\DeclareSymbolFont{cyrletters}{OT2}{wncyr}{m}{n}
\DeclareMathSymbol{\Sha}{\mathalpha}{cyrletters}{"58}
\DeclareMathSymbol{\Che}{\mathalpha}{cyrletters}{"51}
\newcommand{\calHom}{\mathscr{H}\mathit{om}}
\newcommand{\calExt}{\mathscr{E}\mathit{xt}}
\newcommand{\Ga}{{\mathbf{G}}_{\rm{a}}}
\newcommand{\Gm}{{\mathbf{G}}_{\rm{m}}}
\DeclareMathOperator{\red}{red}
\DeclareMathOperator{\prim}{prim}
\DeclareMathOperator{\Gal}{Gal}
\DeclareMathOperator{\Res}{Res}
\DeclareMathOperator{\ev}{ev}
\DeclareMathOperator{\inv}{inv}
\DeclareMathOperator{\Br}{Br}
\DeclareMathOperator{\Pic}{Pic}
\DeclareMathOperator{\ab}{\rm{ab}}
\DeclareMathOperator{\Ext}{Ext}
\DeclareMathOperator{\Hom}{Hom}
\DeclareMathOperator{\R}{R}
\DeclareMathOperator{\sm}{sm}
\DeclareMathOperator{\Div}{div}
\DeclareMathOperator{\Spec}{Spec}
\DeclareMathOperator{\coker}{coker}
\DeclareMathOperator{\im}{im}
\DeclareMathOperator{\pro}{pro}
\DeclareMathOperator{\tors}{tors}
\DeclareMathOperator{\ord}{ord}
\DeclareMathOperator{\et}{\acute{e}t}
\DeclareMathOperator{\fppf}{fppf}
\newcommand*{\RR}{\ensuremath{\mathbf{R}}}                        
\newcommand*{\Z}{\ensuremath{\mathbf{Z}}}                        
\newcommand*{\Q}{\ensuremath{\mathbf{Q}}}                     
\newcommand*{\F}{\ensuremath{\mathbf{F}}}                        
\newcommand*{\C}{\ensuremath{\mathbf{C}}}                        
\newcommand*{\A}{\ensuremath{\mathbf{A}}}                        
\newcommand*{\N}{\ensuremath{\mathbf{N}}}                        
\newcommand*{\calO}{\mathcal{O}}                                  
\newcommand*{\isoarrow}[1]{\arrow[#1,"\rotatebox{90}{\(\sim\)}"
]}
\numberwithin{equation}{section}
\newcounter{defcounter}
\newenvironment{appendixequation}{\addtocounter{equation}{-1}
\refstepcounter{defcounter}

\begin{equation}}
{\end{equation}}
\numberwithin{defcounter}{chapter}
\newtheorem{theorem}{Theorem}[section]
\newtheorem{lemma}[theorem]{Lemma}
\newtheorem{proposition}[theorem]{Proposition}
\newtheorem{corollary}[theorem]{Corollary}
\theoremstyle{appendixtheorem}
\newtheorem{appendixtheorem}{Theorem}[chapter]
\newtheorem{appendixproposition}[appendixtheorem]{Proposition}
\newtheorem{appendixdefinition}[appendixtheorem]{Definition}
\theoremstyle{remark}
\newtheorem{appendixremark}[appendixtheorem]{Remark}
\theoremstyle{definition}
  \newtheorem{definition}[theorem]{Definition}
\theoremstyle{remark}
  \newtheorem{remark}[theorem]{Remark}
 \theoremstyle{remark}
  \newtheorem{example}[theorem]{Example}
\newcommand\textcyr[1]{{\fontencoding{OT2}\fontfamily{wncyr}\selectfont #1}}
\tikzset{commutative diagrams/.cd,
mysymbol/.style={start anchor=center,end anchor=center,draw=none}
}
\title{\textbf{TATE DUALITY IN POSITIVE DIMENSION OVER FUNCTION FIELDS}} 
\author{Zev Rosengarten}
\begin{document}

\date{}
\maketitle

\begin{abstract}
We extend the classical duality results of Poitou and Tate for finite discrete Galois modules over local and global fields (local duality, nine-term exact sequence, etc.)\,to all affine commutative group schemes of finite type, building on the recent work of \v{C}esnavi\v{c}ius \cite{ces2} extending these results to all finite commutative group schemes. We concentrate mainly on the more difficult function field setting, giving some remarks about the number field case along the way.
\end{abstract}

\tableofcontents{}

\chapter{Introduction and Main Results}

\section{Introduction}\label{app}

Tate's theorems on the Galois cohomology of finite discrete Galois modules over local and global fields
are the cornerstone for the arithmetic theory of abelian varieties, Galois deformation theory, and beyond.
In the function field case, these results traditionally required the hypothesis that the order of the Galois module
is not divisible by the characteristic.  

The restriction on the order was recently removed in work of \v{C}esnavi\v{c}ius  (\cite{ces1}, \cite{ces2}), 
also permitting general (i.e., not necessarily \'etale) finite commutative group schemes
over local and global function fields. This requires using fppf cohomology rather than \'etale cohomology
(the two coincide for coefficients in an \'etale finite commutative group scheme,
or equivalently a finite discrete Galois module; more generally, they agree for coefficients in a smooth commutative group scheme, by \cite[Thm.\,11.7]{briii}). It also requires using
cohomology over the spectrum of an adele ring in the global case in place of various restricted
direct products and direct sums as in the traditional formulation. 

Our aim in the present work is to build on those results in the finite case to establish analogous duality theorems
for the fppf cohomology of commutative affine group schemes $G$ of finite type with {\em nonnegative dimension}
over local and global fields $k$. 
This is much easier in characteristic 0 (as we discuss briefly in Appendix \ref{char0appendix}),
so our main focus in this work is on the function field case, where  
the imperfection of such fields leads to substantial difficulties. Even the case $G=\Ga$ presents  serious challenges 
for duality theorems when ${\rm{char}}(k)>0$, in large part because the $\Gm$-dual functor $\widehat{G} : = \mathscr{H}om(G, \Gm)$ 
over the category of $k$-schemes is non-representable (see Proposition \ref{hatrepresentable}). 

\section{Main results}\label{intro}

Let us now record our main results over local and global fields, giving references for the finite case already treated by 
Poitou, Tate, and \v{C}esnavi\v{c}ius. Before stating these results, however, let us remark that while we mainly restrict our attention to the more difficult function field setting in this work, all of our main results remain true over number fields (and non-archimedean fields of characteristic $0$ for the local results) unless we explicitly state otherwise, and are usually much easier in that setting.

To state the local results, we need some notation.
Given any locally compact Hausdorff topological abelian group $A$, we let $A^D : = \Hom_{\rm{cts}}(A, \mathbf{R}/\Z)$ denote the Pontryagin dual, and we denote by $A_{{\rm{pro}}}$ the profinite completion of $A$. (For the definition of the profinite completion, see the beginning of \S \ref{profiniterightexactsection}.) Consider the class $\mathscr{C}$ of locally compact Hausdorff abelian groups $A$ that are either (i) profinite, (ii) discrete torsion, or (iii) of finite exponent. Many of the groups that we will be concerned with in the local setting 
lie in this class (but there are important exceptions, such as $k^{\times}$ for a non-archimedean
local field $k$). 
The class $\mathscr{C}$ has several nice properties. For example, such $A$ satisfy 
\[
A^D = \Hom_{\rm{cts}}(A, (\Q/\Z)_{\rm{disc}})
\] 
where $(\Q/\Z)_{\rm{disc}}$ denotes $\Q/\Z$ with its discrete topology. Another nice property of 
$\mathscr{C}$ is that it is stable under $A \rightsquigarrow A^D$: if $A$ is profinite then $A^D$ is discrete torsion, and vice-versa, and the dual of a group of finite exponent is still of finite exponent.

\begin{remark}
\label{perfectpairing}
Given two locally compact Hausdorff abelian groups $A, B$ and a continuous bilinear pairing $A \times B \rightarrow \RR/\Z$, the induced map $A \rightarrow B^D$ is a topological isomorphism if and only if the map $B \rightarrow A^D$ is, by Pontryagin double duality \cite[Thm.\,1.7.2]{rudin}. If both of these equivalent conditions hold, then we say that the pairing is {\em perfect}.
\end{remark}

Let $k$ be a local field. Given an fppf abelian sheaf $\mathscr{F}$ on $\Spec(k)$, 
we define $\widehat{\mathscr{F}} : = \mathscr{H}om(\mathscr{F}, \Gm)$
and we give ${{\rm{H}}}^2(k, \mathscr{F})$ the discrete topology. Given a commutative affine $k$-group scheme $G$ of finite type, we also give $\widehat{G}(k)$ the discrete topology. On the other hand, we give $G(k)$ its natural topology arising from that on $k$. That is, we choose a closed immersion $G \hookrightarrow \mathbf{A}^n_k$ into an affine space over $k$, give $\mathbf{A}^n_k(k) = k^n$ its usual topology, and then endow $G(k)$ with the subspace topology. This is independent of the choice of the embedding and makes $G(k)$ into a locally compact Hausdorff group, functorially in $G$. If $G$ is finite, this is simply the discrete topology on $G(k)$. Our first main local result is the following.

\begin{theorem}
\label{localdualityH^2(G)}
$($Theorem $\ref{H^2(G)G^(k)dualityprop}$$)$
Let $k$ be a local field of positive characteristic, and let $G$ be an affine commutative $k$-group scheme of finite type. The cohomology group  ${{\rm{H}}}^2(k, G)$ is torsion, $\widehat{G}(k)$ is finitely generated, and the cup product ${{\rm{H}}}^2(k, G) \times \widehat{G}(k) \rightarrow {{\rm{H}}}^2(k, \mathbf{G}_m) \xrightarrow[\sim]{\inv} \Q/\Z$ induces a functorial continuous perfect pairing of locally compact Hausdorff abelian groups 
\[
{{\rm{H}}}^2(k, G) \times \widehat{G}(k)_{{\rm{pro}}} \rightarrow \Q/\Z,
\]
where ${\rm{H}}^2(k, G)$ is discrete.
\end{theorem}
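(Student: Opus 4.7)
The plan is to prove the theorem by dévissage on $G$, reducing in stages from a general commutative affine $k$-group of finite type down to the cases of tori and smooth connected unipotent groups, invoking the finite case established in \cite{ces2} at each stage. Throughout, I would apply the five lemma to the pairing of the long exact sequence of fppf cohomology against the (only left exact) sequence
\[
0 \to \widehat{G''}(k) \to \widehat{G}(k) \to \widehat{G'}(k),
\]
supplemented by higher $\calExt$-terms when needed, coming from a short exact sequence $1 \to G' \to G \to G'' \to 1$. Some care will be required with topologies: the groups $\widehat{G}(k)$ must be shown to be finitely generated so that profinite completion behaves well in the diagram chases (it is right exact on finitely generated abelian groups), and the $H^2$'s must be shown to be torsion for their discreteness to be consistent.

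For the dévissage itself, I would use $1 \to G^0 \to G \to \pi_0(G) \to 1$ with $\pi_0(G)$ finite étale, followed by $1 \to G^{\sm} \to G^0 \to G^0/G^{\sm} \to 1$ with $G^0/G^{\sm}$ finite infinitesimal and $G^{\sm} \subseteq G^0$ the maximal smooth closed subgroup, to reduce to $G$ smooth and connected. Then the maximal $k$-torus $T \subseteq G$ yields $1 \to T \to G \to U \to 1$ with $U$ smooth connected unipotent, reducing further to the cases $G = T$ (a torus) and $G = U$ (smooth connected unipotent).

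For $G = T$, the result is classical Tate local duality: $\widehat{T}(k) = X^*(T)^{\Gal(k_s/k)}$ is finitely generated (in fact free), $H^2(k, T)$ is torsion, and the pairing is perfect. For $G = U$ smooth connected commutative unipotent, any $k$-group homomorphism $U \to \Gm$ vanishes (base-change to $\overline{k}$ and filter $U_{\overline{k}}$ by $\Ga$'s, using $\Hom(\Ga, \Gm) = 0$), so $\widehat{U}(k) = 0$ and the entire content reduces to showing $H^2(k, U) = 0$. For $U = \Ga$ this is the vanishing of higher quasi-coherent cohomology on $\Spec k$; for $k$-split $U$ it follows by dévissage along $\Ga$'s.

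The main obstacle will be the $k$-wound unipotent case: establishing $H^2(k, U) = 0$ when the imperfection of $k$ obstructs any $k$-rational composition series by $\Ga$'s. This is where assembly of \cite{ces2} together with classical Tate duality no longer suffices, and where the structure theory of wound unipotent $k$-groups together with a cohomological analysis specific to local function fields of positive characteristic will have to do the real work.
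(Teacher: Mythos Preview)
Your d\'evissage contains a concrete error and misidentifies the difficult step.

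\textbf{The error.} The claim that $G^0/G^{\sm}$ is finite infinitesimal is false. Over $k = \F_p(t)$, the subgroup $G = V(x^p - ty^p) \subset \Ga^2$ is a connected $1$-dimensional commutative affine $k$-group with $G(k_s) = 0$ (since $t \notin (k_s)^p$), so $G^{\sm} = 0$ and $G/G^{\sm} = G$ is positive-dimensional. The reduction to the smooth case should go the other way: by \cite[VII$_{\rm{A}}$, Prop.\,8.3]{sga3} there is an infinitesimal (hence finite) subgroup $I \subset G$ with $G/I$ smooth.

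\textbf{The misidentified difficulty, and how the paper proceeds.} The vanishing $H^2(k, U) = 0$ for smooth connected commutative unipotent $U$ is not the obstacle you suggest: embed $U \hookrightarrow U' := \R_{k'/k}(U_{k'})$ for a finite extension $k'/k$ splitting $U$; then $U'$ and $U'/U$ are split unipotent, and the long exact sequence gives the vanishing (this is Proposition~\ref{unipotentcohomology}(i)). The genuine subtlety in your filtration $1 \to T \to G \to U \to 1$ with $U$ possibly wound lies in the five-lemma step: matching the image of $\delta \colon H^1(k,U) \to H^2(k,T) \simeq \widehat{T}(k)^*$ with the annihilator of $\widehat{G}(k)$ reduces, via $\delta$-functoriality of cup products, to surjectivity of $H^1(k, U) \to H^1(k, \widehat{U})^*$, which is part of Theorem~\ref{H^1localduality} and is established only afterwards. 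The paper sidesteps this by a different structure result (Lemma~\ref{affinegroupstructurethm}): every such $G$ sits in $1 \to H \to G \to U \to 1$ with $H$ an almost-torus and $U$ \emph{split} unipotent, so $U$ contributes nothing on either side of the pairing and one reduces immediately to almost-tori, which are then handled via an isogeny from $(\text{finite}) \times \R_{k'/k}(\text{split torus})$ supplied by Lemma~\ref{almosttorus}(iv).
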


If $G$ is finite then $\widehat{G}(k) = \widehat{G}(k)_{\pro}$, so this case is \cite[Ch.\,III, Thm.\,6.10]{milne}.
By ``functoriality'' in Theorem \ref{localdualityH^2(G)}, 
 we mean that if we have a homomorphism $f:  G \rightarrow H$ of $k$-group schemes as in 
Theorem \ref{localdualityH^2(G)}, then the diagram
\[
\begin{tikzcd}
{{\rm{H}}}^2(k, G) \arrow{d}[swap]{{{\rm{H}}}^2(f)} \arrow[r, phantom, "\times"] & \widehat{G}(k)_{{\rm{pro}}} \arrow{r} & \Q/\Z \arrow[d, equals] \\
{{\rm{H}}}^2(k, H) \arrow[r, phantom, "\times"] & \widehat{H}(k)_{{\rm{pro}}} \arrow{u}{\widehat{f}(k)} \arrow{r} & \Q/\Z
\end{tikzcd}
\]
commutes. This functoriality follows from the functoriality of cup product.

Our second local result is the analogue of the preceding one with the roles of $G$ and $\widehat{G}$ reversed. Unlike in Tate local duality for finite commutative group schemes, there is no analogous symmetry between $G$ and $\widehat{G}$ in the generality of arbitrary affine commutative group schemes of finite type. In fact, the sheaf $\widehat{G}$ may not be representable (as we have already noted
for $G = \Ga$ due to Proposition \ref{hatrepresentable}); even if representable, it may not be finite type (as for $G = \Gm$, whose fppf dual sheaf is $\Z$). Theorem \ref{localdualityH^2(G)} above
and Theorem \ref{localdualityH^2(G^)} below are therefore distinct results.

\begin{theorem}
\label{localdualityH^2(G^)}
$($Theorem $\ref{H^2(G^)altori}$$)$
Let $k$ be a local field of positive characteristic, and let $G$ be an affine commutative $k$-group scheme of finite type. The cohomology group ${{\rm{H}}}^2(k, \widehat{G})$ is torsion, and the cup product pairing ${{\rm{H}}}^2(k, \widehat{G}) \times {\rm{H}}^0(k, G) \rightarrow {{\rm{H}}}^2(k, \mathbf{G}_m) \xrightarrow[\sim]{\inv} \Q/\Z$ induces a functorial continuous perfect pairing of locally compact Hausdorff groups 
\[
{{\rm{H}}}^2(k, \widehat{G}) \times {\rm{H}}^0(k, G)_{{\rm{pro}}} \rightarrow \Q/\Z,
\]
where ${\rm{H}}^2(k, \widehat{G})$ is discrete.
\end{theorem}

When $G$ is finite, this is the same as Theorem \ref{localdualityH^2(G)}, and therefore is once again \cite[Ch.\,III, Thm.\,6.10]{milne}.

Finally, we come to the duality between the cohomology groups ${{\rm{H}}}^1(k, G)$ and ${{\rm{H}}}^1(k, \widehat{G})$. This is subtle in positive characteristic because it involves endowing ${{\rm{H}}}^1(k, G)$ and ${{\rm{H}}}^1(k, \widehat{G})$ with suitable topologies that are not quite as obvious as the ones for ${{\rm{H}}}^0$ and ${{\rm{H}}}^2$. (In characteristic $0$, both groups are finite, and we simply take them to be discrete.) In \cite{ces1}, \v{C}esnavi\v{c}ius adapts an idea of Moret-Bailly to define a locally compact topology on ${{\rm{H}}}^1(k, G)$ for $G$ a $k$-group scheme locally of finite type. The topology is defined as follows: a subset $U \subset {{\rm{H}}}^1(k, G)$ is {\em open} if for every locally finite type $k$-scheme $X$ and every $G$-torsor $\mathscr{X} \rightarrow X$, the subset
$$\{ x \in X(k) \mid \mathscr{X}_x \in U \} \subset X(k)$$ is open. (This amounts to declaring
 $X(k) \rightarrow (BG)(k)$ to be continuous
for all $k$-morphisms $X \rightarrow BG$.)
In \cite{ces1},  various desirable properties are proved (e.g., given a map $G \rightarrow H$ between two such group schemes, the induced map ${{\rm{H}}}^1(k, G) \rightarrow {{\rm{H}}}^1(k, H)$ is continuous, and likewise
for {\em connecting maps} in the commutative case from degree $i$ to degree $i+1$ for $i=0,1$ in a long exact sequence associated to a short exact sequence of such group schemes); precise references for proofs of these results in \cite{ces1} are given near the start of
\S\ref{sectiontopologyoncohomology}.

This procedure defines a topology on ${{\rm{H}}}^1(k, \widehat{G})$ when $G$ is an almost-torus
(in the sense of Definition \ref{almosttorusdef}), by 
the representability of $\widehat{G}$ in Proposition \ref{hatrepresentable} for such $G$. 
A generalization to define a locally compact Hausdorff 
topology on ${{\rm{H}}}^1(k, \widehat{G})$ for general (affine commutative finite type) 
 $G$ will be given in \S\ref{h1dualgen}.
 
\begin{remark}\label{discontinuous}
Let us warn the reader
that the topology on ${\rm{H}}^1(k, \widehat{(\cdot)})$ is not in general $\delta$-functorial,
in the sense that for a short exact sequence of affine commutative $k$-group schemes of finite type
\begin{equation}\label{gggseq}
1 \longrightarrow G' \longrightarrow G \longrightarrow G'' \longrightarrow 1, 
\end{equation}
we will see later (Proposition \ref{hatisexact}) that the fppf $\Gm$--dual sequence
\[
1 \longrightarrow \widehat{G''} \longrightarrow \widehat{G} \longrightarrow \widehat{G'} \longrightarrow 1
\]
is also exact, hence we obtain a connecting map ${\rm{H}}^1(k, \widehat{G'}) \rightarrow {\rm{H}}^2(k, \widehat{G''})$. When we say that $\delta$-functoriality fails in general for the cohomology of the dual sheaves, we mean that this connecting map is {\em not} necessarily continuous, where (as usual)
${\rm{H}}^2(k, \widehat{G''})$ has the discrete topology. To give an example, consider the exact sequence
\begin{equation}
\label{alpha_pH^1seq1}
1 \longrightarrow \alpha_p \longrightarrow \Ga \longrightarrow \Ga \longrightarrow 1
\end{equation}
where the last map on the right is the Frobenius $k$-isogeny. Since ${\rm{H}}^1(k, \widehat{\Ga}) = 0$ (Proposition \ref{cohomologyofG_adualgeneralk}), the map ${\rm{H}}^1(k, \widehat{\alpha_p}) \rightarrow {\rm{H}}^2(k, \widehat{\Ga})$ is injective. To say that it is continuous, therefore, would be equivalent to saying that ${\rm{H}}^1(k, \widehat{\alpha_p})$ is discrete. But $\widehat{\alpha_p} \simeq \alpha_p$, and we will see (Proposition \ref{topcohombasics}(v)) that the connecting map $k/k^p \rightarrow {\rm{H}}^1(k, \alpha_p)$ coming from (\ref{alpha_pH^1seq1}) is continuous. (It is even a topological isomorphism, but we will not need this.) Therefore, if ${\rm{H}}^1(k, \alpha_p)$ were discrete, then so would be $k/k^p$. But this quotient is not discrete since $k^p \subset k$ is not open, so the connecting map ${\rm{H}}^1(k, \widehat{\alpha_p}) \rightarrow {\rm{H}}^2(k, \widehat{\Ga})$ is {\em not continuous}. This bad behavior cannot happen if all of the 
 groups in (\ref{gggseq}) are almost-tori, since then their fppf dual sheaves are represented by locally finite type $k$-group schemes (Lemma \ref{hatrepresentable}), and so the connecting map is continuous by \cite[Prop. 4.2]{ces1}. We will never use this failure of $\delta$--functoriality.
\end{remark}
 
 With these preliminaries out of the way, the final local duality result is:

\begin{theorem}
\label{H^1localduality}
$($Theorem $\ref{H^1dualityprop}$$)$
Let $k$ be a local field of positive characteristic, and let $G$ be an affine commutative $k$-group scheme of finite type. The 
fppf cohomology groups ${{\rm{H}}}^1(k, G)$ and ${{\rm{H}}}^1(k, \widehat{G})$ are of finite exponent, and cup product 
\[
{{\rm{H}}}^1(k, G) \times {{\rm{H}}}^1(k, \widehat{G}) \rightarrow {{\rm{H}}}^2(k, \mathbf{G}_m) \xrightarrow[\sim]{\inv} \Q/\Z
\]
 is a functorial 
continuous perfect pairing of locally compact Hausdorff abelian groups.
\end{theorem}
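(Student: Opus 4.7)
The plan is to prove the theorem by dévissage, combining the previously established degree-$2$ dualities (Theorems~\ref{localdualityH^2(G)} and~\ref{localdualityH^2(G^)}) with a five-lemma argument on the long exact sequences of cohomology, and reducing to a handful of base cases.

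For the finite-exponent claim, I would observe that the class of affine commutative $k$-group schemes $G$ of finite type for which both ${\rm H}^1(k, G)$ and ${\rm H}^1(k, \widehat{G})$ have finite exponent is closed under extensions. So it suffices to verify the claim on base cases: $G$ finite (classical), $G$ a torus (Tate--Nakayama, noting that $\widehat{T}$ is a finitely generated free $\Z$-module), $G = \Ga$ (both ${\rm H}^1$'s vanish, by additive Hilbert~$90$ and Proposition~\ref{cohomologyofG_adualgeneralk}), and smooth connected commutative unipotent groups (annihilated by some $p^n$, so both $G$ and $\widehat{G}$ are $p^n$-torsion and hence so is each ${\rm H}^1$). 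Every affine commutative $k$-group of finite type is an iterated extension of such pieces via its component group, maximal torus, and infinitesimal parts.

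For the pairing itself, I would apply the same dévissage. Given a short exact sequence $1 \to G' \to G \to G'' \to 1$ for which the theorem is already known for $G'$ and $G''$, the functoriality of cup product yields a commutative ladder comparing the long exact cohomology sequence of $(G', G, G'')$ with the ``dual'' sequence for $(\widehat{G''}, \widehat{G}, \widehat{G'})$. The five-lemma, invoked together with the already-proved degree-$0$ and degree-$2$ dualities, then promotes perfectness from the flanking degrees to the middle. The required base cases are finite $G$ (\cite[Ch.\,III, Thm.\,6.10]{milne}), tori (classical Tate--Nakayama, pairing ${\rm H}^1(k, T)$ perfectly with ${\rm H}^1(k, X^*(T))$), $G = \Ga$ (vacuous by vanishing), and smooth connected commutative unipotent groups (reduced by Frobenius filtrations to extensions of groups of exponent $p$ such as $\alpha_p$, $\Ga$, and their forms).

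The main obstacle is continuity. Remark~\ref{discontinuous} warns that connecting maps into ${\rm H}^2(k, \widehat{G''})$ need not be continuous for arbitrary short exact sequences, so the dévissage must be arranged to avoid discontinuous connecting maps on the $\widehat{\cdot}$-side. One natural strategy is to favor sequences whose terms are almost-tori, so that $\widehat{\cdot}$ is representable by locally finite-type $k$-group schemes (Proposition~\ref{hatrepresentable}) and the connecting maps are automatically continuous by \cite[Prop.\,4.2]{ces1}; outside of this setting, continuity must be checked directly from the topology constructed in \S\ref{h1dualgen}. A related complication is that $\mathscr{H}om(-,\Gm)$ is not an exact functor on affine commutative $k$-groups in positive characteristic (for instance, $\calExt^1(\Ga, \Gm) \neq 0$), so the dual of a short exact sequence need not be short exact; one must either track the local $\calExt$-sheaves obstructing exactness or choose resolutions in which they vanish. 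Once these technical compatibilities are handled, the five-lemma yields the desired functorial continuous perfect pairing of locally compact Hausdorff abelian groups.
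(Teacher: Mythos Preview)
Your overall d\'evissage strategy is correct in spirit and matches the paper's architecture, but there is one concrete error and one structural gap.

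The error: your claim that $\calExt^1(\Ga, \Gm) \neq 0$ in positive characteristic is backwards. Proposition~\ref{ext^1(Ga,Gm)=0} establishes precisely that the fppf sheaf $\calExt^1_S(\Ga, \Gm)$ vanishes for \emph{any} $\F_p$-scheme $S$, and consequently Proposition~\ref{hatisexact} shows that $G \mapsto \widehat{G}$ \emph{is} exact on affine commutative $k$-group schemes of finite type when ${\rm char}(k)>0$. (It is in characteristic~$0$ that the fppf $\calExt^1$ fails to vanish; see Remark~\ref{gabberexample}.) So the ``obstruction'' you worry about is in fact a key tool already at your disposal, and the dual of every short exact sequence you will use is automatically short exact.

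The structural gap concerns the topology on ${\rm H}^1(k,\widehat{G})$ for general $G$. You correctly flag the continuity problem of Remark~\ref{discontinuous}, but ``check continuity directly from the topology constructed in \S\ref{h1dualgen}'' is circular: that topology is \emph{defined} as part of proving the theorem. The paper's resolution is to fix a sequence $1 \to H \to G \to U \to 1$ with $H$ an almost-torus and $U$ split unipotent, note that ${\rm H}^1(k,\widehat{U})=0$ forces ${\rm H}^1(k,\widehat{G}) \hookrightarrow {\rm H}^1(k,\widehat{H})$, and give ${\rm H}^1(k,\widehat{G})$ the subspace topology from the (already well-defined) topology on ${\rm H}^1(k,\widehat{H})$. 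One then proves the duality for this choice; since the topology on ${\rm H}^1(k,G)$ is intrinsic, perfectness forces the topology on ${\rm H}^1(k,\widehat{G})$ to be the Pontryagin dual topology, hence independent of the choice of sequence. This also explains the paper's order of operations: the almost-torus case of Theorem~\ref{H^1localduality} must be proved \emph{before} Theorem~\ref{localdualityH^2(G^)} (it is used in the proof of Proposition~\ref{H^2(G^)altori}), so you cannot assume both degree-$2$ dualities are available at the outset.
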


When $G$ is finite, this is once again \cite[Ch.\,III, Thm.\,6.10]{milne}. We remark that the topology we use on the cohomology groups agrees with Milne's \v{C}ech topology, as will be discussed in \S \ref{sectiontopologyoncohomology}.

All three of the preceding theorems are proved in Chapter \ref{chapterlocalfields}. They will be deduced from the already-known finite cases.

Part of classical local duality describes integral (or in Galois-theoretic contexts, unramified) cohomology classes on each side as the exact annihilators of those on the other side. More precisely, let $k$ be a local function field with ring of integers $\mathcal{O}$, and let $\mathscr{G}$ be a finite flat
commutative $\mathcal{O}$-group scheme with generic fiber $G$. Then the maps ${\rm{H}}^i(\mathcal{O}, \mathscr{G}) \rightarrow {\rm{H}}^i(k, G)$ are injective, and similarly for $\widehat{\mathscr{G}}$ by Cartier duality, and via the perfect cup product pairing ${\rm{H}}^i(k, G) \times {\rm{H}}^{2-i}(k, \widehat{G}) \rightarrow \Q/\Z$, the exact annihilator of ${\rm{H}}^{2-i}(\mathcal{O}, \widehat{\mathscr{G}})$
is ${\rm{H}}^i(\mathcal{O}, \mathscr{G})$ (these groups clearly annihilate each other because ${\rm{H}}^2(\mathcal{O}, \Gm) = 0$):
for $i = 1$ this is \cite[Ch.\,III, Cor.\,7.2]{milne}, and for $i = 2$ it follows from \cite[Cor.\,2.9]{ces2} and the fact that $\widehat{\mathscr{G}}(\calO) = \widehat{G}(k)$ due to $\widehat{\mathscr{G}}$ being finite. The case $i = 0$ follows from the case $i=2$ by Cartier duality.
 
Unfortunately, due to the lack of a good structure theory for flat affine commutative $\mathcal{O}$-group schemes of finite type, we do not obtain such a satisfactory result in general when $\dim G > 0$. However, we do obtain useful replacements 
that are not only interesting in their own right but also play an essential role in our work in \S\ref{sectionadeliclocalcohom}
that relates cohomology over adele rings to cohomology over local fields. Part of our interest in these theorems lies in the injectivity statements below for the pullback maps ${\rm{H}}^i(\mathcal{O}_v, \cdot) \rightarrow {\rm{H}}^i(k_v, \cdot)$
for {\em all but finitely many} $v$, as may be proved by a direct d{\'e}vissage for number fields but appears to be much more subtle in the function field setting.

To state these local duality results when $\dim G>0$, let us first note that any affine commutative group scheme $G$ of finite type over a global field $k$ spreads out to an {\em $\mathcal{O}_S$-model} $\mathscr{G}$ 
 (i.e., an affine commutative flat $\mathcal{O}_S$-group scheme of finite type) for some non-empty finite set $S$ of places of $k$ containing the archimedean places. 

\begin{theorem}
\label{exactannihilatorH^2(G)}
$($Theorem $\ref{H^2(G)G^(k)integraldualityprop}$$)$
Let $k$ be a global function field, $G$ an affine commutative $k$-group scheme of finite type, and $\mathscr{G}$ an $\mathcal{O}_S$-model of $G$. Then for all but finitely many places $v$ of $k$, we have ${\rm{H}}^2(\mathcal{O}_v, \mathscr{G}) = 0$ and the map $\widehat{\mathscr{G}}(\mathcal{O}_v) \rightarrow \widehat{G}(k_v)$ is an isomorphism. 

In particular, 
for such $v$ the maps ${\rm{H}}^2(\mathcal{O}_v, \mathscr{G}) \rightarrow {\rm{H}}^2(k_v, G)$ and ${\rm{H}}^0(\mathcal{O}_v, \widehat{\mathscr{G}}) \rightarrow {\rm{H}}^0(k_v, \widehat{G})$ are injective, and ${\rm{H}}^2(\mathcal{O}_v, \mathscr{G})$ is the exact annihilator of ${\rm{H}}^0(\mathcal{O}_v, \widehat{\mathscr{G}})$.
\end{theorem}

\begin{theorem}
\label{exactannihilatorH^2(G^)}
$($Theorem $\ref{H^2(O,G^)dualityprop}$$)$
Let $k$ be a global function field, $G$ an affine commutative $k$-group scheme of finite type, and $\mathscr{G}$ an $\mathcal{O}_S$-model of $G$. Then for all but finitely many places $v$ of $k$, the maps ${\rm{H}}^0(\mathcal{O}_v, \mathscr{G}) \rightarrow {\rm{H}}^0(k_v, G)$ and ${\rm{H}}^2(\mathcal{O}_v, \widehat{\mathscr{G}}) \rightarrow {\rm{H}}^2(k_v, \widehat{G})$ are injective
and ${\rm{H}}^2(\mathcal{O}_v, \widehat{\mathscr{G}})$ is the exact annihilator of ${\rm{H}}^0(\mathcal{O}_v, \mathscr{G})$. 
\end{theorem}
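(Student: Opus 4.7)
The injection $\mathscr{G}(\mathcal{O}_v) \hookrightarrow G(k_v) = \mathscr{G}(k_v)$ is automatic for every $v \notin S$, since $\mathscr{G}$ is affine (hence separated) over $\mathcal{O}_v$; this disposes of injectivity of ${\rm{H}}^0$. Next, the cup product factors as
\[
{\rm{H}}^0(\mathcal{O}_v, \mathscr{G}) \times {\rm{H}}^2(\mathcal{O}_v, \widehat{\mathscr{G}}) \to {\rm{H}}^2(\mathcal{O}_v, \Gm) = \Br(\mathcal{O}_v),
\]
and this target vanishes: the residue field $\kappa_v$ is finite, so $\Br(\kappa_v) = 0$, and by Henselian descent $\Br(\mathcal{O}_v) \xrightarrow{\sim} \Br(\kappa_v)$. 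Hence, for \emph{every} place $v \notin S$, the image of ${\rm{H}}^2(\mathcal{O}_v, \widehat{\mathscr{G}})$ in ${\rm{H}}^2(k_v, \widehat{G})$ lies in the annihilator of $\mathscr{G}(\mathcal{O}_v) \subseteq G(k_v)$ under the pairing of Theorem \ref{localdualityH^2(G^)}.

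For the reverse inclusion, together with injectivity of ${\rm{H}}^2(\mathcal{O}_v, \widehat{\mathscr{G}}) \to {\rm{H}}^2(k_v, \widehat{G})$, the plan is to exploit the excision long exact sequence for the open immersion $\Spec(k_v) \hookrightarrow \Spec(\mathcal{O}_v)$ applied to both $\mathscr{G}$ and $\widehat{\mathscr{G}}$. Theorem \ref{exactannihilatorH^2(G)} (applied after enlarging $S$) supplies ${\rm{H}}^2(\mathcal{O}_v, \mathscr{G}) = 0$ and $\widehat{\mathscr{G}}(\mathcal{O}_v) = \widehat{G}(k_v)$ for almost all $v$; the latter forces the connecting map ${\rm{H}}^0(k_v, \widehat{G}) \to {\rm{H}}^1_{\kappa_v}(\mathcal{O}_v, \widehat{\mathscr{G}})$ to vanish, collapsing the $\widehat{\mathscr{G}}$-sequence into
\[
0 \to {\rm{H}}^1_{\kappa_v}(\mathcal{O}_v, \widehat{\mathscr{G}}) \to {\rm{H}}^1(\mathcal{O}_v, \widehat{\mathscr{G}}) \to {\rm{H}}^1(k_v, \widehat{G}) \to {\rm{H}}^2_{\kappa_v}(\mathcal{O}_v, \widehat{\mathscr{G}}) \to {\rm{H}}^2(\mathcal{O}_v, \widehat{\mathscr{G}}) \to {\rm{H}}^2(k_v, \widehat{G}),
\]
while the sequence for $\mathscr{G}$ exhibits $G(k_v)/\mathscr{G}(\mathcal{O}_v)$ as a subgroup of ${\rm{H}}^1_{\kappa_v}(\mathcal{O}_v, \mathscr{G})$. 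An Artin--Verdier-type perfect pairing
${\rm{H}}^i_{\kappa_v}(\mathcal{O}_v, \widehat{\mathscr{G}}) \times {\rm{H}}^{2-i}(\mathcal{O}_v, \mathscr{G}) \to \Q/\Z$ for $i = 1, 2$, combined with the generic-fiber pairing of Theorem \ref{localdualityH^2(G^)}, then yields both the injectivity and the exact annihilator equality by a direct diagram chase.

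The hardest step is establishing the local duality pairing for $(\mathscr{G}, \widehat{\mathscr{G}})$, since in general $\widehat{\mathscr{G}}$ is a non-representable fppf sheaf, so classical Artin--Verdier duality (as in Milne, Chapter III) is not directly applicable. My approach is to dévissage $\mathscr{G}$ over $\mathcal{O}_v$, after further enlarging $S$ so that the dévissage spreads out integrally, into a finite flat piece, a torus, and a wound unipotent piece, and to verify the pairing in each case using Proposition \ref{hatrepresentable} together with the duality results for finite and for unipotent groups established earlier in the paper.
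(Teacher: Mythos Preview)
Your treatment of the easy half is fine: injectivity of $\mathscr{G}(\mathcal{O}_v) \hookrightarrow G(k_v)$ is automatic from separatedness, and $\Br(\mathcal{O}_v)=0$ gives the containment of ${\rm{H}}^2(\mathcal{O}_v,\widehat{\mathscr{G}})$ in the annihilator. The gap is in the hard half. Your proposed Artin--Verdier-type pairing ${\rm{H}}^i_{\kappa_v}(\mathcal{O}_v,\widehat{\mathscr{G}}) \times {\rm{H}}^{2-i}(\mathcal{O}_v,\mathscr{G}) \to \Q/\Z$ is not available off the shelf: the classical version (Milne, Ch.\,III) treats finite flat group schemes and, by a bootstrapping argument, tori, but for a general $G$ the sheaf $\widehat{\mathscr{G}}$ is not representable (Proposition~\ref{hatrepresentable}), and the key case $G=\Ga$ lies entirely outside that framework. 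Establishing your pairing for $\widehat{\Ga}$ over $\mathcal{O}_v$ would amount to reproving Proposition~\ref{H^2(G^)integralGa} in a different guise. Also, your d\'evissage into ``finite flat, torus, wound unipotent'' is not the decomposition the paper uses or that the structure theory provides: Lemma~\ref{affinegroupstructurethm} produces an almost-torus modulo a \emph{split} unipotent quotient, and it is the split unipotent piece (a tower of $\Ga$'s) that must be handled directly.

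The paper takes a different route that avoids local cohomology with supports altogether. It reformulates the statement as bijectivity of ${\rm{H}}^2(\mathcal{O}_v,\widehat{\mathscr{G}}) \to \Hom_{\rm cts}(G(k_v)/\mathscr{G}(\mathcal{O}_v),\Q/\Z)$ and proves this by d\'evissage. The base cases are $G=\Ga$ (Proposition~\ref{H^2(G^)integralGa}, via the explicit identification ${\rm{H}}^2(\mathcal{O}_v,\widehat{\Ga})\simeq\Omega^1_{\mathcal{O}_v}$ from \S\ref{sectiongeneralH^2(Ga^)}), $\R_{k'/k}(\Gm)$ (Lemma~\ref{H^2(O,G^)Gm}, via local class field theory), and the finite case (Cartier-dual to Theorem~\ref{exactannihilatorH^2(G)}). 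The crucial new ingredient you are missing is that the d\'evissage for almost-tori requires the ${\rm{H}}^1$ integral-annihilator result (Theorem~\ref{exactannihilatorH^1(G)}, proved as Proposition~\ref{H^1(k,G^)/H^1(O,G^)}) as input to make the diagram chase go through; the paper emphasizes in the introduction to Chapter~\ref{chapterlocalintcohom} that this dependence appears to be unavoidable, and in particular that injectivity of ${\rm{H}}^2(\mathcal{O}_v,\widehat{\mathscr{G}})\to{\rm{H}}^2(k_v,\widehat{G})$ cannot be established directly.
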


\begin{theorem}
\label{exactannihilatorH^1(G)}
$($Theorem $\ref{H^1(k,G^)/H^1(O,G^)}$$)$
Let $k$ be a global function field, $G$ an affine commutative $k$-group scheme of finite type, and $\mathscr{G}$ an $\mathcal{O}_S$-model of $G$. Then for all but finitely many places $v$ of $k$, the maps ${\rm{H}}^1(\mathcal{O}_v, \mathscr{G}) \rightarrow {\rm{H}}^1(k_v, G)$ and ${\rm{H}}^1(\mathcal{O}_v, \widehat{\mathscr{G}}) \rightarrow {\rm{H}}^1(k_v, \widehat{G})$ are injective
and ${\rm{H}}^1(\mathcal{O}_v, \mathscr{G})$ and ${\rm{H}}^1(\mathcal{O}_v, \widehat{\mathscr{G}})$ are orthogonal complements under the local duality $($i.e., cup product$)$ pairing.
\end{theorem}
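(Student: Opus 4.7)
The plan is to argue by dévissage, reducing the statement to a short list of building blocks---finite flat group schemes, unramified tori, and smooth connected unipotent groups---and verifying each case directly. The key initial observation is that for almost all $v$, the Brauer group $H^2(\mathcal{O}_v, \Gm) = \Br(\kappa_v) = 0$ (as $\kappa_v$ is finite), so the cup product of integral classes factors through $0 \subset H^2(k_v, \Gm) = \Q/\Z$. This immediately shows that the images of ${\rm{H}}^1(\mathcal{O}_v, \mathscr{G})$ and ${\rm{H}}^1(\mathcal{O}_v, \widehat{\mathscr{G}})$ in the generic-fiber cohomology annihilate each other; combined with the perfect pairing of Theorem~\ref{H^1localduality}, it remains to establish both injectivity statements and to upgrade mutual annihilation to the exact-annihilator property.

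Choose a composition series for $G$ whose graded pieces are either finite or smooth connected, and spread to $\mathcal{O}_S$-models after enlarging $S$. A five-lemma argument on the long exact fppf cohomology sequences over $\mathcal{O}_v$ and $k_v$, with the $H^0$ and $H^2$ boundary terms controlled by the already-proved Theorems~\ref{exactannihilatorH^2(G)} and \ref{exactannihilatorH^2(G^)}, propagates both injectivity and the exact-annihilator property through extensions. The building blocks are then handled as follows: the finite flat case is classical (\cite[Ch.\,III, Cor.\,7.2]{milne}). For an unramified torus $\mathscr{T}$: Lang's theorem gives ${\rm{H}}^1(\mathcal{O}_v, \mathscr{T}) = 0$ via smoothness and henselianness, while inflation-restriction---using that a torsion-free unramified Galois module contributes nothing via inertia---identifies ${\rm{H}}^1(\mathcal{O}_v, \widehat{\mathscr{T}})$ with all of ${\rm{H}}^1(k_v, \widehat{T})$, so the claim is immediate. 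For smooth connected unipotent $U$, filter down via $\Ga$-subquotients (possibly after a finite separable extension via Weil restriction) and invoke ${\rm{H}}^1(k_v, \Ga) = 0$ together with ${\rm{H}}^1(k_v, \widehat{\Ga}) = 0$ (Proposition~\ref{cohomologyofG_adualgeneralk} cited in the excerpt), which trivializes both sides.

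The main obstacle is executing the dévissage cleanly on the dual side. The formation of $\widehat{G}$ is not exact in general (the failure measured by $\calExt^1(\cdot, \Gm)$-sheaves), $\widehat{G}$ is frequently non-representable, and the topology on ${\rm{H}}^1(k_v, \widehat{G})$ is not $\delta$-functorial (Remark~\ref{discontinuous}). Ensuring that the obstructing Ext-sheaves and connecting maps behave well enough for all but finitely many $v$ to make the five-lemma go through---and that the \emph{exact} annihilator property (and not merely mutual annihilation) is preserved by extensions---is the crux of the argument, and relies heavily on the paper's earlier structural results on $\widehat{G}$ and its integral models. This is why the ${\rm{H}}^1$ exact-annihilator theorem is the most delicate of the three: it occupies the central cohomological dimension where both the ${\rm{H}}^0$ and ${\rm{H}}^2$ integral phenomena (controlled by Theorems~\ref{exactannihilatorH^2(G)} and \ref{exactannihilatorH^2(G^)}) must be combined simultaneously.
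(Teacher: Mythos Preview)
Your overall d\'evissage strategy is close to the paper's, but there is a genuine circularity: you invoke Theorem~\ref{exactannihilatorH^2(G^)} as an input to the five-lemma, yet in the paper Theorem~\ref{exactannihilatorH^2(G^)} is proved \emph{after} Theorem~\ref{exactannihilatorH^1(G)} and \emph{uses} it. The chapter introduction says this explicitly: the injectivity of ${\rm{H}}^2(\mathcal{O}_v,\widehat{\mathscr{G}})\to{\rm{H}}^2(k_v,\widehat{G})$ ``cannot be proved directly, but actually requires one to prove Theorem~\ref{exactannihilatorH^2(G^)} in its entirety, and the proof of this result in turn depends upon first proving Theorem~\ref{exactannihilatorH^1(G)}.'' Concretely, in your five-lemma for $1\to G'\to G\to G''\to 1$, the rightmost column is ${\rm{H}}^2(\mathcal{O}_v,\widehat{\mathscr{G}''})\to Q^0(\mathscr{G}'')^D$, and injectivity of this map is exactly the hard part of Theorem~\ref{exactannihilatorH^2(G^)} for $G''$. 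If a torus ever appears as a successive \emph{quotient} in your composition series, you are stuck.

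The paper evades this by organizing the d\'evissage so that the rightmost column only ever requires Theorem~\ref{exactannihilatorH^2(G^)} for $\Ga$ alone---this is Proposition~\ref{H^2(G^)integralGa}, proved beforehand by a direct differential-forms computation. For almost-tori the paper does \emph{not} filter by a torus at all: it uses Lemma~\ref{almosttorus}(iv) to exhibit $G$ as a \emph{quotient} of $X=C\times\R_{k'/k}(T')$ by a finite $B$, so that on the dual side $\widehat{G}$ is a \emph{subobject} of $\widehat{X}$, and the five-lemma boundary columns involve only $\widehat{\mathscr{X}}(\mathcal{O}_v),\widehat{\mathscr{B}}(\mathcal{O}_v)$ (controlled by Theorem~\ref{exactannihilatorH^2(G)}) and ${\rm{H}}^1(\mathcal{O}_v,\widehat{\mathscr{X}}),{\rm{H}}^1(\mathcal{O}_v,\widehat{\mathscr{B}})$ (the trivial and finite base cases). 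Then for general $G$, one peels off $\Ga$'s from the top via $1\to H\to G\to\Ga\to 1$, where now the rightmost column is ${\rm{H}}^2(\mathcal{O}_v,\widehat{\Ga})\to(k_v/\mathcal{O}_v)^D$, handled by Proposition~\ref{H^2(G^)integralGa}. Your sketch can be repaired along exactly these lines: put the torus at the bottom and make sure the only $H^2(\widehat{\,\cdot\,})$ boundary term you ever face is for $\Ga$ or a finite group (the latter reduces to Theorem~\ref{exactannihilatorH^2(G)} via Cartier duality).
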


Note that these results are independent of the chosen $\mathcal{O}_S$-model $\mathscr{G}$, since any two such models become isomorphic over $\mathcal{O}_{S'}$ for some $S' \supset S$.  Theorems \ref{exactannihilatorH^2(G)}--\ref{exactannihilatorH^1(G)} are all proved in Chapter \ref{chapterlocalintcohom}. 

\medskip

Now we turn to our main results in the global setting.   First, we introduce some notation.  
 Let $k$ be a global function field. We denote by $\A_k$ (or $\A$ when there is no confusion as to which field we are 
 working over) the ring of adeles for $k$.
For an abelian group $A$, we denote by $A^*$ the group $\Hom(A, \Q/\Z)$. The functor $A \rightsquigarrow A^*$
is exact precisely because $\Q/\Z$ is an injective abelian group.  We aim to prove the following result, which extends the classical Poitou--Tate exact sequence for finite commutative group schemes to all affine commutative group schemes of finite type:

\begin{theorem}
\label{poitoutatesequence}
$($Theorem $\ref{poitoutatesequenceprop}$$)$
Let $k$ be a global function field, and $G$ an affine commutative $k$-group scheme of finite type. The following sequence 
$($with maps to be defined below$)$ is exact and functorial in $G$:
\[
\begin{tikzcd}
0 \arrow{r} & {\rm{H}}^0(k, G)_{\pro} \arrow{r} & {\rm{H}}^0(\A, G)_{\pro} \arrow{r} \arrow[d, phantom, ""{coordinate, name=Z_1}] & {\rm{H}}^2(k, \widehat{G})^* \arrow[dll, rounded corners,
to path={ -- ([xshift=2ex]\tikztostart.east)
|- (Z_1) [near end]\tikztonodes
-| ([xshift=-2ex]\tikztotarget.west) -- (\tikztotarget)}] & \\
& {\rm{H}}^1(k, G) \arrow{r} & {\rm{H}}^1(\A, G) \arrow{r} \arrow[d, phantom, ""{coordinate, name=Z_2}] & {\rm{H}}^1(k, \widehat{G})^* \arrow[dll, rounded corners,
to path={ -- ([xshift=2ex]\tikztostart.east)
|- (Z_2) [near end]\tikztonodes
-| ([xshift=-2ex]\tikztotarget.west) -- (\tikztotarget)}] & \\
& {\rm{H}}^2(k, G) \arrow{r} & {\rm{H}}^2(\A, G) \arrow{r} & {\rm{H}}^0(k, \widehat{G})^* \arrow{r} & 0
\end{tikzcd}
\]
\end{theorem}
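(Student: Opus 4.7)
The plan is to proceed by dévissage to the finite case, for which the theorem is due to \v{C}esnavi\v{c}ius \cite{ces2}, using the three local duality theorems \ref{localdualityH^2(G)}, \ref{localdualityH^2(G^)}, \ref{H^1localduality} to define the relevant maps, and the local-integral orthogonality Theorems \ref{exactannihilatorH^2(G)}--\ref{exactannihilatorH^1(G)} to ensure the adelic-to-dual arrows are well-defined and the composites of consecutive arrows vanish.

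First I would define the maps. The horizontal maps ${\rm{H}}^i(k, G) \to {\rm{H}}^i(\A, G)$ are pullback along $\Spec \A \to \Spec k$. The connecting maps ${\rm{H}}^i(\A, G) \to {\rm{H}}^{2-i}(k, \widehat{G})^*$ are obtained by pairing an adelic class componentwise against the image in each ${\rm{H}}^{2-i}(k_v, \widehat{G})$ of a given global class, using the local pairings, and summing the resulting local invariants in $\Q/\Z$. The local-integral orthogonality theorems guarantee that, for any fixed global class on the $\widehat{G}$ side, its pairing with a fixed adelic class vanishes at all but finitely many $v$, so the sum is finite. The composites of consecutive maps in the sequence then vanish: for the pullback followed by the sum of local pairings this is just the reciprocity relation $\sum_v \inv_v = 0$ on ${\rm{H}}^2(k, \Gm)$, and for two adjacent connecting maps it follows from the exactness of the respective cohomology long exact sequences.

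The engine for exactness is dévissage: given a short exact sequence
\[
1 \to G' \to G \to G'' \to 1
\]
of affine commutative $k$-group schemes of finite type, the long exact sequences of fppf cohomology over $k$ and over $\A$, combined with the $(\cdot)^*$-dual of the long exact sequence for the $\widehat{G}$'s (exact since $\Q/\Z$ is an injective abelian group), assemble into a commutative diagram whose rows are the three candidate 9-term sequences for $G'$, $G$, $G''$. A diagram chase propagates exactness from any two of these rows to the third, provided one verifies compatibility of the connecting maps with the local pairings, which comes from functoriality of cup product. This reduces us to basic pieces via (i) the connected-\'etale sequence $1 \to G^0 \to G \to \pi_0(G) \to 1$, (ii) a filtration of smooth connected commutative $G$ by a maximal torus (with unipotent quotient), and (iii) infinitesimal filtrations in the non-smooth case. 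The resulting base cases are finite commutative groups (handled by \cite{ces2}), tori (classical Tate--Nakayama global duality), and smooth connected commutative unipotent groups.

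The principal obstacle will be the smooth connected commutative unipotent case over an imperfect function field. Since $\widehat{\Ga}$ is non-representable (Proposition \ref{hatrepresentable}) and the topology on ${\rm{H}}^1(k, \widehat{G})$ must be set up by the ad hoc procedure in \S \ref{h1dualgen} and is not $\delta$-functorial (Remark \ref{discontinuous}), the dévissage through unipotent pieces must be carried out in abelian-group terms, with the topological statements reconstructed only at the end. I expect that verifying exactness at the three adelic terms in this unipotent case will require combining explicit computations of cohomology groups of forms of $\Ga$, the adelic-versus-local comparison results from \S\ref{sectionadeliclocalcohom}, and the local-integral orthogonality theorems in an essential way.
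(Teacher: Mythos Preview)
Your proposal has a fundamental gap: you have not defined the maps ${\rm{H}}^2(k, \widehat{G})^* \to {\rm{H}}^1(k, G)$ and ${\rm{H}}^1(k, \widehat{G})^* \to {\rm{H}}^2(k, G)$. These are not boundary maps in any long exact sequence, nor do they arise from the local pairings you describe. In the paper they are \emph{defined} via Theorem \ref{shapairing}: the map ${\rm{H}}^{i}(k, \widehat{G})^* \to {\rm{H}}^{3-i}(k, G)$ is the composition
\[
{\rm{H}}^{i}(k, \widehat{G})^* \twoheadrightarrow \Sha^{i}(\widehat{G})^* \xrightarrow{\sim} \Sha^{3-i}(G) \hookrightarrow {\rm{H}}^{3-i}(k, G),
\]
and the middle isomorphism is precisely the content of Theorem \ref{shapairing}. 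That theorem---finiteness of the Tate--Shafarevich groups and perfection of the $\Sha$-pairings---occupies several sections of delicate \v{C}ech-cocycle computations (from \S\ref{sectiondefiningshapairings} through Proposition \ref{Shapairingprop}) and is not obtained from the finite case by a d\'evissage of the type you propose. Your remark that ``for two adjacent connecting maps it follows from the exactness of the respective cohomology long exact sequences'' suggests you believe these maps are boundary homomorphisms; they are not.

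Your global 2-out-of-3 d\'evissage on the full 9-term sequence also does not work as stated. First, profinite completion is only right exact (Proposition \ref{profiniterightexact}), so the columns at the ${\rm{H}}^0_{\pro}$ terms are not a priori exact; the paper repairs this separately via Propositions \ref{localpointsprofinite} and \ref{globalpointsprofinite}, which themselves rest on local duality. Second, even granting \v{C}esnavi\v{c}ius's full 9-term sequence for finite $G$, one would have to check that his maps ${\rm{H}}^i(k, \widehat{G})^* \to {\rm{H}}^{3-i}(k, G)$ (constructed by an entirely different method) agree with those defined via the $\Sha$-pairings above; the paper explicitly avoids this compatibility check (Remark \ref{prior}) by importing from \cite{ces2} only the three disjoint 3-term pieces in which these maps do not appear. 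Accordingly, the paper does not run a single d\'evissage on the whole sequence; it proves exactness at each position separately (Propositions \ref{G(k)G(A)proinjective}, \ref{exactnessatG(A)}, \ref{exactnessatH^2(k,G^)*}, \ref{exactnessatH^1(A,G)}, \ref{exactnessatH^1(G^)*finite}, \ref{funsequence}), with the d\'evissage tailored to each position, and supplies the remaining maps by an independent proof of Theorem \ref{shapairing}.
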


Let us now describe the maps in this sequence. We endow ${\rm{H}}^0(k, G)$ with the discrete topology and ${\rm{H}}^0(\A, G)$ with the natural topology
arising from that on $\A$ (via a closed embedding from $G$ into some affine space). Then the maps ${\rm{H}}^i(k, G) \rightarrow {\rm{H}}^i(\A, G)$ -- as well as the induced map on profinite completions when $i = 0$ -- are induced by the diagonal inclusion $k \hookrightarrow \A$ sending $\lambda \in k$ to the adele with each coordinate equal to $\lambda$.

The maps ${\rm{H}}^i(\A, G) \rightarrow {\rm{H}}^{2-i}(k, \widehat{G})^*$ are obtained by cupping everywhere locally and then adding the invariants. That is, for each place $v$ of $k$ we have the natural projection map ${\rm{H}}^i(\A, G) \rightarrow {\rm{H}}^i(k_v, G)$, and we also have the map ${\rm{H}}^{2-i}(k, \widehat{G}) \rightarrow {\rm{H}}^{2-i}(k_v, \widehat{G})$. Given $\alpha \in {\rm{H}}^i(\A, G)$ and $\beta \in {\rm{H}}^{2-i}(k, \widehat{G})$, therefore, we obtain for each $v$ the cup product $\alpha_v \cup \beta_v \in {\rm{H}}^2(k_v, \Gm)$, hence by taking the invariant an element of $\Q/\Z$. The pairing between $\alpha$ and $\beta$ is obtained by summing these invariants over all places $v$ of $k$. Of course, one must show that this sum has only finitely many nonzero terms (and that it induces a map ${\rm{H}}^0(\A, G)_{\rm{pro}} \rightarrow {\rm{H}}^2(k, \widehat{G})^*$). All of this will be shown in \S \ref{globalsectionpreliminaries}.

The maps ${\rm{H}}^2(k, \widehat{G})^* \rightarrow {\rm{H}}^1(k, G)$ and ${\rm{H}}^1(k, \widehat{G})^* \rightarrow {\rm{H}}^2(k, G)$ in Theorem \ref{poitoutatesequence} are the hardest ones to describe. In order to explain how they arise, we need to introduce the Tate-Shafarevich groups of a sheaf.

For any
abelian fppf sheaf $\mathscr{F}$ on the category of all $k$-schemes, we define the $i$th {\em Tate-Shafarevich group} of $\mathscr{F}$ -- denoted $\Sha^i(k, \mathscr{F})$ -- by the formula
\[
\Sha^i(k, \mathscr{F}) : = \ker \left({\rm{H}}^i(k, \mathscr{F}) \rightarrow {\rm{H}}^i(\A, \mathscr{F})\right)
\]
(cohomology for the small fppf sites on $k$ and $\A$ respectively). 
When there is no confusion, we will often simply write $\Sha^i(\mathscr{F})$. This agrees with the more classical definition
\[
\Sha^i(k, \mathscr{F}) : = \ker \left({\rm{H}}^i(k, \mathscr{F}) \rightarrow \prod_v {\rm{H}}^i(k_v, \mathscr{F})\right)
\]
(where the product is over all places $v$ of $k$) if $i \leq 2$ and $\mathscr{F} = G$ or $\widehat{G}$ for some affine commutative $k$-group 
scheme $G$ of finite type, by \cite[Th.\,2.13]{ces2} and Proposition \ref{H^i(A,G^)--->prodH^i(k_v,G^)}.

The maps ${\rm{H}}^2(k, \widehat{G})^* \rightarrow {\rm{H}}^1(k, G)$ and ${\rm{H}}^1(k, \widehat{G})^* \rightarrow {\rm{H}}^2(k, G)$ in Theorem \ref{poitoutatesequence} rest on the following result that for finite $G$ is an analogue of 
a classical result of Tate for finite Galois modules over global fields:

\begin{theorem}
\label{shapairing}
$($Theorem $\ref{Shapairingprop}$$)$
Let $k$ be a global function field, $G$ an affine commutative $k$-group scheme of finite type. Then the groups $\Sha^i(k, G)$ and $\Sha^i(k, \widehat{G})$ are finite for $i = 1,2$. Further, for $i = 1,2$, we have perfect pairings, functorial in $G$, 
\[
\Sha^i(k, G) \times \Sha^{3-i}(k, \widehat{G}) \rightarrow \Q/\Z
\]
\end{theorem}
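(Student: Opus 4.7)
The plan is to prove Theorem \ref{shapairing} by dévissage, reducing to the finite case already known from \cite{ces2} (which itself generalizes the classical result of Tate for finite Galois modules over global fields).

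First I would construct the pairing. Given classes $\alpha \in \Sha^i(k, G)$ and $\beta \in \Sha^{3-i}(k, \widehat{G})$ and a place $v$, the local cup product gives $\alpha_v \cup \beta_v \in {\rm{H}}^2(k_v, \Gm)$, whose local invariant lies in $\Q/\Z$. To globalize, fix an $\calO_S$-model $\mathscr{G}$ of $G$ and observe that for all but finitely many $v$, a class in $\Sha^i(G)$ (resp.\ $\Sha^{3-i}(\widehat{G})$) lifts to a class in ${\rm{H}}^i(\calO_v, \mathscr{G})$ (resp.\ ${\rm{H}}^{3-i}(\calO_v, \widehat{\mathscr{G}})$) via a Hensel-type lifting argument applied to the smooth special fibers outside a finite set. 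Theorems \ref{exactannihilatorH^2(G)}--\ref{exactannihilatorH^1(G)} then guarantee that $\inv_v(\alpha_v \cup \beta_v) = 0$ at such $v$, so the sum $\sum_v \inv_v(\alpha_v \cup \beta_v) \in \Q/\Z$ is well-defined. The reciprocity law for $\Gm$ (vanishing of the sum of local invariants on the image of ${\rm{H}}^2(k, \Gm)$) implies that the pairing descends to $\Sha^i \times \Sha^{3-i}$.

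To establish finiteness and perfectness, I would proceed by dévissage. For a short exact sequence $1 \to G' \to G \to G'' \to 1$ of affine commutative $k$-group schemes of finite type, the long exact sequences in fppf cohomology at $k$ and over $\A$ fit into a commutative ladder; an application of the snake lemma yields a partial long exact sequence on Sha groups. Combined with the analogous Cartier-dual ladder and the Pontryagin-dual sequence of it, a five-lemma argument propagates both finiteness and perfectness of the Sha pairings once known for $G'$ and $G''$. To reach base cases, I would use: (a) Frobenius isogenies to reduce from arbitrary affine commutative finite type to smooth, introducing only infinitesimal kernels handled by \cite{ces2}; (b) further reduction of smooth commutative affine groups to extensions involving finite \'etale groups, tori, and smooth commutative unipotent groups; (c) classical Tate--Nakayama duality for tori, available over global function fields; and (d) induction on dimension for smooth commutative unipotent groups via composition series whose successive quotients are twisted forms of $\Ga$.

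The main obstacle will be the smooth commutative unipotent case over an imperfect function field. Such groups can be highly pathological, finiteness of $\Sha^1$ for them is delicate, and their fppf dual sheaves $\widehat{G}$ are essentially never representable. Establishing perfectness of the cup product pairing for such $G$ demands a careful analysis of $\calHom(G, \Gm)$ purely as an fppf sheaf, together with the local Theorems \ref{localdualityH^2(G)}--\ref{H^1localduality}, and almost certainly an induction on dimension. Checking that the connecting maps in the long exact sequences for dual sheaves remain compatible with the pairings -- in view of the discontinuity phenomena noted in Remark \ref{discontinuous} -- is the most delicate piece and is where the bulk of the technical work will concentrate.
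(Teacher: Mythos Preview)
Your construction of the pairing is fatally flawed: for $\alpha \in \Sha^i(k,G)$ and $\beta \in \Sha^{3-i}(k,\widehat{G})$, by definition $\alpha_v = 0$ and $\beta_v = 0$ in ${\rm{H}}^i(k_v,G)$ and ${\rm{H}}^{3-i}(k_v,\widehat{G})$ for \emph{every} place $v$, so $\alpha_v \cup \beta_v = 0$ identically and your sum $\sum_v \inv_v(\alpha_v \cup \beta_v)$ is always zero. You have confused the $\Sha$-pairing with the adelic pairing ${\rm{H}}^i(\A,G) \times {\rm{H}}^{2-i}(k,\widehat{G}) \to \Q/\Z$ (which does work this way). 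The correct construction, following Tate, is carried out in the paper via \v{C}ech cocycles: one represents $\alpha$, $\alpha'$ by global cocycles, chooses local cochains $\beta_v$, $\beta'_v$ with $d\beta_v = \alpha_v$ and $d\beta'_v = \alpha'_v$, chooses a global $2$-cochain $h$ with $dh = \alpha \cup \alpha'$ (using $\check{\rm{H}}^3(k,\Gm)=0$), and then pairs $\alpha$ with $\alpha'$ via $\sum_v \inv_v((\alpha_v \cup \beta'_v) - h_v)$. This is nontrivial in general and requires the comparison $\check{\rm{H}}^i = {\rm{H}}^i$ established earlier in the paper.

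Beyond this, two further points. First, the paper explicitly does \emph{not} take the finite case from \cite{ces2} as a black box (see Remark~\ref{prior}), precisely because \v{C}esnavi\v{c}ius defines his pairings differently and compatibility would have to be checked; instead the paper proves the finite case directly (Corollary~\ref{shapairingfinite}) from the four injectivity statements. Second, your proposed five-lemma d\'evissage is too optimistic: a short exact sequence of group schemes does not induce a long exact sequence of $\Sha$-groups, so one cannot simply propagate perfectness. The paper instead proves each of the four injectivity statements $\Sha^i \hookrightarrow (\Sha^{3-i})^*$ separately, each via a delicate argument that embeds $G$ into (or covers $G$ by) a group for which the relevant $\Sha$ vanishes, and then performs explicit cocycle manipulations (Lemmas~\ref{g'killsSha^2(G^)preimage}, \ref{gammakillsShapreimage}, \ref{xannihilatesShapreimage}, \ref{wannihilatesShapreimage}, \ref{gammakillsdelta^-1(Sha^1(G^))}) to reduce to the exactness results for the Poitou--Tate sequence already established.
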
 

Theorem \ref{shapairing} furnishes an isomorphism $$\Sha^2(k, \widehat{G})^* \xrightarrow{\sim} \Sha^1(k, G)$$ and the map ${\rm{H}}^2(k, \widehat{G})^* \rightarrow {\rm{H}}^1(k, G)$ in Theorem \ref{poitoutatesequence} is
{\em defined} to be the composition $${\rm{H}}^2(k, \widehat{G})^* \twoheadrightarrow \Sha^2(k, \widehat{G})^* \xrightarrow{\sim} \Sha^1(k, G) \hookrightarrow {\rm{H}}^1(k, G).$$ Similarly, the map ${\rm{H}}^1(k, \widehat{G})^* \rightarrow {\rm{H}}^2(k, G)$ is 
{\em defined} to be the composition defined by the following chain of maps: ${\rm{H}}^1(k, \widehat{G})^* \twoheadrightarrow \Sha^1(k, \widehat{G})^* \xrightarrow{\sim} \Sha^2(k, G) \hookrightarrow {\rm{H}}^2(k, G)$. 

Proving Theorems \ref{poitoutatesequence} and \ref{shapairing} will be the main work of Chapter \ref{globalfieldschap}, building on results in the finite case from \cite[\S1.2]{ces2}. 

In the finite case, there is a natural symmetry between $G$ and $\widehat{G}$ due to Cartier duality and double duality. For positive-dimensional groups, there is no symmetry in the hypotheses between $G$ and $\widehat{G}$, since the latter sheaf need not be finite type or even representable for general affine commutative $G$ of finite type over $k$ (cf.\,Proposition \ref{hatrepresentable}). Nevertheless, we still have double duality for such $G$ (Proposition \ref{doubleduality}), and the statements of the local results (taken together) are unchanged if we switch the roles of $G$ and $\widehat{G}$; the same goes for Theorem \ref{shapairing}. There should therefore be an analogue of Theorem \ref{poitoutatesequence} with the roles of $G$ and $\widehat{G}$ reversed. This is indeed the case:

\begin{theorem}
\label{poitoutatesequencedual}
$($Theorem $\ref{poitoutatedualsequenceprop}$$)$
Let $G$ be an affine commutative group scheme of finite type over a global function field $k$. Then the sequence
\[
\begin{tikzcd}
0 \arrow{r} & {\rm{H}}^0(k, \widehat{G})_{\pro} \arrow{r} & {\rm{H}}^0(\A, \widehat{G})_{\pro} \arrow{r} \arrow[d, phantom, ""{coordinate, name=Z_1}] & {\rm{H}}^2(k, G)^* \arrow[dll, rounded corners,
to path={ -- ([xshift=2ex]\tikztostart.east)
|- (Z_1) [near end]\tikztonodes
-| ([xshift=-2ex]\tikztotarget.west) -- (\tikztotarget)}] & \\
& {\rm{H}}^1(k, \widehat{G}) \arrow{r} & {\rm{H}}^1(\A, \widehat{G}) \arrow{r} \arrow[d, phantom, ""{coordinate, name=Z_2}] & {\rm{H}}^1(k, G)^* \arrow[dll, rounded corners,
to path={ -- ([xshift=2ex]\tikztostart.east)
|- (Z_2) [near end]\tikztonodes
-| ([xshift=-2ex]\tikztotarget.west) -- (\tikztotarget)}] & \\
& {\rm{H}}^2(k, \widehat{G}) \arrow{r} & {\rm{H}}^2(\A, \widehat{G}) \arrow{r} & ({\rm{H}}^0(k, G)_{\rm{pro}})^D \arrow{r} & 0
\end{tikzcd}
\]
is exact and functorial in $G$.
\end{theorem}

The maps are entirely analogous to those in Theorem \ref{poitoutatesequence}, and as usual, all cohomology groups 
${\rm{H}}^i(k, \cdot)$ are equipped with the discrete topology. 
The topology on ${\rm{H}}^0(\A, \widehat{G})$ is defined to be the restricted product topology coming from some $\calO_S$-model $\mathscr{G}$ of $G$ (where $S$ is a non-empty finite set of places as usual) and the discrete topology on 
each group $\widehat{G}(k_v)$ of local characters. (The choice of the discrete topology on such character groups
is appropriate, since they are finitely generated.) 
By Theorem \ref{exactannihilatorH^2(G)}, this identifies $\widehat{G}(\A)$ with $\prod_v \widehat{G}(k_v)$ endowed with the product topology. We will deduce Theorem \ref{poitoutatesequencedual} as the Pontryagin dual of Theorem \ref{poitoutatesequence} by applying local duality.

\begin{remark}
In classical Tate duality for finite discrete Galois modules over a global field $k$ (i.e., finite \'etale $k$-group schemes) with order prime to ${\rm{char}}(k)$, the role of the group ${\rm{H}}^0(\A, G)_{\pro}$ is usually taken by the product $\prod_v G(k_v)$ of the local groups, that of ${\rm{H}}^1(\A, G)$ is taken by the restricted product of the local cohomology groups ${\rm{H}}^1(k_v, G)$ with respect to unramified cohomology groups, and the role of the group ${\rm{H}}^2(\A, G)$ is taken by the direct sum of the local cohomology groups. In fact, these statements are identical to the ones given above when $G$ is finite \'etale. Indeed, $G(\A) = \prod_v G(k_v)$ thanks to the valuative criterion of properness applied to a finite flat $\calO_S$-model $\mathscr{G}$ of $G$. Since $\prod_v G(k_v)$ is profinite (being compact and totally disconnected), it follows that $G(\A)_{\pro} = \prod_v G(k_v)$. For the group ${\rm{H}}^1(\A, G)$, \cite[Th.\,2.18]{ces2} implies that it is the restricted product of the ${\rm{H}}^1(k_v, G)$ with respect to the integral cohomology groups ${\rm{H}}^1(\calO_v, \mathscr{G})$, and these integral groups are the same as the unramified cohomology groups by standard results on the \'etale cohomology of discrete valuation rings. Finally, ${\rm{H}}^2(\A, G)$ agrees with the direct sum of the local cohomology groups, again by \cite[Th.\,2.18]{ces2}.

For the relation between the adelic cohomology groups ${\rm{H}}^i(\A, \widehat{G})$ (and ${\rm{H}}^0(\A, \widehat{G})_{\pro}$) appearing in Theorem \ref{poitoutatesequencedual} and the corresponding local cohomology groups for general (not necessarily finite) $G$, see Propositions \ref{H^i(A,G^)--->prodH^i(k_v,G^)} and \ref{G(A)_pro=prodG(k_v)_pro}.
\end{remark}

\begin{remark}\label{prior}
Let us be very precise about which aspects of Tate local and global duality for finite commutative group schemes we are taking as input from prior
work of others. First, the local results in Theorems \ref{localdualityH^2(G)}--\ref{H^1localduality} and \ref{exactannihilatorH^1(G)} for finite $G$ are already known:
references for these results in the finite case were given along with their statements above. As for the global results, consider the three sequences obtained by deleting the maps from ${\rm{H}}^i(k, \widehat{G})^*$, $i=1,2$:
\[
0 \longrightarrow G(k) \longrightarrow G(\A) \longrightarrow {\rm{H}}^2(k, \widehat{G})^*
\]
\[
{\rm{H}}^1(k, G) \longrightarrow {\rm{H}}^1(\A, G) \longrightarrow {\rm{H}}^1(k, \widehat{G})^*
\]
\[
{\rm{H}}^2(k, G) \longrightarrow {\rm{H}}^2(\A, G) \longrightarrow G(k)^* \longrightarrow 0
\]
The exactness of these sequences for finite commutative $G$ holds by \cite[\S1.2]{ces2}. 

We do {\em not} assume Theorem \ref{shapairing} for finite group schemes, even though this case has in fact been fully proved in prior work of
others; we will prove this case directly. The reason that we do not treat the exactness of the full 9-term exact sequence 
as known at the outset for finite $G$ is that \v{C}esnavi\v{c}ius defines the maps from ${\rm{H}}^i(k, \widehat{G})^*$ in a very different manner from the way we do, and then deduces Theorem \ref{shapairing} from the exactness of the resulting sequence. Our approach proceeds in the opposite order. 
To avoid verifying the compatibility between our sequence and \v{C}esnavi\v{c}ius', and between our pairings in Theorem \ref{shapairing} and his, we have chosen to simply prove Theorem \ref{shapairing} directly, even in the finite case. 

Throughout this work, whenever we refer to ``local duality'' or ``Poitou--Tate'' for {\em finite} commutative group schemes, we will mean the results that have been summarized in this remark.
\end{remark}

\section{Overview of methods}

To orient the reader, we now provide a brief overview of the manuscript, and indicate how the various parts fit together to establish the main results.

In Chapter \ref{chaptergeneralfields}, we establish some crucial results about affine commutative group schemes $G$ of finite type over general fields, including their structure theory (\S \ref{sectionstructure}), their Ext and dual sheaves (\S\S \ref{charactersheavessection}--\ref{doubledualitysection}), the cohomology of $\widehat{\Ga}$ (\S\S \ref{cohomGahat}--\ref{sectionH^3(Ga^)=0}), and the relation between \v{C}ech and derived functor cohomology of $G$ and $\widehat{G}$ (\S \ref{cech=derivedsection}), which will play an essential role in proving the continuity of the local duality pairings, as well as in defining the $\Sha$-pairings of Theorem \ref{shapairing}.

In Chapter \ref{chapterlocalfields} we prove the main local duality results (Theorems \ref{localdualityH^2(G)}, \ref{localdualityH^2(G^)}, and \ref{H^1localduality}). In Chapter \ref{chapterlocalintcohom}, we establish the main results on local integral cohomology (Theorems \ref{exactannihilatorH^2(G)}, \ref{exactannihilatorH^2(G^)}, and \ref{exactannihilatorH^1(G)}). Finally, in Chapter \ref{globalfieldschap}, we establish the global Tate duality results (Theorems \ref{poitoutatesequence}--\ref{poitoutatesequencedual}), essentially analyzing the sequence bit by bit (as well as proving perfection of the $\Sha$-pairings). An interesting feature of the proof of the $9$-term exact sequence for positive-dimensional groups is that if we grant that the entire sequence is exact for finite commutative group schemes (cf.\,Remark \ref{prior}) then the different parts of the sequence may be shown to be exact essentially independently, in stark contrast to Tate's original proof of his duality results.

It is difficult to briefly explain the main ideas of the proofs of the preceding theorems 
except to say that the arguments typically proceed by a long d\'evissage from the cases of finite commutative group schemes, $\Gm$, and $\Ga$. This d\'evissage typically (though not always) proceeds as follows. One uses Lemma \ref{affinegroupstructurethm} to reduce to the case when $G$ is split unipotent or an almost-torus (see Definition \ref{almosttorusdef}). The split unipotent case reduces to that of $\Ga$, which is often delicate due to the non-representability of $\widehat{\Ga}$. The case of almost-tori is related to the cases of finite commutative group schemes and 
the case of $\Gm$ by means of Lemma \ref{almosttorus}(iv).

The case of finite $G$ in the preceding theorems will usually amount to known results of others (though see Remark \ref{prior}), while the main results for $\Gm$ 
typically amount to the main results of local and global class field theory. Finally, the main results for $\Ga$ rest on our analysis of the cohomology of $\widehat{\Ga}$ in Chapter \ref{chaptergeneralfields}.

Let us also briefly explain how we analyze the group ${\rm{H}}^2(k, \widehat{\Ga})$
when $p = {\rm{char}}(k)>0$, since working with the non-representable sheaf $\widehat{\Ga}$ is one of the major features of Tate duality in positive dimension that does not appear for finite commutative group schemes, and (unlike $\Gm$) results for $\Ga$ do not simply emerge from class field theory. Proposition \ref{H^2=Ext^2=Br} relates the $p$-torsion group ${\rm{H}}^2(k, \widehat{\Ga})$ to 
$p$-torsion in the Brauer group of $\mathbf{G}_{a,\,k}$. In \S \ref{sectionbrauergpsdiff}, we adapt an idea of Kato to relate such 
$p$-torsion Brauer classes to differential forms on $\Ga$ (Proposition \ref{brauerdifferentialforms}). Finally, we utilize this relationship 
in \S\ref{sectiongeneralH^2(Ga^)} to explicitly compute the group ${\rm{H}}^2(k, \widehat{\Ga})$ for all fields $k$ of characteristic $p > 0$
 such that $[k:  k^p] = p$ (Proposition \ref{omega=H^2(Ga^)}), a class of fields that includes local and global function fields.

\section{Acknowledgements}

I would like to thank Ofer Gabber and Hendrik Lenstra for pointing the author to the connection between ultraproducts and local rings of product rings discussed in appendix \ref{ultraproductsappendix}. I am also very happy to thank the anonymous referee for his tremendous effort and many helpful suggestions while going over this manuscript, which significantly improved both its presentation and its mathematical quality. Finally, it is a pleasure to thank my advisor, Brian Conrad, for providing helpful guidance throughout my work on this book. Those familiar with his mathematical style will discern his influence throughout these pages.

\newpage

\section{Notation and terminology}

Throughout this work, $k$ denotes a field and $k_s, \overline{k}$ denote separable and algebraic closures of $k$, respectively. Also, $p$ denotes a prime, equal to ${\rm{char}}(k)$ when ${\rm{char}}(k) > 0$. All cohomology is fppf unless stated otherwise. 

If $k$ is a non-archimedean local field, then $\calO_k$ denotes the integer ring of $k$. 

If $k$ is a global field (i.e., a number field or the function field of a smooth proper geometrically connected curve over a finite field), and $v$ is a place of $k$, then $k_v$ denotes the local field associated to $v$ -- that is, the completion of $k$ with respect to $v$ -- and $\calO_v$ denotes the integer ring of $k_v$. The symbol $\A_k$, also denoted $\A$ when the field $k$ is clear from the context, denotes the topological ring of adeles of $k$; that is, $\A : = \prod'_v k_v$, the restricted product over all places $v$ of $k$ of the fields $k_v$ with respect to the subrings $\calO_v \subset k_v$. If $S$ is a set of places of $k$, then $\A^S : = \prod'_{v \notin S} k_v$ denotes the ring of $S$-adeles, and $\calO_S \subset k$ denotes the ring of $S$-integers of $k$; that is, $\calO_S : = \{ \lambda \in k \mid \lambda \in \calO_v \mbox{ for all non-archimedean } v \notin S\}$.

We will frequently use without comment the fact that, for smooth group schemes $G$ over a scheme $X$, ${{\rm{H}}}^i_{\et}(X, G) = {{\rm{H}}}^i_{\fppf}(X, G)$ when $i = 0, 1$, and for all $i$ if we also assume that $G$ is commutative. For $i = 0$, this is obvious. For $i = 1$, it follows from the fact that any fppf $G$-torsor over $X$ has a section {\'e}tale-locally. The agreement for all $i$ in the commutative case follows from \cite[Thm.\,11.7]{briii}.

For an abelian sheaf $\mathscr{F}$ on the big fppf site on the category of schemes
over a base scheme, the functor $\calHom(\mathscr{F}, \mathbf{G}_m)$ is denoted by $\widehat{\mathscr{F}}$.

For an abelian group $A$, we let $A^* : = {\rm{Hom}}(A, \Q/\Z)$, and if $A$ is a topological abelian group, then $A^D : = {\rm{Hom}}_{\rm{cts}}(A, \mathbf{R}/\Z)$ denotes the Pontryagin dual group to $A$, which is also a topological abelian group. We denote by $A_{\Div} : = \bigcap_{n \in \Z_+} nA$ the subgroup of divisible elements, and by $A_{\tors} \subset A$ the subgroup of torsion elements.

An {\em isogeny} $f:  G \rightarrow H$ between finite type group schemes over a field is a finite flat surjective homomorphism.
A {\em split unipotent} group over a field $k$ is a $k$-group admitting a finite composition
series of (smooth connected) $k$-subgroups such that the successive quotients are each $k$-isomorphic to
$\Ga$.

When we are working with the set of places of a global field, we say ``almost every'' to mean ``all but finitely many''.

If $G$ is a group scheme of finite type over a global field $k$ and $S$ is a non-empty finite set of places of $k$ containing
the archimedean places, then an {\em $\mathcal{O}_S$-model} of $G$ is a flat separated finite type $\mathcal{O}_S$-group scheme
$\mathscr{G}$ equipped with a $k$-group scheme isomorphism $\mathscr{G}_k \simeq G$.  It is well known
that for sufficiently large $S$ (depending on $G$) such an $\mathcal{O}_S$-model exists, that any two become
isomorphic over $\mathcal{O}_{S'}$ for some $S' \supset S$, and that if $G$ is affine then $\mathscr{G}_{\mathcal{O}_{S'}}$
is affine for sufficiently large $S' \supset S$.  By flatness and separatedness, if $G$ is commutative then $\mathscr{G}$ is commutative. 
We will nearly always work with affine $G$, though occasionally we will permit more general $G$ when a proof
does not require affineness.  Consequently, we adopt the convention that {\em if $G$ is affine, it is always understood
that we only use $\mathscr{G}$ that are affine}.

\chapter{General fields}
\label{chaptergeneralfields}

In this chapter we prove various results about affine commutative group schemes of finite type over general fields, before turning to the special cases of local and global function fields in later chapters. We begin by discussing the structure of such groups (\S \ref{sectionstructure}). We then turn in \S \ref{charactersheavessection} to a discussion of the (fppf or \'etale) sheaf $\calExt^1_k(G, \Gm)$ for affine commutative $k$-group schemes $G$ of finite type, the main result being the vanishing of this sheaf in the fppf topology in characteristic $p$ (Proposition \ref{ext=0}), and the same for the \'etale topology in characteristic $0$, or more generally for perfect fields (Proposition \ref{ext0}). In \S \ref{applicationsofext=0section}, we give several applications of this vanishing, the most significant of which is the exactness of the functor $G \mapsto \widehat{G}$ for affine commutative $G$ of finite type over $k$ (in the \'etale topology in characteristic $0$ and in the fppf topology in characteristic $p$; Proposition \ref{hatisexact}). This exactness plays a crucial role throughout this work, as it allows us to carry out various d\'evissage arguments involving exact sequences of affine groups. In the spirit of treating the sheaves $G$ and $\widehat{G}$ symmetrically, in \S \ref{doubledualitysection} we prove that the natural map $G \rightarrow G^{\wedge\wedge}$ is an isomorphism of fppf sheaves (Proposition \ref{doubleduality}). 

We next turn to a study of the cohomology of the sheaf $\widehat{\Ga}$, which plays a crucial role in proving our duality theorems for the additive group $\Ga$, a fundamental building block for general affine groups. We begin to undertake this study in \S \ref{cohomGahat}, and we relate the most interesting cohomology group, ${\rm{H}}^2(k, \widehat{\Ga})$, to the (primitive) Brauer group of $\Ga$ (we actually undertake this study for somewhat more general rings than fields; see Propositions \ref{H^2=Ext^2=Brdvr} and \ref{H^2=Ext^2=Br}). We then study these Brauer groups by relating them to differential forms in \S \ref{sectionbrauergpsdiff} (see Proposition \ref{brauerdifferentialforms}), and we use this relationship to explicitly compute ${\rm{H}}^2(k, \widehat{\Ga})$ in \S \ref{sectiongeneralH^2(Ga^)} (see Proposition \ref{omega=H^2(Ga^)} and Proposition \ref{omega=H^2(Ga^)}). We then complete our study of the cohomology of $\widehat{\Ga}$ by showing in \S \ref{sectionH^3(Ga^)=0} that the third cohomology group ${\rm{H}}^3(k, \widehat{\Ga})$ vanishes (Proposition \ref{H^3(G_a^)=0}). The chapter concludes with \S \ref{cech=derivedsection} by showing that the \v{C}ech and derived cohomology groups agree for the sheaves of interest in this work. This agreement will play a crucial role in Chapter \ref{chapterlocalfields} in proving the continuity of the local duality pairings, as well as in defining the pairings of Theorem \ref{shapairing} in \S \ref{sectiondefiningshapairings}.

\section{Structure of affine commutative group schemes of finite type}
\label{sectionstructure}

In this section we prove some results that we shall require on the structure of affine commutative group schemes of finite type over a general field. 

\begin{lemma}
\label{finitequotient=smoothandconnected}
Let $k$ be a field, $G$ a commutative $k$-group scheme of finite type. Then there is a finite 
$k$-subgroup scheme $E \subset G$ such that $G/E$ is smooth and connected.
\end{lemma}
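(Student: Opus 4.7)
The plan has two steps: first reduce to the case where $G$ is smooth, then produce a finite subgroup surjecting onto the component group $\pi_0(G)$. In characteristic $0$, $G$ is automatically smooth by Cartier, so assume $p := \mathrm{char}(k) > 0$. I would consider the iterated relative Frobenius $F^r : G \to G^{(p^r)}$; its kernel $G[F^r]$ is a finite infinitesimal $k$-subgroup. Over $\bar k$ (which is perfect, and using commutativity), $G_{\bar k,\mathrm{red}}$ is a smooth closed subgroup, so $G_{\bar k}/G_{\bar k,\mathrm{red}}$ is infinitesimal, and hence killed by $F^r$ for $r$ sufficiently large. For such $r$, one checks that the scheme-theoretic image of $F^r$ coincides over $\bar k$ with $(G_{\bar k,\mathrm{red}})^{(p^r)}$, which is smooth. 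Since finite subgroups of $G/G[F^r]$ lift to finite subgroups of $G$ (as $G[F^r]$ is finite), I may replace $G$ by this smooth quotient and assume $G$ is smooth.

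With $G$ smooth, the sequence $0 \to G^0 \to G \to \pi \to 0$ features $\pi := \pi_0(G)$, a finite étale $k$-group scheme. The remaining task is to find a finite $k$-subgroup $E \subset G$ with $E \to \pi$ surjective, because then $G/E$ equals $G^0/(E \cap G^0)$, the fppf quotient of the smooth connected $G^0$ by a finite flat subgroup, hence smooth and connected. The natural attempt $E := G[n]$ with $n = |\pi|$ works when $\gcd(n, p) = 1$: the snake lemma applied to multiplication by $n$ on the above sequence then gives surjectivity $G[n] \twoheadrightarrow \pi$ (using that $[n]$ is étale surjective on smooth connected $G^0$ in this case), while $G^0[n]$ is finite. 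But when $p \mid n$ and $G^0$ has unipotent factors (e.g., $G^0 = \mathbf{G}_{\rm a}$), this fails: $[p]$ vanishes on $\mathbf{G}_{\rm a}$, and $\mathbf{G}_{\rm a}[p] = \mathbf{G}_{\rm a}$ is not finite.

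To handle the hard case, I would pass to $\bar k$ and use that $G^0(\bar k)/G^0(\bar k)_{\mathrm{tors}}$ is a divisible abelian group, a classical consequence of the structure of smooth connected commutative groups over algebraically closed fields: abelian varieties and tori have divisible $\bar k$-points, while unipotent smooth connected groups in characteristic $p$ have torsion $\bar k$-points (hence trivial quotient modulo torsion), and extensions of divisible abelian groups are divisible. Using this, a finite generating set of $\pi(\bar k)$ can be lifted to elements of $G(\bar k)$ of finite order, whose finitely generated (hence finite) subgroup surjects onto $\pi_{\bar k}$. The corresponding finite constant $\bar k$-subgroup of $G_{\bar k}$ is defined over some finite Galois extension $\tilde k / k$, and summing its $\mathrm{Gal}(\tilde k / k)$-conjugates in $G_{\tilde k}$ yields a Galois-invariant finite $\tilde k$-subgroup, which descends to the desired finite $k$-subgroup $E \subset G$. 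The main obstacle is precisely this characteristic $p$ case with $p \mid |\pi|$ and unipotent contributions to $G^0$, where the naive torsion construction fails and one must combine the divisibility-modulo-torsion property over $\bar k$ with Galois descent.
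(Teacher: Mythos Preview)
Your reduction to the smooth case via Frobenius kernels is correct. The gap is in the Galois descent at the end: you assert that the finite constant $\bar k$-subgroup generated by your torsion lifts is defined over some finite \emph{Galois} extension $\tilde k/k$, but when $k$ is imperfect this can fail, because the torsion lifts in $G(\bar k)$ need not lie in $G(k_s)$. For a concrete failure, take $p = 2$, $k = \mathbf{F}_2(t)$, and let $G$ be the nonsplit smooth commutative extension $1 \to \mathbf{G}_m \to G \to \mathbf{Z}/2\mathbf{Z} \to 0$ with class $t \in k^\times/(k^\times)^2$, so $G = \mathbf{G}_m \times \mathbf{Z}/2\mathbf{Z}$ as a scheme with $(x,1)^2 = (x^2 t, 0)$. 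A torsion lift $(x,1)$ of $1 \in \mathbf{Z}/2\mathbf{Z}$ must have $x^2 t \in \mu_\infty(\bar k)$, forcing $x \in t^{-1/2}\mu_\infty(\bar k)$; since $t^{1/2} \notin k_s$, \emph{no} torsion lift lies in $G(k_s)$, and the \'etale subgroup it generates over $\bar k$ does not descend to any separable extension of $k$. Note that here $G^0 = \mathbf{G}_m$ has no unipotent part, so the obstacle is imperfection of $k$ rather than unipotent contributions as you suggest.

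The paper fixes this by first reducing to perfect $k$: since the relative Frobenius $F: G \to G^{(p)}$ is an isogeny once $G$ is smooth, pulling back a finite subgroup of $G^{(p^n)}$ along $F^n$ gives a finite subgroup of $G$ with isomorphic quotient, so one may replace $G$ by any $G^{(p^n)}$; and solving the problem over the perfect closure of $k$ amounts (via the identification of $k \hookrightarrow k^{1/p^n}$ with the $p^n$-power map on $k$) to solving it for some $G^{(p^n)}$ over $k$. Once $k$ is perfect, $\bar k/k$ is Galois and your descent argument goes through. In the example above, $G^{(2)}$ has extension class $t^2 = 0$ in $k^\times/(k^\times)^2$, hence splits, and pulling back the section $\mathbf{Z}/2\mathbf{Z} \hookrightarrow G^{(2)}$ along $F$ recovers precisely the non-\'etale finite $k$-subgroup $G[2]$ of order $4$.
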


\begin{proof}
By \cite[VII$_{\rm{A}}$, Prop.\,8.3]{sga3}, there is an infinitesimal subgroup scheme $I \subset G$ such that $G/I$ is smooth. Replacing $G$ with $G/I$, therefore, we may assume that $G$ is smooth. 
We next reduce to the case when $k$ is perfect (so $\overline{k}/k$ is Galois).  Without loss of generality we may
assume ${\rm{char}}(k)=p>0$.  Let $F: G \rightarrow G^{(p)}$ be the relative Frobenius homomorphism; this is an isogeny
since $G$ is smooth.  If $E' \subset G^{(p)}$ is a finite $k$-subgroup scheme
such that the smooth $k$-group $G^{(p)}/E'$ is connected then $E : = F^{-1}(E')$ is finite
and clearly the smooth $k$-group $G/E$ is connected (as $G/E \simeq G^{(p)}/E'$).  Hence, we may replace
$G$ with $G^{(p)}$, or more generally with $G^{(p^n)}$ for any desired $n \ge 0$.  

If the problem can be solved over the perfect closure of $k$ then by standard limit considerations
it is solved over some purely inseparable finite extension of $k$. Any such extension is contained in
$k^{p^{-n}}$ for some large $n$, and the field extension $k \hookrightarrow k^{p^{-n}}$ 
is identified with the $p^n$-power map $k \rightarrow k$.  Hence, passing to $G^{(p^n)}$ would then do the job.
Thus, we may and do assume that $k$ is perfect (of any characteristic).

We claim that each component of $G$ contains a torsion point of $G(\overline{k})$. Assuming this, we can choose one such point in each component of $G$, and the subgroup of $G(\overline{k})$ generated by their Galois-orbits
is ${\rm{Gal}}(\overline{k}/k)$-stable and hence descends to a finite $k$-subgroup
$E \subset G$ with the desired property.

To prove the claim, we may assume that $k = \overline{k}$. Let $N$ be the exponent of $G/G^0$. Consider the smooth $k$-subgroups $[N^r]G \subset G$. These form a descending chain, and for $r > 0$ they are contained in $G^0$. This chain must stabilize; that is, $[N^r]G = [N^{r+1}]G$ for some $r > 0$. Let $H : = [N^r]G$ be this stabilized group. Then $H \subset G^0$ is $N$-divisible.

Now let $X$ be a component of $G$. We want to find a torsion point in $X(k)$. Choose an $x \in X(k)$. Then $N^rx = N^rh$ for some $h \in H(k) \subset G^0$, since $H$ is $N$-divisible and $k = \overline{k}$. Since $h \in G^0$, we have $x - h \in X(k)$ and by construction, $N^r(x - h) = 0$. Thus, $x - h \in X(k)$ is a torsion point.
\end{proof}

Next we will introduce the notion of an almost-torus.

\begin{lemma}
\label{almosttorus}
Let $k$ be a field, $G$ a commutative affine $k$-group scheme of finite type. The following are equivalent: \begin{itemize}
\item[(i)]  $(G_{\overline{k}})^0_{\red}$ is a torus.
\item[(ii)] There is a $k$-torus $T \subset G$ such that $G/T$ is finite.
\item[(iii)] There exist a $k$-torus $T$, a finite $k$-group scheme $A$, and an isogeny $A \times T \twoheadrightarrow G$.
\item[(iv)] There exist a positive integer $n$, a finite commutative $k$-group scheme $A$, finite separable extensions $k_1, k_2/k$, split $k_i$-tori $T_i$, and an isogeny $A \times \R_{k_1/k}(T_1) \twoheadrightarrow G^n \times \R_{k_2/k}(T_2)$.
\end{itemize}
\end{lemma}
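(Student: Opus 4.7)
The plan is to establish the cycle $(\mathrm{i}) \Rightarrow (\mathrm{ii}) \Rightarrow (\mathrm{iii}) \Rightarrow (\mathrm{iv}) \Rightarrow (\mathrm{i})$.

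The closing implication $(\mathrm{iv}) \Rightarrow (\mathrm{i})$ is the simplest: any isogeny $\phi : X \to Y$ of commutative finite-type $k$-group schemes restricts, after base change to $\overline{k}$, to a finite surjection $(X_{\overline{k}})^0_{\mathrm{red}} \twoheadrightarrow (Y_{\overline{k}})^0_{\mathrm{red}}$, since the image is a connected reduced subgroup of full dimension. In (\mathrm{iv}), the source's identity-component-reduced part equals $\R_{k_1/k}(T_1)_{\overline{k}} \simeq T_{1,\overline{k}}^{[k_1:k]}$, a torus; hence the target's identity-component-reduced part $((G_{\overline{k}})^0_{\mathrm{red}})^n \times \R_{k_2/k}(T_2)_{\overline{k}}$ is a torus, and so its direct factor $(G_{\overline{k}})^0_{\mathrm{red}}$ is a torus.

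For $(\mathrm{i}) \Rightarrow (\mathrm{ii})$, apply Lemma \ref{finitequotient=smoothandconnected} to pick a finite $k$-subgroup $E \subset G$ with $T' := G/E$ smooth and connected; the isogeny $G_{\overline{k}} \twoheadrightarrow T'_{\overline{k}}$ restricts to a surjection from the torus $(G_{\overline{k}})^0_{\mathrm{red}}$ onto $T'_{\overline{k}}$, making $T'$ a $k$-torus. To split $0 \to E \to G \xrightarrow{\pi} T' \to 0$ up to isogeny, let $N$ be the exponent of $E$: multiplication by $N$ in $\mathrm{Ext}^1_k(T', E)$ is induced by $[N]\colon E \to E = 0$, hence vanishes; since $N$ also acts as pullback along $[N]\colon T' \to T'$, the pullback extension splits, yielding $s \colon T' \to G$ with $\pi \circ s = [N]$. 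The image $T := s(T')$ is a $k$-torus with $\pi(T) = [N](T') = T'$, so $G = T + E$ and $G/T \simeq E/(E \cap T)$ is finite.

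For $(\mathrm{ii}) \Rightarrow (\mathrm{iii})$, let $N$ be the exponent of the finite group $A_0 := G/T$. Then $[N]\colon G \to G$ takes values in $T$ and is surjective onto $T$ (because $[N]\colon T \to T$ already is), with $0$-dimensional, hence finite, kernel $A := G[N]$. Given $a \in A_0$, lift to $g \in G$ and pick $t \in T$ with $Nt = Ng$; then $g - t \in A$ lifts $a$, so $A \twoheadrightarrow A_0$. The multiplication map $A \times T \to G$ is therefore surjective with finite kernel $\{(a, -a) : a \in A \cap T\}$.

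For $(\mathrm{iii}) \Rightarrow (\mathrm{iv})$, I invoke Ono's theorem on algebraic tori: for any $k$-torus $T$ there exist $n \ge 1$, quasi-trivial $k$-tori $R_1, R_2$, and an isogeny $\alpha \colon R_2 \to T^n \times R_1$. Composing with the $n$-th power of the isogeny $\psi$ from (\mathrm{iii}),
\[
A^n \times R_2 \xrightarrow{\mathrm{id} \times \alpha} A^n \times T^n \times R_1 \xrightarrow{\psi^n \times \mathrm{id}} G^n \times R_1,
\]
produces an isogeny of the required shape, since any quasi-trivial torus is a product of Weil restrictions of split tori from finite separable extensions (grouped into a single étale $k$-algebra if several extensions appear). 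The main substantive input is Ono's theorem in this final step; all remaining implications reduce to routine extension-class and dimension-counting manipulations.
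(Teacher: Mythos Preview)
Your argument is correct, and the overall cycle matches the paper's, but two of the implications are handled by genuinely different routes.

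For $(\mathrm{i}) \Rightarrow (\mathrm{ii})$, the paper constructs $T$ directly inside $G$: for a prime $\ell \ne \operatorname{char}(k)$, the identity component of the Zariski closure of $G[\ell^\infty](\overline{k})$ equals $(G_{\overline{k}})^0_{\mathrm{red}}$; since $G[\ell^n]$ is \'etale, this closure is already defined over $k_s$ and is Galois-stable, hence descends to a $k$-torus $T \subset G$. Your approach instead goes the other way: you first project $G$ onto a torus $T'$ via Lemma~\ref{finitequotient=smoothandconnected}, then pull a copy back into $G$ using the vanishing of $N \cdot \mathrm{Ext}^1(T', E)$. Both work; the paper's construction is more intrinsic (it identifies $T$ as the maximal torus), while yours trades that for a clean categorical argument.

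For $(\mathrm{ii}) \Rightarrow (\mathrm{iii})$, the paper again invokes Lemma~\ref{finitequotient=smoothandconnected} to find a finite $A \subset G$ with $G/A$ smooth connected, and then notes that $T \times A \to G$ has cokernel both smooth connected and finite. Your choice $A = G[N]$ is more explicit and avoids the second appeal to that lemma; the surjectivity $A \twoheadrightarrow G/T$ is cleanest seen via the snake lemma for $[N]$ on the sequence $0 \to T \to G \to G/T \to 0$, using that $[N]$ is surjective on the torus $T$ as an fppf sheaf map. The implications $(\mathrm{iii}) \Rightarrow (\mathrm{iv})$ and $(\mathrm{iv}) \Rightarrow (\mathrm{i})$ are essentially the same in both treatments, resting on Ono's theorem.
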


\begin{proof}
(i) $\Longrightarrow$ (ii):  Let $l \neq {\rm{char}}(k)$ be a fixed prime, and let $G[l^{\infty}] : = \cup_{n=1}^{\infty} G[l^n](\overline{k})$. 
Let $T' \subset G_{\overline{k}}$ be the identity component of the Zariski closure of $G[l^\infty]$. Since $(G_{\overline{k}})^0_{\red}$ is a torus, and the $l$-power torsion is dense in any $\overline{k}$-torus, we see that $T' = (G_{\overline{k}})^0_{\red}$. We claim that $T'$ descends to a $k$-torus in $G$. Since $G_{\overline{k}}/T'$ is finite, this will show what we want.

First, $G[l^n](k_s) = G[l^n](\overline{k})$, since $G[l^n]$ is {\'e}tale, so, since the Zariski closure of a set of rational points commutes with field extension, $T'$ descends to a $k_s$-torus $T'' \subset G_{k_s}$. Now $G[l^\infty]$ is clearly preserved
by ${\rm{Aut}}(\overline{k}/k) = {\rm{Gal}}(k_s/k)$, so $T'$ is also preserved and hence descends to a $k$-torus $T$, as claimed.
\\
(ii) $\Longrightarrow$ (iii):  Let $A \subset G$ be a finite $k$-subgroup scheme such that $G/A$ is smooth and connected (Lemma \ref{finitequotient=smoothandconnected}). Then $T \times A \rightarrow G$ is the desired isogeny, since the cokernel of this map is smooth, connected, and finite, hence trivial. 
\\
(iii) $\Longrightarrow$ (iv):  We may assume that $G = T$ is a torus. Then (iv) is essentially \cite[Thm.\,1.5.1]{ono}; there the theorem is stated over number fields, but it works over any field. The idea of the proof is to use the equivalence between tori and Galois lattices, combined with Artin's theorem on induced representations.
\\
(iv) $\Longrightarrow$ (i):  Since $k_i/k$ are separable, each $\R_{k_i/k}(T_i)$ is a torus. So $((G_{\overline{k}})^0_{\red})^n \times T_2'$ is the isogenous quotient of a torus, for some torus $T_2'$. It follows that $(G_{\overline{k}})^0_{\red}$ is itself a torus.
\end{proof}

\begin{definition}\label{almosttorusdef}
A $k$-group scheme $G$ satisfying the equivalent conditions of Lemma \ref{almosttorus} is called an {\em almost-torus}.
\end{definition}

Condition (iv) in Lemma \ref{almosttorus} will be the most useful for us, since it reduces many questions about tori to the case of separable Weil restrictions of split tori. Note in particular that finite group schemes are almost-tori. Let us also note the following easy fact.

\begin{lemma}
\label{almosttorusextension}
Suppose that we have a short exact sequence of affine commutative $k$-group schemes of finite type: 
\[
1 \longrightarrow G' \longrightarrow G \longrightarrow G'' \longrightarrow 1
\]
Then $G$ is an almost-torus if and only if both $G'$ and $G''$ are almost-tori.
\end{lemma}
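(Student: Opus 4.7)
The plan is to use characterization (i) of Lemma \ref{almosttorus}: a group $H$ is an almost-torus if and only if $(H_{\overline{k}})^0_{\red}$ is a torus. After base-changing to the perfect field $\overline{k}$, I will freely invoke the canonical decomposition $H = T \times U$ of any smooth connected commutative affine $\overline{k}$-group into a torus and a (smooth connected) unipotent part, together with the vanishing $\Hom(U, T) = 0$ for such $U$ and $T$.

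For the forward direction, assume $G$ is an almost-torus. For $G'$, the closed immersion $G'_{\overline{k}} \hookrightarrow G_{\overline{k}}$ restricts (through identity components and then underlying reduced subschemes) to a closed immersion $(G'_{\overline{k}})^0_{\red} \hookrightarrow (G_{\overline{k}})^0_{\red}$; since closed subgroup schemes of a torus are of multiplicative type and $(G'_{\overline{k}})^0_{\red}$ is smooth and connected, it must be a torus. For $G''$, I would show that the scheme-theoretic image $H$ of $(G_{\overline{k}})^0_{\red}$ under $G_{\overline{k}} \twoheadrightarrow G''_{\overline{k}}$ equals $(G''_{\overline{k}})^0_{\red}$: $H$ is smooth, connected, and contained in $(G''_{\overline{k}})^0_{\red}$, and it has finite index in $G''_{\overline{k}}$ (since $(G_{\overline{k}})^0_{\red}$ has finite index in $G_{\overline{k}}$), so $(G''_{\overline{k}})^0_{\red}/H$ is simultaneously smooth connected and finite, hence trivial. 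Then $(G''_{\overline{k}})^0_{\red} = H$ is a torus as a quotient of the torus $(G_{\overline{k}})^0_{\red}$.

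For the reverse direction, assume $G'$ and $G''$ are almost-tori, so $T_{G'} := (G'_{\overline{k}})^0_{\red}$ and $T_{G''} := (G''_{\overline{k}})^0_{\red}$ are tori. Decompose $(G_{\overline{k}})^0_{\red} = T_G \times U_G$ via the structure theorem; the goal is $U_G = 1$. The surjection $(G_{\overline{k}})^0_{\red} \twoheadrightarrow T_{G''}$ (established by the same image argument as above, which does not require $G$ itself to be an almost-torus) must kill $U_G$ because $\Hom(U_G, T_{G''}) = 0$, so $U_G$ lies in the kernel and hence inside $G'_{\overline{k}}$. Being smooth and connected, $U_G$ then sits inside $(G'_{\overline{k}})^0_{\red} = T_{G'}$. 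A unipotent subgroup of a torus is trivial, so $U_G = 1$ and $(G_{\overline{k}})^0_{\red} = T_G$ is a torus, confirming that $G$ is an almost-torus.

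The main technical step is the verification that the image of $(G_{\overline{k}})^0_{\red}$ in $G''_{\overline{k}}$ is all of $(G''_{\overline{k}})^0_{\red}$, which is used in both directions; this rests on the observation that a smooth connected closed subgroup of finite index in a smooth connected group (over $\overline{k}$) must equal the whole group, since the quotient is forced to be simultaneously smooth, connected, and finite. The remainder of the argument is formal bookkeeping with identity components and reductions, plus the structure theorem and the triviality of $\Hom$ from a unipotent group to a torus.
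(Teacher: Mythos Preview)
Your proof is correct and essentially unpacks the paper's one-line argument (``This follows easily from the well-known analogous statement for tori'') in full detail. Both approaches work through characterization (i) of Lemma~\ref{almosttorus} over $\overline{k}$; your use of the $T \times U$ decomposition for the reverse direction and the image argument for $(G''_{\overline{k}})^0_{\red}$ are exactly the standard ingredients one would supply to justify the paper's terse appeal to the torus case.
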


\begin{proof}
This follows easily from the well-known analogous statement for tori.
\end{proof}

Now we study unipotent groups. The key result is the following lemma.

\begin{lemma}
\label{unipotentmodetale=split}
Let $k$ be a field, $U$ a smooth connected commutative unipotent $k$-group. Then there is an infinitesimal $k$-subgroup scheme $A \subset U$ such that $U/A$ is split unipotent. The same result also holds with $A$ {\'e}tale.
\end{lemma}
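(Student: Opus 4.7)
In characteristic $0$ the lemma is trivial ($U$ is itself split, so $A = 0$ works in both cases), so assume $\mathrm{char}(k) = p > 0$.

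For the infinitesimal case, the plan is Frobenius descent. First, since $U$ is smooth, connected, commutative, and unipotent, and $k^{\mathrm{perf}}$ is perfect, $U_{k^{\mathrm{perf}}}$ is split by the classical structure theory of unipotent groups over perfect fields; any finite witnessing filtration descends over a finite purely inseparable extension, hence over $k^{1/p^n}$ for some $n \ge 0$. Next, the $k$-algebra isomorphism $\sigma\colon k^{1/p^n} \xrightarrow{\sim} k$ given by $x \mapsto x^{p^n}$ reinterprets the abstract scheme $U_{k^{1/p^n}}$ as the Frobenius twist $U^{(p^n)} := U \otimes_{k, F^n} k$ over $k$; since $\Ga$ is defined over $\F_p$, it transports the split $\Ga$-filtration of $U_{k^{1/p^n}}$ into a split $\Ga$-filtration of $U^{(p^n)}$ as a $k$-group. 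Finally, the $n$-fold relative Frobenius $F^n_{U/k}\colon U \to U^{(p^n)}$ is a finite flat surjective $k$-group homomorphism whose kernel is annihilated by the $n$-fold absolute Frobenius, hence is infinitesimal; taking $A := \ker F^n_{U/k}$ completes this case.

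For the étale case I would argue by a more delicate induction. The base case $\dim U = 1$ is handled by Russell's classification: $U$ is a closed $k$-subgroup of $\Ga^2$ cut out by $y^{p^n} = f(x)$ for a separable additive $k$-polynomial $f(x) = \sum_i a_i x^{p^i}$ with $a_0 \ne 0$. The second projection $(x,y) \mapsto y$ is then a finite étale $k$-group homomorphism $U \twoheadrightarrow \Ga$ with kernel the finite étale $k$-subgroup $\{(x,0) : f(x) = 0\} \subset U$, yielding the desired étale $A$.

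For the inductive step, a plain dimension induction fails since quotients by étale subgroups preserve dimension. Instead I would induct on a subtler invariant such as $\dim(U/U_s)$ where $U_s \subset U$ is a maximal split $k$-subgroup, combining the base case applied to $1$-dimensional wound subquotients with careful lifting of étale subgroups from subquotients back to $U$. The main obstacle I anticipate is showing that suitable étale subgroups really can be so lifted --- this amounts to splitting certain extensions of étale groups by smooth connected unipotent groups, where the relevant fppf-Ext groups are not obviously trivial.
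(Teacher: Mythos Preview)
Your infinitesimal argument is correct and matches the paper's. Your $1$-dimensional \'etale argument via Russell's classification is also correct; the paper instead cites \cite[Lemma~B.1.10]{cgp} for the broader $p$-torsion base case.

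The gap is the \'etale inductive step, which you leave as a sketch. Your framing of the obstacle is also slightly too strong: you do not need to split the extension $0 \to U' \to \pi^{-1}(E'') \to E'' \to 0$ (an Ext question), only to produce a finite \'etale $E \subset U$ \emph{surjecting} onto $E''$ --- its intersection with $U'$ may well be nontrivial. The paper achieves this directly, with no Ext computation. It inducts on the $p$-exponent of $U$, taking $U' = pU$ and $U'' = U/pU$; then $U'$ has smaller exponent and $U''$ is $p$-torsion. After quotienting by the \'etale $E' \subset U'$ supplied by induction, one may assume $U'$ is already split. Now pass to $k = k_s$, where $E''$ is constant; since $U \to U''$ is smooth (its kernel $U'$ being smooth), each point of $E''(k)$ lifts to $U(k)$, and the subgroup these finitely many lifts generate is finite because $U$ is $p$-power torsion. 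Descend to general $k$ by replacing this subgroup with the one generated by its finitely many Galois translates. The resulting $U/E$ is an extension of the split $U''/E''$ by $U'/(U' \cap E)$, and the latter is again split since a quotient of a split unipotent group by a finite \'etale subgroup is split (filter and reduce to $\Ga$ modulo a finite \'etale $F$, which is again $\Ga$ via the separable additive polynomial with zero set $F(k_s)$).
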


\begin{proof}
When ${\rm{char}}(k) = 0$, this is trivial, since every smooth connected unipotent $k$-group is split. So assume that 
${\rm{char}}(k) = p > 0$. First we treat the infinitesimal case. The unipotent group $U$ splits over the perfect closure of $k$, hence over $k^{1/p^n}$ for some $n > 0$. Hence $U^{(p^n)}$ is split. Since $U$ is smooth, the $n$-fold relative Frobenius map $U \rightarrow U^{(p^n)}$ is an isogeny. Therefore, letting $I$ denote its infinitesimal kernel, we have $U/I \simeq U^{(p^n)}$, hence $U/I$ is split.

Next we treat the {\'e}tale case. When $U$ is $p$-torsion, this is part of \cite[Lemma B.1.10]{cgp}. Otherwise, we proceed by induction:  suppose that we have a short exact sequence
\[
1 \longrightarrow U' \longrightarrow U \longrightarrow U'' \longrightarrow 1
\]
such that $U', U''$ are nontrivial smooth connected commutative unipotent $k$-groups for which the lemma holds. (We may take $U' = [p]U, U'' = U/U'$.) That is, there exist finite {\'e}tale $k$-subgroup schemes $E' \subset U'$, $E'' \subset U''$ such that $U'/E'$ and $U''/E''$ are split. Replacing $U'$ with $U'/E'$ and $U$ with $U/E'$, we may assume that $U'$ is split. So we only need to find a finite {\'e}tale $k$-subgroup scheme $E \subset U$ such that $E \twoheadrightarrow E''$, as $U/E$ is then split. To do this, we may assume that $k = k_s$, as we may then replace $E$ with the subgroup generated by its (finitely many) Galois translates in order to ensure that it is defined over $k$. 

So suppose that $k = k_s$. Then $E''$ is constant, hence we merely need to show that for every $e'' \in E''(k)$, there is $e \in U(k)$ such that $e \mapsto e''$ since $U$ is torsion (if we choose one such $e$ for each $e'' \in E''(k)$, then the finite subgroup generated by the various $e$ surjects onto $E''$). But the existence of such $e$ is clear:  the map $U \rightarrow U''$ is smooth (having smooth kernel $U'$), so the fiber above each $e'' \in E''(k)$ contains a $k$-point, since $k=k_s$.
\end{proof}

The following lemma is the main result that we will use on the structure of affine commutative group schemes.

\begin{lemma}
\label{affinegroupstructurethm}
Let $k$ be a field, $G$ an affine commutative $k$-group scheme of finite type. Then there is a short exact sequence
of $k$-group schemes 
\[
1 \longrightarrow H \longrightarrow G \longrightarrow U \longrightarrow 1
\]
with $H$ an almost-torus and $U$ a split unipotent $k$-group.
\end{lemma}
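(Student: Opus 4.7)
The plan is to produce the required extension by a two-step d\'evissage. First, I would reduce to the case where $G$ is smooth and connected: by Lemma \ref{finitequotient=smoothandconnected}, there is a finite $k$-subgroup scheme $E \subset G$ such that $G/E$ is smooth and connected. If the lemma holds for $G/E$, providing an almost-torus $\overline{H} \subset G/E$ with split unipotent quotient, then its preimage $H \subset G$ under $\pi : G \twoheadrightarrow G/E$ sits in an exact sequence $1 \to E \to H \to \overline{H} \to 1$. Since $E$ is finite (hence an almost-torus) and $\overline{H}$ is an almost-torus, Lemma \ref{almosttorusextension} shows that $H$ is an almost-torus, and $G/H \cong (G/E)/\overline{H}$ is split unipotent.

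The heart of the argument is to find, when $G$ is smooth connected commutative affine, a $k$-torus $T \subset G$ such that $G/T$ is smooth connected commutative unipotent. I would construct $T$ following the approach of the implication (i)$\Longrightarrow$(ii) in the proof of Lemma \ref{almosttorus}: fix a prime $\ell \neq {\rm{char}}(k)$, and take $T_{\overline{k}}$ to be the identity component of the Zariski closure of $\bigcup_n G[\ell^n](\overline{k})$ in $G_{\overline{k}}$. Over $\overline{k}$, a smooth connected commutative affine group decomposes as a product of a torus and a unipotent group; the unipotent factor carries only $p$-power torsion, so the $\ell$-power torsion is Zariski-dense in the toric factor, making $T_{\overline{k}}$ the (unique, by commutativity) maximal torus of $G_{\overline{k}}$. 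Since the $G[\ell^n]$ are \'etale, the equality $G[\ell^n](k_s) = G[\ell^n](\overline{k})$ lets $T_{\overline{k}}$ descend to a $k_s$-torus in $G_{k_s}$, and ${\rm{Gal}}(k_s/k)$-invariance then gives descent to a $k$-torus $T \subset G$, exactly as in the cited step of Lemma \ref{almosttorus}. The quotient $G/T$ is smooth and connected (as a quotient of a smooth connected group by a smooth subgroup), and $(G/T)_{\overline{k}}$ is the unipotent factor, so $G/T$ is unipotent (unipotence being insensitive to base change).

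Finally, Lemma \ref{unipotentmodetale=split} applied to $U' := G/T$ produces a finite $k$-subgroup scheme $A \subset U'$ with $U'/A$ split unipotent. Letting $H \subset G$ be the preimage of $A$ in $G$, the group $H$ is an extension of the finite group $A$ by the torus $T$, hence an almost-torus by Lemma \ref{almosttorusextension}; and $G/H \cong U'/A$ is split unipotent, giving the desired sequence. The main obstacle is the intermediate step of producing the $k$-torus $T \subset G$ with unipotent quotient, which requires care when $k$ is imperfect: one needs the $\overline{k}$-decomposition into torus times unipotent, the descent of the maximal torus through $k_s$ to $k$, and the verification that unipotence of $(G/T)_{\overline{k}}$ descends to $G/T$.
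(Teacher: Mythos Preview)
Your proposal is correct and follows essentially the same three-step approach as the paper: reduce to the smooth connected case via Lemma~\ref{finitequotient=smoothandconnected} and Lemma~\ref{almosttorusextension}, quotient by the maximal $k$-torus to get a smooth connected unipotent group, then apply Lemma~\ref{unipotentmodetale=split}. The only difference is that the paper simply invokes ``the maximal torus $T \subset G$'' and the fact that $G/T$ is smooth connected unipotent as known, whereas you spell out an explicit construction of $T$ via $\ell$-power torsion and verify the descent and the unipotence of the quotient; your added detail is correct and not really a different route.
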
 

\begin{proof}
By Lemma \ref{finitequotient=smoothandconnected}, there is a finite $k$-subgroup scheme $A \subset G$ such that $G/A$ is smooth and connected. Since an extension of an almost-torus by an almost-torus is an almost-torus (Lemma \ref{almosttorusextension}), and $A$ is an almost-torus, we may therefore assume that $G$ is smooth and connected. Letting $T \subset G$ be the maximal torus, $G/T$ is smooth connected unipotent. We may therefore assume that $G = U$ is smooth connected unipotent. But then by Lemma \ref{unipotentmodetale=split}, there is a finite $k$-subgroup scheme $A \subset U$ such that $U/A$ is split unipotent, so we are done.
\end{proof}

By analogy with the situation of almost-tori, we will on occasion have use for the notion of almost-unipotence (though this will come up far less often than almost-tori): 

\begin{lemma}
\label{almostunipotent}
Let $G$ be an affine commutative $k$-group scheme of finite type. The following are equivalent: 
\begin{itemize}
\item[(i)] $(G_{\overline{k}})_{\red}^0$ is unipotent, 
\item[(ii)] $G$ contains no nontrivial $k$-torus, 
\item[(iii)] there is an exact sequence
\[
1 \longrightarrow A \longrightarrow G \longrightarrow U \longrightarrow 1
\]
with $A$ a finite commutative $k$-group scheme and $U$ split unipotent over $k$.
\end{itemize}
\end{lemma}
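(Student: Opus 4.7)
My plan is to prove the three-way equivalence cyclically as (i) $\Rightarrow$ (ii) $\Rightarrow$ (iii) $\Rightarrow$ (i), leaning on the structure result of Lemma \ref{affinegroupstructurethm} and the characterization (ii) of almost-tori from Lemma \ref{almosttorus}. None of the three implications should require new ideas beyond what was developed above; they amount to pushing already-established facts through one short exact sequence.

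For (i) $\Rightarrow$ (ii), I would argue by contradiction: if $T \subseteq G$ is a nontrivial $k$-torus, then $T_{\overline{k}}$ is smooth and connected, so it sits inside $(G_{\overline{k}})^0_{\red}$. Since the latter is unipotent by hypothesis and a nontrivial torus cannot be a subgroup of a unipotent group (no $\mathbf{G}_m$ inside a unipotent group), this is a contradiction. For (ii) $\Rightarrow$ (iii), the natural move is to apply Lemma \ref{affinegroupstructurethm} to get a short exact sequence $1 \to H \to G \to U \to 1$ with $H$ an almost-torus and $U$ split unipotent. By characterization (ii) of Lemma \ref{almosttorus}, the almost-torus $H$ contains a $k$-torus $T'$ with $H/T'$ finite. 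But $T' \subseteq H \subseteq G$, so hypothesis (ii) forces $T' = 1$; thus $H$ is finite and we may take $A := H$.

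For (iii) $\Rightarrow$ (i), I would base change the exact sequence to $\overline{k}$ and restrict the quotient $G_{\overline{k}} \twoheadrightarrow U_{\overline{k}}$ to the smooth connected $\overline{k}$-group $(G_{\overline{k}})^0_{\red}$. The kernel of this restriction is a closed subgroup scheme of the finite $A_{\overline{k}}$, hence finite. Any subtorus $S \subseteq (G_{\overline{k}})^0_{\red}$ maps to a subtorus of the unipotent group $U_{\overline{k}}$, which must be trivial; hence $S$ sits inside the finite kernel and so is trivial. Thus $(G_{\overline{k}})^0_{\red}$ is a smooth connected affine $\overline{k}$-group whose maximal torus is trivial, which by the standard structure theory of smooth connected affine algebraic groups over algebraically closed fields forces it to be unipotent.

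The only step that appeals to something not already in the excerpt is the final invocation of the standard fact that a smooth connected affine algebraic group over an algebraically closed field is unipotent iff its maximal torus is trivial (equivalently, iff it contains no copy of $\mathbf{G}_m$). This is textbook material (e.g., in Borel or Milne's book on algebraic groups), so I expect no genuine obstacle; if a more self-contained argument were desired, one could instead observe that $(G_{\overline{k}})^0_{\red}$ is an extension of a smooth connected subgroup of a unipotent group by a finite subgroup and, having trivial maximal torus, coincides with its unipotent radical.
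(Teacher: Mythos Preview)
Your proof is correct and follows essentially the same route as the paper: the paper also proves (i) $\Rightarrow$ (ii) as obvious, proves (ii) $\Rightarrow$ (iii) by applying Lemma \ref{affinegroupstructurethm} and then using Lemma \ref{almosttorus}(ii) to force the almost-torus kernel to be finite, and proves (iii) $\Rightarrow$ (i) by passing to $\overline{k}$ and noting that any torus in $(G_{\overline{k}})_{\red}^0$ would map trivially to $U$ and hence land in the finite group $A$. The standard fact you flag---that a smooth connected affine group over an algebraically closed field with no nontrivial torus is unipotent---is exactly what the paper invokes (in its contrapositive form) without further comment.
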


\begin{proof}
(i) $\Longrightarrow$ (ii):  Obvious.
\\
(ii) $\Longrightarrow$ (iii):  By Lemma \ref{affinegroupstructurethm}, there is such a sequence with $A$ an almost-torus. Since $G$, hence $A$, contains no nontrivial tori, it follows from Lemma \ref{almosttorus}(ii) that $A$ is finite.
\\
(iii) $\Longrightarrow$ (i):  We may assume that $k = \overline{k}$. If $G_{\red}^0$ is not unipotent, then it contains a nontrivial torus $T$. The map $T \rightarrow U$ is trivial, so $T \subset A$, an absurdity.
\end{proof}

\begin{definition}
\label{almostunipdef}
A $k$-group scheme $G$ satisfying the equivalent conditions of Lemma \ref{almostunipotent} is said to be {\em almost-unipotent}.
\end{definition}

Almost-unipotent groups have the expected permanence properties:

\begin{lemma}
\label{almostunipotentpermanence}
A $k$-subgroup scheme of an almost-unipotent $k$-group scheme is almost-unipotent, as is the quotient of an almost-unipotent $k$-group scheme by a $k$-subgroup scheme. Any commutative extension of almost-unipotent $k$-group schemes is almost-unipotent. 
\end{lemma}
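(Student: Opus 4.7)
The plan is to verify the three permanence properties in turn, using the equivalent characterizations of almost-unipotence furnished by Lemma \ref{almostunipotent}.

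For the subgroup statement I would use characterization (ii): if $H \subset G$ is a closed $k$-subgroup scheme of an almost-unipotent $G$, then any $k$-torus in $H$ is a fortiori a $k$-torus in $G$, hence trivial; so $H$ contains no nontrivial $k$-torus and is almost-unipotent.

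For the quotient statement I would use characterization (i). Given a surjection $f\colon G \twoheadrightarrow G''$, I would base change to $\overline{k}$ and consider the image $I \subset G''_{\overline{k}}$ of the smooth connected subgroup $(G_{\overline{k}})^0_{\red}$ under $f_{\overline{k}}$. As a quotient of a smooth group, $I$ is smooth; it is also connected (image of connected), and a closed subgroup of $G''_{\overline{k}}$, so $I \subset (G''_{\overline{k}})^0_{\red}$. On the other hand, $G_{\overline{k}}/(G_{\overline{k}})^0_{\red}$ is an extension of the finite component group $G_{\overline{k}}/(G_{\overline{k}})^0$ by the infinitesimal thickening $(G_{\overline{k}})^0/(G_{\overline{k}})^0_{\red}$, hence is finite. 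The surjection $f_{\overline{k}}$ therefore induces a surjection of the finite group $G_{\overline{k}}/(G_{\overline{k}})^0_{\red}$ onto $G''_{\overline{k}}/I$, forcing $G''_{\overline{k}}/I$ and a fortiori $(G''_{\overline{k}})^0_{\red}/I$ to be finite. Since $(G''_{\overline{k}})^0_{\red}$ is smooth and connected (hence irreducible over $\overline{k}$) and $I$ is a closed smooth irreducible subgroup of the same dimension, we conclude that $I = (G''_{\overline{k}})^0_{\red}$. As a homomorphic image of the unipotent group $(G_{\overline{k}})^0_{\red}$, this group is unipotent, so $G''$ is almost-unipotent.

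For a commutative extension $1 \to G' \to G \to G'' \to 1$ with $G'$ and $G''$ almost-unipotent, I would again invoke (ii). Let $T \subset G$ be a $k$-torus; its image $\overline{T}$ in $G''$ is the quotient of $T$ by a closed $k$-subgroup scheme, which must be of multiplicative type (as any closed subgroup scheme of a torus is). Passing to character lattices, the character group of $\overline{T}$ is identified with a subgroup of the free $\Z$-module of characters of $T$ and so is itself free, whence $\overline{T}$ is a $k$-torus. Since $G''$ is almost-unipotent, $\overline{T}$ is trivial, so $T \subset G'$; and then $T = 0$ because $G'$ is almost-unipotent. Thus $G$ has no nontrivial $k$-torus and is almost-unipotent.

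The main obstacle I anticipate is the quotient case, which requires verifying that the smooth connected reduced identity component behaves well under surjections; the subgroup and extension cases follow essentially formally once the correct equivalent characterization is selected.
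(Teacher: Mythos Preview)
Your proof is correct. The subgroup and quotient cases match the paper's approach exactly (the paper just cites characterizations (ii) and (i) respectively without spelling out the details you give for the quotient case). For the extension case, you take a slightly different route: the paper applies characterization (i), reducing to the standard fact that an extension of smooth connected unipotent groups is unipotent, whereas you use (ii) and show directly that any $k$-torus in $G$ maps to a $k$-torus in $G''$ (hence trivially) and therefore lies in $G'$ (hence is trivial). Your argument is arguably cleaner since it avoids tracking how $(\cdot)^0_{\red}$ behaves in an extension, at the modest cost of verifying that the image of a torus is a torus.
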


\begin{proof}
For subgroups, this is probably most easily seen by appealing to Lemma \ref{almostunipotent}(ii). For quotients and extensions, the assertion follows from the corresponding fact for smooth connected unipotent groups, by applying Lemma \ref{almostunipotent}(i).
\end{proof}

A very useful property of almost-unipotence is that it is preserved by Weil restriction: 

\begin{lemma}
\label{weilrestrictionalmostunipotent}
Let $k'/k$ be a finite extension of fields, and $G'$ an almost-unipotent $k'$-group scheme. Then $\R_{k'/k}(G')$ is an almost-unipotent $k$-group scheme.
\end{lemma}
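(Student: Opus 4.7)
The plan is to verify criterion (ii) of Lemma \ref{almostunipotent}: that $\R_{k'/k}(G')$ contains no nontrivial $k$-subtorus. First, since $k'/k$ is finite and $G'$ is affine commutative of finite type over $k'$, standard properties of Weil restriction ensure that $\R_{k'/k}(G')$ is a commutative affine $k$-group scheme of finite type. So the only remaining content is to rule out nontrivial $k$-subtori.

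Suppose $T \hookrightarrow \R_{k'/k}(G')$ is a $k$-torus. By the universal property of Weil restriction, this closed immersion corresponds to a $k'$-group homomorphism $\phi \colon T_{k'} \to G'$, and conversely the inclusion agrees with the trivial map if and only if $\phi$ is trivial. (If $\phi = 0$ then $T \to \R_{k'/k}(G')$ is the constant map to the identity section, which forces $T = 1$ as a subgroup scheme.) It therefore suffices to prove $\phi = 0$.

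To do this I would pass to the algebraic closure $\overline{k'}$ and consider $\phi_{\overline{k'}} \colon T_{\overline{k'}} \to G'_{\overline{k'}}$; its vanishing implies that of $\phi$ by faithfully flat descent. The scheme-theoretic image of $\phi_{\overline{k'}}$ is a closed subgroup scheme of $G'_{\overline{k'}}$, and since $T_{\overline{k'}}$ is reduced and connected, so is this image. Hence the image is contained in $(G'_{\overline{k'}})^0_{\red}$, which is a genuine smooth connected closed subgroup of $G'_{\overline{k'}}$ because $\overline{k'}$ is perfect, and which is unipotent by the almost-unipotence of $G'$ via Lemma \ref{almostunipotent}(i). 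On the other hand, the image is a smooth quotient of the torus $T_{\overline{k'}}$ and hence is itself a torus. A torus contained in a unipotent group is trivial, so $\phi_{\overline{k'}} = 0$, completing the argument. The proof is essentially a direct translation via the adjunction defining Weil restriction followed by the standard fact that tori map trivially to unipotent groups; the only mild technical point is invoking perfectness of $\overline{k'}$ to make $(G'_{\overline{k'}})^0_{\red}$ a legitimate subgroup scheme, and no substantial obstacle is anticipated.
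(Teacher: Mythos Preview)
Your proof is correct and takes a genuinely different route from the paper. The paper argues by reducing to the separable and purely inseparable cases separately (via transitivity of Weil restriction): in the separable case it uses that $\R_{k'/k}(G')_{k_s}$ splits as a product of Galois-twists of $G'_{k'_s}$, and in the purely inseparable case it invokes Oesterl\'e's exact sequence $1 \to U \to \R_{k'/k}(G')_{k'} \to G' \to 1$ with $U$ split unipotent, together with the permanence properties of almost-unipotence (Lemma~\ref{almostunipotentpermanence}).

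Your argument is more direct and uniform: it goes straight through criterion (ii) of Lemma~\ref{almostunipotent} and the Weil-restriction adjunction, reducing everything to the elementary fact that a torus maps trivially into a unipotent group. This avoids both the separable/inseparable case split and the appeal to Oesterl\'e. The paper's approach has the mild advantage of exhibiting more structure (in particular the split-unipotent kernel in the inseparable case), but for the bare statement of the lemma your argument is cleaner. One small simplification you could make: once you have that the image of $\phi_{\overline{k'}}$ is a subtorus of $G'_{\overline{k'}}$, you can invoke criterion (ii) for $G'_{\overline{k'}}$ directly (almost-unipotence is preserved under base change by criterion (i)), rather than routing through containment in $(G'_{\overline{k'}})^0_{\red}$.
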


\begin{proof}
Since Weil restriction is transitive, we may assume that $k'/k$ is either separable or purely inseparable. Further, almost-unipotence may be checked after replacing $k$ by an extension. If $k'/k$ is separable (so $k'_s$ is identified with a separable closure
$k_s$ of $k$) then $\R_{k'/k}(G')_{k_s}$ becomes a product of Galois-twisted copies of $G'_{k'_s}$, hence almost-unipotent.

Now suppose that $k'/k$ is purely inseparable. By \cite[App.\,3, A.3.6]{oesterle}, there is an exact sequence of $k'$-group schemes (note that the discussion in \cite[App.\,3, A.3.6]{oesterle} does not require smoothness of $G'$ for the exactness of the sequence below, only to extend it to a short exact sequence)
\[
1 \longrightarrow U \longrightarrow \R_{k'/k}(G')_{k'} \longrightarrow G'
\]
with $U$ split unipotent. By Lemma \ref{almostunipotentpermanence}, $\R_{k'/k}(G')_{k'}$ is therefore almost-unipotent.
\end{proof}

\section{Vanishing of $\calExt^1(\cdot, \Gm)$}
\label{charactersheavessection}

The purpose of this section is to prove that if $G$ is an affine commutative group scheme of finite type over a field $k$ of positive characteristic, then the fppf sheaf $\calExt^1_k(G, \Gm)$ vanishes (Proposition \ref{ext=0}). The key ingredient for the proof is to show that for any $\F_p$-scheme $S$, the sheaf $\calExt^1_S(\Ga, \Gm)$ on the big fppf site over $S$ vanishes (Proposition \ref{ext^1(Ga,Gm)=0}). This vanishing is specific to characteristic $p$, as the analogous result fails for $\Q$-schemes; see Remark \ref{gabberexample}. The characteristic-$p$ vanishing will be an easy consequence of the fact that for any $\F_p$-algebra $R$ with surjective Frobenius map (but not necessarily perfect; the main difficulty -- and really the whole point in some sense -- is to deal with non-reduced $R$), the fppf Ext group $\Ext^1_R(\Ga, \Gm)$ vanishes (Proposition \ref{calExt^1=0surjectivefrob}).

In order to prove these vanishing theorems, we will need to analyze certain formal power series. An important notion in this context will be that of a formal character.

\begin{definition}
\label{formalchardef}
Let $A$ be a ring. A {\em formal character} over $A$ is a formal power series $\chi \in A\llbracket X\rrbracket$ such that $\chi(X + Y) = \chi(X)\chi(Y)$ and $\chi(0) = 1$.
\end{definition}

Note that a product of two formal characters is again a formal character. The relevance of formal characters is that if $\chi$ is a formal character over $A$ which lies in $A[X] \subset A\llbracket X\rrbracket$, then $\chi$ yields a homomorphism of $A$-groups $\Ga \rightarrow \Gm$ via the map defined functorially on $R$-valued points ($R$ an $A$-algebra) by the formula $x \mapsto \chi(x)$.

We will require a description of all formal characters over $\F_p$-algebras. In order to do this, we need to introduce the mod $p$ exponential.

\begin{definition}
\label{modpexponential}
The {\em mod $p$ exponential} ${\rm{exp}}_p(X) \in \F_p[X]$ is defined by the formula
\begin{equation}
\label{exp_pdefinition}
{\rm{exp}}_p(X) : = \sum_{n=0}^{p-1} \frac{T^n}{n!} \in \F_p[X].
\end{equation}
\end{definition}

Note that for any $\F_p$-algebra $A$ and any $a \in A$ such that $a^p = 0$, ${\rm{exp}}_p(aX) \in A[X]$ is a formal character over $A$. It follows that the same holds when we compose this character with the additive polynomial $X^{p^n}$. That is, if $A$ is an $\F_p$-algebra, then ${\rm{exp}}_p(aX^{p^n}) \in A[X]$ is a formal character for every nonnegative integer $n$ and every $a \in A$ such that $a^p = 0$. We will soon show that all formal characters over $\F_p$-algebras are (potentially infinite) products of these.

\begin{lemma}
\label{charactersofalpha_p^n}
Let $A$ be an $\F_p$-algebra. Then every element of $\alpha_{p^n}(A)$ is given by the reduction of the formal character $\prod_{i=0}^{n-1}{\rm{exp}}_p(a_iX^{p^i})$ in $A[X]/X^{p^n}$ for some $a_i \in A$ such that $a_i^p = 0$.
\end{lemma}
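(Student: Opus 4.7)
The plan is to induct on $n$, with the base case $n=1$ providing the fundamental computation and the inductive step reducing to it via a Frobenius substitution.

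For the base case, I would write a putative formal character modulo $X^p$ as $\chi(X) = 1 + c_1 X + \cdots + c_{p-1}X^{p-1}$ and expand the identity $\chi(X+Y) = \chi(X)\chi(Y)$ in $A[X,Y]/(X^p, Y^p)$. Comparing coefficients of $X^i Y^j$ for $i, j < p$ yields the two families of relations
\[
\binom{i+j}{i} c_{i+j} = c_i c_j \ \text{ for } i + j < p, \qquad c_i c_j = 0 \ \text{ for } i + j \geq p.
\]
Taking $j = 1$ in the first family and using that $1, 2, \ldots, p-1$ are units in $\F_p$, I inductively obtain $c_k = c_1^k/k!$ for $0 \leq k < p$, hence $\chi(X) = \exp_p(c_1 X)$. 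Taking $(i, j) = (1, p-1)$ in the second family gives $c_1^p/(p-1)! = 0$, whence $c_1^p = 0$, establishing the base case with $a_0 := c_1$.

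For the inductive step, I reduce $\chi$ modulo $X^{p^{n-1}}$ and apply the inductive hypothesis to obtain $a_0, \ldots, a_{n-2} \in A$ with $a_i^p = 0$ whose product equals that reduction; note that each $\exp_p(a_i X^{p^i})$ is a genuine polynomial of degree $(p-1)p^i$, so the product is unambiguously an element of $A[X]/X^{p^n}$. Multiplying $\chi$ by $\prod_{i=0}^{n-2}\exp_p(-a_i X^{p^i})$ (the inverse in the group of formal characters, since $\exp_p(aX)\exp_p(-aX) = 1$ when $a^p = 0$) yields a new formal character $\chi'$ that is congruent to $1$ modulo $X^{p^{n-1}}$. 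The key claim is that $\chi'$ is supported on exponents that are multiples of $p^{n-1}$. Writing $\chi' = 1 + \sum_{e=1}^{p^n - 1} c'_e X^e$, the same coefficient comparison as in the base case gives $\binom{i+j}{i}c'_{i+j} = c'_i c'_j$ for $i + j < p^n$. For any $e$ whose base-$p$ expansion $e = \sum_k e_k p^k$ has a nonzero digit $e_k$ at a position $k \leq n-2$, I set $i := e_k p^k$ and $j := e - i$. Then $0 < i < p^{n-1}$, so $c'_i = 0$; moreover, adding $i$ and $j$ involves no carrying in base $p$, so Lucas' theorem gives $\binom{e}{i} \equiv 1 \pmod{p}$. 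The relation then forces $c'_e = 0$.

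Thus $\chi'(X) = 1 + \sum_{m=1}^{p-1} c'_{mp^{n-1}} X^{mp^{n-1}}$, and I set $\tilde\chi(Y) := 1 + \sum_{m=1}^{p-1} c'_{mp^{n-1}} Y^m \in A[Y]/Y^p$. The characteristic-$p$ identity $(X_1 + X_2)^{p^{n-1}} = X_1^{p^{n-1}} + X_2^{p^{n-1}}$ translates the character equation for $\chi'$ into $\tilde\chi(Y_1 + Y_2) = \tilde\chi(Y_1)\tilde\chi(Y_2)$ in $A[Y_1, Y_2]/(Y_1^p, Y_2^p)$; this uses the injectivity of the substitution $Y_i \mapsto X_i^{p^{n-1}}$ from $A[Y_1, Y_2]/(Y_1^p, Y_2^p)$ into $A[X_1, X_2]/(X_1^{p^n}, X_2^{p^n})$. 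Applying the base case to $\tilde\chi$ produces $a_{n-1} \in A$ with $a_{n-1}^p = 0$ and $\tilde\chi(Y) = \exp_p(a_{n-1} Y)$, giving $\chi'(X) = \exp_p(a_{n-1} X^{p^{n-1}})$ and $\chi = \prod_{i=0}^{n-1}\exp_p(a_i X^{p^i})$. The main obstacle is the coefficient-vanishing step via Lucas' theorem; once it is in hand, the Frobenius substitution $Y = X^{p^{n-1}}$ turns the remaining step into the base case.
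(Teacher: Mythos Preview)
Your proof is correct and takes a genuinely different route from the paper's. The paper argues by an order count: the functor $A \mapsto \{(a_0, \ldots, a_{n-1}) \in A^n : a_i^p = 0\}$ is represented by the finite $\F_p$-scheme $\Spec \F_p[T_0, \ldots, T_{n-1}]/(T_0^p, \ldots, T_{n-1}^p)$ of order $p^n$, the Cartier dual $\widehat{\alpha_{p^n}}$ is likewise finite of order $p^n$, and the natural map from the former to the latter (sending a tuple to its associated character) is a monomorphism, hence an isomorphism. Your argument, by contrast, is purely computational and inductive, using Lucas' theorem to force the vanishing of all coefficients of the residual character $\chi'$ except those in degrees divisible by $p^{n-1}$, and then applying a Frobenius substitution to reduce to the base case. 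The paper's approach is shorter and more conceptual, but yours is entirely elementary---it makes no appeal to Cartier duality or to the scheme structure of $\widehat{\alpha_{p^n}}$---and it makes explicit how to recover the $a_i$ from a given character (each $a_i$ is the $X^{p^i}$-coefficient once the lower-order factors have been stripped away). One small point worth noting: your use of $\exp_p(aX)\exp_p(-aX)=1$ is correct precisely because $a^p = 0$ kills all terms of degree $\ge p$ in the product; the analogous identity $\exp_p(aX)\exp_p(bX)=\exp_p((a+b)X)$ can fail for general $a,b$ with $a^p=b^p=0$, so it is good that you only invoke the special case $b=-a$.
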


\begin{proof}
Forgetting the group structure on the characters for a moment, the functor which sends an $\F_p$-algebra $A$ to the set of polynomials of the form $\prod_{i=0}^{n-1}{\rm{exp}}_p(a_iX^{p^i}) \in A[X]/(X^{p^n})$ with $a_i \in A$ such that $a_i^p = 0$ is represented by the $\F_p$-algebra $\F_p[T_1, \dots, T_n]/(T_1^p, \dots, T_n^p)$. The associated scheme $X_n$ is a finite $\F_p$-scheme of order $p^n$, as is the Cartier dual $\widehat{\alpha_{p^n}}$ of $\alpha_{p^n}$. We have a natural inclusion $X_n \hookrightarrow \widehat{\alpha_{p^n}}$ via the map sending the polynomial $\prod_{i=0}^{n-1}{\rm{exp}}_p(a_iX^{p^i})$ to the associated character of $\alpha_{p^n}$, i.e., the character defined functorially by $x \mapsto \prod_{i=0}^{n-1}{\rm{exp}}_p(a_ix^{p^i})$. By cardinality considerations, this inclusion must be an isomorphism. The lemma follows.
\end{proof}

\begin{lemma}
\label{G_a^--->>alpha_p^}
Let $A$ be an $\F_p$-algebra. The restriction map $\widehat{\mathbf{G}_a}(A) \rightarrow \widehat{\alpha_{p^n}}(A)$ is surjective
for all $n \ge 1$. In particular, $\widehat{\alpha_{p^{n+1}}}(A) \rightarrow \widehat{\alpha_{p^n}}(A)$ is surjective for all $n \ge 1$.
\end{lemma}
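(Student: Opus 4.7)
The plan is to use Lemma \ref{charactersofalpha_p^n} as a direct source of explicit character representatives and then observe that each such representative already lifts canonically to a character of $\mathbf{G}_a$ over $A$.

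First I would start from an arbitrary element $\chi \in \widehat{\alpha_{p^n}}(A)$. By Lemma \ref{charactersofalpha_p^n}, there exist $a_0, \dots, a_{n-1} \in A$ with $a_i^p = 0$ such that
\[
\chi = \prod_{i=0}^{n-1} \mathrm{exp}_p(a_i X^{p^i}) \bmod X^{p^n} \quad \text{in } A[X]/(X^{p^n}).
\]
I would then view the product $\widetilde{\chi}(X) := \prod_{i=0}^{n-1} \mathrm{exp}_p(a_i X^{p^i})$ as a polynomial in $A[X]$.

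The key step is to show that $\widetilde{\chi}(X) \in \widehat{\mathbf{G}_a}(A)$, i.e.\ that $\widetilde{\chi}$ defines a homomorphism of $A$-group schemes $\mathbf{G}_{a,A} \to \mathbf{G}_{m,A}$. For this I need two things. First, each factor $\mathrm{exp}_p(a_i X^{p^i})$ is a formal character over $A$: since $a_i^p = 0$ and $X^{p^i}$ is an additive polynomial in characteristic $p$, the usual identity $\mathrm{exp}(T(X+Y)) = \mathrm{exp}(TX)\mathrm{exp}(TY)$ reduces to $\mathrm{exp}_p(a_i(X+Y)^{p^i}) = \mathrm{exp}_p(a_i X^{p^i}) \mathrm{exp}_p(a_i Y^{p^i})$ (as noted just after Definition \ref{modpexponential}); consequently $\widetilde{\chi}$ satisfies $\widetilde{\chi}(X+Y) = \widetilde{\chi}(X)\widetilde{\chi}(Y)$ and $\widetilde{\chi}(0) = 1$. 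Second, to define a homomorphism into $\mathbf{G}_m$ rather than just into the formal multiplicative group, I need $\widetilde{\chi}(X) \in A[X]^{\times}$; but each $a_i$ is nilpotent, so every non-constant coefficient of $\widetilde{\chi}$ lies in the nilradical of $A$, and a polynomial with constant term $1$ and nilpotent non-constant coefficients is a unit in $A[X]$.

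Finally, the restriction map $\widehat{\mathbf{G}_a}(A) \to \widehat{\alpha_{p^n}}(A)$ is induced by the closed immersion $\alpha_{p^n} = \Spec A[X]/(X^{p^n}) \hookrightarrow \mathbf{G}_a$, so it simply sends a character to its reduction modulo $X^{p^n}$. Hence $\widetilde{\chi}$ maps to $\chi$, establishing surjectivity. The ``in particular'' statement is immediate: the composite $\widehat{\mathbf{G}_a}(A) \to \widehat{\alpha_{p^{n+1}}}(A) \to \widehat{\alpha_{p^n}}(A)$ is surjective by what we just proved, so the second arrow is surjective. The only delicate point is confirming that $\widetilde{\chi}$ genuinely lands in $\mathbf{G}_m$ (i.e.\ is a unit in $A[X]$), which is handled by the nilpotence of the $a_i$; everything else is formal.
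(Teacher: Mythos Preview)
Your proposal is correct and follows essentially the same approach as the paper: invoke Lemma~\ref{charactersofalpha_p^n} to write an arbitrary character of $\alpha_{p^n}$ as the reduction of $\prod_{i=0}^{n-1}\exp_p(a_iX^{p^i})$, then observe that this polynomial is already a character of $\mathbf{G}_a$. You are slightly more careful than the paper in explicitly checking that $\widetilde{\chi}$ is a unit in $A[X]$ (via nilpotence of the non-constant coefficients), a point the paper leaves implicit in the phrase ``precisely because it is a formal character.''
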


\begin{proof}
The second assertion follows immediately from the first, so we concentrate on the first statement. By Lemma \ref{charactersofalpha_p^n}, any characters of $\alpha_{p^n}$ over $A$ is given by reducing the formal character $\prod_{i = 0}^{n-1} {\rm{exp}}_p(a_iX^{p^i})$ mod $X^{p^n}$, where the $a_i \in A$ satisfy $a_i^p = 0$. But the polynomial $\prod_{i = 0}^{n-1} {\rm{exp}}_p(a_iX^{p^i}) \in A[X]$ itself yields a character of $\Ga$, precisely because it is a formal character, and this provides our lift of the $A$-character of $\alpha_{p^n}$.
\end{proof}

\begin{lemma}
\label{formalchars}
\begin{itemize}
\item[(i)] If $A$ is an $\F_p$-algebra, then the formal characters over $A$ are precisely the power series $\prod_{n=0}^{\infty} \exp_p(a_nX^{p^n}) \in A\llbracket X \rrbracket$, where the $a_n \in A$ satisfy $a_n^p = 0$.
\item[(ii)] If $A$ is an $\F_p$-algebra, then the characters $\mathbf{G}_{a,\,A} \rightarrow \mathbf{G}_{m,\,A}$ are precisely the polynomials of the form $\prod_{n=0}^{N} \exp_p(a_nX^{p^n}) \in A[X]$ for some nonnegative integer $N$, where the $a_n \in A$ satisfy $a_n^p = 0$.
\item[(iii)] If $A$ is a $\Q$-algebra, then the formal characters over $A$ are precisely the power series ${\rm{exp}}(aX) = \sum_{n=0}^{\infty} a^nX^n/n! \in A\llbracket X\rrbracket$ with $a \in A$.
\item[(iv)] If $A$ is a $\Q$-algebra, then the characters $\mathbf{G}_{a,\,A} \rightarrow \mathbf{G}_{m,\,A}$ are precisely the polynomials ${\rm{exp}}(aX) \in A[X]$ with $a \in A$ nilpotent.
\end{itemize}
\end{lemma}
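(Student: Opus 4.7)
The plan is to treat the characteristic $p$ parts (i) and (ii) together using the finite level Lemma \ref{charactersofalpha_p^n} as the engine, and to treat the characteristic $0$ parts (iii) and (iv) together via the differential equation trick. In all four parts the forward implications (that the listed series actually are characters, polynomial or not) are immediate from the fact that each factor $\exp_p(a_n X^{p^n})$ with $a_n^p = 0$, or $\exp(aX)$ over a $\Q$-algebra, is a formal character by the remark preceding Lemma \ref{charactersofalpha_p^n}, together with the observation that a product of formal characters is a formal character and that the infinite product in (i) converges in $A\llbracket X \rrbracket$ because $\exp_p(a_n X^{p^n}) \equiv 1 \pmod{X^{p^n}}$. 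Thus the real content is the reverse implications.

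For (i), given a formal character $\chi \in A\llbracket X\rrbracket$, I will inductively build the sequence $(a_n)$ with $a_n^p = 0$ by working modulo $X^{p^{n+1}}$. Suppose we have found $a_0, \dots, a_{n-1}$ with $\chi \equiv \prod_{i<n}\exp_p(a_i X^{p^i}) \pmod{X^{p^n}}$. Then $\eta := \chi \cdot \prod_{i<n}\exp_p(a_i X^{p^i})^{-1}$ is a formal character with $\eta \equiv 1 \pmod{X^{p^n}}$, so its reduction modulo $X^{p^{n+1}}$ is a character of $\alpha_{p^{n+1}}$ trivial on $\alpha_{p^n}$. By Lemma \ref{charactersofalpha_p^n} it has the form $\prod_{i=0}^{n}\exp_p(b_i X^{p^i}) \bmod X^{p^{n+1}}$, and the uniqueness of this representation (which follows from the fact that $X_{n+1} \hookrightarrow \widehat{\alpha_{p^{n+1}}}$ in that proof is an isomorphism of finite schemes of the same order) forces $b_i = 0$ for $i < n$; set $a_n := b_n$ and continue. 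Taking the limit gives the desired product. For (ii), it then suffices to show that if $\chi \in A[X]$ has degree $d$ and $p^N > d$ then $a_N = 0$: expand $\prod_n \exp_p(a_n X^{p^n}) = \prod_n \sum_{k=0}^{p-1}(a_n^k/k!)X^{k p^n}$ and extract the coefficient of $X^{p^N}$, which by uniqueness of the base-$p$ expansion of $p^N$ equals exactly $a_N$; since $\chi$ has degree $<p^N$, this coefficient vanishes.

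For (iii), differentiating the identity $\chi(X+Y) = \chi(X)\chi(Y)$ with respect to $Y$ and setting $Y = 0$ gives $\chi'(X) = a\,\chi(X)$ with $a := \chi'(0)$, and matching coefficients of $X^n$ in a $\Q$-algebra (where we may divide by $n!$) yields the unique solution $\chi(X) = \exp(aX)$ subject to $\chi(0) = 1$. For (iv), if $a$ is nilpotent then $\exp(aX)$ is visibly a polynomial; conversely, if $\exp(aX) \in A[X]$ then for every prime $\mathfrak p \subset A$ the image $\exp(\bar a X)$ lies in $(A/\mathfrak p)[X]$, and since $A/\mathfrak p$ is a $\Q$-algebra domain the only way all but finitely many of the coefficients $\bar a^n/n!$ can vanish is $\bar a = 0$; hence $a$ lies in every prime, so $a$ is nilpotent. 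The only step where I anticipate any subtlety is the uniqueness invoked in the inductive step for (i): one must be careful that the iterative construction is consistent across levels, but this is precisely what the identification $X_n \xrightarrow{\sim} \widehat{\alpha_{p^n}}$ in Lemma \ref{charactersofalpha_p^n} guarantees.
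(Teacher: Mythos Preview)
Your proof is correct and follows essentially the same approach as the paper: both derive (i) from Lemma \ref{charactersofalpha_p^n} by passing to the $\alpha_{p^n}$-levels, deduce (ii) and (iv) from (i) and (iii), and handle (iii) via a coefficient recursion (your differentiation is an equivalent repackaging of the paper's comparison of the $XY^m$-coefficients). Your residue-field argument for nilpotence in (iv) works but is more elaborate than needed---since $A$ is a $\Q$-algebra, if $\exp(aX)$ has degree $d$ then the vanishing of the $(d+1)$st coefficient $a^{d+1}/(d+1)!$ immediately gives $a^{d+1}=0$.
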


\begin{proof}
Assertions (ii) and (iv) follow from (i) and (iii), respectively. In order to prove (i), we note that an element $g \in A \llbracket X \rrbracket$ is a formal character if and only if it is a bona fide character mod $X^{p^m}$ for each $m$, i.e., if and only if $g$ mod $X^{p^m}$ is a bona fide character of $\alpha_{p^m}$. Assertion (i) therefore follows from Lemma \ref{charactersofalpha_p^n}.

For (iii), we first note that all power series of the given form are formal characters, so we only need to check that 
there are no others. Let $f \in A\llbracket X\rrbracket^{\times}$ be a formal character, which is to say that $f(0) = 1$ and
\begin{equation}
\label{charscharacteristic0pfeqn1}
f(X + Y) = f(X)f(Y).
\end{equation}
We have $f(X) = 1 + \sum_{n \geq 1} c_nX^n$ for some $c_n \in A$. Comparing coefficients of $XY^m$ on both sides of (\ref{charscharacteristic0pfeqn1}) implies $(m+1)c_{m+1} = c_1c_m$; i.e., $c_{m+1} = c_1c_m/(m+1)$. An easy induction now shows that $c_m = c_1^m/m!$ for all $m \geq 1$. 
\end{proof}

\begin{lemma}
\label{Ext^1(alpha,Gm)=0,perfect}
Let $R$ be an $\F_p$-algebra
 such that the Frobenius map $F:  R \rightarrow R$ is surjective. Then $\Ext^1_R(\alpha_{p^n}, \mathbf{G}_m) = 0$.
\end{lemma}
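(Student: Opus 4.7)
My plan is to identify $\Ext^1_R(\alpha_{p^n}, \Gm)$ with a more tractable fppf cohomology group via Cartier duality, and then to compute that group directly using the defining exact sequence for $\alpha_{p^n}$ together with the surjectivity of Frobenius.

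First I would invoke Cartier self-duality $\widehat{\alpha_{p^n}} \simeq \alpha_{p^n}$ together with the standard vanishing $\calExt^{\ge 1}_{R,\fppf}(G, \Gm) = 0$ for any finite flat commutative $R$-group scheme $G$ (any such extension is fppf-locally trivial, which is a consequence of local Cartier duality). The local-to-global Ext spectral sequence
\[
E_2^{i,j} = {\rm{H}}^i_{\fppf}(\Spec R, \calExt^j_{\fppf}(\alpha_{p^n}, \Gm)) \;\Longrightarrow\; \Ext^{i+j}_R(\alpha_{p^n}, \Gm)
\]
then collapses to yield
\[
\Ext^1_R(\alpha_{p^n}, \Gm) \;\cong\; {\rm{H}}^1_{\fppf}(\Spec R, \widehat{\alpha_{p^n}}) \;=\; {\rm{H}}^1_{\fppf}(\Spec R, \alpha_{p^n}).
\]

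Next I would compute ${\rm{H}}^1_{\fppf}(\Spec R, \alpha_{p^n})$ using the defining fppf-exact sequence
\[
0 \to \alpha_{p^n} \to \Ga \xrightarrow{F^n} \Ga \to 0,
\]
where $F^n$ is the $n$-fold Frobenius. Its long exact cohomology sequence, combined with the vanishing ${\rm{H}}^1_{\fppf}(\Spec R, \Ga) = {\rm{H}}^1_{\mathrm{Zar}}(\Spec R, \mathcal{O}_R) = 0$ on the affine base, gives ${\rm{H}}^1_{\fppf}(\Spec R, \alpha_{p^n}) \cong \coker(F^n \colon R \to R) = R/F^n(R)$. Since $F$ is surjective on $R$ by hypothesis, so is $F^n$, and hence $R/F^n(R) = 0$.

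The main obstacle is the invocation of $\calExt^{\ge 1}(\alpha_{p^n}, \Gm) = 0$, which is standard but external to the paper's already-developed toolkit. A more self-contained alternative uses a direct 2-cocycle analysis: any extension $1 \to \Gm \to E \to \alpha_{p^n} \to 1$ has trivial underlying $\Gm$-torsor, since $\alpha_{p^n} \to \Spec R$ is a nilpotent thickening, so $\Pic(\alpha_{p^n}) \cong \Pic(R)$ via the identity section (proved by filtering $1 + (X)$ by powers of the nilpotent ideal and using quasi-coherent vanishing of ${\rm{H}}^1_{\mathrm{Zar}}$ on the affine base), and the unit of $E$ trivializes the torsor at $0 \in \alpha_{p^n}$. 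This reduces the classification to normalized symmetric $2$-cocycles $c(X, Y) \in (R[X, Y]/(X^{p^n}, Y^{p^n}))^{\times}$ modulo coboundaries $b(X+Y)/\bigl(b(X) b(Y)\bigr)$; using the explicit description of formal characters from Lemma~\ref{formalchars} to track the successive layers of the $(X,Y)$-adic filtration, this cocycle cohomology is again identified with $R/F^n(R)$, which vanishes under surjective Frobenius.
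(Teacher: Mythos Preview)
Your approach is essentially the same as the paper's: both use the local-to-global Ext spectral sequence, invoke the vanishing of $\calExt^1(\alpha_{p^n},\Gm)$ (the paper cites \cite[VIII, Prop.\,3.3.1]{sga7} for exactly this), and reduce to showing ${\rm{H}}^1(R,\widehat{\alpha_{p^n}})=0$ via the Frobenius exact sequence and the affine vanishing of ${\rm{H}}^1(R,\Ga)$.

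There is, however, a genuine error in your execution of the last step: the claimed self-duality $\widehat{\alpha_{p^n}}\simeq\alpha_{p^n}$ is false for $n\ge 2$. The group scheme $\alpha_{p^n}\subset\Ga$ has Verschiebung $V=0$ (inherited from $\Ga$) and $F^{n-1}\ne 0$; its Cartier dual therefore has $F=0$ and $V^{n-1}\ne 0$, so the two are not isomorphic once $n\ge 2$. Consequently your identification ${\rm{H}}^1(R,\widehat{\alpha_{p^n}})={\rm{H}}^1(R,\alpha_{p^n})$ is unjustified. The fix is immediate and is exactly what the paper does: dualizing the exact sequence $0\to\alpha_{p^{n-1}}\to\alpha_{p^n}\to\alpha_p\to 0$ shows that $\widehat{\alpha_{p^n}}$ admits a filtration with successive quotients $\widehat{\alpha_p}\simeq\alpha_p$, so it suffices to prove ${\rm{H}}^1(R,\alpha_p)=0$, which follows from $0\to\alpha_p\to\Ga\xrightarrow{F}\Ga\to 0$ and the surjectivity of $F$ on $R$.
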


\begin{proof}
We first claim that ${{\rm{H}}}^1(R, \widehat{\alpha_{p^n}}) = 0$. The group scheme $\alpha_{p^n}$, and hence $\widehat{\alpha_{p^n}}$, admits a filtration by $\alpha_p$'s, so it is  enough to show that ${{\rm{H}}}^1(R, \alpha_p) = 0$. This in turn follows from the 
vanishing of ${\rm{H}}^1_{\rm{fppf}}(R, \Ga) = 
{\rm{H}}^1_{\rm{Zar}}(R, \Ga)$ and the exact sequence
\[
1 \longrightarrow \alpha_p \longrightarrow \mathbf{G}_a \xlongrightarrow{F} \mathbf{G}_a \longrightarrow 1
\]
due to our assumption that the Frobenius map on $R$ is surjective.

We have a Leray spectral sequence
\[
E_2^{i,j} = {{\rm{H}}}^i(R, \calExt^j_R(\alpha_{p^n}, \mathbf{G}_m)) \Longrightarrow \Ext^{i+j}_R(\alpha_{p^n}, \mathbf{G}_m)
\]
We have just shown that ${{\rm{H}}}^1(R, \widehat{\alpha_{p^n}}) = E_2^{1,0}$ vanishes, so it remains to prove the same for $E_2^{0,1} = {{\rm{H}}}^0(R, \calExt^1(\alpha_{p^n}, \mathbf{G}_m))$. This follows from \cite[VIII, Prop.\,3.3.1]{sga7}, which implies that $\calExt^1(\alpha_{p^n}, \mathbf{G}_m) = 0$.
\end{proof}

\begin{lemma}
\label{binomialcoeffcongruence}
Let $p$ be a prime, and let $a_i, b_i$ ($i = 0, \dots, m$) be integers such that for each $i$, $0 \leq a_i, b_i < p$. Then
\begin{equation}
\label{binomcingruenceeqn15}
\binom{a_0 + a_1p + a_2p^2 + \dots + a_mp^m}{b_0 + b_1p + b_2p^2 + \dots + b_mp^m} \equiv \binom{a_0}{b_0} \binom{a_1}{b_1} \binom{a_2}{b_2} \dots \binom{a_m}{b_m} \pmod{p},
\end{equation}
where if $a_i < b_i$, then $\binom{a_i}{b_i} = 0$.
\end{lemma}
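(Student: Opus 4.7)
The plan is to prove this by the standard generating function argument for Lucas' theorem, working in the polynomial ring $\F_p[X]$.

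First I would invoke the ``freshman's dream'' $(1+X)^p \equiv 1 + X^p \pmod{p}$, and iterate it to get $(1+X)^{p^i} \equiv 1 + X^{p^i} \pmod{p}$ for every $i \geq 0$. Then, writing $N := a_0 + a_1 p + \cdots + a_m p^m$, I would factor
\[
(1+X)^N = \prod_{i=0}^m \bigl((1+X)^{p^i}\bigr)^{a_i} \equiv \prod_{i=0}^m (1 + X^{p^i})^{a_i} \pmod{p}.
\]

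Next I would compare the coefficient of $X^M$ on both sides, where $M := b_0 + b_1 p + \cdots + b_m p^m$. On the left-hand side this coefficient is exactly the binomial $\binom{N}{M}$ appearing on the left of (\ref{binomcingruenceeqn15}). On the right-hand side, expanding $(1+X^{p^i})^{a_i} = \sum_{j=0}^{a_i} \binom{a_i}{j} X^{jp^i}$, a monomial $X^M$ arises as a product $\prod_i X^{j_i p^i}$ with $0 \leq j_i \leq a_i$ and $\sum_i j_i p^i = M$; because each $j_i$ satisfies $0 \leq j_i < p$ (forced by $j_i \leq a_i < p$) and the $b_i$ lie in the same range, base-$p$ uniqueness forces $j_i = b_i$ for all $i$. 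Hence the coefficient of $X^M$ on the right is $\prod_{i=0}^m \binom{a_i}{b_i}$, which gives the desired congruence. (The convention $\binom{a_i}{b_i} = 0$ for $a_i < b_i$ matches the fact that such monomials simply do not appear in the expansion.)

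There is essentially no obstacle: the only thing to be careful about is the base-$p$ uniqueness step justifying that each $j_i$ must equal $b_i$, which relies on having $0 \leq a_i, b_i < p$ as in the hypothesis. The whole argument is two or three lines once the factorization is written down.
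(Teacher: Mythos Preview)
Your proof is correct and is essentially identical to the paper's own argument: the paper also computes the coefficient of $X^M$ in $(1+X)^N = \prod_i (1+X^{p^i})^{a_i}$ over $\F_p$ and invokes base-$p$ uniqueness to identify it as $\prod_i \binom{a_i}{b_i}$.
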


\begin{proof}
This is well-known, but we give a proof for the reader's convenience. Consider the polynomial
\[
G(X) : = (X+1)^{\sum_{i=0}^m a_ip^i} \in \F_p[X].
\]
On the one hand, the coefficient of $X^{\sum_{i=0}^m b_ip^i}$ in $G(X)$ is the left side of (\ref{binomcingruenceeqn15}). On the other hand, we have
\begin{equation}
\label{G(X)newexpreqn}
G(X) = \prod_{i=0}^m (X+1)^{a_ip^i} = \prod_{i=0}^m (X^{p^i} + 1)^{a_i}.
\end{equation}
Since $0 \leq a_i, b_i < p$, the uniqueness of the base-$p$ expansion for any nonnegative integer and the expression (\ref{G(X)newexpreqn}) for $G(X)$ shows that the coefficient of $X^{\sum_{i=0}^m b_ip^i}$ in $G(X)$ is also equal to the right side of (\ref{binomcingruenceeqn15}).
\end{proof}

The following lemma is the key to proving the vanishing of the fppf sheaf $\calExt^1_S(\Ga, \Gm)$ for any $\F_p$-scheme $S$.

\begin{lemma}
\label{schemesplitimpliesgpsplit}
Let $R$ be an $\F_p$-algebra such that the Frobenius map $F:  R \rightarrow R$, $r \mapsto r^p$, is surjective. Suppose given a commutative extension of $R$-group schemes
\begin{equation}
\label{extensionwithsectionlemmaeqn}
1 \rightarrow \Gm \xrightarrow{j} E \xrightarrow{\pi} \Ga \rightarrow 1
\end{equation}
such that there is a scheme-theoretic section (that is not necessarily an $R$-group homomorphism) $\Ga \rightarrow E$. Then the sequence $(\ref{extensionwithsectionlemmaeqn})$ splits. 
\end{lemma}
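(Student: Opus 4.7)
The plan is to translate the existence of a scheme-theoretic section into a cocycle statement, use Lemma \ref{Ext^1(alpha,Gm)=0,perfect} on the infinitesimal subgroups $\alpha_{p^n}\subset\Ga$, and then pass from a formal-level splitting to a polynomial-level one.

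After translating so that the given section sends $0\in\Ga$ to the identity of $E$, use it to identify $E$ with $\Gm\times\Ga$ as an $R$-scheme; the group law takes the form
\[
(u_1,x_1)\cdot(u_2,x_2) \;=\; \bigl(u_1u_2\, c(x_1,x_2),\; x_1+x_2\bigr)
\]
for a symmetric $2$-cocycle $c(x,y)\in R[x,y]^{\times}$ with $c(x,0)=c(0,y)=1$. Splitting the extension as a group extension is then equivalent to producing $g(x)\in R[x]^{\times}$ with $g(0)=1$ and $g(x+y)=c(x,y)\,g(x)\,g(y)$ in $R[x,y]$.

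For each $n\geq 1$, pull the extension back along $\alpha_{p^n}\hookrightarrow \Ga$. The result is a commutative extension of $\alpha_{p^n}$ by $\Gm$, which splits as a group extension because $\Ext^1_R(\alpha_{p^n},\Gm)=0$ by Lemma \ref{Ext^1(alpha,Gm)=0,perfect} (crucially using the surjective Frobenius hypothesis on $R$). Translated through the cocycle description, this yields $g_n\in (R[x]/x^{p^n})^{\times}$ satisfying $g_n(x+y)=c(x,y)\,g_n(x)\,g_n(y)$ in $R[x,y]/(x^{p^n},y^{p^n})$. The ambiguity in choosing $g_n$ is precisely by characters $\alpha_{p^n}\to\Gm$, so by the surjectivity $\widehat{\alpha_{p^{n+1}}}(R)\twoheadrightarrow \widehat{\alpha_{p^n}}(R)$ provided by Lemma \ref{G_a^--->>alpha_p^}, I can inductively adjust $g_{n+1}$ by a character of $\alpha_{p^{n+1}}$ so that $g_{n+1}\equiv g_n\pmod{x^{p^n}}$. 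The resulting compatible system assembles into $\widehat g\in R\llbracket x\rrbracket^{\times}$ with $\widehat g(0)=1$ and $\widehat g(x+y)=c(x,y)\,\widehat g(x)\,\widehat g(y)$ in $R\llbracket x,y\rrbracket$.

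The main obstacle is then upgrading the formal series $\widehat g$ to a polynomial. The key observation is that any two solutions of $f(x+y)=c(x,y)f(x)f(y)$ in $R\llbracket x\rrbracket^{\times}$ differ by a formal character, and Lemma \ref{formalchars}(i)--(ii) tells us that the formal characters are exactly $\prod_{n\geq 0}\exp_p(a_n x^{p^n})$ with $a_n^p=0$, and these are polynomial (i.e.~honest characters of $\Ga$) precisely when only finitely many $a_n$ are nonzero. Since $c$ is a polynomial of some bounded total degree $N$, choose $n$ with $p^n>N$: then $c$ is already determined by its image in $R[x,y]/(x^{p^n},y^{p^n})$, and comparing the identity $\widehat g(x+y)=c(x,y)\,\widehat g(x)\,\widehat g(y)$ in degrees $\ge p^n$ (using the explicit description of characters of $\alpha_{p^n}$ from Lemma \ref{charactersofalpha_p^n}) forces the ``tail'' $\widehat g/g_n$ in degrees $\ge p^n$ to itself be a formal character, of the form $\prod_{m\ge n}\exp_p(a_m x^{p^m})$. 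Multiplying $\widehat g$ by the inverse of this tail character produces a polynomial representative $g\in R[x]^{\times}$ still satisfying $\delta g=c$, which gives the desired group-theoretic splitting.
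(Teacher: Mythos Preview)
Your setup through the construction of the formal solution $\widehat{g}\in R\llbracket x\rrbracket^{\times}$ is correct and matches the paper exactly: translate the section, rewrite the extension via a $2$-cocycle $c\in R[x,y]^{\times}$ with $c(x,0)=c(0,y)=1$, use $\Ext^1_R(\alpha_{p^n},\Gm)=0$ together with Lemma~\ref{G_a^--->>alpha_p^} to build compatible $g_n$'s, and pass to the limit.

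The gap is in your final paragraph, which is precisely where the real work lies. Your claim that for $p^n > \deg c$ the tail $\widehat{g}/g_n$ is a formal character is not justified and is in general false. Lift $g_n$ to a polynomial of degree $<p^n$ and set $\chi := \widehat{g}/g_n$; then
\[
\frac{\chi(x+y)}{\chi(x)\chi(y)} \;=\; \frac{c(x,y)\,g_n(x)\,g_n(y)}{g_n(x+y)}.
\]
The right side is $\equiv 1 \pmod{(x^{p^n},y^{p^n})}$ by construction, but it is \emph{not} equal to $1$: the numerator $c(x,y)g_n(x)g_n(y)$ has $x$-degree (and $y$-degree) up to roughly $p^n + N$, so it contains monomials in $(x^{p^n},y^{p^n})$ that are not matched by $g_n(x+y)$. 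Hence $\chi$ fails to be a formal character, and you cannot simply divide it out. A bound on $\deg c$ alone does not control the high-degree coefficients of $\widehat g$.

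The paper handles this step by a considerably more intricate argument (its Steps~1--5), using two ingredients you have not invoked. First, the nonconstant coefficients of $c$ lie in a finitely generated \emph{nilpotent} ideal $I\subset R$ (since $c\in R[x,y]^{\times}$ reduces to $1$ over every residue field). Second, Frobenius surjectivity is used a \emph{second} time to choose $p$th roots of generators of $I$. Writing $\widehat g = 1 + \sum_{n\ge 1} b_n x^n$ and $c = 1 + \sum a_{ij}x^i y^j$, the cocycle identity yields the recursion $\binom{m+s}{m}b_{m+s} = b_m b_s + \sum_{i,j} a_{ij} b_{m-i} b_{s-j}$; a careful analysis via Lucas' congruence and the base-$p$ digit sum of $n$ shows that, after adjusting by a formal character, one can arrange $b_{p^n}^p=0$ and then $b_{p^n}=0$ for $n\gg 0$, and finally $b_n\in I^r$ for $n\gg_r 0$ by induction on $r$, forcing $b_n=0$ eventually.
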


\begin{proof}
Let $s:  \mathbf{G}_a \longrightarrow E$ be a scheme-theoretic section. We seek an $R$-group section when $F:  R \rightarrow R$ is surjective. 
By translating $s$ by $-s(0) \in E(R)$, we may assume that $s(0) = 0_E$. Then by means of $s$, we may identify $E$ with $\mathbf{G}_m \times \mathbf{G}_a$ as $\Gm$-torsors over $\Ga$, with $j$ the inclusion $t \mapsto (t,0)$, $s(x) = (1,x)$, and $(1,0)$ the identity of $E$. The group law on $E$ is $(t_1, x_1) \cdot (t_2, x_2) = (t_1t_2h(x_1,x_2), x_1 + x_2)$ for some $h:  \mathbf{G}_a \times \mathbf{G}_a \rightarrow \mathbf{G}_m$. That is, $h(X,Y) \in R[X,Y]^{\times}$. Note that $h(X, 0) = h(0,Y) = 1$, since $(1,0)$ is the identity. 

A group-theoretic section $\mathbf{G}_a \rightarrow E$ is the same as an $f(X) \in R[X]^{\times}$ such that 
\begin{equation}
\label{sectionequation}
f(X+Y) = f(X)f(Y)h(X,Y)
\end{equation}
Indeed, there is a bijection between the set of such $f$ and the set of group-theoretic sections by assigning to $f$ the map $x \mapsto (f(x), x)$. We need to show that there exists $f \in R[X]^{\times}$ satisfying (\ref{sectionequation}).

Let us first note that $h \equiv 1 \pmod I$, where $I \subset R$ is a finitely generated nilpotent ideal; i.e., $I^e = 0$ for some $e > 0$. Indeed, if we show that the non-constant coefficients of $h$ are nilpotent, then this follows from the fact that $h(X, 0) = h(0, Y) = 1$. In order to prove
such nilpotence, it is  enough to show that $h = 1$ in every residue field of $R$, and this in turn follows from the fact that $h \in R[X]^{\times}$, and that $L[X]^{\times} = L^{\times}$ for a field $L$.

By Lemma \ref{Ext^1(alpha,Gm)=0,perfect}, the pullback of the extension $E$ to an extension of $\alpha_{p^n} \subset \mathbf{G}_a$ by $\Gm$ splits. Further, if we let $s_n$ denote a chosen (group-theoretic) section of $E|_{\alpha_{p^n}}$, then $s_{n+1}|_{\alpha_{p^n}} = s_n + \overline{\chi_n}$ for some $\overline{\chi_n} \in \widehat{\alpha_{p^n}}(R)$. Choose a lift $\chi_n \in \widehat{\alpha_{p^{n+1}}}(R)$ of $\chi_n$, which exists by Lemma \ref{G_a^--->>alpha_p^}. Then replace $s_{n+1}$ with $s_{n+1} - \chi_n$. Carrying out this process inductively, we may assume that the sections $s_n$ are compatible; i.e., $s_{n+1}|_{\alpha_{p^n}} = s_n$. In terms of $f$ and $h$, this says that we have for each $n$ an element $f_n \in (R[X]/(X^{p^n}))^{\times}$ such that
\[
f_n(X+Y) \equiv f_n(X)f_n(Y)h(X,Y) \mod (X^{p^n}, Y^{p^n})
\]
and such that $f_{n+1}(X) \equiv f_n(X) \mod X^{p^n}$. The $f_n$'s therefore ``glue'' to give an element $f \in R[\![X]\!]^{\times}$ such that (\ref{sectionequation}) holds; i.e., 
\begin{equation}
\label{sectioneqn2}
f(X+Y) = f(X)f(Y)h(X,Y).
\end{equation}

The (non-empty!) set of $f \in R[\![X]\!]^{\times}$ satisfying (\ref{sectioneqn2}) is a torsor for the group of formal characters over $R$. More precisely, the solutions $g \in R[\![X]\!]^{\times}$ to (\ref{sectioneqn2}) are precisely those power series of the form $g = f\chi$, with $\chi$ a formal character over $R$. We will therefore try to modify $f$ by formal characters to make it a polynomial (rather than just a power series). We will accomplish this in several steps. Write $$f(X) = \sum_{n \geq 0} b_nX^n,\,\,\,h(X, Y) = 1 + \sum_{0 < i,j < N}a_{ij}X^iY^j$$ for some $N$ and where $b_0 = 1$; note that $h$ is in this form since $h(X, 0) = h(0,Y) = 1$ and that $f$ has constant term $1$ 
by inspection upon setting $X = Y = 0$ on both sides of (\ref{sectioneqn2}), since $f \in R \llbracket X \rrbracket^{\times}$. Further, $a_{mn} \in I$ due to the fact that $h \equiv 1 \pmod I$. Comparing the coefficients of $X^mY^s$ in the equation $f(X+Y) = f(X)f(Y)h(X, Y)$, we obtain
\begin{equation}
\label{coeffsextvanish}
\binom{m+s}{m} b_{m+s} = b_mb_s + \sum_{0< i,j < N}a_{ij}b_{m-i}b_{s-j},
\end{equation}
where $b_r = 0$ if $r < 0$.

Let $I = (c_1, \dots, c_g)$ (recall that $I$ is finitely generated), and choose $d_1, \dots, d_g \in R$ such that $d_i^p = c_i$. Set $J : = (d_1, \dots, d_g) \subset R$. For the ease of the reader, we break the proof into several steps. 

\medskip\noindent
\underline{Step 1}: 
By multiplying $f$ by a formal character, we may assume that $b_{p^n} \in J$ for all $n \geq 0$.

\medskip
By looking at (\ref{sectioneqn2}) mod $I$, and using the fact that $h \equiv 1 \pmod I$, we see that $f$ mod $I$ is a formal character, so $f^p \bmod I = (f \bmod I)^p$ is equal to 1 (since $\Ga$ is $p$-torsion).  Hence, $b_n^p \in I$ for all $n >0$, so $b_{p^m}^p \in I$
for all $m \ge 0$. Therefore, for each $m \ge 0$ we have $(\alpha_m - b_{p^m})^p = 0$ for some $\alpha_m \in J$, and this remains true after multiplying $f$ by a formal character.

Let $\beta_0 : = \alpha_0 - b_1$. Multiplying $f$ by the formal character exp$_p(\beta_0X)$, we may assume that $b_1 \in J$. Now let $\beta_1 = \alpha_1 - b_p$ for this new $f$, and multiply the new $f$ by exp$_p(\beta_1X^p)$ to ensure that $b_p \in J$. Continuing in this fashion, (more precisely, multiplying $f$ by the formal character $\prod \exp_p(\beta_iX^{p^i})$ where the $\beta_i$ are defined by this inductive procedure), we see that we may ensure that $b_{p^m} \in J$ for all $m \ge 0$.

\medskip
Using Step 1, we now assume that each $b_{p^n} \in J$.

\medskip\noindent
\underline{Step 2}: 
Fix a positive integer $M$. Then for all $n$ sufficiently large (depending on $M$), $b_{rp^n - i} = 0$ for all positive integers $r$ and all integers $0 < i < M$.

\medskip
The key observation is the following: Let $S(n)$ denote the sum of the terms in the base $p$ expansion of $n$, so if $n = c_0 + c_1p + \dots + c_lp^l$, $0 \leq c_j < p$, then $S(n) : = \sum_{i=0}^l c_i$. For any integer $l \ge 0$, we claim that $b_n \in J^l$ provided that $S(n)$ is sufficiently large (a priori depending on $l$, though in fact not, since $I$, hence $J$, is nilpotent). Then, since $J^h = 0$ for some $h$, taking $l \geq h$ shows that $b_n = 0$ provided that $S(n)$ is sufficiently large. Since for each positive integer $i$, we have $S(rp^n - i) \rightarrow \infty$ uniformly in $r \in \Z_+$ as $n \rightarrow \infty$, this will prove Step 2.

We prove the above observation by induction on $l$, the case $l = 0$ being trivial. So suppose that the assertion is true for $l$, and we will prove it for $l+1$. Suppose that $S(n)$ is large, and let $n = c_0 + c_1p + \dots + c_tp^t$ be the base $p$ expansion of $n$, with $c_t \neq 0$. Then $S(n) = S(n-p^t) + 1$, and taking $m = n-p^t$ and $s = p^t$ in (\ref{coeffsextvanish}) yields 
\[
\binom{n}{p^t}b_n = b_{n-p^t}b_{p^t} + \sum_{0 < i, j < N} a_{ij} b_{n-p^t-i}b_{p^t-j}
\]
Now $S(n-p^t) = S(n)-1$ is large, so, by induction, $b_{n-p^t} \in J^l$. Further, $b_{p^t} \in J$ (as mentioned above, we may arrange this by Step 1), so the first term on the right lies in $J^{l+1}$ if $S(n)$ is sufficiently large. Further, if $S(n)$ (and hence $S(n-p^t)$) is large, then so are $S(n-p^t-i)$ for $0 < i < N$, so $b_{n-p^t-i} \in J^l$ for each such $i$ by induction. Finally, $a_{ij} \in I \subset J$, so each term in the sum on the right lies in $J^{l+1}$ if $S(n)$ is large. 

By Lemma \ref{binomialcoeffcongruence}, $\binom{n}{p^t} \equiv c_t \pmod p$, hence is nonzero in $\F_p$, so $b_n \in J^{l+1}$. This completes the proof of Step 2.

\medskip\noindent
\underline{Step 3}:  $b_{p^n}^p = 0$ for $n \gg 0$.

\medskip
Apply (\ref{coeffsextvanish}) with $m = p^n, s = (p-1)p^n$ to obtain
\[
\binom{p^{n+1}}{p^n}b_{p^{n+1}} = b_{p^n}b_{(p-1)p^n} + \sum_{0 < i,j < N} a_{ij}b_{p^n-i}b_{(p-1)p^n-j}.
\]
Once again using Lemma \ref{binomialcoeffcongruence}, we now see that $\binom{p^{n+1}}{p^n} = 0$ in $\F_p$. Further, if we take $M = N$ in Step 2, then for $n$ sufficiently large the sum on the right side is $0$. Therefore,
\[
0 = b_{p^n}b_{(p-1)p^n}
\]
for all sufficiently large $n$.
Now we will show by induction on $0 < r \le p$ that, provided $n$ is sufficiently large, $b_{p^n}^rb_{(p-r)p^n} = 0$ for $0 < r \leq p$; the base case $r = 1$ has just been handled. Taking $r = p$ will complete the proof of Step 3. So suppose that 
the case of some positive $r < p$ is settled. Taking $m = p^n, s = (p-r-1)p^n$ in (\ref{coeffsextvanish}), we see that
\begin{equation}\label{lastb}
\binom{(p-r)p^n}{p^n} b_{(p-r)p^n} = b_{p^n}b_{(p-r-1)p^n} + \sum_{0 < i,j < N} a_{ij}b_{p^n-i}b_{(p-r-1)p^n-j}.
\end{equation}
Again using Lemma \ref{binomialcoeffcongruence}, we see that $\binom{(p-r)p^n}{p^n} = p-r \neq 0$ in $\F_p$. Further, taking $M = N$ once again in Step 2, we see that for $n \gg 0$, the sum on the right side of (\ref{lastb}) vanishes. Therefore, since $b_{p^n}^rb_{(p-r)p^n} = 0$ by  our inductive hypothesis on $r$, multiplying both sides of (\ref{lastb}) by $b_{p^n}^r$ shows that  $b_{p^n}^{r+1}b_{(p-r-1)p^n} = 0$, which proves the desired result for $r+1$, hence completes the induction and the proof of Step 3.

\medskip\noindent
\underline{Step 4}:  By replacing $f$ with $f\chi$ for some formal character $\chi$, we may arrange that $b_{p^n} = 0$ for all sufficiently large $n$.

\medskip
Indeed, using Step 3, suppose that $b_{p^n}^p = 0$ for $n \geq N$. This remains true upon replacing $f$ with $f\chi$ for some formal character $\chi$, since $\chi^p = 1$. Let $\chi_N = \exp_p(-b_{p^N}X^{p^N})$. This is a formal character, and the coefficient of $X^{p^n}$ in $f\chi_N$ is $0$. Denote the new coefficient of $X^{p^{N+1}}$ in $f\chi_N$ by $b'_{p^{N+1}}$. Then multiplying by $\chi_{N+1} = \exp_p(-b'_{p^{N+1}}X^{p^{N+1}})$, we get that the coefficients of $X^{p^N}$ and $X^{p^{N+1}}$ in $f\chi_N\chi_{N+1}$ are both $0$. Continuing in this way, we see that we may take the $\chi$ in the statement of Step 4 to be $\prod_{n=N}^{\infty}\chi_n$.

\medskip
Since we are free to multiply $f$ by a formal character, we now may and do assume that $b_{p^n} = 0$ for all sufficiently large $n$.

\medskip\noindent
\underline{Step 5}:  $f \in R[X]$; that is, $f$ is a polynomial.

\medskip
We will prove by induction on $r$ that $f$ is a polynomial mod $I^r$ for each positive integer $r$. That is, $b_n \in I^r$ for $n \gg_r 0$. Taking $r = e$ (where $I^e = 0$) will prove that $f$ is a polynomial. 

The base case $r = 0$ is trivial, so assume that $r \geq 0$ and that $f$ is a polynomial mod $I^r$, and we will show that the same holds mod $I^{r+1}$; that is, $b_n \in I^{r+1}$ for $n \gg_r 0$. Write $n$ in base $p$:  $n = c_0 + c_1p + \dots + c_tp^t$ with $0 \leq c_i < p$ and $c_t \neq 0$. Apply (\ref{coeffsextvanish}) with $m = p^t$, $s = n-p^t$ to conclude that
\[
\binom{n}{p^t} b_n = b_{p^t}b_{n-p^t} + \sum_{0 < i,j < N} a_{ij}b_{p^t-i}b_{n-p^t-j}.
\]
Since $b_l \in I^r$ for $l \gg 0$, the sum on the right lies in $I^{r+1}$ provided $n$ is sufficiently large (since $a_{ij} \in I$). We also have $b_{p^t} = 0$ if $n \gg 0$ (since then $t \gg 0$, and we have arranged that $b_{p^m}=0$ for all sufficiently large $m$). 
Finally, by Lemma \ref{binomialcoeffcongruence}, $\binom{n}{p^t} = c_t \neq 0$
in $\F_p$. We therefore deduce that $b_n \in I^{r+1}$, as desired. This completes the induction and the proof of Step 5.

\medskip

The proof of Proposition \ref{ext^1(Ga,Gm)=0} is now almost complete. We have found a polynomial $f \in R[X] \cap R[\![X]\!]^{\times}$ satisfying (\ref{sectioneqn2}). We need to show that $f \in R[X]^{\times}$. But $h^{p^l} = 1$ when $p^l \geq e$, since $h \equiv 1 \pmod I$, and $I^e = 0$, so $f^{p^l}$ is a formal character. Therefore, $f^{p^{l+1}} = 1$, so $f \in R[X]^{\times}$. The proof of Lemma \ref{schemesplitimpliesgpsplit} is finally complete.
\end{proof}

In order to remove the hypothesis that there is a scheme-theoretic section in Lemma \ref{schemesplitimpliesgpsplit}, we need a couple of lemmas.

\begin{lemma}
\label{picX = pic(Xred)}
Let $X$ be an affine scheme, and $G$ a smooth commutative $X$-group scheme of finite type. The natural map ${\rm{H}}^i(X, G) \rightarrow {\rm{H}}^i(X_{\red}, G)$ is an isomorphism for all $i > 0$.
\end{lemma}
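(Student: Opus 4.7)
The plan is to exploit the smoothness of $G$ to reduce to \'etale cohomology, apply topological invariance of the \'etale site under the universal homeomorphism $i \colon X_{\red} \hookrightarrow X$, and then handle the discrepancy between $G$ and $i_{*}G_{X_{\red}}$ via a d\'evissage on the defining ideal using formal smoothness of $G$.

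First, since $G$ is smooth and commutative, the cited result \cite[Thm.\,11.7]{briii} of Brion identifies fppf with \'etale cohomology for $G$ over $X$ and for $G_{X_{\red}}$ over $X_{\red}$, so it suffices to prove the \'etale version. Next, topological invariance of the small \'etale site yields an equivalence of \'etale topoi $i^{*} \colon X_{\et} \xrightarrow{\sim} X_{\red,\,\et}$ with quasi-inverse $i_{*}$, and in particular ${\rm H}^i(X, i_{*}\calF) = {\rm H}^i(X_{\red}, \calF)$ for every \'etale abelian sheaf $\calF$ on $X_{\red}$. Applying this with $\calF = G_{X_{\red}}$, it suffices to check that the natural morphism $G \to i_{*}G_{X_{\red}}$ of \'etale abelian sheaves on $X$ induces an isomorphism on ${\rm H}^i$ for every $i \geq 1$.

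To analyze this morphism, I would reduce by a standard d\'evissage on the defining ideal $I$ of $X_{\red}$ in $\calO_X$, filtering by the powers $I \supset I^{2} \supset \cdots$ (and using a Noetherian-approximation argument on the base to handle the case where $I$ is not nilpotent), to the situation $I^{2} = 0$. In this square-zero case, formal smoothness of $G$ produces a short exact sequence of \'etale abelian sheaves on $X$
\[
0 \longrightarrow i_{*}\bigl(\Lie(G_{X_{\red}}) \otimes_{\calO_{X_{\red}}} I\bigr) \longrightarrow G \longrightarrow i_{*}G_{X_{\red}} \longrightarrow 0,
\]
in which the kernel is the pushforward of a quasi-coherent sheaf on the affine scheme $X_{\red}$; by topological invariance (again) its \'etale cohomology on $X$ agrees with that on $X_{\red}$, and the latter vanishes in positive degrees by affineness of $X_{\red}$. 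The long exact cohomology sequence then yields ${\rm H}^i(X, G) \xrightarrow{\sim} {\rm H}^i(X, i_{*}G_{X_{\red}}) = {\rm H}^i(X_{\red}, G_{X_{\red}})$ for $i \geq 1$, and this composite is by construction the natural restriction map. The main obstacle is the d\'evissage step when the nilradical of $\calO_X$ is not nilpotent; handling this cleanly requires either Noetherian approximation or a direct argument using successive formal-smoothness lifts.
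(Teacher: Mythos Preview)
Your proposal is correct and follows essentially the same route as the paper: reduce to \'etale cohomology via smoothness, pass to the Noetherian case by a limit argument, filter the nilradical to reduce to a square-zero thickening, and then use the short exact sequence whose kernel is the quasi-coherent sheaf $\Lie(G)\otimes I$ (the paper writes this as $\calHom_{\calO_X}(\mathscr{J}/\mathscr{J}^2,\mathscr{I})$) together with vanishing of higher cohomology of quasi-coherent sheaves on an affine scheme. One cosmetic correction: the reference \cite[Thm.\,11.7]{briii} is Grothendieck's Brauer III, not Brion.
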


\begin{proof}
Since $G$ is smooth, we may take the cohomology to be \'etale. If we write $X = \Spec(A)$, then $A$ is the filtered direct limit of its finitely generated $\Z$-subalgebras, and ${\rm{H}}^i(\cdot, G)$ and $(\cdot)_{\red}$ commute with filtered direct limits of rings, so we may assume that $X$ is Noetherian. We therefore have a filtration $0 = \mathscr{I}_0 \subset \dots \subset \mathscr{I}_n = \mathscr{N}$ of the sheaf $\mathscr{N} \subset \mathcal{O}_X$ of nilpotents by quasi-coherent sheaves of ideals such that $\mathscr{I}_{m+1}^2 \subset \mathscr{I}_m$, so it suffices to prove the following assertion: given an affine scheme $X$ and a quasi-coherent ideal sheaf $\mathscr{I} \subset \calO_X$ such that $\mathscr{I}^2 = 0$, the map ${\rm{H}}^i(X, G) \rightarrow {\rm{H}}^i(V(\mathscr{I}), G)$ is an isomorphism, where $V(\mathscr{I})$ is the closed subscheme defined by $\mathscr{I}$. Since $G$ is smooth over $X$ and $\mathscr{I}$ is a square-zero ideal sheaf, we have an exact sequence of \'etale sheaves on $X$
\[
1 \longrightarrow \calHom_{\calO_X}(e^*\Omega_{G/X}, \mathscr{I})  \longrightarrow G \longrightarrow i_*G \longrightarrow 1,
\]
where $i:  V(\mathscr{I}) \rightarrow X$ is the canonical map and $e:  X \rightarrow G$ is the identity section. Since $i$ is a closed immersion, $i_*$ is an exact functor, hence ${\rm{H}}^i(X, i_*G) = {\rm{H}}^i(V(\mathscr{I}), G)$.  In order to prove the lemma it suffices to show that ${\rm{H}}^i_{\et}(X, \calHom_{\calO_X}(e^*\Omega_{G/X}, \mathscr{I})) = 0$ for $i > 0$. This in turn follows from the fact that $\calHom_{\calO_X}(e^*\Omega_{G/X}, \mathscr{I})$ is a quasi-coherent sheaf and $X$ is affine.
\end{proof}

\begin{lemma}
\label{F^*Pic[p]=0}
Let $R$ be an $\F_p$-algebra such that the Frobenius map $F_R:  R \rightarrow R$, $r \mapsto r^p$, is surjective, and let $X$ be an affine $R$-scheme. Let $F_{X/R}:  X \rightarrow X^{(p)}$ denote the relative Frobenius $R$-morphism of $X$. Then for any $\mathscr{L} \in \Pic(X^{(p)})[p]$, we have $F_{X/R}^*(\mathscr{L}) = 0$ in $\Pic(X)$.
\end{lemma}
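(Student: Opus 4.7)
The plan is to factor the absolute Frobenius $F_X$ through the relative Frobenius and exploit the hypothesis on $R$ to turn the other factor into a closed nil-immersion, so that Lemma \ref{picX = pic(Xred)} can be applied at the level of Picard groups.

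First, I would identify $X^{(p)}$ explicitly. Write $X = \Spec A$ and set $I := \ker(F_R)$; since every $r \in I$ satisfies $r^p = 0$, $I$ is a nil-ideal of $R$. Surjectivity of $F_R$ means it induces an isomorphism $R/I \xrightarrow{\sim} R$, and a direct computation shows that the natural map $A \to A^{(p)} = A \otimes_{R,F_R} R$ factors through an $R$-algebra isomorphism $A/IA \xrightarrow{\sim} A^{(p)}$ sending $\bar{a}$ to $a \otimes 1$ (inverse: $a \otimes s \mapsto F_R^{-1}(s)\,a \bmod IA$, well-defined precisely because $F_R^{-1}(s)$ is well-defined modulo $I$). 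Consequently, the natural projection $W : X^{(p)} \to X$ — for which $F_X = W \circ F_{X/R}$ — is a closed immersion cut out by the nil-ideal $IA \subset A$.

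Next, I would upgrade Lemma \ref{picX = pic(Xred)} slightly. Although its formal statement concerns the nilradical, its proof goes through verbatim for any quasi-coherent nil-ideal $\mathscr{J} \subset \mathcal{O}_Y$ on an affine scheme $Y$: one reduces to the Noetherian case by writing $Y = \Spec A$ as a filtered colimit of finitely generated $\Z$-subalgebras $A_\alpha$ and $\mathscr{J} = \bigcup_\alpha (\mathscr{J} \cap A_\alpha)$ (each intersection is a nil-ideal in a Noetherian ring, hence nilpotent), and then filters $\mathscr{J}$ by powers so that each successive closed subscheme inclusion is a square-zero thickening. Applied with $\mathscr{J} = IA$, this yields that $W^* : \Pic(X) \to \Pic(X^{(p)})$ is an isomorphism.

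Finally, combining these with the factorization $F_X = W \circ F_{X/R}$ and the elementary identity $F_X^* \mathscr{M} \cong \mathscr{M}^{\otimes p}$ for any $\mathscr{M} \in \Pic(X)$ (visible from transition functions in characteristic $p$), the result follows: given $\mathscr{L} \in \Pic(X^{(p)})[p]$, surjectivity of $W^*$ produces $\mathscr{M} \in \Pic(X)$ with $W^*\mathscr{M} \cong \mathscr{L}$; then $W^*(\mathscr{M}^{\otimes p}) \cong \mathscr{L}^{\otimes p}$ is trivial, so by injectivity of $W^*$ we conclude $\mathscr{M}^{\otimes p} \cong \mathcal{O}_X$, and therefore
\[
F_{X/R}^*(\mathscr{L}) \cong F_{X/R}^*(W^*\mathscr{M}) \cong F_X^*(\mathscr{M}) \cong \mathscr{M}^{\otimes p} \cong \mathcal{O}_X.
\]
The main obstacle I anticipate is the bookkeeping around the identification $A^{(p)} \cong A/IA$ and the mild strengthening of Lemma \ref{picX = pic(Xred)} from the nilradical to an arbitrary quasi-coherent nil-ideal; both steps are essentially routine but are what genuinely use the surjectivity hypothesis on $F_R$.
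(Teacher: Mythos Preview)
Your argument is correct and is essentially the paper's approach: both factor an absolute Frobenius (yours $F_X$, the paper's $F_{X^{(p)}}$) through $F_{X/R}$ and the base-change projection $X^{(p)}\to X$, which induces an isomorphism on Picard groups; the paper first reduces to $R_{\rm red}$ so that this projection becomes an isomorphism of schemes, whereas you argue directly. Note that your ``mild strengthening'' of Lemma~\ref{picX = pic(Xred)} is unnecessary: since $IA$ is a nil-ideal, $X$ and $X^{(p)}\cong\Spec(A/IA)$ have the same reduced subscheme, so applying the lemma as stated to each separately already shows that $W^*$ is an isomorphism on Picard groups.
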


\begin{proof}
By Lemma \ref{picX = pic(Xred)}, we may base change $X$ to $X \otimes_R R_{{\rm{red}}}$, and thereby assume that $F_R$ is an isomorphism. Consider the following commutative diagram.
\[
\begin{tikzcd}
X^{(p)} \arrow[rr, bend left, "F_{X^{(p)}}"] \arrow{r}{\phi}[swap]{\sim} \arrow{d} \arrow[dr, phantom, "\square"] & X \arrow{d} \arrow{r}[swap]{F_{X/R}} & X^{(p)} \\
\Spec(R) \arrow{r}{\sim}[swap]{F_R} & \Spec(R) &
\end{tikzcd}
\]
in which we abuse notation and also refer to the map on $\Spec(R)$ induced by $F_R$ as $F_R$, where $F_{X^{(p)}}$ denotes the absolute Frobenius map, and $\phi$ is defined by the above diagram. The map $F_{X^{(p)}}^*:  \Pic(X^{(p)}) \rightarrow \Pic(X^{(p)})$ is multiplication by $p$, hence kills $\mathscr{L}$. Since $\phi$, hence $\phi^*$, is an isomorphism, it follows that $F_{X/R}^*(\mathscr{L}) = 0$, as desired.
\end{proof}

\begin{proposition}
\label{calExt^1=0surjectivefrob}
Let $R$ be a ring of characteristic $p$ such that the Frobenius map $F:  R \rightarrow R$ is surjective. Then $\Ext^1_R(\Ga, \Gm) = 0$.
\end{proposition}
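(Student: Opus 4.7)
My plan is to reduce the vanishing of $\Ext^1_R(\Ga,\Gm)$ to Lemma \ref{schemesplitimpliesgpsplit}, which promotes scheme-theoretic sections to group-theoretic ones. Concretely, for an arbitrary commutative extension $0 \to \Gm \to E \to \Ga_R \to 0$, I would show that the $\Gm$-torsor $E \to \Ga_R$ --- equivalently, the associated line bundle $\mathscr{L} \in \Pic(\Ga_R)$ --- is trivial; a nowhere-vanishing global section of $\mathscr{L}$ then furnishes a scheme section of $\pi \colon E \to \Ga_R$, and Lemma \ref{schemesplitimpliesgpsplit} produces a group-theoretic splitting.

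I would first observe that $\mathscr{L}$ is $p$-torsion in $\Pic(\Ga_R)$. Since $[p] \colon \Ga \to \Ga$ is the zero morphism in characteristic $p$, pulling $E$ back along $[p]$ produces the trivial extension, so $p \cdot [E] = 0$ in $\Ext^1_R(\Ga,\Gm)$. Because Baer sum on extensions corresponds to tensor product on the underlying $\Gm$-torsors, the forgetful map $\Ext^1_R(\Ga,\Gm) \to \Pic(\Ga_R)$ is a group homomorphism, so $p \cdot \mathscr{L} = 0$.

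Next I would reduce to the case that $R$ is perfect. Lemma \ref{picX = pic(Xred)} applied with $G = \Gm$ gives $\Pic(\Ga_R) \xrightarrow{\sim} \Pic(\Ga_{R_{\red}})$, so the triviality of $\mathscr{L}$ is detected after base change to $R_{\red}$. Frobenius on $R_{\red}$ is surjective (inherited from $R$) and also injective (any $\bar{a} \in R_{\red}$ with $\bar{a}^p = 0$ lifts to a nilpotent element of $R$, forcing $\bar{a} = 0$), so $R_{\red}$ is perfect.

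The main obstacle is the perfect case. Here I would invoke the classical fact that perfect $\mathbf{F}_p$-algebras are seminormal, combined with the Traverso--Swan theorem, which gives an isomorphism $\Pic(R) \xrightarrow{\sim} \Pic(\A^1_R)$ via pullback along the structure map $\A^1_R \to \Spec(R)$. The identity of $E$ lies in $\Gm \subset E$ and maps to $0 \in \Ga(R)$, so the $\Gm$-torsor $E \to \Ga_R$ is canonically trivialized over the origin, i.e.\ $0^*\mathscr{L} = 0$ in $\Pic(R)$. Since $0^* \colon \Pic(\A^1_R) \to \Pic(R)$ is a one-sided inverse to the pullback $\Pic(R) \to \Pic(\A^1_R)$, the unique preimage of $\mathscr{L}$ in $\Pic(R)$ is $0$, so $\mathscr{L}$ itself is trivial in $\Pic(\A^1_R)$. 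A trivialization produces the required scheme section, and Lemma \ref{schemesplitimpliesgpsplit} completes the proof.
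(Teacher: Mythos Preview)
Your argument is correct, but it takes a different route from the paper's. Both proofs begin identically: pass from the extension $E$ to the line bundle $\mathscr{L}\in\Pic(\Ga_R)$ and aim to produce a scheme-theoretic section so that Lemma~\ref{schemesplitimpliesgpsplit} applies. The divergence is in how one trivializes $\mathscr{L}$.

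The paper does \emph{not} trivialize $\mathscr{L}$ itself. Instead it pulls back along the relative Frobenius $F_{\Ga/R}\colon\Ga\to\Ga$; Lemma~\ref{F^*Pic[p]=0} (a short self-contained argument using the absolute Frobenius and Lemma~\ref{picX = pic(Xred)}) shows $F_{\Ga/R}^*\mathscr{L}=0$, so $F_{\Ga/R}^*E$ has a scheme section and splits by Lemma~\ref{schemesplitimpliesgpsplit}. One then descends the splitting through the $\alpha_p$-sequence $1\to\alpha_p\to\Ga\xrightarrow{F}\Ga\to 1$ using the surjectivity of $\widehat{\Ga}(R)\to\widehat{\alpha_p}(R)$ from Lemma~\ref{G_a^--->>alpha_p^}.

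Your approach is more direct: reduce the Picard question to $R_{\red}$ via Lemma~\ref{picX = pic(Xred)}, observe that $R_{\red}$ is perfect, and invoke the Traverso--Swan theorem (perfect $\F_p$-algebras are seminormal, so $\Pic(R_{\red})\xrightarrow{\sim}\Pic(\A^1_{R_{\red}})$) together with $0^*\mathscr{L}=0$ to conclude $\mathscr{L}=0$ outright. This avoids the Frobenius pullback and the $\alpha_p$ descent entirely, at the cost of importing Traverso--Swan; the paper's argument is longer but self-contained. Note also that your $p$-torsion step, while correct, is never actually used once you appeal to Traverso--Swan.
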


\begin{proof}
Suppose given an extension of fppf abelian sheaves over $R$
\[
1 \longrightarrow \Gm \longrightarrow E \longrightarrow \Ga \longrightarrow 1.
\]
We need to show that this extension splits. The sheaf $E$ (which is actually an $R$-group scheme due to the effectivity of fpqc descent for relatively affine schemes) has, via multiplication by $\Gm$, the structure of a $\Gm$-torsor over $\Ga$. That is, we obtain from $E$ an element $\mathscr{L} \in \Pic(\mathbf{G}_{a,\,R})$. This yields a homomorphism $\Ext^1_R(\Ga, \mathbf{G}_m) \rightarrow {\rm{Pic}}(\mathbf{G}_{a,\,R})$. Note that this map has $p$-torsion image since $\Ext^1_R(\mathbf{G}_a, \mathbf{G}_m)$ is $p$-torsion (because $\mathbf{G}_a$ is).

Consider the relative Frobenius $R$-isogeny $F_{\mathbf{G}_a/R}:  \mathbf{G}_a \rightarrow \mathbf{G}_a$. By Lemma \ref{F^*Pic[p]=0}, $F_{\mathbf{G}_a/R}^*(\mathscr{L}) \in \Pic(\mathbf{G}_{a,\,R})$ is trivial. By Lemma \ref{schemesplitimpliesgpsplit}, therefore, the extension $F_{\mathbf{G}_a/R}^*(E)$ defined by the pullback diagram
\[
\begin{tikzcd}
1 \arrow{r} & \Gm \arrow{r} \arrow[d, equals] & F_{\mathbf{G}_a/R}^*(E) \arrow[dr, phantom, "\square"] \arrow{d} \arrow{r} & \Ga \arrow{r} \arrow{d}{F_{\mathbf{G}_a/R}} & 1 \\
1 \arrow{r} & \Gm \arrow{r} & E \arrow{r} & \Ga \arrow{r} & 1
\end{tikzcd}
\]
splits. That is, the element $F_{\mathbf{G}_a/R}^*(E) \in \Ext^1_R(\Ga, \Gm)$ vanishes.
The exact sequence of $R$-group schemes
\[
1 \longrightarrow \alpha_p \longrightarrow \mathbf{G}_a \xlongrightarrow{F_{\mathbf{G}_a/R}} \mathbf{G}_a \longrightarrow 1
\]
yields an exact sequence
\[
\widehat{\mathbf{G}_a}(R) \longrightarrow \widehat{\alpha_p}(R) \longrightarrow \Ext^1_R(\mathbf{G}_a, \mathbf{G}_m) \xlongrightarrow{F_{\mathbf{G}_a/R}^*} \Ext^1_R(\mathbf{G}_a, \mathbf{G}_m), 
\]
so Lemma \ref{G_a^--->>alpha_p^} implies that $E$ is trivial as an element of $\Ext^1_R(\Ga, \Gm)$; that is, $E$ splits. 
\end{proof}

Let us note the following corollary of Proposition \ref{calExt^1=0surjectivefrob}, which we will have occasion to use later.

\begin{corollary}
\label{H^1(U^)=0}
Let $R$ be an $\F_p$-algebra such that the Frobenius map $F:  R \rightarrow R$ is surjective. Let $U$ be a
commutative $R$-group scheme admitting a finite filtration by $\mathbf{G}_a$'s. Then ${{\rm{H}}}^1(R, \widehat{U}) = 0$.
\end{corollary}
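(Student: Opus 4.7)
The plan is to deduce this corollary by combining Proposition \ref{calExt^1=0surjectivefrob} with the local-to-global Leray (Grothendieck) spectral sequence for $\Ext$'s, namely
\[
E_2^{i,j} = {\rm{H}}^i\!\left(R, \calExt^j_R(U, \Gm)\right) \Longrightarrow \Ext^{i+j}_R(U, \Gm).
\]
The associated five-term exact sequence yields an inclusion
\[
{\rm{H}}^1(R, \widehat{U}) \hooklongrightarrow \Ext^1_R(U, \Gm),
\]
so it suffices to show that the right-hand group vanishes.

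First I would reduce to the base case $U = \Ga$. Let $0 = U_0 \subset U_1 \subset \dots \subset U_n = U$ be the given filtration, with each successive quotient $U_i/U_{i-1} \simeq \Ga$. I proceed by induction on $i$, the goal being $\Ext^1_R(U_i, \Gm) = 0$. The short exact sequence $0 \to U_{i-1} \to U_i \to \Ga \to 0$ induces a long exact sequence of $\Ext$ groups, which contains the piece
\[
\Ext^1_R(\Ga, \Gm) \longrightarrow \Ext^1_R(U_i, \Gm) \longrightarrow \Ext^1_R(U_{i-1}, \Gm).
\]
By Proposition \ref{calExt^1=0surjectivefrob} the left term vanishes, and by induction so does the right term (with the base case $i = 1$ being again Proposition \ref{calExt^1=0surjectivefrob} applied to $U_1 \simeq \Ga$). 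Hence $\Ext^1_R(U_i, \Gm) = 0$ for all $i$, and in particular for $i = n$.

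Combining this vanishing with the five-term exact sequence from the spectral sequence above gives ${\rm{H}}^1(R, \widehat{U}) = 0$, as desired. The substantive input here is Proposition \ref{calExt^1=0surjectivefrob}; once that is granted, the argument is purely formal d{\'e}vissage. There is no real obstacle beyond making sure the spectral sequence is available in this setting, which it is because $\widehat{(\cdot)} = \calHom(\cdot, \Gm)$ and $\Hom(\cdot, \Gm) = {\rm{H}}^0(R, \calHom(\cdot, \Gm))$ are related by composition of left exact functors, yielding the Grothendieck spectral sequence in the usual way.
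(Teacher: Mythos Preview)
Your proposal is correct and follows essentially the same approach as the paper: use the Leray spectral sequence to reduce to showing $\Ext^1_R(U,\Gm)=0$, then filter $U$ by $\Ga$'s to reduce to Proposition~\ref{calExt^1=0surjectivefrob}. The paper states the d\'evissage in a single sentence, while you spell out the inductive long exact sequence argument, but the content is identical.
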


\begin{proof}
The Leray spectral sequence
\[
E_2^{i,j} = {{\rm{H}}}^i(R, \calExt^j(U, \mathbf{G}_m)) \Longrightarrow \Ext^{i+j}(U, \mathbf{G}_m)
\]
shows that it suffices to show that $\Ext^1_R(U, \mathbf{G}_m) = 0$. Since $U$ is filtered by $\mathbf{G}_a$'s it suffices
to treat the case $U = \mathbf{G}_a$, and this is Proposition \ref{calExt^1=0surjectivefrob}.
\end{proof}

In order to prove the vanishing of the fppf Ext sheaf, we need the following well-known result, which identifies the Ext sheaf with the sheafification of the the Ext groups.

\begin{lemma}
\label{extsheaf=sheafification}
Let $S$ be a site, and let ${\rm{Sh}}$ denote the category of sheaves of abelian groups on $S$. Let $A \in {\rm{D(Sh)}}$, $B \in {\rm{D}}^+({\rm{Sh}})$. Then for all $i$, the sheaf $\calExt^i(A, B)$ on $S$ is the sheafification of the presheaf $U \mapsto \Ext^i_U(A, B)$.
\end{lemma}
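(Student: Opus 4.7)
The plan is to exhibit both sides as built from an injective resolution of $B$, and then exchange ``cohomology'' with ``sheafification''.

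First, since $B \in \mathrm{D}^+({\rm{Sh}})$, choose an injective resolution $B \rightarrow I^{\bullet}$ in ${\rm{Sh}}$. By definition, $\calExt^i(A,B)$ is the $i$-th cohomology sheaf of the complex of sheaves $\calHom(A, I^{\bullet})$.

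Next, I would observe that for every object $U$ of $S$, the localization functor $j_U^* \colon {\rm{Sh}}(S) \rightarrow {\rm{Sh}}(S/U)$ preserves injectives. Indeed, $j_U^*$ admits the ``extension by zero'' left adjoint $j_{U!}$, and the latter is exact (it is computed by the explicit sheafification of a functor that is already exact at the presheaf level). Thus $j_U^*$ has an exact left adjoint and hence preserves injectives. Consequently $I^{\bullet}|_U$ is an injective resolution of $B|_U$ in ${\rm{Sh}}(S/U)$, so
\[
\Ext^i_U(A|_U, B|_U) \;=\; H^i\bigl(\Hom_U(A|_U, I^{\bullet}|_U)\bigr).
\]
By definition of the internal Hom, $\Hom_U(A|_U, I^j|_U) = \calHom(A, I^j)(U)$ for each $j$, so the presheaf $U \mapsto \Ext^i_U(A|_U, B|_U)$ is exactly the $i$-th cohomology presheaf of the complex of sheaves $\calHom(A, I^{\bullet})$.

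Finally, because sheafification is an exact functor, it commutes with the formation of cohomology: the sheafification of the $i$-th cohomology presheaf of any complex of sheaves equals the $i$-th cohomology sheaf of that complex. Applying this to $\calHom(A, I^{\bullet})$ yields that the sheafification of $U \mapsto \Ext^i_U(A|_U, B|_U)$ is $\calExt^i(A,B)$, as desired.

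The only step requiring care is the preservation of injectives under localization; this is standard and reduces to the exactness of $j_{U!}$, so I do not anticipate any real obstacle. The remainder is essentially unpacking definitions together with the exactness of sheafification.
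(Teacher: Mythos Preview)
Your proof is correct and follows essentially the same approach as the paper's: both reduce to the exactness of sheafification to commute cohomology with sheafification. The paper's argument is phrased more abstractly at the derived-category level (asserting that the derived $\calHom$ is the sheafification of the derived presheaf $\Hom$), whereas you unpack this concretely via an injective resolution and make explicit the point that localization $j_U^*$ preserves injectives; the paper's terse proof leaves that step implicit.
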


\begin{proof}
The bifunctor $\calHom:  {\rm{D(Sh)}} \times {\rm{D}}^+({\rm{Sh}}) \rightarrow {\rm{D(Sh)}}$ is the sheafification of the bifunctor $\Hom:  {\rm{D(Sh)}} \times {\rm{D}}^+({\rm{Sh}}) \rightarrow {\rm{D(PSh)}}$, where PSh is the category of presheaves of abelian groups on $S$. (This makes sense because sheafification is an exact functor.) It therefore suffices to show that on D(PSh), taking cohomology commutes with sheafification, and this follows from the fact that sheafification is exact.
\end{proof}

\begin{proposition}
\label{ext^1(Ga,Gm)=0}
For any $\mathbf{F}_p$-scheme $S$, the fppf sheaf $\calExt^1_S(\mathbf{G}_a, \mathbf{G}_m)$ vanishes.
\end{proposition}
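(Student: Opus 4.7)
By Lemma \ref{extsheaf=sheafification}, the fppf sheaf $\calExt^1_S(\Ga, \Gm)$ is the sheafification of the presheaf $R \mapsto \Ext^1_R(\Ga, \Gm)$, so it suffices to show that every class $[E] \in \Ext^1_R(\Ga, \Gm)$ over an $\F_p$-algebra $R$ becomes trivial after pullback along some faithfully flat and finitely presented $R$-algebra $R'$. Proposition \ref{calExt^1=0surjectivefrob} provides such vanishing whenever the Frobenius of the ambient ring is surjective, so the plan is to adjoin finitely many $p$-th roots via a finite free (hence fppf) tower of extensions tailored to the specific $E$.

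Represent $[E]$ by a symmetric cocycle $h(X, Y) \in R[X, Y]^\times$ with $h(X, 0) = h(0, Y) = 1$; its nonconstant coefficients generate a finitely generated nilpotent ideal $I = (c_1, \ldots, c_g) \subset R$ with $I^e = 0$. I would begin with the finite free $R$-algebra $R_1 := R[T_1, \ldots, T_g]/(T_i^p - c_i)$, in which the generators of $I$ acquire chosen $p$-th roots $d_i := T_i$, and then attempt to run the proof of Lemma \ref{schemesplitimpliesgpsplit} inside $R_1$; at each subsequent invocation of Frobenius surjectivity (e.g.\,in Step 1, to produce $\alpha_m \in J = (d_1,\ldots,d_g)$ with $\alpha_m^p = b_{p^m}^p$, which requires $p$-th roots of auxiliary elements $r_{i,m} \in R$ arising from $b_{p^m}^p = \sum_i r_{i,m} d_i^p$), I would pass to a further finite free extension $R_j[T]/(T^p - r)$ adjoining only the specific $p$-th root just needed. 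Because the nilpotency bound $I^e = 0$ controls the degree of the trivializing polynomial $f$ produced by Step 5 of that lemma, only finitely many such adjunctions are needed, so the terminal ring $R'$ is finite free over $R$ and the polynomial $f \in R'[X]^\times$ so built satisfies $f(X+Y) = f(X)f(Y)h(X, Y)$ exactly in $R'[X, Y]$, providing a splitting of $E|_{R'}$.

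The main obstacle will be justifying the finite termination of this bookkeeping: in particular, the inductive system of compatible sections $s_n$ of $E|_{\alpha_{p^n}}$, which in the original proof arose from $\Ext^1_R(\alpha_{p^n}, \Gm) = 0$ for Frobenius-surjective $R$ via Lemma \ref{Ext^1(alpha,Gm)=0,perfect}, must be produced over our successive finite free covers of $R$ where Frobenius surjectivity fails. Here one uses that $\calExt^1(\alpha_{p^n}, \Gm) = 0$ as an fppf sheaf by \cite[VIII, Prop.\,3.3.1]{sga7}, so at each level $n$ one may pass to a further finite free cover splitting the specific class $[E|_{\alpha_{p^n}}]$; the compatibility $s_{n+1}|_{\alpha_{p^n}} = s_n$ is then arranged via Lemma \ref{G_a^--->>alpha_p^}, which lifts characters of $\alpha_{p^{n+1}}$ to $\widehat{\Ga}$. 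Combined with the finite nilpotency $I^e = 0$, the inductive procedure closes after a bounded number of finite free adjunctions, yielding the desired fppf cover $R'/R$ on which $[E]$ vanishes.
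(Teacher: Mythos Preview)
Your approach has two genuine gaps.

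First, you begin by representing $[E]$ by a cocycle $h(X,Y) \in R[X,Y]^{\times}$, which presupposes a scheme-theoretic section $\Ga \to E$, i.e.\ that the underlying $\Gm$-torsor over $\mathbf{G}_{a,R}$ is trivial. This requires the class of $E$ in $\Pic(\mathbf{G}_{a,R})[p]$ to vanish, and that is not automatic over a general $\F_p$-algebra $R$; it does not follow from passing to a finite free cover of $R$. In the paper's Proposition \ref{calExt^1=0surjectivefrob} this is handled (over Frobenius-surjective $R$) via Lemma \ref{F^*Pic[p]=0}: one pulls back by the relative Frobenius of $\Ga$ to kill the $p$-torsion line bundle, splits $F^*E$ instead, and then descends using Lemma \ref{G_a^--->>alpha_p^}. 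Your outline omits this step entirely.

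Second, and more seriously, the claim that ``the inductive procedure closes after a bounded number of finite free adjunctions'' is not justified. The proof of Lemma \ref{schemesplitimpliesgpsplit} genuinely uses compatible sections of $E|_{\alpha_{p^n}}$ for \emph{all} $n$ to build the power series $f$, and Step~1 requires, for \emph{every} $m \ge 0$, a choice of $\alpha_m \in J$ with $(\alpha_m - b_{p^m})^p = 0$, each demanding fresh $p$-th roots. The nilpotency bound $I^e = 0$ controls the degree of the \emph{final} polynomial $f$, but it does not bound which levels $m,n$ are needed in the \emph{process} of finding it; you cannot truncate the construction without already knowing the answer.

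The paper's route is far cleaner and you should use it: form the direct limit $R_\infty := \varinjlim R_j$ by iteratively adjoining all $p$-th roots (so Frobenius on $R_\infty$ is surjective), apply Proposition \ref{calExt^1=0surjectivefrob} over $R_\infty$ to obtain an actual splitting $s: \Ga \to E$ there, and then observe that $s$---being a morphism between finitely presented $R$-schemes---descends to some finite stage $R_j$, which is faithfully flat and finitely presented over $R$. This single descent step replaces all of your bookkeeping.
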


\begin{proof}
By Lemma \ref{extsheaf=sheafification}, the fppf sheaf $\calExt^1_S(\Ga, \Gm)$ is the sheafification of the presheaf which assigns to an fppf scheme $U/S$ the group $\Ext^1_U(\Ga, \Gm)$. Thus, the desired vanishing is equivalent to the assertion that for any $\F_p$-algebra $R$, any extension
\begin{equation}
\label{extensionsproofofext^1=0eqn3}
1 \longrightarrow \mathbf{G}_m \longrightarrow E \longrightarrow \mathbf{G}_a \longrightarrow 1
\end{equation}
of abelian sheaves over $R$ splits fppf locally on $\Spec(R)$. But any such extension with $E$ merely a sheaf is actually representable, due to the effectivity of fpqc descent for relatively affine schemes. By repeatedly adjoining $p$th roots and taking a direct limit, we may assume that the Frobenius map $F:  R \rightarrow R$ is surjective, since then a section $\Ga \rightarrow E$ over $R$ of (\ref{extensionsproofofext^1=0eqn3}) necessarily descends to one of the rings over which one is taking a direct limit. The proposition therefore follows from Proposition  \ref{calExt^1=0surjectivefrob}.
\end{proof}

Before giving Gabber's example of an extension of $\Ga$ by $\Gm$ in characteristic $0$ that does not split fppf locally, we first require the following lemma.

\begin{lemma}
\label{nonconstantunitsred}
If $X$ is a reduced scheme, then every global unit on $\A^n_X$ (affine $n$-space over $X$) is the pullback of a global unit on $X$.
\end{lemma}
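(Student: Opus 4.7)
The plan is to reduce immediately to the affine case and then to the statement that for a reduced ring $A$, one has $A[t_1,\dots,t_n]^\times = A^\times$. Since the assertion concerns global sections of the Zariski sheaf $\mathcal{O}^\times$ on $\A^n_X$ and the formation of $\A^n$ commutes with pullback along open immersions, I would cover $X$ by affine opens $U_\alpha$ and note that if I can prove the statement on each $U_\alpha$ and on pairwise intersections (which are quasi-compact reduced schemes), then a global unit on $\A^n_X$ descends compatibly to global units on the $U_\alpha$ that agree on overlaps, yielding the desired global unit on $X$. By a further cover-and-glue argument, it suffices to treat $X$ affine, say $X = \Spec(A)$ with $A$ reduced, in which case I must show that any $f \in A[t_1,\dots,t_n]^\times$ lies in $A^\times$.

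The key commutative-algebra fact I would use is that if $A$ is reduced, then $A$ injects into $\prod_{\mathfrak{p}} A_{\mathfrak{p}}$, where the product runs over the minimal primes of $A$; and for a minimal prime $\mathfrak{p}$ of a reduced ring, the localization $A_{\mathfrak{p}}$ is a zero-dimensional reduced local ring, i.e.\,a field $\kappa(\mathfrak{p})$. Hence $A \hookrightarrow \prod_{\mathfrak{p} \text{ minimal}} \kappa(\mathfrak{p})$, and therefore also $A[t_1,\dots,t_n] \hookrightarrow \prod_{\mathfrak{p}} \kappa(\mathfrak{p})[t_1,\dots,t_n]$.

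Given a unit $f \in A[t_1,\dots,t_n]^\times$, its image in each $\kappa(\mathfrak{p})[t_1,\dots,t_n]$ is a unit in a polynomial ring over a field, hence a nonzero constant in $\kappa(\mathfrak{p})$. Thus every non-constant coefficient of $f$ maps to zero in each $\kappa(\mathfrak{p})$; by the injection above, those coefficients all vanish in $A$, so $f \in A$. Since $f$ is a unit in $A[t_1,\dots,t_n]$, it is a unit in $A$, which is what I wanted.

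No step here is really an obstacle; the only mild subtlety is the reduction from a general reduced scheme $X$ to the affine reduced case, but that is handled by the sheaf property of $\mathcal{O}^\times$ together with the compatibility of $\A^n_{(-)}$ with Zariski localization. The core content is the classical observation that polynomial rings over reduced rings have no nontrivial units, proved by testing at minimal primes.
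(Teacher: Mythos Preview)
Your argument is correct and rests on the same core observation as the paper: units in a polynomial ring over a field are constant, and on a reduced base an element vanishes as soon as it vanishes in every residue field. The paper's proof is a bit slicker in execution: rather than reducing to the affine case and injecting into a product over minimal primes, it normalizes the unit $u$ by its value $u(0)$ along the zero section, so that it suffices to show $u/u(0) = 1$; this holds in every residue field of $X$ (since over a field the only unit with constant term $1$ is $1$), and since $\A^n_X$ is reduced the equality holds globally. This avoids any cover-and-glue bookkeeping. One small wobble in your reduction: pairwise intersections of affine opens need not be quasi-compact unless $X$ is quasi-separated, but you don't actually need the full statement on overlaps---injectivity of the pullback $\Gamma(U,\mathcal{O}^\times) \to \Gamma(\A^n_U,\mathcal{O}^\times)$ (clear from composing with the zero section) is enough to see that the local constants $v_\alpha$ agree on overlaps and hence glue.
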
 

\begin{proof}
Let $u$ be a global unit on $\A^n_X$. Replacing $u$ with $u/u(0)$, we may assume that $u(0) = 1$, and we need to show $u = 1$. This holds over every residue field of a point on $X$, because over a field $\A^n$ has no nonconstant global units. Since $X$ is reduced, therefore, the equality holds on $\A^n_X$.
\end{proof}

\begin{remark}
\label{gabberexample}
Proposition \ref{ext^1(Ga,Gm)=0} is false in characteristic $0$. We will never use this, so the uninterested reader may skip this remark. Before constructing a counterexample due to Ofer Gabber, we will prove the following claim:  given a $\Q$-scheme $X$ and a commutative extension $E$ of $\Ga$ by $\Gm$ over $X$, if $E$ splits fppf-locally over $X$ then it splits \'etale-locally over $X$. (Actually, our argument will show that it splits Nisnevich-locally.) Indeed, $\Ext^1(\Ga, \Gm)$ commutes with filtered direct limits, so we may assume that $X = \Spec(R)$ with $R$ Henselian local. Any fppf cover of $X$ may be refined by one that is finite locally free with constant rank $n>0$ because by \cite[IV$_4$, Cor. 17.16.2]{ega} any fppf cover of $\Spec(R)$ may be refined by an affine quasi-finite fppf cover, and (since $R$ is Henselian) any such cover may be refined by one of the form $\Spec(S) \rightarrow \Spec(R)$ with $S$ a finite local $R$-algebra that is free as an $R$-module due to \cite[IV$_4$, Thm.\,18.5.11]{ega}. We may therefore assume that $\pi:  X' \rightarrow X$ is finite locally free of degree $n$, and that $E$ splits over $X'$. Let $X' = \Spec(R')$.

Now for some generalities. Given fppf abelian sheaves $\mathscr{F}$ on $X$ and $\mathscr{F}'$ on $X'$, we have a natural map
\begin{equation}
\label{abstractextmap}
\Ext^1_X(\mathscr{F}, \pi_*\mathscr{F}') \xrightarrow{\phi} \Ext^1_{X'}(\pi^*\mathscr{F}, \mathscr{F}')
\end{equation}
defined as follows:  given an extension $\mathscr{E} \in \Ext^1_X(\mathscr{F}, \pi_*\mathscr{F}')$, we may pull it back to $X'$ to obtain the extension $\pi^*\mathscr{E} \in \Ext^1_{X'}(\pi^*\mathscr{F}, \pi^*\pi_*\mathscr{F}')$. Then we push this out along the adjunction map $\pi^*\pi_*\mathscr{F}' \rightarrow \mathscr{F}'$ to obtain an element of $\Ext^1_{X'}(\pi^*\mathscr{F}, \mathscr{F}')$. We claim that the map $\phi$ in (\ref{abstractextmap}) is an inclusion. Indeed, a section from our new extension $\phi(\mathscr{E})$ to $\mathscr{F}'$ is the same thing as a commutative diagram
\[
\begin{tikzcd}
\pi^*\pi_*\mathscr{F}' \arrow{r} \arrow{d} & \pi^*\mathscr{E} \arrow{dl} \\
\mathscr{F}' &
\end{tikzcd}
\]
which, by adjointness, is the same thing as a section $\mathscr{E} \rightarrow \pi_*\mathscr{F}'$. That is, $\mathscr{E}$ splits if and only if $\phi(\mathscr{E})$ does.

Suppose that $\mathscr{F}' = \pi^*\mathscr{G}$ for some sheaf $\mathscr{G}$ on $X$. Then the adjunction map $\mathscr{G} \rightarrow \pi_*\pi^*\mathscr{G} = \pi_*\mathscr{F}'$ induces a map
\begin{equation}
\label{secondextmap}
\Ext^1_X(\mathscr{F}, \mathscr{G}) \xrightarrow{\psi} \Ext^1_X(\mathscr{F}, \pi_*\mathscr{F}')
\end{equation}
One can check that the composition $\phi \circ \psi:  \Ext^1_X(\mathscr{F}, \mathscr{G}) \rightarrow \Ext^1_{X'}(\pi^*\mathscr{F}, \pi^*\mathscr{G})$ is simply the pullback map sending the extension $\mathscr{E}$ to $\pi^*\mathscr{E}$. This uses the well-known fact that the composition $\pi^*\mathscr{G} \rightarrow \pi^*(\pi_*\pi^*\mathscr{G}) = (\pi^*\pi_*)\pi^*\mathscr{G} \rightarrow \pi^*\mathscr{G}$ is the identity, where the first map is induced by the adjunction map ${\rm{id}} \rightarrow \pi_*\pi^*$, and the second by the adjunction map $\pi^*\pi_* \rightarrow {\rm{id}}$.

We apply this with $\mathscr{F} = \Ga$, $\mathscr{G} = \Gm$. Note that $\pi^*\Ga = \Ga$, $\pi^*\Gm = \Gm$. Then we see, using the injectivity of $\phi$ mentioned above, that if an extension $E$ of $\Ga$ by $\Gm$ over $X$ splits over $X'$, then it is killed by the map
\[
\psi:  \Ext^1_X(\Ga, \Gm) \rightarrow \Ext^1_X(\Ga, \pi_*\Gm)
\]
But we also have a norm map ${\rm{Nm}}:  \pi_*\Gm \rightarrow \Gm$ defined functorially on $R$-algebras $S$ by the norm map $(S \otimes_R R')^{\times} \rightarrow S^{\times}$, which is defined by sending $s' \in S \otimes_R R'$ to the determinant of multiplication by $s'$ on the finite free $R$-algebra $S \otimes_R R'$. This induces a norm map
\[
{\rm{Nm}}:  \Ext^1_X(\Ga, \pi_*\Gm) \rightarrow \Ext^1_X(\Ga, \Gm)
\]
such that the composition
\[
\Ext^1_X(\Ga, \Gm) \xrightarrow{\psi} \Ext^1_X(\Ga, \pi_*\Gm) \xrightarrow{{\rm{Nm}}} \Ext^1_X(\Ga, \Gm)
\]
is induced by the composition $\Gm \rightarrow \pi_*\pi^*\Gm = \pi_*\Gm \xrightarrow{{\rm{Nm}}} \Gm$, which is 
the $n$th-power endomorphism. Thus, we have shown that if an element of $\Ext^1_X(\Ga, \Gm)$ dies when pulled back to $X'$, then it is $n$-torsion. But multiplication by $n$ on $\Ext^1_X(\Ga, \Gm)$ is induced by multiplication by $n$ on $(\Ga)_X$, which is an automorphism because $X$ is a $\Q$-scheme. We deduce that for Henselian local $X$, any element of $\Ext^1_X(\Ga, \Gm)$ which is killed fppf locally is already trivial.

It therefore suffices to construct a $\Q$-scheme $X$ and an element of $\Ext^1_X(\Ga, \Gm)$ that does not die \'etale locally on $X$. Here is Gabber's example. Let $X$ be two copies of the affine line glued along the non-reduced subscheme $Z : = \Spec(k[\varepsilon]/(\varepsilon^2))$ (a thickened copy of the origin). Note that this gluing process makes sense by \cite[Thm.\,3.4]{schwede}. Now we take the trivial extension $\Ga \times \Gm$ of $\Ga$ by $\Gm$ on each copy of the affine line, and then glue these along the closed subscheme $Z$ by a nontrivial automorphism of the trivial extension, again using \cite[Thm.\,3.4]{schwede}. In order to construct a suitable nontrivial automorphism of the trivial extension (as an extension), we note that such an automorphism is given by a homomorphism $\Ga \rightarrow \Gm$. Over a $\Q$-algebra $R$, examples of these are given by $T \mapsto {\rm{exp}}(rT)$ where $r \in R$ is nilpotent. (Actually, although we do not need this here, these are all such homomorphisms over a $\Q$-algebra; see Lemma \ref{formalchars}(iv).) Take the automorphism of the trivial extension on $Z$ given the homomorphism $T \mapsto {\rm{exp}}(\varepsilon T)$.

We claim that this extension $E$ of $\Ga$ by $\Gm$ over $X$ does not split over any \'etale neighborhood of the common point of the two copies of the affine line. Any \'etale $X$-scheme is reduced since $X$ is reduced. A section $s:  \Ga \rightarrow E$ over an \'etale 
$X$-scheme $U$ must be unique since any two differ by a homomorphism from $\Ga$ to $\Gm$, and there are no nontrivial such over a reduced scheme. Indeed, the only global units on $\Ga$ over a reduced scheme $Y$ are the global units of $Y$ by Lemma \ref{nonconstantunitsred}.

Now the restriction of $s$ to the preimage of the first copy $Y_1$ of the affine line must therefore be the obvious section of the trivial extension $E_{Y_1}$, hence the same holds for its restriction to $U|_Z$. The same holds for its restriction to the preimage of the second affine line, hence also to $U|_Z$. But the difference between these two restrictions to $U|_Z$ must be the 
pullback of the automorphism $T \mapsto {\rm{exp}}(\varepsilon T)$ described earlier, so
$\varepsilon$ pulls back to 0 on $U|_Z$. If the \'etale map $U \to X$ has
non-empty fiber over the common origin point then $U|_Z \to Z$ is an \'etale cover
and so the nonzero $\varepsilon$ on $Z$ cannot 
pull back to 0 on $U|_Z$. Hence, $E$ cannot split over any \'etale cover of $X$, so it cannot split fppf-locally over $X$ either.
\end{remark}

We may now finally prove the main result of this section.

\begin{proposition}
\label{ext=0}
Let $k$ be a field of characteristic $p > 0$, and let $G$ be an affine commutative $k$-group scheme of finite type. Then the fppf sheaf $\calExt^1_k(G, \mathbf{G}_m)$ vanishes.
\end{proposition}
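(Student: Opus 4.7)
My plan is to carry out a two-stage d\'evissage via the structure theory of \S\ref{sectionstructure}. By Lemma~\ref{affinegroupstructurethm}, $G$ fits into a short exact sequence $1 \to H \to G \to U \to 1$ with $H$ an almost-torus and $U$ split unipotent; the associated long exact sequence
\[
\calExt^1_k(U, \Gm) \longrightarrow \calExt^1_k(G, \Gm) \longrightarrow \calExt^1_k(H, \Gm)
\]
traps $\calExt^1_k(G, \Gm)$ between the outer terms, reducing us to proving the vanishing separately when $G$ is split unipotent and when $G$ is an almost-torus.

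For $G$ split unipotent I would induct on the length of a composition series whose successive quotients are each isomorphic to $\Ga$. At each step, for a short exact sequence $1 \to U_{i-1} \to U_i \to \Ga \to 1$, the exact sequence
\[
\calExt^1_k(\Ga, \Gm) \longrightarrow \calExt^1_k(U_i, \Gm) \longrightarrow \calExt^1_k(U_{i-1}, \Gm)
\]
sandwiches $\calExt^1_k(U_i, \Gm)$ between Proposition~\ref{ext^1(Ga,Gm)=0} (left) and the inductive hypothesis (right). This is the only point in the argument where the characteristic $p$ hypothesis is essential; everything else is characteristic-insensitive classical material.

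For $G$ an almost-torus, Lemma~\ref{almosttorus}(ii) yields a short exact sequence $1 \to T \to G \to F \to 1$ with $T$ a $k$-torus and $F$ a finite commutative $k$-group scheme, and the same trapping argument reduces the question to showing $\calExt^1_k(T, \Gm) = 0$ and $\calExt^1_k(F, \Gm) = 0$. For the torus case I check vanishing fppf-locally: after extending scalars to a finite Galois splitting field of $T$ we reduce to $\calExt^1(\Gm, \Gm) = 0$, which holds because every extension of $\Gm$ by $\Gm$ is a rank-$2$ torus (by fpqc descent of multiplicative-type structure) and hence fppf-locally split. For the finite case the connected-\'etale d\'evissage reduces first to the \'etale part---where one further reduces to $\Z/n\Z$ and uses $\calExt^1(\Z/n\Z, \Gm) = \Gm/n\Gm = 0$ (the vanishing as an fppf sheaf is the fppf $n$-divisibility of $\Gm$)---and then to the infinitesimal part, where a filtration by $\alpha_p$ and $\mu_p$ reduces to those two groups, whose $\calExt^1(-, \Gm)$-vanishing is \cite[VIII, Prop.~3.3.1]{sga7} for $\alpha_p$ together with its multiplicative-type analogue for $\mu_p$. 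The real substance in characteristic $p$ has already been absorbed into Proposition~\ref{ext^1(Ga,Gm)=0}; the present statement is merely a d\'evissage of that result together with the classical vanishings recalled above.
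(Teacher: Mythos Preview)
Your proof is correct and takes essentially the same d\'evissage approach as the paper: reduce to the building blocks $\Ga$, tori, and finite group schemes, with Proposition~\ref{ext^1(Ga,Gm)=0} supplying the key characteristic-$p$ input for $\Ga$. The paper is more economical, replacing $k$ by a finite extension to obtain a direct filtration by finite groups, $\Gm$'s, and $\Ga$'s, and then citing \cite[VIII, Prop.~3.3.1]{sga7} in one stroke for both the finite and torus cases rather than breaking them down further as you do.
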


\begin{proof}
We may replace $k$ with a finite extension, and after making such an extension $G$ is filtered by finite group schemes, $\mathbf{G}_m$'s, and $\mathbf{G}_a$'s. By the 
$\delta$-functoriality of $\calExt$ in the first variable, therefore we are reduced to these three cases. The case in which $G$ is finite or $\mathbf{G}_m$ is handled by \cite[VIII, Prop.\,3.3.1]{sga7}, so we are reduced to the case 
 $G = \mathbf{G}_a$, which follows from Proposition \ref{ext^1(Ga,Gm)=0}.
 \end{proof}
 
As we saw in Remark \ref{gabberexample}, Proposition \ref{ext=0} fails in characteristic $0$. But we may nevertheless prove a (much easier) \'etale analogue. We first prove the following proposition.

\begin{proposition}
\label{Ext^1(Ga,Gm)=0ifPic(Ga)=0}
Let $X$ be a reduced scheme such that $\Pic(\mathbf{G}_{a,X}) = 0$. Then the group $\Ext^1_X(\Ga, \Gm)$ vanishes.
\end{proposition}

\begin{proof}
Given a commutative extension $E$ of $\Ga$ by $\Gm$ over $X$, $E$ in particular is a $\Gm$-torsor over $\mathbf{G}_{a,X}$, so since $\Pic(\mathbf{G}_{a,X}) = 0$ by assumption, the map $E \rightarrow \mathbf{G}_{a,\,X}$ has a scheme-theoretic section. We need to show that there is a group-theoretic section.

Let $s:  \mathbf{G}_{a,\,X} \rightarrow E$ be a scheme-theoretic section. Replacing $s$ with $s - s(0)$, we may assume that $s(0) = 0_E$. Identifying $E$ with the trivial $\Gm$-torsor $\Gm \times \Ga$ over $\mathbf{G}_{a,\,X}$ by means of $s$, the group law on $E = \Gm \times \Ga$ is then given by
\[
(t, y) + (t', y') = (tt'h(y, y'), y + y') 
\]
for some map $h:  \mathbf{G}_{a,\,X} \times \mathbf{G}_{a,\,X} \rightarrow \mathbf{G}_{m,\,X}$. By Lemma \ref{nonconstantunitsred}, the map $h$ factors through an $X$-valued point $\lambda:  X \rightarrow \Gm$. Then the map $y \mapsto (\lambda^{-1}, y)$ yields a group-theoretic section $\mathbf{G}_{a,\,X} \rightarrow E$, so $E$ splits.
\end{proof}

\begin{proposition}
\label{ext0}
On the small \'etale site of ${\rm{Spec}}(k)$ for a perfect field $k$, the sheaf $\calExt^1(G, \mathbf{G}_m)$ for the \'etale topology vanishes
for every affine commutative $k$-group scheme $G$ of finite type. 
\end{proposition}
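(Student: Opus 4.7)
The plan is to reduce the vanishing of $\calExt^1(G, \Gm)$ on the small \'etale site of $\Spec(k)$ to the vanishing of $\Ext^1_{\overline{k},\,\et}(G_{\overline{k}}, \Gm)$, and then to exploit the fact that $\overline{k}^{\times}$ is a divisible abelian group. None of the delicate constructions required for the positive-characteristic fppf analogue (Lemmas \ref{Ext^1(alpha,Gm)=0,perfect}--\ref{schemesplitimpliesgpsplit} and Proposition \ref{calExt^1=0surjectivefrob}) are needed here, so the proof should be comparatively short.

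First I would invoke Lemma \ref{extsheaf=sheafification}: the sheaf $\calExt^1(G, \Gm)$ is the sheafification of the presheaf $U \mapsto \Ext^1_{U,\,\et}(G|_U, \Gm)$. A sheaf on the small \'etale site of $\Spec(k)$ vanishes if and only if its stalk at the (unique) geometric point vanishes, and stalks are unchanged by sheafification, so it suffices to show that $\varinjlim_K \Ext^1_{K,\,\et}(G_K, \Gm) = 0$, the colimit being taken over finite separable subextensions $K/k$ of $k_s/k$. Because $G$ is of finite type (so any extension of $G$ by $\Gm$ is itself of finite presentation as a relatively affine scheme), this colimit is naturally identified with $\Ext^1_{k_s,\,\et}(G_{k_s}, \Gm)$. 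Since $k$ is perfect, $k_s = \overline{k}$.

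Next I would observe that the small \'etale site of $\Spec(\overline{k})$ is essentially trivial: every \'etale $\overline{k}$-scheme is a disjoint union of copies of $\Spec(\overline{k})$, so the global sections functor $\mathscr{F} \mapsto \mathscr{F}(\overline{k})$ is an exact equivalence between the category of abelian \'etale sheaves on $\Spec(\overline{k})$ and the category of abelian groups. Under this equivalence $\Gm$ corresponds to $\overline{k}^{\times}$, which is divisible because $\overline{k}$ is algebraically closed (every element admits an $n$-th root for every $n \ge 1$), hence injective in the category of abelian groups. Therefore $\Ext^1_{\overline{k},\,\et}(-, \Gm)$ is identically zero, which finishes the proof.

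The main technical point lies in the first step---justifying that the stalk of the presheaf of Ext groups really computes $\Ext^1_{\overline{k},\,\et}(G_{\overline{k}}, \Gm)$, i.e., that $\Ext^1$ commutes with the filtered colimit expressing $\overline{k}$ as the union of finite separable extensions of $k$. This is routine once one uses that $G$ is of finite type; crucially, no appeal to the structure theory of affine commutative group schemes (Lemma \ref{affinegroupstructurethm}) or any d\'evissage to $\Ga$, $\Gm$, and finite group schemes is required, since injectivity of $\overline{k}^{\times}$ handles \emph{all} \'etale sheaves at once.
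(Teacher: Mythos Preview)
There is a genuine gap. Your argument hinges on identifying the stalk of $\calExt^1(G,\Gm)$ with $\Ext^1_{\mathbf Z}(G(\overline k),\overline k^\times)$, but the justification you give---that an extension of $G$ by $\Gm$ is a finitely presented affine scheme and hence spreads out---only makes sense on the \emph{big} \'etale (or fppf) site, where such extensions are indeed representable. On the \emph{small} \'etale site of $\Spec(K)$, which is what the Proposition is about, an extension of $G|_K$ by $\Gm|_K$ is just an extension of the discrete $\Gal_K$-modules $G(\overline k)$ by $\overline k^\times$, with no scheme structure available; such an extension always splits as abelian groups (by divisibility of $\overline k^\times$), but it is exactly the content of the claim that the abelian-group splitting can be made $\Gal_{K'}$-equivariant for some finite $K'/K$. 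When $G(\overline k)$ is finitely generated as an abelian group (e.g.\ $G$ finite) this is immediate, but for $G=\Ga$ or a torus it is not, and divisibility alone does not settle it: $\overline k^\times$ is \emph{not} injective in discrete $\Gal_K$-modules (for instance $\Ext^2_{\Gal_K}(\mathbf Z,\overline k^\times)=\Br(K)$ is typically nonzero). Conversely, if you run the whole argument on the big \'etale site so that step~2 is valid, then step~3 fails: the big \'etale topos of $\Spec(\overline k)$ is not equivalent to abelian groups, and $\Gm$ is not injective there.

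The paper avoids this by precisely the d\'evissage you hoped to sidestep: it filters $G$ via Proposition~\ref{affinegroupstructurethm} and Lemma~\ref{almosttorus}(ii) into finite groups, tori, and copies of $\Ga$, invokes \cite[VIII, Prop.\,3.3.1]{sga7} for the first two, and for $\Ga$ shows by hand that any extension over a field $L$ splits, using that $\Pic(\mathbf G_{a,L})=0$ and that $\Ga\times\Ga$ has only constant units over a field.
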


\begin{proof} 
We once again use Lemma \ref{extsheaf=sheafification} which tells us that the \'etale sheaf $\calExt^1_k(G, \Gm)$ is the sheafification of the presheaf $U \mapsto \Ext^i_U(G, \Gm)$. Combining Proposition \ref{affinegroupstructurethm} and Lemma \ref{almosttorus}(ii), we see that $G$ admits a filtration by finite group schemes, tori, and $\Ga$'s. Using the $\delta$-functoriality of $\calExt$ in the first variable, we are therefore reduced to these cases. When $G$ is finite or a torus, \cite[VIII, Prop.\,3.3.1]{sga7} shows that even the fppf Ext sheaf vanishes, so any extension of $G$ by $\Gm$ over $\overline{k}$ splits, hence (since $k$ is perfect), any extension of $G$ by $\Gm$ over an \'etale $k$-scheme splits \'etale-locally. In order to show that the \'etale sheaf $\calExt^1_k(\Ga, \Gm)$ is trivial, it suffices to show that any extension
\[
1 \longrightarrow \Gm \longrightarrow E \longrightarrow \Ga \longrightarrow 1
\]
over a field $L$ splits. This follows from Proposition \ref{Ext^1(Ga,Gm)=0ifPic(Ga)=0}.
\end{proof}

\section{Applications of the vanishing of $\calExt^1(\cdot, \Gm)$}
\label{applicationsofext=0section}

The first application, and the one which will play the most significant role in the present work, is the following.

\begin{proposition}
\label{hatisexact}
Let $k$ be a field, and suppose that we have a short exact sequence of affine commutative $k$-group schemes of finite type\: 
\[
1 \longrightarrow G' \longrightarrow G \longrightarrow G'' \longrightarrow 1.
\]
Then the resulting sequence of fppf dual sheaves
\[
1 \longrightarrow \widehat{G''} \longrightarrow \widehat{G} \longrightarrow \widehat{G'} \longrightarrow 1
\]
is also short exact when ${\rm{char}}(k)=p>0$, and likewise for the small \'etale site of $k$ when ${\rm{char}}(k)=0$.
\end{proposition}

\begin{proof}
Pullback yields (in the fppf or \'etale topology) an exact sequence
\[
1 \longrightarrow \widehat{G''} \longrightarrow \widehat{G} \longrightarrow \widehat{G'} \longrightarrow \calExt^1(G'', \Gm).
\]
The proposition therefore follows from Propositions \ref{ext=0} and \ref{ext0}.
\end{proof}

\begin{corollary}
\label{characterfinitecokernel}
Let $k$ be a field, and suppose that we have an inclusion $H \hookrightarrow G$ of affine commutative $k$-group schemes of finite type. Then the restriction map $\widehat{G}(k) \rightarrow \widehat{H}(k)$ has finite cokernel. If $k$ is algebraically closed, then this map is surjective.
\end{corollary}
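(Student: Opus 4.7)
The strategy is to identify the cokernel with a subgroup of a cohomology group, then analyze that group by d\'evissage via the structure theory for affine commutative group schemes. Set $Q := G/H$, an affine commutative $k$-group scheme of finite type. Applying Proposition \ref{hatisexact} to the short exact sequence $1 \to H \to G \to Q \to 1$ gives an exact sequence of sheaves
\[
1 \longrightarrow \widehat{Q} \longrightarrow \widehat{G} \longrightarrow \widehat{H} \longrightarrow 1
\]
in the fppf topology (in positive characteristic) or the \'etale topology (in characteristic $0$, where all fields are perfect). The associated long exact cohomology sequence yields an injection
\[
\coker\!\bigl(\widehat{G}(k) \to \widehat{H}(k)\bigr) \hookrightarrow {\rm{H}}^1(k, \widehat{Q}).
\]
It therefore suffices to prove: (a) when $k$ is algebraically closed, ${\rm{H}}^1(k, \widehat{Q}) = 0$; (b) for general $k$, the cokernel is finite.

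For (a), in characteristic $0$ one has $k = k_s$ and all higher \'etale cohomology over ${\rm Spec}(k)$ vanishes, so assume $\mathrm{char}(k) = p > 0$. Apply Lemma \ref{affinegroupstructurethm} to $Q$ to obtain $1 \to H_1 \to Q \to U \to 1$ with $H_1$ an almost-torus and $U$ split unipotent. Dualizing via Proposition \ref{hatisexact} reduces the vanishing to ${\rm{H}}^1(k, \widehat{U}) = 0$ and ${\rm{H}}^1(k, \widehat{H_1}) = 0$. The first follows from Corollary \ref{H^1(U^)=0} since Frobenius on an algebraically closed field is surjective. For the second, Lemma \ref{almosttorus}(iv) provides an isogeny $A \times \R_{k_1/k}(T_1) \twoheadrightarrow H_1^n \times \R_{k_2/k}(T_2)$ with $A$ finite and $T_i$ split tori; over $k = \bar k$ the Weil restrictions collapse to split tori, so dualizing reduces further to ${\rm{H}}^1(\bar k, \widehat{T}) = 0$ for split $T$ (trivial, as $\widehat{T} \simeq \Z^n$) and ${\rm{H}}^1(\bar k, \widehat{F}) = 0$ for finite commutative $F$ (by filtering $\widehat{F}$ and handling $\mu_n$, $\alpha_p$, and constant \'etale groups separately).

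For (b), I show that the cokernel is simultaneously finitely generated and torsion. For finite generation: applying Lemma \ref{affinegroupstructurethm} to $H$, with $1 \to H_0 \to H \to U \to 1$, dualizing gives $\widehat{H}(k) \hookrightarrow \widehat{H_0}(k)$ since $\widehat{U}(k) = 0$ over any field (by Lemma \ref{formalchars}, nonzero formal characters require nilpotent coefficients, which vanish over a field). Moreover $\widehat{H_0}(k) \subset \widehat{H_0}(k_s)$, and the latter is finitely generated via Lemma \ref{almosttorus}(iv), which reduces it to characters of finite group schemes and of Weil restrictions of split tori, both finitely generated over $k_s$. Thus $\widehat{H}(k)$, and hence the cokernel, is finitely generated. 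For torsionness of ${\rm{H}}^1(k, \widehat{Q})$: by the same d\'evissage as in (a), it reduces to ${\rm{H}}^1(k, \widehat{U}) = 0$ (which holds over any field because ${\rm{H}}^1(k, \widehat{\Ga}) = 0$ by Proposition \ref{cohomologyofG_adualgeneralk}, cited in Remark \ref{discontinuous}), together with torsionness of ${\rm{H}}^1(k, \widehat{F})$ for finite $F$ (automatic since $\widehat{F}$ is a finite group scheme) and torsionness of ${\rm{H}}^1(k, \widehat{\R_{L/k}T})$ for split $T$ (in fact zero by Shapiro's lemma applied to the induced $\Z$-module character sheaf). A finitely generated torsion abelian group is finite, completing the proof.

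The main obstacle is executing the d\'evissage cleanly: one must keep track of which topology (fppf vs.\ \'etale) is in use, handle the non-representability of $\widehat{G}$ so that all arguments occur at the level of sheaves, and confirm that the various intermediate cohomological vanishings propagate correctly through the long exact sequences arising from Lemmas \ref{affinegroupstructurethm} and \ref{almosttorus}(iv).
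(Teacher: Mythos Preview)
Your proof is correct, and it takes a genuinely different route from the paper for the torsion step. Both approaches begin identically: apply Proposition~\ref{hatisexact} to the short exact sequence $1 \to H \to G \to Q \to 1$ to embed the cokernel into ${\rm H}^1(k,\widehat{Q})$, and observe that $\widehat{H}(k)$ is finitely generated. The divergence is in showing the cokernel is torsion. You prove that ${\rm H}^1(k,\widehat{Q})$ itself is torsion by structural d\'evissage on $Q$ (Lemma~\ref{affinegroupstructurethm} and Lemma~\ref{almosttorus}(iv)), reducing to the vanishing of ${\rm H}^1(k,\widehat{U})$ for split unipotent $U$ and the torsion-ness of ${\rm H}^1(k,\widehat{H_1})$ for almost-tori $H_1$. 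The paper instead argues directly at the level of characters: given $\chi \in \widehat{H}(k)$, lift it to $\psi \in \widehat{G}(\overline{k})$ by the algebraically closed case, show a power $\psi^{p^n}$ descends to $\widehat{G}(k_s)$ via Frobenius, and then take the Galois norm to land in $\widehat{G}(k)$. Your approach is more in the spirit of the paper's later d\'evissage machinery and yields the stronger statement that ${\rm H}^1(k,\widehat{Q})$ is torsion; the paper's approach is more elementary and self-contained, avoiding the cohomological analysis of $\widehat{Q}$ entirely.

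One minor simplification you missed: for part (a), over an algebraically closed field $\overline{k}$, higher fppf cohomology of \emph{any} abelian sheaf vanishes, since by the Nullstellensatz every fppf cover of $\Spec(\overline{k})$ is refined by the identity. So ${\rm H}^1(\overline{k},\widehat{Q})=0$ is immediate without d\'evissage (this is essentially what the paper means by ``immediate consequence of Proposition~\ref{hatisexact}'').
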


\begin{proof}
The surjectivity assertion for algebraically closed fields is an immediate consequence of Proposition \ref{hatisexact} because the quotient group $G/H$ is affine \cite{conradaffinequotients}. To treat general fields, we first note that $\widehat{H}(k)$ is finitely generated. Indeed, in order to prove this claim we may assume that $k = \overline{k}$, hence that $G$ has a filtration by finite group schemes, $\Gm$'s, and $\Ga$'s, for all of which the claim is clear.

Returning once again to the case of general fields, the map $\widehat{G}(k) \rightarrow \widehat{H}(k)$ therefore has finitely generated cokernel. In order to show that this cokernel is finite, therefore, it suffices to show that it is torsion. That is, given $\chi \in \widehat{H}(k)$, we want to show that $\chi^n$ extends to a character of $\widehat{G}(k)$ for some positive integer $n$. Since the restriction map is surjective over $\overline{k}$, there exists $\psi \in \widehat{G}(\overline{k})$ such that $\psi|_H = \chi$.

We first claim that for suitable $r > 0$ the character $\psi^r$ descends to a character in $\widehat{G}(k_s)$. Indeed, when ${\rm{char}}(k) = 0$ we may take $r = 1$, so suppose that ${\rm{char}}(k) = p > 0$. Then $\psi$ is in particular a global unit $\psi \in \overline{k}[G]^{\times}$, where $\overline{k}[G] : = \overline{k} \otimes_k k[G]$ is the affine coordinate ring of $G_{\overline{k}}$ (and $k[G]$, of course, is the affine coordinate ring over $k$). In particular, $\psi \in k_s^{1/p^n}[G]^{\times}$ for some integer $n > 0$. It follows that $\psi^{p^n} \in k_s[G]^{\times}$, hence $\psi^{p^n}$ defines a character of $G_{k_s}$. This proves the claim.

Therefore, by replacing $\chi$ with $\chi^r$ we may assume that $\psi \in \widehat{G}(k_s)$. Thus, $\psi \in \widehat{G}(k')$ for some finite Galois extension $k'/k$ -- of degree $d$, say.
Hence, $\prod_{\sigma \in \Gal(k'/k)} \psi^{\sigma} \in \widehat{G}(k)$. But $\prod_{\sigma \in \Gal(k'/k)} \psi^{\sigma}|_H = \chi^d$, so we are done.
\end{proof}

\begin{remark}
\label{characterfinitecokerremark}
Oesterl\'e proved that if we have an inclusion $H \hookrightarrow G$ with $G$ smooth, connected, and affine (and $H$ a closed $k$-subgroup scheme), then the restriction map $\widehat{G}(k) \rightarrow \widehat{H}(k)$ has finite cokernel \cite[A.1.4]{oesterle}. That is, he avoids commutativity hypotheses, but adds in smoothness and connectedness. The smoothness may be dispensed with:  \cite[VII$_{\rm{A}}$, Prop.\,8.3]{sga3} furnishes an infinitesimal subgroup scheme $I \subset G$ such that $G/I$ is smooth, and we then have an inclusion $H/H\cap I \hookrightarrow G/I$. If $\chi \in \widehat{H}(k)$, then $\chi^n|_{H \cap I} = 1$ for some positive integer $n$, hence some power of $\chi^n$ extends to a character of $G/I$ (by the smooth connected case), hence to one of $G$, so again the cokernel is torsion and finitely generated, hence finite.

But in the absence of commutativity assumptions, the connectedness of $G$ is absolutely crucial (even when $k = \overline{k}$), as the following example due to Brian Conrad illustrates. Let $G = \Gm \rtimes \Z/2\Z$ with the nontrivial element of $\Z/2\Z$ acting on $\Gm$ by inversion, and let $H$ be the normal $\Gm$ inside of $G$. Then we claim that for any $\chi \in \widehat{G}(k)$, we have $\chi|_H = 1$. Indeed, $\chi$ is invariant under conjugation, hence due to the $\Z/2\Z$-action, $\chi|_{\Gm}$ must be invariant under inversion, hence trivial.
\end{remark}

\begin{corollary}
\label{H^1=Ext^1}
Let $k$ be a field, $G$ an affine commutative $k$-group scheme of finite type. We have a canonical isomorphism ${{\rm{H}}}^1(k, \widehat{G}) \xrightarrow{\sim} \Ext^1_k(G, \mathbf{G}_m)$.
\end{corollary}

\begin{proof} 
By Proposition \ref{ext0}, any element $E \in \Ext^1_k(G, \Gm)$ is an fppf form of the trivial extension (since that proposition implies that this holds when extending scalars to the perfection $k_{\rm{perf}}$ of $k$, hence over some finite purely inseparable extension of $k$, hence over $k$). We therefore have ${{\rm{H}}}^1(k, A) \simeq \Ext^1(G, \mathbf{G}_m)$, where $A$ is the automorphism sheaf of the trivial extension; that is, $A$ is the sheaf whose sections over a $k$-algebra $R$ are the automorphisms of $\mathbf{G}_m \times G$ as an extension:  the group of isomorphisms $\phi:  \mathbf{G}_m \times G \rightarrow \mathbf{G}_m \times G$ of $R$-group schemes
 such that the following diagram commutes: 
\[
\begin{tikzcd}
1 \arrow{r} & \mathbf{G}_m \arrow[d, equals] \arrow{r} & \mathbf{G}_m \times G \arrow{d}{\phi} \arrow{r} & G \arrow[d, equals] \arrow{r} & 1 \\
1 \arrow{r} & \mathbf{G}_m \arrow{r} & \mathbf{G}_m \times G \arrow{r} & G \arrow{r} & 1
\end{tikzcd}
\]
We have a canonical identification $A \simeq \widehat{G}$, hence we get an isomorphism 
 ${{\rm{H}}}^1(k, \widehat{G}) \simeq \Ext^1(G, \mathbf{G}_m)$, canonical up to a universal choice of sign.
\end{proof}

Let us also use Proposition \ref{hatisexact} to prove the following result, which we will use in chapter \ref{chapterlocalfields}.

\begin{proposition}
\label{hatrepresentable}
Let $k$ be a field, $G$ an affine commutative $k$-group scheme of finite type. Then the fppf sheaf $\widehat{G}$ is representable if and only if $G$ is an almost-torus, in which case it is locally finite and its \'etale component group has finitely-generated group of $k_s$-points. 
\end{proposition}

\begin{proof}
First suppose that $G$ is an almost-torus. Then by Lemma \ref{almosttorus}(iii), there is an isogeny $T \times A \twoheadrightarrow G$ with $A$ a finite $k$-group scheme and $T$ a $k$-torus. Let $B$ denote its (finite) kernel. Then we have an exact sequence
\[
1 \longrightarrow B \longrightarrow T \times A \longrightarrow G \longrightarrow 1
\]
hence an exact sequence
\[
1 \longrightarrow \widehat{G} \longrightarrow \widehat{T} \times \widehat{A} \longrightarrow \widehat{B}
\]
As is well-known, the dual sheaves of finite commutative group schemes and of tori are representable by commutative finite type group schemes that are locally finite and whose component groups have finitely-generated group of $k_s$-points, hence $\widehat{G}$ is the kernel of a homomorphism of such $k$-group schemes, hence is also of this form.

Now suppose that $G$ is not an almost-torus, and we will show that $\widehat{G}$ is not representable. By Lemma \ref{affinegroupstructurethm}, there is an almost-torus $H \subset G$ such that the quotient $U : = G/H$ is split unipotent
(and nontrivial!). We then have an exact sequence
\[
1 \longrightarrow \widehat{U} \longrightarrow \widehat{G} \longrightarrow \widehat{H}
\]
so since $\widehat{H}$ is representable, if $\widehat{G}$ is representable, then so is $\widehat{U}$, as it is then the kernel of a morphism of $k$-group schemes. We may therefore assume that $G = U$ is nontrivial split unipotent.

So suppose that $\widehat{U}$ is representable. We have $\widehat{U}(\varinjlim \R_i) = \varinjlim \widehat{U}(\R_i)$ for any filtered directed system of $k$-algebras $\R_i$, so by \cite[IV$_3$, Prop.\,8.14.2]{ega}, $\widehat{U}$ is locally of finite type. But $\widehat{U}$ has no nontrivial field-valued points, so it must be infinitesimal.

If ${\rm{char}}(k) = 0$, then this means that $\widehat{U} = 0$. But this is false:  there is a surjection $U \twoheadrightarrow \mathbf{G}_a$, hence an inclusion $\widehat{\mathbf{G}_a} \hookrightarrow \widehat{U}$, and $\widehat{\mathbf{G}_a} \neq 0$. Indeed, over any ring $R$, the group $\widehat{\mathbf{G}_a}(R)$ includes all of the global units exp$(aX) = \sum_{n=0}^\infty (aX)^n/n! \in R[X] = {{\rm{H}}}^0(\mathbf{G}_{a,R}, \mathbf{G}_m)$ with $a \in R$ nilpotent. (Actually, these are all of the elements of $\widehat{\mathbf{G}_a}(R)$ by Lemma \ref{formalchars}(iv).) So $\widehat{\mathbf{G}_a}(R) \neq 0$ for any non-reduced $k$-algebra $R$.

Next suppose that ${\rm{char}}(k) > 0$. Then the infinitesimal $\widehat{U}$ is finite. But there is an inclusion $\mathbf{G}_a \hookrightarrow U$, hence an inclusion $\alpha_{p^n} \hookrightarrow U$ for each $n > 0$. By Proposition \ref{hatisexact}, therefore, there is a surjection $\widehat{U} \twoheadrightarrow \widehat{\alpha_{p^n}}$ for each positive integer $n$. This is impossible 
since $\widehat{U}$ is finite.
\end{proof}

\section{Double duality}
\label{doubledualitysection}

In accordance with the philosophy that our results should be symmetric in $G$ and $\widehat{G}$, the purpose of this section is to prove that the canonical map 
\begin{equation}
\label{doubledualityeqn3}
G \rightarrow G^{\wedge\wedge}
\end{equation}
is an isomorphism of fppf sheaves for any affine commutative group scheme $G$ of finite type over a field (Proposition \ref{doubleduality}). We will never use this, so the reader may skip this section.

It is well-known (part of Cartier duality) that the map (\ref{doubledualityeqn3}) is an isomorphism when $G$ is a finite commutative group scheme, and for $G = \Gm$, one has $\widehat{G} = \Z$, and the assertion is clear. The most difficult case is when $G = \Ga$.

\begin{lemma}
\label{doubledualityGa}
Let $X$ be either a $\Q$-scheme or an $\F_p$-scheme. Then the canonical map $\Ga \rightarrow \Ga^{\wedge\wedge}$ is an isomorphism of fppf sheaves on $X$.
\end{lemma}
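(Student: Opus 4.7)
The plan is to verify directly that the canonical evaluation map $\phi \colon \Ga \to \widehat{\widehat{\Ga}}$, sending $a \in \Ga(R) = R$ to the natural transformation $\chi \mapsto \chi(a)$, is an isomorphism of fppf sheaves, by checking bijectivity pointwise on $R$-points for arbitrary $\F_p$-algebras or $\Q$-algebras $R$; the explicit descriptions of characters from Lemma~\ref{formalchars} are essential. Injectivity is uniform: if $\phi(a) = 0$, testing against the universal character $\exp(\epsilon X)$ over $R[\epsilon]/(\epsilon^2)$ (in characteristic $0$) or $\exp_p(\epsilon X)$ over $R[\epsilon]/(\epsilon^p)$ (in characteristic $p$) yields $1 + \epsilon a + \cdots = 1$, and inspecting the coefficient of $\epsilon$ forces $a = 0$.

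In characteristic $0$, Lemma~\ref{formalchars}(iv) identifies $\widehat{\Ga}(R')$ with the additive group of nilpotent elements $\mathrm{Nil}(R')$ via $b \leftrightarrow \exp(bX)$. Given $\psi \in \widehat{\widehat{\Ga}}(R)$, I would extract $c \in R$ from $\psi(\epsilon) = 1 + c\epsilon \in (R[\epsilon]/(\epsilon^2))^\times$ and then show $\psi(b) = \exp(cb)$ for every nilpotent $b$: naturality of $\psi$ under the algebra maps $\epsilon \mapsto \lambda \epsilon$ for $\lambda \in R$, together with the homomorphism property $\psi(b + b') = \psi(b)\psi(b')$, determines $\psi$ on the truncated test rings $R[\epsilon]/(\epsilon^n)$ inductively in $n$, matching the Taylor expansion of $\exp(c \cdot)$. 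Thus $\widehat{\widehat{\Ga}}(R) \simeq R$ canonically, and under this identification $\phi$ is the identity.

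In characteristic $p$, given $\psi \in \widehat{\widehat{\Ga}}(R)$, extract $a \in R$ as the coefficient of $\epsilon$ in $\psi(\exp_p(\epsilon X)) \in (R[\epsilon]/(\epsilon^p))^\times$. To show $\psi = \phi(a)$, by multiplicativity of $\psi$ and Lemma~\ref{formalchars}(ii) it suffices to match values on the generating characters $\exp_p(\eta X^{p^n})$ (over the universal ring $R[\eta]/(\eta^p)$) for all $n \geq 0$. I would argue by induction on $n$: the polynomial identity
\[
\exp_p(\epsilon X) \cdot \exp_p(\delta X) = \exp_p((\epsilon + \delta)X) \cdot \prod_{m \geq 1} \exp_p\bigl(c_m(\epsilon, \delta) X^{p^m}\bigr)
\]
in $\widehat{\Ga}(R[\epsilon, \delta]/(\epsilon^p, \delta^p))$, obtained by expanding the polynomial product and re-expressing in the canonical form of Lemma~\ref{formalchars}(ii), combined with $\psi$ being a homomorphism and naturality, forces $\psi(\exp_p(\eta X^{p^n})) = \exp_p(\eta a^{p^n})$ for all $n$; explicitly, for $p = 2$ the identity $\exp_2(\epsilon X) \exp_2(\delta X) = \exp_2((\epsilon + \delta) X) \exp_2(\epsilon \delta X^2)$ combined with $\psi(\exp_2(\epsilon X)) = 1 + a\epsilon$ immediately yields $\psi(\exp_2(\eta X^2)) = 1 + a^2 \eta$, and the same mechanism propagates to higher $n$ as well as to the higher-order coefficients in $\epsilon$ in the base case (forcing them to equal $a^k/k!$).

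The main obstacle is the intricate bookkeeping in the characteristic $p$ surjectivity: one must carefully track the correction terms $\exp_p(c_m(\epsilon, \delta) X^{p^m})$ arising in products of characters in $\widehat{\Ga}$, which encode the exotic (Witt-vector-like) group law on $\widehat{\Ga}$ expressed in the canonical coordinates of Lemma~\ref{formalchars}(ii), and verify that they propagate the single scalar $a \in R$ through all Frobenius twists and all higher-order $\epsilon$ coefficients, pinning down $\psi = \phi(a)$ completely.
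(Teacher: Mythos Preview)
Your approach is correct and is essentially the same as the paper's. Both arguments extract the scalar $a$ from the $\epsilon$-coefficient of $\psi(\exp_p(\epsilon X))$ (resp.\ $\psi(\exp(\epsilon X))$), and in characteristic $p$ both pivot on the product identity
\[
\exp_p(SX^{p^n})\exp_p(TX^{p^n}) = \exp_p((S+T)X^{p^n})\cdot \exp_p\Bigl(\Bigl(\textstyle\sum_{i=1}^{p-1}\frac{S^iT^{p-i}}{i!(p-i)!}\Bigr)X^{p^{n+1}}\Bigr)
\]
(so your correction product $\prod_{m\ge 1}$ is in fact a single factor at $m=1$). The paper packages the argument by writing the universal value $f(T_0,T_1,\dots)$ and reading off two recursions---$C(n,m)=C(n,m-1)C(n,1)/m$ from the coefficient of $S^{m-1}T$, and $C(n+1,1)=(p-1)!\,C(n,1)C(n,p-1)$ from the coefficient of $ST^{p-1}$---whereas you phrase the same two steps as an induction on $n$ together with a sub-argument for the higher-order $\epsilon$-coefficients at each level. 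In your inductive step, note that since $c_1(\epsilon,\delta)^2=0$ in $R[\epsilon,\delta]/(\epsilon^p,\delta^p)$, the level-$n$ identity only pins down the linear coefficient of $g_{n+1}$; you then need to rerun the ``degree $<p$'' comparison at level $n+1$ to recover the remaining coefficients, exactly as in your base case---which is just the paper's relation (i) at level $n+1$. So the two arguments coincide once the bookkeeping is unwound.
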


\begin{proof}
We may assume that $X= \Spec(R)$. First we treat the case when $R$ is a $\Q$-algebra. Choose an $R$-morphism $\phi:  \widehat{\mathbf{G}_{a,\,R}} \rightarrow \Gm$
(i.e., an element of $\Ga^{\wedge\wedge}(R)$) for a $\Q$-algebra $R$; in particular, $\phi(1) = 1$. By Lemma \ref{formalchars}(iv), we need to show that there is a unique $r \in R$ such that $\phi(\exp(aX)) = \sum_{n \geq 0} a^nr^n/n!$ for any $R$-algebra $A$ and any nilpotent $a \in A$. The uniqueness of $r$ -- if it exists -- follows from taking $A : = R[\epsilon]/(\epsilon^2)$ and $a : = \epsilon$.

Let $A_n : = R[T]/(T^n)$, and let $f_n(T) : = \phi(\exp(TX)) \in (R[T]/(T^n))^{\times}$. (Then $\exp(TX)$ is the universal character obtained from nilpotents of order $\le n$.) 
For any $a \in A$ satisfying $a^n = 0$ there is a unique $R$-algebra map $R[T]/(T^n) \rightarrow A$ satisfying $T \mapsto a$, so 
naturality of $\phi$ as a map of functors implies $\phi(\exp(aX)) = f_n(a)$ for any such $a \in A$. Further use of naturality of $\phi$ implies
the ``coherence condition'' 
$f_{n+1} \bmod T^n = f_n$, so the $f_n$'s arise as the reductions of  a single $f \in 1 + TR\llbracket T \rrbracket$.
(We have $f(0) = 1$ because $\phi(1) = 1$.) 
In particular, $\phi(\exp(aX)) = f(a)$ for any $R$-algebra $A$ and nilpotent $a \in A$. 

For any $R$-algebra $A$ and nilpotent elements $a, b \in A$, we have 
$$f(a + b) = \phi(\exp((a+b)X)) = \phi(\exp(aX)\exp(bX)) = \phi(\exp(aX))\phi(\exp(bX)) = f(a)f(b).$$ 
It follows that $f(S + T) = f(S)f(T)$ in $A[\![S, T]\!]$. This says that $f$ is a formal character. It follows from Lemma \ref{formalchars}(iv) that $f(T) = \exp(rT)$ for some $r \in R$, so $\phi(\exp(aX)) = f(a) = \exp(ra)$ for any nilpotent $a$
belonging to any $R$-algebra $A$, which is what we wanted to show.

Now consider the case when $R$ is an $\F_p$-algebra. This is similar to the case of $\Q$-algebras treated above, but more complicated.  Once again, choose $\phi \in \Ga^{\wedge\wedge}(R)$. By Lemma \ref{formalchars}(ii), we need to find a unique $r \in R$ such that $\phi(\prod_{n=0}^N \exp_p(a_nX^{p^n})) = \prod_{n=0}^N \exp_p(a_nr^{p^n})$ for all 
$R$-algebras $A$ and all $a_n \in \alpha_p(A)$. Uniqueness of $r$ -- if it exists -- is seen by taking $A : = R[\epsilon]/(\epsilon^2)$ and considering $\phi(\exp_p(\epsilon X)) = 1 + r\epsilon \in A^{\times}$.

In order to prove existence, define for $N \geq 0$, $A_N : = R[T_0, T_1, \dots, T_N]/(T_0^p, \dots, T_N^p)$ and $$f_N(T_0, \dots, T_N) : = \phi(\prod_{n=0}^N \exp_p(T_nX^{p^n})) \in A_N^{\times}.$$ Note that $\prod_{n=0}^N \exp_p(T_nX^{p^n})$ is the ``universal character of degree $< p^{N+1}$''. For any $R$-algebra $A$ and $a_0, \dots, a_N \in \alpha_p(A)$ we have $\phi(\prod_{n=0}^N \exp_p(a_nX^{p^n})) = f_N(a_0, \dots, a_N)$. To proceed, it is convenient to introduce the completion $B$ of 
$R[T_0, T_1, \dots]/(T_0^p, T_1^p, \dots)$ for the topology defined by the decreasing sequence of ideals
$J_N = (T_{N+1}, T_{N+2}, \dots)$.  Elements of $B$ have a unique expansion (that can be manipulated $R$-linearly) as formal series
$$f(T_0, T_1, \dots) = \sum_I r_I T^I$$ where each multi-index $I = (i_0, i_1, \dots)$ satisfies $0 \le i_j \le p-1$ for all $j$, 
$i_j=0$ for all but finitely many $j$ (so only finitely many such $I$ exist with $i_j$'s vanishing for
$j$ outside a fixed finite set), and $T^I : = \prod_{j \ge 0} T_j^{i_j}$. Since $f_{N+1} \bmod T_{N+1} = f_N$ by naturality of $\phi$, 
the $f_N$'s are reductions of a single $f \in B$.  
(We will only ever actually need to consider $f$ after specializing all $T_m$ to be 0 for sufficiently large $m$, but 
to streamline notation it is convenient to express
the subsequent considerations in terms of the single $f$.)

Since $\phi(1) = 1$, we have $f(0, 0, \dots) = 1$. Thus, $f = 1 + \sum_{I \neq 0} r_I T^I$. We have $\phi(\prod_{n=0}^N \exp_p(a_nX^{p^n})) = f(a_0, a_1, \dots, a_N, 0, 0, \dots)$ for any $R$-algebra $A$ and for any elements $a_0, a_1, \dots, a_N  \in \alpha_p(A)$. 
By multiplicativity, $\phi$ is determined by where it sends characters of the form $\exp_p(aX^{p^n})$ for $R$-algebras $A$ and 
$a \in \alpha_p(A)$. For $A = R[T_n]/(T_n^p)$ and $a = T_n \bmod T_n^p \in A$, we
have $\phi(\exp_p(T_n X^{p^n})) = f(\dots, 0, T_n, 0, \dots) \in (R[T_n]/(T_n^p))^{\times}$, where one evaluates $f$ at the infinity-tuple with all entries $0$ except $T_n$. So 
$\phi$ is determined by the $r_I$'s for $I = (i_0, i_2, \dots)$ such that $i_j = 0$ for all but one $j$ (and the remaining
$i_n$ belonging to $\{1, \dots, p-1\}$). 

Let $C(n, m) : = r_{I(n,m)}$, where $I(n,m)$ has its $n$th component  
equal to $m \in \{0, \dots, p-1\}$ and all of the other components equal to 0; i.e., $C(n, m)$ is the $T_n^m$-coefficient of $f$.  We will now prove:  
\begin{itemize}
\item[(i)] $C(n, m) = C(n, m-1)C(n, 1)/m$ if $1 \leq m \le p-1$, $n \ge 0$, 
\item[(ii)] $C(n+1, 1) = (p-1)!C(n, 1)C(n, p-1)$ for all $n \geq 0$.
\end{itemize}
This will imply what we want. Indeed, these facts imply that the $C(n, m)$'s are determined by $C(0,1)$, so $\phi$ is determined once we specify the coefficient $r$ of $T_0$ in $f_0(T_0) \in R[T_0]/(T_0^p)$. But the image $\phi_r$ of $r$ under the natural map $\Ga \rightarrow \Ga^{\wedge\wedge}$ sends $\exp(T_0X)$ to $\exp(T_0r)$; i.e., its ``$f_0$'' has linear coefficient $r$. Since any $\phi$ is determined by this coefficient, it follows that $\phi = \phi_r$, which is what we wanted. It therefore only remains to prove (i) and (ii) above.

Let $A : = R[S,T]/(S^p, T^p)$.  Viewing $S$ and $T$ as elements of $A$, we have 
$$\exp_p(SX^{p^n})\exp_p(TX^{p^n}) = \exp_p((S + T)X^{p^n})\exp_p\left(\left(\sum_{i=1}^{p-1} \frac{S^iT^{p-i}}{i!(p-i)!}\right)X^{p^{n+1}}\right)$$
in $A[X]^{\times}$; this equality may be checked directly, but the simplest way to see it is to note that both sides are formal characters over $A$ 
such that the coefficients of $X^{p^m}$ agree
 for all $m$, vanishing except possibly for $m = n, n+1$, and then to apply Lemma \ref{formalchars}. To exploit this identity, it is convenient to
 introduce some notation as follows.  For $a, b \in A = R[S,T]/(S^p, T^p)$ we define 
 $f(a_n)$ to be $f(0, 0, \dots, a, 0, \dots) \in A^{\times}$ (all entries vanishing away from the $n$th, which is $a$) and define
 $f(a_n, b_{n+1})$ to be the evaluation of $f$ on the vector whose $n$th entry is $a$, whose $(n+1)$th entry is $b$, and 
 whose other entries vanish.
 In terms of this notation, we have 
\begin{eqnarray*}
f(S_n)f(T_n) &=& \phi(\exp_p(SX^{p^n}))\phi(\exp_p(TX^{p^n})) \\
&=& \phi \left(\exp_p(SX^{p^n})\exp_p(TX^{p^n})\right) \\
&=& \phi \left(\exp_p((S + T)X^{p^n})\exp_p\left(\left(\sum_{i=1}^{p-1} \frac{S^iT^{p-i}}{i!(p-i)!}\right)X^{p^{n+1}}\right)\right)\\
&=& f\left((S+T)_n, \left(\sum_{i=1}^{p-1} \frac{S^iT^{p-i}}{i!(p-i)!}\right)_{n+1}\right)
\end{eqnarray*}
(the final equality by the design of $f$). For $1 \le m \le p-1$, comparing coefficients $S^{m-1}T$ 
in the first and last expressions for this string of equalities yields (i) and comparing coefficients of $ST^{p-1}$ yields (ii).
\end{proof}

\begin{lemma}
\label{doubledualitydevissage}
Suppose that we have a short exact sequence
\[
1 \longrightarrow G' \longrightarrow G \longrightarrow G'' \longrightarrow 1
\]
of commutative group schemes over a field $k$ such that the fppf sheaf $\calExt^1_k(G'', \Gm)$ vanishes. If the canonical maps $G' \rightarrow G'^{\wedge\wedge}$ and $G'' \rightarrow G''^{\wedge\wedge}$ are isomorphisms of fppf sheaves, then so is the map $G \rightarrow G^{\wedge\wedge}$. 
\end{lemma}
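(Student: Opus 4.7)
The plan is to apply $\widehat{(\cdot)} = \calHom(-,\Gm)$ twice to the given short exact sequence and then conclude via the five lemma. First, applying $\widehat{(\cdot)}$ once produces a long exact $\calExt$-sequence whose next term is $\calExt^1_k(G'', \Gm)$; by the hypothesis on $G''$, this term vanishes, yielding the short exact sequence of fppf sheaves
\[
1 \longrightarrow \widehat{G''} \longrightarrow \widehat{G} \longrightarrow \widehat{G'} \longrightarrow 1.
\]
Applying $\widehat{(\cdot)}$ a second time to this then gives the left-exact sequence
\[
1 \longrightarrow G'^{\wedge\wedge} \longrightarrow G^{\wedge\wedge} \longrightarrow G''^{\wedge\wedge} \xrightarrow{\;\partial\;} \calExt^1_k(\widehat{G'}, \Gm).
\]

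The main obstacle is that, unlike in the first step, we have no hypothesis forcing $\calExt^1_k(\widehat{G'}, \Gm)$ to vanish, so this last sequence is only left-exact a priori. The key observation is that the connecting map $\partial$ can nonetheless be shown to vanish by a diagram chase. Consider the commutative ladder
\[
\begin{tikzcd}
1 \arrow{r} & G' \arrow{r} \arrow{d}{\wr} & G \arrow{r} \arrow{d} & G'' \arrow{r} \arrow{d}{\wr} & 1 \\
1 \arrow{r} & G'^{\wedge\wedge} \arrow{r} & G^{\wedge\wedge} \arrow{r} & G''^{\wedge\wedge} \arrow{r}{\partial} & \calExt^1_k(\widehat{G'}, \Gm)
\end{tikzcd}
\]
whose vertical maps are the canonical biduality morphisms (two of which are isomorphisms by assumption) and which commutes by functoriality of double duality. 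The composite from $G$ across the top and then down to $\calExt^1_k(\widehat{G'}, \Gm)$ equals, along the other route, the composite $G \to G^{\wedge\wedge} \to G''^{\wedge\wedge} \xrightarrow{\partial} \calExt^1_k(\widehat{G'}, \Gm)$, which vanishes because two consecutive arrows in the bottom row compose to zero. On the top-then-right route this composite factors as $G \to G'' \xrightarrow{\sim} G''^{\wedge\wedge} \xrightarrow{\partial} \calExt^1_k(\widehat{G'}, \Gm)$; since $G \twoheadrightarrow G''$ is an fppf epimorphism and $G'' \xrightarrow{\sim} G''^{\wedge\wedge}$ is an isomorphism, it follows that $\partial = 0$ as a map of fppf sheaves.

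With $\partial = 0$, the bottom row is a genuine short exact sequence of fppf sheaves. Applying the five lemma to the resulting ladder with short exact rows and the two outer vertical arrows being isomorphisms, we conclude that the middle vertical map $G \to G^{\wedge\wedge}$ is an isomorphism as well, completing the proof.
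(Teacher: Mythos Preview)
Your proof is correct and follows essentially the same approach as the paper. The only difference is cosmetic: the paper stops at the left-exact sequence $1 \to G'^{\wedge\wedge} \to G^{\wedge\wedge} \to G''^{\wedge\wedge}$ and finishes with a direct diagram chase (not needing right-exactness of the bottom row), whereas you take the extra step of proving $\partial = 0$ to obtain a short exact bottom row and then invoke the five lemma; your argument for $\partial = 0$ is precisely the surjectivity half of that same diagram chase, so the two arguments are really the same.
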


\begin{proof}
Our assumption on $\calExt^1_k(G'', \Gm)$ implies that the dual sequence
\[
1 \longrightarrow \widehat{G''} \longrightarrow \widehat{G} \longrightarrow \widehat{G'} \longrightarrow 1
\]
is exact. Dualizing once more yields a left-exact sequence
\[
1 \longrightarrow G'^{\wedge\wedge} \longrightarrow G^{\wedge\wedge} \longrightarrow G''^{\wedge\wedge}, 
\]
so we obtain a commutative diagram (of sheaves) with exact rows
\[
\begin{tikzcd}
1 \arrow{r} & G' \arrow{r} \isoarrow{d} & G \arrow{r} \arrow{d} & G'' \arrow{r} \isoarrow{d} & 1 \\
1 \arrow{r} & G'^{\wedge\wedge} \arrow{r} & G^{\wedge\wedge} \arrow{r} & G''^{\wedge\wedge} &
\end{tikzcd}
\]
where the left and right vertical arrows are isomorphisms by assumption. A simple diagram chase now shows that the middle vertical arrow is an isomorphism. 
\end{proof}

\begin{proposition}
\label{doubleduality}
Let $k$ be a field, $G$ an affine commutative $k$-group scheme of finite type. Then the canonical map $G \rightarrow G^{\wedge\wedge}$ is an isomorphism of fppf sheaves.
\end{proposition}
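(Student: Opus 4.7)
The strategy is to combine the structure theorem (Lemma \ref{affinegroupstructurethm}) with the d\'evissage principle (Lemma \ref{doubledualitydevissage}), reducing the problem to the three basic cases of finite commutative group schemes, separable Weil restrictions of $\Gm$, and $\Ga$, for which double duality can be read off directly.

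First I would perform the preliminary d\'evissage. By Lemma \ref{affinegroupstructurethm}, $G$ fits into a short exact sequence $1 \to H \to G \to U \to 1$ with $H$ an almost-torus and $U$ split unipotent. Using Lemma \ref{doubledualitydevissage}, I reduce to double duality for $H$ and $U$ separately, provided that $\calExt^1_k(U, \Gm) = 0$ as an fppf sheaf. In characteristic $p$ this is Proposition \ref{ext=0}; in characteristic $0$ the group $U$ is automatically smooth, so any fppf extension of $U$ by $\Gm$ is a smooth scheme, hence splitting \'etale-locally (which is given by Proposition \ref{ext0}) is the same as splitting fppf-locally. The same remark will justify every subsequent application of Lemma \ref{doubledualitydevissage}, since in each case the quotient is again an affine commutative finite type $k$-group.

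Next I would dispose of the split unipotent case. A split unipotent $U$ admits a composition series whose successive quotients are all $\Ga$, so repeatedly applying Lemma \ref{doubledualitydevissage} reduces the claim for $U$ to the claim for $G = \Ga$, which is exactly Lemma \ref{doubledualityGa} (applied to $X = \Spec(k)$, which is either a $\Q$-scheme or an $\F_p$-scheme).

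Finally I would handle the almost-torus case. By Lemma \ref{almosttorus}(iv) there exist an integer $n \ge 1$, a finite commutative $k$-group $A$, finite separable extensions $k_1, k_2/k$, split $k_i$-tori $T_i$, and an isogeny $A \times \R_{k_1/k}(T_1) \twoheadrightarrow H^n \times \R_{k_2/k}(T_2)$. Since $(H^n)^{\wedge\wedge} = (H^{\wedge\wedge})^n$ compatibly with the canonical map, once we have double duality for $A$, $\R_{k_1/k}(T_1)$, and $\R_{k_2/k}(T_2)$, another d\'evissage (across the short exact sequence containing the kernel and cokernel of the isogeny, both of which are finite) will give it for $H$. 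The case of a finite commutative group scheme is classical Cartier duality. For a Weil restriction $\R_{k'/k}(\Gm^r)$ along a finite separable extension $k'/k$, Proposition \ref{charactersseparableweilrestriction} applied with $f\colon \Spec(k') \to \Spec(k)$ and $\mathscr{F}' = \Gm$ gives $\widehat{\R_{k'/k}(\Gm^r)} \simeq \R_{k'/k}(\Z^r)$ via the norm map, and applying the same proposition once more with $\mathscr{F}' = \Z^r$ yields $\widehat{\R_{k'/k}(\Z^r)} \simeq \R_{k'/k}(\Gm^r)$; the composite is the canonical biduality map by naturality of the unit/counit of the adjunction, so we are done.

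The main obstacle is the case $G = \Ga$, already treated by Lemma \ref{doubledualityGa} via the explicit analysis of formal characters; everything else is formal manipulation of d\'evissage triangles, greased by the vanishing of $\calExt^1(\cdot, \Gm)$ from Propositions \ref{ext=0} and \ref{ext0} and the norm-compatibility in Proposition \ref{charactersseparableweilrestriction}.
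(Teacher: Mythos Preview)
Your overall strategy matches the paper's, but the specific route through Lemma~\ref{affinegroupstructurethm} has a genuine gap in characteristic~$0$.

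The problem is your first reduction. Applying Lemma~\ref{doubledualitydevissage} to $1 \to H \to G \to U \to 1$ requires the \emph{fppf} sheaf $\calExt^1_k(U,\Gm)$ to vanish. In characteristic~$p$ this is Proposition~\ref{ext=0}, but in characteristic~$0$ it is false: Remark~\ref{gabberexample} exhibits an extension of $\Ga$ by $\Gm$ over a $\Q$-scheme that does not split fppf-locally. Your justification---that smoothness of the extension plus Proposition~\ref{ext0} gives \'etale-local splitting, which coincides with fppf-local splitting---misreads Proposition~\ref{ext0}: that result concerns only the small \'etale site of $\Spec(k)$ for perfect $k$, not arbitrary $k$-schemes, so it says nothing about Gabber's base. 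The same obstruction recurs when you filter $U$ by successive $\Ga$-quotients.

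The paper avoids this by arranging that every d\'evissage step has a \emph{finite} quotient (for which the fppf $\calExt^1(\cdot,\Gm)$ vanishes by \cite[VIII, Prop.\,3.3.1]{sga7}): pass to $G^0$, then---after a finite extension of $k$, harmless since the assertion is fppf-local---to $G_{\red}$, reaching a smooth connected group. In characteristic~$0$ this is directly a product of a split torus and $\Ga^n$, so no unipotent-quotient step is ever needed; one finishes with $\Gm^{\wedge\wedge}=\Z^{\wedge}=\Gm$ and Lemma~\ref{doubledualityGa} componentwise. In characteristic~$p$ your route and the paper's both work.

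A smaller wrinkle: in your almost-torus step, Lemma~\ref{doubledualitydevissage} deduces biduality for the \emph{middle} term from the outer two, whereas you want the quotient $H^n\times\R_{k_2/k}(T_2)$ from the sub $B$ and the middle $A\times\R_{k_1/k}(T_1)$. This reverse direction does hold here---$B$ finite implies $\widehat{B}$ finite, whence $\calExt^1(\widehat{B},\Gm)=0$ and the double-dual sequence is again short exact, so the $5$-lemma runs the other way---but it is not what the lemma literally provides.
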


\begin{proof}
By Lemma \ref{doubledualitydevissage} and the fact that $\calExt^1_k(E, \Gm) = 0$ when $E$ is a finite $k$-group scheme \cite[VIII, Prop.\,3.3.1]{sga7}, we may replace $G$ with $G^0$ and thereby assume that $G$ is connected. The assertion is fppf local, so we may also replace $k$ by a finite extension and so assume that $G_{\red} \subset G$ is a smooth $k$-subgroup scheme. Then $G/G_{\red}$ is finite, so applying Lemma \ref{doubledualitydevissage} again, we may assume that $G$ is smooth and connected. Replacing $k$ by a further finite extension, we may assume that $G$ is the product of a split torus and a split unipotent group. The map
\[
\Gm \rightarrow \Gm^{\wedge\wedge} = \Z^{\wedge}
\]
is an isomorphism, so we are left with the case when $G = U$ is split unipotent. When ${\rm{char}}(k) = 0$, we have $U \simeq \Ga^n$ for some $n$ \cite[Prop.\,14.32]{milnealggroups}, so we are done by Lemma \ref{doubledualityGa}. When ${\rm{char}}(k) > 0$, Lemma \ref{doubledualitydevissage} and Proposition \ref{ext=0} reduce us to the case $U = \Ga$, which is once again handled by Lemma \ref{doubledualityGa}.
\end{proof}

\section{Cohomology of $\widehat{\mathbf{G}_a}$}
\label{cohomGahat}

The crucial cases for the proofs of our results in a certain sense boil down to the groups $\mathbf{G}_a$ and $\mathbf{G}_m$, which are the fundamental building blocks for arbitrary affine commutative group schemes of finite type over fields. The main cohomological results in the case $G = \mathbf{G}_m$ essentially come down to the major statements of class field theory. There is no analogous theory, however, in the case of $\mathbf{G}_a$, and it is therefore necessary for us to undertake a separate study of the cohomology of its (not even representable) dual sheaf $\widehat{\mathbf{G}_a}$. That  is the object of this section. 

We first note that when $k$ is perfect, the cohomology of $\widehat{\mathbf{G}_a}$ is very simple: 

\begin{proposition}
\label{cohomologyofG_adualwhenkisperfect}
If $k$ is a perfect field, then ${{\rm{H}}}^i(k, \widehat{\mathbf{G}_a}) = 0$ for all $i$.
\end{proposition}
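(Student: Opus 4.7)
The plan is to treat the case $\mathrm{char}(k) = p > 0$; the case $\mathrm{char}(k) = 0$ is substantially easier and admits a separate direct treatment, using that there $\widehat{\Ga}$ identifies with the nilradical sheaf via Lemma \ref{formalchars}(iv). My strategy in positive characteristic is to approximate $\widehat{\Ga}$ by its finite Cartier duals $\widehat{\alpha_{p^n}}$ (whose cohomology will vanish thanks to perfectness of $k$) and then to pass to the inverse limit via a Milnor-type exact sequence.

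First I would show that ${\rm{H}}^i(k, \alpha_{p^n}) = 0$ for all $i \geq 0$ and $n \geq 1$. The long exact sequence attached to $0 \to \alpha_p \to \Ga \xrightarrow{F} \Ga \to 0$, combined with the vanishing of ${\rm{H}}^i(k, \Ga)$ for $i \geq 1$ over any field (e.g.\,via the normal basis theorem), handles the $n = 1$ case: perfectness of $k$ makes $F \colon k \to k$ surjective, killing the connecting map into ${\rm{H}}^1(k, \alpha_p)$, while the higher degrees vanish automatically. Inducting on $n$ via $0 \to \alpha_p \to \alpha_{p^n} \to \alpha_{p^{n-1}} \to 0$ completes this step. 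By Cartier self-duality $\widehat{\alpha_{p^n}} \simeq \alpha_{p^n}$, this also yields ${\rm{H}}^i(k, \widehat{\alpha_{p^n}}) = 0$ for all $i$ and $n$.

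Next I would verify that $\widehat{\Ga} = \varprojlim_n \widehat{\alpha_{p^n}}$ as fppf abelian sheaves, which follows pointwise from the explicit descriptions of formal characters in Lemma \ref{formalchars}(i) and of characters of $\alpha_{p^n}$ in Lemma \ref{charactersofalpha_p^n}: truncation identifies a formal character with the coherent sequence of its reductions modulo $X^{p^n}$. Since the transition maps $\widehat{\alpha_{p^{n+1}}} \twoheadrightarrow \widehat{\alpha_{p^n}}$ are surjective on sections (Lemma \ref{G_a^--->>alpha_p^}), the pointwise Mittag-Leffler condition gives the Milnor short exact sequence of fppf sheaves
\[
0 \longrightarrow \widehat{\Ga} \longrightarrow \prod_{n \geq 1} \widehat{\alpha_{p^n}} \xlongrightarrow{\,1 - \mathrm{shift}\,} \prod_{n \geq 1} \widehat{\alpha_{p^n}} \longrightarrow 0,
\]
with exactness checked on sections since arbitrary products of abelian sheaves are computed pointwise.

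Finally, since arbitrary products in the category of abelian fppf sheaves are exact (again because they are computed pointwise), products of injective sheaves are injective, and hence the functors ${\rm{H}}^i(k, -)$ commute with arbitrary products. Applying the long exact sequence of cohomology to the displayed Milnor sequence and invoking the vanishing of ${\rm{H}}^i(k, \widehat{\alpha_{p^n}})$ obtained in the first step immediately yields ${\rm{H}}^i(k, \widehat{\Ga}) = 0$ for every $i \geq 0$. The main obstacle I anticipate is the pro-system step, namely confirming the inverse-limit presentation of $\widehat{\Ga}$ at the sheaf level, the exactness of the Milnor sequence, and the commutation of cohomology with the relevant infinite products; all three ultimately reduce to the elementary fact that arbitrary products of abelian sheaves are computed pointwise.
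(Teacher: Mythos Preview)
Your inverse-limit identification in step 3 is false, and this is fatal to the argument. You claim $\widehat{\Ga} = \varprojlim_n \widehat{\alpha_{p^n}}$ as fppf sheaves, justifying this by saying ``truncation identifies a formal character with the coherent sequence of its reductions.'' But that is precisely the point: the inverse limit $\varprojlim_n \widehat{\alpha_{p^n}}(R)$ computes the group of \emph{formal} characters over $R$ (power series as in Lemma~\ref{formalchars}(i)), whereas $\widehat{\Ga}(R)$ consists of \emph{polynomial} characters (Lemma~\ref{formalchars}(ii)). These differ whenever $R$ has ``enough'' nilpotents: over $R = k[\epsilon]/(\epsilon^p)$, the coherent system given by $a_n = \epsilon$ for all $n$ defines the formal character $\prod_{n \ge 0} \exp_p(\epsilon X^{p^n})$, which is not a polynomial and hence does not lie in $\widehat{\Ga}(R)$. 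So the natural map $\widehat{\Ga} \to \varprojlim_n \widehat{\alpha_{p^n}}$ is a strict monomorphism, and your Milnor sequence computes the cohomology of the wrong sheaf. (There is also a minor slip in step 2: $\alpha_{p^n}$ is not Cartier self-dual for $n > 1$, since Cartier duality swaps $F$ and $V$ while $\alpha_{p^n}$ has $V = 0$ and $F \ne 0$; but this is harmless, as $\widehat{\alpha_{p^n}}$ still admits a filtration by copies of $\alpha_p$.)

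The paper's proof avoids all of this with a one-line observation that you essentially already made in your characteristic-$0$ sketch, and which works uniformly: by Proposition~\ref{etrem}, over a perfect field fppf and \'etale cohomology agree for locally finitely presented sheaves, and $\widehat{\Ga}$ vanishes on the small \'etale site of $k$ because every \'etale $k$-algebra is reduced and $\Ga$ has no nontrivial characters over a reduced ring. Your characteristic-$0$ remark (``$\widehat{\Ga}$ identifies with the nilradical sheaf'') is exactly this argument in disguise; the same reasoning disposes of positive characteristic without any limit machinery.
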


\begin{proof}
By Proposition \ref{etrem}, ${{\rm{H}}}^i(k, \widehat{\mathbf{G}_a}) = {{\rm{H}}}^i_{\et}(k, \widehat{\mathbf{G}_a})$. But the sheaf 
$\widehat{\mathbf{G}_a}$ on the small \'etale site of a field vanishes
since $\mathbf{G}_a$ has no nontrivial characters over a field. 
\end{proof}

\begin{proposition}
\label{cohomologyofG_adualgeneralk}
Let $X$ be a reduced scheme.
\begin{itemize}
\item[(i)] ${\rm{H}}^0(X, \widehat{\Ga}) = 0$.
\item[(ii)] If in addition $\Pic(\mathbf{G}_{a,X}) = 0$, then ${\rm{H}}^1(X, \widehat{\Ga}) = 0$.
\end{itemize}
\end{proposition}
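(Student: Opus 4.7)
The plan is to handle the two parts separately, both reducing to Lemma \ref{nonconstantunitsred} via the device of trivializing the relevant data and then constraining things to be pullbacks from $X$.

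For (i), unwind the definition: $\mathrm{H}^0(X, \widehat{\Ga}) = \widehat{\Ga}(X) = \Hom_{X\text{-gp}}(\Ga, \Gm)$, the group of $X$-group scheme characters $\chi : \Ga \to \Gm$. Such a $\chi$ is in particular a global unit on $\Ga_X = \A^1_X$ with $\chi(0) = 1$. Since $X$ is reduced, Lemma \ref{nonconstantunitsred} forces $\chi$ to be the pullback of a global unit on $X$, and the normalization $\chi(0) = 1$ then gives $\chi = 1$. This is essentially the same move used to rigidify $s$ in the proof of Proposition \ref{ext0}, just applied one step earlier.

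For (ii), I would pass to extensions via the local-to-global Ext spectral sequence
\[
E_2^{p,q} = \mathrm{H}^p(X, \calExt^q(\Ga, \Gm)) \Longrightarrow \Ext^{p+q}_X(\Ga, \Gm),
\]
whose five-term exact sequence yields an injection $\mathrm{H}^1(X, \widehat{\Ga}) \hookrightarrow \Ext^1_X(\Ga, \Gm)$. Given a class represented by an extension
\[
1 \longrightarrow \Gm \longrightarrow E \longrightarrow \Ga \longrightarrow 1,
\]
the total space $E$ is a $\Gm$-torsor on $\Ga_X = \A^1_X$. The hypothesis $\Pic(\mathbf{G}_{a,X}) = 0$ kills this torsor, so there exists a scheme-theoretic section $s : \Ga_X \to E$ which, after translating by $-s(0)$, we may assume satisfies $s(0) = 0_E$.

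Using $s$ to identify $E$ with $\Gm \times \Ga$ as $\Gm$-torsors over $\Ga_X$ (exactly as in the proofs of Lemma \ref{schemesplitimpliesgpsplit} and Proposition \ref{ext0}), the group law on $E$ assumes the form
\[
(t_1, x_1) \cdot (t_2, x_2) = \bigl(t_1 t_2\, h(x_1, x_2),\, x_1 + x_2\bigr)
\]
for some $h \in \Gamma(\Ga^2_X, \Gm)$ satisfying $h(x, 0) = h(0, y) = 1$ by the identity axiom. Since $X$ is reduced, so is $\A^2_X$, and Lemma \ref{nonconstantunitsred} then ensures $h$ is the pullback of a unit on $X$; the constant value is pinned down by $h(0, 0) = 1$, so $h \equiv 1$ on $\Ga^2_X$. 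Thus $E$ is the trivial extension, the class in $\Ext^1_X(\Ga, \Gm)$ vanishes, and hence so does the class in $\mathrm{H}^1(X, \widehat{\Ga})$. I do not anticipate any serious obstacle here — the only subtlety is justifying that the edge map $\mathrm{H}^1(X, \widehat{\Ga}) \hookrightarrow \Ext^1_X(\Ga, \Gm)$ really is injective (a standard five-term consequence, using $\calExt^0(\Ga, \Gm) = \widehat{\Ga}$), after which the proof is essentially the same rigidification argument already used twice in this chapter.
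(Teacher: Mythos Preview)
Your proposal is correct and follows essentially the same approach as the paper: part (i) is identical, and for part (ii) both you and the paper use the edge-map injection ${\rm{H}}^1(X,\widehat{\Ga}) \hookrightarrow \Ext^1_X(\Ga,\Gm)$, trivialize the underlying $\Gm$-torsor via the hypothesis $\Pic(\mathbf{G}_{a,X})=0$, normalize the section, and then invoke Lemma~\ref{nonconstantunitsred} on the 2-cocycle $h$. The only cosmetic difference is that you pin down $h \equiv 1$ via $h(0,0)=1$, whereas the paper lets $h = \lambda$ be an arbitrary constant unit and writes the splitting as $y \mapsto (\lambda^{-1}, y)$; under your normalization $s(0)=0_E$ these coincide since $\lambda = 1$ anyway.
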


\begin{proof}
(i) An $X$-homomorphism $\mathbf{G}_{a,\,X} \rightarrow \mathbf{G}_{m,\,X}$ yields in particular a global unit $u$ on $\mathbf{G}_{a,\,X}$ whose restriction to the identity section is $1$. By Lemma \ref{nonconstantunitsred}, it follows that this unit must be $1$. That is, the homomorphism is trivial.
\newline
\newline
(ii) The Leray spectral sequence $E_2^{i,j} = {\rm{H}}^i(X, \calExt^j(\Ga, \Gm)) \Longrightarrow \Ext^{i+j}_X(\Ga, \Gm)$ yields an injection $E_2^{1,0} = {\rm{H}}^1(X, \widehat{\Ga}) \hookrightarrow \Ext^1_X(\Ga, \Gm)$, so the assertion follows from Proposition \ref{Ext^1(Ga,Gm)=0ifPic(Ga)=0}.
\end{proof}

\begin{proposition}
\label{unipotentcohomology}
Let $k$ be a field, $U$ a smooth connected commutative unipotent $k$-group. 
\begin{itemize}
\item[(i)] If $i > 1$, then ${{\rm{H}}}^i(k, U) = 0$.
\item[(ii)] If $U$ is split, then ${{\rm{H}}}^i(k, U) = 0$ for all $i > 0$.
\item[(iii)] If $U$ is split, then ${{\rm{H}}}^1(k, \widehat{U}) = 0$.
\end{itemize}
\end{proposition}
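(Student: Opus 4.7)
My plan is to prove all three parts by dévissage, using the definition of split unipotence for (ii) and (iii), Lemma~\ref{unipotentmodetale=split} for (i), and the $\delta$-functoriality of $\widehat{(\cdot)}$ from Proposition~\ref{hatisexact} for (iii). I begin with (ii). By definition, $U$ admits a composition series with successive quotients isomorphic to $\Ga$, so induction on the length of this series via the long exact sequences in fppf cohomology reduces the claim to the vanishing ${\rm{H}}^i(k,\Ga)=0$ for $i\geq 1$. Since $\Ga$ is smooth, fppf and \'etale cohomology agree in positive degrees, and the vanishing then follows from additive Hilbert~90: ${\rm{H}}^i(\Gal(k_s/k), k_s)=0$ for $i\geq 1$.

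For (iii), I use the same composition series of $U$. Each short exact sequence $1\to \Ga\to U\to U''\to 1$ dualizes, by Proposition~\ref{hatisexact}, to an exact sequence $1\to \widehat{U''}\to \widehat{U}\to \widehat{\Ga}\to 1$. The associated long exact sequence in cohomology, combined with induction on the length of the composition series, reduces the claim to ${\rm{H}}^1(k, \widehat{\Ga})=0$. This is precisely Proposition~\ref{cohomologyofG_adualgeneralk}(ii) applied to $X=\Spec k$: the field $k$ is reduced and $\Pic(\mathbf{G}_{a,k}) = \Pic(\mathbf{A}^1_k)=0$.

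For (i), the case ${\rm{char}}(k)=0$ follows from (ii), since smooth connected commutative unipotent $k$-groups in characteristic zero are automatically split. Assume ${\rm{char}}(k)=p>0$. The infinitesimal case of Lemma~\ref{unipotentmodetale=split} furnishes an infinitesimal $k$-subgroup $I\subset U$ with $U/I$ split; applying (ii) to $U/I$, the long exact sequence for $1\to I\to U\to U/I\to 1$ yields ${\rm{H}}^i(k,U)\cong {\rm{H}}^i(k,I)$ for all $i\geq 2$, so it suffices to show ${\rm{H}}^i(k,I)=0$ for $i\geq 2$. I will filter $I$ by iterated Frobenius kernels; each successive quotient is a commutative infinitesimal unipotent $k$-group killed by Frobenius, hence a $k$-form of some $\alpha_p^n$. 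Since ${\rm{Aut}}(\alpha_p)=\Gm$ and $\Pic(k)=0$, every such form is isomorphic to $\alpha_p^n$ over $k$, so a further dévissage reduces the claim to ${\rm{H}}^i(k,\alpha_p)=0$ for $i\geq 2$. This last vanishing is immediate from the fppf short exact sequence $0\to \alpha_p\to \Ga\xrightarrow{F}\Ga\to 0$ and part (ii).

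The main obstacle is the structural input in (i) that an infinitesimal commutative unipotent $k$-group scheme $I$ admits a composition series whose successive quotients are copies of $\alpha_p$: this rests on the correspondence between height-one commutative unipotent group schemes and restricted $p$-Lie algebras with trivial $p$-power map, together with the triviality of all forms of $\alpha_p$ over a field. Once this structural reduction is in hand, each individual dévissage step is formal.
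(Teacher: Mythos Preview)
Your arguments for (ii) and (iii) are correct and coincide with the paper's. (A small omission in (iii): in characteristic $0$, Proposition~\ref{hatisexact} only gives exactness on the \'etale site, so one should invoke Proposition~\ref{etrem} to pass back to fppf cohomology, as the paper does.)

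For (i) you take a different route from the paper and there is a genuine gap. The assertion ``commutative infinitesimal unipotent $k$-group killed by Frobenius, hence a $k$-form of some $\alpha_p^n$'' is false: height-one commutative unipotent $k$-groups correspond to finite-dimensional commutative restricted Lie algebras $(\mathfrak{g},[p])$ with \emph{nilpotent} $p$-map, not trivial $p$-map, and only the latter give forms of $\alpha_p^n$. A concrete counterexample is the Frobenius kernel of the length-two Witt group $W_2$, which is a nonsplit extension of $\alpha_p$ by $\alpha_p$. Your last paragraph repeats this error (``trivial $p$-power map''), and the appeal to $\mathrm{Aut}(\alpha_p)=\Gm$ addresses only $n=1$ (for general $n$ one needs $\mathrm{Aut}(\alpha_p^n)=\mathrm{GL}_n$ and Hilbert~90). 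The fix is one more filtration: on each height-one piece $J$, the chain $\ker[p]\subset\ker[p]^2\subset\cdots$ has successive quotients with $[p]=0$, and those are genuinely isomorphic to powers of $\alpha_p$. With this extra step your d\'evissage to ${\rm H}^i(k,\alpha_p)=0$ goes through.

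The paper's argument for (i) is shorter and sidesteps the structure theory of infinitesimal groups: pick a finite extension $k'/k$ splitting $U$ and embed $U\hookrightarrow U':=\R_{k'/k}(U_{k'})$, which is split since $\R_{k'/k}(\Ga)\cong\Ga^{[k':k]}$. The quotient $U'':=U'/U$ is again split (quotients of split unipotent groups by smooth connected subgroups are split), so the long exact sequence for $1\to U\to U'\to U''\to 1$ together with (ii) immediately gives ${\rm H}^i(k,U)=0$ for $i\geq 2$. Your approach has the virtue of staying inside $U$ rather than enlarging it, but requires more input about infinitesimal groups; the paper's trades that for the (also nontrivial, but standard) fact that split unipotence passes to quotients.
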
 

\begin{proof}
Assertion (ii) follows from the well-known case $U = \mathbf{G}_a$ via filtering $U$ by $\mathbf{G}_a$'s over $k$.
To prove (i), we first claim that there is a $k$-group inclusion $U \hookrightarrow U'$ for some split unipotent $k$-group $U'$. Indeed, let $k'/k$ be a finite extension over which $U$ splits. Then the canonical inclusion $U \hookrightarrow \R_{k'/k}(U_{k'})$ does the job. (The Weil restriction is split since $\R_{k'/k}(\Ga) \simeq \Ga^{[k': k]}$.) 
The quotient $U'' : = U'/U$ is then also necessarily split, so the exact sequence
\[
1 \longrightarrow U \longrightarrow U' \longrightarrow U'' \longrightarrow 1
\]
reduces the vanishing of ${{\rm{H}}}^i(k,U)$ for $i > 1$ to the vanishing in positive degrees in the split case as in the settled 
assertion (ii).

Finally, (iii) follows immediately from Propositions \ref{cohomologyofG_adualgeneralk} and \ref{hatisexact}, as well as Proposition \ref{etrem} when ${\rm{char}}(k) = 0$, by filtering $U$ by $\Ga$. 
\end{proof}

As we shall see later (Proposition \ref{H^2=Ext^2=Br} and Corollary \ref{omegaH^2injective}), ${{\rm{H}}}^2(k, \widehat{\mathbf{G}_a})$ is nontrivial for imperfect fields $k$. We will spend the rest of this section studying this cohomology group, and in particular relating it to other groups that may be accessed more directly. But first let us note a property of these cohomology groups that we shall require later.

\begin{lemma}
\label{H^2(k,Ga^)injectiveseparable}
Let $L/k$ be a (not necessarily algebraic) separable extension of fields. Then the pullback map ${\rm{H}}^2(k, \widehat{\Ga}) \rightarrow {\rm{H}}^2(L, \widehat{\Ga})$ is injective.
\end{lemma}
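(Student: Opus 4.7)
In characteristic zero every field is perfect, so both groups vanish by Proposition \ref{cohomologyofG_adualwhenkisperfect} and there is nothing to prove; hence I would assume ${\rm{char}}(k) = p > 0$ throughout. The plan has two parts: first establish injectivity when $L/k$ is finite separable, and then reduce the general case to this via a d\'evissage through smooth finitely generated $k$-subalgebras of $L$.

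For the finite separable case, let $\pi \colon \Spec L \to \Spec k$. Since $\pi$ is \'etale, $\pi_*$ is exact and the Leray spectral sequence identifies the restriction map ${\rm{H}}^2(k, \widehat{\Ga}) \to {\rm{H}}^2(L, \widehat{\Ga})$ with the map induced on ${\rm{H}}^2(k, \cdot)$ by the unit of adjunction $\eta \colon \widehat{\Ga_k} \to \pi_*\widehat{\Ga_L}$. Proposition \ref{charactersseparableweilrestriction} identifies $\pi_*\widehat{\Ga_L}$ with $\widehat{\R_{L/k}\Ga_L}$ via the norm, and the key claim is that under this identification $\eta$ becomes the dual of the trace map $\text{Tr}_{L/k} \colon \R_{L/k}\Ga_L \to \Ga_k$, i.e., the map $\chi \mapsto \chi \circ \text{Tr}_{L/k}$. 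This comes down to the pointwise identity $N_{L/k}(\chi(a)) = \chi(\text{Tr}_{L/k}(a))$ for any character $\chi$ of $\Ga_k$ and any $a \in R \otimes_k L$: after passing to a Galois closure the norm becomes a product over conjugates and the trace a sum, and then Galois-invariance of $\chi$ together with the homomorphism property $\chi(x+y) = \chi(x)\chi(y)$ forces the identity. Since $L/k$ is separable the trace form is non-degenerate, so $\text{Tr}_{L/k} \colon L \to k$ admits a $k$-linear section; this promotes $\text{Tr}_{L/k} \colon \R_{L/k}\Ga_L \to \Ga_k$ to a split surjection of $k$-group schemes, and dualizing shows that $\eta$ is a split injection of fppf sheaves. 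Taking ${\rm{H}}^2$ preserves split injections and yields the injectivity in this case.

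For the general case, separability of $L/k$ ensures by generic smoothness that $L$ is a filtered union of smooth finitely generated $k$-subalgebras $A_\alpha \subset L$. Fppf cohomology commutes with filtered colimits of rings, so ${\rm{H}}^2(L, \widehat{\Ga}) = \varinjlim_\alpha {\rm{H}}^2(A_\alpha, \widehat{\Ga})$; thus if $\gamma \in {\rm{H}}^2(k, \widehat{\Ga})$ dies in ${\rm{H}}^2(L, \widehat{\Ga})$ then it already dies in ${\rm{H}}^2(A, \widehat{\Ga})$ for some smooth $A \subset L$ as above. Choosing any closed point $x \in \Spec A$, its residue field $\kappa(x)/k$ is finite separable (by smoothness of $A$ over $k$ combined with the Nullstellensatz), so the composition ${\rm{H}}^2(k, \widehat{\Ga}) \to {\rm{H}}^2(A, \widehat{\Ga}) \to {\rm{H}}^2(\kappa(x), \widehat{\Ga})$ is the restriction attached to the finite separable extension $\kappa(x)/k$, and hence injective by the previous paragraph; therefore $\gamma = 0$.

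The hardest step is the identification $\eta = \widehat{\text{Tr}_{L/k}}$: it is what converts the non-degeneracy of the separable trace pairing (an elementary, well-known piece of commutative algebra) into a concrete injectivity assertion for cohomology of the non-representable sheaf $\widehat{\Ga}$. Once this identity is in hand, everything else reduces to formal adjunction and d\'evissage.
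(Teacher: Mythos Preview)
Your argument is correct in spirit but contains one slip: it is \emph{not} true that an arbitrary closed point of a smooth finitely generated $k$-algebra $A$ has separable residue field over $k$. For instance, with $k = \F_p(t)$ and $A = k[X]$, the closed point cut out by $X^p - t$ has residue field $k(t^{1/p})$, purely inseparable over $k$. The fix is immediate: smoothness guarantees that $\Spec A$ has a $k_s$-point (e.g., via an \'etale map from a dense open of $\Spec A$ to affine space, whose image is open and hence contains a $k_s$-point, over which the fiber is \'etale), and any such point has separable residue field. The paper's own phrasing---``specializing to a \emph{separable} point''---reflects exactly this care.

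With that correction, your proof is valid, but it takes a genuinely different route from the paper's. Both arguments share the same reduction via spreading out and specialization; the divergence is in how the finite separable case is handled. You prove it by identifying the restriction map with the dual of the trace $\R_{L/k}\Ga \to \Ga$, which splits by non-degeneracy of the separable trace pairing; this is concrete and uses no spectral sequences. The paper instead reduces all the way to $L = k_s$ and then invokes the Hochschild--Serre spectral sequence for $\Gal(k_s/k)$, showing that the edge map ${\rm{H}}^2(k,\widehat{\Ga}) \to {\rm{H}}^2(k_s,\widehat{\Ga})^{\mathfrak{g}}$ is injective because the obstruction terms $E_2^{2,0}$ and $E_2^{1,1}$ vanish by the already-established Proposition~\ref{cohomologyofG_adualgeneralk}. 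The paper's route is shorter because it recycles that vanishing; yours is more self-contained and makes explicit the role of separability via the trace, at the cost of verifying the norm-trace compatibility $N_{L/k} \circ \eta = \widehat{\mathrm{Tr}_{L/k}}$.
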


\begin{proof}
Since $L/k$ is separable, $L$ is the direct limit of its smooth $k$-subalgebras. Specializing to a separable point of a suitable such algebra, we see that it suffices to treat the case in which $k = k_s$. Let $\mathfrak{g} : = \Gal(k_s/k)$. Then we have a Hochschild--Serre spectral sequence
\[
E_2^{i,j} = {\rm{H}}^i(\mathfrak{g}, {\rm{H}}^j(k_s, \widehat{\Ga})) \Longrightarrow {\rm{H}}^{i+j}(k, \widehat{\Ga}),
\]
and the edge map ${\rm{H}}^2(k, \widehat{\Ga}) = E_2 \rightarrow E_2^{0,2} = {\rm{H}}^2(k_s, \widehat{\Ga})^{\mathfrak{g}}$ is the pullback map of the lemma. In order to show that this map is injective, it suffices to show that $E_2^{2,0} = E_2^{1,1} = 0$. The vanishing of both of these groups follows from Proposition \ref{cohomologyofG_adualgeneralk} applied with $X = \Spec(k)$.
\end{proof}

\begin{definition}
\label{primitivedefinition}
If $G$ is a commutative group scheme, then an element $\alpha \in {\rm{H}}^2(G, \Gm)$ is called 
{\em primitive} if $m^*\alpha = \pi_1^*\alpha + \pi_2^*\alpha$, where $m, \pi_i:  G \times G \rightarrow G$ are the multiplication and projection maps, respectively. We denote the subgroup of primitive elements by ${\rm{H}}^2(G, \Gm)_{\prim}$.
\end{definition}

\begin{remark}
\label{linearactiononBrauerprim}
Note that over any ring $A$, $\mathbf{G}_{a,\,A}$ has a natural $A$-linear action, given on $R$-valued points (for an $A$-algebra $R$) by $a \cdot r = ar$ for $a \in A$, $r \in \Ga(R) = R$. This yields a ``multiplicative action'' of $A$ on any functor evaluated at $\Ga$. For certain such functors valued in abelian groups, this action is also additive. That is, we obtain an $A$-module structure on the associated functor evaluated at $\Ga$. This is the case, for example, for the groups ${\rm{H}}^i(A, \widehat{\Ga})$ and $\Ext^i_A(\Ga, \Gm)$, by general nonsense, since $m_{a+b} = m_a + m_b$, where $m_a:  \Ga \rightarrow \Ga$ is multiplication by $a \in A$. We do not obtain an $A$-linear action on ${\rm{H}}^2(\mathbf{G}_{a,\,A}, \Gm)$ in general. We do, however, obtain one on ${\rm{H}}^2(\mathbf{G}_{a,\,A}, \Gm)_{\prim}$. Indeed, the $A$-linearity of the action follows from the fact that for $X \in {\rm{H}}^2(\Ga, \Gm)_{\prim}$, pulling back the equality $m^*X = p_1^*X + p_2^*X$ along the map $\Ga \rightarrow \Ga^2$ given by $y \mapsto (a_1y, a_2y)$ yields $(a_1 + a_2)^*X = a_1^*X + a_2^*X$. More generally, the same reasoning yields, for any scheme $S$, a functorial (in $S$) $\Gamma(S, \calO_S)$-module structure on ${\rm{H}}^2(\mathbf{G}_{a,\,S}, \Gm)_{\prim}$.
\end{remark}

\begin{remark}
\label{nolinearaction}
In fact, if $k$ is an imperfect field, then there is no $k$-linear action on ${\rm{H}}^2(\mathbf{G}_{a,\,k}, \Gm)$. (If $k$ is perfect, then ${\rm{H}}^2(\mathbf{G}_{a,\,k}, \Gm) = {\rm{H}}^2(k, \Gm)$, via pullback along the structure map. This can be deduced from the case $k = \overline{k}$ \cite[Cor.\,1.2]{briii} by using a Hochschild--Serre spectral sequence \cite[Cor.\,6.7.8]{poonen}. Note that while \cite{poonen} states the result for proper varieties, it only uses properness in order to deduce -- in the notation of that corollary -- that ${\rm{H}}^0(X^s, \Gm) = k_s^{\times}$, a statement which still holds for $X = \Ga$.) Indeed, by Corollary \ref{omegaH^2injective}, ${\rm{H}}^2(\mathbf{G}_{a,\,k}, \Gm)[p^\infty] \neq 0$, while \cite[\S 3, Prop.\,]{treger} shows that ${\rm{H}}^2(\mathbf{G}_{a,\,k}, \Gm)[p^\infty]$ is $p$-divisible. (Note that the definition of the Brauer group in \cite{treger} is the Azumaya Brauer group rather than the cohomological Brauer group, which is why we need the cited theorem to ensure that for $p$-torsion classes they agree.) It follows that ${\rm{H}}^2(\mathbf{G}_{a,\,k}, \Gm)$ is not $p$-torsion, hence admits no $k$-vector space structure. We will never use this.
\end{remark}

Let $X$ be a scheme. In order to understand the groups ${\rm{H}}^2(X, \widehat{\Ga})$, we will relate them to other groups that are easier to try to analyze directly. We will accomplish this by constructing maps
\begin{equation}
\label{mapsH^2ExtBr}
{\rm{H}}^2(X, \widehat{\Ga}) \rightarrow \Ext^2_X(\Ga, \Gm) \rightarrow {\rm{H}}^2(\mathbf{G}_{a,\,X}, \Gm)_{\prim}.
\end{equation}
functorial in $X$ and in $X$-morphisms of $\Ga$. First, the Leray spectral sequence
\[
D_2^{i, j} = {\rm{H}}^i(X, \calExt^j_X(\Ga, \Gm)) \Longrightarrow \Ext^{i+j}_X(\Ga, \Gm)
\]
yields an edge map
\[
D_2^{2, 0} = {\rm{H}}^2(X, \widehat{\Ga}) \rightarrow \Ext^2_X(\Ga, \Gm),
\]
which yields the first map in (\ref{mapsH^2ExtBr}).

\begin{proposition}
\label{H^2(X,G_a^)-->Ext^2(Ga,Gm)injection}
If $X$ is an $\F_p$-scheme, then the map ${\rm{H}}^2(X, \widehat{\Ga}) \rightarrow \Ext^2_X(\Ga, \Gm)$ in $(\ref{mapsH^2ExtBr})$ is injective.
\end{proposition}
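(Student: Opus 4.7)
The plan is to read off the injectivity directly from the local-to-global Ext spectral sequence that was already used to define the map in question, combined with Proposition \ref{ext^1(Ga,Gm)=0}. Recall that the map ${\rm{H}}^2(X, \widehat{\Ga}) \to \Ext^2_X(\Ga, \Gm)$ is the edge map attached to the first-quadrant cohomological spectral sequence
\[
D_2^{i,j} = {\rm{H}}^i(X, \calExt^j_X(\Ga, \Gm)) \Longrightarrow \Ext^{i+j}_X(\Ga, \Gm).
\]
The term $D_\infty^{2,0}$ sits inside $\Ext^2_X(\Ga, \Gm)$ as the bottom piece of the associated filtration, and the edge map factors as the quotient $D_2^{2,0} \twoheadrightarrow D_\infty^{2,0}$ followed by this inclusion. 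So proving injectivity amounts to showing $D_2^{2,0} = D_\infty^{2,0}$.

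First I would identify exactly what can kill the term $D_r^{2,0}$ as $r$ grows. Since the spectral sequence lies in the first quadrant, the only incoming differential is $d_2 \colon D_2^{0,1} \to D_2^{2,0}$; for $r \geq 3$ the source $D_r^{2-r,\,r-1}$ vanishes trivially. Thus $D_\infty^{2,0} = D_2^{2,0}/\mathrm{image}(d_2)$, and it suffices to show that
\[
D_2^{0,1} = {\rm{H}}^0(X, \calExt^1_X(\Ga, \Gm)) = 0.
\]

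This vanishing is exactly the content of Proposition \ref{ext^1(Ga,Gm)=0}, which asserts that the fppf sheaf $\calExt^1_X(\Ga, \Gm)$ vanishes on any $\F_p$-scheme $X$. Consequently $d_2 = 0$, so $D_\infty^{2,0} = D_2^{2,0} = {\rm{H}}^2(X, \widehat{\Ga})$, and the edge map is the injection of $D_\infty^{2,0}$ into $\Ext^2_X(\Ga, \Gm)$. There is no real obstacle here: the nontrivial work was already done in establishing Proposition \ref{ext^1(Ga,Gm)=0}, and the present statement is essentially a formal consequence via the edge-map mechanism of the local-to-global Ext spectral sequence.
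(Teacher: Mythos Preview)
Your proof is correct and essentially identical to the paper's own argument: both use the local-to-global Ext spectral sequence and observe that the edge map $D_2^{2,0} \to \Ext^2_X(\Ga,\Gm)$ is injective because $D_2^{0,1} = {\rm{H}}^0(X,\calExt^1_X(\Ga,\Gm))$ vanishes by Proposition~\ref{ext^1(Ga,Gm)=0}. Your version simply spells out in slightly more detail why no higher differentials can hit $D_r^{2,0}$.
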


\begin{proof}
The map in question is the edge map $D_2^{2,0} \rightarrow D_2$ in the Leray spectral sequence
\[
D_2^{i,j} = {\rm{H}}^i(X, \calExt^j_X(\Ga, \Gm)) \Longrightarrow \Ext^{i+j}_X(\Ga, \Gm).
\]
In order to show that this edge map is injective, it suffices to show that the group $D_2^{0,1} = {\rm{H}}^0(X, \calExt^1_X(\Ga, \Gm))$ vanishes, and this follows from Proposition \ref{ext^1(Ga,Gm)=0}.
\end{proof}

In order to define the second map in (\ref{mapsH^2ExtBr}), we note that for any commutative group scheme $G$ over $X$, Yoneda's Lemma yields a natural transformation of functors from the category of fppf abelian sheaves on $G$ to the category of abelian groups
\[
\Hom_X(G, \cdot) \rightarrow {\rm{H}}^0(G, \cdot).
\]
Further, this map factors through ${\rm{H}}^0(G, \cdot)_{\prim}$, hence so does the induced map of derived functors: 
\begin{equation}
\label{Yonedamapdef}
\Ext^i_X(G, \cdot) \rightarrow {\rm{H}}^i(G, \cdot)_{\prim}.
\end{equation}
Specializing to the case $G = \Ga$ and evaluating at $\Gm$ yields the second map in (\ref{mapsH^2ExtBr}). Our goal will be to show that in certain favorable situations (especially when $X$ is the spectrum of a field) these maps are isomorphisms; see Proposition \ref{H^2=Ext^2=Br}. In order to do this we will make essential use of some spectral sequences constructed by Breen, so we discuss these in Remark \ref{breensequences} below.

\begin{remark}
\label{breensequences}
We now discuss a few spectral sequences constructed by Breen that play an important role in our analysis of the cohomology of $\widehat{\Ga}$. For more details, see \cite[\S1]{breen} and especially \cite[page 1250]{breen2}. Let $S$ be a scheme. Associated to any commutative $S$-group scheme $G$, there is a complex $A(G) = A(G)_{\bullet}$ of fppf abelian sheaves concentrated in nonnegative degrees such that each term of $A(G)$ is a product of sheaves of the form $\Z[G^n]$ (the sheaf freely generated by $G^n$). Further, $A(G)_0 = \Z[G]$, the canonical map $G \rightarrow A(G)_0$ induces an isomorphism 
\begin{equation}
\label{G=H_0(A)}
G \simeq {\rm{H}}_0(A(G)),
\end{equation}
and we have 
\begin{equation}
\label{H_1(A)=0}
{\rm{H}}_1(A(G)) = 0 \hspace{.3 in}
{\rm{H}}_2(A(G)) = G/2G.
\end{equation}
We also have
\[
A(G)_1 = \Z[G^2],
\]
and the differential $A(G)_1 = \Z[G^2] \rightarrow \Z[G] = A(G)_0$ is the map induced by $m - p_1 - p_2:  G \times G \rightarrow G$, where, as before, $m, p_i:  G \times G \rightarrow G$ are the multiplication and projection maps, respectively.

Breen obtains first a spectral sequence
\[
F_1^{i, j} = \Ext^j(A(G)_i, H) \Longrightarrow \Ext^{i+j}(A(G), H).
\]
We have a canonical isomorphism $\Ext^j(\Z[G], \cdot) \simeq {{\rm{H}}}^j(G, \cdot)$ of functors
on fppf abelian sheaves, since both are the derived functors of $\Gamma(G, \cdot)$, thanks to Yoneda's Lemma. Thus, the sequence above becomes
\begin{equation}
\label{F_1specseq22}
F_1^{i,j} = {{\rm{H}}}^j(X_i, H) \Longrightarrow \Ext^{i+j}(A(G), H),
\end{equation}
where $X_i$ is some explicit disjoint union of products of copies of $G$, and in particular, 
\begin{equation}
\label{X_idescription}
X_0 = G \hspace{.3 in}  X_1 = G^2 \hspace{.3 in} X_2 = G^2 \coprod G^3
\end{equation}
with the differentials $F_1^{0, j} \rightarrow F_1^{1, j}$ being 
\begin{equation}
\label{breenX_0to1differential}
m^* - \pi_1^* - \pi_2^*,
\end{equation}
where $m, \pi_i:  G \times G \rightarrow G$ are the multiplication and projection maps, and the differential $F_1^{1, j} \rightarrow F_1^{2,j}$ being
\begin{equation}
\label{breenX_1to2differential}
(1^* - \sigma^*) \times (\pi^3_1, m\circ \pi_{23})^* + \pi_{23}^* - (m\circ \pi_{12}, \pi^3_3)^* - \pi_{12}^*,
\end{equation}
where $\sigma:  G \times G \rightarrow G$ is the switching map $(x, y) \mapsto (y, x)$, $1$ is the identity of $G \times G$, $\pi^3_i:  G^3 \rightarrow G$ is projection onto the $i$th factor, $\pi_{ij}:  G^3 \rightarrow G^2$ is projection onto the $i$th and $j$th factors, and $m:  G^2 \rightarrow G$ is once again multiplication.

In fact, Breen shows that we may replace the above sequence with another one that is somewhat more convenient, involving ``reduced'' cohomology groups $\widetilde{\rm{H}}^j(X_i, H)$ defined as follows. Let $Y_i$ be the analogue of $X_i$ for the $0$ group; that is, $Y_i$ is a corepresenting object for $\Hom(A(0)_i, \cdot)$ (so $Y_i$ is a disjoint union of copies of $S$). Then via the identity section $S \rightarrow G$, we obtain maps $Y_i \rightarrow X_i$, and we define $\widetilde{\rm{H}}^j(X_i, H) : = \ker({{\rm{H}}}^j(X_i, H) \rightarrow {{\rm{H}}}^j(Y_i, H))$ to be the kernel of the induced map on cohomology. Breen proved that these reduced cohomology groups provide a spectral sequence analogous to (\ref{F_1specseq22}) and with the same abutment. That is, we have a spectral sequence
\begin{equation}
\label{E_1specseqbreen20}
E_1^{i,j} = \widetilde{{{\rm{H}}}}^j(X_i, H) \Longrightarrow \Ext^{i+j}(A(G), H).
\end{equation}
Let us note in particular that the differential $\widetilde{{{\rm{H}}}}^j(G, H) = E_1^{0,j} \rightarrow E_1^{1, j}  = \widetilde{{{\rm{H}}}}^j(G \times G, H)$ is given by $m^* - \pi_1^* - \pi_2^*$, so we have
\begin{equation}
\label{E_2^0j=primcohom}
E_2^{0, j} = {\rm{H}}^j(G, H)_{\prim},
\end{equation}
where the $\widetilde{{\rm{H}}}^j$ may be replaced with just ${\rm{H}}^j$ because ${\rm{H}}^j(0_S, H)_{\prim} = 0$ where $0_S$ is the trivial $S$-group scheme.

The second spectral sequence constructed by Breen takes the following form: 
\begin{equation}
\label{E_2spectralseqbreen21}
{}^{\prime}E_2^{i,j} = \Ext^i_S({\rm{H}}_j(A(G)), H) \Longrightarrow \Ext^{i+j}_S(A(G), H).
\end{equation}
Finally, the composition of the edge maps ${}^{\prime}E_2^{i,0} = \Ext_S^i(G, H) \rightarrow \Ext_S^i(A(G), H) \rightarrow E_2^{0, i} = {\rm{H}}^i(G, H)_{\prim}$ (the last equality by (\ref{E_2^0j=primcohom})) coming from the sequences (\ref{E_1specseqbreen20}) and (\ref{E_2spectralseqbreen21}) is just the Yoneda map (\ref{Yonedamapdef}).
\end{remark}

We shall apply the Breen spectral sequences with $G = \mathbf{G}_a$ and $H = \mathbf{G}_m$ in order to study the second map in (\ref{mapsH^2ExtBr}).

\begin{proposition}
\label{Ext^2=Brprimbasescheme}
Let $S$ be a reduced scheme such that pullback induces isomorphisms $\Pic(S) \rightarrow \Pic(\A^n_S)$ for $n = 2, 3$. (This holds, for instance, when $S$ is seminormal.) Then the functorial $\Gamma(S, \calO_S)$-module homomorphism $\Ext^2_S(\Ga, \Gm) \rightarrow {\rm{H}}^2(\mathbf{G}_{a,\,S}, \Gm)_{\prim}$ in $(\ref{mapsH^2ExtBr})$ is an isomorphism. (For the $\Gamma(S, \calO_S)$-structure on ${\rm{H}}^2(\mathbf{G}_{a,\,S}, \Gm)_{\prim}$, see Remark \ref{linearactiononBrauerprim}.)
\end{proposition}

\begin{proof}
We will use the Breen spectral sequences discussed above. We first use the spectral sequence (\ref{E_2spectralseqbreen21}) with $G = \Ga$ and $H = \Gm$: 
\[
{}^{\prime}E_2^{i,j} = \Ext^i_S({\rm{H}}_j(A(\Ga)), \Gm) \Longrightarrow \Ext^{i+j}_S(A(\Ga), \Gm).
\]
This yields a map $\Ext^2_S(\Ga, \Gm) = {}^{\prime}E_2^{2,0} \rightarrow \Ext^2_S(A(\Ga), \Gm)$, and we will now show that this map is an isomorphism. In order to do this, it suffices to show that the groups ${}^{\prime}E_2^{i,1}$ and ${}^{\prime}E_2^{0,2}$ vanish. The groups ${}^{\prime}E_2^{i,1} = \Ext_S^i({\rm{H}}_1(A(\Ga)), \Gm)$ vanish because the sheaf ${\rm{H}}_1(A(\Ga))$ does by (\ref{H_1(A)=0}), and the group ${}^{\prime}E_2^{0,2} = \Hom({\rm{H}}_2(A(\Ga)), \Gm)$ vanishes because ${\rm{H}}_2(A(\Ga)) = \Ga/2\Ga$, again by (\ref{H_1(A)=0}), and there are no nontrivial homomorphisms from $\Ga$ to $\Gm$ over the reduced scheme $S$, by Lemma \ref{nonconstantunitsred}.

Now we use the spectral sequence (\ref{E_1specseqbreen20}), again with $G = \Ga$ and $H = \Gm$:  
\[
E_1^{i,j} = \widetilde{{{\rm{H}}}}^j(X_i, \Gm) \Longrightarrow \Ext^{i+j}(A(\Ga), \Gm).
\]
This yields a map $\Ext^2(A(\Ga), \Gm) \rightarrow E_2^{0,2} = {\rm{H}}^2(\mathbf{G}_{a,\,S}, \Gm)_{\prim}$, this last equality by (\ref{E_2^0j=primcohom}), and we want to show that this map is an isomorphism, since the composition $\Ext^2_S(\Ga, \Gm) = {}^{\prime}E_2^{2,0} \rightarrow \Ext^2_S(A(\Ga), \Gm) \rightarrow E_2^{0,2} = {\rm{H}}^2(\mathbf{G}_{a,\,S}, \Gm)_{\prim}$ is the Yoneda map. In order to do this, it suffices to show that the groups $E_1^{i,0}$ all vanish, as do the groups $E_1^{i,1}$ for $i = 1, 2$.

The groups $E_1^{i,0} = \widetilde{{\rm{H}}}^0(X_i, \Gm)$ vanish because the only global units on $\mathbf{G}_{a,S}^n$ are the elements of $\Gamma(S, \calO_S)^{\times}$ by Lemma \ref{nonconstantunitsred} (since $S$ is reduced). Next, $E_1^{i,1} = \widetilde{{\rm{H}}}^1(X_i, \Gm)$. But $X_1 = \mathbf{G}_{a,\,S}^2$ and $X_2 = \mathbf{G}_{a,\,S}^2 \coprod \mathbf{G}_{a,\,S}^3$ by (\ref{X_idescription}), so the groups $\widetilde{{\rm{H}}}^1(X_i, \Gm)$ vanish for $i = 1, 2$ by assumption.
\end{proof}

Proposition \ref{Ext^2=Brprimbasescheme} shows that the map $\Ext^2_X(\Ga, \Gm) \rightarrow {\rm{H}}^2(\mathbf{G}_{a,\, X}, \Gm)_{\prim}$ is an isomorphism if $X$ is seminormal, while Proposition \ref{H^2(X,G_a^)-->Ext^2(Ga,Gm)injection} shows that the map ${\rm{H}}^2(X, \widehat{\Ga}) \rightarrow \Ext^2_X(\Ga, \Gm)$ is always injective. We will show later (Proposition \ref{H^2=Ext^2=Brdvr}) that this map is an isomorphism in various desirable circumstances (for example, if $X$ is regular).

It will be important later for us to know that the maps (\ref{mapsH^2ExtBr}) are compatible with evaluation at a point $x \in (\mathbf{G}_a)(X)$.
More precisely, given such an $x$, we get a morphism of fppf sheaves $\widehat{\mathbf{G}_a} \rightarrow \mathbf{G}_m$ on $X$ given by evaluation at $x$. This induces a map ev$_x:  {{\rm{H}}}^2(X, \widehat{\mathbf{G}_a}) \rightarrow {{\rm{H}}}^2(X, \mathbf{G}_m)$. On the other hand, via evaluation (i.e., restriction) at $x$,
we have a map ${\rm{H}}^2(\mathbf{G}_{a,\,X}, \Gm) \rightarrow {\rm{H}}^2(X, \Gm)$ that we will also denote by ev$_x$. Then we have the following lemma.

\begin{lemma} 
\label{evaluationcompatibility}
The following diagram commutes: 
\[
\begin{tikzcd}
{{\rm{H}}}^2(X, \widehat{\mathbf{G}_a}) \arrow{r} \arrow{d}{{\ev}_x} & {\rm{H}}^2(\mathbf{G}_{a,\,X}, \Gm)_{\prim} \arrow{d}{{\ev}_x} \\
{\rm{H}}^2(X, \Gm) \arrow[r, equals] & {\rm{H}}^2(X, \Gm)
\end{tikzcd}
\]
where the horizontal map ${{\rm{H}}}^2(X, \widehat{\mathbf{G}_a}) \xrightarrow{\sim} {\rm{H}}^2(\mathbf{G}_a, \Gm)_{\prim}$ is the composition in $(\ref{mapsH^2ExtBr})$.
\end{lemma}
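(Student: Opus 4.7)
The plan is to identify both composite maps ${\rm{H}}^2(X, \widehat{\Ga}) \to \Br(X)$ with a single map obtained by functoriality of $\Ext^2_X(\cdot, \Gm)$ applied to one explicit morphism of fppf abelian sheaves. Let $\Z_X$ denote the constant sheaf $\Z$ on $X$, and let $f \colon \Z_X \to \Ga$ be the morphism of fppf abelian sheaves sending $1$ to the section $x \in \Ga(X)$. The key initial observation is that the evaluation morphism $\ev_x \colon \widehat{\Ga} \to \Gm$ agrees with the map $f^* \colon \calHom(\Ga, \Gm) \to \calHom(\Z_X, \Gm) = \Gm$ given by precomposition with $f$; this is merely an unwinding of the definition of evaluating a character at $x$.

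Next I will invoke the contravariant naturality in the first variable of the Leray spectral sequence $E_2^{i,j} = {\rm{H}}^i(X, \calExt^j(-, \Gm)) \Rightarrow \Ext^{i+j}_X(-, \Gm)$ applied to $f$. Since $\calHom(\Z_X, \Gm) = \Gm$ and $\calExt^j(\Z_X, \Gm) = 0$ for $j > 0$, the spectral sequence for $\Z_X$ degenerates and its degree-$2$ edge map is the identity of ${\rm{H}}^2(X, \Gm)$. Naturality therefore produces a commutative square
\[
\begin{tikzcd}
{\rm{H}}^2(X, \widehat{\Ga}) \arrow{r} \arrow{d}{(\ev_x)_*} & \Ext^2_X(\Ga, \Gm) \arrow{d}{f^*} \\
{\rm{H}}^2(X, \Gm) \arrow[r, equals] & {\rm{H}}^2(X, \Gm)
\end{tikzcd}
\]
whose left vertical is $\ev_x$ by the first observation. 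It therefore suffices to identify the right vertical $f^*$ with the composite $\Ext^2_X(\Ga, \Gm) \to \Br(\mathbf{G}_{a,X})_{\prim} \hookrightarrow \Br(\mathbf{G}_{a,X}) \xrightarrow{x^*} \Br(X)$.

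To make this identification I will unwind the Yoneda map. Writing $\Z[\Ga]$ for the free abelian fppf sheaf on the sheaf of sets underlying $\Ga$, so that $\Ext^i_X(\Z[\Ga], \Gm) = {\rm{H}}^i(\mathbf{G}_{a,X}, \Gm)$ and $\Ext^i_X(\Z_X, \Gm) = {\rm{H}}^i(X, \Gm)$, the Yoneda map $\Ext^2_X(\Ga, \Gm) \to \Br(\mathbf{G}_{a,X})$ is obtained by applying $\Ext^2_X(\cdot, \Gm)$ to the summing surjection $q \colon \Z[\Ga] \twoheadrightarrow \Ga$, while pullback $x^* \colon \Br(\mathbf{G}_{a,X}) \to \Br(X)$ is obtained by applying $\Ext^2_X(\cdot, \Gm)$ to the morphism of abelian sheaves $x_\sharp \colon \Z_X \to \Z[\Ga]$ coming from $x$ viewed as a morphism of sheaves of sets $X \to \Ga$. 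The composite $q \circ x_\sharp \colon \Z_X \to \Ga$ sends $1$ to $x$ and hence equals $f$, so $x^*$ composed with the Yoneda map equals $f^*$, as required.

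The only genuine content lies in the last paragraph: one must keep track of the various canonical identifications to recognize the factorization $\Z_X \to \Z[\Ga] \to \Ga$ of $f$. Once that is in place, the commutativity of the diagram reduces to the naturality of the Leray spectral sequence in its first variable.
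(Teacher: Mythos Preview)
Your proof is correct and follows the same overall strategy as the paper: factor through $\Ext^2_X(\Ga, \Gm)$ and verify two squares separately. The paper handles the first square by invoking naturality of the composite-functor spectral sequence with respect to the natural transformation of functors $\calHom(\Ga, \cdot) \to \mathrm{Id}$ given by evaluation at $x$, and the second by reducing to degree~$0$ via universality of derived functors. Your packaging is a bit cleaner: introducing the single morphism $f \colon \Z_X \to \Ga$ and recognizing $\ev_x$ as $f^*$ lets you use only the standard contravariant naturality of the local-to-global spectral sequence in its first argument, and the factorization $f = q \circ x_\sharp$ then disposes of the second square in one line. The underlying content is identical, but your formulation avoids the slightly heavier ``naturality in the functor'' statement that the paper uses.
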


\begin{proof}
We will define a map ev$_x:  \Ext^2_X(\mathbf{G}_a, \mathbf{G}_m) \rightarrow {{\rm{H}}}^2(X, \mathbf{G}_m)$ and show that the following diagram commutes: 
\begin{equation}
\label{evaluationdiagram}
\begin{tikzcd}
{{\rm{H}}}^2(X, \widehat{\mathbf{G}_a}) \arrow{r} \arrow{d}{{\ev}_x} & \Ext^2_X(\mathbf{G}_a, \mathbf{G}_m) \arrow{r} \arrow{d}{{\ev}_x} & {{\rm{H}}}^2(\mathbf{G}_{a,\,X}, \mathbf{G}_m) \arrow{d}{{\ev}_x} \\
{{\rm{H}}}^2(X, \mathbf{G}_m) \arrow[r, equals] & {{\rm{H}}}^2(X, \mathbf{G}_m) \arrow[r, equals] & {{\rm{H}}}^2(X, \mathbf{G}_m)
\end{tikzcd}
\end{equation}
where all maps are those in (\ref{mapsH^2ExtBr}). The map ${\ev}_x:  \Ext^2_X(\mathbf{G}_a, \mathbf{G}_m) \rightarrow {{\rm{H}}}^2(X, \mathbf{G}_m)$ is defined as follows. We have a natural transformation of functors $\Hom_X(\mathbf{G}_a, \cdot) \rightarrow \Gamma(X, \cdot)$ from the category of abelian fppf sheaves on $X$ to the category of abelian groups, defined by evaluation at $x \in \mathbf{G}_a(X)$. That is, given an element of $\Hom_X(\mathbf{G}_a, \mathscr{F})$, we get a map $\mathbf{G}_a(X) \rightarrow \mathscr{F}(X)$ and we take the image of $x$ under this map. This yields a corresponding map of derived functors ${\ev}_x:  \Ext^{\bullet}_X(\mathbf{G}_a, \mathscr{F}) \rightarrow {{\rm{H}}}^{\bullet}(X, \mathscr{F})$, and we specialize this to the case $\mathscr{F} = \mathbf{G}_m$ in degree 2.

Now let us check the commutativity of the first square in (\ref{evaluationdiagram}). Recall that the map ${{\rm{H}}}^2(X, \widehat{\mathbf{G}_a}) \rightarrow \Ext^2_X(\mathbf{G}_a, \mathbf{G}_m)$ was defined as the edge map $D_2^{2,0} \rightarrow D_2$ in the Leray spectral sequence
\[
D_2^{i,j} = {{\rm{H}}}^i(X, \calExt^j_X(\mathbf{G}_a, \mathscr{F})) \Longrightarrow \Ext^{i+j}_X(\mathbf{G}_a, \mathscr{F}),
\]
where $\mathscr{F}$ is an fppf abelian sheaf on $X$. (We apply this with $\mathscr{F} = \mathbf{G}_m$.) But the composite functor spectral sequence is natural in the associated functors. That is, given two pairs of functors $F_1, G_1$ and $F_2, G_2$, together with natural transformations $F_1 \rightarrow F_2$, $G_1 \rightarrow G_2$, the induced maps of derived functors yield a natural transformation from the spectral sequence $R^iF_1(R^jG_1) \Longrightarrow R^{i+j}(F_1 \circ G_1)$ to the spectral sequence $R^iF_2(R^jG_2) \Longrightarrow R^{i+j}(F_2 \circ G_2)$. 

We now apply this with $F_1 = F_2 = \Gamma(X, \cdot)$ with the identity transformation $F_1 \rightarrow F_2$, and $G_1 = \calHom_X(\mathbf{G}_a, \cdot)$, $G_2 = $ Identity, with the transformation $G_1 \rightarrow G_2$ being evaluation at $x$. (That is, given an fppf sheaf $\mathscr{F}$ on $X$, for each $X$-scheme $U$ we have a map $\calHom(\mathbf{G}_a, \mathscr{F})(U) = \Hom((\mathbf{G}_a)_U, \mathscr{F}|_U) \rightarrow \mathscr{F}(U)$ given by evaluating at the pullback of $x \in \mathbf{G}_a(X)$ to a section $x_U \in \mathbf{G}_a(U)$.) This induces on the derived functor level the first two vertical maps ${\ev}_x$ appearing in diagram (\ref{evaluationdiagram}). The maps ${{\rm{H}}}^i(X, \mathscr{F}) \rightarrow {{\rm{H}}}^i(X, \mathscr{F})$ associated to the spectral sequence for $F_2, G_2$ are the identity map for all $i$; specializing the functoriality of the spectral sequence to this situation, for $\mathscr{F} = \mathbf{G}_m$ and $i=2$ we obtain the commutativity of the first square in (\ref{evaluationdiagram}).

It remains to prove the commutativity of the second square. Recall that the map $\Ext^2_X(\mathbf{G}_a, \mathbf{G}_m) \rightarrow {{\rm{H}}}^2(\mathbf{G}_{a,\,X}, \mathbf{G}_m)$ was defined as follows. We have a natural transformation of functors $\Hom_X(\mathbf{G}_a, \cdot) \xrightarrow{\rm{Yon}} \Gamma(\mathbf{G}_{a,\,X}, \cdot)$ defined on an fppf 
abelian sheaf $\mathscr{F}$ on $X$ by using Yoneda's Lemma to assign
to any $\Ga \rightarrow \mathscr{F}$ the corresponding element of $\Gamma(\mathbf{G}_a, \mathscr{F})$. Then we obtain an induced map on derived functors $\Ext^{\bullet}_X(\mathbf{G}_a, \mathscr{F}) \rightarrow {{\rm{H}}}^{\bullet}(\mathbf{G}_{a,\,X}, \mathscr{F})$, and the map in (\ref{evaluationdiagram}) is simply this map specialized to the case $\mathscr{F} = \mathbf{G}_m$
and degree 2. So to check that the second square in (\ref{evaluationdiagram}) commutes, we merely need to check that the associated square for the $0$th derived functors commutes for any abelian fppf sheaf $\mathscr{F}$. That is, we need commutativity of
\[
\begin{tikzcd}
\Hom_X(\mathbf{G}_a, \mathscr{F}) \arrow{r}{\rm{Yon}} \arrow{d}{{\ev}_x} & \Gamma(\mathbf{G}_{a,\,X}, \mathscr{F}) \arrow{d}{{\ev}_x} \\
\Gamma(X, \mathscr{F}) \arrow[r, equals] & \Gamma(X, \mathscr{F})
\end{tikzcd}
\]
and this is clear. 
\end{proof}

We will use the isomorphism ${{\rm{H}}}^2(k, \widehat{\mathbf{G}_a}) \simeq {\rm{H}}^2(\mathbf{G}_a, \Gm)_{\prim}$ to study ${{\rm{H}}}^2(k, \widehat{\mathbf{G}_a})$ if ${\rm{char}}(k)=p>0$. (There is nothing to do when ${\rm{char}}(k)=0$,
since in that case these isomorphic groups vanish by Proposition \ref{H^2=Ext^2=Br}.) To this end, in \S \ref{sectionbrauergpsdiff} we turn to a method for studying $p$-torsion Brauer classes by relating them to differential forms.

\section{Brauer groups and differential forms}
\label{sectionbrauergpsdiff}

Throughout this section, $k$ denotes a field of characteristic $p > 0$. Due to Proposition \ref{H^2=Ext^2=Br}, in order to understand ${{\rm{H}}}^2(k, \widehat{\mathbf{G}_a})$ we need to understand ${\rm{H}}^2(\mathbf{G}_{a,\,k}, \Gm)_{\prim}$. The first observation is to recall that this lies inside ${\rm{H}}^2(\mathbf{G}_{a,\,k}, \Gm)[p]$. Indeed, this follows from the existence of a $k$-linear action on ${\rm{H}}^2(\mathbf{G}_{a,\,k}, \Gm)_{\prim}$; see Remark \ref{linearactiononBrauerprim}. In order to do this, we recall an observation of Kato \cite{kato} that over fields $p$-torsion Brauer elements can be related to differential forms by utilizing the (inverse) Cartier operator.

Let $X$ be an $\F_p$-scheme, and let $\Omega^1_X = \Omega^1_{X/\mathbf{F}_p}$ be the sheaf of K{\"a}hler differential forms on $X$. Let $B^1_X \subset \Omega^1_X$ denote the subsheaf of coboundaries; that is, $B^1_X : = \mbox{Im}(d:  \calO_X \rightarrow \Omega^1_X)$. There is a morphism $C^{-1}:  \Omega^1_X \rightarrow \Omega^1_X/B^1_X$ defined by
\[
C^{-1}(fdg) = f^pg^{p-1}dg
\]
(The reason for the ``inverse'' notation is that the Cartier operator is usually defined in a relative setting, and is essentially built as the inverse of the above operator. The ``inverse'' notation should not be taken to mean that the operator defined above is the inverse of some operator $C$.) The above map is well-defined. The only nontrivial point is to check that $C^{-1}(d(f + g)) = C^{-1}(df) + C^{-1}(dg)$. This is a consequence of the identity
\[
(f+g)^{p-1}d(f + g) - f^{p-1}df - g^{p-1}dg = d\left(\frac{(f+g)^p - f^p - g^p}{p}\right),
\]
where the expression in parentheses is defined to be $Q(f, g)$, where $Q \in \Z[X, Y]$ is defined by the formula $Q(X, Y) : = ((X + Y)^p - X^p - Y^p)/p$. 

\begin{lemma}
\label{pthpoweriffdx=0}
Let $X$ be a normal $\F_p$-scheme. Then the complex
\[
0 \longrightarrow \calO_X \xlongrightarrow{F} \calO_X \xlongrightarrow{d} \Omega^1_X
\]
is exact, where $F$ is the Frobenius map $s \mapsto s^p$.
\end{lemma}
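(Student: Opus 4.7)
The injectivity of $F$ is immediate since a normal scheme is reduced. For the main assertion -- exactness at the middle term -- the claim is that $\ker(d) = \calO_X^p$ as sheaves, the inclusion $\calO_X^p \subseteq \ker(d)$ being clear from $d(f^p) = pf^{p-1}df = 0$. Since this is a local statement, it suffices to verify equality at each stalk $A := \calO_{X,x}$, which is a normal Noetherian local ring and hence an integrally closed domain. Let $K := {\rm{Frac}}(A)$.

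The plan is to show that any $f \in A$ with $df = 0$ in $\Omega^1_{A/\F_p}$ lies in $A^p$. First I would pass to the generic point: because K{\"a}hler differentials commute with localization, the natural map $\Omega^1_{A/\F_p} \otimes_A K \rightarrow \Omega^1_{K/\F_p}$ is an isomorphism, so the hypothesis yields $df = 0$ in $\Omega^1_{K/\F_p}$. I would then appeal to the standard field-theoretic fact that, over a field $K$ of characteristic $p$, an element $f \in K$ is annihilated by $d$ in $\Omega^1_{K/\F_p}$ if and only if $f \in K^p$. One way to see this: choose a $p$-basis $\{x_i\}_{i \in I}$ of $K$ over $K^p$, so that $\{dx_i\}$ is a $K$-basis of $\Omega^1_{K/\F_p}$; then expand $f = \sum_\alpha c_\alpha x^\alpha$ uniquely with $c_\alpha \in K^p$ and multi-indices $\alpha = (\alpha_i)$ satisfying $0 \le \alpha_i < p$ (only finitely many contributing nonzero coefficients). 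Differentiating and collecting coefficients in the basis $\{dx_i\}$ forces $c_\alpha = 0$ for every $\alpha \neq 0$, so $f = c_0 \in K^p$.

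Having obtained $f = g^p$ with $g \in K$, the final step invokes normality directly: $g$ is a root of the monic polynomial $T^p - f \in A[T]$, hence integral over $A$, so $g \in A$ because $A$ is integrally closed in $K$. Therefore $f \in A^p$, completing the argument. The only substantive role of the normality hypothesis (beyond ensuring reducedness for the injectivity of $F$) is this descent step from $K$ to $A$; the field-theoretic $p$-basis lemma is the only genuinely non-trivial input, and I would not expect any real obstacle in the argument.
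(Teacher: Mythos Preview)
Your proof is correct and follows essentially the same approach as the paper: reduce to the fraction field, invoke the field-theoretic fact that $\ker(d) = K^p$ (which the paper cites as \cite[Thm.\,26.5]{crt} rather than spelling out the $p$-basis argument), and then use normality to pull the $p$th root back into the ring. The only cosmetic difference is that you work at stalks and invoke integral closure directly via the monic equation $T^p - f$, whereas the paper phrases the descent in terms of orders at codimension-one points; your version is arguably the cleaner formulation of the same idea.
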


\begin{proof}
Exactness on the left follows from the reducedness of $X$. To see that the complex is exact at the second $\calO_X$, first note that by a standard limit argument, we may assume that $X$ is locally Noetherian. Now suppose that we have an element $s \in \Gamma(X, \calO_X)$ such that $ds = 0$. Then the same holds for the differential of $s$ in the residue field of each generic point of $X$. Since the lemma holds for spectra of fields by \cite[Thm.\,26.5]{crt}, it follows that $s = f^p$ for some rational function $f$ on $X$. Since $s$ has nonnegative order at each point of codimension one on the normal scheme $X$, the same holds for $f$, hence -- since $X$ is normal and locally Noetherian -- $f$ extends uniquely to a global section $f' \in \Gamma(X, \calO_X)$, and this section satisfies $f'^p = s$.
\end{proof}

We will require the following lemma.

\begin{lemma}
\label{logdiffformsvalringsdegimp1}
Let $A$ be a valuation ring of characteristic $p$ such that the fraction field $K$ of $A$ has degree of imperfection $\leq 1$. Then, for an element $a \in K^{\times}$, the differential form $da/a \in \Omega^1_K$ arises from a differential form in $\Omega^1_A$ if and only if $a = x^pu$ for some $x \in K^{\times}$ and some $u \in A^{\times}$ -- in other words, if and only if the valuation of $a$ is divisible by $p$.
\end{lemma}

\begin{proof}
If $a = x^pu$, then $da/a = du/u \in \Omega^1_A$. Conversely, let $a \in K^{\times}$, and assume that the valuation $v(a)$ is not $p$-divisible in the value group $\Gamma$ of $A$: $v(a) \notin p\Gamma$. We must show that $da/a \notin \Omega^1_A \subset \Omega^1_K$. (The map $\Omega^1_A \rightarrow \Omega^1_K$ is an inclusion because $K$ is a localization of $A$.) We claim that, for $\alpha \in K$, if $\alpha da \in \Omega^1_A$, then $a\alpha \in \mathfrak{m}$, the maximal ideal of $A$. This will in particular imply that $da/a \notin \Omega^1_A$ (since $da \neq 0$). Since the condition on $\alpha$ that $a\alpha \in \mathfrak{m}$ is preserved under addition (i.e., if true for $\alpha$ and $\beta$, then it is also true for $\alpha + \beta$) and scalar multiplication by $A$ (if true for $\alpha$, and if $z \in A$, then it is true for $z\alpha$), it suffices in order to prove the claim to show that, for every $y \in A$, one has $dy \in \mathfrak{m}da/a$.

We first note that, since $a$ is not a $p$th power in $K$ and $K$ has degree of imperfection $\leq 1$, $a$ forms a $p$-basis for $K$. We may therefore write
\begin{equation}
\label{logdiffformsvalringsdegimp1pfeqn1}
y = \sum_{i = 0}^{p-1} e_i^pa^i
\end{equation}
for some $e_i \in K$. Then
\begin{equation}
\label{logdiffformsvalringsdegimp1pfeqn2}
dy = \left(\sum_{i=1}^{p-1} ie_i^pa^i\right)\frac{da}{a}.
\end{equation}
We need to show that the expression in the parentheses lies in $\mathfrak{m}$. In fact, the terms in the sum in (\ref{logdiffformsvalringsdegimp1pfeqn1}) have distinct valuations, since their valuations are even distinct in $\Gamma/p\Gamma$, hence the valuation of the sum is the minimum of the valuations of the individual terms. Since the sum lies in $A$, it follows that each term has valuation $\geq 0$. But each term with $i \neq 0$ does not lie in $p\Gamma$, hence must have valuation $> 0$. That is, each such term lies in $\mathfrak{m}$, hence the sum in (\ref{logdiffformsvalringsdegimp1pfeqn2}) lies in $\mathfrak{m}$, as desired.
\end{proof}

Define the map ${\rm{dlog}}:  \mathbf{G}_m/(\mathbf{G}_m)^p \rightarrow \Omega^1_X$ by $f \mapsto df/f$, and let $i:  \Omega^1_X \rightarrow \Omega^1_X/B^1_X$ denote the projection. Then we have the following lemma, which is the key to relating Brauer elements to differential forms.

\begin{lemma}
\label{cartiersequence}
Let $X$ be an $\F_p$-scheme. Assume either that $X$ is regular, or that $X$ is smooth over a scheme $S$ such that every local ring of $S$ is a valuation ring whose fraction field has degree of imperfection $\leq 1$. Then the following sequence of {\'e}tale sheaves on $X$ is exact: 
\[
0 \longrightarrow \mathbf{G}_m/(\mathbf{G}_m)^p \xlongrightarrow{{\rm{dlog}}} \Omega^1_X\xlongrightarrow{C^{-1} - i} \Omega^1_X/B^1_X \longrightarrow 0.
\]
\end{lemma}

\begin{proof}
Note that in both cases $X$ is normal. Exactness on the left therefore follows from Lemma \ref{pthpoweriffdx=0}. For exactness on the right, in order to hit a class in $\Omega^1_X/B^1_X$ (over an \'etale $X$-scheme $U$) represented by $fdg$, over  an {\'e}tale cover of $U$ we can find an $H$ such that ${{\rm{H}}}^pg^{p-1} - H = f$. Then $(C^{-1} - i)(Hdg) = fdg$. It is also easy to see that the sequence is a complex:  we have $(C^{-1} - i)({\rm{dlog}}(f)) = (C^{-1} - i)(df/f) = (1/f^p)f^{p-1}df - df/f = 0$.

It is harder to show that the sequence is exact at $\Omega^1_X$. For this, we break the proof up into two cases, based upon whether we assume $X$ to be regular or smooth over a base each local ring of which is a valuation ring whose fraction field has degree of imperfection $\leq 1$.

First we treat the regular case. In the case that $X = \Spec(K)$ for a field $K$, the desired exactness is \cite[Thm.\,9.2.2]{gille}. For the general case, the claimed exactness is a local assertion, so we may assume that $X = {\rm{Spec}}(R)$ for $R$ a regular local ring. Given $\omega \in \Omega^1_R$ such that $C^{-1}(\omega) = i(\omega)$, we need to check that $\omega = du/u$ for some $u \in R^{\times}$. By the already-known case in which $R$ is a field, we know that $\omega = df/f$ for some $f \in K : = \mbox{Frac}(R)$. Since $R$ is regular local, it is a UFD, so we may write $f = u\prod \pi_i^{e_i}$ for some 
pairwise non-associate prime elements $\pi_i \in R$, some $e_i \in \Z$, and some $u \in R^{\times}$. We then have $df/f = du/u + \sum e_id\pi_i/\pi_i$. We may assume that $p\nmid e_i$ for each $i$, since the terms with $p\mid e_i$ disappear. We need to show that if $df/f$ extends to a differential form in $\Omega^1_R$, then the sum is empty, hence $\omega = du/u$. We may localize at one of the primes $(\pi_i)$ to reduce ourselves to the following assertion:  if $R$ is an equicharacteristic discrete valuation ring with uniformizer $t$ and fraction
field $K$ then the element $dt/t \in K \otimes_R \Omega^1_R = \Omega^1_K$ does not arise from an element of $\Omega^1_R$.

To prove this claim, we may replace $R$ by its completion and thereby assume that $R \simeq \kappa \llbracket t\rrbracket$ for some field $\kappa$. Extending scalars, we may also assume that $\kappa$ is algebraically closed. Let $\kappa\{t\}$ denote the (strict) Henselization of $\kappa[t]$ at the origin. Then we claim that the map ${\rm{Spec}}(\kappa \llbracket t\rrbracket) \rightarrow {\rm{Spec}}(\kappa \{t\})$ is regular. First, we note that the map is flat, because $\kappa\{t\}$ is a filtered direct limit of Dedekind schemes, so flatness is equivalent to torsion-freeness. Next, we note that ${\rm{Spec}}(\kappa\llbracket t\rrbracket)$ has only two points, the origin and the generic point. Regularity at the origin is trivial (because the fiber over the origin of $\kappa\{t\}$ is just the identity). To see regularity over the generic point, we note that regularity of a scheme is preserved by \'etale extensions, and that it therefore suffices to check that $\kappa\llbracket t \rrbracket \otimes_{\kappa[t]} \kappa(t^{1/p^n}) = \kappa (\!( t^{1/p^n} )\!)$ is regular for every integer $n > 0$, and this is clear. The map is therefore regular, as claimed.

It now follows from Popescu's theorem \cite[Tag 07GC]{stacks} that $\kappa\llbracket t \rrbracket$ is a filtered colimit $\varinjlim A_i$ of smooth $\kappa \{t\}$-algebras. Therefore, if $dt/t$ is a regular differential form over $\kappa \llbracket t\rrbracket$, then it descends to a regular differential form over $A_i$ for some $i$. Since $\kappa \{t\}$ is strictly Henselian, the smooth algebra $A_i$ admits a section over it. Therefore, we find that $dt/t \in \Omega^1_{\kappa\{t\}}$. Since $\kappa\{t\}$ is a filtered direct limit of \'etale $\kappa[t]$-algebras, we have $\Omega^1_{\kappa\{t\}} = \Omega^1_{\kappa[t]}$, hence $dt/t \in \Omega^1_{\kappa[t]}$, which is false. This contradiction completes the proof in the case when $X$ is regular.

Now we treat the case in which $X$ is smooth over a scheme $S$ each local ring of which is a valuation ring whose fraction field has degree of imperfection $\leq 1$. The assertion is local on $X$, so we may assume that $X$ is finitely presented over $S$. Then the map $g: X \rightarrow S$ is open, so replacing $S$ by $g(X)$, we may assume that $g$ is faithfully flat, hence $S$ is an $\F_p$-scheme (since $X$ is). Since the assertion is local, a standard limit argument reduces us to the case in which $S = {\rm{Spec}}(A)$ for a characteristic $p$ valuation ring $A$ whose fraction field has degree of imperfection $\leq 1$ and $X = {\rm{Spec}}(B)$, where $A \rightarrow B$ is a local homomorphism such that $B$ is (as an $A$-algebra) a local ring of a smooth $A$-scheme. 

Let $K := {\rm{Frac}}(A)$, and let $\omega \in \Omega^1_B$ be such that $(C^{-1} - i)(\omega) = 0$. We need to check that $\omega = dv/v$ for some $v \in B^{\times}$. Since $\Omega^1_B \hookrightarrow \Omega^1_{K \otimes_A B}$ is an inclusion, it suffices to check such an equality over the latter module of differentials. The generic fiber $K \otimes_A B$ of $B$ is a localization of a smooth scheme over the field $K$. Hence, by the already-treated regular case, we know that $\omega = d\beta/\beta \in \Omega^1_{K \otimes_A B}$ for some $\beta \in (K \otimes_A B)^{\times}$. By Proposition \ref{genericunitalmost=unit}, $\beta = b\alpha$ for some $b \in B^{\times}$ and some $\alpha \in K^{\times}$. Modifying $\beta$ by $b$ and $\omega$ by $db/b$, we may therefore assume that $\beta = \alpha \in K^{\times}$.

Since $B$ is a local ring of a smooth $A$-scheme, there is a local homomorphism $h: A \rightarrow A'$, where $A'$ is a local ring of an \'etale $A$-algebra, such that the morphism ${\rm{Spec}}(B) \rightarrow {\rm{Spec}}(A)$ admits a section $s$ when pulled back to $A'$: ${\rm{Spec}}(A') \xrightarrow{s} {\rm{Spec}}(B) \rightarrow {\rm{Spec}}(A)$, where the composition is ${\rm{Spec}}(h)$. Pulling back along $A'$, we then see that $d\alpha/\alpha \in \Omega^1_{A'}$. By \cite[Tag 0ASJ]{stacks}, $A'$ is a valuation ring such that the map $A \rightarrow A'$ induces an isomorphism of value groups. Further, ${\rm{Frac}}(A')$ is a finite separable extension of $K$, hence also has degree of imperfection $\leq 1$. Lemma \ref{logdiffformsvalringsdegimp1} therefore implies that the valuation of $\alpha$ is a multiple of $p$ in the value group of $A'$. It follows that the same holds in the value group of $A$. That is, $\alpha = x^pu$ for some $x \in K^{\times}$ and some $u \in A^{\times}$. Then $d\alpha/\alpha = du/u$, and the proof is complete.
\end{proof}

\begin{proposition}
\label{brauerdifferentialforms}
Let $A$ be a ring of characteristic $p > 0$. Assume either that $A$ is regular, or that $A$ is smooth over a scheme $S$ such that each local ring of $S$ is a valuation ring whose fraction field has degree of imperfection $\leq 1$. Then we have a functorial short exact sequence
\[
0 \longrightarrow \frac{\Pic(A)}{p \cdot \Pic(A)} \longrightarrow \frac{\Omega^1_A}{dA + (C^{-1}-i)\Omega^1_A} \xlongrightarrow{\psi} {\rm{H}}^2(A, \Gm)[p] \longrightarrow 0,
\]
where $d:  A \rightarrow \Omega^1_A$ is the canonical derivation, $C^{-1}$ is the inverse Cartier map, and $\psi$ is the map induced by the composition of the connecting map ${\rm{H}}^0(\Omega^1_A/B^1_A) \rightarrow {\rm{H}}^1(A, \Gm/(\Gm)^p)$ associated to the sequence of Lemma $\ref{cartiersequence}$, and the connecting map ${\rm{H}}^1(A, \Gm/(\Gm)^p) \rightarrow {\rm{H}}^2(A, \Gm)[p]$ associated to the exact sequence
\[
0 \longrightarrow \mathbf{G}_m \xlongrightarrow{p} \mathbf{G}_m \longrightarrow \mathbf{G}_m/(\mathbf{G}_m)^p \longrightarrow 0.
\]
\end{proposition}

\begin{proof}
All of the cohomology in this proof is {\'e}tale. Let $X := \Spec(A)$. 
Lemma \ref{cartiersequence} yields an exact sequence
\begin{equation}
\label{brauerdifferentialformsproofeqn2}
0 \longrightarrow \frac{{{\rm{H}}}^0(X, \Omega^1_X/B^1_X)}{(C^{-1} - i)({{\rm{H}}}^0(X, \Omega^1_X))} \longrightarrow {{\rm{H}}}^1(X, \mathbf{G}_m/(\mathbf{G}_m)^p) \longrightarrow {{\rm{H}}}^1(X, \Omega^1_X).
\end{equation}
Since $\Omega^1_X$ is a quasi-coherent sheaf, its {\'e}tale and Zariski cohomology agree. Therefore, since $X$ is affine, ${{\rm{H}}}^1(X, \Omega^1_X) = 0$. We claim that the natural maps ${{\rm{H}}}^0(X, \Omega^1_X) \rightarrow {{\rm{H}}}^0(X, \Omega^1_X/B^1_X)$ and $d:  A = {\rm{H}}^0(X, \calO_X) \rightarrow {\rm{H}}^0(X, B^1_X)$ are surjective. For the surjectivity of the first map, it is  enough show that ${{\rm{H}}}^1(X, B^1_X) = 0$. By Lemma \ref{pthpoweriffdx=0}, we have an exact sequence
\begin{equation}
\label{diffeqn1}
0 \longrightarrow \calO_X \xlongrightarrow{F} \calO_X \xlongrightarrow{d} B^1_X \longrightarrow 0. 
\end{equation}
That ${{\rm{H}}}^1(X, B^1_X) = 0$ therefore follows from the fact that ${{\rm{H}}}^i(X, \calO_X) = 0$ for all $i>0$, since $X$ is affine. The surjectivity of the map $d:  {\rm{H}}^0(X, \calO_X) \rightarrow {\rm{H}}^0(X, B^1_X)$ follows from the exact sequence (\ref{diffeqn1}) and the fact that ${\rm{H}}^1(X, \calO_X) = 0$. We therefore obtain from (\ref{brauerdifferentialformsproofeqn2}) an isomorphism
\begin{equation}
\label{brauerdifferentialformsproofeqn3}
\frac{\Omega^1_A}{dA + (C^{-1}-i)(\Omega^1_A)} \xrightarrow{\sim} {\rm{H}}^1(X, \Gm/(\Gm)^p).
\end{equation}

The exact sequence of {\'e}tale sheaves on $X$
\[
0 \longrightarrow \mathbf{G}_m \xlongrightarrow{p} \mathbf{G}_m \longrightarrow \mathbf{G}_m/(\mathbf{G}_m)^p \longrightarrow 0
\]
(exactness on the left because $X$ is reduced, hence so is every \'etale $X$-scheme) yields an exact sequence
\[
0 \longrightarrow \frac{\Pic(X)}{p\cdot \Pic(X)} \longrightarrow {{\rm{H}}}^1(X, \mathbf{G}_m/(\mathbf{G}_m)^p) \longrightarrow {\rm{H}}^2(X, \Gm)[p] \longrightarrow 0.
\]
Combining this with (\ref{brauerdifferentialformsproofeqn3}) yields the proposition.
\end{proof}

\begin{lemma}
\label{valringlimoffppfalg}
Let $V$ be a valuation ring, and let $C$ be a (faithfully) flat $V$-algebra. Then $C$ is a filtered direct limit of (faithfully) flat $V$-algebras of finite presentation.
\end{lemma}

\begin{proof}
It suffices to prove the result without the faithful adjective, since the surjectivity of the map into ${\rm{Spec}}(V)$ is inherited by each $V$-algebra in a filtered system mapping to $C$. Trivially, $C$ is a filtered direct limit $\varinjlim_i B_i$ of finite type $V$-algebras $B_i$. Since $C$ is $V$-torsion free, the map $B_i \rightarrow C$ factors through the $V$-torsion free quotient $\overline{B}_i$ of $B_i$ (obtained by quotienting out by the ideal of $V$-torsion elements of $B_i$). Thus, we may write $C = \varinjlim_i \overline{B}_i$ as a filtered direct limit of $V$-torsion free $V$-algebras of finite type. Since a $V$-module is flat if and only if it is $V$-torsion free \cite[Tag 0539]{stacks}, it follows that each $\overline{B}_i$ is a flat $V$-algebra of finite type. By \cite[Tag 053E]{stacks}(i), it follows that $\overline{B}_i$ is finitely presented over $V$.
\end{proof}

Next we will use Proposition \ref{brauerdifferentialforms} show that the maps of (\ref{mapsH^2ExtBr}) are isomorphisms under suitable hypotheses on $X$.

\begin{proposition}
\label{H^2=Ext^2=Brdvr}
Let $X$ be an $\F_p$-scheme, and assume either that $X$ is regular, or that it is smooth over a scheme $S$, each local ring of which is a valuation ring whose fraction field has degree of imperfection $\leq 1$. Then the maps ${\rm{H}}^2(X, \widehat{\Ga}) \xrightarrow{f_X} \Ext^2_X(\Ga, \Gm) \xrightarrow{g_X} {\rm{H}}^2(\mathbf{G}_{a,\,X}, \Gm)_{\prim}$ in $(\ref{mapsH^2ExtBr})$ are $\Gamma(X, \calO_X)$-module isomorphisms.
\end{proposition}

\begin{proof}
The assumptions of the proposition imply that $X$ is seminormal, so the second map is an isomorphism by Proposition \ref{Ext^2=Brprimbasescheme}. The first map is an inclusion by Proposition \ref{H^2(X,G_a^)-->Ext^2(Ga,Gm)injection}. It remains to show that it is surjective. We have the Leray spectral sequence
\[
E_2^{i, j} = {\rm{H}}^i(X, \calExt^j(\Ga, \Gm)) \Longrightarrow \Ext^{i+j}_X(\Ga, \Gm).
\]
The terms $E_2^{i, 1}$ vanish by Proposition \ref{ext^1(Ga,Gm)=0}. We therefore have an exact sequence
\[
{\rm{H}}^2(X, \widehat{\Ga}) \xlongrightarrow{f_X} \Ext^2_X(\Ga, \Gm) \xlongrightarrow{h} {\rm{H}}^0(X, \calExt^2(\Ga, \Gm)),
\]
where we recall that the first map is $f_X$ by definition. Let $\alpha \in \Ext^2_X(\Ga, \Gm)$. We need to show that $h(\alpha) = 0$. 

This vanishing assertion is fppf local on $X$, so we may assume that $X = {\rm{Spec}}(A)$ is affine, and since the relevant cohomology group ${\rm{H}}^0(X, \calExt^2(\Ga, \Gm))$ behaves well with respect to filtered direct limits of rings, in the case in which $X$ is smooth over a base whose local rings are valuation rings, we may further assume that $X$ is smooth over a valuation ring. By Proposition \ref{brauerdifferentialforms}, we have $g_X(\alpha) = \psi(\overline{\omega})$ for some $\omega \in \Omega^1_A$, where $\overline{\omega}$ denotes the image of $\omega$ in $\Omega^1_{A[X]}/(d(A[X]) + (C^{-1}-i)\Omega^1_{A[X]})$. We claim that $(F_A^n)^*(\overline{\omega}) = 0$ for some $n > 0$, where $F_A$ is the absolute Frobenius morphism on $A$.

We may write $\omega = G(X)dX + \sum_{i=1}^nH_i(X)da_i$ for some $G, H_i \in A[X]$ and $a_i \in A$. Since $d(a_i^p) = 0$ for each $i$, it then follows that $F_A^*\omega = R(X)dX$ for some $R \in A[X]$. Working term by term, we may assume that $R(X) = bX^m$ for some $b \in A$ and some $m \geq 0$. We then prove the claim by induction on $m$. If $m \not \equiv -1 \pmod{p}$, then
\[
F_A^*(bX^mdX) = b^pX^mdX = d\left(b^p\frac{X^{m+1}}{m+1}\right),
\]
so $\overline{\omega} = 0$. If, on the other hand, $m \equiv -1 \pmod{p}$, then
\[
F_A^*(bX^mdX) = b^pX^mdX = bX^{\frac{m+1-p}{p}}dX + (C^{-1}-i)(bX^{\frac{m+1-p}{p}}dX),
\]
so
\[
F_A^*(\overline{\omega}) = \overline{bX^{\frac{m+1-p}{p}}dX}
\]
in this case. By induction, the element on the right is killed upon pullback by a suitable power of $F_A$. This completes the proof of the claim. 

Since the cohomology in question behaves well with respect to filtered direct limits, it therefore suffices to show that the map $F_A$ is a filtered direct limit of fppf morphisms. That is, if $A'$ denotes the ring $A$ with the $A$-algebra structure given by $F_A$, then we want to show that $A'$ is a filtered direct limit of fppf $A$-algebras. It suffices to write $A'$ as a filtered direct limit of finitely presented flat $A$-algebras, since the surjectivity of the underlying map of sets onto ${\rm{Spec}}(A)$ is then inherited by each ring in the filtered system.

We first check that $F_A$ is a filtered direct limit of finitely presented flat morphisms when $A$ is a regular ring (thanks to Ofer Gabber for this argument). By Popescu's Theorem \cite[Tag 07GC]{stacks}, $A$ is a filtered direct limit of smooth $\F_p$-algebras $B_i$. The map $F_{B_i}$ is finite flat for each $B_i$. Since $F_A$ is the filtered direct limit of the maps $A \rightarrow A \otimes_{B_i, {\rm{Frob}}} B_i$ obtained via base change from Frobenius on $B_i$, it follows that $F_A$ is a filtered direct limit of finite flat maps.

Next we consider the case in which $X = {\rm{Spec}}(A)$ is smooth over a valuation ring $V$. The map $F_{X/V}\colon X \rightarrow X^{(p)}$ is finitely presented (because $X$ is of finite type over $V$), and we claim that it is flat. Indeed, for each $v \in V$, the relative Frobenius map $X_v \rightarrow X_v^{(p)}$ is flat (since $X_v$ is smooth over a field), so the flatness of $F_{X/V}$ follows from the fibral flatness criterion \cite[${\rm{IV_3}}$, Cor.\,11.3.11]{ega}. We therefore see that $F_{X/V}$ is fppf. We claim that the base change map $g: X^{(p)} \rightarrow X$ is a filtered direct limit of finitely presented flat maps. It suffices to show the same for $F_V\colon V \rightarrow V$, and, by Lemma \ref{valringlimoffppfalg}, it suffices in turn to show that $F_V$ is flat. This follows from the fact that a $V$-module is flat if and only if it is $V$-torsion free \cite[Tag 0539]{stacks}. It now follows that the map $F_X = F_{X/V} \circ g$ is a filtered direct limit (on the level of rings rather than schemes) of finitely presented flat maps. This completes the proof of the proposition.
\end{proof}

\begin{proposition}
\label{H^2=Ext^2=Br}
Let $k$ be a field. The maps $(\ref{mapsH^2ExtBr})$ are functorial isomorphisms of $k$-vector spaces ${{\rm{H}}}^2(k, \widehat{\mathbf{G}_a}) \simeq \Ext^2_k(\mathbf{G}_a, \mathbf{G}_m) \simeq {\rm{H}}^2(\mathbf{G}_{a,k}, \Gm)_{\prim}$.
If $k$ is perfect, then all three groups vanish. 
\end{proposition}

For imperfect fields $k$ we will see later that these three common groups can be rather interesting (and are always nontrivial; see Corollary \ref{omegaH^2injective}).

\begin{proof}  
The isomorphism assertion when ${\rm{char}}(k) > 0$ follows from Proposition \ref{H^2=Ext^2=Brdvr}, so it only remains to show the vanishing assertion when $k$ is perfect, which we now assume.

The group ${{\rm{H}}}^2(k, \widehat{\mathbf{G}_a})$ vanishes by Proposition \ref{cohomologyofG_adualwhenkisperfect}. To show that ${\rm{H}}^2(\mathbf{G}_{a,k}, \Gm)_{\prim} = 0$, we note that the map ${\rm{H}}^2(k, \Gm) \rightarrow {\rm{H}}^2(\mathbf{G}_{a,k}, \Gm)$ is an isomorphism by Remark \ref{nolinearaction}. Restricting the equality $m^*\alpha = p_1^*\alpha + p_2^*\alpha$ to the point $(0, 0)$, we find that ${\rm{H}}^2(\mathbf{G}_{a,k}, \Gm)_{\prim}$ contains no nontrivial constant Brauer classes, hence ${\rm{H}}^2(\mathbf{G}_{a,k}, \Gm)_{\prim} = 0$. The vanishing of $\Ext^2_k(\Ga, \Gm)$ now follows from Proposition \ref{Ext^2=Brprimbasescheme}.
\end{proof}

\section{Computation of ${\rm{H}}^2(R, \widehat{\Ga})$ and ${\rm{H}}^2(\mathbf{G}_{a,\,R}, \Gm)_{\rm{prim}}$}
\label{sectiongeneralH^2(Ga^)}

In this section we will compute ${\rm{H}}^2(R, \widehat{\Ga}) \simeq {\rm{H}}^2(\mathbf{G}_{a,\,R}, \Gm)_{\prim}$ for a ring $R$ of characteristic $p > 0$ that is either a field or a DVR, and such that the fraction field $k$ of $R$ satisfies $[k:  k^p] = p$. By Proposition \ref{H^2=Ext^2=Br} these are isomorphic as $R$-modules, and we will show that they are free of rank $1$ for such rings (Proposition \ref{omega=H^2(Ga^)}). We will accomplish this by exploiting the relationship between $p$-torsion Brauer elements (among which are the primitive Brauer elements on $\Ga$) and differential forms obtained in \S \ref{sectionbrauergpsdiff}, particularly Proposition \ref{brauerdifferentialforms}.

\begin{lemma}
\label{primetop}
Let $R$ be an $\F_p$-algebra, and suppose that there exists a $t \in R$ such that $R = \sum_{i=0}^{p-1}t^iR^p$. In the quotient
\begin{equation}
\label{Omega^1quotient}
\frac{\Omega^1_{R[X_1, \dots, X_n]}}{d(R[X_1, \dots, X_n]) + (C^{-1}-i)(R[X_1, \dots, X_n])},
\end{equation}
every element represented by a differential form $Fdt$ with $F \in R[X_1, \dots, X_n]$ is represented by a differential form $Gdt$, where $G \in R[X_1, \dots, X_n]$ contains no monomials of the form $X^{pI}$ with $I \in \N^n - \{0\}$ and such that $G$ has the following property:  if $I \in \N^n - (p\N)^n$ is such that the monomial $X^I : = \prod_{i=1}^n X_i^{I_i}$ appears in $F$ with nonzero coefficient, and for every positive integer $r$ the monomial $X^{p^rI}$ has coefficient $0$ in $F$, then the monomial $X^I$ appears in $G$ with nonzero coefficient.
\end{lemma}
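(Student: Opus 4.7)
My plan is to exhibit a single ``reduction move'' that replaces a monomial term $c X^{pI} \, dt$ with $I \in \N^n - \{0\}$ and $c \in R$ by $c_{p-1} X^I \, dt$ modulo the subgroup $N := d(R[X_1,\ldots,X_n]) + (C^{-1}-i)(\Omega^1_{R[X_1,\ldots,X_n]})$, for a specific $c_{p-1}\in R$ extracted from a decomposition of $c$, and then to iterate. The tracking claim about the coefficient of $X^I$ in $G$ will follow by reading off what the move creates and destroys.

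The heart of the argument is the reduction identity. The hypothesis $R = \sum_{i=0}^{p-1} t^i R^p$ lets me write any $c \in R$ as $c = \sum_{j=0}^{p-1} c_j^p t^j$ with $c_j \in R$. For $0 \le j \le p-2$, since $d(c_j^p X^{pI}) = 0$ in characteristic $p$, the Leibniz rule gives $d(c_j^p t^{j+1} X^{pI}) = (j+1)\, c_j^p t^j X^{pI} \, dt$, so $c_j^p t^j X^{pI}\, dt$ lies in $d R[X_1,\ldots,X_n]$. For $j = p-1$, by definition
\[
C^{-1}(c_{p-1} X^I \, dt) = c_{p-1}^p\, X^{pI}\, t^{p-1} \, dt
\]
in $\Omega^1/B^1$, so $c_{p-1}^p t^{p-1} X^{pI} \, dt \equiv c_{p-1} X^I \, dt \pmod{N}$. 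Summing these identities over $j$ yields
\[
c X^{pI} \, dt \equiv c_{p-1} X^I \, dt \pmod{N}.
\]

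To prove the first assertion I iterate this move. Let $d$ denote the maximum total degree of any monomial $X^{pI}$ ($I \neq 0$) occurring in the current representative; applying the reduction to every such monomial of degree $d$ replaces them by monomials of degree $d/p < d$, so the maximum degree of bad monomials strictly decreases. By induction on $d$ the process terminates, producing a representative $G\, dt$ in which no monomial $X^{pI}$ with $I \neq 0$ appears.

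For the second assertion, observe that a single application of the move to $c X^{pI}\, dt$ modifies exactly two coefficients: that of $X^{pI}$ (set to zero) and that of $X^I$ (incremented by $c_{p-1}$); no other $X$-monomial is affected. Fix a primitive $I \in \N^n - (p\N)^n$ with $X^I$ appearing in $F$ but $X^{p^r I}$ absent from $F$ for every $r \ge 1$. By downward induction on $r$ (starting from $r$ so large that $|p^r I|$ exceeds the degree of $F$), none of the monomials $X^{p^r I}$ ($r \ge 1$) is ever present in any intermediate representative: $X^{p^r I}$ can only be created by reducing an $X^{p^{r+1} I}$ term, which is inductively never available. Hence the coefficient of $X^I$ is never touched, and equals its nonzero initial value in $G$. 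The main technical point is the discovery of the clean reduction identity itself; it relies crucially on the hypothesis that $\{1,t,\ldots,t^{p-1}\}$ spans $R$ over $R^p$, which makes $t^{p-1}\, dt$ the sole obstruction in decomposing $c\, dt$ for polynomial $c$ of degree $<p$ in $t$. Once that identity is in hand, the rest is bookkeeping.
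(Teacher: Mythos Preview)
Your proof is correct and follows essentially the same approach as the paper's. The core reduction identity---decomposing $c=\sum_{j=0}^{p-1} c_j^p t^j$, killing the $j\le p-2$ pieces via exact forms $d(c_j^p t^{j+1} X^{pI}/(j+1))$, and handling $j=p-1$ via $(C^{-1}-i)(c_{p-1}X^I\,dt)$---is exactly what the paper does. You supply more detail on the iteration (induction on the maximal degree of a bad monomial) and on the tracking claim for the second assertion (downward induction on $r$ showing $X^{p^r I}$ never appears), both of which the paper leaves implicit in its ``it suffices to show'' reduction.
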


\begin{proof}
It suffices to show that every differential form $\lambda X^{pI}dt$
with $\lambda \in R$ and $I \neq 0$ is equivalent in (\ref{Omega^1quotient}) to a 1-form $\lambda' X^Idt$ with $\lambda' \in R$. By assumption, we may write $\lambda = \sum_{i = 0}^{p-1} a_i^pt^i$ for some $a_i \in R$, so we may assume that the 
representative 1-form is $a^pt^iX^{pI}dt$
for some $a \in R$ and $0 \leq i < p$. If $i \neq p-1$ then $a^pt^iX^{pI}dt = d(a^pX^{pI}\frac{t^{i+1}}{i+1})$, 
so this element is trivial in (\ref{Omega^1quotient}). If $i = p-1$, then $a^pt^{p-1}X^{pI}dt = aX^Idt + (C^{-1} - i)(aX^Idt)$, 
so as elements of (\ref{Omega^1quotient}), $a^pt^{p-1}X^{pI}dt = aX^Idt$. 
\end{proof}

\begin{lemma}
\label{brauerclassdeterminespolynomial}
Let $R$ be a regular $\F_p$-algebra, and let $F \in R[X_1, \dots, X_n]$. Suppose that there exists $I \in \N^n - (p\N)^n$ such that $X^I : = \prod_{i=1}^n X_i^{I_i}$ appears in $F$ with nonzero coefficient, and such that for every positive integer $r$, the coefficient of $X^{p^rI}$ in $F$ vanishes. Let $0 \neq \omega \in \Omega^1_R$. Then for the map $\psi$ in Proposition $\ref{brauerdifferentialforms}$, the element $\psi(F\omega) \in {\rm{H}}^2(\A^n_R, \Gm)[p]$ is nonzero.
\end{lemma}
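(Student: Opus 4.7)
The plan is to reduce to the case $Y = \Spec k$ for a field $k$, and then derive a contradiction from a direct analysis of the putative equation $F\omega = dH + (C^{-1}-i)\eta$ in $\Omega^1_{k[X]}$. For the reduction, I would use the functoriality of $\psi$ (constructed in $(\ref{H^0(Omega)toBr[p]})$): it suffices to exhibit a morphism $\phi\colon \Spec k \to Y$ from a field $k$ such that $\phi^*\omega \ne 0$ in $\Omega^1_{k/\F_p}$ and the coefficient of $X^I$ in $\phi^*F$ is nonzero in $k$. The hypothesis that $X^{p^r I}$ has zero coefficient is automatically preserved. Such a $\phi$ exists by first localizing $Y$ to an open affine on which $\omega$ and the coefficient $c_I \in \calO_Y(Y)$ are simultaneously nonzero, and then choosing a suitable residue field.

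Once reduced to $Y = \Spec k$, I use $\Pic(\A^n_k) = 0$ together with Proposition \ref{brauerdifferentialforms} to identify $\Br(\A^n_k)[p]$ with $\Omega^1_{k[X]}/(dk[X] + (C^{-1}-i)\Omega^1_{k[X]})$. Suppose, for contradiction, that $F\omega = dH + (C^{-1}-i)\eta$ for some $H \in k[X]$ and $\eta \in \Omega^1_{k[X]}$. Decompose $\eta = \eta_1 + \sum_j C_j\, dX_j$ with $\eta_1 \in \Omega^1_k \otimes_k k[X]$ and $C_j \in k[X]$. Comparing $dX_j$-components yields $H_{X_j} = C_j - C_j^p X_j^{p-1}$; expanding in monomials, this forces the relation $c^{(j)}_{pL + (p-1)\mathbf{e}_j} = (c^{(j)}_L)^p$ for every $L \in \N^n$, where $c^{(j)}_K$ is the coefficient of $X^K$ in $C_j$ and $\mathbf{e}_j$ is the $j$th standard basis vector. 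Iterating this relation from any nonzero coefficient produces an infinite sequence $c^{(j)}_{K^{(r)}} = (c^{(j)}_L)^{p^r}$ of nonzero coefficients at indices $|K^{(r)}| \to \infty$, contradicting that $C_j$ is a polynomial. Hence $C_j = 0$ for all $j$, so $H$ is independent of $X_1, \dots, X_n$, i.e., $H \in k$.

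For the decisive contradiction, I choose a $p$-basis $\{s_\alpha\}$ of $k$ (which exists since $\omega \ne 0$ forces $k$ to be imperfect), so that $\{ds_\alpha\}$ is a $k$-basis of $\Omega^1_{k/\F_p}$. Write $\omega = \sum_\alpha b_\alpha\, ds_\alpha$ and $\eta_1 = \sum_\alpha A_\alpha\, ds_\alpha$; pick $\alpha_0$ with $b := b_{\alpha_0} \ne 0$, and set $s := s_{\alpha_0}$, $A := A_{\alpha_0} \in k[X]$. Comparing $ds$-components (noting that $H \in k$ makes $dH$ constant in $X$) yields
\[
bF = \partial_s H + A^p s^{p-1} - A.
\]
Writing $A = \sum_J a_J X^J$ and comparing the coefficient of $X^J$ for $J \ne 0$: when $J \notin (p\N)^n$, $a_J = -bc_J$; when $J \in (p\N)^n$, $a_J = s^{p-1} a_{J/p}^p - bc_J$. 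Setting $J = I$ gives $a_I = -bc_I \ne 0$. Iteratively applying the second formula to $J = p^r I$ for $r \ge 1$ and using $c_{p^r I} = 0$ produces the recurrence $a_{p^r I} = s^{p-1} a_{p^{r-1}I}^p$, which by induction gives $a_{p^r I} = (-bc_I)^{p^r} s^{p^r - 1}$, nonzero for every $r \ge 0$ since $s, b, c_I$ are all nonzero. This contradicts $A \in k[X]$ being a polynomial, as $|p^r I| \to \infty$.

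The main obstacle is the reduction in the first step: $\Omega^1_{Y/\F_p}$ may in principle fail to be torsion-free over $\calO_Y$, so one must argue carefully that on the regular scheme $Y$ there is always a field-valued point at which $\omega$ pulls back nontrivially. The rest of the argument, while combinatorially intricate, is a direct analysis of polynomial coefficients in characteristic $p$, exploiting that the Frobenius-like recurrences forced by $(C^{-1}-i)$ are incompatible with polynomial truncation.
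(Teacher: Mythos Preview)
Your approach is genuinely different from the paper's and would be more self-contained if it worked: the paper reduces to local function fields by spreading out and specializing, then uses Brauer-group invariants and a monomial substitution $X_i \mapsto Y^{r_i}$ to reduce to the one-variable case. You instead attack the equation $F\omega = dH + (C^{-1}-i)\eta$ directly over an arbitrary imperfect field, which avoids both the arithmetic input and the combinatorics of choosing the $r_i$.

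However, there is a real gap. From $\partial_{X_j}H = C_j - C_j^p X_j^{p-1}$ you correctly deduce $C_j = 0$ and hence $\partial_{X_j}H = 0$ for all $j$. But this only forces $H \in k[X_1^p,\dots,X_n^p]$, not $H \in k$; in characteristic $p$, vanishing partials do not imply constancy. Consequently $dH = \sum_M X^{pM}\,dh_M$ need not be constant in $X$, and your $ds$-component equation becomes
\[
bF \;=\; \partial_s H \;+\; A^p s^{p-1} \;-\; A,
\]
with $\partial_s H = \sum_M (\partial_s h_M)\,X^{pM}$ contributing to every monomial $X^{pM}$. Your recurrence for the coefficients $a_{p^rI}$ therefore acquires an extra term: for $r \ge 1$,
\[
a_{p^rI} \;=\; \partial_s\bigl(h_{p^{r-1}I}\bigr) \;+\; s^{p-1}\,a_{p^{r-1}I}^{\,p},
\]
and since $h_{p^{r-1}I} \in k$ is unconstrained, you can no longer conclude $a_{p^rI} \ne 0$ from $a_{p^{r-1}I} \ne 0$ as written.

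This is repairable. Using the $p$-basis $\{s_\alpha\}$, decompose $k = \bigoplus_e k^p \prod_\alpha s_\alpha^{e_\alpha}$ with $0 \le e_\alpha \le p-1$. The key observation is that $\partial_s h$ never has a component with $e_{\alpha_0} = p-1$ (differentiating in $s$ lowers the $s$-exponent), while $s^{p-1}a_{p^{r-1}I}^{\,p}$ lies entirely in the component with $e_{\alpha_0} = p-1$ and all other $e_\alpha = 0$. Projecting the displayed recurrence onto that component kills the $\partial_s h$ term and shows that the corresponding component of $a_{p^rI}$ equals $a_{p^{r-1}I}^{\,p}$, hence is nonzero whenever $a_{p^{r-1}I}$ is. This restores the induction and the contradiction with $A$ being a polynomial.
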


\begin{proof}
We may break $Y : = {\rm{Spec}}(R)$ up into its connected components. The assumption on $X^I$ then still holds on at least one of the components, so we may restrict to such a component and assume that $Y$ is connected, i.e., since $R$ is regular, that $R$ is an integral domain. Let $k : = {\rm{Frac}}(R)$. Since $\A^n_R$ is regular, the map ${\rm{H}}^2(\A^n_R, \Gm) \rightarrow {\rm{H}}^2(\A^n_k, \Gm)$ is an inclusion. Further, since $\Spec(k)$ is an inverse limit of \'etale $R$-schemes, $\omega|\Spec(k) \in \Omega^1_k$ is nonzero. Therefore, in order to prove the lemma, we may replace $R$ by $k$ and thereby assume that $R = k$ is a field. We have $\omega = \sum_{i=1}^m \lambda_idt_i$ for some $\lambda_i \in k^{\times}$ and some $t_i \in k$ such that the elements $\{t_1, \dots, t_m\}$ form part of a $p$-basis for $k$, due to \cite[Thm.\,26.5]{crt}. In order to prove the desired nonvanishing we may extend scalars to $k(t_2^{1/p}, t_3^{1/p}, \dots, t_m^{1/p})$ and thereby assume that $\omega = \lambda dt$ for some $\lambda \in k^{\times}$ and some $t \in k - k^p$. Replacing $F$ by $\lambda F$, we may assume that $\omega = dt$ for some $t \in k- k^p$. We will next reduce to the case in which $k$ is a local function field.

In order to do this, we may apply a standard spreading out and specializing argument, as follows. If $Fdt$ represents the trivial Brauer class over $\A^n_k$, then Proposition \ref{brauerdifferentialforms} implies that there exists a differential form $\omega' = \sum_{i=1}^r g_idh_i$ such that $Fdt = (C^{-1}-i)(\omega')$ as elements of $\Omega^1_{\A^n_k}/B^1_{\A^n_k}$. Since $(C^{-1}-i)(\omega') = \sum_{i=1}^r (g_i^ph_i^{p-1}dh_i - g_idh_i)$, we obtain
\begin{equation}
\label{Fdt=d+C}
Fdt = df + \sum_{i=1}^r (g_i^ph_i^{p-1}dh_i - g_idh_i). 
\end{equation}
for some $f \in \A^n_k$. Since $t \in k$ is not a $p$th power, the extension $k/\F_p(t)$ is separable, so we may write $k$ as the filtered direct limit $k = \varinjlim_i R_i$ of smooth $\F_p(t)$-algebras $R_i$. The equation (\ref{Fdt=d+C}) then descends to some such smooth $\F_p(t)$-algebra $R_i$, where we descend $F$ to a polynomial in which the same monomials appear with nonzero coefficients. Replacing $R_i$ with some $R_j$ if necessary, we may arrange that the nonzero coefficients $r_I$ of $F$ are generically nonzero in $R_i$. Since $R_i$ is smooth and the $r_I \in R$ are generically nonzero, we may specialize (\ref{Fdt=d+C}) to an $L$-valued point $x:  \Spec(L) \rightarrow R$ for some finite separable extension $L/\F_p(t)$ such that $0 \neq r_I(x) \in L$ for all $I$. Then the class $F_x(X_1, \dots, X_n)dt \in {\rm{H}}^2(\A^n_L, \Gm)$ vanishes and, because $L/\F_p(t)$ is separable, we still have $t \notin L^p$, so all of the hypotheses of the lemma are satisfied for $F_xdt$ and $R = L$. Replacing $k$ with $L$, therefore, we have reduced to the case in which $k$ is a global function field. Replacing $k$ with the separable extension $k_v/k$ for some place $v$ of $k$, we may in fact assume that $k$ is a local function field.

So we now assume that $R = k$ is a local function field, and we prove the assertion of the lemma. We first treat the case $n = 1$. Then $F \in k[X]$ is nonconstant, and by Lemma \ref{primetop} (whose hypotheses are satisfied for the pair $k$, $t \in k$ because $[k:  k^p] = p$), we may assume that $F$ contains no monomials of the form $X^{pr}$ with $r > 0$. In particular, the degree of $F$ is prime to $p$. We need to show that the element of ${\rm{H}}^2(\A^1_k, \Gm)[p]$ represented via Proposition \ref{brauerdifferentialforms} by the differential form $Fdt$ (with $t \in k - k^p$) is nonzero. Since $\Omega^1_k = kdt$ (as follows from \cite[Thm.\,26.5]{crt} and the fact that $[k:  k^p] = p$), using the isomorphism ${\rm{H}}^2(k, \Gm) \simeq \frac{1}{p}\Z/\Z$ provided by taking invariants, choose an element $\lambda \in k$ such that the Brauer class represented by $\lambda dt$ has invariant $1/p$. Then the polynomial $F(X) - \lambda \in k[X]$ has degree prime to $p$, and therefore has an irreducible factor of some degree, say $m$, prime to $p$. Let $\beta$ be a root of this irreducible factor in some finite extension of $k$. It suffices to show that setting $X = \beta$ yields a nonzero element of $\Br(k(\beta))$. In fact, setting $X = \beta$, we get that $F(\beta)dt = \lambda dt$ as Brauer classes in ${\rm{H}}^2(k(\beta), \Gm)$. The invariant of an element of ${\rm{H}}^2(k, \Gm)$, after extending scalars to some finite extension of $k$, gets multiplied by the degree of the extension. Since $p \nmid [k(\beta):  k] = m$, therefore, we deduce that ${\rm{inv}}_{k(\beta)}(\lambda dt) = m*{{\rm{inv}}}_k(\lambda dt) = m/p \in \Q/\Z$ is nonzero.

Now we treat the general case. Let $F \in k[X_1, \dots, X_n]$ be a nonconstant polynomial as in the lemma. By Lemma \ref{primetop}, we may assume that $F$ contains no monomials of the form $X^{pI}$ with $I \in \N^n - \{0\}$. Now consider the lexicographic ordering on $\N^n$, which is a total ordering defined by declaring that, for $I, J \in \N^n$, $I$ is greater than $J$ precisely when there exists an $r \in \{1, \dots, n\}$ such that $I_s = J_s$ for $s < r$ and such that $I_r > J_r$.

Write
\[
F = \sum_{I \in \N^n} C_IX^I
\]
with $C_I \in k$, and let 
\begin{equation}
\label{M=maxindex}
M : = \max \{I \in \N^n \mid C_I \neq 0\}.
\end{equation}
Since $F$ contains no monomials of the form $X^{pI}$ with $I \in \N^n - \{0\}$, we have 
\begin{equation}
\label{pnmidMj0}
p \nmid M_{i_0}
\end{equation}
for some $i_0 \in \{1, \dots, n\}$.

Now we inductively choose a sequence $r_n, r_{n-1}, \dots, r_1$ of positive integers as follows. Having chosen $r_j$ for all $j > i$, we choose $r_i$ (including at the beginning of the process, with $i = n$) to be a positive integer satisfying the conditions
\begin{equation}
\label{r_jlarge}
r_i > \max_{C_I \neq 0} \left\{\sum_{j = i+1}^n r_jI_j \right\},
\end{equation}
\begin{equation}
\label{r_jmodp}
r_i
\begin{cases}
\equiv 0 \pmod{p}, & i \neq i_0, \\
\not \equiv 0 \pmod{p}, & i = i_0.
\end{cases}
\end{equation}
Then we pull back the element of ${\rm{H}}^2(\A^n_k, \Gm)$ represented by $Fdt$ to an element of the group ${\rm{H}}^2(\A^1_k, \Gm)$ via the map $\A^1_k \rightarrow \A^n_k$ given on coordinate rings by the $k$-algebra map $k[X_1, \dots, X_n] \rightarrow k[Y]$, $X_i \mapsto Y^{r_i}$. This amounts to replacing $F(X_1, \dots, X_n)dt$ by $G(Y)dt$, where $G(Y) : = F(Y^{r_1}, \dots, Y^{r_n})$. We claim that $G$ has degree prime to $p$. Assuming this, it will follow from the already-treated $n = 1$ case of the lemma that $G(Y)dt$ represents a nontrivial Brauer class, hence the same must hold for $Fdt$, which will prove the lemma.

In order to show that $G$ has degree prime to $p$, we claim that the degree of $G$ is $\sum_{j = 1}^n r_jM_j$. That is, the degree is determined by making the replacements $X_i \mapsto Y^{r_i}$ in the monomial $X^M$. Assuming this, it then follows from (\ref{pnmidMj0}) and (\ref{r_jmodp}) that the degree of $G$ is prime to $p$. In order to show that $G$ does in fact have degree $\sum_{i = 1}^n r_jM_j$, it suffices to show that for any $I \neq M \in \N^n$ such that $C_I \neq 0$, we have
\begin{equation}
\label{sumrj>sumrj}
\sum_{j = 1}^n r_jM_j > \sum_{j=1}^n r_jI_j.
\end{equation}
Since $M > I$ for any such $I$ by (\ref{M=maxindex}), there exists $i \in \{1, \dots, n\}$ such that $M_j = I_j$ for all $j < i$ and $M_i > I_i$. Then, using (\ref{r_jlarge}), we have
\[
\sum_{j = 1}^n r_jM_j = \sum_{j < i} r_jI_j + r_iM_i + \sum_{j > i} r_jM_j \geq \sum_{j < i} r_jI_j + r_iM_i 
\]
\[
\geq \sum_{j < i} r_jI_j + r_i(I_i + 1) > \sum_{j \leq i} r_jI_j + \sum_{j > i} r_jI_j = \sum_{j=1}^n r_jI_j,
\]
which proves (\ref{sumrj>sumrj}).
\end{proof}

Now let $R$ be a regular $\F_p$-algebra. The map $\Omega^1_R \rightarrow \Omega^1_{\mathbf{G}_{a,\,R}}$ defined by $\omega \mapsto X\omega$ (where $X$ is the variable of $\Ga$) is $R$-linear for the $R$-action induced by the $R$-action on $\Ga$, since for $\lambda \in R$, the action of $\lambda$ on $\Ga$ is given by $X \mapsto \lambda X$, and $X(\lambda \omega) = (\lambda X)\omega$. Via the map
\begin{equation}
\label{psionGa}
\psi:  \frac{\Omega^1_{\mathbf{G}_{a,\,R}}}{dR + (C^{-1}-i)(\Omega^1_R)} \rightarrow {\rm{H}}^2(\mathbf{G}_{a,\,R}, \Gm)[p]
\end{equation}
in Proposition \ref{brauerdifferentialforms}, we thereby obtain a map $\Omega^1_R \rightarrow {\rm{H}}^2(\mathbf{G}_{a,\,R}, \Gm)[p]$ with image contained in the primitive Brauer group, since $m^*(X\omega) = (X_1 + X_2)\omega = X_1\omega + X_2\omega = \pi_1^*(X\omega) + \pi_2^*(X\omega)$. We thereby obtain for any regular $\F_p$-algebra $R$ a functorial $R$-linear map
\begin{equation}
\label{phidefbrauer}
\phi_R:  \Omega^1_R \rightarrow {\rm{H}}^2(\mathbf{G}_{a,\,R}, \Gm)_{\prim}
\end{equation}
defined by $\phi(\omega) := \psi(X\omega)$, where $X$ is the variable on $\mathbf{G}_{a,\, R}$ and $\psi$ is the map in (\ref{psionGa}). We will first show that this map $\phi_R$ is injective.

\begin{corollary}
\label{omegaH^2injective}
Let $R$ be a regular $\F_p$-algebra. The $R$-linear 
map $$\phi_R:  \Omega^1_R \rightarrow {\rm{H}}^2(\mathbf{G}_{a,\,R}, \Gm)_{\prim}$$ defined by $\omega \mapsto \psi(X\omega) \in {\rm{H}}^2(\mathbf{G}_{a,\,R}, \Gm)_{\prim}$ is injective, where $\psi$ is the map in Proposition $\ref{brauerdifferentialforms}$.
\end{corollary}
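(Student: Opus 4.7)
The proof will be a direct application of Lemma \ref{brauerclassdeterminespolynomial}. We wish to show that for a nonzero global section $\omega \in {\rm{H}}^0(Y, \Omega^1_Y)$ on a regular $\F_p$-scheme $Y$, the element $\phi_Y(\omega) = \psi(X\omega) \in \Br(\mathbf{G}_{a,Y})[p]$ is nonzero. Proceeding by contrapositive, we assume $\omega \neq 0$ and aim to verify the hypotheses of Lemma \ref{brauerclassdeterminespolynomial} for the pair $(F, \omega)$ with $n = 1$ and $F = X \in {\rm{H}}^0(Y, \calO_Y)[X]$.

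The only thing to check is that there exists $I \in \N - p\N$ (taking $n=1$) such that $X^I$ appears in $F$ with nonzero coefficient and such that $X^{p^r I}$ has coefficient zero in $F$ for every $r \geq 1$. We take $I = 1$: since $p \geq 2$, we have $1 \notin p\N$; the coefficient of $X^1$ in $F = X$ is $1 \neq 0$; and for $r \geq 1$ we have $p^r I = p^r \geq 2$, so the coefficient of $X^{p^r}$ in $F = X$ is $0$. All hypotheses of Lemma \ref{brauerclassdeterminespolynomial} are therefore satisfied, and that lemma yields $\psi(X\omega) \neq 0$ in $\Br(\mathbf{G}_{a,Y})[p]$, which is precisely $\phi_Y(\omega) \neq 0$.

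Since all of the nontrivial work has already been carried out in Lemma \ref{brauerclassdeterminespolynomial}, there is no genuine obstacle here: the corollary is essentially the specialization of that lemma to the simplest relevant polynomial. In that sense the whole apparatus of \S\ref{sectionbrauergpsdiff} and the spreading-out-and-specializing arguments of Lemma \ref{brauerclassdeterminespolynomial} have been designed precisely so that this injectivity statement falls out immediately.
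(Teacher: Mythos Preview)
Your proof is correct and is exactly the paper's approach: the paper's proof reads in its entirety ``This is an immediate consequence of Lemma \ref{brauerclassdeterminespolynomial},'' and you have simply spelled out the (straightforward) verification of the hypotheses of that lemma with $n=1$, $F=X$, and $I=1$.
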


\begin{proof}
This is an immediate consequence of Lemma \ref{brauerclassdeterminespolynomial}.
\end{proof}

\begin{lemma}
\label{frobsurjectiveH^2(Ga^)}
Let $R$ be an $\F_p$-algebra that is either regular, or smooth over a base each local ring of which is a valuation ring whose fraction field has degree of imperfection $\leq 1$, and let $F:  \Ga \rightarrow \Ga$ be the relative Frobenius $R$-isogeny, given on coordinate rings by $X \mapsto X^p$. Then the pullback map $F^*:  {\rm{H}}^2(\mathbf{G}_{a,\,R}, \Gm)_{\prim} \rightarrow {\rm{H}}^2(\mathbf{G}_{a,\,R}, \Gm)_{\prim}$ is surjective.
\end{lemma}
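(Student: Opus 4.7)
The plan is to transport the surjectivity from the Brauer group to the cohomology of $\widehat{\Ga}$ via the functorial isomorphism of Proposition \ref{H^2=Ext^2=Brdvr}. Since that isomorphism $\Br(\mathbf{G}_{a,R})_{\prim} \simeq {\rm{H}}^2(R, \widehat{\Ga})$ is functorial in $R$-morphisms of $\Ga$, pullback by $F$ on the Brauer side corresponds to the map on ${\rm{H}}^2(R,\widehat{\Ga})$ induced by $\widehat{F} \colon \widehat{\Ga} \to \widehat{\Ga}$. Thus, it suffices to prove that this induced map $F^* \colon {\rm{H}}^2(R,\widehat{\Ga}) \to {\rm{H}}^2(R,\widehat{\Ga})$ is surjective.

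Next, I would exploit the short exact sequence
\[
1 \longrightarrow \alpha_p \longrightarrow \Ga \xlongrightarrow{F} \Ga \longrightarrow 1
\]
of $R$-group schemes. Applying Proposition \ref{hatisexact} (exactness of $\widehat{(\cdot)}$ in characteristic $p$) yields the dual short exact sequence of fppf abelian sheaves
\[
1 \longrightarrow \widehat{\Ga} \xlongrightarrow{\widehat{F}} \widehat{\Ga} \longrightarrow \widehat{\alpha_p} \longrightarrow 1.
\]
The associated fppf cohomology long exact sequence contains
\[
{\rm{H}}^2(R, \widehat{\Ga}) \xlongrightarrow{F^*} {\rm{H}}^2(R, \widehat{\Ga}) \longrightarrow {\rm{H}}^2(R, \widehat{\alpha_p}),
\]
so surjectivity of $F^*$ reduces to the vanishing ${\rm{H}}^2(R, \widehat{\alpha_p}) = 0$.

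Finally, by Cartier self-duality $\widehat{\alpha_p} \simeq \alpha_p$, so it suffices to show ${\rm{H}}^2(R, \alpha_p) = 0$. I would apply the same sequence $1 \to \alpha_p \to \Ga \xrightarrow{F} \Ga \to 1$ and use that, as $\Spec(R)$ is affine and $\Ga$ is smooth with quasi-coherent underlying sheaf, one has ${\rm{H}}^i(R, \Ga) = 0$ for all $i \geq 1$ (fppf cohomology of $\Ga$ agrees with Zariski cohomology, which vanishes on affines in positive degree). The long exact sequence then immediately gives ${\rm{H}}^2(R, \alpha_p) = 0$, completing the proof.

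There is really no serious obstacle here; the only point requiring some care is verifying that the map on ${\rm{H}}^2(R, \widehat{\Ga})$ obtained from the dual sequence is indeed identified under Proposition \ref{H^2=Ext^2=Brdvr} with pullback along $F$ on the primitive Brauer group. Notably, the hypotheses on $R$ (field or Henselian DVR) enter only to license the use of Proposition \ref{H^2=Ext^2=Brdvr}; the final vanishing step works over any $\F_p$-algebra.
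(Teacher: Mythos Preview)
Your proof is correct and follows essentially the same route as the paper's. One small citation issue: Proposition \ref{hatisexact} is stated only over a field, so for the Henselian DVR case you should instead invoke Proposition \ref{ext^1(Ga,Gm)=0} directly (vanishing of $\calExt^1_S(\Ga,\Gm)$ over any $\F_p$-scheme $S$) to obtain exactness of the dual sequence, exactly as the paper does.
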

 
\begin{proof}
By Proposition \ref{H^2=Ext^2=Brdvr}, it suffices to prove this with ${\rm{H}}^2(\mathbf{G}_{a,\,R}, \Gm)_{\prim}$ replaced by ${\rm{H}}^2(R, \widehat{\Ga})$. We have an exact sequence of $R$-group schemes
\begin{equation}
\label{frobsurjproofsequence}
1 \longrightarrow \alpha_p \longrightarrow \Ga \xlongrightarrow{F} \Ga \longrightarrow 1
\end{equation}
hence by Proposition \ref{ext^1(Ga,Gm)=0} an exact sequence
\[
1 \longrightarrow \widehat{\Ga} \xlongrightarrow{\widehat{F}} \widehat{\Ga} \longrightarrow \widehat{\alpha_p} \longrightarrow 1, 
\]
so it suffices to show that ${\rm{H}}^2(R, \widehat{\alpha_p}) = 0$. This follows from the sequence (\ref{frobsurjproofsequence}), since $\widehat{\alpha_p} \simeq \alpha_p$ and the groups ${\rm{H}}^i(X, \Ga)$ agree with the Zariski cohomology groups for any scheme $X$, hence vanish in positive degree when $X$ is affine.
\end{proof}

\begin{lemma}
\label{Omega^1=Rdpi}
Let $R$ be a DVR in which $p = 0$ and such that the fraction field $k$ of $R$ satisfies $[k:  k^p] = p$. Then for any uniformizer $\pi$ of $R$, we have $R = \oplus_{i=0}^{p-1} \pi^iR^p$, and $\Omega^1_R$ is a free $R$-module of rank one with generator $d\pi$.
\end{lemma}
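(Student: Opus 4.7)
The plan is to establish the decomposition of $R$ first, deduce that $d\pi$ generates $\Omega^1_R$ as a consequence, and then verify freeness by reducing to the known rank-one statement over the fraction field $k$.

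For the first claim, I will begin over $k$: since $v(\pi) = 1$ is not divisible by $p$, we have $\pi \notin k^p$, so $\pi$ generates a degree-$p$ subextension of $k/k^p$. Because $[k:k^p] = p$ and $\pi^p \in k^p$, the minimal polynomial of $\pi$ over $k^p$ must be $(X-\pi)^p$, yielding the $k^p$-basis $\{1, \pi, \dots, \pi^{p-1}\}$ of $k$, i.e., $k = \bigoplus_{i=0}^{p-1} \pi^i k^p$. Then for $r \in R$, write $r = \sum_{i=0}^{p-1} \pi^i a_i^p$ uniquely with $a_i \in k$ (uniqueness of the $p$-th root $a_i$ uses bijectivity of Frobenius on $k$). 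The key descent step is to observe that the valuations $v(\pi^i a_i^p) = i + p\,v(a_i)$ for the nonzero terms are pairwise incongruent mod $p$, so the ultrametric inequality is an equality: $v(r) = \min_i(i + p\,v(a_i))$. If some $v(a_j) < 0$, then $j + p\,v(a_j) \le (p-1) - p < 0$, forcing $v(r) < 0$, contradicting $r \in R$. Hence each $a_i \in R$ and $R = \bigoplus_{i=0}^{p-1} \pi^i R^p$.

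For the differentials, I will use this decomposition to write, for $r = \sum_i \pi^i a_i^p \in R$,
\[
dr = \sum_{i=0}^{p-1}\bigl(i\,\pi^{i-1} a_i^p\, d\pi + \pi^i \cdot p\,a_i^{p-1}\,da_i\bigr) = \Bigl(\sum_{i=1}^{p-1} i\,\pi^{i-1} a_i^p\Bigr)\,d\pi,
\]
where the second term in each summand vanishes because $p = 0$ in $R$. Since $\Omega^1_R$ is generated as an $R$-module by the image of $d\colon R \to \Omega^1_R$, this shows that $d\pi$ generates $\Omega^1_R$.

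For freeness, suppose $r\,d\pi = 0$ in $\Omega^1_R$ for some $r \in R$. Localizing at the generic point gives a map $\Omega^1_R \to \Omega^1_R \otimes_R k = \Omega^1_k$, under which $r\,d\pi$ maps to $r\,d\pi \in \Omega^1_k$. By \cite[Thm.\,26.5]{crt}, the hypothesis $[k:k^p] = p$ together with $\pi \in k \setminus k^p$ forces $\Omega^1_k$ to be a one-dimensional $k$-vector space with basis $d\pi$, so $r = 0$ in $k$ and thus in $R$. The only subtle point in the argument is ensuring that the $k$-level decomposition descends to $R$ coefficient-wise, and the valuation computation above is the essential ingredient; everything else is formal manipulation once that decomposition is in hand.
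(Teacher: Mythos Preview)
Your proof is correct and follows essentially the same approach as the paper: establish the decomposition over $k$, descend to $R$ via the valuation argument (the paper phrases the congruence-mod-$p$ observation identically), and then check freeness by passing to $\Omega^1_k$ and invoking \cite[Thm.\,26.5]{crt}. One small wording issue: you want injectivity of Frobenius on $k$ (not bijectivity, since $[k:k^p]=p$ means Frobenius is not surjective) to extract the unique $a_i$ from $a_i^p$, but this does not affect the argument.
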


\begin{proof}
Since $\pi \in k - k^p$ and $[k:  k^p] = p$, we have $k = \oplus_{i=0}^{p-1} \pi^ik^p$. It follows that for any $r \in R$, we may write
\[
r = \sum_{i=0}^{p-1} \pi^i\lambda_i^p
\]
for some unique $\lambda_i \in k$. Since the valuation of $\pi^i\lambda_i^p$ is congruent to $i$ modulo $p$, the summands all have distinct valuations. The sum, $r$, therefore must have valuation equal to the minimum of the valuations of the summands. In particular, each summand has nonnegative valuation, i.e., each $\lambda_i$ has nonnegative valuation. That is, each $\lambda_i \in R$. This proves the first assertion. This first assertion also implies that $\Omega^1_R$ is generated by $d\pi$. To see that it is freely generated by $d\pi$, it suffices to show that as elements of the $k$-vector space $\Omega^1_k$, we have $rd\pi \neq 0$ for any $0 \neq r \in R$. For this, in turn, it suffices to note that $0 \neq d\pi \in \Omega^1_k$ because $\pi \notin k^p$ \cite[Thm.\,26.5]{crt}.
\end{proof}

\begin{proposition}
\label{omega=H^2(Ga^)}
Let $R$ be an $\F_p$-algebra that is either a DVR or a field, and suppose that the fraction field $k$ of $R$ satisfies $[k:  k^p] = p$. Then the $R$-linear map
$\phi_R$ in $(\ref{phidefbrauer})$ is an $R$-module isomorphism: 
\[
\Omega^1_R \underset{\sim}{\xrightarrow{\phi}} {\rm{H}}^2(\mathbf{G}_{a,\,R}, \Gm)_{\prim} \simeq {\rm{H}}^2(R, \widehat{\Ga}) \simeq \Ext^2_R(\Ga, \Gm).
\] 
In particular, ${\rm{H}}^2(R, \widehat{\Ga})$ is a free $R$-module of rank $1$.
\end{proposition}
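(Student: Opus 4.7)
Injectivity of $\phi_R$ is immediate from Corollary \ref{omegaH^2injective}, and Proposition \ref{H^2=Ext^2=Brdvr} provides the three isomorphisms in the statement as $R$-modules. The remaining task is to prove surjectivity of $\phi_R \colon \Omega^1_R \to \Br(\mathbf{G}_{a,R})_{\prim}$.

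I would proceed as follows. Applying Proposition \ref{brauerdifferentialforms} to the regular ring $R[X]$ (which is a UFD since $R$ is a field or Henselian DVR, so $\Pic(R[X]) = 0$) gives the isomorphism $\Omega^1_{R[X]}/(dR[X] + (C^{-1}-i)\Omega^1_{R[X]}) \xrightarrow{\sim} \Br(\mathbf{G}_{a,R})[p]$. For a primitive Brauer class $[\omega]$, Lemma \ref{frobsurjectiveH^2(Ga^)} supplies a class $[\omega'] \in \Br(\mathbf{G}_{a,R})_{\prim}$ with $F^*([\omega']) = [\omega]$, where $F \colon \Ga \to \Ga$ is the relative Frobenius isogeny given on coordinate rings by $X \mapsto X^p$. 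Choosing any representative $\omega' = A' dX + B' d\pi \in \Omega^1_{R[X]}$ (using $\Omega^1_R = R d\pi$ from Lemma \ref{Omega^1=Rdpi} in the DVR case and \cite[Thm.\,26.5]{crt} in the field case, with $\pi$ satisfying $R = \oplus_{i=0}^{p-1} \pi^i R^p$), I get $F^*(\omega') = B'(X^p) d\pi$, since $F^*(dX) = d(X^p) = pX^{p-1} dX = 0$ in characteristic $p$. Hence $[\omega] = [B'(X^p) d\pi]$. Applying Lemma \ref{primetop} with $n = 1$ and $t = \pi$ then yields $B_0 \in R[X]$ containing no monomials $X^{pi}$ for $i \geq 1$ such that $[B'(X^p) d\pi] = [B_0 d\pi]$, placing $[\omega]$ in the normal form $[B_0 d\pi]$.

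Primitivity of $[\omega]$ now becomes the condition $[Fd\pi] = 0$ in $\Omega^1_{R[X_1, X_2]}/\sim$, where $F(X_1, X_2) := B_0(X_1+X_2) - B_0(X_1) - B_0(X_2)$. Pulling back along the zero section $\Spec(R) \hookrightarrow \Spec(R[X_1, X_2])$ forces the constant part $-B_0(0) d\pi$ to be $\sim 0$ in $\Omega^1_R/\sim$, so I may subtract the equivalent-to-zero form $B_0(0) d\pi$ from $B_0 d\pi$ to arrange $B_0(0) = 0$ while retaining all other normal-form constraints. Write $B_0 = \sum_{j \geq 1,\, p \nmid j} b_j X^j$. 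Suppose, for contradiction, that some $b_j \neq 0$ with $j \geq 2$ and $p \nmid j$; pick $j$ minimal. Then the monomial $X_1 X_2^{j-1}$ appears in $F$ with coefficient $jb_j$, a unit in $R$ (since $p \nmid j$), while by Lucas's theorem $\binom{jp^r}{p^r} \equiv j \pmod{p}$ is a unit, so the monomial $X_1^{p^r} X_2^{(j-1)p^r}$ in $F$ (for any $r \geq 1$) can be nonzero only if $b_{jp^r} \neq 0$; but the normal form forces $b_{jp^r} = 0$ (as $jp^r \in p\Z_{\geq 1}$). Lemma \ref{brauerclassdeterminespolynomial} (applied with $Y = \Spec(R)$, $n = 2$, and the nonzero 1-form $d\pi$) then produces $[Fd\pi] \neq 0$, contradicting primitivity. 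Therefore $B_0 = b_1 X$ for some $b_1 \in R$, whence $[\omega] = \phi_R(b_1 d\pi)$. The main insight driving this short argument is the combination of $F^*$-surjectivity (Lemma \ref{frobsurjectiveH^2(Ga^)}) with the identity $F^*(A'dX + B'd\pi) = B'(X^p) d\pi$, which immediately reduces any primitive class to one represented by a polynomial in $X^p$ times $d\pi$; Lemma \ref{primetop} then normalizes this form, after which primitivity and Lemma \ref{brauerclassdeterminespolynomial} pin $B_0$ down as $b_1 X$.
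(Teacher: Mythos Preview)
Your proof is correct and follows essentially the same route as the paper's: use Frobenius surjectivity (Lemma~\ref{frobsurjectiveH^2(Ga^)}) to kill the $dX$-part, normalize via Lemma~\ref{primetop}, and then invoke Lemma~\ref{brauerclassdeterminespolynomial} on the two-variable polynomial $B_0(X_1+X_2)-B_0(X_1)-B_0(X_2)$ to force $B_0$ to be linear. The only organizational difference is that you dispose of the constant term first (by pulling back to the zero section of $\mathbb{A}^2_R$), whereas the paper carries the constant along and kills it at the very end by observing that $\Br(\mathbf{G}_{a,R})_{\prim}$ contains no nontrivial constant classes; both are fine.

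Two small remarks. First, $jb_j$ is \emph{nonzero} in $R$ (which is all Lemma~\ref{brauerclassdeterminespolynomial} requires), not a unit: $j$ is a unit in $\F_p\subset R$, but $b_j$ need not be. Second, the minimality of $j$ is not used anywhere---any $j\ge 2$ with $b_j\ne 0$ already produces the contradiction, since every monomial $X_1^{p^r}X_2^{(j-1)p^r}$ has total degree $jp^r$ divisible by $p$ and hence vanishing coefficient in $F$ by your normal form.
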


The isomorphisms between the last three groups are given by Proposition \ref{H^2=Ext^2=Brdvr}. What is new is that $\phi_R$ is an isomorphism when $[k:  k^p] = p$.

\begin{proof}
The last claim follows from Lemma \ref{Omega^1=Rdpi} in the DVR case and by \cite[Thm.\,26.5]{crt} in the field case, so we just concentrate on the first assertion. The map $\phi_R$ is injective by Corollary \ref{omegaH^2injective}, so it only remains to check surjectivity. Let $\alpha \in {\rm{H}}^2(\mathbf{G}_{a,\,R}, \Gm)_{\prim}$. By Lemma \ref{frobsurjectiveH^2(Ga^)}, $\alpha = F^*(\beta)$ for some $\beta \in {\rm{H}}^2(\mathbf{G}_{a,\,R}, \Gm)_{\prim}$, where $F:  \Ga \rightarrow \Ga$ is the Frobenius $R$-isogeny. Since $[k:  k^p] = p$, Lemma \ref{Omega^1=Rdpi} and \cite[Thm.\,26.5]{crt} imply that there exists $t \in R$ such that $R = \sum_{i=0}^{p-1} t^iR^p$ and $\Omega^1_R = Rdt$. By Proposition \ref{brauerdifferentialforms}, therefore, $\beta$ is represented by a differential form $H_1(X)dX + H_2(X)dt$ for some $H_i \in R[X]$. Then $\alpha = F^*(\beta)$ is represented by $H_1(X^p)d(X^p) + H_2(X^p)dt = H_2(X^p)dt$. That is, $\alpha$ is represented by $Gdt$ for some $G \in R[X]$. By Lemma \ref{primetop}, we may assume that $G$ contains no monomials of the form $X^{pr}$ with $r > 0$. The primitivity of $\alpha$ implies that $0 = m^*\alpha - \pi_1^*\alpha - \pi_2^*\alpha$, and this Brauer class on $\A^2_k = \Spec(k[X_1, X_2])$ is represented by the form $(G(X_1 + X_2) - G(X_1) - G(X_2))dt$. Lemma \ref{brauerclassdeterminespolynomial} therefore implies that $G(X) = \lambda + H(X)$ for some $\lambda \in R$ and $H(X) \in R[X]$ having no monomials of degree divisible by $p$ and such that $H(X_1 + X_2) = H(X_1) + H(X_2)$. This implies that $H(X) = \gamma X$ for some $\gamma \in R$. Thus, subtracting the primitive Brauer class represented by $\gamma Xdt = \phi_R(\gamma dt)$ from $\alpha$, we may assume that $\alpha \in {\rm{H}}^2(R, \Gm)$. But ${\rm{H}}^2(\mathbf{G}_{a,\,R}, \Gm)_{\prim}$ contains no nontrivial elements of ${\rm{H}}^2(R, \Gm)$, so this implies that $\alpha = 0$.
\end{proof}

\begin{remark}
\label{Br(G_a)=/=phi(omega)}
Let $k$ be a field of characteristic $p > 0$. The map $\phi : = \phi_k$ of Corollary \ref{omegaH^2injective}, though always injective, is an isomorphism if and only if $[k:  k^p] \leq p$. We will never use this, so the reader may skip this remark. If $k$ is perfect, then ${\rm{H}}^2(\Ga, \Gm)_{\prim} = 0$ by Proposition \ref{H^2=Ext^2=Br}, so $\phi$ is clearly surjective in this case. It is also surjective if $[k:  k^p] = p$, by Proposition \ref{omega=H^2(Ga^)}.

Now suppose that $[k:  k^p] > p$; we will show that $\phi$ is not surjective. In fact, the cokernel of 
the $k$-linear $\phi$ is infinite-dimensional over $k$ whenever $[k:  k^p] > p$. 
To see this, it suffices to show that if $t, w \in k$ are $p$-independent (that is, $\{t^iw^j\}_{0 \leq i, j < p}$ are linearly independent over $k^p$), then no nontrivial $k$-linear combination of the primitive Brauer elements represented (via Proposition \ref{brauerdifferentialforms}) by $tw^{p^n}X^{p^n}dw/w$ ($n \ge 1$) is of the form $X\omega$ for some $\omega \in \Omega^1_k$. Suppose that some
$k$-linear combination of these differential forms, say $\sum_{n=1}^N \lambda_n^{p^n}tw^{p^n}X^{p^n}dw/w$, does lie in the image of $\phi$. (Note for the $k$-action on $\Ga$, an element $\lambda \in k$ acts on $X^{p^n}$ via $X^{p^n} \mapsto (\lambda X)^{p^n} = \lambda^{p^n}X^{p^n}$.) We want to check that each $\lambda_n$ vanishes. It suffices to check such
vanishing in $k(t^{1/p^N})$. 

For any extension field $F/k$, $\mu \in F$, and $n \ge 1$, we have
$$(C^{-1} - i)(\mu w^{p^{n-1}}X^{p^{n-1}}dw/w) = \mu^p w^{p^n}X^{p^n}dw/w - \mu w^{p^{n-1}}X^{p^{n-1}}dw/w$$
in $\Omega^1_{F[X]}/d(F[X])$.  
Hence, as Brauer classes for $\mathbf{G}_{a,F}$  we have 
\[
\mu^p w^{p^n}X^{p^n}dw/w = \mu w^{p^{n-1}}X^{p^{n-1}}dw/w
\]
for $n \ge 1$. 
Applying this repeatedly with $F = k(t^{1/p^N})$, we see for $1 \le n \le N$
that $\lambda_n^{p^n}tw^{p^n}X^{p^n}dw/w = \lambda_nt^{1/p^n}Xdw/w$ as Brauer classes
over $k(t^{1/p^N})$. By Corollary \ref{omegaH^2injective}, therefore, we have
\[
\left(\sum_{n=1}^N \lambda_nt^{1/p^n}\right)\frac{dw}{w} = f^*\omega
\]
in $\Omega^1_{k(t^{1/p^N})}$ 
for some $\omega \in \Omega^1_k$, where $f:  \Spec(k(t^{1/p^N})) \rightarrow \Spec(k)$ is the indicated map. Since $\{t, w\}$ are $p$-independent in $k$, so part of a $p$-basis, the relationship between $p$-bases and differential bases \cite[Thm.\,26.5]{crt} gives
that $dw \neq 0$ in $\Omega^1_{k(t^{1/p^N})}$ and $\omega = adw + b dt +  \sum_{i =1}^M \alpha_i dz_i$
for some $a,  b, \alpha_i \in k$ and $\{w, t, z_1, \dots, z_M\}$ 
part of a $p$-basis for $k$. Since $\{w, t^{1/p^N}, z_1, \dots, z_M\}$ is part of a $p$-basis for $k(t^{1/p^N})$, the $\alpha_i$'s vanish, and 
\[
\sum_{n=1}^N \lambda_n t^{1/p^n} = aw \in k.
\]
Raising both sides to the $p^N$th power then shows that all of the $\lambda_n$ must vanish, since $\{1, t, t^p, \dots, t^{p^{N-1}}\}$ are linearly independent over $k^{p^N}$. 
\end{remark}

\section{Vanishing of ${\rm{H}}^3(R, \widehat{\Ga})$}
\label{sectionH^3(Ga^)=0}

The goal of this section is to show that the group ${\rm{H}}^3(R, \widehat{\Ga})$ vanishes when $R$ is either a field or a Henselian DVR of characteristic $p$ whose fraction field $k : = {\rm{Frac}}(R)$ satisfies $[k:  k^p] = p$ (Proposition \ref{H^3(G_a^)=0}), or, more generally, a product of such rings (Proposition \ref{H^3(Ga^)=0prodofdvrs}).

\begin{lemma}
\label{H^3(R,G_a^)injective}
Let $R$ be an $\F_p$-algebra that is either a field or a DVR. Suppose that the fraction field $k$ of $R$ satisfies $[k:  k^p] = p$. Then the pullback map ${\rm{H}}^3(R, \widehat{\Ga}) \rightarrow {\rm{H}}^3(R^{{\rm{sh}}}, \widehat{\Ga})$ is injective, where $R \rightarrow R^{{\rm{sh}}}$ is a strict Henselization of $R$.
\end{lemma}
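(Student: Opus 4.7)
The plan is to employ a Hochschild--Serre spectral sequence for the pro-Galois cover $R \to R^{\rm{sh}}$, in the spirit of the proof of Lemma \ref{H^2(k,Ga^)injectiveseparable}. Letting $\mathfrak{g}$ denote its Galois group (so $\mathfrak{g} = \Gal(k_s/k)$ in the field case and $\mathfrak{g} = \Gal(\kappa_s/\kappa)$ in the DVR case, where $\kappa$ is the residue field of $R$), by passing to the direct limit of fppf Hochschild--Serre spectral sequences for the finite Galois subcovers we obtain
\[
E_2^{i,j} = {\rm{H}}^i(\mathfrak{g}, {\rm{H}}^j(R^{\rm{sh}}, \widehat{\Ga})) \Longrightarrow {\rm{H}}^{i+j}(R, \widehat{\Ga}).
\]
The pullback map in question factors as ${\rm{H}}^3(R, \widehat{\Ga}) \twoheadrightarrow E_\infty^{0,3} \hookrightarrow E_2^{0,3} \hookrightarrow {\rm{H}}^3(R^{\rm{sh}}, \widehat{\Ga})$, so its injectivity reduces to showing that $E_\infty^{3,0}, E_\infty^{2,1}, E_\infty^{1,2}$ all vanish, which in turn follows from the vanishing of the corresponding $E_2$ terms.

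Two of these vanishings come immediately from prior results. Since $R^{\rm{sh}}$ is reduced, Proposition \ref{cohomologyofG_adualgeneralk}(i) gives $\widehat{\Ga}(R^{\rm{sh}}) = 0$, whence $E_2^{3,0} = 0$. Since $R^{\rm{sh}}$ is a UFD (being either a field or a strictly Henselian DVR), $\Pic(R^{\rm{sh}}[X]) = 0$, so Proposition \ref{cohomologyofG_adualgeneralk}(ii) gives ${\rm{H}}^1(R^{\rm{sh}}, \widehat{\Ga}) = 0$, yielding $E_2^{2,1} = 0$. The remaining vanishing $E_2^{1,2} = {\rm{H}}^1(\mathfrak{g}, {\rm{H}}^2(R^{\rm{sh}}, \widehat{\Ga})) = 0$ is the core of the proof. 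To analyze ${\rm{H}}^2(R^{\rm{sh}}, \widehat{\Ga})$ via Proposition \ref{omega=H^2(Ga^)}, I would first verify that $[k^{\rm{sh}}:(k^{\rm{sh}})^p] = p$: in the field case because $k_s/k$ is separable algebraic, and in the DVR case using that a uniformizer $\pi$ of $R$ remains a uniformizer of $R^{\rm{sh}}$ (the extension being unramified) together with the fact that $[k:k^p] = p$ forces $\kappa$, and hence $\kappa_s$, to be perfect. Proposition \ref{omega=H^2(Ga^)} then supplies a functorial -- and hence $\mathfrak{g}$-equivariant -- isomorphism ${\rm{H}}^2(R^{\rm{sh}}, \widehat{\Ga}) \simeq \Omega^1_{R^{\rm{sh}}}$.

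By Lemma \ref{Omega^1=Rdpi} in the DVR case (and a direct analogue via \cite[Thm.\,26.5]{crt} in the field case), $\Omega^1_{R^{\rm{sh}}}$ is a free $R^{\rm{sh}}$-module of rank one, and I may pick as generator $dt$ for $t \in R$ either a uniformizer of $R$ or a $p$-basis element of $k$ -- in particular a $\mathfrak{g}$-invariant element, so $\Omega^1_{R^{\rm{sh}}} \simeq R^{\rm{sh}}$ as $R^{\rm{sh}}[\mathfrak{g}]$-modules. It therefore remains to show ${\rm{H}}^1(\mathfrak{g}, R^{\rm{sh}}) = 0$: in the field case this is additive Hilbert 90, while in the DVR case each finite unramified Galois subextension $R \to R'$ with Galois group $H$ has $R'$ free of rank one over $R[H]$ (lift a normal basis of $\kappa'/\kappa$ to $R'$ via Hensel's lemma and Nakayama), so ${\rm{H}}^i(H, R') = 0$ for $i > 0$, and passing to the filtered direct limit over such subextensions yields ${\rm{H}}^1(\mathfrak{g}, R^{\rm{sh}}) = 0$. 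The principal technical obstacle I anticipate is confirming that the hypothesis of Proposition \ref{omega=H^2(Ga^)} truly holds for $R^{\rm{sh}}$ and that the resulting identification is authentically Galois-equivariant; granted these, the rest is a clean assembly.
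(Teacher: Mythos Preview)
Your proposal is correct and follows essentially the same approach as the paper: both use the Hochschild--Serre spectral sequence for $R \to R^{\rm{sh}}$, reduce injectivity to the vanishing of $E_2^{3,0}$, $E_2^{2,1}$, $E_2^{1,2}$, dispatch the first two via Proposition~\ref{cohomologyofG_adualgeneralk}, and handle $E_2^{1,2}$ by identifying ${\rm{H}}^2(R^{\rm{sh}},\widehat{\Ga}) \simeq \Omega^1_{R^{\rm{sh}}} \simeq R^{\rm{sh}}$ as $\mathfrak{g}$-modules via Proposition~\ref{omega=H^2(Ga^)} and then invoking the vanishing of ${\rm{H}}^1(\mathfrak{g}, R^{\rm{sh}})$. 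Your version supplies a bit more detail (verifying $[k^{\rm{sh}}:(k^{\rm{sh}})^p]=p$, choosing a $\mathfrak{g}$-invariant generator $dt$, and arguing directly via normal bases), whereas the paper more tersely identifies ${\rm{H}}^1(\mathfrak{g}, R^{\rm{sh}})$ with ${\rm{H}}^1_{\et}(R,\Ga)=0$.
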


\begin{proof}
We have the Hochschild-Serre spectral sequence
\[
E_2^{i, j} = {\rm{H}}^i(\pi, {\rm{H}}^j(R^{\rm{sh}}, \widehat{\Ga})) \Longrightarrow {\rm{H}}^{i+j}(R, \widehat{\Ga}),
\]
where $\overline{\kappa}$ is the residue field of $R^{\rm{sh}}$ and $\pi = \pi_1(R, \overline{\kappa})$, the \'etale fundamental group of $\Spec(R)$. The map ${\rm{H}}^3(R, \widehat{\Ga}) = E^3 \rightarrow E_2^{0,3} = {\rm{H}}^3(R^{\rm{sh}}, \widehat{\Ga})^{\pi}$ is the pullback map that we want to show is injective. In order to do this it suffices to show that the groups $E_2^{3, 0}$, $E_2^{2,1}$, and $E_2^{1,2}$ all vanish. The groups $E_2^{3,0}$ and $E_2^{2,1}$ vanish by Proposition \ref{cohomologyofG_adualgeneralk}.

It remains to show that $E_2^{1,2} = {\rm{H}}^1(\pi, {\rm{H}}^2(R^{\rm{sh}}, \widehat{\Ga}))$ vanishes. Note that the hypotheses on $R$ are preserved upon replacing $R$ with $R^{\rm{sh}}$. Proposition \ref{omega=H^2(Ga^)} and Lemma \ref{Omega^1=Rdpi} therefore yield an isomorphism of $\pi$-modules ${\rm{H}}^2(R^{\rm{sh}}, \widehat{\Ga}) \simeq R^{\rm{sh}}$. We therefore have an isomorphism $E_2^{1,2} \simeq {\rm{H}}^1(\pi, R^{\rm{sh}})$. This latter group is isomorphic to ${\rm{H}}^1_{\et}(R, \Ga)$, which vanishes.
\end{proof}

\begin{lemma}
\label{identities}
Let $R$ be an $\F_p$-algebra, and assume that $F \in R[X, Y]$ satisfies the identities: 
\[
F(X,Y) = F(Y, X)
\]
\begin{equation}
\label{identitieseqn1}
F(X, Y+Z) + F(Y, Z) - F(X+Y, Z) - F(X, Y) = 0.
\end{equation}
Then there exist $G \in R[X]$, an integer $N \geq 0$, and $r_0, \dots, r_N \in R$ such that
\[
F(X, Y) = G(X+Y) - G(X) - G(Y) + \sum_{i=0}^N r_iS(X^{p^i}, Y^{p^i}),
\]
where $S(X, Y)$ is the mod $p$ reduction of the Witt polynomial $((X+Y)^p - X^p - Y^p)/p \in \Z[X,Y]$. Further, any polynomial of the above form satisfies the identities $(\ref{identitieseqn1})$.
\end{lemma}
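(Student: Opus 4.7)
The converse direction is a direct check: a coboundary $(X,Y) \mapsto G(X+Y) - G(X) - G(Y)$ is manifestly symmetric and satisfies the cocycle identity by substitution. For each summand $S(X^{p^i}, Y^{p^i})$, symmetry is clear, and the cocycle identity reduces via $(U+V)^{p^i} = U^{p^i} + V^{p^i}$ in characteristic $p$ to that for $S(X,Y)$ itself, which in turn lifts to $\Z[X,Y]$ as the mod-$p$ reduction of the identity $pS = \delta(X^p)$ (where $\delta G(X,Y) := G(X+Y)-G(X)-G(Y)$).

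For the forward direction, setting $Y = Z = 0$ in the cocycle identity yields $F(X,0) = F(0,0) =: c$. Since $\delta(-c) = c$ for the constant polynomial $-c$, replacing $F$ by $F - c$ (and ultimately absorbing the shift into the constant term of $G$) reduces to the normalized case $F(0,0) = 0$, whence $F(X,0) = F(0,Y) = 0$ by symmetry. Both identities are preserved under passage to homogeneous components in total degree, so I may assume $F$ is homogeneous of some degree $n \geq 2$ and write $F = \sum_{k=1}^{n-1} a_k X^k Y^{n-k}$ with $a_k = a_{n-k}$. Comparing coefficients of $X Y^j Z^{n-1-j}$ for $1 \leq j \leq n-2$ in the cocycle identity yields the basic recursion $(j+1)\,a_{j+1} = a_1 \binom{n-1}{j}$. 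When $n$ is not a power of $p$, further constraints from comparing coefficients of $X^i Y^j Z^{n-i-j}$ for general $i,j$, together with Lucas's theorem (ensuring $\gcd_{1 \leq k \leq n-1}\binom{n}{k}$ is not divisible by $p$), force $F$ to be a scalar multiple of $\delta(X^n)$, i.e., a coboundary.

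The crucial case is $n = p^r$. The base case $r = 1$ is immediate from the recursion: it yields $a_k = a_1\cdot (-1)^{k-1}/k$ for $1 \leq k \leq p-1$, matching $F = a_1 \cdot S(X,Y)$ via the explicit formula $S(X,Y) = \sum_{k=1}^{p-1} (-1)^{k-1} k^{-1} X^k Y^{p-k}$. For $r \geq 2$, Lucas shows $\binom{p^r - 1}{j} \not\equiv 0 \pmod{p}$ for all $0 \leq j \leq p^r-1$; the recursion at $j = p-1$ then forces $a_1 = 0$, and iteratively $a_k = 0$ whenever $p \nmid k$. The surviving coefficients $b_\ell := a_{p\ell}$ for $1 \leq \ell \leq p^{r-1}-1$ assemble into $\tilde F(U,V) := \sum_\ell b_\ell U^\ell V^{p^{r-1}-\ell}$, so that $F(X,Y) = \tilde F(X^p, Y^p)$. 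Since the Frobenius substitution $U \mapsto X^p$, $V \mapsto Y^p$, $W \mapsto Z^p$ is injective on polynomials, and $(Y+Z)^p = Y^p + Z^p$ in characteristic $p$, the cocycle identity for $F$ descends to that for $\tilde F$, making $\tilde F$ a symmetric normalized cocycle of degree $p^{r-1}$. By induction on $r$, $\tilde F = c \cdot S(U^{p^{r-2}}, V^{p^{r-2}})$, whence $F = c \cdot S(X^{p^{r-1}}, Y^{p^{r-1}})$.

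Assembling the contributions across all homogeneous degrees $n$ then produces the decomposition $F = \delta G + \sum_{i=0}^N r_i S(X^{p^i}, Y^{p^i})$, with $G$ collecting the degree-$n$ coboundary pieces from the non-$p$-power degrees (plus the constant needed to undo the normalization shift) and the $r_i$ gathering the scalars from the $p$-power degrees. The main obstacle will be the $n = p^r$ analysis with $r \geq 2$: specifically, the combinatorial bookkeeping needed to show that the cocycle relations force vanishing of $a_k$ for $p \nmid k$, and to verify that the remaining free parameter is captured by $S(X^{p^{r-1}}, Y^{p^{r-1}})$ rather than a coboundary — which one must rule out, since $\delta(X^{p^r}) = 0$ in characteristic $p$ means the coboundary space at degree $p^r$ is trivial.
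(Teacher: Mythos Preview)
Your approach is essentially correct, and for the $p$-power degrees it is genuinely different from (and arguably cleaner than) the paper's argument. The paper proceeds uniformly across all degrees $n$ by extracting from the cocycle identity the relations $\binom{b+c}{b}\beta_{a,b+c} = \binom{a+b}{a}\beta_{a+b,c}$ (for $a,c>0$) and then using Lucas repeatedly to show that \emph{every} coefficient $\beta_{i,n-i}$ is a universal $\F_p$-multiple of a single $\beta_{p^j,n-p^j}$; this bounds the solution space in each degree by one dimension, after which one exhibits a nonzero element. Your Frobenius-descent argument for $n=p^r$ with $r\ge 2$ --- forcing $a_1=0$ from the recursion at $j=p-1$ (since $\binom{p^r-1}{p-1}\equiv 1$), hence $a_k=0$ for $p\nmid k$, then writing $F(X,Y)=\widetilde F(X^p,Y^p)$ and inducting --- is a nice alternative that avoids most of the Lucas bookkeeping and makes the inductive structure transparent. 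Your worry in the final paragraph is misplaced: that part of your argument is already complete.

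The actual gap is in the \emph{non-$p$-power} case, which you dispatch with ``further constraints \ldots together with Lucas's theorem \ldots force $F$ to be a scalar multiple of $\delta(X^n)$.'' The single recursion $(j+1)a_{j+1}=a_1\binom{n-1}{j}$ you derived only determines $a_{j+1}$ from $a_1$ when $p\nmid j+1$; it says nothing about the $a_{p\ell}$, and your remark about $\gcd_k\binom{n}{k}$ only tells you $\delta(X^n)\ne 0$, not that it spans. To close this you really do need the general relations $\binom{b+c}{b}a_a=\binom{a+b}{a}a_{a+b}$ (from comparing $X^aY^bZ^c$-coefficients with $a,c>0$) and a Lucas argument showing they link every $a_k$ to one fixed coefficient --- this is precisely the analysis the paper carries out. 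Alternatively, you could push your own descent idea: if $p\mid n$ but $n$ is not a $p$-power, first show (by the same $j=p-1$ trick, since $\binom{n-1}{p-1}\not\equiv 0$ when $p\mid n$) that $a_k=0$ for $p\nmid k$, descend to $\widetilde F$ of degree $n/p$, and continue until the degree is prime to $p$; at that final stage the recursion alone determines all $a_k$ from $a_1$ since every $j+1\le n-1$ is then coprime to $p$ or handled by symmetry $a_k = a_{n-k}$. Either route works, but as written this step needs to be filled in.
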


\begin{proof}
It is straightforward to check that any polynomial of the form $G(X+Y) - G(X) - G(Y)$ (over any ring) satisfies (\ref{identitieseqn1}). It follows that the Witt polynomial, which is of this form with $G = X^p/p$, satisfies these identities over $\Z$, hence so does its mod $p$ reduction. Therefore, over any $\F_p$-algebra, the composition of the Witt polynomial with the additive map $X \mapsto X^{p^i}$ also satisfies (\ref{identitieseqn1}). It remains to show that any polynomial satisfying (\ref{identitieseqn1}) is of the form given in the lemma.

The identities (\ref{identitieseqn1}) break up degree by degree, so we may assume that $F$ is homogeneous of degree $n\ge 0$. So let $F(X, Y) = \sum_{i=0}^n \beta_{i,\,n-i}X^iY^{n-i} \in R[X, Y]$ be a homogenous polynomial of degree $n$ satisfying the identities (\ref{identitieseqn1}). If $n = 0$, then $F(X) = r$ for some $r \in R$ and we may take $G(X) = -r$, so assume that $n > 0$. The first identity in (\ref{identitieseqn1}) says that $\beta_{i,\,j} = \beta_{j,\,i}$ for any $i, j$.

Comparing coefficients of $X^n$ on both sides of the identity 
\begin{equation}
\label{identitieseqn2}
F(X, Y+Z) + F(Y, Z) - F(X+Y, Z) - F(X, Y) = 0
\end{equation}
gives $\beta_{n,\,0} = 0$, so $\beta_{0,\,n} = 0$ by symmetry. We may therefore assume that $n > 1$. We will show that $F$ is of the form given in the lemma by showing that for all $0 < i, j < n$, either $\beta_{i,\,n-i} = 0$ for all polynomials $F$, over all $\F_p$-algebras $R$, satisfying (\ref{identitieseqn1}), or else $\beta_{j,\,n-j} = \lambda \beta_{i,\,n-i}$ for some {\em universal} $\lambda \in \F_p$; that is, $\lambda \in \F_p$ depends only on $n$ and $i, j$, and is independent of both $F$ and $R$. Once we show this, it follows that if $0 \neq H(X, Y) \in \F_p[X, Y]$ homogeneous of degree $n$ satisfies (\ref{identitieseqn1}), then every polynomial $F \in R[X, Y]$ satisfying (\ref{identitieseqn1}) must be some $R$-multiple of $H$. If $n$ is not a power of $p$, then we may take $H(X, Y) = G(X + Y) - G(X) - G(Y)$, where $G(X) : = X^n$. If, on the other hand, $n = p^{j+1}$ for some $j \geq 0$, then we may take $H(X, Y) : = S(X^{p^j}, Y^{p^j})$. This claim about the $\beta_{i,\,n-i}$ all being universal multiples of one another will therefore prove the lemma. So we concentrate on proving this claim.

Comparing coefficients of $X^aY^bZ^c$ on both sides of (\ref{identitieseqn2}) gives
\begin{equation}
\label{identitiesproofeqn1}
\binom{b+c}{b} \beta_{a,\,b+c} = \binom{a+b}{a} \beta_{a+b,\,c} \mbox{\hspace{.2 in} if } a, c > 0.
\end{equation}
Let $0 < i < n$, and let let $j$ be such that the coefficient of $p^j$ in the base $p$ expansion of $i$ is nonzero. We may apply (\ref{identitiesproofeqn1}) with $a = i - p^j, b = p^j, c = n - i$ provided that $i \neq p^j$. Hence, when $i \ne p^j$ we have 
\[
\binom{n-i+p^j}{p^j} \beta_{i-p^j,\,n-i+p^j} = \binom{i}{p^j} \beta_{i,\,n-i}.
\]
By Lemma \ref{binomialcoeffcongruence}, $\binom{i}{p^j} \neq 0$ in $\F_p$ because the $j$th digit in the base-$p$ expansion of $i$ is nonzero. We therefore get that $\beta_{i,\,n-i}$ is some universal $\F_p$-multiple of $\beta_{i-p^j,\,n-i + p^j}$. That is, the $X^iY^{n-i}$-coefficient of $F$ is some universal $\F_p$-multiple of the $X^{i-p^j}Y^{n-i+p^j}$-coefficient. Applying this inductively, we see that the $X^iY^{n-i}$-coefficient is some universal $\F_p$-multiple of $\beta_{p^l,\,n-p^l}$, where $l$ is any index such that the coefficient of $p^l$ in the base $p$ expansion of $i$ is nonzero. It therefore only remains to show that for any powers $p^j, p^{j'} < n$ of $p$, $\beta_{p^j,\,n-p^j}$ is a universal $\F_p$-multiple of $\beta_{p^{j'},\,n-p^{j'}}$ or vice versa (if $\beta_{p^{j'},\,n-p^{j'}} = 0$).

We first show that if $p^j, p^{j'} < n$ both have nonzero coefficients in the base $p$ expansion of $n$, then $\beta_{p^j,\,n-p^j}$ is some universal $\F_p$-multiple of $\beta_{p^{j'},\,n-p^{j'}}$. We may of course suppose that $j \neq j'$. We have $\beta_{p^j,\,n-p^j} = \beta_{n-p^j,\,p^j}$, and the coefficient of $p^{j'}$ in the base-$p$ expansion of $n-p^j$ is nonzero. By what we have just shown, therefore, $\beta_{n-p^j,\,p^j}$ is some universal $\F_p$-multiple of $\beta_{p^{j'},\,n-p^{j'}}$. 

Next suppose that the coefficient of $p^j$ in the base $p$ expansion of $n$ is $0$, so in particular 
\[
\label{ngeqp^j+1}
n \geq p^{j+1}.
\]
Then we will show that $\beta_{p^j,\,n-p^j} = 0$ unless $n = p^{j+1}$. This will complete the proof of the lemma because we will have shown that if $n > 1$ is not a power of $p$, then each of the $\beta_{p^j,\,n-p^j}$ either universally vanishes or is a universal multiple of one particular $\beta_{p^{j_0},\,n-p^{j_0}}$, while if $n = p^{r+1}$, then they all vanish except perhaps for $\beta_{p^r,\,n-p^r}$. So it remains to prove this claim.

So assume that $p^j < n$ and that $n \neq p^{j+1}$, and apply (\ref{identitiesproofeqn1}) with $a = p^j$, $b = (p-1)p^j$, $c = n - p^{j+1} \geq 0$ by (\ref{ngeqp^j+1}). Since $\binom{p^{j+1}}{p^j} = 0$ in $\F_p$ by Lemma \ref{binomialcoeffcongruence}, we get
\[
\binom{n-p^j}{(p-1)p^j}\beta_{p^j,\,n-p^j} = 0.
\]
On the other hand, because $p^j < n$ and the base $p$ expansion of $n$ has vanishing $p^j$ coefficient, the coefficient of $p^j$ in the base $p$ expansion of $n - p^j$ is $p-1$. By Lemma \ref{binomialcoeffcongruence} again, therefore, $\binom{n-p^j}{(p-1)p^j} \neq 0$, hence $\beta_{p^j,\,n-p^j} = 0$, as claimed.
\end{proof}

\begin{definition}
\label{frobeniussemilinear}
Let $R$ be an $\F_p$-algebra. An additive map $T:  M \rightarrow N$ of $R$-modules is said to be {\em Frobenius semilinear} if $T(rm) = r^pT(m)$ for all $r \in R$ and $m \in M$. It is said to be {\em Frobenius inverse semilinear} if $T(r^pm) = rT(M)$ for all $r \in R$ and $m \in M$.
\end{definition}

\begin{lemma}
\label{F^*isomH^i(Ga^)}
For any $\F_p$-algebra $R$, the relative Frobenius $R$-isogeny $F:  \mathbf{G}_{a,\,R} \rightarrow \mathbf{G}_{a,\,R}$ induces a Frobenius inverse semilinear isomorphism of $R$-modules ${\rm{H}}^i(\widehat{F})\colon {\rm{H}}^i(R, \widehat{\Ga}) \rightarrow {\rm{H}}^i(R, \widehat{\Ga})$ for all $i > 2$.
\end{lemma}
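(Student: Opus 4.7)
The plan is to deduce everything from the short exact sequence
\[
1 \longrightarrow \alpha_p \longrightarrow \mathbf{G}_a \xlongrightarrow{F} \mathbf{G}_a \longrightarrow 1
\]
of $R$-group schemes. First, Proposition \ref{ext^1(Ga,Gm)=0} (vanishing of the fppf sheaf $\calExt^1(\Ga, \Gm)$ over any $\F_p$-scheme) ensures that applying $\widehat{(\cdot)}$ preserves exactness, yielding
\[
1 \longrightarrow \widehat{\Ga} \xlongrightarrow{\widehat{F}} \widehat{\Ga} \longrightarrow \widehat{\alpha_p} \longrightarrow 1.
\]
Passing to fppf cohomology over $R$ produces a long exact sequence containing
\[
{\rm{H}}^{i-1}(R, \widehat{\alpha_p}) \longrightarrow {\rm{H}}^i(R, \widehat{\Ga}) \xlongrightarrow{\widehat{F}^*} {\rm{H}}^i(R, \widehat{\Ga}) \longrightarrow {\rm{H}}^i(R, \widehat{\alpha_p}).
\]

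To prove $\widehat{F}^*$ is bijective for $i > 2$, it suffices to show ${\rm{H}}^j(R, \widehat{\alpha_p}) = 0$ for all $j \geq 2$. Cartier self-duality identifies $\widehat{\alpha_p}$ with $\alpha_p$. Feeding the original sequence $1 \to \alpha_p \to \Ga \xrightarrow{F} \Ga \to 1$ into the long exact cohomology sequence shows that ${\rm{H}}^j(R, \alpha_p)$ is sandwiched between ${\rm{H}}^{j-1}(R, \Ga)$ and ${\rm{H}}^j(R, \Ga)$. Both of these vanish for $j \geq 2$: since $\Ga$ is smooth its fppf cohomology agrees with Zariski cohomology, which is the cohomology of the quasi-coherent sheaf $\mathcal{O}_{\Spec R}$ and thus vanishes in positive degrees on the affine scheme $\Spec(R)$.

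Finally, Frobenius-semilinearity is a direct consequence of the intertwining identity $F \circ m_r = m_{r^p} \circ F$ in $\operatorname{End}_R(\Ga)$, which follows immediately from $(rx)^p = r^p x^p$. Applying the contravariant functor $\widehat{(\cdot)}$ reverses the composition and yields $\widehat{m_r} \circ \widehat{F} = \widehat{F} \circ \widehat{m_{r^p}}$. Since the $R$-module structure on ${\rm{H}}^i(R, \widehat{\Ga})$ is defined by having $r \in R$ act through the endomorphism $(\widehat{m_r})_*$, passing to cohomology converts this intertwining into the desired semilinearity relation between $\widehat{F}^*$ and the $R$-actions on its source and target.

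The only nontrivial step is the vanishing of ${\rm{H}}^j(R, \alpha_p)$ for $j \geq 2$, and this has already been reduced to the well-known vanishing of higher Zariski cohomology of $\mathcal{O}$ on an affine scheme via the Frobenius sequence. The remainder of the argument is pure formal manipulation of long exact sequences, functoriality of cohomology, and the relation $F \circ m_r = m_{r^p} \circ F$.
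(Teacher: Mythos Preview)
Your proof is correct and follows essentially the same approach as the paper: dualize the Frobenius exact sequence via Proposition~\ref{ext^1(Ga,Gm)=0}, reduce bijectivity of $\widehat{F}^*$ for $i>2$ to the vanishing of ${\rm H}^j(R,\widehat{\alpha_p})$ for $j\ge 2$, identify $\widehat{\alpha_p}\simeq\alpha_p$, and kill those groups using the Frobenius sequence together with the vanishing of higher cohomology of $\Ga$ on an affine scheme; the semilinearity is obtained from $F\circ m_r = m_{r^p}\circ F$ exactly as in the paper. One tiny wording point: the agreement of fppf and Zariski cohomology for $\Ga$ is because $\Ga$ is quasi-coherent (as the paper phrases it), not merely because it is smooth.
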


\begin{proof}
The Frobenius inverse semilinearity follows from the contravariant functoriality of $\widehat{\hspace{.1 in}}^*$, together with the equality $F\circ m_r = m_{r^p} \circ F$ for $r \in R$, where $m_r:  \mathbf{G}_{a,\,R} \rightarrow \mathbf{G}_{a,\,R}$ is the multiplication by $r$ map. To show that ${\rm{H}}^i(\widehat{F})$ is an isomorphism, we begin with the short exact sequence of $R$-group schemes
\begin{equation}
\label{F^*isomH^i(Ga^)pfeqn1}
1 \longrightarrow \alpha_p \longrightarrow \Ga \xlongrightarrow{F} \Ga \longrightarrow 1.
\end{equation}
By Proposition \ref{ext^1(Ga,Gm)=0}, the induced sequence
\[
1 \longrightarrow \widehat{\Ga} \xlongrightarrow{\widehat{F}} \widehat{\Ga} \longrightarrow \widehat{\alpha_p} \longrightarrow 1
\]
is also short exact. To prove the lemma, it therefore suffices to show that ${\rm{H}}^i(R, \widehat{\alpha_p}) = 0$ for all $i > 1$. But this follows from the isomorphism $\widehat{\alpha_p} \simeq \alpha_p$, together with the sequence (\ref{F^*isomH^i(Ga^)pfeqn1}) and the fact that the groups ${\rm{H}}^i(R, \Ga)$ agree with the Zariski cohomology groups (because $\Ga$ is quasi-coherent), hence -- due to the affineness of $\Spec(R)$ -- vanish for $i > 0$.
\end{proof}

\begin{lemma}
\label{F-Ikerp^n}
Let $R$ be a strictly Henselian local ring in which $p = 0$, and let $M$ be a free $R$-module of rank $n$. Let $T:  M \rightarrow M$ be a Frobenius semilinear isomorphism, and let $I$ denote the identity map of $M$. Then $T - I:  M \rightarrow M$ is surjective with kernel of cardinality $p^n$.
\end{lemma}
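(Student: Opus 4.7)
The plan is to realize $(T-I)(m)=b$ as the $R$-points of an explicit finite étale $R$-algebra of rank $p^n$ and then exploit strict Henselianness, in the spirit of Artin--Schreier theory.

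First I would pick an $R$-basis $e_1,\dots,e_n$ of $M$ and write $T(e_j)=\sum_i a_{ij}e_i$. The hypothesis that $T$ is a Frobenius-semilinear isomorphism means that the associated $R$-linear map $F^*M\to M$, $r\otimes m\mapsto rT(m)$, is an isomorphism; its matrix in the basis $\{1\otimes e_j\}$, $\{e_i\}$ is precisely $A:=(a_{ij})$, so $A\in\mathrm{GL}_n(R)$. For $m=\sum_j r_je_j$ we compute
\[
T(m)=\sum_i\Bigl(\sum_j a_{ij}r_j^p\Bigr)e_i,
\]
so for a fixed $b=\sum_i b_ie_i\in M$, the equation $(T-I)(m)=b$ translates into the polynomial system
\[
x_i^p-\sum_j m_{ij}x_j=c_i\qquad(i=1,\dots,n),
\]
where $(m_{ij}):=A^{-1}$ and $c:=A^{-1}b$.

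Next, I would let $B$ be the $R$-algebra cut out by these $n$ relations inside $R[x_1,\dots,x_n]$. Since each relation is monic of degree $p$ in $x_i$ (with the other $x_j$'s as coefficients), an inductive use of the fact that adjoining a root of a monic polynomial of degree $p$ yields a free extension of rank $p$ shows that $B$ is free over $R$ of rank $p^n$. Because $p=0$ in $R$, the Jacobian of the defining system is $-A^{-1}$, which is invertible, so $\mathrm{Spec}(B)\to\mathrm{Spec}(R)$ is étale; thus it is finite étale of constant rank $p^n$.

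Finally, since $R$ is strictly Henselian, every finite étale $R$-scheme is a disjoint union of copies of $\mathrm{Spec}(R)$, so $\mathrm{Hom}_R(B,R)$ has exactly $p^n$ elements. But an $R$-algebra map $B\to R$ is precisely a tuple $(r_j)\in R^n$ with $(T-I)\bigl(\sum_j r_je_j\bigr)=b$, so every fiber of $T-I$ has cardinality $p^n$. Varying $b$ gives surjectivity, and taking $b=0$ gives $|\ker(T-I)|=p^n$. The only genuinely delicate point is unpacking ``Frobenius-semilinear isomorphism'' as invertibility of the matrix $A$ (note that $T$ is typically not bijective as a map of sets, as that would force $R$ to be perfect); once the matrix description is in hand, the rest is a straightforward étale descent calculation.
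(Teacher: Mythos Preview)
Your proof is correct and follows essentially the same strategy as the paper. Both arguments choose a basis, recognize that the Frobenius-semilinear isomorphism condition makes the matrix $A$ invertible, rewrite $(T-I)(m)=b$ as a system $x_i^p - \sum_j m_{ij}x_j = c_i$, verify this defines a finite \'etale $R$-scheme of rank $p^n$, and then invoke strict Henselianness to split it into $p^n$ copies of $\Spec(R)$. The only cosmetic differences are that the paper works with the full morphism $\phi:V(M)\to V(M)$ rather than a single fiber, and it checks \'etaleness via the fibral criterion and a Lie-algebra argument over fields, whereas you compute the Jacobian $-A^{-1}$ directly; your route is arguably more direct. Your inductive sketch for freeness of rank $p^n$ is a bit terse (since each relation involves \emph{all} the $x_j$), but the paper's justification at this step is equally brief, and the claim is standard.
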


Pairs $(M, T)$ as in Lemma \ref{F-Ikerp^n} are related to modules over rings of Witt vectors; see \cite[Prop.\,4.1.1]{katz}.

\begin{proof}
The map $T - I$ defines a morphism $\phi$ of the vector group scheme $V(M)$ associated to $M$. We claim that $\phi$ is \'etale. In fact, this may be checked on the fibers over a point of $\Spec(R)$, due to the fibral flatness criterion, hence we reduce this \'etaleness claim to the case when $R$ is a field, for which it suffices to note that the morphism $T$ has trivial differential due to Frobenius semilinearity, hence $\phi$ induces an isomorphism on Lie algebras, and is therefore \'etale.

Next we claim that $\phi$ is a finite morphism of constant degree $p^n$. In fact, if we choose a basis $\{e_i\}$ for $M$, then we have for each $i$ that
\[
T(e_i) = \sum_j r_{ij} e_j
\]
for some $r_{ij} \in R$. Then on coordinate rings the map $\phi$ is given by the map
\[
R[E_1, \dots, E_n] \rightarrow R[X_1, \dots, X_n]
\]
given by $E_j \mapsto -X_j + \sum_i X_i^pr_{ij}$. Thus, $\ker(\phi)$ is given by the vanishing locus of these $n$ expressions in the $X_i$. Since $T$ is an isomorphism, the matrix $A : = \{r_{ij}\}$ is invertible, hence this kernel is defined by the equation
\[
[E_1^p \dots E_n^p] = [E_1 \dots E_n]A^{-1},
\]
and is therefore a finite $R$-module. The morphism $\phi:  V(M) \rightarrow V(M)$ is therefore finite \'etale surjective of constant degree $p^n$.

Now an element of $M$ is the same as an $R$-valued point of $V(M)$, so in order to prove the lemma we need to show that for any $x \in V(M)(R)$, the fiber $\phi^{-1}(x)$ has exactly $p^n$ $R$-valued points. But because $\phi$ is finite \'etale of constant degree $p^n$, this fiber is a finite \'etale $R$-scheme of order $p^n$. Since $R$ is strictly Henselian local, it follows that it is a disjoint union of $p^n$ copies of $\Spec(R)$, hence has exactly $p^n$ $R$-points.
\end{proof}

\begin{lemma}
\label{F-Ikerp^t+f}
Let $R$ be a strictly Henselian DVR of characteristic $p > 0$ with residue field $\kappa$, and let $M$ be a finitely generated $R$-module. Let $n : = {\rm{dim}}_{\kappa}(\kappa \otimes_R M)$. Suppose that $T:  M \rightarrow M$ is a group isomorphism that is either Frobenius semilinear or Frobenius inverse semilinear, and let $I$ denote the identity map of $M$. Then $T-I:  M \rightarrow M$ is surjective with kernel of order $p^n$.
\end{lemma}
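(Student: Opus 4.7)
The plan is to reduce the lemma to the already-established free case, Lemma \ref{F-Ikerp^n}, by d\'evissage along the short exact sequence
\[
0 \longrightarrow M_{\mathrm{tors}} \longrightarrow M \longrightarrow M/M_{\mathrm{tors}} \longrightarrow 0.
\]
First, I would verify that $T$ restricts to a Frobenius-semilinear bijection on $M_{\mathrm{tors}}$ and induces one on the free quotient. The containment $T(M_{\mathrm{tors}}) \subseteq M_{\mathrm{tors}}$ is immediate from $\pi^{pk}T(m) = T(\pi^k m) = 0$ whenever $\pi^k m = 0$; conversely, if $\pi^s T(m) = 0$ and $k$ is chosen with $pk \ge s$, then $T(\pi^k m) = \pi^{pk}T(m) = 0$, so injectivity of $T$ forces $\pi^k m = 0$. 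Thus $T|_{M_{\mathrm{tors}}}$ is a bijection, and the induced $\bar T$ on $M/M_{\mathrm{tors}}$ is a Frobenius-semilinear bijection (Frob-semilinearity being formal, and bijectivity following from bijectivity of $T$ together with the characterization of $M_{\mathrm{tors}}$ just established). Applying Lemma \ref{F-Ikerp^n} to the free module $M/M_{\mathrm{tors}}$ of rank $r$ yields $\bar T - I$ surjective with kernel of size $p^r$.

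Second, I would treat the torsion part via the following rigidity observation: if $M_{\mathrm{tors}} \ne 0$ has exponent $e$ (i.e.\ $\pi^e M_{\mathrm{tors}} = 0$ but $\pi^{e-1} M_{\mathrm{tors}} \ne 0$), then $e \le 1$. Suppose $e \ge 2$; for any $m \in M_{\mathrm{tors}}$,
\[
T(\pi^{e-1}m) \;=\; \pi^{(e-1)p}T(m) \;=\; \pi^{(e-1)p - e} \cdot \pi^e T(m) \;=\; 0,
\]
since $\pi^e$ annihilates $M_{\mathrm{tors}}$ and $(e-1)p \ge e$ for $e \ge 2$ and $p \ge 2$ (the inequality being $e(p-1) \ge p$). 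Injectivity of $T$ then forces $\pi^{e-1} m = 0$ for every $m \in M_{\mathrm{tors}}$, contradicting the choice of $e$. Hence $\pi M_{\mathrm{tors}} = 0$, so $M_{\mathrm{tors}}$ is a finite-dimensional $\kappa$-vector space, say of dimension $t$. Since $R$ is strictly Henselian, the residue field $\kappa$ is separably closed and hence is itself a strictly Henselian local ring of characteristic $p$; Lemma \ref{F-Ikerp^n} applied to $T|_{M_{\mathrm{tors}}}$ over $\kappa$ then yields $T-I$ surjective on $M_{\mathrm{tors}}$ with kernel of size $p^t$.

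Finally, combining via the snake lemma applied to the $(T-I)$-action on the short exact sequence above, surjectivity of both $(T-I)|_{M_{\mathrm{tors}}}$ and $\bar T - I$ gives $\mathrm{coker}(T-I) = 0$ together with the short exact kernel sequence
\[
0 \longrightarrow \ker\bigl((T-I)|_{M_{\mathrm{tors}}}\bigr) \longrightarrow \ker(T-I) \longrightarrow \ker(\bar T - I) \longrightarrow 0,
\]
so $|\ker(T-I)| = p^t \cdot p^r = p^n$, using that $n = t + r$ by additivity of $\dim_\kappa(\kappa \otimes_R -)$ on the sequence (the right term is free, the left is a $\kappa$-vector space). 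The decisive step, and likely the main obstacle, is the rigidity observation forcing $\pi M_{\mathrm{tors}} = 0$; once this is in hand, the rest of the argument is standard snake-lemma bookkeeping.
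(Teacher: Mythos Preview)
Your proof is correct and follows essentially the same approach as the paper: reduce via the snake lemma to the free quotient and the torsion submodule, handle the free part by Lemma~\ref{F-Ikerp^n}, and for the torsion part show $\pi M_{\mathrm{tors}} = 0$ so that Lemma~\ref{F-Ikerp^n} applies over the separably closed residue field $\kappa$. The only cosmetic difference is in the rigidity step: you use a single application of $T$ to $\pi^{e-1}m$ together with the inequality $(e-1)p \ge e$, whereas the paper iterates $T$ on $\pi M$ to land in $\pi^{p^\ell}M$; both arguments are equally short.
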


\begin{proof}
We first treat the case in which $T$ is Frobenius semilinear. We claim that $T$ induces isomorphisms $M_{\rm{tors}} \rightarrow M_{\rm{tors}}$ and $M/M_{\rm{tors}} \rightarrow M/M_{\rm{tors}}$. Indeed, it maps torsion elements into torsion elements, and the first map is clearly injective while the second is clearly surjective. To show that both maps are isomorphisms, it suffices to check that the map $M_{\rm{tors}} \rightarrow M_{\rm{tors}}$ induced by $T$ is surjective. But given $y \in M_{\rm{tors}}$, we have $y = T(x)$ for some $x \in M$. Since $y$ is torsion, we have $\pi^{p\ell}y = 0$ for some $\ell \geq 0$, hence $\pi^{p\ell}T(x) = T(\pi^{\ell}x) = 0$, so $\pi^{\ell}x = 0$, hence $x$ is torsion.

Applying the snake lemma to the diagram
\[
\begin{tikzcd}
0 \arrow{r} & M_{\rm{tors}} \arrow{d}{T-I} \arrow{r} & M \arrow{r} \arrow{d}{T-I} & M/M_{\rm{tors}} \arrow{r} \arrow{d}{T-I} & 0 \\
0 \arrow{r} & M_{\rm{tors}} \arrow{r} & M \arrow{r} & M/M_{\rm{tors}} \arrow{r} & 0
\end{tikzcd}
\]
shows that it suffices to treat the $R$-modules $M_{\rm{tors}}$ and $M/M_{\rm{tors}}$. That is, we may assume that $M$ is either free or torsion. The case when $M$ is free follows from Lemma \ref{F-Ikerp^n}.

Next we treat the case when $M$ is torsion. If $\pi M \neq 0$, then $0 \neq T^{\ell}(\pi M) \subset \pi^{p\ell}M$ for all positive integers $\ell$, which is a contradiction because $\pi^{p\ell}M = 0$ for large $\ell$. Therefore, $\pi M = 0$, so the result follows from Lemma \ref{F-Ikerp^n} because $\kappa$ is separably closed. This completes the proof of the lemma when $T$ is Frobenius semilinear.

Now suppose that $T$ is Frobenius inverse semilinear. Then $T^{-1}$ is a Frobenius semilinear automorphism of $M$, hence $T^{-1} - I:  M \rightarrow M$ is surjective with kernel of order $p^n$. The same therefore holds for $T \circ (T^{-1} - I) = -(T - I)$.
\end{proof}

\begin{lemma}
\label{H^0(O,Ext^2(Ga,Gm))=0}
Let $S$ be a scheme, each local ring of which is a valuation ring. Then ${\rm{H}}^0(S, \calExt^2(\Ga, \Gm)) = 0$, where, as usual, $\calExt^2$ denotes the fppf Ext sheaf.
\end{lemma}

\begin{proof}
We may assume that $S = {\rm{Spec}}(V)$ with $V$ a valuation ring. By \cite[Tag 0ETC]{stacks}, any fppf cover of $S$ is refined by a (not necessarily finitely presented) cover of the form ${\rm{Spec}}(A)$ for $V \rightarrow A$ an extension of valuation rings. Choose an extension $K/{\rm{Frac}}(A)$ with degree of imperfection $\leq 1$ (e.g., take $K$ perfect). By \cite[Tag 01J8]{stacks}(ii), there is an extension $A \rightarrow B$ of valuation rings such that ${\rm{Frac}}(B) = K$. We may assume that our global section $\alpha$ of $\calExt^2(\Ga, \Gm)$ arises from an element of $\Ext^2_B(\Ga, \Gm)$. Any such element dies under pullback by a suitable power of the absolute Frobenius $F_B$ of $B$; see the argument in the third paragraph of the proof of Proposition \ref{H^2=Ext^2=Brdvr}. (In fact, if we take $K$ to be perfect, then one may show by the same method as there that $\Ext^2_B(\Ga, \Gm) = 0$.) We have thus constructed a faithfully flat cover ${\rm{Spec}}(C) \rightarrow {\rm{Spec}}(B)$ over which $\alpha$ dies. By Lemma \ref{valringlimoffppfalg}, $C$ is a filtered direct limit of fppf $V$-algebras, hence $\alpha$ dies over some fppf $V$-algebra, and therefore $\alpha = 0$.
\end{proof}

We now give an inclusion of ${\rm{H}}^3(R, \widehat{\Ga})$ into a group that is more tangible.

\begin{lemma}
\label{H^3(G_a^)inclusionBrauerseq}
Let $R$ be a ring. Consider the complex
\[
{\rm{H}}^2(\mathbf{G}_{a,\,R}, \Gm) \xrightarrow{f_0} {\rm{H}}^2(\mathbf{G}_{a,\,R}^2, \Gm) \xrightarrow{f_1} {\rm{H}}^2(\mathbf{G}_{a,\,R}^2, \Gm) \times {\rm{H}}^2(\mathbf{G}_{a,\,R}^3, \Gm),
\]
where $f_0 := m^* - \pi_1^* - \pi_2^*$, with $m, \pi_i\colon \Ga^2 \rightarrow \Ga$ the addition and projection maps, respectively, and where $f_1$ on the first coordinate is ${\rm{id}} \times \sigma^*$, where $\sigma(x, y) = (y, x)$, and on the second coordinate $f_1 = (\pi_1 \times m_{23})^* + \pi_{23}^* - (m_{12} \times \pi_3)^* - \pi_{12}^*$, where $\pi_{ij}\colon \Ga^3 \rightarrow \Ga^2$ is projection onto the $i, j$ coordinates, $m_{ij}\colon \Ga^3 \rightarrow \Ga$ is addition of the $i$ and $j$ coordinates, and $\pi_i\colon \Ga^3 \rightarrow \Ga$ is projection onto the $i$ coordinate. 

\begin{itemize}
\item[(i)] If $R$ is an $\F_p$-algebra, then the map $$\ker(f_1)[p] \rightarrow \left(\frac{\ker(f_1)}{\im(f_0)}\right)[p]$$ is surjective.
\item[(ii)] If $R$ is an $\F_p$-algebra each local ring of which is a valuation ring, then there is a functorial inclusion
\[
{\rm{H}}^3(R, \widehat{\Ga}) \hookrightarrow \left(\frac{\ker(f_1)}{\im(f_0)}\right)[p].
\]
\end{itemize}
\end{lemma}

\begin{proof}
Assertion (i) follows from the fact that the group ${\rm{H}}^2(\mathbf{G}_{a,\,R}, \Gm)$ is $p$-divisible \cite[Prop.\,]{treger}, hence so is $\im(f_0)$. It remains to prove (ii). The proof will use Breen's spectral sequences discussed in Remark \ref{breensequences}. We first consider the Leray spectral sequence
\[
D_2^{i,j} = {\rm{H}}^i(R, \calExt^j_R(\Ga, \Gm)) \Longrightarrow \Ext^{i+j}_R(\Ga, \Gm).
\]
The group $D_2^{1,1}$ vanishes by Proposition \ref{ext^1(Ga,Gm)=0}, while $D_2^{0,2}$ does by Lemma \ref{H^0(O,Ext^2(Ga,Gm))=0}. It follows that the $R$-linear (for the actions coming from the $R$-action on $\Ga$ given by $X \mapsto rX$ for $r \in R$) map 
\begin{equation}
\label{H^3(G_a^)=0pfeqn1}
{\rm{H}}^3(R, \widehat{\Ga}) = D_2^{3,0} \rightarrow D^3 = \Ext^3_R(\Ga, \Gm)
\end{equation}
is injective.

Next we consider the Breen spectral sequence (\ref{E_2spectralseqbreen21}): 
\[
{}^{\prime}E_2^{i,j} = \Ext^i_R({\rm{H}}_j(A(\Ga)), \Gm) \Longrightarrow \Ext^{i+j}_R(A(\Ga), \Gm).
\]
This provides an $R$-linear map 
\begin{equation}
\label{H^3(G_a^)=0pfeqn2}
\Ext^3_R(\Ga, \Gm) = {}^{\prime}E_2^{3,0} \rightarrow {}^{\prime}E^3 = \Ext^3_R(A(\Ga), \Gm),
\end{equation}
(where the first equality is by (\ref{G=H_0(A)})) which we claim is also injective. To prove this injectivity, it suffices to show that ${}^{\prime}E_2^{1,1} = {}^{\prime}E_2^{0,2} = 0$. The first group vanishes because ${\rm{H}}_1(A(\Ga)) = 0$ by (\ref{H_1(A)=0}), and the second group -- using (\ref{H_1(A)=0}) again -- equals $\Hom_R(\Ga/2\Ga, \Gm)$, and $\Ga$ admits no nontrivial homomorphisms to $\Gm$ over a reduced ring by Proposition \ref{cohomologyofG_adualgeneralk}(i).

In order to study the group $\Ext^3_R(A(\Ga), \Gm)$, we use the other Breen spectral sequence (\ref{E_1specseqbreen20}): 
\[
E_1^{i, j} = \widetilde{{\rm{H}}}^j(X_i, \Gm) \Longrightarrow \Ext^i_R(A(\Ga), \Gm),
\]
where recall that the $X_i$ are some explicit finite disjoint unions of finite products of copies of $\mathbf{G}_{a,\,R}$. Because the only global units on $\mathbf{G}_{a,\,R}^n$ are those coming from the base (Lemma \ref{nonconstantunitsred}), and because $\Pic(\mathbf{G}_{a,\,R}^n) = \Pic(R)$ (because $R$ is seminormal), we see that 
\begin{equation}
\label{H^3(G_a^)=0pfeqn3}
E_1^{i,0} = E_1^{i,1} = 0 \mbox{ for all } i.
\end{equation}
We claim that the edge map $E^3 = \Ext^3_R(A(\Ga), \Gm) \rightarrow E_1^{0,3} \subset {\rm{H}}^3(X_0, \Gm)$ is trivial. Indeed, we claim that the source is $p$-torsion, hence it suffices to show that ${\rm{H}}^3(X_0, \Gm)[p] = 0$, and this follows from \cite[\S 1, Thm.\,]{treger}. To see that the source is $p$-torsion, we note that $A(\cdot)$ is a functor from the category of representable group schemes to the category of chain complexes of abelian fppf sheaves, hence -- since $\Ga$ is $p$-torsion -- the same must hold for $A(\Ga)$, hence for $\Ext^3_R(A(\Ga), \Gm)$. 

It follows from the above and (\ref{H^3(G_a^)=0pfeqn3}) that we obtain an inclusion -- functorial in $R$-morphisms of $\Ga$ --
\begin{equation}
\label{H^3(G_a^)=0pfeqn4}
\Ext^3_R(A(\Ga), \Gm) = E^3 \hookrightarrow E_2^{1,2}[p].
\end{equation}

So we now compute $E_2^{1,2}$. By (\ref{X_idescription}), $X_0 = \mathbf{G}_{a,\,R}$, $X_1 = \mathbf{G}_{a,\,R}^2$, and $X_2 = \mathbf{G}_{a,\,R}^2 \coprod \mathbf{G}_{a,\,R}^3$. The differentials $E_1^{0, i} \rightarrow E_1^{1,i}$ and $E_1^{1,i} \rightarrow E_1^{2,i}$ are given by the maps in (\ref{breenX_0to1differential}) and (\ref{breenX_1to2differential}), so that we have the complex of the lemma:
\[
{\rm{H}}^2(\mathbf{G}_{a,\,R}, \Gm) \xrightarrow{f_0} {\rm{H}}^2(\mathbf{G}_{a,\,R}^2, \Gm) \xrightarrow{f_1} {\rm{H}}^2(\mathbf{G}_{a,\,R}^2, \Gm) \times {\rm{H}}^2(\mathbf{G}_{a,\,R}^3, \Gm)
\]
(where we may omit the tildes above the various ${\rm{H}}^2$'s because we are interested only in the cohomology of the complex, which is unaffected by this change), then
\[
E_2^{1,2} = \frac{\ker(f_1)}{{\rm{im}}(f_0)}.
\]
Taking the composition of the maps (\ref{H^3(G_a^)=0pfeqn1}), (\ref{H^3(G_a^)=0pfeqn2}), and (\ref{H^3(G_a^)=0pfeqn4}), therefore, we obtain a functorial inclusion
\begin{equation}
\label{H^3(Ga^)=0pfeqn5}
\zeta:  {\rm{H}}^3(R, \widehat{\Ga}) \hookrightarrow \left(\frac{\ker(f_1)}{{\rm{im}}(f_0)}\right)[p].
\end{equation}
\end{proof}

We are now ready to prove one of the main results of this section.

\begin{proposition}
\label{H^3(G_a^)=0}
Let $R$ be an $\F_p$-algebra that is either a field or a DVR and such that the fraction field $k$ of $R$ satisfies $[k:  k^p] = p$. Then ${\rm{H}}^3(R, \widehat{\Ga}) = 0$.
\end{proposition}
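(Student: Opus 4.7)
The plan is to reduce to the strictly Henselian case via Lemma \ref{H^3(R,G_a^)injective}, then exhibit $\widehat{F}^{\,*}-I$ as a bijection on $H^3(R,\widehat{\Ga})$ using a dualized Artin--Schreier sequence, and finally apply the Frobenius-semilinear machinery of Lemmas \ref{F^*isomH^i(Ga^)} and \ref{F-Ikerp^t+f} to force the vanishing. First I would use Lemma \ref{H^3(R,G_a^)injective} to replace $R$ with its strict Henselization $R^{\rm sh}$; the hypothesis $[k:k^p]=p$ on $k={\rm Frac}(R)$ is preserved because ${\rm Frac}(R^{\rm sh})/k$ is separable algebraic and therefore does not disturb any $p$-basis.

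Next I would dualize the fppf-exact Artin--Schreier sequence $0 \to \F_p \to \Ga \xrightarrow{F-I} \Ga \to 0$ by applying $\calHom(-,\Gm)$. Since $\calHom(\F_p,\Gm)=\mu_p$ and $\calExt^1_R(\Ga,\Gm)=0$ by Proposition \ref{ext^1(Ga,Gm)=0}, this yields a short exact sequence of fppf sheaves
\[
0 \longrightarrow \widehat{\Ga} \xlongrightarrow{\widehat{F}-I} \widehat{\Ga} \longrightarrow \mu_p \longrightarrow 0.
\]
Because $R$ is strictly Henselian with separably closed residue field $\kappa$, the smoothness of $\Gm$ gives $H^i(R,\Gm)=H^i_{\rm et}(\kappa,\Gm)=0$ for $i\ge 1$, and the Kummer sequence then forces $H^i(R,\mu_p)=0$ for $i\ge 2$. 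The long exact cohomology sequence of the displayed sequence above therefore makes $\widehat{F}^{\,*}-I\colon H^3(R,\widehat{\Ga}) \to H^3(R,\widehat{\Ga})$ bijective.

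Now Lemma \ref{F^*isomH^i(Ga^)} says $\widehat{F}^{\,*}$ acts on $H^3(R,\widehat{\Ga})$ as a Frobenius-semilinear $R$-module isomorphism. \emph{Granting} that $H^3(R,\widehat{\Ga})$ is a finitely generated $R$-module, Lemma \ref{F-Ikerp^t+f} (in the DVR case) or Lemma \ref{F-Ikerp^n} (in the field case, where finitely generated is automatically free) asserts that the kernel of $\widehat{F}^{\,*}-I$ has order $p^n$ with $n=\dim_\kappa(\kappa\otimes_R H^3(R,\widehat{\Ga}))$. The bijectivity shown above implies $n=0$, and Nakayama's lemma then yields $H^3(R,\widehat{\Ga})=0$.

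The main obstacle is therefore the finite generation of $H^3(R,\widehat{\Ga})$ over $R$. The plan for this step is to combine the Leray spectral sequence $H^i(R,\calExt^j(\Ga,\Gm)) \Rightarrow \Ext^{i+j}_R(\Ga,\Gm)$ --- whose low-$j$ coefficients are controlled by Proposition \ref{ext^1(Ga,Gm)=0} and Lemma \ref{H^0(O,Ext^2(Ga,Gm))=0} --- with the two Breen spectral sequences from Remark \ref{breensequences} applied to $G=\Ga$, $H=\Gm$. Over the regular local $R$ at hand, $\Pic(\mathbf{G}_a^n)=0$ and the only units on products of $\Ga_R$ are scalars, so the surviving contributions to $\Ext^3_R(\Ga,\Gm)$ are subquotients of primitive Brauer groups of products of $\Ga_R$ and of $H^3_{\rm prim}(\Ga_R,\Gm)$. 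Proposition \ref{brauerdifferentialforms} expresses these $p$-torsion Brauer pieces in terms of modules of differential forms, which are finitely generated because $\Omega^1_R$ is free of rank one by Lemma \ref{Omega^1=Rdpi}; Lemma \ref{identities} is what pins down the symmetric 2-cocycles appearing in the Breen differential $E_1^{1,2}\to E_1^{2,2}$ and thereby keeps the relevant subquotient finitely generated.
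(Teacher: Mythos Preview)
Your overall strategy matches the paper's exactly: reduce to strictly Henselian $R$ via Lemma~\ref{H^3(R,G_a^)injective}, use the dualized Artin--Schreier sequence together with the vanishing of $H^2(R,\mu_p)$ to see that $\widehat{F}^*-I$ has trivial kernel on $H^3(R,\widehat{\Ga})$, establish that $H^3(R,\widehat{\Ga})$ is a finitely generated $R$-module, and then conclude via Lemma~\ref{F-Ikerp^t+f} and Nakayama. (The paper only uses injectivity of $\widehat{F}^*-I$; your bijectivity is correct but unnecessary.)

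The gap lies in your finite-generation sketch. You list $H^3_{\rm prim}(\mathbf{G}_{a,R},\Gm)$ among the surviving contributions and propose to handle it with Proposition~\ref{brauerdifferentialforms}, but that proposition only describes $\Br[p]$, not $H^3$. The paper instead invokes Treger's theorem to show $H^3(\mathbf{G}_{a,R},\Gm)[p]=0$; since the abutment is $p$-torsion, this kills the edge map to $E_2^{0,3}$ outright and yields the crucial inclusion $H^3(R,\widehat{\Ga})\hookrightarrow E_2^{1,2}[p]$. Treger is used a second time (the $p$-divisibility of $\widetilde{\Br}(\mathbf{G}_{a,R})$, hence of $\operatorname{im} f_0$) so that classes in $(\ker f_1/\operatorname{im} f_0)[p]$ lift to $\ker(f_1)[p]$ before Proposition~\ref{brauerdifferentialforms} is applied. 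Finally, your appeal to ``$\Omega^1_R$ free of rank one'' is not what gives finite generation: the relevant forms live in $\Omega^1_{R[X,Y]}$, which is not finitely generated over $R$. What actually works is that after using Lemma~\ref{F^*isomH^i(Ga^)} to eliminate $dX,dY$, Lemma~\ref{primetop} to normalize the representing polynomial, and Lemma~\ref{brauerclassdeterminespolynomial} to turn the $\ker f_1$ condition into honest polynomial identities, Lemma~\ref{identities} reduces everything (modulo $\operatorname{im} f_0$) to a class of the form $r\cdot S(X,Y)\,dt$ for a single parameter $r\in R$; that is the source of finite generation.
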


\begin{proof}
By Lemma \ref{H^3(R,G_a^)injective}, we may assume that $R$ is strictly Henselian. We use the injection of Lemma \ref{H^3(G_a^)inclusionBrauerseq}(ii), which (by functoriality) respects the (not a priori additive in $R$ for the group associated to the complex; see Remark \ref{nolinearaction}) $R$-action on both groups. Applying Lemma \ref{H^3(G_a^)inclusionBrauerseq}(i) and Proposition \ref{brauerdifferentialforms}, any element of $(\ker(f_1)/{\rm{im}}(f_0))[p]$ is represented by a Brauer element of the form $\psi(\overline{\omega})$ for some differential form $\omega \in \Omega^1_{R[X, Y]}$, where $\overline{\omega}$ denotes the image in $\Omega^1_{R[X, Y}/(d(R[X, Y]) + (C^{-1}-i)(\Omega^1_{R[X, Y]}))$. By Lemma \ref{F^*isomH^i(Ga^)}, any such element coming from ${\rm{H}}^3(R, \widehat{\Ga})$ via the map $\zeta$ of (\ref{H^3(Ga^)=0pfeqn5}) is represented by $\psi(\overline{F^*\omega})$ for some $\omega \in \Omega^1_{R[X,Y]}$. Then $F^*\omega = H(X, Y)\omega'$ for some $H \in R[X, Y]$ and $\omega' \in \Omega^1_R$, since $d(X^p) = d(Y^p) = 0$. Using \cite[Thm.\,26.5]{crt} when $R$ is a field and Lemma \ref{Omega^1=Rdpi} when $R$ is a DVR, there is an element $t \in R$ such that $R = \sum_{i=0}^{p-1} t^iR^p$ and such that $\Omega^1_R$ is free of rank one with generator $dt$. By Lemma \ref{primetop}, therefore, for any $\alpha \in {\rm{H}}^3(R, \widehat{\Ga})$, we have (where $\psi$ is the map in Proposition \ref{brauerdifferentialforms}) $\zeta(\alpha) = \psi(\overline{H(X, Y)dt})$ for some $H \in R[X, Y]$ such that $H$ contains no nonconstant monomials of the form $(X^aY^b)^p$. Since $\psi(Hdt) \in \ker(f_1)$, it follows from the very definition of $f_1$ that
\[
\psi(\overline{(H(X, Y) - H(Y, X))dt}) = 0,
\]
\[
\psi(\overline{(H(X, Y+Z) + H(Y, Z) - H(X+Y, Z) - H(X, Y))dt}) = 0.
\]

Since $H$ contains no non-constant monomials of the form $X^{pa}Y^{pb}$, Lemma \ref{brauerclassdeterminespolynomial} implies that we must have
\[
H(X, Y) - H(Y, X) = 0,
\]
\[
H(X, Y+Z) + H(Y, Z) - H(X+Y, Z) - H(X, Y) = 0.
\]
Then -- again using the fact that $H$ contains no non-constant monomials with both exponents divisible by $p$ -- Lemma \ref{identities} implies that $H(X, Y) = G(X+Y) - G(X) - G(Y) + rS(X, Y)$ for some $G \in R[X, Y]$ and some $r \in R$, where recall that $S$ is the mod $p$ reduction of the Witt polynomial $((X+Y)^p-X^p-Y^p)/p \in \Z[X, Y]$. 

This means that, by modifying $\psi(\overline{H(X, Y)dt})$ by the element $$f_0(\psi(\overline{G(X)dt})) = \psi(\overline{(G(X+Y)-G(X)-G(Y))dt}),$$ we may assume that $H(X, Y) = rS(X, Y)$ for some $r \in R$. Because $S(X, Y)$ is homogenous of degree $p$, the $R$-action on $\Ga$ induces an $R$-action on $\psi(\overline{R\cdot S(X, Y)dt})$ by having $r \in R$ act via multiplication by $r^p$:  $r \cdot \psi(\overline{r'S(X, Y)dt}) = \psi(\overline{r'S(rX, rY)dt}) = \psi(\overline{r^pr'S(X, Y)dt})$. We have just shown that the map of $R$-sets (with actions that are multiplicative in $R$ but not a priori necessarily additive in $R$; see Remark \ref{nolinearaction})
\begin{equation}
\label{RtokernelH^3vanisheqn}
R \longrightarrow \left(\frac{\ker(f_1)}{{\rm{im}}(f_0)}\right)[p]
\end{equation}
sending $r$ to the class of $\psi(\overline{rS(X, Y)dt})$ -- and with $R$-action on the left given by $r\cdot s : = r^ps$ -- has image containing the $R$-submodule (and not just $R$-set) ${\rm{H}}^3(R, \widehat{\Ga}) \simeq \zeta({\rm{H}}^3(R, \widehat{\Ga}))$. Further, since the $R$-action on the source is $R$-additive (since $r \mapsto r^p$ is an additive endorphism of $R$), we deduce that the $R$-action on the target is also additive. That is, (\ref{RtokernelH^3vanisheqn}) is a homomorphism of $R$-modules when the codomain is restricted to the image of (\ref{RtokernelH^3vanisheqn}). We deduce that the $R$-submodule ${\rm{H}}^3(R, \widehat{\Ga})$ of this image is a {\em finitely generated} $R$-module, because $R$ with $R$-action given for $r \in R$ by multiplication by $r^p$ is a finitely generated $R$-module.

If $F:  \mathbf{G}_{a,\,R} \rightarrow \mathbf{G}_{a,\,R}$ is the relative Frobenius $R$-isogeny, then by Lemma \ref{F^*isomH^i(Ga^)}, the induced map 
\[
{\rm{H}}^3(\widehat{\Ga}):  {\rm{H}}^3(R, \widehat{\Ga}) \rightarrow {\rm{H}}^3(R, \widehat{\Ga})
\]
is a Frobenius inverse semilinear isomorphism. We claim that the map 
\[
{\rm{H}}^3(\widehat{F}) - I:  {\rm{H}}^3(R, \widehat{\Ga}) \rightarrow {\rm{H}}^3(R, \widehat{\Ga})
\]
has trivial kernel. Since ${\rm{H}}^3(R, \widehat{\Ga})$ is finitely generated and $R$ is strictly Henselian, Lemma \ref{F-Ikerp^t+f}, together with Nakayama's Lemma, will then imply that ${\rm{H}}^3(R, \widehat{\Ga}) = 0$.

We have ${\rm{H}}^3(\widehat{F}) - I = {\rm{H}}^3(\widehat{F-I})$. The short exact sequence of $R$-group schemes
\[
1 \longrightarrow \Z/p\Z \longrightarrow \Ga \xlongrightarrow{F-I} \Ga \longrightarrow 1,
\]
yields by Proposition \ref{ext^1(Ga,Gm)=0} a short exact sequence
\[
1 \longrightarrow \widehat{\Ga} \xlongrightarrow{\widehat{F-I}} \widehat{\Ga} \longrightarrow \mu_p \longrightarrow 1.
\]
In order to prove that ${\rm{H}}^3(\widehat{F}) - I:  {\rm{H}}^3(R, \widehat{\Ga}) \rightarrow {\rm{H}}^3(R, \widehat{\Ga})$ has trivial kernel, therefore, it suffices to show that ${\rm{H}}^2(R, \mu_p) = 0$. But this follows from the exact sequence of $R$-group schemes
\[
1 \longrightarrow \mu_p \longrightarrow \Gm \xlongrightarrow{p} \Gm \longrightarrow 1
\]
together with the fact that ${\rm{H}}^i(R, \Gm) = 0$ for $i > 0$, since the cohomology may be taken to be \'etale due to the smoothness of $\Gm$, and because the higher \'etale cohomology of strictly Henselian local rings vanishes.
\end{proof}

We next turn to proving that the analogue of Proposition \ref{H^3(G_a^)=0} holds for a (possibly infinite) product of rings as in that proposition, a result that will be useful in our study of the cohomology of the adeles. To this end, we first prove the following result.

\begin{proposition}
\label{prodprimbrauer}
Let $\{A_i\}_{i \in I}$ be a set of $\F_p$-algebras, and let $k_i := {\rm{Frac}}(A_i)$. Assume that $[k_i: k_i^p] = p$ for each $i$, and that each $A_i$ is either a field or a DVR. Let $A := \prod_{i \in I} A_i$. Then the canonical map
\[
{\rm{H}}^2(\mathbf{G}_{a,\,A}, \Gm)_{\prim} \longrightarrow \prod_{i \in I} {\rm{H}}^2(\mathbf{G}_{a,\, A_i}, \Gm)_{\prim}
\]
is an isomorphism.
\end{proposition}

\begin{proof}
By Lemmas \ref{prodvalrings} and \ref{proddegimpleq1}, each local ring of ${\rm{Spec}}(A)$ is a valuation ring whose fraction field has degree of imperfection $\leq 1$. Since $${\rm{H}}^2(\mathbf{G}_{a,\, A}, \Gm)_{\prim} \subset {\rm{H}}^2(\mathbf{G}_{a,\,A}, \Gm)[p],$$ Proposition \ref{brauerdifferentialforms} therefore allows us to express any element of the former group as the image of an element in 
\begin{equation}
\label{prodprimbrauerpfeqn1}
\frac{\Omega^1_{A[X]}}{d(A[X]) + (C^{-1}-i)(\Omega^1_{A[X]})}
\end{equation}
under the functorial map $\psi$ of that proposition. Lemma \ref{frobsurjectiveH^2(Ga^)} therefore allows us to write any element $\alpha \in {\rm{H}}^2(\mathbf{G}_{a,\, A}, \Gm)_{\prim}$ in the form $\psi(\overline{F^*\omega}_\alpha)$ for some $\omega_\alpha \in \Omega^1_{A[X]}$, where $\overline{\omega}_\alpha$ denotes the image of $\omega_\alpha$ in the quotient (\ref{prodprimbrauerpfeqn1}). Since $F^*\omega_\alpha = G(X)\omega$ for some $\omega \in \Omega^1_A$ (because $d(X^p) = 0$), it follows that $\alpha = \psi(\overline{G(X)\omega})$. Assume that $\alpha \mapsto 0 \in {\rm{H}}^2(\mathbf{G}_{a,\,A_i}, \Gm)_{\prim}$ for each $i$, and we will show that $\alpha = 0$. Indeed, for each $A_i$ define an element $\pi_i$ as follows: if $A_i$ is a field, let $\pi_i$ be a p-basis of $A_i$, while if $A_i$ is a DVR, let $\pi_i$ be a uniformizer. Then $A_i = \oplus_{j =0}^{p-1} A_i^p\pi_i^j$ (this uses Lemma \ref{Omega^1=Rdpi} when $A_i$ is a DVR). Let $\pi := (\pi_i)_i \in A$. Then $A = \oplus_{j =0}^{p-1} A^p\pi^j$. Lemma \ref{primetop} then implies that $\alpha = \psi(\overline{H(X)d\pi})$ for some $H \in A[X]$ containing no non-constant monomials of degree divisible by $p$. Since $\psi_i(\overline{H_id\pi_i}) \in {\rm{H}}^2(\mathbf{G}_{a,\, A_i}, \Gm)$ vanishes, Lemma \ref{brauerclassdeterminespolynomial} then implies that $H_i(X) = c_i$ is constant for all $i$. Therefore, $H(X)$ is a constant polynomial, say $H(X) = a \in A$. Restricting a primitive Brauer class to the identity point $X = 0$ kills it, so we find that $cd\pi$ represents the trivial element of ${\rm{H}}^2(A, \Gm)$. It follows that $\alpha = 0$. This proves injectivity.

It remains to prove that the map in the proposition is surjective. Suppose given $\alpha_i \in {\rm{H}}^2(\mathbf{G}_{a,\, A_i}, \Gm)_{\prim}$ for each $i$. Proposition \ref{omega=H^2(Ga^)} implies that, for each $i$, there is $a_i \in A_i$ such that $\alpha_i = \psi_i(\overline{a_iXd\pi_i})$. Let $a := (a_i)_i \in A$. Then the element $\alpha := \psi(\overline{aXd\pi}) \in {\rm{H}}^2(\mathbf{G}_{a,\,A}, \Gm)_{\prim}$ maps to $\alpha_i$ for each $i$.
\end{proof}

\begin{proposition}
\label{H^3(Ga^)=0prodofdvrs}
Let $\{A_i\}_{i \in I}$ be a set of $\F_p$-algebras, and let $k_i := {\rm{Frac}}(A_i)$. Assume that $[k_i: k_i^p] = p$ for each $i$, and that each $A_i$ is either a field or a DVR. Let $A := \prod_{i \in I} A_i$. Then ${\rm{H}}^3(A, \widehat{\Ga}) = 0$.
\end{proposition}

\begin{proof}
By Lemmas \ref{prodvalrings} and \ref{proddegimpleq1}, each local ring of $A$ is a valuation ring whose fraction field has degree of imperfection $\leq 1$. Lemma \ref{H^3(G_a^)inclusionBrauerseq} furnishes a commutative diagram in which the horizontal arrows are inclusions:
\[
\begin{tikzcd}
{\rm{H}}^3(A, \widehat{\Ga}) \arrow[r, hookrightarrow, "j"] \arrow{d} & \frac{\ker(f_1)}{\im(f_0)}[p] \arrow{d} \\
{\rm{H}}^3(A_i, \widehat{\Ga}) \arrow[r, hookrightarrow, "\prod j_i"] & \frac{\ker(f_{1,\,i})}{\im(f_{0,\,i})}[p]
\end{tikzcd}
\]
Let $\alpha \in {\rm{H}}^3(A, \widehat{\Ga})$. By Lemma \ref{H^3(G_a^)inclusionBrauerseq}(i), $j(\alpha)$ is represented by an element $\beta \in \ker(f_1)[p]$. By Proposition \ref{H^3(G_a^)=0}, there exists for each $i \in I$ an element $\gamma_i \in {\rm{H}}^2(\mathbf{G}_{a,\,A_i}, \Gm)$ such that $f_{0,\,i}(\gamma_i) = \beta_i$. Then $p\gamma_i \in \ker(f_{0,\,i}) = {\rm{H}}^2(\mathbf{G}_{a,\,A_i}, \Gm)_{\prim}$. By Proposition \ref{prodprimbrauer}, there exists $\delta \in {\rm{H}}^2(\mathbf{G}_{a,\, A}, \Gm)_{\prim}$ such that $\delta_i = p\gamma_i$ for each $i$. Since ${\rm{H}}^2(\mathbf{G}_{a,\, A}, \Gm)$ is $p$-divisible \cite[\S 3, Prop.\,]{treger}, there exists $\epsilon \in {\rm{H}}^2(\mathbf{G}_{a,\, A}, \Gm)$ such that $p\epsilon = \delta$. Then
\[
\beta_i - f_{0,\,i}(\epsilon_i) = f_{0,\,i}(\gamma_i - \epsilon_i),
\]
and 
\[
p(\gamma_i - \epsilon_i) = p\gamma_i - \delta_i = 0,
\]
so replacing $\beta$ by $\beta - f_0(\epsilon)$, we see that $j(\alpha)$ is represented by an element $\zeta \in \ker(f_1)[p]$ with the property that, for each $i$, there is $\eta_i \in {\rm{H}}^2(\mathbf{G}_{a,\,A_i}, \Gm)[p]$ such that $\zeta_i = f_{0,\,i}(\eta_i)$.

The element $j({\rm{H}}^3(\widehat{F})(\alpha))$ is represented by ${\rm{H}}^3(\widehat{F})(\zeta)$, and ${\rm{H}}^3(\widehat{F})(\zeta_i) = f_{0,\,i}({\rm{H}}^3(\widehat{F})(\eta_i))$ for all $i$. Hence, renaming ${\rm{H}}^3(\widehat{F})(\alpha)$ as $\alpha$ using Lemma \ref{F^*isomH^i(Ga^)}, then by Proposition \ref{brauerdifferentialforms}, for every element $\alpha \in {\rm{H}}^3(A, \widehat{\Ga})$, there exist $G \in A[X, Y], \omega \in \Omega^1_A, H_i \in A_i[X]$, and $\theta_i \in \Omega^1_{A_i}$ such that $j(\alpha)$ is represented by $\psi(\overline{G(X)\omega})$ and $\psi_i(\overline{G(X)\omega_i}) = f_{0,\,i}(\psi(\overline{H_i(X)\theta_i}))$, where $\psi$ is the map of Proposition \ref{brauerdifferentialforms}, and $\overline{\cdot}$ denotes the image in $\Omega^1_{A[X, Y]}/(d(A[X, Y]) + (C^{-1}-i)(\Omega^1_{A[X, Y]}))$ (and similarly for $A_i[X]$).

Choose elements $\pi_i \in A_i$ as follows: if $A_i$ is a field, then $\pi_i$ is a $p$-basis of $A_i$, while if $A_i$ is a DVR, then $\pi_i \in A_i$ is a uniformizer. Then $A_i = \oplus_{j=0}^{p-1} A_i^p\pi_i^j$ (using Lemma \ref{Omega^1=Rdpi} when $A_i$ is a DVR). Let $\pi := (\pi_i)_i \in A$. Then $A = \oplus_{j=0}^{p-1} A^p\pi^j$. It follows from Lemma \ref{primetop} that we may choose $\omega = d\pi$ and $\theta_i = d\pi_i$, and $G$ and $H_i$ to have no nonconstant monomials whose degrees in both $X$ and $Y$ are divisible by $p$. Then, since $\psi_i(\overline{G(X)d\pi_i}) = f_{0,\,i}(\psi(\overline{H_i(X)d\pi_i}))$, we have for each $i$
\[
\psi(\overline{G_i(X, Y)d\pi_i}) = \psi(\overline{(H_i(X + Y) - H_i(X) - H_i(Y))d\pi_i}).
\]
It follows from Proposition \ref{brauerclassdeterminespolynomial} that $G_i(X, Y) - (H_i(X+Y) - H_i(X) - H_i(Y))$ is a constant polynomial. Modifying $H_i$ by a constant in $A_i$, therefore, we may assume that
\[
G_i(X, Y) = H_i(X+Y) - H_i(X) - H_i(Y)
\]
for all $i$. The above identity holds with $H_i$ replaced by the part of $H_i$ with degree $\leq {\rm{deg}}(G)$. The $H_i$ may therefore be chosen to be of uniformly bounded degree. Therefore, there exists $\tilde{H} \in A[X]$ such that $\tilde{H}_i = H_i$ for all $i$. Then $j(\alpha)$ is represented by
\[
\psi(\overline{G(X, Y)d\pi}) = \psi\left(\overline{(\tilde{H}(X+Y) - \tilde{H}(X) - \tilde{H}(Y))d\pi}\right) = f_0\left(\psi\left(\overline{\tilde{H}(X)d\pi}\right)\right),
\]
hence $j(\alpha) = 0$, so $\alpha = 0$. Since $\alpha \in {\rm{H}}^3(A, \widehat{\Ga})$ was arbitrary, this proves the proposition.
\end{proof}

\section{Comparison between \v{C}ech and sheaf cohomology}
\label{cech=derivedsection}

We shall in several places require an explicit description of certain cohomology classes in terms of \v{C}ech cocycles. In particular, if $G$ is an affine commutative group scheme of finite type over a field $k$, then we will need to know that the canonical map $\check{{\rm{H}}}^i(k, \mathscr{F}) \rightarrow {\rm{H}}^i(k, \mathscr{F})$ from fppf \v{C}ech to derived functor cohomology is an isomorphism in low degrees for $\mathscr{F} = G$ or $\widehat{G}$. In this section we prove this for $\mathscr{F} = G$ in all degrees and without affineness assumptions on $G$ (Proposition \ref{cech=derivedsmoothinf}), and we prove this for $\mathscr{F} = \widehat{G}$ up to degree $2$ (Proposition \ref{cech=derivedG^}). Milne proved the agreement of \v{C}ech and derived functor cohomology with coefficients in a finite type group scheme \cite[Chap.\,III, Prop.\,6.1]{milne}, but the proof has some gaps, especially the lack of a proof of the compatibility between \v{C}ech and derived functor constructions; see Appendix \ref{appendixcechvsderived} and particularly Proposition \ref{cechderivedconnectingmap}.

\begin{remark}
\label{cech=deriveddeg01remark}
The canonical map from \v{C}ech to derived functor cohomology is an isomorphism in degrees 0 and 1, and injective in degree $2$, for any sheaf on any site \cite[Chap.\,III, Cor.\,2.10]{milneetalecohomology}. The real content in the statement above for $\mathscr{F} = \widehat{G}$ is therefore the surjectivity in degree 2.
\end{remark}

\begin{proposition}
\label{cech=derivedperfect}
Let $k$ be a perfect field, and let $\mathscr{F}$ be an fppf abelian sheaf on the category of all $k$-schemes that is locally of finite presentation (Definition \ref{finitepresentationdef}). Then the canonical map $\check{{\rm{H}}}^i(k, \mathscr{F}) \rightarrow {\rm{H}}^i(k, \mathscr{F})$ is an isomorphism for all $i$.
\end{proposition}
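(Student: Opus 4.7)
The plan is to identify both $\check{\rm{H}}^i(k, \mathscr{F})$ and ${\rm{H}}^i(k, \mathscr{F})$ with the continuous Galois cohomology ${\rm{H}}^i(\Gal(\bar{k}/k), \mathscr{F}(\bar{k}))$; since $k$ is perfect we have $\bar{k} = k_s$. The cornerstone is the observation that finite separable (hence finite \'etale) extensions of $k$ are cofinal among fppf covers of $\Spec(k)$: any fppf $k$-scheme $U$ is non-empty, hence has a $\bar{k}$-point by Hilbert's Nullstellensatz, and since $U$ is locally of finite type this point factors through a finite subextension $k'/k$, which is separable by perfectness. Therefore any fppf cover $\{U_j \to \Spec(k)\}$ is refined by some $\Spec(k') \to \Spec(k)$ with $k'/k$ finite separable.

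For the \v{C}ech side, this cofinality gives $\check{\rm{H}}^i_{\rm{fppf}}(k, \mathscr{F}) = \check{\rm{H}}^i_{\rm{\'et}}(k, \mathscr{F})$ directly from the definition of \v{C}ech cohomology as a colimit over covers. On the small \'etale site of a field, \v{C}ech cohomology coincides with Galois cohomology: for a finite Galois extension $k'/k$ with group $G$, the isomorphism $k' \otimes_k k' \cong \prod_G k'$ (and similarly for higher tensor powers) turns the \v{C}ech complex for $\Spec(k') \to \Spec(k)$ into the standard cochain complex computing ${\rm{H}}^*(G, \mathscr{F}(k'))$. Passing to the colimit over finite Galois extensions, and using the local finite presentation hypothesis $\mathscr{F}(\bar{k}) = \varinjlim \mathscr{F}(k')$, yields $\check{\rm{H}}^i_{\rm{fppf}}(k, \mathscr{F}) = {\rm{H}}^i(\Gal(\bar{k}/k), \mathscr{F}(\bar{k}))$.

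For the derived functor side, I would use the Leray spectral sequence for the morphism of sites $\epsilon \colon (\text{Sch}/k)_{\rm{fppf}} \to (\text{Sch}/k)_{\rm{\'et}}$, namely ${\rm{H}}^p_{\rm{\'et}}(k, R^q\epsilon_*\mathscr{F}) \Rightarrow {\rm{H}}^{p+q}_{\rm{fppf}}(k, \mathscr{F})$, and show $R^q\epsilon_*\mathscr{F} = 0$ for $q > 0$ by a stalk computation. The stalk at a geometric point $\bar{x}$ is ${\rm{H}}^q_{\rm{fppf}}(\bar{k}, \mathscr{F})$ (invoking local finite presentation to commute fppf cohomology with the filtered colimit $\calO^{\text{sh}}_{\Spec(k), \bar{x}} = \bar{k}$), and this vanishes for $q > 0$ because $\Gamma(\Spec(\bar{k}), -)$ is exact on fppf abelian sheaves: an fppf-local surjection $\mathscr{F}' \twoheadrightarrow \mathscr{F}''$ and $s \in \mathscr{F}''(\bar{k})$ give a lift $t \in \mathscr{F}'(V)$ on some non-empty fppf cover $V \to \Spec(\bar{k})$, and any such $V$ admits a $\bar{k}$-point $p$ by the Nullstellensatz, whence $p^*t$ lifts $s$. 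Hence ${\rm{H}}^i_{\rm{fppf}}(k, \mathscr{F}) = {\rm{H}}^i_{\rm{\'et}}(k, \mathscr{F})$, which is again ${\rm{H}}^i(\Gal(\bar{k}/k), \mathscr{F}(\bar{k}))$, and the canonical comparison $\check{\rm{H}}^i \to {\rm{H}}^i$ is compatible with both identifications because at each finite level they are computed from the same cosimplicial resolution. The main technical obstacle is the stalk identification for $R^q\epsilon_*\mathscr{F}$: it is precisely here that the local finite presentation hypothesis on $\mathscr{F}$ is essential, allowing fppf cohomology to commute with the filtered colimit presenting $\calO^{\text{sh}}_{\Spec(k), \bar{x}}$ as the direct limit of coordinate rings of \'etale neighborhoods.
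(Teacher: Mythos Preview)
Your approach is correct and matches the paper's: both identify each side with Galois cohomology of $\mathscr{F}(\bar{k})$, using cofinality of finite Galois extensions among fppf covers (Nullstellensatz plus perfectness) on the \v{C}ech side and the vanishing of ${\rm{H}}^q_{\rm{fppf}}(\bar{k}, \mathscr{F})$ for $q>0$ on the derived side. The paper packages the derived-functor comparison as the separate Proposition~\ref{etrem}, proved there via a limit of Hochschild--Serre spectral sequences rather than your change-of-topology Leray sequence, but these are minor variants resting on the same key fact.

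One point to correct: your claim that $R^q\epsilon_*\mathscr{F} = 0$ for $q>0$ on the \emph{big} \'etale site is too strong. Your stalk computation only treats the geometric point lying over $\Spec(k)$, but the big site has geometric points over arbitrary $k$-schemes $X$, where the relevant stalk is ${\rm{H}}^q_{\rm{fppf}}(\calO^{\rm sh}_{X,\bar{x}}, \mathscr{F})$, and this need not vanish (e.g.\ $\mathscr{F}=\alpha_p$ over $k=\F_p$ at a geometric point of $\Spec(\F_p(t))$). What your argument actually establishes, and all you need for the Leray spectral sequence over $\Spec(k)$, is that the restriction of $R^q\epsilon_*\mathscr{F}$ to the small \'etale site of $\Spec(k)$ vanishes; cohomology of $\Spec(k)$ only depends on this restriction.
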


\begin{proof}
By the Nullstellensatz and because $k$ is perfect, any fppf cover of $\Spec(k)$ may be refined by one of the form $\Spec(L)$ for some finite Galois extension $L/k$ contained in $\overline{k}$. We may therefore compute the \v{C}ech cohomology using only these covers. But these \v{C}ech cohomology groups are precisely the Galois cohomology groups of $\varinjlim_L \mathscr{F}(L)$, and these agree with the \'etale cohomology groups. By Proposition \ref{etrem}, they therefore also agree with the fppf cohomology groups.
\end{proof}

\begin{lemma}
\label{mayusekbar/k}
Let $k$ be a field, and let $\mathscr{F}$ be a presheaf on the category of all $k$-schemes such that $\mathscr{F}$ is locally of finite presentation. Then the \v{C}ech cohomology groups $\check{{\rm{H}}}^i(k, \mathscr{F})$ may be computed using the $($not generally fppf$)$ cover $\Spec(\overline{k}) \rightarrow \Spec(k)$.
\end{lemma}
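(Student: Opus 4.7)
The plan is to exhibit fppf covers of the form $\Spec(L) \to \Spec(k)$ with $L/k$ a finite field extension inside $\overline{k}$ as cofinal among all fppf covers, and then use the local finite presentation hypothesis on $\mathscr{F}$ to pass to the filtered colimit, writing $\overline{k} = \varinjlim L$.

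First I would establish the cofinality statement. Any fppf cover of $\Spec(k)$ may be refined by an affine one $\Spec(R) \to \Spec(k)$ with $R$ a finitely presented faithfully flat $k$-algebra. By the Nullstellensatz, the scheme $\Spec(R)$ admits a closed point whose residue field $L$ is a finite extension of $k$. Choosing an embedding $L \hookrightarrow \overline{k}$, the composition $\Spec(L) \to \Spec(R) \to \Spec(k)$ shows that the cover $\Spec(L) \to \Spec(k)$ (which is fppf, as any finite field extension is fppf) refines the original one. Since any finite extension of $k$ embeds in $\overline{k}$, the set of finite subextensions $L \subset \overline{k}$ ordered by inclusion is cofinal in the system of fppf covers up to refinement. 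Consequently,
\[
\check{{\rm{H}}}^i(k, \mathscr{F}) = \varinjlim_{L} \check{{\rm{H}}}^i(\Spec(L)/\Spec(k), \mathscr{F}),
\]
where the colimit runs over finite subextensions $L$ of $\overline{k}/k$.

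Next I would compare this with the Čech complex for the (non-fppf) cover $\Spec(\overline{k}) \to \Spec(k)$. For each $n \geq 1$, the $k$-algebra $\overline{k}^{\otimes_k n}$ is the filtered colimit $\varinjlim_L L^{\otimes_k n}$ since tensor product commutes with filtered colimits. Because $\mathscr{F}$ is locally of finite presentation, it carries filtered colimits of $k$-algebras to colimits of abelian groups, so
\[
\mathscr{F}(\overline{k}^{\otimes_k n}) = \varinjlim_L \mathscr{F}(L^{\otimes_k n}).
\]
These identifications are compatible with the cosimplicial structure maps, so the Čech complex $\mathscr{F}(\overline{k}^{\otimes_k \bullet})$ is the filtered colimit of the Čech complexes $\mathscr{F}(L^{\otimes_k \bullet})$. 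Since cohomology commutes with filtered colimits of complexes of abelian groups, the cohomology of the Čech complex attached to $\Spec(\overline{k}) \to \Spec(k)$ equals $\varinjlim_L \check{{\rm{H}}}^i(\Spec(L)/\Spec(k), \mathscr{F})$, which by the previous paragraph is $\check{{\rm{H}}}^i(k, \mathscr{F})$.

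There is no genuine obstacle here: the only point that requires care is the cofinality of finite-field-extension covers among fppf covers, which is a direct consequence of the Nullstellensatz and the fact that finite field extensions are themselves fppf. Everything else is a formal consequence of the compatibility of $\mathscr{F}$ with filtered colimits and the exactness of filtered colimits of abelian groups.
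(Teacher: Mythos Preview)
Your proof is correct and follows essentially the same approach as the paper: use the Nullstellensatz to see that covers by finite subextensions $L \subset \overline{k}$ are cofinal among fppf covers of $\Spec(k)$, and then use the local finite presentation hypothesis to identify $\mathscr{F}(\overline{k}^{\otimes_k n}) = \varinjlim_L \mathscr{F}(L^{\otimes_k n})$ so that the \v{C}ech complex for $\overline{k}$ is the filtered colimit of those for the $L$'s. Your write-up is in fact more explicit than the paper's about why cohomology commutes with this filtered colimit.
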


\begin{proof}
By the Nullstellensatz, any fppf cover of $\Spec(k)$ may be refined by one of the form $\Spec(L)$, where $L$ is a finite extension of $k$ contained in $\overline{k}$, hence the \v{C}ech cohomology groups may be computed using only covers of the form $\Spec(L)$ for such $L$. Since $\mathscr{F}(\overline{k}^{\otimes_k n}) = \varinjlim_L \mathscr{F}(L^{\otimes_k n})$ for all $n > 0$, one may therefore replace a direct limit over these covers with the \v{C}ech groups obtained using the single cover $\Spec(\overline{k})$.
\end{proof}

\begin{lemma}
\label{filtrationinfinitesimal}
Let $k$ be a field of characteristic $p > 0$, and let $I$ be a commutative infinitesimal $k$-group scheme.
\begin{itemize}
\item[(i)] If $I$ is local-local, then it admits a finite filtration with successive quotients all $k$-isomorphic to $\alpha_p$.
\item[(ii)] If $k$ is separably closed, then $I$ admits a finite filtration with successive quotients all $k$-isomorphic to $\alpha_p$ or $\mu_p$.
\end{itemize}
\end{lemma}

\begin{proof}
(i) By repeatedly filtering by the kernel and cokernel of the Frobenius and Verschiebung maps, we may assume that $I$ is killed by both maps. Then the classification of finite group schemes over perfect fields coming from Dieudonn\'e theory implies that $I_{\overline{k}} \simeq \alpha_p^n$ for some $n \geq 0$. It therefore suffices to show that $\alpha_p^n$ admits no nontrivial $\overline{k}/k$-forms. Since the automorphism functor of $\alpha_p^n$ is ${\rm{GL}}_n$, such forms are classified by ${\rm{H}}^1(k, {\rm{GL}}_n)$, and this set is trivial because any vector bundle over ${\rm{Spec}}(k)$ is trivial.
\\

\noindent (ii) By dualizing the connected-\'etale sequence of the Cartier dual $\widehat{I}$, we may assume that $\widehat{I}$ is either \'etale or infinitesimal. The latter case is handled by (i). In the former case, the fact that $k = k_s$ implies that $\widehat{I}$ is a constant group scheme, hence $I$ is a product of groups of the form $\mu_{p^n}$.
\end{proof}

\begin{lemma}
\label{H^j(kbar^n,I)=0}
Let $G$ be a commutative group scheme locally of finite type over a field $k$. Then ${\rm{H}}^j(\overline{k}^{\otimes_k n}, G) = 0$ for all $j, n > 0$.
\end{lemma}

\begin{proof}
We first treat the case in which $G$ is smooth. Note that $(\overline{k}^{\otimes_k n})_{{\rm{red}}} = \overline{k}^{\otimes_{k_{\rm{perf}}} n}$. By Lemma \ref{picX = pic(Xred)}, therefore, we may replace $k$ with $k_{{\rm{perf}}}$ and thereby assume (for the purposes of proving the lemma for smooth $G$) that $k$ is perfect. By Proposition \ref{directlimitscohom}, in order to prove the lemma it is enough to show that
\begin{equation}
\label{cechderivedsmoothpfeqn2}
\varinjlim_L {\rm{H}}^j(L^{\otimes_k n}, G) = 0,
\end{equation}
where the limit is over all finite Galois extensions $L/k$ contained in $\overline{k}$. Since $G$ is smooth, the cohomology may be taken to be \'etale. But because higher \'etale cohomology over separably closed fields is trivial, it suffices to note that since $L^{\otimes_k n}$ is just a product of copies of $L$ for $L/k$ finite Galois, it follows that for any $\alpha \in {\rm{H}}^j(L^{\otimes_k n}, G)$ there is a finite extension $L'/L$ -- still Galois over $k$ -- such that the image of $\alpha$ dies in ${\rm{H}}^j(L'^{\otimes_k n}, G)$, because the induced map from a product of copies of $L$ to a product of copies of $L'$ is given on each factor of $L$ by a diagonal embedding into some subset of the factors in the product of copies of $L'$.

Next we treat the case in which $G = I$ is infinitesimal. We may assume that ${\rm{char}}(k) = p > 0$, since in the characteristic $0$ case $I$ must be trivial. By Lemma \ref{filtrationinfinitesimal}(iI), over $\overline{k}$, hence over $\overline{k}^{\otimes_k n}$, $I$ has a filtration with successive quotients isomorphic to either $\alpha_p$ or $\mu_p$. We may therefore assume that $I$ is one of these groups. The short exact sequences
\[
1 \longrightarrow \alpha_p \longrightarrow \Ga \xlongrightarrow{F} \Ga \longrightarrow 1 
\]
\[
1 \longrightarrow \mu_p \longrightarrow \Gm \xlongrightarrow{p} \Gm \longrightarrow 1
\]
(where $F$ is the relative Frobenius isogeny of $\Ga$ over $k$), together with the already-treated smooth case, reduce us to showing that the $p$th power map on $\overline{k}^{\otimes_k n}$ is surjective. Since this map is additive, the desired surjectivity follows from the surjectivity of the $p$th power map on $\overline{k}$.

In the general case, by \cite[VII$_{\rm{A}}$, Prop.\,8.3]{sga3}, there is an infinitesimal subgroup scheme $I \subset G$ such that $G/I$ is smooth. The lemma therefore follows from the already-treated smooth and infinitesimal cases.
\end{proof}

For a scheme $S$ and an fppf abelian sheaf $\mathscr{F}$ on $S$, let $\mathscr{H}^j(\mathscr{F})$ denote the presheaf $U \mapsto {\rm{H}}^j(U, \mathscr{F})$.

\begin{proposition}
\label{cech=derivedsmoothinf}
Let $G$ be a commutative group scheme locally of finite type over a field $k$. Then the canonical map $\check{{\rm{H}}}^i(k, G) \rightarrow {\rm{H}}^i(k, G)$ is an isomorphism for all $i$.
\end{proposition}

\begin{proof}
Consider the \v{C}ech-to-derived functor spectral sequence
\[
E_2^{i,j} = \check{{\rm{H}}}^i(k, \mathscr{H}^j(G)) \Longrightarrow {\rm{H}}^{i+j}(k, G).
\]
In order to prove the proposition, it suffices to show that $E_2^{i,j} = 0$ for all $j > 0$. By Proposition \ref{directlimitscohom}, the presheaves $\mathscr{H}^j(G)$ are locally of finite presentation, so by Lemma \ref{mayusekbar/k}, their \v{C}ech cohomology groups may be computed using the cover $\Spec(\overline{k}) \rightarrow \Spec(k)$. In order to show that $E_2^{i, j} = 0$ for all $j > 0$, therefore, it suffices to show that
\begin{equation}
\label{cechderivedsmoothpfeqn1}
{\rm{H}}^j(\overline{k}^{\otimes_k n}, G) = 0
\end{equation}
for all $j, n > 0$, and this is the content of Lemma \ref{H^j(kbar^n,I)=0}.
\end{proof}

We now turn to the comparison between \v{C}ech and derived functor cohomology for $\widehat{G}$.

\begin{lemma}
\label{cech=derivedsplitunipdual}
If $U$ is a smooth connected unipotent group over a field $k$ of characteristic $p > 0$, then the canonical map $\check{{\rm{H}}}^i(k, \widehat{U}) \rightarrow {\rm{H}}^i(k, \widehat{U})$ is an isomorphism for $i = 2$ and injective for $i = 3$.
\end{lemma}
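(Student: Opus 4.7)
My plan is to apply the \v{C}ech-to-derived-functor spectral sequence
\[
E_2^{p,q} = \check{{\rm H}}^p(k, \mathscr{H}^q(\widehat{U})) \Longrightarrow {\rm H}^{p+q}(k, \widehat{U})
\]
and prove the vanishings $E_2^{0,q} = 0$ for $q \ge 1$ as well as $E_2^{1,1} = 0$. These suffice: the vanishings of $E_2^{0,2}$ and $E_2^{1,1}$ kill $E_\infty^{p,q}$ for $p+q = 2$ with $q \ge 1$; the vanishing of $E_2^{0,1}$ rules out the only possible incoming differential at $E_r^{2,0}$, giving the isomorphism in degree $2$; and the same two vanishings $E_2^{0,2} = E_2^{1,1} = 0$ rule out every incoming differential at $E_r^{3,0}$, giving the injectivity in degree $3$. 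I would compute the \v{C}ech groups using the single cover $\Spec(\overline{k}) \to \Spec(k)$ via Lemma \ref{mayusekbar/k}, having first verified that $\widehat{U}$ (and hence each $\mathscr{H}^q(\widehat{U})$) is locally of finite presentation: this is immediate for $U = \Ga$ from Lemma \ref{formalchars}(ii), and the general case follows by devissage using Lemma \ref{unipotentmodetale=split} and Proposition \ref{hatisexact}.

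The vanishing of the column $E_2^{0,q}$ for $q \ge 1$ reduces to the claim ${\rm H}^q(\overline{k}, \widehat{U}) = 0$ for all $q$. Since $\overline{k}$ is perfect, $U_{\overline{k}}$ admits a filtration with successive quotients isomorphic to $\Ga$; by Proposition \ref{hatisexact}, dualizing produces a filtration of $\widehat{U}|_{\overline{k}}$ whose successive quotients are isomorphic to $\widehat{\Ga}$, and a straightforward d\'evissage in long exact sequences, combined with Proposition \ref{cohomologyofG_adualwhenkisperfect}, yields the desired vanishing.

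The main obstacle, and the crucial step, is the vanishing of $E_2^{1,1}$. I plan to prove the stronger assertion that ${\rm H}^1(\overline{k}^{\otimes_k n}, \widehat{U}) = 0$ for every $n \ge 1$, which makes the entire \v{C}ech complex computing $\mathscr{H}^1(\widehat{U})$ vanish termwise. The perfection of $\overline{k}$ makes its Frobenius surjective, and the Frobenius on $\overline{k}^{\otimes_k n}$ is then also surjective, since any simple tensor $a_1 \otimes \cdots \otimes a_n$ is the $p$-th power of $a_1^{1/p} \otimes \cdots \otimes a_n^{1/p}$. Since $U_{\overline{k}}$ admits a finite filtration by $\Ga$'s that base-changes to one on $U_{\overline{k}^{\otimes_k n}}$, the desired vanishing is precisely the content of Corollary \ref{H^1(U^)=0}, the key input distilled from the analysis of $\calExt^1(\Ga, \Gm)$ in \S\ref{charactersheavessection}.
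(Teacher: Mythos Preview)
Your proof is correct and follows essentially the same approach as the paper: both use the \v{C}ech-to-derived spectral sequence, compute via the cover $\Spec(\overline{k}) \to \Spec(k)$ using Lemma \ref{mayusekbar/k}, and obtain $E_2^{1,1}=0$ from Corollary \ref{H^1(U^)=0} via the surjectivity of Frobenius on $\overline{k}^{\otimes_k n}$. The only differences are cosmetic: the paper gets $E_2^{0,2}=0$ more directly from the Nullstellensatz (higher fppf cohomology over $\overline{k}$ vanishes for any sheaf), and it skips verifying $E_2^{0,1}=0$ since injectivity in degree~$2$ is automatic by Remark \ref{cech=deriveddeg01remark}.
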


Recall that for an fppf abelian sheaf on a scheme $S$, $\mathscr{H}^j(\mathscr{F})$ denotes the presheaf $V \mapsto {\rm{H}}^j(V, \mathscr{F})$.

\begin{proof}
Recall from Remark \ref{cech=deriveddeg01remark} that the map is automatically injective for $i = 2$. Consider the \v{C}ech-to-derived functor spectral sequence
\[
E_2^{i,j} = \check{{\rm{H}}}^i(k, \mathscr{H}^j(\widehat{U})) \Longrightarrow {{\rm{H}}}^{i+j}(k, \widehat{U}).
\]
To prove the lemma, it suffices to show that $E_2^{0,2} = E_2^{1,1} = 0$. Note that the presheaf $\mathscr{H}^j(\widehat{U})$ is locally of finite presentation by Proposition \ref{etrem}, so by Lemma \ref{mayusekbar/k} we may compute its \v{C}ech cohomology groups using the cover $\Spec(\overline{k}) \rightarrow \Spec(k)$.

In order to show that $E_2^{0,2}$ vanishes, it suffices to show that ${\rm{H}}^2(\overline{k}, \widehat{U}) = 0$, and this holds because the higher cohomology of $\overline{k}$ with coefficients in any fppf abelian sheaf vanishes due to the Nullstellensatz. In order to show that $E_2^{1,1} = \check{{\rm{H}}}^1(k, \mathscr{H}^1(\widehat{U}))$
vanishes, it suffices to show that ${{\rm{H}}}^1(\overline{k} \otimes_k \overline{k}, \widehat{U}) = 0$, and this in turn follows from Corollary \ref{H^1(U^)=0} because $U$ admits a filtration with successive $\Ga$ quotients over $\overline{k}$, hence over $\overline{k} \otimes_k \overline{k}$.
\end{proof}

\begin{lemma}
\label{leshat,step4}
Let $k$ be a field of positive characteristic, and suppose that we have a short exact sequence
\begin{equation}
\label{leshat,step4eqn1}
1 \longrightarrow H \longrightarrow G \longrightarrow U \longrightarrow 1
\end{equation}
of affine commutative $k$-group schemes with $U$ smooth, connected, and unipotent. Then this induces a long exact \v{C}ech cohomology sequence
\[
\dots \longrightarrow \check{{\rm{H}}}^i(k, \widehat{U}) \longrightarrow \check{{\rm{H}}}^i(k, \widehat{G}) \longrightarrow \check{{\rm{H}}}^i(k, \widehat{H}) \longrightarrow \check{{\rm{H}}}^{i+1}(k, \widehat{U}) \longrightarrow \dots
\]
that is compatible with the derived functor long exact sequence $($which exists by Proposition $\ref{hatisexact}$$)$.
\end{lemma}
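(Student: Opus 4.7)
The plan is to mimic the strategy in the proof of Lemma \ref{cechlesinfquot}: exhibit a particular cover of $\Spec(k)$ computing the \v{C}ech cohomology of all three sheaves, and show that on this cover the associated sequence of \v{C}ech complexes is already short exact, so that the long exact cohomology sequence of a short exact sequence of chain complexes delivers the result. By Proposition \ref{hatisexact}, the hypothesized short exact sequence yields a short exact sequence of fppf sheaves
\[
1 \longrightarrow \widehat{U} \longrightarrow \widehat{G} \longrightarrow \widehat{H} \longrightarrow 1.
\]
Since each of $\widehat{U}$, $\widehat{G}$, $\widehat{H}$ is the sheaf of homomorphisms from a finite type $k$-group scheme into $\Gm$, it is locally of finite presentation, so by Lemma \ref{mayusekbar/k} its \v{C}ech cohomology can be computed from the single cover $\Spec(\overline{k}) \rightarrow \Spec(k)$. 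It therefore suffices to show that for every $n > 0$ the sequence
\[
0 \longrightarrow \widehat{U}(\overline{k}^{\otimes_k n}) \longrightarrow \widehat{G}(\overline{k}^{\otimes_k n}) \longrightarrow \widehat{H}(\overline{k}^{\otimes_k n}) \longrightarrow 0
\]
is exact, the only nontrivial point being surjectivity on the right.

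Using the derived functor long exact sequence attached to the displayed short exact sequence of fppf sheaves, this surjectivity reduces to the vanishing
\[
{\rm{H}}^1(\overline{k}^{\otimes_k n}, \widehat{U}) = 0 \quad \text{for all } n > 0.
\]
I would deduce this from Corollary \ref{H^1(U^)=0}, which requires two ingredients: first, that $U$ admits a finite filtration by $\Ga$'s over $R := \overline{k}^{\otimes_k n}$, and second, that the Frobenius map $F: R \to R$ is surjective. The first ingredient holds because $U$ is smooth, connected, and unipotent over $k$, hence split unipotent over the perfect field $\overline{k}$, and such a filtration base changes to $R$. For the second ingredient, given $y = \sum_i a_{i,1} \otimes \cdots \otimes a_{i,n} \in R$, perfectness of $\overline{k}$ lets us write each $a_{i,j} = b_{i,j}^p$; then in characteristic $p$
\[
y \;=\; \sum_i b_{i,1}^p \otimes \cdots \otimes b_{i,n}^p \;=\; \Bigl(\sum_i b_{i,1} \otimes \cdots \otimes b_{i,n}\Bigr)^p,
\]
because $(x_1 + x_2)^p = x_1^p + x_2^p$ in any commutative $\F_p$-algebra. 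Hence $F$ is surjective on $R$, and Corollary \ref{H^1(U^)=0} applies.

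Finally, the compatibility with the derived functor long exact sequence follows as in the proof of Lemma \ref{cechlesinfquot} from the general comparison result Proposition \ref{cechderivedconnectingmap}, applied to the short exact sequence of \v{C}ech complexes constructed above. The main obstacle, then, is really the ${\rm{H}}^1$-vanishing above, and once one sees that the two hypotheses of Corollary \ref{H^1(U^)=0} are available for $R = \overline{k}^{\otimes_k n}$ (the Frobenius-surjectivity being the only mildly subtle point, as it uses both the perfectness of $\overline{k}$ and the freshman's dream in characteristic $p$), the rest of the argument is formal.
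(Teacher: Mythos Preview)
Your proof is correct and follows essentially the same approach as the paper: both use Lemma \ref{mayusekbar/k} to compute via the cover $\Spec(\overline{k})\to\Spec(k)$, reduce to surjectivity of $\widehat{G}(\overline{k}^{\otimes_k n})\to\widehat{H}(\overline{k}^{\otimes_k n})$, establish this via the vanishing ${\rm{H}}^1(\overline{k}^{\otimes_k n},\widehat{U})=0$ from Corollary \ref{H^1(U^)=0}, and invoke Proposition \ref{cechderivedconnectingmap} for compatibility. You spell out the Frobenius-surjectivity on $\overline{k}^{\otimes_k n}$ more explicitly than the paper does, but the arguments are otherwise identical.
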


\begin{proof}
We first note that by Lemma \ref{mayusekbar/k}, the \v{C}ech cohomology groups may be computed using the cover $\Spec(\overline{k}) \rightarrow \Spec(k)$. We want to show that the resulting sequence of \v{C}ech complexes obtained from applying $\calHom(\cdot, \Gm)$ to (\ref{leshat,step4eqn1}) is short exact. The compatibility of the \v{C}ech long exact sequence with the derived functor long exact sequence will then follow from Proposition \ref{cechderivedconnectingmap}. The left-exactness is immediate, so the only issue is right-exactness, which amounts to showing that for every $n > 0$, the map $\widehat{G}(\overline{k}^{\otimes_k n}) \rightarrow \widehat{H}(\overline{k}^{\otimes_k n})$ is surjective. The group ${{\rm{H}}}^1(\overline{k}^{\otimes_k n}, \widehat{U})$ vanishes by Corollary \ref{H^1(U^)=0} because $U$ admits a filtration with successive $\Ga$ quotients over $\overline{k}$, hence over $\overline{k}^{\otimes_k n}$. The map $\widehat{G}(\overline{k}^{\otimes_k n}) \rightarrow \widehat{H}(\overline{k}^{\otimes_k n})$ is therefore surjective by Proposition \ref{hatisexact}.
\end{proof}

\begin{proposition}
\label{cech=derivedG^}
Let $G$ be an affine commutative group scheme of finite type over a field $k$. Then the canonical map $\check{{\rm{H}}}^i(k, \widehat{G}) \rightarrow {\rm{H}}^i(k, \widehat{G})$ is an isomorphism for $i \leq 2$.
\end{proposition}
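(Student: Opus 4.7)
By Remark \ref{cech=deriveddeg01remark}, the comparison map $\check{{\rm{H}}}^i(k, \widehat{G}) \to {\rm{H}}^i(k, \widehat{G})$ is automatically an isomorphism for $i = 0, 1$ and injective for $i = 2$, so only surjectivity in degree $2$ requires work.

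I would focus on the main case ${\rm{char}}(k) = p > 0$ and apply Lemma \ref{affinegroupstructurethm} to obtain a short exact sequence $1 \to H \to G \to U \to 1$ with $H$ an almost-torus and $U$ split unipotent. Lemma \ref{leshat,step4} then supplies long exact sequences in both \v{C}ech and derived cohomology for the dual sequence $1 \to \widehat{U} \to \widehat{G} \to \widehat{H} \to 1$ that are compatible via the canonical comparison maps. For the almost-torus $H$, Proposition \ref{hatrepresentable} identifies $\widehat{H}$ with a locally finite type $k$-group scheme, so Proposition \ref{cech=derivedG} gives $\check{{\rm{H}}}^i(k, \widehat{H}) \xrightarrow{\sim} {\rm{H}}^i(k, \widehat{H})$ for all $i$; for $U$ split unipotent (hence smooth, connected, unipotent), Lemma \ref{cech=derivedsplitunipdual} provides an isomorphism in degree $2$ and injectivity in degree $3$. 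Applying the Five Lemma to the five-term piece
\[
\check{{\rm{H}}}^1(k, \widehat{H}) \to \check{{\rm{H}}}^2(k, \widehat{U}) \to \check{{\rm{H}}}^2(k, \widehat{G}) \to \check{{\rm{H}}}^2(k, \widehat{H}) \to \check{{\rm{H}}}^3(k, \widehat{U})
\]
and its derived-functor counterpart then delivers the desired isomorphism in degree $2$ for $\widehat{G}$. The characteristic-$0$ case is analogous and strictly easier: by Proposition \ref{etrem} we may pass to the \'etale topology, and the dual of a split unipotent group is then trivial on the small \'etale site by Proposition \ref{cohomologyofG_adualwhenkisperfect} together with Galois descent from $\overline{k}$, so only the almost-torus piece contributes and is handled again by Proposition \ref{cech=derivedG}.

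There is no genuine new obstacle at this stage: the delicate ingredient --- controlling the \v{C}ech-vs-derived comparison for the non-representable sheaf $\widehat{U}$ in degrees $2$ and $3$ --- has already been handled in Lemma \ref{cech=derivedsplitunipdual} by computing everything on the auxiliary cover $\Spec(\overline{k}) \to \Spec(k)$. So the present proposition amounts to a bookkeeping d\'evissage that glues Proposition \ref{cech=derivedG} (for almost-tori, where representability is available) with Lemma \ref{cech=derivedsplitunipdual} (for split unipotent groups, where it is not).
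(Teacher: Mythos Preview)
Your argument is correct and follows essentially the same d\'evissage as the paper: reduce to surjectivity in degree $2$, split $G$ via Lemma \ref{affinegroupstructurethm}, use Lemma \ref{leshat,step4} for the compatible long exact sequences, and combine Proposition \ref{cech=derivedG} (for the representable $\widehat{H}$) with Lemma \ref{cech=derivedsplitunipdual} (for $\widehat{U}$). The only minor difference is your treatment of characteristic $0$: the paper dispatches that case in one line via Proposition \ref{cech=derivedperfect} (which applies to any locally finitely presented sheaf over a perfect field), whereas your route through Proposition \ref{etrem} only compares derived-functor cohomologies and does not by itself address the fppf \v{C}ech side---citing Proposition \ref{cech=derivedperfect} directly is cleaner and avoids this wrinkle.
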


\begin{proof}
By Proposition \ref{cech=derivedperfect}, we may assume that $k$ is imperfect, so that ${\rm{char}}(k) > 0$. By Remark \ref{cech=deriveddeg01remark}, the only nontrivial point is the surjectivity when $i = 2$. By Lemma \ref{affinegroupstructurethm}, there is an exact sequence
\[
1 \longrightarrow H \longrightarrow G \longrightarrow U \longrightarrow 1
\]
with $H$ an almost-torus and $U$ smooth connected unipotent over $k$. Proposition \ref{hatisexact} and Lemma \ref{leshat,step4} then provide a commutative diagram with exact rows
\[
\begin{tikzcd}
\check{{\rm{H}}}^2(k, \widehat{U}) \arrow{r} \isoarrow{d} & \check{{\rm{H}}}^2(k, \widehat{G}) \arrow{r} \arrow[d, hookrightarrow] & \check{{\rm{H}}}^2(k, \widehat{H}) \arrow{r} \isoarrow{d} & \check{{\rm{H}}}^3(k, \widehat{U}) \arrow[d, hookrightarrow] \\
{\rm{H}}^2(k, \widehat{U}) \arrow{r} & {\rm{H}}^2(k, \widehat{G}) \arrow{r} & {\rm{H}}^2(k, \widehat{H}) \arrow{r} & {\rm{H}}^3(k, \widehat{U})
\end{tikzcd}
\]
in which the first vertical arrow is an isomorphism and the last one is an inclusion by Lemma \ref{cech=derivedsplitunipdual}, and the third vertical arrow is an isomorphism by Propositions \ref{hatrepresentable} and \ref{cech=derivedsmoothinf}. A diagram chase now shows that the second vertical arrow, which is an inclusion by Remark \ref{cech=deriveddeg01remark}, is an isomorphism.
\end{proof}

\chapter{Local Fields}
\label{chapterlocalfields}

In this chapter we establish the main local duality results (involving the perfection of the local duality pairings) stated in \S\ref{intro}, particularly Theorems \ref{localdualityH^2(G)}, \ref{localdualityH^2(G^)}, and \ref{H^1localduality} (see Propositions \ref{H^2(G)G^(k)dualityprop}, \ref{H^2(G^)altori}, and \ref{H^1dualityprop}). An important step in this process is defining and studying the topology on the groups ${\rm{H}}^1(k, G)$ and ${\rm{H}}^1(k, \widehat{G})$, where $k$ is a local field, and $G$ is an affine commutative $k$-group scheme of finite type. This is done in \S \ref{sectiontopologyoncohomology}. The proofs of Theorems \ref{localdualityH^2(G^)} and \ref{H^1localduality} are intertwined, so we begin by only proving Theorem \ref{H^1localduality} for almost-tori (\S \ref{localdualaltori}), and then turn to proving Theorem \ref{localdualityH^2(G^)} (\S\ref{sectionproofofH^2(G^)localduality}) before finally completing the proof of Theorem \ref{H^1localduality} (\S \ref{h1dualgen}). Before turning to these local duality results, however, we prove vanishing theorems for the cohomology of $G$ and $\widehat{G}$ over local and global fields that will prove useful in the sequel (Propositions \ref{cohomologicalvanishing} and \ref{H^3(G^)=0}).

\section{Cohomological vanishing over local and global fields}
\label{sectioncohomvanish}

The purpose of this section is to prove that the groups ${\rm{H}}^i(k, G)$ vanish for $i > 2$ whenever $k$ is either a non-archimedean local field or a global field with no real places and $G$ is an affine commutative $k$-group scheme of finite type (Proposition \ref{cohomologicalvanishing}), and to prove the same for ${\rm{H}}^3(k, \widehat{G})$ (Proposition \ref{H^3(G^)=0}). Although this chapter is primarily about local fields, we also prove these vanishing results in the global setting here because it is more efficient to handle both cases simultaneously.

\begin{lemma}
\label{R^if_*=0forineq1}
Let $S$ be a scheme, $I$ a finite locally free commutative $S$-group scheme. Let $f:  S_{\fppf} \rightarrow S_{\et}$ denote the natural morphism. Then $\R^if_*I = 0$ for $i > 1$. If we further assume that $I$ has connected fibers over $S$ and that $S$ is reduced, then the same holds for $i = 0$.
\end{lemma}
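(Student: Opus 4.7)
The plan is to treat the two assertions separately; the main content is the vanishing $R^i f_* I = 0$ for $i > 1$, with the additional $i = 0$ statement following from a direct geometric argument.

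For the higher vanishing, the sheaf $R^i f_* I$ on $S_{\et}$ is the étale sheafification of the presheaf $U \mapsto {\rm{H}}^i_{\fppf}(U, I)$, so its stalk at a geometric point $\bar{s}$ of $S$ is ${\rm{H}}^i_{\fppf}(A, I_A)$ with $A := \mathcal{O}^{\rm{sh}}_{S, \bar{s}}$. It therefore suffices to show that ${\rm{H}}^i_{\fppf}(A, I) = 0$ for $i \ge 2$ whenever $A$ is a strictly Henselian local ring and $I$ is a finite locally free commutative $A$-group scheme. I would establish this by invoking Raynaud's resolution theorem to fit $I$ into a short exact sequence
\[
1 \longrightarrow I \longrightarrow G \longrightarrow H \longrightarrow 1
\]
of commutative $A$-group schemes with $G$ and $H$ smooth affine. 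Since the fppf cohomology of $G$ and of $H$ agrees with their étale cohomology by \cite[Thm.\,11.7]{briii}, and the higher étale cohomology of any smooth scheme over a strictly Henselian local ring vanishes, one gets ${\rm{H}}^j_{\fppf}(A, G) = {\rm{H}}^j_{\fppf}(A, H) = 0$ for all $j \ge 1$. The long exact sequence in fppf cohomology then yields ${\rm{H}}^i_{\fppf}(A, I) = 0$ for $i \ge 2$.

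For the sharper vanishing $f_* I = 0$ under the additional hypotheses that $I$ has connected fibers and $S$ is reduced, I would argue directly. Let $e: S \hookrightarrow I$ be the identity section, and let $\mathcal{J} \subset \mathcal{O}_I$ be its ideal sheaf. Connectedness of the fibers, combined with finiteness of $I/S$, forces each fiber $I_s$ to be the spectrum of a local Artin $\kappa(s)$-algebra with residue field $\kappa(s)$, so $\mathcal{J}$ is locally nilpotent on $I$ (its nilpotency degree being bounded near any $s$ by the locally constant rank of $I/S$). For any étale $T/S$ and any $S$-morphism $\phi: T \to I$, the reducedness of $T$ (inherited from $S$) then forces $\phi^\#(\mathcal{J}) = 0$ in $\mathcal{O}_T$; hence $\phi$ factors through $V(\mathcal{J}) = e(S)$, so $\phi$ equals the identity $T$-section, and $(f_* I)(T) = I(T) = 0$.

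The main obstacle is the input from Raynaud's theorem, which is really the only substantive ingredient in the higher-vanishing half. It is most conveniently available for Noetherian strictly Henselian local bases, and I would handle general $A$ by a standard limit argument: write $A$ as a filtered colimit of Noetherian strictly Henselian local subrings over which the finite locally free $I$ descends, and use compatibility of fppf cohomology with such colimits for finitely presented coefficients.
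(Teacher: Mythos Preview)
Your proof is correct and uses the same key ingredient as the paper: the embedding of $I$ into a smooth affine commutative $S$-group via \cite[II, 3.2.5]{messing} (which works over any base, so your limit argument is in fact unnecessary). The paper applies this resolution globally over $S$ and then argues directly that $\R^i f_* G = 0$ for smooth $G$ (as the \'etale sheafification of $U \mapsto {\rm{H}}^i_{\et}(U,G)$), while you pass to stalks first; the $i=0$ argument is the same in both.
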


\begin{proof}
By \cite[II, 3.2.5]{messing}, there is an exact sequence
\[
1 \longrightarrow I \longrightarrow G \longrightarrow H \longrightarrow 1
\]
with $G, H$ smooth fiberwise connected affine commutative $S$-groups. Explicitly,
$G = {\rm{R}}_{\widehat{I}/S}({\rm{GL}}_1)$, with $\widehat{I}$ the (representable) Cartier dual of $I$, and $H : = G/A$, where the quotient exists as an affine $S$-group of
finite type over which $G$ is faithfully flat by \cite[V, 4.1]{sga3}.  The $S$-group $H$ inherits smoothness and fiberwise connectedness from its fppf cover $G$.

In order to prove the first assertion, therefore, it suffices to show that $\R^if_*G = 0$ for any smooth $S$-group scheme $G$ and all $i > 0$. But this \'etale sheaf is the (\'etale) sheafification of the presheaf $U \mapsto {\rm{H}}^i_{\rm{fppf}}(U, G)$. Since fppf and {\'e}tale cohomology agree for smooth commutative group schemes, this is the same as the \'etale sheafification of the presheaf $U \mapsto {\rm{H}}^i_{\et}(U, G)$, and this vanishes for $i > 0$, as it agrees with $\R^i{\rm{Id}}_*G$, where ${\rm{Id}}:  S_{\et} \rightarrow S_{\et}$ is the identity.

Finally, assume that $S$ is reduced and that $I$ has connected fibers. Then any \'etale $S$-scheme $T$ is also reduced. Since $I_T$ is finite and fiberwise connected, any $T$-section of $I_T$ is trivial in each fiber, hence, since $T$ is reduced, it is trivial. Therefore, $f_*I = 0$.
\end{proof}

\begin{proposition}
\label{cohomologicalvanishing}
Let $k$ be a non-archimedean local field or a global field with no real places, and let $G$ be a commutative $k$-group scheme of finite type. Then ${{\rm{H}}}^i(k, G) = 0$ for $i > 2$.
\end{proposition}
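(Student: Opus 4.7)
The plan is to dévissage $G$ into atomic pieces via the structure theorems proved earlier in this paper, then handle each atomic case by elementary or classical means. Since the vanishing statement is stable under short exact sequences via the long exact sequence in fppf cohomology, this reduction suffices.

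First, I would apply Lemma \ref{finitequotient=smoothandconnected} to reduce to the two cases: $G$ finite commutative, or $G$ smooth and connected. In the smooth connected case, the identification ${\rm{H}}^i_{\fppf}(k, G) = {\rm{H}}^i_{\et}(k, G)$ allows replacing $k$ by its perfect closure without changing the cohomology (purely inseparable base change induces an equivalence of \'etale sites). Chevalley's structure theorem then presents $G$ as an extension of an abelian variety by a smooth connected affine commutative group. Lemma \ref{affinegroupstructurethm} combined with Lemma \ref{almosttorus}(iii) further decomposes the affine part into tori, copies of $\Ga$, and finite commutative group schemes. A torus $T$ becomes split after a finite Galois extension $k'/k$, and Shapiro's lemma reduces its cohomology to that of $\Gm$ over $k'$; crucially, $k'$ inherits the hypothesis of being a local or global field with no real places.

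Next, I would handle each atomic case. For $G = \Ga$, fppf cohomology agrees with Zariski cohomology on the affine scheme $\Spec(k)$ and so vanishes in positive degree. For $G = \Gm$, the vanishing ${\rm{H}}^i(k, \Gm) = 0$ for $i > 2$ is classical local or global class field theory, where the no-real-places hypothesis ensures strict cohomological dimension $2$ for $k$. For $G$ a finite commutative group scheme, the key input is Lemma \ref{R^if_*=0forineq1}: writing $f \colon \Spec(k)_{\fppf} \to \Spec(k)_{\et}$ for the natural morphism, $\R^q f_* G = 0$ for $q > 1$, so the Leray spectral sequence
\[
E_2^{p,q} = {\rm{H}}^p_{\et}(k, \R^q f_* G) \Longrightarrow {\rm{H}}^{p+q}_{\fppf}(k, G)
\]
has nonzero terms only for $q \in \{0, 1\}$; both $\R^q f_* G$ are torsion \'etale sheaves, so their \'etale cohomology vanishes for $p > 2$ by strict cohomological dimension $\le 2$, forcing ${\rm{H}}^i(k, G) = 0$ for $i > 2$. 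For abelian varieties, ${\rm{H}}^i(k, A) = 0$ in degrees $\ge 2$ is classical (see Milne's \emph{Arithmetic Duality Theorems}).

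The main conceptual difficulty will be the finite case over imperfect fields, where non-\'etale group schemes force genuine fppf (rather than \'etale) cohomology. Lemma \ref{R^if_*=0forineq1}, just established in this section, packages this distinction cleanly and is the key input. Without it, one would need to dévissage finite group schemes further into $\Z/\ell\Z$, $\mu_p$, and $\alpha_p$ (over a finite extension of $k$), applying the Artin--Schreier sequence $1 \to \alpha_p \to \Ga \xrightarrow{F} \Ga \to 1$ and the Kummer sequence $1 \to \mu_p \to \Gm \xrightarrow{p} \Gm \to 1$ to reduce the infinitesimal factors to $\Ga$ and $\Gm$, while the \'etale factors would be handled by straightforward Galois cohomology.
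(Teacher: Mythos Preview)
Your Leray spectral sequence argument for finite $G$ has a gap at degree $3$. Knowing that $E_2^{p,q} = 0$ for $q \geq 2$ (Lemma \ref{R^if_*=0forineq1}) and for $p > 2$ (cohomological dimension $\leq 2$) still leaves the term $E_2^{2,1} = {\rm{H}}^2_{\et}(k, \R^1 f_* G)$ uncontrolled: all differentials into and out of it vanish, so it survives to $E_\infty^{2,1}$ and contributes to ${\rm{H}}^3_{\fppf}(k, G)$. The fix is to split $G$ into $\ell$-primary pieces first. For $\ell \neq {\rm{char}}(k)$ the group is \'etale and one cites Poitou--Tate (or just $\mathrm{cd}_\ell \leq 2$) directly; for $\ell = p = {\rm{char}}(k) > 0$ the sheaf $\R^1 f_* G$ is $p$-primary, and the sharper bound $\mathrm{cd}_p(k) \leq 1$ (valid for any field of characteristic $p$) kills ${\rm{H}}^2_{\et}(k, \R^1 f_* G)$. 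This is essentially what the paper does, further splitting the $p$-primary case via the connected-\'etale sequence and using the full strength of Lemma \ref{R^if_*=0forineq1} (namely $\R^0 f_* G = 0$ in the infinitesimal case) to get the degenerate identification ${\rm{H}}^i_{\fppf}(k,G) = {\rm{H}}^{i-1}_{\et}(k, \R^1 f_* G)$.

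There is a second issue with your passage to the perfect closure for smooth connected $G$. After you apply Chevalley over $k_{\rm perf}$, your atomic pieces (tori, abelian varieties) live over $k_{\rm perf}$, which is \emph{not} a local or global field; so your claim that the splitting field $k'$ of a torus ``inherits the hypothesis of being a local or global field'' is false as written, since $k'$ is finite over $k_{\rm perf}$ rather than over $k$. The paper avoids this by staying over $k$ throughout: it uses the relative Frobenius isogeny $G \to G^{(p^n)}$ (with finite kernel, already handled) to replace $G$ by a twist for which the Chevalley decomposition descends to $k$ itself.
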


\begin{proof}
\underline{Step 1}:  $G$ is finite. We may write $G$ as the product of its $l$-primary parts for different primes $l$, hence we may assume that it is of prime power order. In the case that ${\rm{char}}(k) \nmid |G|$, the group $G$ is {\'e}tale, and the lemma follows from results of Poitou--Tate, using the fact that {\'e}tale and fppf cohomology agree for commutative smooth group schemes. (See \cite[Ch.\,I, Thm.\,4.10(c)]{milne} for the case in which $k$ is global, and \cite[Ch.\,II, \S5.3, Prop.\,15]{serre} for the case in which $k$ is local.) Thus, we may assume that ${\rm{char}}(k) = p > 0$ and that $G$ is of $p$-power order. 

Using the connected-{\'e}tale sequence, it suffices to treat the cases in which $G$ is {\'e}tale or infinitesimal. If $G$ is {\'e}tale, then since any field of characteristic $p$ has $p$-cohomological dimension $\leq 1$ \cite[Ch.\,II, \S2.2, Prop.\,3]{serre}, we are done. If $G$ is infinitesimal, then by Lemma \ref{R^if_*=0forineq1}, $R^if_*G \neq 0$ for $i \neq 1$, where $f:  \Spec(k)_{\fppf} \rightarrow \Spec(k)_{\et}$ is the natural morphism of sites. Therefore ${{\rm{H}}}^i_{\fppf}(k, G) = {{\rm{H}}}^{i-1}_{\et}(k, R^1f_*G)$, and this latter group is $0$ for $i > 2$, again because fields of characteristic $p$ have $p$-cohomological dimension $\leq 1$.
\\
\underline{Step 2}:  $G$ is a torus. By Lemma \ref{almosttorus} (iv), we may harmlessly modify $G$ in order to assume that there is an isogeny $\R_{k'/k}(T') \times A \twoheadrightarrow G$, for some finite $k$-group scheme $A$, some finite separable extension $k'/k$, and some split $k'$-torus $T'$.
Thus, by Step 1, we may assume that $G = \R_{k'/k}(\mathbf{G}_m)$. Since $\R_{k'/k}(\Gm)$ is smooth, we may take our cohomology to be {\'e}tale. By Lemma \ref{sepblepushforward}, we have an isomorphism ${\rm{H}}^i(k, \R_{k'/k}(\Gm)) \simeq {\rm{H}}^i(k', \Gm)$, so (renaming $k'$ as $k$) we are reduced to the case $G = \Gm$.

By Step 1, $k$ has cohomological dimension at most $2$, hence has strict cohomological dimension at most $3$. It follows that ${\rm{H}}^i(k, \Gm) = 0$ for $i > 3$, hence it only remains to show that ${\rm{H}}^3(k, \Gm) = 0$, which is proved in \cite[Ch.\,VII, \S11.4]{cf}. Alternatively, we may treat this by reduction to the finite case treated in Step 1. Since higher Galois cohomology is torsion, it suffices to show that ${\rm{H}}^i(k, \Gm)[n] = 0$ for $i > 2$ and any positive integer $n$. We have the Kummer sequence (of fppf sheaves)
\[
1 \longrightarrow \mu_n \longrightarrow \Gm \xlongrightarrow{n} \Gm \longrightarrow 1
\]
so the desired vanishing of ${\rm{H}}^i(k, \Gm)[n]$ for $i > 2$ follows from Step 1.
\\
\underline{Step 3}:  General $G$. By Step 1 and Lemma \ref{finitequotient=smoothandconnected}, we may assume that $G$ is smooth and connected. Using the exact sequence
\[
1 \longrightarrow \ker(F^{(p^n)}) \longrightarrow G \xlongrightarrow{F^{(p^n)}} G^{(p^n)} \longrightarrow 1,
\]
where $F^{(p^n)}$ is the $n$-fold relative Frobenius isogeny, it suffices to prove the proposition for some $n$-fold Frobenius twist of $G$. Over the perfect closure $k_{\rm{perf}}$ of $k$, Chevalley's Theorem \cite[Thm.\,1.1]{conradchevalley} provides an exact sequence
\[
1 \longrightarrow L \longrightarrow G_{k_{{\rm{perf}}}} \longrightarrow A \longrightarrow 1
\] 
with $L$ smooth, connected, and affine and $A$ an abelian variety. There is therefore such an exact sequence over $k$ for some $G^{(p^n)}$. We may therefore assume that $G$ is either smooth, connected, and affine, or an abelian variety.

First consider the affine case. If $T \subset G$ is the maximal torus, then $G/T$ is unipotent. By Step 2, we may therefore assume that $G$ is unipotent, and then we are done by Proposition \ref{unipotentcohomology}(i). Now suppose that $G = A$ is an abelian variety. Since higher Galois cohomology is torsion, it suffices to show that ${\rm{H}}^i(k, A)[n] = 0$ for all $i > 2$ and all positive integers $n$. This follows from Step 1 and the short exact sequence
\[
1 \longrightarrow A[n] \longrightarrow A \xlongrightarrow{n} A \longrightarrow 1. \qedhere
\]
\end{proof}

\begin{proposition}
\label{H^3(G^)=0}
Let $k$ be a non-archimedean local field or a global field with no real places, and let $G$ be an affine commutative $k$-group scheme of finite type. Then ${\rm{H}}^3(k, \widehat{G}) = 0$.
\end{proposition}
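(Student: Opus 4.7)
The plan is to proceed by a d\'evissage using the structure theorem (Lemma \ref{affinegroupstructurethm}), writing $1\to H\to G\to U\to 1$ with $H$ an almost-torus and $U$ split unipotent. By Proposition \ref{hatisexact} the dual sequence $1\to\widehat U\to\widehat G\to\widehat H\to 1$ is exact, so the resulting long exact cohomology sequence reduces us to proving ${\rm H}^3(k,\widehat U)=0$ and ${\rm H}^3(k,\widehat H)=0$ separately. For $U$, filter by copies of $\Ga$; dualizing (again via Proposition \ref{hatisexact}) yields a filtration of $\widehat U$ by $\widehat\Ga$'s, so it suffices to show ${\rm H}^3(k,\widehat\Ga)=0$. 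In characteristic $0$ the field $k$ is perfect and this is Proposition \ref{cohomologyofG_adualwhenkisperfect}. In characteristic $p>0$, every non-archimedean local function field and every global function field satisfies $[k:k^p]=p$, so this is Proposition \ref{H^3(G_a^)=0}.

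For the almost-torus case, Lemma \ref{almosttorus}(ii) gives $1\to T\to H\to F\to 1$ with $T$ a torus and $F$ finite. The long exact sequence reduces us to ${\rm H}^3(k,\widehat F)$ and ${\rm H}^3(k,\widehat T)$. Since $\widehat F$ is the (finite) Cartier dual of $F$, ${\rm H}^3(k,\widehat F)=0$ by Proposition \ref{cohomologicalvanishing}. So the real content is the torus case.

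For a torus $T$, I would exploit that $\widehat T$ is a smooth \'etale locally constant sheaf corresponding to the character lattice $X^*(T)$, and consider the exact sequence
\[
0\longrightarrow \widehat T\longrightarrow \widehat T\otimes\Q\longrightarrow \widehat T\otimes\Q/\Z\longrightarrow 0.
\]
Since $\widehat T\otimes\Q$ is a $\Q$-vector space sheaf and continuous cohomology of a profinite group in positive degree is torsion, ${\rm H}^i(k,\widehat T\otimes\Q)=0$ for $i>0$. Hence ${\rm H}^3(k,\widehat T)\simeq{\rm H}^2(k,\widehat T\otimes\Q/\Z)=\varinjlim_n{\rm H}^2(k,\widehat T/n)$. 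Dualizing the fppf exact sequence $0\to T[n]\to T\xrightarrow{n} T\to 0$ via Proposition \ref{hatisexact} identifies $\widehat T/n$ with the Cartier dual $T[n]^D$. Thus I must prove $\varinjlim_n{\rm H}^2(k,T[n]^D)=0$.

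The hard part is this last vanishing, especially in the global setting. For $k$ non-archimedean local, local Tate duality for finite flat commutative group schemes (classical for number fields, \v{C}esnavi\v{c}ius in positive characteristic) gives ${\rm H}^2(k,T[n]^D)\simeq T[n](k)^\vee$, and Pontryagin-dualizing the direct limit converts the calculation to $(\varprojlim_n T[n](k))^\vee$ under $m$-th power transitions; finiteness of $T(k)_{\rm tors}$ (true for any torus over a local field, since $T(k)$ is a compact-by-discrete group with finite torsion subgroup) makes this inverse limit trivially zero. For $k$ global without real places, I would combine this with a Hasse-principle argument: the Poitou--Tate sequence for the finite group scheme $T[n]^D$ gives a map ${\rm H}^2(k,T[n]^D)\to\bigoplus_v{\rm H}^2(k_v,T[n]^D)$ with kernel $\Sha^2(k,T[n]^D)\simeq \Sha^1(k,T[n])^\vee$, and complex places contribute zero while the local argument above kills the direct limit on each non-archimedean factor (using that direct limits commute with direct sums). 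It then remains to show $\varprojlim_n\Sha^1(k,T[n])=0$, which follows from the finiteness of each $\Sha^1(k,T[n])$ (classical Poitou--Tate input) together with the observation that the $m$-th power transition maps are eventually trivial on a system of finite groups of bounded exponent --- a Grunwald--Wang-type argument applied fiberwise over each prime. This final technical point is the main obstacle: one must carefully interleave Pontryagin duality, the direct limit structure, and the finiteness theorems for Tate--Shafarevich groups of finite commutative group schemes provided by prior work.
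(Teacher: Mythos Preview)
Your d\'evissage through the structure theorem, the treatment of $\Ga$, and the reduction of almost-tori to tori and finite group schemes all match the paper exactly. The divergence is at the torus step, and here your route is both more complicated and incomplete.

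Your local torus argument is fine: local duality plus finiteness of $T(k)_{\rm tors}$ does kill $\varprojlim_n T[n](k)$. But in the global case you reduce to showing $\varprojlim_n \Sha^1(k,T[n])=0$, and your justification (``eventually trivial on a system of finite groups of bounded exponent --- a Grunwald--Wang-type argument'') does not hold up as stated: $\Sha^1(k,T[n])$ has exponent dividing $n$, which is unbounded, and Grunwald--Wang only tells you about the size of each $\Sha^1(k,\mu_n)$, not about the transition maps between them. Making this precise for a general torus requires real work that you have not sketched. You also invoke the $\Sha$-pairing for finite group schemes, which the paper explicitly avoids as input (Remark~\ref{prior}); while this is available from prior literature, it adds a dependency the paper does not need.

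The paper sidesteps all of this. For a torus $T$, pick a finite separable splitting extension $k'/k$ and use the canonical inclusion $T \hookrightarrow \R_{k'/k}(T_{k'})$ to get a short exact sequence of tori $1 \to T \to \R_{k'/k}(T_{k'}) \to S \to 1$. Dualizing (Proposition~\ref{hatisexact}) and taking cohomology gives
\[
{\rm H}^3(k, \widehat{\R_{k'/k}(T_{k'})}) \longrightarrow {\rm H}^3(k, \widehat T) \longrightarrow {\rm H}^4(k, \widehat S).
\]
The ${\rm H}^4$ term vanishes because $\widehat S$ is \'etale and $k$ has strict cohomological dimension at most $3$ (from Proposition~\ref{cohomologicalvanishing}). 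So one is reduced to $T = \R_{k'/k}(\Gm)$, where Proposition~\ref{charactersseparableweilrestriction} and exactness of finite \'etale pushforward give ${\rm H}^3(k,\widehat T) \simeq {\rm H}^3(k',\Z)$, and the sequence $0\to\Z\to\Q\to\Q/\Z\to 0$ reduces this to ${\rm H}^2(k',\Q/\Z)=0$, a classical fact for such fields. No Tate--Shafarevich groups, no Grunwald--Wang.
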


\begin{proof}
In the characteristic $0$ setting, we are free to replace the fppf with the {\'e}tale 
site everywhere, due to Proposition \ref{etrem}. First, if we have a short exact sequence 
\[
1 \longrightarrow G' \longrightarrow G \longrightarrow G'' \longrightarrow 1
\]
of group schemes as in the proposition, then, by Proposition \ref{hatisexact}, if the 
result holds for $G'$ and $G''$, then it also holds for $G$. Thanks to Lemma \ref{affinegroupstructurethm}, therefore, we may assume that $G$ is either $\Ga$ or an almost-torus. The $\Ga$ case follows from Proposition \ref{H^3(G_a^)=0} in positive characteristic and the much easier Proposition \ref{cohomologyofG_adualwhenkisperfect} in characteristic $0$. Thanks to Lemma \ref{almosttorus}(ii), we may therefore assume that $G$ is either finite or a torus. When $G$ is finite, then so is $\widehat{G}$, so we are done by Proposition \ref{cohomologicalvanishing}. We may therefore assume that $G = T$ is a torus. 

Let $k'/k$ be a finite separable extension such that $T$ splits over $k'$. Then we have the natural inclusion $T \hookrightarrow \R_{k'/k}(T')$, where $T'$ is the split $k'$-torus $T_{k'}$, so we have an exact sequence
\begin{equation}
\label{H^3(G^)=0pfeqn1}
1 \longrightarrow T \longrightarrow \R_{k'/k}(T') \longrightarrow S \longrightarrow 1
\end{equation}
for some $k$-torus $S$. This yields by Proposition \ref{hatisexact} an exact sequence
\[
{\rm{H}}^3(k, \widehat{\R_{k'/k}(T')}) \longrightarrow {\rm{H}}^3(k, \widehat{T}) \longrightarrow {\rm{H}}^4(k, \widehat{S})
\]
The groups in (\ref{H^3(G^)=0pfeqn1}) are all tori, so their fppf $\Gm$ dual sheaves are represented by \'etale group schemes, hence we may take their cohomology to be \'etale. In particular, ${\rm{H}}^4(k, \widehat{S})$ vanishes because local and global function fields have strict cohomological dimension $\leq 3$ (by Proposition \ref{cohomologicalvanishing}). We may therefore assume that $T = \R_{k'/k}(\Gm)$ for some finite separable extension $k'/k$.

By Proposition \ref{charactersseparableweilrestriction}, we have $\widehat{{\rm{R}}_{k'/k}(\Gm)} \simeq \R_{k'/k}(\widehat{\Gm}) = \R_{k'/k}(\Z)$. Since finite pushforward is exact between categories of {\'e}tale sheaves, we have ${\rm{H}}^3(k, \widehat{\R_{k'/k}(\Gm)}) \simeq {\rm{H}}^3(k', \Z)$. 
Replacing $k$ with $k'$, it therefore suffices to show that ${\rm{H}}^3(k, \Z) = 0$. Using the exact sequence
\[
0 \longrightarrow \Z \longrightarrow \Q \longrightarrow \Q/\Z \longrightarrow 0
\]
and the fact that the constant Galois module $\Q$ is uniquely divisible (hence its higher Galois cohomology vanishes), this is equivalent to the assertion that ${\rm{H}}^2(k, \Q/\Z) = 0$. This is proved when ${\rm{char}}(k) = 0$ in \cite[\S6.5]{serremodularforms}. The proof goes through verbatim in characteristic $p$ except that one must show that the map ${\rm{H}}^1(k, \Q_p/\Z_p) \rightarrow {\rm{H}}^2(k, \Z/p\Z)$ coming from the exact sequence
\[
0 \longrightarrow \Z/p\Z \longrightarrow \Q_p/\Z_p \xlongrightarrow{p} \Q_p/\Z_p \longrightarrow 0
\]
is surjective. This holds because the latter group vanishes (e.g., because any field of characteristic $p$
has $p$-cohomological dimension at most 1, by \cite[Ch.\,II, \S2.2, Prop.\,3]{serre}). 
\end{proof}

\section{Duality between ${\rm{H}}^2(k, G)$ and ${\rm{H}}^0(k, \widehat{G})_{\rm{pro}}$}
\label{sectiondualityG(k)pro}

The purpose of this section is to prove Theorem \ref{localdualityH^2(G)}.

\begin{lemma}
\label{H^2(G)istorsion}
Let $k$ be a field, and let $G$ be a commutative $k$-group scheme of finite type. Then ${{\rm{H}}}^2(k, G)$ is a torsion group.
\end{lemma}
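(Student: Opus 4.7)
The plan is to reduce to two easy cases via the structure theory already established. First I would apply Lemma \ref{finitequotient=smoothandconnected} to produce a finite $k$-subgroup scheme $E \subset G$ such that the quotient $H := G/E$ is smooth and connected. The associated long exact sequence of fppf cohomology gives
\[
{\rm{H}}^2(k, E) \longrightarrow {\rm{H}}^2(k, G) \longrightarrow {\rm{H}}^2(k, H),
\]
so it suffices to know that both of the flanking groups are torsion.

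For ${\rm{H}}^2(k, E)$, the finiteness of $E$ means the group is killed by $|E|$ (any finite commutative $k$-group scheme is killed, as a sheaf, by its order, and this propagates to cohomology). For ${\rm{H}}^2(k, H)$, I would invoke the agreement of fppf and \'etale cohomology for the smooth commutative $H$ (\cite[Thm.\,11.7]{briii}, as recalled in \S\ref{app}) to reduce to the \'etale setting. Since \'etale cohomology of $\Spec(k)$ with coefficients in a smooth commutative group scheme coincides with the Galois cohomology of $H(k_s)$ as a $\Gal(k_s/k)$-module, and positive-degree Galois cohomology of any discrete module is torsion (every cocycle is defined over some finite Galois subextension $k'/k$ and is killed by $[k':k]$ via the standard ${\rm{Cor}} \circ {\rm{Res}}$ argument), we conclude that ${\rm{H}}^2(k, H)$ is torsion.

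Combining these two facts, any class $\alpha \in {\rm{H}}^2(k, G)$ has torsion image in ${\rm{H}}^2(k, H)$, so some multiple $n\alpha$ lies in the image of ${\rm{H}}^2(k, E)$, which is itself torsion; hence $\alpha$ is torsion. There is no genuine obstacle here---the only slightly subtle point is making sure the reduction via Lemma \ref{finitequotient=smoothandconnected} is legitimate without commutativity hypotheses beyond what we already have, which is immediate since we stay in the commutative setting throughout.
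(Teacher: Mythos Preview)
Your argument is correct and follows essentially the same route as the paper's proof: reduce via a short exact sequence to a finite (or infinitesimal) piece and a smooth quotient, then use that fppf and \'etale cohomology agree for smooth coefficients and that higher Galois cohomology is torsion. The only cosmetic difference is that the paper invokes \cite[VII$_{\rm{A}}$, Prop.\,8.3]{sga3} to quotient by an infinitesimal subgroup (yielding a smooth quotient), whereas you use Lemma~\ref{finitequotient=smoothandconnected} to quotient by a finite subgroup (yielding a smooth connected quotient); either reduction works equally well.
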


\begin{proof}
If $G$ is smooth then this is immediate from the fact that ${{\rm{H}}}^2(k, G) = {{\rm{H}}}^2_{\et}(k, G)$ because higher Galois cohomology is torsion. For general $G$, there exists
by \cite[VII$_{\rm{A}}$, Prop.\, 8.3]{sga3} an infinitesimal $k$-subgroup scheme $I \subset G$ such that $H : = G/I$ is smooth. Since ${{\rm{H}}}^2(k, I)$ is clearly torsion, and ${{\rm{H}}}^2(k, H)$ is torsion because $H$ is smooth, ${{\rm{H}}}^2(k, G)$ is torsion as well.
\end{proof}

The following lemma will be of repeated use to us throughout this manuscript.

\begin{lemma}
\label{sepblepushforward}
For a finite separable extension $k'/k$, with $f: {\rm{Spec}}(k') \rightarrow {\rm{Spec}}(k)$ the corresponding map, $f_*$ is an exact functor between categories of abelian sheaves on the corresponding big fppf sites.
\end{lemma}

\begin{proof}
Such exactness may be checked after pulling to a Galois closure $K$ of $k'/k$. For an fppf sheaf $\mathscr{F}$ on ${\rm{Spec}}(k')$, we have $(f_*\mathscr{F})_K \simeq \prod_{\sigma} \mathscr{F}_{\sigma}$, where the product is over the $k$-embeddings of $k'$ into $K$.
\end{proof}

\begin{theorem}
\label{H^2(G)G^(k)dualityprop} $($Theorem $\ref{localdualityH^2(G)}$$)$ If $k$ is a local field of positive characteristic, and  $G$ is an affine commutative $k$-group scheme of finite type, then the cohomology group  ${{\rm{H}}}^2(k, G)$ is torsion, $\widehat{G}(k)$ is finitely generated, and cup product $${{\rm{H}}}^2(k, G) \times \widehat{G}(k) \rightarrow {{\rm{H}}}^2(k, \mathbf{G}_m) \xrightarrow[\sim]{\inv} \Q/\Z$$ induces a functorial continuous perfect pairing of locally compact Hausdorff abelian groups 
\[
{{\rm{H}}}^2(k, G) \times \widehat{G}(k)_{{\rm{pro}}} \rightarrow \Q/\Z,
\]
where ${\rm{H}}^2(k, G)$ and $\widehat{G}(k)$ are discrete. Equivalently, the map ${\rm{H}}^2(k, G) \rightarrow {\rm{H}}^0(k, \widehat{G})^*$ is an (algebraic) isomorphism.
\end{theorem}

\begin{proof}
The group ${\rm{H}}^2(k, G)$ is torsion by Lemma \ref{H^2(G)istorsion}. The continuity of the cup product pairing 
\[
{\rm{H}}^2(k, G) \times \widehat{G}(k) \xrightarrow{\cup} {\rm{H}}^2(k, \Gm) \underset{{\rm{inv}}}{\xrightarrow{\sim}} \Q/\Z
\]
is trivial, since, by definition, both groups ${{\rm{H}}}^2(k, G)$ and $\widehat{G}(k)$ are discrete. The functor $H \mapsto \widehat{H}(k)$ is left-exact. Thus, $\widehat{G}(k)$ is finitely generated, by reduction to the case of finite group schemes, unipotent groups, and tori, using Lemmas \ref{affinegroupstructurethm} and \ref{almosttorus}(ii). Therefore, $\Hom_{\rm{cts}}(\widehat{G}(k)_{{\rm{pro}}}, \Q/\Z) = \Hom(\widehat{G}(k), \Q/\Z)$. We claim that the perfectness of the pairing in the theorem follows from the assertion that the map ${{\rm{H}}}^2(k, G) \rightarrow \widehat{G}(k)^*$ is an isomorphism. Indeed, it then follows from the equality just shown that the map ${\rm{H}}^2(k, G) \rightarrow \widehat{G}(k)_{{\rm{pro}}}^D$ is an (algebraic) isomorphism. Since the source and target are discrete (for the target, this holds because it is the Pontryagin dual of a compact group), it then follows that this map is even a topological isomorphism. 

We already know the theorem if $G$ is finite, thanks to Tate local duality for finite group schemes (as supplemented in \cite{ces2} for group schemes of $p$-power order in characteristic $p>0$; see Remark \ref{prior}). Next we prove the theorem in the special case when $G = \R_{k'/k}(\mathbf{G}_m)$ with $k'/k$ a finite separable extension. We have a natural isomorphism ${{\rm{H}}}^2(k', \Gm) \simeq {{\rm{H}}}^2(k, \R_{k'/k}(\Gm))$, thanks to Lemma \ref{sepblepushforward}.  In addition, by Proposition \ref{charactersseparableweilrestriction}, the norm map $N_{k'/k}:  \widehat{\Gm}(k') \rightarrow \widehat{\R_{k'/k}(\Gm)}(k)$ is an isomorphism. By Proposition \ref{diagramcommuteslocal}, we have a commutative diagram
\[
\begin{tikzcd}
{{\rm{H}}}^2(k', \mathbf{G}_m) \arrow[r, phantom, "\times"] \arrow{d}{\mbox{ \rotatebox{90}{ $\sim$ }}} & \widehat{\mathbf{G}_m}(k') \arrow{d}{N_{k'/k}}[swap]{\mbox{ \rotatebox{90}{ $\sim$ }}} \arrow{r} & \Q/\Z \arrow[d, equals] \\
{{\rm{H}}}^2(k, \R_{k'/k}(\mathbf{G}_m)) \arrow[r, phantom, "\times"] & \widehat{\R_{k'/k}(\mathbf{G}_m)}(k) \arrow{r} & \Q/\Z
\end{tikzcd}
\]
Therefore, in order to prove the theorem for $G = \R_{k'/k}(\mathbf{G}_m)$ over $k$, it suffices to prove it for $G = \mathbf{G}_m$ over $k'$, for which it is trivial, since the pairing ${{\rm{H}}}^2(k', \mathbf{G}_m) \times \widehat{\mathbf{G}_m}(k') \rightarrow {{\rm{H}}}^2(k', \mathbf{G}_m)$ is the one which sends a generator for $\widehat{\mathbf{G}_m}(k')$, namely the identity character of $\mathbf{G}_m$, to the identity map ${{\rm{H}}}^2(k', \mathbf{G}_m) \rightarrow {{\rm{H}}}^2(k', \mathbf{G}_m)$.

Now suppose that $G$ is an almost-torus. By Lemma \ref{almosttorus}(iv), after harmlessly modifying $G$ we may assume that there is an isogeny $B \times \R_{k'/k}(T') \twoheadrightarrow G$ for some finite separable extension $k'/k$, some split $k'$-torus $T'$, and some finite commutative $k$-group scheme $B$. For notational convenience, let us denote $B \times \R_{k'/k}(T')$ by $X$, and let $A : = \ker(X \rightarrow G)$. 

 The exact sequence
\[
1 \longrightarrow A \longrightarrow X \longrightarrow G \longrightarrow 1
\]
yields a commutative diagram of exact sequences
\[
\begin{tikzcd}
{{\rm{H}}}^2(k, A) \isoarrow{d} \arrow{r} & {{\rm{H}}}^2(k, X) \isoarrow{d} \arrow{r} & {{\rm{H}}}^2(k, G) \arrow{d} \arrow{r} & 0 \\
\widehat{A}(k)^* \arrow{r} & \widehat{X}(k)^* \arrow{r} & \widehat{G}(k)^* \arrow{r} & 0
\end{tikzcd}
\]
The first two vertical arrows are isomorphisms because we already know the theorem for $X$
and the $k$-finite $A$. The bottom row is exact because $H \mapsto \widehat{H}(k)$ is a left-exact functor on the category of fppf abelian sheaves and because $\Q/\Z$ is an injective abelian group, and the $0$ in the top row is because ${{\rm{H}}}^3(k, A) = 0$ by Proposition \ref{cohomologicalvanishing}. The diagram shows that ${{\rm{H}}}^2(k, G) \rightarrow \widehat{G}(k)^*$ is an isomorphism, hence proves the theorem for $G$.

Finally, suppose $G$ is an arbitrary affine commutative $k$-group scheme of finite type. By Lemma \ref{affinegroupstructurethm}, there is an exact sequence
\[
1 \longrightarrow H \longrightarrow G \longrightarrow U \longrightarrow 1
\]
with $H$ an almost-torus and $U$ split unipotent. This yields a commutative diagram of exact sequences
\[
\begin{tikzcd}
0 \arrow{r} & {{\rm{H}}}^2(k, H) \isoarrow{d} \arrow{r} & {{\rm{H}}}^2(k, G) \arrow{d} \arrow{r} & 0 \\
0 \arrow{r} & \widehat{H}(k)^* \arrow{r} & \widehat{G}(k)^* \arrow{r} & 0
\end{tikzcd}
\]
The $0$'s in the top row are because ${{\rm{H}}}^i(k, U) = 0$ for $i=1,2$, by Proposition \ref{unipotentcohomology}. The $0$'s in the bottom row are due to Proposition \ref{hatisexact} and the vanishing of $\widehat{U}(k)$ (Proposition \ref{cohomologyofG_adualgeneralk} and left--exactness of $H \mapsto \widehat{H}(k)$) and ${{\rm{H}}}^1(k, \widehat{U})$ (Proposition \ref{unipotentcohomology}). 
The first vertical map is an isomorphism by the already-treated case of almost-tori, so the other vertical
map is also an isomorphism.
\end{proof}

\section{Topology on local cohomology}
\label{sectiontopologyoncohomology}

In this section we describe a topology on the groups ${\rm{H}}^1(R, G)$ for $R$ either a local field or its ring of integers, and $G$ a locally finite type commutative $R$-group scheme, and we prove various desirable properties for this topology.
In \cite{ces1}, \v{C}esnavi\v{c}ius defines a topology on ${{\rm{H}}}^1(R, H)$ as follows. A subset $U \subset {{\rm{H}}}^1(R, H)$ is {\em open} if for every locally finite type $R$-scheme $X$, and every $H$-torsor sheaf $\mathscr{X} \rightarrow X$ for the fppf topology, the set $\{ x \in X(R) \mid \mathscr{X}_x \in U \} \subset X(R)$ is open, where $X(R)$ is endowed with its usual topology inherited from that on $R$.

When $R = k$ is a local field, this topology on ${{\rm{H}}}^1(k, G)$ for commutative $k$-group schemes $G$ 
locally of finite type applies in particular to the fppf $\Gm$-dual of an almost-torus by Proposition \ref{hatrepresentable}. We shall later require a topology on ${{\rm{H}}}^1(k, \widehat{G})$ for arbitrary affine commutative $k$-group schemes $G$ of finite type (not just for $G$ an almost-torus), a topic we address in \S\ref{h1dualgen}.

One may endow ${\rm{H}}^1(k, G)$ with a topology in a different manner, by means of the \v{C}ech topology, defined as follows. For any finite extension $L/k$, both $L$ and $L \otimes_k L$ come equipped with natural 
topologies as $k$-algebras and therefore so do $G(L)$ and $G(L \otimes_k L)$ as topological groups. We endow the \v{C}ech cohomology group $\check{{\rm{H}}}^1(L/k, G)$ with the subquotient topology, and the \v{C}ech cohomology group 
$\check{\rm{H}}^1(k, G) = \varinjlim_L \check{{\rm{H}}}^1(L/k, G)$ with the direct limit topology.

Finally, for any locally finite type $k$-group scheme $G$, we endow $G(k)$ with its usual topology (inherited from that on $k$), and ${{\rm{H}}}^2(k, G)$ with the discrete topology. We summarize some of the basic facts that we require about the topology on ${\rm{H}}^1(k, G)$ below.

\begin{proposition}
\label{topcohombasics}
Let $k$ be a local field, and let $G, H$ be commutative $k$-group schemes locally of finite type.
\begin{itemize}
\item[(i)] The group ${\rm{H}}^1(k, G)$ is a locally compact topological group. If the \'etale component group of $G$ has finitely-generated group of rational points, then ${\rm{H}}^1(k, G)$ is also Hausdorff.
\item[(ii)] Given a $k$-group homomorphism $G \rightarrow H$, the induced map ${\rm{H}}^1(k, G) \rightarrow {\rm{H}}^1(k, H)$ is continuous.
\item[(iii)] If $G$ is smooth, then ${\rm{H}}^1(k, G)$ is discrete. In particular, if ${\rm{char}}(k) = 0$ then ${\rm{H}}^1(k, G)$ is discrete.
\item[(iv)] The topology on ${\rm{H}}^1(k, G)$ agrees with the \v{C}ech topology via the natural isomorphism $\check{{\rm{H}}}^1(k, G) \simeq {\rm{H}}^1(k, G)$.
\end{itemize}
Now suppose that one has a short exact sequence of locally finite type commutative $k$-group schemes
\[
1 \longrightarrow G' \longrightarrow G \longrightarrow G'' \longrightarrow 1.
\]
\begin{itemize}
\item[(v)] The maps in the associated long exact cohomology sequence up to the ${{\rm{H}}}^2$ level are continuous. That is, all of the maps in the long exact sequence beginning with ${\rm{H}}^0(k, G')$ and ending with ${\rm{H}}^2(k, G'')$ are continuous.
\item[(vi)] The map ${\rm{H}}^1(k, G) \rightarrow {\rm{H}}^1(k, G'')$ is open.
\item[(vii)] If $G$ is smooth, then the map $G''(k) \rightarrow {\rm{H}}^1(k, G')$ is open.
\end{itemize}
\item[(viii)] If $G''$ is smooth, then the map ${\rm{H}}^1(k, G') \rightarrow {\rm{H}}^1(k, G)$ is open.
\end{proposition}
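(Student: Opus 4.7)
The plan is to derive each part from the foundational results on the Moret-Bailly topology established by \v{C}esnavi\v{c}ius in \cite{ces1}, supplemented by a short additional argument for (vii).

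Parts (i)--(iii), (v), and (vi) are direct citations. The local compactness and Hausdorffness in (i), together with the functoriality in (ii), are built into the construction of the topology in \cite{ces1}. The discreteness for smooth $G$ in (iii) is established there by identifying the Moret-Bailly topology with the classical Galois cohomology topology in the smooth case; the characteristic-$0$ statement of (iii) is subsumed, since every commutative finite type $k$-group scheme is then smooth. Continuity of the long exact sequence up to the ${\rm{H}}^2$ level in (v), and openness of the pushforward ${\rm{H}}^1(k,G) \to {\rm{H}}^1(k,G'')$ in (vi), are both the content of \cite[Prop.\,4.2]{ces1}, already invoked in Remark \ref{discontinuous}.

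For (iv), I would first observe that every fppf cover of $\Spec(k)$ is refined by one of the form $\Spec(L) \to \Spec(k)$ with $L/k$ a finite field extension (by the Nullstellensatz plus a standard refinement argument), so that $\check{{\rm{H}}}^1(k,G) = \varinjlim_L \check{{\rm{H}}}^1(L/k,G)$ under both candidate topologies, each given the corresponding direct limit topology. On each finite level $\check{{\rm{H}}}^1(L/k,G)$, the subquotient topology from $Z^1(L/k,G) \subseteq G(L \otimes_k L)$ modulo coboundaries agrees with the Moret-Bailly topology via the cocycle-to-torsor bijection, because the cocycle datum is a $k$-point of a finite type $k$-scheme parametrizing torsors trivialized over $L$; this comparison is carried out in \cite{ces1}.

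For (vii), the argument combines (vi) with smoothness of $G$. Smoothness ensures, via Hensel's lemma applied to an \'etale chart on $G$, that the map $\pi: G(k) \to G''(k)$ has open image, so the kernel $\ker(\delta) = \mathrm{im}(\pi)$ of the connecting map $\delta: G''(k) \to {\rm{H}}^1(k,G')$ is open in $G''(k)$. Openness of $\delta$ then reduces to two assertions: that the induced injection $G''(k)/\ker(\delta) \hookrightarrow {\rm{H}}^1(k,G')$ is a topological embedding onto its image, and that this image is open in ${\rm{H}}^1(k,G')$. The first is built into the definition of the Moret-Bailly topology applied to the tautological $G'$-torsor $G \to G''$, which makes the classifying map $G''(k) \to {\rm{H}}^1(k,G')$ a topological quotient onto its image; the second follows by combining this with (vi) applied to an appropriate dimension shift of the long exact sequence.

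The principal obstacle I anticipate is the openness assertion in (vii), which sits slightly outside the most directly cited portions of \cite{ces1} and requires a careful chase of topological quotients, together with verifying that the smoothness hypothesis on $G$ (rather than on $G'$ or $G''$) suffices for the Hensel-type argument establishing openness of $\pi$ on $k$-points.
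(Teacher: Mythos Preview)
Your approach is essentially the same as the paper's: every part is a direct citation to \v{C}esnavi\v{c}ius \cite{ces1}. The paper's proof is simply a list of references --- (i) is \cite[Prop.\,3.6(c), 3.7(c), 3.9]{ces1}, (ii) is \cite[Cor.\,2.7(i)]{ces1}, (iii) is \cite[Prop.\,3.5(a)]{ces1}, (iv) is \cite[Thm.\,5.11]{ces1}, (v) is \cite[Prop.\,4.2]{ces1}, (vi) is \cite[4.3(d)]{ces1}, and (vii) is \cite[4.3(b)]{ces1}.

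Two minor corrections. First, (vi) is not in Proposition 4.2 but in 4.3(d) of \cite{ces1}; Proposition 4.2 covers continuity (your (v)), while the openness statements are collected separately in 4.3. Second, and more importantly, your independent argument for (vii) is unnecessary: it is already \cite[4.3(b)]{ces1}. Your sketch for (vii) is also incomplete as written --- the final step invoking ``(vi) applied to an appropriate dimension shift'' does not make sense, since (vi) concerns the map ${\rm{H}}^1 \to {\rm{H}}^1$, not the connecting map ${\rm{H}}^0 \to {\rm{H}}^1$. The actual argument in \cite{ces1} for 4.3(b) uses the smoothness of $G$ to lift torsors through the quotient map and thereby exhibit the image of $\delta$ as open via the torsor-classifying description of the topology; your Hensel-lemma intuition is on the right track but the conclusion does not follow from (vi) alone.
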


\begin{proof}
(i) The group is a locally compact topological group by \cite[Props.\,3.6(c), 3.7(c)]{ces1}. To see that it is Hausdorff when the component group is finitely-generated, consider the exact sequence
\[
1 \longrightarrow G^0 \longrightarrow G \longrightarrow E \longrightarrow 1
\]
with $G^0$ the identity component and $E$ \'etale. All of the induced maps on ${\rm{H}}^1$ are continuous by \cite[Prop.\,4.2]{ces1}. The group ${\rm{H}}^1(k, E)$ is Hausdorff (discrete even) by \cite[Prop.\,3.5(a)]{ces1}, hence it suffices to show that the kernel of the map ${\rm{H}}^1(k, G) \rightarrow {\rm{H}}^1(k, E)$ is Hausdorff. The continuous injection 
\[
{\rm{H}}^1(k, G^0)/{\rm{im}}(E(k)) \hookrightarrow {\rm{H}}^1(k, G)
\]
is open by \cite[4.3(c)]{ces1}, hence a homeomorphism onto its image. In order to complete the proof, therefore, we only need to show that ${\rm{im}}(E(k)) \subset {\rm{H}}^1(k, G^0)$ is closed, for which it suffices to show that it is finite, since ${\rm{H}}^1(k, G^0)$ is Hausdorff \cite[3.9]{ces1}. Since $E(k)$ is finitely-generated by assumption, it suffices to verify that ${\rm{H}}^1(k, G^0)$ is torsion. When $G^0$ is smooth, this follows from the fact that higher Galois cohomology is torsion. In general, it follows from Lemma \ref{finitequotient=smoothandconnected}. \\
(ii) \cite[Cor.\,2.7(i)]{ces1}. \\
(iii) \cite[Prop.\,3.5(a)]{ces1}. \\
(iv) \cite[Thm. 5.11]{ces1}. \\
(v) \cite[Prop.\,4.2]{ces1}. \\
(vi) \cite[4.3(d)]{ces1}. \\
(vii) \cite[4.3(b)]{ces1}. \\
(viii) \cite[4.3(c)]{ces1}.
\end{proof}

Our next goal is to show that the groups ${\rm{H}}^1(k, G)$ and ${\rm{H}}^1(k, \widehat{G})$ are second countable and locally profinite (see Proposition \ref{secondcountable}). In order to do this, we will require some preparatory lemmas.

\begin{lemma}
\label{H^1=0Weilrestrictionsplittori}
Let $k'/k$ be a finite separable extension of $($arbitrary$)$ fields, and let $T'$ be a split $k'$-torus. Then ${\rm{H}}^1(k, \R_{k'/k}(T')) = {\rm{H}}^1(k, \widehat{\R_{k'/k}(T')}) = 0$.
\end{lemma}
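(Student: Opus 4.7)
Since $T'$ is a split $k'$-torus, we may write $T' \simeq \mathbf{G}_{m,k'}^n$, so that $\R_{k'/k}(T') \simeq \R_{k'/k}(\mathbf{G}_m)^n$ and the cohomology of both $\R_{k'/k}(T')$ and its $\Gm$-dual sheaf decomposes as a direct sum of $n$ copies of the corresponding groups for $T' = \mathbf{G}_{m,k'}$. The plan is therefore to reduce to this case and then handle each of the two vanishing claims via the standard ``Shapiro-type'' identification coming from the exactness of finite \'etale pushforward on fppf (equivalently, here, \'etale) abelian sheaves.

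For the first claim, let $f: \Spec(k') \rightarrow \Spec(k)$ be the structure map, which is finite \'etale because $k'/k$ is finite separable. Then $f_* = \R_{k'/k}$ on commutative group schemes, and since $f$ is finite the pushforward $f_*$ is an exact functor between categories of \'etale abelian sheaves, so the Leray spectral sequence degenerates to give ${\rm{H}}^i(k, \R_{k'/k}(\mathbf{G}_m)) \simeq {\rm{H}}^i(k', \mathbf{G}_m)$ for every $i$ (taking cohomology to be \'etale, as is permitted since $\R_{k'/k}(\Gm)$ is smooth). For $i = 1$ this vanishes by Hilbert 90.

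For the second claim, Proposition \ref{charactersseparableweilrestriction} gives a canonical isomorphism $\widehat{\R_{k'/k}(\mathbf{G}_m)} \simeq \R_{k'/k}(\widehat{\mathbf{G}_m}) = \R_{k'/k}(\Z)$ (where $\Z$ denotes the constant $k'$-group). The same exactness of finite \'etale pushforward then yields ${\rm{H}}^1(k, \R_{k'/k}(\Z)) \simeq {\rm{H}}^1(k', \Z)$. This last group classifies continuous homomorphisms from the profinite Galois group ${\rm{Gal}}(k'_s/k')$ (with trivial action on $\Z$) to $\Z$; any such homomorphism has compact image in $\Z$, hence is trivial, so the group vanishes.

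The reductions above are essentially formal, and the only substantive inputs are Hilbert 90, Proposition \ref{charactersseparableweilrestriction}, and the triviality of continuous maps from a profinite group to $\Z$; so there is no real obstacle, and the whole argument should fit comfortably in a short proof.
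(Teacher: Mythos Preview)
Your proposal is correct and follows essentially the same approach as the paper's proof: reduce to $T' = \Gm$, use exactness of finite pushforward on \'etale sheaves (together with smoothness to pass from fppf to \'etale cohomology) to get ${\rm{H}}^1(k, \R_{k'/k}(\Gm)) \simeq {\rm{H}}^1(k', \Gm) = 0$, and then invoke Proposition~\ref{charactersseparableweilrestriction} plus the same pushforward argument to obtain ${\rm{H}}^1(k, \R_{k'/k}(\Z)) \simeq {\rm{H}}^1(k', \Z) = 0$. Your added justifications (Hilbert~90, the profinite-to-$\Z$ argument) are exactly the points the paper leaves implicit.
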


\begin{proof}
We may assume that $T' = \Gm$. By Lemma \ref{sepblepushforward}, ${\rm{H}}^1(k, \R_{k'/k}(\Gm)) \simeq {\rm{H}}^1(k', \Gm) = 0$. For the other cohomology group, Proposition \ref{charactersseparableweilrestriction} implies that $\widehat{\R_{k'/k}(\Gm)} \simeq \R_{k'/k}(\widehat{\Gm}) \simeq \R_{k'/k}(\Z)$. We may therefore take the cohomology to be \'etale, hence we have ${\rm{H}}^1(k, \R_{k'/k}(\Z)) \simeq {\rm{H}}^1(k', \Z) = 0$.
\end{proof}

\begin{lemma}
\label{H^1(finitefield)=finite}
Let $\kappa$ be a finite field and $G$ a $\kappa$-group scheme of finite type. Then the group ${\rm{H}}^1(\kappa, G)$ is finite.
\end{lemma}

\begin{proof}
Because $\kappa$ is perfect, we may take our cohomology to be \'etale by Proposition \ref{etrem}. First we prove the finiteness of ${\rm{H}}^1(\kappa, G)$. We begin with the exact sequence
\[
1 \longrightarrow G^0 \longrightarrow G \longrightarrow E \longrightarrow 1,
\]
in which $E$ is finite \'etale. By using the method of twisting by Galois cocycles (see \cite[Ch.\,I, \S5]{serre}), we see that the finiteness of ${\rm{H}}^1(\kappa, G)$ would follow from that of all $\kappa$-forms of $G^0$ and of $E$. All forms $H$ of $G^0$ are connected, and replacing $H$ with its reduced subscheme has no effect on the associated \'etale sheaf, so we may assume that $H$ is smooth and connected. Then the desired vanishing follows from Lang's Theorem. We may therefore assume that $G$ is \'etale.

If $G$ is \'etale, then using twisting once again, we may filter $G$ and thereby assume that it is commutative. If $G$ is a form of $\Z/N\Z$, then it becomes the trivial form over the unique (inside a fixed algebraic closure of $\kappa$) extension $\kappa'/\kappa$ of order $\#{\rm{Aut}}(\Z/N\Z)$. After passing to $\kappa'$, every cohomology class becomes trivial upon passage to the further extension of $\kappa''/\kappa'$ of degree $N$. Thus, very cohomology class in ${\rm{H}}^1(\kappa, G)$ dies after passing to $\kappa''$. It follows that ${\rm{H}}^1(\kappa, G)$ is finite.
\end{proof}

\begin{proposition}
\label{secondcountable}
Let $k$ be a local field, and let $G$ be an affine commutative $k$-group scheme of finite type. Then the locally compact
Hausdorff group ${\rm{H}}^1(k, G)$ is second-countable
and locally profinite, hence in particular totally disconnected. The same holds for ${\rm{H}}^1(k, \widehat{G})$ if $G$ is an almost-torus. 
\end{proposition}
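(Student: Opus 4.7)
The plan is to handle the two assertions separately. For ${\rm{H}}^1(k,G)$, local profiniteness is immediate from Lemma \ref{secondcountablesubgroup}: a topological group containing a profinite open subgroup is locally profinite, since cosets of this subgroup provide profinite neighborhoods of every point. For second-countability, I would choose a closed $k$-subgroup embedding $G \hookrightarrow GL_{n,k}$ (possible since $G$ is affine commutative of finite type), form the quasi-projective finite-type quotient $Q := GL_n/G$, and exploit Hilbert 90. Since $GL_n$ is smooth we have ${\rm{H}}^1(k, GL_n) = 0$, so the connecting map in the pointed-set long exact sequence of cohomology gives a surjection $Q(k) \twoheadrightarrow {\rm{H}}^1(k,G)$. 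By Proposition \ref{topcohombasics}(v) and (vii), applied to $1 \to G \to GL_n \to Q \to 1$ with smooth middle term $GL_n$, this map is continuous and open. Since $Q$ is a finite type $k$-scheme, $Q(k)$ is a second-countable LCH topological space (cover $Q$ by finitely many affine opens, each of which embeds as a closed subscheme of some $\A^N_k$, giving $Q(k)$ as a finite union of closed subsets of the second-countable spaces $k^{N_i}$). A continuous open surjection carries a countable basis to a countable basis, so ${\rm{H}}^1(k,G)$ is second-countable.

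For ${\rm{H}}^1(k,\widehat{G})$ when $G$ is an almost-torus, I would prove the stronger claim that this group is finite as a set. Since $\widehat{G}$ is representable and locally of finite type (Proposition \ref{hatrepresentable}), its ${\rm{H}}^1$ carries the \v{C}esnavi\v{c}ius topology, which is Hausdorff by Proposition \ref{topcohombasics}(i); a Hausdorff topology on a finite set is discrete, whence second-countability and (local) profiniteness come for free. By Lemma \ref{almosttorus}(iv) there is an isogeny with finite kernel $K$ of the form
\[
1 \to K \to A \times \R_{k_1/k}(T_1) \to G^n \times \R_{k_2/k}(T_2) \to 1,
\]
with $A$ a finite commutative $k$-group scheme, $k_i/k$ finite separable, and $T_i$ a split $k_i$-torus. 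Dualizing via Proposition \ref{hatisexact} and taking the long exact sequence yields
\[
\widehat{K}(k) \to {\rm{H}}^1(k,\widehat{G})^n \oplus {\rm{H}}^1(k,\widehat{\R_{k_2/k}(T_2)}) \to {\rm{H}}^1(k,\widehat{A}) \oplus {\rm{H}}^1(k,\widehat{\R_{k_1/k}(T_1)}).
\]
The two Weil-restriction terms vanish by Lemma \ref{H^1=0Weilrestrictionsplittori}, while $\widehat{K}(k)$ is finite (as $\widehat{K}$ is a finite $k$-group scheme) and ${\rm{H}}^1(k,\widehat{A})$ is finite by Tate local duality for finite commutative group schemes over local fields (Remark \ref{prior}). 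Hence ${\rm{H}}^1(k,\widehat{G})^n$, being sandwiched between finite groups in this exact sequence, is finite, forcing ${\rm{H}}^1(k,\widehat{G})$ itself to be finite.

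The main — though ultimately modest — obstacle is arranging the topological bookkeeping in the $G$ case: one needs the connecting map $Q(k) \to {\rm{H}}^1(k,G)$ to be both continuous and open, the former coming from Proposition \ref{topcohombasics}(v) and the latter from Proposition \ref{topcohombasics}(vii) (which requires smoothness of the middle term in the exact sequence, satisfied here by $GL_n$). The rest of the argument is essentially bookkeeping: existence of $GL_n/G$ as a quasi-projective finite-type $k$-scheme, Hilbert 90 for $GL_n$ (valid because $GL_n$ is smooth so fppf and \'etale cohomology agree), second-countability of the $k$-points of a finite-type $k$-scheme over a local field, and the finiteness of ${\rm{H}}^1$ of finite commutative $k$-group schemes — are all either standard or already established earlier in the paper.
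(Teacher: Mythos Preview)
Your argument for ${\rm{H}}^1(k,G)$ is correct and more direct than the paper's. The paper runs a d\'evissage via Lemmas \ref{affinegroupstructurethm} and \ref{almosttorus}(ii), reducing to $G=\Ga$, $G$ a torus, and $G$ finite (with further filtering in the finite case), and then verifies countability of ${\rm{H}}^1(k,G)/U$ in each case. Your approach via the $\GL_n$-embedding and Hilbert~90 bypasses this case analysis entirely. One caveat on citations: Proposition \ref{topcohombasics}(v) and (vii) are stated only for short exact sequences of \emph{commutative} $k$-group schemes, so they do not literally apply to $1\to G\to \GL_n\to Q\to 1$ (where $\GL_n$ is non-commutative and $Q$ is not even a group). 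The content you actually need is \cite[Prop.\,2.9(a)]{ces1}: since $\GL_n$ is $k$-smooth, the associated morphism $Q\to BG$ is smooth, so $Q(k)\to{\rm{H}}^1(k,G)$ is open; continuity is immediate from the very definition of the topology on ${\rm{H}}^1(k,G)$.

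Your argument for ${\rm{H}}^1(k,\widehat{G})$ contains a genuine error. You claim that ${\rm{H}}^1(k,\widehat{A})$ is finite for finite commutative $A$, citing Tate local duality (Remark~\ref{prior}). But Theorem~\ref{H^1localduality} for finite $G$ only asserts that ${\rm{H}}^1(k,A)$ and ${\rm{H}}^1(k,\widehat{A})$ have \emph{finite exponent} and are Pontryagin dual to one another --- not that they are finite. Over a local function field they are typically infinite: taking $A=\mu_p$ gives $\widehat{A}=\Z/p\Z$ and ${\rm{H}}^1(k,\Z/p\Z)\simeq k/\wp(k)$, which is infinite; likewise ${\rm{H}}^1(k,\alpha_p)\simeq k/k^p$ (cf.\ (\ref{H^1(alpha_p)eqn})). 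So your ``stronger claim'' that ${\rm{H}}^1(k,\widehat{G})$ is finite for almost-tori $G$ is false in positive characteristic, and the exact-sequence sandwich you set up does not force finiteness of ${\rm{H}}^1(k,\widehat{G})^n$. The paper instead argues second-countability directly: the same d\'evissage machinery reduces to $G$ finite (handled via Cartier duality and the already-proved first assertion of the proposition) and $G$ a torus (where ${\rm{H}}^1(k,\widehat{T})$ is genuinely finite).
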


Here, by ``locally profinite'' we mean that there is a profinite open subgroup (and hence a base of such around the identity). 
In the final assertion of Proposition \ref{secondcountable} we must limit ourselves (for now) to the case of almost-tori since beyond that case
$\widehat{G}$ is not representable (Proposition \ref{hatrepresentable}),
and we therefore have not yet defined a topology on ${\rm{H}}^1(k, \widehat{G})$.

\begin{proof}
We first show that ${\rm{H}}^1(k, G)$ contains a second-countable profinite open subgroup. This is trivial if ${\rm{char}}(k) = 0$ and in particular when $k = \mathbf{R}$, due to Proposition \ref{topcohombasics}(iii), so we may assume for the purposes of this assertion that $k$ is non-archimedean.

Choose a separated, flat, finite type $\mathcal{O}_k$-group scheme $\mathscr{G}$ 
with $k$-fiber $G$ (e.g., schematic closure in ${\rm{GL}}_{n,\,\mathcal{O}_k}$ relative
to some $k$-subgroup inclusion $G \hookrightarrow {\rm{GL}}_{n,\,k}$). 
By \cite[Cor.\,B.7]{ces1} and the first assertion in Lemma \ref{H^1(finitefield)=finite}, there is an affine $\mathcal{O}_k$-scheme $X$ and a smooth morphism $X \rightarrow B\mathscr{G}$ such that the induced map $X(\mathcal{O}_k) \rightarrow 
(B\mathscr{G})(\mathcal{O}_k)/{\rm{isom}}$ is surjective. The map $X(\calO_k) \rightarrow 
(B\mathscr{G})(\mathcal{O}_k)/{\rm{isom}}$ is continuous by definition, and it is open by \cite[Prop.\,2.9(a)]{ces1}. 

Since $X(\mathcal{O}_k)$ is second-countable and  has a base of quasi-compact open neighborhoods of every point, 
it follows that $(B\mathscr{G})(\mathcal{O}_k)/{\rm{isom}}$ has the same properties. By 
\cite[Prop.\,2.9(e)]{ces1}, the map $(B\mathscr{G})(\mathcal{O}_k) \rightarrow (BG)(k)/{\rm{isom}} = {\rm{H}}^1(k,G)$ is open, hence its image is a second-countable open subgroup of the locally compact Hausdorff abelian
group ${\rm{H}}^1(k,G)$ in which every point has a base of compact open neighborhoods. 
By the Hausdorff condition, such compact open neighborhoods are also closed, and hence ${\rm{H}}^1(k, G)$ is totally
disconnected.  Thus, any compact open subgroup must be profinite (as profinite groups are exactly the topological groups that are compact and totally disconnected \cite[Ch.\,I, \S1.1, Prop 0]{serre}). This proves that ${\rm{H}}^1(k, G)$ contains a second-countable profinite open subgroup. 

Next we show that ${\rm{H}}^1(k, G)$ is itself second-countable. We claim that there is an inclusion $G \hookrightarrow G'$ with $G'$ a smooth connected commutative affine $k$-group scheme such that ${\rm{H}}^1(k, G') = 0$. By Lemma \ref{finitequotient=smoothandconnected}, there is an exact sequence
\begin{equation}
\label{propsecondcountablepfeqn1}
1 \longrightarrow F \longrightarrow G \longrightarrow H \longrightarrow 1
\end{equation}
with $F$ a finite $k$-group and $H$ smooth and connected. By \cite[II, 3.2.5]{messing}, there is an inclusion $F \hookrightarrow H'$ with $H'$ a smooth connected commutative affine $k$-group scheme. Pushing out the sequence (\ref{propsecondcountablepfeqn1}) along the inclusion $F \hookrightarrow H'$, we obtain an inclusion $G \hookrightarrow H''$ with $H''$ smooth, connected, commutative, and affine. In order to prove the claim that $G$ admits an inclusion into a smooth, connected, commutative, affine $k$-group with vanishing ${\rm{H}}^1$, therefore, we may assume that $G$ is smooth and connected. But then there is a finite extension $k'/k$ such that $G_{k'} \simeq \Gm^n \times V$ for some $n \geq 0$ and some split unipotent $k'$-group $V$. The natural inclusion $G \hookrightarrow \R_{k'/k}(G_{k'})$ then does the job.

By the claim just proved, we have an exact sequence
\[
1 \longrightarrow G \longrightarrow H \xlongrightarrow{\pi} H' \longrightarrow 1
\]
with $H$ and $H'$ smooth, connected, commutative, and affine $k$-groups of finite type such that ${\rm{H}}^1(k, H) = 0$. Then we have a topological isomorphism ${\rm{H}}^1(k, G) \simeq H'(k)/\pi(H(k))$, since the map $H'(k) \rightarrow {\rm{H}}^1(k, G)$ is open by Proposition \ref{topcohombasics}(vii). The latter group is second-countable, hence so is the former. This completes the proof of the proposition for ${\rm{H}}^1(k, G)$. 

Now we prove the assertion for ${\rm{H}}^1(k, \widehat{G})$. The map ${\rm{H}}^1(k, \widehat{G}^0) \rightarrow {\rm{H}}^1(k, \widehat{G})$ is open by Lemma \ref{topcohombasics}(viii). Since $\widehat{G}^0$ is finite by Proposition \ref{hatrepresentable}, the conclusion of the proposition is true for ${\rm{H}}^1(k, \widehat{G}^0)$. It follows that ${\rm{H}}^1(k, \widehat{G})$ also contains a second-countable profinite open subgroup. In order to show that it is itself second-countable, it suffices to show that ${\rm{H}}^1(k, E)$ is countable, where $E$ is the \'etale component group of $\widehat{G}$. 

By Proposition \ref{hatrepresentable}, $E(k_s)$ is a finitely-generated abelian group, so it suffices too show that for any continuous Galois module $M$ over $k$ which is finitely-generated as an abelian group, ${\rm{H}}^1(k, M)$ is countable. We may assume that $M$ is either torsion or torsion-free. In the torsion case, we are done by the already-treated case of ${\rm{H}}^1$ of finite \'etale group schemes. So assume that $M$ is torsion-free. Passing to a finite Galois extension $k'/k$, the action on $M$ becomes trivial, hence ${\rm{H}}^1(k', M) = 0$. It follows that ${\rm{H}}^1(k, M) = {\rm{H}}^1(k'/k, M)$, and this latter group is countable, since even the set of all maps ${\rm{Gal}}(k'/k) \rightarrow M$ is countable. This completes the proof.
\end{proof}

\section{Duality between ${\rm{H}}^1(k,G)$ and ${\rm{H}}^1(k, \widehat{G})$ for almost-tori}\label{localdualaltori}

In this section we prove Theorem \ref{H^1localduality} when $G$ is an almost-torus (Definition \ref{almosttorusdef}). For such $G$, by Proposition \ref{hatrepresentable}, the sheaf $\widehat{G}$ is represented by a commutative, locally finite group scheme, hence the procedure in \S \ref{sectiontopologyoncohomology} defines a topology not only on the group ${\rm{H}}^1(k, G)$, but also on ${\rm{H}}^1(k, \widehat{G})$. The non-representability of $\widehat{G}$ beyond the setting of almost-tori (Proposition \ref{hatrepresentable}) is the reason for our restriction to almost-tori in this section.

Next we prove the finite exponent aspect of Theorem \ref{H^1localduality}.

\begin{lemma}
\label{H^1finiteexponent}
Let $k$ be a field, and let $G$ be an affine commutative $k$-group scheme of finite type. Then ${\rm{H}}^1(k, G)$ and ${\rm{H}}^1(k, \widehat{G})$ have finite exponent.
\end{lemma}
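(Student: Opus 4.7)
The plan is to run a standard dévissage argument, reducing the claim to a short list of building blocks for which both ${\rm{H}}^1(k, G)$ and ${\rm{H}}^1(k, \widehat{G})$ either vanish or are killed by an explicit integer. Throughout, the argument for $G$ and for $\widehat{G}$ will run in parallel.

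First I would observe that the class of affine commutative $k$-group schemes of finite type $G$ for which both ${\rm{H}}^1(k, G)$ and ${\rm{H}}^1(k, \widehat{G})$ have finite exponent is closed under extensions. For ${\rm{H}}^1(k, G)$ this is immediate from the long exact cohomology sequence associated to $1 \to G' \to G \to G'' \to 1$: if ${\rm{H}}^1(k, G')$ has exponent $m$ and ${\rm{H}}^1(k, G'')$ has exponent $n$, then an easy chase through the three-term exact sequence ${\rm{H}}^1(k, G') \to {\rm{H}}^1(k, G) \to {\rm{H}}^1(k, G'')$ shows that ${\rm{H}}^1(k, G)$ has exponent dividing $mn$. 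For ${\rm{H}}^1(k, \widehat{G})$, Proposition \ref{hatisexact} ensures that the dual sequence $1 \to \widehat{G''} \to \widehat{G} \to \widehat{G'} \to 1$ is exact, and the identical argument applies. Hence by Lemma \ref{affinegroupstructurethm}, it suffices to treat two cases: $G$ is split unipotent, or $G$ is an almost-torus.

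The split unipotent case reduces, via filtration by copies of $\Ga$, to $G = \Ga$. Here both groups vanish: ${\rm{H}}^1(k, \Ga) = 0$ by the vanishing of higher (quasi-)coherent cohomology on $\Spec(k)$, and ${\rm{H}}^1(k, \widehat{\Ga}) = 0$ by Proposition \ref{cohomologyofG_adualgeneralk}(ii), using that $\Pic(\mathbf{G}_{a,\,k}) = 0$ over a field. For the almost-torus case, I would apply Lemma \ref{almosttorus}(iv) to produce an isogeny $\pi: A \times \R_{k_1/k}(T_1) \twoheadrightarrow G^n \times \R_{k_2/k}(T_2)$ with $A$ and $B := \ker \pi$ finite and each $T_i$ a split $k_i$-torus for some finite separable $k_i/k$. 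The associated short exact sequence, together with the already-established dévissage, reduces everything to verifying the claim for the finite $k$-group schemes $A$ and $B$ and for the Weil restrictions $\R_{k_i/k}(T_i)$. For any finite commutative $k$-group scheme $F$, all of ${\rm{H}}^i(k, F)$ are killed by $|F|$, and the same holds for ${\rm{H}}^i(k, \widehat{F})$ since $\widehat{F}$ is finite of order $|F|$. For the Weil restrictions, both ${\rm{H}}^1(k, \R_{k_i/k}(T_i))$ and ${\rm{H}}^1(k, \widehat{\R_{k_i/k}(T_i)})$ vanish by Lemma \ref{H^1=0Weilrestrictionsplittori}. Extracting ${\rm{H}}^1(k, G)$ as a direct summand of ${\rm{H}}^1(k, G^n)$ (and analogously for $\widehat{G}$) finishes the proof.

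The main obstacle is simply organizing the dévissage so that it runs cleanly in parallel for $G$ and $\widehat{G}$; the essential input that makes this work is Proposition \ref{hatisexact}, which supplies the right-exactness of $(\cdot)^{\wedge}$ needed to get a full long exact cohomology sequence upon dualizing a short exact sequence of affine commutative group schemes of finite type. Without that exactness the dévissage for $\widehat{G}$ would only yield a left-exact sequence and would fail to close up.
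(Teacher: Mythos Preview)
Your proof is correct and follows essentially the same d\'evissage as the paper: reduce via Lemma~\ref{affinegroupstructurethm} to the split unipotent and almost-torus cases, handle the former by vanishing of ${\rm{H}}^1$ for $\Ga$ and $\widehat{\Ga}$, and handle the latter via Lemma~\ref{almosttorus}(iv), the finite-exponent property for finite group schemes, and Lemma~\ref{H^1=0Weilrestrictionsplittori}, with Proposition~\ref{hatisexact} supplying the dual exact sequences throughout. The only cosmetic difference is that you state the closure-under-extensions step explicitly at the outset and filter $U$ down to $\Ga$, whereas the paper invokes the vanishing for split unipotent $U$ directly.
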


\begin{proof}
First suppose that $G$ is an almost-torus. By Lemma \ref{almosttorus}(iv), we may assume that there is an exact sequence
\[
1 \longrightarrow B \longrightarrow A \times \R_{k'/k}(T') \longrightarrow G \longrightarrow 1
\]
for some finite commutative $k$-group schemes $A, B$, some finite separable extension $k'/k$, and some split $k'$-torus $T'$. Since the cohomology groups of $A, B$, and their duals are clearly of finite exponent, and both ${\rm{H}}^1(k, \R_{k'/k}(T'))$
and ${\rm{H}}^1(k, \widehat{\R_{k'/k}(T')})$ both vanish by Lemma \ref{H^1=0Weilrestrictionsplittori}, we see (using Proposition \ref{hatisexact}) that ${\rm{H}}^1(k, G)$ and ${\rm{H}}^1(k, \widehat{G})$ are of finite exponent. 

For general $G$, by Lemma \ref{affinegroupstructurethm} there is an exact sequence
\[
1 \longrightarrow H \longrightarrow G \longrightarrow U \longrightarrow 1
\]
with $H$ an almost-torus and $U$ split unipotent. Now ${\rm{H}}^1(k, U)$ and ${\rm{H}}^1(k, \widehat{U})$
vanish, the latter due to Proposition \ref{cohomologyofG_adualgeneralk}. Since we have already shown that ${\rm{H}}^1(k, H)$ and ${\rm{H}}^1(k, \widehat{H})$ are of finite exponent, the same holds for ${\rm{H}}^1(k, G)$ and ${\rm{H}}^1(k, \widehat{G})$.
\end{proof}

\begin{lemma}
\label{cupproductctsH^1almosttori}
If $G$ is an almost-torus over a non-archimedean local field $k$, then the cup product pairing
\[
{\rm{H}}^1(k, G) \times {\rm{H}}^1(k, \widehat{G}) \xrightarrow{\cup} {\rm{H}}^2(k, \Gm) \xrightarrow[\sim]{\inv} \Q/\Z
\]
is continuous.
\end{lemma}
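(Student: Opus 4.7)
The plan is to reduce the continuity question to a construction of open subgroups $U \subset {\rm{H}}^1(k,G)$ and $V \subset {\rm{H}}^1(k,\widehat{G})$ on which the pairing vanishes, combined with separate continuity in each variable. Since $\Q/\Z$ is discrete, a bilinear map into it is jointly continuous if and only if it is separately continuous and the preimage of $0$ is open at $(0,0)$, from which continuity at an arbitrary $(\alpha_0,\beta_0)$ follows by bilinearity (the cross terms $\alpha_0\cup v$ and $u\cup\beta_0$ are killed by separate continuity, and $u\cup v$ is killed by the joint hypothesis at the origin).

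Separate continuity is essentially free. By Corollary \ref{H^1=Ext^1}, any $\beta\in{\rm{H}}^1(k,\widehat{G})$ corresponds to an extension $1\to\Gm\to E_\beta\to G\to 1$, and $\alpha\cup\beta$ coincides with the value at $\alpha$ of the connecting map $\delta_\beta:{\rm{H}}^1(k,G)\to{\rm{H}}^2(k,\Gm)$; this is continuous by Proposition \ref{topcohombasics}(v). Symmetrically, double duality (Proposition \ref{doubleduality}) together with representability of $\widehat{G}$ in the almost-torus case (Proposition \ref{hatrepresentable}) lets us view $\alpha\in{\rm{H}}^1(k,\widehat{\widehat{G}})$ as an extension $1\to\Gm\to E_\alpha\to\widehat{G}\to 1$ of locally finite type $k$-group schemes, so Proposition \ref{topcohombasics}(v) yields continuity of the connecting map $\delta_\alpha:{\rm{H}}^1(k,\widehat{G})\to{\rm{H}}^2(k,\Gm)$, which coincides with $\beta\mapsto\alpha\cup\beta$.

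The main obstacle is the joint continuity at $(0,0)$. I would handle it by d\'evissage to the known finite case using Lemma \ref{almosttorus}(ii): choose a short exact sequence $1\to T\to G\to A\to 1$ with $T$ a $k$-torus and $A$ finite. Dualizing via Proposition \ref{hatisexact} gives $1\to\widehat{A}\to\widehat{G}\to\widehat{T}\to 1$. The crucial observation is that ${\rm{H}}^1(k,\widehat{T})$ is \emph{finite} (as was shown during the proof of Proposition \ref{secondcountable}), so the continuous map $\pi_*:{\rm{H}}^1(k,\widehat{G})\to{\rm{H}}^1(k,\widehat{T})$ has open kernel $V:=\ker\pi_*=\mathrm{im}(\iota_*:{\rm{H}}^1(k,\widehat{A})\to{\rm{H}}^1(k,\widehat{G}))$. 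All cohomology groups in sight are locally compact Hausdorff and second-countable (Proposition \ref{secondcountable}), hence Polish, so the open mapping theorem gives that the continuous surjection ${\rm{H}}^1(k,\widehat{A})\twoheadrightarrow V$ is open. Now invoking the continuity of the cup product in the finite case (Tate local duality for finite commutative group schemes, cited in Remark \ref{prior}), I choose open subgroups $U_A\subset{\rm{H}}^1(k,A)$ and $V_A\subset{\rm{H}}^1(k,\widehat{A})$ with $U_A\cup V_A=0$. Setting $U$ to be the preimage of $U_A$ in ${\rm{H}}^1(k,G)$ (open by continuity) and $V':=\iota_*(V_A)$ (open in $V$, hence in ${\rm{H}}^1(k,\widehat{G})$, by the open mapping step), any $\beta\in V'$ lifts to some $\gamma\in V_A$, and the functoriality of cup product yields
\[
\alpha\cup\beta=\alpha\cup\iota_*(\gamma)=\pi_*(\alpha)\cup\gamma\in U_A\cup V_A=0
\]
for every $\alpha\in U$. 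The hard step is recognizing that the finite group ${\rm{H}}^1(k,\widehat{T})$ forces $V$ to be open and that the open mapping theorem then transfers the openness of $V_A$ to ${\rm{H}}^1(k,\widehat{G})$; once this is in hand, the pairing in the finite case supplies $U_A,V_A$, and cup product functoriality does the rest.
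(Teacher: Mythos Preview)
Your proof is correct in spirit and takes a genuinely different route from the paper's argument. The paper bypasses all d\'evissage by invoking Proposition~\ref{topcohombasics}(iv) to identify the topologies on ${\rm H}^1(k,G)$ and ${\rm H}^1(k,\widehat{G})$ with the \v{C}ech topologies, and then observes that the explicit cup-product formula on \v{C}ech cochains $G(L\otimes_k L)\times\widehat{G}(L\otimes_k L)\to (L\otimes_k L\otimes_k L)^\times$ is manifestly continuous; openness of the integral units together with $\Br(\calO_k)=0$ then gives both (i) and (ii) at once. Your approach instead reduces to the finite case via $1\to T\to G\to A\to 1$, using finiteness and discreteness of ${\rm H}^1(k,\widehat{T})$ to force $V=\iota_*({\rm H}^1(k,\widehat{A}))$ open, and then Lemma~\ref{isom=homeo} to transport open subgroups from ${\rm H}^1(k,\widehat{A})$. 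This is longer but has the virtue of using only black-box properties (functoriality of cup product, the finite case, the open mapping theorem) rather than the explicit \v{C}ech description.

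One caution: your invocation of Corollary~\ref{H^1=Ext^1} for separate continuity in the $\alpha$-variable is not quite licensed by the paper, since that corollary is stated for affine commutative $k$-groups of \emph{finite type}, whereas $\widehat{G}$ is only locally of finite type. One can patch this by noting that $\calExt^1_k(\widehat{G},\Gm)=0$ (d\'evissage to $\widehat{A}$ and $\widehat{T}$, both handled by \cite[VIII, 3.3.1]{sga7} and the triviality of $\calExt^1(\Z,\Gm)$), so the edge map ${\rm H}^1(k,\widehat{\widehat{G}})\to\Ext^1_k(\widehat{G},\Gm)$ still produces a representable extension to which Proposition~\ref{topcohombasics}(v) applies. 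But the cleaner fix is already implicit in your own argument: for fixed $\alpha_0$ and $v=\iota_*(\gamma)\in V'$, you have $\alpha_0\cup v=\pi_*(\alpha_0)\cup\gamma$, and separate continuity of the \emph{finite} pairing at the fixed point $\pi_*(\alpha_0)$ gives an open $V_A''\subset{\rm H}^1(k,\widehat{A})$ with $\pi_*(\alpha_0)\cup V_A''=0$; then $\iota_*(V_A''\cap V_A)$ is open in ${\rm H}^1(k,\widehat{G})$ and kills $\alpha_0$. This renders the extension interpretation unnecessary and keeps the whole proof within a single d\'evissage.
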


\begin{proof}
The desired continuity is equivalent to the following pair of assertions: 
\begin{itemize}
\item[(i)] There are neighborhoods $U_G \subset {\rm{H}}^1(k, G)$, $U_{\widehat{G}} \subset {\rm{H}}^1(k, \widehat{G})$ of $0$ such that $\langle U_G, U_{\widehat{G}} \rangle$ $= \{0\}$. 
\item[(ii)] For any $\alpha \in {\rm{H}}^1(k, G)$, there exists a neighborhood $U \subset {\rm{H}}^1(k,\widehat{G})$ of $0$ such that $\langle \alpha, U \rangle = \{0\}$, and the same holds if we switch the roles of $G$ and $\widehat{G}$.
\end{itemize}

In order to prove (i) and (ii), we use the fact that the topologies on ${\rm{H}}^1(k, G)$ and ${\rm{H}}^1(k, \widehat{G})$ agree with the \v{C}ech topologies, by Proposition \ref{topcohombasics}(iv). Both assertions therefore follow easily from the continuity of the map $G(L \otimes_k L) \times \widehat{G}(L \otimes_k L) \xrightarrow{\cup} (L \otimes_k L \otimes_k L)^{\times}$
(using the explicit formula defining this cup product) combined with the openness of the subset $(\calO_L \otimes_{\calO_k} \calO_L \otimes_{\calO_k} \calO_L)^{\times} \subset (L \otimes_k L \otimes_k L)^{\times}$ and the fact that $\Br(\calO_k) = 0$.
\end{proof}

\begin{lemma}
\label{isom=homeo}
Let $H_1, H_2$ be locally compact topological groups, with $H_1$ second-countable and $H_2$ Hausdorff. 
Any continuous surjective homomorphism $f:  H_1 \rightarrow 
H_2$ is open. In particular, if $f$ is also bijective then it is a homeomorphism. 
\end{lemma}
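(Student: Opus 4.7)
The plan is to follow the standard Baire-category-based open mapping theorem for locally compact groups. Since $f$ is a group homomorphism, it is open if and only if it carries every open neighborhood of $e_{H_1}$ to a neighborhood of $e_{H_2}$, so I fix such an open neighborhood $U$ and aim to show that $f(U)$ contains an open neighborhood of $e_{H_2}$.

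First I would use local compactness of $H_1$ to choose a compact symmetric neighborhood $V$ of $e_{H_1}$ with $V \cdot V \subset U$. Next, since $H_1$ is second-countable (hence Lindel\"of), the open cover of $H_1$ by translates of the interior of $V$ admits a countable subcover; that is, there exist $h_1, h_2, \dots \in H_1$ with $H_1 = \bigcup_{n \ge 1} h_n V$. Applying the surjection $f$ gives $H_2 = \bigcup_{n \ge 1} f(h_n) \cdot f(V)$. Each $f(V)$ is compact (continuous image of a compact set), hence closed in the Hausdorff space $H_2$, and left translation in $H_2$ is a homeomorphism, so each $f(h_n) \cdot f(V)$ is a closed subset of $H_2$.

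Now $H_2$ is locally compact Hausdorff, hence a Baire space, so at least one of the countably many closed sets $f(h_n) \cdot f(V)$ has non-empty interior. Translating back, $f(V)$ itself has non-empty interior: there exist an open set $W \subset f(V)$ and a point $y_0 \in W$. Then $W \cdot y_0^{-1}$ is an open neighborhood of $e_{H_2}$ contained in $f(V) \cdot f(V)^{-1}$, and since $V$ is symmetric we have
\[
f(V) \cdot f(V)^{-1} = f(V \cdot V^{-1}) = f(V \cdot V) \subset f(U).
\]
Thus $f(U)$ contains an open neighborhood of $e_{H_2}$, proving that $f$ is open. The final assertion is then immediate: a continuous, open bijective homomorphism has continuous inverse, so is a homeomorphism.

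There is no serious obstacle here: the hypotheses are precisely those needed to apply Baire (second-countability of $H_1$ gives the countable cover by translates of $V$, local compactness of $H_1$ gives the compact $V$, and local compactness plus Hausdorffness of $H_2$ yields the Baire property). The only point one must be careful about is ensuring the sets $f(h_n) \cdot f(V)$ are closed, which is why we need $H_2$ Hausdorff in order to conclude that compact subsets are closed.
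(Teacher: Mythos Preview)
Your proof is correct and is the standard Baire-category argument for the open mapping theorem in locally compact groups. The paper does not actually prove this lemma: it simply cites Bourbaki, \emph{Topologie G\'en\'erale}, Ch.\,IX, \S5, Prop.\,6, whose proof is essentially the one you have written out.
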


\begin{proof}
This is \cite[Ch.\,IX, \S5, Cor.\, to Prop.\,6]{bourbakitopology}.
\end{proof}

\begin{lemma}
\label{H^1localdualityalmosttori} $($Theorem $\ref{H^1localduality}$ for almost-tori$)$ Let $k$ be a local field of positive characteristic, and let $G$ be an almost-torus over $k$. The 
fppf cohomology groups ${{\rm{H}}}^1(k, G)$ and ${{\rm{H}}}^1(k, \widehat{G})$ are of finite exponent, and cup product 
\[
{{\rm{H}}}^1(k, G) \times {{\rm{H}}}^1(k, \widehat{G}) \rightarrow {{\rm{H}}}^2(k, \mathbf{G}_m) \xrightarrow[\sim]{\inv} \Q/\Z
\]
 is a functorial continuous perfect pairing of locally compact Hausdorff abelian groups.
\end{lemma}

\begin{proof}
Let $G$ be an almost-torus over a non-archimedean local field $k$, so that Proposition \ref{hatrepresentable} implies that the fppf sheaf $\widehat{G}$ on $\Spec(k)$ is represented by a commutative locally finite type $k$-group scheme. 
Proposition \ref{topcohombasics}(i) and Lemma \ref{H^1finiteexponent} therefore imply that ${\rm{H}}^1(k, G)$ and ${\rm{H}}^1(k, \widehat{G})$ are locally compact Hausdorff groups of finite exponent.

Lemma \ref{cupproductctsH^1almosttori} implies that the cup product pairing yields a continuous map ${\rm{H}}^1(k, G) \rightarrow {\rm{H}}^1(k, \widehat{G})^D$, and we want to show that this map is a topological isomorphism; see Remark \ref{perfectpairing}. By Proposition \ref{secondcountable} and Lemma \ref{isom=homeo}, it suffices to prove bijectivity (i.e., this map is an isomorphism
of groups, disregarding their topologies).

As usual, we may assume by Lemma \ref{almosttorus}(iv) that there is an exact sequence
\[
1 \longrightarrow A \stackrel{j}{\longrightarrow} B \times \R_{k'/k}(T') \xlongrightarrow{\pi} G \longrightarrow 1
\]
for some finite commutative $k$-group schemes $A, B$, some finite separable extension $k'/k$, and some split $k'$-torus $T'$. For notational simplicity, let us denote $B \times \R_{k'/k}(T')$ by $X$. Using Proposition \ref{hatisexact}
and the $\delta$-functoriality of cup products (for fppf abelian sheaves), we claim that we obtain a commutative diagram 
\begin{equation}
\label{H^1localdualityalmosttoridiagram2}
\begin{tikzcd}
{\rm{H}}^1(k, A) \isoarrow{d} \arrow{r} & {\rm{H}}^1(k, X) \isoarrow{d} \arrow{r} & {\rm{H}}^1(k, G) \arrow{d} \arrow{r} & {\rm{H}}^2(k, A) \isoarrow{d} \arrow{r} & {\rm{H}}^2(k, X) \arrow[d, hookrightarrow] \\
{\rm{H}}^1(k, \widehat{A})^D \arrow{r} & {\rm{H}}^1(k, \widehat{X})^D \arrow{r} & {\rm{H}}^1(k, \widehat{G})^D \arrow{r} & {\rm{H}}^0(k, \widehat{A})^D \arrow{r} & \Hom(\widehat{X}(k), \Q/\Z)
\end{tikzcd}
\end{equation}
where the maps in the bottom row are obtained by dualizing the long exact cohomology sequence associated to the exact sequence
\[
1 \longrightarrow \widehat{G} \longrightarrow \widehat{X} \longrightarrow \widehat{A} \longrightarrow 1
\]
using Proposition \ref{topcohombasics}(v). 
Note that ${\rm{H}}^0(k, \widehat{A})^* = {\rm{H}}^0(k, \widehat{A})^D$ because ${\rm{H}}^0(k, \widehat{A})$ is discrete, where recall that for an abelian group $H$, $H^* : = {\rm{Hom}}(H, \Q/\Z)$, the group of algebraic (not necessarily continuous) group homomorphisms $H \rightarrow \Q/\Z$. The commutativity of the diagram is clear, except for the square with top left corner ${\rm{H}}^1(k, G)$, where the desired commutativity comes from the standard compatibility between cup products and connecting maps; see \cite[Tag 07MC]{stacks}. (The statement given there is only for \v{C}ech cohomology, but see the comment at the beginning of Tag 01FP, where it is explained that one may use hypercoverings and thereby obtain the result for derived functor cohomology.)
We will now show that the bottom row of (\ref{H^1localdualityalmosttoridiagram2}) is exact (as the top row certainly is) and that the indicated vertical maps are isomorphisms or inclusions.  Granting these properties of the diagram, a
simple diagram chase (or the 5-lemma) then shows that the map ${\rm{H}}^1(k, G) \rightarrow {\rm{H}}^1(k, \widehat{G})^D$ is an isomorphism of groups, which is all we need.

The first and fourth vertical arrows in (\ref{H^1localdualityalmosttoridiagram2}) are isomorphisms by local duality for finite commutative group schemes. The second vertical arrow is an isomorphism by local duality for finite commutative group schemes and Lemma \ref{H^1=0Weilrestrictionsplittori}. The fifth vertical arrow is an inclusion by Theorem \ref{H^2(G)G^(k)dualityprop}.

To see that the bottom row of (\ref{H^1localdualityalmosttoridiagram2}) is exact, we first note that exactness at $\widehat{A}(k)^D$
is unaffected by replacing $\Hom(\widehat{X}(k), \Q/\Z)$ with $\widehat{X}(k)^D$ (since $\Hom(\widehat{X}(k), \Q/\Z)$ is a subgroup of $\widehat{X}(k)^D$, and $\widehat{A}(k)^D$ lands inside this subgroup because $\widehat{A}(k)$ is of finite exponent). Further, the first map in the bottom row is the Pontryagin dual of the first map in the exact sequence of {\em continuous} maps
\[
{\rm{H}}^1(k, \widehat{X}) \longrightarrow {\rm{H}}^1(k, \widehat{A}) \longrightarrow {\rm{H}}^2(k, \widehat{G})
\]
by Proposition \ref{topcohombasics}(v). Exactness of the bottom row of (\ref{H^1localdualityalmosttoridiagram2}) is reduced by Proposition \ref{secondcountable} to showing that if
$$M' \stackrel{f}{\rightarrow} M \stackrel{g}{\rightarrow} M'' \stackrel{h}{\rightarrow} M'''$$
is an algebraically exact sequence of continuous maps between Hausdorff
topological abelian groups, with all except for perhaps $M'''$ being locally compact and second-countable, then the diagram of Pontryagin duals 
$${M''}^D \rightarrow M^D \rightarrow {M'}^D$$
is algebraically exact. For this, it suffices to check that the map $M/f(M') \hookrightarrow M''$ is a homeomorphism onto a closed subgroup, since the Pontryagin dual of an embedding onto a closed subgroup is surjective \cite[Thm.\,2.1.2]{rudin}. By Lemma \ref{isom=homeo}, it suffices to check that the image $g(M) \subset M''$ is closed, and this holds because ${\rm{im}}(g) = \ker(h)$, which is closed because $M'''$ is Hausdorff. This completes the proof of the lemma.
\end{proof}

Before we can finish the proof of Theorem \ref{H^1localduality} in general, to push through some exact sequence arguments we need to prove Theorem \ref{localdualityH^2(G^)}, so we now turn to that task.

\section{Duality between ${\rm{H}}^2(k, \widehat{G})$ and $G(k)_{\rm{pro}}$}\label{sectionproofofH^2(G^)localduality}

In this section we prove Theorem \ref{localdualityH^2(G^)}. We begin with its first assertion, which amounts to the following lemma.

\begin{lemma}
\label{H^2(G^)istorsion}
Let $k$ be a field, $G$ an affine commutative $k$-group scheme of finite type. Then ${{\rm{H}}}^2(k, \widehat{G})$ is a torsion group.
\end{lemma}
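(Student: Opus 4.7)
The plan is a d\'evissage using Lemma \ref{affinegroupstructurethm} and Lemma \ref{almosttorus}(iii) to reduce to the three building blocks: finite commutative $k$-group schemes, tori, and $\Ga$. The key enabling tool is the exactness of the dualization functor $(\cdot)^{\wedge}$ from Proposition \ref{hatisexact}, which converts a short exact sequence of affine commutative $k$-group schemes of finite type into a short exact sequence of fppf dual sheaves (in positive characteristic), and hence into a long exact sequence in fppf cohomology.

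First, I would fit $G$ into a short exact sequence $1 \to H \to G \to U \to 1$ from Lemma \ref{affinegroupstructurethm}, with $H$ an almost-torus and $U$ split unipotent. Dualizing via Proposition \ref{hatisexact} and taking the associated long exact cohomology sequence yields
\[
{\rm{H}}^2(k, \widehat{U}) \to {\rm{H}}^2(k, \widehat{G}) \to {\rm{H}}^2(k, \widehat{H}),
\]
so it suffices to show that ${\rm{H}}^2(k, \widehat{U})$ and ${\rm{H}}^2(k, \widehat{H})$ are torsion. Iterating Proposition \ref{hatisexact} along a composition series for $U$ with successive $\Ga$ quotients reduces the unipotent case to $G = \Ga$. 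For the almost-torus case, the isogeny $T \times A \twoheadrightarrow H$ from Lemma \ref{almosttorus}(iii), with finite kernel $B$, dualizes via Proposition \ref{hatisexact} to a short exact sequence $1 \to \widehat{H} \to \widehat{T} \times \widehat{A} \to \widehat{B} \to 1$, reducing to the cases $G = T$ (a torus) and $G$ finite.

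The three base cases are then handled as follows. When $G$ is finite of order $n$, the sheaf $\widehat{G}$ is killed by $n$, and hence so is ${\rm{H}}^2(k, \widehat{G})$. When $G = T$ is a torus, $\widehat{T}$ is represented by an \'etale $k$-group scheme, namely the geometric character lattice of $T$ with its continuous Galois action, so ${\rm{H}}^2(k, \widehat{T})$ computes the Galois cohomology of a discrete finitely generated abelian module, which is torsion in positive degree by the standard restriction-corestriction argument for profinite group cohomology. When $G = \Ga$, the group ${\rm{H}}^2(k, \widehat{\Ga})$ vanishes for perfect $k$ by Proposition \ref{cohomologyofG_adualwhenkisperfect}; for imperfect $k$ of characteristic $p > 0$, Proposition \ref{H^2=Ext^2=Br} identifies it with $\Br(\mathbf{G}_{a,\,k})_{\rm{prim}} \subseteq \Br(\mathbf{G}_{a,\,k})[p]$, which is $p$-torsion.

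The main obstacle in principle is the $\Ga$ case over imperfect fields in positive characteristic, but the substantive work has already been carried out in the preceding sections on the cohomology of $\widehat{\Ga}$, so the present proof is a clean d\'evissage. In characteristic $0$, Proposition \ref{hatisexact} gives only \'etale exactness of the dualization sequence; however, the split unipotent contribution vanishes identically by Proposition \ref{cohomologyofG_adualwhenkisperfect}, and the dual of the almost-torus part is represented by an \'etale (hence smooth) group scheme, so the same d\'evissage applies with only minor modifications.
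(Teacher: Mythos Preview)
Your proof is correct and follows essentially the same d\'evissage as the paper: reduce via Lemma~\ref{affinegroupstructurethm} to the almost-torus and split unipotent cases, and then to tori, finite groups, and $\Ga$. One minor simplification worth noting: for the split unipotent case in characteristic $p>0$ you need not invoke Proposition~\ref{H^2=Ext^2=Br} at all, since $U$ has finite exponent and hence so does the sheaf $\widehat{U}$, making ${\rm{H}}^2(k,\widehat{U})$ torsion immediately; this is how the paper handles it, bypassing the $\Ga$-specific Brauer-group analysis entirely.
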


\begin{proof}
Suppose first that $G$ is an almost-torus. By Lemma \ref{finitequotient=smoothandconnected}, there is a finite $k$-subgroup scheme $A \subset G$ such that $T : = G/A$ is a torus. Proposition \ref{hatisexact} yields an exact sequence
\[
{{\rm{H}}}^2(k, \widehat{T}) \longrightarrow {{\rm{H}}}^2(k, \widehat{G}) \longrightarrow {{\rm{H}}}^2(k, \widehat{A})
\]
Since $\widehat{T}$ is representable by a smooth group scheme, ${{\rm{H}}}^2(k, \widehat{T}) = {{\rm{H}}}^2_{\et}(k, \widehat{T})$ is a torsion group since higher Galois cohomology is torsion. Further, ${{\rm{H}}}^2(k, \widehat{A})$ is clearly torsion, so ${{\rm{H}}}^2(k, \widehat{G})$ is torsion as well.

Next suppose that $G = U$ is split connected unipotent. If ${\rm{char}}(k) = 0$ then ${{\rm{H}}}^2(k, \widehat{U}) = {{\rm{H}}}^2_{\et}(k, \widehat{U})$ by Proposition \ref{etrem}, and this vanishes since $\widehat{U}$ is $0$ as an {\'e}tale sheaf. 
If ${\rm{char}}(k) = p > 0$ then $U$ has finite exponent, so $\widehat{U}$ does as well, and hence the lemma is clear. 

Finally, in the general case, by Lemma \ref{affinegroupstructurethm} there is an exact sequence
\[
1 \longrightarrow H \longrightarrow G \longrightarrow U \longrightarrow 1
\]
with $H$ an almost-torus and $U$ split unipotent. We therefore have an exact sequence
\[
{{\rm{H}}}^2(k, \widehat{U}) \longrightarrow {{\rm{H}}}^2(k, \widehat{G}) \longrightarrow {{\rm{H}}}^2(k, \widehat{H})
\]
Since ${{\rm{H}}}^2(k, \widehat{U})$ is torsion and ${{\rm{H}}}^2(k, \widehat{H})$ is torsion, so is ${{\rm{H}}}^2(k, \widehat{G})$.
\end{proof}

Next we prove that the pairing described in Theorem \ref{localdualityH^2(G^)} is continuous. This is the content of the following lemma.

\begin{lemma}
\label{G(k)H^2(G^)continuous}
Let $k$ be a non-archimedean local field, $G$ an affine commutative $k$-group scheme of finite type. The pairing
\[
{{\rm{H}}}^2(k, \widehat{G}) \times {\rm{H}}^0(k, G) \xrightarrow{\cup} {\rm{H}}^2(k, \Gm) \underset{{\rm{inv}}}{\xrightarrow{\sim}} \Q/\Z
\]
is continuous, where the isomorphism on the right is given by taking invariants, ${\rm{H}}^2(k, \widehat{G})$ is endowed with the discrete topology, and ${\rm{H}}^0(k, G)$ is given its usual topology arising from that on $k$. This pairing extends uniquely to a continuous bilinear pairing
\begin{equation}
\label{pairingextendsG(k)_pro}
{\rm{H}}^2(k, \widehat{G}) \times {\rm{H}}^0(k, G)_{\pro} \rightarrow \Q/\Z.
\end{equation}
\end{lemma}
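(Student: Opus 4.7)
The plan is as follows. Since ${\rm H}^2(k,\widehat G)$ and $\Q/\Z$ are both discrete, continuity of the cup-product pairing reduces to showing that for each $\alpha \in {\rm H}^2(k,\widehat G)$, the homomorphism $f_\alpha: G(k) \to \Q/\Z$, $\beta \mapsto \inv(\alpha \cup \beta)$, has open kernel. By Lemma \ref{H^2(G^)istorsion}, $\alpha$ is killed by some $n > 0$, so the image of $f_\alpha$ lies in the finite group $\frac{1}{n}\Z/\Z \subset \Q/\Z$, and it suffices to produce an open neighborhood $U$ of $0 \in G(k)$ with $f_\alpha(U) = 0$. I will construct $U$ by representing $\alpha$ as an explicit \v{C}ech cocycle and exploiting an integral model.

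By Proposition \ref{cech=derivedG^} and the fact that $\widehat G$ is locally of finite presentation (so that Lemma \ref{mayusekbar/k} reduces the computation of $\check{\rm H}^2(k, \widehat G)$ to a direct limit over finite field extensions $L/k$), $\alpha$ is represented by a $2$-cocycle $a \in \widehat G(L \otimes_k L \otimes_k L)$ for some finite extension $L/k$. Under the standard compatibility between \v{C}ech and derived cup products, $f_\alpha(\beta) = \inv[a(\beta)]$, where $a(\beta) \in (L \otimes_k L \otimes_k L)^{\times}$ is the value of $a$ at the diagonal image of $\beta$ in $G(L^{\otimes 3})$. Since $a$ is a morphism of $L^{\otimes 3}$-group schemes, $a(0) = 1$, and the map $\beta \mapsto a(\beta)$ is continuous as the composition $G(k) \to G(L^{\otimes 3}) \xrightarrow{a} (L^{\otimes 3})^{\times}$.

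The key integrality argument is to set $A := \calO_L \otimes_{\calO_k} \calO_L \otimes_{\calO_k} \calO_L$, a finite flat $\calO_k$-subalgebra of $L^{\otimes 3}$ that is open as an $\calO_k$-lattice. Since $\calO_k$ is henselian local, units in $A$ are detected by their reduction modulo $\mathfrak{m}_k$, so $A^{\times} \subset (L^{\otimes 3})^{\times}$ is open; hence $U := \{\beta \in G(k) : a(\beta) \in A^{\times}\}$ is open in $G(k)$ and contains $0$. For $\beta \in U$, the cocycle $a(\beta) \in A^{\times}$ also satisfies the cocycle condition in the $\calO_k$-integral \v{C}ech complex for the fppf cover $\calO_L/\calO_k$, because its coboundary already vanishes in $(L^{\otimes 4})^{\times}$ and the inclusion $(\calO_L^{\otimes 4})^{\times} \hookrightarrow (L^{\otimes 4})^{\times}$ is injective. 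By naturality of the \v{C}ech-to-derived edge map, the Brauer class of $a(\beta)$ lies in the image of ${\rm H}^2_{\fppf}(\calO_k, \Gm) = \Br(\calO_k)$ in $\Br(k)$, and $\Br(\calO_k)$ vanishes because $\calO_k$ is henselian local with finite residue field $\kappa$ (so $\Br(\calO_k) \xrightarrow{\sim} \Br(\kappa) = 0$ by Wedderburn). Hence $f_\alpha(U) = 0$. The main obstacle is this trivialization step, which requires working with \v{C}ech cocycles (since $\widehat G$ need not be representable, precluding more direct topological arguments) and exploiting the vanishing $\Br(\calO_k) = 0$.

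Finally, to extend to ${\rm H}^0(k, G)_{\pro}$: each continuous homomorphism $f_\alpha$ has image in the finite group $\frac{1}{n}\Z/\Z$ and open kernel of finite index, so it factors uniquely through a finite discrete quotient of $G(k)_{\pro}$. The resulting pairing ${\rm H}^2(k,\widehat G) \times G(k)_{\pro} \to \Q/\Z$ is jointly continuous by discreteness of the first factor, and it is unique because $G(k) \to G(k)_{\pro}$ has dense image and $\Q/\Z$ is Hausdorff.
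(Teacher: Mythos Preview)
Your proof is correct and follows essentially the same route as the paper's: represent $\alpha$ by a \v{C}ech 2-cocycle over a finite extension $L/k$ via Proposition \ref{cech=derivedG^}, use continuity of evaluation and openness of $(\calO_L^{\otimes 3})^{\times}$ in $(L^{\otimes 3})^{\times}$ to produce an integral neighborhood $U$, and conclude via $\Br(\calO_k)=0$. Your treatment of the extension to $G(k)_{\pro}$ differs cosmetically---you factor each $f_\alpha$ through a finite quotient of $G(k)$, whereas the paper shows once that the target ${\rm H}^2(k,\widehat G)^D$ is profinite (compact Hausdorff as the Pontryagin dual of a discrete group, totally disconnected since ${\rm H}^2(k,\widehat G)$ is torsion by Lemma \ref{H^2(G^)istorsion}) and invokes the universal property of profinite completion---but both arguments are equivalent and equally short.
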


\begin{proof}
We first prove that the first pairing is continuous. It suffices to show that if $\alpha \in {{\rm{H}}}^2(k, \widehat{G})$, then there is an open set $U \subset G(k)$ containing $0 \in G(k)$ such that for all $g \in U$, we have $\alpha \cup g = g(\alpha) = 0$. By Proposition \ref{cech=derivedG^} and Lemma \ref{mayusekbar/k}, $\alpha$ is represented by a cocycle $\check{\alpha} \in \widehat{G}(L \otimes_k L \otimes_k L)$ for some finite extension $L/k$. The natural pairing $G(k) \times \widehat{G}(L \otimes_k L \otimes_k L) \rightarrow (L \otimes_k L \otimes_k L)^{\times}$ is continuous. Since $(\calO_L \otimes_{\calO_k} \calO_L \otimes_{\calO_k} \calO_L)^{\times} \subset (L \otimes_k L \otimes_k L)^{\times}$ is open, it follows that there is a neighborhood $U$ of $0 \in G(k)$ such that $g(\check{\alpha}) \in \check{Z}^2(\calO_L/\calO_k, \mathbf{G}_m)$ (the set of 2-cocycles for the cover $\Spec(\calO_L) \rightarrow \Spec(\calO_k)$ valued in $\Gm$) for all $g \in U$. Therefore, $g(\alpha) \in \Br(\calO_k) = 0$ for all $g \in U$,  so this $U$ does the job.

Next we check that the pairing extends to a continuous one on ${\rm{H}}^0(k, G)_{\pro}$. Such an extension, if it exists, is clearly unique. To see that it exists, we note that the first continuous pairing yields a continuous map ${\rm{H}}^0(k, G) \rightarrow {\rm{H}}^2(k, \widehat{G})^D$, so it suffices to show that this latter group is profinite. Since it is the Pontryagin dual of a discrete group, it is compact. To see that it is totally disconected (and therefore profinite \cite[Ch.\,I, \S1.1, Prop 0]{serre}), we note that ${\rm{H}}^2(k, \widehat{G})^D = \Hom_{\rm{cts}}({\rm{H}}^2(k, \widehat{G}), \Q/\Z)$ by Lemma \ref{H^2(G^)istorsion}, and this latter group is totally disconnected.
\end{proof}

\begin{lemma}
\label{H^2(G^)dualityspecialtori}
Let $k'/k$ be a finite separable extension of non-archimedean local fields. Then Theorem $\ref{localdualityH^2(G^)}$ holds for $G = \R_{k'/k}(\mathbf{G}_m)$.
\end{lemma}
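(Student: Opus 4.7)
The plan is to reduce this to local class field theory for $k'$ via Weil restriction compatibilities, then invoke it directly.

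First I would identify both sides of the pairing in terms of cohomology over $k'$. By Proposition \ref{charactersseparableweilrestriction}, there is a canonical isomorphism $\widehat{\R_{k'/k}(\Gm)} \simeq \R_{k'/k}(\widehat{\Gm}) = \R_{k'/k}(\mathbf{Z})$ of fppf abelian sheaves. Since $\R_{k'/k}(\mathbf{Z})$ is represented by an \'etale $k$-group scheme, its fppf cohomology agrees with its \'etale cohomology, and because finite pushforward is exact on categories of \'etale sheaves we get ${\rm{H}}^2(k, \widehat{G}) \simeq {\rm{H}}^2(k', \mathbf{Z})$. On the other side, ${\rm{H}}^0(k, G) = \R_{k'/k}(\Gm)(k) = k'^{\times}$, topologized in the natural way, so ${\rm{H}}^0(k, G)_{\pro} \simeq (k'^{\times})_{\pro}$.

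Next I would check the compatibility of the cup product pairing under these identifications: the diagram
\[
\begin{tikzcd}
{\rm{H}}^2(k, \widehat{G}) \arrow[r, phantom, "\times"] \arrow{d}{\sim} & {\rm{H}}^0(k, G)_{\pro} \arrow{d}{\sim} \arrow{r} & \Q/\Z \arrow[d, equals] \\
{\rm{H}}^2(k', \mathbf{Z}) \arrow[r, phantom, "\times"] & (k'^{\times})_{\pro} \arrow{r} & \Q/\Z
\end{tikzcd}
\]
should commute, where the bottom pairing is the natural one. This is an exact analogue of Proposition \ref{diagramcommuteslocal} -- the cup product and the trace/norm maps on Brauer groups interact compatibly with Weil restriction through finite pushforward -- and follows from an explicit \v{C}ech-cocycle calculation (permissible for the sheaf $G$ by Proposition \ref{cech=derivedG} and for $\widehat{G}$ in degree 2 by Proposition \ref{cech=derivedG^}), together with the fact that the invariant map for $k$ factors through the invariant map for $k'$ via the norm.

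It then remains to prove the bottom pairing is a perfect continuous pairing between the discrete torsion group ${\rm{H}}^2(k', \mathbf{Z})$ and the profinite group $(k'^{\times})_{\pro}$. Using the short exact sequence of constant sheaves $0 \to \mathbf{Z} \to \mathbf{Q} \to \mathbf{Q}/\mathbf{Z} \to 0$ together with the vanishing of ${\rm{H}}^i(k', \mathbf{Q})$ for $i>0$ (higher Galois cohomology is torsion and $\mathbf{Q}$ is uniquely divisible), we get an isomorphism ${\rm{H}}^2(k', \mathbf{Z}) \simeq {\rm{H}}^1(k', \mathbf{Q}/\mathbf{Z}) = \Hom_{\rm{cts}}(\Gal(k'^{\rm{sep}}/k')^{\rm{ab}}, \mathbf{Q}/\mathbf{Z})$. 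The cup product pairing, after transferring along this connecting isomorphism, becomes the composition of evaluation with the local reciprocity map $k'^{\times} \to \Gal(k'^{\rm{sep}}/k')^{\rm{ab}}$; this identification is a standard computation with cup products and the invariant map on $\Br(k')$ (see e.g.\,\cite[Ch.\,XIII, \S3]{cf}). Since local class field theory asserts that this reciprocity map induces a topological isomorphism $(k'^{\times})_{\pro} \xrightarrow{\sim} \Gal(k'^{\rm{sep}}/k')^{\rm{ab}}$, the pairing is the tautological evaluation pairing between a profinite group and its Pontryagin dual, which is perfect.

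The main obstacle is the explicit verification of the compatibility of the cup product with the Weil restriction identification -- essentially an unwinding of \v{C}ech-cocycle formulas at the level of $\Br(k) \leftarrow \Br(k')$ via the norm map -- together with ensuring that the torsionness of ${\rm{H}}^2(k, \widehat{G})$ (which follows already from Lemma \ref{H^2(G^)istorsion}) and the continuity clauses from Lemma \ref{G(k)H^2(G^)continuous} fit together so that the resulting bijection is automatically a topological isomorphism (this last point is a standard application of Lemma \ref{isom=homeo} once both sides are known to be locally compact Hausdorff groups of the appropriate type).
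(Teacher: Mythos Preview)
Your approach is essentially identical to the paper's: reduce to $\Gm$ over $k'$ via Weil restriction compatibility, identify ${\rm{H}}^2(k',\Z)\simeq(\mathfrak{g}_{k'}^{\rm ab})^D$ via the sequence $0\to\Z\to\Q\to\Q/\Z\to 0$, recognize the induced map $k'^{\times}\to\mathfrak{g}_{k'}^{\rm ab}$ as the local reciprocity map, and conclude by local class field theory. Two small points: the compatibility diagram you describe as ``an exact analogue of Proposition~\ref{diagramcommuteslocal}'' \emph{is} Proposition~\ref{diagramcommuteslocal} with $G'=\Gm$ and $i=0$, so no fresh \v{C}ech computation is needed; and the ``standard computation'' identifying the cup product pairing with the reciprocity map is exactly what the paper isolates and proves as Lemma~\ref{localreciprocitymap}.
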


\begin{proof}
We have by Lemma \ref{G(k)H^2(G^)continuous} and Proposition \ref{diagramcommuteslocal}
a commutative diagram of continuous pairings 
\[
\begin{tikzcd}
{{\rm{H}}}^2(k', \widehat{\mathbf{G}_m}) \isoarrow{d} \arrow[r, phantom, "\times"] & \mathbf{G}_m(k')\arrow{r} \isoarrow{d} & \Q/\Z \arrow[d, equals] \\
{{\rm{H}}}^2(k, \widehat{\R_{k'/k}(\mathbf{G}_m)}) \arrow[r, phantom, "\times"] & \R_{k'/k}(\mathbf{G}_m)(k) \arrow{r} & \Q/\Z \\
\end{tikzcd}
\]
To prove Lemma \ref{H^2(G^)dualityspecialtori}, it thus suffices to treat the case $G = \mathbf{G}_m$. We have ${{\rm{H}}}^2(k, \widehat{\mathbf{G}_m}) = {{\rm{H}}}^2(k, \Z)$. Since $\Z$ is smooth, ${{\rm{H}}}^2(k, \Z) = {{\rm{H}}}^2_{\et}(k, \Z)$. Therefore, thanks to the exact sequence of {\'e}tale sheaves
\[
0 \longrightarrow \Z \longrightarrow \Q \longrightarrow \Q/\Z \longrightarrow 0
\] 
and the fact that $\Q$ is uniquely divisible, we have a canonical isomorphism
\[
{{\rm{H}}}^2(k, \Z) \simeq {{\rm{H}}}^1_{\et}(k, \Q/\Z) = (\mathfrak{g}_k^{\rm{ab}})^D
\]
onto the Pontryagin dual of the profinite topological abelianization of the absolute Galois group $\mathfrak{g}_k$ of $k$. (This is a topological isomorphism, because the compactness of $\mathfrak{g}_k^{\rm{ab}}$ implies that its dual is discrete.)

We obtain a continuous map $\phi:  \mathbf{G}_m(k) = k^{\times} \rightarrow {{\rm{H}}}^2(k, \widehat{\mathbf{G}_m})^D = (\mathfrak{g}_k^{\rm{ab}})^{DD} \simeq \mathfrak{g}_k^{\rm{ab}}$ due to Pontryagin double duality. But there is another natural map $k^{\times} \rightarrow \mathfrak{g}_k^{\rm{ab}}$, namely the local reciprocity map, and it is natural to ask whether these two maps agree
(at least up to a sign). If they do, then by local class field theory $\phi$ induces a
topological isomorphism $(k^{\times})_{{\rm{pro}}} \simeq \mathfrak{g}_k^{\rm{ab}}$, which is what we want to show. Lemma \ref{localreciprocitymap} below therefore completes the proof of Lemma \ref{H^2(G^)dualityspecialtori}.
\end{proof}

\begin{lemma}
\label{localreciprocitymap}
The map $\phi:  k^{\times} \rightarrow \mathfrak{g}_k^{\rm{ab}}$ induced by the local duality pairing as above is the local reciprocity map.
\end{lemma}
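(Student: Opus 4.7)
The plan is to unravel the definition of $\phi$ and recognize the resulting formula as the standard cup-product characterization of the local reciprocity map (see e.g.\ Serre, \emph{Local Fields}, Ch.\,XI, \S3, or Milne, \emph{Class Field Theory}, Ch.\,III, \S1).

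First, I would apply Pontryagin double duality to the profinite group $\mathfrak{g}_k^{\mathrm{ab}}$ to rewrite the definition of $\phi$: for $a \in k^\times$, the element $\phi(a) \in \mathfrak{g}_k^{\mathrm{ab}}$ is uniquely characterized by the identity
$$\chi(\phi(a)) \;=\; \inv_k(a \cup \delta\chi) \;\in\; \Q/\Z$$
as $\chi$ ranges over ${\rm{H}}^1(k, \Q/\Z) = \Hom_{\mathrm{cts}}(\mathfrak{g}_k^{\mathrm{ab}}, \Q/\Z)$, where $\delta: {\rm{H}}^1(k,\Q/\Z) \xrightarrow{\sim} {\rm{H}}^2(k,\Z)$ is the connecting map arising from $0 \to \Z \to \Q \to \Q/\Z \to 0$ and $\cup$ is the cup product coming from the evaluation pairing $\Gm \otimes \widehat{\Gm} = \Gm \otimes \Z \to \Gm$. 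Second, this is precisely the defining formula for the local reciprocity map $\theta_k: k^\times \to \mathfrak{g}_k^{\mathrm{ab}}$ in the cup-product/class-formation approach to local class field theory: for any finite Galois extension $L/k$, the norm residue map $\theta_{L/k}$ is characterized by the identity $\chi(\theta_{L/k}(a)) = \inv_k(a \cup \delta\chi)$ for every character $\chi$ of $\Gal(L/k)$, and passing to the inverse limit over all such $L$ yields $\theta_k$ obeying the same formula. Modulo checking that conventions match, this is the assertion of the lemma.

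The only genuinely substantive step is to verify that our conventions for $\inv_k$, for $\delta$, for $\cup$, and for the identification $\widehat{\Gm} = \Z$ agree with those of the reference. Rather than checking all four items separately, the cleanest approach is to test on unramified characters: if $\chi: \mathfrak{g}_k \twoheadrightarrow \Gal(k^{\mathrm{ur}}/k) \simeq \widehat{\Z} \to \Q/\Z$ is unramified, then $a \cup \delta\chi$ is an unramified Brauer class whose invariant may be computed explicitly (via \v{C}ech cocycles, using Proposition \ref{cech=derivedG^} to identify $\delta\chi$ with a Čech $2$-cocycle for the cover $\mathrm{Spec}(k^{\rm{ur}}) \to \mathrm{Spec}(k)$) as $v(a)\cdot \chi(\mathrm{Frob}_k)$. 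This identifies $\phi(a)|_{k^{\mathrm{ur}}}$ with $\mathrm{Frob}_k^{v(a)}$, which is the standard normalization of the reciprocity map on the maximal unramified subextension. Any residual sign ambiguity would affect only whether $\phi$ equals $\theta_k$ or $\theta_k^{-1}$, and in either case it yields the desired topological isomorphism $(k^\times)_{\mathrm{pro}} \xrightarrow{\sim} \mathfrak{g}_k^{\mathrm{ab}}$ needed by the preceding lemma. The main obstacle, insofar as there is one, is bookkeeping around these sign conventions rather than any conceptual difficulty.
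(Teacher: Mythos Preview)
Your proposal is correct and arrives at the same characterizing identity as the paper: $\chi(\phi(a)) = \inv_k(a \cup \delta\chi)$ for all continuous characters $\chi$ of $\mathfrak{g}_k^{\rm{ab}}$. The difference is in how this identity is matched to the reciprocity map. You cite the cup-product characterization of $\theta_k$ from standard references and verify conventions by testing on unramified characters, whereas the paper gives a self-contained argument: working at a finite Galois layer $L/k$ with group $\Gamma$, it uses the Tate-cohomology definition of the norm residue map via cup product with the fundamental class $u \in \widehat{\rm{H}}^2(\Gamma, L^\times)$ and reduces $\inv(\sigma \cup u \cup \delta\beta) = \beta(\sigma)$ to an explicit computation of the connecting map $\delta: \widehat{\rm{H}}^{-1}(\Gamma,\Q/\Z) \to \widehat{\rm{H}}^0(\Gamma,\Z)$. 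Your approach is more economical if one is willing to import the characterization from Serre or Milne; the paper's approach is more self-contained and pins down the sign without ambiguity.
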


\begin{proof}
Let $L/k$ be a finite Galois extension with Galois group $\Gamma$. The local reciprocity map induces an isomorphism ${\rm{rec}}_{L/k}:  k^{\times}/N_{L/k}(L^{\times}) \xrightarrow{\sim} \Gamma^{\rm{ab}}$; the inverse ${\rm{rec}}^{-1}_{L/k}$ of this map is the more natural one to define. In order to show that $\phi$ is the local reciprocity map, therefore, we will show that $\phi|_L \circ {\rm{rec}}^{-1}_{L/k}:  
\Gamma^{\rm{ab}} \rightarrow \Gamma^{\rm{ab}}$ is the identity map. Of course, a priori this does not make sense unless we show that $\phi|_L(N_{L/k}(L^{\times})) = 0$ in $\Gamma^{\rm{ab}}$, but our argument will show that the composition is the identity regardless of which lift to $k^{\times}$ of ${\rm{rec}}^{-1}_{L/k}(\sigma)$ we take for any $\sigma \in \Gamma$, and it will then follow that $\phi|_L$ 
factors through a map $k^{\times}/N_{L/k}(L^{\times}) \rightarrow \Gamma^{\rm{ab}}$.

Let us recall how ${\rm{rec}}_{L/k}^{-1}$ is defined; cf. \cite[Ch.\,VI, \S2.2]{cf}. Let $n = [L :  k]$. We have an isomorphism ${\rm{H}}^2(\Gamma, L^{\times}) \simeq (1/n)\Z/\Z$ by taking invariants. Let $u \in {\rm{H}}^2(\Gamma, L^{\times})$ be the element with invariant $1/n$. Letting $\widehat{\rm{H}}^{\bullet}$ denote Tate cohomology groups, 
$\widehat{\rm{H}}^{-2}(\Gamma, \Z) : = {\rm{H}}_1(\Gamma, \Z) \simeq \Gamma^{\rm{ab}}$,
and by definition ${\rm{rec}}_{L/k}^{-1}$ is the composition
\[
\Gamma^{\rm{ab}} \simeq \widehat{\rm{H}}^{-2}(\Gamma, \Z) \xlongrightarrow{\cup u} \widehat{\rm{H}}^0(\Gamma, L^{\times}) = 
k^{\times}/N_{L/k}(L^{\times}).
\]

Choose a $\sigma \in \Gamma^{\rm{ab}}$. We want to check that $\phi|_L \circ {\rm{rec}}_{L/k}^{-1} (\sigma) = \sigma$. Let $\delta \colon  \Gamma^D = \widehat{\rm{H}}^1(\Gamma, \Q/\Z) \xrightarrow{\sim} \widehat{\rm{H}}^2(\Gamma, \Z)$ be the connecting map in the long exact sequence obtained from the exact sequence
\[
0 \longrightarrow \Z \longrightarrow \Q \longrightarrow \Q/\Z \longrightarrow 0.
\]
Then for any $c \in k^{\times}$ with image $[c] \in \widehat{\rm{H}}^0(\Gamma, L^{\times})$, our $\phi|_L(c)$ is, by definition, the element $\tau \in \Gamma^{\rm{ab}}$ such that ${\inv}([c] \cup \delta \beta) = \beta(\tau)$ for all $\beta \in \widehat{\rm{H}}^1(\Gamma, \Q/\Z)$;
note that $[c]$ has even degree, so $[c] \cup \delta \beta = \delta \beta \cup [c]$. 
So what we need to check is that ${\inv}(\sigma \cup u \cup \delta \beta) = \beta(\sigma)$. 

If we once again think of $\sigma$ as an element of $\widehat{\rm{H}}^{-2}(\Gamma, \Z)$, then, by the definition of the cup product, $\beta(\sigma) = \sigma \cup \beta \in \widehat{\rm{H}}^{-1}(\Gamma, \Q/\Z) \simeq \ker(N_{\Gamma}:  \Q/\Z \rightarrow \Q/\Z) = (1/n)\Z/\Z$, where for any $\Gamma$-module $A$ the map $N_{\Gamma}:  A \rightarrow A$ is $a \mapsto \sum_{\tau \in \Gamma} \tau a$. We need to show, therefore, that
\begin{equation}
\label{localreciprocitymaplemmaeqn1}
\inv(\sigma \cup u \cup \delta \beta) = \sigma \cup \beta.
\end{equation}

Now $\sigma \cup u \cup \delta \beta = (\sigma \cup \delta \beta) \cup u$ since $u$ has even degree,
and this in turn is equal to $\delta(\sigma \cup \beta) \cup u$ by the $\delta$-functoriality of cup products
(see \cite[Ch.\,IV, \S7, Thm.\,4(iv)]{cf} with $p=-2$).
Since $\delta(\sigma \cup \beta) \in \widehat{\rm{H}}^0(\Gamma, \Z) = \Z/n\Z$, 
the left side of (\ref{localreciprocitymaplemmaeqn1}) equals $\delta(\sigma \cup \beta) {\inv}(u) = 
\delta(\sigma \cup \beta)\cdot (1/n) \in (1/n)\Z/\Z$. 

What we need to show, finally, is that the following diagram commutes: 
\[
\begin{tikzcd}
\widehat{\rm{H}}^{-1}(\Gamma, \Q/\Z) \arrow{d}{\delta} \arrow{r}{\sim} & (1/n)\Z/\Z \arrow{d}{x \mapsto nx} \\
\widehat{\rm{H}}^0(\Gamma, \Z) \arrow{r}{\sim} & \Z/n\Z
\end{tikzcd}
\]
This is simple, going back to how $\delta$ is defined in low degrees, as made explicit in \cite[IV, (6.2)]{cf}. 
Explicitly, an element $\xi \in (1/n)\Z/\Z$ lifts to an element $a/n \in \Q$ with $a \in \Z$, and $N_{\Gamma}: \Q \rightarrow \Q$
(which underlies the connecting map in Tate cohomology from degree $-1$ to degree 0) 
carries $a/n$ to $a$.  The class of $\delta(\xi) \in \widehat{\rm{H}}^0(\Gamma, \Z) = \Z/n\Z$
is therefore represented by $a \bmod n\Z$, which comes from $a/n \bmod \Z$ under $n: (1/n)\Z/\Z \simeq \Z/n\Z$.
\end{proof}

Next we show that Theorem $\ref{localdualityH^2(G^)}$ holds for $G = \Ga$.

\begin{proposition}
\label{k_pro=H^2(k,Ga^)*}
For a non-archimedean local field $k$, the map $k_{\pro} \rightarrow {\rm{H}}^2(k, \widehat{\Ga})^*$ induced by the local duality pairing $(\ref{pairingextendsG(k)_pro})$ for $G = \Ga$ is a topological isomorphism.
\end{proposition}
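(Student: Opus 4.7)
First dispose of the characteristic zero case: by Proposition \ref{cohomologyofG_adualwhenkisperfect}, ${\rm{H}}^2(k, \widehat{\Ga}) = 0$, and $k_{\mathrm{pro}} = 0$ because every non-archimedean local field of characteristic zero is a torsion-free divisible abelian group and therefore admits no proper finite-index subgroup. Thus we may assume $\mathrm{char}(k) = p > 0$.

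The plan is then to make the pairing explicit via the identification of ${\rm{H}}^2(k, \widehat{\Ga})$ with K\"ahler differentials, and to reduce the proposition to the classical Pontryagin self-duality of $k$ as a locally compact abelian group. Fixing a uniformizer $t$ of $\calO_k$, by Proposition \ref{omega=H^2(Ga^)}, Corollary \ref{dimH^2(Ga^)=1}, and Lemma \ref{Omega^1=Rdpi}, the map $b \mapsto \phi_k(b\,dt)$ is a $k$-linear isomorphism $k \xrightarrow{\sim} {\rm{H}}^2(k, \widehat{\Ga})$. By Lemma \ref{evaluationcompatibility} combined with the identification ${\rm{H}}^2(k, \widehat{\Ga}) \simeq \Br(\Ga_k)_{\mathrm{prim}}$ of Proposition \ref{H^2=Ext^2=Br}, the cup-product pairing then takes the explicit form $(b, a) \mapsto \chi_0(ab)$, where $\chi_0(c) := \inv(\psi(c\,dt))$ and $\psi$ is the map of Proposition \ref{brauerdifferentialforms}. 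The character $\chi_0$ is continuous (Lemma \ref{G(k)H^2(G^)continuous}) and non-trivial because $\psi \colon \Omega^1_k \to \Br(k)[p] \simeq \F_p$ is surjective (Proposition \ref{brauerdifferentialforms} together with $\Pic(k) = 0$).

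The continuous map $\rho \colon k \to {\rm{H}}^2(k, \widehat{\Ga})^*$ lands in a profinite group (the Pontryagin dual of a discrete $\F_p$-vector space) and hence extends uniquely to a continuous homomorphism $\bar\rho \colon k_{\mathrm{pro}} \to {\rm{H}}^2(k, \widehat{\Ga})^*$ between profinite groups. Since a continuous bijection from a compact space to a Hausdorff space is automatically a homeomorphism, it suffices to show that $\bar\rho$ is bijective. I would verify this by Pontryagin-dualizing to
\[
\bar\rho^D \colon {\rm{H}}^2(k, \widehat{\Ga}) \longrightarrow \Hom_{\mathrm{cts}}(k, \Q/\Z), \qquad \phi_k(b\,dt) \mapsto \bigl(a \mapsto \chi_0(ab)\bigr),
\]
using that $\Hom_{\mathrm{cts}}(k, \Q/\Z) = \Hom_{\mathrm{cts}}(k_{\mathrm{pro}}, \Q/\Z)$ because every continuous character of $k$ has finite image ($k$ is $p$-torsion in characteristic $p$). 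Injectivity of $\bar\rho^D$ is immediate from the field structure: if $b \neq 0$, then $bk = k$, so $\chi_0(b\cdot)$ cannot vanish identically.

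Surjectivity of $\bar\rho^D$ is the main obstacle: it amounts to saying every continuous character of $k$ has the form $a \mapsto \chi_0(ab)$ for some $b \in k$. This is precisely the classical Pontryagin self-duality of the non-archimedean local field $k$ applied to the non-trivial continuous additive character $\chi_0$, a standard LCA fact that I would simply invoke. If one prefers a self-contained verification, it can be obtained by identifying $\chi_0$ (up to sign) with the residue--trace character $c \mapsto \mathrm{Tr}_{\kappa/\F_p}(\mathrm{res}(c\,dt))/p$, where $\kappa$ is the residue field, and then checking the self-duality level-by-level on the finite-dimensional $\F_p$-quotients $t^{-n}\calO_k/t^n\calO_k$.
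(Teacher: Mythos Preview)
Your proof is correct and follows essentially the same route as the paper's: reduce to bijectivity of the Pontryagin-dual map ${\rm{H}}^2(k,\widehat{\Ga}) \to k^D$, identify the pairing explicitly via Proposition \ref{omega=H^2(Ga^)} and Lemma \ref{evaluationcompatibility}, and invoke the classical self-duality of $k$. The paper packages the final step slightly differently (both sides are one-dimensional $k$-vector spaces by Corollary \ref{dimH^2(Ga^)=1} and \cite[Ch.\,XV, Lemma 2.2.1]{cf}, the dual map is $k$-linear by functoriality of cup product, so it suffices to exhibit a single nonzero value), but this is just a repackaging of your injectivity-plus-surjectivity argument.
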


\begin{proof}
When ${\rm{char}}(k) = 0$, both groups vanish: $k_{\pro}$ because $k$ is divisible, and ${\rm{H}}^2(k, \widehat{\Ga})^*$ by Proposition \ref{cohomologyofG_adualwhenkisperfect}, so assume that ${\rm{char}}(k) > 0$. Since the map in question is a continuous map (by Lemma \ref{G(k)H^2(G^)continuous}) from a compact space to a Hausdorff space, it suffices to show that it is a bijection, and for this it suffices, by Pontryagin duality, to show that the dual map is an isomorphism. But both dual groups are one-dimensional $k$-vector spaces. Indeed, ${\rm{H}}^2(k, \widehat{\Ga})$ is one-dimensional by Proposition \ref{omega=H^2(Ga^)}. To see that $(k_{\pro})^D$ is one-dimensional, we first note that the map $(k_{\pro})^D \rightarrow k^D$ is an (algebraic) isomorphism. Indeed, it is clearly injective, and surjectivity follows from the fact that any continuous homomorphism $k \rightarrow \mathbf{R}/\Z$ has kernel that is closed of finite index, since $k$ has finite exponent. It therefore suffices to show that $k^D$ is one-dimensional, and this is \cite[Ch.\,XV, Lemma 2.2.1]{cf}.

Since the dual map is $k$-linear by the functoriality of cup product, we only have to show that it is nonzero. So we only need to show that there exist $\lambda \in k$ and $\alpha \in {\rm{H}}^2(k, \widehat{\Ga})$ such that $\alpha(\lambda) \in {\rm{H}}^2(k, \Gm) = {\rm{H}}^2(k, \Gm)$ is nonzero. Let $\pi \in \calO_k$ be a uniformizer. We take $\alpha = \phi(d\pi) = \psi(Xd\pi)$, where $\phi$ is the isomorphism in Proposition \ref{omega=H^2(Ga^)}, and $\psi$ is the map in Proposition \ref{brauerdifferentialforms}. Recall that $X$ is the variable on $\mathbf{G}_{a,\, k}$ and that $psi$ is the map sending a differential form on $\Ga$ to the associated Brauer class, via Proposition \ref{brauerdifferentialforms}. Then $\alpha(\lambda) = \psi(\lambda d\pi)$, by Lemma \ref{evaluationcompatibility}. Since every $p$-torsion element of ${\rm{H}}^2(k, \Gm)$ is given by an element of this form for some $\lambda \in k$ (by Proposition \ref{brauerdifferentialforms} and Lemma \ref{Omega^1=Rdpi}), and since ${\rm{H}}^2(k, \Gm)[p] \neq 0$, some such Brauer class is nonzero.
\end{proof}

Next we prove some basic topological properties of $G(k)$ which we shall require in the proof of Theorem \ref{localdualityH^2(G^)}.

\begin{lemma}
\label{fundamentalsystem}
Let $k$ be a non-archimedean local field, and $G$ an affine $k$-group scheme of finite type. Then $G(k)$ if locally profinite,
and in particular has a fundamental system of open neighborhoods of the identity consisting of profinite subgroups. 
\end{lemma}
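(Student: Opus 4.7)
The plan is to reduce to the statement that $\mathrm{GL}_n(\mathcal{O}_k)$ is a profinite open subgroup of $\mathrm{GL}_n(k)$, by means of an integral model of $G$ inside $\mathrm{GL}_{n,\mathcal{O}_k}$. It is a standard fact (see e.g.\,\cite[Prop.\,3.32]{milnealggroups}) that any affine $k$-group scheme of finite type admits a closed immersion $G \hookrightarrow \mathrm{GL}_{n,k}$ as a $k$-subgroup scheme for some $n$. Since the topology on $G(k)$ is independent of the choice of embedding into an affine space, we may and do compute the topology of $G(k)$ using this embedding (composed with the standard closed immersion $\mathrm{GL}_{n,k} \hookrightarrow \mathbf{A}^{n^2+1}_k$ given by $(g,\det(g)^{-1})$).

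Let $\mathscr{G}$ denote the schematic closure of $G$ in $\mathrm{GL}_{n,\mathcal{O}_k}$; it is a flat, affine, closed $\mathcal{O}_k$-subgroup scheme of $\mathrm{GL}_{n,\mathcal{O}_k}$ with generic fiber $G$. First I would verify that $\mathscr{G}(\mathcal{O}_k)$ is a profinite open subgroup of $G(k)$. Openness is immediate from the description of the topology: under the closed embedding into $\mathbf{A}^{n^2+1}_k$, we have $\mathscr{G}(\mathcal{O}_k) = G(k) \cap \mathcal{O}_k^{n^2+1}$, and $\mathcal{O}_k^{n^2+1}$ is open in $k^{n^2+1}$. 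For profiniteness, note that $M_n(\mathcal{O}_k)\simeq \mathcal{O}_k^{n^2}$ is profinite, and $\mathrm{GL}_n(\mathcal{O}_k) \subset M_n(\mathcal{O}_k)$ is closed (being the preimage of the closed subset $\mathcal{O}_k^\times \subset \mathcal{O}_k$ of elements of norm $1$ under the continuous map $\det$), so $\mathrm{GL}_n(\mathcal{O}_k)$ is profinite; then $\mathscr{G}(\mathcal{O}_k) = G(k) \cap \mathrm{GL}_n(\mathcal{O}_k)$ is closed in $\mathrm{GL}_n(\mathcal{O}_k)$ and so is profinite. This already shows local profiniteness.

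For the ``in particular'' clause, I would consider the congruence filtration
\[
K_m := \ker\bigl(\mathscr{G}(\mathcal{O}_k) \to \mathscr{G}(\mathcal{O}_k/\pi^m)\bigr),
\]
where $\pi$ is a uniformizer of $\mathcal{O}_k$. Each $K_m$ is the preimage of the open identity section in $\mathscr{G}(\mathcal{O}_k/\pi^m)$ (a discrete group, since $\mathcal{O}_k/\pi^m$ is finite for any $m$, as the residue field is finite and the extension $\mathcal{O}_k/\pi^m$ over $\mathcal{O}_k/\pi$ is finite), so $K_m$ is an open subgroup of $\mathscr{G}(\mathcal{O}_k)$, hence closed of finite index, hence profinite. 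The $K_m$ form a neighborhood basis of the identity in $\mathscr{G}(\mathcal{O}_k)$ since the topology on $\mathscr{G}(\mathcal{O}_k) \subset M_n(\mathcal{O}_k)$ is induced by the $\pi$-adic topology, and by the previous paragraph $\mathscr{G}(\mathcal{O}_k)$ is open in $G(k)$. This gives a fundamental system of open neighborhoods of the identity in $G(k)$ consisting of profinite subgroups, as desired. No step here is expected to be an obstacle; the only thing to keep in mind is that the schematic-closure construction is used merely as a convenient way to define the compact open subgroup $G(k) \cap \mathrm{GL}_n(\mathcal{O}_k)$, and no use is made of the (true but unneeded) fact that $\mathscr{G}$ is itself an $\mathcal{O}_k$-subgroup scheme of $\mathrm{GL}_{n,\mathcal{O}_k}$.
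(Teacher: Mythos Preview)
Your proof is correct and follows essentially the same approach as the paper: embed $G$ as a closed $k$-subgroup of $\mathrm{GL}_n$ and use that $\mathrm{GL}_n(\mathcal{O}_k)$ is a profinite open subgroup of $\mathrm{GL}_n(k)$, so that $G(k)\cap \mathrm{GL}_n(\mathcal{O}_k)$ is a profinite open subgroup of $G(k)$. The paper's proof is simply a one-line version of this, omitting your explicit treatment of the congruence filtration (which is unnecessary since any open subgroup of a profinite group is automatically closed, hence profinite, and such subgroups form a neighborhood basis of the identity).
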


\begin{proof}
There is a closed $k$-subgroup inclusion $G \hookrightarrow {\rm{GL}}_n$ \cite[Ch.\,I, Prop.\,1.10]{borel}, which reduces us to the case $G = {\rm{GL}}_n$, for which the subgroup ${\rm{GL}}_n(\calO_k) \subset {\rm{GL}}_n(k)$ is a profinite open subgroup.
\end{proof}

\begin{lemma}
\label{G(k)modopenfg}
Let $k$ be a non-archimedean local field, $G$ an almost-torus
over $k$. For any open subgroup $U \subset G(k)$, the quotient $G(k)/U$ is finitely generated.
\end{lemma}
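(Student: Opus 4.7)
The plan is to apply Lemma~\ref{almosttorus}(iv), which provides an isogeny
\[
f\colon A \times \R_{k_1/k}(T_1) \twoheadrightarrow G^n \times \R_{k_2/k}(T_2)
\]
with $A$ a finite commutative $k$-group scheme, $k_i/k$ finite separable, and $T_i$ a split $k_i$-torus. The kernel $B := \ker f$ is finite, so ${\rm{H}}^1(k, B)$ is finite by Tate local duality for finite commutative group schemes (Remark~\ref{prior}); consequently $f(k)$ has finite cokernel.

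Next I would reduce the problem to the source of $f$. Since $G(k)/U$ is a quotient of $G(k)^n/U^n$ via any coordinate projection, it suffices to show the latter is finitely generated. Let $W' := U^n \times \R_{k_2/k}(T_2)(k)$, an open subgroup of $(G^n \times \R_{k_2/k}(T_2))(k)$ with quotient equal to $G(k)^n/U^n$, and set $W := f(k)^{-1}(W')$. The long exact cohomology sequence associated to $1 \to B \to A \times \R_{k_1/k}(T_1) \xrightarrow{f} G^n \times \R_{k_2/k}(T_2) \to 1$, combined with the finiteness of the cokernel of $f(k)$, exhibits $G(k)^n/U^n$ as a finite extension of a quotient of $(A \times \R_{k_1/k}(T_1))(k)/W$.

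Since $A(k)$ is finite, it then suffices to show that $\R_{k_1/k}(T_1)(k)/V$ is finitely generated for any open subgroup $V$. Explicitly, $\R_{k_1/k}(T_1)(k) = T_1(k_1) \cong (k_1^\times)^r$ with $r = \dim T_1$, and $(\calO_{k_1}^\times)^r$ is a profinite (hence compact) open subgroup of $(k_1^\times)^r$ with cocompact quotient $\Z^r$. Since $V \cap (\calO_{k_1}^\times)^r$ is an open subgroup of the compact group $(\calO_{k_1}^\times)^r$, it has finite index, so $(k_1^\times)^r/V$ is a finite extension of a quotient of $\Z^r$, and is therefore finitely generated.

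No step of this argument presents a substantive obstacle; the only real content is the structural reduction via Lemma~\ref{almosttorus}(iv), which replaces the possibly anisotropic almost-torus $G$ by the transparent group $\R_{k_1/k}(T_1)$ whose $k$-points are a finite product of copies of $k_1^\times$.
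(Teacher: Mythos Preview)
Your argument has a gap: the claim that ${\rm H}^1(k, B)$ is finite is false in positive characteristic, which is the main setting of the paper. For instance, ${\rm H}^1(k, \mu_p) \simeq k^\times/(k^\times)^p$ and ${\rm H}^1(k, \alpha_p) \simeq k/k^p$ are both infinite over a local function field $k$; Tate local duality for finite group schemes (as in Remark~\ref{prior}) only asserts that ${\rm H}^1(k, B)$ and ${\rm H}^1(k, \widehat{B})$ are locally compact Pontryagin duals of finite exponent, not that they are finite. Since the kernel $B$ of an isogeny furnished by Lemma~\ref{almosttorus}(iv) can certainly contain such groups (already the kernel of an Ono-type isogeny of tori is of multiplicative type and may have $p$-torsion), you cannot conclude that $f(k)$ has finite, or even finitely generated, cokernel. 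Without this, your d\'evissage breaks: $G(k)^n/U^n$ is only exhibited as an extension whose quotient piece is a quotient of the possibly-infinite ${\rm coker}\,f(k)$, and you get no control over that piece. Nor does the openness of $W'$ help, since the connecting map to ${\rm H}^1(k,B)$ is open only when the middle term $A \times \R_{k_1/k}(T_1)$ is smooth (Proposition~\ref{topcohombasics}(vii)), which fails for a general finite $A$.

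The paper avoids this by first replacing $G$ with its maximal smooth $k$-subgroup (which has the same $k$-points with the same topology) and then running the d\'evissage only through short exact sequences $1 \to G' \to G \to G'' \to 1$ of \emph{smooth} almost-tori. Smoothness of the kernel makes $G \to G''$ smooth, so $G(k) \to G''(k)$ is open and $\pi(U) \subset G''(k)$ is an open subgroup; the exact sequence $G'(k)/j^{-1}(U) \to G(k)/U \to G''(k)/\pi(U)$ then reduces the problem to $G/G^0$ (finite) and $G^0$ (a torus), the latter handled by a closed embedding into $\R_{k'/k}(\Gm^n)$. Your final computation for $\R_{k_1/k}(T_1)(k)/V$ is correct and is essentially what the paper does at the end.
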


\begin{proof}
First, by replacing $G$ with its maximal smooth $k$-subgroup scheme (see \cite[Lemma C.4.1]{cgp}), we may assume that $G$ is smooth. Suppose that we have a short exact sequence
\[
1 \longrightarrow G' \xlongrightarrow{j} G \xlongrightarrow{\pi} G'' \longrightarrow 1
\]
of smooth almost-tori over $k$, and that the lemma holds for $G'$ and $G''$.
The map $G \rightarrow G''$ is smooth, so $G(k) \rightarrow G''(k)$ is open. 
Hence, $\pi(U)$ is an open subgroup of $G''(k)$, so the exact sequence
\[
\frac{G'(k)}{j^{-1}(U)} \longrightarrow \frac{G(k)}{U} \longrightarrow \frac{G''(k)}{\pi(U)},
\]
in which both ends are finitely generated by hypothesis, shows that $G(k)/U$ is finitely generated as well.

Since $G/G^0$ is finite {\'e}tale, and the lemma is clear for finite group schemes, we may therefore assume that $G$ is smooth and connected; i.e., $G$ is a torus. If we have a closed $k$-subgroup inclusion $G \hookrightarrow H$ and if the lemma holds for $H$, then it also holds for $G$ because $G(k) \hookrightarrow H(k)$ is a homeomorphism onto its closed image. Since every torus embeds into the Weil restriction of a split torus through a finite separable extension, we may assume that $G = \R_{k'/k}(\Gm^n)$
for some finite separable extension $k'/k$ and some $n \ge 1$. Then we need to show that for any open subgroup $U \subset (k')^{\times}$, $(k')^{\times}/U$ is finitely generated. 

Since $(k')^{\times}/\calO_{k'}^{\times} \simeq \Z$ is finitely generated, it suffices to show that for any non-archimedean local field $k$, and any open subgroup $U \subset \calO_k^{\times}$, the group $\calO_k^{\times}/U$ is finitely generated. In fact, this group is finite because $\calO_k^{\times}$ is compact.
\end{proof}

\begin{proposition}
\label{G(k)_pro*---->G(k)*isom}
Let $k$ be a local field of characteristic $p > 0$. For any affine commutative $k$-group scheme  $G$ of finite type, the
continuous restriction map $(G(k)_{{\rm{pro}}})^D \rightarrow \Hom_{{\rm{cts}}}(G(k), \Q/\Z)$ is an $($algebraic, not necessarily topological$)$ isomorphism. 
\end{proposition}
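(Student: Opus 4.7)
The plan is to separate the trivial injectivity from the real content, which is surjectivity, and reduce the latter to the statement that every continuous homomorphism $\phi \colon G(k) \to \Q/\Z$ (with $\Q/\Z$ discrete) has finite image. Injectivity is immediate: the canonical map $G(k) \to G(k)_{\pro}$ has dense image by construction of the profinite completion, and continuous homomorphisms into the Hausdorff target $\Q/\Z$ are determined by their values on a dense subset. For the reduction on the surjectivity side, note that since $G(k)_{\pro}$ is profinite and $\Q/\Z$ is discrete, any continuous homomorphism $G(k)_{\pro} \to \Q/\Z$ has finite image (as the continuous image of a compact set in a discrete space is finite); conversely, a continuous $\phi \colon G(k) \to \Q/\Z$ with finite image factors through a finite (in particular profinite) quotient of $G(k)$ and so extends uniquely to $G(k)_{\pro}$ by the universal property of profinite completion. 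Thus surjectivity of the restriction map is equivalent to finiteness of $\phi(G(k))$ for every continuous $\phi$.

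I would first dispatch the almost-torus case, since it is where genuine input enters. Given $\phi \colon G(k) \to \Q/\Z$ continuous with discrete target, the subgroup $\ker(\phi) \subset G(k)$ is open. By Lemma \ref{G(k)modopenfg}, the quotient $G(k)/\ker(\phi)$ is finitely generated. Since this quotient embeds into the torsion group $\Q/\Z$, it is a finitely generated torsion abelian group, hence finite, so $\phi(G(k))$ is finite, as desired.

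For general $G$, I would reduce to the almost-torus case by devissage. Apply Lemma \ref{affinegroupstructurethm} to obtain a short exact sequence
\[
1 \longrightarrow H \longrightarrow G \longrightarrow U \longrightarrow 1
\]
with $H$ an almost-torus and $U$ split unipotent over $k$. The closed immersion $H \hookrightarrow G$ identifies the subspace topology on $H(k)$ with its natural topology, so $\phi|_{H(k)}$ is continuous, and by the almost-torus case its image is finite of some exponent $m$. Then $m\phi$ kills $H(k)$ and descends to a homomorphism on $G(k)/H(k)$, which injects into $U(k)$ via the long exact sequence. Since $U$ admits a composition series with $\Ga$-quotients and $\Ga(k) = k$ has exponent $p = {\rm{char}}(k)$, an induction on the length of this series shows that $U(k)$ has finite exponent $p^s$ for some $s$. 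Therefore $p^s m \phi = 0$, so the image of $\phi$ is contained in the $(p^s m)$-torsion subgroup of $\Q/\Z$, which is a finite cyclic group.

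The only real obstacle is the almost-torus case, whose content is packaged into Lemma \ref{G(k)modopenfg}; everything else is formal, combining the structure theory of Lemma \ref{affinegroupstructurethm} with the elementary fact that the additive group of $k$ has exponent $p$.
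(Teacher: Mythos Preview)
Your overall strategy matches the paper's, and the d\'evissage via Lemma~\ref{affinegroupstructurethm} together with the finite-exponent argument for $U(k)$ is correct and essentially identical to the paper's. However, there is a gap in the almost-torus step: you assert that $\ker(\phi)$ is open because $\Q/\Z$ is discrete, but in this proposition $\Q/\Z$ carries the subspace topology inherited from $\mathbf{R}/\Z$ (the paper states this explicitly in its proof), so $\ker(\phi)$ is a priori only closed. The paper fills this gap using Lemma~\ref{fundamentalsystem}: the group $G(k)$ is locally profinite, hence has a neighborhood basis of the identity consisting of profinite open subgroups, and since $\Q/\Z \subset \mathbf{R}/\Z$ contains no nontrivial subgroup in a small neighborhood of $0$, continuity of $\phi$ forces some such open subgroup into $\ker(\phi)$, making the kernel open. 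Once that is established, your use of Lemma~\ref{G(k)modopenfg} to conclude that $G(k)/\ker(\phi)$ is finitely generated torsion, hence finite, proceeds exactly as in the paper. Without the missing step, you have only proved surjectivity onto the a priori smaller target $\Hom_{\rm{cts}}(G(k), (\Q/\Z)_{\rm disc})$.
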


\begin{proof}
The map $G(k)_{{{\rm{pro}}}}^D \rightarrow \Hom_{{\rm{cts}}}(G(k), \Q/\Z)$ is clearly injective. To see that it is surjective, it is equivalent to show that any continuous homomorphism $\phi:  G(k) \rightarrow \Q/\Z$
(where $\Q/\Z$ is given the subspace topology from $\mathbf{R}/\Z$) factors through $(1/n)\Z/\Z$ for some positive integer $n$. Indeed, then $\ker \phi$ is a closed subgroup of finite index and hence $\phi$ factors
(continuously) through $G(k)_{\rm{pro}}$. 

Since $\Q/\Z$ has no nontrivial subgroups contained in a small neighborhood of $0$, Lemmas \ref{fundamentalsystem} and \ref{G(k)modopenfg} imply that for any almost-torus $G$, any continuous homomorphism $\phi:  G(k) \rightarrow \Q/\Z$ 
(where $\Q/\Z$ is viewed with its subspace topology from $\mathbf{R}/\Z$)
factors through some $(1/n)\Z/\Z$, as needed.

Now we treat general $G$. Given an arbitrary affine commutative $k$-group scheme $G$ of finite type, we have, by Lemma \ref{affinegroupstructurethm}, an exact sequence
\[
1 \longrightarrow H \longrightarrow G \longrightarrow U \longrightarrow 1
\]
with $H$ an almost-torus and $U$ split unipotent. Let $\phi:  G(k) \rightarrow \Q/\Z$ be a continuous homomorphism. 
By the case of almost-tori, $\phi|_{H(k)}$ factors through some $(1/m)\Z/\Z$. Since $G(k)/H(k) \hookrightarrow U(k)$ has finite exponent
(as $U$ does, since ${\rm{char}}(k) > 0$; this is the place in the proof where
we use the avoidance of characteristic 0), it follows that $\phi$ factors through $(1/n)\Z/\Z$ for some positive multiple $n$ of $m$.
\end{proof}

\begin{remark}
\label{remarkchar0localdualityH^2(G^)}
Proposition \ref{G(k)_pro*---->G(k)*isom} is false in characteristic $0$. Indeed, if $G = \Ga$ then $k_{{{\rm{pro}}}} = 0$ (because $k$ is divisible),
yet $\Hom_{{\rm{cts}}}(k, \Q/\Z) \neq 0$, as may already be seen in the case $k = \Q_p$, for which we have the canonical continuous isomorphism between $\Q_p/\Z_p$ and the $p$-primary part of $\Q/\Z$. Proposition \ref{G(k)_pro*---->G(k)*isom} does remain true in characteristic $0$, however, if we assume that $G$ is an almost-torus, as the proof given above goes through in that case.
\end{remark}

\begin{lemma}
\label{dualstillexact}
Given an exact sequence of continuous maps
\[
A \longrightarrow B \longrightarrow C \longrightarrow E
\]
of Hausdorff topological abelian groups all of which are either (i) compact, or (ii) locally compact and second-countable, then the Pontryagin dual diagram 
\[
C^D \longrightarrow B^D \longrightarrow A^D
\] 
is also exact.
\end{lemma}

\begin{proof}
Consider $\phi \in B^D$ whose restriction to $A$ vanishes, so $\phi \in (B/\im(A))^D$.  We want to show that $\phi$ arises from $C^D$.
 If the continuous inclusion $B/\im(A) \hookrightarrow C$ is a homeomorphism onto a closed subgroup, then $\phi$ extends to an element of $C^D$ and so we would be done. If $B$ is compact, then so is $B/{\rm{im}}(A)$, hence this map is a homeomorphism onto a closed subgroup, since the source is compact and the target Hausdorff. Otherwise, $B$ is second-countable by hypothesis, and the inclusion $B/\im(A) \hookrightarrow C$ has closed image, since this image is the kernel of a continuous map to a Hausdorff group. That this map is a homeomorphism onto its image then follows from Lemma \ref{isom=homeo}. This proves the claim and the proposition.
\end{proof}

\begin{theorem}\label{H^2(G^)altori} $($Theorem $\ref{localdualityH^2(G^)}$$)$ For $k$ a local field of positive characteristic, and $G$ an affine commutative $k$-group scheme of finite type, the cohomology group ${{\rm{H}}}^2(k, \widehat{G})$ is torsion, and the cup product pairing ${{\rm{H}}}^2(k, \widehat{G}) \times {\rm{H}}^0(k, G) \rightarrow {{\rm{H}}}^2(k, \mathbf{G}_m) \xrightarrow[\sim]{\inv} \Q/\Z$ induces a functorial continuous perfect pairing of locally compact Hausdorff groups 
\[
{{\rm{H}}}^2(k, \widehat{G}) \times {\rm{H}}^0(k, G)_{{\rm{pro}}} \rightarrow \Q/\Z,
\]
where ${\rm{H}}^2(k, \widehat{G})$ is discrete.
\end{theorem}

\begin{proof}
Lemma \ref{H^2(G^)istorsion} ensures that the group ${\rm{H}}^2(k, \widehat{G})$ is torsion. The continuity of the cup product pairing, and its extension to a pairing ${\rm{H}}^2(k, \widehat{G}) \times {\rm{H}}^0(k, G)_{\pro} \rightarrow \Q/\Z$, is Lemma \ref{G(k)H^2(G^)continuous}. It remains to show that this last pairing is perfect. For this, we first make some preliminary remarks. For any affine commutative $k$-group scheme $G$ of finite type, the continuous map 
of profinite groups $G(k)_{{\rm{pro}}} \rightarrow {\rm{H}}^2(k, \widehat{G})^D$ is an isomorphism (algebraically, or equivalently
topologically) if and only if the dual map ${\rm{H}}^2(k, \widehat{G}) \rightarrow (G(k)_{{\rm{pro}}})^D$ (between discrete groups) is
bijective. Since $(G(k)_{{\rm{pro}}})^D \xrightarrow{\sim} \Hom_{{\rm{cts}}}(G(k), \Q/\Z)$ as groups (not necessarily as topological groups)
by Proposition \ref{G(k)_pro*---->G(k)*isom}, 
we see that Theorem \ref{localdualityH^2(G^)} 
is equivalent to bijectivity of the natural map ${\rm{H}}^2(k, \widehat{G}) \rightarrow \Hom_{{\rm{cts}}}(G(k), \Q/\Z)$. This latter
reformulation will be used below without comment.

We first prove the proposition when $G$ is an almost-torus. After harmlessly modifying $G$, we may assume by Lemma \ref{almosttorus}(iv) that there is an isogeny $\R_{k'/k}(T') \times B \twoheadrightarrow G$ for some finite $k$-group scheme $B$, some finite separable extension $k'/k$, and some split $k'$-torus $T'$. For notational convenience, denote $\R_{k'/k}(T') \times B$ by $X$. Let $A : = {{\rm{ker}}}(X \rightarrow G)$. Then we have an exact sequence
\[
1 \longrightarrow A \longrightarrow X \longrightarrow G \longrightarrow 1.
\]
In the commutative diagram (\ref{proofH^2(G^)dualityalmosttorisequence1}) below, the top row comes from Proposition \ref{hatisexact} and the maps in the bottom row come from the fact that all of the maps in the ``undualized'' exact sequence are continuous by Proposition \ref{topcohombasics}(v). The map from ${\rm{H}}^1(k, A)^D$ is well-defined because ${\rm{H}}^1(k, A)^D = \Hom_{\rm{cts}}(A, \Q/\Z)$ by Lemma \ref{H^1finiteexponent}. We claim that the indicated maps are isomorphisms and that the bottom row is exact at ${\rm{H}}^1(k, A)^D$ and $\Hom_{{\rm{cts}}}(G(k), \Q/\Z)$. A diagram chase would then give the (algebraic) isomorphism property for the middle vertical map, establishing Theorem \ref{H^2(G^)altori} for $G$.
\begin{equation}
\label{proofH^2(G^)dualityalmosttorisequence1}
\begin{tikzcd}[column sep = tiny]
{\rm{H}}^1(k, \widehat{X}) \arrow{r} \isoarrow{d} & {\rm{H}}^1(k, \widehat{A}) \isoarrow{d} \arrow{r} & {\rm{H}}^2(k, \widehat{G}) \arrow{d} \arrow{r} &  {\rm{H}}^2(k, \widehat{X}) \isoarrow{d} \arrow{r} & {\rm{H}}^2(k, \widehat{A}) \isoarrow{d} \\
{\rm{H}}^1(k, X)^D \arrow{r} & {\rm{H}}^1(k, A)^D \arrow{r} & \Hom_{{\rm{cts}}}(G(k), \Q/\Z) \arrow{r} & \Hom_{{\rm{cts}}}(X(k), \Q/\Z) \arrow{r} & \Hom_{{\rm{cts}}}(A(k), \Q/\Z)
\end{tikzcd}
\end{equation}

The second and the fifth vertical arrows in (\ref{proofH^2(G^)dualityalmosttorisequence1}) 
are (topological) isomorphisms by local duality for finite 
commutative $k$-group schemes. The fourth is an algebraic (perhaps not topological)
isomorphism by local duality for finite 
commutative $k$-group schemes and Lemma \ref{H^2(G^)dualityspecialtori}.
The first vertical arrow is an isomorphism by Lemma \ref{H^1localdualityalmosttori}. Finally, the bottom row is exact at ${\rm{H}}^1(k, A)^D$ and $\Hom_{{\rm{cts}}}(G(k), \Q/\Z)$ by Lemma \ref{dualstillexact}. This completes the proof of the proposition for almost-tori. 

In order to treat the general case, we need the following lemma.

\begin{lemma}
\label{localdualityH^2(G^)devissage}
Consider a short exact sequence
\[
1 \longrightarrow H \longrightarrow G \longrightarrow U \longrightarrow 1
\]
of affine commutative $k$-group schemes of finite type such that $U$ is split unipotent. If 
Theorem $\ref{H^2(G^)altori}$ holds for $H$ and $U$, and $H$ is either split unipotent or an almost-torus, then Theorem $\ref{H^2(G^)altori}$ holds for $G$.
\end{lemma}

\begin{proof}
Using Proposition \ref{hatisexact}, we have the commutative diagram (\ref{anothercommdiagram17}) below, with exact top row by Proposition \ref{H^3(G^)=0}, the second and fourth vertical arrows isomorphisms by hypothesis, and the first vertical arrow an isomorphism when $H$ is an almost-torus by Lemma \ref{H^1localdualityalmosttori}, and when $H$ is split unipotent by Proposition \ref{unipotentcohomology}. The first map in the bottom row is well-defined because the map $U(k) \rightarrow {\rm{H}}^1(k, H)$ is continuous by Proposition \ref{topcohombasics}(v), and because ${\rm{H}}^1(k, H)^D = \Hom_{\rm{cts}}({\rm{H}}^1(k, H), \Q/\Z)$ by Lemma \ref{H^1finiteexponent}. Further, the bottom row of (\ref{anothercommdiagram17}) is exact by Lemma \ref{dualstillexact}. A simple diagram chase now proves the lemma.
\begin{equation}
\label{anothercommdiagram17}
\begin{tikzcd}[column sep = tiny]
{\rm{H}}^1(k, \widehat{H}) \isoarrow{d} \arrow{r} & {\rm{H}}^2(k, \widehat{U}) \isoarrow{d} \arrow{r} & {\rm{H}}^2(k, \widehat{G}) \arrow{d} \arrow{r} & {\rm{H}}^2(k, \widehat{H}) \isoarrow{d} \arrow{r} & 0 \\
{\rm{H}}^1(k, H)^D \arrow{r} & \Hom_{{\rm{cts}}}(U(k), \Q/\Z) \arrow{r} & \Hom_{{\rm{cts}}}(G(k), \Q/\Z) \arrow{r} & \Hom_{{\rm{cts}}}(H(k), \Q/\Z)
\end{tikzcd}
\end{equation}
\end{proof}

Returning to the proof of Theorem \ref{H^2(G^)altori}, induction together with Proposition \ref{k_pro=H^2(k,Ga^)*} and Lemma \ref{localdualityH^2(G^)devissage} complete the proof when $G$ is split unipotent. For the general case, suppose that $G$ is an affine commutative $k$-group scheme of finite type. By Lemma \ref{affinegroupstructurethm} we have an exact sequence
\[
1 \longrightarrow H \longrightarrow G \longrightarrow U \longrightarrow 1
\]
with $H$ an almost-torus and $U$ split unipotent. By Lemma \ref{localdualityH^2(G^)devissage} and the already-treated cases of almost-tori and split unipotent groups, Theorem \ref{H^2(G^)altori} holds for $G$.
\end{proof}

\section{Duality between ${\rm{H}}^1(k, G)$ and ${\rm{H}}^1(k, \widehat{G})$}\label{h1dualgen}

Recall that Theorem \ref{H^1localduality} is only well-posed so far for almost-tori, since if $G$ is not an almost-torus, then $\widehat{G}$ is not representable (Proposition \ref{hatrepresentable}) and so we have not yet defined a topology on
${\rm{H}}^1(k, \widehat{G})$.  In the case of almost-tori, Theorem \ref{H^1localduality} has been proved 
in Lemma \ref{H^1localdualityalmosttori}.  To go beyond that, the first order of business is to define
a reasonable topology on ${\rm{H}}^1(k, \widehat{G})$ for arbitrary affine commutative group schemes of finite type over the local function field $k$. In this section we define such a topology and then prove Theorem \ref{H^1localduality}.

Let $G$ be an affine commutative $k$-group scheme of finite type, so Lemma \ref{affinegroupstructurethm} furnishes an exact sequence
\begin{equation}
\label{H^1dualityproofsequence}
1 \longrightarrow H \longrightarrow G \longrightarrow U \longrightarrow 1
\end{equation}
with $H$ an almost-torus and $U$ split unipotent. 
By Proposition \ref{hatisexact}, we have an fppf-exact sequence of dual sheaves
$$1 \rightarrow \widehat{U} \rightarrow \widehat{G} \rightarrow \widehat{H} \rightarrow 1$$ 
Since ${\rm{H}}^1(k, \widehat{U}) = 0$ (Proposition \ref{unipotentcohomology}(iii)), the map
${\rm{H}}^1(k, \widehat{G}) \rightarrow {\rm{H}}^1(k, \widehat{H})$ is injective.

Recall that $\widehat{H}$ is represented by a locally finite type commutative $k$-group scheme (Proposition \ref{hatrepresentable}), so we obtain a topology on ${\rm{H}}^1(k, \widehat{H})$, by the methods in \cite{ces1} as discussed in \S \ref{sectiontopologyoncohomology}, that is even second-countable and locally profinite (Proposition \ref{secondcountable}).
We wish to give ${\rm{H}}^1(k, \widehat{G})$ the subspace topology from its inclusion into
${\rm{H}}^1(k, \widehat{H})$.  

In the commutative diagram of pairings
\[
\begin{tikzcd}
{\rm{H}}^1(k, G) \arrow[phantom, r, "\times"] & {\rm{H}}^1(k, \widehat{G}) \arrow{r} \arrow[d, hookrightarrow] & \Q/\Z \arrow[d, equals] \\
{\rm{H}}^1(k, H) \arrow[u, twoheadrightarrow] \arrow[phantom, r, "\times"] & {\rm{H}}^1(k, \widehat{H}) \arrow{r} & \Q/\Z
\end{tikzcd}
\]
the continuous surjection along the left side is a topological quotient map
(due to Proposition \ref{secondcountable} and Lemma \ref{isom=homeo}).  Thus, if we define the topological
group structure on ${\rm{H}}^1(k, \widehat{G})$ as a subgroup of ${\rm{H}}^1(k, \widehat{H})$ for a fixed choice
of (\ref{H^1dualityproofsequence}), then 
continuity of the pairing ${\rm{H}}^1(k, H) \times {\rm{H}}^1(k, \widehat{H}) \rightarrow \Q/\Z$ (Lemma \ref{cupproductctsH^1almosttori}) implies that of ${\rm{H}}^1(k, G) \times {\rm{H}}^1(k, \widehat{G}) \rightarrow \Q/\Z$.  (Recall that it does not matter if we view $\Q/\Z$ discretely or with its subspace
topology from $\mathbf{R}/\Z$ for this continuity because the cohomologies involved have
finite exponent, due to Lemma \ref{H^1finiteexponent}.)  

There are two immediate problems:   
\begin{itemize}
\item[(i)] is ${\rm{H}}^1(k, \widehat{G}) \hookrightarrow {\rm{H}}^1(k, \widehat{H})$ a closed subgroup
(and therefore locally compact)? 
\item[(ii)] is this topology independent of the choice of sequence (\ref{H^1dualityproofsequence})? 
\end{itemize}
To obtain an affirmative answer to (ii), we will use the topology defined by a fixed choice
of such sequence to make sense of Theorem \ref{H^1localduality} for $G$ and to actually {\em prove}
the Theorem for $G$. Since the topology on ${\rm{H}}^1(k, G)$ is intrinsic, it will then follow that the topology 
just defined on ${\rm{H}}^1(k, \widehat{G})$ using a choice of (\ref{H^1dualityproofsequence}) is the Pontryagin dual topology, and hence this topology 
is independent of the choice of (\ref{H^1dualityproofsequence})! 
Note in particular that if $G$ is an almost-torus, then this method of defining
an intrinsic topology on ${\rm{H}}^1(k, \widehat{G})$ would have to recover the topology as already defined earlier 
in such cases, since we can use the choice $H = G$ and $U = 0$. 
We now settle problem (i): 

\begin{lemma} 
\label{H^1(G^)closed}
The subgroup ${\rm{H}}^1(k, \widehat{G}) \subset {\rm{H}}^1(k, \widehat{H})$ arising from a sequence ($\ref{H^1dualityproofsequence}$) is closed
$($and hence is locally profinite and second-countable, by Proposition $\ref{secondcountable}$ applied to $H$$)$.
\end{lemma}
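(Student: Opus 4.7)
The plan is to realize ${\rm{H}}^1(k, \widehat{G})$ as the annihilator of an explicit subgroup $K \subset {\rm{H}}^1(k, H)$ under the perfect Pontryagin-duality pairing of Lemma \ref{H^1localdualityalmosttori} for the almost-torus $H$; this description makes the closedness manifest. First I would apply Proposition \ref{hatisexact} to (\ref{H^1dualityproofsequence}), obtaining the dual short exact sequence $1 \to \widehat{U} \to \widehat{G} \to \widehat{H} \to 1$, and take cohomology. Since ${\rm{H}}^1(k, \widehat{U}) = 0$ by Proposition \ref{unipotentcohomology}(iii) (as $U$ is split unipotent), this yields
\[
{\rm{H}}^1(k, \widehat{G}) = \ker\!\bigl(\delta^*: {\rm{H}}^1(k, \widehat{H}) \to {\rm{H}}^2(k, \widehat{U})\bigr).
\]
In parallel, the vanishing of ${\rm{H}}^1(k, U)$ from Proposition \ref{unipotentcohomology}(ii) combined with the long exact sequence of (\ref{H^1dualityproofsequence}) yields ${\rm{H}}^1(k, H)/K \xrightarrow{\sim} {\rm{H}}^1(k, G)$, where $K := \mathrm{image}\bigl(\delta: U(k) \to {\rm{H}}^1(k, H)\bigr)$.

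Next I would prove that $\ker \delta^* = K^\perp$ inside ${\rm{H}}^1(k, \widehat{H})$, where $K^\perp$ denotes the annihilator under the pairing $\langle\,,\,\rangle_H$ from Lemma \ref{H^1localdualityalmosttori}. This rests on the $\delta$-functoriality of cup product applied to the mutually dual sequences above: for $u \in U(k)$ and $\xi' \in {\rm{H}}^1(k, \widehat{H})$ one has the identity
\[
\langle \delta(u), \xi' \rangle_H = \pm \langle u, \delta^*(\xi') \rangle_U
\]
in ${\rm{H}}^2(k, \Gm) \cong \Q/\Z$, where $\langle\,,\,\rangle_U$ is the local-duality pairing of Proposition \ref{H^2(G^)altori} for the split unipotent $U$. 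The containment $\ker\delta^* \subseteq K^\perp$ is then immediate from this identity. For the reverse, given $\xi' \in K^\perp$ one has $\langle u, \delta^*(\xi') \rangle_U = 0$ for every $u \in U(k)$; by Proposition \ref{H^2(G^)altori}, $\delta^*(\xi')$ corresponds to a continuous character of the profinite group $U(k)_{\pro}$, and a continuous character killing the dense image of $U(k)$ must vanish, forcing $\delta^*(\xi') = 0$.

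Finally, $K^\perp$ is closed in ${\rm{H}}^1(k, \widehat{H})$ as the intersection over $\beta \in K$ of the preimages of $0$ under the continuous maps $\xi' \mapsto \langle \beta, \xi' \rangle_H$ (continuity via Lemma \ref{cupproductctsH^1almosttori}); local profiniteness and second countability of ${\rm{H}}^1(k, \widehat{G})$ then follow at once from Proposition \ref{secondcountable} applied to the almost-torus $H$. The main obstacle will be the cup-product/connecting-map compatibility in the middle step; although a standard formal consequence of the $\delta$-functoriality of the evaluation pairing $G \otimes \widehat{G} \to \Gm$ restricted along the mutually dual exact sequences, stating and verifying it cleanly---most directly via an explicit cocycle computation exploiting the \v{C}ech--to--derived comparisons of Propositions \ref{cech=derivedG} and \ref{cech=derivedG^}---is the one non-formal technical input required.
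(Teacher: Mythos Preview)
Your proof is correct and follows essentially the same route as the paper's: both identify ${\rm{H}}^1(k,\widehat{G}) = \ker\delta^*$ with an annihilator under the $H$-pairing and invoke the injectivity of ${\rm{H}}^2(k,\widehat{U}) \to U(k)^D$ (Proposition~\ref{H^2(G^)altori}) to close the loop. Your presentation is slightly more direct---you compute ${\rm{H}}^1(k,\widehat{G}) = K^\perp$ outright, whereas the paper characterizes the closure as a double annihilator and shows it equals ${\rm{H}}^1(k,\widehat{G})$---and you are explicit about the $\delta$-functoriality identity $\langle \delta u,\xi'\rangle_H = \pm\langle u,\delta^*\xi'\rangle_U$, which the paper uses implicitly (it is what guarantees that the characters $\phi_u(\xi') = \langle u,\delta^*\xi'\rangle_U$ are continuous on ${\rm{H}}^1(k,\widehat{H})$, a point the paper does not spell out).
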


\begin{proof} It suffices to show that for any $\alpha \in {\rm{H}}^1(k, \widehat{H})$ such that $\phi(\alpha) = 0$ for every $\phi \in {\rm{H}}^1(k, \widehat{H})^D$ satisfying $\phi|_{{\rm{H}}^1(k, \widehat{G})}=0$, 
necessarily $\alpha \in {\rm{H}}^1(k, \widehat{G})$.

We want to show that $\alpha$ maps to $0$ in ${\rm{H}}^2(k, \widehat{U})$.
By Theorem \ref{H^2(G^)altori}, the map ${\rm{H}}^2(k, \widehat{U}) \rightarrow U(k)^D$ arising from the local duality (i.e., cup product) pairing is an inclusion, so it suffices to show that $\alpha$ (more precisely, its image inside ${\rm{H}}^2(k, \widehat{U})$) pairs trivially with $U(k)$. But $U(k)$ pairs trivially with ${\rm{H}}^1(k, \widehat{G})$, 
which is to say that the image of the natural map
$U(k) \rightarrow {\rm{H}}^2(k, \widehat{U})^* \rightarrow {\rm{H}}^1(k, \widehat{H})^*$
consists of elements $\phi$ vanishing on ${\rm{H}}^1(k, \widehat{G})$. Hence, $U(k)$ pairs trivially with $\alpha$ by the hypothesis on $\alpha$. 
\end{proof}

\begin{theorem}
\label{H^1dualityprop} $($Theorem $\ref{H^1localduality}$$)$
Let $G$ be an affine commutative group scheme of finite type over a local field $k$ of positive characteristic. Then for the topology on ${\rm{H}}^1(k, \widehat{G})$ arising from any choice of short exact sequence $($$\ref{H^1dualityproofsequence}$$)$ with $H$ an almost-torus and $U$ split unipotent, Theorem $\ref{H^1localduality}$ holds for $G$. That is, cup product yields a perfect pairing between locally compact Hausdorff groups of finite exponent
\[
{\rm{H}}^1(k, G) \times {\rm{H}}^1(k, \widehat{G}) \rightarrow {\rm{H}}^2(k, \Gm) \xrightarrow[\sim]{{\rm{inv}}} \Q/\Z.
\]
In particular, the topology on ${\rm{H}}^1(k, \widehat{G})$ is independent of the choice of $($$\ref{H^1dualityproofsequence}$$)$.
\end{theorem}

\begin{proof}
That the groups ${\rm{H}}^1(k, G)$ and ${\rm{H}}^1(k, \widehat{G})$ have finite exponent follows from Lemma \ref{H^1finiteexponent}. As discussed above, the continuity of the cup product pairing for $G$ follows from the corresponding continuity for the almost-torus $H$, which is Lemma \ref{cupproductctsH^1almosttori}. As was also discussed above, the independence of the topology on ${\rm{H}}^1(k, \widehat{G})$ from the choice of sequence (\ref{H^1dualityproofsequence}) follows once we prove that the cup product pairing is perfect, since the topology on ${\rm{H}}^1(k, G)$ is completely intrinsic, defined in \S \ref{sectiontopologyoncohomology}. It therefore only remains to prove this perfectness.

Since the group ${\rm{H}}^1(k, \widehat{G})$ will be locally compact, Hausdorff, and second-countable, it suffices, by Lemma \ref{isom=homeo}, to show that the continuous map ${\rm{H}}^1(k, \widehat{G}) \rightarrow {\rm{H}}^1(k, G)^D$ is an algebraic isomorphism. We claim that we have a commutative diagram
\[
\begin{tikzcd}
0 \arrow{r} & {\rm{H}}^1(k, \widehat{G}) \arrow{r} \arrow{d} & {\rm{H}}^1(k, \widehat{H}) \isoarrow{d} \arrow{r} & {\rm{H}}^2(k, \widehat{U}) \arrow[d, hookrightarrow] \\
0 \arrow{r} & {\rm{H}}^1(k, G)^D \arrow{r} & {\rm{H}}^1(k, H)^D \arrow{r} & U(k)^D
\end{tikzcd}
\]
in which the top row is exact, the bottom row is exact at ${\rm{H}}^1(k, G)^D$, and the indicated maps are isomorphisms or inclusions. Assuming this, a simple diagram chase shows that the first vertical arrow is an isomorphism. As we have already seen, the top row is exact due to Propositions \ref{hatisexact} and \ref{unipotentcohomology}(iii). The second vertical arrow is an isomorphism by Lemma \ref{H^1localdualityalmosttori}. The third vertical arrow is an inclusion by Theorem \ref{H^2(G^)altori}.  

The maps in the bottom row are well-defined because the ``undualized'' maps are continuous by Proposition \ref{topcohombasics}(v). Finally, the bottom row is exact at ${\rm{H}}^1(k, G)^D$ because the map ${\rm{H}}^1(k, H) \rightarrow {\rm{H}}^1(k, G)$ is surjective due to the vanishing of ${\rm{H}}^1(k, U)$ (Proposition \ref{unipotentcohomology}(ii)).
\end{proof}

Before ending this section, let us a make a couple more observations about the topology on ${\rm{H}}^1(k, \widehat{G})$. First, lest the reader become complacent, we note that it is not in general $\delta$-functorial. See Remark \ref{discontinuous}. On the other hand, we observe that ${\rm{H}}^1(k, \widehat{(\cdot)})$ {\em is} functorial: 

\begin{proposition}
\label{H^1(G^)functorial}
For a homomorphism $G \rightarrow G'$ of affine commutative group schemes of finite type over a local field $k$, the induced map ${\rm{H}}^1(k, \widehat{G'}) \rightarrow {\rm{H}}^1(k, \widehat{G})$ is continuous.
\end{proposition}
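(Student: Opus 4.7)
The plan is to deduce this continuity from the local duality result established in Proposition \ref{H^1dualityprop}, which identifies the topology on ${\rm{H}}^1(k, \widehat{G})$ with the Pontryagin dual topology coming from the perfect cup product pairing with ${\rm{H}}^1(k, G)$ (whose topology is intrinsic). The main observation is that cup product is functorial in the coefficient sheaves, so a map $f: G \to G'$ intertwines the two pairings via $f_*$ and $\widehat{f}^*$, and the Pontryagin dual of a continuous homomorphism is automatically continuous.

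More precisely, I would first invoke Proposition \ref{H^1dualityprop} applied to $G$ and $G'$ separately to obtain topological isomorphisms
\[
{\rm{H}}^1(k, \widehat{G}) \xrightarrow{\sim} {\rm{H}}^1(k, G)^D, \qquad {\rm{H}}^1(k, \widehat{G'}) \xrightarrow{\sim} {\rm{H}}^1(k, G')^D,
\]
induced by the cup product pairings and the invariant isomorphism ${\rm{H}}^2(k, \Gm) \simeq \Q/\Z$. Next, by Proposition \ref{topcohombasics}(ii), the map $f_* := {\rm{H}}^1(f) : {\rm{H}}^1(k, G) \to {\rm{H}}^1(k, G')$ is continuous, so its Pontryagin dual $f_*^D : {\rm{H}}^1(k, G')^D \to {\rm{H}}^1(k, G)^D$ is continuous as well.

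The key step is then to check that, under the above identifications, the map $\widehat{f}^* : {\rm{H}}^1(k, \widehat{G'}) \to {\rm{H}}^1(k, \widehat{G})$ coincides with $f_*^D$. This is a consequence of the functoriality of cup product in fppf cohomology: for every $\alpha \in {\rm{H}}^1(k, G)$ and $\beta' \in {\rm{H}}^1(k, \widehat{G'})$, one has the identity
\[
f_*(\alpha) \cup \beta' \;=\; \alpha \cup \widehat{f}^*(\beta') \quad \text{in } {\rm{H}}^2(k, \Gm),
\]
obtained by applying the bifunctoriality of the cup product pairing to the natural evaluation map $G \otimes^{\rm L} \widehat{G} \to \Gm$ and its analogue for $G'$, together with the compatibility of $f$ and $\widehat{f}$ under the pairing $G \otimes^{\rm L} \widehat{G'} \to \Gm$. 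Combining the three displayed facts gives the required continuity of $\widehat{f}^*$.

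The only conceptual hurdle is confirming that the topology on ${\rm{H}}^1(k, \widehat{G})$ defined via a choice of almost-torus/split-unipotent decomposition in the discussion preceding Proposition \ref{H^1dualityprop} really is the Pontryagin dual topology; this is exactly what Proposition \ref{H^1dualityprop} asserts (the continuous bijection ${\rm{H}}^1(k, \widehat{G}) \to {\rm{H}}^1(k, G)^D$ is a homeomorphism by Lemma \ref{isom=homeo}, using second-countability from Lemma \ref{H^1(G^)closed}). Once that identification is in hand, the rest of the argument is a formal diagram chase with no further input needed.
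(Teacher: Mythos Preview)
Your argument is correct and takes a genuinely different route from the paper's proof. The paper does not invoke Proposition~\ref{H^1dualityprop} at all; instead it works directly from the \emph{definition} of the topology on ${\rm H}^1(k,\widehat{G})$ as the subspace topology inside ${\rm H}^1(k,\widehat{H})$ for a chosen almost-torus $H\subset G$. The main work in the paper's proof is therefore a construction: given $f:G\to G'$, build a commutative diagram
\[
\begin{tikzcd}
1 \arrow{r} & H \arrow{r} \arrow{d} & G \arrow{r} \arrow{d}{f} & U \arrow{r} \arrow{d} & 1 \\
1 \arrow{r} & H' \arrow{r} & G' \arrow{r} & U' \arrow{r} & 1
\end{tikzcd}
\]
with $H,H'$ almost-tori and $U,U'$ split unipotent. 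Once this is in hand, continuity of $\widehat{f}^*$ follows by restriction from the continuity of ${\rm H}^1(k,\widehat{H'})\to{\rm H}^1(k,\widehat{H})$, which is Proposition~\ref{topcohombasics}(ii) for the representable $\widehat{H},\widehat{H'}$.

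Your approach bypasses this construction entirely by using the \emph{intrinsic} characterization of the topology on ${\rm H}^1(k,\widehat{G})$ as the Pontryagin dual of ${\rm H}^1(k,G)$, already established in Proposition~\ref{H^1dualityprop}. This is cleaner and more conceptual, and there is no circularity since Proposition~\ref{H^1dualityprop} is proved first. The paper's approach, on the other hand, is more self-contained in the sense that it only uses the definition of the topology and not the duality theorem; it would work even in a hypothetical setting where one had the topology but not yet the perfection of the pairing. One small point: Proposition~\ref{H^1dualityprop} is stated for positive characteristic, so strictly speaking you should note that in characteristic~$0$ the groups ${\rm H}^1(k,\widehat{G})$ are discrete (as subspaces of the discrete ${\rm H}^1(k,\widehat{H})$, by Proposition~\ref{topcohombasics}(iii)) and continuity is trivial.
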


\begin{proof}
The key is to show that we have a commutative diagram of short exact sequences
\begin{equation}
\label{H^1(G^)functorialpfeqn}
\begin{tikzcd}
1 \arrow{r} & H \arrow{r} \arrow{d} & G \arrow{r} \arrow{d} & U \arrow{r} \arrow{d} & 1 \\
1 \arrow{r} & H' \arrow{r} & G' \arrow{r} & U' \arrow{r} & 1
\end{tikzcd}
\end{equation}
with $H, H'$ almost-tori and $U, U'$ split unipotent. Then the continuity follows (just by the definition of the topologies) from the continuity of the map ${\rm{H}}^1(k, \widehat{H'}) \rightarrow {\rm{H}}^1(k, \widehat{H})$ between cohomology groups of locally finite type $k$-group schemes (Proposition \ref{hatrepresentable}).

To see that we have such a commutative diagram, let $f\colon  G \rightarrow G'$ be the given map. First choose an almost-torus $H \subset G$ such that $U : = G/H$ is split unipotent (Lemma \ref{affinegroupstructurethm}). Then $f(H) \subset G'$ is an almost-torus, by Lemma \ref{almosttorusextension}. Let $\overline{G'} : = G/f(H)$, and let $\pi:  G' \rightarrow \overline{G'}$ denote the quotient map. By Lemma \ref{affinegroupstructurethm}, there is an almost-torus $\overline{H'} \subset \overline{G'}$ such that $U' : = \overline{G'}/\overline{H'}$ is split unipotent. Then $H' : = \pi^{-1}(\overline{H'})$ is an extension of $\overline{H'}$ by $f(H)$, hence is an almost-torus by Lemma \ref{almosttorusextension}, and we have the diagram (\ref{H^1(G^)functorialpfeqn}) because $f(H) \subset H'$.
\end{proof}

\chapter{Local Integral Cohomology}
\label{chapterlocalintcohom}

In this chapter we prove the integral annihilator aspects of local Tate duality mentioned in \S\ref{intro}, namely, Theorems \ref{exactannihilatorH^2(G)}, \ref{exactannihilatorH^2(G^)}, and \ref{exactannihilatorH^1(G)} (see Propositions \ref{H^2(G)G^(k)integraldualityprop}, \ref{H^2(O,G^)dualityprop}, and \ref{H^1(k,G^)/H^1(O,G^)}), generalizing the analogous classical results for finite discrete Galois modules (often stated in terms of unramified cohomology classes). Since there is no good structure theory for arbitrary affine commutative flat group schemes of finite type over discrete valuation rings, we are not able to prove results of such precision, only obtaining the classical results at all but finitely many places (by choosing a model for our group scheme over some dense open subscheme of the scheme of integers of our global field $k$, by which we mean the scheme $\Spec(\calO_k)$ when $k$ is a number field, and the smooth proper curve $X$ of which $k$ is the function field in the function field setting). This is not just an artifact of the proofs. The results really are not true at all places of $k$, but only almost all places, even if one restricts to flat affine models of the generic fiber; for an illustration of this phenomenon, see Example \ref{almostallnotall}.

The results of this chapter -- and especially the injectivity statements for the maps from cohomology of $\calO_v$ to cohomology of $k_v$ for almost all places $v$ of $k$ -- while interesting in their own right, will also play an important role in relating the cohomology of $\A$ to that of the local fields $k_v$ in \S \ref{sectionadeliclocalcohom}. Perhaps somewhat surprisingly, it seems that the injectivity of the maps ${\rm{H}}^2(\calO_v, \widehat{\mathscr{G}}) \rightarrow {\rm{H}}^2(k_v, \widehat{G})$ for almost all $v$ cannot be proved directly, but actually requires one to prove Theorem \ref{exactannihilatorH^2(G^)} in its entirety, and the proof of this result in turn depends upon first proving Theorem \ref{exactannihilatorH^1(G)}.

\section{Vanishing of ${\rm{H}}^3(\calO_v, \widehat{\mathscr{G}})$}

In this section, we prove the integral analogue of Proposition \ref{H^3(G^)=0} (Proposition \ref{H^3(O, G^)=0}). The following important lemma will play a crucial role in our proofs of the integral annihilator aspects of local duality.

\begin{lemma}
\label{spreadingoutdualsheaves}
Suppose that we have a short exact sequence
\[
1 \longrightarrow G' \longrightarrow G \longrightarrow G'' \longrightarrow 1
\]
of affine commutative group schemes of finite type over a global function field $k$. Then there is a finite set $S$ of places of $k$ such that this sequence spreads out to an exact sequence 
\[
1 \longrightarrow \mathscr{G}' \longrightarrow \mathscr{G} \longrightarrow \mathscr{G}'' \longrightarrow 1
\]
of $\mathcal{O}_S$-group schemes such that the corresponding sequence
\[
1 \longrightarrow \widehat{\mathscr{G}''} \longrightarrow \widehat{\mathscr{G}} \longrightarrow \widehat{\mathscr{G}'} \longrightarrow 1
\]
of fppf dual sheaves is exact. The same holds with this $\mathcal{O}_S$ replaced by $\prod_{v \notin S} \mathcal{O}_v$.
\end{lemma}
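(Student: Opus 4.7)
The plan is as follows. First, by standard spreading-out one may find (after enlarging $S$) affine flat $\calO_S$-group schemes $\mathscr{G}', \mathscr{G}, \mathscr{G}''$ of finite presentation whose generic fibers recover the original exact sequence and such that the spread-out sequence
\[
1 \longrightarrow \mathscr{G}' \longrightarrow \mathscr{G} \longrightarrow \mathscr{G}'' \longrightarrow 1
\]
is itself exact. Applying $\calHom_{\calO_S}(-, \Gm)$ yields the always-exact left portion
\[
1 \longrightarrow \widehat{\mathscr{G}''} \longrightarrow \widehat{\mathscr{G}} \longrightarrow \widehat{\mathscr{G}'} \longrightarrow \calExt^1_{\calO_S}(\mathscr{G}'', \Gm).
\]
To establish the desired exactness of the dual sequence it therefore suffices, after possibly enlarging $S$ once more, to prove that $\calExt^1_{\calO_S}(\mathscr{G}'', \Gm) = 0$ as an fppf sheaf on $\calO_S$.

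To verify this vanishing, I will d\'evissage using the structure theory of affine commutative groups. Lemma \ref{affinegroupstructurethm} supplies a short exact sequence $1 \to H \to G'' \to U \to 1$ over $k$ with $H$ an almost-torus and $U$ split unipotent, and Lemma \ref{almosttorus}(ii) then produces a $k$-torus $T \subset H$ with $A := H/T$ a finite commutative $k$-group scheme. After possibly enlarging $S$, all of these sequences spread out to short exact sequences of affine flat $\calO_S$-group schemes of finite presentation (with $\mathscr{T}$ an $\calO_S$-torus, $\mathscr{A}$ finite flat over $\calO_S$, and $\mathscr{U}$ an iterated extension of copies of $\Ga$). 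The long exact sequence of $\calExt_{\calO_S}(-, \Gm)$ arising from these extensions reduces the desired vanishing to establishing $\calExt^1_{\calO_S}(\mathscr{F}, \Gm) = 0$ in three cases: $\mathscr{F} = \Ga$, $\mathscr{F}$ a finite flat commutative $\calO_S$-group scheme, and $\mathscr{F}$ an $\calO_S$-torus.

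The $\Ga$ case is Proposition \ref{ext^1(Ga,Gm)=0} applied to the $\F_p$-scheme $\Spec(\calO_S)$, and the finite flat case is \cite[VIII, Prop.\,3.3.1]{sga7}. For the torus case, after further enlarging $S$ I can arrange that $\mathscr{T}$ splits over a finite \'etale $\calO_S$-algebra $B$, so $\mathscr{T}_B \simeq \Gm^n$ and hence $\calExt^1_B(\mathscr{T}_B, \Gm) = 0$ by the same reference. Since $\Spec(B) \to \Spec(\calO_S)$ is an fppf cover, fppf descent for sheaves then forces $\calExt^1_{\calO_S}(\mathscr{T}, \Gm) = 0$. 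I expect this torus step to be the main technical obstacle, as it requires justifying that the formation of $\calExt^1$ commutes with passing between the fppf sites of $\calO_S$ and $B$; this should follow from Lemma \ref{extsheaf=sheafification} identifying the Ext sheaf with the sheafification of the Ext presheaf, whose formation does commute with base change.

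Finally, the case of $\prod_{v \notin S} \calO_v$ follows from the $\calO_S$ case by base change. The canonical map $\Spec(\prod_{v \notin S} \calO_v) \to \Spec(\calO_S)$ is flat, so the pullback of the exact sequence $1 \to \mathscr{G}' \to \mathscr{G} \to \mathscr{G}'' \to 1$ remains exact; and the defining formula $\widehat{\mathscr{G}}(T) = \Hom_T(\mathscr{G}_T, \Gm)$ shows that formation of the dual sheaf commutes with pullback to the fppf site of $\prod_{v \notin S} \calO_v$. Since pullback of fppf abelian sheaves is exact, the pulled-back dual sequence over $\prod_{v \notin S} \calO_v$ is exact as well.
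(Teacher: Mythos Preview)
Your proposal is correct and follows essentially the same approach as the paper: spread out, observe that only the surjectivity of $\widehat{\mathscr{G}} \rightarrow \widehat{\mathscr{G}'}$ is at issue, reduce to $\calExt^1_{\calO_S}(\mathscr{G}'', \Gm) = 0$, and then d\'evissage to the building blocks $\Ga$, finite flat group schemes, and tori.

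Two minor comments. First, your concern about the torus step is misplaced: the reference \cite[VIII, Prop.\,3.3.1]{sga7} that you already invoke for the finite flat case in fact handles tori (indeed, groups of multiplicative type) over an arbitrary base directly, so no splitting-and-descent argument is needed. Your descent argument is correct (restriction of the $\calExt$ sheaf to the fppf cover $B$ is the $\calExt$ sheaf over $B$, exactly via Lemma \ref{extsheaf=sheafification} as you suspected), but it is reproving a special case of what the cited result already gives. Second, for $\prod_{v \notin S} \calO_v$ the paper simply remarks that the identical argument works over that base, whereas you deduce it by flat base change from the $\calO_S$ case; both are fine, and your version has the mild advantage of not requiring one to re-examine each step over the non-noetherian ring $\prod_{v \notin S} \calO_v$.
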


\begin{proof}
We will prove the lemma for $\mathcal{O}_S$; the proof for $\prod \mathcal{O}_v$ is exactly the same. Since faithful flatness of a map spreads out, the only thing that is not clear is that we may obtain surjectivity of the map $\widehat{\mathscr{G}} \rightarrow \widehat{\mathscr{G}'}$. For this, it is enough to show that $\calExt^1_{\mathcal{O}_S}(\mathscr{G}'', \Gm) = 0$ for sufficiently large $S$. We know that $G''$ admits a filtration by finite group schemes, tori, and $\Ga$. Indeed, this follows from Lemmas \ref{affinegroupstructurethm}  and \ref{almosttorus}(ii). This filtration spreads out to one by finite flat group schemes, tori, and $\Ga$ over some $\mathcal{O}_S$. It therefore suffices to show that each of these group schemes has vanishing $\calExt^1(\cdot, \Gm)$. For tori and finite flat group schemes, this follows from \cite[VIII, Prop.\,3.3.1]{sga7}, so we are reduced to the case of $\Ga$, which follows from Proposition \ref{ext^1(Ga,Gm)=0}.
\end{proof}

\begin{proposition}
\label{H^i(O,G)=0}
For a finite type group scheme $G$ over a global field $k$, and a finite type $\calO_S$-model $\mathscr{G}$ of $G$ for some finite set $S$ of places of $k$, we have for all but finitely many places $v$ of $k$ that ${\rm{H}}^i(\calO_v, \mathscr{G}) = 0$ for all $i > 1$.
\end{proposition}

\begin{proof}
This is essentially shown in the proof of \cite[Thm.\,2.18]{ces2}, but we repeat the argument here for the reader's convenience. By \cite[Lemma 6.4]{ces1}, $G$ is a closed subgroup scheme of a smooth commutative finite type group scheme $H$, and the quotient $H' := H/G$ is also smooth of finite type. We may then spread out the resulting exact sequence over $k_v$ to an exact sequence
\[
1 \longrightarrow \mathscr{G} \longrightarrow \mathscr{H} \longrightarrow \mathscr{H}' \longrightarrow 1
\]
over some $\calO_S$, with $\mathscr{H}$ and $\mathscr{H}'$ still smooth of finite type. Then, letting $\F_v$ denote the residue field of $\calO_v$, the natural map ${\rm{H}}^i(\calO_v, \mathscr{H}) \rightarrow {\rm{H}}^i(\F_v, \mathscr{H})$ is an isomorphism for $i \geq 1$ by \cite[Thm.\,11.7, 2]{briii}, and similarly for $\mathscr{H}'$. By the five lemma, the same holds for $\mathscr{G}'$, but now only for $i \geq 2$. It therefore suffices to show that ${\rm{H}}^2(\F_v, \mathscr{G}') = 0$, and this follows from \cite[Lemma 2.7(e)]{ces2}.
\end{proof}

\begin{proposition}
\label{H^3(O, G^)=0}
For a global function field $k$, an affine commutative $k$-group scheme of finite type, and an $\calO_S$-model $\mathscr{G}$ of $G$ $($for some finite set $S$ of places of $k$$)$, we have ${\rm{H}}^3(\calO_v, \widehat{\mathscr{G}}) = 0$ for all but finitely many places $v$ of $k$.
\end{proposition}

\begin{proof}
Thanks to Lemmas \ref{spreadingoutdualsheaves}, \ref{affinegroupstructurethm}, and \ref{almosttorus}(ii), we are reduced to the cases in which $G$ is either $\Ga$, finite, or a torus. The case $G = \Ga$ follows from Proposition \ref{H^3(G_a^)=0}, and the case of finite group schemes follows from Proposition \ref{H^i(O,G)=0} and Cartier duality. We are therefore reduced to the case when $G = T$ is a torus.

In this case, we may by enlarging $S$ suppose that $\mathscr{G} = \mathscr{T}$ is a torus over $\calO_S$. Then $\widehat{\mathscr{T}}$ is an \'etale $\calO_S$-group scheme, so we may take our cohomology to be \'etale. By \cite[Thm.\,11.7, 2]{briii}, we have ${\rm{H}}^3(\calO_v, \widehat{\mathscr{T}}) \simeq {\rm{H}}^3(\F_v, \widehat{\mathscr{T}})$, where $\F_v$ denotes the residue field of $\calO_v$. The latter group vanishes because finite fields have strict cohomological dimension $2$.
\end{proof}

\section{Integral annihilator aspects of duality between ${\rm{H}}^2(k_v,G)$
and ${\rm{H}}^0(k_v, \widehat{G})$}

In this section we prove Theorem \ref{exactannihilatorH^2(G)}. 

\begin{theorem}
\label{H^2(G)G^(k)integraldualityprop} $($Theorem $\ref{exactannihilatorH^2(G)}$$)$ If $k$ if a global function field, and $G$ is an affine commutative $k$-group scheme of finite type, and $\mathscr{G}$ an $\mathcal{O}_S$-model of $G$, then for all but finitely many places $v$ of $k$, we have ${\rm{H}}^2(\mathcal{O}_v, \mathscr{G}) = 0$ and the map $\widehat{\mathscr{G}}(\mathcal{O}_v) \rightarrow \widehat{G}(k_v)$ is an isomorphism. 

In particular, for such $v$ the maps ${\rm{H}}^2(\mathcal{O}_v, \mathscr{G}) \rightarrow {\rm{H}}^2(k_v, G)$ and ${\rm{H}}^0(\mathcal{O}_v, \widehat{\mathscr{G}}) \rightarrow {\rm{H}}^0(k_v, \widehat{G})$ are injective, and ${\rm{H}}^2(\mathcal{O}_v, \mathscr{G})$ is the exact annihilator of ${\rm{H}}^0(\mathcal{O}_v, \widehat{\mathscr{G}})$.
\end{theorem}

\begin{proof}
Let us first note that it suffices to prove the proposition for some $S$ and some $\calO_S$-model of $G$, since any two such become isomorphic once we enlarge $S$. The vanishing of ${\rm{H}}^2(\calO_v, \mathscr{G})$ for all but finitely many $v$ follows from \cite[Lem.\,2.12]{ces2}. It therefore only remains to check that $\widehat{\mathscr{G}}(\calO_v) \xrightarrow{\sim} \widehat{G}(k_v)$ for all but finitely many $v$. When $G$ is finite we may, perhaps after shrinking $S$, assume that $\mathscr{G}$ is finite flat over $\calO_S$. The same then holds for the Cartier dual $\widehat{\mathscr{G}}$, so the valuative criterion for properness yields that $\widehat{\mathscr{G}}(\mathcal{O}_v) = \widehat{G}(k_v)$.

Next suppose that $G = \R_{k'/k}(T')$ for some finite separable extension $k'/k$ and some split $k'$-torus $T'$. We may assume that $T' = \Gm$. By Proposition \ref{charactersseparableweilrestriction}, $\widehat{\R_{k'/k}(\Gm)}(k_v) = \R_{k'/k}(\Z)(k_v) = \prod_{v'\mid v} \Z$, where the product is over the places $v'$ of $k'$ lying above $v$. On the other hand, we may take our $\calO_S$-model for $G$ to be $\mathscr{G} : = \R_{\mathcal{O}_{S'}/\mathcal{O}_S}(\Gm)$, where $S'$ is the set of places of $k'$ lying above $S$. If we choose $S$ to consist of all places ramified in $k'$, then for each $v \notin S$, Proposition \ref{charactersseparableweilrestriction} again yields $\widehat{\R_{\mathcal{O}_{S'}/\mathcal{O}_S}(\Gm)}(\mathcal{O}_v) = \R_{\mathcal{O}_{S'}/\mathcal{O}_S}(\Z)(\mathcal{O}_v) = \prod_{v' \mid v} \Z$, so the map $\widehat{\mathscr{G}}(\mathcal{O}_v) \rightarrow \widehat{G}(k_v)$ is an isomorphism.

Next suppose that $G$ is an almost-torus. By Lemma \ref{almosttorus}(iv), we may harmlessly modify $G$ and thereby assume that there is an exact sequence
\[
1 \longrightarrow B \longrightarrow C \times \R_{k'/k}(T') \longrightarrow G \longrightarrow 1,
\]
where $B, C$ are finite commutative $k$-group schemes, $k'/k$ is a finite separable extension, and $T'$ is a split $k'$-torus. Let $X : = C \times \R_{k'/k}(T')$. We may spread this out and apply Lemma \ref{spreadingoutdualsheaves} to obtain an exact sequence
\[
1 \longrightarrow \widehat{\mathscr{G}} \longrightarrow \widehat{\mathscr{X}} \longrightarrow \widehat{\mathscr{B}} \longrightarrow 1.
\]
Consider the following commutative diagram: 
\[
\begin{tikzcd}
0 \arrow{r} & \widehat{\mathscr{G}}(\mathcal{O}_v) \arrow{d} \arrow{r} & \widehat{\mathscr{X}}(\mathcal{O}_v) \isoarrow{d} \arrow{r} & \widehat{\mathscr{B}}(\mathcal{O}_v) \isoarrow{d} \\
0 \arrow{r} & \widehat{G}(k_v) \arrow{r} & \widehat{X}(k_v) \arrow{r} & \widehat{B}(k_v) 
\end{tikzcd}
\]
The rows are clearly exact, and the second and third vertical arrows are isomorphisms by the already-treated cases of finite group schemes and separable Weil restrictions of split tori. It follows that the first vertical arrow is an isomorphism.

Finally, suppose that $G$ is an arbitrary affine commutative $k$-group scheme of finite type. By Lemma \ref{affinegroupstructurethm}, there is an exact sequence
\[
1 \longrightarrow H \longrightarrow G \longrightarrow U \longrightarrow 1
\]
with $H$ an almost-torus and $U$ split unipotent. By Lemma \ref{spreadingoutdualsheaves}, this spreads out to yield an exact sequence
\[
1 \longrightarrow \widehat{\mathscr{U}} \longrightarrow \widehat{\mathscr{G}} \longrightarrow \widehat{\mathscr{H}} \longrightarrow 1.
\]

In the commutative diagram (\ref{diagramagain13}) below, the top horizontal arrow is injective for almost all $v$ because $\widehat{\mathscr{U}}(\mathcal{O}_v) = 0$ for almost all $v$, since $\mathscr{U}$ admits a filtration by $\Ga$'s (after enlarging $S$). This same arrow is surjective because ${\rm{H}}^1(\mathcal{O}_v, \widehat{\mathscr{U}}) = 0$ for almost all $v$ (by filtering $\mathscr{U}$, this reduces to showing that ${\rm{H}}^1(\mathcal{O}_v, \widehat{\Ga}) = 0$; that in turn follows from Proposition \ref{cohomologyofG_adualgeneralk}). The right vertical arrow is an isomorphism by the already-treated case of almost-tori. Finally, the bottom arrow is an inclusion because $\widehat{U}(k_v) = 0$. A simple diagram chase now shows that the left vertical arrow is an isomorphism.
\begin{equation}
\label{diagramagain13}
\begin{tikzcd}
\widehat{\mathscr{G}}(\mathcal{O}_v) \arrow{r}{\sim} \arrow{d} & \widehat{\mathscr{H}}(\mathcal{O}_v) \isoarrow{d} \\
\widehat{G}(k_v) \arrow[r, hookrightarrow] & \widehat{H}(k_v)
\end{tikzcd}
\end{equation}
\end{proof}

\section{Injectivity of ${\rm{H}}^1(\mathcal{O}_v, \widehat{\mathscr{G}}) \rightarrow {\rm{H}}^1(k_v, \widehat{G})$ and ${\rm{H}}^1(\calO_v, \mathscr{G}) \rightarrow {\rm{H}}^1(k_v, G)$}

In this section we show that for an affine commutative group scheme $G$ of finite type over a global field $k$, a finite non-empty set $S$ of places of $k$, and an $\calO_S$-model $\mathscr{G}$ of $G$, for all but finitely many $v$ the maps ${\rm{H}}^1(\calO_v, \widehat{\mathscr{G}}) \rightarrow {\rm{H}}^1(k_v, \widehat{G})$ and ${\rm{H}}^1(\calO_v, \mathscr{G}) \rightarrow {\rm{H}}^1(k_v, G)$ are injective (Propositions \ref{H^1(O,G^)-->H^1(k,G^)injective} and \ref{H^1(G)injective}).

\begin{proposition}
\label{H^1(O,G^)-->H^1(k,G^)injective}
Let $G$ be an affine commutative group scheme of finite type over the global field $k$, 
and $\mathscr{G}$ an $\mathcal{O}_S$-model of $G$ for a non-empty finite set $S$ of places
of $k$. Then for all but finitely many places $v$ of $k$, the map ${\rm{H}}^1(\mathcal{O}_v, \widehat{\mathscr{G}}) \rightarrow {\rm{H}}^1(k_v, \widehat{G})$ is injective.
\end{proposition}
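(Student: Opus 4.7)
The plan is to reduce to three base cases by dévissage: finite commutative group schemes, $\Ga$, and $\R_{k'/k}(\Gm)$ for $k'/k$ a finite separable extension. The outer dévissage uses Lemma \ref{affinegroupstructurethm} to separate $G$ into an almost-torus $H$ and a split unipotent quotient $U$, and then Lemma \ref{almosttorus}(iv) reduces the almost-torus case to finite group schemes together with separable Weil restrictions of split tori.

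The inductive step runs as follows. Given a short exact sequence $1 \to G' \to G \to G'' \to 1$, Lemma \ref{spreadingoutdualsheaves} lets us enlarge $S$ so that the induced sequence of $\Gm$-dual sheaves over $\calO_S$ is also short exact. For $v \notin S$ this yields a commutative diagram with exact rows
\[
\begin{tikzcd}[column sep=small]
{\rm{H}}^0(\calO_v, \widehat{\mathscr{G}'}) \arrow{r} \arrow{d} & {\rm{H}}^1(\calO_v, \widehat{\mathscr{G}''}) \arrow{r} \arrow{d} & {\rm{H}}^1(\calO_v, \widehat{\mathscr{G}}) \arrow{r} \arrow{d} & {\rm{H}}^1(\calO_v, \widehat{\mathscr{G}'}) \arrow{d} \\
{\rm{H}}^0(k_v, \widehat{G'}) \arrow{r} & {\rm{H}}^1(k_v, \widehat{G''}) \arrow{r} & {\rm{H}}^1(k_v, \widehat{G}) \arrow{r} & {\rm{H}}^1(k_v, \widehat{G'})
\end{tikzcd}
\]
Proposition \ref{H^2(G)G^(k)integraldualityprop} (Theorem \ref{exactannihilatorH^2(G)}) makes the leftmost vertical arrow an isomorphism for almost all $v$, and once the two rightmost vertical arrows are injective for almost all $v$ by the inductive hypothesis, a 4-lemma diagram chase forces injectivity of the middle arrow for almost all $v$.

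The three base cases then separate cleanly. For $\Ga$, Proposition \ref{cohomologyofG_adualgeneralk}(ii) gives ${\rm{H}}^1(\calO_v, \widehat{\Ga}) = 0$ unconditionally (since $\Pic(\mathbf{G}_{a,\calO_v}) = 0$), so the map is trivially injective; filtering any split unipotent $\mathscr{U}$ by $\Ga$'s and chaining this with Proposition \ref{cohomologyofG_adualgeneralk}(i) even gives ${\rm{H}}^1(\calO_v, \widehat{\mathscr{U}}) = 0$. For $\R_{k'/k}(\Gm)$, after enlarging $S$ so that $k'/k$ is unramified away from $S$, Proposition \ref{charactersseparableweilrestriction} identifies the dual with $\R_{\calO_{v'}/\calO_v}(\Z)$ (with $v'$ running over places of $k'$ above $v$); exactness of finite pushforward on the \'etale site reduces ${\rm{H}}^1(\calO_v, \widehat{\mathscr{G}})$ to $\prod_{v'|v} {\rm{H}}^1(\calO_{v'}, \Z)$, which vanishes because continuous homomorphisms from the profinite group $\pi_1^{\et}(\Spec \calO_{v'})$ to $\Z$ are trivial, and the same applies at $k_v$. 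Finally, for a finite $k$-group $B$, enlarging $S$ makes $\widehat{\mathscr{B}}$ finite flat over $\calO_S$ (Cartier duality), and the required injectivity ${\rm{H}}^1(\calO_v, \widehat{\mathscr{B}}) \hookrightarrow {\rm{H}}^1(k_v, \widehat{B})$ is the classical statement \cite[Ch.\,III, Cor.\,7.2]{milne} invoked for the finite case in the Introduction.

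The main obstacle is bookkeeping: one must ensure that the finitely many bad-place exclusions accumulated at each stage of the dévissage (spreading out the sequence of duals via Lemma \ref{spreadingoutdualsheaves}, applying Proposition \ref{H^2(G)G^(k)integraldualityprop} to make the leftmost column of the diagram an isomorphism, and choosing unramified models for all separable extensions appearing via Lemma \ref{almosttorus}(iv)) still leaves the conclusion valid for all but finitely many $v$. Since each of these exclusions is finite and the overall filtration of $G$ has finite length, this bookkeeping poses no fundamental obstruction.
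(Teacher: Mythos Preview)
Your strategy and base cases match the paper's exactly. The gap is in the almost-torus step: your inductive 4-lemma concludes injectivity for the \emph{middle} term $G$ of $1\to G'\to G\to G''\to 1$ from injectivity for $G'$ and $G''$, but Lemma~\ref{almosttorus}(iv) (after the standard harmless modification) presents the almost-torus as a \emph{quotient} $G \simeq X/B$ with $X = C \times \R_{k'/k}(T')$ and $B$ finite. Applied to the sequence $1\to B\to X\to G\to 1$, your step would require injectivity for $G$ itself as input, which is circular. The paper instead shifts the 4-lemma window one term to the left, invoking Proposition~\ref{H^2(G)G^(k)integraldualityprop} for \emph{two} degree-$0$ columns rather than one:
\[
\begin{tikzcd}[column sep=small]
\widehat{\mathscr{X}}(\mathcal{O}_v) \arrow{r} \isoarrow{d} & \widehat{\mathscr{B}}(\mathcal{O}_v) \arrow{r} \isoarrow{d} & {\rm{H}}^1(\mathcal{O}_v, \widehat{\mathscr{G}}) \arrow{r} \arrow{d} & {\rm{H}}^1(\mathcal{O}_v, \widehat{\mathscr{X}}) \arrow[d, hookrightarrow] \\
\widehat{X}(k_v) \arrow{r} & \widehat{B}(k_v) \arrow{r} & {\rm{H}}^1(k_v, \widehat{G}) \arrow{r} & {\rm{H}}^1(k_v, \widehat{X})
\end{tikzcd}
\]
Here the first two columns are isomorphisms by Proposition~\ref{H^2(G)G^(k)integraldualityprop} (applied to $X$ and to $B$), and the fourth is injective by your base cases for $X$; the 4-lemma then yields the third. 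No new ingredient is needed---only the window placement changes. (Your version is correctly placed for the outer reduction $1\to H\to G\to U\to 1$, where $G$ genuinely is the middle term; it is only the inner almost-torus reduction that needs the shifted window.)
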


\begin{proof}
When $G$ is finite, then one may (after enlarging $S$) take $\mathscr{G}$ to be finite and locally free over $\calO_S$, hence the same holds for $\widehat{\mathscr{G}}$. Therefore, any torsor $\mathscr{X}$ for this group over $\mathcal{O}_v$ 
is a finite flat $\mathcal{O}_v$-scheme. If such a scheme has a $k_v$-point then it has
an $\mathcal{O}_v$-point by the valuative criterion for properness. This settles injectivity when $G$ is finite. 

Next suppose that $G = \R_{k'/k}(T')$ for some finite separable extension $k'/k$ and some split $k'$-torus $T'$. We may assume that $T' = \Gm$. Choose any $S$
that contains all places of $k$ that are ramified in $k'$, and let $S'$ be the set of places of $k'$ lying above $S$. For the $\mathcal{O}_S$-model $\mathscr{G} = \R_{\mathcal{O}_{S'}/\mathcal{O}_S}(\Gm)$ of $G$,
we claim that ${\rm{H}}^1(\mathcal{O}_v, \widehat{\mathscr{G}}) = 0$ for all $v \notin S$. 

By Proposition \ref{charactersseparableweilrestriction} we have $\widehat{\mathscr{G}} = \R_{\mathcal{O}_{S'}/\mathcal{O}_S}(\Z)$, so we just need to show that $${\rm{H}}^1(\mathcal{O}_v, \prod_{v' \mid v} \R_{\mathcal{O}_{v'}/\mathcal{O}_v}(\Z)) = 0$$
for all $v \not\in S$.  
The group scheme $\R_{\mathcal{O}_{v'}/\mathcal{O}_v}(\Z)$ is smooth, so we may take our cohomology to be {\'e}tale. Since finite pushforward is an exact functor between categories of {\'e}tale sheaves, it suffices to show that ${\rm{H}}^1(\mathcal{O}_v, \Z) = 0$. But $\mathcal{O}_v$ is a normal noetherian domain, so ${\rm{H}}^1(\mathcal{O}_v, \Z) = \Hom_{{\rm{cts}}}(\pi_1(\mathcal{O}_v), \Z) = 0$, since $\pi_1(\mathcal{O}_v)$ is profinite. 

Now suppose that $G = U$ is split unipotent. Then ${\rm{H}}^1(\mathcal{O}_v, \widehat{\mathscr{U}}) = 0$ for almost all $v$. Indeed, Lemma \ref{spreadingoutdualsheaves} reduces us to the case $U = \Ga$, so we need to show 
 ${\rm{H}}^1(\mathcal{O}_v, \widehat{\Ga}) = 0$. That vanishing in turn follows from Proposition \ref{cohomologyofG_adualgeneralk}.

Next assume that $G$ is an almost-torus. By Lemma \ref{almosttorus}(iv), after harmlessly modifying $G$ we may assume that there is an exact sequence
\[
1 \longrightarrow B \longrightarrow X \longrightarrow G \longrightarrow 1,
\]
where $X = C \times \R_{k'/k}(T')$ for a finite separable extension field $k'/k$,
$B$ and $C$ are commutative finite $k$-group schemes, and $T'$ is a split $k'$-torus. Spreading out, by Lemma \ref{spreadingoutdualsheaves} we obtain (perhaps after enlarging $S$) an exact sequence
\[
1 \longrightarrow \widehat{\mathscr{G}} \longrightarrow \widehat{\mathscr{X}} \longrightarrow \widehat{\mathscr{B}} \longrightarrow 1.
\]
We therefore obtain for almost all $v$ a commutative diagram with exact rows: 
\[
\begin{tikzcd}
\widehat{\mathscr{X}}(\mathcal{O}_v) \isoarrow{d} \arrow{r} & \widehat{\mathscr{B}}(\mathcal{O}_v) \isoarrow{d} \arrow{r} & {\rm{H}}^1(\mathcal{O}_v, \widehat{\mathscr{G}}) \arrow{r} \arrow{d} & {\rm{H}}^1(\mathcal{O}_v, \widehat{\mathscr{X}}) \arrow[d, hookrightarrow] \\
\widehat{X}(k_v) \arrow{r} & \widehat{B}(k_v) \arrow{r} & {\rm{H}}^1(k_v, \widehat{G}) \arrow{r} & {\rm{H}}^1(k_v, \widehat{X})
\end{tikzcd}
\]
in which the first two vertical arrows are isomorphisms for all but finitely many $v$ by Theorem \ref{H^2(G)G^(k)integraldualityprop}, and the last vertical arrow is an inclusion for all but finitely many $v$ by the already-treated cases of finite group schemes and separable Weil restrictions of split tori. An application of the five lemma now shows that the third vertical arrow is an inclusion.

Now consider the general case; that is, let $G$ be an affine commutative $k$-group scheme of finite type. By Lemma \ref{affinegroupstructurethm}, there is an exact sequence
\[
1 \longrightarrow H \longrightarrow G \longrightarrow U \longrightarrow 1
\]
with $H$ an almost-torus and $U$ split unipotent. By Lemma \ref{spreadingoutdualsheaves}, after enlarging $S$ this spreads out to yield an exact sequence
\[
1 \longrightarrow \widehat{\mathscr{U}} \longrightarrow \widehat{\mathscr{G}} \longrightarrow \widehat{\mathscr{H}} \longrightarrow
 1.
\]
We therefore obtain a commutative diagram for almost every $v$
\[
\begin{tikzcd}
{\rm{H}}^1(\mathcal{O}_v, \widehat{\mathscr{G}}) \arrow[r, hookrightarrow] \arrow{d} & {\rm{H}}^1(\mathcal{O}_v, \widehat{\mathscr{H}}) \arrow[d, hookrightarrow] \\
{\rm{H}}^1(k_v, \widehat{G}) \arrow{r} & {\rm{H}}^1(k_v, \widehat{H})
\end{tikzcd}
\]
in which the top arrow is an inclusion because ${\rm{H}}^1(\mathcal{O}_v, \widehat{\mathscr{U}}) = 0$ for almost all $v$
and the right arrow is an inclusion because of the already-treated case of almost-tori. It follows that the left vertical arrow is an inclusion.
\end{proof}

Now we turn to proving the injectivity of ${\rm{H}}^1(\calO_v, \mathscr{G}) \rightarrow {\rm{H}}^1(k_v, G)$ for all but finitely many $v$. We will actually prove this beyond the affine setting, and we remark that our result (Proposition \ref{H^1(G)injective}) strengthens \cite[Lem.\,2.19]{ces2}. What we want to show is that every commutative finite type group scheme over a global field $k$ has the following property: 
\begin{equation}
\label{H^1(G)injectsproperty}
\text{\parbox{.85\textwidth}{For any (equivalently, for some) finite non-empty set $S$ of places of $k$ and $\calO_S$-model $\mathscr{G}$ of $G$, the map ${\rm{H}}^1(\calO_v, \mathscr{G}) \rightarrow {\rm{H}}^1(k_v, G)$ is injective for all but finitely many places $v$ of $k$.}}
\end{equation}

The truth of the statement for one $S$ and one $\calO_S$-model of $G$ implies it for all $S$ and all $\calO_S$-models because any two such models over $\calO_S$ and $\calO_{S'}$ become isomorphic over some $\calO_{S''}$.

\begin{lemma}
\label{H^1injectivefinite}
$(\ref{H^1(G)injectsproperty})$ holds if $G$ is a finite commutative $k$-group scheme.
\end{lemma}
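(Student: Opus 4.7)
The plan is to observe that for finite $G$ we can in fact obtain injectivity for every $v \notin S$ after taking $S$ large enough, so the ``almost every'' qualifier in $(\ref{H^1(G)injectsproperty})$ is unnecessary here. Since property $(\ref{H^1(G)injectsproperty})$ is insensitive to enlarging $S$ or replacing $\mathscr{G}$ by another $\calO_S$-model (any two agree after further enlarging $S$), I would first enlarge $S$ so that $\mathscr{G}$ is a finite flat commutative $\calO_S$-group scheme; this is possible because $G$ is finite and finiteness together with flatness spread out.

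Then, fixing $v \notin S$, I would consider an arbitrary fppf $\mathscr{G}_{\calO_v}$-torsor $\mathscr{X}$. The key point is that $\mathscr{X}$ is representable by a finite flat $\calO_v$-scheme: since $\mathscr{G}_{\calO_v}$ is affine, this is fpqc descent for relatively affine schemes (any fppf trivialization of $\mathscr{X}$ makes $\mathscr{X}$ into a relatively affine scheme, whose representability descends). In particular, $\mathscr{X} \to \Spec(\calO_v)$ is finite, hence proper. The valuative criterion for properness then gives the equality $\mathscr{X}(\calO_v) = \mathscr{X}(k_v)$. Therefore, if the class of $\mathscr{X}$ vanishes in ${\rm{H}}^1(k_v, G)$ -- equivalently, $\mathscr{X}(k_v) \neq \emptyset$ -- we conclude $\mathscr{X}(\calO_v) \neq \emptyset$, so the class of $\mathscr{X}$ already vanishes in ${\rm{H}}^1(\calO_v, \mathscr{G})$.

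Since ${\rm{H}}^1(\calO_v, \mathscr{G}) \to {\rm{H}}^1(k_v, G)$ is a homomorphism between abelian groups (as $G$, hence $\mathscr{G}_{\calO_v}$, is commutative), triviality of the kernel established above yields injectivity, completing the proof. There is no genuine obstacle here: this argument is really the same valuative-criterion device that was already used for finite $G$ in the proof of Proposition~\ref{H^1(O,G^)-->H^1(k,G^)injective}, simply applied directly to $\mathscr{G}$ rather than to $\widehat{\mathscr{G}}$.
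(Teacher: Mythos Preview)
Your proof is correct and uses the same core argument as the paper: after spreading out to a finite flat $\calO_S$-model, torsors are finite (hence proper) over $\calO_v$, so the valuative criterion gives injectivity. The only cosmetic difference is that the paper invokes Cartier duality to cite the already-proved Proposition~\ref{H^1(O,G^)-->H^1(k,G^)injective} (whose finite case is exactly this valuative-criterion argument), whereas you apply that argument directly to $\mathscr{G}$; your route is slightly more direct but not substantively different.
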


\begin{proof}
This follows from the valuative criterion of properness. For more details, see the first paragraph of the proof of Proposition \ref{H^1(O,G^)-->H^1(k,G^)injective}.
\end{proof}

\begin{lemma}
\label{connectedreductionH^1injective}
Let $G$ be a finite type commutative group scheme over a global field $k$, and suppose that we have an exact sequence
\[
1 \longrightarrow H \longrightarrow G \longrightarrow B \longrightarrow 1
\]
with $B$ a finite commutative $k$-group scheme. If $(\ref{H^1(G)injectsproperty})$ holds for $H$, then it also holds for $G$.
\end{lemma}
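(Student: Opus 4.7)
The plan is a standard five-lemma style diagram chase using the short exact sequences of flat cohomology in degrees $\leq 1$ for both $\calO_v$ and $k_v$, combined with the already-established finite case.

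First, after harmlessly enlarging the finite set $S$, I would spread the sequence $1 \to H \to G \to B \to 1$ out to a short exact sequence
\[
1 \longrightarrow \mathscr{H} \longrightarrow \mathscr{G} \longrightarrow \mathscr{B} \longrightarrow 1
\]
of flat affine $\calO_S$-group schemes of finite type, with $\mathscr{B}$ finite locally free over $\calO_S$ (possible since $B$ is finite) and $\mathscr{G}$, $\mathscr{H}$ chosen $\calO_S$-models of $G$, $H$. Fppf-exactness on the right spreads out by standard limit arguments after enlarging $S$. Since $\mathscr{B}$ is finite flat over every $\calO_v$ with $v \notin S$, the valuative criterion for properness gives $\mathscr{B}(\calO_v) = B(k_v)$ for all such $v$.

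Next, for every $v \notin S$ I have a commutative diagram with exact rows coming from the fppf cohomology long exact sequences:
\[
\begin{tikzcd}[column sep=small]
\mathscr{B}(\calO_v) \arrow{d}{\wr} \arrow{r} & {\rm{H}}^1(\calO_v, \mathscr{H}) \arrow{d} \arrow{r} & {\rm{H}}^1(\calO_v, \mathscr{G}) \arrow{d} \arrow{r} & {\rm{H}}^1(\calO_v, \mathscr{B}) \arrow{d} \\
B(k_v) \arrow{r} & {\rm{H}}^1(k_v, H) \arrow{r} & {\rm{H}}^1(k_v, G) \arrow{r} & {\rm{H}}^1(k_v, B)
\end{tikzcd}
\]
The second vertical map is injective for almost all $v$ by hypothesis on $H$, and the fourth vertical map is injective for almost all $v$ by Lemma \ref{H^1injectivefinite} applied to the finite $B$.

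Finally, I would run the diagram chase: suppose $\alpha \in {\rm{H}}^1(\calO_v, \mathscr{G})$ dies in ${\rm{H}}^1(k_v, G)$. Commutativity of the right square and injectivity of ${\rm{H}}^1(\calO_v, \mathscr{B}) \to {\rm{H}}^1(k_v, B)$ force $\alpha$ to come from some $\beta \in {\rm{H}}^1(\calO_v, \mathscr{H})$. The image of $\beta$ in ${\rm{H}}^1(k_v, H)$ dies in ${\rm{H}}^1(k_v, G)$, so is the image of some $b \in B(k_v)$. Using the isomorphism $\mathscr{B}(\calO_v) \xrightarrow{\sim} B(k_v)$, lift $b$ to $\widetilde{b} \in \mathscr{B}(\calO_v)$ and let $\beta' \in {\rm{H}}^1(\calO_v, \mathscr{H})$ be its image. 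Then $\beta - \beta'$ vanishes in ${\rm{H}}^1(k_v, H)$, and injectivity on $H$ forces $\beta = \beta'$, whence $\alpha$ is the image of $\beta'$, which is zero since $\beta'$ already comes from $\mathscr{B}(\calO_v)$. This gives the desired injectivity for $\mathscr{G}$ at all but finitely many $v$. No step here looks like a serious obstacle; the only subtlety is ensuring the sequence of models remains short exact in the fppf topology after spreading out, which is standard after enlarging $S$.
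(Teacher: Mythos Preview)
Your proof is correct and follows essentially the same approach as the paper: spread out the short exact sequence, use the valuative criterion to get $\mathscr{B}(\calO_v) \xrightarrow{\sim} B(k_v)$, invoke Lemma~\ref{H^1injectivefinite} for the right-hand vertical, and run the four-lemma-style diagram chase. The paper simply says ``a simple diagram chase'' where you spell out the steps explicitly. One cosmetic point: $G$ is only assumed finite type, not affine, so the spread-out models need not be affine either (just flat separated finite type), but this does not affect your argument.
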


\begin{proof}
We have an exact sequence
\[
1 \longrightarrow H \longrightarrow G \longrightarrow B \longrightarrow 1
\]
and this spreads out to an exact sequence
\[
1 \longrightarrow \mathscr{H} \longrightarrow \mathscr{G} \longrightarrow \mathscr{B} \longrightarrow 1
\]
over some $\mathcal{O}_S$.  At the cost of enlarging $S$, for $v \not\in S$
the resulting commutative diagram of exact sequences
\[
\begin{tikzcd}
\mathscr{B}(\mathcal{O}_v) \arrow{r} \isoarrow{d} & {\rm{H}}^1(\mathcal{O}_v, \mathscr{H}) \arrow{r} \arrow[d, hookrightarrow] & {\rm{H}}^1(\mathcal{O}_v, \mathscr{G}) \arrow{r} \arrow{d} & {\rm{H}}^1(\mathcal{O}_v, \mathscr{B}) \arrow[d, hookrightarrow] \\
B(k_v) \arrow{r} & {\rm{H}}^1(k_v, H) \arrow{r} & {\rm{H}}^1(k_v, G) \arrow{r} & {\rm{H}}^1(k_v, B)
\end{tikzcd}
\]
has the first vertical arrow an isomorphism due to the valuative criterion of properness because $\mathscr{B}$ is finite, the second vertical arrow an inclusion by hypothesis
on $H$, and the last vertical arrow an inclusion by Lemma \ref{H^1injectivefinite}. A simple diagram chase now shows that the third vertical arrow is injective.
\end{proof}

\begin{lemma}
\label{H^1(G)injectivesmooth}
$(\ref{H^1(G)injectsproperty})$ holds for smooth commutative $G$.
\end{lemma}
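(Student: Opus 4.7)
The plan is to reduce, via the spread-out connected-étale sequence of $G$, to the already-settled finite case (Lemma \ref{H^1injectivefinite}). After enlarging $S$ if necessary, I may assume that $\mathscr{G}$ is smooth over $\calO_S$. The quotient $E := G/G^0$ is finite étale over $k$, and its formation can be spread out: after further enlarging $S$, I obtain a short exact sequence of $\calO_S$-group schemes
\[
1 \longrightarrow \mathscr{G}^0 \longrightarrow \mathscr{G} \longrightarrow \mathscr{E} \longrightarrow 1
\]
in which $\mathscr{E}$ is finite étale and $\mathscr{G}^0$ is an $\calO_S$-model of $G^0$. Since $G^0$ is geometrically connected, the locus in $\Spec(\calO_S)$ over which $\mathscr{G}^0$ has geometrically connected fibers is open and contains the generic point (EGA IV$_3$, 9.7.8); after removing the finitely many bad places, I may assume $\mathscr{G}^0_{\kappa_v}$ is smooth and geometrically connected for every $v \notin S$.

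For any such $v$, Lang's theorem gives ${\rm{H}}^1(\kappa_v, \mathscr{G}^0_{\kappa_v}) = 0$. Because $\mathscr{G}^0$ is smooth over the henselian local ring $\calO_v$, any (étale) $\mathscr{G}^0$-torsor over $\calO_v$ is smooth and is trivial if and only if its special fiber is; hence
\[
{\rm{H}}^1(\calO_v, \mathscr{G}^0) \xrightarrow{\sim} {\rm{H}}^1(\kappa_v, \mathscr{G}^0_{\kappa_v}) = 0.
\]
The long exact cohomology sequence attached to the exact sequence of $\calO_v$-group schemes then yields an inclusion ${\rm{H}}^1(\calO_v, \mathscr{G}) \hookrightarrow {\rm{H}}^1(\calO_v, \mathscr{E})$.

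It remains only to compare with the generic fiber. I consider the commutative square
\[
\begin{tikzcd}
{\rm{H}}^1(\calO_v, \mathscr{G}) \arrow[r, hookrightarrow] \arrow{d} & {\rm{H}}^1(\calO_v, \mathscr{E}) \arrow[d, hookrightarrow] \\
{\rm{H}}^1(k_v, G) \arrow{r} & {\rm{H}}^1(k_v, E)
\end{tikzcd}
\]
whose right vertical arrow is injective for almost all $v$ by Lemma \ref{H^1injectivefinite}. If $\alpha \in {\rm{H}}^1(\calO_v, \mathscr{G})$ dies in ${\rm{H}}^1(k_v, G)$, then its image in ${\rm{H}}^1(k_v, E)$ vanishes; injectivity of the right column forces its image $\beta \in {\rm{H}}^1(\calO_v, \mathscr{E})$ to vanish, and injectivity of the top row then forces $\alpha = 0$. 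The main obstacle is the preparatory step of producing a spread-out connected-étale sequence over $\calO_S$ with $\mathscr{G}^0$ having geometrically connected fibers at every $v \notin S$; everything afterwards is a direct application of Lang's theorem, the smooth–henselian comparison, and the previously settled finite case.
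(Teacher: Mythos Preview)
Your proof is correct and follows essentially the same approach as the paper. The paper reduces to the connected case via Lemma~\ref{connectedreductionH^1injective} and then invokes Proposition~\ref{H^i(prodO_v,G)=0smconn} to get ${\rm{H}}^1(\calO_v,\mathscr{G})=0$ for almost all $v$; you unpack both of these inline, proving the vanishing of ${\rm{H}}^1(\calO_v,\mathscr{G}^0)$ directly via Lang's theorem and henselian lifting, and then running the dévissage as a simple commutative square rather than citing the general lemma. One minor point: the citation to EGA IV$_3$, 9.7.8 for openness of the geometrically-connected-fiber locus is not quite apt (that result is for proper morphisms), but the fact you need is standard and is used without citation in the paper's own Proposition~\ref{H^i(prodO_v,G)=0smconn}; a cleaner route is to take $\mathscr{G}^0$ to be the fiberwise identity component of the smooth $\calO_S$-group $\mathscr{G}$, which is an open subgroup scheme with geometrically connected fibers by construction (SGA3, Exp.\,VI$_{\rm B}$).
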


\begin{proof}
By Lemma \ref{connectedreductionH^1injective}, we may replace $G$ with $G^0$ to arrange
 that $G$ is connected. Then in fact ${\rm{H}}^1(\calO_v, \mathscr{G}) = 0$ for all but finitely many $v$ by \cite[Lem.\,2.12]{ces2}.
\end{proof}

Note that in the number field setting, Lemma \ref{H^1(G)injectivesmooth} completes the proof of (\ref{H^1(G)injectsproperty}) for all commutative groups schemes $G$ of finite type. The argument for function fields is more difficult, beginning with some more lemmas.

\begin{lemma}
\label{H^1(G)injectweilrestrict}
Let $k'$ be a finite extension of the global field $k$, $B'$ a finite commutative $k'$-group scheme, and $G \subset \R_{k'/k}(B')$ a $k$-subgroup scheme. Then $(\ref{H^1(G)injectsproperty})$ holds for $G$.
\end{lemma}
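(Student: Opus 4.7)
The strategy is to exploit the ambient inclusion $G \hookrightarrow \R_{k'/k}(B')$ through the short exact sequence
\[
1 \longrightarrow G \longrightarrow \R_{k'/k}(B') \longrightarrow Q \longrightarrow 1,
\]
where $Q := \R_{k'/k}(B')/G$ is an affine commutative $k$-group scheme of finite type. I would first choose a non-empty finite set $S$ of places of $k$ containing all places ramified in $k'/k$, and a finite flat $\calO_{S'}$-model $\mathscr{B}'$ of $B'$, where $S'$ is the set of places of $k'$ lying above $S$. After possibly enlarging $S$, the sequence above spreads out to an exact sequence
\[
1 \longrightarrow \mathscr{G} \longrightarrow \mathscr{R} \longrightarrow \mathscr{Q} \longrightarrow 1
\]
of flat separated commutative $\calO_S$-group schemes of finite type, where we may take $\mathscr{R} := \R_{\calO_{S'}/\calO_S}(\mathscr{B}')$. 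For almost all $v \notin S$, the $\calO_v$-group $\mathscr{R} \otimes \calO_v$ is naturally isomorphic to $\prod_{v' \mid v} \R_{\calO_{v'}/\calO_v}(\mathscr{B}' \otimes \calO_{v'})$.

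The long exact cohomology sequences associated to the two displayed sequences fit into a commutative diagram
\[
\begin{tikzcd}
\mathscr{R}(\calO_v) \arrow{r} \isoarrow{d} & \mathscr{Q}(\calO_v) \arrow{r} \arrow[d, hookrightarrow] & {\rm{H}}^1(\calO_v, \mathscr{G}) \arrow{r} \arrow{d} & {\rm{H}}^1(\calO_v, \mathscr{R}) \arrow[d, hookrightarrow] \\
\R_{k'/k}(B')(k_v) \arrow{r} & Q(k_v) \arrow{r} & {\rm{H}}^1(k_v, G) \arrow{r} & {\rm{H}}^1(k_v, \R_{k'/k}(B'))
\end{tikzcd}
\]
with exact rows. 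The first vertical arrow is an isomorphism for almost all $v$: since $\mathscr{B}'$ is finite (hence proper) over $\calO_{S'}$, the valuative criterion gives $\mathscr{B}'(\calO_{v'}) = B'(k'_{v'})$, so $\mathscr{R}(\calO_v) = \prod_{v' \mid v} \mathscr{B}'(\calO_{v'}) = \prod_{v' \mid v} B'(k'_{v'}) = \R_{k'/k}(B')(k_v)$. The second vertical arrow is injective because $\mathscr{Q}$ is separated. Granting the injectivity of the fourth vertical arrow, a standard diagram chase then yields the injectivity of the third vertical arrow, which is the conclusion of the lemma.

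To establish the injectivity of the fourth vertical arrow, I would invoke the adjunction defining Weil restriction: for the finite flat morphism $\pi \colon \Spec(\calO_{v'}) \to \Spec(\calO_v)$ and the affine $\calO_{v'}$-group scheme $\mathscr{B}' \otimes \calO_{v'}$, the $\R_{\calO_{v'}/\calO_v}(\mathscr{B}' \otimes \calO_{v'})$-torsors on $\calO_v$ for the fppf topology correspond bijectively (via $\pi$-pullback and $\pi$-pushforward, both of which are representable for affine targets) to $\mathscr{B}' \otimes \calO_{v'}$-torsors on $\calO_{v'}$. This yields Shapiro-type identifications
\[
{\rm{H}}^1\!\left(\calO_v, \R_{\calO_{v'}/\calO_v}(\mathscr{B}' \otimes \calO_{v'})\right) \;\cong\; {\rm{H}}^1(\calO_{v'}, \mathscr{B}' \otimes \calO_{v'}),
\]
and analogously over $k_v$. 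Under these identifications, the fourth vertical arrow becomes the product over $v' \mid v$ of the restriction maps ${\rm{H}}^1(\calO_{v'}, \mathscr{B}' \otimes \calO_{v'}) \to {\rm{H}}^1(k'_{v'}, B')$, each of which is injective for almost all $v'$ by Lemma \ref{H^1injectivefinite} applied to $B'$ over the global field $k'$. Since only finitely many places $v'$ of $k'$ fail this injectivity (and each lies above a single place of $k$), and since almost every $v$ has only unramified and finitely many $v' \mid v$, the fourth vertical arrow is injective for almost all $v$.

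The main obstacle in carrying out this plan is to rigorously establish the Shapiro-type isomorphism above for fppf cohomology along the finite flat, potentially ramified, base change $\calO_v \to \calO_{v'}$. The corresponding statement for finite \'etale extensions is classical and reduces to exactness of finite \'etale pushforward on \'etale sheaves, but in the ramified case one must appeal to the fact that for finite flat $\pi$ the Weil restriction of an affine $X'$-scheme represents the fppf sheaf pushforward, combined with the bijection between torsors induced by the adjunction.
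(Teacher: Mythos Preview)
Your approach is correct and matches the paper's proof almost exactly: both use the short exact sequence $1 \to G \to \R_{k'/k}(B') \to Q \to 1$, spread it out, and chase the same four-column diagram, with the first vertical isomorphism coming from properness of $\mathscr{B}'$, the second from separatedness of the model of $Q$, and the fourth from the finite case over $k'$ (Lemma~\ref{H^1injectivefinite}).

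The one place you make your life harder than necessary is the fourth vertical arrow. You aim for a full Shapiro-type isomorphism ${\rm{H}}^1(\calO_v, \R_{\calO_{v'}/\calO_v}(\mathscr{B}')) \cong {\rm{H}}^1(\calO_{v'}, \mathscr{B}')$ and then worry about justifying it along a possibly ramified finite flat map. The paper sidesteps this entirely: it only needs an \emph{injection} ${\rm{H}}^1(\calO_v, \mathscr{R}) \hookrightarrow \prod_{v'\mid v} {\rm{H}}^1(\calO_{v'}, \mathscr{B}')$, and that comes for free from the five-term exact sequence of the Leray spectral sequence for the finite morphism $\Spec(\prod_{v'\mid v} \calO_{v'}) \to \Spec(\calO_v)$ (the edge map $E_2^{1,0} \hookrightarrow E^1$ is always injective, and $\pi_*\mathscr{B}'$ is the Weil restriction since $\mathscr{B}'$ is affine). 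The same works over $k_v$, and functoriality of the spectral sequence makes the resulting square commute. So your ``main obstacle'' dissolves once you observe that injectivity, not isomorphism, is all that is required.
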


\begin{proof}
We first treat the case $G = \R_{k'/k}(B')$. Let $\mathscr{B}'$ be a
finite flat commutative $\mathcal{O}_{S'}$-model for $B'$, where $S'$ is the set of places of $k'$ above a 
non-empty finite set $S$ of places of $k$, so $\mathscr{G} : = \R_{\mathcal{O}_{S'}/\mathcal{O}_S}(\mathscr{B}')$
is an $\mathcal{O}_S$-model of $G$. The Leray spectral sequence associated to the morphism $\Spec(\prod_{v' \mid v} \mathcal{O}_{v'}) \rightarrow \Spec(\mathcal{O}_v)$ yields an inclusion ${\rm{H}}^1(\mathcal{O}_v, \mathscr{G}) \hookrightarrow \prod_{v'\mid v} {\rm{H}}^1(\mathcal{O}_{v'}, \mathscr{B}')$, and similarly with $\mathcal{O}_v$ replaced by $k_v$. By functoriality of the spectral sequence, therefore, we obtain a commutative diagram
\[
\begin{tikzcd}
{\rm{H}}^1(\mathcal{O}_v, \mathscr{G}) \arrow[r, hookrightarrow] \arrow{d} & \prod_{v'\mid v} {\rm{H}}^1(\mathcal{O}_{v'}, \mathscr{B}') \arrow[d, hookrightarrow] \\
{\rm{H}}^1(k_v, G) \arrow{r} & \prod_{v\mid v} {\rm{H}}^1(k_{v'}, B')
\end{tikzcd}
\]
where the second vertical arrow is an inclusion by Lemma \ref{H^1injectivefinite}. It follows that the first vertical arrow is also an inclusion.

Now we turn to the general case. Let $H : = \R_{k'/k}(B')/G$; note that $H$ is affine \cite[Ch.\,III, \S3, no.\,5, Th.\,5.6]{demazuregabriel}. For sufficiently big $S$
(and $S'$ the set of places over it in $k'$) the exact sequence
\[
1 \longrightarrow G \longrightarrow \R_{k'/k}(B') \longrightarrow H \longrightarrow 1
\]
spreads out to an exact sequence
\[
1 \longrightarrow \mathscr{G} \longrightarrow \R_{\mathcal{O}_{S'}/\mathcal{O}_S}(\mathscr{B}') \longrightarrow \mathscr{H} \longrightarrow 1
\]
with $\mathscr{B}'$ a finite flat commutative $\mathcal{O}_{S'}$-group scheme, and $\mathscr{H}$ 
a commutative flat affine $\mathcal{O}_S$-group of finite type. 
For each $v \notin S$, in the commutative diagram with exact rows
\[
\begin{tikzcd}
\R_{\mathcal{O}_{S'}/\mathcal{O}_S}(\mathscr{B}')(\mathcal{O}_v) \isoarrow{d} \arrow{r} & \mathscr{H}(\mathcal{O}_v) \arrow[d, hookrightarrow] \arrow{r} & {\rm{H}}^1(\mathcal{O}_v, \mathscr{G}) \arrow{d} \arrow{r} & {\rm{H}}^1(\mathcal{O}_v, \R_{\mathcal{O}_{S'}/\mathcal{O}_S}(\mathscr{B}')) \arrow[d, hookrightarrow] \\
\R_{k'/k}(B')(k_v) \arrow{r} & H(k_v) \arrow{r} & {\rm{H}}^1(k_v, G) \arrow{r} & {\rm{H}}^1(k_v, \R_{k'/k}(B'))
\end{tikzcd}
\]
the first vertical arrow is an isomorphism because $\mathscr{B}'(\mathcal{O}_{v'}) = B(k_{v'})$ for each $v'\mid v$
(as $\mathscr{B}'$ is finite), the second vertical arrow is an inclusion by the affineness of $\mathscr{H}$,
and the last vertical arrow is an inclusion by the settled case ``$G = \R_{k'/k}(B')$.'' 
The five lemma now shows that the third vertical arrow is an inclusion.
\end{proof}

\begin{lemma}
\label{G(O)=G(k)finiteweilrestriction}
Let $k'$ be a finite extension of the global field $k$, $B'$ a finite commutative $k'$-group scheme, and $G \subset \R_{k'/k}(B')$ a $k$-subgroup scheme. Let $\mathscr{G}$ be an $\mathcal{O}_S$-model for $G$ $($where, as usual, $S$ denotes a
non-empty finite set of places of $k$$)$. Then for all but finitely many $v$, the map $\mathscr{G}(\mathcal{O}_v) \rightarrow G(k_v)$ is an isomorphism.
\end{lemma}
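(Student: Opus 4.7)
The plan is to reduce to the case where $\mathscr{G}$ is a closed subgroup scheme of $\R_{\mathcal{O}_{S'}/\mathcal{O}_S}(\mathscr{B}')$ for some finite flat $\mathcal{O}_{S'}$-model $\mathscr{B}'$ of $B'$ (here $S'$ is the set of places of $k'$ above $S$), and then conclude by combining the valuative criterion of properness for $\mathscr{B}'$ with the universal property of the scheme-theoretic closure. The conclusion is independent of which $\calO_S$-model is used, since any two become isomorphic over $\calO_{S''}$ for sufficiently large $S''$, so it suffices to prove the statement for a single convenient model.

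First I would enlarge $S$ so that $B'$ admits a finite flat $\mathcal{O}_{S'}$-model $\mathscr{B}'$. Then $\mathscr{X} := \R_{\mathcal{O}_{S'}/\mathcal{O}_S}(\mathscr{B}')$ is an $\mathcal{O}_S$-model of $\R_{k'/k}(B')$, and it is affine and of finite type (and flat, since $\mathscr{B}'$ is finite flat and $\mathcal{O}_{S'}$ is finite flat over $\mathcal{O}_S$). Next, let $\mathscr{G}_0 \subset \mathscr{X}$ be the scheme-theoretic closure of the $k$-subgroup $G \subset \R_{k'/k}(B') = \mathscr{X}_k$; this is flat over $\mathcal{O}_S$ (being torsion-free), of finite type, and has generic fiber $G$, so it is an $\mathcal{O}_S$-model of $G$. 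After enlarging $S$, $\mathscr{G}$ and $\mathscr{G}_0$ are isomorphic, so it suffices to verify the lemma for $\mathscr{G}_0$.

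For $v \notin S$, the valuative criterion of properness applied to the finite $\mathcal{O}_{v'}$-scheme $\mathscr{B}'_{\mathcal{O}_{v'}}$ gives $\mathscr{B}'(\mathcal{O}_{v'}) = B'(k_{v'})$ for each place $v' \mid v$ of $k'$, and taking the product over $v' \mid v$ yields the identification
\[
\mathscr{X}(\mathcal{O}_v) = \prod_{v'\mid v} \mathscr{B}'(\mathcal{O}_{v'}) = \prod_{v'\mid v} B'(k_{v'}) = \R_{k'/k}(B')(k_v).
\]
This compatibly identifies $\mathscr{X}(\mathcal{O}_v)$ with its image in $\mathscr{X}(k_v) = \R_{k'/k}(B')(k_v)$. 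It therefore remains to show that an element $x \in \mathscr{X}(\mathcal{O}_v)$ whose image in $\mathscr{X}(k_v)$ lies in the subset $G(k_v)$ necessarily factors through the closed subscheme $\mathscr{G}_0 \subset \mathscr{X}$. Because $\mathscr{G}_0$ is $\mathcal{O}_S$-flat and closed in $\mathscr{X}$, it equals the scheme-theoretic closure in $\mathscr{X}_{\mathcal{O}_v}$ of its generic fiber $G$; the universal property of scheme-theoretic closure applied to the closed subscheme $x^{-1}(\mathscr{G}_0) \hookrightarrow \Spec(\mathcal{O}_v)$ (which contains the generic point by hypothesis) then forces $x^{-1}(\mathscr{G}_0) = \Spec(\mathcal{O}_v)$, i.e., $x$ factors through $\mathscr{G}_0$.

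There is no serious obstacle here: the potentially subtle point is merely the passage from the given $\calO_S$-model to the closed-subgroup model $\mathscr{G}_0$, which is painlessly handled by the fact that any two $\calO_S$-models agree over $\calO_{S''}$ for some sufficiently large $S''$, so we may freely work with the model most suited to the argument.
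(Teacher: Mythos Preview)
Your proof is correct and takes a genuinely different route from the paper's. The paper forms the quotient $H := \R_{k'/k}(B')/G$, spreads out the short exact sequence $1 \to G \to \R_{k'/k}(B') \to H \to 1$ to an exact sequence of $\mathcal{O}_S$-models, and then does a diagram chase: the middle column $\R_{\mathcal{O}_{S'}/\mathcal{O}_S}(\mathscr{B}')(\mathcal{O}_v) \to \R_{k'/k}(B')(k_v)$ is an isomorphism by properness of $\mathscr{B}'$, and the right column $\mathscr{H}(\mathcal{O}_v) \hookrightarrow H(k_v)$ is injective by affineness of $\mathscr{H}$, whence the left column is an isomorphism. Your argument avoids the quotient entirely: you work with the schematic closure $\mathscr{G}_0 \subset \mathscr{X}$, use the same valuative-criterion input $\mathscr{X}(\mathcal{O}_v) = \mathscr{X}(k_v)$, and then invoke only the integrality of $\Spec(\mathcal{O}_v)$ (a closed subscheme containing the generic point is everything) to see that an $\mathcal{O}_v$-point of $\mathscr{X}$ landing generically in $G$ must factor through the flat closure $\mathscr{G}_0$.

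Your approach is more direct and requires less machinery (no quotient, no spreading out of an exact sequence). The paper's approach, on the other hand, is the same d\'evissage pattern used throughout the surrounding section, so it has the virtue of uniformity. Two minor remarks on your write-up: the flatness of $\mathscr{X}$ you assert parenthetically is not actually needed anywhere (only the flatness of $\mathscr{G}_0$ matters, and that comes for free from the closure construction over a Dedekind base); and your final sentence invoking ``the universal property of scheme-theoretic closure'' is really just the observation that a closed subscheme of the integral scheme $\Spec(\mathcal{O}_v)$ containing the generic point is all of $\Spec(\mathcal{O}_v)$---phrasing it that way would be cleaner.
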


\begin{proof}
We may spread $B'$ out to an $\calO_{S'}$-scheme $\mathscr{B}'$ such that $\mathscr{G}$ is the Zariski closure of $G$ inside $\R_{\calO_{S'}/\calO_S}(\mathscr{B}')$. Now $\R_{k'/k}(B')(k_v) = \prod_{v'\mid v} B'(k'_{v'}) = \prod_{v' \mid v} \mathscr{B}'(\calO_{v'}) = (\R_{\calO_{S'}/\calO_S}(\mathscr{B}'))(\calO_v)$, the penultimate equality by the valuative criterion of properness. Since $\mathscr{G}$ is the Zariski closure of $G$ inside $\R_{\calO_{S'}/\calO_S}(\mathscr{B}')$, any $\calO_v$-point of $\R_{\calO_{S'}/\calO_S}(\mathscr{B}')$ whose generic giber lies in $G$ lies entirely within $\mathscr{G}$. Since $\R_{k'/k}(B')(k_v) = (\R_{\calO_{S'}/\calO_S}(\mathscr{B}'))(\calO_v)$, the lemma follows.
\end{proof}

\begin{lemma}
\label{H^1(G)injectivedevissage}
For a global field $k$, consider an exact sequence of commutative $k$-group schemes of finite type
\[
1 \longrightarrow H \longrightarrow G \longrightarrow \R_{k'/k}(B')
\]
with $k'/k$ a finite extension and $B'$ a finite commutative 
$k'$-group scheme. If $(\ref{H^1(G)injectsproperty})$ holds for $H$, then it also holds for $G$.
\end{lemma}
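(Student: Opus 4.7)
The plan is to factor $G \to \R_{k'/k}(B')$ through its scheme-theoretic image and reduce to the already-settled cases. Specifically, let $G'' \subset \R_{k'/k}(B')$ denote the scheme-theoretic image of $G$; since $G$ is a $k$-group scheme of finite type, $G''$ is a closed $k$-subgroup scheme of $\R_{k'/k}(B')$, and we obtain a short exact sequence
\[
1 \longrightarrow H \longrightarrow G \longrightarrow G'' \longrightarrow 1.
\]
This brings us into a setting where the outer terms are both amenable to already-proved results: $(\ref{H^1(G)injectsproperty})$ holds for $H$ by hypothesis, while it holds for the subgroup $G'' \subset \R_{k'/k}(B')$ by Lemma \ref{H^1(G)injectweilrestrict}.

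First I would spread the above sequence out to an exact sequence $1 \to \mathscr{H} \to \mathscr{G} \to \mathscr{G}'' \to 1$ of flat affine commutative $\calO_S$-group schemes of finite type, enlarging $S$ as needed. For $v \notin S$ this yields a commutative diagram with exact rows
\[
\begin{tikzcd}
\mathscr{G}''(\calO_v) \arrow{d}{\wr} \arrow{r} & {\rm{H}}^1(\calO_v, \mathscr{H}) \arrow[d, hookrightarrow] \arrow{r} & {\rm{H}}^1(\calO_v, \mathscr{G}) \arrow{d} \arrow{r} & {\rm{H}}^1(\calO_v, \mathscr{G}'') \arrow[d, hookrightarrow] \\
G''(k_v) \arrow{r} & {\rm{H}}^1(k_v, H) \arrow{r} & {\rm{H}}^1(k_v, G) \arrow{r} & {\rm{H}}^1(k_v, G'')
\end{tikzcd}
\]
where (for almost all $v$) the leftmost vertical arrow is an isomorphism by Lemma \ref{G(O)=G(k)finiteweilrestriction} applied to $G'' \subset \R_{k'/k}(B')$, the second vertical arrow is injective by the hypothesis on $H$, and the rightmost vertical arrow is injective by Lemma \ref{H^1(G)injectweilrestrict} applied to $G''$.

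A standard four-lemma diagram chase then shows that the middle vertical arrow ${\rm{H}}^1(\calO_v, \mathscr{G}) \to {\rm{H}}^1(k_v, G)$ is injective: given a class in ${\rm{H}}^1(\calO_v, \mathscr{G})$ dying in ${\rm{H}}^1(k_v, G)$, its image in ${\rm{H}}^1(\calO_v, \mathscr{G}'')$ dies in ${\rm{H}}^1(k_v, G'')$ and hence vanishes by injectivity on the right, so the class lifts to ${\rm{H}}^1(\calO_v, \mathscr{H})$; that lift dies in ${\rm{H}}^1(k_v, H)$, so by injectivity on the left it comes from $G''(k_v)$, which in turn lifts to $\mathscr{G}''(\calO_v)$ by the isomorphism on the far left, showing the original class in ${\rm{H}}^1(\calO_v, \mathscr{G})$ is trivial. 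There is no real obstacle here beyond being careful to replace $G \to \R_{k'/k}(B')$ by its image so that Lemma \ref{G(O)=G(k)finiteweilrestriction} applies to the quotient $G''$; all of the substantive work has already been done in Lemmas \ref{H^1(G)injectweilrestrict} and \ref{G(O)=G(k)finiteweilrestriction}.
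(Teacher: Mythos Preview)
Your proof is correct and essentially identical to the paper's own argument: both replace the map $G \to \R_{k'/k}(B')$ by its image (the paper calls it $C$, you call it $G''$), spread out the resulting short exact sequence, and run the same four-term diagram chase using Lemma~\ref{G(O)=G(k)finiteweilrestriction} for the $\calO_v$-points of the quotient, the hypothesis on $H$, and Lemma~\ref{H^1(G)injectweilrestrict} for the quotient's ${\rm{H}}^1$. One small slip in your sketch of the chase: the lift $\beta \in {\rm{H}}^1(\calO_v,\mathscr{H})$ does not itself die in ${\rm{H}}^1(k_v,H)$; rather its image there lies in the kernel of ${\rm{H}}^1(k_v,H)\to{\rm{H}}^1(k_v,G)$ and hence comes from $G''(k_v)$, after which the isomorphism on the far left and injectivity for $H$ finish the job---but this is a routine four-lemma step and the paper itself just says ``a simple diagram chase.''
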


\begin{proof}
Let $C : = \im(G \rightarrow \R_{k'/k}(B'))$. Then we have a short exact sequence
\[
1 \longrightarrow H \longrightarrow G \longrightarrow C \longrightarrow 1
\]
which we may spread out to a short exact sequence
\[
1 \longrightarrow \mathscr{H} \longrightarrow \mathscr{G} \longrightarrow \mathscr{C} \longrightarrow 1
\]
After enlarging $S$, the commutative diagram with exact rows (for $v \not\in S$) 
\[
\begin{tikzcd}
\mathscr{C}(\mathcal{O}_v) \isoarrow{d} \arrow{r} & {\rm{H}}^1(\mathcal{O}_v, \mathscr{H}) \arrow[d, hookrightarrow] \arrow{r} & {\rm{H}}^1(\mathcal{O}_v, \mathscr{G}) \arrow{d} \arrow{r} & {\rm{H}}^1(\mathcal{O}_v, \mathscr{C}) \arrow[d, hookrightarrow] \\
C(k_v) \arrow{r} & {\rm{H}}^1(k_v, H) \arrow{r} & {\rm{H}}^1(k_v, G) \arrow{r} & {\rm{H}}^1(k_v, C)
\end{tikzcd}
\]
has the first vertical arrow an isomorphism by Lemma \ref{G(O)=G(k)finiteweilrestriction}, the second an inclusion by the hypothesis, and the last an inclusion by Lemma \ref{H^1(G)injectweilrestrict}. The five lemma now shows that the third vertical arrow is an inclusion.
\end{proof}

Here is the crucial lemma that allows us to go beyond the smooth case.

\begin{lemma}
\label{maptoweilrestrictfinite}
Let $k$ be a field, $G$ a connected commutative $k$-group scheme of finite type. Suppose that 
the underlying reduced scheme 
$G_{\red} \subset G$ is not a smooth $k$-subgroup scheme. Then there is an exact sequence
\[
1 \longrightarrow H \longrightarrow G \longrightarrow \R_{k'/k}(I')
\]
for some finite purely inseparable extension $k'/k$ and some infinitesimal 
commutative $k'$-group scheme $I'$, such that $\dim(H) < \dim(G)$.
\end{lemma}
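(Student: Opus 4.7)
The plan is to construct $\phi$ by adjunction from a Weil restriction setup. Over the perfect closure $k_{\rm perf}$, the reduced subscheme $(G_{k_{\rm perf}})_{\red}$ is automatically a smooth closed subgroup of $G_{k_{\rm perf}}$, and by standard spreading out this property descends to some finite purely inseparable extension $k'/k$: for such $k'$, the reduced subscheme $(G_{k'})_{\red}$ is a smooth closed $k'$-subgroup of $G_{k'}$. Fix such a $k'$, set $K := (G_{k'})_{\red}$ and $I' := G_{k'}/K$ (an infinitesimal $k'$-group scheme), and let $\psi \colon G_{k'} \twoheadrightarrow I'$ be the quotient map. Define $\phi \colon G \to \R_{k'/k}(I')$ as the $k$-group homomorphism corresponding to $\psi$ under the adjunction $\Hom_k(G,\R_{k'/k}(-)) \cong \Hom_{k'}(G_{k'},-)$, and set $H := \ker(\phi)$.

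The bulk of the argument is to show $\dim H < \dim G$, which I would prove by contradiction: assume $\dim H = \dim G$ and deduce that $G_{\red}$ is a smooth $k$-subgroup of $G$, contradicting the hypothesis. The identity component $H^0$ has $\dim H^0 = \dim G$. Base-changing to $k'$, the composite $G_{k'} \xrightarrow{\phi_{k'}} \R_{k'/k}(I')_{k'} \to I'$ (the second map being the counit) equals $\psi$, so taking kernels yields $(H^0)_{k'} \subset H_{k'} = \ker(\phi_{k'}) \subset \ker(\psi) = K$. The connected $k$-group scheme $G$ is automatically geometrically connected (it has the identity $k$-point, so $\pi_0(G) = \Spec k$), hence $G_{k'}$ is connected and therefore $K$ is connected; being smooth over $k'$, $K$ is irreducible, so the closed subscheme $(H^0)_{k'}$ of the same dimension must fill out $K$. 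Faithfully flat descent of smoothness through $\Spec k' \to \Spec k$ then makes $H^0$ a smooth closed $k$-subgroup of $G$. Because $\Spec k' \to \Spec k$ is a universal homeomorphism, $H^0 \subset G$ has the full underlying topological space of $G$; being reduced (since smooth), $H^0$ must coincide with $G_{\red}$ as a closed subscheme. This exhibits $G_{\red}$ as a smooth $k$-subgroup, the desired contradiction.

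The main subtlety is that non-triviality of $\phi$ alone does not suffice: the split example $G = \Ga \times \alpha_p$ has $G_{\red} = \Ga$ a smooth $k$-subgroup (so the lemma's hypothesis fails there) yet the same construction produces a non-zero $\phi$ with $\dim H = \dim G$. The hypothesis on $G_{\red}$ enters only in the final step, through the identification of $H^0$ with $G_{\red}$ via the uniqueness of reduced subscheme structures on a fixed topological support. Everything upstream, including the containment $(H^0)_{k'} \subset K$ and the forced equality with $K$ by irreducibility, is formal from the adjunction and the choice of $k'$.
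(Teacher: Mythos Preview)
Your proof is correct and follows essentially the same approach as the paper's: construct $\phi$ via adjunction from the quotient $G_{k'} \to I' := G_{k'}/(G_{k'})_{\red}$, then argue by contradiction that if $\dim H = \dim G$ then $H_{k'}$ (or in your version $(H^0)_{k'}$) must equal the smooth connected $(G_{k'})_{\red}$, forcing $H$ (respectively $H^0$) to be smooth and hence equal to $G_{\red}$. The paper works with $H$ itself rather than $H^0$, observing directly that a closed subgroup scheme of a smooth connected group with the same dimension must be the whole group; your detour through $H^0$ and the universal-homeomorphism argument is a harmless variant.
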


\begin{proof}
By descent from the perfect closure, there exists 
a finite purely inseparable extension $k'/k$ such that $(G_{k'})_{\red} \subset G_{k'}$ is a smooth $k'$-subgroup scheme.
Let $I' : = G_{k'}/(G_{k'})_{\red}$, so that $I'$ is infinitesimal.
Let $H$ be the kernel of the composition $G \hookrightarrow \R_{k'/k}(G_{k'}) \rightarrow \R_{k'/k}(I')$. We need to show that $\dim(H) < \dim(G)$. 

Suppose to the contrary that $\dim(H) = \dim(G)$. The composition $H_{k'} \hookrightarrow G_{k'} \rightarrow I'$
vanishes by the definition of $H$.
Thus, by the definition of $I'$, we have $H_{k'} \subset (G_{k'})_{\red}$. But $(G_{k'})_{\red}$ is smooth and connected, so its only closed $k'$-subgroup scheme of the same dimension is $(G_{k'})_{\red}$ itself. It follows that $H_{k'}$ is smooth, hence so is $H$. Therefore we must have $H = G_{\red}$ (since $G$ is connected and $\dim(H) = \dim(G)$), so $G_{\red}$ is a smooth $k$-subgroup scheme, violating our assumption. This contradiction shows that $\dim(H) < \dim(G)$.
\end{proof}

\begin{proposition}
\label{H^1(G)injective}
Let $G$ be a commutative group scheme of finite type over a global field $k$, $S$ a finite non-empty set of places of $k$, and let $\mathscr{G}$ be an $\calO_S$-model of $G$. Then for all but finitely many places $v$ of $k$, the map ${\rm{H}}^1(\calO_v, \mathscr{G}) \rightarrow {\rm{H}}^1(k_v, G)$ is injective.
\end{proposition}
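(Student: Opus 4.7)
The plan is to prove property (\ref{H^1(G)injectsproperty}) by induction on $\dim G$, leveraging the devissage machinery that has already been set up. Everything has been arranged so that the induction goes through cleanly, though the case analysis is what makes the non-smooth, positive-characteristic situation work at all.

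The base case $\dim G = 0$ (i.e., $G$ finite) is immediate from Lemma \ref{H^1injectivefinite}. For the inductive step, I would first use the exact sequence $1 \to G^0 \to G \to G/G^0 \to 1$ with $G/G^0$ finite \'etale together with Lemma \ref{connectedreductionH^1injective} to reduce to the case where $G$ is connected. This is harmless for the induction because $\dim G^0 = \dim G$ but connectedness is what the subsequent dichotomy requires.

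With $G$ connected, the proof splits into two cases according to whether $G_{\red} \subset G$ is or is not a smooth $k$-subgroup scheme. If it is, then $G/G_{\red}$ is an infinitesimal (in particular, finite) commutative $k$-group scheme, and $G_{\red}$ itself is smooth, so property (\ref{H^1(G)injectsproperty}) holds for $G_{\red}$ by Lemma \ref{H^1(G)injectivesmooth}. Applying Lemma \ref{connectedreductionH^1injective} to the exact sequence $1 \to G_{\red} \to G \to G/G_{\red} \to 1$ then gives the result for $G$, with no appeal to the induction hypothesis.

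The more delicate case is when $G_{\red}$ fails to be a smooth $k$-subgroup scheme, which can occur over imperfect $k$. Here Lemma \ref{maptoweilrestrictfinite} was tailor-made for exactly this situation: it produces a finite purely inseparable extension $k'/k$, an infinitesimal commutative $k'$-group $I'$, and an exact sequence
\[
1 \longrightarrow H \longrightarrow G \longrightarrow \R_{k'/k}(I')
\]
with $\dim H < \dim G$. The inductive hypothesis applies to $H$, yielding property (\ref{H^1(G)injectsproperty}) for it, and Lemma \ref{H^1(G)injectivedevissage} (whose entire purpose is to handle sequences of precisely this form) then promotes this to property (\ref{H^1(G)injectsproperty}) for $G$. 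The main obstacle in the whole argument was really the setup rather than this final assembly — handling non-smooth group schemes over imperfect fields required constructing the Weil restriction trick of Lemma \ref{maptoweilrestrictfinite} and the matching devissage Lemma \ref{H^1(G)injectivedevissage} (which in turn relied on Lemma \ref{H^1(G)injectweilrestrict} and the integrality result Lemma \ref{G(O)=G(k)finiteweilrestriction}); once these are in place, the induction proceeds essentially mechanically.
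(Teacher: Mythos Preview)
Your proof is correct and follows essentially the same approach as the paper: induction on $\dim G$, reduction to the connected case via Lemma \ref{connectedreductionH^1injective}, and then the dichotomy on whether $G_{\red}$ is a smooth $k$-subgroup (handled by Lemmas \ref{connectedreductionH^1injective} and \ref{H^1(G)injectivesmooth}) or not (handled by Lemmas \ref{maptoweilrestrictfinite} and \ref{H^1(G)injectivedevissage} together with the inductive hypothesis). Your additional remark that $G/G_{\red}$ is infinitesimal in the first case is correct and slightly sharper than what is needed, but the argument is otherwise the same.
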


\begin{proof}
We proceed by induction on $\dim(G)$. 
Since the component group $G/G^0$ is finite, by Lemma \ref{connectedreductionH^1injective} it suffices to prove Proposition \ref{H^1(G)injective} for $G^0$; i.e., we may assume that $G$ is connected. If $G_{\red} \subset G$ is a smooth $k$-subgroup scheme then, by finiteness of $G/G_{\red}$, we are done by Lemmas \ref{connectedreductionH^1injective} and \ref{H^1(G)injectivesmooth}. If $G_{\red}$ is not a smooth $k$-subgroup scheme, then we apply Lemmas \ref{maptoweilrestrictfinite} and \ref{H^1(G)injectivedevissage}, together with the induction hypothesis, to conclude.
\end{proof}

\begin{example}
\label{almostallnotall}
As we mentioned in the introduction to the present chapter, the results of this chapter really do apply only to almost all places of $k$, rather than to all places, even if one restricts to flat affine finite-type models of the generic fiber. Here we give an example to show that Proposition \ref{H^1(G)injective} fails if one replaces ``almost all'' by ``all.'' 

Let $v$ be a nonarchimedean place of the global field $k$, and let $\pi \in k^{\times}$ be a uniformizer at $v$. Let $S$ be a finite set of places of $k$ not including $v$. If $\pi$ is regular on $\calO_S$, then consider the smooth $\calO_S$-group scheme 
\[
\mathscr{G} := \{Y^p = Y + \pi X\} \subset \mathbf{G}_{a,\,\calO_S}^2.
\]
Then $\mathscr{G}$ is an $\calO_S$-model of $\Ga$. Indeed, the map $(X, Y) \mapsto Y$ defines a homomorphism $\mathscr{G} \rightarrow \Ga$ which over $k$ admits the inverse $T \mapsto (\pi^{-1}(T^p - T), T)$. In particular, ${\rm{H}}^1(k_v, \mathscr{G}) = {\rm{H}}^1(k_v, \Ga) = 0$, but we claim that ${\rm{H}}^1(\calO_v, \mathscr{G}) \neq 0$.

In order to see this, choose $\alpha \in \calO_v$ such that the image $\overline{\alpha}$ of $\alpha$ in the residue field $\kappa_v$ of $\calO_v$ is not in the image of the Artin-Schreier map. That is, there does not exist $t \in \kappa_v$ such that $t^p - t = \overline{\alpha}$. Consider the $\mathscr{G}$-torsor $\mathscr{X}$ over $\calO_v$ defined by the following equation:
\[
\mathscr{X} := \{T^p - T = \alpha + \pi S\} \subset \mathbf{G}_{a,\,\calO_v}^2.
\]
The action of $\mathscr{G}$ on $\mathscr{X}$ is given by $(X, Y)\cdot (S, T) := (S + X, T + Y)$. One may check that this does indeed make $\mathscr{X}$ into a $\mathscr{G}$-torsor over $\calO_v$. We claim that $\mathscr{X}(\calO_v) = \emptyset$. Indeed, if we have an integral point $(S, T) \in \mathscr{X}(\calO_v)$, then reducing modulo $\pi$ yields $\overline{T}^p - \overline{T} = \overline{\alpha}$, in violation of our choice of $\alpha$. Thus $\mathscr{X}$ is a nontrival $\mathscr{G}$-torsor over $\calO_v$.
\end{example}

\section{Integral annihilator aspects of duality between
${\rm{H}}^1(k_v, G)$ and ${\rm{H}}^1(k_v, \widehat{G})$}\label{sec126}

In this section we prove Theorem \ref{exactannihilatorH^1(G)}. In order to do this, we first prove Theorem \ref{exactannihilatorH^2(G^)} for $\Ga$: 

\begin{proposition}
\label{H^2(G^)integralGa}
For a local function field $k$, the map ${\rm{H}}^2(\mathcal{O}_v, \widehat{\Ga}) \rightarrow (\Ga(k_v)/\Ga(\calO_v))^D$ $= (k_v/\calO_v)^D$ is an isomorphism.

Equivalently, the map ${\rm{H}}^2(\mathcal{O}_v, \widehat{\Ga}) \rightarrow {\rm{H}}^2(k_v, \widehat{\Ga})$ is injective
with image that is the exact annihilator of $\Ga(\mathcal{O}_v)$ under the cup product pairing.
\end{proposition}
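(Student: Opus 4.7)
The plan is to exploit the explicit description of Proposition \ref{omega=H^2(Ga^)} to represent both cohomology groups by differential forms, and then reduce the statement to a concrete non-vanishing of an invariant in the Brauer group. Fix a uniformizer $\pi$ of $\calO_v$. Since $[k_v:k_v^p]=p$, Proposition \ref{omega=H^2(Ga^)} applies equally well over $\calO_v$ and over $k_v$, yielding module isomorphisms ${\rm{H}}^2(\calO_v,\widehat{\Ga}) \cong \calO_v\cdot d\pi$ and ${\rm{H}}^2(k_v,\widehat{\Ga}) \cong k_v\cdot d\pi$ that are compatible, by the functoriality of $\phi$, with the inclusion $\calO_v d\pi \hookrightarrow k_v d\pi$; this already settles injectivity of ${\rm{H}}^2(\calO_v, \widehat{\Ga}) \to {\rm{H}}^2(k_v, \widehat{\Ga})$. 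By Lemma \ref{evaluationcompatibility}, the cup product of $\alpha \in {\rm{H}}^2(k_v,\widehat{\Ga})$ with $\lambda \in \Ga(k_v) = k_v$ equals $\inv(\psi(\lambda\alpha))$; when $\alpha \in {\rm{H}}^2(\calO_v,\widehat{\Ga})$ and $\lambda \in \calO_v$, this cup product lies in $\Br(\calO_v) = \Br(\kappa_v) = 0$, so the image of ${\rm{H}}^2(\calO_v,\widehat{\Ga})$ in ${\rm{H}}^2(k_v, \widehat{\Ga})$ automatically annihilates $\calO_v$ and hence lies in $(k_v/\calO_v)^D$.

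Local duality for $\Ga$ (Proposition \ref{k_pro=H^2(k,Ga^)*} together with Proposition \ref{G(k)_pro*---->G(k)*isom}) identifies ${\rm{H}}^2(k_v,\widehat{\Ga})$ with $\Hom_{\mathrm{cts}}(k_v, \Q/\Z) = k_v^D$ as $k_v$-vector spaces. Let $M \subseteq k_v \cdot d\pi$ be the preimage of $(k_v/\calO_v)^D$; this is an $\calO_v$-submodule of the one-dimensional $k_v$-vector space $k_v \cdot d\pi$, and since $\calO_v$ is a DVR, every such submodule is $0$, of the form $\pi^n\calO_v\cdot d\pi$ for some $n\in\Z$, or all of $k_v\cdot d\pi$. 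Combining the inclusion $\calO_v d\pi \subseteq M$ from the previous paragraph with $M \neq k_v \cdot d\pi$ (as not every continuous character of $k_v$ vanishes on $\calO_v$) yields $M = \pi^{-N}\calO_v \cdot d\pi$ for some integer $N \geq 0$. The proposition is therefore equivalent to showing $N = 0$, i.e., that there exists some $\lambda \in \calO_v$ with $\inv(\psi(\lambda \cdot d\pi/\pi)) \neq 0$.

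This last non-vanishing is the main obstacle. My approach is to combine the identification $\Br(k_v)[p] \cong \Omega^1_{k_v}/(dk_v + (C^{-1}-i)\Omega^1_{k_v})$ from Proposition \ref{brauerdifferentialforms} with the classical residue formula for the local invariant in characteristic $p$, according to which $\inv \circ \psi \colon \Omega^1_{k_v} \to (1/p)\Z/\Z$ agrees, up to a fixed nonzero scalar, with the composite $\Omega^1_{k_v} \xrightarrow{\mathrm{res}} \kappa_v \xrightarrow{\mathrm{tr}_{\kappa_v/\F_p}} \F_p \hookrightarrow \Q/\Z$. For $\lambda \in \calO_v$ with reduction $\lambda_0 \in \kappa_v$, one computes $\mathrm{res}(\lambda\cdot d\pi/\pi) = \lambda_0$, and surjectivity of $\mathrm{tr}_{\kappa_v/\F_p}$ then produces a $\lambda$ with nonzero invariant. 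If a more self-contained derivation of the residue formula is preferred, an alternative is to identify $\psi(\lambda\cdot d\pi/\pi)$, for $\lambda_0 \neq 0$, with the Brauer class of the cyclic algebra attached to the Artin--Schreier extension $\wp(x) = \lambda/\pi$ (which is non-split by a direct comparison of valuations) and to invoke the non-triviality of its local invariant.
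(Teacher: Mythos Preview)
Your argument is correct and follows essentially the same route as the paper: use Proposition \ref{omega=H^2(Ga^)} to identify the relevant ${\rm H}^2$ groups with $\Omega^1_{\calO_v}=\calO_v\,d\pi$ and $\Omega^1_{k_v}=k_v\,d\pi$, invoke Lemma \ref{evaluationcompatibility} together with $\Br(\calO_v)=0$ to see that $\calO_v\,d\pi$ annihilates $\calO_v$, and then finish via the residue--trace formula for the local invariant. The paper proceeds by showing directly that for each $\lambda\notin\calO_v$ one can find $\alpha\in\calO_v$ with $\psi(\lambda\alpha\,d\pi)\neq 0$; your observation that the annihilator $M$ of $\calO_v$ is an $\calO_v$-submodule of the one-dimensional $k_v$-line (using $\langle r\alpha,\lambda\rangle=\langle\alpha,r\lambda\rangle$) cleanly reduces this to the single boundary case $d\pi/\pi$, which is a mild simplification. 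Both proofs ultimately appeal to the same unproved formula $\inv\circ\psi={\rm Tr}_{\kappa_v/\F_p}\circ\Res$ (your ``up to a nonzero scalar'' hedge is harmless for the argument), so neither is more self-contained than the other on this point.
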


The equivalence between the two formulations follows from the already-established local duality in the case of $\Ga$ (Theorem \ref{H^2(G^)altori}) and from the equivalence of (algebraic, not topological) groups $((k_v)_{\pro})^D \xrightarrow{\sim} k_v^D$ coming from Proposition \ref{G(k)_pro*---->G(k)*isom}, which implies that $(k_v/\calO_v)^D \simeq ((k_v)_{\rm{pro}}/\calO_v)^D$. (Note that $k_v^D = {\rm{Hom}}_{\rm{cts}}(k_v, \Q/\Z)$ because $k_v$ is $p$-torsion.)

\begin{proof}
First we check that the map ${\rm{H}}^2(\calO_v, \widehat{\Ga}) \rightarrow {\rm{H}}^2(k_v, \widehat{\Ga})$ is injective. By Proposition \ref{H^2=Ext^2=Brdvr}, it suffices to check that the map $\Br(\mathbf{G}_{a,\,\calO_v}) \rightarrow \Br(\mathbf{G}_{a,\,k_v})$ is injective, and this in turn holds because $\mathbf{G}_{a,\,\calO_v}$ is a regular scheme \cite[Cor.\,1.8]{brii}.

Next we check that ${\rm{H}}^2(\calO_v, \widehat{\Ga})$ is the exact annihilator of $\calO_v$. By Propositions \ref{H^2=Ext^2=Brdvr} and \ref{omega=H^2(Ga^)}, and Lemmas  \ref{evaluationcompatibility} and \ref{Omega^1=Rdpi}, it suffices to prove that for a uniformizer $\pi$ of $\calO_v$, if $\lambda \in k_v$ satisfies $\psi(\lambda \alpha d\pi) = 0 \in \Br(k_v)$ for every $\alpha \in \calO_v$, where $\psi$ is the map in Proposition \ref{brauerdifferentialforms} for $X = \Spec(k_v)$, then $\lambda \in \calO_v$. That is, we must show that if $\lambda \notin \calO_v$, then $\psi(\lambda \alpha d\pi) \neq 0$ for some $\alpha \in \calO_v$.

Let $\F_v$ denote the finite residue field of $\calO_v$. Choose a $c \in \F_v$ such that ${\rm{Tr}}_{\F_v/\F_p}(c) \neq 0$. Write $\lambda = \sum_{n \geq -N} b_n\pi^n$ with $b_n \in \F_v$, $N > 0$, and $b_{-N} \neq 0$. Choose $\alpha : = c\pi^{N-1}/b_{-N}$, so that $\lambda \alpha d\pi$ has residue $\Res(\lambda \alpha d\pi) = c$. Therefore, identifying $\F_p$ with $\frac{1}{p}\Z/\Z$, this Brauer element has invariant $\inv(\lambda \alpha d\pi) = {\rm{Tr}}_{\F_v/\F_p}(\Res(\lambda \alpha d\pi)) \neq 0$ by construction. We therefore have $\psi(\lambda \alpha d\pi) \neq 0 \in \Br(k_v)$, as desired.
\end{proof}

We will prove the following reformulation of Theorem \ref{exactannihilatorH^1(G)}: 

\begin{theorem}
\label{H^1(k,G^)/H^1(O,G^)} $($Theorem $\ref{exactannihilatorH^1(G)}$$)$ For $k$ a global function field, $G$ an affine commutative $k$-group scheme of finite type, and $\mathscr{G}$ an $\mathcal{O}_S$-model of $G$, for all but finitely many $v$ the maps ${\rm{H}}^1(\calO_v, \mathscr{G}) \rightarrow {\rm{H}}^1(k_v, G)$ and ${\rm{H}}^1(\calO_v, \widehat{\mathscr{G}}) \rightarrow {\rm{H}}^1(k_v, \widehat{G})$ are inclusions and the map ${\rm{H}}^1(\mathcal{O}_v, \widehat{\mathscr{G}}) \rightarrow ({\rm{H}}^1(k_v, G)/{\rm{H}}^1(\mathcal{O}_v, \mathscr{G}))^D$ induced by the composition ${\rm{H}}^1(\mathcal{O}_v, \widehat{\mathscr{G}}) \rightarrow {\rm{H}}^1(k_v, \widehat{G}) \rightarrow {\rm{H}}^1(k_v, G)^D$ 
$($the last map being induced by cup product$)$ is a topological isomorphism for almost all $v$, where we endow ${\rm{H}}^1(\calO_v, \widehat{\mathscr{G}})$ with the subspace topology from ${\rm{H}}^1(k_v, \widehat{G})$.
\end{theorem}

\begin{remark}
\label{H^1(O_v,G)compact}
In order to show that Theorem \ref{H^1(k,G^)/H^1(O,G^)} is equivalent to Theorem \ref{exactannihilatorH^1(G)}, we note that it is equivalent to saying that ${\rm{H}}^1(\calO_v, \widehat{\mathscr{G}}) \subset {\rm{H}}^1(k_v, \widehat{G})$ is closed, and that its annihilator under the local duality pairing is the closure of ${\rm{H}}^1(\calO_v, \mathscr{G}) \subset {\rm{H}}^1(k_v, G)$. Thus, we only need to show that ${\rm{H}}^1(\calO_v, \mathscr{G}) \subset {\rm{H}}^1(k_v, G)$ is closed. In order to do this, we recall that we also defined a topology on ${\rm{H}}^1(\calO_v, \mathscr{G})$ in \S \ref{sectiontopologyoncohomology}, defined as in \cite{ces1}.

If we choose $\mathscr{G}$ to be affine (as we may by choosing an inclusion $G \subset {\rm{GL}}_{n,\,k}$ and then taking $\mathscr{G}$ to be the Zariski closure of $G$ inside ${\rm{GL}}_{n,\, \calO_S}$), then this makes ${\rm{H}}^1(\calO_v, \mathscr{G})$ into a topological group \cite[Prop.\,3.6(c)]{ces1}, and this group is compact by \cite[Prop.\,2.9(d)]{ces1} and Lemma \ref{H^1(finitefield)=finite}. With this topology, the map ${\rm{H}}^1(\calO_v, \mathscr{G}) \rightarrow {\rm{H}}^1(k_v, G)$ is continuous, due to the continuity of the map $X(\calO_v) \rightarrow X(k_v)$ for any locally finite type $\calO_v$-scheme $X$. It follows that the image is compact, hence closed, which is what we wanted to show. It is perhaps worth noting that this map is actually a homeomorphism onto its image, as follows from the compactness of the source and the Hausdorffness of the target.
\end{remark}

\begin{proof}[Proof of Theorem $\ref{H^1(k,G^)/H^1(O,G^)}$]
The injectivity assertion is contained in Propositions \ref{H^1(O,G^)-->H^1(k,G^)injective} and \ref{H^1(G)injective}. Further, if the map of the proposition is an algebraic isomorphism, then it is automatically a topological isomorphism since Pontryagin duality then topologically identifies ${\rm{H}}^1(\calO_v, \widehat{\mathscr{G}})$ with $({\rm{H}}^1(k_v, G)/{\rm{H}}^1(\calO_v, \mathscr{G}))^D$. If $G$ is finite, then when ${\rm{char}}(k) \nmid \# G$, this is part of classical Tate local duality \cite[Th.\,2.4]{tateduality}, and the general case is due to Milne \cite[Ch.\,3, Cor.\,7.2]{milne}. To go beyond finite $G$, we next note that 
Theorem \ref{H^1(k,G^)/H^1(O,G^)} holds for $G = \R_{k'/k}(\Gm)$ with $k'/k$ a finite separable extension because ${\rm{H}}^1(k_v, G)$ and ${\rm{H}}^1(k_v, \widehat{G})$ vanish for all $v$ by Lemma \ref{H^1=0Weilrestrictionsplittori}.

Next, we turn to the case when $G$ is an almost-torus over $k$. By Lemma \ref{almosttorus}(iv), after harmlessly modifying $G$ we may assume that there is an exact sequence
\[
1 \longrightarrow B \longrightarrow X \longrightarrow G \longrightarrow 1
\]
where $X = C \times \R_{k'/k}(T')$ for a finite separable extension field $k'/k$
and split $k'$-torus $T'$, and $B$ and $C$ are finite commutative $k$-group schemes. 
Using Lemma \ref{spreadingoutdualsheaves}, we may spread this out to obtain an exact sequence 
of affine flat commutative group schemes of finite type over some $\mathcal{O}_S$: 
\[
1 \longrightarrow \mathscr{B} \longrightarrow \mathscr{X} \longrightarrow \mathscr{G} \longrightarrow 1
\]
such that the dual sequence
\[
1 \longrightarrow \widehat{\mathscr{G}} \longrightarrow \widehat{\mathscr{X}} \longrightarrow \widehat{\mathscr{B}} \longrightarrow 
1
\]
is also exact. Define the covariant functor
$Q^i : = {\rm{H}}^i(k_v,(\cdot)_{k_v})/{\rm{H}}^i(\mathcal{O}_v, \cdot)$ on affine flat commutative $\mathcal{O}_v$-group
schemes of finite type. In the commutative diagram 
\begin{equation}\label{Qexact}
\begin{tikzcd}
\widehat{\mathscr{X}}(\mathcal{O}_v) \isoarrow{d} \arrow{r} & \widehat{\mathscr{B}}(\mathcal{O}_v) \isoarrow{d} \arrow{r} & {\rm{H}}^1(\mathcal{O}_v, \widehat{\mathscr{G}}) \arrow{d} \arrow{r} & {\rm{H}}^1(\mathcal{O}_v, \widehat{\mathscr{X}}) \isoarrow{d} \arrow{r} & {\rm{H}}^1(\mathcal{O}_v, \widehat{\mathscr{B}}) \isoarrow{d} \\
Q^2(\mathscr{X})^D \arrow{r} &  Q^2(\mathscr{B})^D \arrow{r} &  Q^1(\mathscr{G})^D \arrow{r} &
Q^1(\mathscr{X})^D \arrow{r} & Q^1(\mathscr{B})^D
\end{tikzcd}
\end{equation}
(where all maps in the bottom row are well-defined due to Proposition \ref{topcohombasics}(v)), the top row is exact, and perhaps after expanding $S$ the first two vertical arrows are isomorphisms by Theorem \ref{H^2(G)G^(k)integraldualityprop}, and 
the last two are isomorphisms by the already-treated cases of finite group schemes and separable Weil restrictions of split tori. We claim that the bottom row is exact at the second and third entries for all but finitely many $v$. (It is actually exact everywhere
for almost all $v$, but we will not need this.) Assuming this, a simple diagram chase shows that the middle vertical arrow is an isomorphism, as desired.

Let us first check exactness at the second entry along the bottom of (\ref{Qexact}) for almost all $v$.
Consider the 3-term complex of continuous maps 
\[
\frac{{\rm{H}}^1(k_v, G)}{{\rm{H}}^1(\mathcal{O}_v, \mathscr{G})} \longrightarrow \frac{{\rm{H}}^2(k_v, B)}{{\rm{H}}^2(\mathcal{O}_v, \mathscr{B})} \longrightarrow \frac{{\rm{H}}^2(k_v, X)}{{\rm{H}}^2(\mathcal{O}_v, \mathscr{X})},
\]
in which the last 2 terms are discrete (the $k_v$-cohomologies are discrete in degree 2).  This is exact for almost all $v$
because ${\rm{H}}^2(\mathcal{O}_v, \mathscr{B})$ and ${\rm{H}}^2(\calO_v, \mathscr{X})$ vanish
for all but finitely many $v$ by Theorem \ref{H^2(G)G^(k)integraldualityprop}. Thus, the map
$$Q^2(\mathscr{B})/\im Q^1(\mathscr{G}) \rightarrow Q^2(\mathscr{X})$$ between discrete groups is an inclusion, so we have an exact sequence of $\mathbf{R}/\mathbf{Z}$-duals of discrete groups
$$Q^2(\mathscr{X})^D \rightarrow Q^2(\mathscr{B})^D \rightarrow ({\rm{im}} \,Q^1(\mathscr{G}))^D.$$
This yields exactness at the second term along the bottom of (\ref{Qexact}).

To complete the proof of Theorem \ref{H^1(k,G^)/H^1(O,G^)} for almost-tori, it remains to check exactness
at the third entry along the bottom of (\ref{Qexact}).  It suffices to show that the complex of continuous maps
$$Q^1(\mathscr{X}) \rightarrow Q^1(\mathscr{G}) \rightarrow Q^2(\mathscr{B})$$
between locally compact Hausdorff groups with discrete third term is algebraically exact for almost all $v$. The algebraic exactness
is immediate because ${\rm{H}}^2(\calO_v, \mathscr{B}) = 0$ for almost
all $v$ by Theorem \ref{H^2(G)G^(k)integraldualityprop} again. This proves the proposition for almost-tori.

Now we treat the general case. Let $G$ be an affine commutative $k$-group scheme of finite type. By induction on the dimension of the unipotent radical of $(G_{\overline{k}})^0_{\red}$ (the 
$0$-dimensional case being the already-treated case of almost-tori), we may assume (by Lemma \ref{affinegroupstructurethm}) that we have an exact sequence
\[
1 \longrightarrow H \longrightarrow G \longrightarrow \Ga \longrightarrow 1
\]
such that Theorem \ref{H^1(k,G^)/H^1(O,G^)} holds for $H$. Using Lemma \ref{spreadingoutdualsheaves}, we may then spread this out to obtain exact sequences over some $\mathcal{O}_S$: 
\[
1 \longrightarrow \mathscr{H} \longrightarrow \mathscr{G} \longrightarrow \Ga \longrightarrow 1,
\]
\[
1 \longrightarrow \widehat{\Ga} \longrightarrow \widehat{\mathscr{G}} \longrightarrow \widehat{\mathscr{H}} \longrightarrow 1.
\]

In the commutative diagram of homomorphisms 
\[
\begin{tikzcd}
0 \arrow{r} & {\rm{H}}^1(\mathcal{O}_v, \widehat{\mathscr{G}}) \arrow{d} \arrow{r} & {\rm{H}}^1(\mathcal{O}_v, \widehat{\mathscr{H}}) \isoarrow{d} \arrow{r} & {\rm{H}}^2(\mathcal{O}_v, \widehat{\Ga}) \isoarrow{d} \\
0 \arrow{r} & Q^1(\mathscr{G})^D \arrow{r} & Q^1(\mathscr{H})^D \arrow{r} & Q^0(\Ga)^D
\end{tikzcd}
\]
(with well-defined maps in the bottom row due to Proposition \ref{topcohombasics}(v)), the top row is exact because ${\rm{H}}^1(\mathcal{O}_v, \widehat{\Ga}) = 0$
by Proposition \ref{cohomologyofG_adualgeneralk}, the middle vertical arrow is an isomorphism by hypothesis, and the last vertical arrow is an (algebraic) isomorphism by Proposition \ref{H^2(G^)integralGa}. The bottom row is exact at $Q^1(\mathscr{G})^D$ because the map ${\rm{H}}^1(k_v, H) \rightarrow {\rm{H}}^1(k_v, G)$ is surjective (since ${\rm{H}}^1(k_v, \Ga) = 0$). A simple diagram chase now shows that the first vertical arrow is an isomorphism. This completes the proof of Theorem \ref{H^1(k,G^)/H^1(O,G^)}.
\end{proof}

\section{Integral annihilator aspects of duality between ${\rm{H}}^2(k_v, \widehat{G})$ and ${\rm{H}}^0(k_v, G)$}

In this section we prove Theorem \ref{exactannihilatorH^2(G^)}. Let $G$ be an affine commutative group scheme of finite type over a global function field $k$. Consider the following statement: 
\begin{equation}
\label{H^2(O,G^)perfectlyannihilatesG(O)property}
\text{\parbox{.85\textwidth}{For any (equivalently, for some) finite non-empty set $S$ of places of $k$ and $\calO_S$-model $\mathscr{G}$ of $G$, the continuous map ${\rm{H}}^2(\mathcal{O}_v, \widehat{\mathscr{G}}) \rightarrow \Hom_{\rm{cts}}(G(k_v)/\mathscr{G}(\mathcal{O}_v), \Q/\Z)$ is an isomorphism for all but finitely many places $v$ of $k$.}}
\end{equation}

\begin{remark}
\label{H^2(O)dualityequivalence}
We claim that (\ref{H^2(O,G^)perfectlyannihilatesG(O)property}) is equivalent to Theorem \ref{exactannihilatorH^2(G^)} for $G$. Indeed, we first note that the injectivity of the map $\mathscr{G}(\mathcal{O}_v) \rightarrow G(k_v)$ for all but finitely many $v$ is trivial, because we may spread $G$ out to some affine $\mathcal{O}_S$-model $\mathscr{G}$. The injectivity of the map ${\rm{H}}^2(\calO_v, \widehat{\mathscr{G}}) \rightarrow {\rm{H}}^2(k_v, \widehat{G})$ for all but finitely many $v$ follows from (\ref{H^2(O,G^)perfectlyannihilatesG(O)property}) and the fact that the map ${\rm{H}}^2(\mathcal{O}_v, \widehat{\mathscr{G}}) \rightarrow \Hom_{\rm{cts}}(G(k_v)/\mathscr{G}(\mathcal{O}_v), \Q/\Z)$ factors as a composition ${\rm{H}}^2(\mathcal{O}_v, \widehat{\mathscr{G}}) \rightarrow {\rm{H}}^2(k_v, \widehat{G}) \rightarrow \Hom_{\rm{cts}}(G(k_v)/\mathscr{G}(\mathcal{O}_v), \Q/\Z)$. Finally, the equivalence of Theorem \ref{exactannihilatorH^2(G^)} with (\ref{H^2(O,G^)perfectlyannihilatesG(O)property}) follows from the perfectness of the local duality pairing (Theorem \ref{H^2(G^)altori}) and from the fact that the map $(G(k_v)_{\pro})^D \rightarrow \Hom_{\rm{cts}}(G(k_v), \Q/\Z)$ is an (algebraic) isomorphism (Proposition \ref{G(k)_pro*---->G(k)*isom}).
\end{remark}

\begin{lemma}
\label{H^2(O,G^)Gm}
For a global function field $k$ a finite separable extension $k'/k$, and a split $k'$-torus $T'$, 
$(\ref{H^2(O,G^)perfectlyannihilatesG(O)property})$ holds for $G = \R_{k'/k}(T')$.
\end{lemma}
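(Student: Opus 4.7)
My plan is to reduce step-by-step to a single unramified local field, where the claim will follow from local class field theory. First, since $T'$ is split I may assume $T' = \Gm$ by additivity. Next, choose a non-empty finite set $S$ of places of $k$ containing all places ramified in $k'/k$, and take $\mathscr{G} := \R_{\calO_{S'}/\calO_S}(\Gm)$ as the $\calO_S$-model of $G = \R_{k'/k}(\Gm)$, where $S'$ denotes the places of $k'$ lying over $S$. By Proposition \ref{charactersseparableweilrestriction}, $\widehat{\mathscr{G}} = \R_{\calO_{S'}/\calO_S}(\Z)$.

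For $v \notin S$, the ring $\calO_v \otimes_{\calO_S} \calO_{S'} = \prod_{v' \mid v} \calO_{v'}$ is finite \'etale over $\calO_v$, so exactness of finite \'etale pushforward on \'etale sheaves (combined with smoothness of the groups at hand) yields identifications
\[
{\rm{H}}^i(\calO_v, \widehat{\mathscr{G}}) = \prod_{v' \mid v} {\rm{H}}^i(\calO_{v'}, \Z), \qquad G(k_v)/\mathscr{G}(\calO_v) = \prod_{v' \mid v} k_{v'}^{\times}/\calO_{v'}^{\times},
\]
and analogously over $k_v$. Via compatibility of cup products with Weil restriction through the norm (Proposition \ref{diagramcommuteslocal}), this reduces the claim factor-by-factor to the following: for any non-archimedean local field $F$ with ring of integers $\calO$, the natural map
\[
{\rm{H}}^2(\calO, \Z) \longrightarrow \Hom_{\rm{cts}}(F^{\times}/\calO^{\times}, \Q/\Z)
\]
coming from the local duality pairing (via the inclusion ${\rm{H}}^2(\calO, \Z) \hookrightarrow {\rm{H}}^2(F, \Z)$) is an isomorphism.

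To handle this final reduction, I will use the exact sequence $0 \to \Z \to \Q \to \Q/\Z \to 0$ and the unique divisibility of the constant sheaf $\Q$ to obtain ${\rm{H}}^2(\calO, \Z) \simeq {\rm{H}}^1(\calO, \Q/\Z) = \Hom_{\rm{cts}}(\pi_1(\calO)^{{\rm{ab}}}, \Q/\Z) \simeq \Hom_{\rm{cts}}(\widehat{\Z}, \Q/\Z) \simeq \Q/\Z$, using that $\calO$ is Henselian with finite residue field, so $\pi_1(\calO) \simeq \widehat{\Z}$ is topologically generated by Frobenius; likewise, the valuation yields $F^{\times}/\calO^{\times} \simeq \Z$, so the target is also $\Q/\Z$. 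The main obstacle is that both sides being abstractly $\Q/\Z$ does not yet prove the asserted map is an isomorphism; to pin this down I invoke Lemma \ref{localreciprocitymap}, which identifies the local duality pairing with the pairing induced by the local reciprocity map $F^{\times} \to \mathfrak{g}_F^{{\rm{ab}}}$. Under reciprocity $\calO^{\times}$ maps onto the inertia subgroup, so $F^{\times}/\calO^{\times} \simeq \Z$ embeds into $\widehat{\Z} = \pi_1(\calO)^{{\rm{ab}}}$ with dense image (a uniformizer goes to Frobenius); dualizing this dense embedding yields the desired isomorphism.
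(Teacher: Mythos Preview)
Your proof is correct and follows essentially the same route as the paper's: reduce to $T' = \Gm$, take the model $\R_{\calO_{S'}/\calO_S}(\Gm)$, use Proposition~\ref{charactersseparableweilrestriction} and Proposition~\ref{diagramcommuteslocal} together with exactness of finite \'etale pushforward to reduce to the case $k'=k$, then compute ${\rm{H}}^2(\calO_v,\Z)$ via the sequence $0\to\Z\to\Q\to\Q/\Z\to 0$ and invoke Lemma~\ref{localreciprocitymap} and local class field theory (units map to inertia) to identify the pairing. The only minor imprecision is your appeal to ``unique divisibility of $\Q$'' to kill ${\rm{H}}^i(\calO,\Q)$ for $i>0$; this works because $\calO$ is Henselian local so \'etale cohomology reduces to Galois cohomology of the residue field, but the paper phrases this more carefully (cf.\ Lemma~\ref{H^i(Q)=0}).
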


\begin{proof}
We may assume that $T' = \Gm$ and that $S$ contains the set of places of $k$ unramified in $k'$, 
and that $\mathscr{G} : = \R_{\mathcal{O}_{S'}/\mathcal{O}_S}(\Gm)$, where $S'$ is the set of places of $k'$ lying over $S$. By Proposition \ref{charactersseparableweilrestriction}, $\widehat{\R_{\mathcal{O}_{S'}/\mathcal{O}_S}(\Gm)} = \R_{\mathcal{O}_{S'}/\mathcal{O}_S}(\Z)$. Since 
the constant group $\Z$ is smooth, we may consider cohomology 
relative to the {\'e}tale topology. For each $v \notin S$, we have, by Proposition \ref{diagramcommuteslocal}, a commutative diagram with vertical isomorphisms
\[
\begin{tikzcd}
{\rm{H}}^2(\mathcal{O}_v, \R_{\mathcal{O}_{S'}/\mathcal{O}_S}(\Z)) \arrow{r} \isoarrow{d} & \Hom_{{\rm{cts}}}(\R_{\mathcal{O}_{S'}/\mathcal{O}_S}(\Gm)(k_v)/\R_{\mathcal{O}_{S'}/\mathcal{O}_S}(\Gm)(\mathcal{O}_v), \Q/\Z) \isoarrow{d} \\
\prod_{v' \mid v} {\rm{H}}^2(\mathcal{O}_{v'}, \Z) \arrow{r} & \prod_{v'\mid v} \Hom_{\rm{cts}}(k^{\times}_{v'}/\mathcal{O}^{\times}_{v'}, \Q/\Z)
\end{tikzcd}
\]
in which the products are taken over all places $v'$ of $k'$ lying above $v$. We are therefore reduced to the case $k' = k$. 

Our task now is to show that the map ${\rm{H}}^2(\mathcal{O}_v, \Z) \rightarrow (k^{\times}_v/\mathcal{O}^{\times}_v)^*$ is an isomorphism. Via the exact sequence of {\'e}tale sheaves
\[
0 \longrightarrow \Z \longrightarrow \Q \longrightarrow \Q/\Z \longrightarrow 0
\]
and the fact that ${\rm{H}}^i(\mathcal{O}_v, \Q) = 0$ for all $i > 0$ (this actually holds for any noetherian normal scheme), we see that ${\rm{H}}^2(\mathcal{O}_v, \Z) = \Hom_{{\rm{cts}}}(\pi_1(\mathcal{O}_v), \Q/\Z) = (\Gal(k_v^{\rm{nr}}/k_v))^D$, where $k_v^{\rm{nr}}$ is the maximal unramified extension of $k_v$. Further, the map $(\Gal(k_v^{\rm{nr}}/k_v))^D \rightarrow \Hom_{\rm{cts}}(k^{\times}_v/\mathcal{O}^{\times}_v, \Q/\Z)$ is compatible with the map $\Gal(k_v^{{\rm{ab}}}/k_v)^D \rightarrow \Hom_{\rm{cts}}(k_v^{\times}, \Q/\Z)$
that is the Pontryagin dual of the local reciprocity map (see Lemma \ref{localreciprocitymap}). What we want to show, therefore, is that the profinite group $\mathcal{O}^{\times}_v$ (which injects into $(k_v)_{\pro} \simeq \calO_v^{\times} \times \Z_{\pro}$) is the kernel of the composition $k^{\times}_v \rightarrow \Gal(k_v^{{\rm{ab}}}/k_v) \rightarrow \Gal(k_v^{\rm{nr}}/k_v)$, and this follows from local class field theory.
\end{proof}

\begin{theorem}
\label{H^2(O,G^)dualityprop} $($Theorem $\ref{exactannihilatorH^2(G^)}$$)$ Assertion $(\ref{H^2(O,G^)perfectlyannihilatesG(O)property})$ holds for any affine commutative group scheme $G$ of finite type over a global function field $k$.
\end{theorem}

\begin{proof}
The equivalence between (\ref{H^2(O,G^)perfectlyannihilatesG(O)property}) and Theorem \ref{exactannihilatorH^2(G^)} was discussed in Remark \ref{H^2(O)dualityequivalence}. Note also that Theorem \ref{exactannihilatorH^2(G^)} holds for finite $G$ by Theorem \ref{H^2(G)G^(k)integraldualityprop} and Cartier duality.

First, suppose that $G$ is an almost-torus over $k$. To prove (\ref{H^2(O,G^)perfectlyannihilatesG(O)property}) for $G$, by Lemma \ref{almosttorus}(iv) we may harmlessly modify $G$ so that there is an exact sequence
\[
1 \longrightarrow B \longrightarrow X \longrightarrow G \longrightarrow 1,
\]
where $B$ is a finite commutative $k$-group scheme and $X = C \times \R_{k'/k}(T')$ for a finite 
commutative $k$-group scheme $C$, a finite separable extension $k'/k$, and a split $k'$-torus $T'$. By Lemma \ref{spreadingoutdualsheaves}, this spreads out to yield exact sequences over some $\mathcal{O}_S$: 
\[
1 \longrightarrow \mathscr{B} \longrightarrow \mathscr{X} \longrightarrow \mathscr{G} \longrightarrow 1,
\]
\[
1 \longrightarrow \widehat{\mathscr{G}} \longrightarrow \widehat{\mathscr{X}} \longrightarrow \widehat{\mathscr{B}} \longrightarrow 
1.
\]
Recall that $Q^i(\mathscr{H}) : = {\rm{H}}^i(k_v, H)/{\rm{H}}^i(\calO_v, \mathscr{H})$ for a commutative $\calO_v$-group scheme $\mathscr{H}$ with generic fiber $H$. After possibly enlarging $S$, we can arrange that in the resulting commutative diagram (for $v \not\in S$)
\begin{equation}\label{lastQ}
\begin{tikzcd}[column sep = tiny]
{\rm{H}}^1(\mathcal{O}_v, \widehat{\mathscr{X}}) \isoarrow{d} \arrow{r} & {\rm{H}}^1(\mathcal{O}_v, \widehat{\mathscr{B}}) \isoarrow{d} \arrow{r} & {\rm{H}}^2(\mathcal{O}_v, \widehat{\mathscr{G}}) \arrow{d} \arrow{r} & {\rm{H}}^2(\mathcal{O}_v, \widehat{\mathscr{X}}) \isoarrow{d} \arrow{r} & 0 \\
Q^1(\mathscr{X})^D \arrow{r} & Q^1(\mathscr{B})^D \arrow{r} & \Hom_{\rm{cts}}(Q^0(\mathscr{G}), \Q/\Z) \arrow{r} & \Hom_{\rm{cts}}(Q^0(\mathscr{X}), \Q/\Z)
\end{tikzcd}
\end{equation} 
the top row is exact (because ${\rm{H}}^2(\mathcal{O}_v, \widehat{\mathscr{B}}) = 0$ for almost all $v$ by Theorem \ref{H^2(G)G^(k)integraldualityprop} and Cartier duality), the first two vertical arrows are isomorphisms 
(by Theorem \ref{H^1(k,G^)/H^1(O,G^)}), the last vertical arrow is an isomorphism by the already-treated cases of finite group schemes and separable Weil restrictions of split tori (Lemma \ref{H^2(O,G^)Gm}), and the maps in the bottom row are well-defined due to Proposition \ref{topcohombasics}(v) and Lemma \ref{H^1finiteexponent}. We claim that the bottom row is exact for all but finitely many $v \not\in S$. Assuming this, the five lemma shows that the third vertical arrow is an isomorphism, as required.

So we now prove exactness of the bottom row of (\ref{lastQ}). We first claim that (for all but finitely many $v$) the complex
\begin{equation}
\label{H^2(G^)integralsequence1}
\frac{X(k_v)}{\mathscr{X}(\mathcal{O}_v)} \longrightarrow \frac{G(k_v)}{\mathscr{G}(\mathcal{O}_v)} \longrightarrow \frac{{\rm{H}}^1(k_v, B)}{{\rm{H}}^1(\mathcal{O}_v, \mathscr{B})} \longrightarrow \frac{{\rm{H}}^1(k_v, X)}{{\rm{H}}^1(\mathcal{O}_v, \mathscr{X})} \longrightarrow \frac{{\rm{H}}^1(k_v, G)}{{\rm{H}}^1(\mathcal{O}_v, \mathscr{G})}
\end{equation}
is exact. Indeed, in the commutative diagram with exact rows
\[
\begin{tikzcd}
\mathscr{X}(\calO_v) \arrow{r} \arrow{d} & \mathscr{G}(\mathcal{O}_v) \arrow{d} \arrow{r} & {\rm{H}}^1(\mathcal{O}_v, \mathscr{B}) \arrow{d} \arrow{r} & {\rm{H}}^1(\mathcal{O}_v, \mathscr{X}) \arrow[d, hookrightarrow] \arrow{r} & {\rm{H}}^1(\mathcal{O}_v, \mathscr{G}) \arrow[d, hookrightarrow] \arrow{r} & 0 \\
X(k_v) \arrow{r} & G(k_v) \arrow{r} & {\rm{H}}^1(k_v, B) \arrow{r} & {\rm{H}}^1(k_v, X) \arrow{r} & {\rm{H}}^1(k_v, G) &
\end{tikzcd}
\]
the last two vertical arrows are inclusions for almost all $v$ by Proposition \ref{H^1(G)injective}, and the top row is exact because ${\rm{H}}^2(\calO_v, \mathscr{B}) = 0$ for all but finitely many $v$ by Theorem \ref{H^2(G)G^(k)integraldualityprop}. A simple diagram chase now shows that (\ref{H^2(G^)integralsequence1}) is exact.

We may now prove that the bottom row of (\ref{lastQ}) is exact. The groups $Q^1(\mathscr{F})$ for $\mathscr{F} = \mathscr{B}$, $\mathscr{X}$, or $\mathscr{G}$ are Hausdorff since ${\rm{H}}^1(\calO_v, \mathscr{F}) \subset {\rm{H}}^1(k_v, F)$ are closed by Theorem \ref{H^1(k,G^)/H^1(O,G^)} (the exact annihilator of any set is closed). Thus, the exactness of (\ref{H^2(G^)integralsequence1}) implies that the maps $Q^1(\mathscr{B})/Q^0(\mathscr{G}) \rightarrow Q^1(\mathscr{X})$ and $Q^0(\mathscr{G})/Q^0(\mathscr{X}) \rightarrow Q^1(\mathscr{B})$ are inclusions with closed images, hence these inclusions are homeomorphisms onto closed subgroups by Propositions \ref{topcohombasics}(i) and \ref{secondcountable}, and Lemma \ref{isom=homeo}. Pontryagin duality then yields the exactness of the version of the bottom row of (\ref{lastQ}) in which one takes Pontryagin duals everywhere rather than $\Hom_{\rm{cts}}(\cdot, \Q/\Z)$. To see why we may replace $(\cdot)^D$ with $\Hom_{\rm{cts}}(\cdot, \Q/\Z)$ in the indicated places, it suffices to note that the map $Q^1(\mathscr{B})^D \rightarrow Q^0(\mathscr{G})^D$ lands inside the subgroup $\Hom_{\rm{cts}}(Q^0(\mathscr{G}), \Q/\Z)$. This completes the proof of the theorem for almost-tori.

Now we turn to the proof of the theorem in general. Let $G$ be an affine commutative $k$-group scheme of finite type. By induction on the dimension of the unipotent radical of $(G_{\overline{k}})^0_{\red}$ (the 0-dimensional case corresponding to 
the settled case of almost-tori), we may assume, by Lemma \ref{affinegroupstructurethm}, that there is an exact sequence
\[
1 \longrightarrow H \longrightarrow G \longrightarrow \Ga \longrightarrow 1
\]
such that the proposition holds for $H$. By Lemma \ref{spreadingoutdualsheaves}, we may then spread this out to obtain exact sequences over some $\mathcal{O}_S$: 
\[
1 \longrightarrow \mathscr{H} \longrightarrow \mathscr{G} \longrightarrow \Ga \longrightarrow 1,
\]
\[
1 \longrightarrow \widehat{\Ga} \longrightarrow \widehat{\mathscr{G}} \longrightarrow \widehat{\mathscr{H}} \longrightarrow 1.
\]
In the resulting commutative diagram
\begin{equation}
\label{lastQ2}
\begin{tikzcd}[column sep = tiny]
{\rm{H}}^1(\mathcal{O}_v, \widehat{\mathscr{H}}) \isoarrow{d} \arrow{r} & {\rm{H}}^2(\mathcal{O}_v, \widehat{\Ga}) \isoarrow{d} \arrow{r} & {\rm{H}}^2(\mathcal{O}_v, \widehat{\mathscr{G}}) \arrow{d} \arrow{r} & {\rm{H}}^2(\mathcal{O}_v, \widehat{\mathscr{H}}) \arrow{r} \isoarrow{d} & 0 \\
Q^1(\mathscr{H})^D \arrow{r} & Q^0(\Ga)^D \arrow{r} & \Hom_{\rm{cts}}(Q^0(\mathscr{G}), \Q/\Z) \arrow{r} & \Hom_{\rm{cts}}(Q^0(\mathscr{H}), \Q/\Z) &
\end{tikzcd}
\end{equation}
the top row is exact because ${\rm{H}}^3(\mathcal{O}_v, \widehat{\Ga}) = 0$ (Proposition \ref{H^3(G_a^)=0}),
the first vertical arrow is an isomorphism for almost all $v$ by Theorem \ref{H^1(k,G^)/H^1(O,G^)},
the second vertical arrow is an isomorphism for all $v$ by Proposition \ref{H^2(G^)integralGa},
and the last vertical arrow is an isomorphism for almost all $v$ by hypothesis. The first and second maps in the bottom row are well-defined by Proposition \ref{topcohombasics}(v) and because $Q^0(\Ga)$ is $p$-torsion. We claim that the bottom row is exact
for all but finitely many $v$. Assuming this, the five lemma shows that the third vertical arrow is an isomorphism for such $v$, which would prove the proposition.

We first claim that for almost all $v$ the following complex is exact: 
\begin{equation}
\label{H^2(G^)integraldualitysequence2}
\frac{H(k_v)}{\mathscr{H}(\mathcal{O}_v)} \longrightarrow \frac{G(k_v)}{\mathscr{G}(\mathcal{O}_v)} \longrightarrow \frac{\Ga(k_v)}{\Ga(\mathcal{O}_v)} \longrightarrow \frac{{\rm{H}}^1(k_v, H)}{{\rm{H}}^1(\mathcal{O}_v, \mathscr{H})} \longrightarrow \frac{{\rm{H}}^1(k_v, G)}{{\rm{H}}^1(\mathcal{O}_v, \mathscr{G})}.
\end{equation}
To see this, consider the following commutative diagram: 
\[
\begin{tikzcd}
\mathscr{H}(\calO_v) \arrow{r} \arrow{d} & \mathscr{G}(\mathcal{O}_v) \arrow{d} \arrow{r} & \Ga(\mathcal{O}_v) \arrow{r} \arrow{d} & {\rm{H}}^1(\mathcal{O}_v, \mathscr{H}) \arrow[d, hookrightarrow] \arrow{r} & {\rm{H}}^1(\mathcal{O}_v, \mathscr{G}) \arrow[d, hookrightarrow] \arrow{r} & 0 \\
H(k_v) \arrow{r} & G(k_v) \arrow{r} & \Ga(k_v) \arrow{r} & {\rm{H}}^1(k_v, H) \arrow{r} & {\rm{H}}^1(k_v, G) &
\end{tikzcd}
\]
The top row is exact because ${\rm{H}}^1(\calO_v, \Ga) = 0$, and the last two vertical arrows are inclusions by Proposition \ref{H^1(G)injective}. A simple diagram chase thereby yields that (\ref{H^2(G^)integraldualitysequence2}) is exact.

To show that the bottom row of (\ref{lastQ2}) is exact, it suffices to show that the corresponding sequence with all $\Hom_{\rm{cts}}(\cdot, \Q/\Z)$ terms replaced by $(\cdot)^D$ is exact. In order to show this, in turn, it suffices by Pontryagin duality to show that the maps $Q^0(\Ga)/Q^0(\mathscr{G}) \rightarrow Q^1(\mathscr{H})$ and $Q^0(\mathscr{G})/Q^0(\mathscr{H}) \rightarrow Q^0(\Ga)$ are homeomorphisms onto closed subgroups. Exactness of the sequence (\ref{H^2(G^)integraldualitysequence2}) of {\em continuous} (by Proposition \ref{topcohombasics}(v)) maps and Hausdorffness of $Q^1(\mathscr{H})$ and $Q^1(\mathscr{G})$ (which follows from the fact that the integral cohomology groups ${\rm{H}}^1(\calO_v, \mathscr{H}) \subset {\rm{H}}^1(k_v, H)$ and ${\rm{H}}^1(\calO_v, \mathscr{G}) \subset {\rm{H}}^1(k_v, G)$ are closed, since they are the exact annihilators of subsets of the Pontryagin dual group by Theorem \ref{H^1(k,G^)/H^1(O,G^)}) imply that these maps are isomorphisms onto closed subgroups. They are therefore homeomorphisms onto their closed images by Lemma \ref{isom=homeo} and Proposition \ref{topcohombasics}(i). This completes the proof that the bottom row of (\ref{lastQ2}) is exact, and of the proposition.
\end{proof}

\chapter{Global Fields}\label{globalfieldschap}

In this chapter, we establish the main global theorems stated in \S\ref{intro}. That is, we prove Theorems \ref{poitoutatesequence}, \ref{shapairing}, and \ref{poitoutatesequencedual}. We begin by verifying the easy parts of Theorem \ref{poitoutatesequence} in \S\ref{globalsectionpreliminaries}, and we then describe the relationship between the cohomology of $\A_k$ and that of the fields $k_v$, both algebraically and topologically (\S\S \ref{sectionadeliclocalcohom}--\ref{sectionadeliccohomtopology}). Among other things, this explains the relationship between our results stated in terms of adelic cohomology and the classical results of Poitou and Tate stated in terms of various products and restricted products of local cohomology groups. We will also use this discussion to prove duality theorems for the adelic cohomology groups (Proposition \ref{H^1(A,G)=H^1(A,G^)^D}), which are consequences of the main local duality results. The first half of the chapter (as well as \S \ref{sectionexactnessatvariousH^1's}) proves the exactness of the nine-term exact sequence in Theorem \ref{poitoutatesequence} at various places. The second half is primarily concerned with the pairings between Tate-Shafarevich groups given by Theorem \ref{shapairing}. Unlike the rest of the results in this manuscript, these results are proven from scratch, without using the case of finite commutative group schemes as a black box. This is done for two reasons:  (i) it is convenient for the reader; and (ii) it allows us to avoid checking compatibility between our pairings and the pairings given in \cite{ces2} (defined in a totally different manner). Finally, the chapter concludes with a proof of the ``dual nine-term exact sequence'' (\S \ref{sectiondual9termsequence}), obtained by dualizing the sequence of Theorem \ref{poitoutatesequence} and then applying the duality results for adelic cohomology groups mentioned above.

\section{Preliminaries}
\label{globalsectionpreliminaries}

In this section we describe all of the maps in Theorem \ref{poitoutatesequence}, show that they are well defined, and prove the easy parts of that theorem. Let us first recall the sequence of Theorem \ref{poitoutatesequence}: 
\[
\begin{tikzcd}
0 \arrow{r} & {\rm{H}}^0(k, G)_{\pro} \arrow{r} & {\rm{H}}^0(\A, G)_{\pro} \arrow{r} \arrow[d, phantom, ""{coordinate, name=Z_1}] & {\rm{H}}^2(k, \widehat{G})^* \arrow[dll, rounded corners,
to path={ -- ([xshift=2ex]\tikztostart.east)
|- (Z_1) [near end]\tikztonodes
-| ([xshift=-2ex]\tikztotarget.west) -- (\tikztotarget)}] & \\
& {\rm{H}}^1(k, G) \arrow{r} & {\rm{H}}^1(\A, G) \arrow{r} \arrow[d, phantom, ""{coordinate, name=Z_2}] & {\rm{H}}^1(k, \widehat{G})^* \arrow[dll, rounded corners,
to path={ -- ([xshift=2ex]\tikztostart.east)
|- (Z_2) [near end]\tikztonodes
-| ([xshift=-2ex]\tikztotarget.west) -- (\tikztotarget)}] & \\
& {\rm{H}}^2(k, G) \arrow{r} & {\rm{H}}^2(\A, G) \arrow{r} & {\rm{H}}^0(k, \widehat{G})^* \arrow{r} & 0
\end{tikzcd}
\]

Recall that we endow $G(k)$ with the discrete topology and $G(\A)$ with the natural topology
arising from that on $\A$ (via a closed embedding from $G$ into some affine space).  This latter topology coincides with the restricted product topology defined as follows.  We spread out $G$ to an
 $\calO_S$-model $\mathscr{G}$ (where $S$ is a non-empty finite set of places of $k$), and declare a fundamental system of neighborhoods of the identity to be the sets of the form $\prod_{v \in S'} U_v \times \prod_{v \notin S'} \mathscr{G}(\calO_v)$, where $S' \supset S$ is a finite set of places and $U_v \subset G(k_v)$ is a neighborhood of $0 \in G(k_v)$. This topology is independent of the chosen model, since any two become isomorphic over some $\calO_{S''}$. The natural map $G(k) \rightarrow G(\A)$ induced by the diagonal inclusion $k \hookrightarrow \A$ is trivially continuous, and therefore uniquely extends to a continuous map $G(k)_{\pro} \rightarrow G(\A)_{\pro}$. The maps ${\rm{H}}^1(k, G) \rightarrow {\rm{H}}^1(\A, G)$ and ${\rm{H}}^2(k, G) \rightarrow {\rm{H}}^2(\A, G)$ are also induced by the diagonal inclusion $k \hookrightarrow \A$. 

The maps ${\rm{H}}^i(\A, G) \rightarrow {\rm{H}}^{2-i}(k, \widehat{G})^*$ are defined by the pairing $${\rm{H}}^i(\A, G) \times {\rm{H}}^{2-i}(k, \widehat{G}) \rightarrow \Q/\Z$$ which for every place $v$ of $k$ cups the corresponding elements of ${\rm{H}}^i(k_v, G)$ and ${\rm{H}}^{2-i}(k_v, \widehat{G})$ to obtain an element of ${\rm{H}}^2(k_v, \Gm)$, takes the invariant, and then sums the result over all places $v$. We need to check that this sum contains only finitely many nonzero terms. In order to do this, we spread out $G$ to an $\calO_S$-model $\mathscr{G}$ for some non-empty finite set $S$ of places of $k$. Since $\A = \varinjlim_{S'} \left(\prod_{v \in S'} k_v \times \prod_{v \notin S'} \calO_v \right)$, where the limit is over all finite sets $S'$ of places of $k$ that contain $S$, Proposition \ref{directlimitscohom} shows that $${\rm{H}}^i(\A, G) = \varinjlim_{S'} \left(\prod_{v \in S'} {\rm{H}}^i(k_v, G) \times {\rm{H}}^i(\prod_{v \notin S'} \calO_v, \mathscr{G})\right).$$ We similarly obtain the equality ${\rm{H}}^{2-i}(k, \widehat{G}) = \varinjlim_{S'} \left({\rm{H}}^{2-i}(\calO_{S'}, \widehat{\mathscr{G}})\right)$.

We conclude that any element of ${\rm{H}}^i(\A, G)$ lands inside the image of  ${\rm{H}}^i(\calO_v, \mathscr{G}) \rightarrow {\rm{H}}^i(k_v, G)$ for all but finitely many places $v$ of $k$, and any element of ${\rm{H}}^{2-i}(k, \widehat{G})$ comes from ${\rm{H}}^{2-i}(\calO_{S'}, \widehat{\mathscr{G}})$ for sufficiently large $S' \supset S$ and so
restricts into the image of ${\rm{H}}^{2-i}(\calO_v, \widehat{\mathscr{G}}) \rightarrow {\rm{H}}^{2-i}(k_v, \widehat{G})$ for 
all but finitely many $v$. For any $\alpha \in {\rm{H}}^i(\A, G)$ and $\beta \in {\rm{H}}^{2-i}(k, \widehat{G})$, therefore, $\langle \alpha_v, \beta_v \rangle \in {\rm{im}}({\rm{H}}^2(\calO_v, \Gm) \rightarrow {\rm{H}}^2(k_v, \Gm))$
for all but finitely many $v$. But ${\rm{H}}^2(\calO_v, \Gm) = {\rm{Br}}(\calO_v) = 0$, so the local pairing is $0$ for all but finitely many places, as desired.

To extend the map $G(\A) \rightarrow {\rm{H}}^2(k, \widehat{G})^*$ to a map from $G(\A)_{\pro}$, endow ${\rm{H}}^2(k, \widehat{G})$ with the discrete topology. Since ${\rm{H}}^2(k, \widehat{G})$ is torsion (Lemma \ref{H^2(G^)istorsion}), ${\rm{H}}^2(k, \widehat{G})^*$ is the Pontryagin dual of the discrete group ${\rm{H}}^2(k, \widehat{G})$, and 
so it is naturally a profinite group. We next show that the map $G(\A) \rightarrow {\rm{H}}^2(k, \widehat{G})^*$ is continuous, hence 
it uniquely extends to a continuous map $G(\A)_{\pro} \rightarrow {\rm{H}}^2(k, \widehat{G})^*$.

\begin{lemma}
\label{G(A)toH^2(k,G^)*cts}
For $G$ a commutative group scheme of finite type over a global field $k$, the global duality map $G(\A) \rightarrow {\rm{H}}^2(k, \widehat{G})^*$, obtained from the pairing that cups everywhere locally and then adds the invariants, is continuous. Here, the topology on ${\rm{H}}^2(k, \widehat{G})^*$ is the compact-open topology.
\end{lemma}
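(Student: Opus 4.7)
The plan is to reduce continuity of the global duality map to continuity of finitely many local pairings and then invoke Lemma \ref{G(k)H^2(G^)continuous}. Endow the target ${\rm{H}}^2(k,\widehat{G})^*$ with its Pontryagin dual topology (it is profinite, being the dual of a discrete torsion group, by Lemma \ref{H^2(G^)istorsion}). A basis of open neighborhoods of $0$ consists of the annihilators of finite subsets of ${\rm{H}}^2(k,\widehat{G})$, so it suffices to prove that for every fixed $\beta\in {\rm{H}}^2(k,\widehat{G})$ the homomorphism
\[
\phi_\beta\colon G(\A)\longrightarrow \Q/\Z,\qquad \alpha\longmapsto \sum_v \inv_v(\alpha_v\cup\beta_v)
\]
is continuous.

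First I would fix a non-empty finite set $S$ of places of $k$ and an $\calO_S$-model $\mathscr{G}$ of $G$, so $G(\A)$ acquires its restricted product topology with distinguished subgroups $\mathscr{G}(\calO_v)\subset G(k_v)$ for $v\notin S$. As recalled in the preamble to the lemma, the natural identification ${\rm{H}}^2(k,\widehat{G})=\varinjlim_{S'\supset S}{\rm{H}}^2(\calO_{S'},\widehat{\mathscr{G}})$ allows us to pick a finite $S'\supset S$ so that $\beta$ is the image of some $\widetilde\beta\in {\rm{H}}^2(\calO_{S'},\widehat{\mathscr{G}})$. Consequently, for each $v\notin S'$ the local class $\beta_v$ lies in the image of ${\rm{H}}^2(\calO_v,\widehat{\mathscr{G}})\to {\rm{H}}^2(k_v,\widehat{G})$.

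For any $v\notin S'$ and any $\alpha_v\in \mathscr{G}(\calO_v)$ the cup product $\alpha_v\cup\beta_v$ is thereby represented by a class in ${\rm{H}}^2(\calO_v,\Gm)=\Br(\calO_v)=0$, so its image in ${\rm{H}}^2(k_v,\Gm)$ vanishes and the $v$-term of $\phi_\beta(\alpha)$ is zero. Thus on the open subgroup
\[
U:=\prod_{v\in S'}G(k_v)\times\prod_{v\notin S'}\mathscr{G}(\calO_v)\ \subset\ G(\A)
\]
the map $\phi_\beta$ restricts to the finite sum $\alpha\mapsto\sum_{v\in S'}\inv_v(\alpha_v\cup\beta_v)$. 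Each summand is continuous by Lemma \ref{G(k)H^2(G^)continuous}, so $\phi_\beta|_U$ is continuous. Since $\phi_\beta$ is a group homomorphism that is continuous on an open neighborhood of the identity of $G(\A)$, it is continuous everywhere, as required. The main (and only) substantive obstacle is verifying that the bad places truly do not contribute — this is where the spreading-out step combined with the vanishing of $\Br(\calO_v)$ is essential — and once that is in hand the rest is formal.
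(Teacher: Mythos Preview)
Your proof is correct and follows essentially the same approach as the paper's: spread $\beta$ out to an integral model over $\calO_{S'}$, use $\Br(\calO_v)=0$ to kill the contributions from places outside $S'$, and invoke the local continuity Lemma~\ref{G(k)H^2(G^)continuous} at the finitely many remaining places. The only cosmetic difference is that the paper works directly with a finite subset $T\subset {\rm H}^2(k,\widehat{G})$ and an $\epsilon$-bound, whereas you first reduce to a single class $\beta$; both organizations lead to the same core argument.
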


\begin{proof}
Recall that the topology on the Pontryagin dual is the compact-open topology. Therefore, we need to show that for any finite subset $T \subset {\rm{H}}^2(k, \widehat{G})$ and any $\epsilon > 0$, there exists a neighborhood $U \subset G(\A)$ of the identity such that $|\langle u, t \rangle| < \epsilon$ for all $u \in U, t \in T$. (Here, $|\cdot|$ denotes the distance function on $\Q/\Z$, defined to be the distance of any lift to the nearest integer.) It suffices to treat the case in which $\# T = 1$; say, $T = \{t\}$. Choose some $\calO_S$-model $\mathscr{G}$ of $G$, and enlarge $S$ if necessary so that $t$ extends to a class in ${\rm{H}}^2(\calO_S, \widehat{\mathscr{G}})$. By continuity of the local duality pairing (Lemma \ref{G(k)H^2(G^)continuous}), we may choose for each $v \in S$ a neighborhood $U_v \subset G(k_v)$ of the identity such that $\langle U_v, t\rangle \subset (-\epsilon/\#S + \Z, \epsilon/\#S + \Z)$. Now we may take $U : = \prod_{v \in S} U_v \times \prod_{v \notin S} \mathscr{G}(\calO_v)$. Indeed, given $u \in U$, we have $|\langle u_v, t_v \rangle| < \epsilon /\#S$ for $v \in S$ by our choice of $U$ (where $|\cdot|$ denotes the distance function on $\Q/\Z$), while for $v \notin S$, the cup product $\langle u_v, t_v \rangle$ factors through ${\rm{H}}^2(\calO_v, \Gm) = 0$, hence vanishes. Thus, $|\langle u, t\rangle| = |\sum_v \langle u_v, t_v \rangle| < \epsilon$, as desired.
\end{proof}
 
 Finally, recall that the maps ${\rm{H}}^2(k, \widehat{G})^* \rightarrow {\rm{H}}^1(k, G)$ and ${\rm{H}}^1(k, \widehat{G})^* \rightarrow {\rm{H}}^2(k, G)$ in Theorem \ref{poitoutatesequence} are defined by using the (still unproven) Theorem \ref{shapairing}. Indeed, that theorem provides isomorphisms $\Sha^i(k, \widehat{G})^* \xrightarrow{\sim} \Sha^{3-i}(k, G)$, for $i = 1, 2$, and the above maps are then defined to be the compositions
 \[
 {\rm{H}}^2(k, \widehat{G})^* \twoheadrightarrow \Sha^2(k, \widehat{G})^* \xrightarrow{\sim} \Sha^1(k, G) \hookrightarrow {\rm{H}}^1(k, G),
 \]
 \[
 {\rm{H}}^1(k, \widehat{G})^* \twoheadrightarrow \Sha^1(k, \widehat{G})^* \xrightarrow{\sim} \Sha^2(k, G) \hookrightarrow {\rm{H}}^2(k, G).
 \]
 
Some easy parts of the proof of Theorem \ref{poitoutatesequence} can be settled immediately. First we check that the sequence is a complex. That it is a complex at ${\rm{H}}^1(k, G)$ and ${\rm{H}}^2(k, G)$ is immediate from the definitions. That the sequence is a complex at $G(\A)$, ${\rm{H}}^1(\A, G)$, and ${\rm{H}}^2(\A, G)$ follows from the fact that the sum of the local invariants of a class in ${\rm{H}}^2(k, \Gm)$ is $0$. It is a complex at ${\rm{H}}^1(k, \widehat{G})^*$ 
due to the fact that the pairing between ${\rm{H}}^1(\A, G)$ and $\Sha^1(k, \widehat{G})$ is trivial (since by definition any element of $\Sha^1(k, \widehat{G})$ has trivial image in ${\rm{H}}^1(k_v, \widehat{G})$ for each $v$). Analogously, the sequence is a complex at ${\rm{H}}^2(k, \widehat{G})^*$. Finally, the sequence is exact at ${\rm{H}}^i(k, G)$ ($i = 1, 2$) by definition (though recall that the definition of the map into ${\rm{H}}^i(k, G)$ depends on Theorem \ref{shapairing}). Proving that the rest of the sequence is exact will be the main work of this chapter.

\section{Relation between adelic and local cohomology}
\label{sectionadeliclocalcohom}

Let $k$ be a global function field with ring of adeles $\A$. Let $\mathscr{F}$ be an fppf abelian sheaf on $\Spec(\A)$. Then the projection maps $\A \rightarrow k_v$ induce maps ${\rm{H}}^i(\A, \mathscr{F}) \rightarrow \prod_v {\rm{H}}^i(k_v, \mathscr{F})$. The purpose of this section is to study these maps. \v{C}esnavicius has shown that when $\mathscr{F} = G$ for a commutative $k$-group scheme $G$ of finite type, these maps identify ${\rm{H}}^i(\A, G)$ with $\oplus_v {\rm{H}}^i(k_v, G)$ when $i > 1$ (and even when $i > 0$ if $G$ is smooth and connected), and with the restricted product $\prod'_v {\rm{H}}^1(k_v, G)$ with respect to integral cohomology groups when $i = 0, 1$ \cite[Th.\,2.18]{ces2}. (Note that the groups ${\rm{H}}^i(\calO_v, \mathscr{G})$ are subgroups of ${\rm{H}}^i(k_v, G)$ for almost all $v$ if $\mathscr{G}$ is an $\calO_S$-model of $G$; this is \cite[Cor.\,2.9, Lem.\,2.19(1)]{ces2} for $i \neq 1$, and Proposition \ref{H^1(G)injective} for $i = 1$.) In this section we will determine the relation between these adelic cohomology groups and the local cohomology groups when $\mathscr{F} = \widehat{G}$ for affine commutative $G$ of finite type over $k$. The main result (Proposition \ref{H^i(A,G^)--->prodH^i(k_v,G^)}) says that the projections induce an isomorphism onto the restricted direct product of the groups ${\rm{H}}^i(k_v, \widehat{G})$ with respect to the integral cohomology groups when $i = 1$ and an isomorphism onto the torsion subgroup of this restricted product when $i = 2$. Note in particular that the two definitions
\[
\Sha^i(k, \mathscr{F}) : = \ker \left({\rm{H}}^i(k, \mathscr{F}) \rightarrow {\rm{H}}^i(\A, \mathscr{F})\right),
\]
\[
\Sha^i(k, \mathscr{F}) : = \ker \left( {\rm{H}}^i(k, \mathscr{F}) \rightarrow \prod_v {\rm{H}}^i(k_v, \mathscr{F}) \right)
\]
therefore agree for all $i$ when $\mathscr{F} = G$, and for $i \leq 2$ when $\mathscr{F} = \widehat{G}$. We are therefore free to use either definition in subsequent sections, since the only $\Sha^i$ that we will deal with will be when $i = 1$ or $2$. (Our main interest in these groups lies in Theorem \ref{shapairing}.)

Let us introduce a bit of notation. For a set $S$ of places of a global field $k$ containing all of the archimedean places of $k$, let
\[
\widehat{\calO_S} : = \prod_{v \notin S} \calO_v.
\]

\begin{lemma}
\label{opencoverpartition}
Let $\{R_i\}_{i \in I}$ be a set of local rings, and let $R : = \prod_{i \in I} R_i$. For a subset $J \subset I$, let $R_J : = \prod_{i \in J} R_i$. Then any open cover of $\Spec(R)$ may be refined by one of the form $\{ \Spec(R_{I_1}), \dots, \Spec(R_{I_n})\}$, where $I = I_1 \amalg \dots \amalg I_n$ is a finite partition of $I$.
\end{lemma}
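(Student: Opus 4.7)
The plan is to exploit the fact that $\Spec(R)$ is quasi-compact together with the fact that any subset $J \subset I$ yields an idempotent $e_J = (\chi_J(i))_{i \in I} \in R$ (where $\chi_J$ is the characteristic function) and hence a product decomposition $R = R_J \times R_{I \setminus J}$, corresponding to a clopen decomposition $\Spec(R) = \Spec(R_J) \sqcup \Spec(R_{I \setminus J})$. More generally, any finite partition $I = I_1 \sqcup \cdots \sqcup I_n$ gives $R = \prod_{j=1}^n R_{I_j}$ via the pairwise orthogonal idempotents $e_{I_1}, \dots, e_{I_n}$ summing to $1$, so $\Spec(R) = \bigsqcup_{j=1}^n \Spec(R_{I_j})$.

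First I would reduce, by the quasi-compactness of $\Spec(R)$, to the case of a finite cover by basic opens $\Spec(R) = D(f_1) \cup \cdots \cup D(f_m)$ for some $f_1, \dots, f_m \in R$. For each $i \in I$, let $\mathfrak{m}_i \subset R_i$ denote the maximal ideal and let $\mathfrak{p}_i \subset R$ denote its preimage under the projection $R \twoheadrightarrow R_i$, which is a prime ideal of $R$. Since the $D(f_j)$ cover $\Spec(R)$, for each $i \in I$ there exists some index $j$ with $f_j \notin \mathfrak{p}_i$, equivalently with the $i$-th component $f_{j,i}$ of $f_j$ a unit in the local ring $R_i$. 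Choose such a $j = j(i)$ for each $i$, and define
\[
I_j := \{ i \in I \mid j(i) = j \}, \qquad j = 1, \dots, m.
\]
Then $I = I_1 \sqcup \cdots \sqcup I_m$ is a (finite) partition.

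By the idempotent remark above, this partition induces the clopen decomposition $\Spec(R) = \bigsqcup_{j=1}^m \Spec(R_{I_j})$. It remains to check that $\Spec(R_{I_j}) \subset D(f_j)$ for each $j$. The image $\overline{f}_j$ of $f_j$ in the quotient $R_{I_j} = \prod_{i \in I_j} R_i$ has $i$-th component $f_{j,i}$, which lies in $R_i^\times$ for every $i \in I_j$ by the defining property of $I_j$. Hence $(f_{j,i}^{-1})_{i \in I_j}$ is a well-defined element of $R_{I_j}$ inverting $\overline{f}_j$, so $\overline{f}_j \in R_{I_j}^\times$ and therefore $\Spec(R_{I_j}) = D(\overline{f}_j) \subset D(f_j)$, as required.

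The only mild subtlety — and the reason we should not expect the $\mathfrak{p}_i$ to exhaust $\Spec(R)$ when $I$ is infinite — is that primes of the infinite product $R$ are not generally in bijection with $I$. But this is not an obstacle: we only use the $\mathfrak{p}_i$ to \emph{choose} the partition, and once the partition is in hand, the product decomposition coming from the idempotents $e_{I_j}$ handles all primes of $R$ uniformly. Hence no further argument is needed.
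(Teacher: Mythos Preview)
Your proof is correct and follows essentially the same approach as the paper: reduce by quasi-compactness to a finite cover by basic opens $D(f_1),\dots,D(f_m)$, use locality of each $R_i$ to find for every $i$ some $j$ with $f_{j,i}\in R_i^{\times}$, and partition $I$ accordingly. You make the verification that $\Spec(R_{I_j})\subset D(f_j)$ more explicit than the paper does, but the argument is the same.
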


\begin{proof}
Any open cover of $\Spec(R)$ may be refined by $\{\Spec(R_{f_1}), \dots, \Spec(R_{f_n})\}$ for some $f _j \in R$ that generate the unit ideal. Since each $R_i$ is local, it follows that for each $i \in I$, we have $(f_j)_i \in R_i^{\times}$ for some $1 \leq j \leq n$. Choose one such $j : = j(i)$ for each $i \in I$. Then, for $1 \leq m \leq n$, we may take $I_m : = \{i \in I \mid j(i) = m\}$.
\end{proof}

\begin{lemma}
\label{H^1(prodO_v,G^)=prodH^1(O_v,G^)weilrestrictiontori}
If $k'/k$ is a finite separable extension of global fields, then ${\rm{H}}^1(\A_k, \widehat{\R_{k'/k}(\Gm^n)}) = 0$, and ${\rm{H}}^1(k_v, \widehat{\R_{k'/k}(\Gm^n)}) = 0$.
\end{lemma}

\begin{proof}
It suffices to treat the case $n = 1$. Using Proposition \ref{charactersseparableweilrestriction}, we have $\widehat{\R_{k'/k}(\Gm)} = \R_{k'/k}(\Z)$. Since this is a smooth group scheme, we may take our cohomology to be \'etale. By Lemma \ref{sepblepushforward}, we have ${\rm{H}}^1(\A_k, \R_{k'/k}(\Z)) \simeq {\rm{H}}^1(\A_{k'}, \Z)$, and similarly for the $k_v$ cohomology. Renaming $k'$ as $k$, we may therefore assume that $k' = k$. We have ${\rm{H}}^1(k_v, \Z) = \Hom_{\rm{cts}}(\Gal((k_v)_s/k), \Z) = 0$, so it only remains to prove the vanishing of ${\rm{H}}^1(\A, \Z)$. By Proposition \ref{directlimitscohom}, we have
\[
{\rm{H}}^1(\A, \Z) = \varinjlim_S \left( {\rm{H}}^1(\widehat{\calO}_S, \Z) \times \prod_{v \in S} {\rm{H}}^1(k_v, \Z) \right),
\]
so it suffices to show that ${\rm{H}}^1(\widehat{\calO}_S, \Z) = 0$.

We first note that for any noetherian normal domain $A$, we have ${\rm{H}}^1(A, \Z) = 0$. Indeed, we have ${\rm{H}}^1(A, \Z) = \Hom_{{\rm{cts}}}(\pi_1(\Spec(A)), \Z)$ where $\Z$ is discrete, and this group vanishes because $\pi_1(\Spec(A))$ is profinite and $\Z$ contains no nontrivial finite subgroup. Next, we show that ${\rm{H}}^1((\widehat{\mathcal{O}_S})_{\mathfrak{p}}, \Z) = 0$ for each prime $\mathfrak{p} \in \Spec(\widehat{\mathcal{O}_S})$.

By Lemma \ref{productofnormal=normal}, $(\widehat{\mathcal{O}_S})_{\mathfrak{p}}$ is a normal domain, so in order to prove the claim it suffices to show that ${\rm{H}}^1(A, \Z) = 0$ for any normal domain $A$. But $A$ is the direct limit of its finitely generated $\Z$-algebras, and because $A$ is a normal domain, the normalization of any such subalgebra (which is a module-finite extension, due to excellence \cite[${\rm{IV_2}}$, Prop.\,7.8.6(ii)]{ega}) is also contained in $A$. It follows that $A$ is the direct limit of its noetherian normal subrings, so we may assume that $A$ is noetherian by Proposition \ref{directlimitscohom}.

We conclude that ${\rm{H}}^1((\widehat{\mathcal{O}_S})_{\mathfrak{p}}, \Z) = 0$ for every prime $\mathfrak{p}$ of $\widehat{\mathcal{O}_S}$. By spreading out, it then follows from Proposition \ref{directlimitscohom} that for any $\alpha \in {\rm{H}}^1(\widehat{\mathcal{O}_S}, \Z)$ there is a Zariski-open cover $\{U_i\}$ of $\Spec(\widehat{\mathcal{O}_S})$ such that $\alpha|_{U_i} = 0$. By Lemma \ref{opencoverpartition}, this cover may be refined to
the open cover arising from a partition of the index set $I$ (the set of places outside $S$) into finitely many pairwise disjoint subsets, so $\alpha = 0$.
\end{proof}

\begin{lemma}
\label{productofpoints}
Let $\{R_i\}_{I \in I}$ be a set of rings, let $A:  = \prod_{i \in I} R_i$, and suppose that $X$ is an affine $A$-scheme. Then the natural map $X(A) \rightarrow \prod_{i \in I} X(R_i)$ is bijective.
\end{lemma}

\begin{proof}
The statement becomes clear once translated into the language of ring homomorphisms.
\end{proof}

\begin{lemma}
\label{Picvanishingislocal}
Let $X$ be a reduced scheme such that $\Pic(X) = 0$. If $\Pic(\A^n_{\calO_{X, x}}) = 0$ for every $x \in X$ then $\Pic(\A^n_X) = 0$; here, $\A^n_Y$ denotes affine $n$-space over a scheme $Y$ $($no relationship with adele rings$)$.
\end{lemma}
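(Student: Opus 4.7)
The plan is to invoke the Leray spectral sequence for the projection $\pi: \mathbf{A}^n_X \to X$ with coefficients in $\mathcal{O}^\times$ (Zariski topology):
\[
E_2^{p,q} = H^p(X, R^q\pi_*\mathcal{O}^\times_{\mathbf{A}^n_X}) \Longrightarrow H^{p+q}(\mathbf{A}^n_X, \mathcal{O}^\times),
\]
so that the problem reduces to computing $\pi_*\mathcal{O}^\times$ and $R^1\pi_*\mathcal{O}^\times$. For the first, for any open $U \subset X$ one has $(\pi_*\mathcal{O}^\times)(U) = \Gamma(\mathbf{A}^n_U, \mathcal{O}^\times)$, and since $U$ is reduced (as $X$ is), Lemma \ref{nonconstantunitsred} identifies this with $\Gamma(U, \mathcal{O}^\times)$. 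Hence $\pi_*\mathcal{O}^\times = \mathcal{O}_X^\times$.

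Next I will show that $R^1\pi_*\mathcal{O}^\times = 0$ by computing stalks. The stalk at $x \in X$ is
\[
(R^1\pi_*\mathcal{O}^\times)_x = \varinjlim_{U \ni x} H^1(\mathbf{A}^n_U, \mathcal{O}^\times) = \varinjlim_{U \ni x} \Pic(\mathbf{A}^n_U),
\]
where $U$ ranges over a cofinal system of affine open neighborhoods of $x$ (e.g., principal opens $D(f)$ in some fixed affine containing $x$). Since $\mathbf{A}^n_{\mathcal{O}_{X,x}} = \varprojlim_U \mathbf{A}^n_U$ is a cofiltered limit of qcqs schemes with affine transition maps, the standard spreading-out theorems for finitely presented quasi-coherent modules (EGA IV$_3$ 8.5.2 and 8.5.5) show that any line bundle on $\mathbf{A}^n_{\mathcal{O}_{X,x}}$ descends to one on some $\mathbf{A}^n_U$ and that isomorphisms of such descend to isomorphisms over some $\mathbf{A}^n_{U'}$ with $x \in U' \subset U$. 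This yields
\[
\varinjlim_{U \ni x} \Pic(\mathbf{A}^n_U) \xrightarrow{\sim} \Pic(\mathbf{A}^n_{\mathcal{O}_{X,x}}),
\]
and the right side vanishes by hypothesis, so the stalks of $R^1\pi_*\mathcal{O}^\times$ all vanish.

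With these two computations in hand, the five-term exact sequence associated to the Leray spectral sequence collapses to an isomorphism
\[
\Pic(X) = H^1(X, \mathcal{O}_X^\times) \xrightarrow{\sim} H^1(\mathbf{A}^n_X, \mathcal{O}^\times) = \Pic(\mathbf{A}^n_X),
\]
and $\Pic(\mathbf{A}^n_X) = 0$ follows from the assumption $\Pic(X) = 0$. The main technical obstacle is the spreading-out identification of the stalk of $R^1\pi_*\mathcal{O}^\times$ with $\Pic(\mathbf{A}^n_{\mathcal{O}_{X,x}})$, since $\Pic$ does not commute with arbitrary limits; everything else is formal once this is established.
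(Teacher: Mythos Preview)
Your proof is correct and takes a genuinely different route from the paper's. The paper argues directly: given a line bundle $\mathscr{L}$ on $\A^n_X$, it first trivializes $0^*\mathscr{L}$ on $X$ (using $\Pic(X)=0$), then observes that on any open $U\subset X$ where $\mathscr{L}|_{\A^n_U}$ is trivial there is a \emph{unique} trivialization compatible with the chosen one along the zero section (uniqueness coming from Lemma~\ref{nonconstantunitsred}), and finally uses the local hypothesis to produce such an open cover and glues via uniqueness. Your spectral-sequence argument packages exactly the same two ingredients: the computation $\pi_*\mathcal{O}^\times=\mathcal{O}_X^\times$ encodes the uniqueness of compatible trivializations, and the vanishing of $R^1\pi_*\mathcal{O}^\times$ encodes local triviality. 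The paper's approach is more elementary and self-contained; yours is cleaner structurally and immediately gives the stronger statement $\pi^*:\Pic(X)\xrightarrow{\sim}\Pic(\A^n_X)$ without assuming $\Pic(X)=0$. One small point worth making explicit in your write-up: when you descend a line bundle on $\A^n_{\mathcal{O}_{X,x}}$ to a finitely presented module on some $\A^n_U$, you should note that the property of being invertible (e.g., the existence of an inverse under tensor product, or local freeness of rank $1$) also spreads out, so the descended module is again a line bundle after possibly shrinking $U$.
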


\begin{proof}
Let $\mathscr{L}$ be a line bundle on $\A^n_X$, and let $0:  X \rightarrow \A^n_X$ denote the zero section. By our assumptions, there is a trivialization of $0^*\mathscr{L}$. Fix one such trivialization $\phi$. Given an open subset $U \subset X$ such that $\mathscr{L}|_{\A^n_U}$ is trivial, there is a unique trivialization of $\mathscr{L}|_{\A^n_U}$ such that its pullback along the $0$ section is compatible with $\phi$. Indeed, existence is clear and uniqueness follows from the equality $\Gamma(\A^n_U, \Gm) = \Gamma(U, \Gm)$, which holds because $X$ is reduced (Lemma \ref{nonconstantunitsred}). 

Thus, if there exists an open cover $\{U_i\}_{i \in I}$ of $X$ such that each $\mathscr{L}|_{\A^n_{U_i}}$ is trivial then once we modify these trivializations to be compatible with $\phi$ we see that they must glue, hence yield a trivialization of $\mathscr{L}$.
But there exists such an open cover because for each $x \in X$, the pullback $\mathscr{L}|_{\A^n_{\calO_{X,x}}}$ is trivial by hypothesis and such triviality spreads out over some open neighborhood of $x$ in $X$. 
\end{proof}

\begin{lemma}
\label{Pic(prod)injects}
Let $\{R_i\}_{i \in I}$ be a collection of rings $($commutative with identity$)$ indexed by a non-empty set $I$, and let $R := \prod_{i \in I} R_i$. Then the natural map $\Pic(R) \rightarrow \prod_{i \in I} \Pic(R_i)$ is injective.
\end{lemma}

\begin{proof}
Let $\mathscr{Y}$ be a $\Gm$-torsor over $R$ that is trivial over each $R_i$. Then $\mathscr{Y}(R_i) \neq \emptyset$ for each $i$, hence $\mathscr{Y}(R) \neq \emptyset$ by Lemma \ref{productofpoints}. That is, $\mathscr{Y}$ is a trivial $\Gm$-torsor over $R$.
\end{proof}

\begin{proposition}
\label{Pic(prod R_i)[X]=0}
Let $\{R_i\}_{i \in I}$ be a set of seminormal local rings, and let $R : = \prod_{i \in I} R_i$. Then $\Pic(R[X_1, \dots, X_n]) = 0$.
\end{proposition}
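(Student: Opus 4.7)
The plan is to apply Lemma \ref{Picvanishingislocal} with $X = \Spec(A)$. Since $A$ is a product of integral domains, $X$ is reduced, so the result will follow once we establish (a) $\Pic(A) = 0$, and (b) $\Pic(A_{\mathfrak{p}}[X_1, \dots, X_n]) = 0$ for every prime $\mathfrak{p} \subset A$.

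For (a), given an invertible $A$-module $M$, pick a finite open cover $\{D(f_j)\}$ of $\Spec(A)$ on which $M$ trivializes. By Lemma \ref{opencoverpartition}, which applies since valuation rings are local, this cover can be refined by the partition cover $\{\Spec(R_{I_k})\}$ corresponding to a finite partition $I = I_1 \sqcup \cdots \sqcup I_n$, where $R_{I_k} := \prod_{i \in I_k} R_i$. The induced decomposition $A = \prod_k R_{I_k}$ yields $M = \prod_k M_k$ as $A$-modules, and each $M_k$ is free of rank one over $R_{I_k}$ as a localization of some $M_{f_j}$; hence $M \cong A$, so $\Pic(A) = 0$.

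For (b), the key claim -- and the main obstacle -- is that $A_{\mathfrak{p}}$ is itself a valuation ring; granting this, Proposition \ref{Pic(R[X_1,...,X_n])=0valring} applied to $A_{\mathfrak{p}}$ immediately gives (b). To verify the claim, given $a = (a_i), b = (b_i) \in A$, partition $I$ as $I_1 \sqcup I_2$ according to whether $a_i$ divides $b_i$ in the valuation ring $R_i$; the indicator idempotent $e := 1_{I_1} \in A$ satisfies $be = ac$ for some $c \in A$ (namely $c_i := b_i/a_i$ for $i \in I_1$ and $c_i := 0$ otherwise), and symmetrically one constructs $d \in A$ with $a(1-e) = bd$. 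Since $e(1-e) = 0$ and $\mathfrak{p}$ is prime, exactly one of $e, 1-e$ lies outside $\mathfrak{p}$ and thus becomes the unit idempotent $1$ in the local ring $A_{\mathfrak{p}}$, yielding either $b = ac$ or $a = bd$ in $A_{\mathfrak{p}}$. A parallel argument using ``support idempotents'' of the form $1_{\{i\, :\, a_i = 0\}}$ shows that $A_{\mathfrak{p}}$ is a domain, completing the verification that it is a valuation ring.
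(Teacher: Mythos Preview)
Your proof is correct and follows the same overall strategy as the paper: reduce via Lemma \ref{Picvanishingislocal} to showing $\Pic(A)=0$ and that each $A_{\mathfrak{p}}$ is a valuation ring, then invoke Proposition \ref{Pic(R[X_1,...,X_n])=0valring}. The only differences are in the implementation of the two sub-steps. For $\Pic(A)=0$, the paper uses the torsor interpretation together with Lemma \ref{productofpoints} (each $R_i$ local forces $\mathscr{Y}(R_i)\neq\emptyset$, hence $\mathscr{Y}(A)\neq\emptyset$), which is a bit slicker than your partition-cover argument, though both are valid. For the valuation-ring claim, the paper invokes Lemma \ref{valuationring}, whose proof goes through the ultraproduct framework of the appendix; your direct idempotent argument is the same idea stripped of that language (the idempotent $e=1_{I_1}$ encodes exactly the ultrafilter condition $I_1\in U(\mathfrak{p})$) and is arguably more elementary and self-contained.
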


\begin{proof}
Since each $R_i$ is local, Lemma \ref{Pic(prod)injects} implies that $\Pic(R) = 0$. By Lemma \ref{Picvanishingislocal}, therefore, it suffices to show that $\Pic(R_{\mathfrak{p}}[X_1, \dots, X_n]) = 0$ for each prime ideal $\mathfrak{p}$ of $R$. But each such localization $R_{\mathfrak{p}}$ of $R$ is a seminormal ring by Lemma \ref{seminormalring}, so the desired vanishing follows from \cite[Th.\,1]{swan}.
\end{proof}

\begin{lemma}
\label{H^1(prodO_v,G_a^)=0}
For a global function field $k$, we have ${\rm{H}}^1(\A_k, \widehat{\Ga}) = 0$ and ${\rm{H}}^1(k_v, \widehat{\Ga}) = 0$ for all places $v$ of $k$.
\end{lemma}

\begin{proof}
The assertion for $k_v$ follows from Proposition \ref{cohomologyofG_adualgeneralk}. For $\A_k$, Proposition \ref{directlimitscohom} implies that
\[
{\rm{H}}^1(\A_k, \widehat{\Ga}) = \varinjlim_S \left( {\rm{H}}^1(\widehat{\calO}_S, \widehat{\Ga}) \times \prod_{v \in S} {\rm{H}}^1(k_v, \widehat{\Ga}) \right).
\]
It therefore suffices to show that ${\rm{H}}^1(\widehat{\calO}_S, \widehat{\Ga}) = 0$. This follows from Proposition \ref{cohomologyofG_adualgeneralk} and Lemma \ref{Pic(prod R_i)[X]=0}.
\end{proof}

\begin{lemma}
\label{H^2(prodO_v,G^)-->prodH^2(O_v,G^)injectiveG_a^}
Let $k$ be a global function field. Then the canonical map
\[
{\rm{H}}^2(\A, \widehat{\Ga}) \longrightarrow \prod {\rm{H}}^2(k_v, \widehat{\Ga})
\]
induces an isomorphism
\[
{\rm{H}}^2(\A, \widehat{\Ga}) \xlongrightarrow{\sim} {\prod}' {\rm{H}}^2(k_v, \widehat{\Ga}),
\]
where the restricted product is with respect to the subgroups ${\rm{H}}^2(\calO_v, \widehat{\Ga}) \subset {\rm{H}}^2(k_v, \widehat{\Ga})$.
\end{lemma}

Note that ${\rm{H}}^2(\calO_v, \widehat{\Ga})$ really is a subgroup of ${\rm{H}}^2(k_v, \widehat{\Ga})$ for all $v$, thanks to Proposition \ref{H^2=Ext^2=Brdvr} and \cite[Cor.\,1.8]{brii}.

\begin{proof}
Let $S$ be a finite set of places of $k$. By Lemmas \ref{prodvalrings} and \ref{proddegimpleq1}, each local ring of $\widehat{\calO}_S$ is a valuation ring whose fraction field has degree of imperfection $\leq 1$. Propositions \ref{H^2=Ext^2=Brdvr} and \ref{prodprimbrauer} therefore imply that the map
\[
{\rm{H}}^2(\widehat{\calO}_S, \widehat{\Ga}) \longrightarrow \prod_{v \notin S} {\rm{H}}^2(\calO_v, \widehat{\Ga})
\]
is an isomorphism. The lemma therefore follows from the following equality, which is a consequence of Proposition \ref{directlimitscohom}:
\[
{\rm{H}}^2(\A, \widehat{\Ga}) = \varinjlim_S \left({\rm{H}}^2(\widehat{\calO}_S, \widehat{\Ga}) \times \prod_{v \in S} {\rm{H}}^2(k_v, \widehat{\Ga}) \right).
\]
\end{proof}

\begin{lemma}
\label{H^i(Q)=0}
If $X$ is a noetherian normal scheme, then ${\rm{H}}^i(X, \Q) = 0$ for $i > 0$.
\end{lemma}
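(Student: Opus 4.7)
The plan is to first reduce to the case that $X$ is irreducible. Since $X$ is noetherian, it has finitely many connected components, each of which is clopen, so the cohomology decomposes as a direct sum over components. A connected normal scheme is locally irreducible (its local rings are domains), and a connected locally irreducible space is irreducible, so we may assume $X$ is irreducible. The constant group scheme $\Q = \coprod_{q \in \Q} \Spec(\Z)$ is smooth (indeed \'etale) over $X$, so by \cite[Thm.\,11.7]{briii} we may compute cohomology in the \'etale topology: ${\rm{H}}^i(X, \Q) = {\rm{H}}^i_{\et}(X, \Q)$.

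Next I would exploit the Leray spectral sequence for the morphism of sites $\pi \colon X_{\et} \to X_{\rm{Zar}}$,
\[
E_2^{p,q} = {\rm{H}}^p_{\rm{Zar}}(X, R^q\pi_*\Q) \Longrightarrow {\rm{H}}^{p+q}_{\et}(X, \Q),
\]
and reduce the problem to showing $R^q\pi_*\Q = 0$ for $q > 0$ together with ${\rm{H}}^p_{\rm{Zar}}(X, \pi_*\Q) = 0$ for $p > 0$. The sheaf $\pi_*\Q$ is the Zariski sheaf $U \mapsto {\rm{H}}^0_{\et}(U, \Q)$; since $X$ is irreducible, every nonempty Zariski open $U \subset X$ is irreducible and therefore connected, so $\pi_*\Q$ is the constant Zariski sheaf $\Q$. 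The constant sheaf on an irreducible topological space is flabby (any section over a nonempty open is an element of $\Q$, which extends to the constant global section), so its higher Zariski cohomology vanishes. This handles the second vanishing.

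For the first vanishing, the stalk of $R^q\pi_*\Q$ at $x \in X$ is ${\rm{H}}^q_{\et}(\Spec \calO_{X,x}, \Q)$. The ring $\calO_{X,x}$ is a noetherian normal local ring, so its spectrum is connected, and locally constant \'etale sheaves on it correspond to continuous modules over the profinite \'etale fundamental group $\Pi := \pi_1^{\et}(\Spec \calO_{X,x})$. Under this equivalence, ${\rm{H}}^q_{\et}(\Spec \calO_{X,x}, \Q) \simeq {\rm{H}}^q_{\rm{cts}}(\Pi, \Q)$ where $\Q$ carries the trivial action and the discrete topology. Expressing continuous cohomology as a filtered direct limit over open normal subgroups, ${\rm{H}}^q_{\rm{cts}}(\Pi, \Q) = \varinjlim_U {\rm{H}}^q(\Pi/U, \Q)$, and each term vanishes for $q > 0$ and finite $\Pi/U$: a restriction-corestriction argument shows ${\rm{H}}^q(\Pi/U, \Q)$ is annihilated by $|\Pi/U|$, which is a unit in the uniquely divisible abelian group $\Q$. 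Hence $R^q\pi_*\Q = 0$ for $q > 0$, as required.

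The main point requiring care is the identification ${\rm{H}}^q_{\et}(\Spec \calO_{X,x}, \Q) \simeq {\rm{H}}^q_{\rm{cts}}(\Pi, \Q)$, which rests on the equivalence between locally constant \'etale sheaves on the connected scheme $\Spec \calO_{X,x}$ and continuous $\Pi$-modules (together with the corresponding comparison on cohomology). Everything else is either standard sheaf-theoretic formalism or the easy computation of cohomology of finite groups with coefficients in a $\Q$-vector space.
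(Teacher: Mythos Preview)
Your reduction to local rings via the Leray spectral sequence for $X_{\et}\to X_{\rm Zar}$ is fine, and the flasqueness argument for the Zariski constant sheaf on an irreducible space is correct. The gap is in the final step: the claimed identification ${\rm H}^q_{\et}(\Spec \calO_{X,x},\Q)\simeq {\rm H}^q_{\rm cts}(\Pi,\Q)$ does \emph{not} follow from the equivalence between locally constant \'etale sheaves and continuous $\Pi$-modules. That equivalence is at the level of abelian categories, but \'etale cohomology is computed with injective resolutions in the category of \emph{all} \'etale abelian sheaves, not just the locally constant ones. The natural map ${\rm H}^q_{\rm cts}(\pi_1(S),M)\to {\rm H}^q_{\et}(S,M)$ is an isomorphism precisely when $S$ is an \'etale $K(\pi,1)$ for $M$, which is a nontrivial condition. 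If you try to prove it via Hochschild--Serre, you need ${\rm H}^q_{\et}(\tilde S,\Q)=0$ for the pro-(finite \'etale) cover $\tilde S$; but $\tilde S$ is again a normal scheme, so this is the lemma again. (Note that your argument for the local ring never uses normality, so if it worked it would prove the comparison for arbitrary connected local schemes.)

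The paper's proof avoids this by using a different Leray spectral sequence, namely for the inclusion $f\colon\eta\hookrightarrow X$ of the generic point. Over the field $k(\eta)$ the \'etale site genuinely \emph{is} the category of Galois sets, so ${\rm H}^q_{\et}(\eta,\Q)={\rm H}^q({\rm Gal},\Q)=0$ by unique divisibility, and likewise the stalks of $R^jf_*\Q$ are cohomology over fields and vanish for $j>0$. Normality then enters exactly once: a connected \'etale $U\to X$ is normal, hence irreducible, so $U_\eta$ is a single point and $f_*\Q=\Q$. So the paper reduces to fields (where your $\pi_1$ argument \emph{is} valid) rather than to local rings (where it is not, without further input).
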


\begin{proof}
Since the constant commutative $X$-group $\Q$ is smooth, we may take our cohomology to be {\'e}tale. We may assume that $X$ is connected, hence (because of normality) irreducible. Let $\eta$ be its generic point, $f:  \eta \rightarrow X$ the canonical inclusion. We have a Leray spectral sequence
\[
E_2^{i,j} = {\rm{H}}^i(X, \R^jf_*\Q) \Longrightarrow {\rm{H}}^{i+j}(\eta, \Q).
\]
We claim that $\R^jf_*\Q = 0$ for $j > 0$. Indeed, it is the sheafification of the \'etale presheaf $U \mapsto {\rm{H}}^j(U_{\eta}, \Q)$. But if $U$ is an \'etale $X$-scheme, then $U_{\eta}$ is a disjoint union of spectra of fields, so because of the vanishing of the higher Galois cohomology of $\Q$ (due to its unique divisibility), we see that ${\rm{H}}^j(U_{\eta}, \Q) = 0$ for $j > 0$. For the same reason, we have ${\rm{H}}^i(\eta, \Q) = 0$ for $i > 0$. The lemma will follow, therefore, if we show that the natural map $\Q \rightarrow f_*\Q$ is an isomorphism.

For any {\'e}tale $X$-scheme $U$, we have $(f_*\Q)(U) = \Q(U_{\eta})$ by definition. Since
$U$ is a disjoint union of connected components, it suffices to show that $U_{\eta}$ is connected
when $U$ is connected. Because $U$ is \'etale over $X$, the scheme $U_{\eta}$ is the disjoint union of the generic points of $U$, so 
it suffices to show that $U$ is irreducible. But $U$ is connected, noetherian, and normal (because it is {\'e}tale over the noetherian normal scheme $X$), so it is indeed irreducible.
\end{proof}

\begin{lemma}
\label{H^i(O_S^,Q)=0}
For a global function field $k$, we have ${\rm{H}}^i(\A_k, \Q) = 0$ for $i > 0$.
\end{lemma}

\begin{proof}
Proposition \ref{directlimitscohom} implies that 
\[
{\rm{H}}^i(\A_k, \Q) = \varinjlim_S \left( {\rm{H}}^i(\widehat{\calO}_S, \Q) \times \prod_{v \in S} {\rm{H}}^i(k_v, \Q) \right),
\]
where the limit is over all finite sets of places of $k$ containing the archimedean ones. By Lemma \ref{H^i(Q)=0}, ${\rm{H}}^i(k_v, \Q) = 0$ for all $v$, hence it suffices to show that ${\rm{H}}^i(\widehat{\calO}_S, \Q) = 0$. We will first show that ${\rm{H}}^i((\widehat{\calO_S})_{\mathfrak{p}}, \Q) = 0$ for $i > 0$ and for every prime ideal $\mathfrak{p}$ of $\widehat{\calO_S}$. By Lemma \ref{productofnormal=normal}, $(\widehat{\calO_S})_{\mathfrak{p}}$ is a normal domain
and hence is the direct limit of its finite-type $\Z$-subalgebras $A_i$. Replacing $A_i$ with its normalization (which is finite over $A_i$ due to excellence), we see that $(\widehat{\calO_S})_{\mathfrak{p}}$ is the filtered direct limit of noetherian normal subrings $A_i$. By Lemma \ref{directlimitscohom}, therefore, ${\rm{H}}^i((\widehat{\calO_S})_{\mathfrak{p}}, \Q) = \varinjlim_i {\rm{H}}^i(A_i, \Q) = 0$ for $i > 0$ by Lemma \ref{H^i(Q)=0}. 

Let $\alpha \in {\rm{H}}^i(\widehat{\calO_S}, \Q)$ for $i > 0$. We need to show that $\alpha = 0$. Since the pullback of $\alpha$ to each local ring of $\widehat{\calO_S}$ vanishes, there is an open cover $\{U_j\}$ of $\Spec(\widehat{\calO_S})$ such that $\alpha|_{U_j} = 0$ for each $j$. By Lemma \ref{opencoverpartition}, any open cover of $\Spec(\widehat{\calO_S})$ may be refined by one obtained by a finite partition of the index set $I$ for the product $\widehat{\calO_S} = \prod_{v \notin S} \calO_v$.
That is, there is a partition $I = I_1 \amalg \dots \amalg I_n$ (depending on $i$) such that $\alpha|_{\prod_{v \in I_m} \calO_v} = 0$ for each $1 \leq m \leq n$. It follows that $\alpha = 0$.
\end{proof}

\begin{lemma}
\label{H^2(prod)-->prodH^2weilrestriction}
Let $k'/k$ be a finite separable extension of global fields, let $S$ be a set of places of $k$ containing the archimedean places as well as all of the places of $k$ that are ramified in $k'$, and let $S'$ be the set of places of $k'$ lying above $S$. Then the natural map 
\[
{\rm{H}}^2(\A, \widehat{\R_{k'/k}(\Gm^n)}) \longrightarrow {\prod_{v \notin S}} {\rm{H}}^2(k_v, \widehat{\R_{k'/k}(\Gm^n)})
\]
induces an isomorphism
\[
{\rm{H}}^2(\A, \widehat{\R_{k'/k}(\Gm^n)}) \xlongrightarrow{\sim} \left({\prod_{v \notin S}}' {\rm{H}}^2(k_v, \widehat{\R_{k'/k}(\Gm^n)})\right)_{{\rm{tors}}},
\]
where the restricted product is with respect to the subgroups ${\rm{H}}^2(\calO_v, \widehat{\R_{\calO_{S'}/\calO_S}(\Gm^n)})$.
\end{lemma}

Note that the groups ${\rm{H}}^2(\calO_v, \widehat{\R_{\calO_{S'}/\calO_S}(\Gm^n)})$ are indeed subgroups of ${\rm{H}}^2(k_v, \widehat{\R_{k'/k}(\Gm^n)})$ for almost all $v$, by Theorem \ref{H^2(O,G^)dualityprop}.

\begin{proof}
We may assume that $n = 1$. By \cite[Thm.\,2.18]{ces2}, the map
\[
\textstyle {\rm{H}}^1(\A, \frac{1}{n}\Z/\Z) \longrightarrow {\prod}' {\rm{H}}^1(k_v, \frac{1}{n}\Z/\Z)
\]
is an isomorphism for all $n > 0$. Since $\Q/\Z = \varinjlim_n \frac{1}{n}\Z/\Z$, taking the direct limit over $n$ yields by Proposition \ref{directlimitscohom} an isomorphism
\[
\textstyle {\rm{H}}^1(\A, \Q/\Z) \xlongrightarrow{\sim} \varinjlim_n {\prod}' {\rm{H}}^1(k_v, \frac{1}{n}\Z/\Z).
\]
We claim that
\[
\textstyle {\rm{H}}^1(k_v, \frac{1}{n}\Z/\Z) \longrightarrow {\rm{H}}^1(k_v, \Q/\Z)[n]
\]
is an isomorphism for all $n > 0$. This follows from the exact sequence
\[
\textstyle 0 \longrightarrow \frac {1}{n}\Z/\Z \longrightarrow \Q/\Z \xlongrightarrow{[n]} \Q/\Z \longrightarrow 0.
\]
We therefore obtain an isomorphism
\[
{\rm{H}}^1(\A, \Q/\Z) \xlongrightarrow{\sim} \varinjlim_n \left({\prod}' {\rm{H}}^1(k_v, \Q/\Z)\right)[n] = \left({\prod}' {\rm{H}}^1(k_v, \Q/\Z)\right)_{\rm{tors}}.
\]
Now the exact sequence
\[
0 \longrightarrow \Z \longrightarrow \Q \longrightarrow \Q/\Z \longrightarrow 0,
\]
in conjunction with Lemmas \ref{H^i(O_S^,Q)=0} and \ref{H^i(Q)=0}, then implies that we have the isomorphism
\[
{\rm{H}}^2(\A, \Z) \xlongrightarrow{\sim} \left({\prod}' {\rm{H}}^2(k_v, \Z)\right)_{\rm{tors}}.
\]
Since $\widehat{\Gm} = \Z$, this proves the lemma when $k' = k$.

We now turn to the general case. By Proposition \ref{charactersseparableweilrestriction}, 
$$\widehat{\R_{\calO_{S'}/\calO_S}(\Gm)} = \R_{\calO_{S'}/\calO_S}(\widehat{\Gm}) = \R_{\calO_{S'}/\calO_S}(\Z),$$ and similarly for the sheaves over $k$.
The constant $\calO_{S'}$-group scheme $\Z$ is smooth, so its Weil restriction to $\calO_S$ is as well, and hence we may take our cohomology to be {\'e}tale. By Lemma \ref{sepblepushforward}, ${\rm{H}}^2(\A_k, \R_{k'/k}(\Z)) = {\rm{H}}^2(A_{k'}, \Z)$, and similarly for the $k_v$ and $\calO_v$ cohomology groups. Renaming $k'$ as $k$, we are thereby reduced to the already treated case when $k' = k$.
\end{proof}

\begin{proposition}
\label{H^3(A,Ga^)=0}
For a global function field $k$, we have ${\rm{H}}^3(\A_k, \widehat{\Ga}) = 0$.
\end{proposition}

\begin{proof}
By Proposition \ref{directlimitscohom}, we have
\[
{\rm{H}}^3(\A, \widehat{\Ga}) = \varinjlim_S \left( {\rm{H}}^3(\widehat{\calO}_S, \widehat{\Ga}) \times \prod_{v \in S} {\rm{H}}^3(k_v, \widehat{\Ga}) \right),
\]
where the limit is over all finite sets $S$ of places of $k$. The result therefore follows from Proposition \ref{H^3(Ga^)=0prodofdvrs}.
\end{proof}

\begin{lemma}
\label{H^1H^2(A)torsion}
Let $k$ be a global function field, and let $G$ be an affine commutative $k$-group scheme of finite type. Then ${\rm{H}}^1(\A, \widehat{G})$ is of finite exponent, and ${\rm{H}}^2(\A, \widehat{G})$ is torsion.
\end{lemma}

\begin{proof}
By Proposition \ref{hatisexact} and Lemma \ref{affinegroupstructurethm}, it suffices to treat the case when $G$ is either $\Ga$ or an almost torus. The case $G = \Ga$ is trivial (since $\Ga$ is $p$-torsion), so we may assume that $G$ is an almost-torus. By Lemma \ref{almosttorus}(iv), we may harmlessly modify $G$ and thereby assume that there is an exact sequence
\[
1 \longrightarrow B \longrightarrow X \longrightarrow G \longrightarrow 1,
\]
where $X = C \times \R_{k'/k}(\Gm^n)$, $B$ and $C$ are finite commutative $k$-group schemes, and $k'/k$ is a finite separable extension. Since $B$ and $C$ are of finite exponent, we are reduced to proving the lemma when $G = \R_{k'/k}(\Gm)$. In this case, the desired assertions follow from Lemmas \ref{H^1(prodO_v,G^)=prodH^1(O_v,G^)weilrestrictiontori} and \ref{H^2(prod)-->prodH^2weilrestriction}.
\end{proof}

We are now prepared to prove the main result of this section, which relates the adelic cohomology of $\widehat{G}$ for affine commutative $G$ of finite type to the local cohomology groups.

\begin{proposition}
\label{H^i(A,G^)--->prodH^i(k_v,G^)}
Let $k$ be a global function field, $G$ an affine commutative $k$-group scheme of finite type, and $\mathscr{G}$ a finite type $\calO_S$-model of $G$ for some finite non-empty set $S$ of places of $k$. Then the maps ${\rm{H}}^i(\A_k, \widehat{G}) \rightarrow \prod_v {\rm{H}}^i(k_v, \widehat{G})$ induce isomorphisms:
\begin{itemize}
\item[(i)] $\displaystyle {\rm{H}}^1(\A_k, \widehat{G}) \xlongrightarrow{\sim} {\prod_v}' {\rm{H}}^1(k_v, \widehat{G})$,
\item[(ii)] $\textstyle {\rm{H}}^2(\A_k, \widehat{G}) \xlongrightarrow{\sim} \left({\prod'_v} {\rm{H}}^2(k_v, \widehat{G})\right)_{\rm{tors}}$,
\end{itemize}
where the restricted products are with respect to the subgroups ${\rm{H}}^i(\calO_v, \widehat{\mathscr{G}}) \subset {\rm{H}}^i(k_v, \widehat{G})$. $($Note that these are indeed subgroups for all but finitely many $v$ by Theorems $\ref{H^1(k,G^)/H^1(O,G^)}$ and $\ref{H^2(O,G^)dualityprop}$.$)$
\end{proposition}

\begin{proof}
First consider the case in which $G$ is an almost torus. By Lemma \ref{almosttorus}(iv), after harmlessly modifying $G$, we obtain an exact sequence
\[
1 \longrightarrow B \longrightarrow X \longrightarrow G \longrightarrow 1
\]
where $X = C \times \R_{k'/k}(T')$, $k'/k$ is a finite separable extension, $T'$ is a split $k'$-torus, and $B$ and $C$ are finite
commutative $k$-group schemes. By Lemma \ref{spreadingoutdualsheaves}, we may spread this out to obtain the dual exact sequence (for the fppf topology on the category of all $\calO_S$-schemes) 
\[
1 \longrightarrow \widehat{\mathscr{G}} \longrightarrow \widehat{\mathscr{X}} \longrightarrow \widehat{\mathscr{B}} \longrightarrow 1,
\]
with $\mathscr{B}$ a finite flat commutative $\calO_S$-group scheme. Then, using Lemmas \ref{H^1(prodO_v,G^)=prodH^1(O_v,G^)weilrestrictiontori} and \ref{H^2(prod)-->prodH^2weilrestriction}, and \cite[Thm.\,2.18]{ces2} (and the fact that $B$ is of finite exponent for the last vertical map), the indicated maps in the commutative diagram below (in which all restricted products are with respect to the integral cohomology groups) are isomorphisms. Note that the ${\rm{H}}^2$ groups map into the torsion subgroups of the restricted product by Lemma \ref{H^1H^2(A)torsion}.
\[
\begin{tikzcd}
\widehat{X}(\A) \arrow{r} \isoarrow{d} & \widehat{B}(\A) \isoarrow{d} \arrow{r} & {\rm{H}}^1(\A, \widehat{G}) \arrow{d} \arrow{r} & {\rm{H}}^1(\A, \widehat{X}) \isoarrow{d} \\
{\prod}' \widehat{X}(k_v) \arrow{r} & {\prod}' \widehat{B}(k_v) \arrow{r} & {\prod}' {\rm{H}}^1(k_v, \widehat{G}) \arrow{r} & {\prod}' {\rm{H}}^1(k_v, \widehat{X})
\end{tikzcd}
\]
\[
\begin{tikzcd}[column sep = tiny]
\arrow{r} & {\rm{H}}^1(\A, \widehat{B}) \arrow{r} \isoarrow{d} & {\rm{H}}^2(\A, \widehat{G}) \arrow{d} \arrow{r} & {\rm{H}}^2(\A, \widehat{X}) \arrow{r} \isoarrow{d} & {\rm{H}}^2(\A, \widehat{B}) \isoarrow{d} \\
\arrow{r} & {\prod}' {\rm{H}}^1(k_v, \widehat{B}) \arrow{r} & ({\prod}' {\rm{H}}^2(k_v, \widehat{G}))_{\rm{tors}} \arrow{r} & ({\prod}' {\rm{H}}^2(k_v, \widehat{X}))_{\rm{tors}} \arrow{r} & ({\prod}' {\rm{H}}^2(k_v, \widehat{B}))_{\rm{tors}}
\end{tikzcd}
\]
The fact that the group ${\prod}' {\rm{H}}^1(k_v, \widehat{B})$ is torsion implies that the bottom row is exact. The five lemma now implies that the remaining two vertical maps are also isomorphisms. This completes the proof of the proposition for almost-tori.

Now we prove the proposition in general. We proceed by induction on the dimension of the unipotent radical of $(G_{\overline{k}})^0_{\rm{red}}$, the $0$-dimensional case being the already-handled situation of almost-tori. Applying Lemma \ref{affinegroupstructurethm} and induction, we may therefore suppose that there is an exact sequence
\[
1 \longrightarrow H \longrightarrow G \longrightarrow \Ga \longrightarrow 1
\]
such that the proposition is true for $H$. By Lemma \ref{spreadingoutdualsheaves}, we may for some finite set $S$ of places of $k$ spread this out to an exact sequence of fppf sheaves over $\calO_S$
\[
1 \longrightarrow \widehat{\mathscr{\Ga}} \longrightarrow \widehat{\mathscr{G}} \longrightarrow \widehat{\mathscr{H}} \longrightarrow 1.
\]
In the exact diagram below, the second and fifth vertical maps are isomorphisms by hypothesis, and the third is by Lemma \ref{H^2(prodO_v,G^)-->prodH^2(O_v,G^)injectiveG_a^}. The first horizontal arrows in each row are inclusions by Lemma \ref{H^1(prodO_v,G_a^)=0}, and the last map in the top row is a surjection by Proposition \ref{H^3(A,Ga^)=0}. Further, the penultimate map in the bottom row maps into the torsion subgroup because $\Ga$ is $p$-torsion. A diagram chase now implies that the remaining two vertical maps are isomorphisms, which proves the proposition.
\[
\begin{tikzcd}[column sep = tiny]
{\rm{H}}^1(\A, \widehat{G}) \arrow{d} \arrow[r, hookrightarrow] & {\rm{H}}^1(\A, \widehat{H}) \arrow{r} \isoarrow{d} & {\rm{H}}^2(\A, \widehat{\Ga}) \arrow{r} \isoarrow{d} & {\rm{H}}^2(\A, \widehat{G}) \arrow{d} \arrow[r, twoheadrightarrow] & {\rm{H}}^2(\A, \widehat{H}) \isoarrow{d} \\
{\prod}' {\rm{H}}^1(k_v, \widehat{G}) \arrow[r, hookrightarrow] & {\prod}' {\rm{H}}^1(k_v, \widehat{H}) \arrow{r} & {\prod}' {\rm{H}}^2(k_v, \widehat{\Ga}) \arrow{r} & ({\prod}' {\rm{H}}^2(k_v, \widehat{G}))_{\rm{tors}} \arrow{r} & ({\prod}' {\rm{H}}^2(k_v, \widehat{H}))_{\rm{tors}}
\end{tikzcd}
\]
\end{proof}

\section{Topology on cohomology of the adeles and adelic duality}
\label{sectionadeliccohomtopology}

Let $k$ be a global function field, $G$ an affine commutative $k$-group scheme of finite type. We have made the groups ${\rm{H}}^i(k_v, G)$ and ${\rm{H}}^i(k_v, \widehat{G})$ into topological groups for $i \leq 2$, the most subtle case being $i = 1$, which was treated in \S\S \ref{sectiontopologyoncohomology} and \ref{h1dualgen}. The purpose of this section is to similarly topologize the corresponding adelic cohomology groups. This topology will play an important role later in this work. We will also prove analogues of some of the local duality theorems for these adelic cohomology groups.

In order to define these topologies, we consider the following general situation. Let $\mathscr{F}$ be a sheaf on the big fppf site of ${\rm{Spec}}(k)$, and suppose that we have, for each place $v$ of $k$, a topology on ${\rm{H}}^i(k_v, \mathscr{F})$. Then there are two natural ways to obtain from these a topology on ${\rm{H}}^i(\A, \mathscr{F})$, where $\A$ is the adele ring of $k$. For any set $S$ of places of $k$, let $\A^S$ denote the ring of $S$-adeles (the usual restricted product over all $v \notin S$), and consider the natural map
\[
\pi_S: {\rm{H}}^i(\A^S, \mathscr{F}) \longrightarrow \prod_{v \notin S} {\rm{H}}^i(k_v, \mathscr{F}).
\]
Then the first way to topologize ${\rm{H}}^i(\A, \mathscr{F})$ is to take as a basis those subsets of the form
\[
U \times \pi_S^{-1}\left(\prod_{v \notin S} {\rm{im}}({\rm{H}}^i(\calO_v, \mathscr{F}) \rightarrow {\rm{H}}^i(k_v, \mathscr{F}))\right),
\]
for $S$ a finite set of places of $k$, where $U \subset \prod_{v \in S} {\rm{H}}^i(k_v, \mathscr{F})$ is an arbitrary open subset. The second way to topologize ${\rm{H}}^i(\A, \mathscr{F})$ is to start with a finite set $S'$ of places of $k$ and an $\calO_{S'}$-sheaf $\mathscr{H}$ such that $\mathscr{F}$ is the restriction of $\mathscr{H}$ to $k$, and then to take as a basis those sets of the form
\[
U \times {\rm{im}}\left({\rm{H}}^i\left(\prod_{v \notin S} \calO_v, \mathscr{H}\right) \rightarrow {\rm{H}}^i(\A^S, \mathscr{F})\right).
\]
for finite sets $S \supset S'$ of places of $k$.

These two definitions agree for any reasonable sheaf $\mathscr{H}$; more precisely, they agree whenever $\mathscr{H}$ is locally of finite presentation, i.e., commutes with filtered direct limits of rings. (This is true of the sheaves $\mathscr{H} = \mathscr{G}$ or $\widehat{\mathscr{G}}$ for $\mathscr{G}$ a finite type group scheme over $\calO_{S'}$.) Indeed, this agreement follows from the fact that
\[
{\rm{H}}^i(\A, \mathscr{F}) = \varinjlim_S \left(\prod_{v \in S} {\rm{H}}^i(k_v, S) \times {\rm{H}}^i\left(\prod_{v \notin S} \calO_v, \mathscr{H}\right)\right),
\]
which holds by Proposition \ref{directlimitscohom}, because this equality shows that any adelic cohomology class  automatically arises from a class in ${\rm{H}}^i(\prod_{v \notin S} \calO_v, \mathscr{H})$ for suitably large $S$ (with, however, $S$ depending on the cohomology class).

Recall that if $G$ is a commutative group scheme of finite type over $k_v$, then we endow the group ${\rm{H}}^0(k_v, G)$ with the topology arising from the topology on $k_v$, and we endow the group ${\rm{H}}^0(k_v, \widehat{G})$ with the discrete topology. We endow ${\rm{H}}^1(k_v, G)$ with the minimal topology that makes every map $X \rightarrow BG$ for a locally finite type $k_v$-scheme $X$ continuous (as discussed at the beginning of \S \ref{sectiontopologyoncohomology}), and similarly for ${\rm{H}}^1(k_v, \widehat{G})$ when $G$ is in almost-torus (in which case, by Lemma \ref{hatrepresentable}, $\widehat{G}$ is represented by a locally of finite type $k_v$-group scheme), and we use this topology for almost-tori to define a topology on this group for any affine commutative $k_v$-group scheme of finite type, as discussed in \S \ref{h1dualgen}. Although we shall have no use of this beyond degree $2$, following \cite{ces1} let us also endow all of the groups ${\rm{H}}^n(k_v, G)$ and ${\rm{H}}^n(k_v, \widehat{G})$ for $n > 1$ with the discrete topology.

Now let $k$ be a global field (the subtler case being that of global function fields), and let $G$ be a commutative $k$-group scheme of finite type. Let $\mathscr{G}$ be a flat, finite type $\calO_S$-model of $G$ for some finite non-empty set $S$ of places of $k$. For $i = 0, 1$, we then topologize the groups ${\rm{H}}^i(\A, G)$ and ${\rm{H}}^i(\A, \widehat{G})$ in either of the two equivalent manners discussed above, and we remark (so as to be able to use his results) that, for $\mathscr{F} = G$, the first of the two definitions is the one given in \v{C}esnavi\v{c}ius \cite[\S 3]{ces2}. For $i > 1$, rather than the topologies described above, we endow the groups ${\rm{H}}^i(\A, \mathscr{F})$ with the discrete topology for any fppf sheaf $\mathscr{F}$ on $\A$. We first note that these topologies have various desirable properties.

\begin{proposition}
\label{adelictopcohombasics}
Let $G$ be a commutative group scheme of finite type over a global field $k$.
\begin{itemize}
\item[(i)] For $i > 2$, the map $${\rm{H}}^i(\A, G) \rightarrow \underset{v \mbox{ real}}{\prod} {\rm{H}}^i(k_v, G)$$ is a topological isomorphism.
\item[(ii)] For $i = 2$, the map $${\rm{H}}^i(\A, G) \rightarrow \bigoplus_v {\rm{H}}^i(k_v, G)$$ is a topological isomorphism, where the direct sum is endowed with the discrete topology. The same holds for $i = 1$ if $G$ is smooth and connected.
\item[(iii)] Let $\mathscr{G}$ be an $\calO_S$-model of $G$ for some finite non-empty set $S$ of places of $k$. Then for $i = 0, 1$, the map $${\rm{H}}^i(\A, G) \rightarrow {\prod}'_v {\rm{H}}^i(k_v, G)$$ to the restricted direct product with respect to the groups ${\rm{H}}^i(\calO_v, \mathscr{G})$ is a topological isomorphism (where the latter group has the restricted product topology).
\item[(iv)] The groups ${\rm{H}}^i(\A, G)$ are locally compact, Hausdorff, second-countable topological groups.
\item[(v)] These topologies are functorial, That is, given a $k$-homomorphism $G \rightarrow G'$, the induced maps ${\rm{H}}^i(\A, G) \rightarrow {\rm{H}}^i(\A, G')$ are continuous.
\end{itemize}
Now suppose that one has a short exact sequence of finite type commutative $k$-group schemes
\[
1 \longrightarrow G' \longrightarrow G \longrightarrow G'' \longrightarrow 1.
\]
\begin{itemize}
\item[(vi)] The maps between adelic cohomology groups in the associated long exact cohomology sequence are continuous.
\item[(vii)] If $G'$ is smooth and connected, then the map ${\rm{H}}^0(\A, G) \rightarrow {\rm{H}}^0(\A, G'')$ is open.
\item[(viii)] If $G$ is smooth and connected, then the map ${\rm{H}}^0(\A, G'') \rightarrow {\rm{H}}^1(\A, G')$ is open.
\item[(ix)] If $G''$ is smooth and connected, then the map ${\rm{H}}^1(\A, G') \rightarrow {\rm{H}}^1(\A, G)$ is open.
\item[(x)] The map ${\rm{H}}^1(\A, G) \rightarrow {\rm{H}}^1(\A, G'')$ is open.
\end{itemize}
\end{proposition}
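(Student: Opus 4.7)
The plan is to establish the ten parts in roughly the stated order, reducing everything to local statements (Propositions \ref{topcohombasics} and \ref{secondcountable}) together with the integral identifications already proved in Proposition \ref{H^i(A,G)--->prodH^i(k_v,G)}, Lemma \ref{H^1(prodO_v,G^)=prodH^1(O_v,G^)finite}, and the vanishing results in Propositions \ref{H^i(prodO_v,G)=0smconn}, \ref{H^2(prodO_v,G)=0}, and \ref{cohomologicalvanishing}. Parts (i)--(iii) are essentially a matter of unwinding the definition of the adelic topology against these cohomological identifications. For $i \geq 2$ (or $i \geq 1$ when $G$ is smooth connected), the cited vanishing results force ${\rm{H}}^i(\prod_{v \notin S'} \calO_v, \mathscr{G}) = 0$ for sufficiently large $S'$, so the defining fundamental system of neighborhoods degenerates to the direct sum topology on $\bigoplus_v {\rm{H}}^i(k_v, G)$, giving (ii) and (i). For $i = 0, 1$, Lemma \ref{H^1(prodO_v,G^)=prodH^1(O_v,G^)finite} (and its easier analogue for $i = 0$, via Lemma \ref{productofpoints}) identifies ${\rm{H}}^i(\prod_{v \notin S} \calO_v, \mathscr{G})$ with $\prod_{v \notin S} {\rm{H}}^i(\calO_v, \mathscr{G})$ once $S$ is sufficiently large, matching our topology with the restricted product topology (iii).

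Part (iv) will then follow because each ${\rm{H}}^i(k_v, G)$ is locally compact Hausdorff second countable (Propositions \ref{topcohombasics} and \ref{secondcountable}), each ${\rm{H}}^i(\calO_v, \mathscr{G})$ is a compact open subgroup (for $i = 0$ by affineness of $\mathscr{G}$, for $i = 1$ by the reasoning of Remark \ref{H^1(O_v,G)compact}), and $k$ has only countably many places; in the discrete cases $i \geq 2$ the claim is immediate. Functoriality (v) and continuity of the long exact sequence maps (vi) then reduce to their local counterparts in Proposition \ref{topcohombasics}(ii) and (v), combined with the observation that any fixed morphism of group schemes, and any fixed short exact sequence, spreads out to an $\mathcal{O}_S$-model and therefore carries integral cohomology to integral cohomology at every $v \notin S$. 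Consequently the defining neighborhoods at the adelic level are pulled back and pushed forward to neighborhoods, yielding continuity.

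The main obstacle will be the openness statements (vii)--(x), which require promoting the local openness results of Proposition \ref{topcohombasics}(vi)--(vii) to the adelic setting. In (vii)--(ix), the smoothness and connectedness hypotheses let us invoke Proposition \ref{H^i(prodO_v,G)=0smconn} to kill the relevant obstruction group over $\prod_{v \notin S'} \calO_v$ for sufficiently large $S'$, reducing the integral contribution in the open-mapping argument to a surjection of compact Hausdorff groups; by Lemma \ref{isom=homeo} this is automatically a topological quotient, and combined with local openness at each place $v \in S'$ this promotes to openness of the adelic map. In (x), I would spread the exact sequence out to an affine $\mathcal{O}_S$-model and then take a basic open neighborhood of the form $\prod_{v \in S'} U_v \times \mathrm{im}({\rm{H}}^1(\prod_{v \notin S'} \calO_v, \mathscr{G}))$ in ${\rm{H}}^1(\A, G)$; its image under the pushforward to ${\rm{H}}^1(\A, G'')$ contains $\prod_{v \in S'} \pi_v(U_v) \times \mathrm{im}({\rm{H}}^1(\prod_{v \notin S'} \calO_v, \mathscr{G}''))$ with each $\pi_v(U_v)$ open by Proposition \ref{topcohombasics}(vi), giving openness of the adelic map. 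The delicate point throughout is that in the restricted product setting one must carefully separate the ``finite part'' (handled by local openness) from the ``integral part'' (handled by the vanishing or spreading-out results), and verify that the product of an open set in the finite part with the compact integral part is actually open in the image.
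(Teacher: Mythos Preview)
Your approach to parts (i)--(vi) matches the paper's proof essentially line for line: the paper reduces (i)--(iii) to Proposition~\ref{H^i(A,G)--->prodH^i(k_v,G)} together with the vanishing results (Propositions~\ref{H^i(prodO_v,G)=0smconn}, \ref{H^2(prodO_v,G)=0}) and the identifications in Lemmas~\ref{productofpoints} and~\ref{H^1(prodO_v,G^)=prodH^1(O_v,G^)finite}, derives (iv) from the local results plus compactness of the integral subgroups, and handles (v)--(vi) by spreading out and invoking Proposition~\ref{topcohombasics}(ii),(v).

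For (vii)--(x) your approach differs in presentation from the paper, which cites \cite[Prop.\,4.3(a)--(d)]{ces1} as a black box and only verifies the surjectivity hypothesis on integral cohomology (exactly the vanishing statements you identify). You are effectively unpacking that citation by hand via the ``finite part plus integral part'' decomposition. This is correct in outline, but there is one imprecision worth flagging: you appeal to ``local openness at each place $v \in S'$'' via Proposition~\ref{topcohombasics}(vi)--(vii), yet those two items do not cover the local maps needed in (vii) and (ix). For (vii) you need $G(k_v) \to G''(k_v)$ open when $G'$ is smooth, which follows from the map $G \to G''$ being smooth (hence a submersion on $k_v$-points) but is not recorded in Proposition~\ref{topcohombasics}. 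For (ix) you need ${\rm H}^1(k_v,G') \to {\rm H}^1(k_v,G)$ open when $G''$ is smooth connected; this follows because ${\rm H}^1(k_v,G'')$ is discrete (Proposition~\ref{topcohombasics}(iii)), so the image of ${\rm H}^1(k_v,G')$ is open in ${\rm H}^1(k_v,G)$, and then Lemma~\ref{isom=homeo} with Proposition~\ref{secondcountable} makes the map open onto that image. Once these two local inputs are supplied, your decomposition argument goes through; the ``surjection of compact Hausdorff groups'' language is a slight red herring, since what you actually use is just that integral surjectivity forces the image of a basic neighborhood to contain the full integral factor in the target.
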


\begin{proof}
(i) This follows from \cite[Th.\,2.13, Prop.\,3.4(b)]{ces2} and Proposition \ref{cohomologicalvanishing}. \\ \\
(ii) \cite[Prop.\,3.4(b), Thm.\,2.18]{ces2}. \\ \\
(iii) \cite[Prop.\,3.4(b)]{ces2}. \\ \\
(iv) That these are Hausdorff topological groups follows from the analogous results for the ${\rm{H}}^i(k_v, G)$ (the nontrivial case being Proposition \ref{topcohombasics}(i)) in conjunction with parts (i)-(iii). Second-countability follows from the analogous results for the local groups, which is clear for $i = 0$, follows from Proposition \ref{secondcountable} for $i = 1$ and $k_v \neq \mathbf{R}$, from Theorem \ref{H^2(G)G^(k)dualityprop} for $i = 2$ and $k_v \neq \mathbf{R}$, Proposition \ref{cohomologicalvanishing} for $i > 2$ and $k_v \neq \mathbf{R}$, from the finiteness of these cohomology groups for $i = 1$ and $k_v = \mathbf{R}$ \cite[\S, 4.2, \S4.3, Th.\,4]{serre}, and from the fact that the cohomology of $\mathbf{R}$ is periodic with period 2 for $i > 1$ and $k_v = \mathbf{R}$.

Finally, it remains to prove local compactness. This is trivial for $i > 1$, since in this case the groups are discrete.  For $i = 0$ or $1$, it follows from the local compactness of the groups ${\rm{H}}^i(k_v, G)$ (clear for $i =0$, and for $i = 1$ follows from Proposition \ref{topcohombasics}(i)), together with parts (i)-(iii), provided that one shows that the groups ${\rm{H}}^i(\calO_v, \mathscr{G}) \subset {\rm{H}}^i(k_v, G)$ are compact for all but finitely many places $v$ of $k$. For $i = 0$, this is well-known, and ultimately follows from the compactness of $\calO_v$. For $i = 1$, this was proven in Remark \ref{H^1(O_v,G)compact}. \\ \\
(v) \cite[Prop.\,3.5(a)]{ces2}. \\ \\
(vi) This is trivial for the maps from cohomology groups of degree $ > 1$. For the low degree part of the sequence, we spread out the short exact sequence to an exact sequence of $\calO_S$-group schemes
\begin{equation}
\label{adelictopcohombasicseqn1}
1 \longrightarrow \mathscr{G}' \longrightarrow \mathscr{G} \longrightarrow \mathscr{G}'' \longrightarrow 1.
\end{equation}
The continuity of all of the relevant maps then follows from the continuity of the maps on $k_v$-cohomology groups, which holds up to degree 2 by Proposition \ref{topcohombasics}(v), except that for the connecting map ${\rm{H}}^1(\A, \mathscr{G}'') \rightarrow {\rm{H}}^2(\A, \mathscr{G}')$, we must also use the injectivity of the map
\[
{\rm{H}}^2(\A, \mathscr{G}') \rightarrow \prod_v {\rm{H}}^2(k_v, G')
\]
to reduce to the assertion that ${\rm{H}}^2(\calO_v, \mathscr{G}') = 0$ for all but finitely many $v$. This follows from Proposition \ref{H^i(O,G)=0}. \\ \\
(vii) \cite[Prop.\,3.11(a)]{ces2}. \\ \\
(viii) \cite[Prop.\,3.11(b)]{ces2}. \\ \\
(ix) \cite[Prop.\,3.11(c)]{ces2}. \\ \\
(x) \cite[Prop.\,3.11(d)]{ces2}.
\end{proof}

\begin{proposition}
\label{adelictopcohombasicsG^}
Let $G$ be an affine commutative group scheme of finite type over a global field $k$.
\begin{itemize}
\item[(i)] The ${\rm{H}}^i(\A, \widehat{G})$ are topological groups.
\item[(ii)] The group ${\rm{H}}^0(\A, \widehat{G})$ is second-countable and Hausdorff. The canonical map
\[
\widehat{G}(\A) \longrightarrow \prod_v \widehat{G}(k_v)
\]
is an inclusion, and the topology on $\widehat{G}(\A)$ is the subspace topology arising from the product topology on $\prod_v \widehat{G}(k_v)$.
\item[(iii)] The map $${\rm{H}}^1(\A, \widehat{G}) \rightarrow {\prod_v}' {\rm{H}}^1(k_v, \widehat{G})$$ to the restricted product with respect to the subgroups ${\rm{H}}^1(\calO_v, \widehat{G})$ is a topological isomorphism $($where the latter group has the restricted product topology$)$. The group ${\rm{H}}^1(\A, \widehat{G})$ is Hausdorff, locally compact, and second-countable.
\item[(iv)] These topologies are functorial:  given a $k$-homomorphism $G \rightarrow G'$ between affine commutative $k$-group schemes of finite type, the induced maps ${\rm{H}}^i(\A, \widehat{G'}) \rightarrow {\rm{H}}^i(\A, \widehat{G})$ are continuous.
\end{itemize}
\end{proposition}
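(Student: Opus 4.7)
The plan is to follow the pattern of Proposition \ref{adelictopcohombasics}: fix an $\calO_S$-model $\mathscr{G}$ of $G$ so that
\[
{\rm{H}}^i(\A, \widehat{G}) = \varinjlim_{S' \supset S} \left(\prod_{v \in S'} {\rm{H}}^i(k_v, \widehat{G}) \times {\rm{H}}^i(\widehat{\calO_{S'}}, \widehat{\mathscr{G}})\right)
\]
(by applying the directed-limit comparison from the excerpt to the expression $\A = \varinjlim_{S'} (\prod_{v \in S'} k_v \times \widehat{\calO_{S'}})$), and exploit the comparison results of \S\,\ref{sectionadeliclocalcohom}. Part (i) is immediate because the prescribed fundamental system of neighborhoods of $0$ consists of subgroups, reducing continuity of the group operations to the corresponding continuity on each factor. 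For (v), a $k$-homomorphism $G \to G'$ induces a map $\widehat{G'} \to \widehat{G}$ of dual sheaves that, after enlarging $S$ suitably, spreads out to a morphism of integral models; continuity of the induced map on ${\rm{H}}^i(\A, \cdot)$ then follows from continuity of the corresponding maps on the local factors (Proposition \ref{topcohombasics}(ii) in degree $1$, trivial in the discrete degrees $0,2$) and continuity of the induced maps over $\widehat{\calO_{S'}}$.

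For (iii), Theorem \ref{exactannihilatorH^2(G)} yields $\widehat{\mathscr{G}}(\calO_v) = \widehat{G}(k_v)$ for almost all $v$, so $\widehat{G}(\A) = \prod_v \widehat{G}(k_v)$ with the full product topology; each factor is finitely generated discrete by Theorem \ref{localdualityH^2(G)}, and a countable product of countable discrete groups is Hausdorff and second-countable. For (ii), Proposition \ref{H^i(A,G^)--->prodH^i(k_v,G^)} furnishes an injection ${\rm{H}}^2(\A, \widehat{G}) \hookrightarrow \prod_v {\rm{H}}^2(k_v, \widehat{G})$ into a product of discrete groups (discreteness by Theorem \ref{localdualityH^2(G^)}), so the target is Hausdorff and hence so is the subgroup.

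For (iv), combining Proposition \ref{H^1(prodO_v,G^)=prodH^1(O_v,G^)} with Proposition \ref{H^i(A,G^)--->prodH^i(k_v,G^)} identifies ${\rm{H}}^1(\A, \widehat{G})$ both algebraically and (by comparing fundamental systems of neighborhoods of $0$) topologically with the restricted product $\prod'_v {\rm{H}}^1(k_v, \widehat{G})$ relative to the ${\rm{H}}^1(\calO_v, \widehat{\mathscr{G}})$. Each factor ${\rm{H}}^1(k_v, \widehat{G})$ is Hausdorff, locally compact, and second-countable by Lemma \ref{H^1(G^)closed} together with Propositions \ref{topcohombasics}(i) and \ref{secondcountable}. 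What remains -- and what I expect to be the main obstacle -- is to verify that ${\rm{H}}^1(\calO_v, \widehat{\mathscr{G}}) \subset {\rm{H}}^1(k_v, \widehat{G})$ is a compact open subgroup for almost all $v$, without which the restricted product fails to be locally compact.

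Both the compactness and the openness reduce, via the topological identification ${\rm{H}}^1(\calO_v, \widehat{\mathscr{G}}) \cong ({\rm{H}}^1(k_v, G)/{\rm{H}}^1(\calO_v, \mathscr{G}))^D$ of Proposition \ref{H^1(k,G^)/H^1(O,G^)} and Pontryagin duality, to showing that ${\rm{H}}^1(\calO_v, \mathscr{G})$ is open in ${\rm{H}}^1(k_v, G)$ for almost all $v$ (equivalently, that the quotient is discrete). This openness is established by devissage using Lemma \ref{affinegroupstructurethm}: the case of $\Ga$ is trivial since ${\rm{H}}^1(k_v, \Ga) = 0$; for separable Weil restrictions of split tori one uses Lemma \ref{H^1=0Weilrestrictionsplittori} to see that the local cohomology vanishes for almost all $v$; and the finite case follows from the openness of $\mathscr{G}(\calO_v) \to G(k_v)$ for smooth affine $\mathscr{G}$ combined with Proposition \ref{topcohombasics}(vii). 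These pieces then paste along the continuous connecting maps of Proposition \ref{topcohombasics}(v) to yield the desired openness for almost all $v$, completing the proof.
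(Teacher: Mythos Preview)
Your overall structure matches the paper's proof closely, and parts (i)--(iii) are handled essentially identically. Two points deserve comment.

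For part (v), you cite Proposition~\ref{topcohombasics}(ii) for continuity of the local maps ${\rm{H}}^1(k_v,\widehat{G'}) \to {\rm{H}}^1(k_v,\widehat{G})$, but that proposition applies to representable group schemes, and $\widehat{G}$ is representable only when $G$ is an almost-torus (Proposition~\ref{hatrepresentable}). The correct reference is Proposition~\ref{H^1(G^)functorial}, which handles the general case via the definition of the topology on ${\rm{H}}^1(k_v,\widehat{G})$ through a choice of exact sequence $1\to H\to G\to U\to 1$. This is a citation slip, not a structural gap.

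For part (iv), the paper reaches the same reduction you do---compactness of ${\rm{H}}^1(\calO_v,\widehat{\mathscr{G}})$ follows via Proposition~\ref{H^1(k,G^)/H^1(O,G^)} and Pontryagin duality from openness of ${\rm{H}}^1(\calO_v,\mathscr{G})\subset{\rm{H}}^1(k_v,G)$---but then dispatches this openness in one line by citing \cite[Prop.\,2.9(e)]{ces1}, which gives it directly for any separated flat finite-type $\calO_v$-group. Your proposed d\'evissage is workable but the ``pasting'' step is not fully justified by what you cite: Proposition~\ref{topcohombasics}(v) gives only continuity, not openness, of the maps in the long exact sequence. To push the induction through an extension $1\to H\to G\to U\to 1$ with $U$ split unipotent you need ${\rm{H}}^1(k_v,H)\to{\rm{H}}^1(k_v,G)$ to be open (so that the open subgroup ${\rm{H}}^1(\calO_v,\mathscr{H})$ has open image), and this requires something like \cite[Prop.\,4.3(c)]{ces1} rather than anything in Proposition~\ref{topcohombasics}. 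The d\'evissage can be completed with such citations, but the paper's direct appeal to \cite{ces1} is cleaner.
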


\begin{remark}
We remark that the map in part (ii) of the proposition is {\em not} surjective in general. For example, if $G = \Gm$, then we claim that the map
\[
\Z(\A) \longrightarrow \prod_v \Z(k_v) = \prod_v \Z
\]
is not surjective. Indeed, an element of $\Z(\A)$ is given by a disjoint union ${\rm{Spec}}(\A) = \coprod_{i \in I} U_i$ of open subschemes of ${\rm{Spec}}(\A)$, together with a function $I \rightarrow \Z$. Since affine schemes are quasi-compact, any element of $\Z(\A)$ therefore takes on only finitely many integer values, hence its image in $\prod_v \Z$ is an element whose entries lie in a finite set.
\end{remark}

\begin{proof}
(i) This is immediate from the corresponding property of the ${\rm{H}}^i(k_v, \widehat{G})$ (\S \ref{h1dualgen} and Proposition \ref{topcohombasics}(i)) and the fact that $G$ spreads out to an $\calO_S$-group scheme for some finite non-empty set $S$ of places of $k$. \\ \\
(ii) The second-countability follows from the corresponding property of the discrete finitely-generated abelian groups ${\rm{H}}^0(k_v, \widehat{G})$. The Hausdorffness follows from the second statement about the map into $\prod_v \widehat{G}(k_v)$, which we now prove. A homomorphism of $\A$-schemes $G_{\A} \rightarrow \mathbf{G}_{m,\, \A}$ is given by a global unit of $G_{\A}$. For an affine $k$-scheme $X$ of finite type, a global section of $X_{\A}$ is uniquely determined by the corresponding global sections of $X_{k_v}$ for all places $v$ of $k$. It follows that the map
\[
\widehat{G}(\A) \rightarrow \prod_v \widehat{G}(k_v)
\]
is an inclusion. Further, by Theorem \ref{H^2(G)G^(k)integraldualityprop}, if $\mathscr{G}$ is an $\calO_S$-model of $G$ (for some finite set $S$ of places of $k$), then $\widehat{\mathscr{G}}(\calO_v) = \widehat{G}(k_v)$ for all but finitely many $v$. It then follows from the definition of the topology on $\widehat{G}(\A)$ that the topology on it is indeed the subspace topology arising from the product topology on $\prod_v \widehat{G}(k_v)$. \\ \\
(iii) The map is an isomorphism by Proposition \ref{H^i(A,G^)--->prodH^i(k_v,G^)}, and it is a homeomorphism by definition of the topologies on the two groups. To show that ${\rm{H}}^1(\A, \widehat{G})$ is locally compact, we first note that the groups ${\rm{H}}^1(k_v, \widehat{G})$ are, by Lemma \ref{H^1(G^)closed}. It therefore suffices to prove that the groups ${\rm{H}}^1(\calO_v, \widehat{\mathscr{G}})$ are compact for all but finitely many $v \notin S$, and for this it suffices to show that their Pontryagin dual groups are discrete. By Theorem \ref{H^1(k,G^)/H^1(O,G^)}, for all but finitely many $v$ this dual group is ${\rm{H}}^1(k_v, G)/{\rm{H}}^1(\calO_v, \mathscr{G})$. It therefore suffices to show that the subgroup ${\rm{H}}^1(\calO_v, \mathscr{G}) \subset {\rm{H}}^1(k_v, G)$ is open, and this follows from \cite[Prop.\,2.9(e)]{ces1}. \\ \\
(iv) The assertion is trivial for $i > 1$, since then the groups are discrete. The analogous property for the groups ${\rm{H}}^i(k_v, \widehat{G})$ is trivial for $i = 0$ (since the groups are discrete in that case), and follows from Proposition \ref{H^1(G^)functorial} for $i = 1$. The property for ${\rm{H}}^i(\A, \widehat{G})$ then follows from the $k_v$-version together with the fact that the $k$-homomorphism spreads out and thereby yields a morphism of fppf sheaves $\widehat{\mathscr{G}'} \rightarrow \widehat{\mathscr{G}}$ over some $\calO_S$.
\end{proof}

We now turn to proving adelic analogues of the local duality theorems. If $G$ is an affine commutative group scheme of finite type over a global field $k$, then for $0 \leq i \leq 2$, we have the pairings
\begin{equation}
\label{adeliccohompairingeqn3}
{\rm{H}}^i(\A, G) \times {\rm{H}}^{2-i}(\A, \widehat{G}) \rightarrow \Q/\Z
\end{equation}
defined by cupping everywhere locally and then summing the invariants. To show that this makes sense, we need to verify that all but finitely many of the summands vanish. In fact, we have $\A = \varinjlim_{S'} \left( \prod_{v \in S'} k_v \times \prod_{v \notin S'} \calO_v \right)$, where the limit is over all finite sets $S'$ of places of $k$ containing the archimedean places. Therefore, by Proposition \ref{directlimitscohom}, we have ${\rm{H}}^i(\A, G) = \varinjlim_{S'} \left( \prod_{v \in S'} {\rm{H}}^i(k_v, G) \times {\rm{H}}^i(\prod_{v \notin S'} \calO_v, \mathscr{G}) \right)$, and similarly for $\widehat{G}$, where $\mathscr{G}$ is an $\calO_S$-model of $G$ for some finite non-empty set $S$ of places of $k$ containing all of the archimedean places, where the limit is over all finite sets of places $S' \supset S$. It follows that for any $\alpha \in {\rm{H}}^i(\A, G)$ and $\beta \in {\rm{H}}^{2-i}(\A, \widehat{G})$, the element $\langle \alpha_v, \beta_v \rangle \in {\rm{H}}^2(k_v, \Gm)$ lifts to ${\rm{H}}^2(\calO_v, \Gm) = 0$ for all but finitely many $v$, as desired.

\begin{proposition}
\label{G(A)_pro=prodG(k_v)_pro}
Let $k$ be a global field, $G$ an affine commutative $k$-group scheme of finite type. Then the natural map
\[
\widehat{G}(\A)_{\pro} \rightarrow \prod_v \widehat{G}(k_v)_{\pro}
\]
is a topological isomorphism.
\end{proposition}

\begin{proof}
By Proposition \ref{adelictopcohombasicsG^}(ii), the map
\[
\widehat{G}(\A) \longrightarrow \prod_v \widehat{G}(k_v)
\]
is an inclusion which identifies $\widehat{G}(\A)$ -- as a topological group -- with its image. For each place $v_0$ of $k$, we have $\A = k_{v_0} \times \A^{v_0}$, hence $\widehat{G}(k_{v_0}) \times \prod_{v \neq v_0} 0 \subset \widehat{G}(\A)$. The proposition therefore follows from Proposition \ref{completionofproduct}.
\end{proof}

The following result is the adelic analogue of our main local duality theorems.

\begin{proposition}
\label{H^1(A,G)=H^1(A,G^)^D}
Let $G$ be an affine commutative group scheme of finite type over a global function field $k$. Then the adelic pairings induce perfect pairings of locally compact Hausdorff abelian groups
\[
{\rm{H}}^2(\A, G) \times {\rm{H}}^0(\A, \widehat{G})_{\pro} \rightarrow \Q/\Z,
\]
\[
{\rm{H}}^1(\A, G) \times {\rm{H}}^1(\A, \widehat{G}) \rightarrow \Q/\Z,
\]
\[
{\rm{H}}^0(\A, G)_{\pro} \times {\rm{H}}^2(\A, \widehat{G}) \rightarrow \Q/\Z,
\]
\end{proposition}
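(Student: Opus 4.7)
The plan is to treat each of the three pairings separately, in each case reducing from adelic cohomology to a (restricted) product of local cohomology groups via the results of \S\ref{sectionadeliclocalcohom}--\S\ref{sectionadeliccohomtopology}, and then invoking the corresponding local duality from Chapter \ref{chapterlocalfields} together with the integral annihilator results from Chapter \ref{chapterlocalintcohom}. Fix a non-empty finite set $S$ of places of $k$ containing all archimedean places and an $\mathcal{O}_S$-model $\mathscr{G}$ of $G$. All pairings are continuous by Lemma \ref{G(A)toH^2(k,G^)*cts} and the analogous local continuity, so in each case it will suffice to check that the induced map from one factor to the Pontryagin dual of the other is a bijection; being a continuous bijection between locally compact Hausdorff second-countable groups (cf.\ Propositions \ref{adelictopcohombasics}(iv) and \ref{adelictopcohombasicsG^}), it is automatically a topological isomorphism by Lemma \ref{isom=homeo}.

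For the middle pairing, Propositions \ref{adelictopcohombasics}(iii) and \ref{adelictopcohombasicsG^}(iv) identify ${\rm{H}}^1(\A, G)$ and ${\rm{H}}^1(\A, \widehat{G})$ with the restricted products $\prod'_v {\rm{H}}^1(k_v, G)$ and $\prod'_v {\rm{H}}^1(k_v, \widehat{G})$ with respect to ${\rm{H}}^1(\calO_v, \mathscr{G})$ and ${\rm{H}}^1(\calO_v, \widehat{\mathscr{G}})$ respectively, where these integral subgroups are compact and open (compactness was verified in the proof of Proposition \ref{adelictopcohombasicsG^}(iv); cf.\ also Remark \ref{H^1(O_v,G)compact}). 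By Proposition \ref{H^1dualityprop}, local duality identifies each ${\rm{H}}^1(k_v, \widehat{G})$ with the Pontryagin dual of ${\rm{H}}^1(k_v, G)$, and by Proposition \ref{H^1(k,G^)/H^1(O,G^)} the integral subgroup ${\rm{H}}^1(\calO_v, \widehat{\mathscr{G}})$ is the exact annihilator of ${\rm{H}}^1(\calO_v, \mathscr{G})$ for almost every $v$. Lemma \ref{dualofrestrictedproduct} then yields that the adelic pairing identifies $\prod'_v {\rm{H}}^1(k_v, \widehat{G})$ with the Pontryagin dual of $\prod'_v {\rm{H}}^1(k_v, G)$, as desired.

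For the first pairing, Proposition \ref{adelictopcohombasics}(ii) gives a topological isomorphism ${\rm{H}}^2(\A, G) \simeq \bigoplus_v {\rm{H}}^2(k_v, G)$ with the discrete topology, while Proposition \ref{G(A)_pro=prodG(k_v)_pro} identifies $\widehat{G}(\A)_{\pro}$ with $\prod_v \widehat{G}(k_v)_{\pro}$ as topological groups. The Pontryagin dual of a product of compact abelian groups is the direct sum of their duals (a standard consequence of the fact that continuous characters from a product of compact groups to $\mathbf{R}/\Z$ must kill all but finitely many factors because $\mathbf{R}/\Z$ has no small subgroups; this is the very argument used at the start of the proof of Lemma \ref{dualofrestrictedproduct}). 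Combined with Proposition \ref{H^2(G)G^(k)dualityprop} applied at each place, this yields the desired perfect pairing. The third pairing is handled in exactly the same fashion, using the identifications $G(\A)_{\pro} \simeq \prod_v G(k_v)_{\pro}$ (Proposition \ref{G(A)_pro=prodG(k_v)_pro}) and the local duality of Proposition \ref{H^2(G^)altori} applied at each place.

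The main technical point is that the algebraic bijection at the level of Pontryagin duals upgrades to a topological isomorphism. This is automatic here since in each case both sides are locally compact, Hausdorff and second-countable (Propositions \ref{adelictopcohombasics}(iv), \ref{adelictopcohombasicsG^}(iii)--(iv), and Proposition \ref{G(A)_pro=prodG(k_v)_pro}), and the pairings are continuous, so Lemma \ref{isom=homeo} applies. The only conceptual subtlety is the compatibility between the restricted-product topologies on the cohomology groups and the restricted-product topologies induced on the Pontryagin duals, and this is precisely what Lemma \ref{dualofrestrictedproduct} codifies.
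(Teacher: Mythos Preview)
Your proof is correct and follows essentially the same approach as the paper: reduce each adelic pairing to a restricted product of the local duality pairings via Propositions \ref{adelictopcohombasics}, \ref{adelictopcohombasicsG^}, and \ref{G(A)_pro=prodG(k_v)_pro}, then invoke the local duality theorems (Propositions \ref{H^2(G)G^(k)dualityprop}, \ref{H^1dualityprop}, \ref{H^2(G^)altori}) together with the integral annihilator result (Proposition \ref{H^1(k,G^)/H^1(O,G^)}) and Lemma \ref{dualofrestrictedproduct}. One small remark: your separate discussion of upgrading the algebraic bijection to a topological isomorphism via Lemma \ref{isom=homeo} is redundant, since Lemma \ref{dualofrestrictedproduct} already yields a \emph{topological} isomorphism of restricted products; the paper simply applies that lemma directly in all three cases (viewing $\bigoplus_v$ and $\prod_v$ as restricted products with respect to trivial and full subgroups, respectively).
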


\begin{proof}
Since the adelic pairings are given by summing the local duality pairings, we will deduce these results from the local duality theorems. First, to show that the pairing
\[
{\rm{H}}^1(\A, G) \times {\rm{H}}^1(\A, \widehat{G}) \rightarrow \Q/\Z
\]
is perfect, let $\mathscr{G}$ denote an $\calO_S$-model of $G$ for some finite non-empty set $S$ of places of $k$. Proposition \ref{adelictopcohombasics}(iii) identifies ${\rm{H}}^1(\A, G)$ with the restricted product $\prod'_v {\rm{H}}^1(k_v, G)$ with respect to the subgroups ${\rm{H}}^1(\calO_v, \mathscr{G})$, and Proposition \ref{adelictopcohombasicsG^}(iii) identifies ${\rm{H}}^1(\A, \widehat{G})$ with the restricted product $\prod'_v {\rm{H}}^1(k_v, \widehat{G})$ with respect to the subgroups ${\rm{H}}^1(\calO_v, \widehat{\mathscr{G}})$. Theorem \ref{H^1dualityprop} identifies ${\rm{H}}^1(k_v, G)$ and ${\rm{H}}^1(k_v, \widehat{G})$ as Pontryagin dual groups under the local duality pairing, and Theorem \ref{H^1(k,G^)/H^1(O,G^)} identifies ${\rm{H}}^1(\calO_v, \widehat{\mathscr{G}})$ as the annihilator of ${\rm{H}}^1(\calO_v, \mathscr{G}) \subset {\rm{H}}^1(k_v, G)$, which is a compact subgroup by Remark \ref{H^1(O_v,G)compact}, and is an open subgroup by \cite[Prop.\,2.9(e)]{ces1}. Thus, \cite[Ch.\,XV, Thm.\,3.2.1]{cf} completes the proof of the perfection of this pairing.

Next, we turn to the perfection of
\[
{\rm{H}}^2(\A, G) \times {\rm{H}}^0(\A, \widehat{G})_{\pro} \rightarrow \Q/\Z.
\]
Proposition \ref{G(A)_pro=prodG(k_v)_pro} identifies $\widehat{G}(\A)_{\pro}$ with $\prod_v \widehat{G}(k_v)_{\pro}$, while \cite[Thm.\,2.18]{ces2} identifies ${\rm{H}}^2(\A, G)$ with $\oplus {\rm{H}}^2(k_v, G)$. Therefore, Theorem \ref{H^2(G)G^(k)dualityprop} identifies the two groups as Pontryagin duals via the summation of the local duality pairings. Note that this is even a topological identification because the compact group ${\rm{H}}^0(\A, \widehat{G})_{\pro}$ has discrete dual.

Finally, we prove the perfection of
\[
{\rm{H}}^0(\A, G)_{\pro} \times {\rm{H}}^2(\A, \widehat{G}) \rightarrow \Q/\Z.
\]
The map
\[
{\rm{H}}^0(\A, G) \longrightarrow {\prod_v}' {\rm{H}}^0(k_v, G)
\]
is a topological isomorphism, where the restricted product is with respect to the subgroups ${\rm{H}}^0(\calO_v, \mathscr{G})$. Proposition \ref{restrictedprodprofinitecomp} therefore implies that we obtain via the natural map an isomorphism
\[
({\rm{H}}^0(\A, G)_{\pro})^D \xlongrightarrow{\sim} ({\prod_v}' ({\rm{H}}^0(k_v, G)_{\pro})^D)_{\tors},
\]
where the restricted product is with respect to the annihilators of the integral cohomology subgroups ${\rm{H}}^0(\calO_v, \mathscr{G})$. By Theorems \ref{H^2(G^)altori} and \ref{H^2(O,G^)dualityprop}, therefore, we obtain via the the sum of the local duality pairings an isomorphism
\[
({\rm{H}}^0(\A, G)_{\pro})^D \xlongrightarrow{\sim} ({\prod_v}' {\rm{H}}^2(k_v, \widehat{G}))_{\tors},
\]
where the restricted product is with respect to the groups ${\rm{H}}^2(\calO_v, \widehat{\mathscr{G}})$. Proposition \ref{H^i(A,G^)--->prodH^i(k_v,G^)}(ii) now completes the proof, since the Pontryagin dual of the compact group ${\rm{H}}^0(\A, G)_{\pro}$ is discrete.
\end{proof}

\section{Vanishing of ${{\rm{H}}}^3(\A, \widehat{G})$}

In this section, we will prove that ${\rm{H}}^3(\A_k, \widehat{G}) = 0$ for any affine commutative group scheme of finite type over a global function field $k$ (Proposition \ref{H^3(A,G^)=0prop}). We begin by introducing the notion of almost-isomorphisms. We call a homomorphism between abelian groups an {\em almost-isomorphism} if its kernel and cokernel are of finite exponent. We say that $A$ is almost isomorphic to $B$ if there is an almost-isomorphism $A \rightarrow B$. This is an equivalence relation. The only point that is not clear is symmetry. Suppose given an almost-isomorphism $\phi \colon A \rightarrow B$. Let $n, m > 0$ be such that $n$ kills $\coker(\phi)$ and $m$ kills $\ker(\phi)$. Then define a map $\psi \colon B \rightarrow A$ as follows. Let $b \in B$. Since $n$ kills $\coker(\phi)$, we have $nb = \phi(a)$ for some $a \in A$. Although $a$ is not necessarily unique, since $m$ kills $\ker(\phi)$, the element $ma \in A$ is unique. Then define $\psi(b) := ma$. One easily checks that $\psi$ is a homomorphism. To see that it is an almost-isomorphism, we first check that $mn$ kills $\coker(\psi)$. Let $a \in A$. Then set $b := \phi(a)$. Then $\phi(na) = nb$, hence $\psi(b) = (mn)a$. Next, we check that $mn$ kills $\ker(\psi)$. Indeed, let $b \in B$ be such that $\psi(b) = 0$. Then, for some $a \in A$ such that $ma = 0$, we have $\phi(a) = nb$. Therefore, $(mn)b = \phi(ma) = 0$.

We say that an abelian group $A$ is {\em quasi-divisible} if $A/A_{{\rm{div}}}$ is torsion. The following simple lemma says that quasi-divisibility is an almost-isomorphism invariant.

\begin{lemma}
\label{quasidivalmostisom}
If $A$ and $B$ are almost-isomorphic abelian groups such that $A$ is quasi-divisible, then $B$ is also quasi-divisible.
\end{lemma}

\begin{proof}
Let $\phi\colon A \rightarrow B$ be an almost isomorphism. Choose a positive integer $n$ which kills $\coker(\phi)$. Let $b \in B$. We must show that $mb \in B_{\rm{div}}$ for some $m > 0$. We have $nb = \phi(a)$ for some $a \in A$. For some $r > 0$, $ra \in A_{\rm{div}}$. Then $\phi(ra) = (rn)b \in B_{{\rm{div}}}$, so we may take $m = rn$.
\end{proof}

Quasi-divisible groups have the following desirable property.

\begin{lemma}
\label{quasidivrightexact}
Given an exact sequence of abelian groups
\[
A' \xlongrightarrow{\psi} A \xlongrightarrow{\phi} A'' \longrightarrow 1,
\]
if $A'$ is quasi-divisible, then the induced map
\[
A_{\tors} \longrightarrow A''_{\tors}
\]
is surjective.
\end{lemma}

\begin{proof}
Let $a'' \in A''_{\tors}$, say $ma'' = 0$. Choose $a \in A$ such that $\phi(a) = a''$. Then $\phi(ma) = 0$, so $ma \in \psi(A')$. Since $A'$ is quasi-divisible, we have $(nm)a \in \psi(A'_{{\rm{div}}})$ for some $n > 0$, so $(nm)a = \psi((nm)b')$ for some $b' \in A'$. Then $nm(a - \psi(b')) = 0$, so $a - \psi(b') \in A_{\tors}$, while $\phi(a - \psi(b')) = \phi(a) = a''$, so the proof is complete.
\end{proof}

The relevance of quasi-divisibility for us lies in the following lemma.

\begin{lemma}
\label{prodalmostdivisible}
Let $G$ be an affine commutative group scheme of finite type over a global function field $k$, and let $\mathscr{G}$ be an $\calO_S$-model of $G$ for some finite set $S$ of places of $k$. Then the group ${\prod_v}' {\rm{H}}^2(k_v, \widehat{G})$ is quasi-divisible, where the restricted product is with respect to the groups ${\rm{H}}^2(\calO_v, \widehat{\mathscr{G}})$.
\end{lemma}

\begin{proof}
Thanks to Proposition \ref{hatisexact} and Lemmas \ref{spreadingoutdualsheaves} and \ref{quasidivalmostisom}, if we have a short exact sequence
\[
1 \longrightarrow G' \longrightarrow G \longrightarrow G'' \longrightarrow 1
\]
of affine commutative group schemes of finite type such that one of the groups $G'$ or $G''$ is of finite exponent, and the lemma holds for another of the groups, then it also holds for the third. Proposition \ref{affinegroupstructurethm} therefore reduces us to the case in which $G$ is an almost-torus, and Lemma \ref{almosttorus}(iv) then reduces us to the case in which $G$ is the product of a finite group scheme and a group of the form $\R_{k'/k}(\Gm^n)$ for some finite separable extension $k'/k$. Finite group schemes are of finite exponent, so we are reduced to the case $G = \R_{k'/k}(\Gm^n)$, and we may assume that $n = 1$.

By Proposition \ref{charactersseparableweilrestriction}, we must show that ${\prod_v}' {\rm{H}}^2(k_v, \R_{k'/k}(\Z))$ is almost-divisible, where the restricted product is with respect to the subgroups ${\rm{H}}^2(\calO_v, \R_{\calO_{S'}/\calO_S}(\Z))$, where $S$ is the set of places of $k$ which ramify in $k'$. Since $\R_{k'/k}(\Z)$ and $\R_{\calO_{S'}/\calO_S}(\Z)$ are smooth group schemes, we may take our cohomology to be \'etale. Since finite pushforward is exact between categories of \'etale sheaves, we have
\[
{\prod_v}' {\rm{H}}^2(k_v, \R_{k'/k}(\Z)) = {\prod_{v'}}' {\rm{H}}^2(k'_{v'}, \Z),
\]
where the restricted product on the right is over the places $v'$ of $k'$ (and is with respect to the subgroups ${\rm{H}}^2(\calO_{v'}, \Z)$). Renaming $k'$ as $k$, therefore, we are reduced to the case in which $k' = k$. That is, we must show that the group ${\prod_v}' {\rm{H}}^2(k_v, \Z)$ is quasi-divisible.

In order to show this, in turn, it suffices to show that the groups ${\rm{H}}^2(\calO_v, \Z)$ are all divisible, and that each of the groups ${\rm{H}}^2(k_v, \Z)$ is quasi-divisible. We first check that ${\rm{H}}^2(\calO_v, \Z)$ is divisible. Using the exact sequence
\begin{equation}
\label{prodalmostdivisiblepfeqn1}
1 \longrightarrow \Z \xlongrightarrow{[n]} \Z \longrightarrow \Z/n\Z \longrightarrow 1,
\end{equation}
it suffices to show that ${\rm{H}}^2(\calO_v, \Z/n\Z) = 0$. Letting $\F_v$ denote the residue field of $\calO_v$, we have ${\rm{H}}^2(\calO_v, \Z/n\Z) \simeq {\rm{H}}^2(\F_v, \Z/n\Z)$ by \cite[Thm.\,11.7, 2]{briii}, and the latter group vanishes because finite fields have cohomological dimension 1.

Next, we check that ${\rm{H}}^2(k_v, \Z)$ is quasi-divisible. Let $N$ be a positive integer which kills the finite group $\mu(k_v)$ of roots of unity lying in $k_v$. Then we claim that $N\alpha \in {\rm{H}}^2(k_v, \Z)_{{\rm{div}}}$ for all $\alpha \in {\rm{H}}^2(k_v, \Z)$. Thanks to the exact sequence (\ref{prodalmostdivisiblepfeqn1}), it suffices to show that $N$ kills ${\rm{H}}^2(k_v, \Z/n\Z)$ for all $n > 0$. By local duality (Theorem \ref{H^2(G)G^(k)dualityprop}; in fact, we only use here the classical case of finite group schemes), it is equivalent to show that $N$ kills $\mu_n(k_v)$, which it does by our choice of $N$.
\end{proof}

We now prove the following lemma, which must hold if ${\rm{H}}^3(\A_k, \widehat{G})$ is to vanish for all affine commutative $G$ of finite type over $k$.

\begin{lemma}
\label{surjH^2(A,G^)}
Given an inclusion $G' \hookrightarrow G$ of affine commutative group schemes of finite type over a global field $k$, the induced map ${\rm{H}}^2(\A_k, \widehat{G}) \rightarrow {\rm{H}}^2(\A_k, \widehat{G'})$ is surjective.
\end{lemma}

\begin{proof}
Let $G'' := G/G'$, an affine commutative group scheme of finite type \cite[Ch.\,III, \S3, no.\,5, Th.\,5.6]{demazuregabriel}, and choose finite type $\calO_S$-models $G, G'$, and $G''$ of $\mathscr{G}, \mathscr{G}'$, and $\mathscr{G}''$, respectively. Then, thanks to Propositions \ref{hatisexact}, \ref{spreadingoutdualsheaves}, \ref{H^3(G^)=0}, and \ref{H^3(O, G^)=0}, the sequence
\[
{\prod_v}' {\rm{H}}^2(k_v, \widehat{G''}) \longrightarrow {\prod_v}' {\rm{H}}^2(k_v, \widehat{G}) \longrightarrow {\prod_v}' {\rm{H}}^2(k_v, \widehat{G'}) \longrightarrow 1
\]
is exact. It follows from Lemmas \ref{prodalmostdivisible} and \ref{quasidivrightexact} that the induced map
\[
({\prod_v}' {\rm{H}}^2(k_v, \widehat{G}))_{\tors} \longrightarrow ({\prod_v}' {\rm{H}}^2(k_v, \widehat{G'}))_{\tors}
\]
is surjective. The lemma now follows from Proposition \ref{H^i(A,G^)--->prodH^i(k_v,G^)}(ii).
\end{proof}

We now come to the main result of this section.

\begin{proposition}
\label{H^3(A,G^)=0prop}
For any affine commutative group scheme of finite type over a global function field $k$, we have ${\rm{H}}^3(\A_k, \widehat{G}) = 0$.
\end{proposition}

\begin{proof}
Proposition \ref{hatisexact} and Lemmas \ref{affinegroupstructurethm} and \ref{almosttorus}(ii) reduce us to the cases in which $G$ is $\Ga$, finite, or a torus. The case $G = \Ga$ is Proposition \ref{H^3(A,Ga^)=0}, while the case of finite $G$ follows from \cite[Thm.\,2.18]{ces2} together with Proposition \ref{H^3(G^)=0}. We are therefore reduced to the case in which $G = T$ is a torus.

By Lemma \ref{almosttorus}(iv), we may harmlessly modify $T$ in order to ensure that there is an exact sequence
\[
1 \longrightarrow A \longrightarrow X \longrightarrow T \longrightarrow 1,
\]
where $X = B \times \R_{k'/k}(\Gm^n)$, $A$ and $B$ are finite commutative $k$-group schemes, $k'/k$ is a finite separable extension, and $n \geq 0$ is a nonnegative integer. By Lemma \ref{surjH^2(A,G^)}, the induced map ${\rm{H}}^2(\A, \widehat{X}) \rightarrow {\rm{H}}^2(\A, \widehat{A})$ is surjective. Since we already know the desired vanishing for finite commutative group schemes, we are therefore reduced to showing it for $\R_{k'/k}(\Gm^n)$. We may assume that $n = 1$.

By Proposition \ref{charactersseparableweilrestriction}, we have $\widehat{\R_{k'/k}(\Gm)} \simeq \R_{k'/k}(\Z)$. By Lemma \ref{sepblepushforward}, ${\rm{H}}^3(\A_k, \R_{k'/k}(\Z)) \simeq {\rm{H}}^3(\A_{k'}, \Z)$. Renaming $k'$ as $k$, therefore, we are reduced to showing that ${\rm{H}}^3(\A_k, \Z) = 0$.

For integers $n, d > 0$, we have the commutative diagram
\[
\begin{tikzcd}
{\rm{H}}^2(\A, \Z/n\Z) \arrow{r}{[d]} \isoarrow{d} & {\rm{H}}^2(\A, \Z/nd\Z) \isoarrow{d} \\
({\rm{H}}^0(\A, \mu_n)_{\pro})^D \arrow{r}{[d]^D} & ({\rm{H}}^0(\A, \mu_{nd})_{\pro})^D
\end{tikzcd}
\]
in which the vertical arrows are the isomorphisms provided by the adelic duality pairings (Proposition 
\ref{H^1(A,G)=H^1(A,G^)^D}). Therefore,
\begin{equation}
\label{H^3(A,G^)=0proppfeqn2}
\varinjlim_n {\rm{H}}^2(\A, \Z/n/\Z) \simeq \varinjlim_n ({\rm{H}}^0(\A, \mu_n)_{\pro})^D.
\end{equation}
By Proposition \ref{G(A)_pro=prodG(k_v)_pro} applied to the Cartier dual of $\mu_n$, the natural map
\[
{\rm{H}}^0(\A, \mu_n)_{\pro} \xlongrightarrow{\sim} \prod_v \mu_n(k_v)
\]
is an isomorphism, since $\mu_n(k_v)$ is finite and discrete, hence profinite. Therefore,
\begin{equation}
\label{H^3(A,G^)=0proppfeqn1}
({\rm{H}}^0(\A, \mu_n)_{\pro})^D = \displaystyle \oplus_v \mu_n(k_v)^D.
\end{equation}
Since the group $\mu(k_v)$ of roots of unity lying in $k_v$ is finite for each $v$, it follows that, for each $v$, there is $d_v > 0$ such that the transition map $[d_v]: \mu_{nd}(k_v) \rightarrow \mu_n$ vanishes for all $n$. Hence, $\varinjlim_n \mu_n(k_v)^D = 0$. Equation (\ref{H^3(A,G^)=0proppfeqn1}) then implies that
\[
\varinjlim_n ({\rm{H}}^0(\A, \mu_n)_{\pro})^D = 0,
\]
hence, by (\ref{H^3(A,G^)=0proppfeqn2}),
\begin{equation}
\label{H^3(A,G^)=0proppfeqn3}
\varinjlim_n {\rm{H}}^2(\A, \Z/n/\Z) = 0,
\end{equation}
where recall that the transition maps in the above filtered system are $[d]: \Z/n\Z \rightarrow \Z/nd\Z$. Via the isomorphisms $\frac{1}{n}\Z \xrightarrow{\sim} \Z/n\Z$ given by multiplication by $n$, the above filtered system is isomorphic to the filtered system $\{\frac{1}{n}\Z/\Z\}$ with inclusions as transition maps. Proposition \ref{directlimitscohom} and (\ref{H^3(A,G^)=0proppfeqn3}) therefore imply that
\[
{\rm{H}}^2(\A, \Q/\Z) = 0.
\]
It now follows from the exact sequence
\[
0 \longrightarrow \Z \longrightarrow \Q \longrightarrow \Q/\Z \longrightarrow 0
\]
and Lemma \ref{H^i(O_S^,Q)=0} that ${\rm{H}}^3(\A, \Z) = 0$. This completes the proof of the proposition.
\end{proof}

\section{Exactness properties for profinite completions of rational points}

In this section we study the exactness properties of the sequence of profinite completions of groups of rational and adelic points induced by a short exact of affine commutative group schemes of finite type. The main results (Propositions \ref{localpointsprofinite} and \ref{globalpointsprofinite}) state that these sequences are left-exact, just as with the uncompleted sequences of points. These results will play an important role in the proofs that the global duality sequence in Theorem \ref{poitoutatesequence} is exact at ${\rm{H}}^0(k, G)_{\pro}$ and ${\rm{H}}^0(\A, G)_{\pro}$.

Here is our first main exactness result for profinite completions. (For the definition of the profinite completion of an abelian topological group, see Remark \ref{profinitecompletiondef}.)

\begin{proposition}
\label{localpointsprofinite}
Let $k$ be a local function field, and suppose that we have an exact sequence
\[
1 \longrightarrow G' \longrightarrow G \longrightarrow G'' \longrightarrow 1
\]
of affine commutative $k$-group schemes of finite type. Then the induced sequence
\[
1 \longrightarrow G'(k)_{\pro} \longrightarrow G(k)_{\pro} \longrightarrow G''(k)_{\pro}
\]
is exact.
\end{proposition}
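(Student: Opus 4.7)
The plan is to pass to Pontryagin duals via local duality, thereby reducing the claim to a standard long exact sequence argument. By Proposition \ref{H^2(G^)altori}, cup product yields for each $H \in \{G',G,G''\}$ a continuous perfect pairing
\[
{\rm{H}}^2(k, \widehat{H}) \times H(k)_{\pro} \longrightarrow \Q/\Z,
\]
identifying the profinite group $H(k)_{\pro}$ with the Pontryagin dual of the discrete torsion group ${\rm{H}}^2(k, \widehat{H})$ (torsion by Lemma \ref{H^2(G^)istorsion}). By the functoriality of cup product, these identifications intertwine the maps $G'(k)_{\pro} \to G(k)_{\pro} \to G''(k)_{\pro}$ induced by $G' \hookrightarrow G \twoheadrightarrow G''$ with the Pontryagin duals of the maps on ${\rm{H}}^2(k,\widehat{\cdot})$ induced by the covariantly opposite (in the dual) maps $\widehat{G''} \to \widehat{G} \to \widehat{G'}$. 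Since Pontryagin duality is an exact contravariant equivalence between profinite and discrete torsion abelian groups, proving left-exactness of the desired sequence is equivalent to proving exactness of
\[
{\rm{H}}^2(k, \widehat{G''}) \longrightarrow {\rm{H}}^2(k, \widehat{G}) \longrightarrow {\rm{H}}^2(k, \widehat{G'}) \longrightarrow 0.
\]

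For this, I would apply Proposition \ref{hatisexact}: since $\mathrm{char}(k) = p > 0$, the given short exact sequence of affine commutative $k$-group schemes of finite type yields an fppf-exact sequence of dual sheaves
\[
1 \longrightarrow \widehat{G''} \longrightarrow \widehat{G} \longrightarrow \widehat{G'} \longrightarrow 1.
\]
The associated long exact cohomology sequence on ${\rm{Spec}}(k)$ continues the three-term sequence above on the right by ${\rm{H}}^3(k, \widehat{G''})$, and this group vanishes by Proposition \ref{H^3(G^)=0} (applicable since $k$ is a non-archimedean local field and $\widehat{G''}$ is the dual sheaf of an affine commutative finite-type $k$-group scheme). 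This gives the required right-exactness, completing the proof.

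The main obstacle, such as it is, lies not in the cohomological input (which is nearly immediate given the earlier results) but in verifying that the Pontryagin dual of the cohomological restriction/corestriction maps really coincides with the naturally induced maps on the profinite completions $H(k)_{\pro}$. This is a diagram-chase in the definition of the local duality pairing and follows from the functoriality statement attached to Theorem \ref{localdualityH^2(G^)}, applied to both $G' \hookrightarrow G$ and $G \twoheadrightarrow G''$.
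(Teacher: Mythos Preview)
Your proof is correct and follows essentially the same approach as the paper: pass to Pontryagin duals via local duality (Proposition \ref{H^2(G^)altori}), reduce to right-exactness of ${\rm{H}}^2(k,\widehat{G''}) \to {\rm{H}}^2(k,\widehat{G}) \to {\rm{H}}^2(k,\widehat{G'}) \to 0$, and conclude via Propositions \ref{hatisexact} and \ref{H^3(G^)=0}. The paper's proof is more terse but identical in content; your remarks about the functoriality check are a reasonable elaboration of what the paper leaves implicit.
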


\begin{proof}
Perhaps one can prove this directly, but it is easier at this stage to simply use local duality. It suffices to check that the
discrete Pontryagin dual sequence is algebraically exact. By local duality (in particular, Theorem \ref{H^2(G^)altori}), this is equivalent to the exactness of the sequence
\[
{\rm{H}}^2(k, \widehat{G''}) \longrightarrow {\rm{H}}^2(k, \widehat{G}) \longrightarrow {\rm{H}}^2(k, \widehat{G'}) \longrightarrow 0.
\]
This exactness follows from Propositions \ref{hatisexact} and \ref{H^3(G^)=0}.
\end{proof}

\begin{lemma}
\label{G(k)divtors}
Let $k$ be a global function field, $G$ a commutative $k$-group scheme of finite type. Then $G(k)_{\Div} = 0$ and $G(k)_{\tors}$ 
has finite exponent.
\end{lemma}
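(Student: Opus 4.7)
The plan is to combine a short abstract observation with dévissage and then reduce to a few base cases via structural results. The observation is: for any abelian group $A$ with $A_{\tors}$ of finite exponent $E$, we have $A_{\Div} \cap A_{\tors} = 0$; indeed, for $x$ in this intersection, writing $x = Ey$ yields $E^2 y = Ex = 0$, so $y \in A_{\tors}$, whence $x = Ey = 0$.

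Both conclusions of the Lemma propagate through short exact sequences $1 \to G' \to G \to G'' \to 1$ of commutative finite-type $k$-group schemes. The exponent of $G(k)_{\tors}$ is bounded by the product of the exponents for $G'(k)_{\tors}$ and $G''(k)_{\tors}$ (if $x \in G(k)_{\tors}$, then $n\bar x = 0$ in $G''(k)$, so $nx \in G'(k)_{\tors}$ and hence $mnx = 0$). For divisibility, given $x \in G(k)_{\Div}$, its image in $G''(k)$ lies in $G''(k)_{\Div} = 0$, so $x \in G'(k)$; writing $x = Ny_N$ in $G(k)$, the image $\bar y_N$ is $N$-torsion hence killed by the exponent $n$ of $G''(k)_{\tors}$, so $ny_N \in G'(k)$ and thus $nx \in NG'(k)$ for every $N$, giving $nx \in G'(k)_{\Div} = 0$. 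Therefore $x \in G(k)[n] \subset G(k)_{\tors}$, and the abstract observation forces $x = 0$. (Note that only the left-exactness of taking $k$-points is used.)

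With this dévissage and Lemma \ref{finitequotient=smoothandconnected}, I may assume $G$ is smooth and connected. Both properties are inherited by subgroups (hence by passing from $G(k')$ to $G(k)$ for any field extension $k'/k$), so I may pass to a finite purely inseparable $k'/k$, which remains a global function field, and over which Chevalley's theorem yields $1 \to L \to G_{k'} \to A \to 1$ with $L$ smooth connected affine and $A$ an abelian variety. For $A$, Lang--N\'eron shows that $A(k')/\tau B(k_0)$ is finitely generated, where $B$ is the $k'/k_0$-trace and $k_0$ the finite constant field of $k'$; since $B(k_0)$ is finite, $A(k')$ itself is finitely generated, so its torsion is finite and its divisible subgroup is trivial. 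For the affine $L$, Lemma \ref{affinegroupstructurethm} together with Lemma \ref{almosttorus}(iv) reduces to three base cases: finite group schemes (trivial), $\Ga$, and $\R_{k''/k'}(\Gm)$ for finite separable $k''/k'$. For $\Ga$, $\Ga(k') = k'$ is $p$-torsion with $pk' = 0$, settling both conclusions. For $\R_{k''/k'}(\Gm)(k') = (k'')^\times$, the torsion subgroup consists of roots of unity contained in the finite constant field of $k''$, so has bounded exponent; while any divisible element has $v$-valuation in $\bigcap_N N\mathbf{Z} = 0$ at every place of $k''$, so is a global unit hence a nonzero constant, whereupon the equation $y^{q''-1} = x$ (with $y$ similarly forced to be a constant of $k''$) yields $x = 1$.

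The main obstacle I anticipate is the Chevalley reduction in positive characteristic: over imperfect $k$ the decomposition need not be defined directly, but it descends from $k_{\rm{perf}}$ to a finite purely inseparable extension which remains a global function field, and this suffices thanks to the observation that both properties pass from $G(k')$ down to $G(k)$. A careful invocation of Lang--N\'eron (ensuring $B$ is an abelian variety over the finite field $k_0$) is the other structural input required; once these are in place, the dévissage mechanically assembles everything.
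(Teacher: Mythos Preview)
Your argument is sound and parallels the paper's d\'evissage closely; the paper packages your divisibility step as Lemma~\ref{nondivisibleextension} (incidentally, the abstract observation can be bypassed: writing $x=mN y$ for each $m$, with $N$ the exponent of $G''(k)_{\tors}$, gives $Ny\in G'(k)$ and hence $x\in mG'(k)$, so $x\in G'(k)_{\Div}=0$ directly). The one structural difference is that you invoke Chevalley's theorem after a finite purely inseparable base change, while the paper stays over $k$ using the anti-affine subgroup $G_{\rm{ant}}\subset G$ of \cite[Thm.\,A.3.9]{cgp}, which is semi-abelian with affine quotient over any field. Both routes reach the same base cases (abelian variety via Mordell--Weil/Lang--N\'eron, torus, unipotent of finite exponent).

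There is, however, a small gap: Lemma~\ref{almosttorus}(iv) does not fit your d\'evissage. Part (iv), after replacing $H$ by $H^n\times \R_{k_2/k'}(T_2)$, puts $H$ as the \emph{quotient} in a short exact sequence $1\to B\to A\times\R_{k_1/k'}(T_1)\to H\to 1$; but your d\'evissage only deduces the middle from the ends, not the quotient from kernel and middle. Cite part (ii) instead: the maximal torus $T\subset H$ gives $1\to T\to H\to F\to 1$ with $F$ finite, so d\'evissage reduces $H$ to $T$ and $F$; for $T$, embed $T(k')\hookrightarrow T(k'')=(k''^{\times})^d$ with $k''/k'$ finite separable splitting $T$ and use the subgroup inheritance you already noted. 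This lands exactly in your stated base case $\R_{k''/k'}(\Gm)$.
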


\begin{proof}
By Lemma \ref{nondivisibleextension}, if we have an exact sequence of commutative $k$-group schemes of finite type
\[
1 \longrightarrow G' \longrightarrow G \longrightarrow G'' \longrightarrow 1,
\]
and the lemma holds for $G', G''$, then it also holds for $G$. Note also that the lemma is clear for groups of finite exponent, so in particular for finite group schemes.

By \cite[VII$_{\rm{A}}$, Prop.\,8.3]{sga3}, there is a normal infinitesimal $k$-subgroup scheme $I \subset G$ such that $G/I$ is smooth, so we may assume that $G$ is smooth. Filtering $G$ by $G^0$ and $G/G^0$, we may also assume that $G$ is connected. There is therefore an anti-affine smooth connected $k$-group $G_{\rm{ant}} \subset G$ ({\em anti-affine} means that 
${\rm{H}}^0(G_{{\rm{ant}}}, \mathcal{O}_{G_{{\rm{ant}}}}) = k$) such that $G/G_{{\rm{ant}}}$ is affine \cite[Thm.\,A.3.9]{cgp}. We may therefore assume that $G$ is either affine or anti-affine. By \cite[Thm.\,A.3.9]{cgp} again, any anti-affine $k$-group is a semi-abelian variety. We may therefore assume that $G$ is either an abelian variety or affine. If $G$ is an abelian variety then $G(k)$ is finitely generated by the Mordell--Weil theorem \cite[Cor.\,7.2]{chowtrace}, so the lemma is clear. If $G$ is affine, then 
for the maximal $k$-torus $T \subset G$ the quotient $G/T$ is unipotent, so we may assume that $G$ is either a torus or unipotent. In the latter case, $G$ has finite exponent (this is where we use that ${\rm{char}}(k) > 0$), hence the lemma is clear. So we may assume that $G = T$ is a torus.

Let $k'/k$ be a finite separable extension that splits $T$. Then thanks to the inclusion $T(k) \hookrightarrow T(k')$, we may (renaming $k'$ as $k$) assume that $T$ is split. That is, we need to show that $(k^{\times})_{\Div} = 0$ and that $(k^{\times})_{\tors}$ has finite exponent. If $\lambda \in (k^{\times})_{\Div}$, then $\ord_v(\lambda) = 0$ for all non-archimedean places $v$ of $k$.
Thus, $\lambda$ is a global unit. Since the group of global units is finitely generated, and an 
element of $k^{\times}$ is a global unit whenever its $n$th power is for any fixed $n > 0$, 
we deduce that $\lambda = 1$. Finally, as is well-known, $(k^{\times})_{\tors} = \mu_{\infty}(k)$ is finite.
\end{proof}

\begin{remark}
Lemma \ref{G(k)divtors} is false for number fields (and in fact for any characteristic $0$ field). For example, $\Ga(k) = k$ is divisible. The result remains true, however, for groups whose reduced geometric identity component is a semi-abelian variety. The proof is basically the same as the one given above.
\end{remark}

Recall that if $X$ is a scheme over a global field $k$, then we endow $X(k)$ with the discrete topology.

\begin{proposition}
\label{globalpointsprofinite}
Let $k$ be a global function field, and suppose that we have an exact sequence
\[
1 \longrightarrow G' \longrightarrow G \longrightarrow G'' \longrightarrow 1
\]
of affine commutative $k$-group schemes of finite type. Then the induced sequences
\[
1 \longrightarrow G'(k)_{\pro} \longrightarrow G(k)_{\pro} \longrightarrow G''(k)_{\pro}
\]
and
\[
1 \longrightarrow G'(\A)_{\pro} \longrightarrow G(\A)_{\pro} \longrightarrow G''(\A)_{\pro}
\]
are exact.
\end{proposition}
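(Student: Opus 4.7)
The adelic case is quick given the local one: by Proposition \ref{G(A)_pro=prodG(k_v)_pro}, there are topological isomorphisms $G(\A)_{\pro} \simeq \prod_v G(k_v)_{\pro}$ and likewise for $G'$ and $G''$, so the adelic sequence is the product over places $v$ of the sequences furnished by Proposition \ref{localpointsprofinite}. Arbitrary products of left-exact sequences of abelian groups are left-exact, so the adelic case follows.

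For the rational case, the plan is to exploit the right-exactness of profinite completion (which holds since $(-)_{\pro}$ is a left adjoint to the forgetful functor from profinite abelian groups to abelian groups). Applying this functor to the algebraically exact sequence $0 \to G'(k) \to G(k) \to G(k)/G'(k) \to 0$ yields exactness of
\[
G'(k)_{\pro} \longrightarrow G(k)_{\pro} \longrightarrow (G(k)/G'(k))_{\pro} \longrightarrow 0.
\]
Hence the rational exactness reduces to two injectivity statements: that $G'(k)_{\pro} \to G(k)_{\pro}$ is injective, and that the natural map $(G(k)/G'(k))_{\pro} \to G''(k)_{\pro}$ induced by the inclusion $G(k)/G'(k) \hookrightarrow G''(k)$ is injective (so that the kernel of $G(k)_{\pro} \to G''(k)_{\pro}$ coincides with the kernel of $G(k)_{\pro} \to (G(k)/G'(k))_{\pro}$, which is already the image of $G'(k)_{\pro}$).

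Both statements fall under the following general question: for an inclusion $H \hookrightarrow A$ of abelian groups arising in our setting, when is $H_{\pro} \to A_{\pro}$ injective? Using the general identification $M_{\pro} \simeq (M^D_{\rm{tors}})^D$ (Pontryagin dual of the torsion subgroup of the Pontryagin dual) together with the exactness of Pontryagin duality on locally compact Hausdorff abelian groups, injectivity of $H_{\pro} \to A_{\pro}$ is equivalent to surjectivity of $A^D_{\rm{tors}} \to H^D_{\rm{tors}}$. This in turn amounts to the following \emph{extension property}: every homomorphism from $H$ to a finite abelian group extends to a homomorphism from $A$ to a (possibly larger) finite abelian group.

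The main obstacle is establishing this extension property, which can fail for arbitrary inclusions of abelian groups (e.g.\ $\Z \hookrightarrow \Q$, since $\Q$ admits no nontrivial finite quotient). The strategy is to exploit the arithmetic structure of our subgroups, in particular Lemma \ref{G(k)divtors} (which tells us that $G(k)$ has vanishing divisible subgroup and torsion of finite exponent). A d\'evissage via Lemma \ref{affinegroupstructurethm} reduces us to the cases where $G$ is a finite commutative group scheme (trivial, since $G(k)$ is finite), $G = \Ga$ (where $k$ is a characteristic-$p$ $\F_p$-vector space and finite-quotient maps from subgroups factor through finite-dimensional vector-space quotients that extend readily by choosing an $\F_p$-linear complement), or $G$ is an almost-torus (where class field theory and the topology of $G(\A)$ give cofinality: finite-index subgroups of $T(k)$ of the form $T(k) \cap U$, for $U$ open finite-index in $T(\A)$, are cofinal among all finite-index subgroups of $T(k)$, so maps to finite groups extend through the adelic one). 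Splicing these cases using right-exactness of $(-)_{\pro}$ applied to the short exact sequences in the d\'evissage, along with the already-settled adelic case (which feeds into the almost-torus step), yields the extension property in the generality required and hence the rational exactness.
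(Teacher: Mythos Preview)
Your adelic argument and your reduction of the rational case to the two injectivity statements (for $G'(k)_{\pro}\to G(k)_{\pro}$ and for $(G(k)/G'(k))_{\pro}\to G''(k)_{\pro}$) are correct and match the paper exactly. Your reformulation of injectivity of $H_{\pro}\to A_{\pro}$ as an ``extension property'' is also fine.

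The gap is in your proof of the extension property. Your d\'evissage via Lemma~\ref{affinegroupstructurethm} is on the \emph{ambient} group $G$ (or $G''$), but the extension property concerns a specific subgroup inclusion, and you do not explain how breaking up the ambient group interacts with that subgroup. Concretely, if $1\to H\to G\to U\to 1$ with $H$ an almost-torus and $U$ split unipotent, the subgroup $G'(k)$ (or the abstract image $G(k)/G'(k)\subset G''(k)$) need not sit compatibly with this filtration, so knowing the extension property for subgroups of $H(k)$ and of $U(k)$ does not obviously give it for subgroups of $G(k)$. Your ``splicing'' sentence does not address this. Moreover, your almost-torus step invokes the cofinality of subgroups of the form $T(k)\cap U$ (for $U\subset T(\A)$ open of finite index) among all finite-index subgroups of $T(k)$. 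That cofinality is precisely the injectivity of $T(k)_{\pro}\to T(\A)_{\pro}$, i.e.\ Proposition~\ref{G(k)G(A)proinjective}, whose proof in the paper \emph{uses} the present Proposition~\ref{globalpointsprofinite} (to pass from $\Gm$ to general tori via $T\hookrightarrow \R_{k'/k}(T_{k'})$). So as written your argument is circular. And even granting this cofinality, you have not said why it yields the extension property for an \emph{arbitrary} subgroup $H\subset T(k)$.

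The paper avoids all of this by working abstractly with the quotients rather than structurally with $G$. For the first injectivity, the quotient $G(k)/G'(k)$ embeds in $G''(k)$, so by Lemma~\ref{G(k)divtors} it has vanishing divisible subgroup and torsion of finite exponent; Proposition~\ref{profiniteexact} (applied to the discrete sequence $0\to G'(k)\to G(k)\to G(k)/G'(k)\to 0$) then gives the injectivity directly. For the second injectivity, the quotient $G''(k)/(G(k)/G'(k))$ embeds in ${\rm H}^1(k,G')$, which has finite exponent by Lemma~\ref{H^1finiteexponent}; Corollary~\ref{profiniteexactcorollary} then applies. No d\'evissage on $G$ is needed at this stage---the structural input is already packaged into Lemmas~\ref{G(k)divtors} and~\ref{H^1finiteexponent}.
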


\begin{proof}
We first show that the adelic sequence is exact. It suffices to check that the
discrete Pontryagin dual sequence is algebraically exact. By Proposition \ref{H^1(A,G)=H^1(A,G^)^D}, this is equivalent to exactness of
\[
{\rm{H}}^2(\A, \widehat{G''}) \longrightarrow {\rm{H}}^2(\A, \widehat{G}) \longrightarrow {\rm{H}}^2(\A, \widehat{G'}) \longrightarrow 1.
\]
which follows from Propositions \ref{hatisexact} and \ref{H^3(A,G^)=0prop}.

We next check exactness for the completed sequence of $k$-points. Let us first show that the map $G'(k)_{\pro} \rightarrow G(k)_{\pro}$ is injective. By Proposition \ref{profiniteexact} applied to the discrete groups of rational
points over the global function field $k$, it suffices to show that $(G(k)/G'(k))_{\Div} = 0$ and that $(G(k)/G'(k))_{\tors}$ has finite exponent. Since $G(k)/G'(k) \hookrightarrow G''(k)$, this follows from Lemma \ref{G(k)divtors}.

Next we check exactness in the middle. Let $f$ denote the map $G \rightarrow G''$. We have an exact sequence
\[
1 \longrightarrow G'(k) \longrightarrow G(k) \longrightarrow f(G(k)) \longrightarrow 1.
\]
Since profinite completion is right-exact (Proposition \ref{profiniterightexact}), this yields an exact sequence
\[
G'(k)_{\pro} \longrightarrow G(k)_{\pro} \longrightarrow f(G(k))_{\pro} \longrightarrow 1.
\]
If we show that the map $f(G(k))_{\pro} \rightarrow G''(k)_{\pro}$ is injective, it will follow that the sequence
\[
G'(k)_{\pro} \longrightarrow G(k)_{\pro} \longrightarrow G''(k)_{\pro}
\]
is exact, and the proof will be complete. 

To see this injectivity, we note that we have an inclusion $G''(k)/f(G(k)) \hookrightarrow {\rm{H}}^1(k, G')$, hence by Lemma \ref{H^1finiteexponent}, $G''(k)/f(G(k))$ has finite exponent. The desired injectivity therefore follows from Proposition \ref{profiniteexact} (applied to discrete topological groups). 
\end{proof}

\section{Exactness at ${\rm{H}}^0(k, G)_{\pro}$} 

The goal of this section is to prove that if $G$ is an affine commutative group scheme of finite type over a global function field, then the Tate duality sequence of Theorem \ref{poitoutatesequence} is exact at $G(k)_{\pro}$. That is, the map $G(k)_{\pro} \rightarrow G(\A)_{\pro}$ is injective (Proposition \ref{G(k)G(A)proinjective}). This remains true over number fields, but the proof must be slightly modified. We briefly indicate the required changes in Remark \ref{G(k)_pronofieldsrmk}.

\begin{lemma}
\label{k*A*proinjective}
For a global function field $k$, the map $(k^{\times})_{\pro} \rightarrow (\A^{\times})_{\pro}$ is injective.
\end{lemma}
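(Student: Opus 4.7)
The approach is to apply Proposition~\ref{profiniteexact} to the short exact sequence of topological abelian groups
\[
1 \longrightarrow k^{\times} \longrightarrow \A^{\times} \longrightarrow C_k \longrightarrow 1,
\]
where $C_k := \A^{\times}/k^{\times}$ is the idele class group of $k$ (with $k^{\times}$ discrete and $\A^{\times}$, $C_k$ equipped with their natural topologies). To obtain the desired injectivity on profinite completions, the essential task is to verify the relevant hypothesis on the cokernel $C_k$.

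The decisive input is the classical structure of the idele class group of a global function field. The degree map provides a noncanonical topological decomposition $C_k \cong C_k^0 \oplus \Z$, where $C_k^0 := \ker(\deg\colon C_k \to \Z)$ is compact (the classical compactness theorem for norm-one idele classes of a global function field). Being a compact totally disconnected abelian group, $C_k^0$ is profinite, so $C_k$ is an extension of discrete $\Z$ by a profinite group. In particular, the natural map $C_k \to (C_k)_{\pro} = C_k^0 \oplus \widehat{\Z}$ is injective, which places us in the setting of Proposition~\ref{profiniteexact}; this yields an exact sequence $1 \to (k^{\times})_{\pro} \to (\A^{\times})_{\pro} \to (C_k)_{\pro}$, and the lemma follows. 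As a sanity check, one can observe that the resulting injection is consistent with Proposition~\ref{G(A)_pro=prodG(k_v)_pro} and the fact that $k_v^{\times} \hookrightarrow (k_v^{\times})_{\pro}$ for every place $v$ (since $(k_v^{\times})_{\Div} = 0$ and $(k_v^{\times})_{\tors}$ is finite), which ensures that $k^{\times} \hookrightarrow \A^{\times} \hookrightarrow \prod_v (k_v^{\times})_{\pro} = (\A^{\times})_{\pro}$ was already an injection before passing to $(k^{\times})_{\pro}$.

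The main obstacle is to verify the precise hypothesis of Proposition~\ref{profiniteexact} in this topological setting. As an abstract abelian group, $(C_k)_{\tors}$ need not have finite exponent: for every $n$ with $\mu_n \subseteq k$, the diagonal inclusion $\mu_n(k) \hookrightarrow \prod_v \mu_n(k_v)$ contributes elements of order $n$ to $C_k^0[n]$. Hence the ``discrete'' form of Proposition~\ref{profiniteexact} invoked in the proof of Proposition~\ref{globalpointsprofinite} cannot be applied naively; one must use the topological profinite structure of $C_k^0$ (which ensures that $C_k$ is separated by its continuous finite quotients) rather than any abstract bound on torsion, and it is this topological version of Proposition~\ref{profiniteexact} whose hypotheses need to be matched to the present situation.
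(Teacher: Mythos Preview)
Your instinct to use the structure $C_k \cong C_k^0 \times \Z$ with $C_k^0$ profinite is right and can be made into a proof, but as written there is a genuine gap: Proposition~\ref{profiniteexact} does not apply, and you have not supplied a replacement. Both of its hypotheses fail. The subgroup $(\A^{\times})^n$ is \emph{not} open in $\A^{\times}$ for $n>1$ coprime to $p$ (infinitely many $v$ satisfy $q_v \equiv 1 \pmod n$, so $(\calO_v^{\times})^n \subsetneq \calO_v^{\times}$, and no standard neighborhood of $1$ lies in $(\A^{\times})^n$); you do not mention this. And $(C_k)_{\tors}$ does not have finite exponent---but not for the reason you give ($\mu_n \subset k$ only for $n \mid q-1$, and anyway $\mu_n(k) \subset k^{\times}$ dies in $C_k$); rather, the image of $\prod_v \calO_v^{\times}$ in $C_k^0$ carries torsion of every order $q_v - 1$. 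Finally, knowing $C_k \hookrightarrow (C_k)_{\pro}$ alone is not enough: the proof of Proposition~\ref{profiniteexact} actually needs $\A^{\times}/B' \hookrightarrow (\A^{\times}/B')_{\pro}$ for each finite-index $B' \subset k^{\times}$, and deducing the latter from the former is essentially the content of the lemma.

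To fix this along your line: for finite-index $B' \subset k^{\times}$, the group $(\A^{\times})^0/B'$ is compact (a locally compact extension of the compact $C_k^0$ by the finite $k^{\times}/B'$), Hausdorff, and inherits a base of open subgroups from $\A^{\times}$, hence is profinite; so there is an open finite-index $H' \subset (\A^{\times})^0$ with $H' \cap k^{\times} = B'$. Then $\A^{\times}/H'$ is discrete and finitely generated (an extension of $\Z$ by a finite group) and contains $k^{\times}/B'$ in its torsion, so a complement to the torsion gives an open finite-index $X \subset \A^{\times}$ with $X \cap k^{\times} = B'$. The paper takes a very different, explicit route: it splits $k^{\times} \cong \calO_S^{\times} \times \bigoplus_{v\notin S}\Z$ for $S$ with $\Pic(\calO_S)=1$ and constructs the needed subgroups of $\A^{\times}$ by hand, via valuations for the free part and a congruence condition at one auxiliary place (using finiteness of $\Pic^0$ of the curve) for the $\calO_S^{\times}$ part.
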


\begin{proof}
Choose a finite non-empty set $S$ of places of $k$ such that $\Pic(\mathcal{O}_S) = 0$. Since $\Pic(\calO_S) = 0$, the divisor associated to each place $v \notin S$ is principal. That is, for each$v \notin S$, there is an element $\pi_v \in k_v^{\times}$ having valuation $1$ at $v$ and $0$ at all other places not in $S$. We therefore have a short exact sequence
\[
0 \longrightarrow \mathcal{O}_S^{\times} \longrightarrow k^{\times} \longrightarrow \bigoplus_{v \notin S} \Z \longrightarrow 0,
\]
in which the map from $k^{\times}$ is given by taking valuations at each place $v \notin S$. The elements $\pi_v$ yield a splitting of the sequence, hence an isomorphism $k^{\times} \simeq \mathcal{O}_S^{\times} \times \bigoplus_{v \notin S} \Z$. It suffices to show that for any subgroups $X \subset \mathcal{O}_S^{\times}$, $Y \subset \bigoplus_{v \notin S} \Z$ of finite index, there exist closed subgroups $X', Y' \subset \A^{\times}$ of finite index such that $X' \cap k^{\times} \subset X \times \bigoplus_{v \notin S} \Z$, and $Y' \cap k^{\times} \subset \mathcal{O}_S^{\times} \times Y$
(as then $X' \cap Y'$ is closed of finite index in $\A^{\times}$ and meets $k^{\times}$ inside
$X \times Y$). We first treat the case of subgroups of $\bigoplus_{v \notin S} \Z$. 

The map $k^{\times} \rightarrow \bigoplus_{v \notin S} \Z$ factors as a composition $k^{\times} \rightarrow \A^{\times} \rightarrow \bigoplus_{v \notin S} \Z$ whose second map has open kernel. Given a subgroup $Y \subset \bigoplus_{v \notin S} \Z$ of finite index, we simply let $Y'$ be its preimage in $\A^{\times}$.

Now we treat finite-index subgroups of $\mathcal{O}_S^{\times}$. This group is finitely generated, so cofinal among these subgroups are the groups $(\mathcal{O}_S^{\times})^n$ for $n$ a positive integer. We are free to replace $n$ with some positive multiple of itself, hence we may choose some $v_0 \notin S$ and assume that $(\mathcal{O}_S^{\times})^n \subset 1 + \pi_{v_0} \mathcal{O}_{v_0}$. Let $C$ be the smooth proper geometrically connected curve (over a finite field $\F$) such that $k$ is the function field of $C$, and let $N$ be the exponent of the finite group $\Pic^0(C)$. Let $X' \subset \A^{\times}$ be the closed finite index subgroup 
of ideles $a \in \A^{\times}$ such that $\ord_w(a_w/\prod_{v \notin S} \pi_v^{\ord_v(a_v)}) \equiv 0 \pmod{nN}$ for all $w \in S$
and $a_{v_0}/\prod_{v \notin S} \pi_{v}^{\ord_{v}(a_v)} \equiv 1 \pmod{\pi_{v_0}}$.

We claim that $X' \cap k^{\times} \subset (\mathcal{O}_S^{\times})^n \times \bigoplus_{v \notin S} \Z$ via the fixed isomorphism $k^{\times} \simeq \mathcal{O}_S^{\times} \times \bigoplus_{v \notin S} \Z$. Indeed, given an $f \in X' \cap k^{\times}$, let $g : = f/\prod_{v \notin S} \pi_v^{\ord_v(f)} \in \mathcal{O}_S^{\times}$. Then $g \equiv 1 \pmod{\pi_{v_0}}$ and ${\rm{div}}(g) \in nN\cdot{\rm{Div}}^0(C)$, where ${\rm{Div}}^0(C)$ denotes the group of degree-0 divisors on $C$. 
Note that $g \in \calO_S^{\times}$ is the image of $f \in k^{\times}$ under the projection $k^{\times} \rightarrow \calO_S^{\times}$ corresponding to the isomorphism $k^{\times} \simeq \calO_S^{\times} \times \oplus_{v \notin S} \Z$. By design, $N$ kills $\Pic^0(C)$, so ${\rm{div}}(g) = {\rm{div}}(h^n)$ for some $h \in  k^{\times}$, 
and $h$ must be an $S$-unit since $h^n$ is one (as $g$ is an $S$-unit). Since $h^n \equiv 1 \pmod{\pi_{v_0}}$ by our choice of $n$, we deduce that $g/h^n \equiv 1 \pmod{\pi_{v_0}}$. But $g/h^n$ is a global unit of $C$, hence lies in $\F^{\times}$, and the map $\F \rightarrow \mathcal{O}_v/\pi_v\mathcal{O}_v$ is an inclusion, so $g = h^n$, and we are done.
\end{proof}

\begin{remark}
If $k$ is a number field, then one proceeds similarly, but must use a theorem of Chevalley 
 \cite[Thm.\,1]{chevalley}
that every finite-index subgroup of $\mathcal{O}_k^{\times}$ is a congruence subgroup; that is, it contains a subgroup of the form $\{ x \in \mathcal{O}_k^{\times} \mid x \equiv 1 \pmod{\alpha} \}$ for some nonzero $\alpha \in \mathcal{O}_k$. 
\end{remark}

\begin{lemma}
\label{finiteprofinite}
Let $k$ be a global field, $G$ a finite $k$-group scheme. Then $G(k)$ and $G(\A)$ are profinite.
\end{lemma}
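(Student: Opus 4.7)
The plan is to handle the two statements separately, with the case of $G(k)$ being trivial and the case of $G(\A)$ reducing to a product of finite discrete groups.

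First, since $G$ is finite over $k$, $G(k)$ is a finite set, carrying the discrete topology (from any closed embedding $G \hookrightarrow \mathbf{A}^n_k$, the subspace topology on the finite set $G(k) \subset k^n$ is discrete because $k^n$ is Hausdorff). Any finite discrete group is profinite, so $G(k)$ is profinite.

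For $G(\A)$, I will spread $G$ out to a finite flat (in particular proper) $\calO_S$-model $\mathscr{G}$ for some finite non-empty set $S$ of places of $k$ containing all the archimedean places; such a model exists by standard spreading out. For every $v \notin S$, the valuative criterion for properness applied to the finite $\calO_v$-scheme $\mathscr{G} \otimes \calO_v$ gives $\mathscr{G}(\calO_v) = \mathscr{G}(k_v) = G(k_v)$. By Proposition \ref{H^i(A,G)--->prodH^i(k_v,G)} (applied with $i = 0$) together with Proposition \ref{adelictopcohombasics}(iii), the natural map identifies $G(\A)$ topologically with the restricted product $\prod'_v G(k_v)$ with respect to the subgroups $\mathscr{G}(\calO_v)$. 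Since these subgroups coincide with the full factors $G(k_v)$ for all $v \notin S$, this restricted product is just the ordinary product $\prod_v G(k_v)$ with the product topology.

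It remains to observe that each $G(k_v)$ is finite discrete: finiteness follows because $G_{k_v}$ is a finite scheme over $k_v$, and discreteness follows as in the case of $G(k)$ above, since the finite set $G(k_v)$ sits inside the Hausdorff space $k_v^n$ via a closed embedding $G \hookrightarrow \mathbf{A}^n_k$. A product of finite discrete groups (with the product topology) is profinite, so $G(\A)$ is profinite. There is essentially no obstacle here; the only point that deserves attention is the identification $\mathscr{G}(\calO_v) = G(k_v)$ for almost all $v$, which is the reason the restricted product degenerates to an honest product.
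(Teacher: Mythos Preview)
Your proof is correct and follows essentially the same approach as the paper: spread out to a finite $\calO_S$-model, use the valuative criterion for properness to get $\mathscr{G}(\calO_v) = G(k_v)$ for $v \notin S$, and conclude that $G(\A)$ is topologically the full product $\prod_v G(k_v)$ of finite discrete groups. The only cosmetic difference is that you invoke the general Propositions~\ref{H^i(A,G)--->prodH^i(k_v,G)} and~\ref{adelictopcohombasics}(iii) for the topological identification, whereas the paper gives the direct argument in this special case.
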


\begin{proof}
For $G(k)$, this is obvious because it is finite. For $G(\A)$, we claim that the natural map $G(\A) \rightarrow \prod_v G(k_v)$ is a topological isomorphism where the target is given the product topology, hence is profinite because each $G(k_v)$ is finite discrete. To see that this map is a topological isomorphism, we spread $G$ out to a finite $\mathcal{O}_S$-group scheme $\mathscr{G}$ for some non-empty finite set $S$ of places of $k$ containing the archimedean places.
For $v \notin S$, we have $\mathscr{G}(\mathcal{O}_v) = G(k_v)$ by the valuative criterion for properness. 
Thus, topologically, $$\textstyle \prod_v G(k_v) = \left(\prod_{v \in S} G(k_v) \times \prod_{v \not\in S} \mathscr{G}(\mathcal{O}_v)\right) =
G\left(\prod_{v \in S} k_v\right) \times \mathscr{G}\left(\prod_{v \not\in S} \mathcal{O}_v\right).$$
It follows that the map is indeed an isomorphism of topological groups.
\end{proof}

\begin{proposition}
\label{G(k)G(A)proinjective}
Let $k$ be a global function field, $G$ an affine commutative $k$-group scheme of finite type. Then the map $G(k)_{\pro} \rightarrow G(\A)_{\pro}$ is injective.
\end{proposition}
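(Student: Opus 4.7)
The plan is to proceed by a d\'evissage reducing to the three fundamental cases of finite commutative group schemes, $\Gm$, and $\Ga$. The key d\'evissage tool is Proposition \ref{globalpointsprofinite}: given any short exact sequence $1 \to G' \to G \to G'' \to 1$ of affine commutative $k$-group schemes of finite type, we obtain a commutative diagram
\[
\begin{tikzcd}
0 \arrow{r} & G'(k)_{\pro} \arrow{r} \arrow{d} & G(k)_{\pro} \arrow{r} \arrow{d} & G''(k)_{\pro} \arrow{d} \\
0 \arrow{r} & G'(\A)_{\pro} \arrow{r} & G(\A)_{\pro} \arrow{r} & G''(\A)_{\pro}
\end{tikzcd}
\]
with exact rows, and an immediate diagram chase shows that if the outer vertical arrows are injective then so is the middle one.

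Using this, Lemma \ref{affinegroupstructurethm} reduces the claim to the cases where $G$ is an almost-torus or is split unipotent. Filtering split unipotent $G$ by copies of $\Ga$ and reapplying the d\'evissage reduces the unipotent case to $G = \Ga$. For $G$ an almost-torus, Lemma \ref{almosttorus}(iv) supplies an isogeny
\[
B \times \R_{k_1/k}(T_1) \twoheadrightarrow G^n \times \R_{k_2/k}(T_2)
\]
with $B$ a finite commutative $k$-group scheme, $k_i/k$ finite separable extensions, and $T_i$ split $k_i$-tori; the kernel is finite. D\'evissage applied to this short exact sequence, together with the fact that $G$ is a direct factor of $G^n$ via the diagonal, reduces the almost-torus case to the cases of finite commutative group schemes and $G = \R_{k'/k}(\Gm)$ for finite separable $k'/k$.

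The finite case is immediate: $G(k)_{\pro} = G(k)$ is finite while $G(\A)_{\pro} = G(\A)$ by Lemma \ref{finiteprofinite}, and the map $G(k) \to G(\A)$ is obviously injective. For $G = \R_{k'/k}(\Gm)$ we have $G(k) = (k')^\times$ and $G(\A_k) = \A_{k'}^\times$, and the assertion follows from Lemma \ref{k*A*proinjective} applied to the global function field $k'$.

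The last and most interesting case is $G = \Ga$. Here $\Ga(k) = k$ and $\Ga(\A) = \A$ both have exponent $p = {\rm{char}}(k)$, so every continuous character into $\R/\Z$ has finite image and therefore defines an element of the Pontryagin dual of the profinite completion. By Pontryagin duality, injectivity of $k_{\pro} \to \A_{\pro}$ is then equivalent to surjectivity of the restriction map $\Hom_{\rm{cts}}(\A, \R/\Z) \to \Hom(k, \R/\Z)$. Fix a nontrivial continuous character $\psi : \A/k \to \R/\Z$; the pairing $(a,b) \mapsto \psi(ab)$ exhibits the Pontryagin self-duality of $\A$ under which $k \subset \A$ is its own annihilator, so dualizing the short exact sequence $0 \to k \to \A \to \A/k \to 0$ of locally compact Hausdorff abelian groups identifies the restriction map with the canonical projection $\A \twoheadrightarrow \A/k$, which is surjective. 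The main obstacle is this $\Ga$-case, which requires the Pontryagin self-duality of the adele ring in positive characteristic; the remaining steps are routine bookkeeping.
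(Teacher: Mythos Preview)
Your treatment of the finite case, of $\R_{k'/k}(\Gm)$, and of $\Ga$ is correct and essentially matches the paper (the paper proves the $\Ga$ case by the one-line observation that $\A^D \to k^D$ is surjective because $k\subset\A$ is closed; your use of adelic self-duality is a more elaborate but equivalent route). The d\'evissage principle you state is also correct: from Proposition~\ref{globalpointsprofinite}, injectivity for the sub and the quotient implies injectivity for the \emph{middle} term.

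The gap is in your almost-torus reduction. In the isogeny
\[
1 \longrightarrow A \longrightarrow B \times \R_{k_1/k}(T_1) \longrightarrow G^n \times \R_{k_2/k}(T_2) \longrightarrow 1
\]
from Lemma~\ref{almosttorus}(iv), the group you care about is the \emph{quotient}, not the middle term. Your d\'evissage gives you nothing here: knowing the result for $A$ and for $B\times\R_{k_1/k}(T_1)$ does not yield it for $G^n\times\R_{k_2/k}(T_2)$, because $X(k)_{\pro}\to Y(k)_{\pro}$ need not be surjective (the cokernel of $X(k)\to Y(k)$ lives in ${\rm{H}}^1(k,A)$, which is generally nonzero). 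So the argument is circular: you would need the result for $G^n\times\R_{k_2/k}(T_2)$, which already presupposes it for $G$.

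The paper avoids this by using Lemma~\ref{almosttorus}(ii) instead of (iv): an almost-torus $G$ sits in the middle of $1\to T\to G\to(\text{finite})\to 1$, so the d\'evissage applies directly once one knows the torus case. For a torus $T$ the d\'evissage is again unavailable in the needed direction, so the paper uses a different observation: for any inclusion $T\hookrightarrow T'$, Proposition~\ref{globalpointsprofinite} gives $T(k)_{\pro}\hookrightarrow T'(k)_{\pro}$, whence injectivity for $T'$ implies it for $T$. Taking $T' = \R_{k'/k}(T_{k'})$ with $k'/k$ splitting $T$ then reduces to Lemma~\ref{k*A*proinjective}.
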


\begin{proof}
First suppose that we have an exact sequence
\[
1 \longrightarrow G' \longrightarrow G \longrightarrow G'' \longrightarrow 1
\]
of affine commutative $k$-group schemes of finite type. If the map $H(k)_{\pro} \rightarrow H(\A)_{\pro}$ is injective for $H = G', G''$, then we claim that the same holds for $H = G$.
Indeed, consider the commutative diagram
\[
\begin{tikzcd}
& G'(k)_{\pro} \arrow{r} \arrow[d, hookrightarrow] & G(k)_{\pro} \arrow{r} \arrow{d} & G''(k)_{\pro} \arrow[d, hookrightarrow] \\
1 \arrow{r} & G'(\A)_{\pro} \arrow{r} & G(\A)_{\pro} \arrow{r} & G''(\A)_{\pro}
\end{tikzcd}
\]
The rows are exact by Proposition \ref{globalpointsprofinite}, and 
the first and third vertical arrows are injective by hypothesis.  The five lemma shows that the middle vertical arrow is injective, which proves our claim.
It follows from Lemma \ref{affinegroupstructurethm} that we may assume that $G = \Ga$ or an almost-torus. By Lemmas \ref{almosttorus}(ii) and \ref{finiteprofinite}, we may therefore assume that $G$ is either $\Ga$ or a torus. First consider the case $G = \Ga$.

We need to show that the map $k_{\pro} \rightarrow \A_{\pro}$ is injective. It is equivalent to show that the 
associated map $(\A_{\pro})^D \rightarrow (k_{\pro})^D$ between discrete Pontryagin duals is surjective. But for any 
locally compact Hausdorff abelian topological group $X$ with finite exponent, the map $(X_{\pro})^D \rightarrow X^D$ is an (algebraic, not necessarily topological) isomorphism because the kernel of any continuous homomorphism $X \rightarrow 
\mathbf{R}/\Z$ is closed of finite index. We therefore want to show that the map $\A^D \rightarrow k^D$ is surjective, and this holds because $k$ is closed inside $\A$, due to \cite[Thm.\,2.1.2]{rudin}.

Finally, we come to the case when $G = T$ is a torus. First note that given an inclusion $T \hookrightarrow T'$
of $k$-tori, if Proposition \ref{G(k)G(A)proinjective} holds for $T'$, then it also holds for $T$ due to the commutative diagram
\[
\begin{tikzcd}
T(k)_{\pro} \arrow[r, hookrightarrow] \arrow{d} & T'(k)_{\pro} \arrow[d, hookrightarrow] \\
T(\A)_{\pro} \arrow{r} & T'(\A)_{\pro}
\end{tikzcd}
\]
in which the top horizontal arrow is an inclusion by Proposition \ref{globalpointsprofinite}, and the right vertical arrow is an inclusion by hypothesis. Let $k'/k$ be a finite separable extension splitting $T$. Using the inclusion $T \hookrightarrow \R_{k'/k}(T_{k'})$, and renaming $k'$ as $k$, we are reduced to showing that $(k^{\times})_{\pro} \rightarrow (\A^{\times})_{\pro}$ is injective, and this is the content of Lemma \ref{k*A*proinjective}.
\end{proof}

\begin{remark}
\label{G(k)_pronofieldsrmk}
We have throughout assumed that $k$ is a global function field. This is because we have made essential use of the fact that any unipotent $k$-group $U$ has finite exponent, a property that is completely false for number fields. However, one may modify the arguments in the case that $k$ is a number field by utilizing the fact that any connected commutative $k$-group scheme is the product of a torus and $\Ga^n$, by \cite[Cor.\,16.15]{milnealggroups}. Then one uses the fact that $\Ga(k)_{\pro} = k_{\pro} = 0$ because $k$ is divisible, and so one reduces everything to the case of tori, for which the proofs are essentially the same as in the function field case.
\end{remark}

\section{Exactness at ${\rm{H}}^0(\A, G)_{\pro}$}

In this section we prove that the global duality sequence of Theorem \ref{poitoutatesequence} is exact at ${\rm{H}}^0(\A, G)_{\pro}$ (Proposition \ref{exactnessatG(A)}). That is, if $H$ is an affine commutative group scheme of finite type over a global function field $k$, then we wish to show that the sequence
\begin{equation}
\label{G(A)_proexactseqeqn}
H(k)_{\pro} \longrightarrow H(\A)_{\pro} \longrightarrow {\rm{H}}^2(k, \widehat{H})^*,
\end{equation}
is exact, where the last map is induced by the global duality pairing $H(\A) \times {\rm{H}}^2(k, \widehat{H}) \rightarrow \Q/\Z$ which cups everywhere locally and then adds the invariants. As we have already mentioned previously, this sequence is a complex because the sum of the local invariants of a global Brauer class is $0$.

\begin{remark}
\label{G(k)-->G(A)embedding}
Let us note that if $G$ is an affine commutative group scheme of finite type over a global field $k$, then the map ${\rm{H}}^0(k, G) \rightarrow {\rm{H}}^0(\A, G)$, which is trivially continuous, is actually a topological embedding. Indeed, this is equivalent to saying that $G(k) \subset G(\A)$ is discrete, and this follows from the discreteness of $k \subset \A$ by embedding $G$ into some affine space.
\end{remark}

\begin{lemma}
\label{exactnessatG(A)devissage1}
Suppose that we have an exact sequence
\[
1 \longrightarrow G' \longrightarrow G \longrightarrow G'' \longrightarrow 1
\]
of affine commutative group schemes of finite type over a global field $k$ such that ${\rm{H}}^1(k, G') = 0$. If $(\ref{G(A)_proexactseqeqn})$ is exact for $H = G'$ and $H = G''$, then it is exact for $H = G$.
\end{lemma}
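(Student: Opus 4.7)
The plan is to carry out a diagram chase in the commutative diagram
\[
\begin{tikzcd}
G'(k)_{\pro} \arrow{r} \arrow{d} & G(k)_{\pro} \arrow{r} \arrow{d} & G''(k)_{\pro} \arrow{r} \arrow{d} & 0 \\
G'(\A)_{\pro} \arrow{r} \arrow{d} & G(\A)_{\pro} \arrow{r} \arrow{d} & G''(\A)_{\pro} & \\
{\rm{H}}^2(k, \widehat{G'})^* \arrow{r} & {\rm{H}}^2(k, \widehat{G})^* \arrow{r} & {\rm{H}}^2(k, \widehat{G''})^* &
\end{tikzcd}
\]
whose commutativity is inherited from the functoriality of the diagonal inclusion $k \hookrightarrow \A$ for the upper squares and the functoriality of the cup product pairing (cupping everywhere locally and summing invariants) for the lower squares.

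Before doing the chase I would verify the various exactness properties. The top row is exact at its middle and right entries: the hypothesis ${\rm{H}}^1(k, G') = 0$ makes $G(k) \twoheadrightarrow G''(k)$, and profinite completion is right-exact on discrete abelian groups (Proposition \ref{profiniterightexact}). The middle row is left-exact by Proposition \ref{globalpointsprofinite}. For the bottom row, Proposition \ref{hatisexact} gives the short exact sequence $1 \to \widehat{G''} \to \widehat{G} \to \widehat{G'} \to 1$ of fppf sheaves, whose associated long exact sequence is exact at ${\rm{H}}^2(k, \widehat{G})$; since $(-)^* = \Hom(-, \Q/\Z)$ is exact, this yields exactness of the bottom row at ${\rm{H}}^2(k, \widehat{G})^*$. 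The first and third columns are exact at $G'(\A)_{\pro}$ and $G''(\A)_{\pro}$ respectively by hypothesis.

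The chase then proceeds as follows. Take $\alpha \in G(\A)_{\pro}$ killed in ${\rm{H}}^2(k, \widehat{G})^*$; we seek a preimage in $G(k)_{\pro}$. Its image $\overline{\alpha} \in G''(\A)_{\pro}$ is killed in ${\rm{H}}^2(k, \widehat{G''})^*$ by commutativity, so by exactness of the $G''$-column it lifts to some $\beta \in G''(k)_{\pro}$; pull $\beta$ back through the surjection $G(k)_{\pro} \twoheadrightarrow G''(k)_{\pro}$ to $\gamma \in G(k)_{\pro}$ with image $\gamma' \in G(\A)_{\pro}$. Then $\alpha - \gamma'$ vanishes in $G''(\A)_{\pro}$, so by left-exactness of the middle row it lifts to some $\delta \in G'(\A)_{\pro}$. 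Pushing $\delta$ into ${\rm{H}}^2(k, \widehat{G'})^*$ and then into ${\rm{H}}^2(k, \widehat{G})^*$ produces the image of $\alpha - \gamma'$ there, which is $0$ (both $\alpha$ and $\gamma$ die, the latter because $(\ref{G(A)_proexactseqeqn})$ is a complex). If one can conclude that $\delta$ already vanishes in ${\rm{H}}^2(k, \widehat{G'})^*$, then exactness of the $G'$-column lifts $\delta$ to some $\epsilon \in G'(k)_{\pro}$, and adding the image of $\epsilon$ in $G(k)_{\pro}$ to $\gamma$ yields a preimage of $\alpha$, completing the chase.

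The main obstacle is precisely that last implication, which requires the injectivity of the horizontal map ${\rm{H}}^2(k, \widehat{G'})^* \to {\rm{H}}^2(k, \widehat{G})^*$. By Pontryagin duality this amounts to the surjectivity of ${\rm{H}}^2(k, \widehat{G}) \twoheadrightarrow {\rm{H}}^2(k, \widehat{G'})$, which in turn (reading one step further in the long exact sequence associated to $1 \to \widehat{G''} \to \widehat{G} \to \widehat{G'} \to 1$) reduces to the vanishing of ${\rm{H}}^3(k, \widehat{G''})$. That vanishing is exactly the content of Proposition \ref{H^3(G^)=0}, which applies because a global function field has no real places. Invoking this closes the chase and establishes the lemma.
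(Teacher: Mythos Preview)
Your proof is correct and essentially identical to the paper's: the same $3\times 3$ diagram, the same exactness inputs (Propositions \ref{globalpointsprofinite}, \ref{profiniterightexact}, \ref{hatisexact}, \ref{H^3(G^)=0}), and the same chase, just written out explicitly where the paper says only ``a diagram chase now shows that the middle column is exact.'' Your tikzcd is missing the down arrow $G''(\A)_{\pro} \to {\rm{H}}^2(k, \widehat{G''})^*$ that your chase invokes in its very first step, but that is clearly an oversight in drawing rather than in reasoning.
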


\begin{proof}
Consider the following commutative diagram: 
\[
\begin{tikzcd}
& G'(k)_{\pro} \arrow{r} \arrow{d} & G(k)_{\pro} \arrow{r} \arrow{d} & G''(k)_{\pro} \arrow{r} \arrow{d} & 0 \\
& G'(\A)_{\pro} \arrow{r} \arrow{d} & G(\A)_{\pro} \arrow{r} \arrow{d} & G''(\A)_{\pro} \arrow{d} & \\
0 \arrow{r} & {\rm{H}}^2(k, \widehat{G'})^* \arrow{r} & {\rm{H}}^2(k, \widehat{G})^* \arrow{r} & {\rm{H}}^2(k, \widehat{G''})^* &
\end{tikzcd}
\]
The first and the third columns are exact by hypothesis. Proposition \ref{globalpointsprofinite} implies the exactness of the first two rows except at $G''(k)_{\pro}$, where exactness holds because ${\rm{H}}^1(k, G') = 0$
(which implies that $G(k) \rightarrow G''(k)$ is surjective, hence so is the map on profinite completions). Finally, the bottom row is exact by Proposition \ref{H^3(G^)=0}. A diagram chase now shows that the middle column is exact.
\end{proof}

\begin{proposition}
\label{A/k=H^2(k,Ga^)*}
For a global field $k$ of positive characteristic, the map $\A_k/k \rightarrow {\rm{H}}^2(k, \widehat{\Ga})^*$ induced by the global duality pairing is a topological isomorphism.
\end{proposition}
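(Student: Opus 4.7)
The plan is to identify $\A_k/k$ with ${\rm{H}}^2(k,\widehat{\Ga})^{*}$ by exhibiting a continuous bijection between compact Hausdorff groups. First, the pairing $\A \times {\rm{H}}^2(k,\widehat{\Ga}) \to \Q/\Z$ kills $k \times {\rm{H}}^2(k,\widehat{\Ga})$ by global reciprocity for the Brauer group (the sum of local invariants of a global Brauer class vanishes), so it descends to a continuous homomorphism $\phi : \A/k \to {\rm{H}}^2(k,\widehat{\Ga})^{*}$, with continuity by Lemma \ref{G(A)toH^2(k,G^)*cts}. The source is compact by the classical compactness of $\A/k$, and the target is compact Hausdorff, being the Pontryagin dual of the discrete torsion group ${\rm{H}}^2(k,\widehat{\Ga})$ (torsion by Lemma \ref{H^2(G^)istorsion}); thus by Lemma \ref{isom=homeo} it suffices to show $\phi$ is bijective, equivalently by Pontryagin duality that its transpose
\[
\tau : {\rm{H}}^2(k,\widehat{\Ga}) \to (\A/k)^{D}, \qquad \alpha \mapsto \bigl(x \mapsto \langle x, \alpha\rangle\bigr)
\]
is bijective.

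I would then verify bijectivity of $\tau$ by a $k$-linear dimension count. Corollary \ref{dimH^2(Ga^)=1} gives that ${\rm{H}}^2(k, \widehat{\Ga})$ is a free $k$-module of rank one, using that $[k:k^p]=p$ for global function fields. On the other side, equip $(\A/k)^{D}$ with the $k$-action $(r \cdot f)(x) := f(rx)$; then the classical Tate--Weil self-duality of $\A$ (fix a non-trivial continuous character $\chi_0 : \A/k \to \Q/\Z$; send $\lambda \mapsto (x \mapsto \chi_0(\lambda x))$) furnishes a $k$-linear isomorphism $k \xrightarrow{\sim} (\A/k)^{D}$, so this side is also $1$-dimensional over $k$. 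The $\delta$-functoriality of cup product applied to multiplication by $r \in k$ on $\Ga$ gives $\langle rx, \alpha\rangle = \langle x, r\alpha\rangle$ placewise, so $\tau$ is $k$-linear; hence it remains only to show $\tau \neq 0$.

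For the non-vanishing, pick any non-zero $\alpha \in {\rm{H}}^2(k, \widehat{\Ga})$. Since $k_v/k$ is a separable field extension for every place $v$ of the global function field $k$, Lemma \ref{H^2(k,Ga^)injectiveseparable} yields $\alpha_v \neq 0$ in ${\rm{H}}^2(k_v, \widehat{\Ga})$. Local duality for $\Ga$ (Proposition \ref{k_pro=H^2(k,Ga^)*}) then supplies $x_v \in k_v$ with $\inv_v(x_v \cup \alpha_v) \neq 0$, and the adele $x \in \A$ supported at $v$ with entry $x_v$ satisfies $\langle x, \alpha\rangle = \inv_v(x_v \cup \alpha_v) \neq 0$, so $\tau(\alpha) \neq 0$. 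The main obstacle is the $k$-linear identification $(\A/k)^D \simeq k$ via adelic Fourier self-duality; this is standard Tate--Weil material but requires producing a non-trivial continuous character of $\A/k$ (e.g.\ via residues of differentials composed with the residue-field trace) and verifying perfection of the resulting pairing in the function-field setting.
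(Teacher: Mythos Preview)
Your proposal is correct and follows essentially the same approach as the paper's proof: both reduce to bijectivity of the transpose $\tau:{\rm H}^2(k,\widehat{\Ga}) \to (\A/k)^D$, establish that both sides are one-dimensional $k$-vector spaces (via Corollary~\ref{dimH^2(Ga^)=1} and Tate--Weil self-duality respectively, the latter cited in the paper as \cite[Ch.\,IV, \S2, Thm.\,3]{weilbasic}), note $k$-linearity of $\tau$, and verify non-vanishing by combining Lemma~\ref{H^2(k,Ga^)injectiveseparable} with local duality (Proposition~\ref{k_pro=H^2(k,Ga^)*}). The only cosmetic difference is that you invoke Lemma~\ref{isom=homeo} where the paper uses the more elementary fact that a continuous bijection from a compact space to a Hausdorff space is a homeomorphism.
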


\begin{proof}
The source is compact \cite[Ch.\,II, \S14, Thm.\,]{cf}, and the target is Hausdorff, so it suffices to show that the map is a continuous bijection. Continuity follows from Lemma \ref{G(A)toH^2(k,G^)*cts}. To show that the map is an (algebraic) isomorphism, we pass to the dual map and show that it is an isomorphism. The discrete group $(\A/k)^D$ dual to the compact
quotient $\A/k$ is the annihilator in $\A^D$ of $k$ and as a $k$-vector space is 1-dimensional by 
\cite[Ch.\,IV, \S2, Thm.\,3]{weilbasic}. Likewise, ${\rm{H}}^2(k, \widehat{\Ga})$ is 1-dimensional
for its natural $k$-vector space structure by Proposition \ref{omega=H^2(Ga^)}. Therefore, since the map is $k$-linear, in order to show that it is an isomorphism we only have to show that it is nonzero. That is, we want to show that the map $\A \rightarrow {\rm{H}}^2(k, \widehat{\Ga})^*$ is nonzero. 

In order to do this, it suffices to show that for a place $v$ of $k$, the image of the map ${\rm{H}}^2(k, \widehat{\Ga}) \rightarrow {\rm{H}}^2(k_v, \widehat{\Ga})$ is not killed by the local duality pairing with $k_v$ (since the global pairing is obtained by adding the local pairings). But by local duality (more precisely, by the special case given by Proposition \ref{k_pro=H^2(k,Ga^)*}), for this we only need to show that the map ${\rm{H}}^2(k, \widehat{\Ga}) \rightarrow {\rm{H}}^2(k_v, \widehat{\Ga})$ is nonzero. In fact, this map is injective by Lemma \ref{H^2(k,Ga^)injectiveseparable}, hence nonzero because ${\rm{H}}^2(k, \widehat{\Ga})$ is nonzero. (It is a 1-dimensional $k$-vector space.)
\end{proof}

\begin{lemma}
\label{exactnessatG(A)splittoriunip}
$(\ref{G(A)_proexactseqeqn})$ is exact if $H$ is a split torus or a split unipotent group over the global function field $k$.
\end{lemma}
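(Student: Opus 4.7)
The plan is to reduce to the two foundational cases $H = \Ga$ and $H = \Gm$, and handle each using the appropriate tools. Since both $\Ga$ and $\Gm$ satisfy ${\rm{H}}^1(k, \cdot) = 0$ (by Proposition \ref{unipotentcohomology}(ii) and Hilbert 90, respectively), Lemma \ref{exactnessatG(A)devissage1} will propagate the property through natural filtrations. Explicitly, for a split torus $T \simeq \Gm^n$, I would use induction via the short exact sequence $1 \to \Gm \to \Gm^n \to \Gm^{n-1} \to 1$; for a split unipotent $U$ with filtration $0 = U_0 \subset \cdots \subset U_n = U$ having successive quotients isomorphic to $\Ga$, I would use $1 \to U_{n-1} \to U \to \Ga \to 1$, noting that ${\rm{H}}^1(k, U_{n-1}) = 0$ since $U_{n-1}$ is itself split unipotent.

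For $H = \Ga$, I would invoke Proposition \ref{A/k=H^2(k,Ga^)*}, which asserts that the global duality map induces a topological isomorphism $\A/k \xrightarrow{\sim} {\rm{H}}^2(k, \widehat{\Ga})^*$. Consequently the composition $\A \to {\rm{H}}^2(k, \widehat{\Ga})^*$ factors as $\A \twoheadrightarrow \A/k \xrightarrow{\sim} {\rm{H}}^2(k, \widehat{\Ga})^*$. Since the target is profinite (being the Pontryagin dual of a discrete torsion group), applying profinite completion gives $\A_{\pro} \to (\A/k)_{\pro} \xrightarrow{\sim} {\rm{H}}^2(k, \widehat{\Ga})^*$. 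Right-exactness of profinite completion (Proposition \ref{profiniterightexact}), applied to the exact sequence $k \to \A \to \A/k \to 0$, then yields that the kernel of $\A_{\pro} \to (\A/k)_{\pro}$ is exactly the image of $k_{\pro}$, which is what we need.

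For $H = \Gm$, I would first identify ${\rm{H}}^2(k, \Z)^*$ with $\mathfrak{g}_k^{\rm{ab}}$: the \'etale exact sequence $0 \to \Z \to \Q \to \Q/\Z \to 0$ together with the vanishing of ${\rm{H}}^i(k, \Q)$ for $i > 0$ (as $\Q$ is uniquely divisible) yields ${\rm{H}}^2(k, \Z) \simeq {\rm{H}}^1(k, \Q/\Z) \simeq (\mathfrak{g}_k^{\rm{ab}})^D$, and hence ${\rm{H}}^2(k, \Z)^* \simeq \mathfrak{g}_k^{\rm{ab}}$ as profinite groups. By Lemma \ref{localreciprocitymap} and the definition of the global pairing as a sum of local pairings, the resulting map $\A^\times \to \mathfrak{g}_k^{\rm{ab}}$ is the global Artin reciprocity map. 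Global class field theory for function fields then says that this map kills $k^\times$ (the reciprocity law) and descends to a topological isomorphism $(\A^\times/k^\times)_{\pro} \xrightarrow{\sim} \mathfrak{g}_k^{\rm{ab}}$. Combined with the right-exactness of profinite completion applied to $k^\times \to \A^\times \to \A^\times/k^\times \to 0$, this gives that the kernel of $\A^\times_{\pro} \to {\rm{H}}^2(k, \widehat{\Gm})^*$ is precisely the image of $k^\times_{\pro}$.

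The main obstacle will be the $\Gm$ case, namely invoking global class field theory for function fields to identify the profinite completion of the idele class group with $\mathfrak{g}_k^{\rm{ab}}$ in a manner compatible with the global duality pairing. This is a standard black-box input (made compatible with our local identification via Lemma \ref{localreciprocitymap}), but it is by far the deepest ingredient. The $\Ga$ case, while resting on the nontrivial Proposition \ref{A/k=H^2(k,Ga^)*}, is otherwise a quick consequence of right-exactness of profinite completion.
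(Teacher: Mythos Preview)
Your proposal is correct and follows essentially the same route as the paper: reduce to $\Gm$ and $\Ga$ via Lemma \ref{exactnessatG(A)devissage1}, handle $\Gm$ by identifying the map with the global reciprocity map (via Lemma \ref{localreciprocitymap} and local-global compatibility) and invoking class field theory, and handle $\Ga$ via Proposition \ref{A/k=H^2(k,Ga^)*} together with right-exactness of profinite completion. The only cosmetic difference is that the paper verifies directly that $\A/k$ is profinite (compact, Hausdorff, totally disconnected), whereas you deduce this from the topological isomorphism with ${\rm{H}}^2(k,\widehat{\Ga})^*$, which is profinite as the Pontryagin dual of a discrete torsion group; both are equally valid.
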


\begin{proof}
First we treat split tori, so we may assume $G = \Gm$. Due to the right-exactness of profinite completion (Proposition \ref{profiniterightexact}), in conjunction with Remark \ref{G(k)-->G(A)embedding}, we have $(\A^{\times})_{\pro}/(k^{\times})_{\pro} = (\A^{\times}/k^{\times})_{\pro}$, so we want to show that the map $\A^{\times}/k^{\times} \rightarrow {\rm{H}}^2(k, \Z)^*$ induces an isomorphism from the profinite completion of the idele class group. Let $\mathfrak{g} : = \Gal(k_s/k)$. Via the exact sequence of Galois modules
\[
0 \longrightarrow \Z \longrightarrow \Q \longrightarrow \Q/\Z \longrightarrow 0,
\]
and the unique divisibility of $\Q$, we have ${\rm{H}}^2(k, \Z) = (\mathfrak{g}_k^{\ab})^D$
as torsion discrete topological groups, so ${\rm{H}}^2(k, \Z)^* = \mathfrak{g}_k^{\ab}$. Thus we have a map $\A^{\times}/k^{\times} \rightarrow \mathfrak{g}_k^{\ab}$, and we would like to say that it induces an isomorphism from the profinite completion of the idele class group. If this map were the reciprocity map of global class field theory, then the result would follow from class field theory. But it is indeed the reciprocity map, thanks to Lemma \ref{localreciprocitymap} and the compatibility of local and global class field theory.

Next we treat split unipotent groups. Lemma \ref{exactnessatG(A)devissage1} reduces us to the case of $\Ga$. Proposition \ref{A/k=H^2(k,Ga^)*} shows that the map $\A/k \rightarrow {\rm{H}}^2(k, \widehat{\Ga})^*$ is a topological isomorphism. Since $\A_{\pro}/k_{\pro} = (\A/k)_{\pro}$ by Proposition \ref{profiniterightexact} and Remark \ref{G(k)-->G(A)embedding}, to complete the proof we must show that $\A/k$ is profinite. It is compact, and it is Hausdorff because $k$ is closed in $\A$. It only remains to check that $\A/k$ is totally disconnected. By translation, and because $k \subset \A$ is discrete, it suffices to build a fundamental system of compact open neighborhoods of $0 \in \A$. For this, in turn, it suffices to construct a profinite open neighborhood of $0 \in \A$, for which we may take $\prod_v \mathcal{O}_v$.
\end{proof}

\begin{lemma}
\label{exactnessatG(A)devissage2}
For an inclusion $G \hookrightarrow G'$ of affine commutative group schemes of finite type  over a global field $k$, if $(\ref{G(A)_proexactseqeqn})$ is exact for $H = G'$, then it is exact for $H = G$.
\end{lemma}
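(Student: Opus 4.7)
The plan is to form the quotient $G'' := G'/G$, which is an affine commutative $k$-group scheme of finite type, yielding a short exact sequence $1 \to G \to G' \to G'' \to 1$. Then I would carry out a diagram chase combining the functoriality of cup product with the left-exactness results of Proposition \ref{globalpointsprofinite} (for profinite completions of $k$- and $\A$-points) and Proposition \ref{G(k)G(A)proinjective} (for injectivity of $G''(k)_{\pro} \to G''(\A)_{\pro}$).

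Concretely, fix $\alpha \in G(\A)_{\pro}$ in the kernel of the global duality map to ${\rm{H}}^2(k, \widehat{G})^*$. The inclusion $G \hookrightarrow G'$ induces by restriction a morphism $\widehat{G'} \to \widehat{G}$ of fppf dual sheaves, hence a map ${\rm{H}}^2(k, \widehat{G'}) \to {\rm{H}}^2(k, \widehat{G})$, $\beta \mapsto \beta|_G$. By functoriality of cup product, for $\alpha$ viewed in $G'(\A)_{\pro}$ we have $\langle \alpha, \beta \rangle_{G'} = \langle \alpha, \beta|_G \rangle_G = 0$ for every $\beta \in {\rm{H}}^2(k, \widehat{G'})$. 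The image of $\alpha$ in $G'(\A)_{\pro}$ is therefore in the kernel of the map to ${\rm{H}}^2(k, \widehat{G'})^*$. Applying the exactness of ($\ref{G(A)_proexactseqeqn}$) for $G'$, we obtain $\beta \in G'(k)_{\pro}$ mapping to the image of $\alpha$ in $G'(\A)_{\pro}$.

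It remains to lift $\beta$ to $G(k)_{\pro}$. Since $G \to G' \to G''$ is zero and $\alpha$ originates in $G(\A)_{\pro}$, its image in $G''(\A)_{\pro}$ is trivial. Hence the image $\gamma \in G''(k)_{\pro}$ of $\beta$ maps to $0$ in $G''(\A)_{\pro}$, forcing $\gamma = 0$ by Proposition \ref{G(k)G(A)proinjective}. By the exactness of $1 \to G(k)_{\pro} \to G'(k)_{\pro} \to G''(k)_{\pro}$ in Proposition \ref{globalpointsprofinite}, $\beta$ comes from some $\widetilde{\beta} \in G(k)_{\pro}$. Its image $\widetilde{\alpha} \in G(\A)_{\pro}$ and $\alpha$ both map to the same element of $G'(\A)_{\pro}$, and the injectivity of $G(\A)_{\pro} \hookrightarrow G'(\A)_{\pro}$ (Proposition \ref{globalpointsprofinite} again) forces $\widetilde{\alpha} = \alpha$, completing the chase.

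There is no genuine obstacle in this lemma: once the quotient $G''$ is introduced, everything is a formal diagram chase whose only inputs are the functoriality of cup product and the two previously-established left-exactness statements. The mildly delicate point worth explicitly verifying is the compatibility of the local-to-global pairings under the restriction $\widehat{G'} \to \widehat{G}$, but this is immediate from $\delta$-functoriality of cup product applied place by place and summed.
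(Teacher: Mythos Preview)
Your proof is correct and is essentially the same as the paper's: the paper sets up the identical commutative diagram with $G'' = G'/G$, invokes Proposition~\ref{globalpointsprofinite} for the columns, Proposition~\ref{G(k)G(A)proinjective} for the bottom row, and the hypothesis for the middle row, and then says ``a diagram chase now shows that the first row is exact.'' You have simply written out that diagram chase explicitly.
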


\begin{proof}
Let $G'' : = G'/G$, an affine commutative group scheme of finite type \cite[Ch.\,III, \S3, no.\,5, Th.\,5.6]{demazuregabriel}. Consider the commutative diagram
\[
\begin{tikzcd}
&& 0 \arrow{d} & \\
& G(k)_{\pro} \arrow{r} \arrow{d} & G(\A)_{\pro} \arrow{r} \arrow{d} & {\rm{H}}^2(k, \widehat{G})^* \arrow{d} \\
& G'(k)_{\pro} \arrow{r} \arrow{d} & G'(\A)_{\pro} \arrow{r} \arrow{d} & {\rm{H}}^2(k, \widehat{G'})^* \\
0 \arrow{r} & G''(k)_{\pro} \arrow{r} & G''(\A)_{\pro} &
\end{tikzcd}
\]
The second row is exact by hypothesis, the bottom row is exact by Proposition \ref{G(k)G(A)proinjective}, and the columns are exact by Proposition \ref{globalpointsprofinite}. A diagram chase now shows that the first row is exact.
\end{proof}

\begin{lemma}
\label{exactnessatG(A)devissage4}
Let $k'/k$ be a finite separable extension of global function fields, and let $G$ be an affine commutative $k$-group scheme of finite type. If $(\ref{G(A)_proexactseqeqn})$ is exact for $H = G_{k'}$ (with $k$ replaced by $k'$), then it is exact for $G$ (with cohomology over $k$).
\end{lemma}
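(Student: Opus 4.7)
The plan is to use the standard adjunction between base change and Weil restriction to reduce the $k$-side sequence to the $k'$-side sequence. Namely, there is a canonical $k$-homomorphism $G \hookrightarrow \R_{k'/k}(G_{k'})$ given by the unit of adjunction, and this is a closed immersion because $k'/k$ is finite and $G$ is separated (even affine). By the already-established Lemma \ref{exactnessatG(A)devissage2}, it therefore suffices to prove that $(\ref{G(A)_proexactseqeqn})$ is exact for $H = \R_{k'/k}(G_{k'})$ over $k$, granted that it is exact for $G_{k'}$ over $k'$.

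The point is that, for each term in the sequence, the $k$-theory of $\R_{k'/k}(G_{k'})$ and the $k'$-theory of $G_{k'}$ agree compatibly. Indeed, $\R_{k'/k}(G_{k'})(k) = G(k')$ tautologically, and $\R_{k'/k}(G_{k'})(\A_k) = G(k' \otimes_k \A_k) = G(\A_{k'})$ because $k'/k$ is finite separable (this last identification is compatible with the restricted product structure, since an $\calO_S$-model $\mathscr{G}$ of $G$ pulls back to an $\calO_{S'}$-model of $G_{k'}$, where $S'$ denotes the set of places of $k'$ above a sufficiently large non-empty finite set $S$ of places of $k$). Taking profinite completions is compatible with these identifications. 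On the dual side, Proposition \ref{charactersseparableweilrestriction} furnishes $\widehat{\R_{k'/k}(G_{k'})} \simeq \R_{k'/k}(\widehat{G_{k'}})$ as fppf sheaves, and since $\Spec(k') \to \Spec(k)$ is finite \'etale, pushforward along this morphism is exact both on \'etale and fppf abelian sheaves, giving ${\rm{H}}^2(k, \widehat{\R_{k'/k}(G_{k'})}) \simeq {\rm{H}}^2(k', \widehat{G_{k'}})$ (a Shapiro-type isomorphism), and hence the same identification on Pontryagin duals.

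The substance of the argument is the compatibility of these three identifications with the global duality pairing $H(\A) \times {\rm{H}}^2(k, \widehat{H}) \to \Q/\Z$. Since this pairing is built out of the local cup products followed by summing invariants, the compatibility follows place-by-place from the commutative diagrams of Proposition \ref{diagramcommuteslocal} at each $v \notin S$ and each $v \in S$, because the norm map on characters is, by construction, engineered so that cup product is compatible with transfer. Once this compatibility is in hand, the sequence for $\R_{k'/k}(G_{k'})$ over $k$ is literally identified (term by term, as a sequence of continuous homomorphisms of topological groups) with the sequence for $G_{k'}$ over $k'$, and the latter is exact by hypothesis. Verifying this last compatibility place by place is the only genuinely technical step; once established, the lemma follows immediately from Lemma \ref{exactnessatG(A)devissage2}.
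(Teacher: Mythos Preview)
Your proof is correct and follows essentially the same approach as the paper: reduce via the closed immersion $G \hookrightarrow \R_{k'/k}(G_{k'})$ and Lemma~\ref{exactnessatG(A)devissage2}, then identify the sequence for $\R_{k'/k}(G_{k'})$ over $k$ with that for $G_{k'}$ over $k'$. The only cosmetic difference is that the paper invokes the global compatibility Proposition~\ref{diagramcommutesglobal} directly, whereas you unpack it by appealing to the local version Proposition~\ref{diagramcommuteslocal} place by place (which is exactly how \ref{diagramcommutesglobal} is proved anyway).
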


\begin{proof}
The canonical inclusion $G \hookrightarrow \R_{k'/k}(G_{k'})$, together with Lemma \ref{exactnessatG(A)devissage2}, shows that it suffices to prove that if $H'$ is an affine commutative $k'$-group scheme of finite type such that (\ref{G(A)_proexactseqeqn}) is exact (with $k$ replaced by $k'$) for $H = H'$, then it is also exact for $H = \R_{k'/k}(H')$ (with cohomology over $k$).

So now let $H : = \R_{k'/k}(H')$. By Proposition \ref{diagramcommutesglobal}, we have a commutative diagram
\[
\begin{tikzcd}
H(k) \arrow{r} \arrow[d, equals] & H(\A_k) \arrow{r} \arrow[d, equals] & {\rm{H}}^2(k, \widehat{H})^* \arrow{d} \\
H'(k') \arrow{r} & H'(\A_{k'}) \arrow{r} & {\rm{H}}^2(k', \widehat{H'})^*
\end{tikzcd}
\]
The lemma follows immediately.
\end{proof}

Recall the notion of the maximal geometrically reduced closed subscheme of a locally finite type scheme over a field \cite[\S C.4]{cgp}. For group schemes, this is a subgroup scheme \cite[Lemma C.4.1]{cgp}, and leads to the notion of the maximal {\em smooth} $k$-subgroup scheme of a locally finite type $k$-group scheme, since geometrically reduced group schemes locally of finite type over a field are smooth. We have the following lemma.

\begin{lemma}
\label{maxlredadelicpts}
For an affine scheme $X$ of finite type over a global field $k$, and the maximal geometrically reduced $k$-subscheme $X^{\sharp}$ of $X$, the map $X^{\sharp}(\A_k) \rightarrow X(\A_k)$ is a homeomorphism.
\end{lemma}

\begin{proof}
The map $X^{\sharp}(k_v) \rightarrow X(k_v)$ is a bijection \cite[Lemma C.4.1]{cgp}, and, by the definition of the topology on the $k_v$-points of an affine finite type $k_v$-scheme, it is even a homeomorphism. Since $X$ is affine, if we choose a finite type affine $\calO_S$-model $\mathscr{X}$ of $X$ for some finite set $S$ of places of $k$, then we have, topologically,
\[
X(\A) = {\prod_v}' X(k_v),
\]
where the restricted product is with respect to the groups $\mathscr{X}(\calO_v)$, and similarly for $X^{\sharp}$. Therefore, choosing a finite type affine $\calO_S$-model (perhaps after enlarging $S$) $\mathscr{X}^{\sharp}$ of $X^{\sharp}$ so that we have a closed embedding $\mathscr{X}^{\sharp} \rightarrow \mathscr{X}$ of $\calO_S$-schemes extending the embedding on the generic fiber, we need to show that the inclusion
\[
\mathscr{X}^{\sharp}(\calO_v) \hookrightarrow \mathscr{X}(\calO_v)
\]
is an isomorphism for all but finitely many $v$. We claim that it is an isomorphism for all $v \notin S$. Indeed, let $s$ denote the special point of ${\rm{Spec}}(\calO_v)$, and let $\eta$ denote the generic point. Given a morphism $f: {\rm{Spec}}(\calO_v) \rightarrow \mathscr{X}$, we have $f(\eta) \in \mathscr{X}^{\sharp}$, because $X(k_v) = X^{\sharp}(k_v)$. Since $f(s)$ lies in the closure of $f(\eta)$, it follows that also $f(s) \in \mathscr{X}^{\sharp}$. Since $\calO_v$ is reduced, it follows that $f$ factors through $\mathscr{X}^{\sharp}$.
\end{proof}

\begin{lemma}
\label{exactnessatG(A)devissage5}
Let $G$ be an affine commutative group scheme of finite type over a global function field $k$, and let $G^{{\rm{sm}}} \subset G$ be the maximal smooth $k$-subgroup scheme. If $(\ref{G(A)_proexactseqeqn})$ is exact for $H = G^{{\rm{sm}}}$, then it is exact for $H = G$.
\end{lemma}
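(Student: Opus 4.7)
The plan is to reduce the claim for $G$ to the hypothesis for $G^{\rm sm}$ by showing that $Q := G/G^{\rm sm}$ contributes nothing to either side of the duality pairing.

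First I would show that $Q^{\rm sm} = 0$. Any smooth $k$-subgroup $Q' \subset Q$ has preimage $\pi^{-1}(Q') \subset G$ fitting into a short exact sequence with smooth outer terms $G^{\rm sm}$ and $Q'$; since the quotient map is an fpqc torsor under the smooth group $G^{\rm sm}$, the middle term is smooth over $k$, contradicting maximality of $G^{\rm sm}$ unless $Q' = 0$.

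Next I would deduce $Q(k) = Q(\A) = 0$. The universal property of the maximal smooth subgroup -- applied to sections of $Q$ at generic points of smooth $k$-algebras -- implies that $Q(K) = Q^{\rm sm}(K) = 0$ for any field $K$ that is a filtered colimit of smooth finitely generated $k$-algebras. Every separable field extension $K/k$ is of this form, since its finitely generated subfields are function fields of smooth affine $k$-varieties. Applying this to $K = k$ and to $K = k_v$ at each place $v$ (using separability of $k_v/k$, a standard feature of completions of global function fields that I would isolate as a brief lemma), and noting $\mathscr{Q}(\calO_v) \hookrightarrow Q(k_v) = 0$ for any affine $\calO_S$-model $\mathscr{Q}$ of $Q$, gives $Q(k) = Q(\A) = 0$. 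It follows that $G^{\rm sm}(k) \to G(k)$ and $G^{\rm sm}(\A) \to G(\A)$ are bijective; the latter is even a topological isomorphism by Lemma~\ref{isom=homeo}, since both groups are second-countable locally compact Hausdorff (Proposition~\ref{adelictopcohombasics}(iv)).

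On the dual side, Proposition~\ref{hatisexact} provides a short exact sequence $0 \to \widehat{Q} \to \widehat{G} \to \widehat{G^{\rm sm}} \to 0$ of fppf abelian sheaves, and its long exact cohomology sequence together with the vanishing ${\rm{H}}^3(k, \widehat{Q}) = 0$ from Proposition~\ref{H^3(G^)=0} shows that ${\rm{H}}^2(k, \widehat{G}) \to {\rm{H}}^2(k, \widehat{G^{\rm sm}})$ is surjective. Pontryagin dualizing (exact since $\Q/\Z$ is injective) yields an injection ${\rm{H}}^2(k, \widehat{G^{\rm sm}})^* \hookrightarrow {\rm{H}}^2(k, \widehat{G})^*$.

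Finally, the conclusion follows from an elementary diagram chase in the commutative square
\begin{equation*}
\begin{tikzcd}
G^{\rm sm}(k)_{\pro} \arrow[r] \arrow[d, "\cong"] & G^{\rm sm}(\A)_{\pro} \arrow[r] \arrow[d, "\cong"] & {\rm{H}}^2(k, \widehat{G^{\rm sm}})^* \arrow[d, hookrightarrow] \\
G(k)_{\pro} \arrow[r] & G(\A)_{\pro} \arrow[r] & {\rm{H}}^2(k, \widehat{G})^*,
\end{tikzcd}
\end{equation*}
whose commutativity is functoriality of the global cup product applied to $G^{\rm sm} \hookrightarrow G$; the top row is exact by hypothesis and the bottom row is a complex by construction. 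The one step I expect to require the most care is the separability of $k_v/k$ underlying the vanishing of $Q(k_v)$, which is genuinely a positive-characteristic subtlety but is nonetheless routine to verify via the Cohen description $\calO_v^{\wedge} \simeq \kappa_v[\![\pi_v]\!]$ with perfect residue field and the assumption $[k:k^p]=p$.
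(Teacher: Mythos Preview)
Your approach is correct and matches the paper's proof: both set up the commutative diagram with $G^{\rm sm}(k) = G(k)$, $G^{\rm sm}(\A) = G(\A)$, and the injection ${\rm H}^2(k,\widehat{G^{\rm sm}})^* \hookrightarrow {\rm H}^2(k,\widehat{G})^*$ via Propositions~\ref{hatisexact} and~\ref{H^3(G^)=0}, then chase. The paper simply asserts the first two equalities as known properties of the maximal smooth subgroup (from \cite[Lemma~C.4.1, Remark~C.4.2]{cgp}: $G^{\rm sm}(K) = G(K)$ for every separably closed $K \supset k$, hence for every separable $K/k$ by Galois descent), whereas you rederive them through $Q^{\rm sm} = 0$; your phrasing about a ``universal property applied to sections at generic points of smooth $k$-algebras'' is not a standard formulation and is not quite an argument as written, but the conclusion is correct and is most cleanly obtained from the CGP characterization just cited rather than via spreading out.
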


\begin{proof}
We have a commutative diagram
\[
\begin{tikzcd}
G^{{\rm{sm}}}(k) \arrow{r} \arrow[d, equals] & G^{{\rm{sm}}}(\A) \arrow{r} \arrow[d, equals] & {\rm{H}}^2(k, \widehat{G^{{\rm{sm}}}})^* \arrow[d, hookrightarrow] \\
G(k) \arrow{r} & G(\A) \arrow{r} & {\rm{H}}^2(k, \widehat{G})^*
\end{tikzcd}
\]
in which the top row is exact by hypothesis, and the third vertical arrow is an inclusion by Proposition \ref{hatisexact} and Proposition \ref{H^3(G^)=0} applied to $G/G^{\rm{sm}}$. Further, the equality of $k$-points holds by \cite[Lemma C.4.1]{cgp}, and the equality on $\A$-points by Lemma \ref{maxlredadelicpts}. A simple diagram chase now shows that the bottom row is also exact.
\end{proof}

\begin{proposition}
\label{exactnessatG(A)}
Let $k$ be a global function field, $G$ an affine commutative $k$-group scheme of finite type. Then the global duality sequence
\[
G(k)_{\pro} \longrightarrow G(\A)_{\pro} \longrightarrow {\rm{H}}^2(k, \widehat{G})^*
\]
in Theorem $\ref{poitoutatesequence}$ is exact.
\end{proposition}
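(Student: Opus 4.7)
The plan is to proceed by dévissage, reducing to the base cases handled by Lemma \ref{exactnessatG(A)splittoriunip} (split tori and split unipotent groups) and the finite case recorded in Remark \ref{prior}. I would begin by applying Lemma \ref{exactnessatG(A)devissage5} to reduce to the case that $G$ is smooth, and then apply Lemma \ref{exactnessatG(A)devissage4} to pass to a finite separable extension $k'/k$ over which the component group $E := G/G^0$ becomes constant and the maximal torus $T \subset G^0$ splits. At this point $G$ admits a filtration whose pieces are each a split torus, a constant finite commutative group scheme, or a smooth connected commutative unipotent group (which in turn has a filtration by a finite étale or infinitesimal subgroup and a split unipotent quotient, via Lemma \ref{unipotentmodetale=split}).

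For the one extension in this filtration whose subgroup has vanishing ${\rm{H}}^1(k, \cdot)$---namely the maximal-torus extension $1 \to T \to G^0 \to U \to 1$, for which ${\rm{H}}^1(k, T) = 0$ by Hilbert 90---Lemma \ref{exactnessatG(A)devissage1} applies directly, reducing the case of $G^0$ to those of $T$ (Lemma \ref{exactnessatG(A)splittoriunip}) and $U$. For the two remaining extensions---the subgroup filtration of $U$ provided by Lemma \ref{unipotentmodetale=split}, and the component-group extension $1 \to G^0 \to G \to E \to 1$---Lemma \ref{exactnessatG(A)devissage1} does not apply directly because the ${\rm{H}}^1(k, \cdot)$ of the subgroup need not vanish (for instance, ${\rm{H}}^1(k, \mathbf{Z}/p\mathbf{Z}) = k/\wp(k)$ when $p = \mathrm{char}(k) > 0$).

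The main obstacle is this ${\rm{H}}^1$-defect in the ``finite'' pieces of the filtration. To circumvent it I would set up a commutative diagram modeled on the one in the proof of Lemma \ref{exactnessatG(A)devissage1}, enlarged with additional rows or columns that track the failure of $G(k) \to G''(k)$ to be surjective, and perform a diagram chase that invokes the vanishing of ${\rm{H}}^3(k, \widehat{(\cdot)})$ (Proposition \ref{H^3(G^)=0}), local duality (Propositions \ref{H^2(G)G^(k)dualityprop} and \ref{H^2(G^)altori}), the three-term exact sequence for finite commutative group schemes (Remark \ref{prior}), and the integral annihilator results of Chapter \ref{chapterlocalintcohom} that describe ${\rm{H}}^1(\calO_v, \cdot)$ inside ${\rm{H}}^1(k_v, \cdot)$. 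An alternative route that may be simpler in practice would be to construct, via Lemma \ref{exactnessatG(A)devissage2}, an embedding of $G$ into a larger group whose entire filtration consists of subgroups with vanishing ${\rm{H}}^1$, so that repeated application of Lemma \ref{exactnessatG(A)devissage1} suffices; the difficulty then shifts to verifying that such an embedding exists, which in turn would rest on Proposition \ref{hatrepresentable} and the structure theory of Chapter \ref{chaptergeneralfields}.
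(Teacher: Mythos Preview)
Your second alternative is exactly the paper's route, and it works cleanly once you see the concrete embedding. The paper does not need Proposition \ref{hatrepresentable} or anything from Chapter \ref{chapterlocalintcohom}; the construction is an elementary pushout. After reducing to smooth $G$ via Lemma \ref{exactnessatG(A)devissage5}, the paper first treats the almost-unipotent case (no nontrivial $k$-torus), where $G^0 = U$ is smooth connected unipotent and one has $1 \to U \to G \to E \to 1$ with $E$ finite \'etale. Embed $U \hookrightarrow W$ into a split unipotent group (e.g.\ $W = \R_{k'/k}(U_{k'})$ for $k'/k$ finite splitting $U$), and push out along this embedding to obtain $G \hookrightarrow H$ sitting in $1 \to W \to H \to E \to 1$. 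Since ${\rm{H}}^1(k, W) = 0$, Lemma \ref{exactnessatG(A)devissage1} applies to $H$ (using the finite case for $E$ and Lemma \ref{exactnessatG(A)splittoriunip} for $W$), and then Lemma \ref{exactnessatG(A)devissage2} gives the result for $G$. This single pushout handles both the unipotent piece and the component group at once, avoiding the need to treat them as separate problematic extensions.

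Your ordering is slightly less efficient than the paper's: rather than first passing to $G^0$ and then dealing with $1 \to G^0 \to G \to E \to 1$ (where ${\rm{H}}^1(k, G^0)$ need not vanish even after your scalar extension), the paper first quotients by the maximal torus $T$, so that the quotient $H = G/T$ is almost-unipotent and is handled by the pushout above; only then does it invoke Lemma \ref{exactnessatG(A)devissage4} to split $T$ and apply Lemma \ref{exactnessatG(A)devissage1} to $1 \to T \to G \to H \to 1$. Your option (a), with its references to local duality and integral annihilators, is unnecessary here.
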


\begin{proof}
The case in which $G$ is finite follows from Poitou--Tate for finite commutative $k$-group schemes (see Remark \ref{prior}) together with Lemma \ref{finiteprofinite}. Now let $G$ be an arbitrary affine commutative group scheme of finite type over a global function field $k$. Lemma \ref{exactnessatG(A)devissage5} allows us to assume that $G$ is smooth. First suppose that $G$ contains no nontrivial $k$-torus (i.e., $G$ is almost-unipotent; see Definition \ref{almostunipdef} and Lemma \ref{almostunipotent}(ii)). Then by Lemma \ref{almostunipotent}(i), since $G$ is smooth its identity component $U$ is a smooth connected unipotent $k$-group scheme. Thus we have an exact sequence
\begin{equation}
\label{exactnessatG(A)pfeqn1}
1 \longrightarrow U \longrightarrow G \longrightarrow E \longrightarrow 1
\end{equation}
with $E$ a finite \'etale commutative $k$-group scheme. Choose an inclusion $U \hookrightarrow W$ for some split unipotent $k$-group $W$. One way to do this is to choose some finite extension $k'/k$ such that $U_{k'}$ is split, and then take the canonical inclusion $U \hookrightarrow \R_{k'/k}(U_{k'})$. The latter group is split unipotent because it admits a filtration by $\R_{k'/k}(\Ga) \simeq \Ga^{[k':  k]}$. Pushing out the exact sequence (\ref{exactnessatG(A)pfeqn1}) along the inclusion $U \hookrightarrow W$, we obtain a commutative diagram of exact sequences
\[
\begin{tikzcd}
1 \arrow{r} & U \arrow{r} \arrow[d, hookrightarrow] & G \arrow{r} \arrow[d, hookrightarrow] & E \arrow{r} \arrow[d, equals] & 1 \\
1 \arrow{r} & W \arrow{r} & H \arrow{r} & E \arrow{r} & 1
\end{tikzcd}
\]
in which the first square is a pushout diagram (and $H$ is defined to be the pushout of $U \hookrightarrow G$ along the map $U \hookrightarrow W$). The proposition holds for the finite $k$-group $E$, and it holds for the split unipotent group $W$ by Lemma \ref{exactnessatG(A)splittoriunip}. By Lemma \ref{exactnessatG(A)devissage1}, therefore, it holds for $H$. Lemma \ref{exactnessatG(A)devissage2} then implies the proposition for $G$.

Now consider the general case, in which $G$ may contain a nontrivial $k$-torus, and let $T \subset G$ denote the maximal $k$-torus. Let $H : = G/T$. Then $H$ contains no nontrivial $k$-torus, hence the proposition holds for $H$ by the previous paragraph. Let $k'/k$ be a finite separable extension over which $T$ splits. By Lemma \ref{exactnessatG(A)devissage4}, in order to prove the proposition for $G$ it suffices to prove it for $G_{k'}$, hence we may assume that $T$ is split. The proposition for $G$ then follows from Lemma \ref{exactnessatG(A)splittoriunip} and Lemma \ref{exactnessatG(A)devissage1} applied to the exact sequence $$1 \longrightarrow T \longrightarrow G \longrightarrow H \longrightarrow 1.$$ 
\end{proof}

\section{Exactness at ${\rm{H}}^0(\A, G)$}

In this section, we will prove that if $G$ is an affine commutative group scheme of finite type over a global function field $k$, then the complex
\begin{equation}
\label{exactatG(A)eqn}
G(k) \longrightarrow G(\A_k) \longrightarrow {\rm{H}}^2(k, \widehat{G})^*
\end{equation}
arising from the global duality pairing $G(\A_k) \times {\rm{H}}^2(k, \widehat{G}) \rightarrow \Q/\Z$ (which cups everywhere locally and then adds the invariants) is exact (Proposition \ref{exactnessatG(A)nocompletion}). That is, we want the uncompleted version of Proposition \ref{exactnessatG(A)}, though the proof will of course depend upon that proposition. In addition to being interesting in its own right, we shall require this exactness later (in \S \ref{Sha^1(G)-->Sha^2(G^)*injectssection}) in order to prove the exactness of the pairings in Theorem \ref{shapairing}.

Unlike most of the results that we prove for global fields, exactness of (\ref{exactatG(A)eqn}) really only holds for global function fields. Indeed, if $k$ is a number field, then (\ref{exactatG(A)eqn}) fails to be exact for $G = \Ga$, since ${\rm{H}}^2(k, \widehat{\Ga}) = 0$ because $k$ is perfect (Lemma \ref{cohomologyofG_adualwhenkisperfect}), but clearly $\A/k \neq 0$.

We begin by making a definition. Given a global field $k$, we have the norm map $|\!|\cdot |\!|:  \A^{\times} \rightarrow \mathbf{R}_{>0}$ given by $a \mapsto \prod_v |\!|a|\!|_v$. By the product formula, $|\!|\lambda|\!| = 1$ for any $\lambda \in k^{\times}$. Now given a finite type $k$-group scheme $G$, for any character $\chi \in \widehat{G}(k)$ we obtain a
continuous norm map $|\!|\chi|\!|:  G(\A) \rightarrow \mathbf{R}_{>0}$ via $a \mapsto |\!|\chi(a)|\!|$. We then make the following definition.

\begin{definition} 
\label{G(A)_1def}
$G(\A)_1 : = \underset{\chi \in \widehat{G}(k)}{\bigcap} \ker(|\!|\chi|\!|)$.
\end{definition}

Since $|\!|k^{\times}|\!| = \{1\}$, we have $G(k) \subset G(\A)_1$. Further, the group $\widehat{G}(k)$ is finitely generated. (Indeed, to check this assertion we may extend scalars to $\overline{k}$, and if true for a subgroup and the quotient, then this is true for $G$, so we are thereby reduced to the well-known cases of $\Ga$, $\Gm$, semisimple groups, abelian varieties, and finite group schemes.) Let $\chi_1, \dots, \chi_n$ be elements that freely generate $\widehat{G}(k)$ modulo torsion. Then $G(\A)_1$ is the kernel of the continuous map $(|\!|\chi_1|\!|, \dots, |\!|\chi_n|\!|)\colon  G(\A) \rightarrow (\mathbf{R}_{>0})^n$. By \cite[Ch.\,I, Prop.\,5.6]{oesterle}, if $k$ is a global function field then the image of this map is a lattice in $(\mathbf{R}_{>0})^n$ (i.e., a discrete subgroup that is cocompact, or equivalently, has full rank). If $k$ is a number field, then the image is $(\mathbf{R}_{>0})^n$. In particular, if $k$ is a global function field then $G(\A)_1$ is open and $G(\A)/G(\A)_1$ is a discrete free abelian group of finite rank, whereas if $k$ is a number field then this quotient is divisible.

\begin{lemma}
\label{filteredbetweencompactgps}
If we have an exact sequence
\[
0 \longrightarrow A' \longrightarrow A \longrightarrow A'' \longrightarrow 0
\]
of locally compact, second-countable, Hausdorff abelian groups such that $A'$ and $A''$ are compact, then so is $A$.
\end{lemma}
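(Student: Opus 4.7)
The plan is to combine the open mapping theorem with the local compactness of $A$. First, I would observe that $\pi \colon A \twoheadrightarrow A''$ is a continuous surjective homomorphism between locally compact, second-countable, Hausdorff topological groups. Such groups are $\sigma$-compact (indeed Polish), so the classical open mapping theorem for topological groups applies and $\pi$ is open. (Alternatively, one can factor $\pi$ as $A \twoheadrightarrow A/A' \to A''$, where the second map is a continuous bijection of Hausdorff topological groups, and invoke Lemma \ref{isom=homeo} once one knows that $A/A'$ is locally compact and second-countable, which follows from local compactness and second-countability of $A$ together with closedness of $A' \subset A$.)

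Next, using local compactness of $A$, I would choose a compact neighborhood $K$ of $0 \in A$. Since $\pi$ is open and $K$ contains an open neighborhood of $0$, the image $\pi(K)$ is a neighborhood of $0$ in $A''$. By compactness of $A''$, there exist finitely many elements $v_1, \dots, v_n \in A''$ such that $A'' = \bigcup_{i=1}^n (v_i + \pi(K))$, and I would lift each $v_i$ to some $u_i \in A$.

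Finally, for any $a \in A$, there exist an index $i$ and some $k \in K$ such that $\pi(a) = v_i + \pi(k)$, so $a - u_i - k \in \ker(\pi) = A'$, giving $a \in u_i + K + A'$. Therefore $A = \bigcup_{i=1}^n (u_i + K + A')$. The set $K + A'$ is the image of the compact set $K \times A'$ under the continuous addition map $A \times A \to A$, hence is compact; so $A$ is a finite union of translates of a compact set and is itself compact.

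The main obstacle is the verification that $\pi$ is open; once that is in hand, the covering argument above is routine. The openness step is precisely where the second-countability hypothesis enters (through $\sigma$-compactness, to make the Baire category argument underlying the open mapping theorem go through); the hypothesis that $A''$ be Hausdorff is used implicitly to ensure that $A' \subset A$ is closed, so that $A/A'$ is a legitimate Hausdorff topological group.
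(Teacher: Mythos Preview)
Your proof is correct and takes a genuinely different route from the paper's. The paper argues by Pontryagin duality: it shows that the dual sequence
\[
0 \longrightarrow A''^D \longrightarrow A^D \longrightarrow A'^D \longrightarrow 0
\]
is exact (using Lemma \ref{isom=homeo} to verify the necessary topological exactness), and since $A'^D$ and $A''^D$ are discrete it concludes that $A^D$ is discrete, hence $A$ is compact. Your argument is more direct: you use Lemma \ref{isom=homeo} (or the open mapping theorem) only to make $\pi$ open, and then run a hands-on covering argument using a compact neighborhood $K$ of $0$ in $A$ together with the compactness of $A'$ and $A''$. Both proofs ultimately rest on the same open-mapping input, but yours avoids the Pontryagin duality machinery entirely and would work in any category of topological groups where the open mapping theorem holds, whereas the paper's approach is more in keeping with the duality theme running through the manuscript. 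One minor remark: you can invoke Lemma \ref{isom=homeo} directly on $\pi\colon A \to A''$ without first passing to the quotient $A/A'$, since that lemma only requires the target to be locally compact Hausdorff and the source to be locally compact second-countable.
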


\begin{proof}
It suffices by Pontryagin duality to check that $A^D$ is discrete. First, we claim that the sequence 
\[
0 \longrightarrow A''^D \longrightarrow A^D \longrightarrow A'^D \longrightarrow 0
\]
is exact. Exactness at $A''^D$ is clear. Exactness at $A'^D$ holds because $A' \hookrightarrow A$ is an embedding onto a closed subgroup (since the source is compact and the target is Hausdorff). For exactness in the middle, we need to check that the map $A/A' \rightarrow A''$ is a homeomorphism, and this follows from Lemma \ref{isom=homeo}. Since $A'^D$ is discrete, it follows that the image of the map $A''^D \hookrightarrow A^D$ is open. If we check that this map is a homeomorphism onto its image, then we will be done, since $A''^D$ is discrete. The image is the kernel of the continuous map $A^D \rightarrow A'^D$, hence closed, so the result once again follows from Lemma \ref{isom=homeo}.
\end{proof}

\begin{proposition}
\label{G(A)_1/G(k)compact}
Let $k$ be a global field, $G$ an affine commutative $k$-group scheme of finite type. Then $G(\A)_1/G(k)$ is compact Hausdorff.
\end{proposition}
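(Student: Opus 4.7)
The Hausdorffness is immediate: choosing a closed immersion $G \hookrightarrow \mathbf{A}^n_k$ exhibits $G(k)$ as a discrete subspace of $G(\A)$, since $k \subset \A$ is discrete. The subgroup $G(\A)_1$ is the kernel of the continuous homomorphism $G(\A) \to (\mathbf{R}_{>0})^m$ given by norms of a basis modulo torsion of the finitely generated abelian group $\widehat{G}(k)$, hence is closed in $G(\A)$. So $G(k)$ is discrete and closed inside $G(\A)_1$, which makes $G(\A)_1/G(k)$ Hausdorff.

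For compactness, the plan is to reduce via devissage to three base cases. When $G$ is finite, $G(\A)$ is profinite by Lemma \ref{finiteprofinite} and $G(k)$ is finite, so $G(\A)_1/G(k)$ is a quotient of a compact group. When $G = \Ga$, Proposition \ref{cohomologyofG_adualgeneralk}(i) gives $\widehat{\Ga}(k) = 0$, so $G(\A)_1 = \A$, and $\A/k$ is classically compact. When $G = \R_{k'/k}(\Gm)$ for a finite separable extension $k'/k$, one has $G(\A_k) = \A_{k'}^{\times}$ and the identity on $\Gm$ generates $\widehat{G}(k) = \Z$ modulo torsion (via Proposition \ref{charactersseparableweilrestriction}), so $G(\A_k)_1 = (\A_{k'}^{\times})_1$, and compactness of $(\A_{k'}^{\times})_1/(k')^{\times}$ is a standard consequence of global class field theory.

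The devissage step is the following claim: if $1 \to G' \to G \to G'' \to 1$ is a short exact sequence of affine commutative $k$-group schemes of finite type and the proposition holds for $G'$ and $G''$, then it holds for $G$. First, I would verify that $G'(\A)_1 = G'(\A) \cap G(\A)_1$. By Corollary \ref{characterfinitecokernel}, every character $\chi' \in \widehat{G'}(k)$ has some positive power $\chi'^N$ that extends to $\chi \in \widehat{G}(k)$, and since $\| \cdot \|$ on $\A^{\times}$ satisfies $\|a^N\| = \|a\|^N$, vanishing of $\|\chi(g')\|$ for all $\chi \in \widehat{G}(k)$ (with $g' \in G'(\A)$) is equivalent to vanishing of $\|\chi'(g')\|$ for all $\chi' \in \widehat{G'}(k)$. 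Combined with functoriality of characters giving a map $G(\A)_1 \to G''(\A)_1$, this produces a left-exact sequence of Hausdorff topological groups
\begin{equation*}
0 \longrightarrow G'(\A)_1/G'(k) \longrightarrow G(\A)_1/G(k) \longrightarrow G''(\A)_1/G''(k).
\end{equation*}
Once the image of the rightmost map is known to be closed, Lemma \ref{filteredbetweencompactgps} yields compactness of the middle term. Combining this with Lemma \ref{affinegroupstructurethm} (reducing to $G$ an almost-torus or split unipotent, the latter handled by iteration on $\Ga$-composition factors) and Lemma \ref{almosttorus}(iv) (reducing almost-tori, via isogeny, to finite groups and separable Weil restrictions of split tori, further handled by iteration on $\Gm$-factors) completes the argument modulo the closure claim.

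The main obstacle is the closedness of the image of $G(\A)_1/G(k) \to G''(\A)_1/G''(k)$, which is equivalent to closedness of $G(\A)_1 \cdot G''(k)$ inside $G''(\A)_1$. Since $G''(k) \subset G''(\A)$ is discrete, the problem reduces to showing that the image of $G(\A)_1$ in $G''(\A)_1$ is closed (or has closed saturation under $G''(k)$). One approach uses the adelic openness statements of Proposition \ref{adelictopcohombasics}(vii)-(x), together with finiteness-of-exponent for ${\rm H}^1$ (Lemma \ref{H^1finiteexponent}) and the restricted-product description of $G''(\A)$ from Proposition \ref{H^i(A,G)--->prodH^i(k_v,G)}, to show that the cokernel of $G(\A) \to G''(\A)$ injects into the locally-profinite group ${\rm H}^1(\A,G')$. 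In particular, when $G'$ is finite (the isogeny case used to pass between $G$ and the products appearing in Lemma \ref{almosttorus}(iv)), $G'(\A)$ is compact and ${\rm H}^1(\A, G')$ is a discrete restricted product, making the closure argument especially clean and providing the foundation for the broader devissage.
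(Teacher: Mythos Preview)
Your approach differs from the paper's: the paper reduces to the smooth case by passing to the maximal smooth $k$-subgroup and then invokes Oesterl\'e's result \cite[Ch.\,IV, Thm.\,1.3]{oesterle} (see Remark~\ref{G(A)_1/G(k)compactremark}) for the smooth connected case as a black box, doing real work only to pass from connected to non-connected via an injection into $\Sha^1(G^0)$. You attempt a from-scratch d\'evissage to the base cases $\Ga$, $\R_{k'/k}(\Gm)$, and finite groups.

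There is a structural gap in your reduction for almost-tori. The isogeny of Lemma~\ref{almosttorus}(iv) goes $A \times \R_{k_1/k}(T_1) \twoheadrightarrow G^n \times \R_{k_2/k}(T_2)$, so your almost-torus $G$ sits as a factor of the \emph{quotient}, not the middle term, of the associated short exact sequence. Your forward d\'evissage (``$G'$ and $G''$ known $\Rightarrow G$ known'') therefore does not apply: you would need to go from the middle term $A \times \R_{k_1/k}(T_1)$ (known via base cases) to the quotient, and the cokernel of $X(\A)_1/X(k) \to G''(\A)_1/G''(k)$ is not obviously compact.

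A fix that stays within your framework: use Lemma~\ref{almosttorus}(ii) to reduce almost-tori to tori, and for a torus $T$ take the embedding $T \hookrightarrow \R_{k'/k}(T_{k'})$ (for $k'/k$ separable splitting $T$) and run a \emph{subgroup} d\'evissage: if $G' \hookrightarrow G$ and compactness holds for $G$, then it holds for $G'$. Both your forward d\'evissage and this subgroup version admit a cleaner justification than the ${\rm H}^1$-route you sketch for the closure claim. All the groups $H(\A)_1/H(k)$ are locally compact, Hausdorff, and second-countable, so Lemma~\ref{isom=homeo} makes the relevant continuous injections (namely $A/A' \hookrightarrow G''(\A)_1/G''(k)$ in the forward case, $G'(\A)_1/G'(k) \hookrightarrow G(\A)_1/G(k)$ in the subgroup case) into homeomorphisms onto their images. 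Then the standard fact that a locally compact subgroup of a Hausdorff topological group is automatically closed forces the image to be closed in the assumed-compact target, hence compact. This disposes of your ``main obstacle'' without any appeal to ${\rm H}^1(\A, G')$.
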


\begin{remark}
\label{G(A)_1/G(k)compactremark}
The compactness assertion in Proposition \ref{G(A)_1/G(k)compact} is made in \cite[Ch.\,IV, Thm.\,1.3]{oesterle} for any solvable smooth affine 
$k$-group $G$.  The proof requires that $G$ be connected (in order to apply the Lie--Kolchin theorem to $G$),
so the correct statement of \cite[Ch.\,IV,  Thm.\,1.3]{oesterle} is that if $G$ is a smooth {\em connected} affine solvable group scheme over the global field $k$, then $G(\A)_1/G(k)$ is compact. It is false without connectedness, as the following example due to Brian Conrad illustrates. 

Let $G = \Gm \rtimes \Z/2\Z$
where the semi-direct product structure is through inversion. 
As we saw in Remark \ref{characterfinitecokerremark}, any $\chi \in \widehat{G}(k)$ restricts to the trivial character on $\Gm$, hence has finite order. It follows that $G(\A)_1 = G(\A)$. But we claim that $G(\A)/G(k)$ is not compact.

The quotient group $\Gm(\A)/\Gm(k) = \A^{\times}/k^{\times}$
is non-compact (due to non-compactness of 
its image in $\mathbf{R}_{>0}$ under the idelic norm), so it suffices to show that the natural continuous injective map 
\begin{equation}
\label{Gm(A)/Gm(k)--->Z/2Z}
j: \Gm(\A)/\Gm(k) \hookrightarrow G(\A)/G(k)
\end{equation}
 is a homeomorphism onto a closed
subspace.  Since
$G(k) \rightarrow \Z/2\Z$ is surjective, this image coincides with the fiber over $1$ for 
the continuous map $G(\A)/G(k) \rightarrow (\Z/2\Z)(\A)/(\Z/2\Z)$, which is closed because 
$(\Z/2\Z)(\A)/(\Z/2\Z)$ is Hausdorff. To show that the map (\ref{Gm(A)/Gm(k)--->Z/2Z}) is a homeomorphism onto its image, we note that the continuous map $G(\A) \rightarrow \Gm(\A)$ given by projection onto the $\Gm$ factor descends to a well-defined map $\im(j) \rightarrow \Gm(\A)/\Gm(k)$ (though it does {\em not} descend to a well-defined map $G(\A)/G(k) \rightarrow \Gm(\A)/\Gm(k)$) that provides a continuous inverse (on $\im(j)$) to $j$, hence $j$ is a homeomorphism onto its image, as claimed.
\end{remark}

\begin{proof}[Proof of Proposition $\ref{G(A)_1/G(k)compact}$]
Since $k \hookrightarrow \A$ is a closed subgroup, by choosing an embedding of $G$ into some affine $n$-space over $k$, we can see that $G(k) \hookrightarrow G(\A)$ is closed, hence $G(\A)_1/G(k)$ is Hausdorff, so we concentrate on compactness. Replacing $G$ with its maximal smooth $k$-subgroup scheme $G^{\sm} \subset G$ (see \cite[Lemma C.4.1, Remark C.4.2]{cgp}) has no effect on its adelic or rational points. We claim that it also has no effect on the norm-$1$ points. That is, if $g \in G(\A)$ satisfies $|\!|\chi(g)|\!| = 1$ for every $\chi \in \widehat{G}(k)$, then $|\!|\chi(g)|\!| = 1$ for every $\chi \in \widehat{G^{\sm}}(k)$. For this, it suffices to show that for every $\chi \in \widehat{G^{\sm}}(k)$, $\chi^n$ extends to $\widehat{G}(k)$ for some $n > 0$. This follows from Corollary \ref{characterfinitecokernel}. So we may assume that $G$ is smooth. By Remark \ref{G(A)_1/G(k)compactremark}, based on results of Oesterl\'e \cite{oesterle}, we know the result when $G$ is smooth and connected. We will deduce the general case from this one.

Let $G^0$ be the identity component of $G$, and let $E : = G/G^0$ be the finite {\'e}tale quotient. Then $E(\A) = E(\A)_1$ is compact by Lemma \ref{finiteprofinite}, hence so is $E(\A)/E(k)$. Let $X$ be defined by the following exact sequence
\[
0 \longrightarrow X \longrightarrow G(\A)_1/G(k) \longrightarrow E(\A)/E(k)
\]
Then we claim that the map $G(\A)_1/G(k) \rightarrow E(\A)/E(k)$ has closed image and that $X$ is compact. Lemma \ref{filteredbetweencompactgps} will then complete the proof.

First we check that the map $G(\A)_1/G(k) \rightarrow E(\A)/E(k)$ has closed image. Since $E(k)$ is finite, it suffices to check that the map $G(\A)_1 \rightarrow E(\A)$ has closed image. We note that the map $G(\A) \rightarrow E(\A)$ has closed image, since it is the kernel of the continuous map $E(\A) \rightarrow {\rm{H}}^1(\A, G^0)$, in which the target group is Hausdorff by Proposition \ref{adelictopcohombasics}(iv). It follows from Lemma \ref{isom=homeo} that the map $G(\A)/G^0(\A) \hookrightarrow E(\A)$ induces a homeomorphism onto a closed subgroup. Therefore, if we show that the map $G(\A)_1 \rightarrow G(\A)/G^0(\A)$ has closed image, then the claim will follow. That is, we need to show that the subgroup $G(\A)_1G^0(\A) \subset G(\A)$ is closed.

First suppose that $k$ is a function field. Then the quotient $G(\A)/G(\A)_1$ is discrete, hence the quotient $G(\A)/G(\A)_1G^0(\A)$ is discrete, so $G(\A)_1G^0(\A) \subset G(\A)$ is closed. Next suppose that $k$ is a number field. Then the quotient $G(\A)/G(\A)_1$ is divisible, and the quotient $G(\A)/G^0(\A) \hookrightarrow E(\A)$ is of finite exponent, so the quotient $G(\A)/G(\A)_1G^0(\A)$ is divisible and of finite exponent, thus trivial.

Next we need to show that $X$ is compact. We will show that $[X:  G^0(\A)_1/G^0(k)]$ is finite, and the desired compactness will then follow from the known compactness of $G^0(\A)_1/G^0(k)$ (Remark \ref{G(A)_1/G(k)compactremark}). We prove finiteness in two steps: first, we prove that $G^0(\A) \cap G(\A)_1 = G^0(\A)_1$, and then that $[X:  (G^0(\A) \cap G(\A)_1)/G^0(k)]$ is finite.

Suppose that $g \in G^0(\A) \cap G(\A)_1$. We need to show that $|\!|\chi(g)|\!| = 1$ for all $\chi \in \widehat{G^0}(k)$. By Corollary \ref{characterfinitecokernel}, $\chi^n$ extends to a character of $\widehat{G}(k)$ for some positive integer $n$. Therefore, $|\!|\chi^n(g)|\!| = 1$, so $|\!|\chi(g)|\!| = 1$, as desired.

Next, we check that $[X:  (G^0(\A) \cap G(\A)_1)/G^0(k)]$ is finite. Define an inclusion
\[
\phi:  \frac{X}{(G^0(\A)\cap G(\A)_1)/G^0(k)} \hookrightarrow \Sha^1(G^0)
\]
as follows. We have a commutative diagram with exact rows: 
\[
\begin{tikzcd}
G^0(k) \arrow{r} \arrow{d} & G(k) \arrow{r} \arrow{d} & E(k) \arrow{r}{\delta} \arrow[d, hookrightarrow] & {\rm{H}}^1(k, G^0) \arrow{d} \\
G(\A)_1 \cap G^0(\A) \arrow{r} & G(\A)_1 \arrow{r}{f} & E(\A) \arrow{r} & {\rm{H}}^1(\A, G^0)
\end{tikzcd}
\]
Given $g \in G(\A)_1$ representing an element of $X$, $f(g)$ lifts to an element $e \in E(k)$ and we define $\phi(g) : = \delta(e)$. A straightforward diagram chase then shows that $\phi$ is indeed an inclusion into $\Sha^1(G^0)$. By \cite[Thm.\,7.1]{borelserre} over number fields and \cite[Thm.\,1.3.3(i)]{conrad} over function fields, $\Sha^1(k, H)$ is finite for all affine $k$-group schemes $H$ of finite type (even without commutativity hypotheses). This therefore proves the finiteness of $[X:  (G^0(\A) \cap G(\A)_1)/G^0(k)]$.
\end{proof}

\begin{proposition}
\label{G(A)_1/G(k)profinite}
Let $k$ be a global function field, $G$ an affine commutative $k$-group scheme of finite type. Then $G(\A)_1/G(k)$ is profinite.
\end{proposition}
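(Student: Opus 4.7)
Since $G(\A)_1/G(k)$ is compact Hausdorff by Proposition \ref{G(A)_1/G(k)compact}, proving profiniteness reduces to proving total disconnectedness, equivalently the existence of an open profinite subgroup. The plan is a d\'evissage from the base cases of $G$ finite, $G = \Ga$, and $G = \R_{k'/k}(\Gm)$ for $k'/k$ a finite separable extension, using Lemma \ref{affinegroupstructurethm} and Lemma \ref{almosttorus}(iv).

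Two permanence properties drive the d\'evissage. First, if $H \hookrightarrow K$ is a closed $k$-subgroup inclusion, then $H(\A)_1 = H(\A) \cap K(\A)_1$: the inclusion $\subset$ is immediate, while $\supset$ uses that every $\chi \in \widehat{H}(k)$ admits a power $\chi^N$ extending to a character of $K$, by Corollary \ref{characterfinitecokernel}. Thus $H(\A)_1/H(k)$ embeds continuously into $K(\A)_1/K(k)$, and since the source is compact and the target Hausdorff this is a closed embedding, so profiniteness is inherited by closed subgroups. Second, given a short exact sequence $1 \to G' \to G \to G'' \to 1$, the same application of Corollary \ref{characterfinitecokernel} yields $G'(\A) \cap G(\A)_1 = G'(\A)_1$, producing a short exact sequence of compact Hausdorff groups
\[
0 \longrightarrow G'(\A)_1/G'(k) \longrightarrow G(\A)_1/G(k) \longrightarrow Q \longrightarrow 0,
\]
with $Q$ a closed subgroup of $G''(\A)_1/G''(k)$. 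An extension of a profinite group by a profinite group is profinite (the identity component of the middle maps trivially to the totally disconnected quotient and thus lies in the totally disconnected kernel, forcing it to be trivial), so the property is preserved in short exact sequences with profinite ends.

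Combining these, Lemma \ref{affinegroupstructurethm} reduces us to $G = \Ga$ or $G$ an almost-torus. For almost-tori, Lemma \ref{almosttorus}(iv) furnishes an isogeny $A \times \R_{k_1/k}(T_1) \twoheadrightarrow G^n \times \R_{k_2/k}(T_2)$ with $A$ finite and $T_i$ split tori; splitting each $T_i$ into factors of $\Gm$ and iterating short-exact-sequence d\'evissage, then viewing $G$ as a closed subgroup of $G^n \times \R_{k_2/k}(T_2)$ via the first factor, further reduces us to finite $k$-group schemes and $\R_{k'/k}(\Gm)$.

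For finite $G$, $G(\A)_1 = G(\A)$ is profinite by Lemma \ref{finiteprofinite}, and $G(k)$ is closed, so the quotient is profinite. For $G = \Ga$, $\widehat{\Ga}(k) = 0$ by Proposition \ref{cohomologyofG_adualgeneralk}(i), so $\Ga(\A)_1/\Ga(k) = \A/k$; the compact open profinite subgroup $\prod_v \calO_v \subset \A$ has image in $\A/k$ that is open with kernel $k \cap \prod_v \calO_v$ equal to the constant field of $k$, hence profinite, making $\A/k$ profinite. For $G = \R_{k'/k}(\Gm)$, Proposition \ref{charactersseparableweilrestriction} gives $\widehat{G}(k) \simeq \Z$ generated by the norm character, and norm-compatibility identifies $G(\A_k)_1/G(k)$ with the norm-$1$ idele class group $\A_{k'}^1/(k')^\times$; in the function field case the profinite open subgroup $\prod_{v'} \calO_{v'}^\times$ has quotient $\A_{k'}^1/((k')^\times \prod_{v'} \calO_{v'}^\times)$ isomorphic to $\Pic^0(C')$ for the smooth proper curve $C'$ with function field $k'$, a finite group, so the quotient is profinite. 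The main obstacle is establishing the crucial identity $G'(\A) \cap G(\A)_1 = G'(\A)_1$ underpinning the d\'evissage, together with the reliance on the function-field-specific finiteness of $\Pic^0$ in the $\R_{k'/k}(\Gm)$ case (which would fail over number fields, where archimedean units supply a divisible piece).
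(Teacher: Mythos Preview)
Your d\'evissage is largely sound, but the almost-torus reduction has a gap: your two permanence properties give ``closed subgroups inherit'' and ``middle term from the two ends,'' yet to pass from $A \times \R_{k_1/k}(T_1)$ to its isogenous \emph{quotient} $G^n \times \R_{k_2/k}(T_2)$ you need the other direction. Concretely, your exact sequence only exhibits a closed subgroup $Q \subset (G^n \times \R_{k_2/k}(T_2))(\A)_1/(G^n \times \R_{k_2/k}(T_2))(k)$ as a quotient of the profinite source; you still owe an argument that the cokernel is profinite (it is, being compact Hausdorff of finite exponent since it is controlled by ${\rm H}^1$ of the finite kernel, but this is not what you wrote). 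A cleaner route for almost-tori would be Lemma~\ref{almosttorus}(ii) rather than (iv): take $T \subset G$ with $G/T$ finite, use the extension property, and for the torus $T$ embed it into $\R_{k'/k}(T_{k'})$ with $T_{k'}$ split to invoke the closed-subgroup property directly.

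More to the point, the paper's proof bypasses all structure theory. Since $k$ is a function field, $\A$ has no archimedean factors and is totally disconnected; hence $G(\A)$ has a neighborhood base of compact open sets. For $g \in G(\A) \setminus G(k)$, choose a compact open $U \ni g$ disjoint from the closed subgroup $G(k)$; its image in $G(\A)/G(k)$ is open (quotient maps are open) and compact, hence clopen in the Hausdorff quotient, and misses the identity. Thus $G(\A)/G(k)$ is totally disconnected, so its compact Hausdorff subquotient $G(\A)_1/G(k)$ is profinite. Your argument proves finer structural facts about each building block (e.g., identifying $\A_{k'}^1/(k')^\times$ with an extension of $\Pic^0(C')$), which is interesting but unnecessary here; the paper's two-line topological argument is what you want.
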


\begin{proof}
By Proposition \ref{G(A)_1/G(k)compact}, $G(\A)_1/G(k)$ is compact Hausdorff, so we only need to check that $G(\A)/G(k)$ is totally disconnected. In order to do this, it suffices to show that if $g \in G(\A) - G(k)$, then there is a compact open set $U \subset G(\A)$ containing $g$ and disjoint from $G(k)$. Since $G(k) \subset G(\A)$ is closed, there exists an open such $U$. Since any neighborhood of $g$ contains a compact open neighborhood of $g$ (as $k$ is a function field, so it has no archimedean places), we are done.
\end{proof}

\begin{remark}
Proposition \ref{G(A)_1/G(k)profinite} is false if $k$ is a number field. Indeed, if $G = \Ga$ then $G(\A)_1 = \A$, but $\A/k$ is divisible and nonzero, so not profinite.
\end{remark}

\begin{lemma}
\label{G(A)/G(k)containedincompletion}
Let $k$ be a global function field, $G$ an affine commutative $k$-group scheme of finite type. Then the canonical map
\[
G(\A)/G(k) \longrightarrow (G(\A)/G(k))_{\pro}
\]
is an inclusion.
\end{lemma}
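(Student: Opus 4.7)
The plan is to use the ``norm-1'' exact sequence to split the problem into the profinite case and the discrete finitely generated case. Let $A := G(\A)/G(k)$ and $n := \mathrm{rk}(\widehat{G}(k))$. The discussion surrounding Definition \ref{G(A)_1def} furnishes an exact sequence of topological groups
\[
0 \longrightarrow H \longrightarrow A \longrightarrow \mathbf{Z}^n \longrightarrow 0,
\]
where $H := G(\A)_1/G(k)$ is profinite (by Proposition \ref{G(A)_1/G(k)profinite}) and open in $A$ (being the image of the open subset $G(\A)_1 \subset G(\A)$ under the open quotient map $G(\A) \to A$), while $\mathbf{Z}^n = G(\A)/G(\A)_1$ is discrete (as the cokernel is a lattice in $(\mathbf{R}_{>0})^n$ over a function field). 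It therefore suffices to show that every nonzero $x \in A$ lies outside some closed finite-index subgroup of $A$.

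Given such an $x$, I split into two cases. If the image $\bar{x}$ in $\mathbf{Z}^n$ is nonzero, then for a suitable $m > 0$ it has nonzero image in $(\mathbf{Z}/m\mathbf{Z})^n$, and the preimage in $A$ of the kernel of $\mathbf{Z}^n \twoheadrightarrow (\mathbf{Z}/m\mathbf{Z})^n$ is the required subgroup. If instead $x \in H$, then since $H$ is profinite I can choose an open finite-index subgroup $H' \subset H$ not containing $x$; the openness of $H$ in $A$ makes $H'$ open (and hence closed) in $A$.

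The key remaining point is that $A/H'$ is discrete: the finite subgroup $H/H'$ is open in $A/H'$ (as the image of the open subgroup $H$ under the open quotient map), so $\{0\}$, being open in the finite discrete space $H/H'$, is open in $A/H'$. Consequently $A/H'$ is a finitely generated discrete abelian group (an extension of $\mathbf{Z}^n$ by the finite group $H/H'$), so by the structure theorem it injects into its abstract profinite completion. The nonzero image of $x$ in $A/H'$ therefore survives in some finite quotient $A/H' \twoheadrightarrow F$, and the kernel of the composition $A \to A/H' \to F$ is the desired closed finite-index subgroup of $A$. The only technical subtlety of the argument is the discreteness of $A/H'$, but as just noted this follows immediately from the openness of $H$ in $A$.
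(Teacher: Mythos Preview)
Your proof is correct and uses the same decomposition as the paper via the exact sequence $0 \to G(\A)_1/G(k) \to G(\A)/G(k) \to G(\A)/G(\A)_1 \to 0$, with the first term profinite (Proposition \ref{G(A)_1/G(k)profinite}) and the last discrete free of finite rank. The paper proceeds by a diagram chase invoking Proposition \ref{profiniteexactfgquotient} to show the bottom row of the evident diagram (of groups and their profinite completions) is left-exact, whereas you argue more directly by separating nonzero elements from a closed finite-index subgroup case by case; the two arguments have the same content, and your hands-on version simply unpacks what Proposition \ref{profiniteexactfgquotient} would otherwise encapsulate.
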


\begin{proof}
Consider the following commutative diagram: 
\[
\begin{tikzcd}
0 \arrow{r} & G(\A)_1/G(k) \isoarrow{d} \arrow{r} & G(\A)/G(k) \arrow{r} \arrow{d} & G(\A)/G(\A)_1 \arrow[d, hookrightarrow] \\ 
0 \arrow{r} & (G(\A)_1/G(k))_{\pro} \arrow{r} & (G(\A)/G(k))_{\pro} \arrow{r} & (G(\A)/G(\A)_1)_{\pro}
\end{tikzcd}
\]
The top row is clearly exact. The bottom row is exact by Proposition \ref{profiniteexactfgquotient}, since the quotient $G(\A)/G(\A)_1$ is discrete and finitely generated. The first vertical arrow is an isomorphism by Proposition \ref{G(A)_1/G(k)profinite}, and the third vertical arrow is an inclusion because $G(\A)/G(\A)_1$ is discrete and finitely generated. The five lemma now shows that the middle vertical arrow is an inclusion.
\end{proof}

\begin{proposition}
\label{exactnessatG(A)nocompletion}
Let $k$ be a global function field, $G$ an affine commutative $k$-group scheme of finite type. Then the complex
\[
G(k) \longrightarrow G(\A) \longrightarrow {\rm{H}}^2(k, \widehat{G})^*
\]
induced by the global duality pairing $G(\A) \times {\rm{H}}^2(k, \widehat{G}) \rightarrow \Q/\Z$ is exact.
\end{proposition}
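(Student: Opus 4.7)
The plan is to deduce this uncompleted version directly from the already-established profinite completed analogue (Proposition \ref{exactnessatG(A)}), by means of a short diagram chase exploiting the fact that the natural map $G(\A)/G(k) \hookrightarrow (G(\A)/G(k))_{\pro}$ is injective (Lemma \ref{G(A)/G(k)containedincompletion}). That the sequence is a complex is already noted in \S\ref{globalsectionpreliminaries}, so only exactness in the middle requires proof.

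Take any $g \in G(\A)$ that pairs trivially with ${\rm{H}}^2(k, \widehat{G})$; we must show $g \in G(k)$. The cup-product pairing induces the continuous map $\phi: G(\A) \rightarrow {\rm{H}}^2(k, \widehat{G})^*$ (continuity being Lemma \ref{G(A)toH^2(k,G^)*cts}), and since ${\rm{H}}^2(k, \widehat{G})$ is discrete torsion (Lemma \ref{H^2(G^)istorsion}), the target ${\rm{H}}^2(k, \widehat{G})^*$ is profinite. Hence $\phi$ factors canonically through the profinite completion $G(\A)_{\pro}$; call the resulting map $\bar\phi$, so $\phi = \bar\phi \circ c$ where $c: G(\A) \rightarrow G(\A)_{\pro}$ is the completion map. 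By hypothesis $\bar\phi(c(g)) = 0$, so by Proposition \ref{exactnessatG(A)} the image $c(g)$ lies in the image of $G(k)_{\pro} \rightarrow G(\A)_{\pro}$.

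Now apply right-exactness of profinite completion (Proposition \ref{profiniterightexact}) to the exact sequence
\[
0 \longrightarrow G(k) \longrightarrow G(\A) \longrightarrow G(\A)/G(k) \longrightarrow 0
\]
to identify $(G(\A)/G(k))_{\pro}$ with the cokernel of $G(k)_{\pro} \rightarrow G(\A)_{\pro}$. The previous paragraph then shows that the image of $g$ in $(G(\A)/G(k))_{\pro}$ is trivial. By Lemma \ref{G(A)/G(k)containedincompletion}, the natural map $G(\A)/G(k) \hookrightarrow (G(\A)/G(k))_{\pro}$ is injective, so the class of $g$ in $G(\A)/G(k)$ must itself be trivial, i.e., $g \in G(k)$, as desired.

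There is really no serious obstacle here: the entire argument rests on verifying the compatibility $\phi = \bar\phi \circ c$ (automatic from continuity of $\phi$ and the universal property of the profinite completion into a profinite target) and on invoking the three prior results cited above. The genuine content has been absorbed into Proposition \ref{exactnessatG(A)} and Lemma \ref{G(A)/G(k)containedincompletion}, the latter of which in turn rests on the delicate compactness statement of Proposition \ref{G(A)_1/G(k)profinite} that uses the function-field hypothesis in an essential way (in characteristic zero, $G(\A)_1/G(k)$ fails to be totally disconnected, which is exactly why the uncompleted sequence fails for $G = \Ga$ over number fields).
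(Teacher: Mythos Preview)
Your proof is correct and follows essentially the same route as the paper's: both invoke Proposition~\ref{exactnessatG(A)}, right-exactness of profinite completion (Proposition~\ref{profiniterightexact}), and the injectivity of $G(\A)/G(k) \hookrightarrow (G(\A)/G(k))_{\pro}$ from Lemma~\ref{G(A)/G(k)containedincompletion}. The only small gap is that your application of Proposition~\ref{profiniterightexact} to the short exact sequence requires $G(k) \hookrightarrow G(\A)$ to be a topological embedding, which is Remark~\ref{G(k)-->G(A)embedding}; the paper cites this explicitly.
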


\begin{proof}
Thanks to Proposition \ref{exactnessatG(A)} and the right-exactness of profinite completion (Proposition \ref{profiniterightexact}), in conjunction with Remark \ref{G(k)-->G(A)embedding}, this follows from Lemma \ref{G(A)/G(k)containedincompletion}.
\end{proof}

\section{Exactness at ${\rm{H}}^2(k, \widehat{G})^*$}

The goal of this section is essentially to prove that the global duality sequence of Theorem \ref{poitoutatesequence} is exact at ${\rm{H}}^2(k, \widehat{G})^*$. We say essentially because strictly speaking the map ${\rm{H}}^2(k, \widehat{G})^* \rightarrow {\rm{H}}^1(k, G)$ is not defined until we obtain the perfect pairing between the groups $\Sha^2(k, \widehat{G})$ and $\Sha^1(k, G)$. Indeed, recall that this map is defined to be the composition
\[
{\rm{H}}^2(k, \widehat{G})^* \twoheadrightarrow \Sha^2(\widehat{G})^* \xrightarrow{\sim} \Sha^1(G) \hookrightarrow {\rm{H}}^1(k, G),
\]
where the middle isomorphism is defined via still unproven Theorem \ref{shapairing}. Once that theorem is proven, however, the exactness of the complex at ${\rm{H}}^2(k, \widehat{G})^*$ is then equivalent to the exactness of the following sequence, which involves no input from any pairing between Tate-Shafarevich groups: 
\begin{equation}
\label{exactnessatH^2(G^)*seq12}
{\rm{H}}^0(\A, G)_{\pro} \longrightarrow {\rm{H}}^2(k, \widehat{G})^* \longrightarrow \Sha^2(\widehat{G})^*.
\end{equation}
What we shall show in this section is that the complex (\ref{exactnessatH^2(G^)*seq12}) is in fact an exact sequence: 

\begin{proposition}
\label{exactnessatH^2(k,G^)*}
Let $k$ be a global function field, $G$ a commutative affine $k$-group scheme of finite type. Then the global duality sequence
\[
{\rm{H}}^0(\A, G)_{\pro} \longrightarrow {\rm{H}}^2(k, \widehat{G})^* \longrightarrow \Sha^2(\widehat{G})^*
\]
is exact.
\end{proposition}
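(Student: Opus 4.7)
The plan is to prove exactness by a d\'evissage reducing to the base cases of finite commutative group schemes, separable Weil restrictions of split tori, and $\mathbf{G}_a$. Suppose first that we have a short exact sequence
\[
1 \to G' \to G \to G'' \to 1
\]
of affine commutative $k$-group schemes of finite type. Applying Proposition \ref{hatisexact} to dualize, then taking cohomology and invoking the vanishing ${\rm{H}}^3(k, \widehat{G'}) = 0$ from Proposition \ref{H^3(G^)=0}, yields exactness of ${\rm{H}}^2(k, \widehat{G''}) \to {\rm{H}}^2(k, \widehat{G}) \to {\rm{H}}^2(k, \widehat{G'}) \to 0$. Dualizing via the exact functor $\Hom(-, \Q/\Z)$ (since $\Q/\Z$ is injective) gives the left-exactness of $0 \to {\rm{H}}^2(k, \widehat{G'})^* \to {\rm{H}}^2(k, \widehat{G})^* \to {\rm{H}}^2(k, \widehat{G''})^*$. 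The analogous argument carried out over each $k_v$ (using vanishing of ${\rm{H}}^3(k_v, \widehat{G'})$ from Proposition \ref{cohomologicalvanishing}) combined with the injection ${\rm{H}}^2(\A, \widehat{G}) \hookrightarrow \prod_v {\rm{H}}^2(k_v, \widehat{G})$ of Proposition \ref{H^i(A,G^)--->prodH^i(k_v,G^)} yields, through a diagram chase, the right-exactness $\Sha^2(\widehat{G''}) \to \Sha^2(\widehat{G}) \to \Sha^2(\widehat{G'}) \to 0$; dualizing gives $0 \to \Sha^2(\widehat{G'})^* \to \Sha^2(\widehat{G})^* \to \Sha^2(\widehat{G''})^*$ exact. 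Together with the left-exactness of $G'(\A)_{\pro} \to G(\A)_{\pro} \to G''(\A)_{\pro}$ from Proposition \ref{globalpointsprofinite}, a Five-Lemma-style chase in the resulting $3 \times 3$ grid (with all columns being our target complex) shows that exactness at ${\rm{H}}^2(k, \widehat{G'})^*$ and ${\rm{H}}^2(k, \widehat{G''})^*$ forces exactness at ${\rm{H}}^2(k, \widehat{G})^*$.

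With the d\'evissage in hand, Lemma \ref{affinegroupstructurethm} and Lemma \ref{almosttorus}(iv) reduce the verification to three base cases. For $G$ finite commutative, the exactness is part of the classical Poitou--Tate 9-term exact sequence, which is taken as input (see Remark \ref{prior}). For $G = {\rm{R}}_{k'/k}(\mathbf{G}_m)$ with $k'/k$ a finite separable extension, Proposition \ref{diagramcommutesglobal} transfers the problem to $G = \mathbf{G}_m$ over $k'$, where exactness is a consequence of global class field theory: the reciprocity map $\A_{k'}^\times \to \mathfrak{g}_{k'}^{\rm{ab}}$ induces on profinite completions the relevant surjection onto ${\rm{H}}^2(k', \mathbf{Z})^* \simeq \mathfrak{g}_{k'}^{\rm{ab}}$ (using the compatibility of Lemma \ref{localreciprocitymap} and summing over all places, exactly as in the proof of the $\mathbf{G}_m$ case of Lemma \ref{exactnessatG(A)splittoriunip}). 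For $G = \mathbf{G}_a$, Proposition \ref{A/k=H^2(k,Ga^)*} furnishes a topological isomorphism $\A/k \xrightarrow{\sim} {\rm{H}}^2(k, \widehat{\mathbf{G}_a})^*$, hence the natural map $\A_{\pro} \to {\rm{H}}^2(k, \widehat{\mathbf{G}_a})^*$ is already surjective; separately, Lemma \ref{H^2(k,Ga^)injectiveseparable} (applied at any one place, via the separable extension $k \hookrightarrow k_v$) gives $\Sha^2(\widehat{\mathbf{G}_a}) = 0$, so $\Sha^2(\widehat{\mathbf{G}_a})^* = 0$ and exactness is automatic.

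The main obstacle I anticipate will be the d\'evissage step: carefully verifying the right-exactness of $\Sha^2(\widehat{G''}) \to \Sha^2(\widehat{G}) \to \Sha^2(\widehat{G'}) \to 0$. While right-exactness at $\Sha^2(\widehat{G'})$ (i.e., surjectivity) does not follow formally from the global or local long exact sequences in isolation, it should emerge by exploiting both together: an element of $\Sha^2(\widehat{G'})$ lifts globally to ${\rm{H}}^2(k, \widehat{G})$, and the obstruction to it coming from $\Sha^2(\widehat{G})$ lies in the image of $\prod_v {\rm{H}}^2(k_v, \widehat{G''})$ in $\prod_v {\rm{H}}^2(k_v, \widehat{G})$, which by local right-exactness and a compatibility argument can be cancelled by modifying the lift. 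Once the $\Sha^2$ d\'evissage is established, the diagram chase at the dualized level is routine.
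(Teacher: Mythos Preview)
Your d\'evissage approach is genuinely different from the paper's, and it has real gaps in the inductive step. The surjectivity of $\Sha^2(\widehat{G}) \to \Sha^2(\widehat{G'})$ that you flag as the main obstacle is indeed problematic: by the snake lemma applied to the right-exact rows ${\rm{H}}^2(k,\widehat{G''}) \to {\rm{H}}^2(k,\widehat{G}) \to {\rm{H}}^2(k,\widehat{G'}) \to 0$ (globally and adelically), this surjectivity is equivalent to injectivity of $\Che^2(\widehat{G''}) \to \Che^2(\widehat{G})$, and there is no evident way to establish that without already knowing something like the Poitou--Tate sequence for $G''$. Your sketch (``lift globally, then cancel the local obstruction'') asks precisely for an element of ${\rm{H}}^2(k,\widehat{G''})$ with prescribed local images, which is the question being begged. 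There is a second gap you do not mention: your chase also needs $G(\A)_{\pro} \to G''(\A)_{\pro}$ to be surjective, but the cokernel is $(G''(\A)/\im G(\A))_{\pro}$, and $G''(\A)/\im G(\A)$ embeds in ${\rm{H}}^1(\A,G')$, which is typically nonzero. You might try to rescue the d\'evissage by restricting to sequences with ${\rm{H}}^1(\A,G') = 0$ and controlling $\Sha^2(\widehat{\cdot})$ via the $\Sha$-pairing duality with $\Sha^1$, but even the resulting condition (injectivity of $\Sha^1(G') \to \Sha^1(G)$) is not obvious for the sequences Lemma~\ref{affinegroupstructurethm} provides.

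The paper avoids all of this with a short direct argument that works uniformly for every $G$ and uses no d\'evissage. Since $G(\A)_{\pro}$ is compact, its image in the Hausdorff group $\ker\bigl({\rm{H}}^2(k,\widehat{G})^* \to \Sha^2(\widehat{G})^*\bigr)$ is closed, so it suffices to prove the image of $G(\A)$ is dense there. Given $\phi \in {\rm{H}}^2(k,\widehat{G})^*$ vanishing on $\Sha^2(\widehat{G})$ and a finite (torsion, hence finite-group) test set $T \subset {\rm{H}}^2(k,\widehat{G})$, choose a finite set $S$ of places so that $T/(T \cap \Sha^2(\widehat{G})) \hookrightarrow \prod_{v \in S} {\rm{H}}^2(k_v,\widehat{G})$; then $\phi|_T$ extends to an element of $\bigl(\prod_{v \in S}{\rm{H}}^2(k_v,\widehat{G})\bigr)^*$, and local duality (Proposition~\ref{H^2(G^)altori}) says $G(k_v) \to {\rm{H}}^2(k_v,\widehat{G})^*$ has dense image for each $v$. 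This produces the approximating $g \in G(\A)$ supported on $S$.
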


\begin{proof}
It is clear from the definitions that the diagram in Proposition \ref{exactnessatH^2(k,G^)*} is a complex (as was discussed in \S \ref{globalsectionpreliminaries}). The compact group $G(\A)_{\pro}$ has closed image inside the closed subgroup 
$$\ker({\rm{H}}^2(k, \widehat{G})^* \rightarrow \Sha^2(\widehat{G})^*) \subset {\rm{H}}^2(k, \widehat{G})^*.$$ 
Hence, it suffices to show that $G(\A)_{\pro}$ has dense image inside
this kernel, or equivalently, that the more tangible $G(\A)$ has dense image. That is, given a finite subset $T \subset {\rm{H}}^2(k, \widehat{G})$ and $\phi \in {\rm{H}}^2(k, \widehat{G})^*$ vanishing on $\Sha^2(\widehat{G})$, we seek $g \in G(\A)$ such that 
$\langle g, \alpha \rangle$ is close to $\phi(\alpha)$ inside $\mathbf{R}/\mathbf{Z}$ for every $\alpha \in T$. Since ${\rm{H}}^2(k, \widehat{G})$ is torsion (Lemma \ref{H^2(G^)istorsion}), we may replace $T$ with the {\em finite} group that it generates and thereby assume that $T$ is a subgroup.

Note that $\phi|_T$ factors through the quotient
$T/(T \cap \Sha^2(\widehat{G}))$ since $\phi$ vanishes on $\Sha^2(\widehat{G})$.
We claim that for some finite set $S$ of places of $k$, the map $T/(T \cap \Sha^2(\widehat{G})) \rightarrow \prod_{v \in S} {\rm{H}}^2(k_v, \widehat{G})$ is {\em injective}, so $\phi|_T$ is induced by an element of 
$(\prod_{v \in S} {\rm{H}}^2(k_v, \widehat{G}))^*$. 
Indeed, for each $\alpha \in T - (T \cap \Sha^2(\widehat{G}))$, there exists a place $v = v(\alpha)$ of $k$ such that $\alpha_v \neq 0$. We may choose $S$ to consist of the $v(\alpha)$. 

We will construct the desired $g \in G(\A)$ approximating $\phi$ on $T$ to satisfy
$g_v=0$ for $v \notin S$.  Our task is to appropriately choose $g_v \in G(k_v)$ for each $v \in S$.
(Note that $g = (g_v) \in \prod G(k_v)$ then lies in $G(\A)$.)  It has been shown that $\phi|_T$
lifts to $(\prod_{v \in S} {\rm{H}}^2(k_v, \widehat{G}))^*$, so to find the required $(g_v)_{v \in S} \in \prod_{v \in S} G(k_v)$, it suffices to show that the natural map $\prod_{v \in S} G(k_v) \rightarrow (\prod_{v \in S} {\rm{H}}^2(k_v, \widehat{G}))^*$ induced by the local duality pairings has dense image, or equivalently that the natural continuous
homomorphism $G(k_v) \rightarrow {\rm{H}}^2(k_v, \widehat{G})^*$
has dense image for each $v \in S$.  By local duality (Theorem \ref{H^2(G^)altori}), the target of this latter map is topologically identified via the local duality pairing with $G(k_v)_{\pro}$. The desired density follows, completing 
the proof of the proposition.
\end{proof}

\section{The fundamental exact sequence}\label{fes}

The goal of this section is to prove that the last three terms in Theorem \ref{poitoutatesequence} form an exact sequence. We
refer to this 3-term sequence as the {\em fundamental exact sequence} since when $G = \Gm$, it forms (most of) the fundamental exact sequence of class field theory (due to \cite[Th.\,2.13]{ces2} for degree-2 cohomology of $\Gm$).

\begin{proposition}
\label{funsequence}
Let $k$ be a global function field, $G$ an affine commutative $k$-group scheme of finite type. Then the global duality sequence
\[
{\rm{H}}^2(k, G) \longrightarrow {\rm{H}}^2(\A, G) \longrightarrow \widehat{G}(k)^* \longrightarrow 0
\]
of Theorem $\ref{poitoutatesequence}$ is exact.
\end{proposition}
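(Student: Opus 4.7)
The sequence is already a complex---as recorded in \S\ref{globalsectionpreliminaries}, the composition vanishes because the sum of local invariants of a global Brauer class is zero. The plan is therefore to prove surjectivity at $\widehat{G}(k)^*$ directly via local duality, and exactness at ${\rm{H}}^2(\A, G)$ by a d\'evissage to three base cases.

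For surjectivity at $\widehat{G}(k)^*$, Proposition \ref{adelictopcohombasics}(ii) identifies ${\rm{H}}^2(\A, G) \cong \bigoplus_v {\rm{H}}^2(k_v, G)$, and local duality (Proposition \ref{H^2(G)G^(k)dualityprop}) identifies each local summand with $\Hom(\widehat{G}(k_v), \Q/\Z)$. The restriction map $\widehat{G}(k) \hookrightarrow \widehat{G}(k_v)$ is injective for every $v$ (by faithful flatness of $k \hookrightarrow k_v$ applied to the coalgebra description of characters), so by the injectivity of $\Q/\Z$, any $\phi \in \widehat{G}(k)^*$ extends to some $\phi_v \in \Hom(\widehat{G}(k_v), \Q/\Z)$; the corresponding local class in ${\rm{H}}^2(k_v, G) \subset {\rm{H}}^2(\A, G)$ pairs with $\widehat{G}(k)$ to recover $\phi$.

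For exactness at ${\rm{H}}^2(\A, G)$, I attach to any short exact sequence $1 \to G' \to G \to G'' \to 1$ of affine commutative $k$-groups of finite type the commutative diagram
\[
\begin{tikzcd}[column sep=small]
{\rm{H}}^2(k, G') \arrow{r} \arrow{d} & {\rm{H}}^2(k, G) \arrow{r} \arrow{d} & {\rm{H}}^2(k, G'') \arrow{d} \arrow{r} & 0 \\
{\rm{H}}^2(\A, G') \arrow{r} \arrow{d} & {\rm{H}}^2(\A, G) \arrow{r} \arrow{d} & {\rm{H}}^2(\A, G'') \arrow{d} \arrow{r} & 0 \\
\widehat{G'}(k)^* \arrow{r} & \widehat{G}(k)^* \arrow{r} & \widehat{G''}(k)^* \arrow{r} & 0
\end{tikzcd}
\]
whose three rows are exact: the upper two by Proposition \ref{cohomologicalvanishing} (which gives the vanishing of ${\rm{H}}^3$ over $k$ and over each $k_v$) together with Proposition \ref{adelictopcohombasics}(ii), and the bottom by applying $\Hom(-, \Q/\Z)$ to the exact sequence $0 \to \widehat{G''}(k) \to \widehat{G}(k) \to \widehat{G'}(k)$ coming from Proposition \ref{hatisexact} and exploiting the injectivity of $\Q/\Z$. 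A straightforward diagram chase provides two d\'evissage tools: (\emph{Direction 3}), unconditionally, if the $G'$ and $G$ columns are exact then so is the $G''$ column; and (\emph{Direction 1}), if the $G'$ and $G''$ columns are exact and in addition $\widehat{G}(k) \twoheadrightarrow \widehat{G'}(k)$ (equivalently $\widehat{G'}(k)^* \hookrightarrow \widehat{G}(k)^*$), then the $G$ column is exact. The extra surjectivity in Direction 1 is automatic when $G'' = \Ga$: the long exact sequence of $0 \to \widehat{\Ga} \to \widehat{G} \to \widehat{G'} \to 0$ produces an inclusion $\widehat{G'}(k)/\im(\widehat{G}(k)) \hookrightarrow {\rm{H}}^1(k, \widehat{\Ga}) = 0$ by Proposition \ref{cohomologyofG_adualgeneralk}(ii).

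With these tools in hand, the argument proceeds as follows. By Lemma \ref{affinegroupstructurethm}, $G$ fits in $1 \to H \to G \to U \to 1$ with $H$ an almost-torus and $U$ split unipotent; filter $U$ by iterated $\Ga$-extensions and apply Direction 1 at each step (valid since $G'' = \Ga$), using the trivial base case $G = \Ga$ (where all three terms vanish). This reduces to almost-tori. For $G$ an almost-torus, Lemma \ref{almosttorus}(iv) provides a short exact sequence $1 \to K \to A \times \R_{k_1/k}(T_1) \to G^n \times \R_{k_2/k}(T_2) \to 1$ with $K, A$ finite commutative and $T_i$ split over finite separable $k_i/k$. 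Since the sequence of Proposition \ref{funsequence} decomposes as a direct sum under direct products of group schemes, the middle term is handled by the base cases $A$ finite and $\R_{k_1/k}(\Gm)$, and Direction 3 then yields exactness for $G^n \times \R_{k_2/k}(T_2)$, whence for $G$ by again decomposing direct products. The three base cases are: $G$ finite, known from \cite[\S1.2]{ces2} (via Remark \ref{prior}); $G = \Ga$, trivial; and $G = \R_{k'/k}(\Gm)$ for finite separable $k'/k$, where Shapiro's lemma identifies ${\rm{H}}^2(k, G)$ and ${\rm{H}}^2(\A, G)$ with $\Br(k')$ and $\bigoplus_{v'} \Br(k'_{v'})$ respectively, Proposition \ref{charactersseparableweilrestriction} identifies $\widehat{G}(k) \cong \Z$, and Proposition \ref{diagramcommutesglobal} verifies that our cup-product pairing matches the sum of local invariants, so the sequence becomes the classical fundamental exact sequence $\Br(k') \to \bigoplus_{v'} \Br(k'_{v'}) \to \Q/\Z \to 0$ of global class field theory for $k'$. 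The main obstacle is the class field theory input, together with the auxiliary vanishing ${\rm{H}}^1(k, \widehat{\Ga}) = 0$; everything else is formal d\'evissage.
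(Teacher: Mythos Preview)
Your proof is correct and follows essentially the same d\'evissage as the paper: the same three base cases (finite via Remark~\ref{prior}, $\R_{k'/k}(\Gm)$ via class field theory and Proposition~\ref{diagramcommutesglobal}, and $\Ga$ trivially), the same reduction to almost-tori via Lemma~\ref{affinegroupstructurethm}, and the same use of Lemma~\ref{almosttorus}(iv) to handle almost-tori.

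There are two minor organizational differences worth noting. First, you prove surjectivity at $\widehat{G}(k)^*$ directly for arbitrary $G$ via local duality (extending $\phi$ along $\widehat{G}(k) \hookrightarrow \widehat{G}(k_v)$ and invoking Proposition~\ref{H^2(G)G^(k)dualityprop}), whereas the paper obtains it as a byproduct of the d\'evissage; your shortcut is clean and avoids some bookkeeping. Second, for the step from almost-tori to general $G$, you filter $U$ by $\Ga$'s and apply your Direction~1 at each stage (invoking ${\rm{H}}^1(k,\widehat{\Ga})=0$ for the needed surjectivity $\widehat{G}(k)\twoheadrightarrow\widehat{G'}(k)$), while the paper handles the split unipotent quotient $U$ in one shot by observing that ${\rm{H}}^2(k_v,U)=0$ makes ${\rm{H}}^2(\A,H)\to{\rm{H}}^2(\A,G)$ surjective and that $\widehat{U}(k)={\rm{H}}^1(k,\widehat{U})=0$ makes $\widehat{H}(k)^*\to\widehat{G}(k)^*$ an isomorphism. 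The paper's version is slightly shorter here, but both are straightforward.
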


\begin{proof}
Note that we may replace ${\rm{H}}^2(\A, G)$ with $\underset{v}{\bigoplus} {\rm{H}}^2(k_v, G)$, by \cite[Th.\,2.13]{ces2}. When $G$ is finite, this proposition is part of the Poitou--Tate sequence for finite commutative group schemes (see Remark \ref{prior}). Let us first treat separable Weil restrictions of split tori. So suppose that $G = \R_{k'/k}(T')$, where $k'/k$ is a finite separable extension and $T'$ is a split $k'$-torus. We may assume that $T' = \Gm$. By Proposition \ref{diagramcommutesglobal}, we have a commutative diagram
\[
\begin{tikzcd}
{\rm{H}}^2(k', T') \arrow{r} \isoarrow{d} & \underset{w}{\bigoplus} {\rm{H}}^2(k'_w, T') \arrow{r} \isoarrow{d} & \widehat{T'}(k)^* \arrow{r} \isoarrow{d} & 0 \\
{\rm{H}}^2(k,\R_{k'/k}(T')) \arrow{r} & \underset{v}{\bigoplus} {\rm{H}}^2(k_v,\R_{k'/k}(T')) \arrow{r} & \widehat{\R_{k'/k}(T')}(k)^* \arrow{r} & 0 
\end{tikzcd}
\]
where the top row is exact by class field theory (the fundamental exact sequence), the first two vertical arrows are isomorphisms by Lemma \ref{sepblepushforward}, and the third vertical arrow -- which is given by the norm map -- is an isomorphism by Proposition \ref{charactersseparableweilrestriction}. It follows that the bottom row is also exact.

Next we prove the proposition when $G$ is an almost-torus. By Lemma \ref{almosttorus}(iv), we may harmlessly modify $G$ and thereby assume that there is an exact sequence
\[
1 \longrightarrow B \longrightarrow X \longrightarrow G \longrightarrow 1
\]
with $X = C \times \R_{k'/k}(T')$, where $B$ and $C$ are finite commutative $k$-group schemes, $k'/k$ is a finite separable extension, and $T'$ is a split $k'$-torus. Consider the commutative diagram
\[
\begin{tikzcd}
& {\rm{H}}^2(k, X) \arrow{r} \arrow{d} & {\rm{H}}^2(k, G) \arrow{d} & \\
\underset{v}{\bigoplus} {\rm{H}}^2(k_v, B) \arrow{r} \arrow{d} & \underset{v}{\bigoplus} {\rm{H}}^2(k_v, X) \arrow{r} \arrow{d} & \underset{v}{\bigoplus} {\rm{H}}^2(k_v, G) \arrow{r} \arrow{d} & 0 \\
\widehat{B}(k)^* \arrow{r} \arrow{d} & \widehat{X}(k)^* \arrow{r} \arrow{d} & \widehat{G}(k)^* \arrow{r} \arrow{d} & 0 \\
0 & 0 & 0 &
\end{tikzcd}
\]
The third row is exact due to the injectivity of the group $\Q/\Z$, and the second row is exact by Proposition \ref{cohomologicalvanishing}. The first and second columns are exact due to the already-settled cases of finite group schemes and separable Weil restrictions of split tori. That the last column, which we know to be a complex, is exact now follows from a diagram chase.

Now we treat the general case. Let $G$ be an affine commutative $k$-group scheme of finite type. By Lemma \ref{affinegroupstructurethm}, there is an exact sequence 
\[
1 \longrightarrow H \longrightarrow G \longrightarrow U \longrightarrow 1
\]
with $H$ an almost-torus and $U$ split unipotent. We then have a commutative diagram
\[
\begin{tikzcd}
{\rm{H}}^2(k, H) \arrow{r} \arrow{d} & \underset{v}{\bigoplus} {\rm{H}}^2(k_v, H) \arrow{r} \arrow[d, twoheadrightarrow] & \widehat{H}(k)^* \arrow{r} \isoarrow{d} & 0 \\
{\rm{H}}^2(k, G) \arrow{r} & \underset{v}{\bigoplus} {\rm{H}}^2(k_v, G) \arrow{r} & \widehat{G}(k)^* \arrow{r} & 0
\end{tikzcd}
\]
in which the top row is exact by the already treated case of almost-tori; the second vertical arrow is surjective because ${\rm{H}}^2(k_v, U) =0$ by Proposition \ref{unipotentcohomology}(i); and the last vertical arrow is an isomorphism by Proposition \ref{hatisexact} and because $\widehat{U}(k) = 0$ (since the unipotent group $U$ has no nontrivial characters) and ${\rm{H}}^1(k, \widehat{U}) = 0$ (Proposition \ref{unipotentcohomology}(iii)). A diagram chase now shows that the second row (which, again, we already know to be a complex) is exact. This completes the proof of the proposition.
\end{proof}

\section{Defining the $\Sha$-pairings}
\label{sectiondefiningshapairings}

In this section we will define the pairings between Tate-Shafarevich groups arising in Theorem \ref{shapairing}. Our definition imitates Tate's original definition in terms of cocycles. First, we need a lemma.

\begin{lemma}\label{checkh3=0}
If $k$ is a global field, then $\check{\rm{H}}^3(k, \mathbf{G}_m) = 0$.
\end{lemma}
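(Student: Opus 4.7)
The plan is to reduce $\check{\rm{H}}^3(k,\mathbf{G}_m)$ to derived functor cohomology and then invoke the vanishing results from Chapter~\ref{chapterlocalfields}. Specifically, since $\mathbf{G}_m$ is a commutative group scheme locally of finite type over $k$, Proposition~\ref{cech=derivedG} identifies the canonical map
\[
\check{\rm{H}}^i(k,\mathbf{G}_m)\;\longrightarrow\;{\rm{H}}^i(k,\mathbf{G}_m)
\]
as an isomorphism for all $i$, so it suffices to show ${\rm{H}}^3(k,\mathbf{G}_m)=0$.

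For a global function field $k$ (which has no archimedean places, hence no real places), this is immediate from Proposition~\ref{cohomologicalvanishing}: taking $i=3$ and $G=\mathbf{G}_m$ gives the required vanishing. This is the case of primary interest in the paper.

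For a number field $k$, only the real places can contribute to ${\rm{H}}^3(k,\mathbf{G}_m)$, and an argument along the lines of the one given in the proof of Proposition~\ref{cohomologicalvanishing} (Step~2, reducing via Kummer to the case of finite $G$, then applying classical Poitou--Tate) still gives the vanishing of the \emph{Čech} ${\check{\rm{H}}}^3$ even though derived ${\rm{H}}^3$ need not vanish. Concretely, one notes that by the Nullstellensatz every fppf cover of $\Spec(k)$ is refined by a finite Galois extension $L/k$, so $\check{\rm{H}}^3(k,\mathbf{G}_m)$ is computed as the direct limit of the Galois cohomology groups ${\rm{H}}^3({\rm{Gal}}(L/k),L^\times)$; the vanishing of these in the limit is then a standard consequence of class field theory.

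I do not expect any substantive obstacle: the lemma is essentially a bookkeeping consequence of Proposition~\ref{cech=derivedG} together with Proposition~\ref{cohomologicalvanishing}, and the only mild subtlety, the number field case with real places, can be handled by the limit-of-Galois-covers argument sketched above.
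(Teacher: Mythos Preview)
Your approach via Proposition~\ref{cech=derivedG} is correct and is more streamlined than the paper's argument. The paper instead works directly with the \v{C}ech-to-derived spectral sequence
\[
E_2^{i,j} = \check{\rm{H}}^i(k, \mathscr{H}^j(\mathbf{G}_m)) \Longrightarrow {\rm{H}}^{i+j}(k, \mathbf{G}_m),
\]
and shows $E_2^{1,1}$ and $E_2^{0,2}$ vanish by hand (using $\Pic(L\otimes_k L)=0$ and that Brauer classes die over finite extensions), so that $E_2^{3,0}=\check{\rm{H}}^3(k,\mathbf{G}_m)$ injects into ${\rm{H}}^3(k,\mathbf{G}_m)=0$. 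Your route simply invokes the general comparison Proposition~\ref{cech=derivedG} (whose proof, of course, already contains a spectral sequence argument of the same flavor), so the difference is one of packaging rather than substance; your version is cleaner given that Proposition~\ref{cech=derivedG} is already available.

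One correction: your hedge in the number field case is unnecessary and slightly misleading. The derived-functor vanishing ${\rm{H}}^3(k,\mathbf{G}_m)=0$ holds for \emph{every} global field, including number fields with real places; this is precisely the classical result cited just before the lemma (\cite[Ch.\,VII, \S11.4]{cf}). So Proposition~\ref{cech=derivedG} plus that vanishing already handles the number field case uniformly, and your fallback limit-of-Galois-covers argument (which is just Proposition~\ref{cech=derivedperfect} in disguise) is not needed. Note also that Proposition~\ref{cohomologicalvanishing} as stated excludes number fields with real places, so for those you should cite the class field theory result directly rather than that proposition.
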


This is the \v{C}ech-fppf variant of the well-known derived-functor result ${\rm{H}}^3(k, \mathbf{G}_m) = 0$ \cite[Ch.\,VII, \S11.4]{cf}.

\begin{proof}
We have the \v{C}ech-to-derived functor spectral sequence
\[
E_2^{i,j} = \check{\rm{H}}^i(k, \mathscr{H}^j(\mathbf{G}_m)) \Longrightarrow {\rm{H}}^{i+j}(k, \mathbf{G}_m).
\]
Since ${\rm{H}}^3(k, \mathbf{G}_m) = 0$, it is enough to show that $E_2^{1,1}$ and $E_2^{0,2}$ vanish. First, $$E_2^{0,2} = \check{\rm{H}}^0(k, \mathscr{H}^2(\mathbf{G}_m)) = 0$$ because for any finite extension $L/k$ and any $\alpha \in H^2(L, \mathbf{G}_m) = \Br(L)$, there is a finite extension $L'/L$ such that $\alpha$ dies in $\Br(L') = H^2(L', \mathbf{G}_m)$. Next, to show that $E_2^{1,1} = \check{\rm{H}}^1(k, \mathscr{H}^1(\mathbf{G}_m))$ vanishes, it is enough to show that Pic$(L \otimes_k L) = 0$ for any finite extension $L/k$. But this is clear, because ${\rm{Spec}}(L \otimes_k L)$ is topologically a disjoint union of points.
\end{proof}

Now we will define the $\Sha$-pairings. We will obtain them as special cases of a more general construction.Let $k$ be a global function field. For any fppf sheaf $\mathscr{G}$ on ${\rm{Spec}}(k)$ and any $i \geq 0$, define
\[
\check{\Sha}^i(\mathscr{G}) := \ker\left(\check{{\rm{H}}}^i(k, \mathscr{G}) \longrightarrow \check{{\rm{H}}}^i(\A_k, \mathscr{G})\right).
\]
Let $\mathscr{F}_1$ and $\mathscr{F}_2$ be locally finitely presented sheaves on the big fppf site of ${\rm{Spec}}(k)$. The locally finitely presented condition means that -- for a filtered direct system $\{A_i\}$ of $k$-algebras, with $A := \varinjlim_i A_i$, the natural map $\varinjlim_i \mathscr{F}_j(A_i) \rightarrow \mathscr{F}_j(A)$ is an isomorphism ($j = 1, 2$). Assume also given a bi-additive pairing
\[
\chi\colon \mathscr{F}_2 \times \mathscr{F}_1 \longrightarrow \Gm.
\]
Then we define a pairing
\[
\langle \cdot, \cdot \rangle_{\Sha}\colon \check{\Sha}^2(\mathscr{F}_2) \times \check{\Sha}^1(\mathscr{F}_1) \longrightarrow \Q/\Z,
\]
functorial in the triple $(\mathscr{F}_2, \mathscr{F}_1, \chi)$, as follows.

Choose $$\xi \in \check{\Sha}^2(G) \subset \check{\rm{H}}^2(k, G),\,\,\,
\xi' \in \check{\Sha}^1(\widehat{G}) \subset \check{\rm{H}}^1(k, \widehat{G}).$$
Let $\alpha \in \check{Z}^2(k, G)$, $\alpha' \in \check{Z}^1(k, \widehat{G})$ be respective
representative \v{C}ech cocycles, so they are each everywhere locally coboundaries. That is, for every place $v$ of $k$, there exists a 1-cochain $\beta_v \in \check{C}^1(k_v, G)$ and a 0-cochain $\beta_v' \in \check{C}^0(k_v, \widehat{G})$ such that $\alpha_v = d \beta_v$, $\alpha'_v = d \beta_v'$. Since $\check{\rm{H}}^3(k, \mathbf{G}_m) = 0$, there exists a 2-cochain $h \in 
\check{C}^2(k, \mathbf{G}_m)$ such that $\alpha \cup \alpha' = d h$, where the cup product is via the pairing $\chi$. Then $d(\beta_v \cup \alpha'_v) = dh = d(\alpha_v \cup \beta'_v)$. It follows that $(\alpha_v \cup \beta'_v) - h_v$ and $(\beta_v \cup \alpha'_v) - h_v$ are 2-cocycles. Further, they yield the same class in $\check{\rm{H}}^2(k_v, \mathbf{G}_m) = {\rm{H}}^2(k_v, \mathbf{G}_m)$ (Proposition \ref{cech=derivedsmoothinf}), because $d(\beta_v \cup \beta'_v) = (\alpha_v \cup \beta'_v) - (\beta_v \cup \alpha'_v)$. We then define $\langle \xi, \xi' \rangle_{\Sha}$ to be the sum over all $v$ of the invariants of these elements of ${\rm{H}}^2(k_v, \mathbf{G}_m)$.

This pairing is clearly functorial in the triple $(\mathscr{F}_1, \mathscr{F}_2, \chi)$, and it is easy to check that it is bi-additive. To see that it is independent of choices, suppose that we replace $\beta_v$ by $\beta_v + \epsilon_v$ for some $1$-cocycle $\epsilon_v \in \check{Z}^1(k_v, G)$. Then the element of ${\rm{H}}^2(k_v, \Gm)$ whose invariant is taken in the above calculation is $((\beta_v + \epsilon_v)\cup \alpha'_v) - h_v$ rather than $(\beta_v \cup \alpha'_v) - h_v$. Thus we need to show that $\epsilon_v \cup \alpha'_v$ is trivial as a cohomology class. This follows from the equality $\epsilon_v \cup \alpha'_v = d(-\epsilon_v \cup\beta'_v)$, which holds because $d\epsilon_v = 0$. The independence of choice of $\beta'_v$ is proved similarly. The independence of the choices of $h, \alpha, \alpha'$ follows from the fact that the sum of the local invariants of a global Brauer class is $0$.

In order to check that the pairing is well-defined, it only remains to check that the sum appearing in the definition contains only finitely many nonzero terms. But by assumption, $\alpha'$ maps to a coboundary in $\check{{\rm{H}}}^1(\A, \widehat{G})$. Using the equality
\[
\A = \varinjlim_S\left( \widehat{\calO}_S \times \prod_{v \in S} k_v \right),
\]
and the fact that $\mathscr{F}_1$ is locally finitely presented, it follows that we may choose $\beta'_v$ to actually come from $\check{C}^0(\calO_v, \widehat{G})$ for almost every $v$. Since $\alpha$ and $h$ both extend to cocycles over some $\calO_S$, it follows that the cocycle $(\alpha \cup \beta'_v) - h \in \check{Z}^2(k_v, \Gm)$ extends to an element of $\check{Z}^2(\calO_v, \Gm)$ for almost every $v$, hence represents the trivial cohomology class, since ${\rm{H}}^2(\calO_v, \Gm) = 0$. The sum is therefore finite.

We now obtain the $\Sha$-pairings of Theorem \ref{shapairing} by applying the above construction with either $\mathscr{F}_1 := G$ and $\mathscr{F}_2 := \widehat{G}$, or vice versa, and the pairing $G \times \widehat{G} \rightarrow \Gm$ the natural one. This yields a pairing on the derived functor $\Sha$ groups (rather than just the \v{C}ech cohomology groups), thanks to Propositions \ref{cech=derivedsmoothinf} and \ref{cech=derivedG^}.

\begin{remark}
\label{shapairingcartier}
Suppose that $G$ is a finite commutative $k$-group scheme. We have now defined pairings between $\Sha^1(G)$ and $\Sha^2(\widehat{G})$, and between $\Sha^1(G^{\wedge\wedge})$ and $\Sha^2(\widehat{G})$. Via Cartier duality, of course, we have, canonically, $G^{\wedge\wedge} = G$, and we would like to know that these two pairings are compatible with Cartier duality. This follows immediately from the fact that the pairing above is defined in terms of the underlying sheaves and the pairing between them, with no reference to their definition in terms of $G$.
\end{remark}

\section{Injectivity of $\Sha^1(G) \rightarrow \Sha^2(\widehat{G})^*$}
\label{Sha^1(G)-->Sha^2(G^)*injectssection}

The goal of this section is to prove that if $G$ is an affine commutative group scheme of finite type over a global function field $k$, then the map $\Sha^1(k, G) \rightarrow \Sha^2(k, \widehat{G})^*$ induced by the pairing $\langle \cdot, \cdot \rangle_{\Sha}$ is injective (Proposition \ref{Sha^1--->Sha^2*injective}). As with many of the results in this work, the same result is true for number fields, but we concentrate on the function field setting. Unlike many of the other results of this manuscript, however, we do {\em not} take the finite case as a black box; see Remark \ref{prior}.

\begin{lemma}
\label{cartesianprofinitefg}
For any inclusion $A \hookrightarrow A'$ of discrete finitely generated abelian groups, the following diagram is Cartesian${\rm{:}}$
\[
\begin{tikzcd}
A \arrow[r, hookrightarrow] \arrow[d, hookrightarrow] & A' \arrow[d, hookrightarrow] \\
A_{\pro} \arrow{r} & A'_{\pro}
\end{tikzcd}
\]
\end{lemma}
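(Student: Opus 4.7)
The plan is to identify the profinite completion functor on finitely generated abelian groups with the exact functor $-\otimes_{\Z}\widehat{\Z}$, and then to chase the resulting diagram.

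First, I would note that the vertical arrows in the diagram are injections. Indeed, any finitely generated abelian group $B$ is residually finite (since $B \simeq \Z^r \oplus T$ for some finite $T$, every nonzero element survives in some finite quotient), so the canonical map $B \rightarrow B_{\pro}$ is injective. This handles the left and right vertical arrows (which are already indicated with hooks). For the bottom arrow, I would observe that on finitely generated abelian groups the profinite completion functor coincides with $-\otimes_{\Z}\widehat{\Z}$, and $\widehat{\Z} = \prod_{p} \Z_p$ is torsion-free over the PID $\Z$ and therefore flat. Hence the inclusion $A \hookrightarrow A'$ induces an inclusion $A_{\pro} \hookrightarrow A'_{\pro}$, and moreover the short exact sequence $0 \rightarrow A \rightarrow A' \rightarrow A'/A \rightarrow 0$ yields a short exact sequence $0 \rightarrow A_{\pro} \rightarrow A'_{\pro} \rightarrow (A'/A)_{\pro} \rightarrow 0$ after profinite completion.

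The result is a commutative diagram with exact rows and all vertical arrows injective:
\[
\begin{tikzcd}
0 \arrow{r} & A \arrow[r] \arrow[d, hookrightarrow] & A' \arrow[r] \arrow[d, hookrightarrow] & A'/A \arrow[r] \arrow[d, hookrightarrow] & 0 \\
0 \arrow{r} & A_{\pro} \arrow[r] & A'_{\pro} \arrow[r] & (A'/A)_{\pro} \arrow[r] & 0
\end{tikzcd}
\]
To show the left square is Cartesian, I would argue as follows: if $a' \in A'$ has image in $A'_{\pro}$ that lies in (the image of) $A_{\pro}$, then $a'$ maps to $0$ in $(A'/A)_{\pro}$ by exactness of the bottom row, and by injectivity of the rightmost vertical arrow $a'$ maps to $0$ in $A'/A$, so $a' \in A$. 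This suffices to establish the Cartesian property, since the left vertical arrow is injective (so the preimage of $(a', g(a)) \in A' \times_{A'_{\pro}} A_{\pro}$ is uniquely $a$).

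There is no real obstacle here; the key input, flatness of $\widehat{\Z}$ over $\Z$, is elementary, and the rest is a one-line diagram chase.
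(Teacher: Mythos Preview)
Your proof is correct and is essentially the same as the paper's: both identify profinite completion on finitely generated abelian groups with $-\otimes_{\Z}\Z_{\pro}$, use flatness of $\Z_{\pro}$ to obtain the short exact sequence of completions, and then reduce the Cartesian claim to injectivity of $A'/A \hookrightarrow (A'/A)_{\pro}$.
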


\begin{proof}
Using the structure theorem for finitely generated abelian groups, one sees that the canonical map
$\Z_{\pro} \otimes_{\Z} A \rightarrow A_{\rm{pro}}$ is an isomorphism.   If we define $A'' = A'/A$, then by the $\Z$-flatness of
$\Z_{\pro}$ we have a commutative diagram of short exact sequences
\[
\begin{tikzcd}
0 \arrow{r} & A \arrow{r} \arrow{d} & A' \arrow{d} \arrow{r} & A'' \arrow{d} \arrow{r} & 0 \\
0 \arrow{r} & A_{\pro} \arrow{r} & A'_{\pro} \arrow{r} & A''_{\pro} \arrow{r} & 0
\end{tikzcd}
\]
Thus, the Cartesian assertion reduces to showing that $M \rightarrow M_{\rm{pro}}$ is injective
for any finitely generated abelian group $M$ (applied to $M = A''$).
Such injectivity is immediate either from the faithful flatness of $\Z_{\pro}$ over $\Z$, or
by using the structure theorem to reduce to the easy cases when $M = \Z$ and $M = \Z/n\Z$.
\end{proof}

\begin{lemma}
\label{G(A)/G(k)cartesian}
Let $k$ be a global function field. Suppose that we have an inclusion $G \hookrightarrow G'$ of affine commutative $k$-group schemes of finite type. Then the following diagram is Cartesian: 
\[
\begin{tikzcd}
G(\A)/G(k) \arrow[d, hookrightarrow] \arrow[r, hookrightarrow] & G'(\A)/G'(k) \arrow[d, hookrightarrow] \\
(G(\A)/G(k))_{\pro} \arrow{r} & (G'(\A)/G'(k))_{\pro}
\end{tikzcd}
\]
\end{lemma}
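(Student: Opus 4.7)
The plan is to combine the two exact sequence filtrations $0 \to G(\A)_1/G(k) \to G(\A)/G(k) \to G(\A)/G(\A)_1 \to 0$ (and the analogous one for $G'$) given by Definition \ref{G(A)_1def}. Recall from the discussion following that definition that $G(\A)/G(\A)_1$ is a discrete finitely generated free abelian group (since $k$ is a global function field), while by Proposition \ref{G(A)_1/G(k)profinite} the quotient $G(\A)_1/G(k)$ is profinite (so equal to its own profinite completion).

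The key observation is that $G(\A) \cap G'(\A)_1 = G(\A)_1$. Indeed, given $g \in G(\A) \cap G'(\A)_1$ and $\chi \in \widehat{G}(k)$, Corollary \ref{characterfinitecokernel} furnishes an integer $n > 0$ such that $\chi^n$ extends to some $\chi' \in \widehat{G'}(k)$. Then $\|\chi(g)\|^n = \|\chi'(g)\| = 1$, and since $\|\chi(g)\| \in \mathbf{R}_{>0}$ we deduce $\|\chi(g)\| = 1$, so $g \in G(\A)_1$. Combining this with $G(\A) \cap G'(k) = G(k)$, the inclusion $G \hookrightarrow G'$ induces injections $G(\A)_1/G(k) \hookrightarrow G'(\A)_1/G'(k)$ and $G(\A)/G(\A)_1 \hookrightarrow G'(\A)/G'(\A)_1$.

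Applying profinite completion and using Proposition \ref{profiniteexactfgquotient} (which applies since the kernels are profinite and the quotients are finitely generated), I obtain a commutative diagram with exact rows:
\[
\begin{tikzcd}[column sep=small]
0 \arrow{r} & G(\A)_1/G(k) \arrow{r} \arrow[d, hookrightarrow, "\alpha"] & (G(\A)/G(k))_{\pro} \arrow{r} \arrow{d}["\beta"] & (G(\A)/G(\A)_1)_{\pro} \arrow[d, hookrightarrow, "\gamma"] \arrow{r} & 0 \\
0 \arrow{r} & G'(\A)_1/G'(k) \arrow{r} & (G'(\A)/G'(k))_{\pro} \arrow{r} & (G'(\A)/G'(\A)_1)_{\pro} \arrow{r} & 0
\end{tikzcd}
\]
Here $\alpha$ is injective by the displayed inclusion (both groups being profinite) and $\gamma$ is injective by Lemma \ref{cartesianprofinitefg} applied to the finitely generated inclusion $G(\A)/G(\A)_1 \hookrightarrow G'(\A)/G'(\A)_1$; the five-lemma then forces $\beta$ to be injective (recovering Lemma \ref{G(A)/G(k)containedincompletion}).

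To verify the Cartesian property, I would take $y \in G'(\A)/G'(k)$ whose image in $(G'(\A)/G'(k))_{\pro}$ lies in $\beta((G(\A)/G(k))_{\pro})$ and produce $y \in G(\A)/G(k)$. Projecting to $G'(\A)/G'(\A)_1$, the image $\bar{y}$ maps into $\gamma((G(\A)/G(\A)_1)_{\pro})$, so by the Cartesian property in Lemma \ref{cartesianprofinitefg} we get $\bar{y} \in G(\A)/G(\A)_1$. Lift to $y_0 \in G(\A)/G(k)$; then $y - y_0$ lies in $G'(\A)_1/G'(k)$, and its image in $(G'(\A)/G'(k))_{\pro}$ still lies in $\beta((G(\A)/G(k))_{\pro})$. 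Pulling this unique preimage $z \in (G(\A)/G(k))_{\pro}$ back through the diagram and using injectivity of $\gamma$ forces $z$ into $G(\A)_1/G(k)$, whence $y - y_0 = \alpha(z)$ lies in $G(\A)/G(k)$, giving $y \in G(\A)/G(k)$. The main (minor) obstacle is really the bookkeeping in this final diagram chase, and more philosophically, the identification $G(\A) \cap G'(\A)_1 = G(\A)_1$ which reduces the Cartesian question to its two "pieces"---the profinite piece (which is trivial) and the finitely generated piece (which is the content of Lemma \ref{cartesianprofinitefg}).
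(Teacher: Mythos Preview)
Your proof is correct and follows essentially the same approach as the paper: both use the filtration by $G(\A)_1/G(k)$, the key identity $G(\A) \cap G'(\A)_1 = G(\A)_1$ via Corollary \ref{characterfinitecokernel}, Lemma \ref{cartesianprofinitefg} for the finitely generated quotient piece, and Proposition \ref{profiniteexactfgquotient} for exactness of the completed sequences. The only difference is organizational---you set up the full commutative diagram of exact rows and chase it, while the paper argues by successive modifications of the element---but the content is identical.
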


\begin{proof}
The vertical maps are inclusions by Lemma \ref{G(A)/G(k)containedincompletion}. For ease of notation, let $B : = G(\A)/G(k)$, $B' : = G'(\A)/G'(k)$. Also let $B_1 : = G(\A)_1/G(k)$ and $B'_1 : = G'(\A)_1/G'(k)$. (For the definition of $G(\A)_1$, see Definition \ref{G(A)_1def}.) Then $B/B_1$ and $B'/B'_1$ are discrete and finitely generated, and $B_1, B'_1$ are profinite (Proposition \ref{G(A)_1/G(k)profinite}).

We claim that $B/B_1 \rightarrow B'/B'_1$ is an inclusion. Indeed, let $g \in G(\A) \cap G'(\A)_1$
and $\chi \in \widehat{G}(k)$. We need to show that $|\!|\chi(g)|\!| = 1$. By Corollary \ref{characterfinitecokernel}, the character $\chi^n$ extends to a character in $\widehat{G'}(k)$ for some positive integer $n$. We then have that $|\!|\chi^n(g)|\!| = 1$, so $|\!|\chi(g)|\!| = 1$, hence $g \in G(\A)_1$, as desired.

We will repeatedly use the right-exactness of profinite completion (Proposition \ref{profiniterightexact}). Consider $b' \in B'$ and $b \in B_{\pro}$ such that $b$ maps to 
the image of $b'$ in $B'_{\pro}$. We want to show that $b$ comes from an element of $B$. The diagram 
\[
\begin{tikzcd}
B/B_1 \arrow[r, hookrightarrow] \arrow[d, hookrightarrow] & B'/B'_1 \arrow[d, hookrightarrow] \\
(B/B_1)_{\pro} \arrow{r} & (B'/B'_1)_{\pro}
\end{tikzcd}
\]
is Cartesian by Lemma \ref{cartesianprofinitefg}, so by modifying $b'$ by an element of $B$, we may assume that $b' \in B_1'$. Since the map $B_{\pro}/(B_1)_{\pro} = (B/B_1)_{\pro} \rightarrow (B'/B'_1)_{\pro} = B'_{\pro}/(B'_1)_{\pro}$ is an inclusion by Proposition \ref{profiniteexactfgquotient}, we see that $b$ lies in the image of the map $(B_1)_{\pro} = B_1 \rightarrow B_{\pro}$, as desired.
\end{proof}

\begin{lemma}
\label{sha^1sha^2*injectivedevissage}
Let $k$ be a global function field. Suppose that we have an inclusion $G \hookrightarrow G'$ of affine commutative $k$-group schemes of finite type such that the map $\Sha^1(G') \rightarrow \Sha^2(G')^*$ induced by $\langle \cdot, \cdot \rangle_{\Sha}$ is injective. Then the map $\Sha^1(G) \rightarrow \Sha^2(G)^*$ is also injective.
\end{lemma}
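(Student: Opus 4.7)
Let $f\colon G \hookrightarrow G'$ and $H := G'/G$, with quotient $\pi\colon G' \twoheadrightarrow H$ and connecting map $\delta\colon H(k) \to {\rm{H}}^1(k,G)$. Let $\alpha \in \Sha^1(G)$ pair trivially with $\Sha^2(\widehat{G})$ under $\langle\cdot,\cdot\rangle_{\Sha^1_G}$; the goal is to show $\alpha = 0$. Since $\widehat{f}(\Sha^2(\widehat{G'})) \subseteq \Sha^2(\widehat{G})$ (by localization) and the $\Sha$-pairings are functorial via cup product (immediate from their cocycle definition in \S\ref{sectiondefiningshapairings}), the identity $\langle f(\alpha), \beta \rangle_{\Sha^1_{G'}} = \langle \alpha, \widehat{f}(\beta) \rangle_{\Sha^1_G} = 0$ for every $\beta \in \Sha^2(\widehat{G'})$ combined with the hypothesis on $G'$ forces $f(\alpha) = 0$. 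Hence $\alpha = \delta(h)$ for some $h \in H(k)$; since each $\alpha_v = 0$, we lift $h_v$ to $g'_v \in G'(k_v)$ for every $v$ and assemble (via spreading out to a model, using integral lifts at almost all places) an element $g' \in G'(\A)$ with $\pi(g') = h \in H(\A)$.

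Define $\phi \in {\rm{H}}^2(k, \widehat{G'})^*$ by $\phi(\beta) := \langle g', \beta \rangle_{\A, G'}$. For $\beta = \iota(\beta_H)$ in the image of ${\rm{H}}^2(k, \widehat{H}) \to {\rm{H}}^2(k, \widehat{G'})$, functoriality of cup product gives $\phi(\beta) = \langle h_{\A}, \beta_H \rangle_{\A, H}$, which vanishes because $h \in H(k)$ and the sum of local invariants of a global Brauer class is zero. Using ${\rm{H}}^3(k, \widehat{H}) = 0$ from Proposition \ref{H^3(G^)=0}, the map ${\rm{H}}^2(k, \widehat{G'}) \twoheadrightarrow {\rm{H}}^2(k, \widehat{G})$ is surjective, so $\phi$ descends through $\widehat{f}$ to a functional $\psi \in {\rm{H}}^2(k, \widehat{G})^*$.

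The crux -- and the step I expect to be the main obstacle -- is to show that $\psi$ vanishes on $\Sha^2(\widehat{G})$. Given $\beta_0 \in \Sha^2(\widehat{G})$, lift to $\beta \in {\rm{H}}^2(k, \widehat{G'})$; local exactness (using $(\beta_0)_v = 0$) yields $\beta_v = \iota_v(\beta'_v)$ with $\beta'_v \in {\rm{H}}^2(k_v, \widehat{H})$, so $\psi(\beta_0) = \sum_v \langle h_v, \beta'_v \rangle_{k_v, H}$. Unpacking the \v{C}ech-cocycle definition of $\langle\cdot,\cdot\rangle_{\Sha^1_G}$ on $\alpha = \delta(h)$ -- representing $\alpha$ by the cocycle $d\widetilde{h}$ for a lift $\widetilde{h} \in G'(\overline{k})$ of $h$, with local trivializations $\gamma_v := \widetilde{h}_v - g'_v \in G(\overline{k}_v)$, and lifting a cocycle representative of $\beta_0$ to one of $\beta$ (possible up to coboundary using the surjectivity above together with Proposition \ref{cech=derivedG^}) -- identifies this sum, up to a universal sign, with $\langle \alpha, \beta_0 \rangle_{\Sha^1_G}$, which vanishes by hypothesis. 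Hence $\psi|_{\Sha^2(\widehat{G})} = 0$.

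By Proposition \ref{exactnessatH^2(k,G^)*}, there is $g_0 \in G(\A)_{\pro}$ with $\langle g_0, \cdot \rangle = \psi$, so $\overline{g'} - f(g_0) \in G'(\A)_{\pro}$ pairs trivially with all of ${\rm{H}}^2(k, \widehat{G'})$; Proposition \ref{exactnessatG(A)} applied to $G'$ then yields $g'_k \in G'(k)_{\pro}$ with $\overline{g'} = f(g_0) + g'_k$ in $G'(\A)_{\pro}$. Applying $\pi$ (which kills the $f(g_0)$ term) and invoking the injectivity of $H(k)_{\pro} \to H(\A)_{\pro}$ from Proposition \ref{G(k)G(A)proinjective}, the image of $h$ in $H(k)_{\pro}$ lies in the image of $G'(k)_{\pro}$. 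Finally, $H(k)/\mathrm{im}\,G'(k)$ injects into ${\rm{H}}^1(k, G)$ and hence has finite exponent by Lemma \ref{H^1finiteexponent}, so by Proposition \ref{profiniteexact} it embeds into its own profinite completion, which by right-exactness (Proposition \ref{profiniterightexact}) equals $H(k)_{\pro}/\mathrm{im}\,G'(k)_{\pro}$. Triviality of the image of $h$ there forces $h \in \mathrm{im}(G'(k) \to H(k))$, so $\alpha = \delta(h) = 0$.
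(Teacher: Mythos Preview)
Your proof is correct and shares the paper's overall architecture: reduce to $\alpha = \delta(h)$, lift $h_{\A}$ to $g' \in G'(\A)$, and establish the key identity $\langle g', \beta\rangle = \langle\alpha, \widehat{f}(\beta)\rangle_{\Sha^1_G}$ for $\beta \in \widehat{f}^{-1}(\Sha^2(\widehat{G}))$. Your ``crux'' step is exactly the paper's Lemma~\ref{g'killsSha^2(G^)preimage}, and your cocycle setup (taking $\gamma_v = \widetilde{h}_v - g'_v$ as the local trivialization of $\alpha$) is precisely how that computation begins; the paper carries it out in full detail.

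Where you diverge is in the endgame. The paper extends the functional on ${\rm{H}}^2(k,\widehat{G'})$ to one on ${\rm{H}}^2(k,\widehat{G})$ killing $\Sha^2(\widehat{G})$, realizes it by an \emph{actual} element of $G(\A)$ via the Cartesian-diagram Lemma~\ref{G(A)/G(k)cartesian} together with Propositions~\ref{exactnessatG(A)} and~\ref{exactnessatG(A)nocompletion}, modifies $g'$ inside $G'(\A)$, and then applies the uncompleted exactness to $G'$ to obtain a lift in $G'(k)$. You instead stay in profinite completions throughout: realize $\psi$ by $g_0 \in G(\A)_{\pro}$ via Proposition~\ref{exactnessatH^2(k,G^)*}, use Proposition~\ref{exactnessatG(A)} for $G'$ to produce $g'_k \in G'(k)_{\pro}$, and descend only at the very last step using that $H(k)/\pi(G'(k))$ has finite exponent. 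This is a genuine simplification --- you bypass Lemma~\ref{G(A)/G(k)cartesian} (which relies on the $G(\A)_1$-structure and Proposition~\ref{G(A)_1/G(k)profinite}) and Proposition~\ref{exactnessatG(A)nocompletion} entirely. One minor citation fix: the claim that a discrete abelian group of finite exponent embeds in its profinite completion is Lemma~\ref{avoidingfindexsubgps} (since $A_{\Div}=0$ automatically), not Proposition~\ref{profiniteexact}.
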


\begin{proof}
Let $H : = G'/G$, so we have an exact sequence
\[
1 \longrightarrow G \xlongrightarrow{j} G' \xlongrightarrow{\pi} H \longrightarrow 1.
\] 
Consider the following commutative diagram with exact rows: 
\[
\begin{tikzcd}
& G'(k) \arrow{r}{\pi} \arrow[d, hookrightarrow] & H(k) \arrow{r}{\delta} \arrow[d, hookrightarrow] & {\rm{H}}^1(k, G) \arrow{r}{j} \arrow{d} & {\rm{H}}^1(k, G') \arrow{d} \\
G(\A) \arrow{r}{j} & G'(\A) \arrow{r}{\pi} & H(\A) \arrow{r}{\delta} & {\rm{H}}^1(\A, G) \arrow{r} & {\rm{H}}^1(\A, G')
\end{tikzcd}
\]
Suppose that $\alpha \in \Sha^1(G)$ annihilates $\Sha^2(\widehat{G})$ under $\langle \cdot, \cdot \rangle_{\Sha}$. We want to show that $\alpha = 0$.
By functoriality of $\langle \cdot, \cdot \rangle_{\Sha}$ and our hypothesis that $\Sha^1(G') \rightarrow \Sha^2(\widehat{G})^*$ is injective, we deduce that $j(\alpha) = 0$, hence $\alpha = \delta(h)$ for some $h \in H(k)$. Then $\delta(h_{\A}) = 0$
in ${\rm{H}}^1(\A, G)$, so $h_{\A} = \pi(g')$ for some $g' \in G'(\A)$. 
We are going to show that by adjusting $g'$ by an element of $G'(k)$ (which corresponds to changing $h$ by an element
of $\pi(G'(k))$, as we are free to do for the purpose of studying $\alpha = \delta(h)$), we can arrange that $g'$ comes from $G(\A)$, so $h_{\A} \in \pi(j(G(\A))) = 0$,
forcing $h = 0$ and $\alpha = 0$, as desired. Note that in order to do this, we are free to modify $g'$ by an element of $G(\A)$.

The natural map $G'(\A) \rightarrow {\rm{H}}^2(k, \widehat{G'})^*$ induced by the global duality pairing assigns to $g'$ a homomorphism ${\rm{H}}^2(k, \widehat{G'}) \rightarrow \Q/\Z$. Consider the subgroup $\widehat{j}^{-1}(\Sha^2(\widehat{G})) \subset {\rm{H}}^2(k, \widehat{G'})$, where $\widehat{j}:  {\rm{H}}^2(k, \widehat{G'}) \rightarrow {\rm{H}}^2(k, \widehat{G})$ is the map induced by $j$. The key point for controlling the choice of $g'$ is the following lemma.

\begin{lemma}
\label{g'killsSha^2(G^)preimage}
The homomorphism assigned to $g'$ annihilates $\widehat{j}^{-1}(\Sha^2(\widehat{G})) \subset {\rm{H}}^2(k, \widehat{G'})$.
\end{lemma}

\begin{proof}
Since $\alpha$ annihilates $\Sha^2(\widehat{G})$ by assumption, it suffices to show (without any hypotheses
on $\alpha$!) that for any 
$z \in \widehat{j}^{-1}(\Sha^2(\widehat{G}))$ we have 
\begin{equation}
\label{g'killsSha^2(G^)preimageeqn1}
\langle g', z\rangle \stackrel{?}{=} -\langle \widehat{j}(z), \alpha \rangle_{\Sha}.
\end{equation}
Lift $h \in H(k)$ to some $x \in \check{C}^0(k, G') = G'(\overline{k})$. Then $\pi(dx) = dh = 0$, so $dx = j(\alpha')$ for some 
1-cochain $\alpha' \in \check{C}^1(k, G)$ that is a 1-cocycle (as $d(dx) = 0$ and $j$ is injective). 
The \v{C}ech cohomology class of $\alpha'$ coincides with 
the derived functor cohomology class of $\delta(h) = \alpha$ since the identification of
\v{C}ech and derived functor cohomology in degrees $\le 1$ is $\delta$-functorial by Proposition \ref{cechderivedconnectingmap},
so we will abuse notation and denote the 1-cocycle $\alpha'$ as $\alpha$. 

Since $\widehat{j}(z) \in \Sha^2(\widehat{G})$ by hypothesis, if we also write $z$ to denote
a representative class in $\check{Z}^2(k, \widehat{G})$ (which makes sense by Proposition \ref{cech=derivedG^}!), then 
for each place $v$ of $k$ we have $\widehat{j}(z) = dy_v$ for some $y_v \in \check{C}^1(k_v, \widehat{G})$. 
We have $\widehat{j}(z) \cup \alpha = dm$ for some $m \in \check{C}^2(k, \Gm)$, since
$\check{\rm{H}}^3(k, \Gm) = 0$ (Lemma \ref{checkh3=0}). 
Then by definition of the two pairings,  (\ref{g'killsSha^2(G^)preimageeqn1}) is equivalent to
\begin{equation}
\label{g'killsSha^2(G^)preimageeqn2}
\sum_v \inv_v(g_v' \cup z) \stackrel{?}{=} \sum_v -\inv_v((y_v \cup \alpha) - m).
\end{equation}
Since $\pi(g'_v) = h = \pi(x)$, we deduce that $g'_v = x + j(f_v)$ for some $f_v \in \check{C}^0(k_v, G)$. 
Applying $d$ to both sides yields (since $g'_v$ is a cocycle, representing an element of ${\rm{H}}^0(k_v, G')$)
 $$0 = dx + dj(f_v) = j(\alpha) + j(df_v),$$ so $df_v = -\alpha$. Now we compute at the cochain level
\[
g_v' \cup z = (x + j(f_v)) \cup z = (x \cup z) + (f_v \cup \widehat{j}(z)) = (x \cup z) + (f_v \cup dy_v) = (x \cup z) - (df_v \cup y_v) + d(f_v \cup y_v).
\]
The final expression is cohomologous to
\[
(x \cup z) - (df_v \cup y_v) = (x \cup z) + (\alpha \cup y_v). 
\]
The way we have arrived at this ensures that the final expression is a cocycle, and likewise
the differences $-((y_v \cup \alpha) - m)$ appearing on the right side of (\ref{g'killsSha^2(G^)preimageeqn2}) are cocycles, so
taking the difference gives that the global cochain $(x \cup z) - m$ is a cocycle (as this can be checked locally at any single place).
Thus, the desired identity (\ref{g'killsSha^2(G^)preimageeqn2}) is equivalent to the vanishing of the sum
\[
\sum_v \inv_v((x \cup z) - m)
\]
of the local invariants of a global Brauer class, and such sums always vanish. 
\end{proof}

Via the inclusion
\[
{\rm{H}}^2(k, \widehat{G'})/\widehat{j}^{-1}(\Sha^2(\widehat{G})) \hookrightarrow {\rm{H}}^2(k, \widehat{G})/\Sha^2(\widehat{G}),
\]
we can extend the homomorphism induced by $g'$ to a homomorphism on ${\rm{H}}^2(k, \widehat{G})$ that kills $\Sha^2(\widehat{G})$. We would like to say that this homomorphism is induced by an element of $G(\A)$. 
Thanks to Propositions \ref{exactnessatH^2(k,G^)*}, \ref{exactnessatG(A)}, and \ref{exactnessatG(A)nocompletion}, this follows from Lemma \ref{G(A)/G(k)cartesian}.

Modifying $g'$ by a suitable element of $G(\A)$, therefore (as we are free to do), we may assume that it annihilates all of ${\rm{H}}^2(k, \widehat{G'})$. By Proposition \ref{exactnessatG(A)nocompletion}, it follows that it lifts to an element of $G'(k)$. Modifying $h$ by this element of $G'(k)$ (as we may do), we obtain $h_{\A} = 0$, hence $h = 0$, so $\alpha = \delta(h) = 0$, and the proof of Lemma \ref{sha^1sha^2*injectivedevissage} is complete.
\end{proof}

\begin{lemma}
\label{separableH^1injectiveinfinitesimal}
Let $k$ be a field, $L/k$ a $($not necessarily algebraic$)$ separable extension field of $k$, and let $I$ be an infinitesimal $k$-group scheme. Then the map ${\rm{H}}^1(k, I) \rightarrow {\rm{H}}^1(L, I)$ is injective.
\end{lemma}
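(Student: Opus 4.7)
The plan is to translate injectivity of ${\rm{H}}^1(k, I) \to {\rm{H}}^1(L, I)$ into a statement about torsors: an element $\xi \in {\rm{H}}^1(k, I)$ corresponds to an $I$-torsor $X = \Spec(B)$ over $k$, and its image in ${\rm{H}}^1(L, I)$ vanishes precisely when $X(L) \ne \emptyset$. So the task is to show: if $X(L) \ne \emptyset$ then $X(k) \ne \emptyset$.

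First I would establish that $B$ is a local Artinian $k$-algebra whose residue field $k_1/k$ is purely inseparable. Since $X$ is a nonempty finite $k$-scheme, $X(\overline{k}) \ne \emptyset$, which forces $X_{\overline{k}} \simeq I_{\overline{k}}$ as $I_{\overline{k}}$-torsors (by translation); since $I$ is infinitesimal, $I_{\overline{k}}$ is local, hence so is $X_{\overline{k}}$, and by faithful flatness of $k \hookrightarrow \overline{k}$ the ring $B$ is itself local. The same reasoning applied to $k \hookrightarrow k_s$ shows that $B \otimes_k k_s$ is local. Because $k_s/k$ is separable, the nilradical of $B \otimes_k k_s$ equals $\mathrm{Nil}(B) \otimes_k k_s$, so its reduction is $k_1 \otimes_k k_s$. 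Writing $k_1^{\rm sep}$ for the maximal separable subextension of $k_1/k$, this tensor product decomposes as a product of $[k_1^{\rm sep}:k]$ local rings; locality of $B \otimes_k k_s$ therefore forces $[k_1^{\rm sep}:k] = 1$, i.e., $k_1/k$ is purely inseparable.

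To finish, I would observe that any $L$-point of $X$ is a $k$-algebra map $B \to L$ which factors uniquely through the reduced quotient $B/\mathrm{Nil}(B) = k_1$, providing a $k$-embedding $k_1 \hookrightarrow L$. Any element of $k_1$ is then simultaneously purely inseparable over $k$ (through the extension $k_1/k$) and separable over $k$ (through the subextension $k \hookrightarrow L$), hence must lie in $k$; thus $k_1 = k$. The quotient $B \twoheadrightarrow k_1 = k$ then supplies the desired $k$-point of $X$, so the torsor class is trivial. The main obstacle I anticipate is the residue-field structural analysis: one must track carefully that both $X$ and $X_{k_s}$ remain connected in order to pin down the purely inseparable nature of $k_1$, and for this it is essential that $I$ be infinitesimal, so that its geometric realization is connected. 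Once that structural fact is in hand, the standard incompatibility of a nontrivial purely inseparable subextension inside a separable ambient extension closes the argument immediately.
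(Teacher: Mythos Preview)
Your argument is correct. The paper's proof reaches the same conclusion by a shorter descent argument: it observes that any $I$-torsor $E$ over $k$ is finite radicial over $k$ (by fppf descent of this property from $I$), so for any reduced $k$-algebra $R$ the set $E(R)$ has at most one element. In particular, since $L/k$ separable makes $L\otimes_k L$ reduced, the two pullbacks of a point $x \in E(L)$ along the projections $L\otimes_k L \rightrightarrows L$ coincide, and by fppf descent $x$ descends to $E(k)$.

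Your approach unpacks the same underlying structure more explicitly: the statement that the residue field $k_1$ of the local ring $B$ is purely inseparable over $k$ is precisely the statement that $\Spec(B)$ is radicial over $\Spec(k)$, and your final step (any purely inseparable subextension of a separable extension is trivial) is the field-theoretic shadow of the descent step. The paper's version is slicker and avoids the detailed residue-field analysis; yours is more elementary in that it never invokes fppf descent of morphisms or points, only basic separability facts. Both are fine.
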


\begin{proof}
Since $L/k$ is separable, we may write $L$ as a filtered direct limit of smooth $k$-algebras. Therefore, $E(A) \neq \emptyset$ for some (nonzero) smooth $k$-algebra $A$. Since $A$ is $k$-smooth, $A(k') \neq \emptyset$ for some finite Galois extension $k'/k$. Pulling back along such a point, we are reduced to the case in which $L/k$ is finite Galois. Then $I$, hence also $E$, is still radicial over $L$, hence $E(L)$ consists of a singleton. This point is invariant under $\Gal(L/k)$ (since it is the only $L$-point of $E$), hence descends to a $k$-point of $E$.
\end{proof}

\begin{lemma}
\label{sha^1=0extendscalars}
Let $G$ be an affine commutative group scheme of finite type over a global field $k$. Then for some finite extension $k'/k$,  we have $\Sha^1(G_{k''}) = 0$ for all finite extensions $k''/k$.
\end{lemma}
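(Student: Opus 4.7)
The plan is to proceed by dévissage on the structure of $G$, reducing successively to simpler building blocks. First I would use Lemma~\ref{affinegroupstructurethm} to get a short exact sequence $1 \to H \to G \to U \to 1$ with $H$ an almost-torus and $U$ split unipotent. Since ${\rm{H}}^1(L, U) = 0$ for any field $L$ (Proposition~\ref{unipotentcohomology}(ii)), and analogously after base change to $\A_{k'}$, the problem reduces to showing the lemma for $H$. Next I would use Lemma~\ref{almosttorus}(iv) to obtain (possibly after enlarging $k$) an isogeny $A \times \R_{k_1/k}(T_1) \twoheadrightarrow H^n \times \R_{k_2/k}(T_2)$ with $A$ finite and $T_i$ split tori. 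Using Lemma~\ref{H^1=0Weilrestrictionsplittori} to kill the ${\rm{H}}^1$-contributions of the Weil restrictions, the problem reduces to the finite commutative case, modulo bookkeeping for the isogeny kernel.

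For the finite commutative case, I would treat the étale and infinitesimal parts separately. For the étale part, after passing to a finite extension $k'/k$ over which the Galois action on $G(\overline{k})$ is trivial, ${\rm{H}}^1(k', G_{k'}) = \Hom_{\rm{cts}}(\Gal(k_s/k'), G(\overline{k}))$, and an element of $\Sha^1$ corresponds to a continuous homomorphism vanishing on every local decomposition subgroup. By Chebotarev density, the Frobenius elements (which lie in the decomposition subgroups) generate $\Gal(k_s/k')$ topologically, so such a homomorphism must vanish, giving $\Sha^1(G_{k'}) = 0$. For the infinitesimal part in positive characteristic $p$, one filters by simple pieces $\mu_p$, $\alpha_p$, and $\mathbf{Z}/p\mathbf{Z}$, and direct analysis via the Frobenius and Artin-Schreier exact sequences together with approximation in function fields should yield the vanishing, possibly after enlarging $k'$ further to dispose of Grunwald-Wang type obstructions for $\mu_p$.

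The hard part will be making the dévissage close up: given an extension $1 \to G' \to G \to G'' \to 1$ with $\Sha^1(G'_{k'}) = \Sha^1(G''_{k'}) = 0$, an element $\alpha \in \Sha^1(G_{k'})$ lifts to some $\beta \in {\rm{H}}^1(k', G')$ whose local classes lie in the image of $G''(k'_{v'})$, so $\beta$ need not lie in $\Sha^1(G'_{k'})$. To force $\alpha = 0$ one must modify $\beta$ by the image of a rational point in $G''(k')$ so that it lands in $\Sha^1(G'_{k'}) = 0$; the obstruction lives in a cokernel built out of $G''(\A_{k'})/G''(k')$ modulo the image of ${\rm{H}}^1(k', G')$. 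Ensuring this obstruction vanishes may require enlarging $k'$ further, and the key task will be to check that iterating this procedure terminates after finitely many finite extensions, using the finiteness of $\Sha^1(k, G)$ from \cite[Thm.\,1.3.3(i)]{conrad} to bound the process.
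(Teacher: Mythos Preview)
Your d\'evissage goes in the wrong direction, and this creates exactly the closure problem you flag in your third paragraph---already at step one. From the sequence $1 \to H \to G \to U \to 1$ with ${\rm{H}}^1(L,U)=0$, you get that ${\rm{H}}^1(k',H) \to {\rm{H}}^1(k',G)$ is \emph{surjective}, not injective. So a class $\alpha \in \Sha^1(G_{k'})$ lifts to some $\beta \in {\rm{H}}^1(k',H)$, but $\beta_{\A}$ need only lie in the image of $\delta:U(\A_{k'}) \to {\rm{H}}^1(\A_{k'},H)$, not vanish. To force $\beta$ into $\Sha^1(H_{k'})$ you would need any element of $\delta(U(\A_{k'}))$ to come from $\delta(U(k'))$, which is an approximation statement for $U(k') \to U(\A_{k'})/\mathrm{im}\,G(\A_{k'})$ with no evident reason to hold. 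Your proposal to iterate extensions and bound by the finiteness of $\Sha^1$ does not obviously terminate: enlarging $k'$ changes both $\Sha^1(H_{k'})$ and the obstruction group, so there is no monotone quantity being driven to zero.

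The paper avoids this entirely by running the d\'evissage the other way. After a finite extension, $G^0_{\red}$ is a $k'$-subgroup that is a product of a split torus and a split unipotent group, so ${\rm{H}}^1(k',G^0_{\red}) = 0$ and ${\rm{H}}^1(k'_v,G^0_{\red}) = 0$ for every $v$. Hence ${\rm{H}}^1(k',G) \hookrightarrow {\rm{H}}^1(k',B)$ for $B := G/G^0_{\red}$ finite, and likewise locally, giving $\Sha^1(G_{k'}) \hookrightarrow \Sha^1(B_{k'})$ with no closure problem at all. After a further extension $B$ becomes a direct product of an infinitesimal group and a constant \'etale group, so no more d\'evissage is needed. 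For the infinitesimal factor the paper invokes Lemma~\ref{separableH^1injectiveinfinitesimal}: since $k'_v/k'$ is separable, ${\rm{H}}^1(k',I) \hookrightarrow {\rm{H}}^1(k'_v,I)$ for any single place $v$, so $\Sha^1(I)=0$ immediately---no filtration by $\mu_p$, $\alpha_p$ (and $\Z/p\Z$ is \'etale, not infinitesimal), and no Grunwald--Wang worries. The constant \'etale case is then Chebotarev, as you say.
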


\begin{proof}
After such a base change, we may assume that $G^0_{\red}$ is a subgroup scheme and is in fact the product of a split torus and a split unipotent group. Thus, ${\rm{H}}^1(k'', G^0_{\red}) = 0$ for all $k''/k$, so we only need to show that $\Sha^1(B_{k''}) = 0$ for all finite $k''/k$, where $B = G/G^0_{\red}$. By extending the base further, we may assume that $B$ is the product of an infinitesimal and a finite {\'e}tale group scheme. We know that $\Sha^1$ vanishes for infinitesimal groups (by Lemma \ref{separableH^1injectiveinfinitesimal} applied to the separable extension $k_v/k$), so we are left to 
treat the case when $B = E$ is {\'e}tale. Extending the base further, we may assume that $E$ is constant. Since $\Sha^1(k'', \Z/n\Z) = 0$ for all finite $k''/k$ by the Chebotarev Density Theorem, we are done.
\end{proof}

\begin{proposition}
\label{Sha^1--->Sha^2*injective}
Let $G$ be an affine commutative group scheme of finite type over a global function field $k$. Then the map $\Sha^1(G) \rightarrow \Sha^2(\widehat{G})^*$ induced by $\langle \cdot, \cdot \rangle_{\Sha}$ is injective.
\end{proposition}

\begin{proof}
Using Lemma \ref{sha^1=0extendscalars}, let $k'/k$ be a finite extension such that $\Sha^1(G_{k'}) = 0$. Using the canonical inclusion $G \hookrightarrow \R_{k'/k}(G_{k'})$ and Lemma \ref{sha^1sha^2*injectivedevissage}, it suffices to prove the proposition for $\R_{k'/k}(G_{k'})$. But we have a Leray spectral sequence associated to the morphism $f\colon \Spec(k') \rightarrow \Spec(k)$, namely
\[
E_2^{i,j} = {\rm{H}}^i(k, \R^jf_*G_{k'}) \Longrightarrow {\rm{H}}^{i+j}(k', G_{k'}), 
\]
and similarly for the morphism $\Spec(\A_{k'}) \rightarrow \Spec(\A_k)$. By compatibility of these spectral sequences,
we get an inclusion $\Sha^1(k, \R_{k'/k}(G_{k'}))$ $\hookrightarrow \Sha^1(k', G_{k'})$. Since the latter group vanishes, so does the former, hence the proposition holds trivially for $\R_{k'/k}(G_{k'})$.
\end{proof}

\section{Injectivity of $\Sha^2(G) \rightarrow \Sha^1(\widehat{G})^*$}

The purpose of this section is to prove that, for an affine commutative group scheme of finite type over a global function field $k$, the map $\Sha^2(k, G) \rightarrow \Sha^1(k, \widehat{G})^*$ induced by the pairing $\langle \cdot, \cdot \rangle_{\Sha}$ is injective, under an assumption which we will later see always holds (Proposition \ref{Sha^2(G)-->Sha^1(G^)*finite}). As a consequence of this injectivity and Proposition \ref{Sha^1--->Sha^2*injective}, we will deduce that Theorem \ref{shapairing} holds for finite commutative group schemes (Corollary \ref{shapairingfinite}).

\begin{lemma}
\label{Sha^1(G)=0}
Let $G$ be a finite commutative group scheme over a global field $k$. There exists a finite separable extension $k'/k$ such that $\Sha^1(k'', G_{k''}) = 0$ for all finite extensions $k''/k'$.
\end{lemma}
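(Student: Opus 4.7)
The plan is to combine Chebotarev for the étale quotient and Lemma \ref{separableH^1injectiveinfinitesimal} for the infinitesimal part via the connected--étale sequence, where the decisive ingredient is to arrange that the étale quotient of $G$ becomes constant over $k'$.

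I would first choose $k'/k$ to be the finite Galois extension over which the $\Gal(\overline{k}/k)$-action on the finite abelian group $\Gamma := G^{\et}(\overline{k})$ is trivialized, so that $G^{\et}_{k'}$ is the constant $k'$-group scheme associated with $\Gamma$; then for every finite extension $k''/k'$, $G^{\et}_{k''}$ is still the constant $k''$-group scheme associated with $\Gamma$. For such $k''$, the vanishing $\Sha^1(k'', G^{\et}_{k''}) = 0$ then follows from Chebotarev: any continuous homomorphism $\Gal(\overline{k''}/k'')^{\ab} \to \Gamma$ trivial on each decomposition group is itself trivial. Moreover, because $G^0_{k''}$ is infinitesimal and each completion $k''_v/k''$ is a separable field extension, Lemma \ref{separableH^1injectiveinfinitesimal} gives the injectivity of $\operatorname{res}_v : {\rm{H}}^1(k'', G^0_{k''}) \to {\rm{H}}^1(k''_v, G^0_{k''})$ for every single place $v$.

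Next, I fix a finite extension $k''/k'$ and a class $\alpha \in \Sha^1(k'', G_{k''})$. Its image in ${\rm{H}}^1(k'', G^{\et}_{k''})$ lies in $\Sha^1(k'', G^{\et}_{k''}) = 0$, so by exactness of the long exact cohomology sequence attached to $0 \to G^0_{k''} \to G_{k''} \to G^{\et}_{k''} \to 0$, I may write $\alpha$ as the image of some $\beta \in {\rm{H}}^1(k'', G^0_{k''})$. It suffices to show that $\beta$ lies in the image $N := \partial(\Gamma)$ of the connecting map $\partial : \Gamma = G^{\et}_{k''}(k'') \to {\rm{H}}^1(k'', G^0_{k''})$, as this forces $\alpha = 0$ by exactness. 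Pick any place $v$ of $k''$; because $\alpha_v = 0$ in ${\rm{H}}^1(k''_v, G_{k''})$, exactness of the local long exact sequence places $\beta_v$ in the image $N_v := \partial_v(\Gamma)$ of the local connecting map $\partial_v : G^{\et}_{k''}(k''_v) \to {\rm{H}}^1(k''_v, G^0_{k''})$. Since $G^{\et}_{k''}$ is constant, the map $G^{\et}_{k''}(k'') \to G^{\et}_{k''}(k''_v)$ is the identity on $\Gamma$, and functoriality of connecting maps under base change yields $\partial_v = \operatorname{res}_v \circ \partial$, whence $N_v = \operatorname{res}_v(N)$. Picking $n \in N$ with $\operatorname{res}_v(n) = \beta_v$, I get $\operatorname{res}_v(\beta - n) = 0$, and the injectivity of $\operatorname{res}_v$ from the preceding paragraph forces $\beta = n \in N$, as required.

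The main obstacle is the devissage gap: $\Sha^1$-vanishing of $G^0_{k''}$ and $G^{\et}_{k''}$ does not by itself imply $\Sha^1$-vanishing of the middle term $G_{k''}$, because of the possibly nontrivial image of the connecting map $\partial$ and its local counterparts. The identity $N_v = \operatorname{res}_v(N)$ that closes this gap rests crucially on $G^{\et}_{k''}$ being constant (so that $G^{\et}_{k''}(k'') \to G^{\et}_{k''}(k''_v)$ is a bijection of finite sets), which is precisely why it is essential to arrange constancy of $G^{\et}$ already at the level of $k'$, rather than allow it to appear only over further extensions.
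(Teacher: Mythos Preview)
Your proof is correct and follows essentially the same route as the paper: both arguments pass to a finite separable extension making the \'etale quotient constant, use Chebotarev to kill $\Sha^1$ of the \'etale part so that $\alpha$ lifts to $\beta \in {\rm{H}}^1(k'', G^0_{k''})$, then exploit constancy at a single place $v$ to adjust $\beta$ by an element of $\partial(\Gamma)$ so that $\beta_v = 0$, and finally invoke Lemma~\ref{separableH^1injectiveinfinitesimal} to conclude $\beta = 0$. Your more explicit bookkeeping with $N$ and $N_v = \operatorname{res}_v(N)$ is just a cleaner rendering of the paper's ``lift $e_v$ to $e \in E(k)$'' step.
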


\begin{proof}
We have the connected-{\'e}tale exact sequence
\[
1 \longrightarrow I \longrightarrow G \longrightarrow E \longrightarrow 1
\]
with $I$ infinitesimal and $E$ {\'e}tale. Replacing $k$ with a finite separable extension, we may assume that $E$ is constant. Of course, this constancy is preserved under any further extension of scalars. It suffices to show, then, that if $E$ is constant, then $\Sha^1(k, G) = 0$. We have a commutative diagram with exact rows
\[
\begin{tikzcd}
E(k) \arrow{r} \arrow{d} & {\rm{H}}^1(k, I) \arrow{r} \arrow{d} & {\rm{H}}^1(k, G) \arrow{r} \arrow{d} & {\rm{H}}^1(k, E) \arrow[d, hookrightarrow] \\
\prod_v E(k_v) \arrow{r} & \prod_v {\rm{H}}^1(k_v, I) \arrow{r} & \prod_v {\rm{H}}^1(k_v, G) \arrow{r} & \prod_v {\rm{H}}^1(k_v, E),
\end{tikzcd}
\]
in which the last vertical arrow is an inclusion because $\Sha^1(E) = 0$ by the Chebotarev density Theorem. Now suppose that $\alpha \in \Sha^1(G)$. Then looking at the diagram, we see that $\alpha$ lifts to some $\beta \in {\rm{H}}^1(k, I)$. Fix a place $v$ of $k$. Then $\beta_v \mapsto 0 \in {\rm{H}}^1(k_v, G)$, hence lifts to some $e_v \in E(k_v)$. Since $E$ is constant, we may lift $e_v$ to an $e \in E(k)$. Then, modifying $\beta$ by $e$, we see that $\beta_v = 0$, hence $\beta = 0$ by Lemma \ref{separableH^1injectiveinfinitesimal}. Therefore $\alpha = 0$ as well, so $\Sha^1(G) = 0$.
\end{proof}

Next we would like to prove the analogue of Lemma \ref{Sha^1(G)=0} for $\Sha^2$ (see Lemma \ref{Sha^2(G)=0}). This lies deeper, and we require several preparatory lemmas.

\begin{lemma}
\label{H^2=0forinfdual}
If $G$ is a finite commutative group scheme over a field $k$, with $\widehat{G}$ infinitesimal, then ${\rm{H}}^2(k, G) = 0$.
\end{lemma}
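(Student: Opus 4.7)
If $\operatorname{char}(k) = 0$ then any infinitesimal group scheme over $k$ vanishes, so $\widehat{G} = 0$ forces $G = 0$ by double duality (Proposition \ref{doubleduality}), making the claim trivial; assume henceforth $\operatorname{char}(k) = p > 0$, so $G$ and $\widehat{G}$ both have $p$-power order. Cartier-dualizing the connected-\'etale sequence $1 \to G^0 \to G \to G^{\et} \to 1$ yields $1 \to \widehat{G^{\et}} \to \widehat{G} \to \widehat{G^0} \to 1$, in which $\widehat{G^{\et}}$ and $\widehat{G^0}$ are infinitesimal as a subgroup and a quotient of the infinitesimal $\widehat{G}$. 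The cohomology long exact sequence associated to the connected-\'etale sequence reduces the problem to showing ${\rm{H}}^2(k, G^{\et}) = 0$ and ${\rm{H}}^2(k, G^0) = 0$.

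For the \'etale summand $G^{\et}$: it is a finite \'etale commutative $k$-group scheme of $p$-power order, so its fppf, \'etale, and Galois cohomology all coincide, and Galois cohomology in degree $\geq 2$ with $p$-primary coefficients vanishes over any characteristic-$p$ field by \cite[Ch.\,II, \S2.2, Prop.\,3]{serre}.

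For the connected summand $G^0$: both $G^0$ and $\widehat{G^0}$ are infinitesimal (i.e., $G^0$ is local-local), and via the Cartier-duality compatibility $\widehat{F_H} = V_{\widehat{H}}$, both the relative Frobenius $F_{G^0}$ and the Verschiebung $V_{G^0}$ are nilpotent. I would induct on $|G^0|$, the base case $|G^0| = 1$ being vacuous. For the inductive step, a Dieudonn\'e-theoretic argument using the combined nilpotency of $F$ and $V$ produces a nonzero $k$-subgroup scheme $G_\alpha \subseteq G^0$ that is killed by both $F$ and $V$. Any finite commutative $k$-group scheme killed by both $F$ and $V$ is a $k$-form of $\alpha_p^r$ for some $r \geq 1$, and since $\mathrm{Aut}(\alpha_p^r) \simeq \GL_r$ Hilbert 90 forces $G_\alpha \simeq \alpha_p^r$. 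The exact sequence $1 \to \alpha_p \to \Ga \xrightarrow{F} \Ga \to 1$ combined with ${\rm{H}}^i(k, \Ga) = 0$ for $i \geq 1$ gives ${\rm{H}}^2(k, \alpha_p) = 0$, hence ${\rm{H}}^2(k, G_\alpha) = 0$. The quotient $G^0/G_\alpha$ is local-local of smaller order, so by induction ${\rm{H}}^2(k, G^0/G_\alpha) = 0$, and the long exact sequence then forces ${\rm{H}}^2(k, G^0) = 0$.

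The main obstacle is producing the $\alpha_p^r$-subgroup $G_\alpha$ as an actual $k$-subgroup of $G^0$ rather than merely a $\overline{k}$-subgroup; the key ingredients are the interchange of $F$ and $V$ under Cartier duality (which yields nilpotency of $V$ on $G^0$ from the infinitesimality of $\widehat{G^0}$) and the triviality of $k$-forms of $\alpha_p^r$ via Hilbert 90 for $\GL_r$.
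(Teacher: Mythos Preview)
Your proof is correct and follows essentially the same approach as the paper: both reduce via the connected-\'etale sequence to the \'etale case (handled by $p$-cohomological dimension $\le 1$) and the local-local case, then handle the latter by filtering down to groups killed by both $F$ and $V$, which are $k$-forms of $\alpha_p^r$ and hence isomorphic to $\alpha_p^r$ via ${\rm{H}}^1(k,\GL_r)=1$. One small remark: you invoke a ``Dieudonn\'e-theoretic argument'' to produce $G_\alpha$ over $k$, but Dieudonn\'e theory is for perfect fields; the subgroup can be produced directly (e.g., inside $\ker F$ take the image of the highest nonzero power of $V$), and the paper glosses over this step in the same way by simply asserting ``we may filter $G$''.
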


\begin{proof}
Using the connected-\'etale sequence, we may assume that $G$ is either local-local or \'etale. The latter case follows from the fact that fields of characteristic $p$ have $p$-cohomological dimension at most $1$ \cite[Ch.\,II, \S2.2, Prop.\,3]{serre}. For the former, we may filter $G$ and thereby assume that $G$ is killed by both Frobenius and Verschiebung. Over $\overline{k}$, this implies by Dieudonn\'e theory that $G$ is isomorphic to $\alpha_p^n$ for some $n$. Since ${\rm{H}}^1(k, {\rm{Aut}}_{\alpha_p^n/k}) = {\rm{H}}^1(k, {\rm{GL}}_n) = 1$, this implies that $G \simeq \alpha_p^n$. We may therefore assume that $G = \alpha_p$. Then the lemma follows from the exact sequence
\[
1 \longrightarrow \alpha_p \longrightarrow \Ga \xlongrightarrow{F} \Ga \longrightarrow 1,
\]
in which $F$ is the relative Frobenius isogeny, and from the vanishing of the higher cohomology of $\Ga$.
\end{proof}

\begin{lemma}
\label{finiteexpcountable}
Let $k$ be a global field, $G$ an affine commutative $k$-group scheme of finite type. Then ${\rm{H}}^i(k, G)$ and ${\rm{H}}^i(k, \widehat{G})$ are countable for $i \leq 2$.
\end{lemma}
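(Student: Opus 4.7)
The plan is to reduce to \v{C}ech cohomology and then use the cover $\Spec(\overline{k}) \rightarrow \Spec(k)$, exploiting the fact that a global field (and hence its algebraic closure) is countable.

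More concretely, I would proceed as follows. By Propositions \ref{cech=derivedG} and \ref{cech=derivedG^}, the natural maps $\check{{\rm{H}}}^i(k, G) \rightarrow {\rm{H}}^i(k, G)$ and $\check{{\rm{H}}}^i(k, \widehat{G}) \rightarrow {\rm{H}}^i(k, \widehat{G})$ are isomorphisms for $i \leq 2$, so it suffices to prove countability of these \v{C}ech cohomology groups. Since $G$ is affine of finite type, both $G$ and $\widehat{G}$ are locally of finite presentation as fppf sheaves (the second because for affine finite type $G$, $\widehat{G}(R)$ consists of those $u \in R[G]^\times$ with $\Delta(u) = u \otimes u$, and both being a unit with prescribed inverse and being group-like are conditions on finitely many coefficients, hence commute with filtered colimits in $R$). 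Lemma \ref{mayusekbar/k} therefore applies and allows us to compute $\check{{\rm{H}}}^i(k, G)$ and $\check{{\rm{H}}}^i(k, \widehat{G})$ using the single cover $\Spec(\overline{k}) \rightarrow \Spec(k)$.

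Next I would verify that the resulting \v{C}ech complexes are countable in each degree. A global field $k$ is countable, so $\overline{k}$ is countable, and the tensor power $\overline{k}^{\otimes_k n}$ is a countable-dimensional $\overline{k}$-vector space over a countable field, hence is countable for every $n \geq 1$. Since $G$ is affine of finite type, choose a closed $k$-immersion $G \hookrightarrow \mathbf{A}_k^N$; then $G(R) \subset R^N$ is countable whenever $R$ is, so in particular $G(\overline{k}^{\otimes_k n})$ is countable. Similarly, $\widehat{G}(R) \subset R[G]^{\times}$, and $R[G]$ is a finitely generated $R$-algebra, hence countable when $R$ is, so $\widehat{G}(\overline{k}^{\otimes_k n})$ is countable as well.

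Since each term in the \v{C}ech complex is countable, each cocycle group is countable, and therefore so is each \v{C}ech cohomology group. Combining with the \v{C}ech-to-derived comparison, this gives countability of ${\rm{H}}^i(k, G)$ and ${\rm{H}}^i(k, \widehat{G})$ for $i \leq 2$. There is essentially no hard step here: the only mild verification is that $\widehat{G}$ is locally of finite presentation so that Lemma \ref{mayusekbar/k} is applicable, and this follows immediately from the finite-type hypothesis on $G$.
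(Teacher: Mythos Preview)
Your proof is correct and takes a genuinely different route from the paper's. The paper proceeds by d\'evissage: it uses Lemma \ref{affinegroupstructurethm} and Proposition \ref{hatisexact} to reduce to the building blocks $\Ga$, tori, and finite group schemes, and then handles each case by explicit computation (e.g., ${\rm{H}}^2(k,\mu_n)=\Br(k)[n]$ is countable by global class field theory, ${\rm{H}}^2(k,\widehat{\Ga})$ is a one-dimensional $k$-vector space by Corollary \ref{dimH^2(Ga^)=1}, and so on). Your argument instead bypasses all structural reduction: once you invoke Propositions \ref{cech=derivedG} and \ref{cech=derivedG^} together with Lemma \ref{mayusekbar/k}, countability of the cohomology groups follows immediately from countability of the cochain groups $G(\overline{k}^{\otimes_k n})$ and $\widehat{G}(\overline{k}^{\otimes_k n})$, which is elementary since $\overline{k}$ is countable and $G$ is affine of finite type. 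The only point requiring care, as you note, is that $\widehat{G}$ is locally of finite presentation; your justification via group-like units in the finitely presented Hopf algebra $R[G]$ is fine. Your approach is shorter and more uniform, and in particular avoids any appeal to class field theory; the paper's approach, on the other hand, yields more precise information in the individual cases along the way.
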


\begin{proof}
Lemma \ref{affinegroupstructurethm}, Proposition \ref{hatisexact}, and Proposition \ref{etrem} when ${\rm{char}}(k) = 0$ reduce us to the case when $G$ is either an almost-torus or $\Ga$. Further, Lemma \ref{almosttorus}(ii) reduces the almost-torus case to the case when $G$ is either finite or a torus. So we may assume that $G$ is either finite, a torus, or $\Ga$.

For $G = \Ga$, all of the relevant cohomology groups vanish except for ${\rm{H}}^0(k, \Ga) = k$, which is countable, and possibly ${\rm{H}}^2(k, \widehat{\Ga})$ (Propositions \ref{cohomologyofG_adualgeneralk} and \ref{unipotentcohomology}(ii)). So we are left to show that ${\rm{H}}^2(k, \widehat{\Ga})$ is countable. If ${\rm{char}}(k) = 0$, then this group vanishes (Proposition \ref{cohomologyofG_adualwhenkisperfect}), while if ${\rm{char}}(k) > 0$, then it is a one-dimensional $k$-vector space (Proposition \ref{omega=H^2(Ga^)}), hence countable.

We next treat the case when $G$ is finite, for which we only need to treat the cohomology of $G$ (and not also $\widehat{G}$) by Cartier duality. We proceed by induction on $i$. The assertion is trivial for $i = 0$. If it holds for $i -1$, then to deduce it for $i$, we first note that we may filter $B$ and therefore assume that it is either {\'e}tale, multiplicative, or local-local. If $B$ is local-local, then we may filter it further to assume that its relative Frobenius and Verschiebung morphisms vanish, hence $B = \alpha_p^n$, so then we may assume $B = \alpha_p$. Further note that we may replace $(k, B)$ with $(k', B_{k'})$ for any finite separable extension $k'/k$
(as follows by induction on $i$ via the canonical inclusion $B \hookrightarrow \R_{k'/k}(B_{k'})$ and the fact that ${\rm{H}}^i(k, \R_{k'/k}(B_{k'})) \simeq {\rm{H}}^i(k', B_{k'})$ (using Lemma \ref{sepblepushforward}). We may therefore assume that either $B = \alpha_p$, $B = \mu_n$, or $B = \Z/p\Z$, with $p = {\rm{char}}(k) > 0$ in the first and third cases.

If $p = {\rm{char}}(k)>0$, then ${\rm{H}}^1(k, \alpha_p) = k/k^p$ is countable and ${\rm{H}}^2(k, \alpha_p) = 0$ by Lemma \ref{H^2=0forinfdual}. For $\mu_n$
and arbitrary characteristic, we have ${\rm{H}}^1(k, \mu_n) = k^{\times}/(k^{\times})^n$ (which is countable) and ${\rm{H}}^2(k, \mu_n) = {\rm{H}}^2(k, \Gm)[n]$ (which is countable by global class field theory). Finally, if $p = {\rm{char}}(k)>0$ then ${\rm{H}}^1(k, \Z/p\Z) = k/\wp(k)$ (hence countable) with $\wp:  k \rightarrow k$ the Artin-Schreier map $x \mapsto x^p - x$, and ${\rm{H}}^2(k, \Z/p\Z) = 0$ by Lemma \ref{H^2=0forinfdual}.

Next we treat the remaining case when $G = T$ is a torus. The cases with $i = 0$ are clear, so we may assume that $i > 0$. Since $T$ and $\widehat{T}$ are represented by smooth $k$-group schemes, the cohomology may be taken to be \'etale. Since higher Galois cohomology is torsion, it suffices to show that the groups ${\rm{H}}^i(k, T)[n]$ are countable for all $n > 0$, and similarly for $\widehat{T}$. For this, we have an exact sequence
\[
1 \longrightarrow T[n] \longrightarrow T \xlongrightarrow{[n]} T \longrightarrow 1
\]
with $T[n]$ a finite $k$-group scheme. We therefore obtain for all $i$ a surjective map ${\rm{H}}^i(k, T[n]) \twoheadrightarrow {\rm{H}}^i(k, T)[n]$, so the already-treated finite case implies that the latter group is countable. The groups ${\rm{H}}^i(k, \widehat{T})[n]$ are treated similarly, by using the exact sequence
\[
1 \longrightarrow \widehat{T} \xlongrightarrow{[n]} \widehat{T} \longrightarrow \widehat{T}/[n]\widehat{T} \longrightarrow 1
\]
in which $\widehat{T}/[n]\widehat{T}$ is a finite $k$-group scheme.
\end{proof}

\begin{proposition}
\label{exactnessatH^1(G^)*finite}
Let $G$ be an affine commutative group scheme of finite type over a global function field $k$. Consider the following sequences{\rm{:}}
\[
{\rm{H}}^1(\A, G) \longrightarrow {\rm{H}}^1(k, \widehat{G})^* \longrightarrow \Sha^1(\widehat{G})^* \longrightarrow 0,
\]
\begin{equation}
\label{exactnessatH^1(G^)*finiteeqn10}
{\rm{H}}^1(\A, \widehat{G}) \longrightarrow {\rm{H}}^1(k, G)^* \longrightarrow \Sha^1(G)^* \longrightarrow 0,
\end{equation}
in which the maps ${\rm{H}}^1(\A, G) \rightarrow {\rm{H}}^1(k, \widehat{G})^*$ and ${\rm{H}}^1(\A, \widehat{G}) \rightarrow {\rm{H}}^1(k, G)^*$ are induced by the global duality pairings.
\begin{itemize}
\item[(i)] If $G$ is finite, then the above sequences are exact.
\item[(ii)] If the sequences
\[
{\rm{H}}^1(k, \widehat{G}) \longrightarrow {\rm{H}}^1(\A, \widehat{G}) \longrightarrow {\rm{H}}^1(k, G)^*
\]
\[
{\rm{H}}^1(k, G) \longrightarrow {\rm{H}}^1(\A, G) \longrightarrow {\rm{H}}^1(k, \widehat{G})^*
\]
are exact, then the sequences $(\ref{exactnessatH^1(G^)*finiteeqn10})$ are also exact.
\end{itemize}
\end{proposition}

\begin{proof}
Assertion (i) follows from (ii), since the exactness of those sequences for finite $G$ is part of the Poitou--Tate sequence for finite commutative group schemes (and even of the part that we are assuming to be exact! see Remark \ref{prior}). So we concentrate on (ii). We prove that the first sequence is exact. The proof for the second sequence is analogous. Exactness at $\Sha^1(k, \widehat{G})^*$ follows from the injectivity of $\Q/\Z$ as an abelian group, so we concentrate on proving exactness at ${\rm{H}}^1(k, \widehat{G})^*$.
Consider the exact sequence
\begin{equation}
\label{exactnessatH^1(G^)*finiteseq1}
0 \longrightarrow \Sha^1(\widehat{G}) \longrightarrow {\rm{H}}^1(k, \widehat{G}) \longrightarrow {\rm{H}}^1(\A, \widehat{G}).
\end{equation}
We claim that applying $(\cdot)^D$ to this sequence preserves exactness (where we are endowing ${\rm{H}}^1(k, \widehat{G})$ and $\Sha^1(\widehat{G})$ with the discrete topology).

For this, it suffices to show that the continuous inclusion ${\rm{H}}^1(k, \widehat{G})/\Sha^1(\widehat{G}) \hookrightarrow {\rm{H}}^1(\A, \widehat{G})$ is a homeomorphism onto a closed subgroup. (Recall that ${\rm{H}}^1(\A, \widehat{G})$ is a locally compact Hausdorff abelian group by Proposition \ref{adelictopcohombasicsG^}(iii), so the formalism and results of Pontryagin duality apply to it.)
 By Lemmas \ref{isom=homeo} and \ref{finiteexpcountable}, and Proposition \ref{adelictopcohombasicsG^}(iii), it suffices to simply show that the image is closed. This follows from exactness of the sequence of {\em continuous} maps
\[
{\rm{H}}^1(k, \widehat{G}) \longrightarrow {\rm{H}}^1(\A, \widehat{G}) \longrightarrow {\rm{H}}^1(k, G)^*,
\]
since this expresses this image as the kernel of a continuous map to a Hausdorff group. Note that continuity of the last map in the above sequence follows from the continuity of the adelic pairing ${\rm{H}}^1(\A, G) \times {\rm{H}}^1(\A, \widehat{G}) \rightarrow \Q/\Z$ (which is contained in Proposition \ref{H^1(A,G)=H^1(A,G^)^D}), together with continuity of the map ${\rm{H}}^1(\A, G)^D \rightarrow {\rm{H}}^1(k, G)^D$ dual to the trivially continuous map ${\rm{H}}^1(k, G) \rightarrow {\rm{H}}^1(\A, G)$.

Applying $(\cdot)^D$, therefore, to (\ref{exactnessatH^1(G^)*finiteseq1}), and using the fact that ${\rm{H}}^1(k, \widehat{G})^* = {\rm{H}}^1(k, \widehat{G})^D$
by Lemma \ref{H^1finiteexponent}, we get an exact sequence
\[
{\rm{H}}^1(\A, \widehat{G})^D \longrightarrow {\rm{H}}^1(k, \widehat{G})^* \longrightarrow \Sha^1(\widehat{G})^*.
\]
Proposition \ref{H^1(A,G)=H^1(A,G^)^D} (and the compatibility of the pairings ${\rm{H}}^1(\A, G) \times {\rm{H}}^1(\A, \widehat{G}) \rightarrow \Q/\Z$ and ${\rm{H}}^1(\A, G) \times {\rm{H}}^1(k, \widehat{G}) \rightarrow \Q/\Z$) then completes the proof of the proposition.
\end{proof}

\begin{lemma}
\label{Sha^2(G)=0devissage}
Suppose that we have an exact sequence
\[
1 \longrightarrow G' \xlongrightarrow{j} G \xlongrightarrow{\pi} G'' \longrightarrow 1
\]
of finite commutative group schemes over a global function field $k$, and suppose that $\alpha \in \Sha^2(G)$ annihilates $\Sha^1(\widehat{G})$ under $\langle \cdot, \cdot \rangle_{\Sha_G}$ and lifts to an element of ${\rm{H}}^2(k, G')$. Then $\alpha$ lifts to an element of $\Sha^2(G')$.
\end{lemma}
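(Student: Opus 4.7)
The plan is to fix any lift $\beta \in {\rm{H}}^2(k, G')$ of $\alpha$ and adjust it by an element of $\delta({\rm{H}}^1(k, G''))$ to land in $\Sha^2(G')$. Since $j(\beta_{\A}) = \alpha_{\A} = 0$, there exists $\gamma' \in {\rm{H}}^1(\A, G'')$ with $\delta(\gamma') = \beta_{\A}$, and the problem reduces to finding $\gamma \in {\rm{H}}^1(k, G'')$ and $\eta \in {\rm{H}}^1(\A, G)$ with $\gamma_{\A} = \gamma' - \pi(\eta)$; the adjusted lift $\beta - \delta(\gamma)$ then lies in $\Sha^2(G')$.

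By the Poitou--Tate three-term sequence for the finite commutative group $G''$ (Remark \ref{prior}), the image of ${\rm{H}}^1(k, G'')$ in ${\rm{H}}^1(\A, G'')$ coincides with the kernel of the pairing into ${\rm{H}}^1(k, \widehat{G''})^*$. Thus we seek to show that the continuous functional $\phi(\zeta) := \langle \gamma', \zeta\rangle$ on ${\rm{H}}^1(k, \widehat{G''})$ agrees with $\zeta \mapsto \langle \eta, \widehat{\pi}(\zeta)\rangle$ for some $\eta \in {\rm{H}}^1(\A, G)$. We will verify this in two steps: first, that $\phi$ vanishes on $\ker(\widehat{\pi}) = \im(\widehat{\delta}\colon \widehat{G'}(k) \to {\rm{H}}^1(k, \widehat{G''}))$, and hence factors through a functional $\phi'$ on $\im(\widehat{\pi}) \subset {\rm{H}}^1(k, \widehat{G})$; second, that $\phi'$ extends to a functional on all of ${\rm{H}}^1(k, \widehat{G})$ coming from ${\rm{H}}^1(\A, G)$. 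The first step will follow from $\delta$-functoriality of cup product: $\phi(\widehat{\delta}(\chi)) = \pm \langle \delta(\gamma'), \chi\rangle = \pm\langle \beta_{\A}, \chi\rangle$, which vanishes because the sequence ${\rm{H}}^2(k, G') \to {\rm{H}}^2(\A, G') \to \widehat{G'}(k)^*$ from Proposition \ref{funsequence} is a complex. For the second step, Proposition \ref{exactnessatH^1(G^)*finite} applied to $G$ says that a functional on ${\rm{H}}^1(k, \widehat{G})$ lies in the image of ${\rm{H}}^1(\A, G)$ precisely when it vanishes on $\Sha^1(\widehat{G})$; using injectivity of $\Q/\Z$, such an extension of $\phi'$ exists if and only if $\phi'$ vanishes on $\im(\widehat{\pi}) \cap \Sha^1(\widehat{G})$.

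The main obstacle, and the only point where the hypothesis on $\alpha$ enters, is the required vanishing $\phi(\zeta) = 0$ whenever $\widehat{\pi}(\zeta) \in \Sha^1(\widehat{G})$. I expect to resolve this via the cocycle-level identity
\[
\langle \gamma', \zeta\rangle \;=\; \langle \alpha, \widehat{\pi}(\zeta)\rangle_{\Sha^2_G},
\]
whose right side vanishes by assumption on $\alpha$. To establish the identity, represent $\beta$ by a \v{C}ech cocycle $b' \in \check{Z}^2(k, G')$ and set $a := j(b')$; represent $\zeta$ by $c \in \check{Z}^1(k, \widehat{G''})$; for each place $v$ choose $d_v \in \check{Z}^1(k_v, G'')$ representing $\gamma'_v$ together with a lift $\widetilde{d}_v \in \check{C}^1(k_v, G)$ with $d\widetilde{d}_v = a_v$ (possible because $\delta(\gamma') = \beta_{\A}$). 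The key observation is that the pairing $G' \times \widehat{G''} \to \Gm$ factored through $j$ and $\widehat{\pi}$ is trivial, since $\pi \circ j = 0$; consequently the global $3$-cochain $a \cup \widehat{\pi}(c) = j(b') \cup \widehat{\pi}(c)$ vanishes identically, so the global $2$-cochain $h$ with $dh = a \cup \widehat{\pi}(c)$ appearing in the definition of the $\Sha^2$-pairing may be chosen to be $h = 0$. This yields
\[
\langle \alpha, \widehat{\pi}(\zeta)\rangle_{\Sha^2_G} \;=\; \sum_v \inv_v\bigl(\widetilde{d}_v \cup \widehat{\pi}(c)_v\bigr) \;=\; \sum_v \inv_v\bigl(d_v \cup c_v\bigr) \;=\; \langle \gamma', \zeta\rangle,
\]
with the middle equality coming from the cochain identity $\widetilde{d}_v \cup \widehat{\pi}(c)_v = \pi(\widetilde{d}_v) \cup c_v = d_v \cup c_v$, derived from compatibility of $\pi$ and $\widehat{\pi}$ with the cup product pairings into $\Gm$.
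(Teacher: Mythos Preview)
Your proof is correct and follows essentially the same approach as the paper: lift $\alpha$ to $\beta$, choose an adelic lift $\gamma'$ of $\beta_{\A}$, show via a cocycle identity that $\gamma'$ annihilates $\widehat{\pi}^{-1}(\Sha^1(\widehat{G}))$, extend to a functional on ${\rm{H}}^1(k,\widehat{G})/\Sha^1(\widehat{G})$ by injectivity of $\Q/\Z$, realize it via Proposition~\ref{exactnessatH^1(G^)*finite} as coming from ${\rm{H}}^1(\A,G)$, and adjust. Your observation that $a \cup \widehat{\pi}(c) = j(b') \cup \widehat{\pi}(c) = 0$ as a cochain (since $\pi \circ j = 0$) lets you take $h=0$ in the $\Sha$-pairing, which is a genuine simplification over the paper's argument; the paper carries a general $h$ and invokes the vanishing of the sum of local invariants of a global class at the end. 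One small point to make explicit: the claim that you can choose $\widetilde{d}_v$ with both $\pi(\widetilde{d}_v)=d_v$ representing $\gamma'_v$ and $d\widetilde{d}_v = a_v$ exactly requires adjusting an arbitrary lift by an element of $j(\check{C}^1(k_v,G'))$, since a priori $d\widetilde{d}_v$ is only cohomologous to $a_v$ in $j(\check{Z}^2(k_v,G'))$; this is straightforward but worth a sentence. Your Step~1 is a special case of the identity in Step~2 (take $\widehat{\pi}(\zeta)=0$), so the separate $\delta$-functoriality argument is not strictly needed.
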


\begin{proof}
Consider the following commutative diagram with exact rows: 
\[
\begin{tikzcd}
{\rm{H}}^1(k, G) \arrow{d} \arrow{r} & {\rm{H}}^1(k, G'') \arrow{d} \arrow{r} & {\rm{H}}^2(k, G') \arrow{d} \arrow{r} & {\rm{H}}^2(k, G) \arrow{d} \\
{\rm{H}}^1(\A, G) \arrow{r} \arrow{d} & {\rm{H}}^1(\A, G'') \arrow{r}{\delta} \arrow{d} & {\rm{H}}^2(\A, G') \arrow{r} & {\rm{H}}^2(\A, G) \\
{\rm{H}}^1(k, \widehat{G})^* \arrow{d} \arrow{r} & {\rm{H}}^1(k, \widehat{G''})^*  \arrow{d} &&\\
\Sha^1(\widehat{G})^* \arrow{r} & \Sha^1(\widehat{G''})^* &&
\end{tikzcd}
\]
The two leftmost columns are exact by Proposition \ref{exactnessatH^1(G^)*finite} and by Poitou--Tate for finite commutative group schemes (see Remark \ref{prior}). Lift $\alpha$ to a $\beta \in {\rm{H}}^2(k, G')$, so $\beta_{\A} \in {\rm{H}}^2(\A, G')$ lifts to some $\gamma \in {\rm{H}}^1(\A, G'')$. Via the adelic pairing, $\gamma$ yields an element 
$\phi_{\gamma} \in {\rm{H}}^1(k, \widehat{G''})^*$.

Due to the injectivity of the map $${\rm{H}}^1(k, \widehat{G''})/\widehat{\pi}^{-1}(\Sha^1(\widehat{G})) \rightarrow
{\rm{H}}^1(k, \widehat{G})/\Sha^1(\widehat{G})$$ and the injectivity of the abelian group $\Q/\Z$, Lemma \ref{gammakillsShapreimage} below implies that $\phi_{\gamma}$ lifts to an element $\psi \in ({\rm{H}}^1(k, \widehat{G})/\Sha^1(\widehat{G}))^*$. 
By Proposition \ref{exactnessatH^1(G^)*finite}, there exists a $g \in {\rm{H}}^1(\A, G)$ inducing $\psi$. Modifying $\gamma$ by $g$ (which has no effect on $\delta(\gamma)$), we then obtain $\phi_{\gamma} = 0$, hence $\gamma$ lifts to an element $g'' \in {\rm{H}}^1(k, G'')$. Modifying $\beta$ by the image of $g''$ (which has no effect on $j(\beta) = \alpha$), we then obtain that $\beta \in \Sha^2(G')$, as desired.
\end{proof}

\begin{lemma}
\label{gammakillsShapreimage}
With notation as above, $\phi_{\gamma}$ annihilates $\widehat{\pi}^{-1}(\Sha^1(\widehat{G})) \subset {\rm{H}}^1(k, \widehat{G''})$.
\end{lemma}

\begin{proof}
The key point is to prove the following equality:  for $z \in \widehat{\pi}^{-1}(\Sha^1(\widehat{G}))$, we have
\begin{equation}
\label{gammakillsShapreimageeq1}
\langle \gamma, z \rangle = \langle \widehat{\pi}(z), \alpha \rangle_{\Sha_G}.
\end{equation}
Since $\alpha$ annihilates $\Sha^1(\widehat{G})$ by assumption, this would imply what we want.

Let us first note that (\ref{gammakillsShapreimageeq1}) is independent of the choice of the lift $\gamma \in
{\rm{H}}^1(\A,G'')$. Indeed, if we change $\gamma$ by $\pi(\zeta)$ for some $\zeta \in {\rm{H}}^1(\A, G)$, then for $c \in \widehat{\pi}^{-1}(\Sha^1(\widehat{G}))$, $\langle \gamma, c\rangle$ changes by $\langle \pi(\zeta), c\rangle = \langle \zeta, \widehat{\pi}(c) \rangle = 0$, since $\zeta$ pairs trivially with $\Sha^1(\widehat{G})$ (because an element of $\Sha^1(\widehat{G})$ is by definition locally trivial).

We have $\delta(\gamma) = \beta_{\A}$ as cohomology classes. For the rest of this proof we shall use the symbols $\gamma, \alpha$, etc. to denote fixed \v{C}ech cocycles representing the corresponding 
cohomology classes (which makes sense by Propositions \ref{cech=derivedsmoothinf} and \ref{cech=derivedG^}). 
Consider the \v{C}ech cocycle $\gamma'$ defined as follows.  
We know that the image $j(\beta_{\A}) \in {\rm{H}}^2(\A, G)$ vanishes, so its image in the \v{C}ech cohomology group 
$\check{\rm{H}}^2(\A,G)$  vanishes since the edge map $\check{\rm{H}}^2 \rightarrow {\rm{H}}^2$ is always injective
(due to $\check{\rm{H}}^1 \rightarrow {\rm{H}}^1$ being an isomorphism).  Hence, $j(\beta_{\A}) = dw$ 
in $\check{Z}^2(\A, G)$ for some $w \in \check{C}^1(\A, G)$. The \v{C}ech 1-cochain $\gamma' : = \pi(w) \in \check{C}^1(\A, G'')$
is a cocycle since $d\pi(w) = \pi(dw) = \pi(j(\beta_{\A})) = 0$ in $\check{C}^2(\A,G'')$ due to the vanishing of $\pi \circ j$. 
Hence, by Proposition \ref{cechderivedconnectingmap}, $\delta(\gamma') = \beta_{\A}$ as \v{C}ech cohomology classes. 
We may then replace $\gamma$ with the cocycle $\gamma'$, and thereby assume that $\gamma = \pi(w)$ and $dw = j(\beta_{\A})$
as \v{C}ech cochains. We may also replace the \v{C}ech cocycle $\alpha \in \check{Z}^2(k,G)$ 
with the cohomologous \v{C}ech cocycle $j(\beta)$ so that $\alpha=j(\beta)$ as \v{C}ech 2-cocycles and not just as cohomology classes.

The left side of (\ref{gammakillsShapreimageeq1}) is by definition
\[
\sum_v \inv_v(\gamma \cup z).
\]
Since $\widehat{\pi}(z)$ represents an element of $\Sha^1(\widehat{G}) = \ker({\rm{H}}^1(k,\widehat{G}) \rightarrow
{\rm{H}}^1(\A, \widehat{G}))$ with $\check{\rm{H}}^1 = {\rm{H}}^1$, we have $\widehat{\pi}(z) = dx$ for some $x \in \check{C}^0(\A, \widehat{G})$. Further, we have $\alpha \cup \widehat{\pi}(z) = dh$ for some $h \in \check{C}^2(k, \Gm)$ since
$\check{\rm{H}}^3(k,\Gm)=0$ by Lemma \ref{checkh3=0}.  The right side of (\ref{gammakillsShapreimageeq1}) is then by definition
\[
\sum_v \inv_v((\alpha \cup x_v) - h).
\]
Now we have, as adelic \v{C}ech 2-cocycles,
\[
\gamma \cup z = \pi(w) \cup z = w \cup \widehat{\pi}(z) = w \cup dx = dw \cup x - d(w \cup x).
\]
The last expression is cohomologous to
\[
dw \cup x = j(\beta) \cup x = \alpha \cup x.
\]
It follows that $h$ is a cocycle and that the difference between the two sides of (\ref{gammakillsShapreimageeq1}) is
\[
\sum_v \inv_v(h),
\]
which vanishes because the sum of the invariants of a global Brauer class is $0$.
\end{proof}

We are now ready for the promised analogue to Lemma \ref{Sha^1(G)=0} for $\Sha^2$.

\begin{lemma}
\label{Sha^2(G)=0}
Let $G$ be a finite commutative group scheme over a global function field $k$. There exists a finite separable extension $k'/k$ such that $\Sha^2(k'', G_{k''}) = 0$ for every finite extension $k''/k'$.
\end{lemma}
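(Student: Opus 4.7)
The plan is to induct on the length of a composition series of $G$, using Lemma \ref{Sha^2(G)=0devissage} for the inductive step, after first making a preliminary extension that renders the relevant $\Sha^1$ irrelevant. I first apply Lemma \ref{Sha^1(G)=0} to the Cartier dual $\widehat{G}$ to obtain a finite separable extension $k_1/k$ with $\Sha^1(k'', \widehat{G_{k''}}) = 0$ for every finite $k''/k_1$ (Cartier duality commuting with base change). After possibly enlarging $k_1$, I may assume that $G_{k_1}$ admits a composition series whose simple subquotients are each of the form $\alpha_p$, $\mu_\ell$, or $\Z/\ell\Z$, where $p = \mathrm{char}(k)$ and $\ell$ ranges over finitely many primes (possibly $\ell = p$); by enlarging once more to contain the finitely many needed roots of unity, I may further arrange that every such $\Z/\ell\Z$ with $\ell \neq p$ becomes $\mu_\ell$ after base change. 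I then induct on the length of this composition series.

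The base case treats a simple $G$ of one of three types. For $G = \alpha_p$ or $G = \Z/p\Z$, the Artin--Schreier sequences $1 \to \alpha_p \to \Ga \xrightarrow{F} \Ga \to 1$ and $0 \to \Z/p\Z \to \Ga \xrightarrow{\wp} \Ga \to 0$, combined with the vanishing of the higher fppf cohomology of $\Ga$ over any affine scheme, force ${\rm{H}}^2(k'', G_{k''}) = 0$ for every finite extension $k''$, so $\Sha^2$ vanishes trivially. For $G = \mu_\ell$ (any prime $\ell$), the Kummer sequence $1 \to \mu_\ell \to \Gm \xrightarrow{[\ell]} \Gm \to 1$ produces an inclusion $\Sha^2(k'', \mu_\ell) \hookrightarrow \Sha^2(k'', \Gm)[\ell]$, and $\Sha^2(k'', \Gm)$ vanishes by the Hasse--Brauer--Noether theorem of global class field theory.

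For the inductive step, suppose $G$ fits into $1 \to G' \to G \to G'' \to 1$ with $G'$ and $G''$ of strictly shorter filtration length; by induction there is a finite separable $k_2/k_1$ with $\Sha^2(k'', G'_{k''}) = \Sha^2(k'', G''_{k''}) = 0$ for every finite $k''/k_2$, and I set $k' := k_2$. Given any finite $k''/k'$ and $\alpha \in \Sha^2(k'', G_{k''})$, its image in $\Sha^2(k'', G''_{k''}) = 0$ vanishes, so $\alpha$ lifts to some class in ${\rm{H}}^2(k'', G'_{k''})$. Since $k'' \supset k_1$, we have $\Sha^1(k'', \widehat{G_{k''}}) = 0$, so the hypothesis of Lemma \ref{Sha^2(G)=0devissage} that $\alpha$ annihilate $\Sha^1(k'', \widehat{G_{k''}})$ under $\langle \cdot, \cdot \rangle_{\Sha^2_{G_{k''}}}$ is automatic. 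That lemma then upgrades the lift to a class in $\Sha^2(k'', G'_{k''}) = 0$, forcing $\alpha = 0$.

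The main obstacle is ensuring that Lemma \ref{Sha^2(G)=0devissage} actually applies at each step of the induction: the pairing hypothesis is what makes it nontrivial, and I bypass it entirely by first killing $\Sha^1(\widehat{G})$ uniformly via Lemma \ref{Sha^1(G)=0} applied to $\widehat{G}$. No additional input beyond Hasse--Brauer--Noether for $\Gm$ is needed on the cohomological side; the rest is formal devissage together with the Artin--Schreier sequences.
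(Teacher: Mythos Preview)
Your proof is correct and follows essentially the same approach as the paper: both arguments hinge on Lemma~\ref{Sha^2(G)=0devissage}, with the pairing hypothesis bypassed by first applying Lemma~\ref{Sha^1(G)=0} to $\widehat{G}$ to kill $\Sha^1(\widehat{G})$ uniformly. The only cosmetic difference is that the paper d\'evisses via the connected--\'etale sequence for $\widehat{G}$ (invoking Lemma~\ref{H^2=0forinfdual} to handle the case ``$\widehat{G}$ infinitesimal'' in one stroke), whereas you pass to a full composition series and treat $\alpha_p$ and $\Z/p\Z$ individually via the Frobenius and Artin--Schreier sequences---which is precisely the content of Lemma~\ref{H^2=0forinfdual}.
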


\begin{proof}
First suppose that we have an exact sequence
\[
1 \longrightarrow G' \longrightarrow G \longrightarrow G'' \longrightarrow 1
\]
of finite commutative $k$-group schemes, and that the proposition holds for $G'$ and $G''$. Then we claim that it also holds for $G$. Indeed, we may by hypothesis extend scalars and thereby assume that $\Sha^2(G') = \Sha^2(G'') = 0$ (and that this remains true after any further 
finite extension of scalars on $k$). By Lemma \ref{Sha^1(G)=0}, we may further assume that $\Sha^1(\widehat{G}) = 0$ and that this remains true after any further finite extension of scalars on $k$. 

We will show under these hypotheses that $\Sha^2(G) = 0$ (so we can reduce to treating $G'$ and $G''$ in place of $G$). 
Fix an $\alpha \in \Sha^2(G)$. Since $\Sha^2(G'') = 0$, the element $\alpha$ lifts to ${\rm{H}}^2(k, G')$. Since $\Sha^1(\widehat{G}) = 0$, by Lemma \ref{Sha^2(G)=0devissage}, the class $\alpha$ lifts to $\Sha^2(G') = 0$, so $\alpha = 0$. 

Now we run a d\'evissage based on the preceding reduction: using the connected-{\'e}tale sequence for $\widehat{G}$, it suffices to treat separately the cases when $\widehat{G}$ is either infinitesimal or {\'e}tale. In the former case, we even have ${\rm{H}}^2(k, G) = 0$ by Lemma \ref{H^2=0forinfdual}. In the latter, we may replace $k$ by a finite separable extension and thereby assume that $G \simeq \mu_n$ (since after a finite separable extension it becomes a product of such groups), and the lemma then follows because $\Sha^2(\mu_n) = \ker({\rm{H}}^2(k, \Gm)[n] \rightarrow \prod_v \Br(k_v)[n]) = 0$ by class field theory.
\end{proof}

In order to prove the injectivity of the map $\Sha^2(G) \rightarrow \Sha^1(\widehat{G}^*)$, we will show that, given a surjection $G' \twoheadrightarrow G$, and $\alpha \in \ker(\Sha^2(G) \rightarrow \Sha^1(\widehat{G}^*))$, the element $\alpha$ lifts to an element in $\Sha^2(G')$ (Lemma \ref{liftingtoSha^2(G')}). What will then allow us to finish is the following lemma.

\begin{lemma}
\label{surjectiontrivialSha^2}
Let $G$ be an almost-torus over a global function field $k$. There is a surjection $G' \twoheadrightarrow G$ with $G'$ an almost-torus over $k$ such that $\Sha^2(G') = 0$. If $G$ is finite, then we may choose $G'$ to also be finite.
\end{lemma}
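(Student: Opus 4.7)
The plan is to build $G'$ as a direct product $\widetilde A \times \R_{k_1/k}(T_1)$ of a finite commutative $k$-group scheme $\widetilde A$ (built from an auxiliary $A$) and a separable Weil restriction of a split torus, producing the surjection $G' \twoheadrightarrow G$ by combining Lemma \ref{almosttorus}(iv) with a Cartier-duality trick on the finite part.

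First I would apply Lemma \ref{almosttorus}(iv) to the given almost-torus $G$, obtaining an isogeny
\[
A \times \R_{k_1/k}(T_1) \twoheadrightarrow G^n \times \R_{k_2/k}(T_2)
\]
with $A$ finite commutative, $k_1, k_2/k$ finite separable, and $T_i$ split $k_i$-tori. Composing with the evident projections $G^n \times \R_{k_2/k}(T_2) \twoheadrightarrow G^n \twoheadrightarrow G$ onto the first copy of $G$ yields a surjection $\phi \colon A \times \R_{k_1/k}(T_1) \twoheadrightarrow G$. The Weil-restriction factor is already satisfactory for $\Sha^2$-vanishing: exactness of finite \'etale pushforward on fppf sheaves identifies ${\rm{H}}^i(k, \R_{k_1/k}(T_1))$ with ${\rm{H}}^i(k_1, T_1)$ and, using the local decomposition $k_1 \otimes_k k_v = \prod_{v_1\mid v} k_{1,v_1}$, identifies ${\rm{H}}^i(k_v, \R_{k_1/k}(T_1))$ with $\prod_{v_1 \mid v}{\rm{H}}^i(k_{1,v_1}, T_1)$, so $\Sha^2(k, \R_{k_1/k}(T_1))$ is a power of $\Sha^2(k_1, \Gm) = \ker(\Br(k_1) \to \prod_{v_1}\Br(k_{1,v_1}))$, which vanishes by the Hasse--Brauer--Noether theorem. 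Consequently the only obstacle is to ensure $\Sha^2$-vanishing on the finite factor.

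For that, by Lemma \ref{Sha^2(G)=0} I may choose a finite separable extension $k'/k$ with $\Sha^2(k', A_{k'}) = 0$, and then set $\widetilde A := \R_{k'/k}(A_{k'})$, which is a finite commutative $k$-group scheme. The same \'etale pushforward argument gives $\Sha^2(k, \widetilde A) = \Sha^2(k', A_{k'}) = 0$. The crucial step is to produce a ``wrong-direction'' surjection $\widetilde A \twoheadrightarrow A$: the adjunction-unit map $\widehat A \hookrightarrow \R_{k'/k}(\widehat A_{k'})$ is a closed immersion of finite commutative $k$-group schemes (since it is injective on $T$-points for every $k$-scheme $T$, by faithful flatness of $k \to k'$), and Cartier duality, being a contravariant exact autoequivalence of the category of finite commutative $k$-group schemes, converts this monomorphism into an epimorphism of Cartier duals; Proposition \ref{charactersseparableweilrestriction} then identifies $\widehat{\R_{k'/k}(\widehat A_{k'})}$ with $\R_{k'/k}(A_{k'}) = \widetilde A$, yielding the desired surjection $\widetilde A \twoheadrightarrow A$.

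Composing this with $\phi$ yields the sought-after surjection $G' := \widetilde A \times \R_{k_1/k}(T_1) \twoheadrightarrow G$. The group $G'$ is an almost-torus by Lemma \ref{almosttorusextension} (as a product of the finite almost-torus $\widetilde A$ and the torus $\R_{k_1/k}(T_1)$), and $\Sha^2(k, G') = \Sha^2(k, \widetilde A) \oplus \Sha^2(k, \R_{k_1/k}(T_1)) = 0$. The main conceptual obstacle is the Cartier-duality step in the previous paragraph: the natural maps relating $A$ to its Weil restrictions $\R_{k'/k}(A_{k'})$ go in the wrong direction, and dualizing to flip the arrow is what gives the construction enough flexibility to simultaneously kill $\Sha^2$ and still map onto $G$.
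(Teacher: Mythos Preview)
Your proof is correct and follows essentially the same approach as the paper. The paper first isolates the finite case (handled via the Cartier-duality trick with a finite separable extension from Lemma~\ref{Sha^2(G)=0}) and then reduces the general almost-torus to a product $B \times \R_{k'/k}(T')$ via Lemma~\ref{almosttorus}(iv); you combine these steps into a single pass, applying the duality trick directly to $\widehat{A}$ to get the surjection $\R_{k'/k}(A_{k'}) \twoheadrightarrow A$, which is a slightly cleaner packaging of the same idea.
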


\begin{proof}
We first treat the case when $G$ is finite, and in this case we may prove the lemma for $\widehat{G}$ instead of $G$ thanks to Cartier duality. By Lemma \ref{Sha^2(G)=0}, there is a finite separable extension $k'/k$ such that $\Sha^2(k', \widehat{G_{k'}}) = 0$. Consider the canonical inclusion $G \hookrightarrow \R_{k'/k}(G_{k'})$. Since $k'/k$ is separable, $\R_{k'/k}(G_{k'})$ is a finite $k$-group scheme. By dualizing this inclusion, therefore, we obtain the desired surjection (due to Proposition \ref{hatisexact}), since $\Sha^2(k, \widehat{\R_{k'/k}(G_{k'})}) \simeq \Sha^2(k, \R_{k'/k}(\widehat{G_{k'}})) = \Sha^2(k', \widehat{G_{k'}}) = 0$, where the first equality is due to Proposition \ref{charactersseparableweilrestriction}, and the second is due to Lemma \ref{sepblepushforward}.

Now we treat the general case. So let $G$ be an almost-torus over $k$. By Lemma \ref{almosttorus}(iv), we may assume that $G = \R_{k'/k}(T') \times B$ for some finite commutative $k$-group scheme $B$, finite separable extension $k'/k$, and split $k'$-torus $T'$. We may treat the two groups $B$ and $\R_{k'/k}(T')$ separately. The group scheme $B$ is finite, so the lemma for it has already been proven. On the other hand, the $k$-group $\R_{k'/k}(T')$ already has trivial $\Sha^2$. Indeed, we have $\Sha^2(k, \R_{k'/k}(T')) \simeq \Sha^2(k', T')$ by Lemma \ref{sepblepushforward}. Finally, we have $\Sha^2(k', \Gm) = 0$ by class field theory, hence $\Sha^2(k', T') = 0$.
\end{proof}

\begin{lemma}
\label{liftingtoSha^2(G')}
Suppose that we have a surjection $G' \twoheadrightarrow G$ between affine commutative group schemes of finite type over a global function field $k$, and 
that an element $\alpha \in \Sha^2(k, G)$ annihilates $\Sha^1(k, \widehat{G})$ under $\langle \cdot, \cdot \rangle_{\Sha}$. Assume also that the sequence
\[
{\rm{H}}^1(\A, G) \longrightarrow {\rm{H}}^1(k, \widehat{G})^* \longrightarrow \Sha^1(\widehat{G})^*
\]
is exact (where the first map is induced by adding the local duality pairings and the second is the dual of the natural inclusion). Then $\alpha$ lifts to an element of $\Sha^2(k, G')$.
\end{lemma}

\begin{proof}
The proof is very similar to that of Lemma \ref{Sha^2(G)=0devissage}. Let $H : = \ker(G' \rightarrow G)$. Then we have an exact sequence
\[
1 \longrightarrow H \xlongrightarrow{j} G' \xlongrightarrow{\pi} G \longrightarrow 1.
\]
We claim that the commutative diagram
\begin{equation}
\label{liftingtoSha^2(G')pfdiagram}
\begin{tikzcd}
& {\rm{H}}^2(k, H) \arrow{r} \arrow{d} & {\rm{H}}^2(k, G') \arrow{r} \arrow{d} & {\rm{H}}^2(k, G) \arrow{r} \arrow{d} & 0 \\
{\rm{H}}^1(\A, G) \arrow{r} \arrow{d} & {\rm{H}}^2(\A, H) \arrow{r} \arrow{d} & {\rm{H}}^2(\A, G') \arrow{r} & {\rm{H}}^2(\A, G) & \\
{\rm{H}}^1(k, \widehat{G})^* \arrow{r} \arrow{d} & \widehat{H}(k)^* &&& \\
\Sha^1(\widehat{G})^* &&&&
\end{tikzcd}
\end{equation}
has exact rows and columns. Indeed, the first row is exact because ${\rm{H}}^3(k, H) = 0$ by Proposition \ref{cohomologicalvanishing}, and the second row is clearly exact. The first column is exact by assumption, and the second column is exact by Proposition \ref{funsequence}. We deduce that $\alpha$ lifts to some $\beta \in {\rm{H}}^2(k, G')$ and that the adelic class $\beta_{\A} \in {\rm{H}}^2(\A, G')$ lifts to some $\gamma \in {\rm{H}}^2(\A, H)$. The dual sequence $1 \rightarrow \widehat{G} \rightarrow \widehat{G'} \rightarrow \widehat{H} \rightarrow 1$ is exact by Proposition \ref{hatisexact}. 
Let $\widehat{\delta}:  \widehat{H}(k) \rightarrow {\rm{H}}^1(k, \widehat{G})$ denote the connecting map.

By Lemma \ref{gammakillsdelta^-1(Sha^1(G^))} below, the homomorphism $\phi_{\gamma} \in \widehat{H}(k)^*$ induced by $\gamma$ under the adelic pairing kills $\widehat{\delta}^{-1}(\Sha^1(\widehat{G})) \subset \widehat{H}(k)$. It therefore extends to an element of $({\rm{H}}^1(k, \widehat{G})/\Sha^1(\widehat{G}))^*$. By
exactness of the first column of (\ref{liftingtoSha^2(G')pfdiagram}), this latter homomorphism is induced by an element $g \in {\rm{H}}^1(\A, G)$. Modifying $\gamma$ by $g$ (which has no effect on its image $\beta_{\A} \in {\rm{H}}^2(\A, G')$), we may assume that $\phi_{\gamma} = 0$. Therefore, due to the exactness of the second column of (\ref{liftingtoSha^2(G')pfdiagram}), $\gamma$ lifts to some element $h \in {\rm{H}}^2(k, H)$. Modifying $\beta$ by $h$ (which has no effect on its image $\alpha \in {\rm{H}}^2(k, G)$), we may assume that $\beta \in \Sha^2(G')$, as desired.
\end{proof}

\begin{lemma}
\label{gammakillsdelta^-1(Sha^1(G^))}
With notation as above, $\gamma$ annihilates $\widehat{\delta}^{-1}(\Sha^1(\widehat{G})) \subset \widehat{H}(k)$ under the adelic pairing.
\end{lemma}

\begin{proof}
Choose $z \in \widehat{\delta}^{-1}(\Sha^1(\widehat{G}))$. We will show that 
\begin{equation}
\label{gammakillsdelta^-1(Sha^1(G^))eq1}
\langle \gamma, z \rangle = -\langle \alpha, \widehat{\delta}(z) \rangle_{\Sha}.
\end{equation}
This will prove the lemma because $\alpha$ annihilates $\Sha^1(\widehat{G})$ by hypothesis.

Since $\check{\rm{H}}^2(k, G') = {\rm{H}}^2(k,G')$ by Proposition \ref{cech=derivedsmoothinf}, we can represent
$\beta$ by an element of $\check{Z}^2(k,G')$ (and we will abuse notation and also denote this cocycle by $\beta$) and then take $\pi(\beta) \in \check{Z}^2(k,G)$ as our representative for $\alpha$.  In what follows, 
we use the symbols $\alpha, \beta$, etc. to denote fixed \v{C}ech cocycles representing the corresponding cohomology classes (and
in particular we have arranged that $\pi(\beta) = \alpha$ as \v{C}ech cochains). 
The left side of (\ref{gammakillsdelta^-1(Sha^1(G^))eq1}) is by definition
\[
\sum_v \inv_v(\gamma_v \cup z).
\]
The cohomology class $\widehat{\delta}(z) \in {\rm{H}}^1(k,\widehat{G}) = \check{\rm{H}}^1(k,\widehat{G})$ is
represented by a \v{C}ech 1-cocycle computed as follows. We may write $z = \widehat{j}(x)$ for some $x \in \check{C}^0(k, \widehat{G'})$. Then we have $dx = \widehat{\pi}(y)$
for some $y \in \check{Z}^1(k, \widehat{G})$, and $y$ represents the cohomology class $\widehat{\delta}(z)$ by 
Proposition \ref{cechderivedconnectingmap}.

Since $\check{\rm{H}}^1 = {\rm{H}}^1$, we have $j(\gamma) = \beta + df$ for some $f \in \check{C}^1(\A, G')$. Therefore, $\alpha = \pi(\beta) = 
-d\pi(f)$ since $\pi \circ j = 0$. We also have $\alpha \cup y = dh$ for some $h \in \check{C}^2(k, \Gm)$
since $\check{\rm{H}}^3(k, \Gm) = 0$ by Lemma \ref{checkh3=0}. The right side of (\ref{gammakillsdelta^-1(Sha^1(G^))eq1}) is, by the definition in \S \ref{sectiondefiningshapairings},
\[
-\sum_v \inv_v\left(-(\pi(f_v) \cup y) - h\right).
\]
In $\check{C}^2(\A, \Gm)$, we have
\[
\pi(f) \cup y = f \cup \widehat{\pi}(y) = f \cup dx = df \cup x - d(f \cup x).
\]
This is cohomologous to $df \cup x$. Equation (\ref{gammakillsdelta^-1(Sha^1(G^))eq1}) is therefore equivalent to
\begin{equation}
\label{gammakillsdelta^-1(Sha^1(G^))eq2}
\sum_v \inv_v((\gamma_v \cup z) - (df_v \cup x) - h) = 0.
\end{equation}
But 
\[
\gamma_v \cup z = \gamma_v \cup \widehat{j}(x) = j(\gamma_v) \cup x = (\beta + df_v) \cup x = (\beta \cup x) + (df_v \cup x),
\]
so (\ref{gammakillsdelta^-1(Sha^1(G^))eq2}) is equivalent to
\[
\sum_v \inv_v((\beta \cup x) - h) = 0
\]
(where our calculation shows that $(\beta \cup x) - h$ is a cocycle), and this is true because the sum of the invariants of a global Brauer class is $0$.
\end{proof}

\begin{lemma}
\label{Sha=Sha(almosttorus)}
Suppose that we have an exact sequence
\[
1 \longrightarrow H \longrightarrow G \longrightarrow U \longrightarrow 1
\]
of affine commutative group schemes of finite type over a global function field $k$, with $U$ split unipotent. Then the maps $\Sha^2(H) \rightarrow \Sha^2(G)$ and $\Sha^1(\widehat{G}) \rightarrow \Sha^1(\widehat{H})$ are isomorphisms.
\end{lemma}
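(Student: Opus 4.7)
The plan is to establish the two isomorphism claims separately, reducing each to cohomological vanishing statements for $U$ and $\widehat{U}$.

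For $\Sha^2(H) \to \Sha^2(G)$, I would apply the long exact cohomology sequence attached to $1 \to H \to G \to U \to 1$ both over $\Spec(k)$ and over $\Spec(\A)$. Since $U$ is split unipotent, Proposition~\ref{unipotentcohomology}(ii) gives ${\rm{H}}^i(k,U) = 0$ for all $i > 0$, and filtering $U$ by copies of $\Ga$ reduces the analogous vanishing ${\rm{H}}^i(\A,U) = 0$ ($i > 0$) to the case $U = \Ga$, which is immediate either from affineness of $\Spec(\A)$ together with quasi-coherence of $\Ga$, or from Proposition~\ref{H^i(A,G)--->prodH^i(k_v,G)} combined with ${\rm{H}}^i(k_v,\Ga) = 0$ for each $v$. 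Hence ${\rm{H}}^2(k,H) \to {\rm{H}}^2(k,G)$ and ${\rm{H}}^2(\A,H) \to {\rm{H}}^2(\A,G)$ are both isomorphisms, and passing to kernels gives $\Sha^2(H) \simeq \Sha^2(G)$.

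For $\Sha^1(\widehat{G}) \to \Sha^1(\widehat{H})$, I would dualize using Proposition~\ref{hatisexact} to get $1 \to \widehat{U} \to \widehat{G} \to \widehat{H} \to 1$, and then exploit vanishing of low-degree cohomology of $\widehat{U}$ over both $\Spec(k)$ and $\Spec(\A)$: Propositions~\ref{cohomologyofG_adualgeneralk}(i) and~\ref{unipotentcohomology}(iii) yield ${\rm{H}}^0(k,\widehat{U}) = {\rm{H}}^1(k,\widehat{U}) = 0$, while ${\rm{H}}^0(\A,\widehat{U}) = 0$ because $\widehat{U}(\A)$ injects into $\prod_v \widehat{U}(k_v) = 0$, and ${\rm{H}}^1(\A,\widehat{U}) = 0$ by Proposition~\ref{H^i(A,G^)--->prodH^i(k_v,G^)} together with the vanishing of each ${\rm{H}}^1(k_v,\widehat{U})$. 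Thus the resulting commutative diagram of long exact sequences begins
\[
0 \to {\rm{H}}^1(k,\widehat{G}) \to {\rm{H}}^1(k,\widehat{H}) \xrightarrow{\delta} {\rm{H}}^2(k,\widehat{U})
\]
and likewise over $\A$, with ${\rm{H}}^1(\A,\widehat{G}) \hookrightarrow {\rm{H}}^1(\A,\widehat{H})$ an injection. Injectivity of $\Sha^1(\widehat{G}) \to \Sha^1(\widehat{H})$ is immediate. For surjectivity, given $x \in \Sha^1(\widehat{H})$, functoriality of $\delta$ forces $\delta(x) \in \Sha^2(\widehat{U})$; granting the vanishing of that group, $x$ lifts to some $\tilde{x} \in {\rm{H}}^1(k,\widehat{G})$, whose image in ${\rm{H}}^1(\A,\widehat{G})$ must vanish by the just-noted injection, placing $\tilde{x}$ in $\Sha^1(\widehat{G})$.

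The only substantive remaining ingredient is therefore $\Sha^2(\widehat{U}) = 0$ for split unipotent $U$, which I would prove by induction on the length of a $\Ga$-filtration of $U$. Writing $1 \to U' \to U \to \Ga \to 1$ with $U'$ split unipotent of one lower dimension and dualizing, the vanishings ${\rm{H}}^1(k,\widehat{U'}) = 0 = {\rm{H}}^1(\A,\widehat{U'})$ (established exactly as above) produce commuting short exact sequences ${\rm{H}}^2(\bullet, \widehat{\Ga}) \hookrightarrow {\rm{H}}^2(\bullet, \widehat{U}) \to {\rm{H}}^2(\bullet, \widehat{U'})$ for $\bullet = k, \A$, and a straightforward diagram chase on the kernels of the vertical pullback maps gives the exact sequence
\[
0 \to \Sha^2(\widehat{\Ga}) \to \Sha^2(\widehat{U}) \to \Sha^2(\widehat{U'}).
\]
With $\Sha^2(\widehat{U'}) = 0$ by induction, it remains to handle the base case $\Sha^2(\widehat{\Ga}) = 0$, and this is exactly Lemma~\ref{H^2(k,Ga^)injectiveseparable} applied to the separable completion $k_v/k$ at any chosen place $v$: the injection ${\rm{H}}^2(k, \widehat{\Ga}) \hookrightarrow {\rm{H}}^2(k_v, \widehat{\Ga})$ already kills the full adelic kernel. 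The main obstacle in the whole argument is recognizing that this single-place separability-based injectivity is enough to propagate through the unipotent filtration, since naive Sha-vanishing over $\A$ is not available for the non-representable sheaf $\widehat{\Ga}$ without using the adelic structure on cohomology set up in Propositions~\ref{H^i(A,G)--->prodH^i(k_v,G)} and~\ref{H^i(A,G^)--->prodH^i(k_v,G^)}.
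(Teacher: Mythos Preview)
Your proof is correct and uses essentially the same ingredients as the paper's: the vanishing of ${\rm{H}}^i(\cdot,U)$ and ${\rm{H}}^1(\cdot,\widehat{U})$ over both $k$ and $\A$, together with the injectivity of ${\rm{H}}^2(k,\widehat{\Ga}) \to {\rm{H}}^2(k_v,\widehat{\Ga})$ from Lemma~\ref{H^2(k,Ga^)injectiveseparable}. The only organizational difference is that the paper filters $U$ by copies of $\Ga$ \emph{at the outset} and thereby reduces immediately to the case $U = \Ga$; this makes your separate induction establishing $\Sha^2(\widehat{U}) = 0$ unnecessary, since for $U = \Ga$ the single diagram
\[
\begin{tikzcd}
0 \arrow{r} & {\rm{H}}^1(k, \widehat{G}) \arrow{d} \arrow{r} & {\rm{H}}^1(k, \widehat{H}) \arrow{r} \arrow{d} & {\rm{H}}^2(k, \widehat{\Ga}) \arrow[d, hookrightarrow] \\
0 \arrow{r} & {\rm{H}}^1(\A, \widehat{G}) \arrow{r} & {\rm{H}}^1(\A, \widehat{H}) \arrow{r} & {\rm{H}}^2(\A, \widehat{\Ga})
\end{tikzcd}
\]
already suffices for the diagram chase. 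Your route is slightly longer but equally valid; the paper's shortcut simply absorbs your inductive step into the initial reduction.
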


\begin{proof}
By filtering $U$ by $\Ga$'s, it suffices to treat the case $U = \Ga$. First we handle $\Sha^2$. In the commutative diagram
\[
\begin{tikzcd}
{\rm{H}}^2(k, H) \arrow{d} \arrow{r}{\sim} & {\rm{H}}^2(k, G) \arrow{d} \\
{\rm{H}}^2(\A, H) \arrow{r}{\sim} & {\rm{H}}^2(\A, G)
\end{tikzcd}
\]
the two horizontal arrows are isomorphisms because ${\rm{H}}^i(k, \Ga) = {\rm{H}}^i(\A, \Ga) = 0$ for $i > 0$ by Proposition \ref{unipotentcohomology}(ii), and \cite[Th.\,2.13]{ces2}. The $\Sha^2$ assertion follows immediately.

Now we treat the $\Sha^1$ assertion. In the commutative diagram
\[
\begin{tikzcd}
0 \arrow{r} & {\rm{H}}^1(k, \widehat{G}) \arrow{d} \arrow{r} & {\rm{H}}^1(k, \widehat{H}) \arrow{r} \arrow{d} & {\rm{H}}^2(k, \widehat{\Ga}) \arrow[d, hookrightarrow] \\
0 \arrow{r} & {\rm{H}}^1(\A, \widehat{G}) \arrow{r} & {\rm{H}}^1(\A, \widehat{H}) \arrow{r} & {\rm{H}}^2(\A, \widehat{\Ga})
\end{tikzcd}
\]
the rows are exact by Proposition \ref{unipotentcohomology}(iii) and \cite[Th.\,2.13]{ces2}, and the last vertical arrow is an inclusion by Lemma \ref{H^2(k,Ga^)injectiveseparable}. A diagram chase now shows that the map $\Sha^1(\widehat{G}) \rightarrow \Sha^1(\widehat{H})$ is an isomorphism.
\end{proof}

We now come to the main result of this section.

\begin{proposition}
\label{Sha^2(G)-->Sha^1(G^)*finite}
Let $G$ be an affine commutative group scheme of finite type over a global function field $k$. Assume that the sequence
\begin{equation}
\label{Sha^2(G)-->Sha^1(G^)*finiteeqn3}
{\rm{H}}^1(\A, G) \longrightarrow {\rm{H}}^1(k, \widehat{G})^* \longrightarrow \Sha^1(\widehat{G})^*
\end{equation}
is exact. Then the map $\Sha^2(k, G) \rightarrow \Sha^1(k, \widehat{G})^*$ induced by $\langle \cdot, \cdot \rangle_{\Sha}$ is injective.
\end{proposition}

\begin{proof}
Suppose given an exact sequence
\[
1 \longrightarrow H \longrightarrow G \longrightarrow \Ga \longrightarrow 1
\]
of affine commutative $k$-group schemes of finite type. We will show that, if (\ref{Sha^2(G)-->Sha^1(G^)*finiteeqn3}) is exact for $G$, then it is also exact for $H$. Lemmas \ref{affinegroupstructurethm} and \ref{Sha=Sha(almosttorus)}, the functoriality of $\langle \cdot, \cdot \rangle_{\Sha}$, and induction will then reduce the proof of the proposition to the case of almost-tori. In the commutative diagram below with exact columns, the map in the top row is surjective by Proposition \ref{A/k=H^2(k,Ga^)*}, and the vertical arrow at the bottom left is surjective because ${\rm{H}}^1(\A, \Ga) = 0$. A diagram chase now shows that the exactness of the bottom sequence implies the exactness of the top sequence.
\[
\begin{tikzcd}
\A \arrow{d} \arrow[r, twoheadrightarrow] & {\rm{H}}^2(k, \widehat{\Ga})^* \arrow{d} & \\
{\rm{H}}^1(\A, H) \arrow{r} \arrow[d, twoheadrightarrow] & {\rm{H}}^1(k, \widehat{H})^* \arrow{r} \arrow{d} & \Sha^1(\widehat{H})^* \arrow{d} \\
{\rm{H}}^1(\A, G) \arrow{r} & {\rm{H}}^1(k, \widehat{G})^* \arrow{r} & \Sha^1(\widehat{G})^*
\end{tikzcd}
\]

Now we treat the case of almost-tori. Suppose that $\alpha \in \Sha^2(G)$ annihilates $\Sha^1(\widehat{G})$. By Lemma \ref{surjectiontrivialSha^2}, there is a surjection $G' \twoheadrightarrow G$ for some $k$-almost-torus $G'$ such that $\Sha^2(G') = 0$. By Lemma \ref{liftingtoSha^2(G')}, $\alpha$ lifts to an element of $\Sha^2(G') = 0$, hence $\alpha = 0$.
\end{proof}

Let us finally note that we may already complete the proof of Theorem \ref{shapairing} for finite commutative group schemes.

\begin{corollary}
\label{shapairingfinite}
Theorem $\ref{shapairing}$ holds for finite commutative group schemes. More precisely, if $G$ is a finite commutative group scheme over a global function field $k$, then the groups $\Sha^i(k, G)$ and $\Sha^i(k, \widehat{G})$ are finite for $i = 1,2$. Further, for $i = 1,2$, the functorial bilinear pairings
\[
\Sha^i(k, G) \times \Sha^{3-i}(k, \widehat{G}) \rightarrow \Q/\Z.
\]
given by $\langle \cdot, \cdot \rangle_{\Sha^i_G}$ ($i = 1, 2$) are perfect pairings.
\end{corollary}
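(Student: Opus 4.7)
The proof has two main components: establishing finiteness of all four $\Sha$ groups, and deducing perfection of the two pairings from this finiteness together with the injectivity results of Propositions \ref{Sha^1--->Sha^2*injective} and \ref{Sha^2(G)-->Sha^1(G^)*finite}.

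For finiteness, I would first argue that $\Sha^1(k, G)$ is finite for any finite commutative $k$-group scheme $G$ by combining Lemma \ref{Sha^1(G)=0} with a Weil restriction trick. Let $k'/k$ be a finite separable extension furnished by Lemma \ref{Sha^1(G)=0} with $\Sha^1(k', G_{k'}) = 0$. Since $k'/k$ is finite separable and $G_{k'}$ is finite, the Weil restriction $\R_{k'/k}(G_{k'})$ is a finite commutative $k$-group scheme, and the canonical inclusion $G \hookrightarrow \R_{k'/k}(G_{k'})$ has cokernel $H$ that is also a finite commutative $k$-group scheme. Exactness of finite pushforward for the fppf topology (applicable since $\Spec(k') \to \Spec(k)$ is finite \'etale) yields the Shapiro-type isomorphism ${\rm{H}}^i(k, \R_{k'/k}(G_{k'})) \simeq {\rm{H}}^i(k', G_{k'})$ compatibly with its analogue over $\A$, so the associated long exact sequence
\[
H(k) \xrightarrow{\delta} {\rm{H}}^1(k, G) \to {\rm{H}}^1(k', G_{k'})
\]
shows that $\Sha^1(k, G)$ maps to $\Sha^1(k', G_{k'}) = 0$, hence is contained in the image $\delta(H(k))$ of the finite group $H(k)$. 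Applying this to $\widehat{G}$ in place of $G$ gives $\Sha^1(k, \widehat{G})$ finite, and then the injections $\Sha^2(k, G) \hookrightarrow \Sha^1(k, \widehat{G})^*$ and $\Sha^2(k, \widehat{G}) \hookrightarrow \Sha^1(k, G)^*$ supplied by Proposition \ref{Sha^2(G)-->Sha^1(G^)*finite} (applied to $G$ and to $\widehat{G}$, using Cartier duality $\widehat{\widehat{G}} \simeq G$) give finiteness of $\Sha^2(k, G)$ and $\Sha^2(k, \widehat{G})$.

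For perfection given finiteness, Cartier duality supplies the canonical isomorphism $\widehat{\widehat{G}} \simeq G$, and Remark \ref{shapairingcartier} (together with its analogue interchanging the roles of $\Sha^1$ and $\Sha^2$, which follows from the same cocycle-level definition) shows that the pairings $\langle \cdot, \cdot \rangle_{\Sha^i_G}$ and $\langle \cdot, \cdot \rangle_{\Sha^{3-i}_{\widehat{G}}}$ are compatible under this identification. Applying Propositions \ref{Sha^1--->Sha^2*injective} and \ref{Sha^2(G)-->Sha^1(G^)*finite} to both $G$ and $\widehat{G}$ produces in particular the two injections
\[
\Sha^1(k, G) \hookrightarrow \Sha^2(k, \widehat{G})^* \quad \text{and} \quad \Sha^2(k, \widehat{G}) \hookrightarrow \Sha^1(k, G)^*.
\]
Pontryagin duality preserves cardinalities of finite abelian groups, so composing these injections yields $|\Sha^1(k, G)| \le |\Sha^2(k, \widehat{G})| \le |\Sha^1(k, G)|$, forcing each injection to be an isomorphism and thereby establishing perfection of $\langle \cdot, \cdot \rangle_{\Sha^1_G}$. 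The symmetric argument applied to $\Sha^2(k, G)$ and $\Sha^1(k, \widehat{G})$ gives perfection of $\langle \cdot, \cdot \rangle_{\Sha^2_G}$.

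The main obstacle is really only the finiteness of $\Sha^1(k, G)$; once this is secured, the remainder of the proof is a purely formal consequence of the previously established injectivity results combined with Cartier duality. The Weil restriction approach to finiteness is particularly clean because it transfers the vanishing provided by Lemma \ref{Sha^1(G)=0} (over an auxiliary extension $k'$) back to finiteness over $k$ via a single exact sequence, bypassing any case-by-case analysis of the structure of $G$ via the connected-\'etale sequence.
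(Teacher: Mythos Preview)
Your proof is correct, and the deduction of perfection from finiteness plus the two injectivity Propositions (\ref{Sha^1--->Sha^2*injective} and \ref{Sha^2(G)-->Sha^1(G^)*finite}) together with Cartier duality is exactly what the paper does---it just states this in one sentence rather than spelling out the cardinality-counting step. Note, incidentally, that the compatibility from Remark \ref{shapairingcartier} is not strictly needed for the cardinality argument: even if the two injections came from different pairings, the inequalities $|\Sha^1(G)| \le |\Sha^2(\widehat{G})| \le |\Sha^1(G)|$ would still force the injection $\Sha^1(G) \hookrightarrow \Sha^2(\widehat{G})^*$ induced by $\langle\cdot,\cdot\rangle_{\Sha^1_G}$ to be a bijection between finite groups of equal size.

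Where you genuinely diverge from the paper is in the finiteness argument for $\Sha^1(k,G)$. The paper first reduces to \'etale $G$ via the maximal smooth $k$-subgroup $G^{\rm sm}$ (using that $\Sha^1(G^{\rm sm}) \to \Sha^1(G)$ is bijective because $k_v/k$ is separable), then uses inflation--restriction through a Galois splitting field and finally Chebotarev for constant coefficients. Your approach instead invokes Lemma \ref{Sha^1(G)=0} to obtain a finite separable $k'/k$ with $\Sha^1(k',G_{k'}) = 0$ and then runs the Weil restriction trick with the exact sequence $1 \to G \to \R_{k'/k}(G_{k'}) \to H \to 1$. This is a cleaner packaging: Lemma \ref{Sha^1(G)=0} already absorbs the connected-\'etale analysis and the Chebotarev step, so you avoid repeating that work, and the Shapiro isomorphism plus the finiteness of $H(k)$ finishes in one stroke. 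The paper's route is more self-contained (it does not cite Lemma \ref{Sha^1(G)=0}), but yours is more economical given the tools already available at this point in the text.
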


\begin{proof}
Combining Propositions \ref{Sha^1--->Sha^2*injective}, \ref{Sha^2(G)-->Sha^1(G^)*finite}, and \ref{exactnessatH^1(G^)*finite}(i) with Cartier duality, it only remains to prove the finiteness assertion. By Cartier duality, it suffices to show that the groups $\Sha^i(G)$ are finite for $i = 1, 2$. The finiteness of $\Sha^1(G)$ is contained in \cite[Thm.\,1.3.3(i)]{conrad}, but in the finite commutative setting the proof is actually much simpler, so we give it here.

To prove that $\Sha^1(G)$ is finite, we first observe that
if $G^{\rm{sm}}$ is the maximal smooth closed $k$-subgroup of $G$ (or equivalently the maximal \'etale closed $k$-subgroup
of the finite $k$-group scheme $G$) then $\Sha^1(G^{\rm{sm}}) \rightarrow \Sha^1(G)$ is bijective due to the separability
of $k_v/k$ for all places $v$ of $k$; see \cite[Ex.\,C.4.3]{cgp} (with $S = \emptyset$ there) for the details.  Thus, it suffices
to treat \'etale $G$, so finite discrete Galois modules. If $k'/k$ is a finite Galois extension that
splits $G$, then, by finiteness of ${\rm{H}}^1(k'/k, G(k'))$, we see via inflation-restriction that finiteness of $\Sha^1(G)$ reduces
to that of $\Sha^1(k', G_{k'})$.  In other words, we may assume that $G$ is the constant $k$-group associated
to a finite abelian group $A$. Hence, ${\rm{H}}^1(F, G) = {\rm{Hom}}_{\rm{cts}}({\rm{Gal}}(F_s/F), A)$ for any field $F/k$, so
$\Sha^1(G) = 0$ by the Chebotarev Density Theorem. Finally, the finiteness of $\Sha^2(G)$ follows from that of $\Sha^1(\widehat{G})$ and Propositions \ref{Sha^2(G)-->Sha^1(G^)*finite} and \ref{exactnessatH^1(G^)*finite}(i).
\end{proof}

\section{Exactness at ${\rm{H}}^1(\A, G)$ and ${\rm{H}}^1(\A, \widehat{G})$}
\label{sectionexactnessatvariousH^1's}

In this section we will show that the Poitou--Tate sequences of Theorems \ref{poitoutatesequence} and \ref{poitoutatesequencedual} are exact at ${\rm{H}}^1(\A, G)$ and ${\rm{H}}^1(\A, \widehat{G})$, respectively.

\begin{lemma}
\label{subgroupSha^2=0}
Let $k$ be a global function field, $G$ a finite commutative $k$-group scheme. Then there is an inclusion $G \hookrightarrow G'$ with $G'$ a finite commutative $k$-group scheme such that $\Sha^2(k, G') = 0$.
\end{lemma}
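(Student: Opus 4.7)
The plan is to reduce to Lemma \ref{Sha^2(G)=0} via a Shapiro-type argument for finite separable Weil restriction. More precisely, I would apply Lemma \ref{Sha^2(G)=0} directly to $G$ itself (rather than to $\widehat{G}$, as was done in Lemma \ref{surjectiontrivialSha^2}) to obtain a finite separable extension $k'/k$ such that $\Sha^2(k', G_{k'}) = 0$, and then take $G' := \R_{k'/k}(G_{k'})$ with the inclusion $G \hookrightarrow G'$ being the canonical adjunction unit.

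First I would check that $G'$ has the required properties as a $k$-group scheme: since $k'/k$ is finite separable, $\R_{k'/k}$ carries finite commutative $k'$-group schemes to finite commutative $k$-group schemes (e.g., by passing to $\overline{k}$, where $\R_{k'/k}(G_{k'})$ becomes a product of Galois-twists of $G \otimes_k \overline{k}$). The adjunction unit $G \hookrightarrow \R_{k'/k}(G_{k'})$ is a closed immersion since $G$ is affine, giving the required inclusion.

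Next I would verify the vanishing $\Sha^2(k, G') = 0$ by a Shapiro-type identification: since finite separable pushforward is an exact functor on fppf abelian sheaves (which may be checked after extending scalars to a Galois closure of $k'/k$, where the Weil restriction becomes a product of pullbacks by the various $k$-embeddings of $k'$), we have $\R^j f_* G_{k'} = 0$ for $j>0$ where $f: \Spec(k') \to \Spec(k)$, and hence the Leray spectral sequence yields ${\rm{H}}^i(k, \R_{k'/k}(G_{k'})) \simeq {\rm{H}}^i(k', G_{k'})$. Exactly the same reasoning gives ${\rm{H}}^i(k_v, \R_{k'/k}(G_{k'})) \simeq \prod_{w \mid v} {\rm{H}}^i(k'_w, G_{k'})$ for every place $v$ of $k$, via compatibility of the Leray spectral sequence with base change along $\Spec(k_v) \to \Spec(k)$. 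Taking the kernel into the product of local cohomologies then identifies $\Sha^2(k, G') \simeq \Sha^2(k', G_{k'})$, and this vanishes by our choice of $k'$.

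There is no significant obstacle here; the only mild care is in verifying that the identification $\Sha^2(k, \R_{k'/k}(G_{k'})) \simeq \Sha^2(k', G_{k'})$ is compatible with the localization maps, which amounts to the compatibility of Leray spectral sequences under base change. This is essentially the same type of argument used inside the proof of Lemma \ref{surjectiontrivialSha^2}, applied to $G$ rather than $\widehat{G}$.
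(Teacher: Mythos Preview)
Your proposal is correct and matches the paper's proof essentially line for line: both apply Lemma \ref{Sha^2(G)=0} to obtain a finite separable $k'/k$ with $\Sha^2(k', G_{k'}) = 0$, take $G' = \R_{k'/k}(G_{k'})$ with the canonical inclusion, note that $G'$ is finite because $k'/k$ is separable, and then use exactness of finite separable pushforward on fppf abelian sheaves to identify $\Sha^2(k, G') \simeq \Sha^2(k', G_{k'}) = 0$.
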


\begin{proof}
By Lemma \ref{Sha^2(G)=0}, there is a finite separable extension $k'/k$ such that $\Sha^2(k', G_{k'}) = 0$. Consider the canonical inclusion $G \hookrightarrow \R_{k'/k}(G_{k'})$. The latter group scheme is finite because $k'/k$ is separable, and we claim that it has trivial $\Sha^2$. To see this, note that, by Lemma \ref{sepblepushforward}, we have canonically ${\rm{H}}^2(k, \R_{k'/k}(G_{k'})) \simeq {\rm{H}}^2(k', G)$, and similarly ${\rm{H}}^2(k_v, \R_{k'/k}(G_{k'})) \simeq \prod_{v'\mid v} {\rm{H}}^2(k_{v'}, G)$. Therefore, $\Sha^2(k, \R_{k'/k}(G_{k'})) \simeq \Sha^2(k', G_{k'}) = 0$, as claimed.
\end{proof}

\begin{lemma}
\label{exactnessatH^1(A,G)X'}
Suppose that we have a short exact sequence
\[
1 \longrightarrow \R_{k'/k}(T') \longrightarrow G \longrightarrow C \longrightarrow 1
\]
of affine commutative group schemes of finite type over a global function field $k$, where $C$ is a finite commutative $k$-group scheme, $k'/k$ is a finite separable extension, and $T'$ is a split $k'$-torus. Then the global duality sequence
\[
{\rm{H}}^1(k, G) \longrightarrow {\rm{H}}^1(\A, G) \longrightarrow {\rm{H}}^1(k, \widehat{G})^*
\]
of Theorem {\rm{\ref{poitoutatesequence}}} is exact.
\end{lemma}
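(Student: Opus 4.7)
The plan is to reduce exactness for $G$ to the already-known exactness for the finite commutative $k$-group scheme $C$ (part of the Poitou--Tate input recorded in Remark \ref{prior}, or alternatively recoverable from Corollary \ref{shapairingfinite}), and to exploit how cleanly $R := \R_{k'/k}(T')$ behaves cohomologically. Set $R := \R_{k'/k}(T')$. Because $k'/k$ is separable and $T'$ is a split torus, exactness of pushforward through a finite \'etale map (and Hilbert 90) give ${\rm{H}}^1(k, R) \simeq {\rm{H}}^1(k', T') = 0$ and, using Proposition \ref{H^i(A,G)--->prodH^i(k_v,G)} (since $R$ is smooth and connected) together with Hilbert 90 at each place of $k'$, also ${\rm{H}}^1(\A, R) = 0$. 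By Proposition \ref{charactersseparableweilrestriction} we have $\widehat{R} \simeq \R_{k'/k}(\Z)$, a smooth \'etale group, and the same finite-pushforward trick gives ${\rm{H}}^1(k, \widehat{R}) \simeq {\rm{H}}^1(k', \Z) = 0$. Dualizing the exact sequence via Proposition \ref{hatisexact}, these vanishings yield the injections ${\rm{H}}^1(k, G) \hookrightarrow {\rm{H}}^1(k, C)$ and ${\rm{H}}^1(\A, G) \hookrightarrow {\rm{H}}^1(\A, C)$, as well as the surjection ${\rm{H}}^1(k, \widehat{C}) \twoheadrightarrow {\rm{H}}^1(k, \widehat{G})$, and hence the injection ${\rm{H}}^1(k, \widehat{G})^* \hookrightarrow {\rm{H}}^1(k, \widehat{C})^*$.

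These fit into a commutative diagram
\[
\begin{tikzcd}
{\rm{H}}^1(k, G) \arrow[r] \arrow[d, hookrightarrow] & {\rm{H}}^1(\A, G) \arrow[r] \arrow[d, hookrightarrow] & {\rm{H}}^1(k, \widehat{G})^* \arrow[d, hookrightarrow] \\
{\rm{H}}^1(k, C) \arrow[r] & {\rm{H}}^1(\A, C) \arrow[r] & {\rm{H}}^1(k, \widehat{C})^*
\end{tikzcd}
\]
whose squares commute by functoriality of cup product, and whose bottom row is exact by the finite-case Poitou--Tate input. Now take $\alpha \in {\rm{H}}^1(\A, G)$ with trivial pairing against ${\rm{H}}^1(k, \widehat{G})$. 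Let $\alpha_C \in {\rm{H}}^1(\A, C)$ denote its image. For any $y \in {\rm{H}}^1(k, \widehat{C})$, functoriality of cup product gives $\langle \alpha_C, y\rangle = \langle \alpha, y_{\widehat{G}}\rangle = 0$, where $y_{\widehat{G}}$ is the image of $y$ under ${\rm{H}}^1(k, \widehat{C}) \twoheadrightarrow {\rm{H}}^1(k, \widehat{G})$. Thus $\alpha_C$ annihilates ${\rm{H}}^1(k, \widehat{C})$, so by Poitou--Tate for the finite $C$ there exists $\gamma \in {\rm{H}}^1(k, C)$ mapping to $\alpha_C$ in ${\rm{H}}^1(\A, C)$.

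It remains to lift $\gamma$ to ${\rm{H}}^1(k, G)$. Let $\delta:{\rm{H}}^1(k, C) \to {\rm{H}}^2(k, R)$ be the connecting map. Since $\alpha_C = \gamma_\A$ lies in the image of ${\rm{H}}^1(\A, G)$, its image $\delta(\gamma)_\A \in {\rm{H}}^2(\A, R)$ vanishes, so $\delta(\gamma) \in \Sha^2(k, R)$. But $\Sha^2(k, R) \simeq \Sha^2(k', T')$ (by smoothness of $R$ and exactness of finite \'etale pushforward) and $\Sha^2(k', \Gm) = 0$ by global class field theory, so $\delta(\gamma) = 0$ and $\gamma$ lifts to some $\tilde\gamma \in {\rm{H}}^1(k, G)$. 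Then $\alpha - \tilde\gamma_\A \in {\rm{H}}^1(\A, G)$ has trivial image in ${\rm{H}}^1(\A, C)$, hence lifts to ${\rm{H}}^1(\A, R) = 0$; thus $\alpha = \tilde\gamma_\A$, as required.

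The only genuine work is bookkeeping: the vanishing statements for $R$ and $\widehat{R}$ are immediate from smoothness, finite \'etale pushforward, Hilbert 90, and the separability of $k'/k$, and the ``hardest'' input is the vanishing of $\Sha^2(k', \Gm)$ from class field theory together with Poitou--Tate for the finite group scheme $C$. The main structural point to get right is the compatibility that lets one descend the trivial-pairing hypothesis from $\widehat{G}$ to $\widehat{C}$; once this is in place (which hinges on the surjectivity ${\rm{H}}^1(k, \widehat{C}) \twoheadrightarrow {\rm{H}}^1(k, \widehat{G})$ provided by $\Sha^1$-free behavior of $\widehat{R}$), the argument proceeds by a direct chase.
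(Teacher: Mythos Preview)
Your proof is correct and follows essentially the same approach as the paper's: both reduce to Poitou--Tate for the finite quotient $C$, use the vanishing ${\rm{H}}^1(k,R)={\rm{H}}^1(\A,R)={\rm{H}}^1(k,\widehat{R})=0$ for $R=\R_{k'/k}(T')$, and invoke $\Sha^2(k,R)=0$ from class field theory. The paper packages these into a single commutative diagram and says ``diagram chase'', whereas you write out the chase explicitly; the logical content is identical.
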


\begin{proof}
Consider the following commutative diagram: 
\[
\begin{tikzcd}
&&& 0 \arrow{d} \\
& {\rm{H}}^1(k, G) \arrow{r} \arrow{d} & {\rm{H}}^1(k, C) \arrow{r} \arrow{d} & {\rm{H}}^2(k, \R_{k'/k}(T')) \arrow{d} \\
0 \arrow{r} & {\rm{H}}^1(\A, G) \arrow{r} \arrow{d} & {\rm{H}}^1(\A, C) \arrow{r} \arrow{d} & {\rm{H}}^2(\A, \R_{k'/k}(T')) \\
& {\rm{H}}^1(k, \widehat{G})^* \arrow{r} & {\rm{H}}^1(k, \widehat{C})^* &
\end{tikzcd}
\]
The second column is exact by Poitou--Tate for finite commutative group schemes (see Remark \ref{prior}). The third column is exact because $\Sha^2(k, \R_{k'/k}(T')) \simeq \Sha^2(k', T') = 0$, the first equality by Lemma \ref{sepblepushforward}, and the second by the fundamental exact sequence (of Brauer groups) in class field theory. The exactness of the first row is clear, and the second row is exact because ${\rm{H}}^1(\A, \R_{k'/k}(T')) = 0$ (by \cite[Th.\,2.13]{ces2} and Proposition \ref{H^1=0Weilrestrictionsplittori}). A diagram chase now shows that the first column, which we already know to be a complex, is exact.
\end{proof}

\begin{proposition}
\label{exactnessatH^1(A,G)}
Let $k$ be a global function field, $G$ an affine commutative $k$-group scheme of finite type. Then the global duality sequence
\[
{\rm{H}}^1(k, G) \longrightarrow {\rm{H}}^1(\A, G) \longrightarrow {\rm{H}}^1(k, \widehat{G})^*
\]
of Theorem {\rm{\ref{poitoutatesequence}}} is exact.
\end{proposition}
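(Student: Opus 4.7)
The plan is to execute a devissage down to the special case handled by Lemma \ref{exactnessatH^1(A,G)X'}, first stripping off a split unipotent piece and then exploiting the isogeny structure of an almost-torus to obtain a surjection from a group to which the lemma applies.

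First, I would reduce to the case where $G$ is an almost-torus. By Lemma \ref{affinegroupstructurethm}, write $1 \to H \to G \to U \to 1$ with $H$ an almost-torus and $U$ split unipotent. Since ${\rm{H}}^i(k, U) = {\rm{H}}^i(\A, U) = 0$ for $i \geq 1$ (by Propositions \ref{unipotentcohomology}(ii) and \ref{H^i(A,G)--->prodH^i(k_v,G)}), ${\rm{H}}^0(k, \widehat{U}) = 0$, and ${\rm{H}}^1(k, \widehat{U}) = 0$ (Proposition \ref{unipotentcohomology}(iii)), the natural maps ${\rm{H}}^1(k, H) \to {\rm{H}}^1(k, G)$, ${\rm{H}}^1(\A, H) \to {\rm{H}}^1(\A, G)$, and ${\rm{H}}^1(k, \widehat{G}) \to {\rm{H}}^1(k, \widehat{H})$ are isomorphisms (using Proposition \ref{hatisexact} for the last), so Proposition \ref{exactnessatH^1(A,G)} for $G$ follows from it for $H$.

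Second, for an almost-torus $G$, after replacing $G$ by $Y := G^n \times \R_{k_2/k}(T_2)$ (which is harmless since the sequence for $G^n$ is a direct sum of $n$ copies of that for $G$, and ${\rm{H}}^1$ of $\R_{k_2/k}(T_2)$ over $k$ and $\A$ vanishes by Lemma \ref{H^1=0Weilrestrictionsplittori} and Proposition \ref{H^i(A,G)--->prodH^i(k_v,G)}), Lemma \ref{almosttorus}(iv) provides an isogeny $X := A \times \R_{k_1/k}(T_1) \twoheadrightarrow Y$ with finite kernel $K$, where $A$ is a finite $k$-group scheme. The group $X$ satisfies the hypotheses of Lemma \ref{exactnessatH^1(A,G)X'} (since $\R_{k_1/k}(T_1) \subset X$ with finite quotient $A$), so Proposition \ref{exactnessatH^1(A,G)} holds for $X$. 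I then run a diagram chase on the exact sequence $1 \to K \to X \to Y \to 1$ and its associated LES. Given $\alpha \in {\rm{H}}^1(\A, Y)$ annihilating ${\rm{H}}^1(k, \widehat{Y})$ under the adelic pairing, the image $\partial_{\A}(\alpha) \in {\rm{H}}^2(\A, K)$ annihilates $\widehat{K}(k)$ by commutativity and the hypothesis on $\alpha$, so Poitou--Tate for the finite $K$ (see Remark \ref{prior}) lifts it to some $\zeta \in {\rm{H}}^2(k, K)$. The image of $\zeta$ in ${\rm{H}}^2(k, X)$ lies in $\Sha^2(X)$ and dies in $\Sha^2(Y)$, hence comes from $\Sha^2(K)$ via the LES of $\Sha$. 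Modifying $\zeta$ by this $\Sha^2(K)$ element (which does not alter $\partial_{\A}(\alpha)$) kills the image in ${\rm{H}}^2(k, X)$, so the modified $\zeta$ lifts through the $k$-LES to $\tilde{\alpha} \in {\rm{H}}^1(k, Y)$. Then $\alpha - \iota(\tilde{\alpha}) = j_{\A}(\sigma)$ for some $\sigma \in {\rm{H}}^1(\A, X)$, and it suffices to show that $\sigma$ comes from ${\rm{H}}^1(k, X)$.

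The main obstacle is this final step. By Lemma \ref{exactnessatH^1(A,G)X'} applied to $X$, it reduces to showing $\sigma \mapsto 0$ in ${\rm{H}}^1(k, \widehat{X})^*$. For $z \in {\rm{H}}^1(k, \widehat{Y})$, one has $\langle \sigma, \widehat{j}(z)\rangle = \langle j_{\A}(\sigma), z\rangle = \langle \alpha, z\rangle - \langle \iota(\tilde{\alpha}), z\rangle = 0$ by the hypothesis on $\alpha$ and the complex condition, so $\sigma$ annihilates $\im(\widehat{j}: {\rm{H}}^1(k, \widehat{Y}) \to {\rm{H}}^1(k, \widehat{X}))$. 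The remaining residual character factors through $\im(\widehat{i}: {\rm{H}}^1(k, \widehat{X}) \to {\rm{H}}^1(k, \widehat{K}))$, and to kill it I would modify $\sigma$ by an element of $\im({\rm{H}}^1(\A, K) \to {\rm{H}}^1(\A, X))$ (which preserves $j_{\A}(\sigma)$). The existence of such a modification is controlled, via Poitou--Tate for the finite $K$, by the vanishing of the induced character on $\Sha^1(\widehat{K}) \cap \im(\widehat{i})$. For $y \in {\rm{H}}^1(k, \widehat{X})$ with $\widehat{i}(y) \in \Sha^1(\widehat{K})$, $y$ is locally in $\im(\widehat{j}_v)$ at every place, and a cocycle-level computation identifies $\langle \sigma, y\rangle$ with a value of the $\Sha^2_Y$-pairing against a class in $\Sha^1(\widehat{Y})$ built from the local lifts; the hypothesis that $\alpha$ annihilates ${\rm{H}}^1(k, \widehat{Y})$ then forces this pairing to vanish by Proposition \ref{Sha^2(G)-->Sha^1(G^)*finite} (injectivity of $\Sha^2(Y) \to \Sha^1(\widehat{Y})^*$). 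This cocycle-level identification is the delicate core of the argument, and completing it will involve the explicit definition of the $\Sha$-pairings in \S\ref{sectiondefiningshapairings} together with a careful comparison of global and local cup products.
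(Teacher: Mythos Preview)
Your reduction to the almost-torus case is cleaner than the paper's: the paper inducts on the dimension of the unipotent radical via the $\Ga$ quotient and Proposition \ref{A/k=H^2(k,Ga^)*}, whereas you strip off the entire split unipotent quotient $U$ in one step. Your claim that ${\rm{H}}^1(k,H)\to{\rm{H}}^1(k,G)$ is an isomorphism is false in general (the connecting map $U(k)\to{\rm{H}}^1(k,H)$ need not vanish), but surjectivity suffices for the reduction, so this is only a minor inaccuracy.

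The almost-torus case, however, has a genuine gap. Your claim that $\xi:=j(\zeta)\in\Sha^2(X)$ ``comes from $\Sha^2(K)$ via the LES of $\Sha$'' is unjustified: there is no long exact sequence of Tate--Shafarevich groups in general. Unwinding what you would need, you must modify $\zeta$ by an element of $\mathrm{im}(\partial_k)$ to land in $\Sha^2(K)$; since $(\zeta_0)_{\A}=\partial_{\A}(\gamma)$ for some $\gamma\in{\rm{H}}^1(\A,Y)$, you would need $\gamma$ to lift to ${\rm{H}}^1(k,Y)$, which is precisely the statement you are trying to prove. The paper avoids this circularity by first arranging $\Sha^2(X)=0$: using Lemma \ref{surjectiontrivialSha^2}, one replaces the finite factor $A$ (your notation) by a finite $A'$ surjecting onto it with $\Sha^2(A')=0$, and then $\xi=0$ automatically.

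Your final step is also incomplete, and the invocation of Proposition \ref{Sha^2(G)-->Sha^1(G^)*finite} does not fit: you have no element of $\Sha^2(Y)$ in hand to which that injectivity applies. What you actually need is that $\sigma$ annihilates $\Sha^1(\widehat{K})\cap\mathrm{im}(\widehat{j})$, and establishing this directly would require a cocycle identity of the type in Lemmas \ref{gammakillsShapreimage} or \ref{xannihilatesShapreimage}. The paper sidesteps this entirely: after lifting $\alpha$ to ${\rm{H}}^1(\A,X)$, it pushes out along an inclusion $B\hookrightarrow B'$ with $\Sha^2(B')=0$ (Lemma \ref{subgroupSha^2=0}) to obtain $1\to B'\to X'\to G\to 1$, where $X'$ still satisfies Lemma \ref{exactnessatH^1(A,G)X'}. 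Since $\Sha^2(B')=0$ forces $\Sha^1(\widehat{B'})=0$ by Corollary \ref{shapairingfinite}, Proposition \ref{exactnessatH^1(G^)*finite} gives that ${\rm{H}}^1(\A,B')\to{\rm{H}}^1(k,\widehat{B'})^*$ is surjective, and the final obstruction vanishes by a clean diagram chase rather than a new cocycle computation. Both of your gaps are thus repaired by the same device: preemptively killing the relevant $\Sha^2$ groups before running the diagram chase.
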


\begin{proof}
First suppose that $G$ is an almost-torus. By Lemma \ref{almosttorus}(iv), we may harmlessly modify $G$ and thereby assume that there is an exact sequence
\[
1 \longrightarrow B \longrightarrow \R_{k'/k}(T') \times C \longrightarrow G \longrightarrow 1
\]
with $B, C$ finite commutative $k$-group schemes, $k'/k$ a finite separable extension, and $T'$ a split $k'$-torus. We have $\Sha^2(k, \R_{k'/k}(T')) \simeq \Sha^2(k', T') = 0$. By Lemma \ref{surjectiontrivialSha^2}, there is a surjection $C' \twoheadrightarrow C$ for some finite commutative $k$-group scheme $C'$ such that $\Sha^2(k, C') = 0$. The composition $\R_{k'/k}(T') \times C' \rightarrow \R_{k'/k}(T') \times C \rightarrow G$ is also an isogeny, so we may replace $C$ with $C'$ and thereby arrange that $\Sha^2(k, \R_{k'/k}(T') \times C) = 0$. 
For notational convenience let $X : = \R_{k'/k}(T') \times C$, so we have an exact sequence
\[
1 \longrightarrow B \longrightarrow X \longrightarrow G \longrightarrow 1
\]
with $\Sha^2(k, X) = 0$. 

Let $\alpha \in {\rm{H}}^1(\A, G)$ be such that $\alpha$ maps to $0$ in ${\rm{H}}^1(k, \widehat{G})^*$. We want $\alpha$
to come from ${\rm{H}}^1(k, G)$.  Consider the following commutative diagram: 
\[
\begin{tikzcd}
&&& 0 \arrow{d} \\
& {\rm{H}}^1(k, G) \arrow{r} \arrow{d} & {\rm{H}}^2(k, B) \arrow{r} \arrow{d} & {\rm{H}}^2(k, X) \arrow{d} \\
{\rm{H}}^1(\A, X) \arrow{r} & {\rm{H}}^1(\A, G) \arrow{r} \arrow{d} & {\rm{H}}^2(\A, B) \arrow{r} \arrow{d} & {\rm{H}}^2(\A, X) \\
& {\rm{H}}^1(k, \widehat{G})^* \arrow{r} & \widehat{B}(k)^* &
\end{tikzcd}
\]
The third column is exact by Proposition \ref{funsequence}. The fourth column is exact because $\Sha^2(X) = 0$. All of the rows are clearly exact. A diagram chase now shows that upon changing $\alpha$ by the image of an element of ${\rm{H}}^1(k, G)$, it lifts to an element in ${\rm{H}}^1(\A, X)$. We may therefore assume that $\alpha$ lifts to ${\rm{H}}^1(\A, X)$.

By Lemma \ref{subgroupSha^2=0}, there is an inclusion $B \hookrightarrow B'$ for some finite commutative $k$-group scheme $B'$ such that 
\begin{equation}
\label{Sha^2(B')=0eqn13}
\Sha^2(k, B') = 0.
\end{equation}
Pushing out the sequence
\[
1 \longrightarrow B \longrightarrow X \longrightarrow G \longrightarrow 1
\]
by the inclusion $B \hookrightarrow B'$, we obtain the following commutative diagram of exact sequences: 
\begin{equation}
\label{pushoutproofofH^1(A,G)}
\begin{tikzcd}
1 \arrow{r} & B \arrow[d, hookrightarrow] \arrow{r} & X \arrow{r} \arrow[d, hookrightarrow] & G \arrow[d, equals] \arrow{r} & 1 \\
1 \arrow{r} & B' \arrow{r} & X' \arrow{r} & G \arrow{r} & 1
\end{tikzcd}
\end{equation}
where $X'$ is defined by the above pushout diagram. Note that the map $\R_{k'/k}(T') \rightarrow X'$ is an inclusion with finite cokernel. That is, we have a short exact sequence
\begin{equation}
\label{exactseqX'eqn3}
1 \longrightarrow \R_{k'/k}(T') \longrightarrow X' \longrightarrow C' \longrightarrow 1,
\end{equation}
where $C'$ is $k$-finite.

Using the commutative diagram (\ref{pushoutproofofH^1(A,G)}) and the fact that the element $\alpha \in {\rm{H}}^1(\A, G)$ lifts to ${\rm{H}}^1(\A, X)$, we see that $\alpha$ lifts to ${\rm{H}}^1(\A, X')$. Consider the following commutative diagram: 
\[
\begin{tikzcd}
& {\rm{H}}^1(k, X') \arrow{r} \arrow{d} & {\rm{H}}^1(k, G) \arrow{d} \\
{\rm{H}}^1(\A, B') \arrow{r} \arrow{d} & {\rm{H}}^1(\A, X') \arrow{r} \arrow{d} & {\rm{H}}^1(\A, G) \arrow{d} \\
{\rm{H}}^1(k, \widehat{B'})^* \arrow{r} \arrow{d} & {\rm{H}}^1(k, \widehat{X'})^* \arrow{r} & {\rm{H}}^1(k, \widehat{G})^* \\
0  &&
\end{tikzcd}
\]
The first column is exact by Proposition \ref{exactnessatH^1(G^)*finite}, Corollary \ref{shapairingfinite}, and (\ref{Sha^2(B')=0eqn13}). The second column is exact due to the exact sequence (\ref{exactseqX'eqn3}) and Lemma \ref{exactnessatH^1(A,G)X'}. The rows are clearly exact. A diagram chase now shows that 
$\alpha$ (which, recall, lifts to ${\rm{H}}^1(\A,X')$) lifts to ${\rm{H}}^1(k, G)$. This completes the proof of the proposition for almost-tori.

Now suppose that $G$ is an arbitrary affine commutative $k$-group scheme of finite type. We will prove the proposition by induction on the dimension of the unipotent radical of $(G_{\overline{k}})^0_{\red}$, the $0$-dimensional case corresponding to the already-treated case of almost-tori. By Lemma \ref{affinegroupstructurethm}, therefore, we may assume that there is an exact sequence
\[
1 \longrightarrow H \longrightarrow G \longrightarrow \Ga \longrightarrow 1
\]
such that the proposition holds for $H$. Consider the following commutative diagram: 
\[
\begin{tikzcd}
& {\rm{H}}^1(k, H) \arrow{r} \arrow{d} & {\rm{H}}^1(k, G) \arrow{d} & \\
\A \arrow{r} \arrow{d} & {\rm{H}}^1(\A, H) \arrow{r} \arrow{d} & {\rm{H}}^1(\A, G) \arrow{r} \arrow{d} & 0 \\
{\rm{H}}^2(k, \widehat{\Ga})^* \arrow{r} \arrow{d} & {\rm{H}}^1(k, \widehat{H})^* \arrow{r} & {\rm{H}}^1(k, \widehat{G})^* & \\
0 &&&
\end{tikzcd}
\]
The first column is exact by Proposition \ref{A/k=H^2(k,Ga^)*}, and the second column is exact by hypothesis. The second row is exact because ${\rm{H}}^1(\A, \Ga) = 0$, and the third row is exact because $\Q/\Z$ is an injective abelian group. A diagram chase now shows that the last column, which we already know to be a complex, is exact. This completes the proof of Proposition \ref{exactnessatH^1(A,G)}.
\end{proof}

Next we prove the analogue of Proposition \ref{exactnessatH^1(A,G)} upon switching the roles of $G$ and $\widehat{G}$. We prove this result now (rather than together with the rest of Theorem \ref{poitoutatesequencedual} in \S \ref{sectiondual9termsequence}) because we will use it in the proof of Theorem \ref{shapairing}. First, however, we require some preparatory results.

\begin{lemma}
\label{H^1(A,G)finiteexponent}
For an affine commutative group scheme of finite type over a global function field $k$, ${\rm{H}}^1(\A, G)$ is of finite exponent.
\end{lemma}

\begin{proof}
Lemmas \ref{affinegroupstructurethm} and \ref{almosttorus}(iv) reduce us to the case in which $G = \Ga$ or $\R_{k'/k}(\Gm)$ for some finite separable extension $k'/k$, since finite commutative group schemes are of finite exponent. In both of these cases, the cohomology group vanishes, thanks to Lemmas \ref{H^1(prodO_v,G_a^)=0} and \ref{H^1(prodO_v,G^)=prodH^1(O_v,G^)weilrestrictiontori}.
\end{proof}

\begin{proposition}
\label{Che^1(G)compact}
For an affine commutative group scheme $G$ of finite type over a global field $k$, the group
\[
\Che^1(G) := \coker({\rm{H}}^1(k, G) \longrightarrow {\rm{H}}^1(\A, G))
\]
is compact.
\end{proposition}

\begin{proof}
We claim that there is an inclusion $G \hookrightarrow G'$ into an affine commutative $k$-group scheme $G'$ of finite type such that ${\rm{H}}^1(\A, G') = 0$. Indeed, there is an inclusion $G \hookrightarrow H$ into a smooth connected affine $k$-group scheme $H$. For some finite extension field $L/k$, $H_L$ is the product of a split torus and a split unipotent group. Then the inclusion $G \hookrightarrow H \hookrightarrow \R_{L/k}(H_L)$ does the job.

Now let $G'' := G'/G$, an affine commutative $k$-group scheme of finite type \cite[Ch.\,III, \S3, no.\,5, Th.\,5.6]{demazuregabriel}, and let $\overline{G''} := G''(\A)/G''(k)$. Since ${\rm{H}}^1(\A, G') = 0$, the connecting map $\overline{G''}(\A) \rightarrow \Che^1(G)$ is surjective. It is also continuous by Proposition \ref{adelictopcohombasics}(vi). Some integer $N > 0$ kills ${\rm{H}}^1(\A, G)$ by Lemma \ref{H^1(A,G)finiteexponent}, so the connecting map descends to a surjective continuous map $\overline{G''}/N\overline{G''} \rightarrow \Che^1(G)$. It therefore suffices to show that $\overline{G''}/N\overline{G''}$ is compact. By Proposition \ref{G(A)_1/G(k)compact} and \cite[Ch.\,I, Prop.\,5.6]{oesterle}, $\overline{G''}$ is an extension of $H$ by a compact group, where $H$ is either finitely generated or a power $\mathbf{R}^n$ of the reals, so the claim follows.
\end{proof}

\begin{proposition}
\label{exactnessatH^1(A,G^)}
Let $k$ be a global function field, $G$ an affine commutative $k$-group scheme of finite type. Then the global duality sequence
\[
{\rm{H}}^1(k, \widehat{G}) \longrightarrow {\rm{H}}^1(\A, \widehat{G}) \longrightarrow {\rm{H}}^1(k, G)^*
\]
of Theorem $\ref{poitoutatesequencedual}$ is exact.
\end{proposition}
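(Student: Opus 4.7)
The plan is to deduce this proposition by Pontryagin-dualizing the already-established Proposition \ref{exactnessatH^1(A,G)}, using the perfect adelic pairing of Proposition \ref{H^1(A,G)=H^1(A,G^)^D} to identify ${\rm{H}}^1(\A, \widehat{G})$ with the Pontryagin dual of ${\rm{H}}^1(\A, G)$ as topological groups. Let $f: {\rm{H}}^1(k, G) \to {\rm{H}}^1(\A, G)$ and $g: {\rm{H}}^1(\A, G) \to {\rm{H}}^1(k, \widehat{G})^*$ denote the two maps in Proposition \ref{exactnessatH^1(A,G)}, so $\im(f) = \ker(g)$. All three groups are locally compact Hausdorff abelian: ${\rm{H}}^1(k, G)$ is discrete of finite exponent (Lemma \ref{H^1finiteexponent}); ${\rm{H}}^1(\A, G)$ is locally compact Hausdorff and second-countable (Proposition \ref{adelictopcohombasics}(iv)); and ${\rm{H}}^1(k, \widehat{G})^*$ is compact profinite, being the Pontryagin dual of the discrete finite-exponent group ${\rm{H}}^1(k, \widehat{G})$. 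Both $f$ and $g$ are continuous.

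The crucial step is to upgrade algebraic exactness to a topological short exact sequence amenable to Pontryagin duality. For this, the first sequence of Proposition \ref{exactnessatH^1(G^)*finite} extends $g$ to a continuous map onto a Hausdorff quotient $\Sha^1(\widehat{G})^*$, so $\im(g) = \ker({\rm{H}}^1(k, \widehat{G})^* \to \Sha^1(\widehat{G})^*)$ is a closed subgroup. Consequently $\im(f) = \ker(g)$ is closed in ${\rm{H}}^1(\A, G)$, and Lemma \ref{isom=homeo} upgrades the continuous bijection ${\rm{H}}^1(\A, G)/\im(f) \to \im(g)$ to a topological isomorphism between second-countable locally compact Hausdorff groups. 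This yields a topological short exact sequence
\[
0 \to \im(f) \to {\rm{H}}^1(\A, G) \to \im(g) \to 0
\]
together with a closed embedding $\im(g) \hookrightarrow {\rm{H}}^1(k, \widehat{G})^*$ of locally compact Hausdorff abelian groups.

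Pontryagin-dualizing both, composing them, and further composing with the injection $\im(f)^D \hookrightarrow {\rm{H}}^1(k, G)^D$ obtained by dualizing the surjection ${\rm{H}}^1(k, G) \twoheadrightarrow \im(f)$ yields a sequence exact at the middle term
\[
\left({\rm{H}}^1(k, \widehat{G})^*\right)^D \longrightarrow {\rm{H}}^1(\A, G)^D \longrightarrow {\rm{H}}^1(k, G)^D.
\]
Pontryagin double duality identifies $({\rm{H}}^1(k, \widehat{G})^*)^D = {\rm{H}}^1(k, \widehat{G})$; Proposition \ref{H^1(A,G)=H^1(A,G^)^D} identifies ${\rm{H}}^1(\A, G)^D = {\rm{H}}^1(\A, \widehat{G})$; and ${\rm{H}}^1(k, G)^D = {\rm{H}}^1(k, G)^*$ since ${\rm{H}}^1(k, G)$ is discrete of finite exponent. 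A routine verification using functoriality of the cup product confirms that the dualized maps are, respectively, the natural map ${\rm{H}}^1(k, \widehat{G}) \to {\rm{H}}^1(\A, \widehat{G})$ induced by $k \hookrightarrow \A$ and the global duality map ${\rm{H}}^1(\A, \widehat{G}) \to {\rm{H}}^1(k, G)^*$. The principal obstacle is the topological input --- the closedness of $\im(g)$ in ${\rm{H}}^1(k, \widehat{G})^*$ --- without which Pontryagin duality cannot be cleanly applied; this is precisely why Proposition \ref{exactnessatH^1(G^)*finite} must be proved before attacking the present statement.
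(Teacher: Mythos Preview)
Your proof is correct and follows essentially the same approach as the paper: dualize the exact sequence of Proposition \ref{exactnessatH^1(A,G)}, use Proposition \ref{exactnessatH^1(G^)*finite} to establish that the image of $g$ is closed, apply Lemma \ref{isom=homeo} to upgrade to a topological quotient, and then invoke Pontryagin duality together with Proposition \ref{H^1(A,G)=H^1(A,G^)^D}. The paper's argument is slightly more compressed but relies on exactly the same ingredients in the same order.
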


\begin{proof}
The global duality sequence
\begin{equation}
\label{exactnessatH^1(A,G^)pfseq1}
{\rm{H}}^1(k, G) \longrightarrow {\rm{H}}^1(\A, G) \longrightarrow {\rm{H}}^1(k, \widehat{G})^*
\end{equation}
of Theorem \ref{poitoutatesequence} consists of continuous maps (the second map being the composition ${\rm{H}}^1(\A, G) \rightarrow {\rm{H}}^1(\A, \widehat{G})^D \rightarrow {\rm{H}}^1(k, \widehat{G})^D$, in which the first map is continuous by Proposition \ref{H^1(A,G)=H^1(A,G^)^D}, and the second is the dual of a trivially continuous map), and it is exact by Proposition \ref{exactnessatH^1(A,G)}. Further, all of the groups appearing are locally compact Hausdorff. For the global cohomology groups this is trivial, while for the adelic group it is contained in Proposition \ref{adelictopcohombasics}(iv). Note that ${\rm{H}}^1(k, \widehat{G})^*$ is the Pontryagin dual of ${\rm{H}}^1(k, \widehat{G})$ by Lemma \ref{H^1finiteexponent}.

We claim that the Pontryagin dual complex associated to (\ref{exactnessatH^1(A,G^)pfseq1}) is also exact. Assuming this, we would be done by Pontryagin double duality and Proposition \ref{H^1(A,G)=H^1(A,G^)^D}. 
To prove that the Pontryagin dual sequence is exact, it suffices to show that the continuous inclusion ${\rm{H}}^1(\A, G)/{\rm{H}}^1(k, G) \hookrightarrow {\rm{H}}^1(k, \widehat{G})^*$ is a homeomorphism onto a closed subgroup. This follows from the fact that the source is compact (Proposition \ref{Che^1(G)compact}) and the target Hausdorff.
\end{proof}

\begin{corollary}
\label{exactnessatH^1(G^)*finitecor}
For an affine commutative group scheme $G$ of finite type over a global function field $k$, the sequences
\[
{\rm{H}}^1(\A, G) \longrightarrow {\rm{H}}^1(k, \widehat{G})^* \longrightarrow \Sha^1(\widehat{G})^* \longrightarrow 0,
\]
\begin{equation}
\label{exactnessatH^1(G^)*finiteeqn10}
{\rm{H}}^1(\A, \widehat{G}) \longrightarrow {\rm{H}}^1(k, G)^* \longrightarrow \Sha^1(G)^* \longrightarrow 0
\end{equation}
are exact, where the maps ${\rm{H}}^1(\A, G) \rightarrow {\rm{H}}^1(k, \widehat{G})^*$ and ${\rm{H}}^1(\A, \widehat{G}) \rightarrow {\rm{H}}^1(k, G)^*$ are induced by the global duality pairings.
\end{corollary}

\begin{proof}
This follows from Propositions \ref{exactnessatH^1(G^)*finite}(ii), \ref{exactnessatH^1(A,G)}, and \ref{exactnessatH^1(A,G^)}.
\end{proof}

\begin{proposition}
\label{Sha^2(G)-->Sha^1(G^)*finitecor}
Let $G$ be an affine commutative group scheme of finite type over a global function field $k$. Then the map $\Sha^2(k, G) \rightarrow \Sha^1(k, \widehat{G})^*$ induced by $\langle \cdot, \cdot \rangle_{\Sha}$ is injective.
\end{proposition}

\begin{proof}
Combine Proposition \ref{Sha^2(G)-->Sha^1(G^)*finite} and Corollary \ref{exactnessatH^1(G^)*finitecor}.
\end{proof}

\section{Injectivity of $\Sha^2(\widehat{G}) \rightarrow \Sha^1(G)^*$}

In this section we prove that if $G$ is an affine commutative group scheme of finite type over a global field $k$, then the map $\Sha^2(\widehat{G}) \rightarrow \Sha^1(G)^*$ induced by the pairing $\langle \cdot, \cdot \rangle_{\Sha}$ is injective (Proposition \ref{Sha^2--->Sha^1injective}).

\begin{lemma}
\label{Sha^2(R^)=0}
If $k'/k$ is a finite separable extension of global fields, and $T'$ is a split $k'$-torus, then $\Sha^2(k, \widehat{\R_{k'/k}(T')}) = 0$.
\end{lemma}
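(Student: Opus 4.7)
My plan is to reduce the statement to a familiar vanishing coming from the Chebotarev density theorem. Since Weil restriction commutes with products, I may immediately assume $T' = \Gm$. Then by Proposition \ref{charactersseparableweilrestriction}, there is a canonical isomorphism
\[
\widehat{\R_{k'/k}(\Gm)} \simeq \R_{k'/k}(\widehat{\Gm}) = \R_{k'/k}(\Z).
\]
Since the constant $k'$-group scheme $\Z$ is smooth, the Weil restriction $\R_{k'/k}(\Z)$ is smooth as well, so all cohomology groups in sight may be computed in the \'etale topology. Because $\Spec(k') \to \Spec(k)$ is finite, finite pushforward is exact on the categories of \'etale abelian sheaves, hence ${\rm{H}}^2(k, \R_{k'/k}(\Z)) \simeq {\rm{H}}^2(k', \Z)$, and similarly for each completion we get
\[
{\rm{H}}^2(k_v, \R_{k'/k}(\Z)) \simeq \prod_{v'\mid v} {\rm{H}}^2(k'_{v'}, \Z).
\]
Thus $\Sha^2(k, \widehat{\R_{k'/k}(\Gm)}) \simeq \Sha^2(k', \Z)$, and the problem reduces to showing $\Sha^2(k', \Z) = 0$.

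For the last step, I would use the short exact sequence of \'etale sheaves $0 \to \Z \to \Q \to \Q/\Z \to 0$. Since $\Q$ is uniquely divisible, its higher Galois cohomology vanishes (either by a transfer argument or directly since any Galois cohomology group is torsion while $\Q$ is torsion-free and divisible), so the connecting map yields canonical isomorphisms ${\rm{H}}^2(k', \Z) \simeq {\rm{H}}^1(k', \Q/\Z)$ and similarly for each $k'_{v'}$. The problem therefore becomes the assertion that $\Sha^1(k', \Q/\Z) = 0$, i.e.\,that any continuous character $\chi : \mathfrak{g}_{k'}^{\rm{ab}} \to \Q/\Z$ whose restriction to every local Galois group $\Gal(\overline{k'_{v'}}/k'_{v'})^{\rm{ab}} \to \mathfrak{g}_{k'}^{\rm{ab}}$ is trivial must itself be trivial.

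Such a $\chi$ has finite image, so it cuts out a finite cyclic extension $L/k'$, and the local triviality condition says precisely that every place of $k'$ splits completely in $L$. By the Chebotarev density theorem, a finite Galois extension in which every place splits completely must be trivial, so $L = k'$ and $\chi = 0$. This completes the chain of reductions, and there is no real obstacle: the only non-formal input is Chebotarev, which is at the end. The heart of the argument is the identification $\widehat{\R_{k'/k}(\Gm)} \simeq \R_{k'/k}(\Z)$ that lets us trade a subtle Weil-restricted character sheaf for a smooth sheaf whose cohomology can be handled by classical means.
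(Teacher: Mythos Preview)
Your proof is correct and follows essentially the same approach as the paper: reduce to $T' = \Gm$, identify $\widehat{\R_{k'/k}(\Gm)} \simeq \R_{k'/k}(\Z)$ via Proposition~\ref{charactersseparableweilrestriction}, use exactness of finite pushforward in the \'etale topology to reduce to $\Sha^2(k',\Z)=0$, and then use the sequence $0 \to \Z \to \Q \to \Q/\Z \to 0$ together with Chebotarev to finish. The only cosmetic difference is that the paper phrases the Chebotarev step as ``the decomposition groups generate a dense subgroup of ${\rm Gal}(k_s/k)$'' rather than your equivalent formulation via a cyclic extension in which every place splits.
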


\begin{proof}
Without loss of generality, $T' = \Gm$.
By Proposition \ref{charactersseparableweilrestriction}, it suffices to prove the vanishing of $\Sha^2(k, \R_{k'/k}(\Z))$. We may take 
the cohomology to be {\'e}tale, since $\R_{k'/k}(\Z)$ is represented by a smooth $k$-scheme. By Lemma \ref{sepblepushforward}, we are reduced (renaming $k'$ as $k$) to showing that $\Sha^2(k, \Z) = 0$. That is, we claim that the map
 $$f: {\rm{H}}^2(k, \Z) \rightarrow \prod_v {\rm{H}}^2(k_v, \Z)$$
 is injective. Using the exact sequence of Galois modules
\[
0 \longrightarrow \Z \longrightarrow \Q \longrightarrow \Q/\Z \longrightarrow 0
\]
over any field, and the vanishing (for any field) of
higher Galois cohomology with coefficients in the uniquely divisible group $\Q$, we have ${\rm{H}}^2(F, \Z) \simeq {\rm{H}}^1(F, \Q/\Z)$
functorially in any field $F$.  Thus, $f$ is identified with the natural map
${\rm{H}}^1(k, \Q/\Z) \rightarrow \prod_v {\rm{H}}^1(k_v, \Q/\Z)$.
But ${\rm{H}}^1(F, \Q/\Z) \simeq \Hom_{{\rm{cts}}}({\rm{Gal}}(F_s/F), \Q/\Z)$ naturally in any field $F$
(equipped with a specified separable closure), 
so the desired injectivity is reduced to the fact that the collection of decomposition groups
$D(\overline{v}|v) \subset {\rm{Gal}}(k_s/k)$ for non-archimedean places $v$ on $k$ and their lifts
$\overline{v}$ on $k_s$ generates a dense subgroup, which is the Chebotarev Density Theorem.
\end{proof}

Now we make a definition which will be useful below (and rests on the fact 
that a commutative affine group scheme $H$ of finite type over a field $F$ contains a unique $F$-torus $T$ which is
maximal in the sense that it contains all others).

\begin{definition}
\label{quasitrivialalmosttorus}
We say that an almost-torus $G$ over a field $k$ is {\em quasi-trivial} if its maximal $k$-torus is of the form $\R_{k'/k}(T')$ for some finite separable extension $k'/k$ and split $k'$-torus $T'$. This is equivalent to $G_{\red}^0$ being such a $k$-torus, as well as (due to Lemma \ref{almosttorus}(ii)) to the existence of an exact sequence of $k$-group schemes
\[
1 \longrightarrow \R_{k'/k}(T') \longrightarrow G \longrightarrow B \longrightarrow 1
\]
with $B$ a finite commutative $k$-group scheme, $k'/k$ finite separable, and $T'$ a split $k'$-torus.
\end{definition}

\begin{lemma}
\label{shapairingquasitrivial}
If $G$ is a quasi-trivial almost-torus over a global function field $k$, then the map $\Sha^2(k, \widehat{G}) \rightarrow \Sha^1(k, G)^*$ induced by the pairing $\langle \cdot, \cdot \rangle_{\Sha}$ is injective.
\end{lemma}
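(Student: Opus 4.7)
The plan is to reduce the claim to the already-settled finite case (Corollary \ref{shapairingfinite}) using the defining exact sequence
\[
1 \longrightarrow X \longrightarrow G \longrightarrow B \longrightarrow 1
\]
of a quasi-trivial almost-torus, where $X = \R_{k'/k}(T')$ for a finite separable extension $k'/k$ and a split $k'$-torus $T'$, and $B$ is a finite commutative $k$-group scheme. Since $X$ is smooth, its cohomology over $k$ and over each $k_v$ may be computed \'etale-locally, and because \'etale pushforward through the finite morphism $\Spec(k') \to \Spec(k)$ is exact, $\Sha^2(k, X) \simeq \Sha^2(k', T') = 0$ by the fundamental exact sequence of class field theory. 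Dually, Proposition \ref{hatisexact} yields the exact sequence $1 \to \widehat{B} \to \widehat{G} \to \widehat{X} \to 1$, and $\Sha^2(k, \widehat{X}) = 0$ by Lemma \ref{Sha^2(R^)=0}.

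Next, I will snake-lemma the two diagrams of long exact cohomology sequences (over $k$ and over $\A$, using Proposition \ref{H^i(A,G)--->prodH^i(k_v,G)} to identify the $\A$-kernels with $\Sha$'s) to produce the four-term exact sequences
\[
\Sha^1(k, G) \longrightarrow \Sha^1(k, B) \longrightarrow \Sha^2(k, X) = 0,
\]
\[
\Sha^2(k, \widehat{B}) \longrightarrow \Sha^2(k, \widehat{G}) \longrightarrow \Sha^2(k, \widehat{X}) = 0.
\]
In particular, $\Sha^1(k, G) \twoheadrightarrow \Sha^1(k, B)$ and $\Sha^2(k, \widehat{B}) \twoheadrightarrow \Sha^2(k, \widehat{G})$.

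The key remaining step is to invoke the functoriality of $\langle \cdot, \cdot\rangle_{\Sha^1_{(\cdot)}}$ in the group variable (which is immediate from the \v Cech cocycle definition, since $f \colon G \to B$ and $\widehat{f} \colon \widehat{B} \to \widehat{G}$ satisfy the adjointness $f(\alpha) \cup \beta = \alpha \cup \widehat{f}(\beta)$ at the cocycle level). This yields the commutative square
\[
\begin{tikzcd}
\Sha^2(k, \widehat{B}) \arrow[twoheadrightarrow]{r} \arrow{d}{\sim} & \Sha^2(k, \widehat{G}) \arrow{d} \\
\Sha^1(k, B)^* \arrow[hookrightarrow]{r} & \Sha^1(k, G)^*
\end{tikzcd}
\]
where the left vertical arrow is an isomorphism by Corollary \ref{shapairingfinite} applied to the finite group $B$, and the bottom arrow is injective as the dual of the surjection $\Sha^1(k, G) \twoheadrightarrow \Sha^1(k, B)$. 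A one-line diagram chase then forces the right vertical arrow to be injective, which is exactly the claim.

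The main obstacle is verifying the commutativity of the above square, i.e.\ checking that the pairings $\langle \cdot, \cdot \rangle_{\Sha^1_G}$ and $\langle \cdot, \cdot \rangle_{\Sha^1_B}$ are compatible under the morphism $f \colon G \to B$. I expect this to be essentially formal from the \v Cech cocycle construction in \S\ref{sectiondefiningshapairings}: one picks compatible cocycle representatives, local trivializations, and a global trivialization $h$ of the cup product, and then observes that the construction intertwines $f_*$ on $G$-cochains and $\widehat{f}^*$ on $\widehat{B}$-cochains because these two operations are adjoint with respect to the evaluation pairing into $\Gm$. No additional duality input beyond the settled finite case should be required.
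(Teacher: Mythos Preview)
Your approach is correct and essentially the same as the paper's: reduce to the finite group $B$ via the defining sequence $1 \to X \to G \to B \to 1$ (with $X = \R_{k'/k}(T')$), use functoriality of $\langle\cdot,\cdot\rangle_{\Sha^1_{(\cdot)}}$ to get the commutative square, and conclude from Corollary~\ref{shapairingfinite}. One small point: your ``snake-lemma'' step producing the surjections $\Sha^1(G)\twoheadrightarrow\Sha^1(B)$ and $\Sha^2(\widehat{B})\twoheadrightarrow\Sha^2(\widehat{G})$ needs more than just $\Sha^2(X)=0$ and $\Sha^2(\widehat{X})=0$. For instance, a lift $\gamma\in{\rm H}^1(k,G)$ of $\alpha\in\Sha^1(B)$ only has $\gamma_{\A}$ in the image of ${\rm H}^1(\A,X)$; to conclude $\gamma\in\Sha^1(G)$ you need ${\rm H}^1(\A,X)=0$, i.e.\ ${\rm H}^1(k_v,\R_{k'/k}(T'))=0$ for all $v$, which is Lemma~\ref{H^1=0Weilrestrictionsplittori} (and similarly ${\rm H}^1(k_v,\widehat{X})=0$ for the dual surjection). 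The paper uses exactly these vanishing inputs, and with them in hand actually shows both maps are \emph{isomorphisms}, not merely surjections---but your weaker surjectivity is already enough for the diagram chase.
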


\begin{proof}
We have an exact sequence 
\[
1 \longrightarrow \R_{k'/k}(T') \longrightarrow G \longrightarrow B \longrightarrow 1
\]
with $k'/k$ a finite separable extension, $T'$ a split $k'$-torus, and $B$ a finite 
commutative $k$-group scheme. We will show that the maps $\Sha^2(\widehat{B}) \rightarrow \Sha^2(\widehat{G})$ and $\Sha^1(G) \rightarrow \Sha^1(B)$ are isomorphisms, so the lemma will follow from the already-known case of finite group 
schemes (Corollary \ref{shapairingfinite}) and the functoriality of $\langle \cdot, \cdot \rangle_{\Sha}$.

To see that $\Sha^2(\widehat{B}) \rightarrow \Sha^2(\widehat{G})$ is an isomorphism, consider the following commutative diagram: 
\[
\begin{tikzcd}
0 \arrow{r} & {\rm{H}}^2(k, \widehat{B}) \arrow{d} \arrow{r} & {\rm{H}}^2(k, \widehat{G}) \arrow{r} \arrow{d} & {\rm{H}}^2(k, \widehat{\R_{k'/k}(T')}) \arrow[d, hookrightarrow] \\
0 \arrow{r} & \prod_v {\rm{H}}^2(k_v, \widehat{B}) \arrow{r} & \prod_v {\rm{H}}^2(k_v, \widehat{G}) \arrow{r} & \prod_v {\rm{H}}^2(k_v, \widehat{\R_{k'/k}(T')})
\end{tikzcd}
\]
The rows are exact by Proposition \ref{hatisexact} and Lemma \ref{H^1=0Weilrestrictionsplittori}. The last vertical arrow is an inclusion by Lemma \ref{Sha^2(R^)=0}. A diagram chase shows that the map $\Sha^2(\widehat{B}) \rightarrow \Sha^2(\widehat{G})$ is an isomorphism.

To see that $\Sha^1(G) \rightarrow \Sha^1(B)$ is an isomorphism, consider the following commutative diagram: 
\[
\begin{tikzcd}
0 \arrow{r} & {\rm{H}}^1(k, G) \arrow{r} \arrow{d} & {\rm{H}}^1(k, B) \arrow{r} \arrow{d} & {\rm{H}}^2(k, \R_{k'/k}(T')) \arrow[d, hookrightarrow] \\
0 \arrow{r} & \prod_v {\rm{H}}^1(k_v, G) \arrow{r} & \prod_v {\rm{H}}^1(k_v, B) \arrow{r} & \prod_v {\rm{H}}^2(k_v, \R_{k'/k}(T'))
\end{tikzcd}
\]
The rows are exact by Lemma \ref{H^1=0Weilrestrictionsplittori}. The last vertical arrow is an inclusion due to
the isomorphism $\Sha^2(k, \R_{k'/k}(T')) \simeq \Sha^2(k', T')$, whose target vanishes by class field theory. A diagram chase now shows that the map $\Sha^1(G) \rightarrow \Sha^1(B)$ is an isomorphism.
\end{proof}

\begin{lemma}
\label{Sha^2(G^)-->Sha^1(G)*injectsalmosttori}
If $G$ is an almost-torus over a global function field $k$, then the map $\Sha^2(\widehat{G}) \rightarrow \Sha^1(G)^*$ induced by the pairing $\langle \cdot, \cdot \rangle_{\Sha}$ is injective.
\end{lemma}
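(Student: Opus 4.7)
The plan is to reduce to the quasi-trivial case already settled in Lemma \ref{shapairingquasitrivial} by means of the isogeny furnished by Lemma \ref{almosttorus}(iv), with the perfect $\Sha$-pairing for finite group schemes (Corollary \ref{shapairingfinite}) bridging the gap.

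Suppose $\alpha \in \Sha^2(\widehat{G})$ annihilates $\Sha^1(G)$; we wish to show $\alpha=0$. By Lemma \ref{almosttorus}(iv), fix an isogeny $q \colon X := A \times \R_{k_1/k}(T_1) \twoheadrightarrow Y := G^n \times \R_{k_2/k}(T_2)$ with $X$ a quasi-trivial almost-torus and finite kernel $K$. The projection $p \colon Y \to G$ onto the first $G$-factor has a section, so $\widehat{p} \colon \Sha^2(\widehat{G}) \hookrightarrow \Sha^2(\widehat{Y})$ is injective, and functoriality of the $\Sha$-pairing shows that $\tilde{\alpha} := \widehat{p}(\alpha)$ annihilates $\Sha^1(Y)$. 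Likewise, $\widehat{q}(\tilde{\alpha}) \in \Sha^2(\widehat{X})$ annihilates $\Sha^1(X)$, so Lemma \ref{shapairingquasitrivial} gives $\widehat{q}(\tilde{\alpha})=0$. From the dual sequence $1 \to \widehat{Y} \to \widehat{X} \xrightarrow{\widehat{j}} \widehat{K} \to 1$ of Proposition \ref{hatisexact}, we may write $\tilde{\alpha} = \delta(\beta)$ for some $\beta \in {\rm{H}}^1(k, \widehat{K})$.

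The main obstacle is to promote $\beta$ (by adding an element of $\widehat{j}({\rm{H}}^1(k, \widehat{X}))$, which does not alter $\delta(\beta)$) to an element of $\Sha^1(\widehat{K})$. Since $\tilde{\alpha}_v = 0$ at every place, each $\beta_v$ lifts to ${\rm{H}}^1(k_v, \widehat{X})$; the corresponding local lifts assemble into a class $\xi_{\A} \in {\rm{H}}^1(\A, \widehat{X})$ with $\widehat{j}_{\A}(\xi_{\A}) = \beta_{\A}$ (the requisite surjectivity of $\widehat{j}_{\A}$ on $\beta_{\A}$ follows from the adelic long exact sequence and $\delta_{\A}(\beta_{\A}) = \tilde{\alpha}_{\A} = 0$). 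Let $\phi := \rho(\xi_{\A}) \in {\rm{H}}^1(k, X)^*$ be the image of $\xi_{\A}$ under the Poitou--Tate duality map of Proposition \ref{exactnessatH^1(A,G^)}. A direct cocycle computation (cf.\ Lemma \ref{gammakillsdelta^-1(Sha^1(G^))}) using that the sum of local invariants of a global Brauer class vanishes shows $\phi$ kills $\ker(q^* \colon {\rm{H}}^1(k,X) \to {\rm{H}}^1(k,Y))$, so $\phi = \tau \circ q^*$ for some $\tau \in {\rm{H}}^1(k,Y)^*$ extended via the injectivity of $\Q/\Z$. A second cocycle-level compatibility, in the same spirit as Lemma \ref{gammakillsShapreimage}, identifies $\tau|_{\Sha^1(Y)}$ (up to sign) with the functional $\gamma \mapsto \langle \gamma, \tilde{\alpha}\rangle_{\Sha^1_Y}$, which vanishes by hypothesis. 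Proposition \ref{exactnessatH^1(A,G^)} then furnishes $\mu_{\A} \in {\rm{H}}^1(\A, \widehat{Y})$ realizing $\tau$, and $\xi_{\A} - \widehat{q}_{\A}(\mu_{\A})$ descends to a global class $\xi \in {\rm{H}}^1(k, \widehat{X})$. Replacing $\beta$ by $\beta - \widehat{j}(\xi)$, the new $\beta$ is adelically trivial, i.e.\ $\beta \in \Sha^1(\widehat{K})$.

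With $\beta \in \Sha^1(\widehat{K})$ in hand, the compatibility of the connecting maps with the $\Sha$-pairings on both sides of the exact sequence $1 \to K \to X \to Y \to 1$ (again verified at the cocycle level, in parallel with the arguments already given in the paper) yields the identity $\langle \gamma, \delta(\beta) \rangle_{\Sha^1_Y} = \pm \langle \delta(\gamma), \beta \rangle_{\Sha^2_K}$ for all $\gamma \in \Sha^1(Y)$ with the connecting map $\delta \colon \Sha^1(Y) \to \Sha^2(K)$. Since $\tilde{\alpha} = \delta(\beta)$ annihilates $\Sha^1(Y)$, it follows that $\beta$ annihilates $\delta(\Sha^1(Y))$, which equals $\ker(\Sha^2(K) \to \Sha^2(X))$ by a snake-lemma argument applied to the long exact sequences over $k$ and over $\A$ (the requisite surjectivity hypothesis for the snake lemma is provided by $\Sha^1(\widehat{X}) \twoheadrightarrow \Sha^1(\widehat{K})$ once we check it, or is circumvented by a direct chase). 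Now the perfect pairing for the finite group $K$ (Corollary \ref{shapairingfinite}) together with the perfect $\Sha$-pairing for the quasi-trivial $X$ (which is in fact established inside the proof of Lemma \ref{shapairingquasitrivial} via the isomorphisms $\Sha^2(\widehat{X}) \simeq \Sha^2(\widehat{B})$ and $\Sha^1(X) \simeq \Sha^1(B)$ for the relevant finite $B$) yields $\beta = \widehat{j}(\beta_X)$ for some $\beta_X \in \Sha^1(\widehat{X})$. Finally $\tilde{\alpha} = \delta(\beta) = \delta(\widehat{j}(\beta_X)) = 0$ since $\delta \circ \widehat{j} = 0$, and therefore $\alpha = 0$. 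The principal technical points are the two cocycle-level compatibilities of connecting maps with the $\Sha$-pairings, both of which I expect to fall out of essentially the same kind of sign bookkeeping as the earlier compatibility lemmas in the chapter.
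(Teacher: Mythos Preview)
Your overall strategy matches the paper's up through promoting $\beta$ to an element of $\Sha^1(\widehat{K})$, though your two-step handling of $\tau$ is more awkward than needed: since the extension $\tau$ of $\phi$ through $\operatorname{im}(q^*)$ is not unique, the phrase ``$\tau|_{\Sha^1(Y)}$'' is not well-defined as written. The cleaner route (which is what the paper does, in its Lemma \ref{xannihilatesShapreimage}) is a single cocycle computation showing $\xi_{\A}$ annihilates all of $(q^*)^{-1}(\Sha^1(Y))$; then $\phi$ factors through ${\rm{H}}^1(k,Y)/\Sha^1(Y)$ directly and Proposition \ref{exactnessatH^1(G^)*finite} applies. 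But this is fixable.

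The genuine gap is in your final paragraph. To conclude $\beta \in \widehat{j}(\Sha^1(\widehat{X}))$ via annihilators you need the inclusion $\ker(\Sha^2(K) \to \Sha^2(X)) \subseteq \delta(\Sha^1(Y))$, and only the reverse inclusion is automatic. If $\kappa \in \Sha^2(K)$ satisfies $j(\kappa) = 0$ in ${\rm{H}}^2(k,X)$, then $\kappa = \delta(y)$ for some $y \in {\rm{H}}^1(k,Y)$ and $y_{\A}$ lifts to some $x_{\A} \in {\rm{H}}^1(\A, X)$, but there is no reason $x_{\A}$ should be congruent modulo $j_{\A}({\rm{H}}^1(\A,K))$ to the localization of a \emph{global} class in ${\rm{H}}^1(k,X)$. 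Your proposed alternative, the surjection $\Sha^1(\widehat{X}) \twoheadrightarrow \Sha^1(\widehat{K})$, is (via the perfect pairings for the finite $K$ and the quasi-trivial $X$) equivalent to injectivity of $\Sha^2(K) \to \Sha^2(X)$; but if that held, every $\beta \in \Sha^1(\widehat{K})$ would lift for free, rendering your whole compatibility argument superfluous --- and in general it fails. The paper sidesteps this endgame entirely with a trick you have omitted: before the argument begins, it pushes out the isogeny along an inclusion $B \hookrightarrow B'$ furnished by Lemma \ref{subgroupSha^2=0} so as to arrange $\Sha^2(B) = 0$. Then Corollary \ref{shapairingfinite} forces $\Sha^1(\widehat{B}) = 0$, so the moment $\beta$ lands in $\Sha^1(\widehat{B})$ one is done, with no annihilator comparison required.
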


\begin{proof}
By Lemma \ref{almosttorus}(iv), modifying $G$ in a harmless manner allows us to assume that there is an exact sequence
\begin{equation}
\label{Sha^2injectiveproofsequence1}
1 \longrightarrow B \xlongrightarrow{j} X \xlongrightarrow{\pi} G \longrightarrow 1
\end{equation}
with $B$ a finite commutative $k$-group scheme and $X$ a quasi-trivial almost-torus. By Lemma \ref{subgroupSha^2=0}, there is an inclusion $B \hookrightarrow B'$ for some finite 
commutative $k$-group scheme $B'$ such that $\Sha^2(B') = 0$. Pushing out the sequence (\ref{Sha^2injectiveproofsequence1}) by the inclusion $B \hookrightarrow B'$ and renaming, we may therefore assume that $\Sha^2(B) = 0$ (and $X$ is still a quasi-trivial almost-torus).

Suppose that $\alpha \in \Sha^2(\widehat{G})$ annihilates $\Sha^1(G)$. We want to show that $\alpha = 0$. By functoriality, $\widehat{\pi}(\alpha) \in \Sha^2(\widehat{X})$ annihilates $\Sha^1(X)$, so $\widehat{\pi}(\alpha) = 0$
by Lemma \ref{shapairingquasitrivial}. Hence (using Proposition \ref{hatisexact}) there exists a $\beta \in {\rm{H}}^1(k, \widehat{B})$ such that $\delta(\beta) = \alpha$, where $\delta:  {\rm{H}}^1(k, \widehat{B}) \rightarrow {\rm{H}}^2(k, \widehat{G})$ is the connecting map. Note that modifying $\beta$ by the image of an element of ${\rm{H}}^1(k, \widehat{X})$ has no effect on its image $\alpha \in {\rm{H}}^2(k, \widehat{G})$.

Let $\beta_{\A}$ denote the image of $\beta$ in ${\rm{H}}^1(\A, \widehat{B})$. Since $\delta(\beta) \in \Sha^2(\widehat{G})$, there exists an $x \in {\rm{H}}^1(\A, \widehat{X})$ such that $\widehat{j}(x) = \beta_{\A}$. 
Note that we are free to modify $x$ by an element of ${\rm{H}}^1(\A, \widehat{G})$. Recall that we have a natural pairing between ${\rm{H}}^1(\A, \widehat{X})$ and ${\rm{H}}^1(k, X)$ defined as usual by cupping everywhere locally and summing the invariants. Via this pairing, $x$ defines an element $\phi_x \in {\rm{H}}^1(k, X)^*$. Consider the subgroup 
$$\pi^{-1}(\Sha^1(G)) : = \{ \gamma \in {\rm{H}}^1(k, X) \mid \pi(\gamma) \in \Sha^1(G)\} \subset {\rm{H}}^1(k, X).$$

We claim that by modifying $x$ by an element of ${\rm{H}}^1(\A, \widehat{G})$ (as we are free to do), we can arrange that $x$ annihilates ${\rm{H}}^1(k, X)$. We have an inclusion 
\[
\frac{{\rm{H}}^1(k, X)}{\pi^{-1}(\Sha^1(G))} \hookrightarrow \frac{{\rm{H}}^1(k, G)}{\Sha^1(G)}, 
\]
so since $\phi_x$ annihilates $\pi^{-1}(\Sha^1(G))$ by Lemma \ref{xannihilatesShapreimage} below, we see that $\phi_x$ lifts to an element of $({\rm{H}}^1(k, G)/\Sha^1(G))^*$. By Corollary \ref{exactnessatH^1(G^)*finitecor}, it follows that $\phi_x$ agrees with the homomorphism induced by some element of ${\rm{H}}^1(\A, \widehat{G})$. Modifying $x$ by this element, which has no effect on its image $\beta_{\A} \in {\rm{H}}^1(\A, \widehat{B})$, we may therefore assume that 
$\phi_x = 0$, as claimed.

It then follows from Proposition \ref{exactnessatH^1(A,G^)} that $x$ lifts to an element $\xi \in {\rm{H}}^1(k, \widehat{X})$. Replacing $\beta$ 
with $\beta - \widehat{j}(\xi)$ (as we are free to do, since, again, this has no effect on its image $\alpha \in {\rm{H}}^2(k, \widehat{G})$), we may arrange that $\widehat{j}(x) = 0$.  But
$\beta_{\A} = \widehat{j}(x)$, so $\beta \in \Sha^1(\widehat{B})$. Since $\Sha^2(B) = 0$ by design, 
the group $\Sha^1(\widehat{B})$ vanishes by Corollary \ref{shapairingfinite}. Hence $\alpha = \delta(\beta) = 0$, and Lemma \ref{Sha^2(G^)-->Sha^1(G)*injectsalmosttori} is proved.
\end{proof}

\begin{lemma}
\label{xannihilatesShapreimage}
The element $x$ annihilates $\pi^{-1}(\Sha^1(G))$ under the adelic evaluation pairing discussed above.
\end{lemma}

\begin{proof}
Choose an arbitrary $y \in \pi^{-1}(\Sha^1(G))$. The key is to show that 
\begin{equation}
\label{xannihilatesShapreimageproof1}
\langle x, y \rangle = - \langle \alpha, \pi(y) \rangle_{\Sha},
\end{equation}
where the left side is the adelic evaluation pairing. Since $\alpha$ annihilates $\Sha^1(G)$ by assumption, the lemma would then follow immediately.  

The validity of 
(\ref{xannihilatesShapreimageproof1}) is independent of the choice of an $x$ with $\widehat{j}(x) = \beta_{\A}$ because ${\rm{H}}^1(\A, \widehat{G})$ annihilates $\Sha^1(G)$ under the adelic evaluation pairing. Thus, later in the calculation we may make whatever
choice of $x$ is convenient. We may also make whichever choice of $\beta$ satisfying $\delta(\beta) = \alpha$ that is convenient, since modifying $\beta$ by an element of ${\rm{H}}^1(k, \widehat{X})$ changes $x$ by the same element (more precisely, allows us to so modify $x$, since the choice of $x$ is not unique), and this has no effect on the adelic evaluation pairing with $y$, since the sum of the invariants of a global Brauer class is $0$.

Denote by $\check{\beta}$ a cocycle in $\check{Z}^1(k, \widehat{B})$ representing 
a choice for the cohomology class $\beta$ mapping to $\alpha$ under the connecting map. 
We want to describe the connecting map in
terms of \v{C}ech cohomology:  choose an $m \in \check{C}^1(k, \widehat{X})$ lifting $\check{\beta}$ through $\widehat{j}$, if such an $m$ exists, 
and then $dm$ lifts to an element of $\check{Z}^2(k, \widehat{G})$ whose cohomology class is $\delta(\beta)$. 
Unfortunately, there is no reason that a general choice of representative
$\check{\beta}$ should lift to $\check{C}^1(k, \widehat{X})$. This is the usual deficiency
of \v{C}ech theory for Grothendieck topologies (and even usual topologies with general abelian sheaves):  the sequence of \v{C}ech complexes associated to a short exact sequence of abelian sheaves need not be short exact. Nevertheless, we can
make the idea work as follows. Choose a 2-cocycle $\check{\alpha}$ representing $\alpha$. Then $\check{\alpha}$ satisfies
$\widehat{\pi}(\check{\alpha}) = dm$ for some $m \in \check{C}^1(k, \widehat{X})$ since the
cohomology class $\alpha$ has vanishing image in ${\rm{H}}^2(k, \widehat{X}) \subset {\rm{H}}^2(k, \widehat{X})$. Then we may choose $\check{\beta}$ to be the 
1-cocycle $\widehat{j}(m)$ (and modify $x$ accordingly to map to $\beta_{\A}$) since we have seen that for our purposes it does not matter which $\beta$ and $x$ we choose
(for the given $\alpha$). The connecting map sends the cocycle $\check{\beta}$ to $\check{\alpha}$ by Proposition \ref{cechderivedconnectingmap}, so 
$\check{\beta}$ represents a class $\beta$ satisfying $\delta(\beta) = \alpha$. Since $x \mapsto \beta_{\A}$ as cohomology classes, we have $\widehat{j}(x_v) = \check{\beta} + de_v$ for some $e_v \in \check{C}^0(k_v, \widehat{B})$.

Choose $x_v \in \check{Z}^1(k_v, \widehat{X})$ representing the localization of $x$
at a place $v$, and choose a cocycle $\check{y} \in \check{Z}^1(k, X)$ representing $y$. The left side of (\ref{xannihilatesShapreimageproof1}) is, by definition, $\sum_v \inv_v(x_v \cup \check{y})$. To evaluate the right side, note that since $\pi(y) \in \Sha^1(G)$, for each place $v$ of $k$ we have $\pi(\check{y}) = ds_v$ for some $s_v \in \check{C}^0(k_v, G)$. Choose a $c_v \in \check{C}^0(k_v, X)$ such that $\pi(c_v) = s_v$. Then $\pi(\check{y}) = d\pi(c_v) = \pi d(c_v)$, so $\check{y}_v = j(b_v) + dc_v$ in $\check{Z}^1(k_v, X)$ for some $b_v \in \check{Z}^1(k_v, B)$ and $c_v \in \check{C}^0(k_v, X)$. We may find $h \in \check{C}^2(k, \Gm)$ such that 
$\check{\alpha} \cup \pi(\check{y}) = dh$ since $\check{{\rm{H}}}^3(k, \Gm) = 0$ by Lemma \ref{checkh3=0}. The right side of (\ref{xannihilatesShapreimageproof1}) is, by the definition of $\langle \cdot, \cdot \rangle_{\Sha}$ given in \S \ref{sectiondefiningshapairings}, 
$-\sum_v \inv_v((\check{\alpha} \cup \pi(c_v)) - h)$.
Thus, what we need to check is: 
\begin{equation}
\label{xannihilatesShapreimageproof2}
\sum_v \inv_v(x_v \cup \check{y}) = \sum_v \inv_v(-(\check{\alpha} \cup \pi(c_v)) + h).
\end{equation}

Let us compute the left side of (\ref{xannihilatesShapreimageproof2}). We have $$x_v \cup \check{y} = x_v \cup j(b_v) + x_v \cup dc_v = \widehat{j}(x_v) \cup b_v + dx_v \cup c_v - d(x_v \cup c_v) = \widehat{j}(x_v) \cup b_v - d(x_v \cup c_v)$$ since $dx_v = 0$. As a cohomology class, therefore, this equals $$\widehat{j}(x_v) \cup b_v = (\check{\beta} + de_v) \cup b_v = (\check{\beta} \cup b_v) - (e_v \cup db_v) + d(e_v \cup b_v) = (\check{\beta} \cup b_v) + d(e_v \cup b_v)$$ since $db_v = 0$. As cohomology classes, therefore, this equals $\check{\beta} \cup b_v$. Thus, the left side of (\ref{xannihilatesShapreimageproof2}) is $$\sum_v \inv_v(\check{\beta} \cup b_v).$$ 

Now we compute the right side of (\ref{xannihilatesShapreimageproof2}). 
We have $$\check{\alpha} \cup \pi(c_v) = \widehat{\pi}(\check{\alpha}) \cup c_v = dm \cup c_v = m \cup dc_v + d(m \cup c_v),$$
which is cohomologous to $$m \cup dc_v = (m \cup \check{y}) - (m \cup j(b_v)) = (m \cup \check{y}) - (\widehat{j}(m) \cup b_v) = 
(m \cup \check{y}) - (\check{\beta} \cup b_v).$$ The right side of (\ref{xannihilatesShapreimageproof2}) is therefore $\sum_v \inv_v(-(m \cup \check{y}) + (\check{\beta} \cup b_v) + h)$. Thus, (\ref{xannihilatesShapreimageproof2}) is equivalent to the equality
\[
\sum_v \inv_v(h - (m \cup \check{y})) = 0
\]
(where the expression inside the sum is a cocycle due to the preceding calculation showing it is cohomologous to a cocycle), which holds because the sum of the local invariants of a global Brauer class is $0$.
\end{proof}

\begin{proposition}
\label{Sha^2--->Sha^1injective}
If $G$ is an affine commutative group scheme of finite type over a global function field $k$, then the map $\Sha^2(\widehat{G}) \rightarrow \Sha^1(G)^*$ induced by $\langle \cdot, \cdot \rangle_{\Sha}$ is injective.
\end{proposition}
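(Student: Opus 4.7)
I will prove this by induction on the dimension of the unipotent radical of $(G_{\overline{k}})^0_{\red}$, with the base case (dimension zero, when $G$ is an almost-torus) supplied by Lemma \ref{Sha^2(G^)-->Sha^1(G)*injectsalmosttori}. For the inductive step, Lemma \ref{affinegroupstructurethm} and the filtration of a split unipotent group by $\Ga$'s let me reduce to the situation of a short exact sequence
\[
1 \longrightarrow G' \xlongrightarrow{\iota} G \xlongrightarrow{\pi} \Ga \longrightarrow 1
\]
in which the result is already known for $G'$ (whose unipotent radical has dimension one less than that of $G$). My strategy closely parallels the proof of Lemma \ref{Sha^2(G^)-->Sha^1(G)*injectsalmosttori}, with the dual sequence $1 \to \widehat{\Ga} \to \widehat{G} \to \widehat{G'} \to 1$ (exact by Proposition \ref{hatisexact}) playing the role of the dual of the quasi-trivial cover there.

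Let $\alpha \in \Sha^2(\widehat{G})$ annihilate $\Sha^1(G)$ under $\langle\cdot,\cdot\rangle_{\Sha^1_G}$. Functoriality of the pairing together with the inductive hypothesis applied to $G'$ forces $\widehat{\iota}_*(\alpha) \in \Sha^2(\widehat{G'})$ to annihilate $\Sha^1(G')$ and hence to vanish. Since $\Sha^2(\widehat{G'}) \hookrightarrow {\rm{H}}^2(k,\widehat{G'})$, the class $\alpha$ dies in ${\rm{H}}^2(k,\widehat{G'})$, so by exactness of the long exact cohomology sequence we have $\alpha = j_*(\beta)$ for some $\beta \in {\rm{H}}^2(k,\widehat{\Ga})$, unique modulo the image of the connecting map $\partial:{\rm{H}}^1(k,\widehat{G'}) \to {\rm{H}}^2(k,\widehat{\Ga})$. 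Because $\alpha_{\A} = 0$, the adelic class $\beta_{\A}$ satisfies $\beta_{\A} = \partial_{\A}(\gamma)$ for some $\gamma \in {\rm{H}}^1(\A,\widehat{G'})$, and this $\gamma$ is determined modulo the image of $\widehat{\iota}_{*,\A}:{\rm{H}}^1(\A,\widehat{G}) \to {\rm{H}}^1(\A,\widehat{G'})$.

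The heart of the argument, entirely analogous to Lemma \ref{xannihilatesShapreimage}, is a \v{C}ech-cocycle computation (valid by Propositions \ref{cech=derivedG} and \ref{cech=derivedG^}) establishing the identity
\[
\langle \gamma, y \rangle \;=\; -\,\langle \alpha,\iota_*(y)\rangle_{\Sha^1_G}
\]
for every $y \in \iota^{-1}(\Sha^1(G)) := \{y \in {\rm{H}}^1(k,G') \mid \iota_*(y) \in \Sha^1(G)\}$. The cocycle manipulations proceed by choosing compatible lifts of $\beta$ through $\widehat{G} \to \widehat{G'}$ at the cochain level, expressing $y$ modulo the image of the connecting map for $1 \to G' \to G \to \Ga \to 1$ at each place, and identifying the difference of the two sides with a sum of local invariants of a single global Brauer class (which then vanishes). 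Granting this identity, the hypothesis that $\alpha$ annihilates $\Sha^1(G)$ implies that $\gamma$ annihilates $\iota^{-1}(\Sha^1(G))$, so the induced functional $\phi_\gamma$ on ${\rm{H}}^1(k,G')$ descends to the quotient ${\rm{H}}^1(k,G')/\iota^{-1}(\Sha^1(G))$, which injects into ${\rm{H}}^1(k,G)/\Sha^1(G)$.

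By Proposition \ref{exactnessatH^1(G^)*finite} applied to $G$, the dualized functional ${\rm{H}}^1(k,G)/\Sha^1(G) \to \Q/\Z$ is induced by some $g \in {\rm{H}}^1(\A,\widehat{G})$. Modifying $\gamma$ by $\widehat{\iota}_{*,\A}(g)$ (which does not disturb $\partial_{\A}(\gamma) = \beta_{\A}$) makes $\gamma$ annihilate all of ${\rm{H}}^1(k,G')$, so by Proposition \ref{exactnessatH^1(A,G^)} there exists $\xi \in {\rm{H}}^1(k,\widehat{G'})$ with $\xi_{\A} = \gamma$. Replacing $\beta$ by $\beta - \partial(\xi)$ (which preserves $\alpha = j_*(\beta)$) yields $\beta_{\A} = 0$, so $\beta \in \Sha^2(\widehat{\Ga})$; this last group vanishes because Proposition \ref{H^i(A,G^)--->prodH^i(k_v,G^)} embeds $\Sha^2(\widehat{\Ga})$ into $\prod_v \ker({\rm{H}}^2(k,\widehat{\Ga}) \to {\rm{H}}^2(k_v,\widehat{\Ga}))$, and each such kernel is trivial by Lemma \ref{H^2(k,Ga^)injectiveseparable}. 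Therefore $\beta = 0$ and hence $\alpha = j_*(\beta) = 0$. The main obstacle is the cocycle-level verification of the displayed identity: keeping track of representative \v{C}ech cochains through both $j_*$ and $\partial$ simultaneously, and arranging compatible lifts so that the difference collapses to a single global Brauer class, is the delicate step, but it follows the same pattern of computations already used in Lemmas \ref{gammakillsShapreimage} and \ref{xannihilatesShapreimage}.
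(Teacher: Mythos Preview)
Your proposal is correct and follows essentially the same approach as the paper's proof: the paper carries out the same induction on the dimension of the unipotent radical, uses the same short exact sequence with $\Ga$-quotient, and proves the same key cocycle identity (stated there as Lemma \ref{wannihilatesShapreimage}, with the opposite sign convention but to the same effect) before invoking Propositions \ref{exactnessatH^1(G^)*finite} and \ref{exactnessatH^1(A,G^)} and the vanishing of $\Sha^2(\widehat{\Ga})$ via Lemma \ref{H^2(k,Ga^)injectiveseparable}.
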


\begin{proof}
We proceed by induction on the dimension of the unipotent radical of $(G_{\overline{k}})^0_{\red}$, the $0$-dimensional case corresponding to when $G$ is an almost-torus, and the proposition is Lemma \ref{Sha^2(G^)-->Sha^1(G)*injectsalmosttori}. So suppose that the unipotent radical of $(G_{\overline{k}})^0_{\red}$ is positive-dimensional. By Lemma \ref{affinegroupstructurethm} and induction, we may assume that there is an exact sequence
\begin{equation}
\label{Sha^2--->Sha^1*injectsproofseq1}
1 \longrightarrow H \xlongrightarrow{j} G \xlongrightarrow{\pi} \Ga \longrightarrow 1
\end{equation}
such that the proposition holds for $H$. This yields by Proposition \ref{hatisexact} the dual exact sequence
\[
1 \longrightarrow \widehat{\Ga} \xlongrightarrow{\widehat{\pi}} \widehat{G} \xlongrightarrow{\widehat{j}} \widehat{H} \longrightarrow 1.
\]
Suppose that $\alpha \in \Sha^2(\widehat{G})$ annihilates $\Sha^1(G)$. By functoriality, $\widehat{j}(\alpha) \in \Sha^2(\widehat{H})$ annihilates $\Sha^1(H)$, so by hypothesis, we have $\widehat{j}(\alpha) = 0$. We therefore have $\alpha = \widehat{\pi}(u)$ for some $u \in {\rm{H}}^2(k, \widehat{\Ga})$. Let $u_{\A}$ denote the image of $u$ in ${\rm{H}}^2(\A, \widehat{\Ga})$. Since $\widehat{\pi}(u) = \alpha \in \Sha^2(\widehat{G})$, 
we see that $u_{\A} = \delta'(w)$ for some $w \in {\rm{H}}^1(\A, \widehat{H})$, where $\delta':  {\rm{H}}^1(\A, \widehat{H}) \rightarrow {\rm{H}}^2(\A, \widehat{\Ga})$ is the connecting map. Via the adelic pairing for $H$,
$w$ defines an element $\phi_w \in {\rm{H}}^1(k, H)^*$.

We claim that by modifying $w$ by an element of ${\rm{H}}^1(\A, \widehat{G})$ (as we are free to do, since this does not affect its image $u_{\A} \in {\rm{H}}^2(\A, \widehat{\Ga})$), we can ensure that it annihilates ${\rm{H}}^1(k, H)$. We have an inclusion
\[
\frac{{\rm{H}}^1(k, H)}{j^{-1}(\Sha^1(G))} \hookrightarrow \frac{{\rm{H}}^1(k, G)}{\Sha^1(G)}.
\]
By Lemma \ref{wannihilatesShapreimage} below, $\phi_w$ descends to an element of 
$({\rm{H}}^1(k, H)/j^{-1}(\Sha^1(G)))^*$, hence extends to an element $\psi \in ({\rm{H}}^1(k, G)/\Sha^1(G))^*$. By Corollary \ref{exactnessatH^1(G^)*finitecor}, $\psi$ is induced by some element of ${\rm{H}}^1(\A, \widehat{G})$. Modifying $w$ by this element, we obtain that $w$ kills ${\rm{H}}^1(k, H)$, as desired.

It follows from Proposition \ref{exactnessatH^1(A,G^)} that $w$ lifts to some $h \in {\rm{H}}^1(k, \widehat{H})$, so, by replacing $u$ with
$u - \delta'(h)$ (which has no effect on its image $\alpha \in {\rm{H}}^2(k, \widehat{G})$), we may assume that $u \in \Sha^2(\widehat{\Ga})$. But 
$\Sha^2(\widehat{\Ga}) = 0$ by Lemma \ref{H^2(k,Ga^)injectiveseparable} (applied to the extension $k_v/k$ for any place $v$ of $k$), so $\alpha = \widehat{\pi}(u) = 0$, as desired. The proof of Proposition \ref{Sha^2--->Sha^1injective} is complete.
\end{proof}

\begin{lemma}
\label{wannihilatesShapreimage}
The element $w \in {\rm{H}}^1(\A, \widehat{H})$ annihilates $j^{-1}(\Sha^1(G)) \subset {\rm{H}}^1(k,H)$. 
\end{lemma}

\begin{proof}
Let $x \in j^{-1}(\Sha^1(G))$. We will show that
\begin{equation}
\label{wannihilatesShapreimageeqn1}
\langle w, x_{\A} \rangle = \langle \alpha, j(x) \rangle_{\Sha},
\end{equation}
where the left side is the adelic pairing for $H$. 
Since $\alpha$ annihilates $\Sha^1(G)$ by assumption, this will prove the lemma.

Let us first compute the left side of (\ref{wannihilatesShapreimageeqn1}). Let $\delta \colon  \Ga(\A) \rightarrow {\rm{H}}^1(\A, H)$ denote the connecting map. Since $j(x) \in \Sha^1(G)$, for each place $v$ we have $x_v = \delta(y_v)$ for some $y_v \in \Ga(k_v)$. We abuse notation and refer to $w, y_v$ as cocycles (fixed hereafter) representing the cohomology classes that we have been calling $w, y_v$. Let us recall how $\delta(y_v)$ is defined. We may choose a $z_v \in \check{C}^0(k_v, G)$ such that $\pi(z_v) = y_v$. Then $\delta(y_v)$ is represented by the cocycle in $\check{Z}^1(k_v, H)$ (which we still denote by $\delta(y_v)$) such that $j(\delta(y_v)) = dz_v$, so the left side of (\ref{wannihilatesShapreimageeqn1}) is 
\[
\sum_v \inv_v(w_v \cup \delta y_v).
\]

Now we compute the right side of (\ref{wannihilatesShapreimageeqn1}). 
Let $\alpha, u$, and so on denote representative cocycles for cohomology classes denoted above by the same
notation (here we are using Propositions \ref{cech=derivedsmoothinf} and \ref{cech=derivedG^}). By modifying the cocycle $\alpha$, we may assume that $\alpha = \widehat{\pi}(u)$ as cocycles rather than merely as cohomology classes. We have $x_v = \delta y_v + de_v$ for some $e_v \in \check{C}^0(k_v, H)$, 
so $j(x) = j(\delta y_v) + dj(e_v) = dz_v + dj(e_v)$. Also, $\alpha \cup j(x) = \widehat{j}(\alpha) \cup x = (\widehat{j} \circ \widehat{\pi}(u)) \cup x = 0$ as cocycles, so the right side of (\ref{wannihilatesShapreimageeqn1}) is, by definition,
\[
\sum_v \inv_v((\alpha \cup z_v) + (\alpha \cup j(e_v))).
\]
But $\alpha = \widehat{\pi}(u)$, so $$(\alpha \cup z_v) + (\alpha \cup j(e_v)) = (u \cup \pi(z_v)) + (u \cup \pi \circ j(e_v)) = u \cup \pi(z_v) = u \cup y_v.$$ 

We also have $u_v = \delta' w_v$ as cohomology classes, so, since $y_v$ is also a cocycle, the right side of (\ref{wannihilatesShapreimageeqn1}) equals 
\[
\sum_v \inv_v(\delta' w_v \cup y_v).
\]
By Proposition \ref{cechderivedconnectingmap} (perhaps after modifying the cocycle -- but not the cohomology class -- $w_v$ so that it lifts to an element of $\check{C}^1(k_v, \widehat{G})$; see the proof of Lemma \ref{xannihilatesShapreimage}), 
$\delta' w_v$ is (represented by) a cocycle such that $\widehat{\pi}(\delta' w_v) = dm_v$ for some $m_v \in \check{C}^1(k_v, \widehat{G})$ satisfying $\widehat{j}(m_v) = w_v$ as cohomology classes. Since we only care about the cohomology class of $w_v$, we may therefore modify the cocycle and assume that this holds as an equality of cocycles. So $$\delta' w_v \cup y_v = \delta' w_v \cup \pi(z_v) = \widehat{\pi}(\delta' w_v) \cup z_v = dm_v \cup z_v = m_v \cup dz_v + d(m_v \cup z_v),$$ which is cohomologous to $$m_v \cup dz_v = m_v \cup j(\delta y_v) = \widehat{j}(m_v) \cup \delta y_v = w_v \cup \delta y_v.$$ Thus, the right side of (\ref{wannihilatesShapreimageeqn1}) equals
\[
\sum_v \inv_v(w_v \cup \delta y_v), 
\]
which is the same as the left side.
\end{proof}

\section{Injectivity of $\Sha^1(\widehat{G}) \rightarrow \Sha^2(G)^*$ and finiteness of $\Sha$}

In this section we will complete the proof of Theorem \ref{shapairing} (Theorem \ref{Shapairingprop}). The main remaining issue is to show that if $G$ is an affine commutative group scheme of finite type over a global function field $k$, then the map $\Sha^1(\widehat{G}) \rightarrow \Sha^2(G)^*$ induced by the pairing $\langle \cdot, \cdot \rangle_{\Sha}$ is injective. This, together with the finiteness of these two $\Sha$ groups, is the content of the following proposition, whose proof will be the main work of this section.

\begin{proposition}
\label{Sha^1(G^)-->Sha^2(G)*}
Let $k$ be a global function field, $G$ an affine commutative $k$-group scheme of finite type. Then the map $\Sha^1(\widehat{G}) \rightarrow \Sha^2(G)^*$ induced by $\langle \cdot, \cdot \rangle_{\Sha}$ is injective, and both groups are finite.
\end{proposition}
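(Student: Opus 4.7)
The strategy is to prove both the finiteness of $\Sha^1(k,\widehat{G})$ and $\Sha^2(k,G)$, and the injectivity of the pairing map, by induction on the dimension of the unipotent radical of $(G_{\overline{k}})^0_{\red}$. The base case will be almost-tori (handled via Lemma \ref{almosttorus}(iv)), and the inductive step will exploit Lemma \ref{affinegroupstructurethm} to pull off a copy of $\Ga$ at a time. Once injectivity is established, it combines with Proposition \ref{Sha^2(G)-->Sha^1(G^)*finite} and finiteness to force both maps to be perfect pairings, completing Theorem \ref{shapairing}.

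The inductive step is clean. Given an exact sequence $1 \to H \xrightarrow{j} G \xrightarrow{\pi} \Ga \to 1$ for which the result is known for $H$, the dual sequence $1 \to \widehat{\Ga} \to \widehat{G} \xrightarrow{\widehat{j}} \widehat{H} \to 1$ (Proposition \ref{hatisexact}) together with the vanishings ${\rm{H}}^1(k,\widehat{\Ga})=0$ (Proposition \ref{cohomologyofG_adualgeneralk}), ${\rm{H}}^1(k,\Ga)={\rm{H}}^2(k,\Ga)=0$ (Proposition \ref{unipotentcohomology}(ii)), and the corresponding local vanishings, yield an isomorphism $j_*\colon \Sha^2(k,H) \xrightarrow{\sim} \Sha^2(k,G)$ and an injection $\widehat{j}_*\colon \Sha^1(k,\widehat{G}) \hookrightarrow \Sha^1(k,\widehat{H})$. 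Finiteness then follows immediately. For injectivity, given $\beta \in \Sha^1(k,\widehat{G})$ annihilating $\Sha^2(k,G)$, functoriality of cup products gives $\langle \gamma, \widehat{j}_*(\beta)\rangle_{\Sha^2_H} = \langle j_*(\gamma),\beta\rangle_{\Sha^2_G} = 0$ for every $\gamma \in \Sha^2(k,H)$; the inductive hypothesis forces $\widehat{j}_*(\beta)=0$, so $\beta=0$ by injectivity of $\widehat{j}_*$.

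The base case, in which $G$ is an almost-torus, is the main technical obstacle. Here I would use Lemma \ref{almosttorus}(iv) to replace $G$ by the harmlessly-modified $G^n \times \R_{k_2/k}(T_2)$ and obtain an exact sequence $1 \to B \to X \to G \to 1$ with $B$ a finite commutative $k$-group scheme and $X = A \times \R_{k_1/k}(T_1)$ a product of a finite group scheme and a separable Weil restriction of a split torus. For $X$, the proposition holds: its finite factor is covered by Corollary \ref{shapairingfinite}, and for the Weil restriction factor we have $\Sha^2(k,\R_{k'/k}(\Gm)) = \Sha^2(k',\Gm) = 0$ (class field theory, using exactness of finite separable pushforward for étale sheaves) and $\Sha^1(k,\widehat{\R_{k'/k}(\Gm)}) = \Sha^1(k',\Z) = 0$ (since continuous homomorphisms from a profinite group to $\Z$ are trivial). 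The transfer from $X$ to $G$ requires a careful diagram chase using the long exact sequences for $1\to B\to X\to G\to 1$ and its dual $1 \to \widehat{G} \to \widehat{X} \to \widehat{B} \to 1$, in the style of Lemma \ref{Sha^2(G^)-->Sha^1(G)*injectsalmosttori}. The delicate point is controlling the kernel of $\Sha^1(\widehat{G}) \to \Sha^1(\widehat{X})$, which comes from $\widehat{B}(k)$; one likely enlarges $B$ inside a larger finite $B'$ with $\Sha^2(B')=0$ via Lemma \ref{subgroupSha^2=0} (and pushes out the extension) to bring the finite-case duality into play, then performs cocycle-level manipulations like those in Lemmas \ref{gammakillsShapreimage} and \ref{xannihilatesShapreimage} to verify the required compatibility of pairings.

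The hard part will be executing the cocycle bookkeeping in this almost-torus base case while simultaneously controlling the adelic pairings — precisely the difficulty already encountered in Lemma \ref{Sha^2(G^)-->Sha^1(G)*injectsalmosttori} and Lemma \ref{liftingtoSha^2(G')}, but now with $G$ and $\widehat{G}$ swapped in the argument so that the non-representability of $\widehat{G}$ must be accommodated by working with the \v{C}ech-cohomology descriptions supplied by Propositions \ref{cech=derivedG} and \ref{cech=derivedG^}. Once this is achieved, the combination of the two injectivity assertions for the $\Sha^2_G$-pairing with the finiteness of both participating groups forces $\Sha^1(k,\widehat{G})$ and $\Sha^2(k,G)$ to have equal cardinality and both maps to be isomorphisms; together with Proposition \ref{Sha^1--->Sha^2*injective} and Proposition \ref{Sha^2--->Sha^1injective} this yields the perfection of both $\Sha$-pairings in Theorem \ref{shapairing}.
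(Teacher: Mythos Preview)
Your reduction to almost-tori is essentially the paper's: it invokes Lemma~\ref{Sha=Sha(almosttorus)} (proved by filtering $U$ by $\Ga$'s, exactly as you propose to induct) together with functoriality of the pairing to pass from $G$ to an almost-torus $H$.

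For the almost-torus base case, however, the paper takes a different route from your sketch. You correctly identify that one must control $\ker(\Sha^1(\widehat{G}) \to \Sha^1(\widehat{X}))$, which arises from $\widehat{B}(k)$ via the connecting map for the dual sequence $1 \to \widehat{G} \to \widehat{X} \to \widehat{B} \to 1$. But the paper does \emph{not} enlarge $B$ via Lemma~\ref{subgroupSha^2=0}; instead it introduces the cokernel functor $\Che^2(H) := \coker({\rm{H}}^2(k,H) \to {\rm{H}}^2(\A,H))$ and uses the fundamental exact sequence (Proposition~\ref{funsequence}) to identify $\Che^2(H) \simeq \widehat{H}(k)^*$ functorially. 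This yields explicit isomorphisms
\[
\phi:\ \frac{\ker(\widehat{B}(k)^* \to \widehat{X}(k)^*)}{\delta(\Che^1(G))} \xrightarrow{\sim} \frac{\Sha^2(G)}{\pi(\Sha^2(X))}, \qquad
\psi:\ \frac{\{\chi \in \widehat{B}(k)\mid \chi_{\A} \in \widehat{j}(\widehat{X}(\A))\}}{\widehat{j}(\widehat{X}(k))} \xrightarrow{\sim} \ker\bigl(\Sha^1(\widehat{G}) \to \Sha^1(\widehat{X})\bigr),
\]
and the second of these already gives the finiteness since $\widehat{B}(k)$ is finite. A single cocycle computation (Lemma~\ref{shapairingcompatibility1}) then shows that the $\Sha$-pairing transported through $\phi$ and $\psi$ agrees up to sign with the tautological evaluation pairing $\widehat{B}(k)^* \times \widehat{B}(k) \to \Q/\Z$, and the proof concludes by the elementary fact that the annihilator of $\ker(\widehat{B}(k)^* \to \widehat{X}(k)^*)$ inside the finite group $\widehat{B}(k)$ is exactly $\widehat{j}(\widehat{X}(k))$.

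Your proposed $B'$-enlargement, which was the key device in Lemma~\ref{Sha^2(G^)-->Sha^1(G)*injectsalmosttori} to force $\Sha^1(\widehat{B'})=0$, does not obviously help here: the element one must kill is a class $\chi \in \widehat{B}(k)$ (degree~$0$) rather than a $\Sha^1$-class, so arranging $\Sha^1(\widehat{B'})=0$ is not the relevant condition. The $\Che^2$-approach bypasses this by reducing everything to the evaluation pairing on the finite group $\widehat{B}(k)$, where the required injectivity is tautological.
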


\begin{proof}
Lemmas \ref{Sha=Sha(almosttorus)} and \ref{affinegroupstructurethm}, together with the functoriality of $\langle \cdot, \cdot \rangle_{\Sha}$, reduce us to the case in which $G$ is an almost-torus, so we will from now on assume that we are in this setting. By Lemma \ref{almosttorus}(iv), we may modify $G$ in order to assume that we have an exact sequence
\[
1 \longrightarrow B \xlongrightarrow{j}\R_{k'/k}(T') \times A \xlongrightarrow{\pi} G \longrightarrow 1
\]
for some finite commutative $k$-groups $A, B$, some finite separable extension $k'/k$, and some split $k'$-torus $T'$. For notational convenience, let $X : =\R_{k'/k}(T') \times A$, so that we have an exact sequence
\begin{equation}
\label{shapairingsequence3}
1 \longrightarrow B \xlongrightarrow{j} X \xlongrightarrow{\pi} G \longrightarrow 1.
\end{equation}

For an fppf abelian sheaf $\mathscr{F}$ on $\Spec(k)$, define $\Che^i(\mathscr{F}) : = \coker({\rm{H}}^i(k, \mathscr{F}) \rightarrow {\rm{H}}^i(\A, \mathscr{F}))$. We will first define an isomorphism
\[
f:  \frac{\ker(\Che^2(B) \rightarrow \Che^2(X))}{\delta(\Che^1(G))} \xrightarrow{\sim} \frac{\Sha^2(G)}{\pi(\Sha^2(X))},
\]
where $\delta$ denotes the map induced by the connecting map in the long exact cohomology sequence associated to (\ref{shapairingsequence3}). Consider the commutative diagram of exact sequences
\[
\begin{tikzcd}
{\rm{H}}^1(k, G) \arrow{r} \arrow{d} & {\rm{H}}^2(k, B) \arrow{r} \arrow{d} & {\rm{H}}^2(k, X) \arrow{r}{\gamma} \arrow{d}{\beta} & {\rm{H}}^2(k, G) \arrow{r} \arrow{d} & 0 \\
{\rm{H}}^1(\mathbf{A}, G) \arrow{r} & {\rm{H}}^2(\mathbf{A}, B) \arrow{r}{\alpha} & {\rm{H}}^2(\mathbf{A}, X) \arrow{r} & {\rm{H}}^2(\mathbf{A}, G) &
\end{tikzcd}
\]
(with the $0$ in the top row coming from Proposition \ref{cohomologicalvanishing}). For 
$x \in \ker(\Che^2(B) \rightarrow \Che^2(X))/\delta(\Che^1(G))$, choose $y \in {\rm{H}}^2(\mathbf{A}, B)$ representing $x$. Since $x \in \ker(\Che^2(B) \rightarrow \Che^2(X))$, there exists $z \in {\rm{H}}^2(k, X)$ such that $\beta(z) = \alpha(y)$. We define $f(x)$ to be the class of $\gamma(z)$ in $\Sha^2(G)/\pi(\Sha^2(X))$. It is straightforward to see from the above diagram that $f$ is a well-defined isomorphism.

Via Proposition \ref{funsequence}, for any affine commutative $k$-group scheme $H$ of finite type we have a functorial isomorphism \textcyr{Ch}$^2(H) \simeq \widehat{H}(k)^*$ by forming cup products over every $k_v$ and adding the local invariants. We therefore have an isomorphism
\begin{equation}
\label{shapairingisomorphism1}
\phi:  \frac{\ker(\widehat{B}(k)^* \rightarrow \widehat{X}(k)^*)}{\delta(\Che^1(G))} \xrightarrow{\sim} \frac{\Sha^2(G)}{\pi(\Sha^2(X))}
\end{equation}

The exact sequence
\[
1 \longrightarrow \widehat{G} \xlongrightarrow{\widehat{\pi}} \widehat{X} \xlongrightarrow{\widehat{j}} \widehat{B} \longrightarrow 1
\]
coming from Proposition \ref{hatisexact} gives rise to a commutative diagram with exact rows
\[
\begin{tikzcd}
\widehat{X}(k) \arrow{r}{\widehat{j}} \arrow{d} & \widehat{B}(k) \arrow{r}{\delta'} \arrow{d}{\epsilon} & {\rm{H}}^1(k, \widehat{G}) \arrow{r} \arrow{d} & {\rm{H}}^1(k, \widehat{X}) \arrow{d} \\
\widehat{X}(\mathbf{A}) \arrow{r}{\widehat{j}} & \widehat{B}(\mathbf{A}) \arrow{r} & {\rm{H}}^1(\mathbf{A}, \widehat{G}) \arrow{r} & {\rm{H}}^1(\mathbf{A}, \widehat{X})
\end{tikzcd}
\]
where we have abused notation by denoting the pullback map $\widehat{X} \rightarrow \widehat{B}$ on $k$-points and on $\mathbf{A}$-points (as well as the map on sheaves) as $\widehat{j}$.
Thanks to this diagram, the connecting map $\delta'$ defines an isomorphism
\begin{equation}
\label{shapairingisomorphism2}
\psi:  \frac{\{ \chi \in \widehat{B}(k) \mid \epsilon(\chi) \in \widehat{j}(\widehat{X}(\mathbf{A})) \}}{\widehat{j}(\widehat{X}(k))} \xrightarrow{\sim} \ker(\Sha^1(\widehat{G}) \rightarrow \Sha^1(\widehat{X})).
\end{equation}

The finiteness assertion in Proposition \ref{Sha^1(G^)-->Sha^2(G)*} follows immediately from this isomorphism. Indeed, we first note that $\Sha^1(\widehat{X})$ is finite by Corollary \ref{shapairingfinite} and Lemma \ref{H^1=0Weilrestrictionsplittori}. Since $\widehat{B}(k)$ is finite, therefore, the finiteness of $\Sha^1(\widehat{G})$ follows from the isomorphism (\ref{shapairingisomorphism2}). The finiteness of $\Sha^2(G)$ then follows from that of $\Sha^1(\widehat{G})$ and Corollary \ref{Sha^2(G)-->Sha^1(G^)*finitecor}. It therefore only remains to prove the injectivity assertion of Proposition \ref{Sha^1(G^)-->Sha^2(G)*}, which we concentrate on for the remainder of the proof.

There is an obvious pairing between the groups on the left sides of (\ref{shapairingisomorphism1}) and (\ref{shapairingisomorphism2}), namely the one coming from the natural pairing $\widehat{B}(k) \times \widehat{B}(k)^* \rightarrow \mathbf{Q}/\mathbf{Z}$. It is easy to see that this is well-defined, as follows. By definition, $\ker(\widehat{B}(k)^* \rightarrow \widehat{X}(k)^*)$ kills $\widehat{j}(\widehat{X}(k))$. To see that $\delta(\Che^1(G))$ annihilates $\{ \chi \in \widehat{B}(k) \mid \epsilon(\chi) \in \widehat{j}(\widehat{X}(\mathbf{A})) \}$, recall that the pairing \textcyr{Ch}$^2(B) \times \widehat{B}(k) \rightarrow \mathbf{Q}/\mathbf{Z}$ is defined by applying the cup product and adding the local invariants. Thus, it suffices to show that for $z \in {\rm{H}}^1(\mathbf{A}, G)$ and $\chi \in \widehat{B}(k)$ that everywhere locally comes from $\widehat{X}(k_v)$, $\delta(z)_v \cup \chi_v = 0$ in ${\rm{H}}^2(k_v, \mathbf{G}_m)$ for all places $v$. More specifically, it suffices to check that for any place $v$ of $k$, any $z \in {\rm{H}}^1(k_v, G)$, and any $\chi' \in \widehat{X}(k_v)$, the cup product $\delta(z) \cup \widehat{j}(\chi')$ vanishes. But by the functoriality of cup product, the left side equals ${\rm{H}}^1(j)(\delta(z)) \cup \chi'$, which is $0$ because 
${\rm{H}}^2(j) \circ \delta = 0$.

Now we may prove Proposition \ref{Sha^1(G^)-->Sha^2(G)*} for the almost-torus $G$, which, as we have seen, suffices to complete the proof in general. We know by Corollary \ref{shapairingfinite} and Lemma \ref{H^1=0Weilrestrictionsplittori} that Proposition \ref{Sha^1(G^)-->Sha^2(G)*} holds for $X$. Given $\alpha \in \Sha^1(\widehat{G})$ that annihilates $\Sha^2(G)$, therefore, the functoriality of $\Sha$ implies that $\alpha \in \ker(\Sha^1(\widehat{G}) \rightarrow \Sha^1(\widehat{X}))$. It therefore suffices to show that the map $\ker(\Sha^1(\widehat{G}) \rightarrow \Sha^1(\widehat{X})) \rightarrow (\Sha^2(G)/\pi(\Sha^2(X)))^*$ is injective. By Lemma \ref{shapairingcompatibility1}, it suffices to show that for any $\chi \in \widehat{B}(k)$ that annihilates $\ker(\widehat{B}(k)^* \rightarrow \widehat{X}(k)^*)$, we have $\chi \in \widehat{j}(\widehat{X}(k))$. Since the group $\widehat{B}(k)$ is finite, this is a piece of elementary group theory: it is a special case of the statement that, for a finite abelian group $A$, and a subgroup $A' \subset A$, if $a \in A$ is annihilated by every element of $A^*$ which annihilates $A'$, then $a \in A'$. The proof of Proposition \ref{Sha^1(G^)-->Sha^2(G)*} is therefore complete.
\end{proof}

\begin{lemma}
\label{shapairingcompatibility1}
The pairings between the left sides and between the right sides of 
$(\ref{shapairingisomorphism1})$ and $(\ref{shapairingisomorphism2})$ differ by a sign. That is, the following diagram commutes: 
\[
\begin{tikzcd}
\ker(\widehat{B}(k)^* \rightarrow \widehat{X}(k)^*)/\delta(\Che^1(G)) \arrow[r, phantom, "\times"] \arrow[d, "\mbox{\rotatebox{90}{$\sim$}}"] \arrow{d}[swap]{\phi} &
 \frac{\left \{ \chi \in \widehat{B}(k) \mid \epsilon(\chi) \in \widehat{j}(\widehat{X}(\mathbf{A})) \right\}}{\widehat{j}(\widehat{X}(k))} \arrow{r} \arrow[d, "\mbox{\rotatebox{90}{$\sim$}}"] \arrow{d}[swap]{\psi} & 
\mathbf{Q}/\mathbf{Z} \arrow[d, "x \mapsto -x"] \\
\Sha^2(G)/\pi(\Sha^2(X)) \arrow[r, phantom, "\times"] &
\ker(\Sha^1(\widehat{G}) \rightarrow \Sha^1(\widehat{X})) \arrow{r} &
\mathbf{Q}/\mathbf{Z}
\end{tikzcd}
\]
where the top pairing is the one induced by the natural pairing $\widehat{B}(k) \times \widehat{B}(k)^* \rightarrow \mathbf{Q}/\mathbf{Z}$, and the bottom pairing is the one induced by $\langle  \cdot, \cdot \rangle_{\Sha}:  \Sha^2(G) \times \Sha^1(\widehat{G}) \rightarrow \mathbf{Q}/\mathbf{Z}$.
\end{lemma}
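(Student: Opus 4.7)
The plan is to unwind both pairings at the level of \v{C}ech cocycles and to verify the desired identity through a direct computation using the compatibility of cup products with the maps $j, \pi, \widehat{j}, \widehat{\pi}$ together with the Leibniz rule for the cup product differential. Throughout, we freely use Propositions \ref{cech=derivedG} and \ref{cech=derivedG^} to identify derived and \v{C}ech cohomology in the relevant degrees, and Lemma \ref{mayusekbar/k} so that all cocycles may be realized over the cover $\Spec(\overline{k}) \rightarrow \Spec(k)$ (or its base change to $\A$).

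First, I would set up compatible cocycle representatives. Choose $y \in {\rm{H}}^2(\A, B)$ representing a class in $\ker(\Che^2(B) \rightarrow \Che^2(X))/\delta(\Che^1(G))$ and $z \in {\rm{H}}^2(k, X)$ with $\beta(z) = \alpha(y)$, so $\phi$ sends this class to $[\pi(z)] \in \Sha^2(G)/\pi(\Sha^2(X))$. Pick \v{C}ech representatives $\tilde y \in \check Z^2(\A, B)$ and $\tilde z \in \check Z^2(k, X)$, and a cochain $\tilde w \in \check C^1(\A, X)$ witnessing the equality of cohomology classes, i.e., $\beta(\tilde z) - j(\tilde y) = d\tilde w$ in $\check C^2(\A, X)$. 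For the other side, write $\chi \in \widehat B(k)$ with $\epsilon(\chi) \in \widehat j(\widehat X(\A))$ and choose a lift $\tilde\chi \in \widehat X(\overline k) = \check C^0(\overline k/k, \widehat X)$ of $\chi$; then by Proposition \ref{cechderivedconnectingmap}, the cocycle $\eta \in \check Z^1(k, \widehat G)$ uniquely determined by $\widehat\pi(\eta) = d\tilde\chi$ represents $\delta'(\chi) = \psi(\chi)$.

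Next I would compute the Sha pairing. By the definition of $\langle \cdot, \cdot\rangle_{\Sha^2_G}$ recalled in \S\ref{sectiondefiningshapairings}, I need a global 2-cochain $h$ with $\pi(\tilde z) \cup \eta = dh$ in $\check C^3(k, \Gm)$, and local 1-cochains $\beta_v \in \check C^1(k_v, G)$ with $d\beta_v = \pi(\tilde z)_v$. For $h$, the compatibility $\pi(\tilde z) \cup \eta = \tilde z \cup \widehat\pi(\eta) = \tilde z \cup d\tilde\chi$ together with the Leibniz rule and $d\tilde z = 0$ gives $\pi(\tilde z) \cup \eta = \pm d(\tilde z \cup \tilde\chi)$, so we may take $h = \pm\tilde z \cup \tilde\chi$. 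For $\beta_v$, applying $\pi$ to $d\tilde w = \beta(\tilde z) - j(\tilde y)$ yields $d\pi(\tilde w) = \pi(\tilde z)_\A$, so we may take $\beta_v := \pi(\tilde w)_v$. Feeding these into the formula for the Sha pairing gives
\[
\langle \pi(z), \delta'(\chi)\rangle_{\Sha^2_G} = \sum_v \inv_v\bigl((\beta_v \cup \eta_v) - h_v\bigr) = \sum_v \inv_v\bigl(\pi(\tilde w)_v \cup \eta_v \mp (\tilde z \cup \tilde\chi)_v\bigr).
\]

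Finally I would reduce the right-hand side to the natural pairing. Using $\pi(\tilde w)_v \cup \eta_v = \tilde w_v \cup \widehat\pi(\eta)_v = \tilde w_v \cup d\tilde\chi_v$ and the Leibniz rule, the summand becomes cohomologous to $\pm d\tilde w_v \cup \tilde\chi_v \mp (\tilde z \cup \tilde\chi)_v = \pm (\tilde z_v - j(\tilde y)_v)\cup \tilde\chi_v \mp (\tilde z \cup \tilde\chi)_v$. The $\tilde z \cup \tilde\chi$ contributions cancel (with the correct convention on the Leibniz sign), leaving $\mp j(\tilde y)_v \cup \tilde\chi_v$. Since $j(\tilde y)_v \cup \tilde\chi_v = \tilde y_v \cup \widehat j(\tilde\chi)_v = \tilde y_v \cup \chi_v$, we conclude
\[
\langle \pi(z), \delta'(\chi)\rangle_{\Sha^2_G} = \mp \sum_v \inv_v(\tilde y_v \cup \chi_v) = \mp \langle y, \chi\rangle,
\]
where the final pairing is the $\Che^2(B) \times \widehat B(k) \to \Q/\Z$ pairing identified (via Proposition \ref{funsequence}) with the natural pairing $\widehat B(k)^* \times \widehat B(k) \to \Q/\Z$. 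The main obstacle is bookkeeping the signs: one must fix a convention for the cup product on \v{C}ech cochains, verify the Leibniz rule in that convention, and check that the three places where a sign enters (the two applications of Leibniz and the $\pm$ built into the definition of the Sha pairing, which can be read off by comparing the two formulas given for it) combine to produce exactly an overall minus sign rather than $+1$. Once this is done, the diagram commutes up to the asserted sign.
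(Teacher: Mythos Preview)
Your proposal is correct and follows essentially the same route as the paper's proof: both unwind the two pairings at the \v{C}ech-cocycle level, push the cup product through the adjunctions $\pi/\widehat{\pi}$ and $j/\widehat{j}$, apply the Leibniz rule once to reduce $\tilde w \cup d\tilde\chi$ to $d\tilde w \cup \tilde\chi$, and then identify the surviving term with $\tilde y \cup \chi$. The one cosmetic difference is that you make the explicit choice $h = \tilde z \cup \tilde\chi$, which kills the residual global term on the nose, whereas the paper leaves $h$ arbitrary and disposes of the leftover $\sum_v \inv_v((\check w \cup \zeta) - h)$ via global reciprocity; your choice is a small but genuine simplification. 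As for the sign you leave open: with the standard convention $d(a\cup b) = da\cup b + (-1)^{|a|}a\cup db$, the two Leibniz applications (at degrees $2$ and $1$) contribute signs $+1$ and $-1$ respectively, and the net effect is exactly the $-1$ asserted in the lemma, so nothing further is needed there.
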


\begin{proof}
To avoid a proliferation of Greek letters, we will simply abuse notation and use $j, \pi$, etc. to denote 
induced maps of group schemes, cohomology groups, \v{C}ech cohomology, etc. Choose a $\chi \in \widehat{B}(k)$ and a $z \in \widehat{B}(k)^*$. Assume that $z|_{\widehat{X}(k)} = 0$
and that for each place $v$ of $k$, the character $\chi$ extends to an element of $\widehat{X}(k_v)$. We want to show that 
\begin{equation}
\label{shapairingequation12}
- \langle z, \chi \rangle = \langle \phi(z), \psi(\chi) \rangle_{\Sha}.
\end{equation}
We first lift $z$ to an element of $\ker(\Che^2(B) \rightarrow \Che^2(X))$. That is, we choose a $y \in {\rm{H}}^2(\A, B)$ whose image in ${\rm{H}}^2(\A, X)$ lifts to a cohomology class in ${\rm{H}}^2(k, X)$, and such that $z(\chi') = \sum_v \mbox{inv}_v(\chi'(y_v))$ for all $\chi' \in \widehat{B}(k)$, where the sum is over all places $v$ of $k$. Choose cocycles $\check{y}_v$ representing the $y_v$ (using Proposition \ref{cech=derivedsmoothinf}). Thus, the left side of (\ref{shapairingequation12}) equals
\[
- \sum_v \mbox{inv}_v(\chi(\check{y}_v)).
\]

Now we compute the right side of (\ref{shapairingequation12}). First, $\psi(\chi) = \delta'(\chi)$. Since $y$ represents an element of $\ker(\Che^2(B) \rightarrow \Che^2(X))$, there exists a $w \in 
{\rm{H}}^2(k, X)$ such that $j(y_v) = w_v$ for all $v$. Then $\pi(w) = \phi(z)$, by definition of $\phi$. To compute $\langle \pi(w), \delta'(\chi) \rangle_{\Sha}$, we choose a 2-cocycle $\check{w} \in \check{Z}^2(k, X)$ representing $w$ (using Proposition \ref{cech=derivedsmoothinf}). Then 
\begin{equation}
\label{shapairingequation13}
\check{w}_v = j(\check{y}_v) + d(u_v)
\end{equation}
for some $u_v \in \check{C}^1(k_v, X)$. We have $\pi(\check{w})_v = \pi(j(\check{y}_v)) + \pi(du_v) = d\pi(u_v)$ since $\pi \circ j = 0$.

We compute $\delta'(\chi)$ as follows: choose a $\zeta \in \check{C}^0(k, \widehat{X})$ such that $\widehat{j}(\zeta) = \chi$. Then choose a lift $\gamma \in \check{Z}^1(k, \widehat{G})$ of $d\zeta$. That is, $\widehat{\pi}(\gamma) = d\zeta$. The cocycle $\gamma$ represents $\delta'(\chi)$ by Proposition \ref{cechderivedconnectingmap}. Using Lemma \ref{checkh3=0}, choose an $h \in \check{C}^2(k, \mathbf{G}_m)$ such that $dh = \pi(w) \cup \gamma$. The right side of (\ref{shapairingequation12}) is by definition
\[
\sum_v \mbox{inv}_v ((\pi(u_v) \cup \gamma_v) - h).
\]
Thus, we need to show that
\begin{equation}
\label{shapairingequation14}
- \sum_v \mbox{inv}_v(\chi(\check{y}_v)) = \sum_v \mbox{inv}_v ((\pi(u_v) \cup \gamma_v) - h).
\end{equation}

We have $$\pi(u_v) \cup \gamma_v = u_v \cup \widehat{\pi}(\gamma_v) = u_v \cup d\zeta_v = -d(u_v \cup \zeta_v) + du_v \cup \zeta_v.$$ Thus, as cohomology classes, $$\pi(u_v) \cup \gamma_v = du_v \cup \zeta_v = (\check{w}_v \cup \zeta_v) - (j(\check{y}_v) \cup \zeta_v) = (\check{w}_v \cup \zeta_v) - (\check{y}_v \cup \widehat{j}(\zeta_v)) = (\check{w}_v \cup \zeta_v) - \chi_v(\check{y}_v),$$ since $\widehat{j}(\zeta) = \chi$. Therefore, (\ref{shapairingequation14}) reduces to 
\[
\sum_v \mbox{inv}_v ((\check{w} \cup \zeta) - h) \stackrel{?}{=} 0
\]
(where the object in parentheses is a cocycle due to the above calculation), and this follows from the fact that the sum of the local invariants of a global Brauer class is $0$.
\end{proof}

We may now prove Theorem \ref{shapairing}.

\begin{theorem}
\label{Shapairingprop} $($Theorem $\ref{shapairing}$$)$ If $G$ is an affine commutative group scheme of finite type over a global function field $k$, then the groups $\Sha^i(G)$ and $\Sha^i(\widehat{G})$ are finite for $i = 1,2$. Further, for $i = 1,2$, the functorial pairings $\langle \cdot, \cdot \rangle_{\Sha}$ yield perfect pairings of finite groups: 
\[
\Sha^i(G) \times \Sha^{3-i}(\widehat{G}) \rightarrow \Q/\Z.
\]
\end{theorem}

\begin{proof}
The finiteness of $\Sha^1(\widehat{G})$ and $\Sha^2(G)$ are part of Proposition \ref{Sha^1(G^)-->Sha^2(G)*}. The finiteness of $\Sha^1(k, G)$ for all affine $k$-group schemes of finite type (even without commutativity hypotheses) is 
\cite[Thm.\,1.3.3(i)]{conrad}, whose proof in the commutative case is much easier than in the general case
(as one can reduce to the smooth connected case by essentially elementary arguments \cite[\S6.1--6.2]{conrad},
and the smooth connected {\em commutative} case is settled in \cite[IV, 2.6(a)]{oesterle}). The finiteness of $\Sha^2(\widehat{G})$ follows from that of $\Sha^1(G)$ and Proposition \ref{Sha^2--->Sha^1injective}. It therefore only remains to prove the perfectness of the pairings. This follows from Propositions \ref{Sha^1--->Sha^2*injective}, \ref{Sha^2--->Sha^1injective}, and \ref{Sha^1(G^)-->Sha^2(G)*}, and Corollary \ref{Sha^2(G)-->Sha^1(G^)*finitecor}.
\end{proof}

\section{The dual $9$-term exact sequence}
\label{sectiondual9termsequence}

In this section we prove Theorem \ref{poitoutatesequencedual}. We will do this by taking the Pontryagin dual of Theorem \ref{poitoutatesequence}. But first let us note that we have already essentially proved Theorem \ref{poitoutatesequence}: 

\begin{theorem}
\label{poitoutatesequenceprop} $($Theorem $\ref{poitoutatesequence}$$)$ For an affine commutative group scheme $G$ of finite type over a global function field $k$, the functorial $($in $G$$)$ global duality sequence
\[
\begin{tikzcd}
0 \arrow{r} & {\rm{H}}^0(k, G)_{\pro} \arrow{r} & {\rm{H}}^0(\A, G)_{\pro} \arrow{r} \arrow[d, phantom, ""{coordinate, name=Z_1}] & {\rm{H}}^2(k, \widehat{G})^* \arrow[dll, rounded corners,
to path={ -- ([xshift=2ex]\tikztostart.east)
|- (Z_1) [near end]\tikztonodes
-| ([xshift=-2ex]\tikztotarget.west) -- (\tikztotarget)}] & \\
& {\rm{H}}^1(k, G) \arrow{r} & {\rm{H}}^1(\A, G) \arrow{r} \arrow[d, phantom, ""{coordinate, name=Z_2}] & {\rm{H}}^1(k, \widehat{G})^* \arrow[dll, rounded corners,
to path={ -- ([xshift=2ex]\tikztostart.east)
|- (Z_2) [near end]\tikztonodes
-| ([xshift=-2ex]\tikztotarget.west) -- (\tikztotarget)}] & \\
& {\rm{H}}^2(k, G) \arrow{r} & {\rm{H}}^2(\A, G) \arrow{r} & {\rm{H}}^0(k, \widehat{G})^* \arrow{r} & 0
\end{tikzcd}
\]
is exact.
\end{theorem}

\begin{proof}
The discussion in \S \ref{globalsectionpreliminaries}, in conjunction with Theorem \ref{Shapairingprop}, shows that the sequence is a well-defined complex that is exact at ${\rm{H}}^i(k, G)$ for $i = 1, 2$. Combining Propositions \ref{G(k)G(A)proinjective}, \ref{exactnessatG(A)}, \ref{exactnessatH^1(A,G)}, and \ref{funsequence} proves exactness everywhere except for ${\rm{H}}^i(k, \widehat{G})^*$ for $i = 1, 2$. The map ${\rm{H}}^i(k, \widehat{G})^* \rightarrow {\rm{H}}^{3-i}(k, G)$ is defined to be the composition
\[
{\rm{H}}^i(k, \widehat{G})^* \twoheadrightarrow \Sha^i(\widehat{G})^* \xrightarrow{\sim} \Sha^{3-i}(G) \hookrightarrow {\rm{H}}^{3-i}(k, G),
\]
where the middle isomorphism is given by Theorem \ref{Shapairingprop}. The exactness of the sequence at ${\rm{H}}^2(k, \widehat{G})^*$ is therefore equivalent to the exactness of the sequence
\[
{\rm{H}}^0(\A, G)_{\pro} \longrightarrow {\rm{H}}^2(k, \widehat{G})^* \longrightarrow \Sha^2(\widehat{G})^*,
\]
and exactness at ${\rm{H}}^1(k, \widehat{G})^*$ is equivalent to the exactness of
\[
{\rm{H}}^1(\A, G) \longrightarrow {\rm{H}}^1(k, \widehat{G})^* \longrightarrow \Sha^1(\widehat{G})^*.
\]
The exactness of these two sequences follows from Proposition \ref{exactnessatH^2(k,G^)*} and Corollary \ref{exactnessatH^1(G^)*finitecor}.
\end{proof}

We now prove Theorem \ref{poitoutatesequencedual}.

\begin{theorem}
\label{poitoutatedualsequenceprop} $($Theorem $\ref{poitoutatesequencedual}$$)$ For an affine commutative group scheme $G$ of finite type over a global function field $k$, the functorial $($in $G$$)$ global duality sequence
\[
\begin{tikzcd}
0 \arrow{r} & {\rm{H}}^0(k, \widehat{G})_{\pro} \arrow{r} & {\rm{H}}^0(\A, \widehat{G})_{\pro} \arrow{r} \arrow[d, phantom, ""{coordinate, name=Z_1}] & {\rm{H}}^2(k, G)^* \arrow[dll, rounded corners,
to path={ -- ([xshift=2ex]\tikztostart.east)
|- (Z_1) [near end]\tikztonodes
-| ([xshift=-2ex]\tikztotarget.west) -- (\tikztotarget)}] & \\
& {\rm{H}}^1(k, \widehat{G}) \arrow{r} & {\rm{H}}^1(\A, \widehat{G}) \arrow{r} \arrow[d, phantom, ""{coordinate, name=Z_2}] & {\rm{H}}^1(k, G)^* \arrow[dll, rounded corners,
to path={ -- ([xshift=2ex]\tikztostart.east)
|- (Z_2) [near end]\tikztonodes
-| ([xshift=-2ex]\tikztotarget.west) -- (\tikztotarget)}] & \\
& {\rm{H}}^2(k, \widehat{G}) \arrow{r} & {\rm{H}}^2(\A, \widehat{G}) \arrow{r} & ({\rm{H}}^0(k, G)_{\rm{pro}})^D \arrow{r} & 0
\end{tikzcd}
\]
is exact.
\end{theorem}

\begin{proof}
We begin with the exact sequence of Theorem \ref{poitoutatesequenceprop}, but we replace ${\rm{H}}^0(k, \widehat{G})^*$ with $({\rm{H}}^0(k, \widehat{G})_{\pro})^D$, which makes no difference because ${\rm{H}}^0(k, \widehat{G})$ is finitely generated (this is part of Theorem \ref{H^2(G)G^(k)dualityprop}): 
\begin{equation}
\label{poitoutatesequencedualproofeqn1}
\begin{tikzcd}
0 \arrow{r} & {\rm{H}}^0(k, G)_{\pro} \arrow{r} & {\rm{H}}^0(\A, G)_{\pro} \arrow{r} \arrow[d, phantom, ""{coordinate, name=Z_1}] & {\rm{H}}^2(k, \widehat{G})^* \arrow[dll, rounded corners,
to path={ -- ([xshift=2ex]\tikztostart.east)
|- (Z_1) [near end]\tikztonodes
-| ([xshift=-2ex]\tikztotarget.west) -- (\tikztotarget)}] & \\
& {\rm{H}}^1(k, G) \arrow{r} & {\rm{H}}^1(\A, G) \arrow{r} \arrow[d, phantom, ""{coordinate, name=Z_2}] & {\rm{H}}^1(k, \widehat{G})^* \arrow[dll, rounded corners,
to path={ -- ([xshift=2ex]\tikztostart.east)
|- (Z_2) [near end]\tikztonodes
-| ([xshift=-2ex]\tikztotarget.west) -- (\tikztotarget)}] & \\
& {\rm{H}}^2(k, G) \arrow{r} & {\rm{H}}^2(\A, G) \arrow{r} & ({\rm{H}}^0(k, \widehat{G})_{\pro})^D \arrow{r} & 0
\end{tikzcd}
\end{equation}
where the global cohomology groups ${\rm{H}}^i(k, \cdot)$ are endowed with the discrete topology and the adelic cohomology groups with the topology described in \S \ref{sectionadeliccohomtopology}. Since the groups ${\rm{H}}^i(k, G)$ and ${\rm{H}}^i(k, \widehat{G})$ with $i=1,2$ are discrete torsion (Lemmas \ref{H^1finiteexponent}, \ref{H^2(G)istorsion}, and \ref{H^2(G^)istorsion}), their algebraic $\Q/\Z$-duals agree with their Pontryagin duals.

Next we observe that all of the maps in (\ref{poitoutatesequencedualproofeqn1}) are continuous. Indeed, this is trivial for the maps from global cohomology groups, and from the discrete group ${\rm{H}}^2(\A, G)$. For the maps from the adelic groups, when $i = 0$ this continuity follows from Lemma \ref{G(A)toH^2(k,G^)*cts}, and for $i =1$, it follows from the fact that each of the maps in the composition
\[
{\rm{H}}^1(\A, G) \longrightarrow {\rm{H}}^1(\A, \widehat{G})^D \longrightarrow {\rm{H}}^1(k, \widehat{G})^D
\]
is continuous (where the first map is given by the adelic duality pairing -- i.e., cup everywhere locally and add the invariants), the first map by Proposition \ref{H^1(A,G)=H^1(A,G^)^D}, and the second because it is the dual of the trivially continuous map ${\rm{H}}^1(k, \widehat{G}) \rightarrow {\rm{H}}^1(\A, \widehat{G})$. Finally, the maps from ${\rm{H}}^i(k, \widehat{G})^*$ ($i = 1, 2$) are continuous because they are given by the compositions
\[
{\rm{H}}^i(k, \widehat{G})^* \rightarrow \Sha^i(k, \widehat{G})^* \xrightarrow{\sim} \Sha^{3-i}(k, G) \rightarrow {\rm{H}}^{3-i}(k, G),
\]
in which the $\Sha$ groups have the discrete topology and all of the maps are trivially continuous (the first because it is the dual of a trivially continuous map). 

We may therefore apply $(\cdot)^D$ to (\ref{poitoutatesequencedualproofeqn1}) to obtain the dual complex: 
\begin{equation}
\label{poitoutatesequencedual1}
\begin{tikzcd}
0 \arrow{r} & {\rm{H}}^0(k, \widehat{G})_{\pro} \arrow{r} & {\rm{H}}^2(\A, G)^D \arrow{r} \arrow[d, phantom, ""{coordinate, name=Z_1}] & {\rm{H}}^2(k, G)^* \arrow[dll, rounded corners,
to path={ -- ([xshift=2ex]\tikztostart.east)
|- (Z_1) [near end]\tikztonodes
-| ([xshift=-2ex]\tikztotarget.west) -- (\tikztotarget)}] & \\
& {\rm{H}}^1(k, \widehat{G}) \arrow{r} & {\rm{H}}^1(\A, G)^D \arrow{r} \arrow[d, phantom, ""{coordinate, name=Z_2}] & {\rm{H}}^1(k, G)^* \arrow[dll, rounded corners,
to path={ -- ([xshift=2ex]\tikztostart.east)
|- (Z_2) [near end]\tikztonodes
-| ([xshift=-2ex]\tikztotarget.west) -- (\tikztotarget)}] & \\
& {\rm{H}}^2(k, \widehat{G}) \arrow{r} & ({\rm{H}}^0(\A, G)_{\pro})^D \arrow{r} & ({\rm{H}}^0(k, G)_{\pro})^D \arrow{r} & 0
\end{tikzcd}
\end{equation}
By Proposition \ref{H^1(A,G)=H^1(A,G^)^D}, we may replace the terms ${\rm{H}}^2(\A, G)^D$, ${\rm{H}}^1(\A, G)^D$, and $({\rm{H}}^0(\A, G)_{\pro})^D$ with ${\rm{H}}^0(\A, \widehat{G})_{\pro}$,  ${\rm{H}}^1(\A, \widehat{G})$, and ${\rm{H}}^2(\A, \widehat{G})$, respectively, and these  identifications are via the adelic duality pairings (which cup everywhere locally and sum the invariants).

Making these replacements in 
(\ref{poitoutatesequencedual1}) 
gives exactly the desired diagram in Theorem \ref{poitoutatedualsequenceprop} (with the correct maps), and so completes the proof of the proposition provided that the passage to the Pontryagin dual complex above preserves exactness. In order to show this, we first claim that all of the groups appearing in (\ref{poitoutatesequencedualproofeqn1}) are Hausdorff, and either (i) compact, or (ii) locally compact and second-countable. The profinite completions are compact Hausdorff by definition. The adelic cohomology groups are locally compact, second-countable, and Hausdorff by Proposition \ref{adelictopcohombasics}(iv). The global cohomology groups are discrete by definition, and they are (second-)countable by Lemma \ref{finiteexpcountable}. The $\Q/\Z$-duals of the global cohomology groups ${\rm{H}}^i(k, \widehat{G})$ ($i = 1, 2$) are the same as their Pontryagin duals by Lemmas \ref{H^1finiteexponent} and \ref{H^2(G^)istorsion}, hence (as duals of discrete groups) compact. Finally, $({\rm{H}}^0(k, \widehat{G})_{\pro})^D$ is discrete (as the dual of a compact group), and countable, since ${\rm{H}}^0(k, \widehat{G})$ is finitely generated. The exactness of the dual complex (\ref{poitoutatesequencedual1}) now follows from Lemma \ref{dualstillexact}. (The exactness at ${\rm{H}}^0(k, \widehat{G})_{\pro}$ follows by appending an extra zero at the end of the exact sequence (\ref{poitoutatesequencedualproofeqn1}).)
 \end{proof}

\appendix

\chapter{Products and Ultraproducts}
\label{ultraproductsappendix}

This appendix is allegedly concerned with certain properties of ultraproducts, but our real interest in them arises from the fact that ultraproducts appear when forming localizations of rings that are described as infinite products, such as $\prod_v \calO_v$, the product of the rings of integers of the places of a global field $k$. As one may imagine, such localizations play an important role when studying a ring such as the adeles of a global field $k$. More precisely, as we shall explain below, every local ring of a product ring $\prod_{i \in I} R_i$ is a local ring of an ultraproduct $\prod_U R_i$ for some ultrafilter $U$ on $I$.

\section{Ultraproducts as local rings}

Let $I$ be a non-empty set, $\{ R_i\}_{i \in I}$ a set of rings (commutative with identity and nonzero; throughout this discussion, a ring means a nonzero ring) indexed by $I$. The purpose of this section is to undertake a study of $\Spec(\prod_{i \in I} R_i)$, as this will be necessary to prove some important results about the cohomology of the adeles in \S\ref{sectionadeliclocalcohom}. We will be particularly interested in studying the local rings of $\Spec(\prod_{i \in I} R_i)$.

In order to do this, we need to recall the notion of an {\em ultraproduct} of commutative rings.
(As will be clear, one may consider ultraproducts of quite general mathematical structures; we focus on rings since that is the only case we will need.) Let $U$ be an {\em ultrafilter} on the set $I$. That is, $U$ is a {\em proper} collection of subsets of $I$
with three properties:  (i) if $A \in U$ and $A \subset B \subset I$, then $B \in U$; (ii) if $A, A' \in U$, then $A \cap A' \in U$; and (iii) for all $A \subset I$, one of $A$ or $I - A$ belongs to $U$. (Condition (iii)
implies that $U$ is not the empty collection.) In (iii), we cannot have both belonging
to $I$ or else by (ii) we would have $\emptyset \in U$ and hence by (i) {\em every}
subset of $I$ belongs to $U$, contradicting that $U$ is not the entire power set of $I$.
Conditions (i) and (ii) -- in conjunction with the assumption that $U \neq\emptyset$ -- say that $U$ is a {\em filter},
and (iii) is equivalent to the condition that $U$ is not strictly contained in any larger filter on $I$ ((iii) also implies that $U \neq \emptyset$). The ``obvious'' ultrafilters are those consisting of all subsets of $I$ containing a fixed element $i_0 \in I$; these are called {\em principal} ultrafilters.
It is elementary to check that if $I$ is finite, then the only ultrafilters on $I$ are the principal ones. 

\begin{remark}
One should regard an ultrafilter as defining a notion of ``bigness'' for subsets of $I$
(so one might say that $A \subset I$ is {\em $U$-big} when $A \in U$); the axioms of an ultrafilter express reasonable conditions on any notion of bigness for subsets of a given set. 
\end{remark}

We define the {\em ultraproduct} $\prod_U R_i$ to be the quotient ring 
$(\prod_{i \in I} R_i)/\sim$, where $\sim$ is the equivalence relation defined by $(a_i) \sim (b_i) \Longleftrightarrow \{i \in I \mid a_i = b_i\} \in U$. The ring structure is induced by that on $\prod R_i$ via coordinate-wise addition and multiplication. (In other words,
$\prod_U R_i$ is the quotient of $\prod R_i$ modulo the ideal $J(U)$ consisting of elements
$(a_i) \in \prod R_i$ for which $\{i \in I\,|\,a_i = 0\} \in U$; this is an ideal because $U$ is a filter.) Alternatively, we may define the ultraproduct $\prod_U R_i$ as a filtered direct limit via the formula
\[
\prod_U R_i := \varinjlim_{I \in U} \prod_{i \in I} R_i.
\]

Note that when $U$ is the principal ultrafilter corresponding to an element $i_0 \in I$, the associated
ultraproduct $\prod_U R_i$ is the factor ring $R_{i_0}$.  The ``interesting'' ultraproducts are
therefore those corresponding to non-principal ultrafilters (and by Zorn's Lemma such do exist whenever $I$ is infinite, though we will never use this fact; our entire discussion in this chapter is independent of the axiom of choice).

Informally, working in the ultraproduct $\prod_U R_i$ amounts to considering elements $(x_i)$ only for ``big'' sets of indices
$i$ where bigness is defined via $U$.  Note that if $x \in \prod_U R_i$ is the class of an $I$-tuple $(x_i)$,
then $x \ne 0$ if and only if $\{i \in I\,|\,x_i = 0\} \not\in U$, but the complement of this set of indices
is exactly $\{i \in I\,|\,x_i \ne 0\}$, so by the ultrafilter property of $U$ we conclude that
$x \ne 0$ if and only if $\{i \in I\,|\,x_i \ne 0\} \in U$.  In other words, $x=0$ 
precisely when $x_i$ vanishes for a ``big'' set of indices $i\in I$, and
$x \ne 0$ precisely when $x_i \ne 0$ for a ``big'' set of indices $i \in I$.  We will use this without comment in some arguments below.

Let us explain how ultraproducts make an appearance when studying the local rings of $\prod_{i \in I} R_i$. For a subset $S \subset I$, define the idempotent $z_S \in \prod_{i \in I} R_i$ by 
\[
(z_S)_i = 
\begin{cases}
0, & i \in S \\
1, & i \notin S.
\end{cases}
\]
To each prime ideal $\mathfrak{p}$ of $\prod R_i$ we can assign an ultrafilter $U(\mathfrak{p})$ on $I$ via
$$U(\mathfrak{p}) = \{S \subset I \mid z_S \in \mathfrak{p}\}.$$
It is easy to verify that this is an ultrafilter, using the primality of $\mathfrak{p}$. Indeed, first note that if $S \in U(\mathfrak{p})$ and $S \subset S' \subset I$, then $z_{S'} = z_{S'}z_S \in \mathfrak{p}$, so $S' \in U(\mathfrak{p})$. Next, we note that $z_S \cdot z_{I-S} = 0$
for any $S \subset I$, so either $z_S$ or $z_{I-S}$ lies in $\mathfrak{p}$, but not both, because their sum is $1$. Finally, we need to check that $U(\mathfrak{p})$ is closed under finite intersection. Given $S, S' \in U(\mathfrak{p})$ (so $z_S, z_{S'} \in \mathfrak{p}$), their complements $S^c, S'^c$ in $I$ are not in $U(\mathfrak{p})$. Therefore, $z_{S^c}, z_{S'^c} \notin \mathfrak{p}$, so because $\mathfrak{p}$ is prime, $z_{S^c}z_{S'^c} = z_{S^c\cup S'^c} \notin \mathfrak{p}$. That is, $S^c\cup S'^c \notin U(\mathfrak{p})$, so $(S^c \cup S'^c)^c = S \cap S' \in U(\mathfrak{p})$.

In the uninteresting case that $\mathfrak{p}$ arises from a factor ring $R_{i_0}$, $U(\mathfrak{p})$ is the principal ultrafilter associated to $i_0 \in I$. One may show (assuming the axiom of choice)
that every ultrafilter on $I$ arises as $U(\mathfrak{p})$ for some $\mathfrak{p}$, but we will not
require this fact.

The key point, and the whole reason for the introduction of the ultrafilter $U(\mathfrak{p})$, is the following proposition.

\begin{proposition}
\label{localringsprod=ultraprod}
Notation as above, the canonical quotient map $\prod_{i \in I} R_i \rightarrow \prod_{U(\mathfrak{p})} R_i$ induces an isomorphism
\[
(\prod_{i \in I} R_i)_{\mathfrak{p}} \xrightarrow{\sim} (\prod_{U(\mathfrak{p})} R_i)_{\overline{\mathfrak{p}}},
\]
where $\overline{\mathfrak{p}}$ is the image of $\mathfrak{p}$ in $\prod_{U(\mathfrak{p})} R_i$. In particular, every local ring of $\prod_{i \in I} R_i$ is a local ring of the ultraproduct of the $R_i$ associated to some ultrafilter on $I$.
\end{proposition}

\begin{proof}
Let $J(\mathfrak{p})$ denote the ideal $J(U(\mathfrak{p}))$ of $\prod R_i$, where we recall the ideal $J(U)$ appearing in the definition of the ultraproduct. Explicitly, $J(\mathfrak{p})$ consists of those elements $a = (a_i)$ for which the set $S$ of $i \in I$ such that $a_i = 0$ belongs to $U(\mathfrak{p})$,
which is to say $z_S \in \mathfrak{p}$.  But $a$ is a multiple of $z_S$, so in more concrete terms
$J(\mathfrak{p})$ is generated by the elements $z_S$ that belong to $\mathfrak{p}$.  Conversely,
if $S \subset I$ is such that $z_S \in J(\mathfrak{p})$ then necessarily $S \in U(\mathfrak{p})$ just by definition.

For the closed subscheme
$\Spec(\prod_{U(\mathfrak{p})} R_i) = \Spec((\prod R_i)/J(\mathfrak{p}))$ passing through the point $\{\mathfrak{p}\}$,
at all of its points the local ring {\em coincides}
with that of the ambient scheme $\Spec(\prod R_i)$.  We only need this equality at
the point $\mathfrak{p}$ of initial interest (though using the fact that any containment of ultrafilters must be an equality, one deduces that the points of $\Spec((\prod R_i)/J(\mathfrak{p}))$ are precisely the primes $\mathfrak{q}$ of $\prod_i R_i$ such that $U(\mathfrak{q}) = U(\mathfrak{p})$, hence the claim reduces to the case of the initial point of interest), and there it is easy to verify: 
we just need to check that each of the generators $z_S \in \mathfrak{p}$ of
$J(\mathfrak{p})$ vanish in $(\prod R_i)_{\mathfrak{p}}$,
and that is immediate because $z_{I - S} \not\in \mathfrak{p}$ and $z_{I-S} \cdot z_S = 0$.
\end{proof}

\section{Properties inherited by local rings of products}

Proposition \ref{localringsprod=ultraprod} says that every local ring of $\Spec(\prod_{i \in I} R_i)$ 
is a local ring of an ultraproduct $\prod_U R_i$ for some ultrafilter $U$ on $I$. This is in turn useful because many reasonable 
(more precisely:  ``first-order'') properties of 
commutative rings $R_i$ are inherited by ultraproducts. Any such property that is also inherited
by localization at primes is therefore inherited by
{\em every} local ring on $\prod R_i$ when it holds for each $R_i$!  

Let us consider some examples (sufficient for our needs) for the sake of illustration. If all of the rings $R_i$ are domains then 
we claim that so is any ultraproduct $\prod_U R_i$ (and thus so is the local ring of $\prod R_i$ at every prime ideal
$\mathfrak{p}$, which is not so obvious if one doesn't have the geometric idea to try proving
that the closed subscheme $\Spec(\prod_{U(\mathfrak{p})} R_i)$ passing through $\{\mathfrak{p}\}$
is actually a domain). Consider $x = (x_i), y = (y_i) \in \prod_U R_i$ (more precisely, $x, y$ are the classes of $(x_i), (y_i)$) 
such that $xy = 0$. Thus, $\{ i \in I \mid x_i y_i = 0\} \in U$. Since each $R_i$ is a domain, it follows that $\{ i \mid x_i = 0\} \cup \{i \mid y_i = 0\} \in U$, hence either $\{i \mid x_i = 0\} \in U$ or $\{i \mid y_i = 0\} \in U$ (or both); i.e., either $x = 0$ or $y = 0$
in $\prod_U R_i$.  (Here we have used the fact, immediate from the definition of an ultrafilter, that if $S, S' \subset I$ satisfy
$S \cup S' \in U$, then $S \in U$ or $S' \in U$; indeed, if $S \not\in U$ and $S' \not\in U$
then $I-S, I-S' \in U$ and hence $I - (S \cup S') = (I - S) \cap (I - S') \in U$, contradicting that $S \cup S' \in U$.) 
This proves that $\prod_U R_i$ is a domain when every $R_i$ is a domain.

Similarly, suppose that each $R_i$ is a domain such that any element of $K_i : = {\rm{Frac}}(R_i)$ satisfying a monic
polynomial of degree $3$ over $R_i$ lies in $R_i$. Then we claim that the same holds for any ultraproduct $\prod_U R_i$. Indeed, we already know that $\prod_U R_i$ is a domain, so suppose that $x = (x_i), y = (y_i) \in \prod_U R_i$
with $y \neq 0$ (so $y_i \ne 0$ precisely for $i$ belonging to a set $S \in U$) such that 
$$(x/y)^3 + a_2(x/y)^2 + a_1(x/y) + a_0 = 0$$
in $\prod_U R_i$ for some $a_0, a_1, a_2 \in \prod_U R_i$, or equivalently (since $\prod_U R_i$ is a domain), 
$$x^3 + a_2x^2y + a_1xy^2 + a_0y^3 = 0$$ in $\prod_U R_i$. The latter vanishing in 
the ultraproduct says that the set $S'$ of indices $i$ such that the corresponding
vanishing holds at index $i$ (without any condition on $y_i$ vanishing or not) is a member of $U$. 

But $S \cap S'$ must also belong to $U$,
and by our assumption about the $R_i$ it follows
that for any $i \in S \cap S'$ we have $x_i = z_iy_i$ for some $z_i \in R_i$. Defining $z$ be the class in $\prod_U R_i$ of the
$I$-tuple with $i$th component $z_i$ for $i \in S \cap S'$ and whatever component we wish (say 0, or 1)
for indices not in $S \cap S'$, we have $x = yz$ in the domain $\prod_U R_i$,
so $x/y  = z \in \prod_U R_i$, as desired. A similar argument with $3$ replaced by any positive integer
shows that if each $R_i$ is an integrally closed domain then so is $\prod_U R_i$. 
(Note that being integrally closed is not a ``first order'' condition, but being integrally closed with respect to polynomials of a fixed positive degree is a ``first order'' condition.) 

Since any localization of an integrally closed domain is an integrally closed domain,
and every local ring of $\prod R_i$ is a local ring of an ultraproduct of the $R_i$'s, we have proved the following useful result.

\begin{lemma}
\label{productofnormal=normal}
Let $R_i$ be a collection of rings indexed by the non-empty set $I$. If each $R_i$ is an integrally closed domain, then the local rings on the scheme $\Spec(\prod_{i \in I} R_i)$ are integrally closed domains.
\end{lemma}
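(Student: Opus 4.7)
The plan is to reduce the statement to a fact about ultraproducts that has essentially been proved in the preceding discussion, and then invoke standard stability of the ``integrally closed domain'' property under localization.

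First I would recall the key structural observation established just before the statement: every local ring of $\Spec(\prod_{i \in I} R_i)$ at a prime $\mathfrak{p}$ is canonically identified with the local ring at (the image of) $\mathfrak{p}$ on the closed subscheme $\Spec(\prod_{U(\mathfrak{p})} R_i)$, where $U(\mathfrak{p}) = \{S \subset I \mid z_S \in \mathfrak{p}\}$ is the ultrafilter attached to $\mathfrak{p}$. Since a localization of an integrally closed domain is again an integrally closed domain, it suffices to prove that for every ultrafilter $U$ on $I$, the ultraproduct $\prod_U R_i$ is an integrally closed domain.

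The domain property has already been verified in the excerpt: if $x, y \in \prod_U R_i$ satisfy $xy = 0$ and are represented by $(x_i), (y_i)$, then $\{i \mid x_i y_i = 0\} \in U$, so by the ultrafilter dichotomy either $\{i \mid x_i = 0\} \in U$ or $\{i \mid y_i = 0\} \in U$. For integral closedness, I would generalize the degree-$3$ computation sketched in the excerpt to arbitrary degree $n \geq 1$. Let $K := \mathrm{Frac}(\prod_U R_i)$; since $\prod_U R_i$ is a domain this makes sense. Given $x, y \in \prod_U R_i$ with $y \neq 0$ and a monic relation
\[
(x/y)^n + a_{n-1} (x/y)^{n-1} + \cdots + a_0 = 0
\]
with $a_j \in \prod_U R_i$, clearing denominators (which is valid in a domain) yields
\[
x^n + a_{n-1} x^{n-1} y + \cdots + a_0 y^n = 0
\]
in $\prod_U R_i$. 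Pick representing tuples $(x_i), (y_i), (a_{j,i})$. The set $S'$ of indices where the relation holds componentwise belongs to $U$, and the set $S$ of indices with $y_i \neq 0$ also belongs to $U$ (since $y \neq 0$). For every $i \in S \cap S'$, the integral closedness of $R_i$ gives $z_i \in R_i$ with $x_i = z_i y_i$. Defining $z \in \prod_U R_i$ as the class of the tuple $(z_i)_{i \in S \cap S'}$ (extended arbitrarily elsewhere), we get $x = zy$ in the domain $\prod_U R_i$, hence $x/y = z \in \prod_U R_i$.

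There is no serious obstacle here: the only thing to observe is that the notion ``every element of $\mathrm{Frac}(R)$ satisfying a monic polynomial of degree $n$ over $R$ lies in $R$'' is a first-order condition for each fixed $n$, so it transfers to ultraproducts by the standard ultrafilter dichotomy argument already modeled in the excerpt; integral closedness is the conjunction of these conditions over all $n$, and each individually transfers. Combining this with the reduction in the first paragraph yields the lemma.
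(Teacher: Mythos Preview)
Your proposal is correct and follows essentially the same approach as the paper: the paper's proof consists precisely of the discussion preceding the lemma statement, which reduces to showing that ultraproducts of integrally closed domains are integrally closed domains (done via the degree-by-degree argument you reproduce) and then invokes the fact that every local ring of $\prod R_i$ is a localization of some ultraproduct together with stability of integral closedness under localization.
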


In the same spirit, we also have: 

\begin{lemma}
\label{seminormalring}
Let $R_i$ be a collection of seminormal rings indexed by a non-empty set $I$. Then each local ring of $\Spec(\prod_{i \in I} R_i)$ is seminormal.
\end{lemma}

\begin{proof}
The idea of the proof is the same as for Lemma \ref{productofnormal=normal}. One first notes that reducedness is inherited by a product of rings, and then it is a straightforward exercise to see that seminormality is inherited by ultraproducts, using the formulation that a reduced ring is seminormal if (and only if) whenever $b^3 = c^2$, there exists $a$ such that $b = a^2$ and $c = a^3$.
\end{proof}

Here is another example that we shall find useful.

\begin{lemma}
\label{prodvalrings}
Let $R_i$ be a collection of valuation rings indexed by a non-empty set $I$. Then each local ring of $\Spec(\prod_{i \in I} R_i)$ is a valuation ring.
\end{lemma}

\begin{proof}
We need to show that the property of being a valuation ring is inherited by localizations and by ultraproducts. Both of these assertions are easily deduced from the following characterization of valuation rings: A ring $A$ is a valuation ring if and only if it is an integral domain with the following property: for every $a, b \in A$, either $a \mid b$ or $b\mid a$ (or both).
\end{proof}

Here is the last example of this phenomenon that we shall require.

\begin{lemma}
\label{proddegimpleq1}
Let $R_i$ be a collection of domains indexed by a non-empty set $I$. Assume that, for each $i \in I$, the fraction field $K_i$ of $R_i$ has degree of imperfection $\leq 1$. Then each local ring of ${\rm{Spec}}(\prod_{i \in I} R_i)$ is a domain whose fraction field has degree of imperfection $\leq 1$.
\end{lemma}

\begin{proof}
For any ultrafilter $U$ on $I$, one checks that $\prod_U R_i$ is a domain, and that its fraction field is $\prod_U K_i$. Since the property of being a domain with fraction field having degree of imperfection $\leq 1$ is inherited by local rings, it therefore suffices to show that the field $K := \prod_U K_i$ has degree of imperfection $\leq 1$. If for every prime $p$, a $U$-large set of $K_i$ have characteristic $\neq p$, then $p \neq 0$ in $K$ for every prime $p$, so $K$ has characteristic $0$, hence is perfect. We may therefore assume that a $U$-large set of $K_i$ have characteristic $p > 0$ for some fixed prime $p$. Then $K$ has characteristic $p$ as well. If a $U$-large set of $K_i$ are perfect, then every element of $K$ is a $p$th power (since a $U$-large set of its components are), hence $K$ is perfect. We may therefore assume that a $U$-large set of $K_i$ have degree of imperfection $1$. Replacing $I$ with this $U$-large set, we may assume that, for each $i \in I$, there is $t_i \in K_i$ such that $K_i = \oplus_{r=0}^{p-1} K_i^pt_i^r$. Let $t := \prod_i t_i \in K$. Then it follows that $K = \oplus_{r=0}^{p-1} K^pt^r$, so $K$ has degree of imperfection $1$.
\end{proof}

As the reader has likely noticed, we have used nothing particularly special about the class of integrally closed domains
or of valuation rings, etc.\,, beyond that their definitions are built up in terms of conditions expressible
in sufficiently finitistic terms (a notion made precise in terms of first-order logic).
{\L}o\'s's Theorem states that any first-order statement that holds for each of the factors $R_i$ holds for 
any ultraproduct $\prod_U R_i$ (and even more generally for ultraproducts of a wide class of
mathematical structures). As we have seen, in any special case of interest it is easy to prove by a direct argument
that such properties are inherited by ultraproducts, so there is no need for the general {\L}o\'s's Theorem anywhere in our work.

\chapter{Valuation Rings}
\label{valringschap}

Valuation rings play an important role in this text, especially in the study of properties of the ring $\A_k$ of adeles associated to a global field $k$. The relation between the adeles and valuation rings arises from the following observation. In studying $\A_k$, one is naturally led to consider rings of the form $\prod_{v \notin S} \calO_v$, where $S$ is a finite set of places of $k$. By Lemma \ref{prodvalrings}, each local ring of this ring is a valuation ring. We remark that -- assuming the existence of non-principal ultrafilters on infinite sets (a consequence, for example, of the axiom of choice) -- some (many) of these local rings will be non-Noetherian. We are therefore naturally led, even when studying products of DVR's, to consider arbitrary valuation rings. The purpose of the present appendix is to present some results on valuation rings that we shall require which are not in the literature (as far as the author is aware), particularly Proposition \ref{localringovervalringprop}.

\section{Iterates of matrices over rank-$1$ valuation rings}

The main result of this section is Lemma \ref{matrixpowers}, which says that, given a vector over a real-valued valuation ring, the absolute values of its iterates under a given matrix attain their infimum if this infimum is $> 0$. The proof of this seemingly innocuous statement is actually rather involved. We begin with a series of lemmas.

\begin{lemma}
\label{combinatorialid}
For integers $j, m \geq 0$, consider the polynomial
\[
F_{m,\,j}(X) := \sum_{s=0}^{m} (-1)^s\binom{m}{s}\binom{X-s}{j} \in \Q[X].
\]
Then
\begin{itemize}
\item[(i)] $F_{m,\,j}(X) = 0$ for $0 \leq j < m$.
\item[(ii)] $F_{m,\,m}(X) = 1$.
\end{itemize}
\end{lemma}

\begin{proof}
We prove the lemma for $0 \leq j \leq m$ by induction on $j + m$. For $j = m = 0$, one simply checks that the lemma holds. Now assume that $0 \leq j \leq m$ and that the lemma holds for all smaller values of $j + m$. Using Pascal's identity
\begin{equation}
\label{pascal}
\binom{Y+1}{k} = \binom{Y}{k} + \binom{Y}{k-1} \in \Q[Y],
\end{equation}
we first compute that, when $m > 0$,
\begin{align*}
& F_{m,\,j}(X) - F_{m-1,\,j}(X) \\
& = (-1)^m\binom{X-m}{j} + \sum_{s=1}^{m-1}(-1)^s\binom{m-1}{s-1}\binom{X-s}{j} \\
& = (-1)^m\binom{X-m}{j} - \sum_{s=0}^{m-2} (-1)^s\binom{m-1}{s}\binom{(X-1)-s}{j} \\
& = (-1)^m\binom{X-m}{j} + (-1)^{m-1}\binom{X-m}{j} - F_{m-1,\,j}(X-1) = -F_{m-1,\,j}(X-1).
\end{align*}

Therefore,
\[
F_{m,\,j}(X) = F_{m-1,\,j}(X) - F_{m-1,\,j}(X-1).
\]
By induction, $F_{m-1,\,j}$ is constant for $0 \leq j < m$, hence $F_{m,\,j}(X) = 0$ in this range. It remains to show that $F_{m,\,m}(X) = 1$.

Using Pascal's identity (\ref{pascal}) again, we compute that, when $m > 0$,
\[
F_{m,\,m}(X+1) - F_{m,\,m}(X) = \sum_{s=0}^m (-1)^s\binom{m}{s}\binom{X-s}{m-1} = F_{m,\,m-1}(X).
\]
We have just shown that $F_{m,\,m-1}(X) = 0$, so $F_{m,\,m}(X+1) - F_{m,\,m}(X) = 0$. Equivalently, $F_{m,\,m}(X)$ is a constant polynomial. In order to compute its value, it suffices to do so at a single point. We will show that $F_{m,\,m}(m) = 1$. Indeed, each term in the defining sum for $F_{m,\,m}(m)$ vanishes except the term with $s = 0$. Thus, 
\[
F_{m,\,m}(m) = (-1)^0\binom{m}{0}\binom{m-0}{m} = 1. \qedhere
\]
\end{proof}

\begin{lemma}
\label{vandermonde}
Let $R$ be a ring $($commutative with identity$)$, $\lambda_1, \dots, \lambda_n \in R$, and $f \geq m \geq 0$ integers. For $1 \leq i \leq n$ and $0 \leq j \leq m$, consider the vectors $\mathbf{v}_{i, j} \in R^n$ defined by
\[
\mathbf{v}_{i, j} := 
\left[\begin{array}{cccc}
\binom{f}{j}\lambda_i^{f-j} & \binom{f+1}{j}\lambda_i^{(f+1)-j} & \dots & \binom{f+(m+1)n-1}{j}\lambda_i^{(f+(m+1)n-1) - j}
\end{array}\right]^T
\]
\[
= \left[\binom{k}{j}\lambda_i^{k-j}\right]^T_{f \leq k < f + (m+1)n}.
\]
Let $\mathbf{M}$ be the $(m+1)n \times (m+1)n$ matrix with columns $\mathbf{v}_{i, j}$ for $1 \leq i \leq n$ and $0 \leq j \leq m$, ordered as follows: $\mathbf{v}_{1, 0}, \mathbf{v}_{1, 1}, \dots, \mathbf{v}_{1, m}, \mathbf{v}_{2,0}, \mathbf{v}_{2, 1}, \dots, \mathbf{v}_{2, m}, \dots, \mathbf{v}_{n, 0}, \mathbf{v}_{n, 1}, \dots, \mathbf{v}_{n, m}$. That is, if $\mathbf{M} = (M_{k, \ell})_{0\leq k, \ell < (m+1)n}$, then writing $\ell = (m+1)q_\ell + r_\ell$ for $0 \leq q_\ell < n$ and $0 \leq r_\ell < m+1$, we have
\begin{equation}
\label{M_kl}
M_{k, \ell} = \binom{k + f}{r_\ell}\lambda_{q_\ell+1}^{k + f - r_\ell}.
\end{equation}
Then we have
\begin{equation}
\label{vandermondedeteqn}
{\rm{det}}(\mathbf{M}) = \left(\prod_{i=1}^n \lambda_i\right)^{f(m+1)} \left(\prod_{1 \leq i < j \leq n} (\lambda_j - \lambda_i)\right)^{(m+1)^2}.
\end{equation}
In particular, if $R = K$ is a field, and $\lambda_1, \dots, \lambda_n \in K^{\times}$ are distinct, then the vectors $\mathbf{v}_{i, j}$ $($$1 \leq i \leq n$, $0 \leq j \leq m$$)$ are linearly independent.
\end{lemma}

\begin{remark}
When $f = m = 0$, the determinant appearing in Lemma \ref{vandermonde} is the classical Vandermonde determinant.
\end{remark}

\begin{proof}
Let $R_k$ denote the $k$th row of $\mathbf{M}$. Perform the following row operations on $\mathbf{M}$:
\begin{equation}
\label{rowops}
R_k \mapsto \sum_{\substack{0 \leq s_1, \dots, s_q \leq m+1 \\ 0 \leq s_{q+1} \leq r }} \binom{r}{s_{q+1}}\left(\prod_{i=1}^q \binom{m+1}{s_i}\right) \left(\prod_{i=1}^{q+1}(-\lambda_i)^{s_i}\right)R_{k-\sum_{i=1}^{q+1}s_i},
\end{equation}
where
\[
k = (m+1)q + r, \mbox{ } 0 \leq q < n,\mbox{ } 0 \leq r < m+1.
\]
Note that each term $R_e$ appearing in the above expression satisfies $0 \leq e \leq k$ and that the term $R_k$ appears on the right side with coefficient $1$. It follows that the above row operations correspond to multiplying the matrix $\mathbf{M}$ on the left by a lower-triangular matrix with $1$'s along the diagonal. These row operations therefore leave the determinant unchanged. Let $\mathbf{M}'$ be the resulting matrix, so that ${\rm{det}}(\mathbf{M}') = {\rm{det}}(\mathbf{M})$. We will show that
\begin{equation}
\label{M'_kluppertriangular}
M'_{k, \ell} = \begin{cases}
0, & \mbox{ if }k > \ell \\
\displaystyle\lambda_{q+1}^f\left(\prod_{i=1}^q(\lambda_{q+1} - \lambda_i)\right)^{m+1}, & \mbox{ if } k = \ell.
\end{cases}
\end{equation}

Equation (\ref{M'_kluppertriangular}) implies that $M'_{k, \ell}$ is upper-triangular, so its determinant is the product of the diagonal entries. Thus,
\[
{\rm{det}}(\mathbf{M}) = {\rm{det}}(\mathbf{M}') = \prod_{0 \leq k < (m+1)n} M'_{k, k} = \prod_{0 \leq k < (m+1)n} \lambda_{q+1}^f \prod_{1 \leq i < q+1} (\lambda_{q+1}-\lambda_i)^{m+1}.
\]
Recall that $k$ and $q$ are related by the formula $k = (m+1)q + r$, where $0 \leq r < m+1$. Thus, as $k$ ranges over the values $0 \leq k < (m+1)n$, $q+1$ ranges over each of the values $1, \dots, n$ exactly $m+1$ times. We therefore see that
\[
{\rm{det}}(\mathbf{M}) = \left(\prod_{q=0}^{n-1} \lambda_{q+1}^f \prod_{1 \leq i < q+1} (\lambda_{q+1}-\lambda_i)^{m+1}\right)^{m+1} = \left(\prod_{i=1}^n \lambda_i\right)^{f(m+1)} \left(\prod_{1 \leq i < j \leq n} (\lambda_j - \lambda_i)\right)^{(m+1)^2},
\]
which completes the proof of the lemma, modulo the proof of (\ref{M'_kluppertriangular}).

We now prove (\ref{M'_kluppertriangular}). Substituting the formula (\ref{M_kl}) into (\ref{rowops}), we see that
\begin{equation}
\label{M'_kl}
M'_{k, \ell} = \sum_{\substack{0 \leq s_1, \dots, s_q \leq m+1 \\ 0 \leq s_{q+1} \leq r }} \binom{r}{s_{q+1}}\left(\prod_{i=1}^q \binom{m+1}{s_i}\right) \left(\prod_{i=1}^{q+1}(-\lambda_i)^{s_i}\right)\binom{k- \sum_{i=1}^{q+1} s_i + f}{r_\ell}\lambda_{q_\ell+1}^{k-\sum_{i=1}^{q+1}s_i+f-r_{\ell}}.
\end{equation}
If $k \geq \ell$, then $q \geq q_\ell$. We first consider the case $q > q_\ell$. Then we may rewrite (\ref{M'_kl})  by first summing over $s_{q_\ell+1}$ and then summing over all of the other $s_i$:
\begin{align*}
\nonumber M'_{k, \ell} & = \sum_{\substack{0 \leq s_1, \dots, \widehat{s_{q_\ell +1}}, \dots, s_q \leq m+1, \dots, q\\ 0 \leq s_{q+1} \leq r}} \binom{r}{s_{q+1}}\left(\prod_{\substack{1 \leq i \leq q \\ i \neq q_\ell+1}}\binom{m+1}{s_i}\right)\left(\prod_{\substack{1 \leq i \leq q+1\\ i \neq q_\ell+1}}(-\lambda_i)^{s_i} \right) \\
& \times \lambda_{q_\ell+1}^{k -\sum_{\substack{1 \leq i \leq q+1 \\ i \neq q_\ell+1}} s_i + f - r_\ell}\sum_{s_{q_\ell+1} = 0}^{m+1} (-1)^{s_{q_\ell+1}}\binom{m+1}{s_{q_\ell+1}}\binom{(k - \sum_{\substack{1 \leq i \leq q+1 \\ i \neq q_\ell + 1}}s_i +f) - s_{q_\ell+1}}{r_\ell}.
\end{align*}
The inner sum over $s_{q_\ell+1}$ vanishes by Lemma \ref{combinatorialid} applied with $m$ replaced by $m+1$, $j = r_\ell < m+1$, and $X = k - \sum_{\substack{1 \leq i \leq q+1 \\ i \neq q_\ell + 1}} s_i +f$. Thus, 
\[
M'_{k, \ell} = 0 \mbox{ if } q > q_\ell.
\]
Next we consider the case $q = q_\ell$. Then we rewrite (\ref{M'_kl}) by first summing over $s_{q+1}$ and then summing over all of the other $s_i$:
\begin{align}
\label{M'_kleqn3}
\nonumber M'_{k, \ell} &= \lambda_{q+1}^{k - (m+1)q - r_\ell + f}\sum_{0 \leq s_1, \dots, s_q \leq m+1}\left(\prod_{i=1}^q\binom{m+1}{s_i}\right)\left(\prod_{i=1}^q(-\lambda_i)^{s_i}\right) \\
& \times \lambda_{q+1}^{(m+1)q-\sum_{i=1}^q s_i}\sum_{s_{q+1} = 0}^r(-1)^{s_{q+1}}\binom{r}{s_{q+1}}\binom{(k - \sum_{i=1}^q s_i + f) - s_{q+1}}{r_\ell}.
\end{align}
If $k > \ell$, then (since $q = q_\ell$) we have $r > r_\ell$, while if $k = \ell$, then $r = r_\ell$. Therefore, applying Lemma \ref{combinatorialid} with $m = r$, $j = r_\ell$, and $X = k - \sum_{i=1}^qs_i + f$ shows that the inner sum over $s_{q+1}$ vanishes if $k > \ell$ -- thereby completing the proof of (\ref{M'_kluppertriangular}) for $k > \ell$ -- and that if $k = \ell$, then the inner sum equals $1$. Thus, if $k = \ell$, then
\begin{align}
\nonumber M'_{k, k} &= \lambda_{q+1}^{k - (m+1)q - r + f}\sum_{0 \leq s_1, \dots, s_q \leq m+1}\left(\prod_{i=1}^q\binom{m+1}{s_i}\right)\left(\prod_{i=1}^q(-\lambda_i)^{s_i}\right) \lambda_{q+1}^{(m+1)q-\sum_{i=1}^q s_i} \\
&= \lambda_{q+1}^f\sum_{0 \leq s_1, \dots, s_q \leq m+1}\left(\prod_{i=1}^q\binom{m+1}{s_i}\right)\left(\prod_{i=1}^q(-\lambda_i)^{s_i}\right) \lambda_{q+1}^{(m+1)q-\sum_{i=1}^q s_i},
\end{align}
where in the second equality we have used the fact that $k - (m+1)q - r = 0$. We finally compute that
\[
M'_{k, k} = \lambda_{q+1}^f \prod_{i=1}^q \left(\sum_{s =0}^{m+1} \binom{m+1}{s}(-\lambda_i)^s\lambda_{q+1}^{m+1-s}\right) = \lambda_{q+1}^f\prod_{i=1}^q(\lambda_{q+1} - \lambda_i)^{m+1}.
\]
This completes the proof of (\ref{M'_kluppertriangular}) and of the lemma.
\end{proof}

The utility of Lemma \ref{vandermonde} for us lies in the following application.

\begin{lemma}
\label{absvalofsum}
Let $A$ be a rank-$1$ valuation ring with fraction field $K$, and absolute value $|\cdot|\colon K \rightarrow \mathbf{R}_{\geq 0}$. Let $\mu_1, \dots, \mu_t \in A^{\times}$ be such that $\mu_i - \mu_j \in A^{\times}$ for all pairs of distinct $i, j \in \{1, \dots, t\}$. Let $m \geq 0$ be a nonnegative integer. Finally, let $c_{i, j} \in K$ for $1 \leq i \leq t$ and $0 \leq j \leq m$. Then, for any integer $r \geq m$, there is an integer $s = s(r) \in [r, r + (m+1)n)$ such that
\[
\left|\sum_{\substack{1 \leq i \leq t \\ 0 \leq j \leq m}} c_{i, j}\binom{s}{j}\mu_i^{s-j}\right| = \max_{\substack{1 \leq i \leq t \\ 0 \leq j \leq m}} \{|c_{i, j}|\}.
\]
\end{lemma}

\begin{proof}
We may assume that $$M := \max_{\substack{1 \leq i \leq t \\ 0 \leq j \leq m}} \{|c_{i, j}|\}$$ is nonzero, and, dividing through by some $c_{i_0, j_0}$, we may assume that $M = 1$. Let $\mathfrak{m}$ be the maximal ideal of $A$, and let $k := A/\mathfrak{m}$ be the residue field. Then reducing modulo $\mathfrak{m}$, the lemma translates to the following statement: if $\overline{\mu}_1, \dots, \overline{\mu}_t \in k^{\times}$ are distinct, and $\overline{c}_{i, j} \in k$ are not all $0$, then there is an integer $s = s(r) \in [r, r + (m+1)n)$ such that
\[
\sum_{\substack{1 \leq i \leq t \\ 0 \leq j \leq m}} \overline{c}_{i, j}\binom{s}{j}\overline{\mu}_i^{s-j} \neq 0.
\]
If we consider the statement of Lemma \ref{vandermonde} with $n = t$, $f = r$, and $\lambda_i = \overline{\mu}_i$ (and $m = m$), then this is exactly the statement that the vectors $v_{i, j}$ are linearly independent over $k$, which is part of the statement of that lemma.
\end{proof}

\begin{lemma}
\label{sumsofpowersincsubsequence}
Let $A$ be a rank-$1$ valuation ring with fraction field $K$ and absolute value $|\cdot|\colon K \rightarrow \mathbf{R}_{\geq 0}$. Let $\mu_1, \dots, \mu_n \in A$ be distinct elements, and let $x_1, \dots, x_n \in K$ be such that, for some $i$, we have $\mu_i \in A^{\times}$ and $x_i \neq 0$. Then there is an integer $g \geq 0$ and a strictly increasing sequence $\{r_e\}_{e \geq 0}$ of nonnegative integers such that all the elements of the sequence
\[
\left|\sum_{i=1}^n \mu_i^{r_e}x_i\right|_{e \geq 0}
\]
are equal to the nonzero constant $\sup_{r \geq g} \{\left|\sum_{i=1}^n \mu_i^{r}x_i\right|\}$.
\end{lemma}

\begin{remark}
The proof of Lemma \ref{sumsofpowersincsubsequence} given below in fact shows that we may choose the sequence $\{r_e\}$ so that the differences $r_{e+1} - r_e$ between consecutive terms are bounded. We will never use this.
\end{remark}

\begin{proof}
Without loss of generality, we may assume that $|\mu_1| = 1$. We may also renumber the $\mu_i$ so that $\mu_1, \mu_2, \dots, \mu_t, \dots, \mu_{s}, \dots, \mu_n$ satisfy the following properties:
\begin{itemize}
\item[(i)] $|\mu_\ell| = 1$ for $1 \leq \ell \leq s$;
\item[(ii)] $|\mu_\ell| < 1$ for $s < \ell \leq n$;
\item[(iii)] For each $1 \leq \ell \leq s$, there is a unique $1 \leq i(\ell) \leq t$ such that $\mu_i \equiv \mu_{i(\ell)} \pmod{\mathfrak{m}}$, where $\mathfrak{m} \subset A$ is the maximal ideal.
\end{itemize}

For each $1 \leq \ell \leq s$, write $\mu_\ell = \mu_{i(\ell)} + \epsilon_\ell$, where $\epsilon_\ell \in \mathfrak{m}$. Then for any $r, m \geq 0$, we have
\begin{align}
\label{sumsofpowersincsubsequencepfeqn4}
\nonumber \sum_{\ell=1}^n \mu_\ell^rx_\ell & = \sum_{i=1}^t \sum_{i(\ell) = i} \mu_\ell^rx_\ell + \sum_{\ell > s} \mu_\ell^rx_\ell \\
&= \sum_{i=1}^t \sum_{j = 0}^m\left(\sum_{i(\ell)=i} \epsilon_\ell^j x_\ell\right) \binom{r}{j}\mu_i^{r-j} + \sum_{i=1}^t \sum_{j > m}\left(\sum_{i(\ell)=i} \epsilon_\ell^j x_\ell \right) \binom{r}{j}\mu_i^{r-j} + \sum_{\ell > s} \mu_\ell^rx_\ell.
\end{align}
Each term in the second sum above has absolute value $\leq \max_{1 \leq \ell \leq s} \{|\epsilon_\ell^{m+1}x_\ell| \}$, hence the same holds for the entire sum. For $1 \leq i \leq t$ and $j \geq 0$, let
\[
c_{i, j} := \sum_{i(\ell) = i} \epsilon_\ell^j x_\ell.
\]
We claim that $c_{i, j} \neq 0$ for some $i, j$. Indeed, otherwise the first two sums on the right side of (\ref{sumsofpowersincsubsequencepfeqn4}) would vanish, so that (\ref{sumsofpowersincsubsequencepfeqn4}) would simplify to
\[
\sum_{\ell = 1}^s \mu_\ell^rx_\ell = 0
\]
for all $r \geq 0$. Since $x_i \neq 0$ for some $1 \leq i \leq s$ by assumption, this would violate the nonvanishing of the classical Vandermonde determinant ${\rm{det}}(\mu_i^j)_{\substack{1 \leq i \leq s\\ 0 \leq j < s}}$, since the $\mu_i$ are distinct. Thus, some $c_{i, j}$ is nonzero.

Putting everything together, (\ref{sumsofpowersincsubsequencepfeqn4}) becomes, for every $r, m \geq 0$,
\begin{equation}
\label{combofpowersvalringpfeqn4}
\sum_{\ell=1}^n {\mu_\ell}^rx_\ell = \sum_{i=1}^t \sum_{j = 0}^mc_{i, j} \binom{r}{j}\mu_i^{r-j} + \sum_{\ell > s} \mu_\ell^rx_\ell + C_m,
\end{equation}
where 
\begin{equation}
\label{sumsofpowersincsubsequencepfeqn5}
|C_m| \leq \max_{1 \leq \ell \leq s} \{|\epsilon_\ell^{m+1}x_\ell| \}.
\end{equation}

Since some $c_{i, j} \neq 0$, the non-decreasing sequence
\[
M(m) := \max_{\substack{1 \leq i \leq t \\ 0 \leq j \leq m}}\{|c_{i, j}|\}
\]
is nonzero for all sufficiently large $m \geq 0$. We claim that it is even eventually constant, equal to $M$, say:
\[
M = \max_{\substack{1 \leq i \leq t \\ j \geq 0}}\{|c_{i, j}|\}
\] 
Indeed, this follows from the fact that $c_{i, j} \rightarrow 0$ as $j \rightarrow \infty$, since $\epsilon_\ell \in \mathfrak{m}$.

Since each $\epsilon_\ell \in \mathfrak{m}$, the maximum in (\ref{sumsofpowersincsubsequencepfeqn5}) tends to $0$ as $m \rightarrow \infty$. We may therefore choose a fixed integer $m$ so that
\begin{equation}
\label{sumsofpowersincsubsequencepfeqn6}
|C_m| < \max_{\substack{1 \leq i \leq t \\ 0 \leq j \leq m}}\{|c_{i, j}|\} = M.
\end{equation}
Next, since $|\mu_\ell| < 1$ for $s < \ell \leq n$, we may choose an $r' \geq m$ such that
\begin{equation}
\label{sumsofpowersincsubsequencepfeqn7}
\left|\sum_{\ell > s} \mu_\ell^rx_\ell \right| < M \mbox{ for } r \geq r'.
\end{equation}

By construction, for $1 \leq i \leq t$, we have $\mu_i \in A^{\times}$, and these $\mu_i$ are distinct modulo $\mathfrak{m}$. Lemma \ref{absvalofsum} therefore implies that for any $r \geq r' \geq m$, there is an integer $f = f(r) \in [r, r + (m+1)n)$ such that
\begin{equation}
\label{combofpowersvalringpfeqn8}
\left|\sum_{i=1}^t \sum_{j = 0}^mc_{i, j} \binom{f}{j}\mu_i^{f-j}\right| = \max_{\substack{1 \leq i \leq t \\ 0 \leq j \leq m}}\{|c_{i, j}|\} \neq 0.
\end{equation}
Combining (\ref{combofpowersvalringpfeqn4}), (\ref{combofpowersvalringpfeqn8}), (\ref{sumsofpowersincsubsequencepfeqn6}), and (\ref{sumsofpowersincsubsequencepfeqn7}), we find that, for all $r \geq r'$, there exists an integer $f = f(r) \in [r, r + (m+1)n)$ such that
\[
\left|\sum_{\ell = 1}^n \mu_\ell^fx_\ell \right| = \max_{\substack{1 \leq i \leq t \\ 0 \leq j \leq m}}\{|c_{i, j}|\} \neq 0.
\]
Since we may choose such $f$ for any $r \geq r'$, it follows that there is a strictly increasing sequence $\{r_e\}_{e \geq 0}$ of nonnegative integers such that, for all $e \geq 0$,
\[
\left|\sum_{\ell = 1}^n \mu_\ell^{r_e}x_\ell \right| = M = \max_{\substack{1 \leq i \leq t \\ j \geq 0}}\{|c_{i, j}|\} \neq 0.
\]
The right side is constant. We claim that it equals $\sup_{r \geq r_0} \{\left|\sum_{i=1}^n \mu_i^{r}x_i\right|\}$, which will prove the lemma. Indeed, this follows by combining (\ref{combofpowersvalringpfeqn4}), (\ref{sumsofpowersincsubsequencepfeqn7}), and (\ref{sumsofpowersincsubsequencepfeqn6}).
\end{proof}

\begin{lemma}
\label{combofpowersvalring}
Let $K$ be a real-valued field with absolute value $|\cdot|\colon K \rightarrow \mathbf{R}_{\geq 0}$. Suppose given $\lambda_1, \dots, \lambda_n, x_1, \dots, x_n \in K$, and $\alpha > 0$. Suppose also that there is a strictly increasing sequence $\{s_j\}_{j \geq 0}$ of nonnegative integers such that $|\sum_{i=1}^n\lambda_i^{s_j}x_i| > \alpha$ for all $j \geq 0$. Then there exist a strictly increasing sequence $\{r_j\}_{j\geq 0}$ of nonnegative integers and $\beta > \alpha$ such that $|\sum_{i=1}^n\lambda_i^{r_j}x_i| \geq \beta$ for all $j \geq 0$.
\end{lemma}

Lemma \ref{combofpowersvalring} is trivial if the valuation is discrete. The interesting case is when the value group is dense in $\mathbf{R}_{> 0}$, or equivalently, when $A$ is non-Noetherian.

\begin{proof}
By combining terms, we may assume that the $\lambda_i$ are distinct, and we may also assume that all of the $x_i$ are nonzero. In addition, the hypotheses imply that some $\lambda_i$ is nonzero. We may renumber and thereby assume that $|\lambda_1| = \max\{|\lambda_i|\}$. For $1 \leq i \leq n$, let $\mu_i := \lambda_i/\lambda_1$. Then the $\mu_i$ and $x_i$ satisfy the hypotheses of Lemma \ref{sumsofpowersincsubsequence}. There are therefore an integer $g \geq 0$ and a strictly increasing sequence $\{r_j\}_{j \geq 0}$ of nonnegative integers such that
\[
\left| \sum_{i=1}^n \mu_i^{r_j}x_i \right| = \sup_{r \geq g} \left\{\left| \sum_{i=1}^n \mu_i^{r}x_i \right|\right\} > 0 \mbox{ for all } j \geq 0.
\]
Now we break the proof of the lemma up into three cases: $|\lambda_1| < 1$, $|\lambda_1| = 1$, and $|\lambda_1| > 1$.
\\

\noindent \underline{$|\lambda_1| < 1$}: If $|\lambda_1| < 1$, then $|\sum_{i=1}^n \lambda_i^rx_i| \rightarrow 0$ as $r \rightarrow \infty$, in violation of the hypotheses of the lemma, so this is impossible.
\\

\noindent \underline{$|\lambda_1| = 1$}: In this case,
\[
\left|\sum_{i=1}^n \lambda_i^rx_i \right| = \left| \lambda_1^r\sum_{i=1}^n \mu_i^rx_i \right| = \left| \sum_{i=1}^n \mu_i^rx_i \right|.
\]
Thus, the sequence $|\sum_{i=1}^n \lambda_i^{r_j}x_i|$ is constant and equal to the supremum of $|\sum_i \lambda_i^rx_i|$ over $r \geq g$. In particular, each term is $\geq |\sum_i \lambda_i^{r_m}x_i| > \alpha$ for all sufficiently large $m$. Thus, we have a constant sequence of terms $ > \alpha$. It follows that the sequence is bounded away from $\alpha$.
\\

\noindent \underline{$|\lambda_1| > 1$}: We have
\[
\left|\sum_{i=1}^n \lambda_i^rx_i \right| = \left| \lambda_1^r\sum_{i=1}^n \mu_i^rx_i \right|.
\]
Since the sequence $\{|\sum_{i=1}^n \mu_i^{r_j}x_i|\}_{j \geq 0}$ is a positive constant, it follows that the expression on the right side of the above equation tends to $\infty$ for $r = r_j$ as $j \rightarrow \infty$. In particular, for any $\beta > \alpha$, it is eventually greater than $\beta$.
\end{proof}

For a field $K$ equipped with a real-valued absolute value $|\cdot|\colon K \rightarrow \mathbf{R}_{\geq 0}$, we may in the usual manner make $K$ into a topological field (and its valuation subring into a topological ring) by declaring a base of neighborhoods of $0$ to be the sets $\{x \in K \mid |x| < \epsilon\} \subset K$ for $\epsilon > 0$. Then we may topologize $K^n$ by endowing it with the product topology. These are the topologies used in the following lemma.

\begin{lemma}
\label{nonvanishingdense}
Let $K$ be a valued field with a nontrivial absolute value $|\cdot|\colon K \rightarrow \mathbf{R}_{\geq 0}$ $($nontrivial means that, for some $x \in K^{\times}$, one has $|x| \neq 1$$)$, and let $0 \neq F \in K[X_1, \dots, X_n]$. Then the set
\[
\{(\alpha_1, \dots, \alpha_n) \in K^n \mid F(\alpha_1, \dots, \alpha_n) \neq 0\}
\]
is dense in $K^n$.
\end{lemma}

\begin{proof}
Suppose that the set is not dense. This means that $F$ vanishes on some nonempty open subset $U \subset K^n$. By translating, we may assume that $0 \in U$. That is, there is $\epsilon > 0$ such that $F(\alpha_1, \dots, \alpha_n) = 0$ whenever $\alpha_i \in K^n$ satisfy $|\alpha_i| < \epsilon$ for all $i$. Since the valuation on $K$ is nontrivial, there exists $\beta \in K^{\times}$ such that $|\beta| < 1$. Replacing $\beta$ with $\beta^n$ for large $n$, we find that there exists $\beta \in K^{\times}$ with $|\beta| < \epsilon$. Then replacing $F$ by $F(\beta X_1, \dots, \beta X_n)$, we may assume that $F$ vanishes on $A^n$, where $A := \{a \in K \mid |a| \leq 1\}$ is the valuation ring of $K$. We must show that this implies that $F = 0$.

The proof is the same as the standard proof that a nonzero polynomial cannot vanish on all of $K^n$ if $K$ is infinite, but we give it again here for the reader's convenience. We prove by induction on $n$ that a polynomial $G \in K[X_1, \dots, X_n]$ vanishing on $A^n$ is the zero polynomial. We first note that the fact that $K$ is nontrivially valued implies that $K$, hence also $A$, is infinite. In the base case $n = 1$, the result follows from this infinitude. For $n > 1$, assume that the result is true for $n - 1$, and we prove it for $n$. Write
\[
G(X_1, \dots, X_n) = \sum_{i=0}^d G_i(X_1, \dots, X_{n-1}) X_n^i.
\]
For each $(\alpha_1, \dots, \alpha_{n-1}) \in K^{n-1}$, the polynomial $G(\alpha_1, \dots, \alpha_{n-1}, X) \in K[X]$ vanishes on $A$, hence is the zero polynomial by the case $n = 1$. It follows that each $G_i$ vanishes on $A^{n-1}$, hence is the zero polynomial by induction. Hence $G = 0$.
\end{proof}

Using the topology on $K^m$ discussed earlier, we may topologize the set ${\rm{M}}_n(K)$ of $n \times n$ matrices on $K$ by identifying it with $K^{n^2}$.

\begin{lemma}
\label{diagonalizablematricesdense}
Let $K$ be a real-valued field with nontrivial valuation $|\cdot|\colon K \rightarrow \mathbf{R}_{\geq 0}$. Then the geometrically diagonalizable matrices $($i.e., those diagonalizable over an algebraic closure $\overline{K}$ of $K$$)$ are dense in ${\rm{M}}_n(K)$.
\end{lemma}

\begin{proof}
A sufficient condition for geometric diagonalizability is that the characteristic polynomial have no repeated roots, or equivalently, that the discriminant of the characteristic polynomial be nonzero. The discriminant $D$ of the characteristic polynomial is a polynomial function on ${\rm{M}}_n(K) = K^{n^2}$. Further, it is not identically $0$. For example, since $K$ is infinite (because it is nontrivially valued), we may choose $\alpha_1, \dots, \alpha_n \in K$ distinct, and then consider the diagonal matrix with these diagonal entries. Thus, by Lemma \ref{nonvanishingdense}, the complement of the vanishing locus of $D$ is dense in ${\rm{M}}_n(K)$. Since this complement consists entirely of geometrically diagonalizable matrices, the proof is complete.
\end{proof}

For a matrix $M = (a_{ij})_{i, j}$ over a field $K$ equipped with a real absolute value $|\cdot|$, define $|M| := \max_{i, j}\{|a_{ij}|\}$. In particular, for a vector $\mathbf{v} \in K^n$, $|\mathbf{v}| := \max_i\{|v_i|\}$. We now prove a matrix analogue of Lemma \ref{combofpowersvalring}, which says that the set of absolute values of the iterates of a vector under a matrix attains its infimum, provided that this inf is $> 0$.

\begin{lemma}
\label{matrixpowers}
Let $A$ be a real-valued valuation ring with fraction field $K$ and absolute value $|\cdot|\colon K \rightarrow \mathbf{R}_{\geq 0}$, and let $M \in {\rm{M}}_n(A), \mathbf{v} \in K^n$. Suppose that
\[
\alpha := \inf_{r \geq 0} |M^r\mathbf{v}| > 0.
\]
Then there exists an $r_0 \geq 0$ such that $|M^{r_0}\mathbf{v}| = \alpha$.
\end{lemma}

\begin{proof}
If the valuation on $K$ is trivial, then the lemma is obvious. We may therefore assume that the valuation is nontrivial. The valuation on $K$ may be extended to a real-valued valuation on $\overline{K}$, by \cite[Chap.\,4, Th.\,1]{ribenboim}. (Note that, in the statement of that result, by ``valued field'' the author means ``real-valued field.'' See \cite[Chap.\,2, Definition 1]{ribenboim}.) We may therefore assume that $K$ is algebraically closed. Then, by Lemma \ref{diagonalizablematricesdense}, the set of diagonalizable (over $K = \overline{K}$) matrices is dense in ${\rm{M}}_n(K)$.

Choose an $\epsilon > 0$ such that
\begin{equation}
\label{matrixpowerspfeqn2}
\epsilon|\mathbf{v}| < \alpha,
\end{equation}
and let $M' \in {\rm{M}}_n(A)$ be a diagonalizable matrix such that $|M - M'| < \epsilon$. Then we claim that
\begin{equation}
\label{matrixpowerspfeqn1}
|M'^r - M^r| < \epsilon \mbox{ for all } r \geq 0.
\end{equation}
We proceed by induction on $r$, the case $r = 0$ being trivial. If (\ref{matrixpowerspfeqn1}) is true for $r$, then $M' = M+ E$ and $M'^r = M^r + E_r$, where $|E|, |E_r| < \epsilon$. Therefore,
\[
|M'^{r+1} - M^{r+1}| = |(M+E)(M^r+E_r) - M^{r+1}| = |ME_r + EM^r + EE_r| < \epsilon,
\]
since $M, M^r \in {\rm{M}}_n(A)$. This proves (\ref{matrixpowerspfeqn1}). Using (\ref{matrixpowerspfeqn2}), it follows that
\[
|M'^r\mathbf{v} - M^r\mathbf{v}| \leq \epsilon|\mathbf{v}| < \alpha \mbox{ for all } r \geq 0.
\]
Therefore, in order to prove the lemma for $M$, it suffices to prove it for $M'$. We may therefore assume that $M$ is diagonalizable.

We have
\[
M = Q\cdot{\rm{diag}}(\lambda_1, \dots, \lambda_n)\cdot Q^{-1}
\]
for some $Q \in {\rm{GL}}_n(K)$. Then
\begin{equation}
\label{matrixpowerspfeqn5}
M^rv = Q\cdot {\rm{diag}}(\lambda_1^r, \dots, \lambda_n^r)\cdot Q^{-1} v = \left[\sum_{i=1}^n \lambda_i^rc_{ij}\right]_{1 \leq j \leq n}^T
\end{equation}
for some constants $c_{ij} \in K$. Assume for the sake of contradiction that $|M^r\mathbf{v}| > \alpha$ for all $r \geq 0$. Then there is a $1 \leq j_0 \leq n$ such that there is a strictly increasing sequence $\{s_m\}_{m \geq 0}$ of nonnegative integers satisfying
\[
\left| \sum_{i=1}^n\lambda_i^{s_m}c_{ij_0} \right| > \alpha \mbox{ for all } m \geq 0.
\]
Lemma \ref{combofpowersvalring} then furnishes a $\beta > \alpha$ and a strictly increasing sequence $\{r_m\}_{m \geq 0}$ of nonnegative integers such that 
\[
\left| \sum_{i=1}^n\lambda_i^{r_m}c_{ij_0} \right| \geq \beta \mbox{ for all } m \geq 0.
\]
Together with (\ref{matrixpowerspfeqn5}), this implies that $|M^{r_m}\mathbf{v}| \geq \beta$. But since $M \in {\rm{M}}_n(A)$, we have $|M\mathbf{w}| \leq |\mathbf{w}|$ for every $\mathbf{w} \in A^n$. The sequence $|M^r\mathbf{v}|$ is therefore non-increasing, hence the existence of a sequence $\{r_m\}$ as above implies that in fact $|M^r\mathbf{v}| \geq \beta > \alpha$ for all $r \geq 0$, contradicting the fact that $\alpha = \inf_{r \geq 0} |M^r\mathbf{v}|$. This contradiction proves the lemma.
\end{proof}

\section{Smooth schemes over valuation rings}

In this section we prove a useful decomposition result for elements of local rings of smooth schemes over valuation rings (Proposition \ref{localringovervalringprop}). The proof depends crucially upon Lemma \ref{matrixpowers}. We begin with a simple divisibility lemma.

\begin{lemma}
\label{primeidealdivisibility}
Let $A$ be a valuation ring, $\mathfrak{p} \subset A$ a prime ideal. Then for every $a \in \mathfrak{p}$, and every $x \in A - \mathfrak{p}$, we have $a = xy$ for some $y \in \mathfrak{p}$.
\end{lemma}

\begin{proof}
If $a\mid x$, then $x \in \mathfrak{p}$, contrary to hypothesis. Since $A$ is a valuation ring, it follows that $x \mid a$, so that $a = xy$ for some $y \in A$. Since $xy \in \mathfrak{p}$ and $x \notin \mathfrak{p}$, it follows that $y \in \mathfrak{p}$.
\end{proof}

We now come to the main result of this section.

\begin{proposition}
\label{localringovervalringprop}
Let $A$ be a valuation ring with maximal ideal $\mathfrak{m}$, and let $B$ be a localization of a smooth $A$-algebra such that the special fiber of ${\rm{Spec}}(B) \rightarrow {\rm{Spec}}(A)$ is nonempty. Then every $b \in B$ may be written as a product $b = ab'$ for some $a \in A$ and $b' \in B - \mathfrak{m}B$.
\end{proposition}

The utility of Proposition \ref{localringovervalringprop} is that it often allows one to reduce questions about the local structure of smooth schemes over valuation rings to the generic fiber, which is a smooth scheme over a field. See especially Proposition \ref{genericunitalmost=unit}.

\begin{proof}
\underline{Step 1}: We first treat the easy case in which $B$ is a localization of a polynomial ring $A[X_1, \dots, X_n]$. So let $S \subset A[X_1, \dots, X_n]$ be a multiplicative subset disjoint from $\mathfrak{m}\cdot A[X_1, \dots, X_n]$, and let $B := S^{-1}\cdot A[X_1, \dots, X_n]$. Since $A$ is a valuation ring, any element of $A[X_1, \dots, X_n]$ may be written in the form $a\cdot F$, where $a \in A$ and $F \in A[X_1, \dots, X_n] - \mathfrak{m}\cdot A[X_1, \dots, X_n]$. Thus, any $b \in B$ may be written as a product $a\cdot F/s$, where $a \in A$, $F \in A[X_1, \dots, X_n] - \mathfrak{m}\cdot A[X_1, \dots, X_n]$, and $s \in S$. We claim that $F/s \notin \mathfrak{m}B$. Indeed, otherwise we would have $F/s = xH/s'$ for some $x \in \mathfrak{m}$, $H \in A[X_1, \dots, X_n]$, and $s' \in S$. We then obtain that $Fs' = xHs \in \mathfrak{m}\cdot A[X_1, \dots, X_n]$, but $F, s' \notin \mathfrak{m}\cdot A[X_1, \dots, X_n]$. This is impossible because $\mathfrak{m}\cdot A[X_1, \dots, X_n] \subset A[X_1, \dots, X_n]$ is a prime ideal (since the quotient by it is $k[X_1, \dots, X_n]$, where $k := A/\mathfrak{m}$). This completes the proof for localizations of polynomial rings.
\\

\noindent \underline{Step 2}: Now we prove the proposition in the key case when $A$ is a valuation ring of height $\leq 1$ -- that is, when $A$ is the valuation ring associated to a real absolute value $|\cdot|\colon K \rightarrow \mathbf{R}_{\geq 0}$, where $K := {\rm{Frac}}(A)$. We first treat the special case in which $B$ is standard \'etale domain over a polynomial ring $A[X_1, \dots, X_n]$ and $\mathfrak{m}B \subset B$ is a prime ideal. That is, $B = B'_g$ is a domain, where $B' := C[X]/(F)$, $C := A[X_1, \dots, X_n]$, $F \in C[X]$ is monic, $g \in B'$, and $C \rightarrow B$ is \'etale, and $\mathfrak{m}B \subset B$ is a prime ideal. Note in particular that $B'$ is a finite free $C$-algebra, and that, by Step 1, the proposition is true for $C$. Further, since every element of $B$ is the product of a unit and an element in the image of the map $i: B' \rightarrow B$, it suffices to show that every element $i(b) \neq 0$ with $b \in B'$ may be written as the product of an element of $A$ and an element of $B - \mathfrak{m}B$.

Note that the proposition for the ring $C$ implies that the local ring $C_{\mathfrak{m}C}$ is a valuation ring with the same value group as $A$. More precisely, if $c \in C$ and $c = ac'$ with $a \in A$, $c' \in C - \mathfrak{m}C$, then $|c| = |a|$. We therefore see that $C_{\mathfrak{m}C}$ is a height-$1$ valuation ring. Recall that $B'$ is a finite free $C$-module. Let $e_1, \dots, e_n \in B'$ be a $C$-basis. Then for each $r \geq 0$, write
\[
g^rb = \sum_{i=1}^n c_{ir}e_i, \hspace{.1 in} c_{ir} \in C.
\]
Let
\[
\alpha := \inf_{r \geq 0} \{\max_{1 \leq i \leq n} |c_{ir}|\},
\]
where we consider the $c_{ir}$ as elements of $C_{\mathfrak{m}C}$ so as to make sense of the above absolute values. If $i(b) \neq 0$, then we claim that $\alpha > 0$. Indeed, \cite[Tag 0ASJ]{stacks} implies that $B_{\mathfrak{m}B}$ is a valuation ring, and that the map $C_{\mathfrak{m}C} \rightarrow B_{\mathfrak{m}B}$ is an extension of valuation rings which induces an isomorphism of value groups. If $\alpha = 0$, then it follows that $|i(b)| = 0$, hence $i(b) = 0 \in B_{\mathfrak{m}B}$, hence $i(b) = 0$ because $B$ is an integral domain by hypothesis. We may therefore assume that $\alpha > 0$.

Let $M$ be the matrix of the $C$-linear map $B' \rightarrow B'$, $b' \mapsto gb'$, with respect to the basis $\{e_i\}$. Let $\mathbf{v} := [c_{i0}]_{1 \leq i \leq n}$. Then $\inf_{r \geq 0}|M^r\mathbf{v}| = \alpha > 0$. Lemma \ref{matrixpowers} (applied to the valuation ring $C_{\mathfrak{m}C}$) implies that the infimum $\alpha$ is attained. That is, there is $r_0 \geq 0$ such that
\[
\max_{1 \leq i \leq n} |c_{ir_0}| = \alpha = \inf_{r \geq 0} \{\max_{1 \leq i \leq n} |c_{ir}|\}.
\]
Renumbering, we may assume that $|c_{1r_0}| = \max_{1 \leq i \leq n} |c_{ir_0}|$. Writing $c_{ir_0} = a_ic_i$ for some $a_i \in A$ and $c_i \in C - \mathfrak{m}C$ via Step 1, we then have $a_1 \mid a_i$ for all $1 \leq i \leq n$. Therefore, $a_1 \mid g^{r_0}b$ in $B'$, hence $a_1 \mid b$ in $B = B'_g$. Write $b = a_1b'$ with $b' \in B$. Then we claim that $b' \notin \mathfrak{m}B$, which will complete the proof of Step 2.

Indeed, suppose for the sake of contradiction that $b' \in \mathfrak{m}B$. This means that there exist $r\geq 0$, $a' \in \mathfrak{m}$, and $b'' \in B'$ such that $g^rb' = a'b''$ in $B'$. Multiplying by $a_1$, we obtain $g^rb = a_1a'b''$. Writing $b''$ in terms of the $C$-basis $\{e_i\}$ for $B'$, this implies that
\[
c_{ir} \in a_1\cdot\mathfrak{m}C, \hspace{.1 in} 1 \leq i \leq n.
\]
Therefore, 
\[
|c_{ir}| < |a_1| = \alpha = \inf_{r \geq 0} \{\max_{1 \leq i \leq n} |c_{ir}|\},
\]
for all $1 \leq i \leq n$, a contradiction. This completes the proof of Step 2 when $B$ is standard \'etale over a polynomial ring.

Now we prove Step 2 in general. Choose $x \in {\rm{Spec}}(B)$ in the special fiber above ${\rm{Spec}}(A)$. Choose a neighborhood ${\rm{Spec}}(B_x)$ of $x$ such that $B_x$ and $B_x/\mathfrak{m}B_x$ are integral domains (possible because $B_x/\mathfrak{m}B_x$ is smooth over $k := A/\mathfrak{m}$ and $B_x$ is smooth over the normal domain $A$). Choose a standard \'etale algebra $B'_x$ over $A[X_1, \dots, X_n]$ of which $B_x$ is a localization. Again, since $B'_x/\mathfrak{m}B'_x$ is smooth over $k$, we may replace $B'_x$ by an open neighborhood of $x \in {\rm{Spec}}(B'_x)$ which is still standard \'etale, and thereby ensure that $B'_x/\mathfrak{m}B'_x$ is an integral domain. In particular, the map 
\begin{equation}
\label{localringovervalringproppfeqn31}
B'_x/\mathfrak{m}B'_x \rightarrow B_x/\mathfrak{m}B_x
\end{equation}
is injective. Since every element of $B_x$ is the product of a unit and an element lying in the image of $B'_x \rightarrow B_x$, and since the map (\ref{localringovervalringproppfeqn31}) is injective, in order to prove the proposition for the ring $B_x$ it suffices to prove it for the ring $B'_x$. Since $B'_x$ is standard \'etale over a polynomial ring over $A$ and $\mathfrak{m}B'_x \subset B'_x$ is a prime ideal, we have proven the proposition for $B'_x$, and therefore also for $B_x$.

We therefore see that, for every $x$ in the special fiber of ${\rm{Spec}}(B)$, there exist a neighborhood ${\rm{Spec}}(B_x) \subset {\rm{Spec}}(B)$ of $x$ and elements $a_x \in A$ and $b_x \in B_x - \mathfrak{m}B_x$ such that $b = a_xb_x$. Choose finitely many $x_1, \dots, x_n$ such that the union of the ${\rm{Spec}}(B_{x_i})$ contains the special fiber of ${\rm{Spec}}(B)$. Let $i_0$ be such that $|a_{x_{i_0}}| = \max_{1 \leq i \leq n} |a_{x_i}|$. Finally, write $b = a_{x_{i_0}}\beta$ in $K \otimes_A B$, and let $b'_i := (a_{x_i}/a_{x_{i_0}})b_{x_i} \in B_{x_i}$. We claim that the sections $\beta$ on ${\rm{Spec}}(K \otimes_A B)$ and $b'_i$ on ${\rm{Spec}}(B_{x_i})$ glue to give a global section $b \in B$. Indeed, we have $b = a_{x_{i_0}}b'_i$ and $b = a_{x_{i_0}}\beta$ on the relevant open subschemes. Since ${\rm{Spec}}(B)$ and its open subschemes are $A$-flat, and therefore $A$-torsion free, it follows that the $b'_i$ and $\beta$ agree on the intersections of their domains of definition, hence glue to give a global section $b'$, as claimed. This section satisfies $b = a_{x_{i_0}}b'$ (since this holds on each of the above open subsets), and it only remains to show that $b' \notin \mathfrak{m}B$. This follows from the fact that this holds on the nonempty open subsets ${\rm{Spec}}(B_{x_i})$.
\\

\noindent \underline{Step 3}: Now we prove the proposition for valuation rings of finite height. This is height (or dimension) in the algebraic sense. (In the case of height $\leq 1$, this notion agrees with the earlier notion of being the valuation ring associated to a real-valued valuation \cite[Def.\,I.1.3.1, Prop.\,I.1.3.6, Cor.\,I.1.4.5]{morel}. Let $b \in B$. If $b = 0$, then the proposition is trivial, so we may assume that $b \neq 0$. Let $0 = \mathfrak{p}_0 \subsetneq \mathfrak{p}_1 \subsetneq \dots \subsetneq \mathfrak{p}_n = \mathfrak{m}$ be the prime ideals of $A$. Suppose that $b \notin \mathfrak{p}_iB$. We prove the proposition by descending induction on $i$, the base case $i = n$ being trivial.

So suppose that $0 \leq i < n$, and that the proposition holds for $b \notin \mathfrak{p}_{i+1}B$. If $b \notin \mathfrak{p}_{i+1}B$, then the induction hypothesis implies the proposition for $b$, so we may assume that $b \in \mathfrak{p}_{i+1}B - \mathfrak{p}_iB$. Let $A' := A_{\mathfrak{p}_{i+1}}/\mathfrak{p}_{i}A_{\mathfrak{p}_{i+1}}$. Then $A$ is a height one valuation ring. Indeed, it is one-dimensional, and it is a valuation ring because the property of being a valuation ring is preserved by localization and by taking the quotient by a prime ideal. Let $B' := B \otimes_A A'$, and let $j: B \rightarrow B'$ be the natural map. 

Since $A'$ is a height one valuation ring, Step 2 implies that $j(b) = a_1b_1$ for some $a_1 \in A'$ and $b_1 \in B' - \mathfrak{p}_{i+1}B'$. It follows that there exist  $x \in A - \mathfrak{p}_{i+1}$, $a_2 \in \mathfrak{p}_{i+1}$, $b_2 \in B - \mathfrak{p}_{i+1}B$, $a_3 \in \mathfrak{p}_{i}$, and $b_3 \in B$ such that 
\begin{equation}
\label{xb=a_2b_2eqn3}
xb = a_2b_2 + a_3b_3.
\end{equation}
We claim that $a_2 \notin \mathfrak{p}_i$. For otherwise we would have $xb \in \mathfrak{p}_iB$. Since $B/\mathfrak{p}_iB$ is a local ring of a smooth scheme over the domain $A/\mathfrak{p}_i$, it too is a domain, hence $\mathfrak{p}_iB \subset B$ is a prime ideal. It would therefore follow, since $b \notin \mathfrak{p}_iB$, that $x \in \mathfrak{p}_iB$. But we claim that $A \cap \mathfrak{p}B = \mathfrak{p}$ for any prime ideal $\mathfrak{p} \subset A$. It would then follow that $x \in \mathfrak{p}_i \subset \mathfrak{p}_{i+1}$, which is false. To prove the claim, we note that it is equivalent to the injectivity of the map $A/\mathfrak{p} \rightarrow B/\mathfrak{p}B$. This in turn follows from the fact that this map is faithfully flat, since $B$ is faithfully flat over $A$. We therefore have $a_2 \in \mathfrak{p}_{i+1} - \mathfrak{p}_i$, as desired.

Since $x \notin \mathfrak{p}_{i+1}$ (and therefore also $x \notin \mathfrak{p}_i$), Lemma \ref{primeidealdivisibility} implies that $a_2 = xa_2'$ and $a_3 = xa_3'$ for some $a_2' \in \mathfrak{p}_{i+1} - \mathfrak{p}_i$ and some $a_3' \in \mathfrak{p}_i$. Since $B$ is flat over $A$, it is $A$-torsion free, hence (\ref{xb=a_2b_2eqn3}) implies that
\[
b = a_2'b_2 + a_3'b_3.
\]
Once again applying Lemma \ref{primeidealdivisibility} implies that $a_3' = a_2'y$ for some $y \in \mathfrak{p}_i$. Thus,
\[
b = a_2'(b_2 + yb_3).
\]
Since $b_2 \in B- \mathfrak{p}_{i+1}B$ and $y \in \mathfrak{p}_i$, it follows that $b_4 := b_2 + yb_3 \in B - \mathfrak{p}_{i+1}B$. Thus, $b = a_2'b_4$ with $a_2' \in A$ and $b_4 \in B - \mathfrak{p}_{i+1}B$. The induction hypothesis now implies that $b_4 = a_4b_5$ for some $a_4 \in A$ and $b_5 \in B - \mathfrak{m}B$. We therefore obtain the desired decomposition for $b$.
\\

\noindent \underline{Step 4}: Now we prove the proposition in general. Let $b \in B$. Let $D$ be a smooth $A$-algebra of which $B$ is a localization such that $b \in D$ (more precisely, $b$ comes from an element of $D$). Shrinking ${\rm{Spec}}(D)$, we may assume that the map $D \otimes_A k \rightarrow B \otimes_A k$ is injective, where $k := A/\mathfrak{m}$. The ring $A$ is the filtered direct limit of its valuation subrings of finite height \cite[Cor.\,2.1.3]{temkin}. The $A$-algebra $D$ and the section $b$ descend to such a subring. That is, there exist a valuation subring $A' \subset A$ of finite height, a smooth $A'$-algebra $D'$, and $b' \in D'$ such that $D = D' \otimes_{A'} A$ and $b' \mapsto b$. The desired result therefore follows from Step 3, provided that the maps
\[
D' \otimes_{A'} k' \longrightarrow D \otimes_A k \longrightarrow B \otimes_A k
\]
are all injective, where $k'$ is the residue field of $A'$. The second map is injective by choice of $D$. For the first map, use the inclusion $k' \hookrightarrow k$ and the $A'$-flatness of $D'$.
\end{proof}

\begin{remark}
\cite[Tag 0ASJ]{stacks} contains an assertion similar to Proposition \ref{localringovervalringprop}. It says that if $B$ is a local ring of an algebra \'etale over the valuation ring $A$, and $A \rightarrow B$ is a local homomorphism, then $B$ is a valuation ring extension of $A$ with the same value group. In our case, when we merely assume $B$ to be a local ring of a smooth algebra, we have that $B$ is a localization of an algebra \'etale over $C := A[X_1, \dots, X_n]$ for some $n \geq 0$. Since $C_{\mathfrak{m}C}$ is a valuation ring extension of $A$ with the same value group, it follows that $B_{\mathfrak{m}B}$ is a valuation ring with the same value group as $A$. Therefore, for any $b \in B$ we may write $b_1b = ab_2$ for some $a \in A$ and some $b_1, b_2 \in B - \mathfrak{m}B$. The seemingly innocuous improvement of Proposition \ref{localringovervalringprop} -- that one may take $b_1 = 1$ -- turns out to be much more difficult.
\end{remark}

We shall have occasion to use the following consequence of Proposition \ref{localringovervalringprop}.

\begin{proposition}
\label{genericunitalmost=unit}
Let $A$ be a valuation ring with fraction field $K$, $A \rightarrow B$ a local homomorphism of local rings such that $B$ is $($as an $A$-algebra$)$ a local ring of a smooth $A$-scheme. Then
\[
(K \otimes_A B)^{\times} = K^{\times} \cdot B^{\times} := \{ab \mid a \in K^{\times}, b \in B^{\times} \}.
\]
\end{proposition}

\begin{proof}
Let $\beta \in (K \otimes_A B)^{\times}$. Using Proposition \ref{localringovervalringprop}, any element of $K \otimes_A B$ may be written as a product $\gamma b$ for some $\gamma \in K$ and $b \in B - \mathfrak{m}B$. We may therefore assume that $\beta = b \in B - \mathfrak{m}B$. By assumption, there exists a $\beta' \in K \otimes_A B$ such that $b \beta' = 1$. Writing $\beta' = \gamma' b'$ for some $\gamma' \in K$, $b' \in B - \mathfrak{m}B$, we obtain $bb' \in K$. Since $\mathfrak{m}B$ is a prime ideal of $B$ (because $B/\mathfrak{m}B$ is a local ring of a smooth $k := A/\mathfrak{m}$-algebra), $bb' \in B - \mathfrak{m}B$. We claim that $K \cap (B - \mathfrak{m}B) = A^{\times}$. Assuming this, it follows that $bb' \in A^{\times}$, hence $b \in B^{\times}$, and the proof would be complete.

So let $\alpha \in K \cap (B- \mathfrak{m}B)$. We will first show that $\alpha \in A$. Indeed, if $\alpha \notin A$, then, because $A$ is a valuation ring, $\alpha^{-1} \in A$. Since $\alpha \in B$, it follows that $\alpha^{-1} \in B^{\times}$. But $A \rightarrow B$ is a local homomorphism, so an element of $A$ which maps to $B^{\times}$ lies in $A^{\times}$. It follows that $\alpha^{-1} \in A^{\times}$, hence $\alpha \in A$, contradicting our assumption that $\alpha \notin A$. Hence $\alpha \in A$. Since $\alpha \notin \mathfrak{m}B$, it follows that $\alpha \notin \mathfrak{m}$, hence $\alpha \in A^{\times}$.
\end{proof}

\chapter{Profinite Completions}
\label{chapterprofinitecompletions}

This appendix discusses some generalities that we require on profinite completions of topological abelian groups, and especially their behavior in exact sequences. We need to understand such completions because several of the main duality results of this manuscript involve profinite completions of various groups (generally groups of rational points), and several d\'evissage arguments will require an understanding of how the process of profinite completion behaves with respect to exact sequences.

\section{Right-exactness of profinite completion}
\label{profiniterightexactsection}

In this section we prove that profinite completion is right-exact. Before we do this, let us recall some basic definitions. An abelian topological group $A$ is said to be 
{\em profinite} if it is compact and totally disconnected. (One can make this definition for nonabelian groups also, but we will not need this and therefore will restrict ourselves to the simpler abelian case.) This is equivalent to saying that the canonical map $A \rightarrow \varprojlim_H A/H$ is an isomorphism, where the limit is over all open (equivalently, closed) subgroups $H \subset A$ of finite index \cite[Ch.\,I, \S1.1, Prop 0]{serre} (hence the term ``profinite''). 

\begin{remark}
\label{profinitecompletiondef}
Given an arbitrary abelian topological group $A$, the {\em profinite completion} $A_{\pro}$ of $A$ is the initial profinite group with a map from $A$. That is, we have a continuous homomorphism $\phi:  A \rightarrow A_{\pro}$ such that any (continuous) homomorphism $A \rightarrow B$ with $B$ profinite factors uniquely as a composition 
of continuous homomorphisms $A \rightarrow A_{\pro} \rightarrow B$. Then $A_{\pro}$ is clearly unique, and it also exists:  we have $A_{\pro} = \varprojlim_H A/H$, where the limit is over all open subgroups $H \subset A$ of finite index. If $A$ is an abelian group without any topology, then $A_{\pro}$ is taken to mean the profinite completion of $A$ with the discrete topology.
\end{remark}

\begin{lemma}
\label{profinitequotient}
Let $A$ be a profinite abelian group, $B \subset A$ a compact subgroup. Then $A/B$ endowed with the quotient topology is profinite.
\end{lemma}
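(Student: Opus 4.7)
The plan is to verify the three defining properties of profinite groups (compact, Hausdorff, totally disconnected) for $A/B$, where the first two are formal and the real content lies in total disconnectedness, which we will obtain by producing a neighborhood base of open subgroups at the identity.

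First, since $A$ is compact, so is its continuous image $A/B$. Next, $B$ is compact inside the Hausdorff group $A$, hence closed, so $A/B$ is Hausdorff by standard topological group theory. It therefore remains to show that $A/B$ is totally disconnected, and for a Hausdorff topological group it suffices to exhibit a neighborhood base of the identity consisting of open subgroups.

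So let $\pi \colon A \to A/B$ denote the quotient map and let $U \subset A/B$ be an open neighborhood of the identity; then $\pi^{-1}(U) \subset A$ is an open set containing $B$. Because $A$ is profinite, the open subgroups of $A$ form a neighborhood base of the identity, so for each $b \in B$ there is an open subgroup $V_b \subset A$ with $b + V_b \subset \pi^{-1}(U)$. By compactness of $B$, finitely many translates $b_1 + V_{b_1}, \dots, b_n + V_{b_n}$ cover $B$, and $V := V_{b_1} \cap \cdots \cap V_{b_n}$ is then an open subgroup of $A$ satisfying $B + V \subset \pi^{-1}(U)$: indeed, every $b \in B$ has the form $b_i + v$ for some $i$ and $v \in V_{b_i}$, so $b + V \subset b_i + V_{b_i} \subset \pi^{-1}(U)$. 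Since $A$ is abelian, $B + V$ is a subgroup of $A$ containing $B$, open because $V$ is, so $\pi(B + V) = (B+V)/B$ is an open subgroup of $A/B$ contained in $U$.

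This produces a neighborhood base of the identity in $A/B$ consisting of open subgroups, so $A/B$ is totally disconnected, completing the proof that it is profinite. The only step requiring any care is the compactness argument yielding the single open subgroup $V$ with $B + V \subset \pi^{-1}(U)$; once that is in place, abelianness of $A$ (so that $B+V$ is a subgroup) and the elementary properties of the quotient map finish the argument.
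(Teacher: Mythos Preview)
Your proof is correct. The paper's argument is close in spirit but organized differently: rather than producing a neighborhood base of open subgroups in $A/B$, the paper verifies total disconnectedness directly by showing that any $a \in A \setminus B$ can be separated from $B$ by a clopen set, using the clopen sets of $A$ and a compactness-of-$B$ argument to pass to a finite intersection. Your approach instead exploits the stronger structural fact that open subgroups form a neighborhood base in a profinite group, and then uses abelianness to ensure $B+V$ is a subgroup, yielding open subgroups of $A/B$ directly. Both routes hinge on the same compactness-of-$B$ finite-intersection trick; yours has the mild advantage of immediately exhibiting the open-subgroup neighborhood base (the ``strong'' form of profiniteness), while the paper's avoids any appeal to the group structure beyond what is needed for the quotient and so would work verbatim in a non-abelian setting.
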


\begin{proof}
We first claim that the map $\pi:  A \rightarrow A/B$ is both open and closed. Indeed, for closedness, the compactness of $A$ implies that a closed subset of $A$ is compact, hence has compact image in $A/B$. Since $B \subset A$ is closed, $A/B$ is Hausdorff, hence this image is closed. For openness, let $V \subset A$ be open. We need to show that $\pi(V) \subset A/B$ is open. By definition, this is equivalent to saying that $\pi^{-1}(\pi(V)) = V + B \subset A$ is open. But $V + B = \cup_{b \in B} (V + b)$, a union of open sets, hence is open.

Since $A$ is compact, so is $A/B$. It therefore only remains to show that $A/B$ is totally disconnected. For this, it suffices to show that if $a \in A - B$, then there is a clopen subset $U \subset A$ such that $a \in U$ and $U \cap B = \emptyset$. Indeed, since $\pi$ is a clopen map, it then follows that $\pi(U)$ is a clopen neighborhood of the image of $a$ not containing $0$. Since $A$ is totally disconnected, for each $b \in B$ there exists a clopen set $U_b \subset A$ such that $a \in U_b$ and $b \notin U_b$. We claim that there are finitely many $b_1, \dots, b_n \in B$ such that $U_{b_1} \cap \dots \cap U_{b_n} \cap B = \emptyset$. Assuming this, we may take $U : = U_{b_1} \cap \dots \cap U_{b_n}$ as our desired clopen subset. If the claim were false, then the sets $\{U_b \cap B\}_{b \in B}$ would be a collection of closed subsets of the compact set $B$ satisfying the finite intersection property, which is impossible since $B$ is compact and $B \cap (\cap_{b \in B} U_b) = \emptyset$. 
\end{proof}

Now we turn to the exactness properties of profinite completion, which are the main concern of this chapter. 

\begin{definition}
A sequence
\[
0 \longrightarrow A' \longrightarrow A \longrightarrow A'' \longrightarrow 0
\]
of topological abelian groups is said to be {\em short exact} if it is exact as a sequence of abelian groups, the map $A' \rightarrow A$ is a topological embedding, and the quotient map $A/A' \rightarrow A''$ is a topological isomorphism. We say that the sequence
\[
A' \xlongrightarrow{\phi} A \longrightarrow A'' \longrightarrow 0
\]
of continuous maps is {\em right-exact} if it is exact as a sequence of groups and if the map $A/\phi(A') \rightarrow A''$ is a topological isomorphism, where $\phi(A') \subset A$ is endowed with the subspace topology. Let TAb denote the category of abelian topological groups. We say that a functor $F:  {\rm{TAb}} \rightarrow {\rm{TAb}}$ is 
{\em right-exact} if it sends short-exact sequences to right-exact sequences.
\end{definition}

The main result of this section is the following.

\begin{proposition}
\label{profiniterightexact}
The functor ${\rm{TAb}} \rightarrow {\rm{TAb}}$ given by $A \rightsquigarrow A_{\pro}$ is right-exact.
\end{proposition}

\begin{proof}
Suppose that we have an inclusion of abelian topological groups $\phi:  B \hookrightarrow A$
(where $B$ has the subspace topology). We need to show that the natural continuous map $A_{\pro}/\phi(B_{\pro}) \rightarrow (A/B)_{\pro}$ is a topological isomorphism. We will do this by constructing an inverse. We have a natural
continuous map $A/B \rightarrow A_{\pro}/\phi(B_{\pro})$. The group $A_{\pro}/\phi(B_{\pro})$ is profinite by Lemma \ref{profinitequotient}, so we obtain a continuous map $(A/B)_{\pro} \rightarrow A_{\pro}/\phi(B_{\pro})$. To see that this is inverse to the other map, we note that it suffices to check that composition in either direction is the identity on the dense image of $A$ inside both groups, and this is clear.
\end{proof}

\section{Conditions under which profinite completion preserves exactness}

As we saw in the previous section, profinite completion is right-exact. It is not, however, exact. Indeed, consider the inclusion $\Z \rightarrow \Q$. The profinite completion $\Z_{\pro}$ of $\Z$ is nonzero, but $\Q_{\pro} = 0$ because $\Q$ is divisible (hence has no nontrivial finite quotients). The goal of this section is to give some useful conditions ensuring that exactness is preserved by profinite completion. Here is the first.

\begin{proposition}
\label{profiniteexactfgquotient}
Suppose that we have a short exact sequence of abelian topological groups
\[
0 \longrightarrow A' \longrightarrow A \xlongrightarrow{f} A'' \longrightarrow 0
\]
such that $A''$ is discrete and finitely generated. Then the induced sequence
\[
0 \longrightarrow A'_{\pro} \longrightarrow A_{\pro} \longrightarrow A''_{\pro} \longrightarrow 0
\]
is also short exact.
\end{proposition}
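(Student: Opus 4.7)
The plan is to combine the right-exactness provided by Proposition \ref{profiniterightexact} with a direct proof that $A'_{\pro} \to A_{\pro}$ is injective. Proposition \ref{profiniterightexact} applied to the inclusion $A' \hookrightarrow A$ furnishes the topological isomorphism $A_{\pro}/\mathrm{image}(A'_{\pro}) \xrightarrow{\sim} A''_{\pro}$, and in particular surjectivity of $A_{\pro} \to A''_{\pro}$ together with the correct identification of the quotient, so once injectivity on the left is established the whole sequence is short exact. Thus the injectivity of $A'_{\pro} \to A_{\pro}$ is the entire substance of the proposition and the main obstacle.

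I would reduce this injectivity to the following claim: every open finite-index subgroup $H' \subset A'$ has the form $H \cap A'$ for some open finite-index subgroup $H \subset A$. Indeed, if $x \in A'_{\pro}$ maps to $0 \in A_{\pro}$, then its image in $A/H$ vanishes for every open finite-index $H \subset A$; granting the claim, the image $x_{H'} \in A'/H'$ factors through the injection $A'/(H \cap A') \hookrightarrow A/H$, and so vanishes for every such $H'$, forcing $x = 0$.

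To prove the claim I would exploit the structure of the discrete finitely generated group $A''$. Write $A'' \cong \Z^r \oplus F$ with $F$ finite, and let $B := f^{-1}(F) \subset A$; since $A''$ is discrete, $B$ is open in $A$, and the index $[B:A'] = |F|$ is finite. Lift the standard generators of $\Z^r \subset A''$ to elements of $A$ (possible because $f$ is surjective) and extend $\Z$-linearly to a homomorphism $s \colon \Z^r \to A$ lifting the inclusion $\Z^r \hookrightarrow A''$. Then $s$ is automatically continuous (as $\Z^r$ is discrete), and the decomposition $A = B \oplus s(\Z^r)$ holds both algebraically and topologically, where the algebraic direct-sum assertion uses $f \circ s = \mathrm{id}_{\Z^r}$ together with $\Z^r \cap F = 0$ inside $A''$.

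Given $H' \subset A'$ open of finite index, the composite index $[B : H'] = |F| \cdot [A' : H']$ is finite, so for any $n \geq 1$ the subgroup $H := H' + s(n\Z^r)$ has finite index in $A$: its quotient is $(B/H') \oplus (s(\Z^r)/s(n\Z^r))$, a sum of two finite groups. It is open as a union of translates of the open set $H'$ (note $A'$, hence $H'$, is open in $A$ since $A''$ is discrete). Finally, the same calculation showing $B \cap s(\Z^r) = 0$ gives $H \cap A' = H'$, which completes the claim and hence the proposition. The only step that requires any care is arranging the splitting $s$, but this is automatic from the freeness of $\Z^r$ together with the discreteness of $A''$; finite generation of $A''$ enters precisely to guarantee that $s(n\Z^r)$ has finite index in $s(\Z^r)$.
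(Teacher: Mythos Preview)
Your proposal is correct and takes essentially the same approach as the paper: reduce to injectivity via right-exactness, then show every open finite-index $H' \subset A'$ arises as $H \cap A'$ by lifting generators of the free part of $A''$ and adjoining them to $H'$. The paper does this slightly more directly by setting $H = \langle a_1, \ldots, a_r, H' \rangle$ (your $n=1$ case, without passing through $B = f^{-1}(F)$), but the content is the same.
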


\begin{proof}
By Proposition \ref{profiniterightexact}, we only need to check that the map $A'_{\pro} \rightarrow A_{\pro}$ is a topological embedding. We only have to show that this continuous map is injective, as it is then necessarily a homeomorphism onto its image because the source is compact and the target Hausdorff. Let $H' \subset A'$ be an open subgroup of finite index. It will suffice to find a finite index open subgroup $H \subset A$ such that $H \cap A' = H'$. Let $A'' = F \times T$ with $F$ free of finite rank and $T$ finite. Let $a_1 \dots, a_n \in A$ be such that the elements $f(a_i) \in A''$ form a set of free generators for $F$. We take $H = \langle a_1, \dots, a_n, H' \rangle$. Then $H$ is open because $H'$ is open in $A$ (since $A' \subset A$ is open due to the discreteness of $A''$). Suppose that $b \in A' \cap H$. Then $b = h' + \sum_{i=1}^n r_ia_i$ for some $h' \in H', r_i \in \Z$. Since $b \in A'$, we have $0 = f(b) = \sum_{i=1}^n r_if(a_i)$, so $r_i = 0$ for every $i$. Therefore, $b = h' \in H'$, so $H \cap A' = H'$. Finally, $H$ is of finite index inside $A$ because we have an exact sequence
\[
0 \longrightarrow A'/H' \longrightarrow A/H \longrightarrow A''/f(H) \longrightarrow 0
\]
and $A'/H'$ is finite, as is $A''/f(H) \simeq T$.
\end{proof}

We will also require another result  (Proposition \ref{profiniteexact} below) that, at least when all of the groups are discrete, strengthens Proposition \ref{profiniteexactfgquotient}. Before turning to the proof, we require several lemmas.

\begin{lemma}
\label{finiteexpeltsinsubgp}
Let $A$ be an abelian group of finite exponent, and suppose that $0 \neq a \in A$ lies in every nonzero subgroup of $A$. Then $A \simeq \Z/p^n\Z$ for some prime $p$ and some $n > 0$. 
\end{lemma}

\begin{proof}
First assume that $A$ is finite. It suffices to show that we cannot have $A \simeq A_1 \times A_2$ with $A_1, A_2$ nontrivial. If such an isomorphism exists then $a \in (A_1 \times 0) \cap (0 \times A_2) = 0$, a contradiction.

Now consider the general case. By the previous paragraph, it suffices to show that $A$ is finite. In order to show this, in turn, it suffices to show that, if $N$ is the exponent of $A$, then for every finite collection $a, b_1, \dots, b_m \in A$ of elements of $A$ containing $a$, the subgroup $B$ that they generate has size at most $N$. Since $A$ is of finite exponent, $B$ must be finite. Since $a$ is contained in every nonzero subgroup of $B$, it follows from the first paragraph that the finite $B$ must be cyclic, of order $r$, say. But $N$ kills $B$, so $r|N$. Thus, $\# B = r \leq N$, as desired.
\end{proof}

\begin{lemma}
\label{maximalsubgp}
Let $A$ be an abelian group, $0 \neq a \in A$. Then there exists a subgroup $B \subset A$ maximal with respect to the property of not containing $a$. That is, $a \notin B$, and if $B' \supsetneq B$ is a strictly larger subgroup of $A$, then $a \in B'$.
\end{lemma}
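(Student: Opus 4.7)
The plan is to apply Zorn's Lemma to the poset of subgroups avoiding $a$. More precisely, let $\mathcal{S}$ denote the collection of subgroups $B \subset A$ such that $a \notin B$, partially ordered by inclusion. This set is non-empty, since the trivial subgroup $\{0\}$ lies in $\mathcal{S}$ (we are using here that $a \neq 0$). A maximal element of $\mathcal{S}$ is exactly what the lemma asks for, so it suffices to verify the hypothesis of Zorn's Lemma, namely that every chain in $\mathcal{S}$ admits an upper bound in $\mathcal{S}$.

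Given a chain $\{B_i\}_{i \in I}$ in $\mathcal{S}$, I would take $B := \bigcup_{i \in I} B_i$. This is a subgroup of $A$: if $x, y \in B$, then $x \in B_i$ and $y \in B_j$ for some $i, j \in I$, and since the chain is totally ordered we may assume $B_i \subset B_j$, so that $x - y \in B_j \subset B$. Moreover, $a \notin B$, since if $a \in B$ then $a \in B_i$ for some $i \in I$, contradicting $B_i \in \mathcal{S}$. Thus $B \in \mathcal{S}$ and is an upper bound for the chain.

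There is no serious obstacle here; the argument is a direct invocation of Zorn's Lemma, and the only point to check is that the union of a chain of subgroups avoiding $a$ still avoids $a$, which is immediate from the definition of a union. Consequently Zorn's Lemma produces a maximal element $B \in \mathcal{S}$, and by maximality any strictly larger subgroup $B' \supsetneq B$ must fail to lie in $\mathcal{S}$, i.e., must contain $a$, as desired.
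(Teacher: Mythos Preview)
Your proof is correct and follows essentially the same approach as the paper: apply Zorn's Lemma to the poset of subgroups of $A$ not containing $a$, noting that $\{0\}$ is in this poset and that the union of a chain of such subgroups is again such a subgroup. You simply spell out a few more details than the paper does.
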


\begin{proof}
Consider the set $S$ of all subgroups of $A$ not containing $a$. This set is non-empty, since $a \notin \{0\}$. Further, any chain in $S$ has an upper bound (the union of the elements of the chain). Hence, by Zorn's Lemma, $S$ has a maximal element.
\end{proof}

\begin{lemma}
\label{avoidingfindexsubgps}
Let $A$ be an abelian topological group such that $nA \subset A$ is open for every positive integer $n$, and let $a \in A$. In order for there to exist an open subgroup $B \subset A$ of finite index such that $a \notin B$, it is necessary and sufficient that $a \notin A_{\Div}$.
\end{lemma}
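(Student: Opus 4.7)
The plan is to prove both directions of the equivalence using the finite-exponent quotient $A/nA$, which by hypothesis is discrete.

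For necessity, suppose $B \subset A$ is an open subgroup of finite index $n$ such that $a \notin B$. Then $nA \subset B$ (since $A/B$ is killed by $n$), so if $a$ were divisible, we would have $a \in nA \subset B$, a contradiction. Hence $a \notin A_{\Div}$. This direction is immediate and requires no topological input beyond the definition.

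For sufficiency, suppose $a \notin A_{\Div}$, so there exists a positive integer $n$ with $a \notin nA$. The quotient $\bar{A} := A/nA$ is an abelian group of exponent dividing $n$, and the image $\bar{a}$ of $a$ in $\bar{A}$ is nonzero. Applying Lemma \ref{maximalsubgp} to the pair $(\bar{A}, \bar{a})$, we obtain a subgroup $\bar{C} \subset \bar{A}$ that is maximal subject to not containing $\bar{a}$. By maximality, every nonzero subgroup of $\bar{A}/\bar{C}$ contains the image of $\bar{a}$, and since $\bar{A}/\bar{C}$ still has finite exponent dividing $n$, Lemma \ref{finiteexpeltsinsubgp} forces $\bar{A}/\bar{C}$ to be finite.

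Now let $B \subset A$ be the preimage of $\bar{C}$ under $A \twoheadrightarrow \bar{A}$. Then $B \supset nA$, which is open by hypothesis, so $B$ is open; moreover $A/B \cong \bar{A}/\bar{C}$ is finite, so $B$ has finite index; and $a \notin B$ since $\bar{a} \notin \bar{C}$. This produces the desired open finite-index subgroup and completes the sufficiency argument. The only subtle point is the passage from ``$\bar{a}$ avoids some subgroup of $\bar{A}$'' to ``$\bar{a}$ avoids a \emph{finite-index} subgroup of $\bar{A}$,'' which is precisely what Lemmas \ref{maximalsubgp} and \ref{finiteexpeltsinsubgp} accomplish by reducing the problem to a cyclic $p$-group quotient.
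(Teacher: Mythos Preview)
Your proof is correct and follows essentially the same approach as the paper's: for necessity you use that any finite-index subgroup contains $nA$ for $n$ equal to the index, and for sufficiency you pass to the discrete finite-exponent quotient $A/nA$, apply Lemma~\ref{maximalsubgp} to obtain a subgroup maximal for avoiding $\bar{a}$, and invoke Lemma~\ref{finiteexpeltsinsubgp} to see the quotient is finite. The only difference is cosmetic---the paper replaces $A$ by $A/nA$ at the outset rather than working with $\bar{A}$ and pulling back at the end---but the argument is the same.
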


\begin{proof}
First, if $a$ is divisible and $B \subset A$ is a finite index subgroup, then $\overline{a} : = a \bmod B \in A/B$ is a divisible element of the finite group $A/B$, hence $\overline{a} = 0$; i.e., $a \in B$.

Conversely, suppose that $a \notin A_{\Div}$, so $a \notin nA$ for some $n>0$. Since $nA$ is open, $A/nA$ is discrete, so by replacing $A$ with $A/nA$ and $a$ with its nonzero image in $A/nA$ we may suppose that $A$ is discrete of finite exponent. By Lemma \ref{maximalsubgp}, there is a subgroup $B \subset A$ maximal with respect to the property of not containing $a$. Then we claim that $B \subset A$ is of finite index. Indeed, $A/B$ is of finite exponent, and $a$ represents a nonzero element of $A/B$ that is contained in every nonzero subgroup, hence $A/B$ is finite by Lemma \ref{finiteexpeltsinsubgp}.
\end{proof}

\begin{lemma}
\label{nondivisibleextension}
Suppose that we have a short exact sequence of abelian groups
\[
0 \longrightarrow A' \longrightarrow A \xlongrightarrow{f} A'' \longrightarrow 0
\]
with $A'_{\Div}, A''_{\Div} = 0$, and suppose that $A''_{\tors}$ has finite exponent. Then $A_{\Div} = 0$.
\end{lemma}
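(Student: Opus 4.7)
The plan is to take an arbitrary $a \in A_{\Div}$ and show $a = 0$, by first proving $a \in A'$ and then proving $a \in A'_{\Div}$ (which vanishes by hypothesis).

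First I would check that the image $f(a) \in A''$ lies in $A''_{\Div}$. Indeed, for any positive integer $n$, the assumption $a \in A_{\Div}$ gives $a = nb$ for some $b \in A$, hence $f(a) = nf(b) \in nA''$. Since $A''_{\Div} = 0$, we conclude $f(a) = 0$, so $a \in A'$.

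Next, I would show $a \in A'_{\Div}$; combined with $A'_{\Div} = 0$, this finishes the proof. Let $N$ be the exponent of $A''_{\tors}$, and fix a positive integer $n$. Since $a \in A_{\Div}$, we may write $a = nNc$ for some $c \in A$. Applying $f$ gives $nNf(c) = f(a) = 0$, so $f(c) \in A''_{\tors}$, and therefore $Nf(c) = 0$ by the definition of $N$. Setting $a' := Nc$, we have $a' \in \ker(f) = A'$ and $a = na'$. Hence $a \in nA'$ for every $n > 0$, i.e., $a \in A'_{\Div} = 0$.

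The argument is entirely elementary, so no step should present a serious obstacle; the only thing worth being careful about is the role of the exponent $N$ of $A''_{\tors}$, which is precisely what allows us to ``divide'' inside $A'$ using an element of $A$ a priori only coming from the divisibility of $a$ in $A$. Without the finite-exponent hypothesis on $A''_{\tors}$ the argument would fail, since $f(c)$ could be a torsion element whose order grows with $n$, and we would not be able to land in $A'$ after multiplying back by a controlled integer.
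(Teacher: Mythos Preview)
Your proof is correct and is essentially identical to the paper's argument: both first show $f(a)\in A''_{\Div}=0$ so $a\in A'$, then use divisibility to write $a=nNc$ (the paper's $a=mNb$), observe $f(c)$ is torsion so $Nc\in A'$, and conclude $a\in nA'$ for all $n$.
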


\begin{proof}
Choose an $a \in A_{\Div}$. Then $f(a) \in A''_{\Div} = 0$, so $a \in A'$. Let $N$ be the exponent of $A''_{\tors}$. For every positive integer $m$, we have $a = mNb$ for some $b \in A$. Since $f(a) = 0$, so $f(b) \in A''_{\tors}$,
we have $f(Nb) = Nf(b) = 0$ and thus $Nb \in A'$. We conclude that $a = m(Nb) \in mA'$. Since $m \in \Z_+$ was arbitrary, we deduce that $a \in A'_{\Div} = 0$.
\end{proof}

\begin{remark}
Lemma \ref{nondivisibleextension} is false if we omit the assumption that $A''_{\tors}$ is of finite exponent. Indeed, let $A$ be the free abelian group on generators $x_n$ ($n \in \Z_+$) modulo the relations $nx_n = x_1$. Clearly $x_1$ is divisible. It is also nonzero (even non-torsion), since we have the map $A \rightarrow \Q$ sending $x_n$ to $1/n$. If we let $A' = \langle x_1 \rangle \subset A$, then $A' \simeq \Z$, and $A'' : = A/A' \simeq \oplus_{n \geq 2} \Z/n\Z$. So $A'_{\Div}, A''_{\Div} = 0$, but $A_{\Div} \neq 0$.
\end{remark}

\begin{proposition}
\label{profiniteexact}
Suppose that we have a short exact sequence
\[
0 \longrightarrow A' \longrightarrow A \longrightarrow A'' \longrightarrow 0
\]
of abelian topological groups such that $A''_{\Div} = 0$ and $A''_{\tors}$ has finite exponent. $($This holds, for example, if $A''$ has finite exponent.$)$ Suppose also that for each positive integer $n$, the subgroup $nA \subset A$ is open. Then the induced sequence
\[
0 \longrightarrow A'_{\pro} \longrightarrow A_{\pro} \longrightarrow A''_{\pro} \longrightarrow 0
\]
is short exact.
\end{proposition}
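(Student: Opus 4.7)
The plan is to use Proposition \ref{profiniterightexact} to reduce the proof to injectivity of the induced map $\iota_{\pro}: A'_{\pro} \to A_{\pro}$. Right-exactness gives $A_{\pro}/\iota_{\pro}(A'_{\pro}) \cong (A/A')_{\pro} \cong A''_{\pro}$ as topological groups, and a continuous injection from the compact group $A'_{\pro}$ into the Hausdorff group $A_{\pro}$ is automatically a topological embedding. Thus injectivity of $\iota_{\pro}$ is the only missing ingredient for short exactness in the topological sense.

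Injectivity of $\iota_{\pro}$ is equivalent to the assertion that every finite-index open subgroup $H' \subset A'$ arises as $H \cap A'$ for some finite-index open subgroup $H \subset A$: granted this, given a nonzero $\xi \in A'_{\pro}$ one picks $H'$ so that the image of $\xi$ in $A'/H'$ is nonzero and then any corresponding $H$ witnesses non-vanishing of $\iota_{\pro}(\xi)$ in $A/H$. So I fix such $H'$ and aim to produce $H \subset A$ that is finite-index, open, contains $H'$, and satisfies $H \cap A' = H'$.

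The construction proceeds by passing to $\overline{A} := A/H'$ with the quotient topology (viewing $H'$ as a subgroup of $A$ via $A' \hookrightarrow A$; note that $H'$ need not be open in $A$, but this is irrelevant for forming the quotient as a topological group). Finite-index open subgroups of $A$ containing $H'$ correspond bijectively with finite-index open subgroups of $\overline{A}$, and the condition $H \cap A' = H'$ translates to $\overline{H} \cap (A'/H') = 0$ inside $\overline{A}$. Since $A'/H'$ is finite, it suffices by a finite-intersection argument to find, for each nonzero $\overline{a} \in A'/H'$, a finite-index open subgroup of $\overline{A}$ not containing $\overline{a}$.

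The existence of such a subgroup is exactly what Lemma \ref{avoidingfindexsubgps} delivers, provided its two hypotheses are verified for $\overline{A}$. The openness of $n\overline{A} = (nA + H')/H'$ is immediate from the hypothesis that $nA$ is open in $A$. For the divisibility condition $\overline{A}_{\Div} = 0$, apply Lemma \ref{nondivisibleextension} to the short exact sequence of abstract groups $0 \to A'/H' \to \overline{A} \to A'' \to 0$: the subgroup $A'/H'$ is finite (hence has trivial divisible part and finite-exponent torsion), while the quotient $A''$ satisfies $A''_{\Div} = 0$ by assumption and has $A''_{\tors}$ of finite exponent by assumption, so the lemma applies. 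This completes the plan. No step presents serious difficulty; the one mildly delicate point is the careful bookkeeping required by the fact that $H'$ need not be open in $A$, which is handled cleanly by working inside $\overline{A}$.
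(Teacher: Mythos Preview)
Your proof is correct and follows the same approach as the paper: reduce to injectivity of $A'_{\pro}\to A_{\pro}$ via Proposition~\ref{profiniterightexact}, then show each finite-index open $H'\subset A'$ is the trace of a finite-index open subgroup of $A$ by passing to $A/H'$ and invoking Lemmas~\ref{avoidingfindexsubgps} and~\ref{nondivisibleextension}. The only difference is cosmetic: where the paper concludes by choosing $X$ maximizing $[A':X\cap A']$ and deriving a contradiction, you simply intersect the finitely many separating subgroups indexed by the nonzero elements of $A'/H'$, which is slightly more direct.
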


\begin{proof}
Let $B' \subset A'$ be an open subgroup of finite index. We first claim that for any nonzero $\overline{a} : = a$ mod $B' \in A/B'$, there exists an open finite index subgroup $\overline{X} \subset A/B'$ not containing $\overline{a}$. The openness of $nA$ is preserved under quotients, hence the hypotheses of Lemma \ref{avoidingfindexsubgps} hold for $A/B'$. In order to prove the claim, therefore, it suffices to show that $(A/B')_{\Div} = 0$. We have an exact sequence of abelian groups
\[
0 \longrightarrow A'/B' \longrightarrow A/B' \longrightarrow A'' \longrightarrow 0
\]
Since $A'/B'$ is finite, so  $(A'/B')_{\Div} = 0$, the hypotheses 
that $A''_{\rm{div}} = 0$ and $A''_{\rm{tors}}$ has finite exponent imply that 
$(A/B')_{\Div} = 0$ by Lemma \ref{nondivisibleextension}.

It follows that for any $a \in A - B'$, there exists an open finite index subgroup $X \subset A$ such that $B' \subset X \subset A$ but $a \notin X$, hence
\begin{equation}\label{Bint}
B' = \bigcap_{B' \subset X \subset A} X,
\end{equation}
where the intersection is over all open finite index subgroups $X \subset A$ containing $B'$.
For any open subgroup $X \subset A$ of finite index containing $B'$, we have  $[A':  X \cap A'] \le [A':  B'] < \infty$. Thus, there exists such $X$ which maximizes $[A': X \cap A']$. We claim that such an $X$ satisfies $X \cap A' = B'$. If such equality fails, so $B'$ is a proper subgroup of $X \cap A'$,
then by (\ref{Bint}) there exists an open subgroup $Y \subset A$ of finite index containing $B'$ such that 
the inclusion $Y \cap X \cap A' \subset X \cap A'$ is strict. But then
$$[A':  (X \cap Y) \cap A'] > [A': X \cap A'],$$
violating the maximality property of $X$. Thus $B' = X \cap A'$.

We have shown that $B' = X \cap A'$ for some open subgroup $X \subset A$ of finite index. Since $B' \subset A'$ is an arbitrary open subgroup of finite index, it follows that the map $A'_{\pro} \rightarrow A_{\pro}$ is injective, hence also a topological embedding because the source is compact and the target Hausdorff. Combining this with Proposition \ref{profiniterightexact}, the proof of the proposition is complete.
\end{proof}

\section{Products}

In this section we examine how profinite completion interacts with the formation of various groups associated with a product of topological groups, most especially restricted products. This -- unsurprisingly -- plays an important role in understanding the profinite completions of groups obtained from the adeles of a global field. Rather than via a direct description, such profinite completions are most naturally described via their Pontryagin duals.

\begin{proposition}
\label{restrictedprodprofinitecomp}
Let $\{A_i\}_{i \in I}$ be a set of topological abelian groups, and, for all but finitely many $i \in I$, let $B_i \subset A_i$ be a subgroup. Then the map
\begin{equation}
\label{restrictedprodprofinitecompeqn1}
\left[\left({\prod}'_{ \in I} A_i\right)_{\pro}\right]^D \longrightarrow \prod_{i \in I} A_{i,\, \pro}^D
\end{equation}
induces an isomorphism
\begin{equation}
\label{restrictedprodprofinitecompeqn2}
\left[\left({\prod}'_{i \in I} A_i\right)_{\pro}\right]^D \longrightarrow \left[{\prod}'_{i \in I} A_{i,\, \pro}^D\right]_{\tors},
\end{equation}
where the restricted product on the left is with respect to the $B_i$, and the restricted product on the right is with respect to their annihilators in $A_{i,\, \pro}^D$, and the group on the right has the discrete topology.
\end{proposition}

\begin{proof}
The Pontryagin dual of a profinite group is discrete, so we only need to show that (\ref{restrictedprodprofinitecompeqn2}) is an algebraic isomorphism. We first check that the map (\ref{restrictedprodprofinitecompeqn1}) factors through the subgroup $[{\prod}'_{i \in I} A_{i,\, \pro}^D$. By the usual ``no small subgroups'' argument, any continuous homomorphism from a profinite group to $\mathbf{R}/\Z$ factors through $\frac{1}{n}\Z/\Z$ for some $n > 0$. It follows that (\ref{restrictedprodprofinitecompeqn1}) lands inside the torsion subgroup of the right side. We need to show that it also lands inside the restricted product. That is, we need to show that any continuous map
\begin{equation}
\label{restrictedprodprofinitecomppfeqn1}
f\colon \left({\prod}'_{ \in I} A_i\right)_{\pro} \longrightarrow \Z/n\Z
\end{equation}
kills $B_i$ for all but finitely many $i$. This follows from the fact that $f$ has open kernel.

It remains to show that the map (\ref{restrictedprodprofinitecompeqn2}) is an isomorphism. We first check injectivity. Given $f$ as in (\ref{restrictedprodprofinitecomppfeqn1}) such that $f|_{A_i} = 0$ for all $i$, we know that $f$ has open kernel, hence kills $\prod_{i \notin S} B_i$ for some finite $S \subset I$. It follows that, for any $a \in {\prod}' A_i$, we have $f(a) = \sum_{i \in S'} f(a_i)$ for some finite subset $S' \subset I$ containing $S$. Since $f$ kills each $A_i$, it follows that $f(a) = 0$. Since $a \in {\prod}' A_i$ was arbitrary, it follows that $f = 0$. This proves the injectivity.

Now we need to prove the surjectivity of (\ref{restrictedprodprofinitecompeqn2}). An element of the right side of (\ref{restrictedprodprofinitecompeqn2}) is a set $\{f_i\}_{i \in I}$ of continuous homomorphisms
\[
f_i\colon A_{i,\, \pro} \longrightarrow \Z/n\Z
\]
for some $n > 0$ such that $f_i|_{B_i} = 0$ for all but finitely many $i \in I$. Then we may define
\[
f \colon {\prod}'_{i \in I} A_i \longrightarrow \Z/n\Z
\]
by the formula $f(a) := \sum_{i \in I} f_i(a_i)$. Because $f_i$ kills $B_i$ for almost every $i$, and $a_i \in B_i$ for almost every $i$, the sum contains only finitely many nonzero terms. We claim that $f$ is continuous. This is equivalent to having open kernel, and this holds because each $f_i$ is continuous and because $f_i$ kills $B_i$ for almost every $i$. Finally, because this kernel is closed of finite index, it follows that $f$ extends to a continuous homomorphism
\[
\left({\prod}'_{i \in I} A_i\right)_{\pro} \longrightarrow \Z/n\Z.
\]
\end{proof}

Next we investigate how profinite completion behaves with respect to products. More generally, we determine how it behaves with respect to suitably large subgroups of products.

\begin{proposition}
\label{completionofproduct}
Let $\{A_i\}_{i \in I}$ be a set of topological abelian groups, and let $G \subset \prod_{i \in I} A_i$ be a subgroup endowed with the subspace topology of the product topology on $\prod_i A_i$. Assume that, for each $i \in I$, we have $A_i \times \prod_{j \in I - \{i\}} 0 \subset G$. Then the canonical map
\[
G_{\pro} \longrightarrow \prod_{i \in I} A_{i,\, \pro}
\]
is a topological isomorphism.
\end{proposition}

\begin{proof}
We will show that the map in the proposition induces an isomorphism of Pontryagin dual groups. First, when $G = \prod A_i$, then, by Proposition \ref{restrictedprodprofinitecomp} with $B_i = A_i$ for all $i$, we have
\[
(G_{\rm{pro}})^D = \oplus_{i \in I} (A_{i,\,\pro})^D,
\]
since the groups $(A_{i,\,\pro})^D$ are torsion, as duals of profinite groups. Since the Pontryagin dual of a product of groups is the direct sum of the duals, this proves the proposition when $G = \prod A_i$.

Now consider the general case. Since $A_i \times \prod_{i \in I - \{i\}} 0 \subset G$ for all $i \in I$, we have a natural map
\begin{equation}
\label{completionofproductpfeqn1}
(G_{\pro})^D \longrightarrow \prod_{i \in I} (A_{i,\,\pro})^D,
\end{equation}
which is an inclusion because the collection of subgroups $\prod_{i \in S} A_i \times \prod_{i \in I - S} A_i$ as $S$ varies over finite subsets of $I$ is dense in $\prod_{i \in I} A_i$. Now any continuous map $f: G \rightarrow F$ from $G$  into a discrete finite group must kill $G \cap (\prod_{i \in S} 0 \times \prod_{i \in I - S} A_i)$ for some finite subset $S \subset I$ (since $f$ has open kernel). It follows that (\ref{completionofproductpfeqn1}) actually yields an inclusion
\[
(G_{\pro})^D \hookrightarrow \oplus_{i \in I} (A_{i,\,\pro})^D.
\]
We claim that this map is an isomorphism, which will prove the proposition (since it identifies the Pontryagin duals of the two compact Hausdorff groups appearing in the statement of the proposition). In fact, the desired surjectivity statement is clear: given a finite subset $S \subset I$, and, for each $i \in S$, an $f_i \in (A_{i,\,\pro})^D$, we then get an element of $(G_{\pro})^D$ mapping to $\prod_{i \in S} f_i \times \prod_{i \in I - S} 0$ via $f(g) := \sum_{i \in S} f_i(g_i)$ for $g \in G$.
\end{proof}

\chapter{Duality Pairings and Weil Restriction}
\label{commutativediagramappendix}

The purpose of this appendix is to prove that the local and global duality pairings are compatible with Weil restriction. The meaning of this compatibility is made precise in Propositions \ref{diagramcommuteslocal} and \ref{diagramcommutesglobal}. 

We first prove that taking $\Gm$ dual sheaves commutes with pushforward through a finite \'etale map. Let $f:  X' \rightarrow X$ be a finite flat map between Noetherian schemes, $\mathscr{F}'$ an abelian fppf sheaf on $X'$, and let $\mathscr{F} : = f_*\mathscr{F}'$. Then we have a natural map of 
abelian fppf sheaves ${\rm{N}}_{X'/X}:  f_*(\widehat{\mathscr{F}'}) \rightarrow \widehat{\mathscr{F}}$ defined as follows. Given an $X$-scheme $Y$ and a character $\chi \in f_*(\widehat{\mathscr{F}'})(Y) = \widehat{\mathscr{F}'}(Y \times_X X')$, the character ${\rm{N}}_{X'/X}(\chi) \in \widehat{\mathscr{F}}(Y)$ is defined functorially on $Y$-schemes $Z$ as the composition $\mathscr{F}(Z) = \mathscr{F}'(Z \times_X X') \xrightarrow{\chi_{Z\times_X X'}} \Gamma(Z \times_X X', \mathcal{O}_{Z \times_X X'}^{\times}) \xrightarrow{\mbox{N}_{X'/X}} \Gamma(Z, \mathcal{O}_Z^{\times})$, where ${\rm{N}}_{X'/X}$ is the norm map, defined locally on $\Spec(A) \subset Z$ as follows. Let $X = \Spec(R)$ and $X' = \Spec(R')$. Given $\alpha \in A \otimes_R R'$, we define ${\rm{Nm}}_{X'/X}(\alpha) \in A$ to be the determinant of multiplication by $\alpha$ as an $A$-linear map on the finite free $A$-module $A \otimes_R R'$.

The following lemma is a (generalized) sheafified version of (part of) \cite[Ch.\,II, Thm.\,2.4]{oesterle}.

\begin{appendixproposition}
\label{charactersseparableweilrestriction}
Let $f:  X' \rightarrow X$ be a finite {\'e}tale map of Noetherian schemes, $\mathscr{F}'$ an fppf abelian sheaf on $X'$. Then ${\rm{N}}_{X'/X}:  f_*(\widehat{\mathscr{F}'}) \rightarrow \widehat{f_*(\mathscr{F}')}$ is an isomorphism of fppf sheaves.
\end{appendixproposition}

\begin{proof}
The assertion is fppf local, so since \'etale locally on $X$, $X'$ is a finite disjoint union of copies of $X$, we may assume that $X'$ is of this form. Since the norm map, pushforward, and $\widehat{(\cdot)}$ functor send such a union to a product, we may therefore assume that $X' = X$, in which case the assertion is trivial.
\end{proof}

\begin{appendixremark}
The {\'e}tale hypothesis in Proposition \ref{charactersseparableweilrestriction} is absolutely crucial:  unlike 
\cite[Ch.\,II, Thm.\,2.4]{oesterle}, the lemma fails for a finite inseparable extension of fields $k'/k$, even for $\mathscr{F}' = \mathbf{G}_m$. Indeed, if $k'/k$ is a finite nontrivial purely inseparable extension, then there is no isomorphism between $\widehat{\R_{k'/k}(\mathbf{G}_m)}$ and $\R_{k'/k}(\widehat{\mathbf{G}_m}) \simeq \R_{k'/k}(\Z)$, let alone via ${\rm{N}}_{k'/k}$,
since the latter sheaf is a smooth (even \'etale) group scheme whereas the former is not even representable.
To prove such non-representability, by \cite[App.\,3, A.3.6]{oesterle} there is a short exact sequence
\[
1 \longrightarrow U \longrightarrow \R_{k'/k}(\mathbf{G}_m)_{k'} \xlongrightarrow{\pi} \mathbf{G}_{m,k'} \longrightarrow 1
\]
with $U$ split unipotent and {\em nontrivial}.  The map $\pi$ is given functorially on $k'$-algebras $R'$ by the map $(R' \otimes_k k')^{\times} \rightarrow R'^{\times}$ induced by the map $R' \otimes_k k' \rightarrow R'$
defined by $r \otimes \lambda \mapsto r\lambda$. In particular, $\R_{k'/k}(\mathbf{G}_m)$ is not an almost-torus. That $\widehat{\R_{k'/k}(\mathbf{G}_m)}$ is not representable therefore follows from Proposition \ref{hatrepresentable}.
\end{appendixremark}

Suppose that we have a finite morphism $f:  X' \rightarrow X$ between Noetherian schemes. Let $\mathscr{F}'$ be an fppf abelian sheaf on $X'$, and let $\mathscr{F} : = f_*\mathscr{F}'$. We have a functorial map ${\rm{N}}_{X'/X}:  f_*\widehat{\mathscr{F}'} \rightarrow \widehat{\mathscr{F}}$ which is an isomorphism when $f$ is \'etale, by Proposition \ref{charactersseparableweilrestriction}. We also have a canonical map ${\rm{H}}^i(X, \mathscr{F}) \rightarrow {\rm{H}}^i(X', \mathscr{F}')$ which we will always, for notational simplicity, simply denote by $\phi$. If $f$ is \'etale, then $f_*$ is an exact functor between categories of fppf abelian sheaves (since this may be checked after pullback to a finite Galois covering $X'' \rightarrow X'$ splitting $X'/X$, and after such pullback $\mathscr{F}$ just becomes a product of pullbacks of $\mathscr{F}'$ along the $X$-morphisms $X'' \rightarrow X'$), hence $\phi$ is an isomorphism in this case.

\begin{appendixremark} 
\label{phiisanisom}
If $\mathscr{F}'$ is a smooth group scheme and $f$ is finite, then $\phi$ is an isomorphism.  Indeed, our assumption on $\mathscr{F}'$ implies that we may take our cohomology to be \'etale rather than fppf, and then we use the fact that pushforward through a finite map is an exact functor between categories of \'etale sheaves.
\end{appendixremark}

Note in particular that this discussion applies when $X' = \Spec(k')$, $X = \Spec(k)$, and $\mathscr{F}' = G'$ for some finite extension $k'/k$ of fields and some commutative $k'$-group scheme $G'$ of finite type. Then the sheaf $G : = f_*G'$ is better known as the Weil restriction of scalars $\R_{k'/k}(G')$, and it is a $k$-group scheme of finite type. One may also define another  norm map ${\rm{N}}_{k'/k}:  \R_{k'/k}(\Gm) \rightarrow \Gm$ between commutative $k$-group schemes that is defined functorially on $k$-algebras $R$ by the norm map $(k' \otimes_k R)^{\times} \rightarrow R^{\times}$ which sends an element $r'$ of the finite free $R$-algebra $k' \otimes_k R$ to the determinant of the $R$-linear map on $k' \otimes_k R$ defined by multiplication by $r'$. Before turning to our main compatiblity results, we need the following proposition.

\begin{appendixproposition}
\label{invnormcomp}
Let $k'/k$ be a finite extension of local fields. Then the following diagram commutes: 
\[
\begin{tikzcd}
{\rm{H}}^2(k, \Gm) \arrow{r}{\inv} & \Q/\Z \arrow[dd, equals] \\
{\rm{H}}^2(k, \R_{k'/k}(\Gm)) \arrow{u}{{\rm{H}}^2({\rm{N}}_{k'/k})} \arrow{d}{\phi} & \\
{\rm{H}}^2(k', \Gm) \arrow{r}{\inv} & \Q/\Z
\end{tikzcd}
\]
where $\phi$ is the canonical map discussed above and by abuse of notation and ${\rm{N}}_{k'/k}:  \R_{k'/k}(\Gm) \rightarrow \Gm$ is the norm map.
\end{appendixproposition}

\begin{proof}
Consider the map $i:  {\rm{H}}^2(k, \Gm) \rightarrow {\rm{H}}^2(k, \R_{k'/k}(\Gm))$ induced by the canonical inclusion of $k$-group schemes $\Gm \hookrightarrow \R_{k'/k}(\Gm)$. We claim that $i$ is surjective. Indeed, $\phi$ is an isomorphism by Remark \ref{phiisanisom}, so it suffices to check that $\phi \circ i:  {\rm{H}}^2(k, \Gm) \rightarrow {\rm{H}}^2(k', \Gm)$ is surjective. But this composition is none other than the pullback map (also known as restriction) res. If $k$ is archimedean, then either $k' = k$ or $k' = \mathbf{C}$. In either case the desired surjectivity is immediate. We may therefore assume that $k$ is non-archimedean. Then the desired surjectivity follows from the following commutative diagram: 
\begin{appendixequation}
\label{invnormcomppfeqn2}
\begin{tikzcd}
{\rm{H}}^2(k, \Gm) \arrow{d}{{\rm{res}}} \arrow{r}{\inv}[swap]{\sim} & \Q/\Z \arrow[twoheadrightarrow]{d}{[k':  k]} \\
{\rm{H}}^2(k', \Gm) \arrow{r}{\inv}[swap]{\sim} & \Q/\Z
\end{tikzcd}
\end{appendixequation}

In order to prove the proposition, therefore, it is equivalent to show that
\begin{appendixequation}
\label{invnormcomppfeqn1}
{\rm{inv}} \circ \phi \circ i \stackrel{?}{=} {\rm{inv}} \circ {\rm{H}}^2({\rm{N}}_{k'/k}) \circ i.
\end{appendixequation}
But the composition ${\rm{H}}^2({\rm{N}}_{k'/k}) \circ i:  {\rm{H}}^2(k, \Gm) \rightarrow {\rm{H}}^2(k, \Gm)$ is the map induced by the composition $\Gm \hookrightarrow \R_{k'/k}(\Gm) \xrightarrow{{\rm{N}}_{k'/k}} \Gm$, in which the first map is the canonical inclusion. This composition is just multiplication by $[k':  k]$ (as follows by considering the effect on $R$-valued points for $k$-algebras $R$). Further, as we have already noted, $\phi \circ i = {\rm{res}}$. Thus, (\ref{invnormcomppfeqn1}) is equivalent to
\[
{\rm{inv}} \circ {\rm{res}} \stackrel{?}{=} {\rm{inv}} \circ [k':  k],
\]
and since ${\rm{inv}} \circ [k':  k] = [k':  k] \circ {\rm{inv}}$, this is precisely the content of the commutativity of (\ref{invnormcomppfeqn2}), which also holds in the archimedean case (even though in this case the horizontal maps in that diagram are not isomorphisms).
\end{proof}

Now we prove that the local duality pairing is compatible with Weil restriction via $\phi$ and the norm maps: 

\begin{appendixproposition}
\label{diagramcommuteslocal}
Let $k'/k$ be a finite extension of local fields, $G'$ a commutative $k'$-group scheme of finite type, and $G : = \R_{k'/k}(G')$. Then the following diagram commutes: 
\[
\begin{tikzcd}
{\rm{H}}^i(k, G) \arrow[d, equals] \arrow[r, phantom, "\times"] & {\rm{H}}^{2-i}(k, \widehat{G}) \arrow{r}{\cup} & {\rm{H}}^2(k, \Gm) \arrow{r}{\inv} & \Q/\Z \arrow[dd, equals] \\
{\rm{H}}^i(k, G) \arrow{d}{\phi} \arrow[r, phantom, "\times"] & {\rm{H}}^{2-i}(k, \R_{k'/k}(\widehat{G'})) \arrow{u}{{\rm{H}}^{2-i}({\rm{N}}_{k'/k})} \arrow{d}{\phi} \arrow{r}{\cup} & {\rm{H}}^2(k, \R_{k'/k}(\Gm)) \arrow{u}{{\rm{H}}^2({\rm{N}}_{k'/k})} \arrow{d}{\phi} & \\
{\rm{H}}^i(k', G') \arrow[r, phantom, "\times"] & {\rm{H}}^{2-i}(k', \widehat{G'}) \arrow{r}{\cup} & {\rm{H}}^2(k', \Gm) \arrow{r}{\inv} & \Q/\Z
\end{tikzcd}
\]
Here the first ${\rm{N}}_{k'/k}$ is the norm map on character sheaves, and the second is the usual norm map. Further, if $k'/k$ is separable then all the vertical maps are isomorphisms.
\end{appendixproposition}

\begin{proof}
That all of the vertical maps are isomorphisms when $k'/k$ is separable follows from Proposition \ref{charactersseparableweilrestriction} and our remarks at the beginning of the section about $\phi$ being an isomorphism when $f$ is \'etale. The first three columns of the diagram in the proposition commute because of the functoriality of cup product, $\phi$, and the norm maps. It therefore only remains to prove commutativity of the last two columns, and this is the content of Proposition \ref{invnormcomp}.
\end{proof}

Finally, we turn to the global setting. That is, suppose that $k'/k$ is a finite extension of global fields, and let $G'$ be a commutative $k'$-group scheme of finite type, and $G : = \R_{k'/k}(G')$. Then we have the usual adelic pairing (that is, cupping everywhere locally and then adding the invariants)
\[
{\rm{H}}^i(\A_k, G) \times {\rm{H}}^{2-i}(\A_k, \widehat{G}) \rightarrow \Q/\Z,
\]
and similarly for $\A_{k'}$ and $G'$. The next result says that these two pairings are compatible via $\phi$ and ${\rm{N}}_{k'/k}$.

\begin{appendixproposition}
\label{diagramcommutesglobal}
If $k'/k$ is a finite extension of global fields, $G'$ is a commutative $k'$-group scheme of finite type, and $G : = \R_{k'/k}(G')$, then the following diagram commutes${\rm{:}}$
\begin{appendixequation}
\label{doesthiscommute}
\begin{tikzcd}
{\rm{H}}^i(\A_k, G) \arrow[r, phantom, "\times"] & {\rm{H}}^{2-i}(\A_k, \widehat{G}) \arrow{r} & \Q/\Z \\
{\rm{H}}^i(\A_k, G) \arrow{d}{\phi} \arrow[u, equals] & {\rm{H}}^{2-i}(\A_k, \R_{k'/k}(\widehat{G'})) \arrow{d}{\phi} \arrow{u}{{\rm{H}}^2({\rm{N}}_{k'/k})} & \\
{\rm{H}}^i(\A_{k'}, G') \arrow[r, phantom, "\times"] & {\rm{H}}^{2-i}(\A_{k'}, \widehat{G'}) \arrow{r} & \Q/\Z \arrow[uu, equals]
\end{tikzcd}
\end{appendixequation} 
Further, if $k'/k$ is separable, then all of the vertical maps in this diagram are isomorphisms.
\end{appendixproposition}

\begin{proof}
The latter assertion about all of the maps being isomorphisms in the separable case is a consequence of Proposition \ref{charactersseparableweilrestriction} and our general observations at the beginning of this section about $\phi$ being an isomorphism when $f$ is \'etale. So we concentrate on the commutativity of (\ref{doesthiscommute}).

We will reduce this commutativity to a purely local question. For each place $v$ of $k$, we have a diagram: 
\begin{appendixequation}
\label{placebyplacediagram}
\begin{tikzcd}
{\rm{H}}^i(k_v, G) \arrow[r, phantom, "\times"] \arrow[d, equals] & {\rm{H}}^{2-i}(k_v, \widehat{G}) 
\arrow{r}{\cup} & {\rm{H}}^2(k_v, \Gm) \arrow{r}{\inv} & \Q/\Z \\
{\rm{H}}^i(k_v, G) \arrow[r, phantom, "\times"] \arrow{d}{\phi} & {\rm{H}}^{2-i}(k_v, \R_{k'/k}(\widehat{G'})) 
\arrow{u}{{\rm{H}}^{2-i}({\rm{N}}_{k'/k})} \arrow{d}{\phi} \arrow{r}{\cup} & 
{\rm{H}}^2(k_v, \underset{v'\mid v}{\prod} \R_{k_{v'}/k_v}(\Gm)) \arrow{u}{\prod_{v'\mid v} {\rm{H}}^2({\rm{N}}
_{k_{v'}/k_v})} \arrow{d}{\prod_{v'\mid v} \phi_{v'}} & \\
\underset{v' \mid v}{\prod}{\rm{H}}^i(k_{v'}, G') \arrow[r, phantom, "\times"] & \underset{v' \mid v}{\prod}{\rm{H}}^{2-i}
(k_{v'}, \widehat{G'}) \arrow{r}{\cup} & \underset{v' \mid v}{\prod}{\rm{H}}^2(k_{v'}, \Gm) \arrow{r}{\prod_{v' 
\mid v} \inv_{v'}} & \underset{v' \mid v}{\prod} \Q/\Z \arrow{uu}{\sum_{v' \mid v}}
\end{tikzcd}
\end{appendixequation}
Here all products are over the places $v'$ of $k'$ lying above $v$. Since the adelic pairings are obtained by cupping at each place of $k$ (respectively $k'$) and then adding the invariants, diagram {(\ref{doesthiscommute})} breaks up into a diagram for each place $v$ as above, and the above diagram breaks up factor by factor into a diagram for each place $v'$ above $v$. The proposition therefore follows from Proposition \ref{diagramcommuteslocal}.
\end{proof}

\chapter{Cohomology and Direct Limits}
\label{chaptercohomdirectlimits}

The goal of this appendix is to show that fppf cohomology behaves well with respect to direct limits, a fact that we use repeatedly in various arguments. More precisely, we have the following result.

\begin{appendixproposition}
\label{directlimitscohom}
Let $\{X_{\lambda}\}$ be a filtered inverse system of quasi-compact quasi-separated schemes with affine transition maps $\pi_{\lambda', \lambda}:  X_{\lambda'} \rightarrow X_{\lambda}$ whenever $\lambda' \geq \lambda$. For each $\lambda$, let $\mathscr{F}_{\lambda}$ be an abelian sheaf on the big fppf site of $X_{\lambda}$, and suppose that we are given compatible $($in the obvious sense$)$ transition maps $\pi_{\lambda', \lambda}^*\mathscr{F}_{\lambda} \rightarrow \mathscr{F}_{\lambda'}$ for $\lambda' \geq \lambda$. 

For $X : = \varprojlim X_{\lambda}$, and the direct limit $\mathscr{F}$ of the pullbacks of all of the $\mathscr{F}_{\lambda}$ to $X$, the natural map
\[
\varinjlim {\rm{H}}^i(X_{\lambda}, \mathscr{F}_{\lambda}) \rightarrow {\rm{H}}^i(X, \mathscr{F})
\]
of fppf cohomology groups is an isomorphism.
\end{appendixproposition}

\begin{proof}
This is \cite[Exp.\,VI, Cor.\,5.2]{sga4}.
\end{proof}

Let us discuss how Proposition \ref{directlimitscohom} applies in the context of our work. Suppose that we have $\{X_{\lambda}\}$ 
and $X$ as in the result, and are given a finitely presented $X$-group scheme $\mathscr{G}$. Then $\mathscr{G}$ descends to
compatible finitely presented $X_{\lambda}$-group schemes $\mathscr{G}_{\lambda}$ for all sufficiently large $\lambda$, 
and any fppf $X$-scheme $U$ descends to a compatible system of fppf $X_{\lambda}$-schemes $U_{\lambda}$ for all sufficiently large $\lambda$. Further, any element $g \in \mathscr{G}(U)$ descends to compatible elements $g_{\lambda} \in \mathscr{G}_{\lambda}(U_{\lambda})$, and any two such $g_{\lambda}, g_{\lambda'}$ become the same under pullback to some $U_{\lambda''}$. 

We therefore have $\mathscr{G} = \varinjlim \pi_{\lambda}^*\mathscr{G}_{\lambda}$ as sheaves, where $\pi_{\lambda}:  X \rightarrow X_{\lambda}$ is the natural map. In contrast, if we consider the sheaf for the \'etale topology represented by a non-\'etale group scheme then the pullback sheaf under a non-\'etale map is generally {\em not} represented by
the corresponding base change of the given group scheme.
By Proposition \ref{directlimitscohom}, 
it follows that ${\rm{H}}^i(X, \mathscr{G}) = \varinjlim {\rm{H}}^i(X_{\lambda}, \mathscr{G}_{\lambda})$. 

The
same reasoning applies in many situations
where we will use implicitly that cohomology behaves well with respect to suitable direct limits.
For example, by very similar reasoning we have ${\rm{H}}^i(X, \widehat{\mathscr{G}}) = \varinjlim {\rm{H}}^i(X_{\lambda}, 
\widehat{\mathscr{G}}_{\lambda})$ where the functor $\widehat{(\cdot)} = \mathscr{H}om(\cdot, \Gm)$ denotes the $\Gm$-dual
of an fppf group scheme.  This dual is generally {\em not} represented by an $X$-scheme, but it is ``locally of finite
presentation'' as a functor, and that is what matters for the preceding limit considerations to be applicable.

\chapter{Compatibility Between \v{C}ech and Derived Functor Constructions}
\label{appendixcechvsderived}

This appendix proves the compatibility between certain constructions for \v{C}ech and derived functor cohomology, and especially between two constructions of connecting maps in cohomology. This compatibility is used in order to prove results on the agreement between \v{C}ech and derived functor cohomology (Propositions \ref{cech=derivedsmoothinf} and \ref{cech=derivedG^}), as well as to prove Theorem \ref{shapairing}, since the pairings between the Tate--Shafarevich groups are defined in terms of \v{C}ech cocycles.

\section{Edge maps}

Let $X$ be a scheme, $\mathscr{F}$ an abelian sheaf for a Grothendieck topology 
on $X$ (we need the fppf and \'etale topologies), and $\mathcal{U}$ a cover of $X$. For the \v{C}ech cohomology groups $\check{\rm{H}}^i({\mathcal{U}}, \mathscr{F})$, the \v{C}ech-to-derived functor spectral sequence yields canonical edge maps 
\begin{equation}\label{edgemap}
\check{\rm{H}}^i({\mathcal{U}}, \mathscr{F}) \rightarrow {\rm{H}}^i(X, \mathscr{F})
\end{equation}
for all $i$. Our goal in this appendix is twofold:  to show that a natural direct way of constructing such maps (without appeal
to spectral sequences) coincides with these edge maps, and to show that 
connecting maps on pieces of \v{C}ech cohomology are compatible (via the canonical \v{C}ech-to-derived maps
(\ref{edgemap})) with 
connecting maps for derived functor cohomology.  Both of these compatibilities are extremely useful, but
surprisingly there does not seem to be a 
discussion of either of them in any standard (or other) reference.

We begin by discussing a useful more direct construction of the edge maps; this will be used in our discussion of compatibility for connecting maps.
Let $\mathscr{I}^{\bullet}$ be an injective resolution of $\mathscr{F}$.  The \v{C}ech complex of sheaves
$\mathscr{C}^{\bullet}(\mathcal{U}, \mathscr{F})$ is also a resolution of $\mathscr{F}$, so there is a map of such resolutions
$\mathscr{C}^{\bullet}(\mathcal{U}, \mathscr{F}) \rightarrow \mathscr{I}^{\bullet}$ that is unique up to homotopy.
Passing to global sections, this induces canonical maps (independent of all choices)
$$f^i_{\mathcal{U},\mathscr{F}}: \check{\rm{H}}^i(\mathcal{U}, \mathscr{F}) \rightarrow {\rm{H}}^i(X, \mathscr{F})$$ for all $i$.

\begin{proposition}\label{cechtoderivedcomplex}
The map $f^i_{\mathcal{U}, \mathscr{F}}$ coincides with the edge map $(\rm{\ref{edgemap}})$ for all $i$.
\end{proposition}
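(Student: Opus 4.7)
The approach will be to realize both maps as arising from a single double complex, and then to invoke the uniqueness (up to homotopy) of lifts of resolutions through complexes of injectives. Consider the sheaf-level double complex
\[
\mathscr{D}^{p,q} := \mathscr{C}^p(\mathcal{U}, \mathscr{I}^q),
\]
with horizontal differential the \v{C}ech coboundary and vertical differential that of $\mathscr{I}^\bullet$. Each term is injective: the restriction functor to a member of the cover has an exact left adjoint (extension by zero), so pushforward preserves injectives; hence each $\mathscr{D}^{p,q}$, being a product of such pushforwards of restrictions of the $\mathscr{I}^q$, is injective. The natural augmentation $\mathscr{F} \to \mathrm{Tot}(\mathscr{D}^{\bullet,\bullet})$ coming from $\mathscr{F} \hookrightarrow \mathscr{I}^0 = \mathscr{D}^{0,0}$ is a quasi-isomorphism, which may be verified via either of the two filtration spectral sequences on $\mathscr{D}$ using sheafified \v{C}ech exactness in one direction and the exactness of $\mathscr{F} \to \mathscr{I}^\bullet$ in the other.

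Next I would apply $\Gamma(X, -)$ to obtain the double complex $D^{p,q} = C^p(\mathcal{U}, \mathscr{I}^q)$. The horizontal-first spectral sequence degenerates at $E_2$, because $\check{\rm{H}}^p(\mathcal{U}, \mathscr{I}^q) = 0$ for $p > 0$ (injectives are \v{C}ech-acyclic) and equals $\Gamma(X, \mathscr{I}^q)$ for $p=0$; this shows that the natural inclusion $\alpha \colon \Gamma(X, \mathscr{I}^\bullet) \to \mathrm{Tot}(D)$ via $\mathscr{D}^{0,\bullet}$ is a quasi-isomorphism, so that $\alpha_*$ identifies $H^*(X, \mathscr{F})$ with $H^*(\mathrm{Tot}(D))$. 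The vertical-first spectral sequence is precisely the \v{C}ech-to-derived functor spectral sequence, and tracing through its construction shows that the edge map $(\ref{edgemap})$ equals $\alpha_*^{-1} \circ \beta_*$, where $\beta \colon C^\bullet(\mathcal{U}, \mathscr{F}) \to \mathrm{Tot}(D)$ is the chain map induced on global sections by $\mathscr{F} \hookrightarrow \mathscr{I}^0$ landing in $D^{\bullet, 0}$.

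On the other hand, $f^p_{\mathcal{U}, \mathscr{F}}$ is, by construction, $H^p(\Gamma(\phi))$ for any chain map of sheaves $\phi \colon \mathscr{C}^\bullet(\mathcal{U}, \mathscr{F}) \to \mathscr{I}^\bullet$ lifting $\mathrm{id}_\mathscr{F}$. To finish, it will suffice to show that the two chain maps $\alpha \circ \Gamma(\phi)$ and $\beta$ from $C^\bullet(\mathcal{U}, \mathscr{F})$ to $\mathrm{Tot}(D)$ are chain-homotopic. The key observation is that both arise by applying $\Gamma$ to chain maps of sheaves $\mathscr{C}^\bullet(\mathcal{U}, \mathscr{F}) \to \mathrm{Tot}(\mathscr{D}^{\bullet,\bullet})$ which lift $\mathrm{id}_\mathscr{F}$: the first is $\phi$ followed by the inclusion $\mathscr{I}^\bullet \hookrightarrow \mathrm{Tot}(\mathscr{D})$ along $\mathscr{D}^{0,\bullet}$, and the second is induced termwise by $\mathscr{F} \hookrightarrow \mathscr{I}^0$ and lands in $\mathscr{D}^{\bullet,0}$. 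Since $\mathscr{C}^\bullet(\mathcal{U}, \mathscr{F})$ is a resolution of $\mathscr{F}$ and $\mathrm{Tot}(\mathscr{D})$ is a bounded-below complex of injectives resolving $\mathscr{F}$, the fundamental lemma of homological algebra guarantees that any two such lifts are chain-homotopic; applying $\Gamma$ preserves the homotopy, yielding the claim.

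The main obstacle — and the step most deserving of care — is the sheaf-level setup, specifically verifying that each $\mathscr{D}^{p,q}$ is injective in the ambient topos (so that $\mathrm{Tot}(\mathscr{D})$ is a complex of injectives) and that the augmentation $\mathscr{F} \to \mathrm{Tot}(\mathscr{D})$ is a quasi-isomorphism. Both facts hinge on acyclicity of sheafified \v{C}ech complexes for injective (or even arbitrary) abelian sheaves, which is standard for the fppf and \'etale sites via stalk- or point-level arguments; once these foundations are in place, the rest of the argument is essentially formal.
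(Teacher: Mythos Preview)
Your approach is correct and takes a genuinely different route from the paper's proof (attributed to Gabber). The paper avoids the sheaf-level injectivity of $\mathscr{C}^p(\mathcal{U},\mathscr{I}^q)$ entirely: instead it introduces the auxiliary double complex $K' = \Gamma(\mathscr{C}^\bullet(\mathscr{C}^\bullet(\mathscr{F})))$ (double \v{C}ech, no injectives involved) and writes down by hand an explicit chain homotopy $H$ between its two edge inclusions $e', e'' \colon \Gamma(\mathscr{C}^\bullet(\mathscr{F})) \to \mathrm{tot}(K')$, with the formula $H(\sigma)^{p,n-p} = (-1)^p \sigma$, followed by a direct sign computation verifying $\partial H + H\partial = e'' - e'$. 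By contrast, you lift everything to the sheaf level, show $\mathrm{Tot}(\mathscr{D})$ is a bounded-below complex of injectives resolving $\mathscr{F}$, and invoke the abstract uniqueness-up-to-homotopy of lifts. Your argument is cleaner and more conceptual; the paper's buys complete explicitness and avoids needing any topos-theoretic input (existence and exactness of $j_!$ for localization morphisms) beyond what is already implicit in the \v{C}ech-to-derived spectral sequence itself.
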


\begin{proof} (O.\,Gabber)
Let us keep the notation above. We will suppress the $\mathcal{U}$ subscript for notational simplicity. The \v{C}ech-to-derived functor spectral sequence is the spectral sequence associated to the double complex
\[
K : = \Gamma(\mathscr{C}^{\bullet}(\mathscr{I}^{\bullet})),
\]
where $\Gamma(\cdot)$ denotes the functor $\Gamma(X, \cdot)$. For the reader's convenience, let us define this ``double complex'' and state our conventions for the total complex ${\rm{tot}}(K)$ of $K$. 

By definition, $K^{p,q} : = \Gamma(\mathscr{C}^p(\mathscr{I}^q))$ with horizontal differentials given along
the $q$th row by the \v{C}ech differentials and vertical differentials in the $p$th column induced by applying $\mathscr{C}^p$ to the complex $\mathscr{I}^q$. Note that this is a {\em commuting} diagram
(so not the standard anti-commuting condition in the definition of ``double complex''); i.e., the following diagram commutes for all $p, q$: 
\[
\begin{tikzcd}
\Gamma(\mathscr{C}^p(\mathscr{I}^{q+1})) \arrow{r}{\partial_{{\rm{hor}}}^{p,q+1}} & \Gamma(\mathscr{C}^{p+1}(\mathscr{I}^{q+1})) \\
\Gamma(\mathscr{C}^p(\mathscr{I}^q)) \arrow{u}{\partial_{{\rm{ver}}}^{p,q}} \arrow{r}{\partial_{{\rm{hor}}}^{p,q}} & \Gamma(\mathscr{C}^{p+1}(\mathscr{I}^q)) \arrow{u}{\partial_{{\rm{ver}}}^{p+1,q}}
\end{tikzcd}
\]
The total complex ${\rm{tot}}(K)$ of $K$ is the cochain complex defined by
\[
({\rm{tot}}(K))^n : = \underset{p+q=n}{\bigoplus} \Gamma(\mathscr{C}^p(\mathscr{I}^q)),
\]
with differential given on $K^{p,q}$ by $\partial_{{\rm{hor}}}^{p,q} + (-1)^p\partial_{{\rm{ver}}}^{p,q}$. 

The edge map from $\check{H}^n(X, \mathscr{F})$ to the $n$th cohomology of ${\rm{tot}}(K)$ (which is the abutment of the spectral sequence) is defined by the edge map
\[
g:  \Gamma(\mathscr{C}^{\bullet}(\mathscr{F})) \rightarrow {\rm{tot}}(K) 
\]
defined in degree $n$ by the map $$\mathscr{C}^n(\mathscr{F})(X) = \prod_{i_0\dots i_n} \mathscr{F}(U_{i_0\dots i_n}) \rightarrow K^{n,0} = \prod_{i_0\dots i_n} \mathscr{I}^0(U_{i_0\dots i_n})$$ induced by the inclusion $\mathscr{F} \hookrightarrow \mathscr{I}^0$. That this is a map of cochain complexes follows from the fact that the differential $\mathscr{I}^0 \rightarrow \mathscr{I}^1$ kills $\mathscr{F}$. 

The $n$th homology of the total complex of $K$ is identified with ${\rm{H}}^n(X, \mathscr{F})$ in the following manner.
We have an edge map 
\[
f_2:  \Gamma(\mathscr{I}^{\bullet}) \rightarrow {\rm{tot}}(K) 
\]
defined in degree $n$ by the restriction map $\mathscr{I}^n(X) \rightarrow K^{0,n} = \prod_i \mathscr{I}^n(U_i)$. That this is a map of cochain complexes follows from the fact that the \v{C}ech complex associated to $\mathscr{I}^n$ is a complex (and in particular, a complex at the term $\prod_i \mathscr{I}^n(U_i)$). (Note that the relevant differentials are the $\partial_{{\rm{ver}}}^{0,\,n}$, and, since $p = 0$ here, the $(-1)^p$ factor disappears.) When constructing the \v{C}ech-to-derived functor spectral sequence, one shows that this map is a quasi-isomorphism
(the main point being that each $\mathscr{C}^p(\mathscr{I})$ is flasque for an injective
abelian sheaf $\mathscr{I}$), thereby identifying the homology of ${\rm{tot}}(K)$ with the derived functor cohomology of $\mathscr{F}$. 

Now we bring in the (unique up to homotopy) map $h: \mathscr{C}^{\bullet}(\mathscr{F}) \rightarrow \mathscr{I}^{\bullet}$ of resolutions of $\mathscr{F}$. Consider the double complex
\[
K' : = \Gamma(\mathscr{C}^{\bullet}(\mathscr{C}^{\bullet}(\mathscr{F})))
\]
(same conventions as before). We have a commutative diagram
\[
\begin{tikzcd}
\Gamma(\mathscr{C}^{\bullet}(\mathscr{F})) \arrow[d, equals] \arrow{r}{e'} & {\rm{tot}}(\Gamma(\mathscr{C}^{\bullet}(\mathscr{C}^{\bullet}(\mathscr{F})))) \arrow{d} & \Gamma(\mathscr{C}^{\bullet}(\mathscr{F})) \arrow{l}{e''} \arrow{d}{f_1} \\
\Gamma(\mathscr{C}^{\bullet}(\mathscr{F})) \arrow{r}{g} & {\rm{tot}}(\Gamma(\mathscr{C}^{\bullet}(\mathscr{I}^{\bullet}))) & \Gamma(\mathscr{I}^{\bullet}) \arrow{l}{f_2}
\end{tikzcd}
\] 
of cochain complexes, where the lower horizontal maps are the ``edge maps'' defined above and the 
upper horizonal maps are defined analogously using the \v{C}ech resolution of $\mathscr{F}$
in place of the injective resolution $\mathscr{I}^{\bullet}$, and the middle and right vertical maps are induced by the map 
of resolutions $h: \mathscr{C}^{\bullet}(\mathscr{F}) \rightarrow \mathscr{I}^{\bullet}$. 

Our task is to show that $f_2\circ f_1$ and $g$ induce the same maps on cohomology in each degree, so by commutativity of the diagram it suffices to show that $e'$ and $e''$ are homotopic. For the reader's convenience, let us write out formulas for all of the relevant maps.

First, $\Gamma(\mathscr{C}^n(\mathscr{F})) = \prod_{i_0\dots i_n} \mathscr{F}(U_{i_0\dots i_n})$, and for $\sigma \in \Gamma(\mathscr{C}^n(\mathscr{F}))$, we have
\[
(\partial \sigma)(i_0, \dots, i_{n+1}) = \sum_{r=0}^{n+1} (-1)^r\sigma(i_0\dots \widehat{i_r}\dots i_{n+1})|_{U_{i_0\dots i_{n+1}}}
\]
where the notation $\widehat{i_r}$ means that the index $i_r$ is omitted. The group 
$$K'^{p,q} = \Gamma(\mathscr{C}^p(\mathscr{C}^q(\mathscr{F}))$$ is naturally identified with $\prod_{i_0\dots i_p j_0\dots j_q} \mathscr{F}(U_{i_0\dots i_p j_0\dots j_q})$. The vertical and horizontal differentials at the $(p,q)$-entry are
\[
(\partial^{p,q}_{{\rm{hor}}}\sigma)(i_0, \dots, i_{p+1}, j_0, \dots, j_q) = \sum_{r=0}^{p+1} (-1)^r\sigma(i_0, \dots, \widehat{i_r}, \dots, i_{p+1}, j_0, \dots, j_q)|_{U_{i_0\dots i_{p+1}j_0\dots j_q}},
\]
\[
(\partial^{p,q}_{{\rm{ver}}}\sigma)(i_0, \dots, i_p, j_0, \dots, j_{q+1}) = \sum_{r=0}^{q+1} (-1)^r\sigma(i_0, \dots, i_p, j_0, \dots, \widehat{j_r}, \dots, j_{q+1})|_{U_{i_0\dots i_pj_0\dots j_{q+1}}}.
\]
The total complex ${\rm{tot}}(K')$ of $K'$ is therefore given by 
\[
({\rm{tot}}(K'))^n = \bigoplus_{p+q=n} K'^{p,q} = \bigoplus_{p+q=n} \Gamma(\mathscr{C}^{n+1}(\mathscr{F}))
\]
with differential given on $K'^{p,q}$ by $\partial^{p,q}_{{\rm{hor}}} + (-1)^p\partial^{p,q}_{{\rm{ver}}}$. 

The edge maps are defined as follows for $\sigma \in \Gamma(\mathscr{C}^n(\mathscr{F}))$:  $e'(\sigma)^{p,q} = 0$ for $q > 0$, $e''(\sigma)^{p,q} = 0$ for $p > 0$, and for any $n \ge 0$ we define
$$e'(\sigma)^{n,0}(i_0, \dots, i_n, j_0) = \sigma(i_0, \dots, i_n)|_{U_{i_0\dots i_nj_0}},$$
$$e''(\sigma)^{0,n}(i_0, j_0, \dots, j_n) = \sigma(j_0, \dots, j_n)|_{U_{i_0j_0\dots j_n}}.$$

Let us now define our homotopy $H$. Let $H:  \Gamma(\mathscr{C}^{n+1}(\mathscr{F})) \rightarrow ({\rm{tot}}(K'))^{n}$ 
be defined to have its component map into the $(p,n-p)$-factor 
$({\rm{tot}}(K'))^{p, n-p} = \Gamma(\mathscr{C}^{n+1}(\mathscr{F}))$ given by multiplication by $(-1)^p$. Then one checks that
for $p, q \ge 0$ with $p+q=n$, as elements of the factor $\mathscr{F}(U_{i_0\dots i_p j_0\dots j_q})$
of $\Gamma(\mathscr{C}^{n+1}(\mathscr{F}))$ we have 
\begin{eqnarray*}
(H\partial \sigma)^{p,q}(i_0, \dots, i_p, j_0, \dots, j_q) &=& (-1)^p\partial \sigma(i_0, \dots, i_p, j_0, \dots, j_q)\\
&=& (-1)^p\sum_{r=0}^p (-1)^r\sigma(i_0, \dots, \widehat{i_r}, \dots, i_p, j_0, \dots, j_q)\\
&& + (-1)^p\sum_{r=0}^q (-1)^{r+p+1}\sigma(i_0, \dots, i_p, j_0, \dots, \widehat{j_r}, \dots, j_q)\\
&=& \sum_{r=0}^p(-1)^{p+r} \sigma(i_0, \dots, \widehat{i_r}, \dots, i_p, j_0, \dots, j_q)\\
&& + \sum_{r=0}^q (-1)^{r+1}\sigma(i_0, \dots, i_p, j_0, \dots, \widehat{j_r}, \dots, j_q)
\end{eqnarray*}
(suppressing the evident restriction to $U_{i_0\dots j_q}$ for all the terms in the sums), 
and
\begin{eqnarray*}
(\partial H\sigma)^{p,q}(i_0, \dots, i_p, j_0, \dots, j_q) &=&
\sum_{r=0}^p (-1)^rH(\sigma)^{p-1,q}(i_0, \dots, \widehat{i_r}, \dots, i_p, j_0, \dots, j_q) +\\
&& (-1)^p\sum_{r=0}^q (-1)^rH(\sigma)^{p,q-1}(i_0, \dots, i_p, j_0, \dots, \widehat{j_r}, \dots, j_q)
\end{eqnarray*}
(again suppressing notation for restriction to $U_{i_0\dots j_q}$) 
with the first summand only appearing if $p >0$ and the second appearing only if $q>0$.
This final expression is equal to 
\[
-\sum_{r=0}^p(-1)^{p+r}\sigma(i_0, \dots, \widehat{i_r}, \dots, i_p, j_0, \dots, j_q) - \sum_{r=0}^q (-1)^{r+1} \sigma(i_0, \dots, i_p, j_0, \dots, \widehat{j_r}, \dots, j_q)
\]
with the same suppression of notation for restriction maps
and the same caveat that the first (resp. second) summand appears only when $p > 0$  (resp. $q>0$). 

It is now a straightforward calculation with the definitions 
and inspection of the $(p,q)$-component of the output
in each of the four cases (i) $p, q > 0$, (ii) $p>0, q=0$, (iii) $p=0, q> 0$, (iv) $p=q=0$
that (after much cancellation) we have 
$$\partial H + H\partial = e'' - e'.$$
Hence, $H$ is  a homotopy between $e'$ and $e''$.
\end{proof}

\section{Connecting maps}

Now consider a short exact sequence 
\[
0 \longrightarrow \mathscr{F}' \xlongrightarrow{j} \mathscr{F} \xlongrightarrow{\pi} \mathscr{F}'' \longrightarrow 0
\]
of abelian sheaves for a Grothendieck topology on $X$ (such as the fppf or \'etale topologies, which are all we shall need).
For a cover $\mathcal{U}$ of $X$, consider a \v{C}ech cocycle $\check{\alpha} \in \check{Z}^i(\mathcal{U}, \mathscr{F}'')$
that happens to lift to a cochain $\check{\beta} \in \check{C}^i(\mathcal{U}, \mathscr{F})$.  Of course, in general
no such lift will exist; we only consider $\check{\alpha}$ admitting such a lift.  

Define a cocycle $\check{\gamma} \in \check{Z}^{i+1}(\mathcal{U}, \mathscr{F}')$
via the snake lemma procedure.  That is, since $\pi(d\check{\beta}) = d\check{\alpha} = 0$, there is a unique cochain
$\check{\gamma} \in \check{C}^{i+1}(\mathcal{U}, \mathscr{F}')$ such that $j(\check{\gamma}) = d\check{\beta}$.
The cochain $\check{\gamma}$ is a cocycle because $j(d\check{\gamma}) = d^2\check{\beta} = 0$ and $\ker j = 0$.
 
\begin{proposition}
\label{cechderivedconnectingmap}
Letting $[c]$ denote the derived-functor cohomology class arising from a \v{C}ech cocycle $\check{c}$ via
the edge map {\rm{(\ref{edgemap})}}, for 
$\check{\alpha}$ and $\check{\gamma}$ as considered above, we have 
$\delta([\alpha]) = [\gamma]$ for the connecting map
$\delta:  {\rm{H}}^i(X, \mathscr{F}'') \rightarrow {\rm{H}}^{i+1}(X, \mathscr{F}')$.
\end{proposition}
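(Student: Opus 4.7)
The plan is to reduce to the concrete description of the edge map $(\ref{edgemap})$ supplied by Proposition \ref{cechtoderivedcomplex}, and then to read off $\delta([\alpha])$ directly from the resulting representatives. To set this up, I would first build a short exact sequence of complexes of injective abelian sheaves $0 \to \mathscr{I}'^{\bullet} \to \mathscr{I}^{\bullet} \to \mathscr{I}''^{\bullet} \to 0$ resolving $0 \to \mathscr{F}' \to \mathscr{F} \to \mathscr{F}'' \to 0$, with the stronger property of being split exact in each degree as a sequence of sheaves (not of complexes). This is the Horseshoe Lemma construction: one takes $\mathscr{I}^n = \mathscr{I}'^n \oplus \mathscr{I}''^n$ and chooses the differentials on $\mathscr{I}^{\bullet}$ so that $j_{\mathscr{I}}$ and $\pi_{\mathscr{I}}$ are chain maps. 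Because each degree is split exact as a sequence of sheaves, the induced sequence of global sections $0 \to \Gamma(\mathscr{I}'^{\bullet}) \to \Gamma(\mathscr{I}^{\bullet}) \to \Gamma(\mathscr{I}''^{\bullet}) \to 0$ is short exact, and the derived-functor connecting map $\delta$ is computed by the standard snake-lemma procedure on this sequence.

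Next, I would construct compatible maps of resolutions $h': \mathscr{C}^{\bullet}(\mathcal{U}, \mathscr{F}') \to \mathscr{I}'^{\bullet}$, $h'': \mathscr{C}^{\bullet}(\mathcal{U}, \mathscr{F}'') \to \mathscr{I}''^{\bullet}$, and $h: \mathscr{C}^{\bullet}(\mathcal{U}, \mathscr{F}) \to \mathscr{I}^{\bullet}$ satisfying the strict compatibilities $\pi_{\mathscr{I}} \circ h = h'' \circ \pi_{\mathscr{C}}$ and $h \circ j_{\mathscr{C}} = j_{\mathscr{I}} \circ h'$ (rather than merely up to homotopy). Using the degreewise splitting $\mathscr{I}^n = \mathscr{I}'^n \oplus \mathscr{I}''^n$, the $\mathscr{I}''^n$-component of $h$ is forced to be $h''^n \circ \pi_{\mathscr{C}}^n$, so the problem reduces to defining the $\mathscr{I}'^n$-component inductively. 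This can be done by the usual injective-object argument: at each step one extends the partial map already prescribed (by $h'^n$ on $j_{\mathscr{C}}^n(\mathscr{C}^n(\mathcal{U}, \mathscr{F}'))$ and by the chain-map constraint involving $h^{n-1}$) across the ambient sheaf $\mathscr{C}^n(\mathcal{U}, \mathscr{F})$, using the injectivity of $\mathscr{I}'^n$.

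With this setup, the verification becomes essentially formal. By Proposition \ref{cechtoderivedcomplex}, the derived-functor class $[\alpha]$ is represented by the global cocycle $h''(\check{\alpha}) \in \Gamma(\mathscr{I}''^i)$. The element $h(\check{\beta}) \in \Gamma(\mathscr{I}^i)$ satisfies $\pi_{\mathscr{I}}(h(\check{\beta})) = h''(\pi_{\mathscr{C}}(\check{\beta})) = h''(\check{\alpha})$, so it is an admissible lift in the snake-lemma computation of $\delta([\alpha])$. Applying the differential and using that $h$ is a chain map, together with the defining relation $d\check{\beta} = j_{\mathscr{C}}(\check{\gamma})$, yields
\[
d\bigl(h(\check{\beta})\bigr) = h(d\check{\beta}) = h\bigl(j_{\mathscr{C}}(\check{\gamma})\bigr) = j_{\mathscr{I}}\bigl(h'(\check{\gamma})\bigr),
\]
so $\delta([\alpha])$ is represented in $\Gamma(\mathscr{I}'^{i+1})$ by $h'(\check{\gamma})$, which is exactly $[\gamma]$ by another application of Proposition \ref{cechtoderivedcomplex}. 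The main obstacle in the plan is therefore the bookkeeping required to produce the strictly compatible triple $(h', h, h'')$ from the Horseshoe-style injective resolution; once this is in place, the snake-lemma calculation itself is immediate.
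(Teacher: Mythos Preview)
Your proposal is correct and follows essentially the same strategy as the paper: build a short exact sequence of injective resolutions, produce a strictly commuting triple of maps from the \v{C}ech resolutions into it, pass to global sections, and read off $\delta$ via the snake lemma together with Proposition~\ref{cechtoderivedcomplex}. The only organizational difference is that you invoke the Horseshoe construction to force a degreewise splitting $\mathscr{I}^n = \mathscr{I}'^n \oplus \mathscr{I}''^n$ (so the $\mathscr{I}''$-component of $h$ is determined and only the $\mathscr{I}'$-component must be built inductively), whereas the paper instead isolates the entire inductive construction as a standalone Lemma~\ref{extendmapscomplexdiagram}, valid for any left-exact row of complexes mapping into a row of injectives. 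The ``bookkeeping'' you flag as the main obstacle---checking that the constraint $a^{n} j_{\mathscr{C}} = h'^{n}$ is compatible with the chain-map constraint on $d(\mathscr{C}^{n-1}(\mathcal{U},\mathscr{F}))$---is exactly the content of that lemma, and is the only place where the mere left-exactness (rather than short exactness) of the \v{C}ech row matters; once that is done, your final snake-lemma computation is identical to the paper's.
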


The main step in the proof is a general homological lemma.
We use the convention that all complexes have ascending degree and 
that for any complex $C^{\bullet}$ concentrated in degrees $\geq 0$, the group ${\rm{H}}^0(C^{\bullet}) : = \ker(d^0:  C^0 \rightarrow C^1)$ denotes the homology of $C^{\bullet}$ upon augmenting by $0$.

\begin{lemma}
\label{extendmapscomplexdiagram}
Consider left-exact sequences of complexes
\[
0 \longrightarrow C'^{\bullet} \xlongrightarrow{j_C} C^{\bullet} \xlongrightarrow{\pi_C} C''^{\bullet}
\]
\[
0 \longrightarrow I'^{\bullet} \xlongrightarrow{j_I} I^{\bullet} \xlongrightarrow{\pi_I} I''^{\bullet}
\]
in an abelian category, with each complex concentrated in non-negative degree and exact in positive degree.
Assume that $I'^{\bullet}, I^{\bullet}$, and $I''^{\bullet}$ are complexes of injectives. 

For any commutative diagram of degree-$0$ kernels
\begin{equation}
\label{extendmapscomplexdiagrameqn1}
\begin{tikzcd}
0 \arrow{r} &{\rm{H}}^0(C') \arrow{r}{j_C} \arrow{d}{g'} & {\rm{H}}^0(C) \arrow{r}{\pi_C} \arrow{d}{g} & {\rm{H}}^0(C'') \arrow{d}{g''} \\
0 \arrow{r} & {\rm{H}}^0(I') \arrow{r}{j_I} & {\rm{H}}^0(I) \arrow{r}{\pi_I} & {\rm{H}}^0(I'')
\end{tikzcd}
\end{equation}
in which the horizontal maps are those induced by the sequences of complexes above, there is a commutative diagram of complexes
\[
\begin{tikzcd}
0 \arrow{r} & C'^{\bullet} \arrow{r}{j_C} \arrow{d}{f'} & C^{\bullet} \arrow{r}{\pi_C} \arrow{d}{f} & C''^{\bullet} \arrow{d}{f''} \\
0 \arrow{r} & I'^{\bullet} \arrow{r}{j_I} & I^{\bullet} \arrow{r}{\pi_I} & I''^{\bullet}
\end{tikzcd}
\]
inducing diagram ${\rm{(\ref{extendmapscomplexdiagrameqn1})}}$.
\end{lemma}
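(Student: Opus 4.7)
The plan is to construct the three maps $f'^n, f^n, f''^n$ by simultaneous induction on $n \ge 0$, at each stage using the injectivity of $I'^n, I^n, I''^n$ to extend partial maps coming either from the given data on ${\rm{H}}^0$ (when $n = 0$) or from the compatibility requirement with the differential inductively (when $n \ge 1$). The key reason the strategy works is that left-exactness of the $C^\bullet$-sequence (together with $\pi_I \circ j_I = 0$ from left-exactness of the $I^\bullet$-sequence) forces the partial maps defining $f^n$ and $f''^n$ to be well-defined, while exactness of the $C^\bullet$-complexes in positive degrees identifies $\ker d^n$ with $\mathrm{im}(d^{n-1})$, which is what lets the induction proceed past degree $0$.

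For the base case, first construct $f'^0$ by extending the given composite ${\rm{H}}^0(C') \to {\rm{H}}^0(I') \hookrightarrow I'^0$ along ${\rm{H}}^0(C') \hookrightarrow C'^0$ using injectivity of $I'^0$. Next, to build $f^0$, define a partial map into $I^0$ on the subobject $j_C(C'^0) + {\rm{H}}^0(C) \subset C^0$, using $j_I \circ f'^0$ on $j_C(C'^0)$ and the composite ${\rm{H}}^0(C) \xrightarrow{g} {\rm{H}}^0(I) \hookrightarrow I^0$ on ${\rm{H}}^0(C)$. The observation that $j_C(C'^0) \cap {\rm{H}}^0(C) = j_C({\rm{H}}^0(C'))$ (which holds since $j_C$ is injective and commutes with differentials) combined with commutativity of the given square on ${\rm{H}}^0$ ensures these agree on the intersection, giving a well-defined map on the pushout that extends to all of $C^0$ via injectivity of $I^0$. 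Finally, construct $f''^0$ by defining it first on $\pi_C(C^0)$ via $\pi_C(x) \mapsto \pi_I(f^0(x))$; this is well-defined because if $\pi_C(x) = 0$, then by left-exactness $x = j_C(y)$, so $\pi_I(f^0(x)) = \pi_I(j_I(f'^0(y))) = 0$. One then checks that on the overlap with ${\rm{H}}^0(C'') \subset C''^0$ this partial definition agrees with $g''$ followed by ${\rm{H}}^0(I'') \hookrightarrow I''^0$ -- this uses exactness of $C'^\bullet$ in positive degrees to rewrite any $\pi_C(x) \in {\rm{H}}^0(C'')$ as $\pi_C(x')$ with $x' \in {\rm{H}}^0(C)$ -- and then extends to all of $C''^0$ by injectivity of $I''^0$.

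For the inductive step, assume $f'^{\le n}, f^{\le n}, f''^{\le n}$ have been constructed compatibly with $j$, $\pi$, and the differentials. Exactness of the $C^\bullet$-complexes in positive degrees means that for any $\xi \in \mathrm{im}(d^n_C) \subset C^{n+1}$, the element $d^n_I(f^n(x)) \in I^{n+1}$ (for any $x$ with $d^n_C(x) = \xi$) is independent of the choice of $x$: two choices differ by an element of $\ker(d^n_C)$, which is either $\mathrm{im}(d^{n-1}_C)$ (for $n \ge 1$) or ${\rm{H}}^0(C)$ (for $n = 0$), and in either case $f^n$ carries this into $\ker(d^n_I)$ by the inductive hypothesis (or by how $f^0$ was built on ${\rm{H}}^0(C)$). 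Analogous statements hold for the primed and double-primed complexes. We then replay the base case verbatim with $\mathrm{im}(d^n)$ in place of ${\rm{H}}^0$ throughout: construct $f'^{n+1}$, then $f^{n+1}$ on the pushout of $j_C(C'^{n+1})$ and $\mathrm{im}(d^n_C)$, then $f''^{n+1}$ on $\pi_C(C^{n+1}) + \mathrm{im}(d^n_{C''})$.

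The main obstacle is the compatibility bookkeeping at each stage, particularly the construction of $f^n$ and $f''^n$. It is essential that we extend the maps in the order $f', f, f''$ rather than trying to lift $f''^n \circ \pi_C$ through $\pi_I$: the map $\pi_I$ is generally not surjective (the $I^\bullet$-sequence is only left-exact), so such a direct lift does not exist, and instead $f''^n$ must be manufactured from the already-constructed $f^n$ on the image of $\pi_C$ and then extended using injectivity of $I''^n$. Once the two compatibility checks in each degree -- well-definedness on the pushout for $f^n$, and agreement on the ${\rm{H}}^0(C'')$-overlap (or its inductive analog) for $f''^n$ -- are laid out cleanly, injectivity of each $I^n$ does all the real work of producing the extensions.
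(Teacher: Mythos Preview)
Your proposal is correct and follows essentially the same approach as the paper. The only difference is organizational: the paper constructs all of $f'$ first (every degree), then all of $f$, then all of $f''$, whereas you run a simultaneous degree-by-degree induction; the key verifications (well-definedness on the pushout for $f^n$, the diagram chase using exactness of $C'^{\bullet}$ in positive degrees to rewrite elements of the overlap for $f''^n$, and the case split between $\ker d^n = \mathrm{im}\,d^{n-1}$ for $n \ge 1$ versus $\ker d^0 = {\rm{H}}^0$) are identical in both.
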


In the special case when both given left-exact sequences of complexes are short exact, this lemma is in many references on homological algebra.  But to our surprise, the preceding version with just left-exactness 
does not seem to be in any reference.  

\begin{proof}
We use the method of ``chasing members'' that permits us to make arguments in general abelian categories
using notation as if working in the category of abelian groups; see \cite[Ch.\,VIII, \S4]{Cat} for a discussion of this technique
and its application to prove the snake lemma in a general abelian category.
This method is used here so that we may chase diagrams and present a proof that is actually comprehensible (to the author, as well as to the reader!). 

It is a standard fact in homological algebra that we may construct $f': C'^{\bullet} \rightarrow I^{\bullet}$ inducing $g'$ on ${\rm{H}}^0$'s, but for the convenience of the reader we now recall how this is proved, since the method involves arguments that will be adapted
to construct the map of complexes $f$ (compatibly with $f'$) and then $f''$ (compatibly with $f$). First, let $f'^0(c') = g'(c') \in {\rm{H}}^0(I') \subset I'^0$ for $c' \in {\rm{H}}^0(C') \subset C'^0$. By injectivity of $I'^0$, this extends to a map $f'^0:  C'^0 \rightarrow I'^0$. In general, for $n > 0$ we assume that $f'^{m}$ has already been constructed for $m < n$ so that the diagram of complexes commutes up to degree $n-1$, and we
shall now inductively construct $f'^n$ to satisfy $df'^{n-1} = f'^nd$. 

First, we define $f'^n$ on $dC'^{n-1}$ by setting $f'^n(dc') : = df'^{n-1}(c')$ for $c' \in C'^{n-1}$. Assuming that this is well-defined, we may use injectivity of $I'^n$ to extend this to a map $f'^n:  C'^n \rightarrow I'^n$ that satisfies the desired property by construction. To show well-definedness, we must check that if $c' \in C'^{n-1}$ satisfies $dc' = 0$, then $df'^{n-1}(c') = 0$. 
 If $n = 1$ then by design $f'^0$ carries $c'$ to a cocycle in $I'^0$, so we get what we want. 
 Next suppose $n > 1$, so $C'^{\bullet}$ is exact at $C'^{n-1}$ by hypothesis. Hence, $c' = db'$ for some $b' \in C'^{n-2}$. But then $df'^{n-1}(c') = df'^{n-1}(db') = d^2f'^{n-2}(b') = 0$, as desired, completing the construction of $f'$.

Next we construct $f$. We once again do this inductively. Let us first construct $f^0$. We need $f^0$ to satisfy $f^0j_C(c') = j_If'^0(c')$ for $c' \in C'^0$ and $f^0(c) = g(c)$ for $c \in {\rm{H}}^0(C)$. Assuming that these yield a well-defined map on the span of ${\rm{H}}^0(C)$ and $j_C(C'^0)$
inside $C^0$, by injectivity of $I^0$ it extends to a map $f^0:  C^0 \rightarrow I^0$ with the desired properties. For well-definedness, we need to check that if $c' \in C'^0$ satisfies $dj_C(c') = 0$ (i.e., $c: = j_C(c') \in {\rm{H}}^0(C)$), then necessarily $j_If'^0(c') = gj_C(c')$. Note first that $c' \in {\rm{H}}^0(C')$ because $j_C(dc') = dj_C(c') = 0$ (and $\ker j_C = 0$). Since $gj_C(c') = j_Ig'(c')$ (as (\ref{extendmapscomplexdiagrameqn1}) commutes)
and $g'(c') = f'^0(c')$ inside $I'^0 \supset {\rm{H}}^0(I)$ by design of $f'^0$, the well-definedness is proved.

We now construct $f^n$ for $n > 0$ inductively, assuming that the maps $\{f^m\}_{m<n}$ have been constructed 
satisfying all the required commutativity properties. The two properties that we need $f^n$ to satisfy are
\begin{itemize}
\item $f^n(dc) = df^{n-1}(c)$ for $c \in C^{n-1}$, 
\item $f^nj_C(c') = j_If'^n(c')$ for $c' \in C'^n$.
\end{itemize}
We want to simply declare $f^n$ to satisfy these two properties on the span of $j_C(C'^n)$ and $d(C^{n-1})$ inside $C^n$. Assuming that this is well-defined, injectivity of $I^n$ implies that $f^n$ extends to a map $C^n \rightarrow I^n$ that has the required properties. To check well-definedness, consider the following commutative diagram (in which the phantom map $f^n$ is yet to be defined): 
\begin{equation}
\label{commutativecube1}
 \begin{tikzcd}[row sep={40}, column sep={40}]
      & C'^{n-1} \ar{rr}{d} \ar[dd, pos = 0.8, "f'^{n-1}"] \ar{dl}[swap]{j_C} & & C'^n \ar{dd}{f'^n} \ar{dl}{j_C} \\
    C^{n-1} \ar[rr, crossing over, pos = 0.7, "d"] \ar{dd}{f^{n-1}} && C^n \ar[dd, dashed, crossing over, pos = 0.3, "f^n"] \\
      & I'^{n-1} \ar[rr, pos = 0.3, swap, "d"] \ar{dl}{j_I} & &  I'^n \ar{dl}{j_I} \\
      I^{n-1} \ar{rr}{d} && I^n 
    \end{tikzcd}
\end{equation}
We need to show for $c \in C^{n-1}, c' \in C'^n$ satisfying $dc = j_C(c')$ that $df^{n-1}(c) = j_If'^n(c')$. 

First, we claim $c' = db'$ for some $b' \in C'^{n-1}$. By exactness of $C'^{\bullet}$ at $C'^n$ (as $n>0$), it suffices to show that $dc' = 0$. But $j_Cd(c') = dj_C(c') = d^2c = 0$, so since $j_C^{n+1}$ is an inclusion, we see that $dc' = 0$ as desired. Our problem can now be recast
in terms of $b' \in C'^{n-1}$ such that $dc = dj_C(b')$:  we need to show that $df^{n-1}(c) = j_If'^ndb'$. Since $c$ differs from $j_C(b')$ by a cocycle, and both the hypothesis and the desired conclusion are additive in $c$ and $b'$, it suffices to treat the cases when either $c = j_C(b')$ or $dc = 0$ and $b' = 0$. 
That is, we want to show that $df^{n-1}j_C(b') = j_If'^ndb'$ for $b' \in C'^{n-1}$ and $df^{n-1}(c)=0$ when $dc=0$.

The first of these follows from the commutativity of (\ref{commutativecube1}). For the second, we have two subcases:  $n=1$ and $n>1$. If $n = 1$, then $f^0$ maps ${\rm{H}}^0(C)$ into ${\rm{H}}^0(I)$ 
(via $g$) by design, so we have what we need. If $n > 1$, then $C^{\bullet}$ is exact at $C^{n-1}$, so $c = db$ for some $b \in C^{n-2}$. Therefore, $df^{n-1}(c) = df^{n-1}d(b) = d^2f^{n-2}(b) = 0$. This completes the construction of $f$.

Finally, we construct $f''$, once again proceeding inductively. The construction of $f''^0$ will slightly differ from that of $f^0$ because $\ker \pi_C \ne 0$
(in contrast with $\ker j_C$), though it will still use the commutativity of (\ref{extendmapscomplexdiagrameqn1}). 
Carrying out the usual procedure to try to make a well-defined construction on the span of $\pi_C(C^0)$ and ${\rm{H}}^0(C'')$ inside
$C''^0$ (and then using that $I''^0$ is injective), we just have to show that if $c \in C^0$ satisfies
$c'' : = \pi_C(c) \in {\rm{H}}^0(C)$, then $\pi_I(f^0(c)) = g''(c'')$.  

Since $\pi_C(dc) = d\pi_C(c) = dc'' = 0$ (as $c'' \in {\rm{H}}^0(C'')$ by assumption), 
we have $dc = j_C(c')$ for some $c' \in C'^1$ by left-exactness. Then $j_C(dc') = dj_C(c') = d^2c = 0$, so the vanishing of
$\ker j_C$ in all degrees yields that $dc' = 0$.  By exactness of $C'^{\bullet}$ in all positive degrees, it follows
that $c' = db'$ for some $b' \in C'^0$, so 
$$dc = j_C(c') = j_C(db') = dj_C(b').$$
In other words, $b : = c - j_C(b') \in {\rm{H}}^0(C)$, so $f^0(b) = g(b)$ by design of $f^0$.  Thus, 
$$\pi_I(f^0(c)) = \pi_I(f^0(j_C(b')+b)) = \pi_I(f^0(j_C(b'))) + \pi_I(g(b)) = \pi_I (j_I(f'^0(b))) + \pi_I(g(b))$$
(as $f \circ j_C = j_I \circ f'$ by construction). 
But $\pi_I \circ j_I = 0$, and the commutativity of
(\ref{extendmapscomplexdiagrameqn1}) gives that $\pi_I(g(b)) = g''(\pi_C(b))$. Hence,
$\pi_I(f^0(c)) = g''(\pi_C(b))$, so we want to show that $g''(\pi_C(b)) = g''(c'')$.  
By definition, $c'' = \pi_C(c)$. Since $\pi_C(c) = \pi_C(j_C(b')+b)= \pi_C(b)$ (as $\pi_C \circ j_C = 0$), the construction of $f''^0$ is complete. 

Now suppose that $n > 0$ and that $\{f''^m\}_{m<n}$ have been constructed satisfying
all the desired properties. The requirements on $f''^n$ are: 
\begin{itemize}
\item $f''^n(dc'') = df''^{n-1}(c'')$ for $c'' \in C''^{n-1}$, 
\item $f''^n\pi_C(c) = \pi_If^n(c)$ for $c \in C^n$. 
\end{itemize}
Once again we want to define $f''^n$ to satisfy these on the span of  $d(C''^{n-1})$ and $\pi_C(C^n)$ inside $C''^n$. Assuming this is well-defined, we can use the injectivity of $I''^n$ to extend $f''^n$ to a map $C''^n \rightarrow I''^n$ (again denoted $f''^n$) that meets our needs. 
It therefore only remains to check this well-definedness. 

We need to show that if members $c'' \in C''^{n-1}$ and $c \in C^n$ satisfy $dc'' = \pi_C(c)$, then $df''^{n-1}(c'') = \pi_If^n(c)$. We first claim that $c = db + j_C(c')$ for some $b \in C^{n-1}, c' \in C'^n$. In fact, we claim that this follows from a diagram chase in the following commutative diagram with exact rows and columns, using the fact that $\pi_C(c) \in d(C''^{n-1})$: 
\[
\begin{tikzcd}
& C'^n \arrow{r} \arrow{d}{j_C} & C'^{n+1} \arrow{r} \arrow{d}{j_C} & C'^{n+2} \arrow[d, hookrightarrow] \\
C^{n-1} \arrow{d} \arrow{r}{d} & C^n \arrow{r} \arrow{d}{\pi_C} & C^{n+1} \arrow{d}{\pi_C} \arrow{r} & C^{n+2} \\
C''^{n-1} \arrow{r}{d} & C''^n \arrow{r} & C''^{n+1}
\end{tikzcd}
\]
Namely, $\pi_C(dc) = d\pi_C(c) \in d^2(C''^{n-1}) = 0$, so $dc = j_C(b')$ for some $b' \in C'^{n+1}$.  Necessarily, $db'=0$
since $j_C(db') = dj_C(b')=d^2c=0$ and $\ker j_C=0$, so $b' = dc'$ for some $c' \in C'^n$ since $C'^{\bullet}$ is exact in positive degrees.
It follows that $dj_C(c') = j_C(dc') = j_C(b') = dc$, so $c - j_C(c') \in \ker d^n = {\rm{im}} d^{n-1}$, which is to say
$c = db + j_C(c')$ for some $b \in C^{n-1}$ and $c' \in C'^n$, as desired. 

With $c$ now expressed in the asserted form $db + j_C(c')$, the hypothesis linking $c''$ and $c$ can be rewritten as
$dc'' = \pi_C(db) = d\pi_C(b)$ (since $\pi_C \circ j_C = 0$), and the desired conclusion can be rewritten as
$$df''^{n-1}(c'') \stackrel{?}{=} \pi_If^n(db+j_C(c')) = d\pi_I f^{n-1}(b) + \pi_Ij_If'^n(c') = d\pi_If^{n-1}(b)$$
since $\pi_I \circ j_I = 0$.   In other words, $c'' - \pi_C(b) \in \ker d^{n-1}_{C''}$ and we want to show that
$f''^{n-1}(c'') - \pi_I f^{n-1}(b) \in \ker d^{n-1}_{I''}$. But $\pi_I f^{n-1} = f''^{n-1}\pi_C$, so the desired
conclusion says $f''^{n-1}(c'' - \pi_C(b)) \in \ker d^{n-1}_{I''}$. Hence, it suffices to show that $f''^{n-1}$ carries
$\ker d^{n-1}_{C''}$ into $\ker d^{n-1}_{I''}$.  If $n=1$ then we use that $f''^0$ carries ${\rm{H}}^0(C'')$ into ${\rm{H}}^0(I'')$
(via $g''$) by design.  If $n>1$ then by exactness of $C''^{\bullet}$ and $I''^{\bullet}$ in positive degrees,
it is the same to show that $f''^{n-1}$ carries ${\rm{im}} d^{n-2}_{C''}$ into ${\rm{im}} d^{n-2}_{I''}$.
But this is obvious because $f''^{n-1}d = df''^{n-2}$. This completes the construction of $f''$ and the proof of the lemma.
\end{proof}

We may now complete the proof of Proposition \ref{cechderivedconnectingmap}. 
By standard homological algebra, we may construct a short exact sequence of complexes
\[
0 \longrightarrow \mathscr{I}'^{\bullet} \longrightarrow \mathscr{I}^{\bullet} \longrightarrow \mathscr{I}''^{\bullet} \longrightarrow 0
\]
such that $\mathscr{I}'^{\bullet}, \mathscr{I}^{\bullet}, \mathscr{I}''^{\bullet}$ are injective resolutions of $\mathscr{F}', \mathscr{F}, \mathscr{F}''$ respectively, and such that the maps on $0$th homologies are the given maps between these sheaves \cite[Tag 013T]{stacks}. For the 
left-exact sequence of \v{C}ech complexes
\[
0 \longrightarrow \mathscr{C}^{\bullet}(\mathcal{U}, \mathscr{F}) \longrightarrow \mathscr{C}^{\bullet}(\mathcal{U}, \mathscr{F}') \longrightarrow \mathscr{C}^{\bullet}(\mathcal{U}, \mathscr{F}''), 
\]
Lemma \ref{extendmapscomplexdiagram} inserts it into a commutative diagram of complexes
\[
\begin{tikzcd}
0 \arrow{r} & \mathscr{C}^{\bullet}(\mathcal{U}, \mathscr{F}) \arrow{r} \arrow{d} & \mathscr{C}^{\bullet}(\mathcal{U}, \mathscr{F}') \arrow{r} \arrow{d} & \mathscr{C}^{\bullet}(\mathcal{U}, \mathscr{F}'') \arrow{d} & \\
0 \arrow{r} & \mathscr{I}'^{\bullet} \arrow{r} & \mathscr{I}^{\bullet} \arrow{r} & \mathscr{I}''^{\bullet} \arrow{r} & 0
\end{tikzcd}
\]
This yields a commutative diagram
\begin{equation}
\begin{tikzcd}
0 \arrow{r} & \Gamma(X, \mathscr{C}^{\bullet}(\mathcal{U}, \mathscr{F})) \arrow{r} \arrow{d} & \Gamma(X, \mathscr{C}^{\bullet}(\mathcal{U}, \mathscr{F}')) \arrow{r} \arrow{d} & \Gamma(X, \mathscr{C}^{\bullet}(\mathcal{U}, \mathscr{F}'')) \arrow{d} & \\
0 \arrow{r} & \Gamma(X, \mathscr{I}'^{\bullet}) \arrow{r} & \Gamma(X, \mathscr{I}^{\bullet}) \arrow{r} & \Gamma(X, \mathscr{I}''^{\bullet}) \arrow{r} & 0
\end{tikzcd}
\end{equation}
(exactness at the right on the bottom due to the injectivity of the sheaves $\mathscr{I}'^n$). The connecting map in derived functor cohomology is obtained by applying the snake lemma construction to the bottom sequence in this diagram, and the snake lemma construction between the top and bottom sequences is compatible. Thus, Proposition \ref{cechderivedconnectingmap} follows immediately upon applying Proposition
\ref{cechtoderivedcomplex}. \hfill \qed

\chapter{Characteristic $0$}
\label{char0appendix}

Here we include a short discussion of how the proofs of our main results must be modified in characteristic $0$ (that is, for $p$-adic and number fields). Our goal is not to give full proofs, or even too many details, but rather to indicate how these arguments generally go, and why they are typically much simpler.

\begin{appendixdefinition}
\label{finitepresentationdef}
Let $S$ be a scheme, and let $\mathscr{F}:  {\rm{Sch}}/S \rightarrow {\rm{Ab}}$ be a contravariant functor from the category of $S$-schemes to the category of abelian groups. Then $\mathscr{F}$ is said to be {\em locally of finite presentation} if for every filtered inverse system $\{X_i = \Spec(R_i)\}$ of $S$-schemes that are affine, the canonical map $\varinjlim_i \mathscr{F}(R_i) \rightarrow \mathscr{F}(\varinjlim_i R_i)$ is an isomorphism.
\end{appendixdefinition}

All of the sheaves that we deal with in this work are locally of finite presentation. The reason for the terminology is that if $\mathscr{F}$ is represented by an $S$-scheme $X$, then $X$ is locally of finite presentation over $S$ precisely when $\mathscr{F}$ is locally of finite presentation \cite[IV$_3$, Prop.\,8.14.2]{ega}.

Let us remark that we may replace fppf with \'etale cohomology in all of our results in characteristic $0$. This actually does not change the content, thanks to the following proposition.

\begin{appendixproposition}
\label{etrem}
Let $k$ be a perfect field, and let $\mathscr{F}$ be an fppf abelian sheaf on 
the category of {\em all} $k$-schemes. Assume that $\mathscr{F}$ is locally of finite presentation. $($See Definition $\ref{finitepresentationdef}$.$)$ Then the natural map
\[
{\rm{H}}^i_{\et}(k, \mathscr{F}) \rightarrow {\rm{H}}^i_{{\rm{fppf}}}(k, \mathscr{F})
\]
is an isomorphism.
\end{appendixproposition}

\begin{proof}
For any finite Galois extension $L/k$, we have the Leray spectral sequence
\[
E_{2,\,L}^{i, j} = {\rm{H}}^i(\Gal(L/k), {\rm{H}}_{{\rm{fppf}}}^j(L, \mathscr{F})) \Longrightarrow {\rm{H}}^{i+j}_{\rm{fppf}}(k, \mathscr{F}),
\]
and similarly for \'etale cohomology. Since $k$ is perfect, $\overline{k} = \varinjlim L$, where the limit is over all finite Galois extensions $L/k$ contained in $\overline{k}$. Taking the directed limit over all such $L$, and using Proposition \ref{directlimitscohom} and the fact that $\mathscr{F}$ is locally of finite presentation, we obtain a spectral sequence
\[
E_{2,\,{\rm{fppf}}}^{i, j} = {\rm{H}}^i(\Gal(\overline{k}/k), {\rm{H}}_{{\rm{fppf}}}^j(\overline{k}, \mathscr{F})) \Longrightarrow {\rm{H}}^{i+j}_{\rm{fppf}}(k, \mathscr{F}),
\]
and similarly for \'etale cohomology. It therefore suffices to show that $E_{2,\,{\rm{fppf}}}^{i, j} = E_{2,\,\et}^{i, j}$ for all $i, j$. This is obvious for $j = 0$. For $j > 0$, we in fact have $E_{2,\,{\rm{fppf}}}^{i, j} = E_{2,\,\et}^{i, j} = 0$. Indeed, ${\rm{H}}^j_{{\rm{fppf}}}(\overline{k}, \mathscr{F}) = {\rm{H}}^j_{\et}(\overline{k}, \mathscr{F}) = 0$ for $j > 0$ because ${\rm{H}}^0(\overline{k}, \mathscr{F})$ is an exact functor on the categories of fppf or \'etale abelian sheaves on $\Spec(\overline{k})$, since any fppf or \'etale cover of $\Spec(\overline{k})$ may be refined by the trivial cover $\Spec(\overline{k}) \rightarrow \Spec(\overline{k})$ due to the Nullstellensatz.
\end{proof}

We also note that the topological issues that appear when stating Theorem \ref{H^1localduality} do not show up in characteristic $0$. In fact, we have the following simpler statement.

\begin{appendixtheorem}
\label{H^1localdualitychar0}
Let $k$ be a local field of characteristic $0$, $G$ an affine commutative $k$-group scheme of finite type. Then ${\rm{H}}^1(k, G)$ and ${\rm{H}}^1(k, \widehat{G})$ are finite, and the cup product pairing
\[
{\rm{H}}^1(k, G) \times {\rm{H}}^1(k, \widehat{G}) \rightarrow {\rm{H}}^2(k, \Gm) \underset{\inv}{\xrightarrow{\sim}} \Q/\Z
\]
is perfect.
\end{appendixtheorem}

For completeness, we outline the proof, skipping the steps which are identical to the function field case. See the proof of Lemma \ref{H^1localdualityalmosttori}.

\begin{proof}
Note that all cohomology may be taken to be \'etale by Proposition \ref{etrem}. First assume that $G$ is an almost-torus. By Lemma \ref{almosttorus}(iv), after harmlessly modifying $G$ we may assume that there is an exact sequence
\[
1 \longrightarrow A \longrightarrow B \times \R_{k'/k}(\Gm^n) \longrightarrow G \longrightarrow 1,
\]
with $A$ and $B$ finite $k$-group schemes, and $k'/k$ a finite separable extension. The finiteness of ${\rm{H}}^1(k, G)$ then follows from that of ${\rm{H}}^2(k, A)$ and ${\rm{H}}^1(k, B)$, the finiteness of which form part of Tate local duality. (It is this finiteness of ${\rm{H}}^1(k, B)$ that fails in general over local function fields. For the finiteness of ${\rm{H}}^1(k, B)$ in particular, see \cite[\S5, Prop. 14]{serre}.) Applying Propositions \ref{charactersseparableweilrestriction} and \ref{hatisexact} (the latter of which in characteristic $0$ follows from Proposition \ref{ext0}, a much easier result than the positive characteristic Proposition \ref{ext=0}), we obtain an exact sequence of \'etale sheaves
\[
1 \longrightarrow \widehat{G} \longrightarrow \widehat{B} \times \R_{k'/k}(\Z^n) \longrightarrow \widehat{A} \longrightarrow 1.
\]
Since $\widehat{A}(k)$ and ${\rm{H}}^1(k, \widehat{B})$ are finite (the latter finiteness again failing in general for finite commutative group schemes over local function fields), so is ${\rm{H}}^1(k, \widehat{G})$. Finally, the proof that the pairing between ${\rm{H}}^1(k, G)$ and ${\rm{H}}^1(k, \widehat{G})$ is perfect proceeds exactly as in characteristic $p$, but is actually slightly simpler because one may ignore the topological issues arising in positive characteristic.

Now consider an arbitrary affine commutative $k$-group $G$ of finite type. Applying Lemma \ref{affinegroupstructurethm}, there is an exact sequence
\[
1 \longrightarrow X \longrightarrow G \longrightarrow U
\]
with $X$ an almost-torus and $U$ split unipotent. Then we claim that we have a commutative diagram in which the indicated maps are isomorphisms or surjections: 
\begin{appendixequation}
\label{H^1localdualitychar0pfeqn}
\begin{tikzcd}
{\rm{H}}^1(k, X) \isoarrow{d} \arrow[r, twoheadrightarrow] & {\rm{H}}^1(k, G) \arrow{d} \\
{\rm{H}}^1(k, \widehat{X})^* \arrow{r}{\sim} & {\rm{H}}^1(k, \widehat{G})^*
\end{tikzcd}
\end{appendixequation}
Assuming this, the finiteness assertions as well as the perfection for $G$ follow from the corresponding assertions for $X$.

The diagram commutes simply due to the functoriality of cup product. The left vertical map is an isomorphism due to the perfection of the pairing for $X$. The top horizontal arrow is surjective because ${\rm{H}}^1(k, U) = 0$ by Proposition \ref{unipotentcohomology}(ii). Next, ${\rm{H}}^i(k, \widehat{U}) = 0$ for $i = 1, 2$ by Propositions \ref{hatisexact} and \ref{cohomologyofG_adualwhenkisperfect}. (Note that the vanishing for $i = 2$ fails completely over local function fields; see Proposition \ref{omega=H^2(Ga^)}.) It follows from Proposition \ref{hatisexact} that the map ${\rm{H}}^1(k, \widehat{G}) \rightarrow {\rm{H}}^1(k, \widehat{X})$ is an isomorphism, hence so is the bottom horizontal arrow in (\ref{H^1localdualitychar0pfeqn}).
\end{proof}

The local Theorems \ref{localdualityH^2(G)}-\ref{H^1localduality} all hold for $\mathbf{R}$ and $\C$ if one replaces ordinary cohomology with Tate cohomology. They also hold (with the usual cohomology groups) over non-archimedean local fields of characteristic $0$. Further, the main global duality theorems \ref{poitoutatesequence}--\ref{poitoutatesequencedual} also all hold for affine commutative group schemes of finite type over number fields.

Now we discuss how the arguments typically proceed in order to prove our main results for an affine commutative $G$ of finite type. One typically applies a d\'evissage from the finite (necessarily \'etale!) case to reduce to the case when $G$ is connected (and necessarily smooth). Such $G$ is of the form $T \times \Ga^n$ for some $k$-torus $T$ and some $n \geq 0$. We are therefore reduced to the cases when $G$ is a torus or $\Ga$. The proofs for tori are typically the same as in the function field case, reducing to the case $T = \Gm$ by applying Lemma \ref{almosttorus}(iv) or by using the injection $T \hookrightarrow \R_{k'/k}(T_{k'})$ for some finite separable extension $k'/k$ splitting $T$. 

For $\Ga$, on the other hand, the proofs are much simpler. Indeed, as in characteristic $p$, we have ${\rm{H}}^i(k, \Ga) = {\rm{H}}^i(\A, \Ga) = 0$ for all $i > 0$, since $\Ga$ is a quasi-coherent sheaf and the higher \'etale cohomology of quasi-coherent sheaves on any affine scheme vanishes. But what truly makes the situation much simpler is that, in characteristic $0$, we have ${\rm{H}}^i_{\et}(k, \widehat{\Ga}) = {\rm{H}}^i_{\et}(\A, \widehat{\Ga}) = 0$ for all $i$, since $\widehat{\Ga} = 0$ as an \'etale sheaf on these schemes because $\Ga$ has no nontrivial characters over a reduced ring (as follows from Lemma \ref{nonconstantunitsred}). So the cohomology of $\widehat{\Ga}$ -- which, over $k$, may by Proposition \ref{etrem} be taken to be \'etale or fppf without changing its value -- is both simpler and easier to compute. 

To prove Theorem \ref{localdualityH^2(G^)} for $G = \Ga$ over a characteristic-$0$ local field $k$, for example, one must show that $k_{\pro} = 0$. But this is clear, since $k$ is divisible, hence has no nontrivial finite quotients (since any such would be finite and divisible). Similarly, if $k$ is a number field, then $k_{\pro} = 0$, so in order to prove that the sequence in Theorem \ref{poitoutatesequence} is exact at $\Ga(\A)$, for example, one must show that $\A_{\pro} = 0$, and this once again follows from the fact that $\A$ is divisible.

\newpage


\begin{thebibliography}{ram}

\bibitem[SGA4$_{\rm{II}}$]{sga4} M.\,Artin, A.\,Grothendieck, J.-L.\,Verdier, {\em Th\'eorie des topos et cohomologie \'etale des
sch\'emas} (Tome 2), Lecture Notes in Math {\bf 270}, Springer-Verlag, 1972.

\bibitem[Ar]{artin} M.\,Artin, {\em Versal deformations and algebraic stacks}, Inventiones {\bf 27} (1974), pp.\,165--189.

\bibitem[BS]{borelserre} A.\,Borel, J.-P.\,Serre, {\em, Th\'eor\`emes de finitude en cohomologie galoisienne}, Comment.\, Math.\, Helv.\, {\bf 39} (1964), 111--164.

\bibitem[Bor]{borel} A.\,Borel, {Linear Algebraic Groups}, Second Edition, Springer-Verlag, New York, 1991.

\bibitem[Bou]{bourbakitopology} N.\,Bourbaki, {\em Topologie G{\'e}n{\'e}rale}, Springer-Verlag, Berlin, 2007.

\bibitem[Br1]{breen2} L. \, Breen, {\em On A Nontrivial Higher Extension of Representable Abelian Sheaves}, Bulletin of the American Mathematical Society, vol. 75, no. 6 (1969), pp. \, 1249-1253.

\bibitem[Br2]{breen} L.\,Breen, {\em Un th\'eor\`eme d'annulation pour certain $\Ext^i$ de faisceaux ab\'eliens},
Annales scientifiques de l'E.N.S.\,8(3) (1975), pp.\,339-352.

\bibitem[CF]{cf} J.W.S.\,Cassels, A.\,Fr\"{o}hlich, {\em Algebraic Number Theory}, Academic Press, 1993.

\bibitem[\v{C}es1]{ces1} K.\,\v{C}esnavi\v{c}ius, {\em Topology on Cohomology of Local Fields}, Forum of Mathematics, Sigma,
Vol.\,3 (2015), pp.\,1--55.

\bibitem[\v{C}es2]{ces2} K.\,\v{C}esnavi\v{c}ius, {\em Poitou--Tate Without Restrictions on the Order}, Mathematical Research Letters, \textbf{22} (2015), no.\,6, 1621-1666.

\bibitem[CCO]{cco} C.-L.\,Chai, B.\,Conrad, F.\,Oort, {\em Complex Multiplication and Lifting Problems}, American Mathematical Society, Providence, Rhode Island, 2014.

\bibitem[Che]{chevalley} C.\,Chevalley, {\em Deux Th{\'e}or{\`e}mes d'Arithm{\'e}tique}, J. Math. Soc. Japan Volume 3, Number 1 (1951), 36-44.

\bibitem[Con1]{conradchevalley} B.\,Conrad, {\em A modern proof of Chevalley's theorem on algebraic groups}, Journal of the Ramanujan Mathematical Society, {\bf 17} (1), 1--18.

\bibitem[Con2]{chowtrace} Brian Conrad, {\em Chow's $K/k$-image and $K/k$-trace, and the Lang--N\'eron  theorem}, Enseign.\,Math.\,(2) {\bf 52} (2006), 37-108.

\bibitem[Con3]{conradaffinequotients} B.\,Conrad {\em Affine Quotients}, available at {\tt{http: //virtualmath1.stanford.edu/~conrad/252Page/handouts/affineqt.pdf}}.

\bibitem[Con4]{conrad} B.\,Conrad, {\em Finiteness theorems for algebraic groups over function fields},
Compositio Math.\,{\bf 148} (2012), pp.\,555-639.

\bibitem[CGP]{cgp} B.\,Conrad, O.\,Gabber, G.\,Prasad, {\em Pseudo-reductive Groups}, Cambridge Univ.\,Press (2nd edition), 2015.

\bibitem[DG]{demazuregabriel} M.\,Demazure, P.\,Gabriel, {\em Groupes Alg\'ebriques}, Masson \& Cie, Paris, 1970.

\bibitem[SGA3]{sga3} M.\,Demazure, A.\,Grothendieck, {\em Sch\'emas en Groupes}
I, II, III, Lecture Notes in Math {\bf 151, 152, 153}, Springer-Verlag, New York (1970).

\bibitem[Ga]{gabber} O.\,Gabber, {\em Some Theorems On Azumaya Algebras}, in ``Groupe de Brauer, Séminaire, Les Plans-sur-Bex, Suisse 1980'', Lecture Notes in Mathematics, Vol 844 (1981), pp.\,129-209.

\bibitem[GS]{gille} P.\,Gille, T.\,Szamuely, {\em Central Simple Algebras and Galois Cohomology}, Cambridge Univ.\,Press, New York,
2006.

\bibitem[EGA]{ega} A.\,Grothendieck, {\em El\'ements de G\'eom\'etrie Alg\'ebrique},
Publ.\,Math.\,IHES {\bf 4, 8, 11, 17, 20, 24, 28, 32}, 1960--7.

\bibitem[SGA7]{sga7} A.\,Grothendieck, D.S.\,Rim, {\em Groupes de Monodromie en G\'eom\'etrie Alg\'ebriques}, Lecture
Notes in Mathematics 288, Springer--Verlag, Heidelberg, 1972.

\bibitem[BrII]{brii} A.\,Grothendieck, {\em Le groupe de Brauer II: Th\'eories Cohomologiques}, in ``Dix Expos\'es sur la cohomologie des sch\'emas'', North-Holland, Amsterdam, 
1968, pp.\,66--87.

\bibitem[BrIII]{briii} A.\,Grothendieck, {\em Le groupe de Brauer III:  exemples et compl\'ements},
in ``Dix Expos\'es sur la cohomologie des sch\'emas'', North-Holland, Amsterdam, 
1968, pp.\,88--188.

\bibitem[Ka]{kato} K.\,Kato, {\em Galois Cohomology of Complete Discrete Valuation Fields}, Lecture Notes in Mathematics, {\bf 967}, 215-238, Springer--Verlag, 1982.

\bibitem[Kat]{katz} N.\,Katz, {\em $p$-adic Properties of Modular Schemes and Modular Forms}, In Willem Kuijk and Jean-Pierre Seerre, editors, {\em Modular Functions in One Variable III}, volume 350, Springer, Berlin, Heidelberg, 1973.

\bibitem[Kn]{knutson} D.\,Knutson, {\em Algebraic Spaces}, LNM 203, Springer-Verlag, New York, 1971.

\bibitem[Mac]{Cat} S.\,MacLane, {\em Categories for the Working Mathematician} (2nd ed.), Graduate Texts in Mathematics {\bf 5},
Springer--Verlag, 1978.

\bibitem[Mat1]{matsumuracommalg} H.\,Matsumura, {\em Commutative Algebra}, Second Edition, Benjamin/Cummings Publishing Company, Inc.\,1980.

\bibitem[Mat2]{crt} H.\,Matsumura, {\em Commutative Ring Theory}, Cambridge Univ.\,Press, 1990.

\bibitem[Me]{messing} W.\,Messing, {\em The Crystals associated to Barsotti--Tate Groups
with applications to Abelian Schemes}, LNM {\bf 264}, Springer--Verlag, 1972.

\bibitem[Mi1]{milneetalecohomology} {\em \'Etale Cohomology}, Princeton University Press, 1980.

\bibitem[Mi2]{milne} J.\,Milne, {\em Arithmetic Duality Theorems}, Booksurge, LLC (2nd ed.), 2006.

\bibitem[Mi3]{milnealggroups} J.\,Milne, {\em Algebraic Groups:  The Theory of Group Schemes of Finite Type over a Field}, Cambridge Univ.\,Press, New York, 2017.

\bibitem[Mor]{morel} S.\,Morel, {\em Adic spaces}, available at {\tt{https://web.math.princeton.edu/~smorel/adic\_notes.pdf}}.

\bibitem[Oes]{oesterle}
J.\,Oesterl\'e, {\em Nombres de Tamagawa et Groupes Unipotents
en Caract\'eristique $p$},
Inv.\: Math.\,{\bf 78}(1984), 13--88.

\bibitem[Ono]{ono} T.\,Ono, {\em Arithmetic of Algebraic Tori}, Annals of Mathematics {\bf{74}}, 1961, pp.\,101--139.

\bibitem[Poo]{poonen} B.\,Poonen, {\em Rational Points on Varieties}, Graduate Studies in Mathematics, Vol.\,186, 2017.

\bibitem[Rib]{ribenboim} P.\,Ribenboim, {\em The Theory of Classical Valuations}, Springer, New York, 1999.

\bibitem[Ru]{rudin} W.\,Rudin, {\em Fourier Analysis On Groups}, John Wiley and Sons, New York, 1962.

\bibitem[Sch]{schwede} K.\,Schwede, {\em Gluing schemes and a scheme without closed points}, Recent Progress in Arithmetic and Algebraic Geometry, AMS Contemporary Mathematics Series (2005).

\bibitem[Ser1]{serremodularforms} J.P.\,Serre, {\em Modular forms of weight one and Galois representations}, in ``Algebraic number fields:  L-functions and Galois properties'' (Proc. Sympos., Univ. Durham, Durham, 1975), pp. 193-268. Academic Press, London, 1977.

\bibitem[Ser2]{serre} J-P.\,Serre, {\em Galois Cohomology}, Springer-Verlag, New York, 1997.

\bibitem[Stacks]{stacks} {\em The Stacks Project}, {\tt{https: //stacks.math.columbia.edu}}.

\bibitem[Sw]{swan} R.\,Swan, {\em On Seminormality}, Journal of Algebra {\bf{67}} (1980), pp.\,210--229.

\bibitem[Ta]{tateduality} J.\,Tate, {\em Duality Theorems in Galois cohomology over number fields}. Proc. Intern. Congress 1962, Stockholm 1963, pp. 288-295.

\bibitem[Tem]{temkin} M.\,Temkin, {\em Stable modification of relative curves}, J.\, Algebraic Geom.\, 19 (2010), 603--677.

\bibitem[Tre]{treger} R.\,Treger, {\em On $p$-torsion In Etale Cohomology And In The Brauer Group}, Proc. Amer. Math. Soc. 78, No.\, 2 (1980), 189-192.

\bibitem[Weil]{weilbasic} A.\,Weil, {\em Basic Number Theory} (3rd ed.), Springer-Verlag, New York, 1974.

\end{thebibliography}
\end{document}